\title{Prestacks of Tate type}
\author{Aron Heleodoro}
\date{\today}
\begin{document}

\maketitle

\setcounter{tocdepth}{2}
\tableofcontents

\section{Introduction}

\subsection{What is done in this paper?}

This paper initiates the study of an extension of derived algebraic geometry, where we consider Tate affine schemes as the basic building blocks. A \emph{Tate affine scheme} is a filtered diagram of affine schemes where the connecting morphisms are closed embeddings of finite presentation. 

\subsubsection{Tate schemes and Tate affine schemes}

There are a number of reasons why one might want to develop such a theory and we discuss some of them in \S \ref{subsec:why-are-we} below. For now we just mention that in many applications of derived algebraic geometry to the geometric Langlands program one has to deal with ind-schemes, for instance loop groups and the affine Grassmannian. Our first observation is that many of the ind-schemes that arise naturally are examples of Tate schemes.

\begin{thmannounce}[see \S \ref{subsec:examples-Tate-affine}]
The following are all examples of Tate schemes:
\begin{enumerate}[(i)]
    \item given any affine scheme $S$ of finite type, the loop prestack $S((t))$ is a Tate affine scheme;
    \item for any reductive group $G$ the affine Grassmannian $\Gr_G$ is a Tate scheme;
    \item any affine formal scheme whose underlying topological ring is adic and Noetherian gives rise to a Tate affine scheme.
\end{enumerate}
\end{thmannounce}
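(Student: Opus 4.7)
In each case the plan is to exhibit an explicit filtered diagram of affine schemes whose connecting maps are closed embeddings of finite presentation, and to identify its colimit with the given object. Case (iii) is the most direct: for an adic Noetherian ring $A$ with ideal of definition $I$, the diagram $n \mapsto \Spec(A/I^n)$ has transitions dual to the surjections $A/I^{n+1} \twoheadrightarrow A/I^n$ whose kernels $I^n/I^{n+1}$ are finitely generated over $A/I^{n+1}$ by Noetherianity, hence finitely presented; the colimit recovers $\Spf(A)$.

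For (i), write $S = \Spec(A)$ with $A = k[x_1,\ldots,x_n]/(f_1,\ldots,f_r)$ finitely presented, and for each $m \geq 0$ introduce the ``bounded pole'' subprestack
\[
S((t))^{\geq -m}(R) = \bigl\{(a_i) \in (t^{-m}R[[t]])^n : f_j(a_1,\ldots,a_n) = 0 \text{ for all } j \bigr\}.
\]
Each $S((t))^{\geq -m}$ is a cofiltered limit of finite-type affine schemes (truncating the $t$-expansion at successively higher order), so it is an affine scheme in the pro-sense required by the Tate framework. The transition $S((t))^{\geq -m} \hookrightarrow S((t))^{\geq -m-1}$ is cut out by setting to zero the $n$ coordinates of the $t^{-m-1}$-coefficient, hence a closed embedding of finite presentation; the filtered colimit over $m$ recovers $S((t))$. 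For (ii), invoke the standard stratification $\Gr_G = \colim_\lambda \Gr_G^{\leq \lambda}$ by Schubert varieties indexed by dominant coweights, where each $\Gr_G^{\leq \lambda}$ is projective of finite type over $k$; the inclusions for $\lambda \leq \mu$ are closed embeddings of finite-type $k$-schemes, automatically of finite presentation. Since the $\Gr_G^{\leq \lambda}$ are projective rather than affine, this realizes $\Gr_G$ as a Tate scheme rather than a Tate affine scheme.

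The main technical obstacle is to ensure that the filtered diagrams constructed in (i) and (ii) interact correctly with the pro-structure built into the ``affine scheme'' side of the Tate framework. For (i), one must verify that the bounded-pole prestacks really lie in the class of affine schemes being used and that ``finite presentation'' for the transitions refers to the correct notion relative to that class; the cleanest route is to reduce to $S = \mathbb{A}^1$ by functoriality and compatibility with finite limits, whereupon $\mathbb{A}^1((t))^{\geq -m}$ is identified with the spectrum of the symmetric algebra on the continuous dual of the topological $k$-module $t^{-m}k[[t]]$. For (ii), one additionally needs to check that the gluing of Schubert pieces from Tate affine charts is consistent with the Tate-scheme definition introduced in the paper, which in practice reduces to (i) applied to affine opens together with (iii) applied to the formal neighborhoods of the boundary strata.
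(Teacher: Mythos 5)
Your constructions for (i) and (iii) coincide with the paper's. For (iii) the paper likewise takes $\colim_n \Spec(A/I^n)$ and checks that $I^k/I^\ell$ is finitely presented using Noetherianity; for (i) it likewise reduces to $\bA^1$ via compatibility of the loop functor with finite limits and stability of Tate affine schemes under fiber products, and then writes $\bA^1((t)) \simeq \colim_{i}\bA^{\infty}$ with each transition cut out by the single finitely generated ideal $(t_{-i-1})$ --- your bounded-pole filtration is exactly this presentation. For (ii) you take a genuinely shorter route on the key point: you obtain finite presentation of the Schubert inclusions for free from Noetherianity of finite-type classical schemes, whereas the paper identifies $\Gr_{i}\hookrightarrow \Gr_{i+1}$ for $\GL_n$ explicitly as a pullback of an inclusion of finite Grassmannians (and leaves general reductive $G$ to the reader); your argument is the more economical one and is valid. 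However, it only verifies the condition on classical schemes, and condition (b) of the paper's definition of a Tate scheme concerns the coherence of $\Fib(\sO_{Z_j}\to\sO_{Z_i})$ in the derived sense; to pass from the classical statement you still need, as the paper does, the input that $\Gr_G$ is $0$-coconnective (quoted from Gaitsgory--Rozenblyum) together with the paper's lemma showing that a finitely presented closed immersion of Noetherian classical schemes satisfies condition (b) after left Kan extension to derived schemes. Finally, your closing remark that (ii) ``reduces to (i) applied to affine opens together with (iii) applied to formal neighborhoods of the boundary strata'' is not how the verification goes and should be dropped: the Schubert varieties are honest projective finite-type schemes, and neither Tate-affine charts nor formal neighborhoods enter the argument.
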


\subsubsection{Tate-coherent sheaves}

For many applications in representation theory it is necessary to have a theory of sheaves on ind-schemes. Such a theory is normally obtained by considering the colimit of the categories of sheaves on the schemes presenting the ind-scheme. This leads to the development of the theory of ind-coherent sheaves as done by Gaitsgory--Rozenblyum in \cites{GR-I,GR-II}.

Motivated by the desire to find a natural category for the ``dualizing sheaf'' of a Tate scheme to live on, we extend the formalism of ind-coherent sheaves to a theory of Tate-coherent sheaves, at least for Tate schemes satisfying a finiteness condition. The category of Tate-coherent sheaves is defined as the subcategory
\[
\TateCoh(S) \subset \Pro(\IndCoh(S)),
\]
where $\sF$ is an object of $\TateCoh(S)$ if it is an extension of an object of $\IndCoh(S)$ by an object of $\Pro(\Coh(S))$, that is $\sF$ is a Tate-object on the category $\Coh(S)$.

Section \ref{sec:sheaves} of the paper deals with extending the theory of ind-coherent sheaves (as formulated in \cites{GR-I,GR-II}) to Tate-coherent sheaves. This is done in two steps.

In the first step we develop a theory of Tate-coherent sheaves on schemes almost of finite type.

Firstly, we extend the formalism of ind-coherent sheaves with its arbitrary $!$-pullback, $*$-pushforward and base change for proper morphism to what we call \emph{pro-ind-coherent sheaves}, i.e.\ Pro-objects in the category of ind-coherent sheaves. This is done by Pro-extending the pushforward functor and defining the pullback as the appropriate left or right adjoint when the morphism is an open embedding or proper map. 

Secondly, when we consider the subcategory of Tate-objects we notice that the restriction of the $!$-pullback functor preserves Tate-coherent objects. So now we start with this functor and define the pushforward for Tate-coherent sheaves as the appropriate adjoints, analogously to the procedure for pro-ind-coherent sheaves. We use the formalism of the category of correspondences and check that the appropriate base change results still hold to obtain that the whole formalism is well-defined. The following summarizes these results

\begin{thmannounce}[Theorem \ref{thm:TateCoh-correspondence-functor-Sch}]
\label{thmann:TateCoh-on-schemes}
There exists a formalism of Tate-coherent sheaf for schemes almost of finite type, that is for any morphism $f:X \ra Y$ one has a $!$-pullback functor $f^!$ and a $*$-pushforward functor $f^{\rm Tate}_*$. These satisfy proper base change and are characterized by the properties:
\begin{enumerate}[(i)]
    \item for $g$ an open embedding one has an adjoint pair $(g^!,g^{\rm Tate}_*)$;
    \item for $f$ a proper morphism one has an adjoint pair $(f^{\rm Tate}_*,f^!)$.
\end{enumerate}
Moreover, one has a duality functor
\[
\bD^{\rm Tate}_X: \TateCoh(X) \ra \TateCoh(X)^{\rm op}
\]
that recovers the usual Serre duality when restricted to the subcategory $\Coh(X)$.
\end{thmannounce}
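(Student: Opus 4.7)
The plan is to mimic the Gaitsgory--Rozenblyum strategy for constructing $\IndCoh$ as a functor out of the $(\infty,2)$-category of correspondences, but starting from the pro-ind-coherent data already set up in the preceding subsections. Concretely, I would produce a functor
\[
\TateCoh: \Corraft_{\rm proper; all} \ra \DGc
\]
by restricting the pro-ind-coherent formalism to the full subcategory spanned, objectwise, by the Tate objects $\TateCoh(X) \subset \Pro(\IndCoh(X))$, and then extending to all morphisms using the open/proper factorization of Nagata together with the universal property of the category of correspondences established in \cite{GR-I}.

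First I would fix the $!$-pullback: the functor $f^! : \Pro(\IndCoh(Y)) \ra \Pro(\IndCoh(X))$ is already defined for any $f: X \ra Y$, and the main lemma to verify (already the key observation of \S \ref{sec:sheaves}) is that it sends $\TateCoh(Y)$ into $\TateCoh(X)$. Granting this, I take $f^!$ on $\TateCoh$ to be this restriction. Next, for $f$ proper and for $f$ an open embedding, I would define $f^{\rm Tate}_*$ as, respectively, the left and right adjoint of $f^!$, and check that these adjoints actually exist on the Tate subcategory (not merely on $\Pro(\IndCoh)$); the proper case is a formal consequence of ind-properness and the pro-extension, while the open-embedding case reduces to noting that $f^!$ on the Tate subcategory is obtained from the $*$-restriction on $\IndCoh$, whose left adjoint is $\IndCoh$-extension by zero and therefore manifestly preserves extensions by $\Pro(\Coh)$. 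I would then verify the two base-change squares: proper versus $!$-pullback, and open versus $!$-pullback. Both follow by reducing to the analogous statements in $\IndCoh$ and $\Pro(\IndCoh)$, once one checks that the relevant adjunction units and counits are induced from ones already living on the Tate subcategory.

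With these ingredients in place, the construction of the functor out of $\Corraft$ is then a direct application of the general machine: the Beck--Chevalley data assembles, by the results of \cite{GR-I} Chapter~V, into a functor out of $\Corraft_{\rm proper; all}$, and Nagata compactification together with the existing formalism on open embeddings extends it to $\Corraft_{\rm all; all}$. The hard part, and the step I would spend the most care on, is the compatibility of the proper pushforward with the $\Pro$-structure: one must show that if $f$ is proper and $\sF \in \TateCoh(X)$ is an extension $0 \ra \sF' \ra \sF \ra \sF'' \ra 0$ with $\sF' \in \Pro(\Coh)(X)$ and $\sF'' \in \IndCoh(X)$, then $f^{\rm Tate}_*\sF$ sits in an analogous extension on $Y$; this is where the almost-of-finite-type hypothesis genuinely enters, via the coherence of $f_*$ on $\Coh$.

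Finally, for the duality functor $\bD^{\rm Tate}_X$ I would proceed in two steps. On the subcategory $\Coh(X) \subset \TateCoh(X)$ one has the usual Serre duality $\bD^{\rm Serre}_X$, which, after $\Ind$-extension and $\Pro$-extension, produces an equivalence $\Pro(\IndCoh(X)) \simeq \Pro(\IndCoh(X))^{\rm op}$ exchanging $\Pro(\Coh(X))$ with $\Ind(\Coh(X)) = \IndCoh(X)^{\rm op}$. The Tate condition is self-dual under this equivalence precisely because a Tate object is an extension of an $\Ind$-object by a $\Pro$-object, so restricting yields the desired $\bD^{\rm Tate}_X : \TateCoh(X) \ra \TateCoh(X)^{\rm op}$, and the compatibility with the $!$/$*$-formalism is inherited from the corresponding compatibility of Serre duality with $\IndCoh$, applied levelwise in the $\Pro$-direction.
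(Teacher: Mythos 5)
Your overall architecture is the paper's: restrict the $!$-pullback from $\ProIndCoh$ to the Tate subcategory, take $f^{\rm Tate}_*$ to be the left adjoint of $f^!$ for $f$ proper and the right adjoint of $g^!$ for $g$ open, feed the resulting Beck--Chevalley data into the Gaitsgory--Rozenblyum correspondence machinery, and obtain $\bD^{\rm Tate}_X$ by extending Serre duality through the Tate construction (which exchanges the $\Pro(\Coh)$- and $\Ind(\Coh)$-parts). However, your emphasis is inverted, and this hides a genuine gap. The step you single out as hard --- that $f^{\rm Tate}_*$ preserves the Tate condition for $f$ proper --- is immediate: it suffices to check on the generating subcategory $\Coh$, where it is the standard statement that proper pushforward preserves coherence. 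The step you gloss over with ``Nagata compactification together with the existing formalism on open embeddings'' is where the real work lies: one must show that $f^{\rm Tate}_* := h^{\rm Tate}_* \circ j^{\rm Tate}_*$ is independent of the chosen factorization $f = h \circ j$, which requires the compatibility isomorphism $f^{\rm Tate}_* \circ g^{\rm Tate}_{S,*} \simeq g^{\rm Tate}_{T,*}\circ (f')^{\rm Tate}_*$ for a Cartesian square with $g$ open and $f$ proper. This does \emph{not} ``reduce to the analogous statement in $\Pro(\IndCoh)$'': for $g$ open, $g^{\rm Tate}_*$ is an abstractly defined right adjoint living only on the Tate subcategory, and it is not the restriction of $g^{\rm ProInd}_*$ (open pushforward does not preserve coherence, so $g^{\rm ProInd}_*$ does not preserve the Tate condition). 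The paper proves this compatibility by first establishing that $g^{\rm Tate}_*$ is fully faithful for $g$ an open embedding, and then checking the claimed isomorphism separately after restriction to the open locus and to Tate-coherent sheaves supported on the closed complement, using the localization fiber sequence for $\TateCoh$. Without this, your pushforward for a general morphism is not well defined and the functor out of the correspondence category does not assemble.

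A smaller point: your justification for the existence of the right adjoint $g^{\rm Tate}_*$ for $g$ open invokes the \emph{left} adjoint of $g^*$ (extension by zero), which is beside the point. The right adjoint exists because $g^!$, being the restriction to Tate objects of the Pro-extension of a colimit-preserving functor, itself preserves colimits, so the adjoint functor theorem applies; the content is then entirely in how this abstractly defined functor interacts with the proper pushforward, as above.
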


In the second step, we follow the ideas of Gaitsgory--Rozenblyum to extend the formalism of Theorem \ref{thmann:TateCoh-on-schemes} to any Tate scheme almost of finite type. This is done by defining a functor $\TateCoh^!$ that encodes the formalism of $!$-pullback for Tate schemes by right Kan extension from the functor for schemes, and defining the functor $\TateCoh$ that encodes the formalism of $*$-pushfoward by left Kan extension from Tate-coherent sheaves on schemes. Then we check that the natural base change isomorphisms are still satisfied, and general results from \cite{GR-I} about extension of $2$-functors in this context gives us a formalism on the category of correspondences.

\begin{thmannounce}[Theorem \ref{thm:TateCoh-correspondence-functor-Sch-Tate}]
\label{thmann:TateCoh-on-Tate-schemes}
There is an extension of the formalism of Theorem \ref{thmann:TateCoh-on-schemes} to Tate schemes locally almost of finite type defined as
\[
\TateCoh(S) := \lim_{I^{\rm op}}\TateCoh(S_i) \simeq \colim_{I}\TateCoh(S_i)
\]
where $S \simeq \colim_I S_i$ is an presentation of $S$.

Moreover, for any morphism $f:S \ra T$ between Tate schemes locally almost of finite type one has functors
\[
f^!: \TateCoh(T) \ra \TateCoh(S), \;\;\; \mbox{and} \;\;\; f^{\rm Tate}_*: \TateCoh(S) \ra \TateCoh(T).
\]
In the case where $f:S \ra T$ is an ind-proper morphism, one has adjunction $(f^{\rm Tate}_*,f^!)$ and for $g:S \ra T$ an open embedding one has $(g^!,g^{\rm Tate}_*)$; and these functors satisfy base change with respect to ind-proper morphisms.

\end{thmannounce}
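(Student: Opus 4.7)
The plan is to follow the Gaitsgory--Rozenblyum strategy for extending a formalism on schemes to ind-schemes, with the only new ingredient being that our ``coefficient'' $2$-functor $\TateCoh$ now takes values involving Pro-objects. All the heavy lifting on the scheme side has already been done in Theorem \ref{thmann:TateCoh-on-schemes}, so the proof is formal once the correct Kan-extension set-up is in place.

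First, I would build the $!$-pullback functor. Starting from the contravariant functor $\TateCoh^!: (\Schaft)^{\rm op} \ra \DGc$ produced by Theorem \ref{thmann:TateCoh-on-schemes}, I right Kan extend it along $(\Schaft)^{\rm op} \hra (\SchTatelaft)^{\rm op}$. Since every Tate scheme $S$ admits a presentation $S \simeq \colim_I S_i$ by closed embeddings of schemes almost of finite type, this gives
\[
\TateCoh(S) \;\simeq\; \lim_{I^{\rm op}} \TateCoh(S_i),
\]
with transition functors the $!$-pullbacks along the closed embeddings $S_i \hra S_{i'}$. These are proper morphisms, so by part (ii) of Theorem \ref{thmann:TateCoh-on-schemes} they admit left adjoints $(\iota_{i,i'})^{\rm Tate}_*$; passing to left adjoints in a limit diagram in $\DGc$ converts it into the colimit diagram
\[
\colim_I \TateCoh(S_i),
\]
which yields the second description in the statement. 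At this point one must check the right Kan extension is computed by this formula, i.e.\ that for a scheme $Z$ (viewed as a Tate scheme) the formula recovers $\TateCoh(Z)$; this is automatic because $Z$ is terminal among its own Tate presentations.

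Second, I would define the $*$-pushforward $\TateCoh_*: \SchTatelaft \ra \DGc$ by left Kan extension of the $*$-pushforward functor from Theorem \ref{thmann:TateCoh-on-schemes}. Using the equivalence $\TateCoh(S) \simeq \colim_I \TateCoh(S_i)$, for a morphism $f: S \ra T$ presented by a compatible family $f_i: S_i \ra T_{\alpha(i)}$ one defines $f^{\rm Tate}_*$ termwise as the colimit of $(f_i)^{\rm Tate}_*$. For $f$ ind-proper, each component is proper, so the adjunction $((f_i)^{\rm Tate}_*, f_i^!)$ passes to the (co)limit to give the desired adjunction $(f^{\rm Tate}_*, f^!)$. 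For $g$ an open embedding, a similar argument using the adjunction $(g_i^!, (g_i)^{\rm Tate}_*)$ at finite level gives $(g^!, g^{\rm Tate}_*)$.

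Third, and this is the main obstacle, I need to verify base change for ind-proper morphisms in order to apply the extension theorem for $2$-functors on correspondence categories from \cite{GR-I}. Concretely, given a Cartesian square of Tate schemes whose vertical maps are ind-proper, I need a natural isomorphism between the two composites $f^! \circ g^{\rm Tate}_* \simeq \widetilde g^{\rm Tate}_* \circ \widetilde f^!$. Choosing compatible presentations reduces this to base change for proper morphisms between schemes almost of finite type, which is built into Theorem \ref{thmann:TateCoh-on-schemes}; the subtle point is compatibility of these choices under change of presentation, for which one uses the cofinality of tower presentations of Tate schemes and the fact that passing to a cofinal subdiagram does not change limits and colimits. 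Once this base change is established, the general machinery of \cite{GR-I} for extending an $(\infty,2)$-functor from $\Schaft$ to the appropriate category of correspondences of Tate schemes -- with horizontal maps being ind-proper morphisms or open embeddings -- applies verbatim, producing the desired formalism and finishing the proof.
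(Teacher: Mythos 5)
Your proposal follows essentially the same route as the paper: right Kan extension of $\TateCoh^!$ from $\Schaft$ to $\SchTatelaft$ (with the limit/colimit identification coming from passing to left adjoints along the proper closed embeddings of a presentation), left Kan extension for the pushforward, verification of the ind-proper Beck--Chevalley/base-change conditions by reduction to compatible presentations, and then an appeal to the Gaitsgory--Rozenblyum extension theorem for functors out of correspondence categories. The only points the paper handles that you do not mention are (i) that the input on the scheme level must be the version of the correspondence functor whose \emph{admissible} morphisms are proper (not open), since the colimit presentation of $\TateCoh(S)$ is along closed embeddings, and (ii) the set-theoretic care needed because $\Pro$ of a presentable category is not presentable, which is required to legitimately invoke the extension theorem.
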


We remark that the formalism of Theorem \ref{thmann:TateCoh-on-Tate-schemes} is not the most general formalism that one can develop, without much extra work one can develop a formalism of Tate-coherent sheaves for ind-inf-schemes, but we restrict ourselves to the case of Tate schemes for uniformity of presentation.

More importantly, one can ask for a version of the formalism where we don't need the assumption that our Tate schemes are locally almost of finite type. That should be possible, but we need to use a version of ind-coherent sheaves as considered in \cite{Raskin-homological}*{\S 6}. We plan to pursue that in a future version of this work.

\subsubsection{Prestacks of Tate type}

Now we introduce the main object of study in this paper, a \emph{prestack of Tate type} is a functor
\[
\sX: (\SchaffTate)^{\rm op} \ra \Spc,
\]
where $\SchaffTate$ is the category of Tate affine schemes, and $\Spc$ is the category of spaces. 

It turns out that one can develop a lot of the basic theory of derived algebraic geometry for such objects. Many of the notions that one can define for a usual prestack, i.e.\ a functor from $(\Schaff)^{\rm op}$ to $\Spc$, can be straightforwardly generalized to prestacks of Tate type. For instance, we have notions of coconnectivity, convergence, locally almost of finite type, and truncatedness. 

If our theory is to be of any use, any prestack should produce an example of a prestack of Tate type. This is actually the case and given any prestack $\sX_0$ its right Kan extension via the embedding $(\Schaff)^{\rm op} \hra (\SchaffTate)^{\rm op}$ produces a prestack of Tate type. In particular, we can define the notion of a \emph{prestack of Tate type $\sX$ being locally a prestack}, that is, the canonical map
\[
\sX \ra \RKE_{(\Schaff)^{\rm op} \hra (\SchaffTate)^{\rm op}}\left(\left.
\sX\right|_{(\Schaff)^{\rm op}}\right)
\]
is an isomorphism. The following result collects how the usual properties of a prestack interact with these properties when we see it as a prestack of Tate type. None of them are surprising, but they guarantee that the theory works well.

\begin{thmannounce}
Given $\sX_0$ a prestack, consider the associated prestack of Tate type
\[
\sX := \RKE_{(\Schaff)^{\rm op} \hra (\SchaffTate)^{\rm op}}(\sX_0).
\]
Then we have
\begin{enumerate}[(i)]
    \item if $\sX$ is locally almost of finite type, then $\sX_0$ is locally almost of finite type;
    \item if $\sX$ is $n$-coconnective, then $\sX_0$ is $n$-coconnective;
    \item $\sX$ is convergent if and only if $\sX_0$ is convergent;
    \item $\sX$ is $k$-truncated if and only if $\sX_0$ is $k$-truncated.
\end{enumerate}
\end{thmannounce}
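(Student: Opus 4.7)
The plan rests on two structural observations about $\sX = \RKE_j(\sX_0)$, where $j: (\Schaff)^{\rm op} \hra (\SchaffTate)^{\rm op}$ denotes the embedding. Since $j$ is fully faithful, so is $\RKE_j$, and the counit identifies $\sX|_{(\Schaff)^{\rm op}} \simeq \sX_0$. Moreover, for a Tate affine scheme $S = \text{``}\colim_I\text{''}\, S_i$, the compactness of honest affines in $\Ind(\Schaff)$ gives $\Hom_{\SchaffTate}(T, S) \simeq \colim_I \Hom(T, S_i)$, so that the slice category $\Schaff_{/S}$ decomposes as a filtered colimit $\colim_I \Schaff_{/S_i}$. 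Each summand $\Schaff_{/S_i}$ has the tautological pair $(S_i, \id)$ as terminal object, so the defining limit for the right Kan extension collapses to
\[
\sX(S) \;\simeq\; \lim_{I^{\rm op}} \sX_0(S_i).
\]

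Granting this, parts (iii) and (iv) are routine. The ``only if'' directions are restriction along $j$, while the converses combine the key formula with the levelwise computation ${^{\leq n}S} \simeq \text{``}\colim_I\text{''}\, {^{\leq n}S_i}$, the commutation of limits with limits, and the closure of $k$-truncated spaces under limits in $\Spc$. Part (ii) is analogous: $n$-coconnectivity in either the ordinary or the Tate setting is equivalent to the identity $\sX(T) \simeq \sX({^{\leq n}T})$ (because ${^{\leq n}T}$ is a terminal object of the comma category governing the right Kan extension from $n$-coconnective objects), and the implication follows from $\sX|_{(\Schaff)^{\rm op}} \simeq \sX_0$ applied to $T$ and to ${^{\leq n}T}$.

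For (i), convergence of $\sX_0$ comes from (iii). For the left-Kan-extension condition, fix $T \in \Schaffn$; the Tate-laft hypothesis gives
\[
\sX_0(T) = \sX(T) \;\simeq\; \colim_{(S_0,\, T \to S_0),\, S_0 \in \SchaffTatenft} \sX(S_0),
\]
and the task is to identify this with the analogous colimit indexed by $(T_0, T \to T_0)$ with $T_0 \in \Schaffnft$. For each $(S_0, T \to S_0)$ with $S_0 = \text{``}\colim_J\text{''}\, S_0^{(j)}$, the factorization $T \to S_0^{(j_0)} \to S_0$ together with the projection $\sX(S_0) = \lim_{J^{\rm op}} \sX_0(S_0^{(j)}) \to \sX_0(S_0^{(j_0)})$ produces an effective comparison between the two colimits, and a cofinality argument completes the identification. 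The main technical content lies in this last cofinality step; granted it and the key formula, everything else is formal.
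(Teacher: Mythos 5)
Parts (iii) and (iv) of your proposal are fine and agree in substance with the paper: the paper proves (iii) by associativity of right Kan extensions and (iv) by closure of $\Spc_{\leq k}$ under limits, both of which are repackagings of your key formula $\sX(S)\simeq\lim_{I^{\rm op}}\sX_0(S_i)$ (which is the paper's Lemma \ref{lem:concrete-condition-locally-prestack}). For (i) your route genuinely differs from the paper's: you compare the two left Kan extension colimits directly via cofinality of $(\Schaffnft)_{T/}\hra(\SchaffTatenft)_{T/}$, whereas the paper deduces (i) (and (ii)) from a general statement about commuting $\LKE$ and $\RKE$ across a square of inclusions (Lemmas \ref{lem:coconnective-affine-schemes-satisfy-RKE-base-change} and \ref{lem:coconnective-ft-affine-schemes-satisfy-RKE-base-change}). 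Your cofinality claim is in fact correct --- the relevant comma categories reduce to $(\Schaffnft)_{S_0/}$ for $S_0$ a finite type Tate affine scheme, which is filtered (closed under tensor products) and hence contractible --- but you only assert it, and you yourself flag it as the main technical content.

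Part (ii), however, contains a genuine error. You claim that $n$-coconnectivity ``in either setting is equivalent to the identity $\sX(T)\simeq\sX({}^{\leq n}T)$,'' on the grounds that ${}^{\leq n}T$ is terminal in the comma category governing the right Kan extension. But $n$-coconnectivity is defined (in \S\ref{subsec:properties-prestacks-Tate}, following Gaitsgory--Rozenblyum) as membership in the essential image of the \emph{left} Kan extension from $n$-coconnective (Tate) affine schemes; the condition $\sX(T)\simeq\sX({}^{\leq n}T)$ characterizes the essential image of the \emph{right} Kan extension, which is a different and far more restrictive class. Concretely, $\bA^1$ is $0$-coconnective, yet $\bA^1(\Spec A)\simeq\Omega^\infty A$ depends on all of $A$ and not only on $\H^0(A)$, so $\bA^1$ fails your proposed criterion. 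Your one-line argument therefore proves an implication between two conditions neither of which is $n$-coconnectivity, and (ii) is left unproved. This is exactly where the paper does real work: Proposition \ref{prop:Tate-prestack-coconnectivity-compatibility} constructs the comparison map $\LKE_{\imath_{\rm Tate}}\circ\RKE_{\jmath^{\leq n}}\ra\RKE_{\jmath}\circ\LKE_{\imath}$, shows it is an isomorphism when $\sX$ is $n$-coconnective and locally a prestack, and then uses full faithfulness of $\RKE_{\jmath}$ to conclude $\sX_0\simeq\LKE_{\imath}({}^{\leq n}\sX_0)$. To repair (ii) you would need an argument of this kind, or a direct cofinality comparison between the two \emph{left} Kan extension comma categories analogous to what you sketch for (i).
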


The category of Tate affine schemes $\SchaffTate$ admits Grothendieck topologies analogous to the topologies for usual affine schemes. For instance, given a map $f:S \ra T$ between Tate affine scheme, this morphism is \'etale if one of the two following equivalent conditions is satisfied:
\begin{itemize}
    \item $f$ is \'etale as a morphism of prestacks;
    \item one can find common presentations $S \simeq \colim_I S_i$ and $T \simeq \colim_I T_i$ and morphisms
    \[
    \begin{tikzcd}
    S_i \ar[r,"f_i"] \ar[d] & T_i \ar[d] \\
    S \ar[r,"f"] & T
    \end{tikzcd}
    \]
    such that $f_i$ is \'etale.
\end{itemize}

One has similar descriptions for the Zariski and flat topologies. These topologies allow us to define the notion of stacks of Tate type, which we simply refer to as Tate stacks. 

As with the usual theory of stacks, we can introduce a hierarchy of geometric conditions that we call \emph{Tate $k$-Artin stacks}. We define these objects by taking Tate $0$-Artin stacks to be Tate affine schemes, and Tate $k$-Artin stacks to be Tate stacks whose diagonal morphism is Tate $(k-1)$-Artin representable and admits a smooth surjective cover from a Tate $(k-1)$-Artin stack. The following result shows that these notions are compatible with the usual notions from derived algebraic geometry.

\begin{thmannounce}
Let $\sX_0$ be a prestack and consider $\sX:= \RKE_{(\Schaff)^{\rm op} \hra (\SchaffTate)^{\rm op}}(\sX_0)$, then
\begin{enumerate}[(i)]
    \item $\sX$ satisfies flat (resp.\ \'etale, Zariski) descent with respect to Tate affine schemes if and only if $\sX_0$ satisfies flat (resp.\ \'etale, Zariski) descent with respect to affine schemes;
    \item if $\sX_0$ is a $k$-Artin stack, then $\sX$ is a Tate $k$-Artin stack.
\end{enumerate}
\end{thmannounce}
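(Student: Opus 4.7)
The plan is to treat the two parts separately, leaning on three general principles: (a) the right Kan extension along a fully faithful embedding is itself fully faithful, so $\sX|_{(\Schaff)^{\rm op}} \simeq \sX_0$; (b) $\RKE$, being a right adjoint, commutes with all limits, in particular with fiber products and Čech totalizations; and (c) for any Tate affine scheme $S$ with presentation $S \simeq \colim_I S_i$ along closed embeddings of finite presentation, any map $T \to S$ from an affine scheme $T$ factors through some $S_i$, so the presentation is cofinal in the slice category indexing the RKE, yielding the computational formula
\[
\sX(S) \simeq \lim_{i \in I^{\rm op}} \sX_0(S_i).
\]

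For part (i), the ``only if'' direction is immediate: an affine cover in $\Schaff$ is also a Tate affine cover in $\SchaffTate$, its Čech nerve agrees in either category, and restriction along $\Schaff \hra \SchaffTate$ recovers $\sX_0$ from $\sX$, so descent for $\sX$ specializes to descent for $\sX_0$. For the ``if'' direction, use the characterization (recalled in the introduction) of étale, Zariski, and flat morphisms in $\SchaffTate$ via compatible presentations: given a cover $f : T \to S$, choose $S \simeq \colim_I S_i$ and $T \simeq \colim_I T_i$ with each $f_i : T_i \to S_i$ of the same type in $\Schaff$. Since fiber products commute with filtered colimits along closed embeddings, the Čech nerve $T^{\bullet/S}$ has the compatible termwise presentation $\colim_I T_i^{\bullet/S_i}$. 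Combining the formula in (c) with the fact that $\Tot$ commutes with the external limit over $I^{\rm op}$ and invoking descent for $\sX_0$ index by index gives
\[
\Tot\bigl(\sX(T^{\bullet/S})\bigr) \simeq \lim_i \Tot\bigl(\sX_0(T_i^{\bullet/S_i})\bigr) \simeq \lim_i \sX_0(S_i) \simeq \sX(S).
\]

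For part (ii) proceed by induction on $k$. The base $k = 0$ is settled by Yoneda: the RKE of the functor represented by an affine scheme $S_0$ along a fully faithful embedding is represented by $S_0$ viewed as a Tate affine scheme. For the inductive step, let $\sX_0$ be $k$-Artin, with smooth surjective cover $p : U \to \sX_0$ by a $(k-1)$-Artin stack and with $(k-1)$-Artin representable diagonal. Since $\RKE$ preserves limits, $\Delta_{\sX} \simeq \RKE(\Delta_{\sX_0})$; to check it is Tate $(k-1)$-representable, test against a Tate affine $T \to \sX \times \sX$ with presentation $T \simeq \colim_I T_i$. This datum is equivalent, by the formula in (c), to a compatible system of maps $T_i \to \sX_0 \times \sX_0$, and the fiber product $T \times_{\sX \times \sX} \sX$ then identifies, as a Tate stack, with the filtered colimit along closed embeddings of the $(k-1)$-Artin stacks $W_i := T_i \times_{\sX_0 \times \sX_0} \sX_0$; by induction each $\RKE(W_i)$ is Tate $(k-1)$-Artin, and the colimit inherits the same property. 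The analogous reduction shows that $\RKE(p) : \RKE(U) \to \sX$ is smooth and surjective, because both properties of a morphism of Tate stacks are detected on the affine terms of a compatible presentation of a test Tate affine, where the statement reduces to the corresponding assumption on $p$.

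The main obstacle I anticipate is the lemma required in part (ii) that filtered colimits along closed embeddings of Tate $(k-1)$-Artin stacks are again Tate $(k-1)$-Artin. This is essentially a closure property of the hierarchy under the specific colimit operations that build Tate affine schemes out of affine schemes, and it has to be verified---together with the fact that smoothness and surjectivity of a morphism of Tate stacks can be tested on a compatible affine presentation of the source---as a separate preliminary lemma before the inductive argument can be run cleanly. Granted these technical inputs, the above outline carries through.
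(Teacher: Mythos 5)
Your part (i) is correct and is essentially the paper's own argument (Lemma \ref{lem:descent-for-prestack-Tate-lp}): the paper indexes the limits over the slice category $(\Schaff_{/S})^{\rm op}$ rather than over a chosen presentation, but the substance --- the cofinality statement supplied by Proposition \ref{prop:description-of-flat-morphisms-Tate-affine}, the commutation of $\Tot$ with the outer limit, and termwise descent for $\sX_0$ --- is the same.

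Part (ii) follows the paper's skeleton (induction on $k$, $\RKE$ preserving fiber products, the atlas being the $\RKE$ of an atlas), but there are two gaps. First, the base case: a $0$-Artin stack is not a representable but a sheafified disjoint union $\overline{L}(\sqcup_I S_{0,i})$, so Yoneda alone does not settle $k=0$; you also need the compatibility $L\circ\RKE \simeq \RKE\circ\overline{L}$ (Corollary \ref{cor:sheafification-and-RKE-to-Tate-affine}, itself a consequence of part (i)) and the fact that the limit computing the $\RKE$ is taken over a connected category and therefore commutes with the coproduct. Second, and more seriously, your identification of $T\times_{\sX\times\sX}\sX$ with ``the filtered colimit of the $\RKE(W_i)$'' conflates $\colim_I\RKE(W_i)$ with $\RKE(\colim_I W_i)$. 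What limit-preservation of $\RKE$ actually yields is $T\times_{\sX\times\sX}\sX \simeq \RKE\bigl(\overline{T}\times_{\sX_0\times\sX_0}\sX_0\bigr) \simeq \RKE(\colim_I W_i)$, and $\RKE$ does not commute with filtered colimits: evaluated on a Tate affine scheme $S'\simeq\colim_J S'_j$, the two sides read $\colim_I\lim_J W_i(S'_j)$ and $\lim_J\colim_I W_i(S'_j)$, which are not interchangeable in general. Consequently the ``separate preliminary lemma'' you flag is not quite the right one: what is needed is not closure of Tate $(k-1)$-Artin stacks under filtered colimits along closed embeddings, but the statement that the \emph{right Kan extension} of such a colimit of $(k-1)$-Artin stacks is Tate $(k-1)$-Artin --- for $k=1$ this is exactly the assertion that an ind-affine scheme satisfying the Tate condition, right Kan extended to $(\SchaffTate)^{\rm op}$, recovers the Tate affine scheme it presents (Lemma \ref{lem:Tate-affine-schemes-are-locally-prestacks}). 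The paper avoids the interchange by applying the inductive hypothesis directly to $\RKE(\sX_0\times_{\sX_0\times\sX_0}\overline{S})$ without decomposing the $\RKE$ into a colimit; its own justification at that point is terse, but the formulation does not commit the $\colim$/$\RKE$ swap. Your treatment of smoothness and surjectivity of $\RKE(p)$ matches the paper's level of detail and is acceptable.
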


One of the aspects of the theory of prestacks of Tate type that differs from usual derived algebraic geometry is the notion of Tate schemes. The point here is that there are two notions that one could consider for Tate schemes: (1) ind-schemes whose closed embeddings are finitely presented and (2) Tate stacks that admit a Zariski atlas. We define the notion of a \emph{Zariski Tate stack} to be a prestack of Tate type $\sX$ that satisfy the following:
\begin{itemize}
    \item $\sX$ is an \'etale sheaf,
    \item the morphism $\sX \ra \sX \times \sX$ is representable be Tate affine scheme, and
    \item $\sX$ admits a Zariski atlas, i.e.\ there exists a surjective morphism $S \ra \sX$, where $S$ is a disjoint union of Tate affine schemes $S_a$, and $S_a \ra \sX$ is an open embedding.
\end{itemize}

Let $\SchTate$ denote the category of Tate schemes, $\StkZarTateqc$ the category of Zariski Tate stacks that admit a cover by finitely many Tate affine schemes, and for $k \geq 0$ let $\ArtinStkTaten{k}$ be the category of Tate $k$-Artin stacks. The following summarizes the relation between these geometrical objects.

\begin{thmannounce}
All the following functors are fully faithful
\[
\begin{tikzcd}
& & \SchTate \\
\SchaffTate \simeq \ArtinStkTaten{0} \ar[r,hook] & \StkZarTateqc \ar[ru,hook] \ar[rd,hook] & \\
& & \ArtinStkTaten{1}.
\end{tikzcd}
\]
\end{thmannounce}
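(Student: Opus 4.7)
The plan is to verify each of the four claims in turn. The equivalence $\SchaffTate \simeq \ArtinStkTaten{0}$ holds by the definition of a Tate $0$-Artin stack given earlier in the introduction. For each of the three remaining embeddings, full faithfulness is automatic because the source and target are both defined as full subcategories of $\PStkTate$; hence it suffices to show that every object of the source satisfies the defining conditions of the target.

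For $\SchaffTate \hookrightarrow \StkZarTateqc$: given a Tate affine scheme $S$, I would verify the three conditions for a quasi-compact Zariski Tate stack. \'Etale descent follows from the compatibility of the \'etale topology on $\SchaffTate$ with the usual \'etale topology after choosing a presentation $S \simeq \colim_i S_i$. The diagonal is representable by Tate affine schemes since for any $T \to S\times S$ the fiber product $T \times_{S \times S} S$ is Tate affine (closed embeddings of finite presentation are preserved by base change). Finally, the identity map $\id_S: S \to S$ furnishes a Zariski atlas consisting of a single Tate affine chart.

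For $\StkZarTateqc \hookrightarrow \ArtinStkTaten{1}$: the diagonal of a Zariski Tate stack is, by hypothesis, representable by Tate affine schemes, hence is Tate $0$-Artin representable. A Zariski atlas is a surjection from a finite disjoint union of Tate affine schemes, i.e.\ from a Tate $0$-Artin stack, and each component is an open embedding, hence in particular smooth. This is exactly the data needed to exhibit $\sX$ as a Tate $1$-Artin stack.

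The main obstacle is the embedding $\StkZarTateqc \hookrightarrow \SchTate$, since one must produce an actual ind-scheme presentation from a Zariski atlas. Given a Zariski Tate stack $\sX$ with finite cover $\bigsqcup_{a \in A} S_a \to \sX$, I would choose presentations $S_a \simeq \colim_{i \in I_a} S_{a,i}$ by closed embeddings of finite presentation, and then pass to a common cofinal indexing category inside $\prod_a I_a$. For each index $i$, the fiber products $S_{a,i}\times_{\sX} S_{b,i}$ are open subschemes of $S_{a,i}$ and of $S_{b,i}$, and these furnish cocycle data for gluing the finitely many $S_{a,i}$ into a scheme $X_i$ with open cover $\{S_{a,i}\}_{a}$. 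Functoriality in $i$ is automatic from the functoriality of the fiber products, and the transitions $X_i \to X_{i'}$ are closed embeddings of finite presentation because they are so on each affine chart. One then checks that $\sX \simeq \colim_i X_i$ by evaluating on a Tate affine test scheme $T$; a map $T \to \sX$ locally factors through some $S_a$ and then through some $S_{a,i}$ by the Tate affine presentation, which after descent along a suitable Zariski refinement of $T$ glues to a map to $X_i$, and conversely. The delicate point is verifying that the cocycle conditions hold at each finite level $i$ rather than only in the colimit, which is where the quasi-compactness of the cover and the compatibility of open embeddings with closed embeddings of finite presentation both get used.
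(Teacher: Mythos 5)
Your treatment of the three easy claims matches the paper's: $\ArtinStkTaten{0}$ is defined to be (sheafified disjoint unions of) Tate affine schemes, the conditions for a quasi-compact Zariski Tate stack are checked for a Tate affine scheme exactly as you indicate, and the passage from a Zariski atlas to a smooth $0$-representable atlas is the paper's argument verbatim. Full faithfulness being automatic for full subcategories of $\StkTate$ is also how the paper treats it (for the map to $\SchTate$ the paper instead observes that $\jmath(X)$ and $X$ agree as Tate stacks by construction, which amounts to the same thing).

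The gap is in the embedding $\StkZarTateqc \hookrightarrow \SchTate$, at the step where you assert that $S_{a,i}\underset{\sX}{\times}S_{b,i}$ is an open subscheme of $S_{a,i}$ and of $S_{b,i}$. This is false in general. Writing $U_{ab} := S_a\underset{\sX}{\times}S_b$ for the open overlap, the fiber product $S_{a,i}\underset{\sX}{\times}S_{b,i}$ is the intersection inside $U_{ab}$ of the two closed subschemes $S_{a,i}\cap U_{ab}$ and $S_{b,i}\cap U_{ab}$, and there is no reason for these two closed subschemes to coincide: the presentations of $S_a$ and $S_b$ were chosen independently, so the ``level-$i$'' ideals of definition need not agree on the overlap. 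Hence the overlap is only a \emph{closed} subscheme of an open subscheme of $S_{a,i}$, your cocycle data is not a gluing along opens, and the resulting $X_i$ (if it exists as a scheme at all) does not have the $S_{a,i}$ as an open cover. The same defect kills the claim that $X_i \ra X_{i'}$ is a closed embedding ``because it is so on each affine chart'': the charts are not compatible with the transition maps in the required Cartesian way. The paper avoids the first problem by defining $X_k$ as the sheafified geometric realization of the \v{C}ech nerve of $\sqcup_a S_{a,k} \ra \sX$ (a colimit that does not require the overlaps to be open) and citing \cite{GR-I}*{Chapter 2, Lemma 3.1.6} to see that this is a quasi-compact scheme; and it confronts the second problem head-on: the transition maps $f_{k,\ell}:X_k \ra X_{\ell}$ are only locally closed embeddings, so one factors them via Nagata as an open embedding followed by a closed embedding, checks the coherence of the ideal by restricting to the charts, and then replaces the diagram $\{X_k\}$ by the closures $\{\overline{X}_{k'}\}$ over a cofinal sub-poset to obtain a presentation with genuinely closed, finitely presented transition maps. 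This closure-and-reindexing step is the essential content of the theorem and is absent from your proposal.
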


At this moment we couldn't determine if the category $\SchTate$ embeds into the category $\ArtinStkTaten{1}$, as is the case for usual schemes, or if the notions of Zariski Tate stacks and Tate schemes are equivalent. We hope to return to these questions in the future.

Analogously to \cite{GR-II}*{Chapter 3} one can automatically extend the formalism of Theorem \ref{thmann:TateCoh-on-Tate-schemes} to the following $2$-category $\mbox{Corr}(\PStkTatelaft)^{\rm aff-aft-ind-proper}_{\rm aff-aft;all}$
\begin{itemize}
    \item objects are prestacks of Tate type locally almost of finite type;
    \item $1$-morphisms are correspondences, whose vertical morphisms are Tate affine schematic and almost of finite type, and whose horizontal morphisms are arbitrary;
    \item $2$-morphisms are Tate affine schematic, almost of finite type, and ind-proper.
\end{itemize}

\begin{thmannounce}[Theorem \ref{thm:TateCoh-correspondence-functor-PStk-Tate}]
There is a unique functor
\[
\TateCoh_{\mbox{Corr}(\PStkTatelaft)^{\rm aff-aft-ind-proper}_{\rm aff-aft;all}}: \mbox{Corr}(\PStkTatelaft)^{\rm aff-aft-ind-proper}_{\rm aff-aft;all} \ra \DGct
\]
extending the formalism of Theorem \ref{thmann:TateCoh-on-Tate-schemes}.
\end{thmannounce}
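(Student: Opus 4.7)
The plan is to apply the general extension machinery of Gaitsgory--Rozenblyum (\cite{GR-I}, Vol.~I, Chap.~III, as employed in \cite{GR-II}, Chap.~3, \S5) that upgrades a formalism on correspondences of ``geometric'' objects to correspondences of prestacks, provided the vertical morphisms in the target are representable by the geometric objects. In our setting the geometric objects are Tate schemes locally almost of finite type, and the extension must accept Tate-affine-schematic almost-of-finite-type vertical morphisms. The input is precisely Theorem \ref{thmann:TateCoh-on-Tate-schemes}, which supplies the formalism on $\Corr(\SchTatelaft)^{\rm ind-proper}_{\rm all; all}$.

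First I would right Kan extend the $!$-pullback functor
\[
\TateCoh^!: (\SchTatelaft)^{\rm op} \ra \DGct
\]
(obtained by restricting the functor of Theorem \ref{thmann:TateCoh-on-Tate-schemes} to the $!$-pullback direction) along the embedding $(\SchTatelaft)^{\rm op} \hra (\PStkTatelaft)^{\rm op}$, setting
\[
\TateCoh^!(\sX) := \lim_{(S \ra \sX) \in ((\SchaffTateaft)_{/\sX})^{\rm op}} \TateCoh(S).
\]
This defines the category $\TateCoh(\sX)$ for any prestack of Tate type locally almost of finite type and produces $f^!$ for arbitrary morphisms of such prestacks. Next, for a Tate-affine-schematic almost-of-finite-type morphism $f: \sX \ra \sY$, I would construct $f^{\rm Tate}_*$ by descent: for any test $T \ra \sY$ with $T \in \SchaffTateaft$, the base-changed morphism $f_T: \sX \times_\sY T \ra T$ is an honest morphism of Tate schemes almost of finite type, so Theorem \ref{thmann:TateCoh-on-Tate-schemes} applies; the compatibility of these pushforwards under further pullbacks (which follows from base change for Tate schemes) assembles them into a single functor out of $\TateCoh(\sX) \simeq \lim_T \TateCoh(\sX \times_\sY T)$.

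To assemble $f^!$ and $f^{\rm Tate}_*$ into a $2$-functor on the correspondence category, I would invoke the GR extension theorem in the form: given a formalism on $\Corr(\mathbf{C})^{\rm vert}_{\rm horiz}$ satisfying Beck--Chevalley along $\rm vert$, it extends uniquely to $\Corr(\PStk(\mathbf{C}))^{\mathbf{C}{\rm -vert}}_{\rm all}$ with $2$-morphisms in the same class, and the $2$-morphism structure (adjunctions for ind-proper maps) descends from the formalism on $\mathbf{C}$. Here $\mathbf{C} = \SchTatelaft$ (with ``vert'' being Tate-affine-schematic aft morphisms and $2$-morphisms being ind-proper among these), and $\PStk(\mathbf{C}) = \PStkTatelaft$. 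The uniqueness is automatic from the universal property of the Kan extensions involved.

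The main technical obstacle, and the only thing that is not purely formal, is verifying that the base-change isomorphisms required by the GR extension theorem hold at the level of prestacks. Concretely, given a Cartesian square of prestacks of Tate type
\[
\begin{tikzcd}
\sX' \ar[r,"g'"] \ar[d,"f'"'] & \sX \ar[d,"f"] \\
\sY' \ar[r,"g"] & \sY
\end{tikzcd}
\]
with $f$ Tate-affine-schematic aft (in fact ind-proper, for the nontrivial Beck--Chevalley), I need to check that the natural map $g^! f^{\rm Tate}_* \ra f'^{\rm Tate}_* g'^!$ is an isomorphism. Using the limit presentation of $\TateCoh^!(\sY)$ over test Tate affine schemes $T \ra \sY$ and the schematicity of $f$, this reduces, square by square, to the base change already proven on $\SchTatelaft$ in Theorem \ref{thmann:TateCoh-on-Tate-schemes}. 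Once this verification is in place, the functor $\TateCoh_{\Corr(\PStkTatelaft)^{\rm aff-aft-ind-proper}_{\rm aff-aft;all}}$ is produced and characterized uniquely by the extension theorem.
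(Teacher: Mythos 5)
Your proposal is correct and follows essentially the same route as the paper: the paper restricts the formalism of Theorem \ref{thm:TateCoh-correspondence-functor-Sch-Tate} to correspondences of Tate affine schemes almost of finite type and then applies the Gaitsgory--Rozenblyum extension theorem (\cite{GR-I}, Chapter 8, Theorem 6.1.5), whose content is exactly the right Kan extension of $\TateCoh^!$ over $(\SchaffTateaft)_{/\sX}$, the descent construction of $f^{\rm Tate}_*$ for Tate affine schematic aft morphisms, and the reduction of base change to the schematic case via the equivalence $(\SchaffTateaft)_{/S} \simeq (\PStkTatelaft)_{\rm aff\text{-}aft\;in\;S}$. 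The only cosmetic difference is that the paper feeds the affine correspondence category into the extension theorem rather than all of $\Corr(\SchTatelaft)$, which is precisely what makes the prestack-level Beck--Chevalley verification formal rather than a separate check.
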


\subsubsection{Dimension and determinantal theories}

One application of the theory of Tate-coherent sheaves is that it gives a natural definition of a canonical $\Gm$-gerbe on any Tate scheme $S$. For $p:S \ra \Spec(k)$ the structure morphism of a Tate scheme locally almost of finite type we define the dualizing Tate-coherent sheaf as
\[
p^!(k) \in \TateCoh(S),
\]
where we consider $k$ as an object of $\Tate(k) \simeq \TateCoh(\Spec(k))$. By \cite{determinant-map}*{Theorem A} we have a morphism
\[
\sD: \sTate(S) \ra \sBPic(S),
\]
which can be seen as an index map. Thus the natural $\Gm$-gerbe on $S$ is defined as
\[
\omega^{(1)}_S = \sD(p^!(k)).
\]

In the case that $S$ is formally smooth and has a presentation
\[
S \simeq \colim_I S_i,
\]
where each $S_i$ is smooth, we have the following

\begin{thmannounce}[\ref{thm:trivialization-is-determinantal-theory}]
\label{thmann:determinantal-theory}
The data of a determinantal theory\footnote{We recall that a \emph{determinantal theory} (see \cite{Previdi}) on $S \simeq \colim_I S_i$ is the data of:
\begin{itemize}
    \item a line bundle $\sL_i \in \sPic(S_i)$ for every $i \in I$ and 
    \item an isomorphism
    \[
    \tau_{i,j}: f^*_{j,i}(\sL_{i})\otimes (\det(T^*(S_j/S_i)))^{-1} \overset{\simeq}{\ra}\sL_j
    \]
    for every $j \ra i$, where $f_{j,i}:S_j \hra S_i$ is the canonical inclusion.
\end{itemize}
} on $S$ is equivalent to a trivialization of the $\Gm$-gerbe
\[
\sD^{(1)}(p^!(k)) \in \sB\sPic(S).
\]
\end{thmannounce}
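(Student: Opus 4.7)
The plan is to reduce the statement to compatible families on the smooth schemes $S_i$. By Theorem \ref{thmann:TateCoh-on-Tate-schemes} we have $\TateCoh(S) \simeq \lim_{I^{\rm op}} \TateCoh(S_i)$, so $p^!(k)$ corresponds to the compatible family $(p_i^!(k))_{i \in I}$, where $p_i : S_i \to \Spec(k)$. Each $S_i$ being smooth of some dimension $d_i$, we have $p_i^!(k) \simeq \omega_{S_i}[d_i]$, a shift of an honest line bundle in $\Coh(S_i)$. In parallel, flat descent for $\sBPic$ gives $\sBPic(S) \simeq \lim_{I^{\rm op}} \sBPic(S_i)$, so that a trivialization of $\sD^{(1)}(p^!(k))$ is the same datum as a compatible family of trivializations $\tilde\sigma_i$ of $\sD^{(1)}(p_i^!(k)) \in \sBPic(S_i)$.

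On each smooth $S_i$, the line bundle $\omega_{S_i}$ determines a canonical trivialization $\sigma_i$ of $\sD^{(1)}(p_i^!(k))$, via the factorization of $\sD^{(1)}$ through $\sPic$ on (shifted) line bundles. Since trivializations of a $\Gm$-gerbe form a torsor over $\sPic$, every $\tilde\sigma_i$ is uniquely of the form $\sigma_i \otimes \sL_i$ for a line bundle $\sL_i \in \sPic(S_i)$. Thus the data of a family $(\tilde\sigma_i)$ is equivalent to the data of a family of line bundles $(\sL_i)_{i \in I}$, and it remains to match the compatibility conditions across transitions.

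For $j \to i$ in $I$, the standard comparison for $!$-pullback along the closed embedding $f_{j,i}: S_j \hra S_i$ of smooth schemes gives a canonical isomorphism relating $f^!_{j,i}(\omega_{S_i}[d_i])$ and $\omega_{S_j}[d_j]$, with discrepancy controlled by $\det T^*(S_j/S_i)$. Applying $\sD^{(1)}$, which is multiplicative in exact triangles and insensitive to shifts when landing in $\sBPic$, the difference between $f^*_{j,i}(\sigma_i)$ and $\sigma_j$ is precisely the class of $\det T^*(S_j/S_i) \in \sPic(S_j)$. Hence the requirement that the family $(\tilde\sigma_i)$ glue to a trivialization over $S$ translates into the existence of isomorphisms
\[
\tau_{i,j}: f^*_{j,i}(\sL_i) \otimes (\det T^*(S_j/S_i))^{-1} \overset{\simeq}{\ra} \sL_j,
\]
and the higher coherences in $\lim_{I^{\rm op}} \sBPic(S_i)$ match those implicit in the definition of a determinantal theory.

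The principal technical obstacle is the careful identification in the third step: one must establish, with the correct signs and shifts, the compatibility of $\sD^{(1)}$ with $!$-pullback along closed embeddings of smooth schemes, so that the discrepancy between the two canonical trivializations is exactly the class of $\det T^*(S_j/S_i)$. This reduces to Grothendieck duality for regular closed embeddings combined with the multiplicativity of $\sD^{(1)}$ provided by \cite{determinant-map}*{Theorem A}; once in hand, the equivalence between the two groupoids is formal from the two descent statements above.
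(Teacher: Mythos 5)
Your proposal is correct and follows essentially the same route as the paper's proof: pull everything back to the smooth pieces $S_i$ (equivalently, use the limit descriptions of $\TateCoh(S)$ and $\sBPic(S)$), identify $p_i^!(k)$ with a perfect complex on which $\sD^{(1)}$ is canonically trivial (the paper's Lemmas \ref{lem:computation-of-Psi-of-shriek-pullback-on-good-schemes} and \ref{lem:trivialization-of-determinant-1-on-perfect-complexes}), extract $\sL_i$ from the resulting automorphism of $\sPic(S_i)$, and compute the transition discrepancy via Grothendieck's formula for the regular closed embeddings $f_{j,i}$. The only cosmetic difference is that the paper works with the shifted cotangent complex $\overline{T}^*(S_i)[\dim S_i]$ rather than its determinant $\omega_{S_i}[d_i]$, which changes nothing in the argument.
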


Similarly to Theorem \ref{thmann:determinantal-theory}, we obtain the notion of a dimension theory for a Tate scheme $S$, if instead of $\sD$ we consider the composite morphism $\sTate \ra \sB\sPic^{\rm gr} \ra \sB \bZ$.

Though the trivialization of the $\Gm$-gerbe $\sD(p^!(k))$ carries interesting information to perform the renormalization of certain sheaf categories and corresponding cohomologies see \cite{Raskin-homological} and \cite{Raskin-D-modules} for discussion, there are advantages to considering the $\Gm$-gerbe itself. For instance, in the case that we have a Tate object $V$ over a commutative ring $R$, the $\Gm$-gerbe $\sD^{(1)}(V)$ can be described as an appropriate category of modules for the Clifford algebra $\Cl(V \oplus V^{\vee})$, without the choice of a trivialization (see \cite{BBE}*{\S } for an account of these ideas). We hope to establish a similar result in families that would describe the $\Gm$-gerbe $\sD(p^!(k))$ without any choice of trivialization.

\subsubsection{Deformation theory}

\textbf{Disclamer:} At the moment section \ref{sec:deformation-theory} is written in a restricted set up and is more complicated than it has to be. A future version of this preprint will rework it.

One of the reasons that we want to study prestacks of Tate type is that we would like to have the ability to consider square-zero extension of a point on a prestack $x:S \ra \sX$ by a Tate sheaf on $S$. At the moment we developed a deformation theory, where we consider split square-zero extension of $S$ by an object of $\sF \in \TateCoh(S)$. 

The first result in the study of deformation theory for prestacks of Tate type is that for a Tate affine scheme $S$ locally almost of finite type and $\sF \in \TateCoh(S)$, there exists a well-behaved notion of split square-zero extension.

\begin{thmannounce}[Proposition \ref{prop:RealSplitSqZ-Tate-affine-schemes}]
\label{thmann:split-square-zero-extension-for-Tate-affine}
There exists a functor
\[
\RealSplitSqZ: \TateCoh(S)^{\leq 0} \ra (\SchaffTate)_{/S}.
\]
Moreover it is functorial with respect to push-forward via $f:S \ra T$ a morphism of Tate affine schemes.
\end{thmannounce}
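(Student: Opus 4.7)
The plan is to produce the functor by exploiting that $\Spec$ of a split square-zero extension converts a surjection of modules with coherent kernel into a finitely presented closed embedding of affine schemes, which is exactly the type of transition map out of which a Tate affine scheme is built. I proceed in two stages: first for an ordinary affine base, then globalizing to a Tate affine base.

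For the first stage, let $S_0 = \Spec(A)$ be an affine scheme almost of finite type and $\sF \in \TateCoh(S_0)^{\leq 0}$. By definition $\sF$ fits in a fibre sequence $\sF^- \ra \sF \ra \sF^+$ with $\sF^- \in \Pro(\Coh(S_0))$ and $\sF^+ \in \IndCoh(S_0)$. Choose a presentation $\sF^- \simeq \lim_I \sF^-_I$ with $\sF^-_I \in \Coh(S_0)^{\leq 0}$ and surjective transition maps of coherent kernel, and define $\sF_I \in \IndCoh(S_0)^{\leq 0}$ as the cofiber of $\mathrm{fib}(\sF^- \ra \sF^-_I) \ra \sF$, so that $\sF \simeq \lim_I \sF_I$ with transition maps that are again surjections of coherent kernel. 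Then for each $I$ the derived affine scheme $\Spec(A \oplus \sF_I)$ belongs to $\Schaff$, and every surjection $\sF_J \sra \sF_I$ gives a finitely presented closed embedding $\Spec(A \oplus \sF_I) \hra \Spec(A \oplus \sF_J)$; setting
\[
\RealSplitSqZ_{S_0}(\sF) := \colim_I \Spec(A \oplus \sF_I)
\]
produces a Tate affine scheme over $S_0$, and one checks that the result is canonically independent of the chosen lattice.

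For the second stage, given $S \simeq \colim_i S_i$ in $\SchaffTate$ and $\sF \in \TateCoh(S)^{\leq 0}$, Theorem \ref{thmann:TateCoh-on-Tate-schemes} provides compatible restrictions $\sF_i := \sF|_{S_i} \in \TateCoh(S_i)^{\leq 0}$. Applying the first stage yields Tate affine schemes $\RealSplitSqZ_{S_i}(\sF_i)$ over each $S_i$, and base-change compatibility along the closed embeddings $S_i \hra S_j$ gives finitely presented closed embeddings between them, whose colimit over $i$ defines $\RealSplitSqZ_S(\sF)$. Functoriality under a morphism $f: S \ra T$ then follows because $f^{\rm Tate}_*$ preserves the class of pro-presentations used above, so a presentation of $\sF$ induces a compatible one for $f^{\rm Tate}_* \sF$ whose associated $\Spec$s assemble to $f$ post-composed with $\RealSplitSqZ_S(\sF)$. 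The main obstacle will be verifying independence of the lattice in Stage 1, so as to obtain a genuine functor out of $\TateCoh(S_0)^{\leq 0}$ rather than merely out of a category of presentations; this amounts to showing that any two lattice choices admit a common refinement producing cofinal colimit diagrams in $\SchaffTate$, and that morphisms in $\TateCoh(S_0)^{\leq 0}$ lift coherently to morphisms between presentations. This is presumably also the source of the disclaimer's ``more complicated than it has to be'' remark, since a cleaner approach would characterize $\RealSplitSqZ$ abstractly through a universal property against augmented $\sO_S$-algebras and extract the Tate-affine structure from that, bypassing the explicit lattice manipulations.
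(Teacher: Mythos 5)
Your overall architecture (affine case first, then glue over a presentation of the Tate affine base) matches the paper's, but two of your steps have genuine gaps and a third conceals the actual difficulty.

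First, in Stage 1 you assume that the Pro-part $\sF^-$ of an arbitrary $\sF\in\TateCoh(S_0)^{\leq 0}$ admits a presentation $\lim_I\sF^-_I$ whose transition maps are surjective with coherent kernel. Surjectivity on $\H^0$ is exactly what makes $\Spec(A\oplus\sF_I)\hra\Spec(A\oplus\sF_J)$ a closed embedding, and it is not automatic: a Pro-object such as $\lim(\cdots\ra A\overset{t}{\ra}A)$ with $t$ a non-unit lies in $\Pro(\Coh(S_0))^{\leq 0}$ but admits no such presentation. This is precisely why the paper introduces the subcategory $\TateCoha(S)^{\leq 0}$ of \emph{admissible} objects (see \S\ref{par:defn-of-admissible-Tate-Coh}) and only verifies the factorization through $(\SchaffTate)_{S/}$ on that subcategory; your construction needs the same restriction, and as written the claim for all of $\TateCoh(S_0)^{\leq 0}$ fails.

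Second, the ``independence of the lattice'' step that you defer is the real content of the affine case, and resolving it by cofinality of common refinements would also require producing all higher coherences making $\RealSplitSqZ$ a functor of $\infty$-categories — essentially re-proving that Ind-extension is well defined. The paper sidesteps this entirely: it uses $(\ProIndCoh(S)^{\leq 0})^{\rm op}\simeq\Ind((\IndCoh(S)^{\leq 0})^{\rm op})$ and defines the functor as the Ind-extension (a Kan extension) of the classical split square-zero functor on $\IndCoh(S)^{\leq 0}$, so no presentation is ever chosen and there is nothing to check; your closing guess about the source of the paper's complexity is therefore off. Third, your functoriality argument asserts that $f^{\rm Tate}_*$ ``preserves the class of pro-presentations,'' but for a morphism that is not proper $f^{\rm Tate}_*$ is defined as a right adjoint to $f^!$, not as a Pro- or limit-extension of $f^{\rm Ind}_*$, so this is not formal: the paper needs the separate compatibility statement of Lemma \ref{lem:compatibility-TateCoh-pushforward-with-IndCoh-pushforward-for-open-morphisms} and a case analysis over the open/proper factorization to establish Lemma \ref{lem:RealSplitSqZ-of-TateCoh-for-Tate-schemes-is-functorial}. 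A smaller point: the restrictions $\imath_i^!\sF$ in your Stage 2 are only left t-exact pullbacks, so they need not remain in degrees $\leq 0$; one must insert a truncation, as in Corollary \ref{cor:colimits-and-limits-presentation-of-TateCoha}.
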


Using Theorem \ref{thmann:split-square-zero-extension-for-Tate-affine} we define what it means for a prestack of Tate type $\sX$ locally almost of finite type to admit a pro-cotangent space at a point $x:S \ra \sX$, where $S$ is a Tate affine scheme locally almost of finite type. 

One of the main results is that in the case where $\sX$ is locally a prestack and convergent, the pro-cotangent space that we consider coincides with the pro-cotangent space defined in Gaitsgory--Rozenblyum at points given by affine schemes.

\begin{thmannounce}
\label{thmann:cotangent-space-at-a-point}
For $\sX$ a convergent prestack of Tate type, which is locally given by the prestack $\sX_0$. For $x:S_0 \ra \sX$ a point, where $S_{0}$ is an affine scheme, then we have an equivalence
\[
T^*_{x}\sX \simeq T^*_{x}\sX_0.
\]
\end{thmannounce}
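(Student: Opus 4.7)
The plan is to unwind both sides of the claimed equivalence from their definitions and then invoke the universal property of the right Kan extension $\sX \simeq \RKE_{(\Schaff)^{\rm op}\hookrightarrow(\SchaffTate)^{\rm op}}(\sX_0)$. By the definition of pro-cotangent space for a prestack of Tate type combined with Theorem \ref{thmann:split-square-zero-extension-for-Tate-affine}, the cotangent $T^*_x\sX$ is the functor on $\TateCoh(S_0)^{\leq 0}$ sending $\sF$ to $\Fib_x\bigl(\sX(\RealSplitSqZ_{S_0}(\sF))\to \sX(S_0)\bigr)$, whereas the Gaitsgory--Rozenblyum cotangent $T^*_x\sX_0$ is the analogous functor on $\IndCoh(S_0)^{\leq 0}$ built from the ordinary split square-zero extension inside $\Schaff$.

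The first step is to settle the comparison on the common subcategory $\IndCoh(S_0)^{\leq 0}\hookrightarrow \TateCoh(S_0)^{\leq 0}$. For such an $\sF$, the Tate construction $\RealSplitSqZ_{S_0}(\sF)$ reduces to the classical split square-zero extension, which is already an object of $\Schaff$. Because $(\Schaff)^{\rm op}\hookrightarrow(\SchaffTate)^{\rm op}$ is fully faithful, the RKE formula degenerates to $\sX(T)\simeq \sX_0(T)$ for $T\in \Schaff$. Applying this to $T = \RealSplitSqZ_{S_0}(\sF)$ and passing to fibers over $x$ yields the desired equivalence on $\IndCoh(S_0)^{\leq 0}$. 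Convergence of $\sX$ implies convergence of $\sX_0$, so both formalisms can be evaluated on $n$-coconnective truncations of $S_0$ compatibly.

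The main work is to propagate this to all of $\TateCoh(S_0)^{\leq 0}$. For a genuine Tate sheaf $\sF$, the space $\RealSplitSqZ_{S_0}(\sF)$ is only a Tate affine scheme, and $\sX(\RealSplitSqZ_{S_0}(\sF))$ must be computed through the RKE formula as a limit over affine schemes mapping to it. Using a Tate presentation $\sF \simeq (\sF^-,\sF^+)$ with $\sF^+\in\IndCoh(S_0)$ and $\sF^-\simeq \lim_j \sF^-_j$ in $\Pro(\Coh(S_0))$, I would check that $\RealSplitSqZ_{S_0}(\sF)$ admits a presentation by the filtered diagram of ordinary split square-zero extensions $\RealSplitSqZ_{S_0}(\sF^-_j\oplus \sF^+)\in \Schaff$, so that the RKE limit defining $\sX(\RealSplitSqZ_{S_0}(\sF))$ collapses to the corresponding limit of $\sX_0$, and conclude that the Tate cotangent is the canonical pro-extension of the Gaitsgory--Rozenblyum one. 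The hardest step is precisely this compatibility of $\RealSplitSqZ_{S_0}$ with the Pro-structure of $\sF^-$: one has to verify that the Tate-type extension of Theorem \ref{thmann:split-square-zero-extension-for-Tate-affine} is literally the pro-extension of the classical construction, after which the comparison of pro-cotangent spaces follows formally from the RKE presentation of $\sX$.
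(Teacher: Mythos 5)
Your proposal follows essentially the same route as the paper's proof (Propositions \ref{prop:pro-cotangent-of-a-convergent-prestack-Tate-lp-agrees-with-pro-cotangent-of-the-prestack} and \ref{prop:second-isomorphism-pro-cotangent-spaces}): first identify the two lift functors on $\IndCoh(S_0)^{\leq 0}$ using convergence and the $(\Xi_{S_0},\Psi_{S_0})$-adjunction, then extend to all Tate-coherent $\sF$ by writing $\RealSplitSqZ_{S_0}(\sF)$ as a filtered colimit of ordinary affine split square-zero extensions and collapsing the RKE limit via the locally-a-prestack condition (Proposition \ref{prop:RealSplitSqZ-Tate-affine-schemes} and Lemma \ref{lem:presentation-of-square-extension-of-Tate-affine-scheme} in the paper). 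The only imprecision is your direct-sum decomposition $\sF\simeq(\sF^-,\sF^+)$: a Tate object is an \emph{extension} of an Ind-object by a Pro-object, not a direct sum, so you should instead use the defining cofiltered presentation $\sF\simeq\lim_I\sF_i$ with $\sF_i\in\IndCoh(S_0)^{\leq 0}$ and coherent fibers, which serves the same purpose without loss.
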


We also develop the theory of pro-cotangent complexes for prestacks of Tate type locally almost of finite type. However, because of our initial choice to consider square-zero extension by objects of Tate-coherent sheaves, we need to restrict to convergent prestacks of Tate type. This restriction will be lifted when we rewrite Section 6. Similarly to Theorem \ref{thmann:cotangent-space-at-a-point}, we have a result that says that the existence of (pro-)cotangent complexes for a prestack guarantees the existence of its (pro-)cotangent complex when seen as a prestack of Tate type.

\begin{thmannounce}
For $\sX_0$ a prestack and $\sX:= \RKE_{(\Schaff)^{\rm op} \hra (\SchaffTate)^{\rm op}}(\sX_0)$ if $T^*\sX_0$ exists then $T^*\sX$ exists.
\end{thmannounce}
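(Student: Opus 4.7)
The plan is to construct a pro-cotangent space $T^*_x\sX$ at each point $x : S \to \sX$ (with $S \in \SchaffTate$ locally almost of finite type) from the cotangent complexes of $\sX_0$ at the affine points obtained by unwinding the right Kan extension, and then verify the defining universal property against split square-zero extensions by objects of $\TateCoh(S)$.

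\textbf{Unwinding the RKE.} A point $x : S \to \sX$ corresponds by the universal property of right Kan extension to a compatible family $\{x_\alpha : S_\alpha \to \sX_0\}$ indexed by $(S_\alpha \to S) \in (\Schaff)_{/S}$. Since $T^*\sX_0$ exists by hypothesis, each $x_\alpha$ carries a pro-cotangent space $T^*_{x_\alpha}\sX_0$ satisfying the usual universal property against split square-zero extensions of $S_\alpha$ by connective ind-coherent sheaves.

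\textbf{Assembly.} Choose a presentation $S \simeq \colim_I S_i$ by closed embeddings of finite presentation. The collection $\{T^*_{x_i}\sX_0\}_{i \in I}$ is compatible under $!$-pullbacks along $f_{j,i}: S_j \hookrightarrow S_i$ by naturality of the pro-cotangent complex in the Gaitsgory--Rozenblyum formalism, so it defines a candidate
\[
T^*_x\sX \in \TateCoh(S)^{\leq 0} \simeq \lim_{I^{\rm op}} \TateCoh(S_i)^{\leq 0}
\]
via Theorem \ref{thmann:TateCoh-on-Tate-schemes}. Independence of the presentation and functoriality in $x$ follow from the cofinality of any such presentation inside $(\Schaff)_{/S}$.

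\textbf{Universal property.} For $\sF \in \TateCoh(S)^{\leq 0}$ one must identify
\[
\Maps_{\TateCoh(S)}(T^*_x\sX, \sF) \simeq \mathrm{fib}_x\bigl(\sX(\RealSplitSqZ_S(\sF)) \to \sX(S)\bigr).
\]
By Theorem \ref{thmann:split-square-zero-extension-for-Tate-affine}, $\RealSplitSqZ_S(\sF)$ is a Tate affine scheme whose formation is compatible with pushforward along $S_i \hookrightarrow S$; plugging this into $\sX = \RKE(\sX_0)$ rewrites the right-hand side as a limit, over affine $T_0 \to \RealSplitSqZ_S(\sF)$, of spaces $\sX_0(T_0)$. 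Using the existence of $T^*\sX_0$ on each $T_0$ together with the functoriality of $\RealSplitSqZ$, this becomes a limit of mapping spaces out of the $T^*_{x_i}\sX_0$, which recombines via the limit description of $\TateCoh(S)$ into the left-hand side.

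\textbf{Main obstacle.} The principal difficulty is the assembly step: one must check that the compatible system $\{T^*_{x_i}\sX_0\}$, which a priori only lives in $\Pro(\IndCoh(S_i))$, actually lies in the Tate-coherent subcategory and that the transition maps along $f_{j,i}$ are exactly those required by the limit description of $\TateCoh(S)$. A related delicacy in the third step is that when the pro-coherent component of $\sF$ is nontrivial, $\RealSplitSqZ_S(\sF)$ is a genuinely Tate (not affine) scheme, so the reduction to affine pieces must be carried out compatibly with the Tate structure, for which the pushforward-compatibility clause of Theorem \ref{thmann:split-square-zero-extension-for-Tate-affine} is the essential input.
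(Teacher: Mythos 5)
Your outline follows the same route as the paper's proof: unwind the right Kan extension over a presentation $S \simeq \colim_I S_i$, assemble the cotangent data of $\sX_0$ along the $S_i$, and verify the lifting property against $\RealSplitSqZ_S(\sF)$ using its compatibility with pushforward. However, the assembly step as you state it is wrong, not merely delicate. The hypothesis is only that the \emph{pro}-cotangent complex $T^*\sX_0$ exists, so each $T^*_{x_i}\sX_0$ is a priori just an object of $\Pro(\QCoh(S_i)^-)$ (in the Gaitsgory--Rozenblyum sense); it need not be corepresentable, need not be connective, and the fibers of the transition maps need not be coherent. Consequently the assembled object lives only in $\Pro(\TateCoh(S)^-)$ — equivalently, in the ``locally prestack'' subcategory identified with $\ProIndCoh(S)$ — and \emph{not} in $\TateCoh(S)^{\leq 0}$. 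Landing in $\TateCoh(S)$ is exactly the content of Proposition \ref{prop:prestack-of-Tate-type-has-Tate-coherent-space} type statements (Corollary \ref{cor:condition-for-pro-contangent-space-to-be-Tate-coherent}), which require the additional hypotheses that $T^*\sX_0$ be an eventually connective genuine (co)representable cotangent complex; these are not implied by the statement you are proving. Your ``main obstacle'' paragraph treats this as a verification to be carried out, but in general it fails, and the correct fix is to weaken the target category.

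Two further points you elide. First, the comparison between $T^*_{x_\alpha}\sX_0 \in \Pro(\QCoh(S_\alpha)^-)$ and the objects needed on the Tate side goes through the functor $\Pro(\Xi_{S_\alpha})$ (Propositions \ref{prop:pro-cotangent-of-a-convergent-prestack-Tate-lp-agrees-with-pro-cotangent-of-the-prestack} and \ref{prop:second-isomorphism-pro-cotangent-spaces}), and this, together with the $*$-pullback functoriality built into Definition \ref{defn:pro-cotangent-complex-Tate}, forces a reduction to eventually coconnective test schemes, i.e.\ to the convergent setting; you never invoke convergence. Second, your transition maps are $!$-pullbacks while the functoriality defining the pro-cotangent complex is via $f^{\rm Tate,*}$; the identification of the two is again a feature of the eventually coconnective reduction (as used in the proof of Proposition \ref{prop:pro-cotangent-space-on-sX-from-sX_0}) and needs to be stated, not assumed.
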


Finally, we also study some of the usual questions that one can address with deformation theory. In particular, we prove the following result that allows one to use the cotangent complex of prestacks of Tate type to detect if a morphism that induces an isomorphism at the classical level is actually an isomorphism of prestacks of Tate type.

\begin{thmannounce}
Let $\sX_1$ and $\sX_2$ be prestacks of Tate type that admit deformation theory, and assume that we have a morphism $f:\sX_1 \ra \sX_2$ such that
\[
\classical{f}: \classical{\sX_1} \ra \classical{\sX_2}
\]
is an isomorphism of the underlying classical prestacks of Tate type. Moreover, suppose that the canonical map
\[
f^{*}T^*\sX_2 \ra T^*\sX_1
\]
is an isomorphism. Then the map $f$ is an isomorphism of prestacks of Tate type.
\end{thmannounce}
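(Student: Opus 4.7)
The plan is to show that $f$ induces an equivalence $\sX_1(S) \to \sX_2(S)$ for every Tate affine scheme $S$ locally almost of finite type; this suffices because admitting deformation theory implies local presentability of the $\sX_i$ by Tate affine schemes laft. Since the framework of Section \ref{sec:deformation-theory} works with convergent prestacks of Tate type, we use convergence to further reduce to checking the statement for $n$-coconnective Tate affine schemes $S$ laft, and proceed by induction on $n$. The base case $n = 0$ is precisely the hypothesis that $\classical{f}$ is an isomorphism.

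For the inductive step, fix an $n$-coconnective Tate affine scheme $S$ laft, let $\iota: {^{\leq n-1}}S \hra S$ denote its $(n-1)$-coconnective truncation, and consider the Postnikov-style decomposition of $S$ as a split square-zero extension of ${^{\leq n-1}}S$ by $\pi_n(\sO_S)[n]$, realized through the functor $\RealSplitSqZ$ of Theorem \ref{thmann:split-square-zero-extension-for-Tate-affine}. A point $x: S \ra \sX_2$ is then equivalent to the datum of a point $x_0 := x \circ \iota$ in $\sX_2({^{\leq n-1}}S)$ together with a lift along this split square-zero extension, and by the defining property of the pro-cotangent complex the space of such lifts is computed by
\[
\Maps_{\TateCoh({^{\leq n-1}}S)}\bigl(x_0^* T^*\sX_2,\, \pi_n(\sO_S)[n+1]\bigr),
\]
with the analogous expression for $\sX_1$.

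By the inductive hypothesis, the map $\sX_1({^{\leq n-1}}S) \ra \sX_2({^{\leq n-1}}S)$ induced by $f$ is an equivalence, so $x_0$ lifts uniquely to a point $y_0 \in \sX_1({^{\leq n-1}}S)$ with $f \circ y_0 \simeq x_0$. The hypothesis that $f^*T^*\sX_2 \ra T^*\sX_1$ is an equivalence, pulled back along $y_0$, provides an equivalence $y_0^*T^*\sX_1 \simeq x_0^*T^*\sX_2$ in $\TateCoh({^{\leq n-1}}S)$, so the lifting spaces for $\sX_1$ at $y_0$ and for $\sX_2$ at $x_0$ are canonically identified. Combining these two identifications yields the desired equivalence $\sX_1(S) \ra \sX_2(S)$, completing the induction.

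The main obstacle I expect is the Tate version of the Postnikov decomposition: verifying that every $n$-coconnective Tate affine scheme laft arises as a split square-zero extension of its $(n-1)$-truncation by a Tate-coherent sheaf, in a manner that is functorial and that identifies $\pi_n(\sO_S)$ with a genuine admissible Tate object on ${^{\leq n-1}}S$. This is a Tate enhancement of the classical Postnikov tower of derived affine schemes, and requires a careful analysis of the compatibility of the $\RealSplitSqZ$ construction with truncations in a presentation $S \simeq \colim_I S_i$. Once this decomposition is established, together with the formal identification of the space of lifts with the indicated $\Maps$ space in $\TateCoh$ afforded by the definition of the pro-cotangent complex in Section \ref{sec:deformation-theory}, the rest of the argument is the formal induction sketched above.
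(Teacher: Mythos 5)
Your overall strategy --- reduce by convergence to $n$-coconnective test objects, induct on $n$ using the square-zero description of $S$ over its truncation, and compare the two lifting spaces via the hypothesis on cotangent complexes --- is the same as the paper's, which runs exactly this induction through Proposition \ref{prop:derived-affine-Tate-as-square-zero-extensions-of-non-derived}. However, there is a genuine error in how you set up the inductive step. An $n$-coconnective Tate affine scheme is \emph{not} in general a \emph{split} square-zero extension of its $(n-1)$-truncation: Proposition \ref{prop:derived-affine-Tate-as-square-zero-extensions-of-non-derived}(b) identifies it with a generally non-split square-zero extension, classified by a map $\gamma: T^*({^{\leq n-1}S}) \ra \pi_n(\sO_S)[n+1]$ and produced by $\RealSqZ$, not $\RealSplitSqZ$. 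Consequently the space of lifts of $x_0$ along ${^{\leq n-1}S} \hra S$ is not $\Maps_{\TateCoh({^{\leq n-1}S})}(x_0^*T^*\sX_2,\pi_n(\sO_S)[n+1])$; it is the space of null-homotopies of the composite $T^*_{x_0}\sX_2 \ra T^*({^{\leq n-1}S}) \overset{\gamma}{\ra} \pi_n(\sO_S)[n+1]$, as in (\ref{eq:null-homotopy-description}). That identification is precisely what \emph{infinitesimal cohesiveness} provides, and this is the half of the ``admits deformation theory'' hypothesis your argument never invokes: the defining property of the pro-cotangent complex alone only governs split extensions.

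The argument is repairable along the lines you sketch: with the null-homotopy description in hand, the two lifting spaces (each either empty or a torsor over the same mapping space) are identified as soon as $T^*_{x_0}\sX_2 \ra T^*_{y_0}\sX_1$ is an isomorphism compatibly with the codifferentials to $T^*({^{\leq n-1}S})$, which holds because $x_0 = f\circ y_0$. Note that the paper, working under the weaker hypothesis that the cotangent comparison is an isomorphism only over a pseudo-nilpotent locus, must additionally pass through a conservativity statement (Lemma \ref{lem:restriction-to-nilpotent-is-conservative}); under your stated hypothesis that $f^*T^*\sX_2 \ra T^*\sX_1$ is an isomorphism everywhere, that step is not needed. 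The ``main obstacle'' you flag is indeed where the technical work lies, but the correct target is the non-split square-zero structure of the Postnikov tower of a Tate affine scheme, not a splitting of it.
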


\subsection{Why are we pursuing this?}
\label{subsec:why-are-we}

\subsubsection{Central extensions of higher loop Lie algebras}

Our initial motivation for embarking on this project was to answer the following question. In \cite{determinant-map} a higher determinant map was constructed
\begin{equation}
\label{eq:higher-determinant-map}
\sD^{(n)}: \sTate^{(n)} \ra \sB^{(n)}\sPic,    
\end{equation}
where $\sTate^{(n)}$ denotes the prestack of $n$-Tate objects, i.e.\ it sends an affine scheme $S$ to the category
\[
\Tate^n(\Perf(S));
\]
and $\sB^{(n)}\sPic$ is the prestack obtained by delooping the stack of line bundles $n$ times, i.e.\ inductively this is given by
\[
\sB^{(n)}\sPic(S) := \left| \cdots \begin{tikzcd}
 \sB^{(n-1)}\sPic(S) \times \sB^{(n-1)}\sPic(S) \ar[r,shift left = 1.5ex] \ar[r] \ar[r,shift right=1.5ex] & \sB^{(n-1)}\sPic(S) \ar[r,shift left] \ar[r,shift right] & \ast
\end{tikzcd} \right|.
\]

By considering the map induced by (\ref{eq:higher-determinant-map}) on automorphisms of a point, one obtains a central extension
\begin{equation}
\label{eq:central-extension-of-automorphism-of-n-Tate-object}
    1 \ra \B^{n-2}\Gm \ra G^{\wedge}_{n} \ra \GL(k((t_1))\ldots((t_n))) \ra 1.
\end{equation}
The central extension (\ref{eq:central-extension-of-automorphism-of-n-Tate-object}) can then be used to define central extensions of higher loop groups, e.g.\ $\GL_n((t_1))\ldots((t_n))$. However, we are interested in understanding the central extensions of the associated Lie algebras. We would like to determine the object $\fg^{\wedge}_{n}$ corresponding to the Lie algebra of $G^{\wedge}_{n}$, which has the structure of a central extension
\begin{equation}
    \label{eq:central-extension-Lie-algebra-n-Tate}
    0 \ra k[n-2] \ra \fg^{\wedge}_{n} \ra \End(k((t_1))\ldots((t_n))) \ra 0.
\end{equation}

We observe that the object $\End(k((t_1))\ldots((t_n)))$ will naturally have the structure of a $2n$-Tate object, and we would want to keep track of that structure. It is expected that the central extension (\ref{eq:central-extension-Lie-algebra-n-Tate}) is described by a higher analogue of the trace morphism for endomorphisms of a perfect complex (cf.\ \cite{BGW-ideals}). 

Now we notice, that the computation of \cite{determinant-map}*{Proposition 3.13} shows that the trace morphism for perfect complexes is recovered from the morphism
\[
\sD^{(0)}: \sPerf \ra \sPic
\]
as follows. For a point $Spec(k) \overset{V}{\ra} \sPerf$ the trace of an endomorphism $\varphi \in \End(V)$ is given by the morphism between cotangent complexes induced by $\sD^{(0)}$
\[
\sD^{(0)}_{V,*}: \End(V) \simeq T^*_{V}\sPerf \ra T^{*}_{\sD^{(0)}(V)}\sPic \simeq k.
\]

Thus, following the same strategy we hope to understand the higher trace morphisms characterizing the central extension (\ref{eq:central-extension-Lie-algebra-n-Tate}) by considering the morphism induced by (\ref{eq:higher-determinant-map}) on cotangent complexes.

The problem here is that the cotangent complex of a point $\Spec(k) \overset{V_n}{\ra} \sTate^{(n)}$ will naturally be an object of $\Pro(\QCoh(\Spec(k))^-)$. However, we expect that
\[
T^*_{V_n}\sTate^{(n)} \simeq \End(V)
\]
belongs to $\Tate^{2n}(\QCoh(\Spec(k)))$. Thus, we get

\begin{goal}
\label{goal:Tate-pro-cotangent-complex}
Refine the construction of the pro-cotangent complex at a point $S \ra \sX$ of a prestack to obtain an object in $\Tate^m(\QCoh(S))$.
\end{goal}

It turns out that a lot of theory needs to be set up in order to achieve the goal above. To explain why, we recall how the definition of the pro-cotangent complex of a prestack works. Given a point $x:S \ra \sX$ and a sheaf $\sF \in \QCoh(S)^{\leq 0}$ we consider 
\[
\Maps_{S/}(S_{\sF},\sX)
\]
the space of lifts of $x$ to the split square-zero extension $S_{\sF} := \Spec_S(\sO_{S}\oplus \sF)$. Then we obtain the pro-cotangent complex as the object that pro-co-represents \footnote{Strictly speaking the above only sees the connective part of $T^*_{x}\sX$ (see \S \ref{subsec:pro-cotangent-spaces} for the correct definition), but this informal account is enough to motivate our introduction.} the assignment $\sF \mapsto \Maps_{S/}(S_{\sF},\sX)$, i.e.
\[
\Hom_{\Pro(\QCoh(S)^{\leq 0})}(T^*_x\sX,\sF) \simeq \Maps_{S/}(S_{\sF},\sX).
\]

For a start, let's focus on the case $n=1$ of our Goal. If we want the object $T^*_{x}\sX$ to have a structure of $\Tate^2(\QCoh(S))$, or even $\Tate(\QCoh(S))$, we need to probe it with objects $\sF \in \Tate(\QCoh(S)^{\leq 0})$. This quickly introduces many complications:
\begin{enumerate}[(i)]
    \item one needs to perform the Tate construction on a prestable $\infty$-category to obtain $\Tate(\QCoh(S)^{\leq 0})$;
    \item given an object $\sF \in \Tate(\QCoh(S)^{\leq 0})$ and an affine scheme $S$, we need to define an object $S_{\sF}$ which performs the split square-zero of $S$;
    \item we need to understand what it means to lift a map from $S_{\sF} \ra \sX$ to the square-zero extension $S_{\sF}$;
    \item we need to have appropriate categories of sheaves on each $S$ where $T^*_{x}\sX$ will live.
\end{enumerate}

This text tries to address some of these points.

\subsubsection{Where else would such a theory be useful?}

\paragraph{Shifted symplectic structure on infinite-dimensional objects}

In recent years it has become clear that it is very important to pursue ideas of symplectic geometry in the context of derived geometry, namely for Artin derived stacks. However one of the crucial aspects of the theory is that to formulate what it means for a prestack $\sX$ to have a shifted symplectic structure, one needs the technical condition that the cotangent complex $T^*\sX$ is a perfect object, i.e.\ $T^*\sX \in \Perf(\sX)$. Nevertheless there are situations where one would expect a prestack $\sX$ whose cotangent complex is not perfect to admit a shifted symplectic structure, see for example \cite{Elliott} and \cite{Hilburn}. 

We suggest that our theory of cotangent complexes with a Tate structure is adequate to formulate this notion in big generality. Indeed, one of the reasons that one required $T^*_{x}\sX$ to be perfect is because an $n$-shifted symplectic structure produces an isomorphism
\[
T^*_{x}\sX \overset{\simeq}{\ra} T_{x}\sX[n].
\]
That is, the tangent complex of $\sX$ is $T_{x}\sX$, which is the dual object to $T^*_{x}\sX$ in the category $\Perf(\sX)$. In \cite{GR-II} the tangent complex for a prestack locally almost of finite type $\sX$ is considered as an object in $\IndCoh(\sX)$, thus it is not immediately clear what it would mean for it to be isomorphic to $T^*\sX$, which lives in $\Pro(\QCoh(\sX)^-)$. The advantage of formulating the cotangent complex $T^*_{x}\sX$ as an object of $\Tate(\QCoh(\sX)^-)$ instead of $\Pro(\QCoh(\sX)^-)$ is that if we require that $T^*_{x}\sX$ belongs to $\Tate(\Perf(\sX))$, this category is self-dual and we can also define a tangent complex in the category $\Tate(\Perf(\sX))$ as the dual of $T^*_{x}\sX$.

We are working on using the cotangent complex defined in this paper to formulate the notion of a shifted symplectic structure for a certain class of prestacks of Tate type.

\paragraph{The elusive $2$-affine Grassmannian}

Another of our motivations to pursue the study of such infinite-dimensional objects is to be able to define a higher version of the affine Grassmannian. The $2$-affine Grassmannian associated to a $2$-Tate object $k((t_1))((t_2))$ is the geometric object whose functor of points sends a commutative ring $R$ to
\[
\Gr_{k((t_1))((t_2))}(R) = \{ L \subset k((t_1))((t_2)) \; | \; L \mbox{ is a lattice}\}.
\]

There have been numerous attempts in literature to consider slices of $\Gr_{k((t_1))((t_2))}$. We expect that understanding $2$-Tate schemes (and higher) would allow us to get a better understanding of this geometric object.

\subsubsection{What connections do we expect to other theories in the literature?}

\paragraph{Placid $\infty$-stacks}

In the recent impressive preprint \cite{BKY} the authors considered a theory similar to what we develop in this paper. Their building blocks are prestacks $\sX$ that can be written as
\[
\sX \sim \lim_{J^{\rm op}}\colim_I S_{j,i},
\]
where $I$ and $J$ are filtered diagrams, $S_{i,j}$ are affine schemes of finite type, and the connecting morphisms
\[
S_{j',i} \ra S_{j,i}
\]
are finitely presented and smooth. Then they define $n$-placid stacks inductively as the prestacks $\sX$ that 
\begin{itemize}
    \item have a $(n-1)$-placid stack diagonal, and
    \item admit a smooth surjective morphism
    \[
    \sY \ra \sX,
    \]
    where $\sY$ is $(n-1)$-placid.
\end{itemize}

In \cite{BKY} placid stacks are used to prove versions of the decomposition theorem for semi-small maps and applied to the study of affine Springer theory.

Our Tate $n$-Artin stacks are geometric objects somewhat dual to $n$-placid stacks. In \S \ref{subsubsec:digression-pro-ind-presentation} we have a statement about how certain $0$-placid stacks give rise to Tate affine schemes. However, we would like to investigate the precise relation between the objects defined by Bouthier-Kazhdan-Yakhovsky and the ones we study in this article.

\paragraph{Tate affine schemes}

We notice that the theory of Tate affine schemes already has many interesting natural questions that we didn't get to address in this preprint. In \S \ref{subsubsec:commutative-algebra-in-Tate-objects} we make sense of commutative algebra objects in the category $\Tate(k)$; one question we would like to understand is if we can produce a functor from the category of commutative algebra objects in Tate objects to the category of Tate affine schemes.

Throughout this paper we took $k$ as a field of characteristic zero: the main reason is that we needed this assumption to use some results from \cites{GR-I,GR-II}. One generalization that one can pursue is to consider Tate affine schemes and Tate prestacks over a more general base. This generalization would certainly be desirable for certain applications of the theory.

A second, perhaps more interesting generalization, would be the following. If we think of Tate affine schemes as the theory of commutative algebras in the category $\Tate(k):= \Tate(\Perf(k))$, one could consider a base category more general than $\Perf(k)$. One natural generalization here would be the category of locally compact abelian groups, or a modern perspective on it as the category of condensed abelian groups. We notice that a general theory of how to lift algebraic geometry in a symmetric monoidal category $\sC$ to the category $\Tate(\sC)$, would apply to the formalism of condensed mathematics and allow one to consider very infinite-dimensional objects in the geometrical theories based on these objects.

\subsection{Outline of the paper}

\subsubsection{Description of sections}

Section \ref{sec:Tate-schemes} defines the main objects on which the theory of this article is built, namely Tate affine schemes. We also introduce Tate schemes, define natural conditions on these, and check that certain objects are examples of Tate schemes.

Section \ref{sec:sheaves} is one of the main building blocks for the theory we develop. We proceed roughly in two steps: the first is to construct Tate-coherent sheaves on schemes, the second is to bootstrap this formalism to Tate schemes. Each step naturally splits into two: first we consider the formalism of Pro-Ind-coherent sheaves, then we specialize to Tate-coherent sheaves inside of those.

In section \ref{sec:prestacks-Tate} we introduce prestacks of Tate type, which are simply functors from the opposite category of Tate affine schemes to spaces. We also define natural conditions on them and study their relation to usual conditions on prestacks.

In section \ref{sec:Tate-stacks} we study the geometric conditions that can be imposed on prestacks of Tate type: \'etale descent and admitting a smooth or Zariski cover.

In section \ref{sec:deformation-theory} we study the deformation theory of prestacks of Tate type. This section puts together a lot of the technical work from the previous sections. At the moment it is written in a somewhat restricted framework because of the initial conventions. We plan to rewrite this section in larger generality.

In section \ref{sec:application-gerbe} we apply the theory of Tate-coherent sheaves to define a dualizing gerbe for any Tate scheme, and when these Tate schemes satisfy certain technical conditions, we identify trivializations of the dualizing gerbe.

\subsubsection{What is not done in this paper?}

This article suggests many interesting open questions, some of them we plan to pursue in the future.

In section \ref{sec:Tate-schemes} it would be worthwhile to better understand the interaction between formal affine schemes and Tate affine schemes. The theory of ind-schemes is useful to study formal schemes (see \cite{Emerton}), so we expect that the class of Tate schemes can play an important role in studying formal schemes. In particular, we are interested in what type of adic geometry can be obtained from Tate affine schemes, that is, what is recovered by the localization of the category of Tate (affine) schemes with respect to an appropriate notion of admissible blow-ups.

In section \ref{sec:sheaves} one could envision a formalism of ind-coherent or Tate-coherent sheaves on ind-Tate-schemes, i.e.\ filtered colimits of Tate schemes taken in the category of prestacks of Tate type; and further extend this sheaf formalism for prestacks of Tate type with morphisms which are represented by such objects. Another interesting extension of the formalism would be to consider ind-coherent sheaves on Tate schemes which are \emph{not} locally almost of finite type. We hope to pursue this following the ideas in \cite{Raskin-homological}.

In section \ref{sec:Tate-stacks} it would be interesting to understand under which conditions Tate schemes admit any sort of atlas as a Tate stack. This is a slightly subtle problem to tackle because the presentation as a colimits of schemes doesn't allow one to use atlases of individual schemes in the colimit diagram to produce an atlas of the corresponding Tate scheme. We plan to return to this question later, once we develop a bit more of the theory of prestacks of Tate type and a form of representability result, that we hope would allow us to construct these atlases in a more abstract way.

Section \ref{sec:deformation-theory} stops short of addressing many questions. For example, what is the relation between deformation theory and the condition of a prestack of Tate type being locally a prestack; or what is the general definition of pro-cotangent complexes for prestacks of Tate type which are not convergent; and what is the relation between the pro-cotanget complex of a prestack of Tate type and the usual pro-cotangent complex of its underlying prestack. We will return to these questions in the near future.

\subsubsection{Conventions}

We will refer to $\infty$-categories (similarly any operation, property, or functors between them) simply as categories. In this paper by affine schemes we mean derived affine schemes, similarly schemes mean (derived) schemes, stacks are (derived) stacks, etc. Throughout this paper we work over a base field $k$ of characteristic $0$.

\subsubsection{Acknowledgements}

It is my pleasure to thank W.~Balderrama, C.~Dodd, C.~Elliott, and I.~Mirkovi\'c for the stimulating discussions; and A.~Alibek for comments on an earlier draft. Moreover, I would like to especially thank Nick Rozenblyum for his constant support throughout this project. Finally, this work owes a great intellectual debt: to the work of V.~Drinfeld on the notion of Tate objects and a vision of how to incorporate Tate objects into Algebraic Geometry, secondly to the work of O.~Braunling, M.~Groechenig, and J.~Wolfson on the modern formulation of Tate objects, and finally to the work of D.~Gaitsgory and N.~Rozenblyum on formalizing ind-coherent sheaves in great generality.

\section{Tate schemes}
\label{sec:Tate-schemes}

In this section we develop the theory of Tate schemes and Tate affine schemes. Tate affine schemes will be the basic building blocks for the theory of prestacks of Tate type; they should be thought of as a special class of commutative $k$-algebras endowed with a topology. In a certain sense, a good part of this section specializes the theory of indschemes as developed in \cite{DG-indschemes} and \cite{GR-II}*{Chapter 2}. However, some things are not formal; for example, we need to check that the subcategory of Tate (affine) schemes is well-behaved with respect to certain operations such as fiber products and certain push-outs.

We describe the contents of this section. In \S \ref{subsec:pro-schemes-of-ft} we review the basics of derived algebraic geometry that are used throughout this work. In \S \ref{subsec:Tate-schemes} we define Tate schemes and Tate affine schemes. In Section \ref{subsec:pro-commutative-algebras} we embed the category of Tate affine schemes into a certain category of commutative algebra objects. In section \ref{subsec:properties-Tate-schemes} we study some properties of Tate schemes. In \S \ref{subsec:other-definitions} we relate some of our notions to other objects considered in the literature. In \S \ref{subsec:examples-Tate-affine} we define the loop functor and give some examples of Tate schemes.

\subsection{Pro schemes of finite type}
\label{subsec:pro-schemes-of-ft}

In this section we set some of our notation and recall standard results from derived schemes over a ground field. Our main point is to convey the idea that schemes can be thought as Pro-objects in the category of schemes of finite type. Thus, motivating us to take the approach of defining Tate schemes as an appropriate subcategory of Ind-Pro-objects in schemes of finite type.

Let $\Vect$ denote the category of vector spaces over $k$ endowed with its usual t-structure $(\Vect^{\leq 0},\Vect^{\geq 0})$, we will be interested in $\CAlg(\Vect^{\leq 0})$, the category of commutative algebras all of whose cohomology groups vanish in positive degrees. We define the category of \emph{affine schemes} as 
\[
\Schaff := \left(\CAlg(\Vect^{\leq 0})\right)^{\rm op}.
\]

\subsubsection{Coconnective affine schemes}

An affine scheme $S$ is said to be \emph{$n$-coconnective} if $S = \Spec(A)$ such that
\[
\H^{-i}(A) = 0, \;\;\; \mbox{for all} \;\;\; i > n.
\]

We denote by ${^{\leq n}\Schaff}$ the subcategory of $\Schaff$ spanned by $n$-coconnective affine schemes. Moreover, one defines the category of \emph{eventually coconnective} affine schemes as
\[
\Schaffconv = \bigcup_{n \geq 0}{^{\leq n}\Schaff}.
\]

\begin{defn}
\label{defn:affine-schemes-of-finite-type}
For $S$ an eventually coconnective affine scheme we say that $S$ is \emph{of finite type} if $S = \Spec(A)$ with\footnote{Notice that only finitely many $\H^{-i}(A)$ are non zero by hypothesis.}
\begin{enumerate}[(i)]
    \item $\H^0(A)$ is a finitely generated $k$-algebra;
    \item and for every $i \geq 1$ 
    \[
    \H^{-i}(A) \;\; \mbox{finitely generated} \;\H^0(A)\;\mbox{-module.}
    \]
\end{enumerate}
\end{defn}

We let $\Schaffconv_{\rm ft}$ denote the subcategory of $\Schaffconv$ generated by the eventually coconnective affine schemes of finite type.

\begin{prop}
\label{prop:finite-type-is-compact}
Given $S \in \Schaffn$ the following are equivalent:
\begin{enumerate}[1)]
    \item $S$ is of finite type;
    \item $S$ is a cocompact object in $\Schaffn$, i.e.
    \[
    \Hom_{\Schaffconv}(-,S)
    \]
    commutes with cofiltered limits.
\end{enumerate}
\end{prop}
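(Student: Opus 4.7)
The plan is to translate the statement into commutative algebra via the equivalence $\Schaffn \simeq (\CAlg(\Vect^{\leq 0, \geq -n}))^{\rm op}$, where $\Vect^{\leq 0, \geq -n}$ denotes chain complexes with cohomology concentrated in degrees $[-n,0]$. Writing $S = \Spec(A)$, cofiltered limits in $\Schaffn$ correspond to filtered colimits in $\CAlg(\Vect^{\leq 0, \geq -n})$, so cocompactness of $S$ is equivalent to compactness of $A$ in this presentable category. The condition of finite type translates to: $\H^0(A)$ is a finitely generated $k$-algebra and each $\H^{-i}(A)$ for $1 \leq i \leq n$ is a finitely generated $\H^0(A)$-module.

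For (1) $\Rightarrow$ (2), I would first note that the free commutative algebra functor $\Sym_k : \Vect^{\leq 0, \geq -n} \to \CAlg(\Vect^{\leq 0, \geq -n})$ is left adjoint to the forgetful functor, and the forgetful functor preserves filtered colimits. Hence $\Sym_k(k[i])$ is compact for every $-n \leq i \leq 0$, since $k[i]$ is compact in the target of the forgetful functor. Given $A$ of finite type, one builds a cellular presentation by successive approximation: choose finitely many generators of $\H^0(A)$ to produce a map from $\Sym_k(V_0)$ with $V_0$ finite-dimensional placed in degree $0$, then kill finitely many relations in $\H^{-1}$, and continue up to degree $-n$; after applying the truncation $\tau^{\geq -n}$, one exhibits $A$ as a finite iterated pushout of shifted free algebras on finite-dimensional vector spaces. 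Finite colimits of compact objects are compact, so $A$ is compact.

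For (2) $\Rightarrow$ (1), I would use the standard retract argument. The construction above shows that the objects $\Sym_k(k[i])$ form a family of compact generators of $\CAlg(\Vect^{\leq 0, \geq -n})$ whose finite colimits are exactly the finite-type algebras. By general facts about compactly generated presentable categories, every $A$ can be written as a filtered colimit $A \simeq \colim_{\alpha} A_\alpha$ with each $A_\alpha$ of finite type. If $A$ is compact, then $\id_A \in \Hom(A,A) \simeq \colim_\alpha \Hom(A, A_\alpha)$ factors through some $A_\alpha$, making $A$ a retract of $A_\alpha$. Retracts preserve the finite type condition: if $A$ is a retract of $B$ then $\H^0(A)$ is a quotient of the finitely generated (hence Noetherian) $k$-algebra $\H^0(B)$, and each $\H^{-i}(A)$ is a quotient of the finitely generated $\H^0(B)$-module $\H^{-i}(B)$, hence finitely generated over $\H^0(A)$.

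The main delicate point is justifying that $A$ is canonically a filtered colimit of its finite-type ``sub-approximations'' in the derived setting; this is not a classical sub-ring statement but follows from presentability once the free algebras $\Sym_k(k[i])$ are identified as compact generators. The rest is purely formal manipulation within presentable categories.
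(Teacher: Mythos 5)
Your argument is correct in outline, but it takes a different route from the paper: the paper's entire proof is a citation of \cite{HA}*{Proposition 7.2.4.27 (2)} (compact objects of the $n$-truncated connective commutative algebra category are exactly the algebras of finite presentation to order $n$), together with the remark that since $k$ and $\H^0(A)$ are Noetherian, finite presentation coincides with the finite generation conditions of Definition \ref{defn:affine-schemes-of-finite-type}. What you have written is essentially an unwinding of the proof of that cited result: the cell-attachment construction for $(1)\Rightarrow(2)$ and the retract argument for $(2)\Rightarrow(1)$ are exactly Lurie's strategy. Your version is self-contained, which is a virtue, but it glosses over the two points where the real work sits. First, the inductive claim that at each stage only finitely many cells are needed requires knowing that the homotopy groups of a finite cell algebra over a Noetherian base are finitely generated over its $\H^0$ (so that the kernel/cokernel you must kill at the next stage is again finitely generated); this is where the Noetherian hypothesis enters and is not automatic. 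Second, to repair $\H^{-n}$ one must attach cells in degree $-(n+1)$ and then truncate, so you should say explicitly that $\tau^{\geq -n}$ preserves compact objects (its right adjoint, the inclusion, preserves filtered colimits because filtered colimits of $n$-coconnective objects are $n$-coconnective). With those two points made precise, your argument is a complete proof; as written it is a correct sketch that reproves the reference rather than an alternative argument.
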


\begin{proof}
This is a particular case of \cite{HA}*{Proposition 7.2.4.27 (2)} when we restrict to $n$-coconnective affine schemes. Also notice that since $k$ and $\H^0(A)$ are Noetherian the condition of finite presentation of \emph{loc. cit.} is equivalent to our requirement of finite generation in Defintion \ref{defn:affine-schemes-of-finite-type}.
\end{proof}

\begin{cor}
\label{cor:pro-schemes-n-coconnective-ft-are-all}
There is an equivalence of categories
\[
\Pro(\Schaffn_{\rm ft}) \simeq \Schaffn
\]
\end{cor}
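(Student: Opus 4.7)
The plan is to package the corollary as a special case of the standard recognition theorem for Pro-completions: a fully faithful functor $i:\sC_0 \hookrightarrow \sC$ into a cofiltered-limit-complete target induces an equivalence $\Pro(\sC_0) \simeq \sC$ provided (a) every object of $i(\sC_0)$ is cocompact in $\sC$, and (b) every object of $\sC$ is a cofiltered limit of objects in $i(\sC_0)$. Condition (a) is exactly Proposition \ref{prop:finite-type-is-compact}, so the real work is arranging (b), together with unwinding the Hom computation.

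First I would set up the Pro-extension. Since $\Schaffn$ admits cofiltered limits (these are computed as filtered colimits of commutative algebras in $\CAlg(\Vect^{\leq 0})$, and such colimits preserve the vanishing of $\H^{-i}$ for $i>n$), the inclusion $\Schaffn_{\rm ft} \hookrightarrow \Schaffn$ extends by the universal property of Pro to a functor
\[
\Phi:\Pro(\Schaffn_{\rm ft}) \longrightarrow \Schaffn, \qquad \{S_i\}_{i\in I} \longmapsto \lim_{I} S_i.
\]
For full faithfulness, I would compute: for $\{S_i\}_I,\{T_j\}_J \in \Pro(\Schaffn_{\rm ft})$,
\[
\Hom_{\Schaffn}\!\Bigl(\lim_I S_i,\lim_J T_j\Bigr) \simeq \lim_J \Hom_{\Schaffn}\!\Bigl(\lim_I S_i, T_j\Bigr) \simeq \lim_J \colim_I \Hom_{\Schaffn_{\rm ft}}(S_i,T_j),
\]
where the first step uses the universal property of the limit and the second step is exactly the cocompactness of $T_j$ provided by Proposition \ref{prop:finite-type-is-compact}. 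The right-hand side is the definition of $\Hom_{\Pro(\Schaffn_{\rm ft})}(\{S_i\},\{T_j\})$, so $\Phi$ is fully faithful.

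For essential surjectivity I would show that every $S = \Spec(A) \in \Schaffn$ is the limit of a cofiltered diagram in $\Schaffn_{\rm ft}$. Dually, this amounts to writing $A \in \CAlg(\Vect^{\leq 0}_{\leq n})$ as a filtered colimit of compact objects. By Lurie's HA 7.2.4.27 the compact objects of $\CAlg(\Vect^{\leq 0}_{\leq n})$ are the finitely presented algebras; combining this with the fact that $k$ and $\H^0$ are Noetherian (so finite generation and finite presentation agree at each cohomological level, as noted in the proof of Proposition \ref{prop:finite-type-is-compact}) identifies the compact objects with $(\Schaffn_{\rm ft})^{\rm op}$. Since this category is compactly generated, $\CAlg(\Vect^{\leq 0}_{\leq n}) \simeq \Ind\bigl(\CAlg(\Vect^{\leq 0}_{\leq n})^{\rm c}\bigr)$, and dualizing gives the desired presentation of $S$.

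I do not expect a genuine obstacle: both ingredients are standard. The only point that requires care is ensuring that the cofiltered limits taken inside $\Schaffn$ (as opposed to in all affine schemes) really do agree, i.e.\ that the truncation $\tau_{\geq -n}$ does not create spurious obstructions; this is handled by the exactness of filtered colimits on connective modules. Once this is checked, the corollary drops out by assembling the two halves above.
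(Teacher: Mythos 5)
Your proof is correct and is essentially the paper's argument: the paper proves this corollary by citing \cite{HTT}*{Proposition 5.3.5.11}, which packages exactly the two steps you carry out (full faithfulness of the Pro-extension via the cocompactness of finite-type objects from Proposition \ref{prop:finite-type-is-compact}, and essential surjectivity via every $n$-coconnective algebra being a filtered colimit of compact, i.e.\ finite-type, ones). Your write-up just unpacks that citation in more detail, including the check that cofiltered limits exist in $\Schaffn$ and are compatible with truncation.
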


\begin{proof}
This follows from \cite{HTT}*{Proposition 5.3.5.11}.
\end{proof}

\begin{rem}
We notice that in the case of $n=0$, Corollary \ref{cor:pro-schemes-n-coconnective-ft-are-all} proves that the category of classical affine schemes is equivalent to the category of Pro-objects in classical affine schemes of finite type. This was observed in \cite{Kapranov-Vasserot} and is proved in \cite{EGAIV}*{Corollary 8.13.2}.
\end{rem}

\subsubsection{Almost of finite type}

For a general affine scheme $S$, i.e.\ not necessarilly $n$-coconnective for any $n$, one has to be a bit more careful to formulate its finiteness conditions. First we recall that one has a right adjoint to the natural inclusion
\begin{align*}
    \Schaff & \ra \Schaffn \\
    S & \mapsto {^{\leq n}S}
\end{align*}

\begin{defn}
\label{defn:affine-scheme-aft}
Given an affine scheme $S$, one says that $S$ is \emph{almost of finite type}, if for every $n \geq 0$ the scheme ${^{\leq n}S} \in \Schaffn$ is of finite type. We denote the category of affine schemes almost of finite type by $\Schaff_{\rm aft}$.
\end{defn}

\begin{rem}
\label{rem:aft-is-not-compact}
It is important to remark that the condition of being almost of finite type is \emph{not} equivalent to requiring that $S$ is a cocompact object in the category $\Schaff$. In fact, if $S$ is a cocompact object in $\Schaff$ then it is almost of finite type, but the converse doesn't hold in general. Indeed, if $S = \Spec(A)$ and $A$ is compact, then by \cite{HA}*{Proposition 7.2.4.27} $A$ is the finite colimit of a finitely generated free $k$-algebra and by \cite{HA}*{Proposition 7.2.4.31} the affine scheme associated to any one of those is almost of finite type. To see that the converse is false consider $k$ as a $k[\epsilon]/(\epsilon^2)$-algebra, again by \cite{HA}*{Proposition 7.2.4.31} one has that $\Spec(k)$ is almost of finite type over $k[\epsilon]/(\epsilon^2)$, however a simple calculation shows that $\Spec(k)$ is not cocompact.
\end{rem}

\begin{rem}
The categorical concept that coincides with the notion of $S$ being almost of finite type is that of \emph{almost cocompact object}\footnote{This should be compared with almost perfect complexes, which are the better category to consider in derived geometry, rather than the subcategory of quasi-coherent sheaves with coherent cohomology.}, i.e.\ ${^{\leq n}S}$ is a cocompact object in $\Schaffn$ for every $n$.
\end{rem}

\subsubsection{Quasi-compact schemes}

We take the point of view of \cite{GR-I}*{Chapter 2, Section 3} and we define the category of schemes $\Sch$ as follows.

\begin{defn}
\label{defn:schemes}
A scheme $Z$ is a prestack, i.e.\ a functor $\Schaffop \ra \Spc$ satisfying:
\begin{enumerate}[a)]
    \item $Z$ satisfies \'etale descent;
    \item the diagonal map $Z \ra Z\times Z$ is affine schematic and a closed embedding\footnote{As the convention in \cite{GR-II} for us schemes are what would normally be called quasi-separated schemes in the literature.};
    \item there exists a collection of maps $\{f_i:S_i \ra Z\}_I$ where $S_i$ are affine schemes such that
        \begin{itemize}
            \item each $f_i$ is an open embedding;
            \item for every $T \in (\Schaff)_{/Z}$ the induced maps $\{\classical{S_i\times_{Z}T} \ra \classical{T}\}$ cover $\classical{T}$.
        \end{itemize}
\end{enumerate}
A collection $\{f_i:S_i \ra Z\}_I$ satisfying condition c) above is called a \emph{Zariski cover of $Z$}. 
\end{defn}

One can generalize the properties we define for affine schemes for schemes by requiring the similar condition for a cover. We refer the reader to \cite{GR-I}*{Chapter 2, Section 3.3 and Section 3.5} for a discussion of why this works.

\begin{defn}
\label{defn:properties-of-schemes}
Let $Z \in \Sch$ we say that
\begin{enumerate}[1)]
    \item $Z$ is \emph{$n$-coconnective} if it admits a Zariski cover by affine schemes which are $n$-coconnective;
    \item $Z$ is \emph{locally almost of finite type} if it admits a Zariski cover with affine schemes from $\Schaff_{\rm aft}$;
    \item $Z$ is \emph{quasi-compact} if it admits a Zariski cover by a finite number of affine schemes;
    \item $Z$ is \emph{almost of finite type} if it is locally almost of finite type and quasi-compact.
\end{enumerate}
We denote the category of $n$-coconnective schemes by $\Schn$, the category of schemes locally almost of finite type by $\Schlaft$, the category of quasi-compact schemes by $\Schqc$ and the category of schemes almost of finite type by $\Schaft$.
\end{defn}

\begin{defn}
\label{defn:properties-of-n-coconnective-schemes}
Let $Z \in \Schn$ we say that 
\begin{enumerate}[1)]
    \item $Z$ is \emph{locally finite} if it admits a Zariski cover with affine schemes from $\Schaffnft$;
    \item $Z$ is of \emph{finite type} if it is locally finite and quasi-compact as a scheme.
\end{enumerate}
We denote the category of locally finite $n$-coconnective schemes by $\Schnlft$ and its of $n$-coconnective schemes of finite type by $\Schnft$.
\end{defn}

\paragraph{Cofiltered limits of $n$-coconnective schemes of finite type}

Let $\{Z_i\}_I$ denote a cofiltered diagram of $n$-coconnective schemes of finite type, such that for any $i \ra j$ the morphism
\[
Z_i \ra Z_j
\]
is affine. Then, the limit
\[
\lim_I Z_i
\]
exists in $\Sch$ and moreover belongs to the category $\Schn$. Indeed, let $f_j:Z \ra Z_j$ denote the projection morphism from the desired object $Z$, we notice that for any affine scheme $S_j \ra Z_j$ we can compute
\[
f^{-1}(S_j) \simeq \Spec(\colim_{i \in I^{\rm op}_{\leq j}}A_i),
\]
where ${A_i}$ is the filted diagram of commutative algebras corresponding to $f^{-1}_{i,j}(S_j)$ where $f_{i,j}:S_i \ra S_j$. Since each $A_i \in {^{\geq -n, \leq 0}\CAlg(\Vect)}$ their filtered colimit is also $n$-coconnective. This follows from
\[
\H^{-k}(A_i) = \pi_{k}(\DK(A_i)),
\]
where $\DK: \Vect^{\leq 0} \ra \Spc$ is the Dold-Kan functor and the facts that $\DK$ commutes with filtered colimits and that filtered colimits of $n$-truncated objects are $n$-truncated.

Let $\Pro^{\rm aff}(\Schnft)$ denote the subcategory of Pro-object $\Pro(\Schnft)$ where the connective morphisms are affine. By the discussion above we have a realization functor
\begin{equation}
    \label{eq:realization-n-coconnective-schemes-of-ft}
    \Pro^{\rm aff}(\Schnft) \simeq \Schn.    
\end{equation}

\begin{prop}
\label{prop:n-coconnective-schemes-are-limits-of-ft-n-coconnective}
Fix $n$ and let $\Schnqc = \Schn \cap \Schqc$. The realization functor (\ref{eq:realization-n-coconnective-schemes-of-ft}) factors through $\Schnqc$ and the resulting map is an equivalence of categories
\[
\Pro^{\rm aff}(\Schnft) \simeq \Schnqc.
\]
\end{prop}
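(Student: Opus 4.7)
The plan is to split the statement into three assertions: (i) the realization functor factors through $\Schnqc$, (ii) it is fully faithful, and (iii) it is essentially surjective. Parts (i) and (ii) will follow relatively formally from Corollary \ref{cor:pro-schemes-n-coconnective-ft-are-all}, while (iii) is the main work and requires a gluing argument of EGA-type.

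For (i), I would fix any $i_0 \in I$ in a cofiltered diagram $\{Z_i\}_I$ with affine transitions and choose a finite affine Zariski cover $\{V_\alpha \hra Z_{i_0}\}$, using that $Z_{i_0}$ is of finite type and hence quasi-compact. Pulling back along the affine morphism $Z := \lim_I Z_i \ra Z_{i_0}$ yields opens $V_\alpha \times_{Z_{i_0}} Z \simeq \lim_{i} V_\alpha \times_{Z_{i_0}} Z_i$, each a cofiltered limit of affines along affine maps and therefore affine; these form a finite Zariski cover of $Z$, showing $Z \in \Schnqc$.

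For (ii), given $Y = \{Y_i\}$ and $W = \{W_j\}$ in $\Pro^{\rm aff}(\Schnft)$, it suffices to establish for each fixed $W_j$ the equivalence
\[
\Hom_{\Schn}(|Y|, W_j) \simeq \colim_i \Hom_{\Schnft}(Y_i, W_j).
\]
A morphism $|Y| \ra W_j$ corresponds to a pullback of a finite affine Zariski cover of $W_j$ together with maps on the pieces. The affine-diagonal condition ensures all relevant pairwise intersections in $W_j$ are affine, so Corollary \ref{cor:pro-schemes-n-coconnective-ft-are-all} applies to show that both the open-embedding data on $|Y|$ and the maps between affines descend to some finite stage of the $\{Y_i\}$ system; the finiteness of the cover allows the choice of a common such stage.

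For (iii), given $Z \in \Schnqc$, I would fix a finite affine Zariski cover $\{V_\alpha\}_{\alpha \in A}$; by the affine-diagonal condition, all iterated intersections $V_{\alpha_0 \cdots \alpha_k} := V_{\alpha_0} \times_Z \cdots \times_Z V_{\alpha_k}$ are affine, and by Corollary \ref{cor:pro-schemes-n-coconnective-ft-are-all} each is a cofiltered limit of affines of finite type. The task is then to coordinate these local pro-presentations into a common cofiltered index $I$ and, for each $i \in I$, produce finite-type affines $V_{\alpha, i}$ equipped with open embeddings $V_{\alpha\beta, i} \hra V_{\alpha, i}$ realizing the Cech data of the cover at stage $i$; gluing then yields an $n$-coconnective scheme $Z_i$ of finite type with affine transitions, and $\lim_i Z_i \simeq Z$ by construction and Zariski descent. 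The hard part of the whole argument is precisely this coherent gluing, which is the derived analogue of Grothendieck's theorem on limits of schemes (EGA IV \S 8). What makes it tractable is that the Cech datum involves only finitely many affines and intersections, so only finitely many compatibility conditions must be achieved simultaneously; a diagonal argument on the cofiltered indexing categories of the local presentations allows one to pass to a common cofinal subcategory over which the full gluing datum descends.
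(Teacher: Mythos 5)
Your parts (i) and (ii) are fine and essentially formal consequences of quasi-compactness and the cocompactness of finite-type affines (Proposition \ref{prop:finite-type-is-compact} and Corollary \ref{cor:pro-schemes-n-coconnective-ft-are-all}); the paper does not spell these out either. Where you diverge substantially is in (iii), and here the comparison matters. You propose to run the EGA~IV~\S 8 approximation argument directly in the derived setting: choose a finite affine cover, write every iterated intersection as a cofiltered limit of finite-type affines, and descend the whole \v{C}ech gluing datum to a common finite stage. The paper instead argues by induction on the coconnectivity $n$: the case $n=0$ is exactly the classical Thomason--Trobaugh/EGA theorem, quoted as a black box, and the inductive step uses the deformation-theoretic fact that an $(n+1)$-coconnective scheme $X$ with $^{\leq n}X \simeq X_n$ is classified by a pair consisting of $\sF \in \QCoh(X_n)^{\heartsuit}[n+1]$ and a map $T^*X_n \ra \sF$. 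Such a pair descends to a finite stage $X_{n,i}$ of the presentation of $X_n$ by continuity of $\QCoh$ along cofiltered limits with affine transitions (\cite{TT}*{C.4}), and one then forms the corresponding square-zero extensions $X'_{n,i'}$ levelwise. This route deliberately avoids ever having to descend a derived gluing datum: the only thing that must be approximated at a finite stage is a single sheaf and a single morphism.

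The gap in your (iii) is therefore the step you yourself flag as ``the hard part.'' Two specific issues are left unaddressed. First, the pro-presentations of the $V_\alpha$ and of the intersections $V_{\alpha\beta}$ produced by Corollary \ref{cor:pro-schemes-n-coconnective-ft-are-all} are a priori unrelated; to realize the restriction maps $V_{\alpha\beta} \hra V_\alpha$ as \emph{open embeddings} of finite-type affines at each finite stage (rather than just as arbitrary maps) requires a separate spreading-out argument for open immersions of derived affines, not merely cocompactness of the targets. Second, ``only finitely many compatibility conditions'' undersells the derived content: the structure sheaves are connective $E_\infty$-rings, and the identification $\lim_i Z_i \simeq Z$ requires that the approximations be compatible on all homotopy groups, not just on a truncation. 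Both points can be handled, but the cleanest way to handle them is precisely the paper's Postnikov induction, which isolates all the derived content into the single descent statement \cite{TT}*{C.4} for a heart-valued sheaf. I would recommend restructuring your (iii) along those lines rather than attempting the diagonal argument directly.
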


\begin{proof}
We prove the result by induction on $n$. The case $n = 0$ is \cite{TT}*{Theorem C.9} (see also \cite{Raskin-D-modules}*{Theorem 3.4.1 (1)}).

Assume that the result holds for some $n$. Consider $X \in {^{\leq (n+1)}\mbox{Sch}}$ and let $X_n = {^{\leq n}X}$. By assumption we have
\[
X_n \simeq \lim_{I}X_{n,i}
\]
for $X_{n,i} \in \Schnft$ with affine connecting morphisms. By \cite{GR-II}*{Chapter 1, Proposition 5.4.2} for $n \geq 0$ the data of $X$ with an isomorphim ${^{\leq n}X} \simeq X_n$ is equivalent to the data of an object $\sF \in \QCoh(X_{n})^{\heartsuit}[n+1]$ and a morphism
\[
T^*X_n \ra \sF.
\]

By \cite{TT}*{C.4} there exists $i \in I$ and $\sF_i \in \QCoh(X_{n,i})^{\heartsuit}[n+1]$ such that $\sF \simeq f^{*}_{n,i}(\sF_i)$, where $f_{n,i}: X_n \ra X_{n,i}$. Moreover, the composite
\[
f^{*}_{n,i}T^*X_{n,i} \ra T^*X_n \ra f^{*}_{n,i}(\sF_i)
\]
determines a map $T^*X_{n,i} \ra \sF_i$ and we let
\[
X'_{n,i} := X_{n,i}\underset{(X_{n,i})_{\sF_{i}}}{\sqcup}X_{n,i}
\]
denote the correpsonding square-zero extension (see \cite{GR-II}*{Chapter 1, \S 5.1} for the notation). Thus, we claim that
\[
X' := \lim_{I'_{\leq i}}X'_{n,i'}
\]
where $I'_{\leq i} = \{i' \in I \; | \; i' \leq i\}$ is a square-zero extension of $X_n$ corresponding to $\sF$. In other words,
\[
X' \simeq X.
\]
\end{proof}

\subsection{A subclass of Ind-schemes}
\label{subsec:Tate-schemes}

In this section we define the main mathematical objects of this article, namely Tate affine schemes. These will constitute the building blocks of the theory that we are developing. We will also consider the non-affine version, that is Tate schemes. These objects are widely considered in the literature and form a subclass of the so-called ind-schemes.

\subsubsection{Tate affine schemes}

\paragraph{Affine ind-schemes}

\begin{defn}
\label{defn:ind-affine-schemes}
Let $\Ind(\Schaff)$ denote the subcategory of Ind-objects in affine schemes which can be written as a diagram $S \simeq \colim_I S_i$, such that for every $i \ra j$ in $I$ 
\[
f_{i,j}: S_i \ra S_j
\]
is a closed embedding, i.e.\ the underlying map of classical schemes is a closed embedding. We will refer to those as \emph{affine ind-schemes}.
\end{defn}

\begin{rem}
Notice that in Definition \ref{defn:ind-affine-schemes} the use of $\Ind(\Schaff)$ does not only mean that we considered Ind-objects in the category of affine schemes, we also imposed the condition of closed embedding in the connecting maps. We decided to omit this from the notation for clarity and to agree with other usages in the literature.
\end{rem}

Recall that given $S \in \Schaff$ one defines the subcategory
\[
\Coh(S) \subset \QCoh(S)
\]
as $\sF \in \QCoh(S)$ such that 
\begin{itemize}
    \item $\H^i(\sF) = 0$ for $|i| >> 0$;
    \item for all $i$, $\left.\H^i(\sF)\right|_{\classical{X}}$ is a coherent sheaf.
\end{itemize}

\paragraph{Tate affine schemes}
\label{par:Tate-affine-schemes}

\begin{defn}
\label{defn:Tate-affine-schemes}
An Ind-object in affine schemes is a \emph{Tate affine scheme} if it can be represented as
\[
S = \lim_{I} S_i
\]
where $I$ is a filtered diagram such that
\begin{enumerate}[a)]
    \item for every $i \ra j$ in $I$ 
    \[
    f_{i,j}: S_i \ra S_j
    \]
    is a closed embedding, i.e.\ the underlying map of classical schemes $\classical{S_i} \ra \classical{S_j}$ is a closed embedding;
    \item for every $i < j$ in $I$ the fiber 
    \[
    \Fib(\sO_{S_j} \ra \sO_{S_i})
    \]
    belongs to $\Coh(S_j)$.
\end{enumerate}

We let $\SchaffTate$ denote the subcategory of $\Ind(\Schaff)$ generated by the Tate affine schemes.
\end{defn}

\subsubsection{Tate schemes}

There are two natural ways to try to define the objects one would call Tate schemes. The first is as a subcategory of the category of indschemes, the second is to use the concept of prestacks of Tate type (see Section \ref{sec:prestacks-Tate}) and impose a condition similar to Definition \ref{defn:schemes}. We take the first option as our definition since many aspects of it are already developed in \cites{DG-indschemes,GR-II}. We will introduce the later objects, which we will call Zariski Tate stacks, in section \ref{subsec:Zariski-Tate-stacks} and compare them with the former.

\paragraph{ind-Schemes}

Recall that Gaitsgory and Rozenblyum (see \cite{GR-II}*{Chapter 2, \S 1.1}) defined indschemes as the full subcategory $\indSch$ of prestacks consisting of $\sX \in \PStk$ such that
\begin{enumerate}[(a)]
    \item $\sX$ is convergent;
    \item there exists a presentation
    \[
    \sX \simeq \colim_{I}Z_i
    \]
    where each $Z_i \in \Schqc$ and for every $i \ra j$ in $I$ the corresponding map $f_{i,j}:Z_i \ra Z_j$ is a closed embedding.
\end{enumerate}

\paragraph{Tate schemes}

\begin{defn}
\label{defn:Tate-schemes}
An indscheme $\sX \in \indSch$ is said to be a \emph{Tate scheme} if there exists a presentation
\[
\sX \simeq \colim_{I}Z_i
\]
such that for every $i \ra j$ in $I$ one has
\begin{enumerate}[(a)]
    \item $f_{i,j}: Z_i \ra Z_j$ is a closed embedding;
    \item the fiber
    \[
        \Fib(\sO_{Z_j} \ra \sO_{Z_i})
    \]
    belongs to $\Coh(Z_j)$.
\end{enumerate}

We will denote the category of Tate schemes by $\SchTate$.
\end{defn}

\begin{rem}
In the case that $k$ is an algebraically closed field, and restricting to classical schemes, i.e.\ $0$-coconnective\footnote{See \ref{subsubsec:coconnective-Tate-schemes} below for the notion of $0$-coconnective Tate scheme.}, Drinfeld in \cite{Drinfeld} already considered this class of schemes under the name of reasonable indschemes.
\end{rem}

\subsubsection{Morphisms between Tate schemes}

In this section we introduce the generalizations of some of the usual properties of morphisms of schemes. Most of them generalize in a standard way, the main exception is probably the notion of a proper map.

\begin{defn}
\label{defn:morphisms-of-Tate-schemes}
For $f:X \ra Y$ a map between Tate schemes, we say that $f$ is
\begin{enumerate}[(1)]
    \item \emph{flat} (resp.\ \emph{smooth, \'etale, open embedding, Zariski, surjective}) if for every $S \in (\Schaff)_{/Y}$, $S\underset{Y}{\times}X$ is an affine scheme and the corresponding map
    \[
    S\underset{Y}{\times}X \ra S
    \]
    is flat (resp.\ smooth, \'etale, open embedding, Zariski, surjective)\footnote{This is by definition the same as that $f$ is such a map when we consider $X$ and $Y$ as prestacks.}.
    \item a \emph{closed embedding} if the corresponding map of prestacks is a closed embedding\footnote{Recall a map $f: \sX_1 \ra \sX_2$ of prestacks is a \emph{closed} \emph{embedding} if for all $S_0 \in (\clSchaff)_{-/\classical{\sX_2}}$ the map
    \[
    \classical{\sX_1}\times_{\classical{\sX_2}}S_0 \ra S_0
    \]
    is a closed (resp. open) embedding.}.;
    \item a \emph{nil-isomorphism} if the corresponding map of prestacks is a nil-isomorphism\footnote{Recall a map $f: \sX_1 \ra \sX_2$ of prestacks is a \emph{nil-isomorphism} if for all $S \in (\Schaffred)_{-/\sX_2}$ the map induced between reduced prestacks
    \[
    \red{\left(\sX_1\times_{\sX_2}S\right)} \ra S
    \]
    is an isomorphism.}.
\end{enumerate}
\end{defn}

\begin{defn}
\label{defn:ind-proper-map}
Let $f:X \ra Y$ be a map between Tate schemes we say that $f$ is \emph{ind-proper} if given presentations $X \simeq \colim_{I}X_i$ and $Y \simeq \colim_{J}Y_j$ whenever one has a diagram as follows
    \[
    \begin{tikzcd}
    X_i \ar[r,"f_{ij}"] \ar[d] & Y_j \ar[d] \\
    X \ar[r,"f"] & Y
    \end{tikzcd}
    \]
    the map $f_{i,j}$ is proper.
\end{defn}

\begin{rem}
\label{rem:ind-proper-is-proper-for-the-presentation}
Definition \ref{defn:ind-proper-map} is equivalent to requiring that for any $S \in \Sch$ and a closed embedding $g:S \ra X$ the composite $f\circ g: S \ra Y$ is a proper map\footnote{Recall this means a \emph{proper} map of prestacks, i.e.\ $f:\sX \ra \sY$ is proper if for any $T \in \Schaff_{/\sY}$ the induced map $\sX\times_{\sY}T \ra T$ is proper.}.
\end{rem}

\begin{rem}
\label{rem:proper-is-ind-proper}
The condition of Definition \ref{defn:ind-proper-map} is not equivalent to asking that $f:X \ra Y$ is proper. Indeed, if $f$ is proper then $f$ is ind-proper, however the map 
\[
\mbox{Spf}(k[[t]]) \ra \Spec(k[t])
\] GR
is an ind-proper map, actually an ind-closed embedding, but is not proper.
\end{rem}

\subsubsection{Fiber products}

The following will be needed to define the $2$-category of correspondences of Tate affine schemes.

\begin{lem}
\label{lem:Tate-affine-schemes-has-fiber-products}
The category $\SchTate$ has fiber products.
\end{lem}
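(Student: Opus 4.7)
The plan is to compute the fiber product in the ambient category of prestacks, where finite limits exist, and to verify that the result admits a presentation satisfying Definition~\ref{defn:Tate-schemes}. Given morphisms $f: X \to Z$ and $g: Y \to Z$ of Tate schemes, fix presentations $X \simeq \colim_I X_i$, $Y \simeq \colim_J Y_j$, and $Z \simeq \colim_K Z_k$ satisfying conditions (a)--(b) of Definition~\ref{defn:Tate-schemes}; the aim is to exhibit $X \underset{Z}{\times} Y$ as an explicit colimit of schemes whose transition maps are finitely presented closed embeddings.

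The first step is to show that each composite $X_i \hra X \xrightarrow{f} Z$ factors through some $Z_{k(i)} \hra Z$, and similarly $Y_j \hra Y \xrightarrow{g} Z$ through some $Z_{l(j)}$. This is a standard property of indschemes (cf.\ \cite{GR-II}*{Chapter 2}): a map from a quasi-compact scheme into a filtered colimit of closed embeddings factors through a stage. Choosing, for each pair $(i,j) \in I \times J$, an index $m(i,j) \in K$ with $m(i,j) \geq k(i), l(j)$, I form the scheme-theoretic fiber product
\[
W_{i,j} := X_i \underset{Z_{m(i,j)}}{\times} Y_j
\]
and set $W := \colim_{I \times J} W_{i,j}$. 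A cofinality argument combined with commutation of filtered colimits of spaces with finite limits should identify $W$ with the fiber product $X \underset{Z}{\times} Y$ in prestacks.

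It remains to verify that this presentation exhibits $W$ as a Tate scheme. Since closed embeddings are stable under base change and composition, each transition $W_{i,j} \to W_{i',j'}$ is a closed embedding. For the coherent fiber condition, I reduce to a pair of adjacent stages: fixing $m = m(i,j) = m(i',j)$, one has
\[
\Fib(\sO_{W_{i',j}} \to \sO_{W_{i,j}}) \simeq \Fib(\sO_{X_{i'}} \to \sO_{X_i}) \underset{\sO_{Z_m}}{\otimes} \sO_{Y_j},
\]
which is the pullback along $W_{i',j} \to X_{i'}$ of a coherent sheaf and hence remains coherent because the closed embedding $X_i \hra X_{i'}$ is of finite presentation.

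The main obstacle I anticipate is the coherent fiber check under base change: while stability of closed embeddings is formal, preservation of coherence rests on the defining ideal of $X_i \hra X_{i'}$ being finitely presented, which is precisely what condition~(b) of Definition~\ref{defn:Tate-schemes} encodes. The universal property of the colimit and the cofinality manipulations are routine by comparison.
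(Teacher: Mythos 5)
Your overall strategy --- form the fiber product levelwise from presentations of $X$, $Y$, $Z$ and then verify conditions (a)--(b) of Definition~\ref{defn:Tate-schemes} on the resulting colimit --- is the same as the paper's. The two places where you diverge are exactly the two places where your argument has gaps.

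First, the construction of the diagram $(i,j)\mapsto W_{i,j}$. Choosing, for each pair $(i,j)$, an index $m(i,j)\in K$ through which $X_i\to Z$ and $Y_j\to Z$ both factor does not by itself produce a functor out of $I\times J$: the factorizations have to be chosen compatibly with all the transition maps of the three diagrams, and in the $\infty$-categorical setting coherently so, before $\colim_{I\times J}W_{i,j}$ is even defined. This is precisely what the paper outsources to the common-presentation statement \cite{Hennion-Tate}*{Proposition 1.2}, which replaces the three separate index categories by a single filtered $I$ indexing compatible presentations of $X$, $Y$ and $Z$ simultaneously. Without invoking such a result (or reproving it), the "routine cofinality manipulations" are hiding the actual work.

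Second, the coherence check. Your displayed identity only covers transitions in which the $X$-index moves while the $Y$-index and the base index $m$ stay fixed; a general transition $W_{i,j}\to W_{i',j'}$ also changes the base from $Z_{m(i,j)}$ to $Z_{m(i',j')}$, and that case is not reduced to the one you treat. More seriously, the conclusion "pullback of a coherent sheaf, hence coherent" does not follow: $\Fib(\sO_{X_{i'}}\to\sO_{X_i})\otimes_{\sO_{Z_m}}\sO_{Y_j}$ is the derived base change of a bounded almost perfect complex along a morphism that may have infinite Tor-amplitude, so it is connective and almost perfect but need not be cohomologically bounded --- and boundedness is part of the definition of $\Coh$ entering condition (b). (Already $k\otimes_{k[\epsilon]/(\epsilon^2)}k$ has Tor in every degree.) Finite presentation of the ideal of $X_i$ in $X_{i'}$ buys you almost perfectness of the fiber, not boundedness of its derived base change. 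The paper argues this step differently, by restricting to an affine open and exhibiting the fiber of the structure sheaves as a finite colimit of the three coherent fibers for $X$, $Y$, $Z$ in a stable category; whichever route you take, some input beyond "base change preserves coherence" is required here.
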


\begin{proof}
Suppose that 
\begin{equation}
    \label{eq:fiber-product-diagram}
    \begin{tikzcd}
    & X \ar[d,"f"] \\
    Z \ar[r,"g"] & Y
    \end{tikzcd}
\end{equation}
is a diagram of indschemes. Firstly, we notice that the fiber product as a prestack 
\[
Z\underset{Y}{\times}X
\]
exists and is convergent, since the truncation functor commutes with limits.

Secondly, by \cite{Hennion-Tate}*{Proposition 1.2} one can pick common presentations $Z \simeq \colim_{I}Z_i$, $X \simeq \colim_I X_i$ and $Y \simeq \colim_{I}Y_i$ and maps
\[
\begin{tikzcd}
& X_i \ar[d,"f_i"] \\
Z_i \ar[r,"g_i"] & Y_i
\end{tikzcd}
\]
whose colimit recovers (\ref{eq:fiber-product-diagram}). Since fiber products commute with filtered colimits one has an equivalence
\[
\colim_{I}Z_i\underset{Y_i}{\times}X_i \overset{\simeq}{\ra} Z\underset{Y}{\times}X.
\]
To simplify notation let
\[
W_i = Z_i\underset{Y_i}{\times}X_i
\]
for $i \in I$. Given any $i \ra j$ in $I$ we need to check that the map $W_i \ra W_j$ is a closed embedding and satisfy condition (b) from Definition \ref{defn:Tate-schemes}. The map $h_{i,j}: W_i \ra W_j$ is a closed embedding if for every open affine $U \ra W_j$ the map
\[
U\underset{W_j}{\times}W_i \ra U
\]
is closed. However, we notice that $U\underset{W_j}{\times}W_i$ can be obtained as the pullback
\begin{equation}
    \label{eq:affine-pullback-for-a-map-between-pullbacks}
    \begin{tikzcd}
    U\underset{W_j}{\times}W_i \ar[r] \ar[d] & X_i\underset{X_j}{\times}U \ar[d] \\
    Z_i \underset{Z_j}{\times}U \ar[r] & Y_i\underset{Y_j}{\times}U
    \end{tikzcd}
\end{equation}
which is a closed embedding since the maps
\[
Z_i \underset{Z_j}{\times}U \ra U, \;\; X_i\underset{X_j}{\times}U \ra U, \;\; \mbox{and} \;\; Y_i\underset{Y_j}{\times}U \ra U
\]
are closed embeddings.

Finally, we claim that for every $i \ra j$ in $I$ the fiber
\begin{equation}
\label{eq:coherent-fiber-for-pullback}
    \Fib(\sO_{W_j} \ra \sO_{W_i}) \in \Coh(W_j).
\end{equation}
Indeed, it is enough to check that for any affine open $U \subset W_j$
\[
\Fib(\sO_{W_j}(U) \ra \sO_{W_i}(U))
\]
is coherent over $U$. Since the category of quasi-coherent sheaves on $U$ is stable, finite limits and colimits commute, hence by (\ref{eq:affine-pullback-for-a-map-between-pullbacks}) one has a pushout diagram
\begin{equation}
\label{eq:fiber-as-a-pushout}
    \begin{tikzcd}
    \Fib(\sO_{Y_j}(U) \ra \sO_{Y_i}(U)) \ar[r] \ar[d] & \Fib(\sO_{Z_j}(U) \ra \sO_{Z_i}(U)) \ar[d] \\
    \Fib(\sO_{X_j}(U) \ra \sO_{X_i}(U)) \ar[r] & \Fib(\sO_{W_j}(U) \ra \sO_{W_i}(U)).
    \end{tikzcd}
\end{equation}
However, by definition the terms 
\[
\Fib(\sO_{Y_j}(U) \ra \sO_{Y_i}(U)), \;\; \Fib(\sO_{X_j}(U) \ra \sO_{X_i}(U)) \;\; \mbox{and} \;\; \Fib(\sO_{Z_j}(U) \ra \sO_{Z_i}(U))
\]
on (\ref{eq:fiber-as-a-pushout}) are coherent over $U$, hence their pushout is also coherent. This finishes the proof.
\end{proof}

\subsection{Pro-objects in commutative algebras}
\label{subsec:pro-commutative-algebras}

\subsubsection{Embedding of Tate affine schemes into Pro-commutative algebras}
\label{subsubsec:embedding-Tate-affine-into-Pro-commutative-algebras}

\paragraph{}

Recall that from our conventions one has that
\begin{equation}
\label{eq:defining-equivalence-affine-schemes}
\Schaffop \simeq \CAlg(\Vect^{\leq 0}_k).    
\end{equation}

\paragraph{}

Let $\Pro(\CAlg(\Vect^{\leq 0}_k))$ denote the category of Pro-objects in the category of commutative dg-algebras concentrated in degrees $\leq 0$. The category $(\SchaffTate)^{\rm op}$ naturally includes into the category $\Pro((\Schaff)^{\rm op})$, indeed one considers
\[
\SchaffTate \hra \Ind'(\Schaff)
\]
and passes to the opposite categories. Here $\Ind'(\Schaff)$ denotes the actual category of Ind-objects in affine schemes, and not what we denote by $\Ind(\Schaff)$ above. We also recall that
\[
(\Ind'(\Schaff))^{\rm op} \simeq \Pro((\Schaff)^{\rm op}).
\]

Thus, passing to Pro-objects in the equivalence (\ref{eq:defining-equivalence-affine-schemes}), one obtains a natural inclusion functor
\begin{equation}
    \label{eq:inclusion-Tate-affine-into-Pro-CAlg}
    \imath^{\rm geom}:\SchaffTateop \ra \Pro(\CAlg(\Vect^{\leq 0}_k)).
\end{equation}

Informally the functor $\imath^{\rm geom}$ is given by the assignment
\begin{align*}
    \imath^{\rm geom}: \;\; \SchaffTateop & \ra \Pro(\CAlg(\Vect^{\leq 0}_k)) \\
 S \simeq \colim_I S_i & \mapsto \lim_{I^{\rm op}}\Gamma(S_i,\sO_{S_i}).
\end{align*}

\paragraph{The essential image of $\imath^{\rm geom}$}
\label{par:definition-of-Geom-Alg}
One easily sees that the functor $\imath^{\rm geom}$ is fully faithful, and its essential image consists of Pro-object $\{A_{j}\}_{J} \in \Pro(\CAlg(\Vect^{\leq 0}_k))$ such that: 
\begin{enumerate}[a)]
    \item for each $i \ra j$ in $I$ the map $A_i \ra A_j$ induces a surjection 
    \[
    \H^0(A_i) \ra \H^0(A_j);
    \]
    \item and for each $i \ra j$ 
    \[
    \Fib(A_i \ra A_j) \in \Mod^{\rm fp}(A_i),
    \]
    where $\Mod^{\rm fp}(A_i)$ is the subcategory of finitely presented $A_i$-modules \footnote{As defined in \cite{HA}*{Definition 7.2.4.26}.}.
\end{enumerate}

We will denote by $\GeomAlg$ the full subcategory of $\Pro(\CAlg(\Vect^{\leq 0}_k))$ spanned by the objects in the essential image of $\imath^{\rm geom}$.

\begin{rem}
We refer the reader to the appendix \ref{sec:recollections-on-Pro-and-Tate} for a discussion of different categories of commutative algebras that one could have considered as the opposite of the category of Tate affine schemes and the relation between them.
\end{rem}

\subsubsection{Coconnective pro-commutative algebras}

We recall from \cite{GR-I}*{Chapter 2, \S 1.2.2} that one has fully faithful functor
\[
\CAlg(\Vect^{\geq -n,\leq 0}) \ra \CAlg(\Vect^{\leq 0})
\]
that admits a left adjoint $\tau^{\geq -n}$, such that the following diagram commutes
\[
\begin{tikzcd}
\CAlg(\Vect^{\leq 0}) \ar[d,"\obliv_{\CAlg}"] \ar[r,"\tau^{\geq -n}"] & \CAlg(\Vect^{\geq -n,\leq 0}) \ar[d,"\obliv_{\CAlg}"] \\
\Vect^{\leq 0} \ar[r,"\tau^{\geq -n}"] & \Vect^{\geq -n,\leq 0}
\end{tikzcd}
\]

By general properties\footnote{Indeed, any object in the essential image of 
\[
\CAlg(\Vect^{\geq -n,\leq 0}) \ra \Pro(\CAlg(\Vect^{\leq 0}))
\]
is cocompact, so by \cite{HTT}*{Proposition 5.3.5.11} (\ref{eq:inclusion-Pro-CAlg-truncated-into-all-Pro-CAlg}) is fully faithful. By \cite{HTT}*{Proposition 5.3.5.15} (\ref{eq:inclusion-Pro-CAlg-truncated-into-all-Pro-CAlg}) has a left adjoint if and only if the inclusion
\[
\CAlg(\Vect^{\geq -n,\leq 0}) \ra \CAlg(\Vect^{\leq 0})
\]
preserves finite limits, which follows from the fact that $\Vect^{\geq -n,\leq 0} \hra \Vect^{\leq 0}$ preserves finite limits.} of the Pro construction one obtains a functor
\begin{equation}
\label{eq:inclusion-Pro-CAlg-truncated-into-all-Pro-CAlg}
\Pro(\CAlg(\Vect^{\geq -n,\leq 0})) \ra \Pro(\CAlg(\Vect^{\leq 0})),
\end{equation}
which is fully faithful and has a left adjoint
\begin{equation}
\label{eq:truncation-Pro-CAlg}
\tau^{\geq -n}: \Pro(\CAlg(\Vect^{\leq 0})) \ra \Pro(\CAlg(\Vect^{\geq -n,\leq 0})).
\end{equation}

Moreover, the natural diagram
\begin{equation}
\label{eq:n-truncation-compatible-with-pro-calg-objects}
\begin{tikzcd}
\CAlg(\Vect^{\leq 0}) \ar[r,"\tau^{\geq -n}"] \ar[d] & \CAlg(\Vect^{\geq -n,\leq 0}) \ar[r] \ar[d] & \CAlg(\Vect^{\leq 0}) \ar[d] \\
\Pro(\CAlg(\Vect^{\leq 0})) \ar[r,"\tau^{\geq -n}"] & \Pro(\CAlg(\Vect^{\geq -n,\leq 0})) \ar[r] & \Pro(\CAlg(\Vect^{\leq 0}))
\end{tikzcd}    
\end{equation}
where the vertical arrows are the canonical inclusions commutes.

\subsubsection{Convergence of Tate affine schemes}
\label{subsubsec:convergence-Tate-affine-schemes}

\begin{defn}
\label{defn:Tate-affine'-scheme}
Following Definition \ref{defn:Tate-schemes} one could define a \emph{Tate affine' scheme} as a prestack $\sX$ which is convergent and can be represented as
\[
\sX \simeq \colim_I S_i
\]
where $S_i \in \Schaff$ and satisfy the conditions (a) and (b) above.
\end{defn}

\begin{lem}
\label{lem:Tate-affine-schemes-are-convergent}
Let $S$ be a Tate affine scheme, then considered as a prestack $S$ is convergent.
\end{lem}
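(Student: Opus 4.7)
The plan is to unpack the definition of convergence and reduce the statement to a colimit--limit commutation controlled by the closed-embedding structure of the Tate presentation.

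A prestack $\sX$ is convergent if for every $T \in \Schaff$ the canonical map $\sX(T) \to \lim_n \sX({^{\leq n}T})$ is an equivalence. Writing $S \simeq \colim_I S_i$ as in Definition \ref{defn:Tate-affine-schemes}, the value of $S$ as a prestack at $T$ is the filtered colimit of spaces $S(T) \simeq \colim_I \Maps(T, S_i)$, and similarly $S({^{\leq n}T}) \simeq \colim_I \Maps({^{\leq n}T}, S_i)$. Each $S_i$ being an affine scheme is itself convergent, so $\Maps(T, S_i) \simeq \lim_n \Maps({^{\leq n}T}, S_i)$. The convergence of $S$ therefore reduces to showing the commutation
\[
\colim_I \lim_n \Maps({^{\leq n}T}, S_i) \simeq \lim_n \colim_I \Maps({^{\leq n}T}, S_i).
\]

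To establish this commutation I would exploit that the transition maps $f_{ij}: S_i \to S_j$ are closed embeddings. For any $T' \in \Schaff$, the induced map $\Maps(T', S_i) \to \Maps(T', S_j)$ is $(-1)$-truncated, i.e.\ it identifies $\Maps(T', S_i)$ with a union of connected components of $\Maps(T', S_j)$. Given a compatible tower $(y_n)_n$ in the right-hand side, the strategy is to inductively pick representatives $f_n: {^{\leq n}T} \to S_{i_n}$ with $i_n \leq i_{n+1}$ in $I$, and with $f_n$ restricting to $f_{n-1}$ composed with the transition $f_{i_{n-1}, i_n}$. The monomorphism property ensures such a sequence of representatives is essentially unique; one then assembles them into a coherent lift $T \to S_i$ for some $i \in I$ by invoking the convergence of a single $S_i$.

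The main obstacle is to stabilize the sequence of indices $(i_n)$: a priori, the compatible tower $(f_n)$ might require ever-larger members of the diagram, preventing a clean factorization through a single $S_i$. The coherent-fiber condition in Definition \ref{defn:Tate-affine-schemes}(b) is the key input here, encoding a finite-presentation property on the closed embeddings in the presentation that enables such stabilization and places Tate affine schemes within the class of affine indschemes for which convergence is a structural feature (compare \cite{DG-indschemes, GR-II}).
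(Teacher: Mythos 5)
Your reduction of the statement to the colimit--limit exchange
\[
\colim_I \lim_n \Maps({^{\leq n}T}, S_i) \;\simeq\; \lim_n \colim_I \Maps({^{\leq n}T}, S_i)
\]
is correct and does isolate the crux, but the mechanism you propose for establishing it does not work, and the exchange itself is left unproven. First, the claim that a closed embedding $f_{i,j}\colon S_i \to S_j$ induces a $(-1)$-truncated map $\Maps(T', S_i) \to \Maps(T', S_j)$ is false for derived test schemes: ``closed embedding'' here only requires surjectivity on $\H^0$ of the underlying classical rings, which does not make $f_{i,j}$ a monomorphism of derived affine schemes. For instance, for $\Spec(k) \hra \bA^1 = \Spec(k[x])$ (a closed embedding with finitely presented, hence coherent, ideal, so a legitimate transition map in a Tate presentation) and $T' = \Spec(B)$ with $\H^{-1}(B) \neq 0$, the source of $\Maps(T',\Spec(k)) \to \Maps(T',\bA^1)$ is a point while the homotopy fiber over its image has $\pi_0 \cong \H^{-1}(B) \neq 0$, so the map is not $(-1)$-truncated. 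Second, even granting some uniqueness of representatives, the ``stabilization of the indices $(i_n)$'' that you yourself flag as the main obstacle is exactly the content of the lemma, and you supply no argument for it; the appeal to the coherent-fiber condition (b) of Definition \ref{defn:Tate-affine-schemes} is a misdirection, since neither condition (a) nor condition (b) plays any role in the paper's proof.

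The paper avoids the exchange altogether by working on the algebra side: it passes to $\imath^{\rm geom}(S) = \lim_{I^{\rm op}} A_i$ in $\Pro(\CAlg(\Vect^{\leq 0}))$, where the Tate presentation is an inverse limit rather than a filtered colimit, uses that each $A_i \simeq \lim_{n}\tau^{\geq -n}(A_i)$ (convergence of affine schemes, i.e.\ Postnikov completeness), commutes the two limits --- which is automatic --- and identifies the result with $\lim_n \tau^{\geq -n}(\imath^{\rm geom}(S))$ using that truncation of Pro-algebras is computed termwise, i.e.\ diagram (\ref{eq:n-truncation-compatible-with-pro-calg-objects}). If you insist on arguing at the level of mapping spaces, you need an actual input making the tower $n \mapsto \Maps(A_i, \tau^{\geq -n}B)$ interact well with the filtered colimit over $I$ (for example a connectivity or stabilization estimate on its transition maps that is uniform in $i$); monomorphy plus ``finite presentation of the ideals'' does not provide one.
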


\begin{proof}
Let $\imath^{\rm geom}(S) = \{A_i\}_{I^{\rm op}} \in \Pro(\CAlg(\Vect^{\leq 0}))$ be the pro-object in commutative algebras corresponding to $S$. We notice that by (\ref{eq:n-truncation-compatible-with-pro-calg-objects}) one has the equivalences
\begin{align*}
    \imath^{\rm geom}(S) & \simeq \lim_{I^{\rm op}}A_i \\
     & \simeq \lim_{I^{\rm op}}\lim_{n \geq 0} \tau^{\geq -n}(A_i) \\
     & \simeq \lim_{n \geq 0} \lim_{I^{\rm op}} \tau^{\geq -n}(A_i) \\
     & \simeq \lim_{n \geq 0}\tau^{\geq n}(\imath^{\rm geom}(S)),
\end{align*}
where the isomorphism from the first to the second line is the statement that any affine scheme is convergent (\cite{GR-I}*{Chapter 2, \S 1.4.3}).
\end{proof}

\begin{cor}
\label{cor:Tate-affine-and-Tate-affine'-agree}
For $\sX$ a prestack the following are equivalent:
\begin{enumerate}[(i)]
    \item there exists $S \in \SchaffTate$ such that
    \[
    \sX \simeq \Hom_{\Schaff}(-,S);
    \]
    \item $\sX$ satisfies Definition \ref{defn:Tate-affine'-scheme}.
\end{enumerate}
\end{cor}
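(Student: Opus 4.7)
The proof boils down to matching definitions via the formula $\Hom_{\Schaff}(T, \colim_I S_i) \simeq \colim_I \Hom_{\Schaff}(T, S_i)$ that characterizes the Hom-functor into an Ind-colimit in $\Ind(\Schaff)$. In particular, this identifies the realization of a Tate affine scheme as a prestack with the prestack colimit of any of its presentations.

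For the direction (i) $\Rightarrow$ (ii), I would take $S \in \SchaffTate$ with a presentation $S \simeq \colim_I S_i$ as in Definition \ref{defn:Tate-affine-schemes}. The Hom-formula above identifies $\sX := \Hom_{\Schaff}(-, S)$ with the prestack colimit $\colim_I S_i$, so the very same diagram provides a presentation of $\sX$ in $\PStk$, and conditions (a) and (b) on the connecting maps transfer verbatim. Convergence of $\sX$ as a prestack is then precisely the content of Lemma \ref{lem:Tate-affine-schemes-are-convergent}. Equivalently, writing $\sX(T) \simeq \Hom_{\Pro(\CAlg)}(\imath^{\rm geom}(S), \mathcal{O}(T))$ and using that $\mathcal{O}(T)$ is convergent while $\Hom$ commutes with limits in the target, one reads off $\sX(T) \simeq \lim_n \sX(\tau^{\geq -n} T)$.

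For the direction (ii) $\Rightarrow$ (i), I would take a convergent prestack $\sX$ together with a presentation $\sX \simeq \colim_I S_i$ satisfying conditions (a) and (b) of Definition \ref{defn:Tate-affine'-scheme}. The same diagram, viewed now in $\Ind(\Schaff)$, defines an object $S := \colim_I S_i$ whose connecting maps satisfy exactly the defining conditions of $\SchaffTate$ from Definition \ref{defn:Tate-affine-schemes}, hence $S \in \SchaffTate$. Applying the Hom-formula once more gives $\Hom_{\Schaff}(-, S) \simeq \colim_I S_i \simeq \sX$ as prestacks.

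The main obstacle is really already absorbed into Lemma \ref{lem:Tate-affine-schemes-are-convergent}: once one knows that the realization of any Tate affine scheme is a convergent prestack, the corollary reduces to a structural matching of Ind-colimits with prestack colimits via Yoneda. The convergence hypothesis in (ii) is thus not an additional restriction but a compatibility condition ensuring that no extra prestack data exists beyond what is captured by the underlying Ind-diagram; any prestack representable by a Tate affine scheme is automatically convergent, so imposing convergence in (ii) is exactly what is needed to make the equivalence hold on the nose.
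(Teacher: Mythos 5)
Your proof is correct and follows the route the paper intends: the identification $\Hom_{\Ind(\Schaff)}(T,\colim_I S_i)\simeq\colim_I\Hom_{\Schaff}(T,S_i)$ for $T\in\Schaff$ matches the Ind-presentation with the prestack colimit, conditions (a) and (b) transfer verbatim, and convergence is exactly Lemma \ref{lem:Tate-affine-schemes-are-convergent} (the paper states the corollary as an immediate consequence of that lemma without writing out a proof). Nothing further is needed.
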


Henceforth we will use the term Tate affine scheme to refer to any of the two equivalent conditions of Corollary \ref{cor:Tate-affine-and-Tate-affine'-agree}. The following is a consequence of the equivalence of both notions.

\begin{cor}
The natural map $\Schaff \hra \Sch$ gives rise to a canonical inclusion
\begin{equation}
    \label{eq:inclusion-Tate-affine-schemes-into-Tate-schemes}
    \SchaffTate \hra \SchTate.
\end{equation}
\end{cor}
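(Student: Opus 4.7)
The plan is to show that any Tate affine scheme, when viewed as a prestack via the Yoneda embedding, automatically satisfies the definition of a Tate scheme, and then to observe that fully faithfulness is immediate from the fact that both categories sit inside $\PStk$.

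First I would take $S \in \SchaffTate$ and use Corollary \ref{cor:Tate-affine-and-Tate-affine'-agree} to identify it with the convergent prestack admitting a presentation $S \simeq \colim_I S_i$, where each $S_i \in \Schaff$, each connecting map $f_{i,j}: S_i \to S_j$ is a closed embedding, and $\Fib(\sO_{S_j} \to \sO_{S_i}) \in \Coh(S_j)$. Since every affine scheme is in particular a quasi-compact scheme, the inclusion $\Schaff \hra \Schqc$ lets me reinterpret the same colimit diagram as one taking values in $\Schqc$ with closed-embedding transitions. Combined with convergence, this places $S$ inside $\indSch$ per the definition of Gaitsgory--Rozenblyum recalled above.

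Next I would verify the Tate fiber condition from Definition \ref{defn:Tate-schemes}(b). This condition is literally the same as the one from Definition \ref{defn:Tate-affine-schemes}(b), since it is phrased in terms of the coherent-sheaf fiber $\Fib(\sO_{S_j} \to \sO_{S_i})$, which is unchanged under viewing the $S_i$ as quasi-compact schemes rather than as affine schemes. Hence $S \in \SchTate$, which defines the functor (\ref{eq:inclusion-Tate-affine-schemes-into-Tate-schemes}) on objects; functoriality is inherited from the functoriality of $\Schaff \hra \Sch$ under passage to Ind-objects.

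Finally, for fully faithfulness, I would observe that by Corollary \ref{cor:Tate-affine-and-Tate-affine'-agree} the category $\SchaffTate$ embeds fully faithfully into $\PStk$ (as the Tate affine prestacks), while $\SchTate \subset \indSch$ is by construction a full subcategory of $\PStk$ as well. Since the functor we built sends $S \in \SchaffTate$ to the same underlying prestack, the induced map on mapping spaces is the identity, giving full faithfulness. There is no substantive obstacle here; the only point requiring care is the identification of the Ind-colimit of affine schemes in $\Ind(\Schaff)$ with the prestack-level colimit, which is precisely what Corollary \ref{cor:Tate-affine-and-Tate-affine'-agree} provides.
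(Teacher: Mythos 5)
Your proof is correct and follows the route the paper intends: the paper states this corollary without proof as an immediate consequence of Corollary \ref{cor:Tate-affine-and-Tate-affine'-agree}, and your argument simply spells out that consequence (reading the affine presentation as a presentation by quasi-compact schemes, noting that condition (b) is verbatim the same in Definitions \ref{defn:Tate-affine-schemes} and \ref{defn:Tate-schemes}, and deducing full faithfulness from the common fully faithful embedding into $\PStk$). No gaps.
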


\subsection{Properties of Tate schemes}
\label{subsec:properties-Tate-schemes}

In this section we develop the analogue of certain properties of derived affine schemes and derived schemes to the cases of Tate affine schemes and Tate schemes. A reference for a treatment of these properties for affine schemes and derived is \cite{GR-I}*{Chapter 2}. Most of these conditions are straightforward special cases of the corresponding condition for indschemes (see \cite{GR-II}*{Chapter 2, \S 1}). We record them here for our convenience.

\subsubsection{Coconnectivity conditions}
\label{subsubsec:coconnective-Tate-schemes}

We define the subcategory of \emph{$n$-coconnective Tate affine schemes} $\SchaffTaten$ as the subcategory of $\Ind(\Schaff)$ whose objects $T$ can be represented as
\[
T \simeq \colim_{I}T_i
\]
where each $T_i \in {^{\leq n}\Schaff}$.

\paragraph{$n$-coconnective Pro-commutative algebras}

Here is another definition one could have considered. We say that $S \in \SchaffTate$ is an \emph{$n$-coconnective Pro-commutative algebra} if $\imath^{\rm geom}(S)$ belongs to the essential image of (\ref{eq:inclusion-Pro-CAlg-truncated-into-all-Pro-CAlg}).

The following is proved in the same way as Lemma \ref{lem:Tate-affine-schemes-are-convergent}.

\begin{lem}
Suppose that $A \in \Pro(\CAlg(\Vect^{\leq 0}))$ belongs to the essential image of $\imath^{\rm geom}$, then $\tau^{\geq -n}(A)$ belongs to the essential image of $\left.\imath^{\rm geom}\right|_{\SchaffTaten}$.

In particular, a Tate affine scheme $S$ is $n$-coconnective if and only if $\imath^{\rm geom}(S)$ is an $n$-coconnective Pro-commutative algebra.
\end{lem}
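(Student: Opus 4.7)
The plan is to proceed exactly as in Lemma \ref{lem:Tate-affine-schemes-are-convergent}: apply the compatibility diagram (\ref{eq:n-truncation-compatible-with-pro-calg-objects}) to compute $\tau^{\geq -n}A$ termwise as a Pro-object, and then verify that the result lies in the essential image of $\imath^{\rm geom}$ restricted to $n$-coconnective Tate affine schemes.

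Write $A \simeq \{A_i\}_{I^{\rm op}}$, corresponding under $\imath^{\rm geom}$ to a Tate affine scheme $S \simeq \colim_I S_i$ with $S_i = \Spec(A_i)$. By (\ref{eq:n-truncation-compatible-with-pro-calg-objects}), $\tau^{\geq -n}A$ is computed termwise as $\{\tau^{\geq -n}A_i\}_{I^{\rm op}}$, with each $\tau^{\geq -n}A_i \in \CAlg(\Vect^{\geq -n,\leq 0})$. The natural candidate for the desired $n$-coconnective Tate affine scheme is
\[
T := \colim_I \Spec(\tau^{\geq -n}A_i) = \colim_I {^{\leq n}S_i},
\]
which is tautologically a colimit of $n$-coconnective affine schemes.

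It then remains to verify the two conditions of Definition \ref{defn:Tate-affine-schemes} for $T$. Condition (a), that each transition map is a closed embedding, reduces to surjectivity on $\H^0$ of the algebra transition maps between the $\tau^{\geq -n}A_i$'s. Since $\tau^{\geq -n}$ preserves $\H^0$ for $n \geq 0$, this is immediate from the corresponding property for $\{A_i\}$. Condition (b), that each fiber is coherent, is the step requiring real work, because $\tau^{\geq -n}$ is a left adjoint and hence does not preserve fiber sequences. The plan is to compare the fiber $G$ in the truncated system to the original fiber $F$ (given to be coherent) via long exact sequences: using that $\H^{-k}(\tau^{\geq -n}A_?)$ equals $\H^{-k}(A_?)$ for $0 \leq k \leq n$ and vanishes for $k > n$, one reads from the LES of $G$ that $G \in \Vect^{\geq -n,\leq 0}$, with $\H^{-k}(G) \simeq \H^{-k}(F)$ for $0 \leq k < n$, and $\H^{-n}(G)$ equal to the kernel of the relevant map on $\H^{-n}$, which in turn is a quotient of $\H^{-n}(F)$ (as seen from $F$'s own LES). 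Since each $\H^{-k}(F)$ is finitely generated over $\H^0$ by coherence of $F$, the same holds for every $\H^{-k}(G)$; hence $G \in \Coh({^{\leq n}S_j})$ and the first assertion follows, with $\imath^{\rm geom}(T) \simeq \tau^{\geq -n}A$.

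The main obstacle is exactly the failure of $\tau^{\geq -n}$ to preserve fibers, which is sidestepped by the cohomological LES comparison above rather than any direct manipulation of fiber sequences. For the ``in particular'' statement, the forward implication is immediate from any presentation by $n$-coconnective $S_i$'s; for the converse, if $\imath^{\rm geom}(S)$ is $n$-coconnective then the unit $\imath^{\rm geom}(S) \to \tau^{\geq -n}\imath^{\rm geom}(S)$ is an equivalence, so by the main claim this common object lies in the essential image of $\left.\imath^{\rm geom}\right|_{\SchaffTaten}$, and full faithfulness of $\imath^{\rm geom}$ forces $S$ itself to be $n$-coconnective.
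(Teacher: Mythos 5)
Your proof is correct and follows the same route the paper indicates, namely computing $\tau^{\geq -n}$ termwise via the diagram (\ref{eq:n-truncation-compatible-with-pro-calg-objects}) as in Lemma \ref{lem:Tate-affine-schemes-are-convergent} and then checking the conditions of Definition \ref{defn:Tate-affine-schemes} for the truncated system. The long-exact-sequence verification that condition (b) survives truncation is exactly the detail the paper leaves implicit, and your handling of it (identifying $\H^{-n}$ of the new fiber as a quotient of $\H^{-n}$ of the old one) is sound.
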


\paragraph{Eventually coconnective Tate affine schemes}

We say that $T \in \SchaffTate$ is \emph{eventually coconnective} if $T \in \SchaffTaten$ for some $n$. We will denote by $\SchaffTateconv$ the category of eventually coconnective Tate affine schemes.

\paragraph{}

We recall that $n$-coconnective indscheme is an $n$-coconnective prestack
\[
Z \in \PStkn := \Fun((\Schaffn)^{\rm op},\Spc)
\]
that can be represented as
\[
Z \simeq \colim_{I}Z_i
\]
where each $Z_i \in \Schnqc$ and for every $i \ra j$ in $I$ the map $Z_i \ra Z_j$ is a closed embedding.

We denote the category of $n$-coconnective indschemes by $\indSchn$.

\paragraph{$n$-coconnective Tate schemes}

The category of $n$-coconnective Tate schemes $\SchTaten$ is defined as the full subcategory of $\indSchn$ of indschemes admitting a presentation that satisfy condition (b) from Definition \ref{defn:Tate-schemes}.

\begin{lem}
\label{lem:n-coconnective-Tate-affine-schemes-has-fiber-products}
The category $\SchTaten$ has fiber products.
\end{lem}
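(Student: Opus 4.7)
The plan is to essentially follow the argument of Lemma \ref{lem:Tate-affine-schemes-has-fiber-products}, adapted to the $n$-coconnective setting, taking care that fiber products in $\SchTaten$ must be computed in the $n$-coconnective world. First I would form the fiber product $W := Z\underset{Y}{\times}X$ at the level of prestacks (it exists as such because limits are computed pointwise), and note that it is automatically $n$-coconnective: if $X,Y,Z$ are $n$-coconnective then for any $S \in \Schaffn$ the space $\Hom(S,W)$ is the limit of $n$-truncated spaces, hence $n$-truncated, so $W$ lies in $\PStkn$. Convergence is inherited as well since truncation and limits commute. This shows the candidate $W$ at least lies in the ambient category where $\indSchn$ is a full subcategory.

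Next I would produce a presentation of $W$ satisfying the two conditions of Definition \ref{defn:Tate-schemes}. As in Lemma \ref{lem:Tate-affine-schemes-has-fiber-products}, by \cite{Hennion-Tate}*{Proposition 1.2} one can pick common indexing $I$ and compatible presentations $X\simeq \colim_I X_i$, $Y\simeq \colim_I Y_i$, $Z\simeq \colim_I Z_i$, with each $X_i,Y_i,Z_i \in \Schnqc$. Set
\[
W_i := Z_i\underset{Y_i}{\times}X_i,
\]
where now the fiber product is formed inside $\Schn$; here one must note that the ordinary derived fiber product need not be $n$-coconnective, but its $n$-truncation gives the fiber product in $\Schn$, which remains quasi-compact. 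Since filtered colimits of closed embeddings of $n$-coconnective schemes commute with fiber products in $\PStkn$, one gets $W \simeq \colim_I W_i$.

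I would then verify the two conditions in Definition \ref{defn:Tate-schemes}. For condition (a) (closed embeddings), the verification is identical to the proof of Lemma \ref{lem:Tate-affine-schemes-has-fiber-products}: given any affine open $U\to W_j$, the pullback $U\underset{W_j}{\times}W_i$ fits into a Cartesian square whose structure maps are closed embeddings, and being a closed embedding is a condition on classical truncations, hence insensitive to whether we are working in $\Sch$ or $\Schn$. For condition (b), the pushout diagram (\ref{eq:fiber-as-a-pushout}) still computes $\Fib(\sO_{W_j}\to \sO_{W_i})$ after applying the truncation $\tau^{\geq -n}$ to everything, since $\tau^{\geq -n}$ preserves finite colimits. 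The three input fibers are coherent on $Y_j,X_j,Z_j$ respectively, and coherence is preserved by $\tau^{\geq -n}$ and by pushout along the maps into $\sO_{W_j}$, so the output fiber is coherent on $W_j$.

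The main subtlety, and the step I would be most careful with, is checking that replacing the derived fiber product $Z_i\times_{Y_i}X_i$ by its $n$-truncation $W_i$ does not destroy the coherent fiber condition in (b). The point is that $\tau^{\geq -n}$ is a left adjoint, hence commutes with the finite colimit giving the pushout square (\ref{eq:fiber-as-a-pushout}), and the coherent sheaf category is stable under $\tau^{\geq -n}$ on $n$-coconnective schemes. Everything else is a direct transcription of the previous proof, so once this point is dealt with the lemma follows.
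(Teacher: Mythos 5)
Your proposal is correct and takes essentially the same route as the paper, whose own proof is a two-line reduction to Lemma \ref{lem:Tate-affine-schemes-has-fiber-products}: form levelwise fiber products in $\Schn$ (i.e.\ the $\tau^{\geq -n}$-truncations of the derived pullbacks) and pass to the filtered colimit. One small imprecision in the step you flag as the main subtlety: the fiber $\Fib(\sO_{W_j}\to\sO_{W_i})$ of the \emph{truncated} structure sheaves is a fiber in the stable category $\QCoh(W_j)$, so it is not literally obtained by applying $\tau^{\geq -n}$ to the pushout square (\ref{eq:fiber-as-a-pushout}) ($\tau^{\geq -n}$ preserves colimits but not fibers, and the truncated category is not stable); the coherence you need nonetheless follows from the long exact sequence comparing the truncated and untruncated fibers, which shows they agree in degrees $>-n$, give a coherent kernel in degree $-n$, and vanish below.
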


\begin{proof}
We notice that in the proof of Lemma \ref{lem:Tate-affine-schemes-has-fiber-products} we can consider levelwise fiber products of $n$-coconnective affine schemes, those have fiber products given by the truncation of the pullback in the category of affine schemes. Thus, considering the filtered colimit of those we obtain the fiber product in the category of $n$-coconnective Tate affine schemes.
\end{proof}

We will say that a Tate scheme $Z$ is \emph{eventually coconnective} if $Z \in \SchTaten$ for some $n$, and we will denote by $\SchTateconv$ the category of such Tate schemes.

\subsubsection{Finiteness conditions}
\label{subsubsec:finiteness-conditions}

\paragraph{Tate affine schemes of finite type}

\begin{defn}
\label{defn:n-coconnective-Tate-affine-schemes-finite-type}
For $S \in \SchaffTaten$ for some $n$, i.e.\ $S$ is an eventually coconnective Tate affine scheme. We say that $S$ is of \emph{finite type}, if there exists a presentation
\[
S \simeq \colim_I S_i
\]
where each $S_i \in \Schaffnft$, i.e.\ each $S_i$ is an $n$-coconnective affine scheme of finite type.
\end{defn}

We will denote by $\SchaffTatenft$ the category of $n$-coconnective Tate affine schemes of finite type.

\paragraph{Prestacks locally almost of finite type}

Following \cite{GR-I}*{Chapter 2, \S 1.7} we recall the definition.
\begin{defn}
\label{defn:prestacks-laft}
A prestack $\sX$ is said to be \emph{locally almost of finite type} if
\begin{enumerate}[(i)]
    \item $\sX$ is convergent;
    \item for every $n \geq 0$ one has
    \[
    \LKE_{\Schaffnftop \hra \Schaffnop}(\left.{^{\leq n}\sX}\right|_{\Schaffnftop}) \overset{\simeq}{\ra} {^{\leq n}\sX},
    \]
    i.e.\ ${^{\leq n}\sX}$ is a \emph{prestack locally of finite type}.
\end{enumerate}
\end{defn}

Let $\PStklaft$ denote the category of prestacks locally almost of finite type. For indschemes, we will define
\[
\indSchlaft := \indSch \cap \PStklaft
\]
the subcategory of indschemes locally almost of finite type.

\paragraph{Tate affine scheme almost of finite type}

When restricted to Tate affine schemes the conditions Proposition \ref{prop:prestack-laft-is-colimit-of-aft-schemes} are compatible with those of Definition \ref{defn:n-coconnective-Tate-affine-schemes-finite-type}.

\begin{lem}
\label{lem:defn-Tate-affine-scheme-aft}
Given an object $S \in \SchaffTate$ the following are equivalent:
\begin{enumerate}[(i)]
    \item $S$ is locally almost of finite type as a prestack;
    \item for each $n \geq 0$, ${^{\leq n}S}$ is of finite type;
    \item there exists a presentation
    \[
    S \simeq \colim_I S_i
    \]
    where each $S_i \in \Schaffaft$.
\end{enumerate}
\end{lem}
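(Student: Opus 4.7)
The plan is to show the triangle (iii)$\Rightarrow$(ii), (ii)$\Rightarrow$(iii), and (i)$\Leftrightarrow$(ii), with (ii) serving as the bridging condition.

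First I would prove the easy direction (iii)$\Rightarrow$(ii). Given a presentation $S\simeq\colim_I S_i$ with $S_i\in\Schaff_{\rm aft}$, the $n$-truncation functor is a left adjoint and so commutes with filtered colimits, producing ${^{\leq n}S}\simeq\colim_I {^{\leq n}S_i}$. By Definition \ref{defn:affine-scheme-aft} each ${^{\leq n}S_i}$ lies in $\Schaffnft$. One then checks that the closed-embedding condition and the coherence-of-fiber condition from Definition \ref{defn:Tate-affine-schemes} pass to truncations (both are preserved by $\tau^{\geq -n}$, since truncation of a closed embedding is a closed embedding and coherence passes to $\H^0$-fibers). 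This exhibits ${^{\leq n}S}$ as of finite type in the sense of Definition \ref{defn:n-coconnective-Tate-affine-schemes-finite-type}.

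Next, (i)$\Leftrightarrow$(ii). By Definition \ref{defn:prestacks-laft}, $S$ is laft iff it is convergent and each ${^{\leq n}S}$ is locally of finite type as a prestack. Convergence of $S$ is Lemma \ref{lem:Tate-affine-schemes-are-convergent}. It therefore suffices to show that for any $T\in\SchaffTaten$, $T$ is locally of finite type as a prestack iff $T$ is of finite type in the sense of Definition \ref{defn:n-coconnective-Tate-affine-schemes-finite-type}. The $\Leftarrow$ direction follows because a filtered colimit (in prestacks) of finite-type affine schemes is automatically locally of finite type (left Kan extension commutes with colimits). For $\Rightarrow$, I would take any presentation $T\simeq\colim_I T_i$ with $T_i\in\Schaffn$ and closed embedding connecting maps. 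Each $T_i$ is cocompact in $\Schaffn$ by Proposition \ref{prop:finite-type-is-compact}, hence writable as a cofiltered limit of $n$-coconnective affine schemes of finite type by Corollary \ref{cor:pro-schemes-n-coconnective-ft-are-all}. The local-finite-type hypothesis on $T$ allows any map from a finite-type affine scheme into $T$ to factor through some $T_i$ and then through some finite-type approximation of $T_i$; rearranging this double diagram and restricting to the cofinal subcategory where the inherited transition maps remain closed embeddings produces the required finite-type presentation of $T$.

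Finally (ii)$\Rightarrow$(iii) is the main obstacle, since one must coherently assemble the level-wise finite-type presentations of the truncations into a single presentation by aft affine schemes. The strategy is to proceed by induction on $n$, using the square-zero extension description of Proposition \ref{prop:n-coconnective-schemes-are-limits-of-ft-n-coconnective} (adapted from schemes to Tate affine schemes). Starting with a presentation ${^{\leq 0}S}\simeq\colim_I S_{0,i}$ by classical affine schemes of finite type, one inductively lifts each ${^{\leq n}S}\simeq\colim_I S_{n,i}$ to ${^{\leq (n+1)}S}\simeq\colim_I S_{n+1,i}$ by realizing ${^{\leq (n+1)}S}$ as a square-zero extension of ${^{\leq n}S}$ by the sheaf $\H^{-(n+1)}(\sO_S)$ (this requires tracking that the relevant sheaf is a filtered colimit of coherent sheaves pulled back from each $S_{n,i}$, which follows from condition (b) of Definition \ref{defn:Tate-affine-schemes}). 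Keeping a common index set $I$ throughout the induction, set $S_i:=\lim_n S_{n,i}$, which is affine because the connecting maps in the tower are affine and the constituents are compact at each truncation level. Each $S_i$ is then aft by construction, and the equivalence $\colim_I S_i\simeq S$ is checked level-wise on truncations using the convergence of $S$.

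The main technical difficulty is the inductive choice of a common index $I$ in (ii)$\Rightarrow$(iii), together with the compatibility of the closed-embedding and coherence conditions under the square-zero lifts; all the remaining steps are formal consequences of the adjunctions and results from \S\ref{subsec:pro-schemes-of-ft}.
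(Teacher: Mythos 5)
Your overall skeleton is sound, but the proof as written has a genuine gap exactly at the implication that carries all the content, namely passing from the abstract prestack-level condition to a concrete finite-type presentation (the $\Rightarrow$ direction inside your (i)$\Leftrightarrow$(ii), and the reindexing in (ii)$\Rightarrow$(iii)). In the step where you write each $T_i$ as a cofiltered limit of finite-type schemes and then propose ``restricting to the cofinal subcategory where the inherited transition maps remain closed embeddings,'' nothing guarantees that such a subcategory is cofinal, or even nonempty: if $T_i \to T_j$ is a closed embedding and $T_{i,\alpha}$, $T_{j,\beta}$ are finite-type approximations, the induced map $T_{i,\alpha}\to T_{j,\beta}$ is in general neither a closed embedding nor finitely presented, and arranging this requires modifying the approximations (e.g.\ passing to scheme-theoretic images) and using condition (b) of Definition \ref{defn:Tate-affine-schemes} together with Noetherianity of finite-type rings to descend the ideals. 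This is precisely the Drinfeld/Gaitsgory--Rozenblyum approximation argument and cannot be waved through. Likewise, in (ii)$\Rightarrow$(iii) the assertion that one keeps ``a common index set $I$ throughout the induction'' is the crux: the finite-type presentation of ${}^{\leq (n+1)}S$ supplied by hypothesis (ii) comes with its own index category, and interleaving it compatibly with the level-$n$ presentation for \emph{every} $n$ simultaneously (so that $\lim_n S_{n,i}$ is a well-defined aft affine scheme) needs an explicit cofinality/diagonal argument that you only name as ``the main technical difficulty'' without supplying.

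For comparison, the paper's proof is a one-liner taking a very different route: it treats the equivalence of (i) with the existence of an aft presentation as the Gaitsgory--Rozenblyum criterion recorded as Proposition \ref{prop:prestack-laft-is-colimit-of-aft-schemes} --- which is exactly the hard implication you are attempting to reprove --- and then notes that the remaining equivalence with (ii) is formal from the commutation of truncation and left Kan extension with filtered colimits together with the definition of $\Schaffaft$ (an affine scheme is aft iff all of its truncations are of finite type). Your easy directions, (iii)$\Rightarrow$(ii) and (ii)$\Rightarrow$(i), agree with this. If you want a self-contained argument rather than a citation, the missing ingredient is the Noetherian-approximation lemma described above; otherwise, invoking Proposition \ref{prop:prestack-laft-is-colimit-of-aft-schemes} (specialized to the affine case) closes the loop (i)$\Rightarrow$(iii) and lets you discard the two problematic steps entirely.
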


\begin{proof}
This is an immediate consequence of the fact that left Kan extensions and colimits commute and the definition of $\Schaffaft$.
\end{proof}

 A Tate affine scheme $S$ satisfying the conditions of Lemma \ref{lem:defn-Tate-affine-scheme-aft} will be called \emph{almost of finite type}. We will denote by $\SchaffTateaft$ the category of Tate affine schemes almost of finite type.

\paragraph{Tate schemes locally almost of finite type}

A Tate scheme $Z$ is said to be \emph{locally almost of finite type} if it is locally almost of finite type as a prestack. By \cite{GR-II}*{Chapter 2, \S 1.7} one has the following

\begin{prop}
\label{prop:prestack-laft-is-colimit-of-aft-schemes}
For $Z$ a Tate scheme the following are equivalent
\begin{enumerate}[(i)]
    \item $Z$ is a prestack locally almost of finite type;
    \item there exists a presentation
    \[
    Z \simeq \colim_I Z_i
    \]
    where each $Z_i \in \Schaft$.
\end{enumerate}
\end{prop}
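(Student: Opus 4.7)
The plan is to reduce both implications to the analogous result for ind-schemes in \cite{GR-II}*{Chapter 2, \S 1.7}, supplemented by a verification that the Tate condition (coherence of the fibers of the structure sheaves) is preserved under the relevant approximations.

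For the direction (ii) $\Ra$ (i), I would argue that the property of being locally almost of finite type is preserved under filtered colimits in $\PStk$. Given $Z \simeq \colim_I Z_i$ with each $Z_i \in \Schaft$, each $Z_i$ is laft as a prestack essentially by definition (by passing from the affine case via a Zariski cover). Since left Kan extension $\LKE_{\Schaffnftop \hra \Schaffnop}$ commutes with colimits and truncation ${^{\leq n}(-)}$ commutes with filtered colimits in $\PStk$, we obtain ${^{\leq n}Z} \simeq \colim_I {^{\leq n}Z_i}$, and each term on the right is of finite type; hence $Z$ is laft.

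For the direction (i) $\Ra$ (ii), which is the substantive part, I would start with an arbitrary Tate presentation $Z \simeq \colim_J Z_j$ satisfying conditions (a) and (b) of Definition \ref{defn:Tate-schemes}. Each $Z_j$ is quasi-compact and the composite $Z_j \hra Z$ factors through the laft indscheme $Z$; by the indscheme version of the statement \cite{GR-II}*{Chapter 2, \S 1.7}, one can refine this to a presentation $Z \simeq \colim_I W_i$ with $W_i \in \Schaft$. Concretely, for each $n \geq 0$, apply Proposition \ref{prop:n-coconnective-schemes-are-limits-of-ft-n-coconnective} to write each ${^{\leq n}Z_j}$ as a cofiltered limit of $n$-coconnective schemes of finite type, and assemble these across $n$ using convergence of $Z_j$; then pass to a cofinal subdiagram compatible with the transition maps $Z_j \ra Z_{j'}$ in $J$.

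The main obstacle is to ensure that the resulting almost-of-finite-type presentation still satisfies condition (b) of Definition \ref{defn:Tate-schemes}, namely that each fiber $\Fib(\sO_{W_{i'}} \ra \sO_{W_i})$ is coherent. Here one uses that the refined schemes $W_i$ sit between the original $Z_j$ along closed embeddings, that the original fibers $\Fib(\sO_{Z_{j'}} \ra \sO_{Z_j})$ are coherent by hypothesis, and that coherence is stable under the relevant pullbacks and truncations; this essentially forces coherence of the refined fibers by a cofinality argument. With this verified, the refined presentation $Z \simeq \colim_I W_i$ meets all the conditions of (ii).
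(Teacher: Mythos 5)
The paper offers no argument for this proposition beyond the citation of \cite{GR-II}*{Chapter 2, \S 1.7}, so your overall strategy --- reduce both directions to the indscheme statement of Gaitsgory--Rozenblyum --- coincides with the paper's. Your (ii) $\Rightarrow$ (i) direction is fine: restriction to $n$-coconnective affines is computed pointwise and hence commutes with the filtered colimit, left Kan extension commutes with colimits, and convergence is part of the definition of an indscheme.

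For (i) $\Rightarrow$ (ii) there are two problems. First, the ``concretely'' step misapplies Proposition \ref{prop:n-coconnective-schemes-are-limits-of-ft-n-coconnective}: that result presents an $n$-coconnective quasi-compact scheme as a cofiltered \emph{limit} of finite-type schemes along \emph{affine} transition maps. This is the Pro-direction; it produces neither closed subschemes of $Z$ nor a colimit presentation. The correct mechanism, as in GR, is to use the laft hypothesis itself (each ${}^{\leq n}Z$ is a left Kan extension from finite-type affines) to extract a cofinal family of aft closed subschemes of $Z$. Second, and more seriously, the verification of condition (b) for the refined presentation --- the only content not already covered by the citation --- is asserted rather than proved, and the sketch as written does not go through: interleaving $W_i \hra Z_j \hra W_{i'}$ gives you coherence of $\Fib(\sO_{Z_{j'}} \ra \sO_{Z_j})$ but no control on $\Fib(\sO_{Z_j} \ra \sO_{W_i})$, since $Z_j$ need not be almost of finite type and the ideal of $W_i$ in $Z_j$ can fail to be finitely generated. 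A workable route argues directly from aft-ness of the $W_i$: the classical truncation of $W_{i'}$ is Noetherian and each cohomology sheaf of $\sO_{W_i}$ is finitely generated over $\H^0(\sO_{W_{i'}})$, so every cohomology sheaf of $\Fib(\sO_{W_{i'}} \ra \sO_{W_i})$ is coherent. What then still requires an argument is the \emph{boundedness} of this fiber, since aft schemes may have unbounded structure sheaves; for that you must genuinely use the original Tate presentation (or reduce to eventually coconnective terms and pass to the limit over truncations). You should either supply that step or make explicit that (ii) is only claiming an indscheme presentation by aft schemes, not a Tate presentation.
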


We will denote by $\SchTatelaft$ the category of Tate schemes locally almost of finite type.

\paragraph{Stability of finiteness condition}

\begin{lem}
\label{lem:n-coconnective-Tate-affine-schemes-of-ft-has-fiber-products}
The category $\SchaffTatenft$ has fiber products.
\end{lem}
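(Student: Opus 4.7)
The plan is to imitate the proof of Lemma \ref{lem:Tate-affine-schemes-has-fiber-products} at the level of presentations, but with extra care to verify that the finite-type condition propagates through the construction. Given a cospan
\[
S \ra T \la U
\]
in $\SchaffTatenft$, the first step is to choose common filtered presentations $S \simeq \colim_I S_i$, $T \simeq \colim_I T_i$, $U \simeq \colim_I U_i$ with all $S_i, T_i, U_i \in \Schaffnft$, together with compatible maps $S_i \ra T_i \la U_i$ that recover the given cospan in the colimit. The existence of such a common indexing category $I$ follows from \cite{Hennion-Tate}*{Proposition 1.2}, applied to the presentations afforded by Definition \ref{defn:n-coconnective-Tate-affine-schemes-finite-type}; replacing $I$ by a cofinal subcategory if necessary, we may ensure the indexing objects land in $\Schaffnft$.

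Next, I would form the levelwise fiber products $W_i := S_i \underset{T_i}{\times} U_i$ inside $\Schaffn$. Since $\Schaffnft$ is stable under finite limits (finite-type $k$-algebras are closed under tensor product over a finite-type base, and the tensor product of $n$-coconnective cdgas with finite-type $\H^0$ and finitely generated negative cohomology modules remains $n$-coconnective and of finite type), each $W_i$ is again an $n$-coconnective affine scheme of finite type. I would then take
\[
W := \colim_I W_i
\]
in $\Ind(\Schaff)$. To conclude that $W \in \SchaffTatenft$, one must verify the Tate conditions for the connecting maps $W_i \ra W_j$: closed embedding and coherent fiber of structure sheaves. Both verifications are exactly those already carried out in the proof of Lemma \ref{lem:Tate-affine-schemes-has-fiber-products} (using the pullback diagram analogous to (\ref{eq:affine-pullback-for-a-map-between-pullbacks}) and the pushout expression for fibers analogous to (\ref{eq:fiber-as-a-pushout})); since $\Schaffnft$ is closed under fiber products the resulting $W_i$ are automatically $n$-coconnective and of finite type, so the presentation exhibits $W$ as an object of $\SchaffTatenft$.

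Finally I would check the universal property: since filtered colimits commute with finite limits in the category of prestacks, the natural map $W \ra S \times_T U$ computed in $\PStk$ is an equivalence, and by the previous paragraph this prestack is represented by an object of $\SchaffTatenft$. Hence $W$ is the required fiber product. The main obstacle is the initial reduction to a common indexing category whose vertices lie in $\Schaffnft$; once that is in place the rest is a matter of invoking stability of $\Schaffnft$ under ordinary fiber products and quoting the coherence/closed-embedding analysis already carried out in Lemma \ref{lem:Tate-affine-schemes-has-fiber-products}.
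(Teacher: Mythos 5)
Your proof follows essentially the same route as the paper: form levelwise fiber products using the stability of $\Schaffnft$ under fiber products, and then quote the closed-embedding and coherence verifications from Lemma \ref{lem:Tate-affine-schemes-has-fiber-products}. The only point to tighten is that the levelwise fiber product in $\Schaffn$ is the \emph{truncation} $\tau^{\geq -n}$ of the pullback of affine schemes (the derived tensor product of $n$-coconnective algebras need not stay $n$-coconnective), so the universal property of $W$ should be stated against $n$-coconnective Tate affine schemes rather than against the fiber product in all of $\PStk$.
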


\begin{proof}
We argue in a similar way as in the proof of Lemma \ref{lem:n-coconnective-Tate-affine-schemes-has-fiber-products} and use the fact that the subcategory $\Schaffnft \subset \Schaffn$ is stable under fiber products. 
\end{proof}

\begin{rem}
\label{rem:Tate-schemes-aft-stable-under-fiber-products}
The subcategory $\SchTatelaft \subset \SchTate$ is stable under fiber products. Indeed, it is enough to check that the subcategory of prestacks locally almost of finite type is stable under fiber products. That follows from the fact that conditions (i) and (ii) in Definition \ref{defn:prestacks-laft} are stable under fiber products. Similarly the subcategory $\SchaffTateaft \subset \SchaffTate$ is stable under fiber products.
\end{rem}

\subsection{Relation to other definitions}
\label{subsec:other-definitions}

In this section using the notion of a pseudo-tensor category we formulate the notion of commutative algebra objects in the category of Tate objects. We also compare the notion of Tate affine schemes to reasonable ind-schemes (as defined by \cite{Drinfeld}) and locally compact indschemes (as defined by \cite{Kapranov-Vasserot}).

\subsubsection{Digression: commutative algebra objects in pseudo-tensor categories}
\label{subsubsec:CAlg-objects-in-pseudo-tensor-categories}

We let $\Vect_k$ denote the stable $\infty$-category of complex of $k$-vector spaces. By definition this category is equivalent to $\Ind(\Vect^{\rm fd}_k)$ and it is endowed with a symmetric monoidal structure which commutes with colimit on each variable.

Consider $\ProVect_k$ the category of Pro-objects in $\Vect_k$, this is endowed with a tensor product
\[
\otimes: \ProVect \times \ProVect \ra \ProVect
\]
that extends the tensor product of $\Vect$ and is characterized by the fact that $(-)\otimes(-)$ commutes with limits on both variables.

The following is an $\infty$-categorical generalization of the notion of a pseudo-tensor structure as considered in \cite{BD}*{\S 1.1}.

\begin{defn}
For $\sC$ an $\infty$-category a \emph{pseudo-tensor structure} is the data of symmetric monoidal strucutre $p_{\sD}:\sD^{\otimes} \ra \Finp$, a co-Cartesian fibration $p_{\sC}: \sC^{\otimes} \ra \Finp$ and a map $f: \sC^{\otimes} \ra \sD^{\otimes}$ such that
\begin{enumerate}[(i)]
    \item 
    \[
    \sC^{\ast} \simeq \sD^{\ast} (\simeq \ast)
    \]
    \item for all $n \geq 1$ the functor
    \[
    f^{<n>}: \sC^{<n>} \ra \sD^{<n>}
    \]
    is fully faithful.
\end{enumerate}
\end{defn}

\begin{rem}
Notice this differs from the definition of a symmetric monoidal category by dropping the requirement that for any $<n>$ one has an equivalence
\[
\sC^{<n>} \overset{\simeq}{\ra} \prod_{i \in <n>^{\circ}}\sC^{\{\ast,i\}}.
\]
\end{rem}

Given $\sC$ a pseudo-tensor category a \emph{commutative algebra object in $\sC$} is the data of a map $a: \Finp \ra \sC^{\otimes}$ of co-Cartesian fibrations over $\Finp$ which takes inert morphisms in $\Finp$ to inert morphisms in $\sC^{\otimes}$\footnote{Those are morphisms $f:X \ra X'$ in $\sC^{\otimes}$ such that (i) $f$ is $p_{\sC}$-co-Cartesian and $p_{\sC}(f)$ is an inert morphism of $\Finp$.}. We denote the category of such by $\CAlg(\sC)$ and one clearly has a map
\[
\CAlg(\sC) \ra \CAlg(\sD)
\]
which is fully faithful.

\begin{rem}
Differently than the situation for symmetric monoidal categories, it is not the case that the data of a commutative algebra object in $\sC$ is equivalent to that of a commutative monoid ($\Finp$-monoid) in $\sC$, as considered in \cite{HA}*{Definition 2.4.2.1}. Indeed, the notion of a commutative monoid a priori doesn't even make sense, since we don't have the identification between $\sC^{<n>}$ and $\prod^n_{i=1}\sC^{<1>}$.
\end{rem}

\paragraph{Construction of pseudo-tensor structure}

\begin{lem}
\label{lem:construction-of-pseudo-tensor-structure}
Let $p_{\sD}: \sD^{\otimes} \ra \Finp$ be a symmetric monoidal structure on $\sD$ consider $\sC \subset \sD$ a full subcategory, then there exists a co-Cartesian fibration and a $\sC^{\otimes} \ra \Finp$ and a map $f: \sC^{\otimes} \ra \sD^{\otimes}$ realizing $\sC$ as a pseudo-tensor category.
\end{lem}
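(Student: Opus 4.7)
The plan is to construct $\sC^\otimes$ as a full subcategory of $\sD^\otimes$. Specifically, I would let $\sC^\otimes \subseteq \sD^\otimes$ consist of those objects $X$ lying over some $<n> \in \Finp$ such that, under the Segal decomposition $\sD^{<n>} \simeq \prod_{i \in <n>^\circ} \sD$, the object $X$ corresponds to a tuple $(X_i)_{i \in <n>^\circ}$ with each $X_i$ in $\sC \subseteq \sD$. Then $f: \sC^\otimes \hookrightarrow \sD^\otimes$ is taken to be the inclusion and $p_\sC := p_\sD \circ f$.

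With this construction, conditions (i) and (ii) are essentially automatic. The fiber $\sC^\ast$ over the basepoint $<0>$ is a single point (there is no tuple to constrain), so $\sC^\ast \simeq \sD^\ast \simeq \ast$. For $n \geq 1$, the functor $f^{<n>}$ is by construction the full inclusion of the subcategory of $\sD^{<n>}$ spanned by tuples of objects of $\sC$, hence is fully faithful.

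The substance of the lemma, and the main obstacle, lies in verifying that $p_\sC$ is a co-Cartesian fibration. Given $\alpha: <n> \ra <m>$ in $\Finp$ and an object $X = (X_1,\ldots,X_n) \in \sC^{<n>}$, I would take the $p_\sD$-co-Cartesian lift $\tilde\alpha$ of $\alpha$ starting at $X$ and check whether its target lies in $\sC^\otimes$. When $\alpha$ is \emph{inert}, the target is a sub-tuple of $X$, whose components remain in $\sC$; hence the inert co-Cartesian lifts in $\sD^\otimes$ restrict to co-Cartesian lifts in $\sC^\otimes$. When $\alpha$ is \emph{active}, however, the target in $\sD^\otimes$ is a tensor product of components of $X$, which in general need not lie in $\sC$.

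To handle this, I would interpret the map $p_\sC$ in the operadic sense, following Lurie's notion of $\infty$-operad in \cite{HA}*{\S 2.1}: co-Cartesian lifts are demanded only for inert morphisms, while the remaining morphisms of $\sC^\otimes$ simply record the spaces of multi-morphisms $\Maps_{\sC^\otimes}((X_1,\ldots,X_n),Y)$ as full subspaces of the corresponding mapping spaces in $\sD^\otimes$. Under this reading the construction above yields the desired pseudo-tensor structure on $\sC$, and the remark following the definition—that only the Segal equivalence $\sC^{<n>} \simeq \prod \sC^{<1>}$ is dropped relative to a genuine symmetric monoidal structure—is exactly what fails for this full subcategory, confirming that the construction does not yield more than a pseudo-tensor structure.
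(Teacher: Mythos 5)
Your construction of $\sC^{\otimes}$ is the same as the paper's: the full subcategory of $\sD^{\otimes}$ spanned by objects $Y_1\otimes\cdots\otimes Y_n$ with all $Y_i \in \sC$, with $f$ the inclusion and $p_{\sC} = p_{\sD}\circ f$; conditions (i) and (ii) are verified identically. Where you diverge is on the co-Cartesian fibration condition, and your analysis is the more careful one. The paper disposes of this step by asserting that a full subcategory inclusion $\sC^{\otimes}\ra\sD^{\otimes}$ is a trivial Kan fibration and that co-Cartesian fibrations are closed under composition; the first assertion is false in general (a full subcategory inclusion is typically not even surjective on objects), and indeed the conclusion fails: as you observe, a $p_{\sC}$-co-Cartesian lift of an \emph{active} morphism $\langle n\rangle \ra \langle 1\rangle$ starting at $(X_1,\ldots,X_n)$ would have to corepresent $Z\mapsto \Maps_{\sD}(X_1\otimes\cdots\otimes X_n,Z)$ on $\sC$, which need not exist unless $\sC$ is closed under (or reflects) tensor products. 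Your resolution --- keeping only the inert lifts, i.e.\ exhibiting $\sC^{\otimes}\ra\Finp$ as a full suboperad in the sense of \cite{HA}*{\S 2.2.1} --- is the standard and correct fix, and it is exactly what the paper's subsequent definition of $\CAlg(\sC)$ actually uses, since that definition only ever invokes co-Cartesian lifts over inert morphisms. The one thing you should make explicit is that this proves a mildly corrected statement: the phrase ``co-Cartesian fibration'' in the lemma (and in the definition of a pseudo-tensor structure) must be read as ``$\infty$-operad over $\Finp$,'' since the construction genuinely does not produce co-Cartesian lifts over active morphisms without further hypotheses on $\sC$.
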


\begin{proof}
This is motivated by the construction in the beginning of Section 2.2.1 from \cite{HA}. Let $\sC^{\otimes}$ denote the full subcategory of $\sC^{\otimes}$ spanned by those objects $X \in \sD^{\otimes}$ having the form $Y_1\otimes\cdots \otimes Y_n$ for $\{Y_i \in \sC\}_{i\in <n>}$.

We claim that $\sC^{\otimes} \ra \Finp$ is a co-Cartesian fibration. This is evident since a full subcategory $\sC^{\otimes} \ra \sD^{\otimes}$ is a trivial Kan fibration, thus also a co-Cartesian fibration and co-Cartesian fibrations are stable under composition.
\end{proof}

\subsubsection{Tate affine schemes as commutative algebras in $\ProVect^{\leq 0}$}

In this section we embed the category of Tate affine schemes into the category of commutative algebra objects in $\Pro(\Vect^{\leq 0})$.

By definition the canonical inclusion
\[
\Vect^{\leq 0} \ra \Pro(\Vect^{\leq 0})
\]
is a symmetric monoidal functor. Thus, since a commutative algebra object in $\Vect^{\leq 0}$ is a map of $\infty$-operads $\Finp \ra (\Vect^{\leq 0})^{\otimes}$ over $\Finp$, composition with $\imath^{\otimes}$ gives a map of $\infty$-operads over $\Finp$. So we obtain the map
\[
\CAlg(\Vect^{\leq 0}) \ra \CAlg(\Pro(\Vect^{\leq 0})).
\]
Since the category $\CAlg(\Pro(\Vect^{\leq 0}))$ admits cofitered limits\footnote{Recall those are computed in the underlying category $\Pro(\Vect^{\leq 0})$.} one obtains the functor

\begin{equation}
    \label{eq:pro-calg-to-calg-pro}
    \varphi: \Pro(\CAlg(\Vect^{\leq 0})) \ra \CAlg(\Pro(\Vect^{\leq 0}))
\end{equation}

\begin{lem}
\label{lem:Pro-CAlg-into-CAlg-Pro-is-fully-faithful}
The functor (\ref{eq:pro-calg-to-calg-pro}) is fully faithful.
\end{lem}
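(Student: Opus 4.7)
The plan is to invoke the universal property of the Pro-construction. By construction, $\varphi$ is the essentially unique cofiltered-limit-preserving extension of $F := \CAlg(\imath): \CAlg(\Vect^{\leq 0}) \ra \CAlg(\Pro(\Vect^{\leq 0}))$ induced by the symmetric monoidal fully faithful inclusion $\imath: \Vect^{\leq 0} \hra \Pro(\Vect^{\leq 0})$; this extension exists since cofiltered limits in the target are computed via the conservative forgetful functor to $\Pro(\Vect^{\leq 0})$, as noted just before the statement. Using that mapping into a limit is the limit of mapping spaces on the one hand, and the standard Pro-formula $\Hom_{\Pro(\sC)}(\{A_i\}, \{B_j\}) = \lim_j \colim_i \Hom_{\sC}(A_i, B_j)$ on the other, fully faithfulness of $\varphi$ reduces to showing, for every $B \in \CAlg(\Vect^{\leq 0})$ and every cofiltered system $\{A_i\}_I$ in $\CAlg(\Vect^{\leq 0})$, that the canonical comparison
\[
\colim_i \Hom_{\CAlg(\Vect^{\leq 0})}(A_i, B) \ra \Hom_{\CAlg(\Pro(\Vect^{\leq 0}))}(\lim_i F(A_i), F(B))
\]
is an equivalence.

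The first step is fully faithfulness of $F$ itself, which follows from fully faithfulness of $\imath$: at the level of the co-Cartesian fibrations $(\Vect^{\leq 0})^{\otimes} \ra \Pro(\Vect^{\leq 0})^{\otimes}$ over $\Finp$, each fibre is a finite product of copies of $\imath$, hence fully faithful, and spaces of commutative algebra morphisms are assembled from these fibrewise mapping spaces as spaces of sections preserving inert edges (using the pseudo-tensor formalism of \S \ref{subsubsec:CAlg-objects-in-pseudo-tensor-categories}). This reduces the problem to the cocompactness-type statement $\Hom_{\CAlg(\Pro(\Vect^{\leq 0}))}(\lim_i F(A_i), F(B)) \simeq \colim_i \Hom_{\CAlg(\Pro(\Vect^{\leq 0}))}(F(A_i), F(B))$, which is weaker than full cocompactness in that one need only test against cofiltered systems of the shape $\{F(A_i)\}_I$.

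For this cocompactness, I would unpack the $\CAlg$-mapping space as a limit over the indexing category for commutative algebra morphisms, with pieces of the form $\Hom_{\Pro(\Vect^{\leq 0})}(X^{\otimes n}, Y^{\otimes n})$. Two key inputs hold simultaneously: first, the tensor product in $\Pro(\Vect^{\leq 0})$ commutes with cofiltered limits in each variable, so $(\lim_i \imath(A_i))^{\otimes n} = \lim_i \imath(A_i^{\otimes n})$; and second, $F(B)^{\otimes n} = F(B^{\otimes n})$ lies in the image of $\imath$, hence is cocompact in $\Pro(\Vect^{\leq 0})$. Therefore each fibrewise mapping space admits the desired identification as a filtered colimit. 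The main obstacle, and where the argument requires genuine work, is to globalise these arity-wise cocompactness statements to a single statement about commutative algebra maps, since this a priori requires commuting a filtered colimit past the large limit defining $\CAlg$-mapping spaces. The approach is to exploit the rigidity of the target: a commutative algebra map into $F(B)$ amounts to an underlying morphism of Pro-objects into $\imath(B)$ (which by underlying cocompactness factors through some $F(A_{i_0})$) together with higher coherence data all taking values in the image of $\imath$; stage-wise refinement of the coherence tower, together with a cofinality argument, then produces a single cofiltered stage realising the entire coherent datum.
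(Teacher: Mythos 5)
Your overall route coincides with the paper's: both reduce, via the universal property of the Pro-construction (\cite{HTT}*{Proposition 5.3.5.11}, which you essentially re-derive by hand using $\Hom_{\Pro}(\{A_i\},\{B_j\}) \simeq \lim_j\colim_i\Hom(A_i,B_j)$), to (a) full faithfulness of the restriction of $\varphi$ to $\CAlg(\Vect^{\leq 0})$ and (b) a cocompactness statement for objects in its essential image; and both deduce (a) from full faithfulness of $\imath:\Vect^{\leq 0}\hra\Pro(\Vect^{\leq 0})$ by passing to the co-Cartesian fibrations over $\Finp$. Your observation that only cocompactness tested against cofiltered systems of the form $\{F(A_i)\}$ is needed is correct and is a mild sharpening, not a different method.

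The step that, as written, would fail is the last one: globalising the arity-wise identifications $\Hom_{\Pro(\Vect^{\leq 0})}((\lim_i\imath(A_i))^{\otimes n},\imath(B)^{\otimes n})\simeq\colim_i\Hom(\imath(A_i)^{\otimes n},\imath(B)^{\otimes n})$ to the $\CAlg$-mapping space by ``stage-wise refinement of the coherence tower together with a cofinality argument.'' The space of commutative algebra maps is a limit over a diagram that is not finite (the coherence data is infinite), and a filtered colimit cannot be commuted past an infinite limit by finding, one coherence at a time, a stage at which it is realised: each successive coherence may force a further refinement, and there is no single index $i_0$ at which the entire coherent datum is witnessed. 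This is exactly the standard failure of filtered colimits to commute with infinite limits, and your sketch does not resolve it. The paper avoids unwinding the coherence tower altogether: both (a) and (b) are read off from the description of $\CAlg$-morphism spaces as spaces of sections of the fibration over $\Finp$ composed with $\imath^{\otimes}$ (the reasoning of \cite{HA}*{Remark 2.1.3.8} together with \cite{HTT}*{Proposition 2.4.4.2 and Corollary 2.4.4.4}), so the comparison is made once at the level of the fibrations rather than coherence by coherence. To complete your argument you would need either to argue at that level, or to exhibit the $\CAlg$-mapping space as a limit over a diagram against which the relevant filtered colimit does commute; the cofinality language alone does not supply this.
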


\begin{proof}
According to \cite{HTT}*{Proposition 5.3.5.11} $\varphi$ is fully faithful if and only if its restriction
\[
\left.\varphi\right|_{\CAlg\left(\Vect^{\leq 0}\right)} : \CAlg(\sC) \ra \CAlg(\Pro(\Vect^{\leq 0}))
\]
is fully faithful and its essential image consists of cocompact objects.

To check that $\left.\varphi\right|_{\CAlg(\sC)}$ is fully faithful we notice that this map is encoded by composition with $\imath^{\otimes}_{\Vect^{\leq 0}}$. By a similar reasoning as \cite{HA}*{Remark 2.1.3.8} (see Proposition 2.4.4.2 and Corollary 2.4.4.4 of \cite{HTT} for details.) one proves that $\imath^{\otimes}_{\Vect^{\leq 0}}$ is fully faithful if and only if the functor $\imath_{\Vect^{\leq 0}}$ is fully faithful and this last statement is evident.

A similar argument works to check that $\left.\varphi\right|_{\CAlg(\Vect^{\leq 0})}$ sends objects of $\CAlg(\sC)$ to cocompact objects of $\CAlg(\Pro(\Vect^{\leq 0}))$.
\end{proof}

Notice that we obtain a functor
\[
\imath^{\rm geom'}:\SchaffTateop \ra \CAlg(\Pro(\Vect^{\leq 0}))
\]
given as the composite
\[
\SchaffTateop \overset{\imath^{\rm geom}}{\ra} \Pro(\CAlg(\Vect^{\leq 0})) \ra \CAlg(\Pro(\Vect^{\leq 0}))
\]
where the functor $\imath^{\rm geom}$ is defined in \S \ref{subsubsec:embedding-Tate-affine-into-Pro-commutative-algebras}. The following is a consequence of Lemma \ref{lem:Pro-CAlg-into-CAlg-Pro-is-fully-faithful} and the fact that $\imath^{\rm geom}$ is fully faithful.

\begin{cor}
There is a fully faithful functor
\begin{equation}
    \label{eq:embedding-Tate-affine-schemes-into-CAlg-Pro}
    \imath^{\rm geom'}: \SchaffTateop \ra \CAlg(\Pro(\Vect^{\leq 0})).
\end{equation}

We denote by $\GeomAlg'$ the subcategory of $\CAlg(\Pro(\Vect^{\leq 0}))$ obtained as the essential image of $\imath^{\rm geom'}$.
\end{cor}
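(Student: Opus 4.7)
The plan is straightforward, since both ingredients are already on the table. I would write $\imath^{\rm geom'}$ explicitly as the composite
\[
\SchaffTateop \xrightarrow{\imath^{\rm geom}} \Pro(\CAlg(\Vect^{\leq 0})) \xrightarrow{\varphi} \CAlg(\Pro(\Vect^{\leq 0})),
\]
and reduce the claim to fully faithfulness of each factor. For the second factor this is exactly Lemma~\ref{lem:Pro-CAlg-into-CAlg-Pro-is-fully-faithful}. For the first factor I would invoke the observation from \S\ref{par:definition-of-Geom-Alg}, tracing it back to the fully faithful inclusion $\SchaffTate \hra \Ind'(\Schaff)$, passing to opposite categories to obtain $(\Ind'(\Schaff))^{\rm op} \simeq \Pro(\Schaffop)$, and then applying the defining equivalence (\ref{eq:defining-equivalence-affine-schemes}) pointwise to identify the target with $\Pro(\CAlg(\Vect^{\leq 0}))$.

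With both factors fully faithful, the standard fact that the composition of fully faithful functors of $\infty$-categories is fully faithful yields the desired embedding, and $\GeomAlg'$ is defined tautologically as its essential image. There is no real obstacle here; the statement is phrased as a corollary precisely because the substantive work has been done upstream in Lemma~\ref{lem:Pro-CAlg-into-CAlg-Pro-is-fully-faithful}. The only point I would sanity-check is the compatibility of commutative algebra structures along the composite, namely that unwinding the construction one recovers the informal recipe
\[
\bigl(S \simeq \colim_I S_i\bigr) \;\longmapsto\; \lim_{I^{\rm op}} \Gamma(S_i,\sO_{S_i}) \;\in\; \CAlg(\Pro(\Vect^{\leq 0})).
\]
This is immediate because $\varphi$ is built from the symmetric monoidal inclusion $\Vect^{\leq 0} \hra \Pro(\Vect^{\leq 0})$ followed by cofiltered limits in $\CAlg(\Pro(\Vect^{\leq 0}))$, which is exactly the procedure that transports an object of $\Pro(\CAlg(\Vect^{\leq 0}))$ to its $\CAlg(\Pro(\Vect^{\leq 0}))$-incarnation.
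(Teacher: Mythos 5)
Your proposal is correct and is exactly the paper's argument: the text introduces $\imath^{\rm geom'}$ as the composite of $\imath^{\rm geom}$ with the functor $\varphi$ of (\ref{eq:pro-calg-to-calg-pro}) and deduces the corollary from Lemma \ref{lem:Pro-CAlg-into-CAlg-Pro-is-fully-faithful} together with the fully faithfulness of $\imath^{\rm geom}$ established in \S\ref{par:definition-of-Geom-Alg}. Your extra sanity check on the compatibility of the algebra structures is a reasonable addition but not needed beyond what the paper already records.
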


\subsubsection{Commutative algebra objects in $\Tate(k)^{\leq 0}$}
\label{subsubsec:commutative-algebra-in-Tate-objects}

We recall that the subcategory $\Tate_k$ of $ \Pro(\Vect)$ inherits a t-structure (see Corollary \ref{cor:t-structure-on-Tate-objects} for details).

Let $\Pro(\Vect^{\leq 0})^{\otimes} \ra \Finp$ denote the co-Cartesian fibration encoding the symmetric monoidal structure of $\Pro(\Vect^{\leq 0})^{\otimes}$. By Lemma \ref{lem:construction-of-pseudo-tensor-structure} the subcategory $\Tate^{\leq 0}_k \subseteq \Pro(\Vect^{\leq 0})$ defines a co-Cartesian fibration $(\Tate^{\leq 0}_k)^{\otimes} \ra \Finp$ and a map
\[
(\Tate^{\leq 0}_k)^{\otimes} \ra \Pro(\Vect^{\leq 0})^{\otimes}
\]
realizing a pseudo-tensor structure on $\Tate^{\leq 0}_k$.

\begin{cor}
\label{cor:CAlg-Tate-embeds-into-CAlg-Pro}
The above discussion gives an embedding of categories
\begin{equation}
    \label{eq:CAlg-Tate-embeds-into-CAlg-Pro}
    \CAlg(\Tate^{\leq 0}_k) \ra \CAlg(\Pro(\Vect^{\leq 0})).
\end{equation}
We will denote by $\CAlg^!$ the essential image of (\ref{eq:CAlg-Tate-embeds-into-CAlg-Pro}).
\end{cor}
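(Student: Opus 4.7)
The plan is to reduce the statement to the general fact from \S\ref{subsubsec:CAlg-objects-in-pseudo-tensor-categories}, namely that a fully faithful map between pseudo-tensor $\infty$-categories induces a fully faithful functor on commutative algebra objects. The key observation is that by the construction in Lemma \ref{lem:construction-of-pseudo-tensor-structure}, the structure map $(\Tate^{\leq 0}_k)^{\otimes} \ra \Pro(\Vect^{\leq 0})^{\otimes}$ is by definition the inclusion of a full subcategory over $\Finp$, and hence in particular a fully faithful functor of $\infty$-categories.

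First I would unwind both sides: a commutative algebra object in $\Tate^{\leq 0}_k$ is a section $a\colon \Finp \ra (\Tate^{\leq 0}_k)^{\otimes}$ of the co-Cartesian fibration $(\Tate^{\leq 0}_k)^{\otimes} \ra \Finp$ that sends inert morphisms to inert morphisms, and analogously for $\Pro(\Vect^{\leq 0})$. The map (\ref{eq:CAlg-Tate-embeds-into-CAlg-Pro}) is postcomposition with the inclusion $(\Tate^{\leq 0}_k)^{\otimes} \hra \Pro(\Vect^{\leq 0})^{\otimes}$; its image lies in $\CAlg(\Pro(\Vect^{\leq 0}))$ because this inclusion carries inert morphisms to inert morphisms, both notions being defined by a co-Cartesian condition over the same base $\Finp$.

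The main step is then that postcomposition with a fully faithful functor yields a fully faithful functor on any functor category, and fully faithfulness survives passage to full subcategories on either side. Applied here, this gives that the induced map on section categories is fully faithful and remains so after restriction to inert-preserving sections. The one subtlety worth verifying carefully is that the inclusion $(\Tate^{\leq 0}_k)^{\otimes} \hra \Pro(\Vect^{\leq 0})^{\otimes}$ not only preserves but also \emph{reflects} inert morphisms: a morphism $f$ in $(\Tate^{\leq 0}_k)^{\otimes}$ is inert iff it is a co-Cartesian lift of an inert morphism in $\Finp$, a condition which, because the inclusion is a full subcategory inclusion over $\Finp$, can be tested after embedding into $\Pro(\Vect^{\leq 0})^{\otimes}$. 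Granting this, the essential image $\CAlg^!$ consists precisely of those commutative algebras in $\Pro(\Vect^{\leq 0})$ whose underlying multi-object lies in the Tate subcategory in each arity.
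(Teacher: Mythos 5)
Your proposal is correct and follows the same route as the paper, which offers no argument beyond invoking the general claim of \S\ref{subsubsec:CAlg-objects-in-pseudo-tensor-categories} that for a pseudo-tensor structure $f\colon \sC^{\otimes}\ra\sD^{\otimes}$ postcomposition induces a fully faithful functor $\CAlg(\sC)\ra\CAlg(\sD)$, applied to the structure produced by Lemma \ref{lem:construction-of-pseudo-tensor-structure}. You in fact supply more detail than the paper does, and the one point you isolate --- that the full subcategory inclusion over $\Finp$ both preserves and reflects inert (co-Cartesian over inert) morphisms, which holds because the ambient co-Cartesian lift of an inert map starting at a tuple of Tate objects again consists of Tate objects --- is exactly the substance behind the paper's unproved assertion.
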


\begin{rem}
At the moment, we don't have a comparison between the subcategories
\[
\begin{tikzcd}
\CAlg^! \ar[rd,hook] & \\
& \CAlg(\Pro(\Vect^{\leq 0})) \\
\GeomAlg^! \ar[ru,hook] &
\end{tikzcd}
\]
where $\CAlg^!$ is the essential image of commutative algebra objects in $\Tate^{\leq 0}_k$ and $\GeomAlg^!$ is the essential image of Tate affine schemes.
\end{rem}

\subsubsection{Reasonable indschemes}

In \cite{Raskin-homological}*{\S 6.8} (cf.\ \cite{Drinfeld}*{\S 6.3.3} for the same notion for classical ind-schemes) the notion of reasonable indschemes is introduced. A prestack $\sX$ is said to be a \emph{reasonable indscheme} if $\sX$ is convergent and it admits a presentation
\[
\sX \simeq \colim_I S_i
\]
where $I$ is a cofiltered diagram and
\begin{enumerate}[(a)]
    \item for every $i\in I$ the $S_i$ is an eventually coconnective quasi-compact scheme;
    \item for every $i \ra j$ the morphism
    \[
    f_{i,j}: S_i \hra S_j
    \]
    is almost finitely presented, which in the case that $S_i$ is eventually coconnective is equivalent to $(f_{i,j})_*(\sO_{S_i}) \in \Coh(S_j)$.
\end{enumerate}

Let's denote by $\indSch_{\rm reas}$ the subcategory of reasonable ind-schemes. We have the following

\begin{prop}
There is an embedding
\[
\indSch_{\rm reas} \hra \SchTateconv
\]
of the category of reasonable ind-schemes into the category of eventually coconnective Tate schemes.
\end{prop}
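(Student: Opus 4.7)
The plan is to show directly that the underlying prestack of every reasonable indscheme satisfies the definition of an eventually coconnective Tate scheme, and to conclude that the embedding follows from the fact that both $\indSch_{\rm reas}$ and $\SchTateconv$ are full subcategories of $\PStk$.

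The heart of the argument is the comparison between the two coherence conditions: condition (b) for reasonable indschemes, namely $(f_{i,j})_*(\sO_{S_i}) \in \Coh(S_j)$, and condition (b) for Tate schemes, namely $\Fib(\sO_{S_j} \to \sO_{S_i}) \in \Coh(S_j)$. Given a closed embedding $f_{i,j}: S_i \hra S_j$ between eventually coconnective quasi-compact schemes, the fiber sequence
\[
\Fib(\sO_{S_j} \to (f_{i,j})_*\sO_{S_i}) \to \sO_{S_j} \to (f_{i,j})_*\sO_{S_i}
\]
in $\QCoh(S_j)$, combined with the fact that $\Coh(S_j)$ is closed under finite limits inside $\QCoh(S_j)$, shows the two conditions are equivalent provided $\sO_{S_j} \in \Coh(S_j)$, which holds since $S_j$ is eventually coconnective and locally Noetherian (the Noetherian hypothesis being implicit in $\Coh(S_j)$ containing the structure sheaf).

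Next I would verify the remaining structural conditions of Definition \ref{defn:Tate-schemes}. Reasonable indschemes are convergent by hypothesis; each $S_i$ is a quasi-compact scheme and each transition $f_{i,j}$ is already a closed embedding. Hence the presentation $\sX \simeq \colim_I S_i$ exhibits $\sX$ as a Tate scheme. To land inside $\SchTateconv$, one must exhibit a uniform bound $n$ such that every $S_i$ is $n$-coconnective; this should be achieved by refining the filtered diagram $I$ to a cofinal subdiagram on which the $S_i$ share a common truncation level, using the compatibility of the transition morphisms being almost finitely presented closed embeddings.

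Fully faithfulness of the embedding is then automatic: since $\indSch_{\rm reas}$ and $\SchTateconv$ are both full subcategories of $\PStk$ and we have shown that every object of the former is an object of the latter, the inclusion is by definition fully faithful. The main obstacle is the last piece of Step 2, namely the uniform coconnectivity bound; this is trivial in the classical setting but in the derived setting will require either a careful diagrammatic argument or a slight reorganization of the presentation via a cofinal subdiagram.
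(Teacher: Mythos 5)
Your core argument is the same as the paper's: the entire published proof consists of the fiber sequence $\sI_{i,j} \ra \sO_{S_j} \ra (f_{i,j})_*\sO_{S_i}$ together with the observation that $\sO_{S_j} \in \Coh(S_j)$ because $S_j$ is eventually coconnective, so that $\sI_{i,j} \in \Coh(S_j)$ if and only if $(f_{i,j})_*\sO_{S_i} \in \Coh(S_j)$, i.e.\ condition (b) for Tate schemes matches the almost-finite-presentation condition for reasonable indschemes. Your remark that fully faithfulness is automatic because both are full subcategories of $\PStk$ is also how the paper implicitly treats it.

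The one place where you go beyond the paper is the uniform coconnectivity bound, and you are right to flag it: the definition of a reasonable indscheme only asks that each $S_i$ be eventually coconnective, while membership in $\SchTateconv$ requires a single $n$ with every term of some presentation $n$-coconnective. The paper's proof is silent on this point. However, your proposed fix does not obviously work as stated: passing to a cofinal subdiagram of $I$ does not change which schemes occur (up to the transition maps), so if the coconnectivity amplitudes $n_i$ were unbounded over $I$ they could remain unbounded over any cofinal subdiagram. Note also that a derived closed embedding $S_i \hra S_j$ with $(f_{i,j})_*\sO_{S_i} \in \Coh(S_j)$ can strictly increase the amplitude (e.g.\ $\Spec(k\otimes^L_{k[x]}k) \hra \Spec k[x]$), so the transition maps being almost finitely presented does not by itself give a uniform bound. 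If you want to close this gap you need an actual argument (or an explicit hypothesis) bounding the amplitudes; as written, this step is an honest acknowledgment of a difficulty rather than a proof of it, but since the paper's own proof omits the point entirely, your write-up is if anything more careful than the original.
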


\begin{proof}
Notice that for $Z$ an eventually coconnective Tate scheme we have a presentation
\[
Z \simeq \colim_I Z_i,
\]
where each $Z_i \in \Schconv$. Let $f_{i,j}:Z_i \hra Z_j$ we have a fiber sequence
\[
\sI_{i,j} \ra \sO_{Z_{j}} \ra (f_{i,j})_*\sO_{Z_i}.
\]
Since $Z_i$ is eventually coconnective we have $\sO_{Z_j} \in \Coh(Z_j)$ so one has
\[
\sI_{i,j} \in \Coh(Z_j) \;\; \Leftrightarrow \;\; (f_{i,j})_*\sO_{Z_i} \Coh(Z_j).
\]
\end{proof}

\subsubsection{Digression: Pro-Ind presentation}
\label{subsubsec:digression-pro-ind-presentation}

This subsection is inspired by the discussion in \cite{Kapranov-Vasserot}*{\S 4.4}. In the theory of Tate objects in stable $\infty$-categories one has a flexibility in writing a Tate object, either as
\begin{itemize}
    \item an Ind-Pro-object, or
    \item a Pro-Ind object.
\end{itemize}

From our initial choice of considering Ind-Pro-objects on schemes of finite type, we seem to have broken this symmetry. The following statement tries to amend that.

Let $Z$ be an $n$-coconnective Tate scheme, by Proposition \ref{prop:n-coconnective-schemes-are-limits-of-ft-n-coconnective} for each $i$ we can write
\[
Z_i \simeq \lim_{J^{\rm op}_i}Z^j_{i}
\]
where $J_i$ is a filtered diagram, each $Z^j_i$ is an $n$-coconnective scheme of finite and the connecting morphisms are affine morphisms.

Let's suppose that for our Tate scheme we can find a diagram
\[
\widetilde{Z}: I \times J^{\rm op} \ra \Schnft
\]
where $I$ and $J$ are filtered that satisfies:
\begin{enumerate}[(a)]
    \item for every $i \ra i'$ the morphism
    \[
    \widetilde{Z}_{i,j} \hra \widetilde{Z}_{i',j}
    \]
    is a closed embedding;
    \item for every $j' \ra j$ the morphism
    \[
    \widetilde{Z}_{i,j'} \ra \widetilde{Z}_{i,j}
    \]
    is affine;
    \item for each $i\in I$ we have an isomorphism
    \[
    Z_i \simeq \lim_{J^{\rm op}}\widetilde{Z}_{i,j};
    \]
    \item for each $i \ra i'$ and $j' \ra j$ the diagram
    \[
    \begin{tikzcd}
    \widetilde{Z}_{i,j'} \ar[r] \ar[d] & \widetilde{Z}_{i',j'}\ar[d] \\
    \widetilde{Z}_{i,j} \ar[r] & \widetilde{Z}_{i',j}
    \end{tikzcd}
    \]
    is Cartesian.
\end{enumerate}
 
Then we have
\begin{prop}
\label{prop:Pro-Ind-presentation-of-Tate-scheme}
For $Z$ an $n$-coconnective Tate scheme satisfying the condition (a-d) above one has an isomorphism
\[
Z \simeq \lim_{J^{\rm op}} \colim_I\widetilde{Z}_{i,j}.
\]
\end{prop}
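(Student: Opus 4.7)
The plan is to check that both $Z$ and $W := \lim_{J^{\rm op}} \colim_I \widetilde{Z}_{i,j}$ have the same functor of points, reducing the statement to a commutation of filtered colimits with cofiltered limits at the level of mapping spaces, where the Cartesianness condition (d) does the essential work.

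First I would unfold definitions. Since limits and colimits of prestacks are computed pointwise, condition (c) gives for every $S \in \Schaff$
\[
\Maps(S,Z) \simeq \colim_I \Maps(S,Z_i) \simeq \colim_I \lim_{J^{\rm op}} \Maps(S,\widetilde{Z}_{i,j}),
\]
while with $W_j := \colim_I \widetilde{Z}_{i,j}$ one has $\Maps(S,W) \simeq \lim_{J^{\rm op}} \colim_I \Maps(S,\widetilde{Z}_{i,j})$. The universal properties of (co)limits yield a canonical interchange morphism $\phi: Z \to W$ of prestacks, and it suffices to show $\phi(S)$ is an equivalence of spaces for every $S$. Setting $F(i,j) := \Maps(S,\widetilde{Z}_{i,j})$, the problem reduces to showing the canonical map $\colim_I \lim_{J^{\rm op}} F \to \lim_{J^{\rm op}} \colim_I F$ is an equivalence of spaces.

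Next I would exploit two structural features of the bifunctor $F: I \times J^{\rm op} \to \Spc$ inherited from our hypotheses. Because closed embeddings of prestacks are monomorphisms and $\Maps(S,-)$ preserves this, condition (a) implies that each horizontal transition $F(i,j) \hookrightarrow F(i',j)$ is a monomorphism of spaces (an inclusion of connected components). Because $\Maps(S,-)$ preserves limits, condition (d) implies that every square in $F$ is Cartesian.

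The critical step is the interchange argument itself. A compatible family $\{g_j\}_{j \in J^{\rm op}}$ in $\lim_{J^{\rm op}} \colim_I F$, with representatives $g_j \in F(i(j),j)$, has to be coherently lifted to a family factoring through a single index $i \in I$. For any two indices $j_0, j_1 \in J^{\rm op}$, cofilteredness of $J^{\rm op}$ supplies a common upper cone $j_2 \in J^{\rm op}$; filteredness of $I$ lets us pass to a common $i$ dominating $i(j_0), i(j_1)$ and $i(j_2)$; and the Cartesianness of the associated square of spaces forces the lift at $j_2$ in $F(i,j_2)$ to restrict (essentially uniquely, by the monomorphism property) to the prescribed lifts at $j_0$ and $j_1$. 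Iterating this procedure over $J^{\rm op}$ and absorbing the successive refinements of $i$ into a single element by filteredness of $I$ yields a uniform index $i$ through which all $g_j$'s factor, producing the desired point of $\colim_I \lim_{J^{\rm op}} F$. Higher homotopy coherences are rigidly controlled by the repeated Cartesianness of all squares.

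The main obstacle is precisely this interchange step: filtered colimits and cofiltered limits in $\Spc$ do not commute in general, and the argument depends essentially on combining the monomorphism structure from (a) with the Cartesianness from (d). Carefully propagating the coherence data through $J^{\rm op}$ in the $\infty$-categorical setting is the principal technical difficulty, but the basic pattern mirrors the well-known comparison of Ind-Pro and Pro-Ind presentations of Tate objects in stable $\infty$-categories, which motivates this subsection.
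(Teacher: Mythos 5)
Your proposal is correct and follows essentially the same route as the paper: both reduce to the functor of points on affine schemes and show that the canonical interchange map $\colim_I\lim_{J^{\rm op}}\Maps(S,\widetilde{Z}_{i,j})\to\lim_{J^{\rm op}}\colim_I\Maps(S,\widetilde{Z}_{i,j})$ is an equivalence. The paper dismisses that last step as ``a standard argument on the commutation of limits and colimits,'' whereas you actually supply its content --- condition (a) makes the $I$-transitions monomorphisms on mapping spaces and condition (d) lets one fix the index $i$ at a single $j_0$ and propagate it over all of $J^{\rm op}$ by Cartesianness (so no iterated refinement of $i$ is ever needed) --- making your write-up, if anything, more complete than the paper's.
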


\begin{proof}
Since both the category $\Ind(\Pro(\Schnft))$ and $\Pro(\Ind(\Schnft))$ embed fully faithfully into the category of prestacks it is enouch to check that for any $S$ an affine scheme we have a morphism
\[
\colim_{I} \lim_{J^{\rm op}}\Maps(S,\widetilde{Z}_{i,j}) \ra \lim_{J^{\rm op}} \colim_I\Maps(S,\widetilde{Z}_{i,j}).
\]

This is a standard argument on the commutation of limits and colimits.

\end{proof}

In particular, if we denote the subcategory of $n$-coconnective Tate schemes that satisfy the condition (a-d) above by $\SchTateProInd$, then we have a fully faithful embedding
\[
\SchTateProInd \hra \Pro(\Ind(\Schnft)).
\]

\begin{rem}
Unfortunately, at the time we are not able to determine if the conditions (a-d) follow automatically from one of the previous properties of Tate schemes that we already introduced.
\end{rem}

\begin{rem}
In \cite{Kapranov-Vasserot}*{\S 4.4} the authors refer to an indscheme satisfying conditions (a-d) above as a \emph{locally compact indscheme}. We also didn't investigate if such indschemes are automatically Tate schemes.
\end{rem}

\subsection{Examples}
\label{subsec:examples-Tate-affine}

In this section we produce examples of Tate schemes and Tate affine schemes. The first subsection treats the loop functors, which produce a Tate affine scheme from an affine scheme of finite type. This example is extremely important in the study of geometric representation theory. In \S \ref{subsubsec:affine-grassmannian} we treat the example of the affine Grassmannian, which is another object of great interest. Finally in \S \ref{subsubsec:affine-formal-schemes} we have a quick discussion about how to get examples of Tate affine schemes from affine formal schemes, i.e.\ topological rings with an adic topology.

\subsubsection{Loop functors}

In this section we follow \cite{DG-indschemes}*{\S 9.2} to define the formal loops functor on prestacks, which when restricted to affine schemes of finite type produces examples of Tate affine schemes.

\paragraph{Definition of loop functor}

We follow \cite{DG-indschemes}*{\S 9.2}, 
\begin{defn}
\label{defn:loop-functors-on-prestacks}
Let $Z \in \PStk$ we consider the following prestacks
\begin{enumerate}[1.]
    \item \emph{finite loops on $Z$}: $Z[t]/t^k$, whose $S = \Spec(A)$-points are
    \[
    (Z[t]/t^k)(S) := \Maps(\Spec(A[t]/t^k),Z);
    \]
    \item \emph{positive loops on $Z$}: $Z[[t]]$, whose $S = \Spec(A)$-points are
    \[
    Z[[t]](S) := \Maps(\Spec(A[[t]]),Z);
    \]
    \item \emph{loops on $Z$}: $Z((t))$, whose $S = \Spec(A)$-points are
    \[
    Z((t))(S) := \Maps(\Spec(A((t))),Z).
    \]
\end{enumerate}
\end{defn}

Notice that by definition one has
\[
Z[[t]] \simeq \lim_{k} (Z[t]/t^k).
\]

\paragraph{Loops on affine schemes of finite type are Tate affine schemes}

\begin{lem}
\label{lem:formal-loops-of-affine-of-finite-type-are-Tate-affine}
Let $Z \in \Schaffnft$, then $Z((t)) \in \SchaffTaten$. 
\end{lem}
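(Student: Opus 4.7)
The strategy is to exhibit $Z((t))$ explicitly as a filtered colimit of $n$-coconnective affine schemes with closed embeddings of finite presentation as transition maps whose cofibers are coherent. I would first handle the base case of affine space directly, then reduce the general case to it via a fiber-product argument using that $(-)((t))$ commutes with limits.

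For the base case $Z = \bA^m_k = \Spec(k[y_1,\ldots,y_m])$, one has
\[
\bA^m((t))(A) \;=\; A((t))^m \;=\; \colim_{N \geq 0}\, (t^{-N}A[[t]])^m,
\]
and $(t^{-N}A[[t]])^m \cong \prod_{n \geq -N} A^m$ is represented by the classical affine scheme $S^{(m)}_N := \Spec(k[x_{i,n} : 1 \leq i \leq m,\, n \geq -N])$. The transition $S^{(m)}_N \hookrightarrow S^{(m)}_{N+1}$ corresponds to the surjection of polynomial rings sending $x_{i,-(N+1)} \mapsto 0$ and the remaining generators to themselves; this is a closed embedding of finite presentation whose kernel is the regular-sequence ideal $(x_{1,-(N+1)},\ldots,x_{m,-(N+1)})$, a free $\sO_{S^{(m)}_{N+1}}$-module of rank $m$, and in particular coherent. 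Since each $S^{(m)}_N$ is classical, Definition \ref{defn:Tate-affine-schemes} is satisfied and $\bA^m((t)) \in \SchaffTaten$ for every $n \geq 0$.

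For general $Z \in \Schaffnft$, I would use Proposition \ref{prop:finite-type-is-compact} to realize $B := \Gamma(Z,\sO_Z)$ as a cocompact object of $\CAlg(\Vect^{\geq -n,\leq 0})$, and dually $Z$ as a retract of a finite iterated fiber product in $\Schaffn$ of ``generalized affine spaces'' whose corresponding algebras are free on finitely many cells in degrees $[-n,0]$. Since the loop functor $(-)((t))$ manifestly commutes with limits on prestacks, $Z((t))$ is a retract of the corresponding finite iterated fiber product of Tate affine schemes of the form treated in the base case. To conclude, I would adapt the constructions in the proofs of Lemmas \ref{lem:Tate-affine-schemes-has-fiber-products} and \ref{lem:n-coconnective-Tate-affine-schemes-has-fiber-products}: pick compatible level-wise presentations of the factors, form fiber products in $\Schaffn$ level-by-level, and use the pushout-of-fibers argument of \emph{loc.\ cit.}\ to verify that the resulting connecting maps remain closed embeddings of finite presentation with coherent cofibers; retracts of such presentations are again Tate affine by restriction of the presentation. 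The main obstacle is precisely the coherence check in the iterated fiber product, but this is supplied by the pushout-of-fibers formula that already appears in the proof of Lemma \ref{lem:Tate-affine-schemes-has-fiber-products}, combined with the stability of coherent sheaves under extensions and retracts.
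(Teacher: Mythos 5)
Your overall strategy matches the paper's: compute $\bA^m((t))$ explicitly as a filtered colimit of infinite-dimensional affine spaces with finitely presented closed embeddings as transition maps, then reduce a general $Z$ to this case using that $(-)((t))$ commutes with limits and that Tate affine schemes are stable under fiber products. The problem is in how you perform the reduction. Invoking cocompactness (Proposition \ref{prop:finite-type-is-compact} plus the structure theory of compact algebra objects) only exhibits $Z$ as a \emph{retract} of a finite iterated fiber product of free affine cells, and you then dismiss the retract with ``retracts of such presentations are again Tate affine by restriction of the presentation.'' That step is unjustified: Definition \ref{defn:Tate-affine-schemes} asks for the \emph{existence} of a presentation with closed, finitely presented transition maps, and there is no evident way to transport such a presentation along a retraction. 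In the linear theory the category of elementary Tate objects is famously not idempotent complete --- this is exactly why \cite{BGW-Tate} pass to the idempotent completion to define Tate objects --- so closure of $\SchaffTate$ under retracts cannot be taken for granted and would require a genuine argument. The paper avoids the issue entirely by writing $Z$ as an honest, retract-free iterated fiber product of zero loci $0\times_{\bA^n}\bA^n$, so that only stability of $\SchaffTaten$ under fiber products (Lemma \ref{lem:n-coconnective-Tate-affine-schemes-has-fiber-products}) is needed; if you can arrange your cell presentation of $Z$ without the retract (which is possible for $n$-coconnective finite type algebras, built from finitely many cell attachments), your argument closes up.

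Two smaller points. First, the factors in your reduction are free algebras on cells in degrees $[-n,0]$, but your base case only treats classical $\bA^m$ (cells in degree $0$); the loop prestacks of the derived cells $\Spec(\mathrm{Sym}(k[d]))$ for $d>0$ need the same, currently unwritten, analysis. Second, for $m\geq 2$ the ideal $(x_{1,-(N+1)},\dots,x_{m,-(N+1)})$ is not a free module of rank $m$ --- it has Koszul syzygies --- though it is perfect, hence coherent, so this slip does not affect the conclusion of the base case.
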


\begin{proof}
Firstly, we notice that any $Z \in \Schaffnft$ can be written as 
\[
Z \simeq Z_1 \underset{\bA^n}{\times} Z_2 \underset{\bA^n}{\times} \cdots \underset{\bA^n}{\times}Z_r
\]
where each $Z_i$ is given by a pullback
\[
\begin{tikzcd}
Z_i \ar[r] \ar[d] & \bA^n \ar[d,"f_i"] \\
0 \ar[r] & \bA^n.
\end{tikzcd}
\]
Secondly, remark that for any $\alpha:I \ra \Schaff$ a finite diagram one has an isomorphism
\[
(\lim_I \alpha_i)((t)) \simeq \lim_I \alpha_i((t)).
\]
Since the category of Tate affine schemes has fiber products, it is enough to check the claim for $Z = \bA^1$, since $\bA^n \simeq (\bA^1)^{\times n}$.

Let $S = \Spec(R)$ by definition one has
\begin{align*}
\bA^1((t))(S) & = \Maps_{\Schaff}(\Spec(R((t)),\Spec(k[x]))   \\
& \simeq \Hom(k[x],R((t))) \\
& \simeq R((t)) \\
& \simeq \colim_{i \geq 0} \lim_{j \geq 1} R\left<t^{-i},\ldots,t^{j-1}\right> \\
& \simeq \colim_{i \geq 0} \lim_{j \geq 1} \Hom(k[t],R^{\times (i+j)}) \\
& \simeq \colim_{i \geq 0} \lim_{j \geq 1} \Hom(k[t_{-i},\ldots,t_{j-1}],R).
\end{align*}

We now introduce some notation to express the answer. Let
\[
\bA^{\infty} := \prod_{n \geq 0}\bA^1 \simeq \Spec(k[t_0,t_1,\ldots])
\]
denote the infinite-dimensional affine space over $k$. Notice that the connecting maps in the presentation above are given by
\begin{align*}
    \imath: \bA^{\infty} & \ra \bA^{\infty} \\
    (x_0,x_1,\ldots) & \mapsto (0,x_0,x_2,\ldots),
\end{align*}
which are given by finitely presented ideals of defintion, namely $(t_0)$ inside $k[t_0,t_1,\ldots]$. Thus, one has
\[
\bA^{1}((t)) \simeq \colim_{i \geq 0}\left(\bA^{\infty} \overset{\imath}{\hra}\bA^{\infty} \hra \cdots\right)
\]
which proves that $\bA^{1}((t))$ is a Tate affine scheme.
\end{proof}

\paragraph{Generalized loop functors}

We introduce some notation, let $\bD = \Spec(k[[t]])$ and $\bD^{\times} = \Spec(k((t)))$, consider the categories
\[
\PStk_{/\bD} := \Fun\left((\Schaff_{/\bD})^{\rm op},\Spc\right) \;\;\; \mbox{and} \;\;\; \PStk_{/\bD^{\times}} := \Fun\left((\Schaff_{/\bD^{\times}})^{\rm op},\Spc\right)
\]
of prestacks over $\bD$ and over $\bD^{\times}$. One has the following generalization of Definition \ref{defn:loop-functors-on-prestacks}.

\begin{defn}
\label{defn:loop-functors-on-prestacks-over-bD}
Let $\sY_0 \in \PStk_{/\bD}$ one defines
\begin{enumerate}[1)]
    \item for some $n \geq 0$ one defines the \emph{$n$th loop space of $\sY_0$} to be the prestack $\L^n\sY_0$ whose $S = \Spec(A)$-points are
    \[
    \L^n\sY_0(S) := \sY_0(\Spec(A[t]/(t^n)));
    \]
    \item the \emph{positive loop space of $\sY_0$} to be the prestack $\L^+\sY_0$ whose $S = \Spec(A)$-points are
    \[
    \L^+\sY_0(S) := \sY_0(\Spec(A[[t]])).
    \]
\end{enumerate}
\end{defn}

\begin{defn}
\label{defn:loop-functor-on-prestacks-over-bD-times}
Let $\sX_0 \in \PStk_{/\bD^{\times}}$ one defines the \emph{loop space on $\sX_0$} to be the prestack $\L\sX_0$ whose $S = \Spec(A)$-points are
    \[
    \L\sY_0(S) := \sY_0(\Spec(A((t)))).
    \]
\end{defn}

\begin{lem}
\label{lem:generalized-loops-on-affine-ft-are-Tate-affine}
Suppose that $Y_0 \in \Schaff_{/\bD}$ or that $X_0 \in \Schaff_{/\bD^{\times}}$ then
\[
\L^+Y_0 \in \Schaff \;\;\; \mbox{and} \;\;\; \L X_0 \in \Ind(\Schaff).
\]
Moreover, if $X_0$ is of finite type then $\L X_0$ is a Tate affine scheme.
\end{lem}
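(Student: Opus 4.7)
The proof combines three pieces, one for each assertion; throughout, the strategy is to reduce to the case of the affine line over $\bD$ (respectively $\bD^\times$), exploiting that $\Schaff$, $\Ind(\Schaff)$, and $\SchaffTate$ are closed under fiber products (Remark \ref{rem:Tate-schemes-aft-stable-under-fiber-products}), and that $\L^+$ and $\L$ preserve limits in the target variable by inspection of their functor-of-points definitions. For $Y_0$ not of finite type over $\bD$, first present $Y_0$ as a cofiltered limit of finite type $\bD$-schemes, so that $\L^+ Y_0 \in \Schaff$ follows by limit-preservation once the finite type case is settled; the same maneuver handles a general $X_0 \in \Schaff_{/\bD^\times}$ for the second assertion.

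For the first assertion I would use the identification $A[[t]] \simeq \lim_n A[t]/(t^n)$ together with finite presentation of $Y_0$ over $\bD$ to rewrite $\L^+ Y_0 \simeq \lim_n \L^n Y_0$, where $\L^n Y_0$ is the jet scheme obtained as the Weil restriction of $Y_0$ along the finite flat cover $\Spec(k[t]/(t^n)) \to \Spec(k)$. Weil restrictions of affine schemes along finite flat covers are affine by a standard argument, and cofiltered limits of affine schemes are affine, so $\L^+ Y_0 \in \Schaff$. For the second assertion I would reduce to $X_0 = \bA^1_{\bD^\times}$ along the lines of the proof of Lemma \ref{lem:formal-loops-of-affine-of-finite-type-are-Tate-affine}, where the explicit computation
\[
\L\bA^1_{\bD^\times}(\Spec A) \simeq A((t)) \simeq \colim_{N \geq 0} t^{-N} A[[t]]
\]
exhibits $\L \bA^1_{\bD^\times}$ as a filtered colimit of copies of $\bA^\infty$ connected by shift embeddings.

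For the third claim I would verify the two conditions of Definition \ref{defn:Tate-affine-schemes}. The shift maps $\bA^\infty \hookrightarrow \bA^\infty$ are closed embeddings cut out by the principal ideal $(t_{-N-1})$, giving (a), and the corresponding fiber $\Fib(\sO_{L_{N+1}} \to \sO_{L_N})$ is precisely this principal ideal, free of rank one and hence coherent, giving (b). For general $X_0 = \Spec(B)$ of finite type over $\bD^\times$, I would pick a finite presentation, clear $t$-denominators in the generators and relations to obtain a pole-order filtration of $\L X_0$, and apply the same closed-embedding-and-coherence analysis. The principal obstacle is the derived aspect of condition (b): when $B$ has nontrivial negative cohomology, the pole-order filtration produces structure sheaves with potentially rich higher cohomology, and one must verify that the finite presentation of $X_0$ over $k((t))$ constrains the fibers to remain coherent on $L_{N+1}$ in the derived sense, a subtlety absent from the absolute case treated in Lemma \ref{lem:formal-loops-of-affine-of-finite-type-are-Tate-affine}.
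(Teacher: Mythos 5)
Your overall plan is reasonable, and the first two thirds of it are essentially consistent with what the paper does: the paper's own proof is a two-line reduction which declares the statement $\L^+Y_0 \in \Schaff$ clear and deduces the rest from the proof of Lemma \ref{lem:formal-loops-of-affine-of-finite-type-are-Tate-affine}, after observing (in a footnote) that $A((t))$ is the pushout of $A \la k \ra k((t))$, i.e.\ $A\otimes_k k((t))$, so that the relative loop computation for $\bA^1_{\bD^{\times}}$ literally coincides with the absolute one. Your Weil-restriction argument for $\L^+Y_0 \simeq \lim_n \L^n Y_0$ is a legitimate, more detailed route to the ``clear'' half (note that finite presentation of $Y_0$ is not needed there, since $\Hom$ out of a fixed algebra commutes with the limit $A[[t]] \simeq \lim_n A[t]/(t^n)$ in the target), and your explicit analysis of $\L\bA^1_{\bD^{\times}}$ as a colimit of copies of $\bA^{\infty}$ along the shift embeddings, with fiber the free rank-one ideal $(t_{-N-1})$, agrees with the paper's computation.

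The genuine problem is in your final paragraph. For a general finite-type $X_0 = \Spec(B)$ you abandon the reduction to $\bA^1$ that you yourself set up, and instead propose to clear $t$-denominators in a presentation of $B$ and run a pole-order filtration; you then correctly observe that you cannot verify condition (b) of Definition \ref{defn:Tate-affine-schemes} for this filtration when $B$ has nontrivial negative cohomology. This is a self-inflicted gap. The proof of Lemma \ref{lem:formal-loops-of-affine-of-finite-type-are-Tate-affine} never touches a presentation of the algebra: it writes a finite-type affine scheme as a finite iterated fiber product over affine spaces, uses that $\L$ commutes with finite limits (it is given by maps out of a fixed object), and invokes closure of Tate affine schemes under fiber products (Lemma \ref{lem:Tate-affine-schemes-has-fiber-products}), whose proof is exactly where the derived coherence of the fibers $\Fib(\sO_{W_j} \ra \sO_{W_i})$ is established via the pushout square of fibers of structure sheaves. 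So the ``subtlety absent from the absolute case'' is in fact present in the absolute case and is disposed of there by the fiber-product lemma; the identical mechanism works over $\bD^{\times}$, since finite type over $k((t))$ gives the same iterated fiber-product presentation. Replacing your last paragraph by this finite-limit reduction closes the proof. (A secondary caveat: your cofiltered-limit maneuver for showing $\L X_0 \in \Ind(\Schaff)$ for arbitrary $X_0$ is not automatic either, since $\Ind(\Schaff)$ is not obviously closed under cofiltered limits; the paper simply asserts this part without argument.)
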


\begin{proof}
The first statement is clear, the second follows from the proof\footnote{Notice that for any $k$-algebra $A$ we denote by $A((t))$ the push-out of $A \la k \ra k((t))$.} of Lemma \ref{lem:formal-loops-of-affine-of-finite-type-are-Tate-affine}.
\end{proof}

\begin{rem}
Lemma \ref{lem:generalized-loops-on-affine-ft-are-Tate-affine} was already observed in \cite{Drinfeld}*{Example 6.3.4}.
\end{rem}

\subsubsection{Affine Grassmannian}
\label{subsubsec:affine-grassmannian}

An important example of a Tate scheme is the affine Grassmannian. In this section we check that in the case of a reductive group the affine Grassmannian is a Tate scheme.

First we need a discussion of how the condition (b) in the definition of Tate schemes interacts with the notion of being $0$-coconnective.

\begin{lem}
\label{lem:condition-b-on-classical-schemes}
Let $\imath_0:X_0 \hra Y_0$ be a closed immersion of quasi-compact Noetherian classical schemes, then if $\ker(\sO_{Y_0} \ra \imath_{*}(\sO_{X_0}))$ is a coherent sheaf on $Y_0$, then the induced morphism
\[
\imath: X \hra Y
\]
between $X := \LKE_{\clSchaffop \hra \Schaffop}(X_0)$ and $Y := \LKE_{\clSchaffop \hra \Schaffop}(X_0)$ satisfies
\[
\Fib(\sO_{Y} \ra \imath_{*}(\sO_X)) \in \Coh(Y).
\]
\end{lem}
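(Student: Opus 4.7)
The plan is to reduce to the affine, local setting, where everything becomes an explicit calculation with ideals in a Noetherian commutative ring. Since membership in $\Coh(Y)$ is local on $Y$, and since $\imath_0$ is affine (being a closed immersion) so that pushforward along $\imath$ commutes with restriction to open affines of $Y$, I may assume $Y_0 = \Spec(A_0)$ is affine. Then $X_0 = \Spec(A_0/I_0)$ for a finitely generated ideal $I_0 \subset A_0$; finite generation follows from the coherence of $\ker(\sO_{Y_0} \to (\imath_0)_*\sO_{X_0})$ together with the Noetherian hypothesis.

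Next I would unwind the left Kan extension. Since $A_0$ is a discrete commutative $k$-algebra, viewing it as an object of $\CAlg(\Vect^{\leq 0})$ yields the derived affine scheme $Y = \Spec(A_0)$, and the associated prestack agrees with $\LKE_{\clSchaffop \hookrightarrow \Schaffop}(Y_0)$; this is the standard comparison that a classical scheme is canonically its own derived enhancement. Likewise $X = \Spec(A_0/I_0)$, so $\sO_Y$ and $\sO_X$ are simply $A_0$ and $A_0/I_0$ placed in cohomological degree zero. In particular $\classical{Y} = Y_0$.

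The third step is to compare the derived pushforward with its classical counterpart. For a closed immersion of classical schemes, the ordinary pushforward of quasi-coherent sheaves is exact, so $\imath_*\sO_X$ is concentrated in cohomological degree $0$ and coincides with $(\imath_0)_*\sO_{X_0}$; affine locally, this is just the $A_0$-module $A_0/I_0$. The fiber in the stable category $\QCoh(Y)$ of a surjection between two discrete objects is the kernel placed in degree $0$, so
\[
\Fib\bigl(\sO_Y \to \imath_*\sO_X\bigr) \;\simeq\; \sI_0,
\]
where $\sI_0$ is the classical ideal sheaf corresponding to $I_0$. Since $I_0$ is finitely generated over the Noetherian ring $A_0$, the sheaf $\sI_0$ is coherent; it is concentrated in degree $0$ and its restriction to $\classical{Y} = Y_0$ is this coherent classical sheaf, hence $\sI_0 \in \Coh(Y)$.

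I do not expect a substantive obstacle: the argument is essentially a matter of verifying that the derived structure sheaves and the derived pushforward along a classical closed immersion reduce to their classical analogues. The only point requiring a sentence of justification is the $t$-exactness of pushforward along a closed immersion of classical schemes (equivalently, that $A_0/I_0$ has no higher $\Tor$ contributions against itself viewed as a trivial coefficient), which is standard and ultimately underlies the identification $\Fib(A_0 \to A_0/I_0) \simeq I_0$ in degree $0$.
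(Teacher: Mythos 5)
Your proof is correct, but it takes a genuinely different and more elementary route than the paper. The paper first translates the condition $\Fib(\sO_Y \ra \imath_*\sO_X) \in \Coh(Y)$ into the statement that $\imath$ is locally almost of finite presentation (via \cite{SAG}*{Definition 4.2.0.1}), and then verifies that condition using Lurie's criterion \cite{HA}*{Theorem 7.4.3.18} in terms of the cotangent complex $L_{B/A}$ together with \cite{stacks-project}*{Tag 01TV}. You instead observe that, because $X$ and $Y$ are the derived enhancements of classical schemes and $\imath$ is an affine (indeed closed) morphism, the pushforward is t-exact, so the fiber of the surjection of discrete sheaves $\sO_Y \ra \imath_*\sO_X$ is literally the classical ideal sheaf $\sI_0$ placed in degree $0$, which is coherent by Noetherianity; the reduction to the affine case via base change for affine morphisms is legitimate since coherence is local. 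Your computation is shorter, avoids the cotangent-complex machinery entirely, and makes transparent that in the Noetherian classical setting both the hypothesis and the conclusion of the lemma are essentially automatic. What the paper's route buys is robustness: the almost-finite-presentation criterion would continue to apply if one weakened ``classical'' to ``eventually coconnective'' or worked with closed immersions that are not pushed forward from the classical truncation, situations where the fiber need not be discrete and your degree-zero identification would fail. As a check on one point you flag only in passing: the t-exactness of $\imath_*$ is not a statement about $\Tor$ (no derived tensor product occurs in a pushforward); it is simply that restriction of scalars along $A_0 \ra A_0/I_0$ is exact, so your identification $\Fib(A_0 \ra A_0/I_0) \simeq I_0$ in degree $0$ is immediate.
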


\begin{proof}
First we notice that $\Fib(\sO_{Y} \ra \imath_{*}(\sO_X)) \in \Coh(Y)$ if and only if the morphism $\imath:X \hra Y$ is locally almost finitely presented (see Definition 4.2.0.1 in \cite{SAG}). By definition it is enough to consider an affine scheme $\Spec(A) \ra Y$ and consider the induced map
\begin{equation}
    \label{eq:affine-pullback-of-closed-immersion}
    \Spec(B) := \Spec(A) \underset{Y}{\times}X \ra \Spec(A).
\end{equation}

By \cite{HA}*{Theorem 7.4.3.18} we have that (\ref{eq:affine-pullback-of-closed-immersion}) is almost of finite presentation if and only if 
\begin{itemize}
    \item the cotangent complex $L_{B/A}$ is almost perfect, and
    \item $\Spec(\H^0(B)) \ra \Spec(\H^0(A))$ is of finite presentation.
\end{itemize}

Now we notice that the condition on $\ker(\sO_{Y_0} \ra \imath_{*}(\sO_{X_0}))$ is equivalent to $\imath_{0}$ being a morphism of finite presentation by \cite{stacks-project}*{Tag 01TV}. Finally, the computation of the cotangent complex of a closed inclusion gives that
\[
L_{B/A} \simeq \frac{\sI_0}{\sI^2_0},
\]
where $\sI_0 = \ker(\sO_{Y_0} \ra \imath_{*}(\sO_{X_0}))$. Since we assumed our schemes to be Noetherian we have that $L_{B/A}$ is coherent and in particular almost perfect.
\end{proof}

\begin{prop}
Let $G$ denote an reductive affine algebraic group, then the affine Grassmannian $\Gr_G$ is a Tate scheme.
\end{prop}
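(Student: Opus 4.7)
The plan is to reduce to the case $G = \GL_n$ via a closed embedding of algebraic groups, then exhibit the classical bounded-lattice presentation of $\Gr_{\GL_n}$, and finally apply Lemma \ref{lem:condition-b-on-classical-schemes} to transport condition (b) of Definition \ref{defn:Tate-schemes} from the classical to the derived setting.

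To begin, I would fix a closed embedding $G \hra \GL_n$ of algebraic groups. Because $G$ is reductive and we are in characteristic zero, the quotient $\GL_n/G$ is affine by Matsushima's theorem; a standard argument (reducing a $\GL_n$-torsor trivialized on $\Spec(R((t)))$ to the subgroup $G$ is a section of a twist of $\GL_n/G$, hence an affine, closed condition) then shows that the induced map of prestacks $\Gr_G \hra \Gr_{\GL_n}$ is a closed embedding. Consequently, any presentation $\Gr_{\GL_n} \simeq \colim_N Z_N$ of the form prescribed by Definition \ref{defn:Tate-schemes} pulls back to a presentation $\Gr_G \simeq \colim_N (Z_N \underset{\Gr_{\GL_n}}{\times} \Gr_G)$, whose terms remain classical quasi-compact Noetherian and whose connecting maps remain closed embeddings.

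For $\GL_n$ itself, I would use the standard presentation by bounded lattice position: for each $N \geq 0$ let $Z_N$ parametrize $R[[t]]$-submodules $L \subseteq t^{-N}R[[t]]^n$ with $t^N R[[t]]^n \subseteq L$ such that $L/t^N R[[t]]^n$ is a projective $R$-submodule direct-summand of the finitely generated free $R$-module $t^{-N}R[[t]]^n / t^N R[[t]]^n$. Each $Z_N$ is a closed subscheme of a disjoint union of finite-dimensional Grassmannians over $\Spec(k)$, hence is projective, quasi-compact, Noetherian, and classical; the transitions $Z_N \hra Z_{N+1}$ (given by the inclusion of lattices) are closed embeddings; and $\Gr_{\GL_n} \simeq \colim_N Z_N$. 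Pulling back along $\Gr_G \hra \Gr_{\GL_n}$ preserves all of these properties.

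Condition (a) of Definition \ref{defn:Tate-schemes} then holds on the nose. For condition (b), I would observe that every closed embedding between Noetherian classical schemes has a coherent ideal sheaf, so Lemma \ref{lem:condition-b-on-classical-schemes} applies level-by-level to yield $\Fib(\sO_{Z_{N+1} \underset{\Gr_{\GL_n}}{\times} \Gr_G} \ra \sO_{Z_N \underset{\Gr_{\GL_n}}{\times} \Gr_G}) \in \Coh$. This exhibits $\Gr_G$ as a Tate scheme. The main obstacle is the first step, namely the verification that $\Gr_G \hra \Gr_{\GL_n}$ is a closed embedding at the derived level; once that input is secured by the reductivity of $G$ and Matsushima's theorem, the remaining bookkeeping is routine.
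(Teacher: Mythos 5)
Your proof is correct, but it handles the general reductive case by a genuinely different route than the paper. The paper works with $\Gr_G$ directly: it invokes \cite{DG-indschemes}*{Theorem 9.3.4} to get a classical presentation by closed embeddings of schemes almost of finite type, reduces condition (b) of Definition \ref{defn:Tate-schemes} to finite presentation of the classical closed immersions via Lemma \ref{lem:condition-b-on-classical-schemes}, verifies this explicitly for $\GL_n$ by pulling back the inclusions of finite-dimensional Grassmannians $\Gr(2ni) \hra \Gr(2n(i+1))$, and for general $G$ asserts that one must check finite presentation of the Schubert inclusions $\Gr_{\leq \mu} \hra \Gr_{\leq \lambda}$, leaving the details to the reader. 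You instead reduce to $\GL_n$ once and for all by choosing a closed embedding $G \hra \GL_n$ with $\GL_n/G$ affine (Matsushima), so that $\Gr_G \hra \Gr_{\GL_n}$ is a closed embedding and the bounded-lattice presentation pulls back; you then observe that on Noetherian classical schemes every closed immersion automatically has coherent (equivalently, finitely presented) ideal, so the hypothesis of Lemma \ref{lem:condition-b-on-classical-schemes} is free. Your route buys a complete treatment of the general case (avoiding the Schubert-variety bookkeeping the paper omits) and shows that the paper's explicit pullback computation for $\GL_n$ is not actually needed; the paper's route stays internal to $\Gr_G$ and avoids the torsor-reduction argument for the closed embedding $\Gr_G \hra \Gr_{\GL_n}$.

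One point you should make explicit: condition (b) is a statement about the \emph{derived} fibers $\Fib(\sO_{Z_{j}} \ra \sO_{Z_i})$, and Lemma \ref{lem:condition-b-on-classical-schemes} only transports the classical statement to the derived one for the left Kan extensions of classical schemes. So you need to know that $\Gr_G$ is $0$-coconnective — i.e.\ that your terms $Z_N \underset{\Gr_{\GL_n}}{\times} \Gr_G$, and the fiber products defining them, carry no derived structure. This is exactly what the paper's citation of \cite{DG-indschemes}*{Theorem 9.3.4} supplies, and your argument should cite it (or an equivalent flatness statement) rather than asserting classicality of the pullbacks as routine.
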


\begin{proof}
By \cite{DG-indschemes}*{Theorem 9.3.4} the indscheme $\Gr_G$ is $0$-coconnective and locally almost of finite type. Thus we have a presentation
\[
\Gr_G \simeq \colim_I \Gr_{G,i}
\]
where each $\Gr_i$ is almost of finite type and 
\[
f_{i,j}: \Gr_i \hra \Gr_j
\]
are closed embedding. Now to check that the maps $f_{i,j}$ satisfy condition (b) from the definition of Tate schemes, Lemma \ref{lem:condition-b-on-classical-schemes} implies that it is enough to check that
\[
\classical{f_{i,j}}: \classical{\Gr_i} \hra \classical{\Gr_j}
\]
are finitely presented.

For concreteness we will focus on the case where $G = \GL_n$ is the linear group for some $n$. In this case it is well-known (see \cite{Xinwen}*{Theorem 1.1.3}) that
\[
\classical{\Gr_{\GL_n}} \simeq \colim_{i \geq 1}\Gr_{i},
\]
where $\Gr_i$ is the classical prestack whose $R$-points, for $R$ a classical $k$-algebra, are
\[
\Gr_i(R) \simeq \{ R[[t]]-\mbox{quotients of the module }\frac{t^{-i}R[[t]]}{t^i R[[t]]}\mbox{ which are projective as }R\mbox{-modules}\}.
\]

We claim that
\begin{equation}
    \label{eq:inclusion-N-to-N+1}
    \Gr_i \hra \Gr_i
\end{equation}
has a finitely presented ideal. Indeed, \cite{Xinwen}*{Lemma 1.1.6} gives that is a closed subscheme of the Grassmannian of $\Gr(2ni)$ given by the subset of projective $R$-submodules which are stable under a nilpotent morphism $\Phi$. We notice that the inclusion (\ref{eq:inclusion-N-to-N+1}) is obtained by pulling back the closed inclusion
\begin{equation}
    \label{eq:inclusion-usual-Grassmannians}
    \Gr(2ni) \hra \Gr(2n(i+1))
\end{equation}
via $\Gr_{i+1} \hra \Gr(2n(i+1))$. The claim now follows from the fact that the inclusion (\ref{eq:inclusion-usual-Grassmannians}) has finitely generated ideal.

For a general reductive group one needs to check that the embedding of 
\[
\Gr_{\leq \mu} \hra \Gr_{\leq \lambda}
\]
for $\lambda > \mu$ is finitely presented, see \cite{Xinwen}*{\S 2.1} for the notation. We leave the details to the reader.
\end{proof}

\subsubsection{Affine Formal schemes}
\label{subsubsec:affine-formal-schemes}

There are at least two point of views on affine formal schemes: (1) as first introduced by Grothendieck in \cite{EGAI}*{I. \S 10} one considers an appropriate class of topological commutative rings and (2) one considers a certain subclass of indschemes. In this section we quickly review the interaction between these two points of view and how a natural class of the (1) type is recovered via the Tate condition on (2).

\paragraph{Affine formal schemes \`a la EGA}

We refer the reader to \cite{stacks-project}*{Tag 07E8} for the following definitions, see also \cite{Fujiwara-Kato}*{Chapter 0, \S 7}.

For a topological ring $A$ with the topology defined by a filtration $\{F^{\lambda}\}_{\Lambda}$ is called \textit{admissible} if $A$ admits an ideal of definition and $A$ is Hausdorff complete, i.e.\
\[
A \simeq \lim_{\Lambda}\frac{A}{F^{\lambda}},
\]
where the right-hand side is equipped with the projective limit topology.

Moreover, we say that $A$ is an \textit{adic ring} if $A$ is admissible and $I$-adic for some ideal $I \subseteq A$.

\begin{construction}
Given an admissible ring $A$, one defines a topologically locally ringed space $\Spf(A)$\footnote{See \url{https://stacks.math.columbia.edu/tag/0AHY} for a discussion of the category of locally topologically ringed spaces.}, as
\begin{itemize}
    \item the underlying topological space of $\Spf(A)$ is the set of open prime ideals of $A$ endowed with the Zariski topology on those ideals, i.e.\ this is isomorphic to $\Spec(A/I)$ as a topological space;
    \item the sheaf of topological rings is\footnote{Here each term of the inverse limit is endowed with the pseudo-discrete topology, a sheaf of discrete rings $\sO$ becomes a sheaf of topological rings by posing that for any quasi-compact $U$, $\sO(U)$ has the discrete topology, and for a general $V = \cup_{I}U_i$ where each $U_i$ is quasi-compact one endows $\sO(U)$ with the induced topology from $\prod_{I}\sO(U_i)$.}
    \[
    \sO_{\Spf(A)} := \lim_{I \in \sI_{A}}\left.\widetilde{\frac{A}{I}}\right|_{\Spf(A)},
    \]
    where $\sI_{A}$ is the set of ideals of definition of $A$ and $\widetilde{\frac{A}{I}}$ is the sheaf of discrete rings on $\Spec(A)$ associated to $\frac{A}{I}$. 
\end{itemize}
In particular, if $A$ is an adic ring, the set $\{I^k\}_{k \geq 1}$ is cofinal in $\sI_{A}$ and one has an equivalence
    \[
    \sO_{\Spf(A)} \simeq \lim_{k \geq 1}\left.\widetilde{\frac{A}{I^k}}\right|_{\Spf(A)}.
    \]
\end{construction}

\begin{defn}
\label{defn:affine-formal-schemes-EGA}
A morphism $f: \Spf(A) \ra \Spf(B)$ is determined by a continuous homomorphism of rings $\varphi: B \ra A$. We will denote by $\SchaffFormalEGA$ the category of \emph{affine formal schemes \`a la EGA}, defined as the opposite of the category of admissible topological rings.
\end{defn}

Notice that given any affine scheme $S = \Spec(A)$ one has a affine formal scheme \`a la EGA by considering $A$ with the discrete topology\footnote{I.e.\ here $F^{\bullet} = \{0,A\}$ forms a family inducing the discrete topology on $A$ and making $A$ an admissible topological ring.}. It is also clear that any homomorphism of discrete rings produces a continuous homomorphism of the associated discrete topological rings. Thus, one has a fully faithful functor
\[
\imath^{\rm EGA}: \clSchaff \hra \SchaffFormalEGA.
\]

\paragraph{Formal algebraic spaces viewpoint}

In this section we follow \cite{stacks-project}*{Tag 0AI7} and approach the theory of formal schemes using diagrams of usual schemes (see also \cite{Emerton}). Let the category of affine formal schemes $\SchaffFormal$ be defined as the subcategory of $\clindSchaff$ consisting of those classical ind-affine-schemes $S$ such that
\[
\red{S} \in \Schaff,
\]
where $\red{S}$ denotes the reduced scheme underlying $S$.

Notice that given $X \in \SchaffFormalEGA$ we define a prestack $\h_X$ by
\[
\h_X(S) := \Hom_{\SchaffFormalEGA}(\imath^{\rm EGA}(S),X).
\]
By \cite{stacks-project}*{Tag 0AI1} this defines a fully faithful embedding
\[
\h_{(-)}: \SchaffFormalEGA \ra \PStk.
\]

\begin{lem}
Let $X = \Spf(A)$, where $\{F^{\lambda}\}_{\Lambda}$ is a family exhibiting $A$ as an admissible topological ring. One has an equivalence
\[
\h_X \simeq \colim_{\Lambda^{\rm op}}\Spec(A/F^{\lambda}).
\]
In particular, the functor $\h_{(-)}$ factors as
\[
\imath^{\rm EGA}: \SchaffFormalEGA \ra \SchaffFormal.
\]
\end{lem}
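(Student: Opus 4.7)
The plan is to unpack $\h_X$ on points, recognize the resulting functor as a filtered colimit of the quotient schemes, and then verify that this filtered colimit lies inside $\SchaffFormal$.

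First, I would compute $\h_X$ on a classical affine test scheme $S = \Spec(R)$. By the definition of $\imath^{\rm EGA}$ and of morphisms in $\SchaffFormalEGA$,
\[
\h_X(S) = \Hom_{\SchaffFormalEGA}(\imath^{\rm EGA}(S), X) = \Hom_{\rm cts}(A, R),
\]
where $R$ carries the discrete topology. Since $R$ is discrete, the kernel of any continuous homomorphism $\varphi:A\to R$ is an open ideal of $A$, and by definition of the filtration topology it must contain some $F^{\lambda}$. Hence every such $\varphi$ factors uniquely through $A/F^{\lambda}$, and
\[
\h_X(S) \;\simeq\; \colim_{\Lambda^{\rm op}} \Hom(A/F^{\lambda}, R) \;\simeq\; \colim_{\Lambda^{\rm op}} \Spec(A/F^{\lambda})(S).
\]
The index category is filtered because the family of open ideals is closed under finite intersection (any two $F^{\lambda}, F^{\mu}$ have a common refinement in $\Lambda$). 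Since filtered colimits in $\Spc$ commute with evaluation at $S$, this gives the claimed equivalence of prestacks
\[
\h_X \;\simeq\; \colim_{\Lambda^{\rm op}} \Spec(A/F^{\lambda}).
\]

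To establish the factorization $\imath^{\rm EGA}: \SchaffFormalEGA \to \SchaffFormal$, I would verify the two defining conditions of $\SchaffFormal \subset \clindSchaff$. Each $\Spec(A/F^{\lambda})$ is a classical affine scheme, and for $F^{\mu} \subseteq F^{\lambda}$ the quotient map $A/F^{\mu} \twoheadrightarrow A/F^{\lambda}$ makes the transition morphism $\Spec(A/F^{\lambda}) \hookrightarrow \Spec(A/F^{\mu})$ a closed embedding. So $\h_X$ is a classical ind-affine scheme.

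It remains to check that $\red{\h_X} \in \Schaff$. Here I would use the hypothesis that $A$ is admissible to pick an ideal of definition $I \subseteq A$. By definition, the family $\{I^n\}_{n\geq 1}$ is cofinal in $\{F^{\lambda}\}_{\Lambda}$, so one may rewrite
\[
\h_X \;\simeq\; \colim_{n \geq 1} \Spec(A/I^n).
\]
The transition maps $\Spec(A/I^{n}) \hookrightarrow \Spec(A/I^{n+1})$ are closed immersions defined by ideals of nilpotent elements, so every term has the same reduced scheme, namely $\red{\Spec(A/I)}$. Hence $\red{\h_X} \simeq \red{\Spec(A/I)}$, which is an affine scheme, and $\h_X \in \SchaffFormal$ as desired.

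The main obstacle is quite mild: one must be careful with the interplay between the filtration convention (continuity forcing factorization through some $F^{\lambda}$ uses discreteness of the target) and cofinality (ideals of definition give a cofinal subsystem so that computing the reduced structure reduces to $\Spec(A/I)$). Once these two bookkeeping points are set up, the rest is formal manipulation of colimits of prestacks.
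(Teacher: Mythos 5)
The paper states this lemma without giving a proof, so I am judging your argument on its own terms. The first half of your proposal is correct and is the expected argument: for a discrete test ring $R$ a continuous homomorphism $A \to R$ has open kernel, hence factors through some $A/F^{\lambda}$; the index poset is filtered; and since colimits of prestacks are computed objectwise this identifies $\h_X$ with $\colim_{\Lambda^{\rm op}}\Spec(A/F^{\lambda})$, whose transition maps are closed immersions because the ring maps $A/F^{\mu} \to A/F^{\lambda}$ are surjective.

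The gap is in your verification that $\red{\h_X}$ is affine. You assert that ``by definition'' the family $\{I^n\}_{n\geq 1}$ is cofinal in $\{F^{\lambda}\}_{\Lambda}$. That is the \emph{adic} condition, not the admissible one: for an ideal of definition $I$ of an admissible ring one only knows that every $F^{\lambda}$ contains some $I^n$, not that every $I^n$ contains some $F^{\lambda}$. In particular $I^n$ need not be open for $n \geq 2$, so $\Spec(A/I^n)$ is not even a term of the defining ind-system, and the prestacks $\colim_{\Lambda^{\rm op}}\Spec(A/F^{\lambda})$ and $\colim_{n}\Spec(A/I^n)$ genuinely differ in general (on $R$-points the former is the set of continuous homomorphisms for the given topology, the latter for the $I$-adic one, and the inclusion can be strict). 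Note that the paper itself reserves the cofinality of $\{I^k\}_{k\geq 1}$ for the adic case. The conclusion is nevertheless true and the repair is short: pass to the cofinal subsystem of those $F^{\lambda}$ contained in $I$ (cofinal because $I$ is open). For such $\lambda$ the kernel $I/F^{\lambda}$ of $A/F^{\lambda} \twoheadrightarrow A/I$ is a nil ideal, since the defining property of an ideal of definition gives $I^n \subseteq F^{\lambda}$ for some $n$, whence $a^n \in F^{\lambda}$ for every $a \in I$. Therefore $\red{\Spec(A/F^{\lambda})} \simeq \red{\Spec(A/I)}$ for all such $\lambda$, the reduced ind-system is constant on a cofinal subdiagram, and $\red{\h_X} \simeq \Spec\bigl((A/I)_{\rm red}\bigr)$ is affine, as required.
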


One can also characterize what is the essential image of $\h_{(-)}$ in the category $\SchaffFormal$, the following is a particular case of \cite{stacks-project}*{Tag 0AIC}.

\begin{lem}
For an object $X$ of $\SchaffFormal$ the following are equivalent:
\begin{enumerate}[(i)]
    \item there exists $Y \in \SchaffFormalEGA$ such that
    \[
    \imath^{\rm EGA}(Y) \simeq X;
    \]
    \item there exists an affine scheme $X_0$ and an ind-closed map $f:X \ra X_0$.
\end{enumerate}
\end{lem}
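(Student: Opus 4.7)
The strategy is to translate back and forth between the ringed description on the EGA side and the diagrammatic description on the ind-scheme side, exploiting the functor $\h_{(-)}$ already in hand.

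For the implication (i) $\Rightarrow$ (ii), I would argue directly. Given $Y = \Spf(A)$ with $A$ an admissible topological ring and defining filtration $\{F^\lambda\}_{\Lambda}$, the previous lemma identifies
\[
X = \imath^{\rm EGA}(Y) \simeq \colim_{\Lambda^{\rm op}}\Spec(A/F^\lambda).
\]
Set $X_0 = \Spec(A)$. For each $\lambda$, the quotient map $A \twoheadrightarrow A/F^\lambda$ is surjective, so $\Spec(A/F^\lambda) \hra X_0$ is a closed embedding; these are compatible with the structure maps in the colimit. Passing to the colimit in $\clindSchaff$ produces the desired ind-closed map $f: X \ra X_0$.

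For the implication (ii) $\Rightarrow$ (i), choose a presentation $X \simeq \colim_{\Lambda} X_\lambda$ with closed embeddings $X_\lambda \hra X_\mu$ for $\lambda \leq \mu$. Since $f: X \to X_0$ is ind-closed, the composite $X_\lambda \to X_0$ is a closed embedding for each $\lambda$; writing $A_0 := \Gamma(X_0,\sO_{X_0})$ this gives a descending chain of ideals $I_\lambda \subseteq A_0$ with $X_\lambda = \Spec(A_0/I_\lambda)$, and $I_\mu \subseteq I_\lambda$ whenever $\lambda \leq \mu$. Define
\[
A := \lim_{\Lambda^{\rm op}} A_0/I_\lambda
\]
and topologize it via the filtration $F^\lambda := \ker(A \to A_0/I_\lambda)$. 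By construction $A$ is Hausdorff and complete in this topology, and the canonical map $\Spf(A) \to \colim_\Lambda \Spec(A_0/I_\lambda) \simeq X$ is an isomorphism of the underlying ind-affine-schemes. It remains only to see that $A$ is admissible.

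The key step, and where I expect the only real content to lie, is producing an ideal of definition in $A$. This is where the standing hypothesis $\red{X} \in \Schaff$ enters. Indeed, since $\red{X} \simeq \colim_\Lambda \red{X_\lambda}$ is a filtered colimit of affine schemes along closed embeddings, it belongs to $\Schaff$ precisely when the system stabilizes: there exists $\lambda_0 \in \Lambda$ with $\red{X_\lambda} = \red{X_{\lambda_0}}$ for all $\lambda \geq \lambda_0$, i.e. $\sqrt{I_\lambda} = \sqrt{I_{\lambda_0}}$. Setting $I := F^{\lambda_0} \subseteq A$ (the image of $I_{\lambda_0}$ under the canonical map $A \to A_0$, viewed as an open ideal of $A$), one uses the stabilization of radicals together with the finite generation of the relative ideals $I_{\lambda_0}/I_\lambda$—coming from the coherence of the fibers $\Fib(\sO_{X_{\lambda_0}} \to \sO_{X_\lambda})$ that is implicit in the ind-scheme being formed in $\SchaffFormal$—to conclude that $I^{n(\lambda)} \subseteq F^\lambda$ for some $n(\lambda)$. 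This exhibits $I$ as an ideal of definition and shows that $A$ is admissible; unpacking the definition, $\h_{\Spf(A)} \simeq X$, which completes the equivalence.
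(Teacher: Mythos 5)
The direction (i) $\Rightarrow$ (ii) is fine, and your reduction of (ii) $\Rightarrow$ (i) to producing an ideal of definition for $A = \lim_{\Lambda^{\rm op}} A_0/I_\lambda$ — using that $\red{X}\in\Schaff$ forces the radicals $\sqrt{I_\lambda}$ to stabilize — is the right skeleton. (For calibration: the paper offers no proof of this lemma at all; it simply declares it a special case of \cite{stacks-project}*{Tag 0AIC}.)

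The gap is in your last step. You justify $I^{n(\lambda)}\subseteq F^{\lambda}$ by invoking ``the finite generation of the relative ideals $I_{\lambda_0}/I_\lambda$ \dots implicit in the ind-scheme being formed in $\SchaffFormal$.'' No such finiteness is implicit: $\SchaffFormal$ is carved out of $\clindSchaff$, i.e.\ arbitrary classical ind-affine schemes with closed-embedding transition maps, and coherence of $\Fib(\sO_{X_\mu}\to\sO_{X_\lambda})$ is precisely the extra Tate condition that the paper imposes only later, when it passes to $\clSchaffTateformal$ (and it is exactly there that Proposition \ref{prop:admissible-ring-associated-to-Tate-affine-scheme} upgrades the topological ring to an adic one). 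Without finite generation, $I_{\lambda_0}\subseteq\sqrt{I_\lambda}$ only shows that every \emph{element} of $F^{\lambda_0}$ is topologically nilpotent — a weak ideal of definition in the sense of the Stacks Project, which is what Tag 0AIC actually delivers — not that a single power of the ideal lands in $F^\lambda$. The failure is genuine: take $A_0=k[x_1,x_2,\dots]$ and $I_n=(x_1^n,\dots,x_n^n,x_{n+1},x_{n+2},\dots)$. All $\sqrt{I_n}$ equal $(x_1,x_2,\dots)$, so $\red{X}=\Spec(k)$ and (ii) holds; yet any open ideal $J$ contains some $F^m$, each $x_{m+i}$ lies in $F^m\subseteq J$, so the image of the squarefree monomial $x_{m+1}\cdots x_{m+N}$ lies in $J^N$ while it does not lie in $F^n$ for $n>m+N$, $n\geq 2$. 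Hence no ideal of definition in the EGA/Fujiwara--Kato sense (the sense in which the paper defines ``admissible'') exists. So you must either interpret ``ideal of definition'' in the weak, topologically-nilpotent sense, or add the Tate/coherence hypothesis — but then you are proving the later statement about $\clSchaffTateformal$, not this lemma as stated.
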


\paragraph{Affine formal schemes corresponding to classical Tate affine schemes}

In this section we try to understand which subcategory of $\SchaffFormalEGA$ corresponds to classical Tate affine schemes in $\SchaffFormal$. That is, let
\[
\clSchaffTateformal := \clSchaffTate \cap \SchaffFormal
\]
denote the subcategory of Tate affine schemes $S$ whose underlying reduced prestack is equivalent to a classical scheme.

We want to understand the category $\SchaffFormalEGATate$ defined by the pullback
\[
\begin{tikzcd}
\SchaffFormalEGATate \ar[r] \ar[d] & \clSchaffTateformal \ar[d] \\
\SchaffFormalEGA \ar[r] & \SchaffFormal.
\end{tikzcd}
\]

The next result gives a necessary condition on an admissible topological ring to produce a Tate affine scheme whose underlying reduced prestack is a classical scheme.

\begin{prop}[see \cite{Fujiwara-Kato}*{Chapter 0, Proposition 7.2.11}]
\label{prop:admissible-ring-associated-to-Tate-affine-scheme}
For a Tate affine scheme $X \simeq \colim_I X_i$ with an ind-closed morphism $X \ra X_0$, the associated topological ring $A := \lim_{I^{\rm op}}\Gamma(X_i,\left.\sO_{X}\right|_{X_i})$ is adic.
\end{prop}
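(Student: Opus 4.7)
The plan is to translate the ind-geometric data into inverse-limit algebra and then exhibit an explicit ideal of definition in $A$. Writing $X_0 = \Spec(A_0)$, the hypothesis that $X \ra X_0$ is ind-closed identifies each $X_i$ with $\Spec(A_0/J_i) =: \Spec(B_i)$ for an ideal $J_i \subseteq A_0$, and the connecting closed embeddings $X_i \hra X_j$ force $J_j \subseteq J_i$ whenever $i \ra j$ in $I$. Hence $A = \lim_{I^{\mathrm{op}}} B_i$ acquires the tautological linear topology with fundamental system of open ideals $F_i := \ker(A \ra B_i)$, and both the Hausdorff property and completeness of $A$ in this topology are built into the inverse-limit description.

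First I would replace $I$ by a cofinal subcategory admitting an initial index $i_0$ (using that $I$ is filtered) and propose $\mathfrak{I} := F_{i_0} = \ker(A \ra B_{i_0})$ as the candidate ideal of definition. Admissibility then reduces to showing that for every $i \in I$ some power $\mathfrak{I}^n$ sits inside $F_i$; unwinding the inverse-limit description, this is equivalent to the assertion that the image $J_{i_0}/J_i \subseteq B_i$ is a \emph{nilpotent} ideal, i.e.\ $J_{i_0}^n \subseteq J_i$ in $A_0$ for some $n = n(i)$.

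The Tate hypothesis enters precisely here: coherence of $\Fib(\sO_{X_i} \ra \sO_{X_{i_0}})$ over $X_i$ implies that $J_{i_0}/J_i$ is a finitely generated ideal of $B_i$, so nilpotence would follow if every generator lies in the nilradical. The existence of the ind-closed morphism $X \ra X_0$ is what enforces this: it pins every layer $X_i$ to the same underlying reduced structure as $X_{i_0}$, so that the closed embedding $X_{i_0} \hra X_i$ is an infinitesimal thickening and its defining ideal lies in the nilradical of $B_i$. Combining finite generation with nilpotence of each generator yields $\mathfrak{I}^n \subseteq F_i$ for a suitable $n$, establishing cofinality of $\{\mathfrak{I}^n\}$ with $\{F_i\}$ and completing the proof.

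The main obstacle I expect is the nilpotence input in the previous step, which does not follow from the bare definition of an ind-closed morphism; one has to combine the coherence of the successive fibers with the fact that the entire Tate presentation $\{X_i\}$ realizes a single formal-geometric object sitting inside $X_0$, so that all $X_i$ share the reduced subscheme $\red{X_{i_0}}$. This is the content captured by the cited Fujiwara--Kato, Chapter~0, Proposition~7.2.11, and my proof would reduce the statement to theirs after the algebraic translation of the first paragraph; in the derived setting one first passes to classical truncations via the $\pi_0$ of $A$, where the nilpotence argument is an honest commutative-algebra statement about finitely generated ideals in quotients of $A_0$.
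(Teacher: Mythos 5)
The paper gives no argument for this proposition at all---it is stated as a bare citation of Fujiwara--Kato---so your proof can only be judged on its own terms. The first half of your plan is essentially sound: the translation to $A=\lim_{I^{\rm op}}A_0/J_i$ with the limit topology, the choice $\mathfrak{I}=F_{i_0}$, and the deduction $J_{i_0}^{\,n}\subseteq J_i$ from finite generation of $J_{i_0}/J_i$ (the Tate condition) combined with its containment in the nilradical of $B_i$. One caveat: that containment is \emph{not} a consequence of the ind-closed morphism $X\ra X_0$, as you assert. The colimit $\colim_n\Spec\bigl(k[t]/(t-1)\cdots(t-n)\bigr)$ is a Tate affine scheme with an ind-closed map to $\bA^1$ whose layers have strictly growing reduced subschemes. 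What you actually need is the contextual hypothesis of the surrounding subsection, namely that $X$ lies in $\SchaffFormal$, i.e.\ $\red{X}$ is an affine scheme, which is what forces the $\red{X_i}$ to stabilize.

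The serious gap is at the end. Proving $\mathfrak{I}^{n(i)}\subseteq F_i$ only shows that $\mathfrak{I}$ is an ideal of definition, i.e.\ that $A$ is admissible. For $A$ to be \emph{adic} the family $\{\mathfrak{I}^n\}$ must be a fundamental system of neighbourhoods of $0$, which also requires each power $\mathfrak{I}^n$ to be open, i.e.\ to contain some $F_j$; you assert ``cofinality'' but establish only one of the two inclusions. The missing inclusion is precisely where the cited Fujiwara--Kato proposition does its work, via finite generation of the ideal of definition itself, and finite generation of the successive kernels $J_i/J_j$ does not imply finite generation of $F_{i_0}$. Indeed the step can genuinely fail under the hypotheses as you have used them: for $X=\colim_m\Spec\bigl(k[x_1,\dots,x_m]/(x_1^2,\dots,x_m^2)\bigr)$, ind-closed in $\Spec k[x_1,x_2,\dots]$, with $\red{X}=\Spec k$ and all successive kernels principal, the limit ring $A$ is local with maximal ideal $\mathfrak{m}=F_{i_0}$, and every element of $\mathfrak{m}^2$ has vanishing linear part in each $B_m$; hence $x_{j+1}\in F_j\setminus\mathfrak{m}^2$ for every $j$, so $\mathfrak{m}^2$ is not open and $A$ is not adic. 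To close the argument you must either import an extra finiteness hypothesis (the paper itself retreats to the finite-type/Noetherian setting two paragraphs later) or reduce to Fujiwara--Kato at the level of a \emph{finitely generated} ideal of definition; as written, the proof does not go through.
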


For the moment we couldn't describe the category $\SchaffFormalEGATate$ more concretely. However we notice that in the case where we consider the subcategory
\[
\clSchaffTateft \cap \clSchaffTateformal
\]
where $\clSchaffTateft$ denotes the subcategory of $0$-coconnective Tate affine schemes of finite type then this category contains a well-known subcategory of affine formal schemes \`a la EGA.

Recall that an ideal of defition $I \subset A$ is said to be of \emph{finite type} if it is a finitely generated ideal of $A$.

\begin{prop}
Let $\SchaffFormalEGANoeth$ denote the subcategory of $\SchaffFormalEGA$ generated by formal affine schemes whose underlying topological ring is adic and Noetherian. Then one has a fully faithful embedding
\[
\SchaffFormalEGANoeth \simeq \clSchaffTateft \cap \clSchaffTateformal.
\]
\end{prop}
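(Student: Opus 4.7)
The plan is to show that the restriction of the fully faithful embedding $\imath^{\rm EGA}: \SchaffFormalEGA \to \SchaffFormal$ to the subcategory $\SchaffFormalEGANoeth$ lands precisely inside $\clSchaffTateft \cap \clSchaffTateformal$, and conversely that every object of the latter arises this way. Since $\imath^{\rm EGA}$ is already fully faithful, this directly yields the claimed equivalence, with the remaining content being an identification of essential images.

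The first step is to verify that if $A$ is a Noetherian adic $k$-algebra with ideal of definition $I$ (and with $A/I$ of finite type over $k$, as is implicit in working over the base field), then the associated prestack
\[
\imath^{\rm EGA}(\Spf(A)) \simeq \colim_n \Spec(A/I^n)
\]
belongs to $\clSchaffTateft \cap \clSchaffTateformal$. For the $\clSchaffTateformal$-condition, the reduced prestack underlying this colimit is $\Spec(A/\sqrt{I})$, an honest affine scheme. For the $\clSchaffTateft$-condition, each $\Spec(A/I^n)$ is of finite type over $k$, since $A/I^n$ is built by iterated extensions from the finitely generated $k$-algebra $A/I$ by the finitely generated $A/I$-modules $I^j/I^{j+1}$, where finite generation of the latter uses Noetherianness of $A$. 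The Tate condition (b) reduces to showing the kernel $I^n/I^{n+1}$ of $A/I^{n+1} \to A/I^n$ is a coherent sheaf on $\Spec(A/I^{n+1})$, which again follows from $A$ being Noetherian.

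For essential surjectivity, fix $X \in \clSchaffTateft \cap \clSchaffTateformal$ and choose a presentation $X \simeq \colim_i X_i$ with each $X_i = \Spec(A_i)$ a classical affine scheme of finite type over $k$ and connecting maps satisfying Tate condition (b). Since $X$ lies in $\clSchaffTateformal$, Proposition \ref{prop:admissible-ring-associated-to-Tate-affine-scheme} applies and the topological ring $A := \lim_{i} A_i$ is adic, with some ideal of definition $I$. Each $A_i$ is Noetherian (finite type over a field), and Tate condition (b) ensures that the ideal $I$ can be chosen finitely generated over $A$. I then invoke the standard Chevalley-type criterion (e.g.\ Stacks Project, Tag 05GH): if $A$ is $I$-adically complete with $I$ finitely generated and $A/I$ Noetherian, then $A$ is Noetherian. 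This places $\Spf(A) \in \SchaffFormalEGANoeth$ with $\imath^{\rm EGA}(\Spf(A)) \simeq X$, completing the argument.

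The main obstacle will be carefully extracting a finitely generated ideal of definition $I \subset A$ from the cofiltered presentation of $X$: one must assemble the finite generation of the individual kernels $\ker(A_{i+1} \to A_i)$ guaranteed by Tate condition (b) into the assertion that a single ideal of $A$ is finitely generated, and verify that the resulting adic topology matches the topology coming from Proposition \ref{prop:admissible-ring-associated-to-Tate-affine-scheme}. Once this compatibility is in hand, the Chevalley-type Noetherianness criterion applies as a black box.
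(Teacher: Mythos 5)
The first half of your argument --- that $\imath^{\rm EGA}$ restricted to Noetherian adic rings lands in $\clSchaffTateft \cap \clSchaffTateformal$ --- is essentially the paper's proof: Noetherianness of $A$ gives finite presentation of the kernels $I^k/I^\ell$, and the reduced prestack is an honest affine scheme. You are in fact more careful than the paper on one point: the paper never checks that each $\Spec(A/I^n)$ is of finite type over $k$ (which genuinely requires the extra hypothesis you insert parenthetically --- take $A=\bC[[t]]$ over $k=\bQ$ to see it can fail), whereas you supply the iterated-extension argument. Note also that the paper only claims, and only proves, a fully faithful embedding (its proof ends with "the canonical inclusion functor factors as $\SchaffFormalEGANoeth \hra \clSchaffTateft \cap \clSchaffTateformal$"); the $\simeq$ in the displayed formula is not meant as an equivalence of categories, and the surrounding text explicitly says the authors could not identify the essential image.

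Your essential surjectivity step is where the proposal breaks, and the obstacle you flag at the end is not merely technical --- it is fatal. Finite generation of each kernel $\ker(A_{i+1} \ra A_i)$ over $A_{i+1}$ does \emph{not} assemble into finite generation of an ideal of definition of $A = \lim_i A_i$, and the limit ring need not be Noetherian, nor even adic. Concretely, take
\[
A_n = k[x_1,\ldots,x_n]/(x_i x_j)_{1 \leq i,j \leq n},
\]
with transition maps killing $x_{n+1}$. Each $A_n$ is a finite-dimensional (hence finite-type, Noetherian) $k$-algebra, each kernel $k\cdot x_{n+1}$ is finitely generated, and the reduced prestack of $X = \colim_n \Spec(A_n)$ is $\Spec(k)$, so $X \in \clSchaffTateft \cap \clSchaffTateformal$. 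But $A = \lim_n A_n = k \oplus \prod_{i\geq 1} k x_i$ with square-zero maximal ideal: every open ideal is infinite-dimensional over $k$, hence (by the square-zero multiplication) not finitely generated, so $A$ admits no finitely generated ideal of definition, the Chevalley-type criterion of Tag 05GH never applies, and $A$ is not Noetherian. Since $\h_{(-)}$ is fully faithful, $X$ cannot be isomorphic to $\h_{\Spf(B)}$ for any Noetherian adic $B$ either. So the inclusion is strict and essential surjectivity is false; you should delete that half of the argument rather than try to repair it.
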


\begin{proof}
Let $A$ be a Noetherian adic ring with $I \subset A$ an ideal of definition of finite type, then we claim that
\[
\h_{\Spf(A)} \simeq \colim_{n \geq 1}\h_{\Spec(A/I^n)}
\]
is a Tate affine scheme. Indeed, we need to check that for every for $\ell > k$ the quotient $\frac{I^{k}}{I^{\ell}}$ is finitely presented as an $A/I^{\ell}$-module. Notice that $I$ finitely presented implies that $I^k$ is finitely presented, thus the quotient
\[
\frac{I^{k}}{I^{\ell}}
\]
is finitely presented as an $A$-module. Thus, the restriction to an $A/I^{\ell}$-module is also finitely presented. Moreover, the canonical map $\Spf(A) \ra \Spec(A/I)$ where we consider $A/I$ with the discrete topological gives an isomorphism between $\red{\h_{\Spf(A)}}$ and $\Spec(A/I)$. Thus, the canonical inclusion functor $\imath: \SchaffFormalEGANoeth \hra \SchaffFormal$ factors as
\[
\SchaffFormalEGANoeth \hra \clSchaffTateft \cap \clSchaffTateformal.
\]
\end{proof}

\section{Sheaves on Tate schemes}
\label{sec:sheaves}

In this section we develop the formalism of Tate coherent sheaves on Tate schemes locally almost of finite type. In \S \ref{subsec:indcoh-review}, we review the formalism of Ind-coherent sheaves restricted to Tate schemes, this follows directly from the formalism in \cites{GR-I,GR-II} where this formalism is constructed over ind-(inf-)schemes. In \S \ref{subsec:Pro-ind-coherent-on-schemes}, we construct the formalism of Pro-Ind-coherent sheaves on schemes. The discussion is analogous to the construction of Ind-coherent sheaves on schemes as performed in \cite{GR-I}*{Chapter 4}. Namely, we first construct an arbitrary $\star$-pushforward functor via Pro-extension, and then construct an arbitrary $!$-pullback functor via a factorization formalism, where $!$ is defined to be the left (resp.\ right) adjoint of $\star$ for open (resp.\ proper) morphisms. In \S \ref{subsec:Tate-coherent-sheaves}, we define the formalism of Tate-coherent sheaves on schemes by using pro-ind-coherent sheaves. The distinction from \S \ref{subsec:Pro-ind-coherent-on-schemes} is that we start with arbitrary pullback !-functors, inherited from the Pro-Ind-coherent sheaves and define an arbitrary $\star$-pushforward by passing to adjoints.

In \S \ref{subsec:technical-presentability}, we discuss a technical condition on our category of Pro-Ind-coherent sheaves needed to apply the formalism of extensions from \cite{GR-I}*{Chapter 8}. The main point is that considering arbitrary Pro-objects on a presentable category is not presentable. We get around this difficulty by being careful regarding the cardinality of the diagrams presenting the Pro-objects. In \S \ref{subsec:Pro-ind-coherent-on-Tate-schemes}, we extend the formalism of Pro-Ind-coherent sheaves from schemes to Tate schemes. This follows from the formalism of extensions for the categories of correspondences. One could have considered the formalism for ind-inf-schemes with the same arguments, but we restrict to Tate schemes for uniformity of presentation. Pro-ind-coherent sheaves have already been considered in \cite{Drinfeld}*{\S 6.3.2}. In \S \ref{subsec:Tate-coherent-sheaves-on-Tate-schemes}, we finally define the formalism of Tate-coherent sheaves on Tate schemes. This section is similar to \S \ref{subsec:Pro-ind-coherent-on-Tate-schemes} and relies on the results of \S \ref{subsec:Tate-coherent-sheaves}. 

\paragraph{What do we want?}

This section is devoted to the construction of an assignment
\[
S \in \SchTateaft \rightsquigarrow \TateCoh(S) \in \DGc
\]
and to understand what functorialities this construction has.

\paragraph{Tate objects}

We recall the following concept. For $S$ a Tate scheme locally almost of finite type one defines the category of \emph{Tate coherent sheaves}\footnote{We will denote
\[
\ProIndCoh(S) = \Pro(\IndCoh(S))
\]
for clarity.} 
\[
\TateCoh(S) \subset \ProIndCoh(S)
\]
as the full subcategory of Pro-objects in $\IndCoh(S)$ which can be represented as
\[
\sF \simeq \lim_{I} \sF_i
\]
with $\sF_i \in \IndCoh(S)$, where for each $i \ra j$ in $I$ one has
\[
\Fib(\sF_i \ra \sF_j) \in \Coh(S).
\]

We notice that the category $\TateCoh(S)$ is equivalent to the smallest stable subcategory of $\Pro(\IndCoh(S))$ generated by the essential images of $\IndCoh(S)$, $\Pro(\Coh(S))$.

\paragraph{Why one needs Pro-ind-sheaves?}

A naive guess would be that we can extend the functors (\ref{eq:IndCoh-!-affine-Tate}) and (\ref{eq:IndCoh-*-affine-Tate}) to Tate objects directly, however this does not quite work.

Let $f:S \ra T$ be a map of schemes almost of finite type, one of the difficulties of following exactly the path of \cite{GR-I}*{Chapter 4, \S 2} is that one can not expect to extend
\[
f^{\rm Ind}_*:\IndCoh(S) \ra \IndCoh(T) \hra \TateCoh(T)
\]
to a functor out of $\TateCoh(S)$. Essentially, the problem is that the extension 
\[
f^{\rm ProInd}_*: \ProIndCoh(S) \ra \ProIndCoh(T)
\]
obtained by Pro extending the map $f^{\rm Ind}_*:\IndCoh(S) \ra \ProIndCoh(T)$ does not preserve Tate objects in $S$ for an arbitrary map $f$\footnote{The reason is the same as why arbitrary push-forwards do not preserve coherent objects.}.

This suggests that our first goal should be to define a functor
\[
\ProIndCoh^!: (\Schaft)^{\rm op} \ra \DGc
\]
which encodes the $!$-pullback formalism and also a functor
\[
\ProIndCoh: \Schaft \ra \DGc
\]
which will encode the $*$-pushforward formalism. 

\paragraph{Tate-coherent sheaves inside pro-ind-coherent sheaves}

Once the formalism of pro-ind-coherent sheaves is in place, we observe that the $!$-pullback from pro-ind-coherent sheaves preserves Tate objects. However, for an arbitrary map $f$ the pushforward $f^{\rm ProInd}_*$ does \emph{not} preserve the Tate condition. To circumvent this problem we proceed in an analogy with how the arbitrary pullback for ind-coherent sheaves is constructed.

Namely, given $f:S \ra T$ an arbitrary map of schemes almost of finite type, one has a factorization
\begin{equation}
    \label{eq:factorization-f-in-introduction-of-subsection}
    S \overset{g}{\ra} Y \overset{h}{\ra} T
\end{equation}
where $g$ is open and $h$ is proper. For $h$ the restriction of $h^{\rm ProInd}_*$ to Tate-objects sends them to Tate-objects and one has an adjunction $()$. For $g$ we \emph{define} $g^{\rm Tate}_*$ as the \emph{right} adjoint to $g^!$. Thus, one has
\[
f^{\rm Tate}_* := h^{\rm Tate}_* \circ g^{\rm Tate}_*
\]

As usual one needs to use base change of Tate-coherent sheaves to check that this assignment is well-defined, i.e.\ does not depend on the factorization (\ref{eq:factorization-f-in-introduction-of-subsection}) and has the correct coherence data.

\subsection{Review of ind-coherent sheaves}
\label{subsec:indcoh-review}

In this section we briefly review the theory of ind-coherent sheaves as developed by Gaitsgory--Rozenblyum in \cites{GR-I,GR-II}. We specialize their theory to the special case of Tate schemes almost of finite type. Using Ind-coherent sheaves we develop the formalism of Pro-Ind-coherent sheaves and Tate-coherent sheaves on Tate schemes locally almost of finite type on sections \ref{subsec:Pro-ind-coherent-on-schemes} and \ref{subsec:Tate-coherent-sheaves-on-Tate-schemes}.

A word on the locally almost of finite type condition. A more general theory of Ind-coherent sheaves exist on the whole category of Tate schemes, which has a natural $*$-pullback, see \cite{Raskin-homological}*{\S 6}. We could take that theory as a starting point to develop an analogous formalism of Pro-Ind-coherent and Tate-coherent sheaves. These theories should be related to the ones in this section in the special case of Tate schemes locally almost of finite type. We plan to return to this point in the future.

\subsubsection{Ind-coherent sheaves on schemes almost of finite type}
\label{subsubsec:ind-coherent-sheaves-on-schemes-almost-of-finite-type}

Let's recall the formalism of $\IndCoh$ for schemes almost of finite type as developed in \cite{GR-I}*{Chapter 4 and Chapter 5}. 

For a scheme almost of finite type\footnote{One can define $\Coh(S)$ for a larger class of schemes, following \cite{Gaitsgory-IndCoh}*{Section 2.2}, however the theory of coherent sheaves behave better when restricted to the schemes almost of finite type.} $S$ we define
\[
\Coh(S) \subset \QCoh(S)
\]
as the subcategory of bounded complexes with coherent cohomology. 

One has a functor
\[
\IndCoh^!: (\Schaft)^{\rm op} \ra \DGc
\]
which assigns to any morphism $f:S \ra T$ a pullback map
\[
f^!: \IndCoh(T) \ra \IndCoh(S).
\]

The $!$-pullback functor has a covariant counter-part $f^{\rm Ind}_{*}$, this is determined as the unique functor that makes the following diagram commutes
\[
\begin{tikzcd}
\IndCoh(S) \ar[r,"\Psi_S"] \ar[d,"f^{\rm Ind}_{*}"'] & \QCoh(S) \ar[d,"f_{*}"] \\
\IndCoh(T) \ar[r,"\Psi_T"] & \QCoh(T) \\
\end{tikzcd}
\]
where $\Psi_{S}: \IndCoh(S) \ra \QCoh(S)$ is the canonical functor given by Ind extending the inclusion $\Coh(S) \hra \QCoh(S)$.

One recalls the following properties which essentially define $f^!$ for all morphisms.

\begin{prop}[\cite{GR-I}*{Chapter 5, Theorem 3.4.3}]
\label{prop:IndCoh-adjunctions-for-schemes}
Let $f:S \ra T$ be a map of schemes almost finite type:
\begin{enumerate}[(1)]
    \item when $f$ is an open immersion $f^!$ is the \emph{left} adjoint of $f^{\rm Ind}_*$;
    \item when $f$ is proper $f^!$ is the \emph{right} adjoint of $f^{\rm Ind}_*$.
\end{enumerate}
\end{prop}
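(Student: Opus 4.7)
The plan is to extract both adjunctions from the construction of $\IndCoh$ as a $2$-functor on the category of correspondences $\Corr(\Schaft)^{\rm proper}_{\rm all; all}$, where horizontal morphisms give rise to $!$-pullbacks and vertical (proper) morphisms give rise to $*$-pushforwards. In this formalism, the defining compatibility between horizontal and vertical edges produces the adjunction $(f^{\rm Ind}_{*}, f^!)$ whenever $f$ is proper, essentially for free. So the bulk of the work is (1), the open-embedding case, and compatibility between the two.

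For (2), I would first recall that by Grothendieck's coherence theorem for proper morphisms, $f_{*}:\QCoh(S)\ra\QCoh(T)$ restricts to $f_{*}:\Coh(S)\ra\Coh(T)$; Ind-extending this gives a continuous functor $f^{\rm Ind}_{*}:\IndCoh(S)\ra\IndCoh(T)$ which preserves compact objects. Since both sides are compactly generated, the adjoint functor theorem produces a (not necessarily continuous) right adjoint, and one takes this right adjoint as the definition of $f^{!}$ in the proper case. One then checks that this matches the abstract $!$-pullback from the correspondence formalism: this is built into the construction of $\IndCoh_{\Corr(\Schaft)^{\rm proper}_{\rm all; all}}$, since a proper map viewed as a self-correspondence provides exactly the counit/unit data of the adjunction.

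For (1), for an open immersion $g:U\hra T$ the map $g$ is flat, so $g^{*}:\QCoh(T)\ra\QCoh(U)$ is $t$-exact and preserves coherence (being a local condition). Thus $g^{*}$ restricts to $\Coh(T)\ra\Coh(U)$ and Ind-extends to a continuous functor $g^{\IndCoh,*}:\IndCoh(T)\ra\IndCoh(U)$. Passing to Ind-categories of the adjunction $(g^{*},g_{*})$ on coherent sheaves (using that for quasi-compact open immersions $g_{*}$ also preserves coherence on the image, allowing the adjunction data to Ind-extend) gives that $g^{\IndCoh,*}$ is left adjoint to $g^{\rm Ind}_{*}$. It then remains to identify $g^{\IndCoh,*}$ with $g^{!}$ as defined by the general formalism; this is the key compatibility statement, and I would prove it by verifying base change against proper morphisms and using the fact that any $f$ factors as an open embedding followed by a proper morphism (Nagata compactification in derived form) to pin down $g^{!}$ uniquely by the two adjunction properties above plus base change.

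The main obstacle is the last step: verifying that the ``hands-on'' $g^{\IndCoh,*}$ for open embeddings and the adjoint-functor-theorem $f^{!}$ for proper maps assemble into a single well-defined $!$-pullback functor on all of $\Schaft$, independent of the factorization $f=h\circ g$ with $g$ open and $h$ proper. The coherence data required here is the base-change isomorphism for pro-per/open squares, which in the formalism of \cites{GR-I,GR-II} is encoded as the compatibility of the $2$-functor on $\Corr(\Schaft)^{\rm proper}_{\rm all; all}$ with the two decomposition classes of morphisms. Once this base change is established (the technical heart of \cite{GR-I}*{Chapter 5}), both adjunctions in the proposition become formal consequences.
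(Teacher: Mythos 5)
The paper gives no proof of this statement: it is quoted verbatim from \cite{GR-I}*{Chapter 5, Theorem 3.4.3}, and the remark immediately following it explicitly defers the construction to the correspondence formalism of \cite{GR-II}*{Chapter 5}. So there is no in-paper argument to compare against; what you have written is a reconstruction of the Gaitsgory--Rozenblyum proof, and in broad outline it is the right one: $f^!$ for proper $f$ is \emph{defined} as the right adjoint of $f^{\rm Ind}_*$ (which exists and is continuous because $f^{\rm Ind}_*$ preserves compact objects, by coherence of proper pushforward), $f^!$ for open $f$ is identified with the $*$-pullback $f^{\IndCoh,*}$, and the two are glued along Nagata factorizations using proper/open base change, packaged as a functor out of $\Corraftpaa$.

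One step in your case (1) is wrong as stated: for a quasi-compact open immersion $g$, the pushforward $g_*$ does \emph{not} preserve coherence (e.g.\ $g:\bA^1\setminus\{0\}\hra\bA^1$ has $g_*\sO = k[t,t^{-1}]$, not finitely generated over $k[t]$), so you cannot obtain the adjunction $(g^{\IndCoh,*},g^{\rm Ind}_*)$ by Ind-extending an adjunction on $\Coh$. The correct route is: $g^{\IndCoh,*}$ is continuous and $\IndCoh(T)$ is compactly generated by $\Coh(T)$, so the adjunction isomorphism need only be checked against compact generators $\sF\in\Coh(T)$, where it reduces via $\Psi$ to the ordinary $(g^*,g_*)$-adjunction on $\QCoh$ (this is \cite{GR-I}*{Chapter 4, \S 4.1}). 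Also, in (2) the right adjoint produced by the adjoint functor theorem is automatically continuous precisely because $f^{\rm Ind}_*$ preserves compacts; flagging it as possibly non-continuous undersells what you have already established. With these repairs your sketch agrees with the cited source.
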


\begin{rem}
To actually construct $f^!$ for all morphisms using the above two requirements one needs to go through the formalism of $\IndCoh$ for correspondences. This is done in \cite{GR-II}*{Chapter 5} whereto we refer the reader for details. We will perform this formal construction when extending the formalism of ind-coherent to pro-ind-coherent and Tate-coherent sheaves in section \ref{subsec:Pro-ind-coherent-on-schemes} below.
\end{rem}

\subsubsection{Ind-coherent sheaves on Tate schemes locally almost of finite type}
\label{subsubsec:indcoh-for-Tate-aft}

The results of \cite{GR-II}*{Chapter 3} extend the formaslism of $\IndCoh$ from the previous subsection to prestacks of locally almost of finite type and ind-inf-schematic morphisms between those (see \emph{loc.\ cit.\ } for a precise formulation). Here we will summarize their results restricted to the case of Tate schemes almost of finite type.

\paragraph{}

The pullback functoriality is defined for all prestacks locally of finite type.

\begin{defn}[$!$-pullback]
\label{defn:!-pullback-prestacks-laft}
Following \cite{GR-I}*{Chapter 5, \S 3} we define the functor
\[
\IndCoh^!: \PStk_{\rm laft} \ra \DGc
\]
by the \textit{right Kan extension} of $\IndCoh^!$ on affine schemes almost of finite type. 
\end{defn}

\paragraph{}

The direct image is a bit more subtle and only exists for ind-inf-schematic between prestacks locally almost of finite type. We will however restrict to the particular of ind-schemes, as those are sufficient for our purposes, and any map between those is ind-schematic and in particular ind-inf-schematic. 

\begin{defn}[$*$-pushforward]
\label{defn:*-pushforwad-ind-schemes}
Following \cite{GR-II}*{Chapter 3, \S 1} we define the functor\footnote{Here $\Ind(\Sch)$ is the subcategory of prestacks $\sX$ that are: (i) convergent and (ii) can be represented as $\sX \simeq \colim_{I}X_i$ where $X_i$ is a quasi-compact scheme.}
\[
\IndCoh: \indSchlaft \ra \DGc
\]
as the \emph{left Kan extension} of $\IndCoh$ for affine schemes.
\end{defn}

\paragraph{Restriction to Tate schemes}

We notice that $\SchTatelaft$ is a subcategory of both $\PStk_{\rm laft}$ and of $\indSchlaft$ hence we can consider the restrictions
\begin{equation}
\label{eq:IndCoh-!-affine-Tate}
\IndCoh^!: (\SchTatelaft)^{\rm op} \ra \DGc
\end{equation}
and
\begin{equation}
\label{eq:IndCoh-*-affine-Tate}
\IndCoh: \SchTatelaft \ra \DGc.    
\end{equation}

\paragraph{}

The following Theorem summarizes all the properties of the functors (\ref{eq:IndCoh-!-affine-Tate}) and (\ref{eq:IndCoh-*-affine-Tate}). We emphasize that to correctly formalize them one needs to consider an extension of the above functors to a functor from the $2$-category of correspondences (see \S \ref{par:functor-out-of-2-category-of-correspondences} below for a discussion of this, or \cite{GR-II}*{Chapter 3, \S 5} for more details.)

\begin{thm}
\label{thm:package-IndCoh-affine-ind-schemes}
The functors (\ref{eq:IndCoh-!-affine-Tate}) and (\ref{eq:IndCoh-*-affine-Tate}) have the following properties:
\begin{enumerate}[1)]
    \item (proper adjunction) suppose that $f:S \ra T$ is an ind-proper map (see Definition \ref{defn:ind-proper-map}) in $\SchTatelaft$ then one has an adjunction
    \[
    (f^{\rm Ind}_*,f^!);
    \]
    \item (open embedding) suppose that $g: S \ra T$ is an open embedding in $\SchTatelaft$ then one has an adjunction
    \[
    (f^!,f^{\rm Ind}_*);
    \]
    \item (base change) suppose that
    \[
    \begin{tikzcd}
    S' \ar[r,"g_S"] \ar[d,"f'"'] & S \ar[d,"f"] \\
    T' \ar[r,"g_T"] & T
    \end{tikzcd}
    \]
    is a Cartesian diagram where $f$ is ind-proper, then the canonical map
    \begin{equation}
        \label{eq:base-change-map}
        (f')^{\rm Ind}_*\circ g^!_S \ra g^!_T\circ f^{\rm Ind}_*
    \end{equation}
    arising from using adjunction 1) to $g^!_S\circ f^! \simeq (f')^{!}\circ g^!_T$ is an isomorphism. 
    
    Similarly, if we suppose that $g_T$ is ind-proper, then we can use $f^{\rm Ind}_* \circ (g^{\rm Ind}_S)_* \simeq (g^{\rm Ind}_T)_* \circ (f')^{\rm Ind}_*$ to obtain the map (\ref{eq:base-change-map}) which is also an equivalence.
\end{enumerate}
\end{thm}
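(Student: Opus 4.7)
The plan is to reduce each claim to the corresponding statement at the level of schemes almost of finite type, where it is established in \cites{GR-I,GR-II}. The key input is that $\IndCoh^!$ on $\SchTatelaft$ arises by right Kan extension while $\IndCoh$ arises by left Kan extension from the respective functors on $\Schaft$. As a consequence, for a Tate scheme $S \simeq \colim_I S_i$ with $S_i \in \Schaft$, the two descriptions
\[
\IndCoh(S) \simeq \colim_I \IndCoh(S_i) \simeq \lim_{I^{\rm op}} \IndCoh(S_i)
\]
are canonically equivalent in $\DGc$, the colimit being taken along the $(f_{i,j})^{\rm Ind}_{*}$ and the limit along the right adjoints $f^{!}_{i,j}$; functoriality for an arbitrary morphism of Tate schemes is then inherited from these descriptions.

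First I would dispatch the adjunctions in (1) and (2). Given $f: S \ra T$ a morphism in $\SchTatelaft$, \cite{Hennion-Tate}*{Proposition 1.2} supplies common presentations $S \simeq \colim_I S_i$ and $T \simeq \colim_I T_i$ such that $f$ is recovered from a levelwise family $f_i: S_i \ra T_i$ of morphisms in $\Schaft$. When $f$ is ind-proper, Remark \ref{rem:ind-proper-is-proper-for-the-presentation} allows us to arrange for each $f_i$ to be proper, and Proposition \ref{prop:IndCoh-adjunctions-for-schemes}(2) then provides adjunctions $((f_i)^{\rm Ind}_{*}, f^{!}_i)$ levelwise. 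Since $f^{\rm Ind}_{*}$ is assembled from the colimit description and $f^{!}$ from the limit description, and since adjunctions pass through the formation of $\colim$ and $\lim$ in $\DGc$, this yields the adjunction $(f^{\rm Ind}_{*}, f^{!})$. The open embedding case is parallel, using Proposition \ref{prop:IndCoh-adjunctions-for-schemes}(1) levelwise.

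For the base change statement (3), given a Cartesian square in $\SchTatelaft$ with $f$ ind-proper, I would present the entire square as a filtered colimit of Cartesian squares in $\Schaft$ whose right vertical morphisms are proper; this is possible by a straightforward extension of the common-presentation argument together with Remark \ref{rem:Tate-schemes-aft-stable-under-fiber-products}. Base change for ind-coherent sheaves on schemes (\cite{GR-I}*{Chapter 5}) then supplies the required isomorphism at each level, and passing to colimits---equivalently, to limits of right adjoints---produces the isomorphism on the Tate scheme level. The case with $g_T$ ind-proper is formally identical by interchanging the roles of vertical and horizontal morphisms.

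The main subtlety lies not in the individual verifications---all of which follow formally from the scheme-level results together with the stability of $\SchTatelaft$ under the relevant fiber products---but in the coherent formalization. Properly, the entire package should be obtained by extending the functor $\IndCoh_{\Corr(\Schaft)^{\rm proper}_{\rm all;all}}$ from schemes to a functor on the $2$-category $\Corr(\SchTatelaft)^{\rm ind-proper}_{\rm all;all}$, along the lines of \cite{GR-II}*{Chapter 3, \S 5}, from which statements 1)--3) are then extracted by restriction to the appropriate subcategories of morphisms.
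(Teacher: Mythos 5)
The paper offers no proof of this theorem: the remark immediately following it declares it to be a restatement of \cite{GR-I}*{Chapter 5, Theorem 3.4.3} and \cite{GR-II}*{Chapter 3, \S 2.1--2.2; Chapter 5, Theorem 5.4.3 and \S 5.5}, restricted along the inclusion $\SchTatelaft \subset \indSchlaft$, so the intended justification is pure citation. Your sketch is therefore doing more work than the paper does, and it is a faithful reconstruction of how Gaitsgory--Rozenblyum actually prove these statements; indeed it is essentially the strategy the paper itself executes in detail for the analogous $\ProIndCoh$ package (Propositions \ref{prop:base-change-ind-proper-ProIndCoh} and \ref{prop:base-change-ind-proper-ProIndCoh^!}), including your closing observation that the clean formulation is the extension of $\IndCoh_{\Corraftpaa}$ to $\CorrTatelaftipaa$. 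One caution on the level of detail: the phrase ``adjunctions pass through the formation of $\colim$ and $\lim$'' is fine for item 1), where the right adjoint of a functor out of $\colim_I\IndCoh(S_i)$ is determined by the levelwise right adjoints of its components via Lemma \ref{lem:adjoints-of-limits-of-categories}; but item 2) is not simply ``parallel,'' since exhibiting $g^!$ as a \emph{left} adjoint requires knowing that $g^!$ commutes with the insertion functors $(\imath_i)^{\rm Ind}_*$ of the colimit presentation, which is an instance of base change between closed and open embeddings --- i.e.\ item 3). Likewise, ``passing to colimits'' of the levelwise base-change isomorphisms in 3) requires identifying the resulting map with the canonical one and supplying the higher coherences, which is exactly what the correspondence-category extension (the route both the paper and GR take, and which you flag at the end) is designed to package. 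So the proposal is correct in substance, with the acknowledged 2-categorical bookkeeping deferred to \cite{GR-II}*{Chapter 3, \S 5}.
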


\begin{rem}
The above theorem is a restatement of \cite{GR-I}*{Chapter 5, Theorem 3.4.3}, \cite{GR-II}*{Chapter 3, \S 2.1 and 2.2} and \cite{GR-II}*{Chapter 5, Theorem 5.4.3 and \S 5.5} for the particular case of Tate schemes.
\end{rem}

\paragraph{Ind-coherent sheaves on a filtered diagram}

For a Tate scheme one can describe the category of Ind-coherent sheaves more concretely.

\begin{prop}
\label{prop:limit-and-colimit-presentation-of-IndCoh-on-Tate-schemes}
Let $T \in \SchTatelaft$ and consider a presentation $T \simeq \colim_I T_i$ where each $T_i \in \Schaft$, i.e.\ $T_i$ is a scheme almost of finite type, one has an equivalence
\[
\IndCoh(T) = \lim_{I^{\rm op}}\IndCoh(T_i) \simeq \colim_I\IndCoh(T_i),
\]
where the limit is taken with respect to the maps $f^!_{i,j}:\IndCoh(T_j) \ra \IndCoh(T_i)$ and the colimit with respect to $(f^{\rm Ind}_{i,j})_*:\IndCoh(T_i) \ra \IndCoh(T_j)$.
\end{prop}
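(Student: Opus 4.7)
The plan is to establish the two descriptions separately and then identify them via abstract adjoint-passing. The colimit description will come from the definition of $\IndCoh$ as a left Kan extension, the limit description from the definition of $\IndCoh^!$ as a right Kan extension, and the comparison from the general fact that in $\DGc$, colimits of diagrams whose transitions admit continuous right adjoints agree with the limits taken along those right adjoints.

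First, I would establish the colimit presentation. By Definition~\ref{defn:*-pushforwad-ind-schemes} combined with transitivity of left Kan extensions, the functor $\IndCoh\colon \indSchlaft \to \DGc$ is the left Kan extension from $\Schaft \hookrightarrow \indSchlaft$. Applied to $T$, this gives
\[
\IndCoh(T) \;\simeq\; \colim_{(X,\,X \to T)\in \Schaft_{/T}}\IndCoh(X).
\]
I would then show that the functor $I \to \Schaft_{/T}$, $i \mapsto (T_i, T_i \hookrightarrow T)$ is cofinal. The key input is that any map $X \to T$ from a quasi-compact scheme almost of finite type factors through some $T_i$ (because $T \simeq \colim_I T_i$ with closed-embedding transitions), and since closed embeddings are monomorphisms, any two factorizations agree after passing to a sufficiently large index. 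This yields the first equivalence $\IndCoh(T) \simeq \colim_I \IndCoh(T_i)$ with transitions $(f_{i,j}^{\rm Ind})_*$.

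Next, I would handle the limit presentation. By Definition~\ref{defn:!-pullback-prestacks-laft}, $\IndCoh^!$ on prestacks laft is the right Kan extension from $\Schaffaft$, so
\[
\IndCoh^!(T) \;\simeq\; \lim_{(S,\,S \to T)^{\rm op} \in (\Schaffaft_{/T})^{\rm op}}\IndCoh(S),
\]
and analogously $\IndCoh^!(T_i) \simeq \lim_{(S \to T_i)^{\rm op}} \IndCoh(S)$. Commuting the double limit $\lim_{I^{\rm op}} \lim_{(S \to T_i)^{\rm op}}$ into the limit over the total Grothendieck construction, the same cofinality input as above (now interpreted as initiality of the projection to $\Schaffaft_{/T}$) gives $\lim_{I^{\rm op}} \IndCoh(T_i) \simeq \IndCoh^!(T)$, where the transitions are the $!$-pullbacks $f_{i,j}^!$.

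Finally, the equivalence between the two descriptions will follow from a purely formal argument in $\DGc$. Since each connecting morphism $f_{i,j}\colon T_i \hookrightarrow T_j$ is a closed embedding, hence proper, Theorem~\ref{thm:package-IndCoh-affine-ind-schemes}(1) provides an adjunction $((f_{i,j})^{\rm Ind}_*,\,f^!_{i,j})$ with continuous right adjoint. Passing to right adjoints then identifies the colimit in $\DGc$ of the diagram with left adjoints $(f_{i,j})_*^{\rm Ind}$ with the limit in $\DGc$ of the diagram with right adjoints $f^!_{i,j}$. I expect the main obstacle to be a clean formulation of the cofinality step, ensuring that the natural functor $I \to \Schaft_{/T}$ (respectively its affine variant) is cofinal at the level of $\infty$-categories rather than merely ordinary categories; this should follow from compactness of quasi-compact schemes almost of finite type against ind-colimits of closed embeddings, combined with the fact that such embeddings are monomorphisms in $\PStk$.
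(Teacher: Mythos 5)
Your proof is correct and follows essentially the same route as the paper: the identification $\IndCoh(T)=\lim_{I^{\rm op}}\IndCoh(T_i)$ is taken there as the definition of the formalism restricted to Tate schemes (delegating to Gaitsgory--Rozenblyum the cofinality of $I$ in the relevant slice categories, which you spell out), and the limit-to-colimit comparison is exactly Lemma \ref{lem:limits-to-colimits-of-categories} applied to the adjunctions $\left((f_{i,j})^{\rm Ind}_*,\,f^!_{i,j}\right)$ coming from properness of the closed transition maps, i.e.\ Proposition \ref{prop:IndCoh-adjunctions-for-schemes}(2). Your separate derivation of the colimit description directly from the left Kan extension is additional but harmless, since the adjoint-passing step already reconciles the two presentations.
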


\begin{proof}
The first equivalence is the definition of $\IndCoh$ on prestacks locally almost of finite type restricted to the case of Tate schemes locally almost of finite type. The second equivalence follows from Lemma \ref{lem:limits-to-colimits-of-categories} and Proposition \ref{prop:IndCoh-adjunctions-for-schemes} (2).
\end{proof}

\subsection{Pro-ind-coherent sheaves}
\label{subsec:Pro-ind-coherent-on-schemes}

In this section we extend the formalism of ind-coherent sheaves to that of pro-ind-coherent sheaves. We follow closely the path of Gaitsgory--Rozenblyum \cites{GR-I,GR-II} and the summary is that essentially everything works as it is supposed to.

\subsubsection{Definition and arbitrary push-forward}

\paragraph{}

Given a scheme $S$ almost of finite type, we will denote by
\[
\ProIndCoh(S) := \Pro(\IndCoh(S))
\]
the category of Pro-objects in $\IndCoh(S)$, i.e.\ the opposite of the category of left exact functors 
\[
\IndCoh(S) \ra \Spc.
\]

\paragraph{}

Given $f:S \ra T$ an arbitrary morphism between schemes almost of finite type, one defines the $*$-pushforward
\begin{equation}
\label{eq:pro-ind-pushforward}
    f^{\rm ProInd}_*: \ProIndCoh(S) \ra \ProIndCoh(T)
\end{equation}
as the Pro-extension of the functor
\[
f^{\rm Ind}_*: \IndCoh(S) \ra \IndCoh(T).
\]

\paragraph{}

Notice that since the functor $f^{\rm Ind}_*$ is continuous, so in particular left exact, then $f^{\rm ProInd}_*$ is also continuous. We denote the resulting functor by
\begin{equation}
    \label{eq:ProIndCoh-functor-from-schemes}
    \ProIndCoh: \Schaft \ra \DGc.
\end{equation}

\paragraph{t-structure on pro-ind-coherent sheaves}

For any affine scheme $S$ the category $\ProIndCoh(S)$ has a t-structure determined by
\[
\ProIndCoh(S)^{\geq 0} := \Pro(\IndCoh(S)^{\geq 0})
\]
and $\ProIndCoh(S)^{\leq 0}$ is defined as the left orthogonal to $\ProIndCoh(S)^{\geq 1} := \ProIndCoh(S)^{\geq 0}[-1]$. One can check that this defines a t-structure as in the proof of \cite{AGH}*{Proposition 2.13} (see also \cite{SAG}*{Lemma C.2.4.3}).

\subsubsection{Open morphisms}

\paragraph{$*$-pullback for eventually coconnective morphisms}
\label{par:*-pullback-for-eventually-connective}

Recall that a map $f:S \ra T$ is said to be \emph{eventually coconnective} if one of the following equivalent conditions is satisfied:
\begin{enumerate}[(i)]
    \item $f^*:\QCoh(T) \ra \QCoh(S)$ sends $\Coh(S)$ to $\Coh(T)$;
    \item $f^*$ has finite Tor amplitude, i.e.\ $f^*$ is left t-exact up to a finite cohomological shift\footnote{Recall this means that $f^*$ sends $\QCoh(T)^{\geq 0}$ to $\QCoh(S)^{\geq -n} = (\QCoh(S)^{\geq 0})[n]$, for some $n \in \bZ_{\geq 0}$.}.
\end{enumerate}

In this case one has a functor
\[
f^{\rm Ind, *}: \IndCoh(T) \ra \IndCoh(S)
\]
obtained by Ind extending $\left.f^*\right|_{\Coh(T)}: \Coh(T) \ra \Coh(S)$. 

For $f:S \ra T$ eventually connective we define
\[
f^{\rm ProInd, *}: \ProIndCoh(T) \ra \ProIndCoh(S)
\]
as the Pro extension of $f^{\rm Ind, *}$.

The following is a general consequence of the interaction between taking Pro-objects and passing to left adjoints, see Proposition \ref{prop:left-adjoint-of-Pro-extended-functors}.

\begin{lem}
\label{lem:left-adjoint-is-Pro-extension}
For $f$ eventually connective, the functor $f^{\rm ProInd, *}$ is left adjoint to $f^{\rm ProInd}_*$.
\end{lem}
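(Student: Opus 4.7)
The strategy is to reduce the claim to the already-established adjunction $(f^{\rm Ind,*}, f^{\rm Ind}_*)$ at the level of ind-coherent sheaves, via the general Proposition~\ref{prop:left-adjoint-of-Pro-extended-functors} on passage to Pro-objects and left adjoints.

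First, I would recall that since $f$ is eventually coconnective, \cite{GR-I}*{Chapter 4} (or the discussion in \S \ref{par:*-pullback-for-eventually-connective}) gives an adjoint pair
\[
f^{\rm Ind, *}: \IndCoh(T) \rightleftarrows \IndCoh(S): f^{\rm Ind}_*.
\]
Both $f^{\rm ProInd}_*$ and $f^{\rm ProInd, *}$ are, by construction, the Pro-extensions of the right and left adjoint respectively. Thus the statement to be proved is a particular instance of the abstract fact that Pro-extension intertwines adjunctions, which is precisely the content of Proposition~\ref{prop:left-adjoint-of-Pro-extended-functors} in the appendix.

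To make the paragraph above self-contained, I would also sketch the direct Hom-computation underlying Proposition~\ref{prop:left-adjoint-of-Pro-extended-functors}. Writing an object of $\ProIndCoh(T)$ as $\sG \simeq \lim_{I^{\rm op}}\sG_i$ with $\sG_i \in \IndCoh(T)$, and an object of $\ProIndCoh(S)$ as $\sF \simeq \lim_{J^{\rm op}}\sF_j$ with $\sF_j \in \IndCoh(S)$, one has by the definition of Pro-extension and the formula for mapping spaces in Pro-categories:
\begin{align*}
\Maps_{\ProIndCoh(S)}\!\left(f^{\rm ProInd,*}(\sG), \sF\right)
&\simeq \lim_{J^{\rm op}}\colim_{I} \Maps_{\IndCoh(S)}\!\left(f^{\rm Ind, *}(\sG_i), \sF_j\right) \\
&\simeq \lim_{J^{\rm op}}\colim_{I} \Maps_{\IndCoh(T)}\!\left(\sG_i, f^{\rm Ind}_*(\sF_j)\right) \\
&\simeq \Maps_{\ProIndCoh(T)}\!\left(\sG, f^{\rm ProInd}_*(\sF)\right),
\end{align*}
where the middle equivalence uses the adjunction at the level of $\IndCoh$ applied objectwise. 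This chain of equivalences is functorial in $\sG$ and $\sF$ and therefore exhibits the desired adjunction.

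The main subtlety I anticipate is not conceptual but technical: one must ensure that the Pro-extensions we are comparing are indexed over small enough diagrams so that the category-theoretic manipulations above make sense within our size conventions. This is exactly the presentability issue addressed in \S \ref{subsec:technical-presentability}; once that is in place, the manipulation above is formal and the adjunction follows.
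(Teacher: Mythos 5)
Your proposal is correct and takes exactly the route the paper does: the paper proves this lemma by a one-line appeal to Proposition~\ref{prop:left-adjoint-of-Pro-extended-functors}, applied to the adjoint pair $(f^{\rm Ind,*},f^{\rm Ind}_*)$ coming from $f$ being eventually coconnective. Your additional explicit $\lim$--$\colim$ computation of mapping spaces in the Pro-categories is a correct unwinding of that proposition and only adds detail.
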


\paragraph{Base change for open morphisms}
\label{par:base-change-open}

Consider
\[
\begin{tikzcd}
    S' \ar[r,"g_S"] \ar[d,"f'"'] & S \ar[d,"f"] \\
    T' \ar[r,"g_T"] & T
\end{tikzcd}
\]
a Cartesian diagram in $\Schaft$ where $f$ is eventually coconnective, by adjunction the natural isomorphism
\[
g^{\rm ProInd}_{T,*} \circ (f')^{\rm ProInd}_{*} \ra f^{\rm ProInd}_{*} \circ g^{\rm ProInd}_{S,*}
\]
gives a natural transformation
\begin{equation}
\label{eq:base-change-open-morphism}
f^{\rm ProInd, *} \circ g^{\rm ProInd}_{T,*} \ra g^{\rm ProInd}_{S,*} \circ (f')^{\rm ProInd, *}.
\end{equation}

\begin{prop}
\label{prop:base-change-eventually-coconnective}
The natural transformation (\ref{eq:base-change-open-morphism}) is an isomorphism.
\end{prop}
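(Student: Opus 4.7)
The plan is to reduce the assertion to base change for $\IndCoh$ with eventually coconnective pullback (already established in the $\IndCoh$ formalism of \cites{GR-I,GR-II}) by invoking the compatibility of the Pro-extension with compositions and adjunctions.

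First I would observe that $f'$ is itself eventually coconnective, since finite Tor-amplitude of the $*$-pullback is stable under derived base change in $\Schaft$. Consequently all four functors entering (\ref{eq:base-change-open-morphism}) are defined as Pro-extensions of their $\IndCoh$ counterparts, and since Pro-extension preserves composition of continuous functors, the two compositions
\[
f^{\rm ProInd,*} \circ g^{\rm ProInd}_{T,*} \;\;\mbox{and}\;\; g^{\rm ProInd}_{S,*} \circ (f')^{\rm ProInd,*}
\]
are, respectively, the Pro-extensions of $f^{\rm Ind,*} \circ g^{\rm Ind}_{T,*}$ and $g^{\rm Ind}_{S,*} \circ (f')^{\rm Ind,*}$.

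Second, I would check that the natural transformation (\ref{eq:base-change-open-morphism}) is the Pro-extension of the analogous base change transformation at the $\IndCoh$ level. By construction (\ref{eq:base-change-open-morphism}) is obtained via the mate construction from the tautological identification $g^{\rm ProInd}_{T,*} \circ (f')^{\rm ProInd}_{*} \simeq f^{\rm ProInd}_{*} \circ g^{\rm ProInd}_{S,*}$, which itself arises by Pro-extending the identification $g^{\rm Ind}_{T,*} \circ (f')^{\rm Ind}_{*} \simeq f^{\rm Ind}_{*} \circ g^{\rm Ind}_{S,*}$ (a consequence of the Cartesian square and functoriality of pushforward) through the adjunctions $(f^{\rm ProInd,*}, f^{\rm ProInd}_*)$ and $((f')^{\rm ProInd,*}, (f')^{\rm ProInd}_*)$ of Lemma \ref{lem:left-adjoint-is-Pro-extension}. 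Appealing to Proposition \ref{prop:left-adjoint-of-Pro-extended-functors} in the appendix, the unit and counit of these Pro-level adjunctions are the Pro-extensions of the corresponding unit and counit at the $\IndCoh$ level, so the mate at the Pro level is the Pro-extension of the mate at the $\IndCoh$ level.

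Finally, the base change theorem for $\IndCoh$ with eventually coconnective pullback asserts that
\[
f^{\rm Ind,*} \circ g^{\rm Ind}_{T,*} \ra g^{\rm Ind}_{S,*} \circ (f')^{\rm Ind,*}
\]
is an isomorphism in $\DGc$. Since Pro-extension sends isomorphisms of functors to isomorphisms of functors, (\ref{eq:base-change-open-morphism}) is an isomorphism as well. The main obstacle in this strategy is the coherence verification in the second step: one must carefully propagate the identification of units and counits under Pro-extension through the mate construction, to be sure that no genuinely new natural transformation is introduced at the Pro level. This bookkeeping is exactly what Proposition \ref{prop:left-adjoint-of-Pro-extended-functors} is designed to take care of, and once it is in hand the rest of the argument is formal.
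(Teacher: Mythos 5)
Your proposal is correct and follows essentially the same route as the paper: identify all four functors as Pro-extensions via Lemma \ref{lem:left-adjoint-is-Pro-extension}, use functoriality of the Pro-extension to recognize the natural transformation (\ref{eq:base-change-open-morphism}) as the Pro-extension of the $\IndCoh$-level base change map, and conclude from \cite{GR-I}*{Chapter 4, Proposition 3.2.2}. You are in fact slightly more careful than the paper in spelling out why the mate at the Pro level agrees with the Pro-extension of the mate at the $\IndCoh$ level, and in noting that $f'$ inherits eventual coconnectivity from $f$.
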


\begin{proof}
By Lemma \ref{lem:left-adjoint-is-Pro-extension} the map (\ref{eq:base-change-open-morphism}) is equivalent to 
\[
\Pro(f^{\rm Ind, *}) \circ \Pro(g^{\rm Ind}_{T,*}) \simeq f^{\rm ProInd, *} \circ g^{\rm ProInd}_{T,*} \ra g^{\rm ProInd}_{S,*} \circ (f')^{\rm ProInd, *} \simeq \Pro(g^{\rm Ind}_{S,*}) \circ \Pro((f')^{\rm Ind, *}),
\]
where we denote by $\Pro(F)$ the Pro-extension a functor $F$. By the functoriality of Pro-extensions, one can rewrite the above as
\[
\Pro(f^{\rm Ind, *} \circ g^{\rm Ind}_{T,*}) \ra \Pro(g^{\rm Ind}_{S,*} \circ (f')^{\rm Ind, *})
\]
which is an isomorphism since the map
\[
f^{\rm Ind, *} \circ g^{\rm Ind}_{T,*} \ra g^{\rm Ind}_{S,*} \circ (f')^{\rm Ind, *}
\]
is an isomorphism by \cite{GR-I}*{Chapter 4, Proposition 3.2.2}.
\end{proof}

\paragraph{}

\begin{prop}
\label{prop:open-immersion-fully-faithful-pushforward}
Let $\jmath:S \ra T$ be an open embedding, the functor
\[
\jmath^{\rm ProInd}_*: \ProIndCoh(S) \ra \ProIndCoh(T)
\]
is fully faithful.
\end{prop}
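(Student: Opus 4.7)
The plan is to reduce the claim to the analogous statement for ind-coherent sheaves, which we may invoke from Gaitsgory--Rozenblyum, and then use the fact that Pro-extension interacts well with adjunctions to transport fully faithfulness up to Pro-objects. Since $\jmath$ is an open embedding, it is in particular eventually coconnective, so by Lemma \ref{lem:left-adjoint-is-Pro-extension} the functor $\jmath^{\rm ProInd, *}$ is left adjoint to $\jmath^{\rm ProInd}_*$. Recall further that for an open embedding, the $!$-pullback and the $*$-pullback agree, so this left adjoint coincides with the Pro-extension of $\jmath^{!} \simeq \jmath^{\rm Ind,*}$.

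The key categorical input is the following. To show $\jmath^{\rm ProInd}_*$ is fully faithful it suffices to show that the counit
\[
\jmath^{\rm ProInd, *} \circ \jmath^{\rm ProInd}_* \ra \id_{\ProIndCoh(S)}
\]
is an equivalence. I would first recall from Proposition \ref{prop:IndCoh-adjunctions-for-schemes}(1) and the standard theory of ind-coherent sheaves (\cite{GR-I}*{Chapter 5}) that at the ind-coherent level one has the analogous statement: the counit
\[
\jmath^{\rm Ind, *} \circ \jmath^{\rm Ind}_* \overset{\simeq}{\ra} \id_{\IndCoh(S)}
\]
is an equivalence, i.e.\ $\jmath^{\rm Ind}_*$ is fully faithful. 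This is where the hypothesis of $\jmath$ being an open embedding is actually used, via the fact that base change along the Cartesian square $S \times_T S \simeq S$ reduces this counit to the identity.

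Next, I would identify the Pro-extension of this counit with the counit of the Pro-extended adjunction. Concretely, the construction of Pro-extension is functorial with respect to composition of functors and sends natural transformations to natural transformations, so one obtains a natural transformation
\[
\jmath^{\rm ProInd, *} \circ \jmath^{\rm ProInd}_* \simeq \Pro\bigl(\jmath^{\rm Ind, *} \circ \jmath^{\rm Ind}_*\bigr) \ra \Pro(\id_{\IndCoh(S)}) \simeq \id_{\ProIndCoh(S)}
\]
which is the Pro-extension of an equivalence, hence an equivalence. One then checks that this transformation agrees with the counit of the adjoint pair $(\jmath^{\rm ProInd, *}, \jmath^{\rm ProInd}_*)$, either by unwinding the construction of the adjoint in Lemma \ref{lem:left-adjoint-is-Pro-extension} (and the analogue of Proposition \ref{prop:left-adjoint-of-Pro-extended-functors}) or by computing on generators $\IndCoh(S) \hra \ProIndCoh(S)$.

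The main obstacle I anticipate is this last bookkeeping step, namely verifying that the counit of the Pro-extended adjunction really is the Pro-extension of the original counit; everything else is either a reference or formal. An alternative route, which avoids the adjunction calculus, is to argue directly on $\Hom$-spaces: for $X = ``\lim_i" X_i$ and $Y = ``\lim_j" Y_j$ in $\ProIndCoh(S)$, compute
\[
\Hom_{\ProIndCoh(T)}(\jmath^{\rm ProInd}_* X,\jmath^{\rm ProInd}_* Y) \simeq \lim_j \colim_i \Hom_{\IndCoh(T)}(\jmath^{\rm Ind}_* X_i,\jmath^{\rm Ind}_* Y_j),
\]
and use fully faithfulness of $\jmath^{\rm Ind}_*$ levelwise to identify this with $\Hom_{\ProIndCoh(S)}(X,Y)$. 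Either approach yields the result.
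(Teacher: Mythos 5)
Your proposal is correct and follows essentially the same route as the paper: the paper likewise reduces to showing the counit $\jmath^{\rm ProInd,*}\circ \jmath^{\rm ProInd}_{*} \ra \id_{\ProIndCoh(S)}$ is an isomorphism, evaluates it on a Pro-object $\lim_I \sG_i$ using that both functors commute with cofiltered limits (being Pro-extensions), and invokes fully faithfulness of $\jmath^{\rm Ind}_*$ for open embeddings (\cite{GR-I}*{Chapter 4, Proposition 4.1.2}) levelwise. The bookkeeping step you flag is handled in the paper exactly by your second, object-wise computation, so no further argument is needed.
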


\begin{proof}
It is enough to check that the adjunction morphism
\[
\jmath^{\rm ProInd,*}\circ \jmath^{\rm ProInd}_{*} \ra \id_{\ProIndCoh(S)}
\]
is an isomorphim. Let $\sG \simeq \lim_{I}\sG_I \in \ProIndCoh(S)$ one has
\begin{align*}
    \jmath^{\rm ProInd, *}\circ \jmath^{\rm ProInd}_{*}(\lim_I \sG_i) & \simeq \jmath^{\rm ProInd, *}(\lim_I \jmath^{\rm Ind_*}(\sG_i)) \\
    & \simeq \lim_I \jmath^{\rm Pro, *}\circ\jmath^{\rm Pro}_*(sG_i) \\
    & \simeq \lim_I \sG_i.
\end{align*}
Here the isomorphism of the first line follows from the fact that $\jmath^{\rm ProInd}_*$ preserves limits and cocompact objects, the isomorphism in the second line is the definition of $\jmath^{\rm ProInd, *}$ and in the final line we used \cite{GR-I}*{Chapter 4, Proposition 4.1.2}.
\end{proof}

\subsubsection{Proper morphisms}

\paragraph{!-pullback for proper maps}
\label{par:!-pullback-for-proper}

Let $f:S \ra T$ be a proper map between schemes almost of finite type, we define the !-pullback functor
\[
f^!: \ProIndCoh(T) \ra \ProIndCoh(S)
\]
as the right adjoint of $f^{\rm ProInd}_*$. 

\begin{rem}
\label{rem:right-adjoint-is-Pro-extension}
For $f$ a proper map, by Proposition \ref{prop:left-adjoint-of-Pro-extended-functors} the functor $f^!$ is equivalent to the Pro-extension of the functor
\[
f^!: \IndCoh(T) \ra \IndCoh(S).
\]
\end{rem}

\begin{warning}
It is \emph{not} the case that for an arbitrary morphism $f:S \ra T$ the functor
\[
f^!: \ProIndCoh(T) \ra \ProIndCoh(S),
\]
defined in \S \ref{par:!-pullback-arbitrary-maps-ProIndCoh-schemes} below, is the Pro extension of $f^!$ for Ind-coherent sheaves.
\end{warning}

\paragraph{Base change for proper morphisms}
\label{par:base-change-proper}

Consider
\[
\begin{tikzcd}
    S' \ar[r,"g_S"] \ar[d,"f'"'] & S \ar[d,"f"] \\
    T' \ar[r,"g_T"] & T
\end{tikzcd}
\]
a Cartesian diagram in $\Schaft$ where $f$ is proper, by adjunction the natural isomorphism
\[
f^{\rm ProInd}_{*} \circ g^{\rm ProInd}_{S,*} \overset{\simeq}{\ra} g^{\rm ProInd}_{T,*} \circ (f')^{\rm ProInd}_{*} 
\]
gives a natural transformation
\begin{equation}
\label{eq:base-change-proper-morphism}
g^{\rm ProInd}_{S,*} \circ (f')^{!} \ra f^! \circ g^{\rm ProInd}_{T,*}
\end{equation}

\begin{prop}
\label{prop:base-change-proper}
The natural transformation (\ref{eq:base-change-proper-morphism}) is an isomorphism.
\end{prop}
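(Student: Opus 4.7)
The plan is to reduce this to the corresponding base-change statement for $\IndCoh$ (part (3) of Theorem \ref{thm:package-IndCoh-affine-ind-schemes}) by observing that every functor appearing in (\ref{eq:base-change-proper-morphism}) is a Pro-extension of its $\IndCoh$ counterpart. By definition $g_{S,*}^{\rm ProInd}$ and $g_{T,*}^{\rm ProInd}$ are Pro-extensions of $g_{S,*}^{\rm Ind}$ and $g_{T,*}^{\rm Ind}$, and by Remark \ref{rem:right-adjoint-is-Pro-extension} the properness of $f$ and $f'$ forces $f^!$ and $(f')^!$ on $\ProIndCoh$ to agree with the Pro-extensions of the $\IndCoh$ $!$-functors. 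Using that Pro-extension is compatible with composition of continuous functors, both sides of (\ref{eq:base-change-proper-morphism}) are Pro-extensions of the corresponding composites on $\IndCoh$.

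Concretely, for $\sG \in \ProIndCoh(T')$ presented as $\sG \simeq \lim_I \sG_i$ with each $\sG_i \in \IndCoh(T')$, I would evaluate both sides termwise:
\[
g_{S,*}^{\rm ProInd}\bigl((f')^!(\lim_I \sG_i)\bigr) \simeq \lim_I g_{S,*}^{\rm Ind}\bigl((f')^!(\sG_i)\bigr), \qquad f^!\bigl(g_{T,*}^{\rm ProInd}(\lim_I \sG_i)\bigr) \simeq \lim_I f^!\bigl(g_{T,*}^{\rm Ind}(\sG_i)\bigr).
\]
It then suffices to identify the natural transformation (\ref{eq:base-change-proper-morphism}) with the levelwise map induced by the $\IndCoh$ base change map, which is an isomorphism on each $\sG_i$ by Theorem \ref{thm:package-IndCoh-affine-ind-schemes} (3).

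The key coherence step is to verify that (\ref{eq:base-change-proper-morphism}) really is obtained by Pro-extending the $\IndCoh$ base change map, rather than being some other natural transformation with matching source and target. This amounts to tracking the construction through the adjunctions: the starting isomorphism $f^{\rm ProInd}_* \circ g^{\rm ProInd}_{S,*} \simeq g^{\rm ProInd}_{T,*} \circ (f')^{\rm ProInd}_*$ is the Pro-extension of its $\IndCoh$ analogue (since all four functors are Pro-extensions and Pro-extension preserves composition), and the passage to the adjunct under $(f^{\rm ProInd}_*, f^!)$ is compatible with Pro-extension because, by Proposition \ref{prop:left-adjoint-of-Pro-extended-functors} and its dual, the $\ProIndCoh$ unit and counit for $(f^{\rm ProInd}_*, f^!)$ are the Pro-extensions of the $\IndCoh$ unit and counit for $(f^{\rm Ind}_*, f^!)$. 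Once this coherence is established, the result follows because Pro-extension takes isomorphisms to isomorphisms.

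I expect the main obstacle to be precisely this coherence bookkeeping — writing down with sufficient care that the two-step construction of (\ref{eq:base-change-proper-morphism}) (starting from the pushforward isomorphism and going through two adjunctions) commutes with Pro-extension at each step. The levelwise computation itself is routine once one has the identifications of $f^!$ and $(f')^!$ with the Pro-extensions in hand.
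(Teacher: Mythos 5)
Your proposal is correct and follows essentially the same route as the paper: the paper's proof likewise observes that all four functors are Pro-extensions (citing Remark \ref{rem:right-adjoint-is-Pro-extension} and Proposition \ref{prop:left-adjoint-of-Pro-extended-functors}), hence commute with cofiltered limits, reduces the claim termwise to objects of $\IndCoh(T')$, and invokes the proper base change for ind-coherent sheaves (\cite{GR-I}*{Chapter 4, Proposition 5.2.2}). Your extra attention to the coherence step — checking that the adjunct natural transformation is itself the Pro-extension of the $\IndCoh$ base-change map — is a point the paper's proof leaves implicit, but it does not change the argument.
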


\begin{proof}
One needs to check that for all $\sF \in \ProIndCoh(T')$ the map
\[
g^{\rm ProInd}_{S,*} \circ (f')^{!}(\sF) \ra f^! \circ g^{\rm ProInd}_{T,*}(\sF)
\]
is an isomorphism. Notice that all the functors in (\ref{eq:base-change-proper-morphism}) commute with limits. Indeed, by Proposition \ref{prop:left-adjoint-of-Pro-extended-functors}, $g^{\rm ProInd}_{T,*} \simeq \Pro(g^{\rm Ind}_{T,*})$ and similarly for $g^{\rm ProInd}_{S,*}$. Thus, it is enough to check the statement for objects $\sF \in \IndCoh(T')$ and we are reduced to the statement \cite{GR-I}*{Chapter 4, Proposition 5.2.2}.
\end{proof}

\paragraph{Pullback compatibility}

Consider the Cartesian diagram in $\Schaft$
\[
\begin{tikzcd}
    S' \ar[r,"g_S"] \ar[d,"f'"'] & S \ar[d,"f"] \\
    T' \ar[r,"g_T"] & T
\end{tikzcd}
\]
where $f$ (hence $f'$) is proper and $g_{T}$ (hence $g_S$) is eventually coconnective. Then Proposition \ref{prop:base-change-eventually-coconnective} gives a base change isomorphism
\[
(f')^{\rm ProInd}_* \circ g^{\rm ProInd, *}_S \simeq g^{\rm ProInd, *}_T \circ f^{\rm ProInd}_*.
\]
We apply the $(f^{\rm ProInd}_*,f^!)$-adjunction to the above to obtain
\begin{equation}
    \label{eq:compatibility-of-pullbacks-ProInd}
    g^{\rm ProInd, *}\circ f^! \ra (f')^{!}\circ g^{\rm ProInd, *}.
\end{equation}

\begin{prop}
\label{prop:pullback-compatibility}
When $g_T$, and hence $g_S$, are open embeddings the map (\ref{eq:compatibility-of-pullbacks-ProInd}) is an isomorphism.
\end{prop}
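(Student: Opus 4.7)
The plan is to reduce the claimed isomorphism to an assertion about Ind-coherent sheaves, where it follows from the functoriality of the $!$-pullback.

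First, I would observe that all four functors in \eqref{eq:compatibility-of-pullbacks-ProInd} are Pro-extensions of their IndCoh counterparts. Indeed, by Remark \ref{rem:right-adjoint-is-Pro-extension}, since $f$ and $f'$ are proper, $f^!$ and $(f')^!$ are the Pro-extensions of the IndCoh $!$-pullback functors; and by the definition in \S \ref{par:*-pullback-for-eventually-connective}, the functors $g^{\rm ProInd, *}_T$ and $g^{\rm ProInd, *}_S$ are Pro-extensions of the IndCoh $*$-pullback functors, which exist since open embeddings are eventually coconnective. Since Pro-extension is functorial with respect to composition, and since the base change isomorphism of Proposition \ref{prop:base-change-eventually-coconnective} is itself obtained by Pro-extending the IndCoh base change isomorphism, the natural transformation \eqref{eq:compatibility-of-pullbacks-ProInd}, constructed via the $(f^{\rm ProInd}_*, f^!)$-adjunction, is the Pro-extension of the corresponding natural transformation
\[
g^{\rm Ind, *}_S \circ f^{!} \ra (f')^{!} \circ g^{\rm Ind, *}_T
\]
on IndCoh. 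Hence it suffices to show this last transformation is an isomorphism.

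Second, for any open embedding $g$, the functors $g^{\rm Ind, *}$ and $g^!$ are both left adjoint to $g^{\rm Ind}_*$: the former because open embeddings are eventually coconnective (see \S \ref{par:*-pullback-for-eventually-connective}), the latter by Proposition \ref{prop:IndCoh-adjunctions-for-schemes}(1). By uniqueness of adjoints they are canonically equivalent, and this equivalence is compatible with the base change maps arising from the two adjunctions. Under the identifications $g^{\rm Ind, *}_T \simeq g^{!}_T$ and $g^{\rm Ind, *}_S \simeq g^{!}_S$, the IndCoh map above becomes
\[
g^{!}_S \circ f^! \ra (f')^{!} \circ g^{!}_T,
\]
which is an isomorphism by the functoriality of $\IndCoh^!:(\Schaft)^{\rm op} \ra \DGc$ applied to the commutative square $f \circ g_S \simeq g_T \circ f'$.

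The main obstacle is the bookkeeping in the first step: one must verify that the natural transformation \eqref{eq:compatibility-of-pullbacks-ProInd}, obtained from the ProInd base change by the $(f^{\rm ProInd}_*, f^!)$-adjunction, matches, after Pro-extension, the IndCoh natural transformation obtained from the IndCoh base change by the $(f^{\rm Ind}_*, f^{!})$-adjunction. This is a routine but somewhat fiddly diagram chase that relies on the compatibility of the Pro-extension construction with both adjunctions (via Proposition \ref{prop:left-adjoint-of-Pro-extended-functors} and Lemma \ref{lem:left-adjoint-is-Pro-extension}) and with the base change isomorphism for $*$-pushforward (Proposition \ref{prop:base-change-eventually-coconnective}). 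Once this compatibility is in place, the reduction to the IndCoh statement, and thence to functoriality of $!$-pullback, is immediate.
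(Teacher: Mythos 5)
Your proof is correct and follows essentially the same route as the paper: both arguments reduce the statement to the ind-coherent case by observing that all four functors (and hence the comparison $2$-morphism itself) are Pro-extensions of their $\IndCoh$ counterparts. The only difference is that the paper then simply cites \cite{GR-I}*{Chapter 4, Proposition 5.3.4} for the $\IndCoh$ statement, whereas you re-derive it via the identification $g^{\rm Ind,*}\simeq g^!$ for open embeddings by uniqueness of left adjoints --- which is in any case how the cited result is proved.
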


\begin{proof}
We notice that by Lemma \ref{lem:left-adjoint-is-Pro-extension} and Remark \ref{rem:right-adjoint-is-Pro-extension} one has
\[
g^{\rm ProInd, *}_S \simeq \Pro(g^{\rm Ind, *}_S) \;\;\; \mbox{and} \;\;\; f^! \simeq \Pro(f^!),
\]
and similarly for $g^{\rm ProInd, *}_T$ and $(f')^!$. Thus, we are reduced to the claim for the corresponding ind-coherent sheaves functors, which is \cite{GR-I}*{Chapter 4, Proposition 5.3.4}.
\end{proof}

\begin{rem}
\label{rem:both-ways-give-same-compatibility-map}
As in \cite{GR-I}*{Remark 5.3.2} we notice that one can also obtain the map (\ref{eq:compatibility-of-pullbacks-ProInd}) by first considering the isomorphism given by Proposition \ref{prop:base-change-proper} and then using the $(g^{\rm ProInd, *}_S,g^{\rm ProInd}_{S,*})$-adjunction. 
\end{rem}

\subsubsection{Zariski Descent}

In this section we prove that $\ProIndCoh$ can be glued together from a Zariski cover.

\paragraph{}

Let $f:U \ra X$ be a Zariski cover, i.e.\ $U$ is the union of affine schemes whose union is all of $X$. Consider $U^{\bullet}$ the \c{C}ech nerve of $f$, the functors $\ProIndCoh$ with $*$-pullback define a cosimplicial category
\[
\ProIndCoh(U^{\bullet}),
\]
which is augmented by $\ProIndCoh(X)$. We have

\begin{prop}
The functor
\[
\ProIndCoh(X) \ra \Tot(\ProIndCoh(U^{\bullet}))
\]
is an equivalence of categories.
\end{prop}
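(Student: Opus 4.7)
The plan is to apply the comonadic form of the Barr--Beck--Lurie theorem to the adjunction $(f^{\rm ProInd,*}, f^{\rm ProInd}_*)$ associated to $f$, and then identify the resulting comonadic totalization with the \c{C}ech totalization via base change. Without loss of generality I may assume the cover $U = \bigsqcup_{i \in I} U_i$ is by finitely many affine opens, so that $\ProIndCoh(U) \simeq \prod_{i \in I} \ProIndCoh(U_i)$ and $f^{\rm ProInd,*}$ decomposes as the product of the individual restriction functors to each $U_i$.

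First, I would identify the cobar construction of the comonad $T := f^{\rm ProInd,*} \circ f^{\rm ProInd}_*$ with the cosimplicial category $\ProIndCoh(U^{\bullet})$. The essential input is the base change isomorphism of Proposition \ref{prop:base-change-eventually-coconnective} applied to the Cartesian square
\[
\begin{tikzcd}
U \times_X U \ar[r,"p_2"] \ar[d,"p_1"'] & U \ar[d,"f"] \\
U \ar[r,"f"] & X,
\end{tikzcd}
\]
which yields $T \simeq p^{\rm ProInd}_{1,*} \circ p_2^{\rm ProInd,*}$, i.e.\ precisely the coface map $\ProIndCoh(U) \to \ProIndCoh(U^1)$. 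Iterating this base change argument over the higher levels of the \c{C}ech nerve extends the identification to all cosimplicial degrees, and the augmentation by $\ProIndCoh(X)$ matches by construction.

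Next, I would verify the Barr--Beck--Lurie hypotheses for $(f^{\rm ProInd,*}, f^{\rm ProInd}_*)$. The functor $f^{\rm ProInd,*}$ preserves all small limits, since by Lemma \ref{lem:left-adjoint-is-Pro-extension} it is the $\Pro$-extension of $f^{\rm Ind,*}$; explicitly, $f^{\rm ProInd,*}(\lim_\alpha \sF_\alpha) \simeq \lim_\alpha f^{\rm Ind,*}(\sF_\alpha)$, so in particular $f^{\rm ProInd,*}$ preserves totalizations of arbitrary cosimplicial objects. The remaining hypothesis is conservativity of $f^{\rm ProInd,*}$.

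The hard part will be this conservativity. While $f^{\rm Ind,*}$ on $\IndCoh$ is plainly conservative (a standard consequence of the surjectivity of the Zariski cover), a non-zero Pro-object $\sF \simeq \lim_\alpha \sF_\alpha \in \ProIndCoh(X)$ could in principle have individual $\sF_\alpha \neq 0$ yet satisfy $\lim_\alpha f^{\rm Ind,*}(\sF_\alpha) \simeq 0$ in $\ProIndCoh(U)$. To rule this out, I would exploit the t-structure on $\ProIndCoh$ together with the t-exactness of $f^{\rm ProInd,*}$ for open embeddings, reducing the conservativity statement to its analogue on the heart; on the heart, the classical Zariski descent for quasi-coherent sheaves implies that an object of $\ProIndCoh(X)^\heartsuit$ whose restriction to each $U_i$ vanishes must itself be zero, thereby completing the argument.
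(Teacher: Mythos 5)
Your overall strategy (comonadic Barr--Beck--Lurie descent along $f^{\rm ProInd,*}$, with the cobar construction identified with the \v{C}ech cosimplicial category via the open base change of Proposition \ref{prop:base-change-eventually-coconnective}) is a legitimate alternative to the paper's argument, and you have correctly isolated where the difficulty lies: conservativity of $f^{\rm ProInd,*}$. Unfortunately, the argument you propose for that step does not work. The t-structure on $\ProIndCoh(X)$ of Lemma \ref{lem:t-structure-on-Pro-objects} is \emph{not} separated: there exist nonzero pro-objects all of whose truncations $\tau^{\geq -n}$ vanish. For instance, already in $\Pro(\Vect)$ the pro-system $\sF_n = \bigoplus_{m \geq n} k[m]$ with the evident projections as transition maps satisfies $\tau^{\geq -j}(\lim_n \sF_n) \simeq 0$ for every $j$, yet $\lim_n \sF_n \neq 0$ (its corepresented functor is nontrivial on suitable unbounded test objects). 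Consequently, knowing that a morphism induces an isomorphism on all cohomology objects of the heart does not let you conclude it is an isomorphism, so the reduction ``t-exactness plus conservativity on the heart'' proves nothing about $f^{\rm ProInd,*}$ on all of $\ProIndCoh(X)$. Note also that conservativity is genuinely delicate here: a nonzero transition map $\sG_\beta \ra \sG_\alpha$ can become null-homotopic on each $U_i$ (the obstruction lives in $\pi_1$ of the Hom-space on the overlaps), so a pro-object can be levelwise locally essentially zero without any single transition map being globally null. A correct proof of conservativity for a finite cover can be extracted from a nilpotence argument (an $N$-fold composite of maps that are null on each open of an $N$-element cover is globally null), but that is a different argument from the one you sketch, and as written your proof has a genuine gap at its load-bearing step.

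For contrast, the paper avoids Barr--Beck and conservativity altogether: it reduces by induction to a two-element cover, writes down the explicit candidate inverse
\[
(\sF_1,\sF_2,\sF_{12}) \mapsto \Fib\left((\jmath_{1})^{\rm ProInd}_{*}(\sF_1)\oplus (\jmath_{2})^{\rm ProInd}_{*}(\sF_2) \ra (\jmath_{12})^{\rm ProInd}_{*}(\sF_{12})\right),
\]
and checks that both composites with the restriction functor are equivalences. One direction uses open base change together with full faithfulness of $\jmath^{\rm ProInd}_*$ (Proposition \ref{prop:open-immersion-fully-faithful-pushforward}); the other uses that every functor in sight commutes with cofiltered limits of pro-objects, so the unit map can be checked levelwise on objects of $\IndCoh(X)$, where it is \cite{GR-I}*{Chapter 4, Proposition 4.2.2}. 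If you want to salvage your approach, you should either prove conservativity by the nilpotence argument above, or switch to the paper's explicit-adjoint strategy, which buys you the result without ever needing to detect isomorphisms locally.
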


\begin{proof}
By induction it is enough to prove the result for a cover $X = U_1\cap U_2$, where $U_{12} = U_1\cap U_2$. Let
\[
\jmath_1: U_1 \hra X,\; \jmath_2: U_2 \hra X, \; \jmath_{12}: U_{12} \hra X, \; \jmath_{12,1}: U_{12} \hra U_1, \; \mbox{and} \; \jmath_{12,2}: U_{12} \hra U_2.
\]
We need to check that the functor
\begin{equation}
    \label{eq:from-X-to-descent-data}
    \ProIndCoh(X) \ra \ProIndCoh(U_1)\underset{\ProIndCoh(U_{12})}{\times}\ProIndCoh(U_2)
\end{equation}
that sends $\sF \in \ProIndCoh(X)$ to
\[
\{(\jmath^{\rm ProInd, *}_1(\sF), \jmath^{\rm ProInd, *}_2(\sF), \jmath^{\rm ProInd, *}_{12,1} \circ \jmath^{\rm ProInd, *}_{1}(\sF)\simeq \jmath^{\rm ProInd, *}_{12,2} \circ \jmath^{\rm ProInd, *}_{2}(\sF))\}
\]
is an equivalence.

Consider the right adjoint to (\ref{eq:from-X-to-descent-data})
\begin{equation}
\label{eq:right-adjoint-to-descent-data}
    \ProIndCoh(U_1)\underset{\ProIndCoh(U_{12})}{\times}\ProIndCoh(U_2) \ra \ProIndCoh(X)    
\end{equation}
given by sending $(\sF_1,\sF_2,\sF_{12},\jmath^{\rm ProInd,*}_{12,1}(\sF_1)\simeq \sF_{12} \simeq \jmath^{\rm ProInd, *}_{12,2}(\sF_2))$ to
\[
\Fib\left(((\jmath_{1})^{\rm ProInd}_{*}(\sF_1)\oplus (\jmath_{2})^{\rm ProInd}_{*}(\sF_2)) \ra (\jmath_{12})^{\rm ProInd}_{*}(\sF_{12})\right)
\]
where the maps $\jmath_{2})^{\rm ProInd}_{*}(\sF_2) \ra (\jmath_{12})^{\rm ProInd}_{*}(\sF_{12})$ comes from applying $(\jmath_{i})^{\rm ProInd}_*$ to the adjunction $\id_{\ProIndCoh(U_i)} \ra (\jmath_{12,i})^{\rm ProInd}_{*}\circ (\jmath_{12,i})^{\rm ProInd, *}$ for $i = 1,2$.

Consider the composition of (\ref{eq:right-adjoint-to-descent-data}) with (\ref{eq:from-X-to-descent-data})
\[
\ProIndCoh(U_1)\underset{\ProIndCoh(U_{12})}{\times}\ProIndCoh(U_2) \ra \ProIndCoh(X) \ra \ProIndCoh(U_1)\underset{\ProIndCoh(U_{12})}{\times}\ProIndCoh(U_2)
\]
we claim this is an equivalence. Indeed, we need to check that
\[
\jmath^{\rm ProInd, *}_i\left(\Fib\left(((\jmath_{1})^{\rm ProInd}_{*}(\sF_1)\oplus (\jmath_{2})^{\rm ProInd}_{*}(\sF_2)) \ra (\jmath_{12})^{\rm ProInd}_{*}(\sF_{12})\right)\right) \simeq \sF_i,
\]
is an isomorphism for $i=1,2$ and that the corresponding glueing morphisms agree. 

We do the case $i=1$ and leave the others to the reader. First we notice that Proposition \ref{prop:base-change-eventually-coconnective} guarantes base change with respect to
\begin{equation}
    \label{eq:base-change-proof-of-Zariski-descent}
    \begin{tikzcd}
    U_{12} \ar[r,"\jmath_{12,2}"] \ar[d,"\jmath_{12,1}"] & U_2 \ar[d,"\jmath_{2}"] \\
    U_1 \ar[r,"\jmath_{1}"] & X.
    \end{tikzcd}
\end{equation}
Thus, base change with respect to (\ref{eq:base-change-proof-of-Zariski-descent}) and Proposition \ref{prop:open-immersion-fully-faithful-pushforward} gives that
\[
\jmath^{\rm ProInd, *}_1\circ (\jmath_1)^{\rm ProInd}_*(\sF_1) \oplus \jmath^{\rm ProInd, *}_1\circ\jmath^{\rm ProInd, *}_2(\sF_2) \simeq \sF_1 \oplus (\jmath_{12,1})^{\rm ProInd}_{*}(\sF_{12}),
\]
and also that
\[
\jmath^{\rm ProInd, *}_1\circ(\jmath_{12})^{\rm ProInd}_{*}(\sF_{12}) \simeq (\jmath_{12,1})^{\rm ProInd}_{*}(\sF_{12}).
\]

We now check that the composition of (\ref{eq:from-X-to-descent-data}) with (\ref{eq:right-adjoint-to-descent-data})
\[
\ProIndCoh(X) \ra \ProIndCoh(U_1)\underset{\ProIndCoh(U_{12})}{\times}\ProIndCoh(U_2) \ra \ProIndCoh(X)
\]
is an isomorphism. It is enough to check that for any $\sF \simeq \lim_J \sF_j$ and object of $\ProIndCoh(X)$ with $\sF_j \in \IndCoh(X)$ the canonical map
\[
\sF \ra \Fib\left(((\jmath_{1})^{\rm ProInd}_{*}(\jmath^{\rm ProInd,*}_1(\sF))\oplus (\jmath_{2})^{\rm ProInd}_{*}(\jmath^{\rm ProInd,*}_2(\sF))) \ra (\jmath_{12})^{\rm ProInd}_{*}(\jmath^{\rm ProInd,*}_{12}(\sF))\right)
\]
is an isomorphism. Since all the functors commute with limits by construction one has that it is enough to check that for each $j \in J$ the map
\[
\sF_j \ra \Fib\left(((\jmath_{1})^{\rm Ind}_{*}(\jmath^{\rm Ind,*}_1(\sF_j))\oplus (\jmath_{2})^{\rm Ind}_{*}(\jmath^{\rm Ind,*}_2(\sF_j))) \ra (\jmath_{12})^{\rm Ind}_{*}(\jmath^{\rm Ind,*}_{12}(\sF_j))\right)
\]
is an isomorphism, and this follows from \cite{GR-I}*{Chapter 4, Proposition 4.2.2}.
\end{proof}

\subsubsection{Pro-ind-coherent sheaves on the category of correspondences}

To obtain a general formalism of $!$-pullback for pro-ind-coherent sheaves we proceed as in \cite{GR-I}*{Chapter 5, \S 2}.

\paragraph{Derived Nagata Theorem}

Suppose that $f:S \ra T$ is an arbitrary morphism of schemes almost of finite type, the following is one of the essential ingredients in the definition of $!$-pullback functor for all morphisms.

\begin{prop}[\cite{GR-I}*{Chapter 5, Proposition 2.1.6}]
\label{prop:factorization-category-is-contractible}
Given any map $f:S \ra T$ in $\Schaft$, the category $\Factor(f)$ of factorizations of $f$ as
\[
X \overset{j}{\ra} Z \overset{g}{\ra} Y 
\]
where $j$ is an open embedding and $g$ is a proper morphism, is contractible.
\end{prop}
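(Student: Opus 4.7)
The plan is to reduce contractibility to a classical statement and then combine Nagata compactification with a cofinality argument.

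First, I would reduce to the classical setting. Both conditions defining $\Factor(f)$---being an open embedding and being proper---are detected on classical truncations. Moreover, once a classical factorization $\classical{S} \hra Z_0 \ra \classical{T}$ is fixed, the derived structure on the intermediate scheme $Z$ is essentially pinned down: $Z$ contains $S$ as an open subscheme, so $Z$ agrees with $S$ over that open, while the proper map $Z \ra T$ rigidifies what happens at the boundary. This reduces the question to the classical, quasi-compact Noetherian setting, since our schemes are almost of finite type over $k$.

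Second, non-emptiness follows from the classical Nagata compactification theorem (in the Conrad--Temkin formulation for qcqs morphisms of finite type), which guarantees at least one such factorization.

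Third, I would show that $\Factor(f)$ is cofiltered, from which contractibility follows since the nerve of a cofiltered category is weakly contractible. Given two factorizations $S \overset{j_i}{\hra} Z_i \overset{g_i}{\ra} T$ for $i=1,2$, the induced map $S \ra Z_1 \times_T Z_2$ is a locally closed embedding: it factors as the diagonal $S \ra S\times_T S$ (a closed embedding, since $S \ra T$ is separated) followed by $j_1\times j_2$ (an open embedding, as a product of open embeddings). Taking the scheme-theoretic closure of $S$ inside $Z_1 \times_T Z_2$ produces a scheme $Z_3$ that is closed in something proper over $T$, hence proper over $T$, with $S$ open inside. The two projections $Z_3 \ra Z_i$ furnish morphisms in $\Factor(f)$ exhibiting $Z_3$ as a common refinement. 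A similar relative closure argument shows cofilteredness on morphism spaces.

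The main obstacle I anticipate is the classical-to-derived reduction: one must show that the forgetful functor $\Factor(f) \ra \Factor(\classical{f})$ induces an equivalence on nerves, i.e., that the space of derived lifts of a given classical factorization is itself contractible. This is not automatic and requires a deformation-theoretic argument for the derived structure on $Z$ near the proper boundary, rather than the direct classical construction. The secondary technical point is checking that scheme-theoretic closures of locally closed embeddings are well-behaved in the almost-of-finite-type setting, which is standard but must be verified in the derived framework.
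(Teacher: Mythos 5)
The paper does not prove this statement; it imports it verbatim from \cite{GR-I}*{Chapter 5, Proposition 2.1.6}, so your proposal has to be measured against the proof given there. Your skeleton is the right one and matches it in outline: non-emptiness from Nagata compactification, cofilteredness of the factorization category via the closure of $S$ in $Z_1\underset{T}{\times}Z_2$, and contractibility of the nerve of a cofiltered category. But as written the argument has two genuine gaps, one of which you flag yourself without resolving.

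The first gap is the derived-to-classical reduction. It is not true that a classical factorization ``essentially pins down'' the derived structure of $Z$: the open-embedding condition forces $Z$ to restrict to $S$ over $\classical{S}$, but on the boundary $\classical{Z}\setminus\classical{S}$ the structure sheaf may carry arbitrary derived thickenings, so the fiber of $\Factor(f)\ra\Factor(\classical{f})$ over a fixed classical factorization is a genuinely nontrivial category, and your fallback (``a deformation-theoretic argument for the space of lifts'') is left entirely open. The efficient repair is not a contractibility-of-lifts computation but an adjunction: the functor $Z_0\mapsto Z_0\underset{\classical{S}}{\sqcup}S$, the push-out along the closed nil-isomorphism $\classical{S}\ra S$ (which exists by \cite{GR-II}*{Chapter 1} and is used elsewhere in this paper, e.g.\ Lemma \ref{lem:push-out-Tate-schemes}), lands in $\Factor(f)$ because properness and the open-embedding condition are checked on classical truncations together with the restriction over $\classical{S}$, and it is left adjoint to $Z\mapsto\classical{Z}$; an adjunction induces a homotopy equivalence of nerves. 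This same construction also supplies the derived compactification that your Nagata step silently requires. The second gap is in the cofilteredness argument: you verify only the existence of lower bounds for pairs of objects and dismiss the equalizer condition as ``a similar relative closure argument.'' That condition is precisely where schematic density enters: given parallel morphisms $h_1,h_2\colon Z_1\rightrightarrows Z_2$ in $\Factor(f)$, one must replace $Z_1$ by the schematic closure of $S$ and use that two maps into the separated scheme $Z_2$ agreeing on a schematically dense open subscheme agree (equivalently, one first passes to the cofinal subcategory of factorizations in which $S$ is schematically dense). Without this step the argument does not close, since $\Factor(f)$ contains objects with components disjoint from $S$ and hence many parallel morphisms between them.
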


\paragraph{Heuristics of arbitrary $!$-pullbacks}

This result from the previous paragraph allows us to explain the heuristics of the definition of $f^!$ for an arbitrary morphism $f:S \ra T$. Given an arbitrary $f:S \ra T$ consider a factorization
\begin{equation}
    \label{eq:factorization-f}
    S \overset{j}{\ra} S' \overset{g}{\ra} T    
\end{equation}
where $j$ is an open embedding and $g$ is proper. We notice that $j$ is eventually connective. Indeed, $j$ is open if $j$ is flat and the underlying map of classical schemes $\classical{j}:\classical{S} \ra \classical{S'}$ is an open embedding of classical schemes. But we notice that for any flat morphism $j:X \ra Z$ the map $j^*:\IndCoh(S') \ra \IndCoh(S)$ is t-exact. Thus, we define $f^!:\ProIndCoh(T) \ra \ProIndCoh(S)$ as follows
\[
f^! := j^{\rm ProInd,*}\circ g^{!},
\]
for a given factorization $f = g \circ j$. 

The hard work is in checking that $f^!$ does not depend on the factorization (\ref{eq:factorization-f}), i.e.\ that different choices of factorization give isomorphic pullback functors and are compatible with its functoriality. Moreover, one expect this functor to have base change properties that generalize Proposition \ref{prop:base-change-eventually-coconnective} and Proposition \ref{prop:base-change-proper}.

\paragraph{The $2$-category of correspondences}
\label{par:the-2-category-of-correspondences-on-schemes-aft}

We follow the idea of Gaitsgory--Rozenblyum to formalize the arbitrary $!$-pullback and its base change property as a functor out of the $2$-category of correspondences.

Let $vert, horiz$ and $adm \subseteq vert\cap horiz$ be three classes of $1$-morphisms in $\Schaffaft$, those need to satisfy some natural conditions, we refer the reader to \cite{GR-I}*{Chapter 7, \S 1.1} for details. We define the $2$-category $\Corr(\Schaffaft)^{adm}_{vert; horiz}$ as follows:
\begin{itemize}
    \item objects of $\Corr(\Schaft)^{adm}_{vert; horiz}$ are the same as objects of $\Schaft$;
    \item for two objects $S$ and $T$ of $\Corr(\Schaft)^{adm}_{vert; horiz}$ we define $\Hom_{\Corr(\Schaft)^{adm}_{vert; horiz}}(S,T)$ to be the category of diagrams
    \[
    \begin{tikzcd}
    U \ar[r,"f"] \ar[d,"g"] & T \\
    S & 
    \end{tikzcd}
    \]
    where $f \in horiz$ and $g \in vert$.
    \item for a pair of correspondences $(U,f,g)$ and $(U',f',g')$ between two objects $S$ and $T$ a $2$-morphism is a diagram
    \[
    \begin{tikzcd}
    U' \ar[rrrd,"f'"] \ar[dddr,"g'"'] \ar[dr,"h"'] & & & \\
    & U \ar[rr,"f"'] \ar[dd,"g"] & & T \\
    & & & & \\
    & S & & 
    \end{tikzcd}
    \]
    where $h \in adm$.
\end{itemize}

Compositions of $1$-morphisms are given by taking fiber products and composing the vertical and horizontal maps, composition of $2$-morphisms is just the usual composition.

\paragraph{}

We also recall that by \cite{GR-I}*{Chapter 1, \S 8.3} there exists an $(\infty,2)$-category enhancement of the $(\infty,1)$-category of DG-categories, we will denote this $2$-category by $\DGct$.

\paragraph{}
\label{par:functor-out-of-2-category-of-correspondences}

\begin{thm}
\label{thm:ProIndCoh-from-correspondences}
There exists an extension of (\ref{eq:ProIndCoh-functor-from-schemes}) to a functor of $2$-categories
\begin{equation}
    \label{eq:ProIndCoh-functor-from-correspondences-proper-all-all}
    \ProIndCoh_{\Corraftpaa}: \Corraftpaa \ra \DGct.
\end{equation}
\end{thm}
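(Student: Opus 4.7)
The plan is to apply the general extension theorem for $2$-functors out of categories of correspondences developed in \cite{GR-I}*{Chapter 7, \S 3}. Recall that for an $\infty$-category $C$ with finite products and suitable classes $vert$, $horiz$, $adm \subseteq vert \cap horiz$ of $1$-morphisms, and a functor $\Phi: C \to \sD$ to an $(\infty,2)$-category $\sD$, the extension theorem produces a unique $2$-functor
\[
\Phi_{\Corr}: \Corr(C)^{adm}_{vert;horiz} \ra \sD
\]
extending $\Phi$ provided that (i) for every $f \in adm$ the $1$-morphism $\Phi(f)$ admits a right adjoint in $\sD$, and (ii) for every Cartesian square in $C$ whose vertical arrows lie in $adm$, the associated Beck--Chevalley map is an isomorphism in $\sD$.

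In our situation, $C = \Schaft$, with $vert = horiz = \mbox{all}$ and $adm = \mbox{proper}$; the target is $\sD = \DGct$, and $\Phi = \ProIndCoh$ is the functor (\ref{eq:ProIndCoh-functor-from-schemes}) assigning to each morphism $f:S \ra T$ the continuous pushforward $f^{\rm ProInd}_*$. First I would check that proper morphisms form an admissible class in the sense of \cite{GR-I}*{Chapter 7, \S 1.1}: they contain all isomorphisms, are closed under composition, and are stable under base change in $\Schaft$. Then I would verify condition (i): for $f$ proper, the right adjoint of $f^{\rm ProInd}_*$ exists in $\DGct$ and is by construction the functor $f^!$ introduced in \S \ref{par:!-pullback-for-proper}; its continuity follows from Remark \ref{rem:right-adjoint-is-Pro-extension}, which identifies $f^!$ with the Pro-extension of the continuous ind-coherent functor $f^!: \IndCoh(T) \ra \IndCoh(S)$. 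Condition (ii) is precisely Proposition \ref{prop:base-change-proper}, which asserts that the Beck--Chevalley natural transformation (\ref{eq:base-change-proper-morphism}) is an isomorphism for any Cartesian square with the map in question proper.

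With the two hypotheses verified, the extension theorem of \cite{GR-I}*{Chapter 7, \S 3} yields a $2$-functor
\[
\ProIndCoh_{\Corraftpaa}: \Corraftpaa \ra \DGct
\]
sending an object $S$ to $\ProIndCoh(S)$, a horizontal $1$-morphism $f$ to $f^{\rm ProInd}_*$, and a proper $2$-morphism to the unit/counit of the $(f^{\rm ProInd}_*, f^!)$-adjunction, extending (\ref{eq:ProIndCoh-functor-from-schemes}) along $\Schaft \hra \Corraftpaa$.

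The main obstacle is that the extension machinery in \cite{GR-I} is applied there in the presentable setting, whereas $\ProIndCoh(S) = \Pro(\IndCoh(S))$ is \emph{not} presentable; so it requires a mild justification that the abstract formalism still applies. This is precisely the issue flagged by the author in \S \ref{subsec:technical-presentability}: one restricts to diagrams of pro-objects of controlled cardinality so that the relevant subcategory is accessible, and checks that the adjoint and base-change data of \S \ref{par:!-pullback-for-proper} and Proposition \ref{prop:base-change-proper} respect this cardinality bound. Once this technical point is resolved, the GR extension theorem applies verbatim.
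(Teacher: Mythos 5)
There is a genuine gap here. The extension theorem you invoke does not exist in the form you state it: the basic result \cite{GR-I}*{Chapter 7, Theorem 3.2.2} (Theorem \ref{thm:left-and-right-Beck-Chevalley-extension-result} in this paper) only applies when the class of admissible morphisms \emph{coincides} with one of the two classes of $1$-morphisms ($adm = horiz$ or $adm = vert$). With your choice $vert = horiz = \mbox{all}$ and $adm = \mbox{proper} \subsetneq vert \cap horiz$, conditions (i) and (ii) as you state them (right adjoints for proper pushforwards, plus proper base change, i.e.\ Proposition \ref{prop:base-change-proper}) would at best produce a functor out of $\Corraftppa$ or $\mbox{Corr}(\Schaft)^{\rm proper}_{\rm all;proper}$ --- correspondences whose \emph{horizontal} legs are proper. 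They do not produce the functor on $\Corraftpaa$, where horizontal morphisms are arbitrary, because that requires defining $f^!$ for an arbitrary morphism $f$, and this is exactly the content of the theorem rather than a formal consequence of proper base change.

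The actual argument must glue the proper case with the open case. Concretely, the paper's proof runs in three steps: first extend $\ProIndCoh$ to $\Corraftoao$ using the \emph{left} Beck--Chevalley condition for open morphisms (the adjunction $(f^{\rm ProInd,*}, f^{\rm ProInd}_*)$ of Lemma \ref{lem:left-adjoint-is-Pro-extension} and the open base change of Proposition \ref{prop:base-change-eventually-coconnective}); then pass to the $1$-category $\Corraftao$; and finally apply \cite{GR-I}*{Chapter 7, Theorem 5.2.4}, whose hypotheses are the contractibility of the category of factorizations $f = (\mbox{proper}) \circ (\mbox{open})$ (derived Nagata, Proposition \ref{prop:factorization-category-is-contractible}) and the compatibility of the open $*$-pullback with the proper $!$-pullback (Proposition \ref{prop:pullback-compatibility}). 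Your proposal omits the entire open-morphism half of the structure and both of these hypotheses, which are precisely what guarantees that $f^! := j^{\rm ProInd,*}\circ g^!$ is independent of the chosen factorization. (Your flagging of the presentability issue of \S \ref{subsec:technical-presentability} is correct but orthogonal to this gap; also note that in the resulting functor it is the \emph{vertical} legs of a correspondence that are sent to pushforwards and the horizontal legs to $!$-pullbacks, not as you describe.)
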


\begin{proof}
The proof follows exactly the strategy of \cite{GR-I}*{Chapter 5, \S 2}.

\textit{Step 1:} the restriction of (\ref{eq:ProIndCoh-functor-from-schemes}) to the class of open morphisms and viewed as functor
\[
\ProIndCoh: \Schaft \ra (\DGct)^{\rm 2-op}
\]
satisfies the left Beck--Chevalley condition with respect to open morphisms\footnote{See Definition \ref{defn:left-Beck-Chevalley} for what this means, and \cite{GR-I}*{Chapter 10, \S 2.1.6} for the meaning of 2-op.}. Thus, by Theorem \ref{thm:left-and-right-Beck-Chevalley-extension-result} (a) one can extend $\ProIndCoh$ to a functor
\begin{equation}
\label{eq:ProIndCoh-functor-from-correspondences-open-all-open}
\ProIndCoh_{\Corraftoao}: \Corraftoao \ra (\DGct)^{\rm 2-op}.    
\end{equation}

\textit{Step 2:} Let $\ProIndCoh_{\Corraftao}$ denote the restriction of $\ProIndCoh_{\Corraftoao}$ to the $1$-subcategory $\Corraftao$, where the only $2$-morphisms allowed are isomorphisms. Then \cite{GR-I}*{Chapter 7, Theorem 4.1.3} says that the data of (\ref{eq:ProIndCoh-functor-from-correspondences-open-all-open}) is equivalent to the data of 
\begin{equation}
    \label{eq:ProIndCoh-functor-from-correspondences-all-open}
    \ProIndCoh_{\Corraftao}: \Corraftao \ra \DGc.    
\end{equation}

\textit{Step 3:} one applies \cite{GR-I}*{Chapter 7, Theorem 5.2.4} to the functor (\ref{eq:ProIndCoh-functor-from-correspondences-all-open}) to obtain the functor (\ref{eq:ProIndCoh-functor-from-correspondences-proper-all-all}). We notice that the conditions necessary to apply \cite{GR-I}*{Chapter 7, Theorem 5.2.4} are satisfied\footnote{We use the numbering of \cite{GR-I}*{Chapter 7} to refer to the conditions.}:
\begin{itemize}
    \item \textit{(Conditions 5.1.1 and 5.1.2}) these are exactly as in the ind-coherent case, see \cite{GR-I}*{Chapter 7, \S 5.1.3};
    \item (\textit{Condition 5.1.4}) is the factorization condition, i.e.\ Proposition \ref{prop:factorization-category-is-contractible};
    \item (\textit{Condition 5.2.2}) requires that $\ProIndCoh$ satisfies a pullback compatibility between open morphisms and proper maps, this is Proposition \ref{prop:pullback-compatibility}.
\end{itemize}
\end{proof}

\paragraph{$!$-pullback for arbitrary maps}
\label{par:!-pullback-arbitrary-maps-ProIndCoh-schemes}

Consider the inclusion $(\Schaft)^{\rm op} \ra \Corraftpaa$ given by
\[
(S \overset{f}{\ra} T) \in (\Schaft)^{\rm op} \mapsto \begin{tikzcd}
S \ar[r,"\simeq"] \ar[d,"f"] & S \\
T & 
\end{tikzcd} \in \Corraftpaa
\]

Let
\begin{equation}
    \label{eq:proindcoh-shriek-on-schemes}
    \ProIndCoh^!:=  \left.\ProIndCoh_{\Corraftpaa}\right|_{(\Schaft)^{\rm op}}: (\Schaft)^{\rm op} \ra \DGc
\end{equation}
denote the restriction of the functor constructed in Theorem \ref{thm:ProIndCoh-from-correspondences}. One has the following
\begin{itemize}
    \item the restriction of $\ProIndCoh^!$ to $(\Schaft)_{\rm proper}^{\rm op}$\footnote{The subscript means that we only consider proper maps between schemes almost of finite type.} corresponds to the functor defined in \ref{par:!-pullback-for-proper};
    \item the restriction of $\ProIndCoh^!$ to $(\Schaft)_{\rm event-coconn}^{\rm op}$\footnote{The subscript means that we only consider eventually-coconnective maps between schemes almost of finite type.} corresponds to the functor defined in \ref{par:*-pullback-for-eventually-connective}.
\end{itemize}

The main consequence of Theorem \ref{thm:ProIndCoh-from-correspondences} is

\begin{cor}
\label{cor:arbitrary-base-change-ProIndCoh}
For an arbitrary Cartesian diagram in $\Schaft$
\[
\begin{tikzcd}
    S' \ar[r,"g_S"] \ar[d,"f'"'] & S \ar[d,"f"] \\
    T' \ar[r,"g_T"] & T
\end{tikzcd}
\]
one has a canonical isomorphism:
\begin{equation}
    \label{eq:base-change-ProInd-general}
    g^!_T \circ f^{\rm ProInd}_* \simeq (f')^{\rm ProInd}_* \circ g^!_S.
\end{equation}
Moreover, in the case that:
\begin{enumerate}[(a)]
    \item $g_T$ is proper, then the morphism $\la$ in (\ref{eq:base-change-ProInd-general}) arises from the $(g^{\rm ProInd}_*,g^!)$-adjunction from the isomorphism
    \[
    f^{\rm ProInd}_* \circ g^{\rm ProInd}_{S,*} \simeq g^{\rm ProInd}_{T,*} \circ (f')^{\rm ProInd}_{*};
    \]
    \item $f$ is proper, then the morphism $\la$ in (\ref{eq:base-change-ProInd-general}) arises from the $(f^{\rm ProInd}_*,f^!)$-adjunction from the isomorphism
    \[
    (f')^!\circ g^!_T \simeq g^!_S \circ f^!;
    \]
    \item $g_T$ is open, then the morphism $\ra$ in (\ref{eq:base-change-ProInd-general}) arises from the $(g^!,g^{\rm ProInd}_*)$-adjuntion from the isomorphism
    \[
    f^{\rm ProInd}_* \circ g^{\rm ProInd}_{S,*} \simeq g^{\rm ProInd}_{T,*} \circ (f')^{\rm ProInd}_{*};
    \]
    \item $f$ is open, then the morphism $\ra$ in (\ref{eq:base-change-ProInd-general}) arises from the $(f^!,f^{\rm ProInd}_*)$-adjunction from the isomorphism
    \[
    (f')^!\circ g^!_T \simeq g^!_S \circ f^!.
    \]
\end{enumerate}
\end{cor}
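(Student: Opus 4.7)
\textbf{Proof plan for Corollary \ref{cor:arbitrary-base-change-ProIndCoh}.} The plan is to extract the base change isomorphism (\ref{eq:base-change-ProInd-general}) as a direct consequence of the $2$-functoriality of $\ProIndCoh_{\Corraftpaa}$ from Theorem \ref{thm:ProIndCoh-from-correspondences}, and then to identify the canonical map in each of the four special cases by exploiting the built-in adjunctions.

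First, I would observe that the Cartesian diagram in the statement defines a morphism
\[
\begin{tikzcd}
S' \ar[r,"g_S"] \ar[d,"f'"'] & S \\
T' &
\end{tikzcd}
\]
in $\Corraftpaa$, viewed as a correspondence from $T'$ to $S$. By the definition of composition in $\Corr(\Schaft)$, this correspondence factors in two ways: as the composition of the open-style correspondence supported on $g_S, \id_S$ with the vertical correspondence $\id_{S}, f$, and as the composition of the vertical correspondence $\id_{T'}, f'$ with the horizontal correspondence $g_T, \id_T$. Applying the $2$-functor $\ProIndCoh_{\Corraftpaa}$ to these two decompositions and using that both yield the same $1$-morphism in $\DGct$ produces the canonical isomorphism $g^!_T \circ f^{\rm ProInd}_* \simeq (f')^{\rm ProInd}_* \circ g^!_S$. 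This part is formal once Theorem \ref{thm:ProIndCoh-from-correspondences} is established.

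Second, for the identification of the canonical map in cases (a)--(d), I would trace through the construction of $\ProIndCoh_{\Corraftpaa}$ carried out in the proof of Theorem \ref{thm:ProIndCoh-from-correspondences}. Recall that $f^!$ for a proper $f$ was \emph{defined} as the right adjoint of $f^{\rm ProInd}_*$ (see \S \ref{par:!-pullback-for-proper}), and for an open $g$, $g^!$ is the left adjoint of $g^{\rm ProInd}_*$ (see Lemma \ref{lem:left-adjoint-is-Pro-extension} together with the observation that open morphisms are eventually coconnective). Consequently, in case (a), where $g_T$ (and hence $g_S$) is proper, the $(g^{\rm ProInd}_*,g^!)$-adjunction converts the natural pushforward base change (which is part of the structure of the $2$-functor from its construction in Step 1 and Step 3 of the proof of Theorem \ref{thm:ProIndCoh-from-correspondences}) into the stated isomorphism. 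The argument for case (b) is analogous, using the $(f^{\rm ProInd}_*,f^!)$-adjunction applied to the $!$-pullback compatibility (which is part of the coherence data of $\ProIndCoh^!$ as a functor out of $(\Schaft)^{\rm op}$). Cases (c) and (d) are treated symmetrically, using that for open $g$ the pair $(g^!, g^{\rm ProInd}_*)$ is an adjunction (as in Proposition \ref{prop:open-immersion-fully-faithful-pushforward} and Lemma \ref{lem:left-adjoint-is-Pro-extension}).

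The one piece of bookkeeping I expect to be delicate is the compatibility in Remark \ref{rem:both-ways-give-same-compatibility-map}-style, namely checking that the two \emph{a priori} distinct canonical maps --- one obtained by applying adjunction to the starred base change and the other by applying adjunction to the shriek base change --- actually coincide. This is the main obstacle, and the resolution is that both arise from the same $2$-morphism in $\Corraftpaa$ under $\ProIndCoh_{\Corraftpaa}$; unraveling this requires invoking the coherence encoded in Steps 1--3 of the proof of Theorem \ref{thm:ProIndCoh-from-correspondences}, in particular the passage through $(\DGct)^{\rm 2\text{-}op}$ and the application of \cite{GR-I}*{Chapter 7, Theorem 5.2.4}, whose hypotheses have already been verified using Proposition \ref{prop:pullback-compatibility}. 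Once this is in place the characterizations (a)--(d) follow by a standard diagram chase with adjunction units and counits.
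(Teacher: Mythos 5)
Your proposal is correct and follows the paper's route exactly: the paper states this corollary as an immediate consequence of Theorem \ref{thm:ProIndCoh-from-correspondences}, with the isomorphism (\ref{eq:base-change-ProInd-general}) encoded by the value of $\ProIndCoh_{\Corraftpaa}$ on the composite correspondence and the identifications (a)--(d) built into the extension theorems of \cite{GR-I}*{Chapter 7} via the adjunctions defining $f^!$ for proper $f$ and $g^!$ for open $g$. Your flagged compatibility issue is precisely what Remark \ref{rem:both-ways-give-same-compatibility-map} and Proposition \ref{prop:pullback-compatibility} address, so nothing is missing.
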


\subsubsection{Symmetric monoidal structure and duality on pro-ind-coherent sheaves}

\paragraph{Tensor structure on pro-ind-sheaves}

Let $S_1$ and $S_2$ be two schemes almost of finite type, there is a naturally defined functor
\begin{equation}
    \label{eq:box-tensor-ProIndCoh}
    \boxtimes: \ProIndCoh(S_1)\times \ProIndCoh(S_2) \ra \ProIndCoh(S_1\times S_2).
\end{equation}
Indeed, by \cite{GR-I}*{Chapter 4, Lemma 6.3.2} one has a functor
\[
\boxtimes: \IndCoh(S_1) \times \IndCoh(S_2) \ra \IndCoh(S_1\times S_2)
\]
which Pro-extends to
\[
\boxtimes: \Pro(\IndCoh(S_1) \times \IndCoh(S_2)) \ra \ProIndCoh(S_1\times S_2),
\]
then we notice that
\[
\ProIndCoh(S_1)\times \ProIndCoh(S_2) \simeq \Pro(\IndCoh(S_1) \times \IndCoh(S_2))
\]
since the Pro-object construction commutes with limits.

The following is a direct consequence of \cite{GR-I}*{Chapter 4, Proposition 6.3.4.}.

\begin{prop}
\label{prop:box-tensor-is-fully-faithful}
The functor (\ref{eq:box-tensor-ProIndCoh}) is:
\begin{enumerate}[(i)]
    \item fully faithful;
    \item if the ground field $k$ is perfect, then it is also an equivalence.
\end{enumerate}
\end{prop}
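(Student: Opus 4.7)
The plan is to reduce both claims to the analogous statement for ind-coherent sheaves, namely \cite{GR-I}*{Chapter 4, Proposition 6.3.4}, by identifying $\boxtimes$ with the Pro-extension of the ind-coherent exterior product. First I would unfold the construction of $\boxtimes$ given in the discussion preceding the statement. By definition it factors as the composite
\[
\ProIndCoh(S_1) \times \ProIndCoh(S_2) \overset{\simeq}{\ra} \Pro\bigl(\IndCoh(S_1) \times \IndCoh(S_2)\bigr) \xrightarrow{\Pro(\boxtimes_{\IndCoh})} \ProIndCoh(S_1 \times S_2),
\]
where the first equivalence uses that the $\Pro$ construction commutes with finite products (a special case of its commutation with limits), and $\boxtimes_{\IndCoh}$ is the ind-coherent exterior product from \cite{GR-I}*{Chapter 4, Lemma 6.3.2}.

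Next I would invoke \cite{GR-I}*{Chapter 4, Proposition 6.3.4} directly: this asserts that $\boxtimes_{\IndCoh}$ is fully faithful in general, and an equivalence when $k$ is perfect.

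The remaining ingredient is the formal assertion that the Pro-extension of a fully faithful functor (resp.\ an equivalence) is again fully faithful (resp.\ an equivalence). Via the identification $\Pro(\sC) \simeq \Ind(\sC^{\rm op})^{\rm op}$ this follows from the analogous preservation properties for Ind-extensions, cf.\ \cite{HTT}*{Proposition 5.3.5.11}: a fully faithful $F:\sC \ra \sD$ gives a fully faithful $F^{\rm op}$, hence a fully faithful $\Ind(F^{\rm op})$, hence a fully faithful $\Pro(F)$; the equivalence case is automatic since $\Pro$ is a $2$-functor. Combining the three steps yields both (i) and (ii).

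The main—rather modest—obstacle is verifying the first step, i.e.\ that under the equivalence $\Pro(\sC_1 \times \sC_2) \simeq \Pro(\sC_1) \times \Pro(\sC_2)$ the functor $\boxtimes$ in the statement genuinely coincides with $\Pro(\boxtimes_{\IndCoh})$. This is essentially immediate from the universal property of $\Pro$ together with the characterization of $\boxtimes$ on pairs of cocompact generators, but it is the one spot where some care with the identifications is warranted.
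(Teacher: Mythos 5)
Your proposal is correct and follows exactly the route the paper takes: the paper defines $\boxtimes$ as the Pro-extension of the ind-coherent exterior product (using that $\Pro$ commutes with finite products) and then declares the proposition a direct consequence of \cite{GR-I}*{Chapter 4, Proposition 6.3.4}, which is precisely your argument with the formal preservation of full faithfulness and equivalences under Pro-extension made explicit.
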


Recall that by \cite{GR-I}*{Chapter 4, Proposition 6.3.7} the functor
\[
\IndCoh: \Schaffaft \ra \DGc
\]
has a unique symmetric monoidal structure. Thus, one obtains
\begin{prop}
\label{prop:ProIndCoh-functor-is-symmetric-monoidal}
The functor
\[
\ProIndCoh: \Schaffaft \ra \DGc
\]
has a unique symmetric monidal structure extending that of $\IndCoh$.
\end{prop}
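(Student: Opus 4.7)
The plan is to Pro-extend the symmetric monoidal structure on $\IndCoh$ provided by \cite{GR-I}*{Chapter 4, Proposition 6.3.7} and then upgrade the resulting lax structure to a strict one using Proposition \ref{prop:box-tensor-is-fully-faithful} together with the assumption that $\mathrm{char}(k) = 0$, so that $k$ is perfect.

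For existence, one encodes the symmetric monoidal structure on $\IndCoh$ as a coCartesian fibration $\IndCoh^{\otimes} \ra \Schaffaft^{\times}$ over the Cartesian symmetric monoidal structure on $\Schaffaft$, and applies $\Pro$ fiberwise. Since $\Pro$ preserves finite products, this yields a fibration $\ProIndCoh^{\otimes} \ra \Schaffaft^{\times}$ that is at worst lax symmetric monoidal, with structure maps at $(S_1, S_2)$ of the form
\begin{equation*}
\ProIndCoh(S_1) \otimes \ProIndCoh(S_2) \ra \Pro\bigl(\IndCoh(S_1) \otimes \IndCoh(S_2)\bigr) \ra \Pro(\IndCoh(S_1 \times S_2)) = \ProIndCoh(S_1 \times S_2),
\end{equation*}
in which the second arrow is Pro-extended from the symmetric monoidal structure on $\IndCoh$ and is an equivalence. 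Strictness reduces to showing that the first arrow is an equivalence; this follows from Proposition \ref{prop:box-tensor-is-fully-faithful}(ii) once one recognizes that the box-tensor bifunctor factors through the tensor product in $\DGc$ by its continuity in each variable.

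For uniqueness, any symmetric monoidal extension of $\IndCoh$ is compatible with the inclusions $\IndCoh(S_i) \hra \ProIndCoh(S_i)$, so its structure maps are determined on the image of $\IndCoh(S_1) \otimes \IndCoh(S_2)$. Since these maps must preserve cofiltered limits (the tensor product on $\DGc$ being continuous) and $\ProIndCoh(S_i)$ is generated from $\IndCoh(S_i)$ under cofiltered limits, any such extension is the unique Pro-extension of the monoidal structure on $\IndCoh$, and hence canonically isomorphic to the one constructed above. The main obstacle will be the strictness step: verifying that the comparison $\ProIndCoh(S_1) \otimes \ProIndCoh(S_2) \ra \Pro(\IndCoh(S_1) \otimes \IndCoh(S_2))$ is an equivalence, which, despite being the essential content of Proposition \ref{prop:box-tensor-is-fully-faithful}(ii), requires some care regarding how tensor products of non-presentable categories are formed in $\DGc$.
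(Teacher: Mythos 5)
The paper offers no explicit proof of this proposition: it is presented as an immediate consequence of the construction of $\boxtimes$ in (\ref{eq:box-tensor-ProIndCoh}), of Proposition \ref{prop:box-tensor-is-fully-faithful}(ii) (an equivalence since $k$ has characteristic $0$, hence is perfect), and of the uniqueness of the symmetric monoidal structure on $\IndCoh$ from \cite{GR-I}*{Chapter 4, Proposition 6.3.7}. Your argument fleshes out exactly this route — Pro-extending the structure, using the equivalence of $\boxtimes$ for strictness, and generation under cofiltered limits for uniqueness — so it is essentially the same approach, and the presentability caveat you flag is the same one the paper defers to \S\ref{subsec:technical-presentability}.
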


As in \cite{GR-I}*{Chapter 5, \S 4.1}, one has the following consequence of Proposition \ref{prop:ProIndCoh-functor-is-symmetric-monoidal} for the functor out of the $2$-category of correspondences
\begin{thm}
\label{thm:ProIndCoh-for-correspondences-is-symmetric-monoidal}
The functor
\[
\ProIndCoh_{\Corraaftpaa}: \Corraaftpaa \ra \DGct
\]
carries a symmetric monoidal structure that extends that of $\ProIndCoh$.
\end{thm}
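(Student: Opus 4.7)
The plan is to mirror the three-step construction used in the proof of Theorem \ref{thm:ProIndCoh-from-correspondences}, but carried out at the level of symmetric monoidal $(\infty,2)$-functors, taking as input the symmetric monoidal structure of Proposition \ref{prop:ProIndCoh-functor-is-symmetric-monoidal}. The Cartesian product on $\Schaffaft$ induces a (co-)Cartesian symmetric monoidal structure on $\Corraaftpaa$, and by \cite{GR-I}*{Chapter 9} the extension procedures from a category to its category of correspondences, when applied to symmetric monoidal functors, yield symmetric monoidal extensions, provided the necessary compatibilities of the $\boxtimes$-product with passage to adjoints and with base change hold.

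First, I would upgrade $\ProIndCoh:\Schaffaft\to\DGc$ to a symmetric monoidal functor valued in $(\DGct)^{\rm 2-op}$ and check that the left Beck--Chevalley condition with respect to open morphisms (Step 1 of Theorem \ref{thm:ProIndCoh-from-correspondences}) is compatible with $\boxtimes$. Concretely, one needs to verify that for open embeddings $g_1:S_1\to T_1$ and $g_2:S_2\to T_2$, the canonical comparison
\[
g_1^{\rm ProInd,*}(-)\boxtimes g_2^{\rm ProInd,*}(-)\simeq (g_1\times g_2)^{\rm ProInd,*}(-\boxtimes -)
\]
is an isomorphism, which reduces by Pro-extension to the analogous statement for $\IndCoh$ proved in \cite{GR-I}*{Chapter 5, \S 4.1}, together with the fact that $\boxtimes$ commutes with cofiltered limits separately in each variable. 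Applying the symmetric monoidal version of Theorem \ref{thm:left-and-right-Beck-Chevalley-extension-result} then yields a symmetric monoidal extension of (\ref{eq:ProIndCoh-functor-from-correspondences-open-all-open}).

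Secondly, Step 2 of the construction (the equivalence between $2$-functors out of $\Corraftoao$ and $1$-functors out of $\Corraftao$) is purely categorical and preserves symmetric monoidal structures; this is the content of the appropriate symmetric monoidal version of \cite{GR-I}*{Chapter 7, Theorem 4.1.3}. Then, for Step 3, I would invoke the symmetric monoidal enhancement of \cite{GR-I}*{Chapter 7, Theorem 5.2.4}: one needs to check that the hypotheses used in the proof of Theorem \ref{thm:ProIndCoh-from-correspondences} upgrade to symmetric monoidal data. The factorization condition (Proposition \ref{prop:factorization-category-is-contractible}) is manifestly compatible with products of schemes, the pullback compatibility (Proposition \ref{prop:pullback-compatibility}) is compatible with $\boxtimes$ by Proposition \ref{prop:box-tensor-is-fully-faithful} and the fact that external products of open embeddings are open and of proper morphisms are proper.

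The main obstacle will be the bookkeeping in Step 3: ensuring that the $(f^{\rm ProInd}_*,f^!)$-adjunction used to pass from an all-open formalism to the proper-all-all formalism is constructed in a symmetric monoidal way, i.e.\ that for proper morphisms $f_1,f_2$ the natural map $f_1^!(-)\boxtimes f_2^!(-)\to (f_1\times f_2)^!(-\boxtimes -)$ arising as a Beck--Chevalley map is an isomorphism. Here one again reduces to the $\IndCoh$ statement by Pro-extension, using that $\boxtimes$ commutes with limits in each variable and that $f^!$ on $\ProIndCoh$ for proper $f$ is the Pro-extension of $f^!$ on $\IndCoh$ (Remark \ref{rem:right-adjoint-is-Pro-extension}). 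Uniqueness of the symmetric monoidal structure extending that of $\ProIndCoh$ is then automatic from the universal property of the extensions in Steps 1--3.
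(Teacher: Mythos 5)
Your proposal is correct and follows essentially the same route as the paper, which offers no written proof of this theorem but simply declares it a consequence of Proposition \ref{prop:ProIndCoh-functor-is-symmetric-monoidal} via the template of \cite{GR-I}*{Chapter 5, \S 4.1}. Your three-step unwinding, with the key compatibilities of $\boxtimes$ with open $*$-pullback and proper $!$-pullback reduced to the $\IndCoh$ statements by Pro-extension, is exactly the content of that citation made explicit.
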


\paragraph{Duality}

As in \cite{GR-I}*{Chapter 5, \S 4.2} we notice that category $\Corraftaa$ has an anti-involution $\omega$ given by sending objects to themselves and $1$-morphisms as
\[
\begin{tikzcd}
U \ar[r,"f"] \ar[d,"g"] & S \\
T & 
\end{tikzcd}
\;\;\; \overset{\omega}{\longmapsto} \;\;\;
\begin{tikzcd}
U \ar[r,"g"] \ar[d,"f"] & T \\
S & 
\end{tikzcd}
\]
Moreover, by \cite{GR-I}*{Chapter 9, Proposition 2.3.4} all objects of $\Corraftaa$ are dualizable, and $\omega$ is the dualization functor. Thus, general considerations about symmetric monoidal categories\footnote{See \cite{GR-I}*{Chapter 5, \S 4.2.1}} give

\begin{thm}
\label{thm:duality-for-ProIndCoh-sheaves}
The restriction of (\ref{eq:ProIndCoh-functor-from-correspondences-proper-all-all}) to the category $\Corraftaa$ fits into the commutative diagram
\[
\begin{tikzcd}
\left(\Corraftaa\right)^{\rm op} \ar[ddd,"\omega"] \ar[rrrrr,"\left(\ProIndCoh_{\Corraftaa}\right)^{\rm op}"] & & & & & \left(\DGcd\right)^{\rm op} \ar[ddd,"\rm dualization"] \\
& & & & & \\
& & & & & \\
\Corraftaa \ar[rrrrr,"\ProIndCoh_{\Corraftaa}"] & & & & & \DGcd
\end{tikzcd}
\]
\end{thm}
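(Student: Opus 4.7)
The plan is to deduce the theorem as a formal consequence of Theorem \ref{thm:ProIndCoh-for-correspondences-is-symmetric-monoidal} together with the fact that every object in $\Corraftaa$ is self-dual, with duality data implemented by the diagonal/projection correspondences. The strategy exactly parallels \cite{GR-I}*{Chapter 5, \S 4.2}, where the analogous statement is proved for $\IndCoh$.

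First I would restrict the symmetric monoidal functor
\[
\ProIndCoh_{\Corraaftpaa}: \Corraaftpaa \ra \DGct
\]
from Theorem \ref{thm:ProIndCoh-for-correspondences-is-symmetric-monoidal} to the sub-$(\infty,1)$-category $\Corraftaa$, where only invertible $2$-morphisms are retained. This restriction is still symmetric monoidal. Next I would invoke \cite{GR-I}*{Chapter 9, Proposition 2.3.4} to get that every object $S \in \Corraftaa$ is dualizable with dual $S$ itself, whose unit and counit are the correspondences
\[
\begin{tikzcd}
S \ar[r,"\Delta_S"] \ar[d] & S\times S \\
\mathrm{pt} &
\end{tikzcd}
\qquad \text{and} \qquad
\begin{tikzcd}
S \ar[r] \ar[d,"\Delta_S"'] & \mathrm{pt} \\
S\times S &
\end{tikzcd}
\]
and such that the induced involution on $\Corraftaa$ sending a $1$-morphism to its transpose is precisely $\omega$.

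Now I would apply the general fact that any symmetric monoidal functor $F: \sC \ra \sD$ between symmetric monoidal $\infty$-categories takes dualizable objects to dualizable objects and is compatible with the dualization autoequivalence: for each dualizable $c \in \sC$, $F(c^\vee) \simeq F(c)^\vee$ naturally, and this equivalence is compatible with the transpose of morphisms. Since every object of $\Corraftaa$ is dualizable and every object of $\DGcd$ in the image is dualizable (being of the form $\ProIndCoh(S)$ for $S$ a scheme almost of finite type), the symmetric monoidal functor $\ProIndCoh_{\Corraftaa}$ automatically intertwines the anti-involution $\omega$ on $\Corraftaa$ with the dualization anti-involution on $\DGcd$. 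This yields precisely the commutative diagram in the statement.

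I do not anticipate a genuine obstacle: the content is entirely formal once one has (i) the symmetric monoidal enhancement of Theorem \ref{thm:ProIndCoh-for-correspondences-is-symmetric-monoidal} and (ii) the fact that $\omega$ implements the duality in $\Corraftaa$. The only nontrivial book-keeping is to verify that the self-duality data in $\Corraftaa$ is the one realized by the diagonal, and that applying $\ProIndCoh_{\Corraftaa}$ to $\Delta_S$ and its transpose gives (via base change along $\Delta_S$) the unit and counit of a self-duality on $\ProIndCoh(S)$. Both points are covered by the symmetric monoidality of $\ProIndCoh_{\Corraftpaa}$ and the base change isomorphism in Corollary \ref{cor:arbitrary-base-change-ProIndCoh}.
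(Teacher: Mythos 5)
Your proposal is correct and follows essentially the same route as the paper: the paper likewise observes that $\omega$ is an anti-involution, cites \cite{GR-I}*{Chapter 9, Proposition 2.3.4} for the self-duality of all objects of $\Corraftaa$ with $\omega$ as the dualization functor, and then invokes the general compatibility of symmetric monoidal functors with dualization (as in \cite{GR-I}*{Chapter 5, \S 4.2.1}) applied to the symmetric monoidal enhancement of $\ProIndCoh$. The only cosmetic mismatch (present in the paper as well) is between the affine and non-affine versions of the symmetric monoidal statement, which is resolved by Zariski descent and does not affect the argument.
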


Concretely, Theorem \ref{thm:duality-for-ProIndCoh-sheaves} is saying that given $S \in \Schaffaft$ one has an equivalence
\begin{equation}
    \label{eq:dualization-ProIndCoh}
    \bD^{\rm ProInd}_{S}: \ProIndCoh(S) \simeq \ProIndCoh(S)^{\vee},
\end{equation}
and for any morphism $f:S \ra T$ one has an equivalence
\[
(f^!)^{\vee} \simeq f^{\rm ProInd}_{*},
\]
where we abuse notation and denote $(f^!)^{\vee}$ as the composite
\[
\ProIndCoh(S) \overset{(\bD^{\rm ProInd}_{S})^{-1}}{\ra} \ProIndCoh(S)^{\vee} \overset{(f^!)^{\vee}}{\ra} \ProIndCoh(T)^{\vee} \overset{\bD^{\rm ProInd}_{T}}{\ra} \ProIndCoh(T).
\]

\subsection{Tate-coherent sheaves}
\label{subsec:Tate-coherent-sheaves}

In this section we consider the subcategory of Tate-coherent sheaves inside of pro-ind-coherent sheaves and specialize the functors constructed in Section \ref{subsec:Pro-ind-coherent-on-schemes} to Tate objects. 

As we discussed before the $!$-pullback functor restricts well, however the pushforward forward doesn't. We amend the situation by constructing an arbitrary pushforward functor of Tate objects by gluing the usual pushforward for proper maps and an 'exceptional', i.e.\ right adjoint, pushforward for open morphisms.

\subsubsection{Pullback of Tate-coherent sheaves}
\label{subsubsec:pullback-TateCoh-sheaves}

In this subsection we analyze the behaviour of the functor (\ref{eq:proindcoh-shriek-on-schemes}) when restricted to the subcategories of Tate-coherent sheaves.

\paragraph{Proper pushforward preserves Tate objects}
\label{par:proper-!-pullback-preserves-TateCoh-schemes}

We start with the following observation.

\begin{lem}
\label{lem:proper-pushforward-preserve-Tate-Coh}
Suppose that $f:S \ra T$ is a proper map between schemes almost of finite type, then the functor $\left.f^{\rm ProInd}_*\right|_{\TateCoh(S)}$ factors as follows
\[
\begin{tikzcd}
\TateCoh(S) \ar[r,dashed,"f^{\rm Tate}_*"] \ar[rd,"\left.f^{\rm ProInd}_*\right|_{\TateCoh(S)}"'] & \TateCoh(T) \ar[d,hook] \\
& \ProIndCoh(T)
\end{tikzcd}
\]
\end{lem}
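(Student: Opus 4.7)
The plan is to invoke the first description of $\TateCoh(S)$: an object $\sF \in \ProIndCoh(S)$ lies in $\TateCoh(S)$ precisely when it admits a presentation $\sF \simeq \lim_I \sF_i$ with $\sF_i \in \IndCoh(S)$ satisfying $\Fib(\sF_i \ra \sF_j) \in \Coh(S)$ for every morphism $i \ra j$ in $I$. I will fix such a presentation and transport it through $f^{\rm ProInd}_*$.

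Since $f^{\rm ProInd}_*$ is by construction the Pro-extension of the continuous functor $f^{\rm Ind}_*$, it commutes with cofiltered limits, so that
\[
f^{\rm ProInd}_*(\sF) \simeq \lim_I f^{\rm Ind}_*(\sF_i).
\]
It thus suffices to check that this system also has coherent connecting fibers. The functor $f^{\rm Ind}_*$ is exact (it is continuous and, by Proposition \ref{prop:IndCoh-adjunctions-for-schemes}, admits a right adjoint $f^!$ when $f$ is proper), so for every $i \ra j$ we obtain
\[
\Fib\bigl(f^{\rm Ind}_*(\sF_i) \ra f^{\rm Ind}_*(\sF_j)\bigr) \simeq f^{\rm Ind}_*\bigl(\Fib(\sF_i \ra \sF_j)\bigr).
\]

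The crux of the argument is then the classical fact that proper direct image preserves coherence: for $f$ proper the functor $f^{\rm Ind}_*$ restricts to a functor $\Coh(S) \ra \Coh(T)$, as recorded in the ind-coherent formalism of \cite{GR-I}*{Chapter 4}. Applying this to the coherent fibers $\Fib(\sF_i \ra \sF_j) \in \Coh(S)$ shows that the connecting fibers of $\lim_I f^{\rm Ind}_*(\sF_i)$ lie in $\Coh(T)$, so $f^{\rm ProInd}_*(\sF)$ satisfies the defining condition of $\TateCoh(T)$. The only nontrivial input is therefore the properness of $f$, used exclusively to ensure preservation of coherence; the rest is formal from the exactness and Pro-extension properties of $f^{\rm ProInd}_*$. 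An alternative route would be to invoke the description of $\TateCoh(S)$ as the smallest stable subcategory of $\ProIndCoh(S)$ generated by $\IndCoh(S)$ and $\Pro(\Coh(S))$, and verify preservation on each generating subcategory separately: on $\IndCoh(S)$ the functor $f^{\rm ProInd}_*$ agrees with $f^{\rm Ind}_*$ and lands in $\IndCoh(T)$, while on $\Pro(\Coh(S))$ the Pro-extension formula combined with the same coherence-preservation input yields an object of $\Pro(\Coh(T))$; exactness of $f^{\rm ProInd}_*$ then propagates the conclusion to the stable closure.
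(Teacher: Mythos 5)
Your proof is correct and follows essentially the same route as the paper: both arguments reduce the claim to the single nontrivial input that proper pushforward preserves coherence (\cite{GR-I}*{Chapter 4, Lemma 5.1.4}), the rest being formal from the Pro-extension and exactness of $f^{\rm ProInd}_*$. Your version merely spells out the transport of a presentation with coherent connecting fibers, which the paper's one-line reduction to $\Coh(S)$ leaves implicit.
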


\begin{proof}
By the definition of $\TateCoh(S)$ it is enough to check that the restriction of $f^{\rm ProInd}_*$ to $\Coh(S)$ factors through $\Coh(T)$. This is exactly \cite{GR-I}*{Chapter 4, Lemma 5.1.4}.
\end{proof}

\paragraph{}
\label{par:correct-adjunction-TateCoh-proper-maps}

Notice that Lemma \ref{lem:proper-pushforward-preserve-Tate-Coh} also implies that the restriction $\left.f^!\right|_{\TateCoh(T)}$ sends $\TateCoh(T)$ to the subcategory $\TateCoh(S)$. We denote the associated functor by
\[
f^!: \TateCoh(T) \ra \TateCoh(S)
\]
and we notice that $(f^{\rm Tate}_*,f^!)$ forms an adjoint pair for $f$ a proper morphism.

\paragraph{Coconnective pullbacks preserve Tate objects}
\label{par:coconnective-pullback-Tatecoh-affine-schemes}

\begin{lem}
\label{lem:coconnective-pullback-preserve-TateCoh}
Let $f:S \ra T$ be an eventually coconnective morphism, then the functor $\left.f^{\rm ProInd, *}\right|_{\TateCoh(T)}$ sends the category $\TateCoh(T)$ to the subcategory $\TateCoh(S) \subset \ProIndCoh(S)$.

Moreover, the induced functor which we denote by
\[
f^{\rm Tate, *}: \TateCoh(T) \ra \TateCoh(S)
\]
preserves colimits.
\end{lem}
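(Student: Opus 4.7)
The strategy is to leverage the defining characterization of a Tate-coherent sheaf as an extension, together with the exactness and left-adjointness of $f^{\rm ProInd, *}$ already established.

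For the first assertion, I would take an arbitrary $\sF \in \TateCoh(T)$ fitting in a fiber sequence $\sL \ra \sF \ra \sF'$ with $\sL \in \Pro(\Coh(T))$ and $\sF' \in \IndCoh(T)$, and apply the exact functor $f^{\rm ProInd, *}$. Exactness (a consequence of $f^{\rm ProInd, *}$ being the Pro-extension of the exact functor $f^{\rm Ind, *}$) produces a fiber sequence in $\ProIndCoh(S)$, and it suffices to verify that its outer terms lie in $\Pro(\Coh(S))$ and $\IndCoh(S)$ respectively. The image of $\sF'$ equals $f^{\rm Ind, *}(\sF')$ by the very definition of $f^{\rm ProInd, *}$, so it automatically lies in $\IndCoh(S)$. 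For $\sL \simeq \lim_I \sL_i$ with $\sL_i \in \Coh(T)$, the eventual coconnectivity hypothesis is exactly what guarantees that $f^*(\sL_i) \in \Coh(S)$; since Pro-extensions commute with cofiltered limits, $f^{\rm ProInd, *}(\sL) \simeq \lim_I f^*(\sL_i) \in \Pro(\Coh(S))$.

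For the colimit preservation, I would invoke Lemma~\ref{lem:left-adjoint-is-Pro-extension}, which identifies $f^{\rm ProInd, *}$ with the left adjoint to $f^{\rm ProInd}_*$; in particular $f^{\rm ProInd, *}$ commutes with all colimits computed in $\ProIndCoh(T)$. It then remains to check that the fully faithful embedding $\TateCoh(T) \hra \ProIndCoh(T)$ preserves the colimits of diagrams valued in $\TateCoh(T)$. Finite colimits are automatic from the stability of the subcategory. For filtered colimits, the expected argument is to show that a filtered colimit of Tate objects computed in $\ProIndCoh(T)$ remains a Tate object, by assembling the $\Pro(\Coh(T))$-part and $\IndCoh(T)$-part of each extension presentation separately and using that both $\IndCoh(T)$ and $\Pro(\Coh(T))$ are individually closed under the relevant filtered colimits in $\ProIndCoh(T)$.

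The main obstacle I anticipate is precisely this last compatibility: demonstrating that filtered colimits in $\TateCoh$ are computed in the ambient $\ProIndCoh$ requires a careful analysis of how the extension data $\sL \ra \sF \ra \sF'$ assembles under filtered colimits of diagrams of Tate objects. This is really the only non-formal input needed beyond what Section~\ref{subsec:Pro-ind-coherent-on-schemes} already provides, since the preservation of the Tate subcategory follows cleanly from the interplay between exactness and the coconnectivity hypothesis.
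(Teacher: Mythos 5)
Your argument for the first claim is correct but routes through the extension characterization ($\sL \ra \sF \ra \sF'$ with $\sL \in \Pro(\Coh(T))$ and $\sF' \in \IndCoh(T)$), whereas the paper works directly with the defining pro-presentation: writing $\sF \simeq \lim_I \sF_i$ with $\Fib(\sF_i \ra \sF_j) \in \Coh(T)$, exactness of $f^{\rm ProInd,*} = \Pro(f^{\rm Ind,*})$ gives $\Fib(f^{\rm Ind,*}\sF_i \ra f^{\rm Ind,*}\sF_j) \simeq f^{\rm Ind,*}\bigl(\Fib(\sF_i \ra \sF_j)\bigr)$, which lies in $\Coh(S)$ precisely because $f$ is eventually coconnective. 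Both arguments hinge on the same single input, namely that $f^*$ preserves coherence; yours additionally invokes the fact that every object of $\TateCoh(T)$ (as defined via pro-presentations with coherent fibers) is such an extension, which is true but is only asserted, not proved, in the paper. The direct argument is marginally cleaner since it avoids that intermediate step.

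For the second claim the paper's entire proof is your first sentence: $f^{\rm ProInd,*}$ is a left adjoint by Lemma \ref{lem:left-adjoint-is-Pro-extension}, hence preserves colimits; the compatibility of colimits in $\TateCoh(T)$ with those in $\ProIndCoh(T)$ that you rightly flag is not addressed there. However, the mechanism you sketch for resolving it would fail: $\Pro(\Coh(T))$ is \emph{not} closed under filtered colimits in $\ProIndCoh(T)$ --- a filtered colimit of objects of $\Coh(T)$ is an object of $\IndCoh(T)$, which generically lies outside $\Pro(\Coh(T))$ --- so one cannot assemble the $\Pro(\Coh(T))$-parts and the $\IndCoh(T)$-parts of the extension presentations separately and conclude termwise. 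If you want to close this gap, the argument must establish closure of $\TateCoh(T)$ itself under the relevant colimits (e.g.\ via its description as the stable subcategory generated by $\IndCoh(T)$ and $\Pro(\Coh(T))$, or via the coherent-fiber presentation), rather than closure of each generating piece.
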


\begin{proof}
By definition of eventually coconnective morphisms (see \S \ref{par:*-pullback-for-eventually-connective}) the restriction $\left.f^{\rm ProInd}\right|_{\Coh(T)}$ sends $\Coh(T)$ to $\Coh(S)$, thus it preserves Tate-objects.

The second claim of the Lemma follows from the fact that $f^{\rm ProInd, *}$ is a left adjoint (see Lemma \ref{lem:left-adjoint-is-Pro-extension}).
\end{proof}

\paragraph{}
\label{par:correct-adjunction-TateCoh-coconnective-maps}

Lemma \ref{lem:coconnective-pullback-preserve-TateCoh} implies the functor $f^{\rm Tate, *}: \TateCoh(T) \ra \TateCoh(S)$ has a right adjoint $f^{\rm Tate}_*$. Thus, when $f$ is a coconnective morphisms one has an adjunction $(f^{\rm Tate, *},f^{\rm Tate}_*)$.

\paragraph{}

As a consequence of \S \ref{par:!-pullback-arbitrary-maps-ProIndCoh-schemes} and Lemmas \ref{lem:proper-pushforward-preserve-Tate-Coh} and \ref{lem:coconnective-pullback-preserve-TateCoh} the functor (\ref{eq:proindcoh-shriek-on-schemes}) restricts to a functor
\begin{equation}
    \label{eq:TateCoh-shriek-on-schemes}
    \TateCoh^!: (\Schaft)^{\rm op} \ra \DGc.
\end{equation}

\subsubsection{Tate-coherent sheaves on the category of correspondences}
\label{subsubsec:TateCoh-on-correspondences-of-schemes}

In goal of this section is to extend the functor (\ref{eq:TateCoh-shriek-on-schemes}) to a functor
\[
\TateCoh_{\Corraftoaa}: \Corraftoaa \ra \DGct.
\]

\paragraph{}

Here is the heuristics of how this functor is constructed.

Suppose that $f:S \ra T$ is an arbitrary map between schemes almost of finite type. By Proposition \ref{prop:factorization-category-is-contractible} one has a factorization of $f$ as
\[
S \overset{j}{\ra} U \overset{g}{\ra} T
\]
where $j$ is an open morphism and $g$ is proper. As a consequence of \S \ref{subsubsec:pullback-TateCoh-sheaves}, the $!$-pullback functors for pro-ind-coherent sheaves restricted to the category of Tate-coherent sheaves factors as follows
\[
\begin{tikzcd}
\ProIndCoh(T) \ar[rr,"g^{!}"] & & \ProIndCoh(U) \ar[rr,"j^{!}"] & & \ProIndCoh(S) \\
\TateCoh(T) \ar[u,hook] \ar[rr,"\left.g^!\right|_{\TateCoh(T)}"',dashed] & & \TateCoh(U) \ar[u,hook] \ar[rr,"\left.j^!\right|_{\TateCoh(U)}"',dashed] & & \TateCoh(S) \ar[u,hook]
\end{tikzcd}
\]

Thus, one defines the functor $f^{\rm Tate}_*: \TateCoh(S) \ra \TateCoh(T)$ as follows
\begin{equation}
    \label{eq:defn-informal-Tate-pushforward}
    f^{\rm Tate}_{*} := g^{\rm Tate}_* \circ j^{\rm Tate}_*,
\end{equation}
where $g^{\rm Tate}_* := \left.g^{\rm ProInd}_*\right|_{\TateCoh(U)}$ and $j^{\rm Tate}_*$ is the \emph{right adjoint} to $j^!$, which exists by Lemma \ref{lem:coconnective-pullback-preserve-TateCoh}. 

The rest of this section is devoted to check that this assignment is well-defined, i.e.\ does not depend on the factorization of $f$. The technical way to achieve this is by defining a functor from the category of correspondences, we do this following the strategy of \cite{GR-I}*{Chapter 5, \S 2} adapted to the case of pushforwards\footnote{In \emph{loc.\ cit.} Gaitsgory and Rozenblyum use this idea to define the $!$-pullback for arbitrary morphisms.}.

\paragraph{Proper base change for Tate-coherent sheaves}

Consider
\begin{equation}
    \label{eq:base-change-proper-TateCoh}
    \begin{tikzcd}
    S' \ar[r,"g_S"] \ar[d,"f'"'] & S \ar[d,"f"] \\
    T' \ar[r,"g_T"] & T
    \end{tikzcd}
\end{equation}
a Cartesian diagram in $\Schaft$ where $f$ is proper, by adjunction the natural isomorphism
\[
g^!_S \circ f^! \overset{\simeq}{\ra} (f')^! \circ g^!_T
\]
gives a $2$-isomorphism
\begin{equation}
    \label{eq:base-change-2-morphism-proper-maps-TateCoh-schemes}
    (f')^{\rm Tate}_{*} \circ g^!_S \ra g^!_T \circ f^{\rm Tate}_{*}
\end{equation}
of functors from $\TateCoh(S)$ to $\TateCoh(T')$.

\begin{lem}
\label{lem:right-Beck-Chevalley-for-proper-morphisms-TateCoh-shriek}
The map (\ref{eq:base-change-2-morphism-proper-maps-TateCoh-schemes}) is an isomorphism.
\end{lem}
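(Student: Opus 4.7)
The plan is to deduce this from the already established base-change statement for pro-ind-coherent sheaves, namely Corollary \ref{cor:arbitrary-base-change-ProIndCoh}(b), together with the fact that all the functors involved are restrictions of their pro-ind-coherent counterparts.

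First, I would record the compatibility of adjunctions. By Lemma \ref{lem:proper-pushforward-preserve-Tate-Coh}, for $f$ proper the functor $f^{\rm Tate}_*$ is by construction the restriction of $f^{\rm ProInd}_*$ along the fully faithful inclusion $\TateCoh(-) \hra \ProIndCoh(-)$; and by \S \ref{par:proper-!-pullback-preserves-TateCoh-schemes} the right adjoint $f^!$ on $\TateCoh$ is likewise the restriction of $f^!$ on $\ProIndCoh$. The same statements hold for $f'$, which is proper since $f$ is and properness is stable under base change. Consequently, the $(f^{\rm Tate}_*, f^!)$-adjunction unit and counit are obtained by restricting the $(f^{\rm ProInd}_*, f^!)$-adjunction unit and counit, and the same for $f'$.

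Next, I would observe that the base-change $2$-morphism (\ref{eq:base-change-2-morphism-proper-maps-TateCoh-schemes}) on $\TateCoh$ is, by its very construction (via the $(f^{\rm Tate}_*, f^!)$-adjunction applied to the isomorphism $g^!_S \circ f^! \simeq (f')^! \circ g^!_T$ inherited from $\ProIndCoh^!$), the restriction of the base change $2$-morphism $(f')^{\rm ProInd}_{*} \circ g^!_S \to g^!_T \circ f^{\rm ProInd}_{*}$ produced in Corollary \ref{cor:arbitrary-base-change-ProIndCoh}(b). In other words, the diagram
\[
\begin{tikzcd}
\TateCoh(S) \ar[r,"g^!_S"] \ar[d,hook] & \TateCoh(S') \ar[r,"(f')^{\rm Tate}_*"] \ar[d,hook] & \TateCoh(T') \ar[d,hook] \\
\ProIndCoh(S) \ar[r,"g^!_S"] & \ProIndCoh(S') \ar[r,"(f')^{\rm ProInd}_*"] & \ProIndCoh(T')
\end{tikzcd}
\]
commutes, and similarly for the composition $g^!_T \circ f^{\rm Tate}_*$ on top and $g^!_T \circ f^{\rm ProInd}_*$ on bottom; moreover these two commutative squares are related by the natural transformation induced by (\ref{eq:base-change-2-morphism-proper-maps-TateCoh-schemes}) on the top and by (\ref{eq:base-change-ProInd-general}) on the bottom, and these transformations agree.

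Finally, Corollary \ref{cor:arbitrary-base-change-ProIndCoh}(b) asserts that the bottom natural transformation is an isomorphism. Since the vertical inclusions $\TateCoh(-) \hra \ProIndCoh(-)$ are fully faithful, it follows that the top natural transformation is an isomorphism as well, which is the claim. The only technical point is the bookkeeping needed to check that the adjunction data on $\TateCoh$ really does restrict from $\ProIndCoh$; this is where the main work lies, but it is a formal consequence of the fact that $f^{\rm Tate}_*$ and $f^!$ are restrictions together with the fully faithfulness of the inclusion.
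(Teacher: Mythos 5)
Your proof is correct and follows exactly the paper's own argument: the paper's proof is a one-line citation of Corollary \ref{cor:arbitrary-base-change-ProIndCoh}(b) together with the observation from \S \ref{par:correct-adjunction-TateCoh-proper-maps} that the $(f^{\rm Tate}_*,f^!)$-adjunction is the restriction of the $(f^{\rm ProInd}_*,f^!)$-adjunction, which is precisely the bookkeeping you spell out. Your version just makes explicit the compatibility of units and counits under the fully faithful inclusions, which the paper leaves implicit.
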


\begin{proof}
This follow from Corollary \ref{cor:arbitrary-base-change-ProIndCoh}(b) and \S \ref{par:correct-adjunction-TateCoh-proper-maps}.
\end{proof}

\paragraph{}

Lemma \ref{lem:right-Beck-Chevalley-for-proper-morphisms-TateCoh-shriek} implies that the functor 
\[
\TateCoh^!: (\Schaft)^{\rm op} \ra \DGc
\]
satisfies the right Beck--Chevalley condition with respect to proper morphisms. Thus, Theorem \ref{thm:left-and-right-Beck-Chevalley-extension-result}(b) implies

\begin{prop}
One has a unique extension
\[
\TateCoh^!_{\Corraftppa}: \Corraftppa \ra \DGct
\]
whose restriction 
\[
\TateCoh_{\rm proper} := \left.\TateCoh^!_{\Corraftppa}\right|_{(\Schaff)_{\rm proper}}
\]
is obtained from $\TateCoh^!$ by passing to left adjoints.
\end{prop}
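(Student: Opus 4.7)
The plan is to apply the general right Beck--Chevalley extension machinery (Theorem \ref{thm:left-and-right-Beck-Chevalley-extension-result}(b), following \cite{GR-I}*{Chapter 7}) to the contravariant functor $\TateCoh^!: (\Schaft)^{\rm op} \to \DGc$ with respect to the class of proper morphisms. The hypotheses needed are: (i) for every proper $f:S\to T$ in $\Schaft$, the functor $f^! = \TateCoh^!(f)$ admits a left adjoint in $\DGc$; and (ii) $\TateCoh^!$ satisfies the right Beck--Chevalley condition with respect to proper morphisms, i.e.\ for every Cartesian square as in (\ref{eq:base-change-proper-TateCoh}) with $f$ proper, the natural map (\ref{eq:base-change-2-morphism-proper-maps-TateCoh-schemes}) is an isomorphism.

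I would first verify (i) by invoking the discussion in \S \ref{par:correct-adjunction-TateCoh-proper-maps}: Lemma \ref{lem:proper-pushforward-preserve-Tate-Coh} shows that $f^{\rm ProInd}_*$ restricts to a functor $f^{\rm Tate}_*: \TateCoh(S) \to \TateCoh(T)$, and by restriction of the pro-ind-coherent adjunction this gives an adjoint pair $(f^{\rm Tate}_*, f^!)$ at the level of Tate-coherent sheaves. (Continuity of $f^{\rm Tate}_*$ in $\DGc$ follows since it is the restriction of a continuous functor to a cocomplete subcategory stable under colimits, which one checks directly.) Condition (ii) is exactly Lemma \ref{lem:right-Beck-Chevalley-for-proper-morphisms-TateCoh-shriek} just established.

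With these two conditions in hand, Theorem \ref{thm:left-and-right-Beck-Chevalley-extension-result}(b) produces a unique extension of $\TateCoh^!$ to a 2-functor
\[
\TateCoh^!_{\Corraftppa}: \Corraftppa \to \DGct,
\]
characterized by the property that for any correspondence $S \xleftarrow{g} U \xrightarrow{f} T$ with $g$ proper, the image is the composite
\[
g^{\rm Tate}_* \circ f^! : \TateCoh(T) \to \TateCoh(U) \to \TateCoh(S),
\]
and that the $2$-cells over a proper $2$-morphism are the base-change isomorphisms from Lemma \ref{lem:right-Beck-Chevalley-for-proper-morphisms-TateCoh-shriek}. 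By construction the restriction $\TateCoh_{\rm proper}$ to $(\Schaft)_{\rm proper}$, viewed as the subcategory of correspondences of the form $S \xleftarrow{\id} S \xrightarrow{g} T$ with $g$ proper, sends $g$ to the left adjoint $g^{\rm Tate}_*$ of $g^!$; this is precisely the claim that $\TateCoh_{\rm proper}$ is obtained from $\TateCoh^!$ by passage to left adjoints.

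The main obstacle is verifying the conditions of the extension theorem are literally applicable in our presentable 2-categorical setting, since $\ProIndCoh$ fails to be presentable in general; however, here we are working inside $\TateCoh$, which as a subcategory of $\ProIndCoh$ has been arranged (see \S \ref{subsec:technical-presentability}) to satisfy the cardinality bounds needed for the machinery of \cite{GR-I}*{Chapter 7} to apply. Once this technical point is confirmed, uniqueness follows formally from the universal property of the extension: any other extension would have to agree with $\TateCoh^!$ on horizontal morphisms and with $g^{\rm Tate}_*$ on vertical proper morphisms, and by compatibility with composition it is thereby determined on all correspondences.
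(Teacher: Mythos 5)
Your proposal is correct and follows essentially the same route as the paper: the paper's proof likewise observes that Lemma \ref{lem:right-Beck-Chevalley-for-proper-morphisms-TateCoh-shriek} gives the right Beck--Chevalley condition for $\TateCoh^!$ with respect to proper morphisms and then applies Theorem \ref{thm:left-and-right-Beck-Chevalley-extension-result}(b). Your additional remarks on the existence of the left adjoints $f^{\rm Tate}_*$ (via \S \ref{par:correct-adjunction-TateCoh-proper-maps}) and on presentability only make explicit what the paper leaves implicit.
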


\paragraph{Open base change for Tate-coherent sheaves}
\label{par:base-change-open-morphism-TateCoh}

For $g:S \ra T$ an open morphism, by Lemma \ref{lem:coconnective-pullback-preserve-TateCoh} the functor $g^!: \TateCoh(T) \ra \TateCoh(S)$ admits a right adjoint, which we denote by
\[
g^{\rm Tate}_*: \TateCoh(S) \ra \TateCoh(T).
\]

Let
\begin{equation}
    \label{eq:pullback-diagram-open-base-change-TateCoh}
    \begin{tikzcd}
    S' \ar[r,"f'"] \ar[d,"g_S"'] & T' \ar[d,"g_T"] \\
    S \ar[r,"f"'] & T
    \end{tikzcd}
\end{equation}
be a Cartesian diagram in $\Schaft$ where $g_T$ is open, consider the 2-morphism
\begin{equation}
    \label{eq:base-change-2-morphism-proper-and-open-TateCoh}
    f^! \circ g^{\rm Tate}_{T, *} \ra g^{\rm Tate}_{S,*}\circ (f')^!
\end{equation}
obtained from the $(g^!_S,g^{\rm Tate}_{S,*})$ and $(g^!_T,g^{\rm Tate}_{T,*})$ adjunctions from the isomorphism
\[
(f')^! \circ g^!_T  \overset{\simeq}{\ra} g^!_S \circ f^!. 
\]

\begin{prop}
\label{prop:open-morphism-right-Beck-Chevalley-TateCoh}
The map (\ref{eq:base-change-2-morphism-proper-and-open-TateCoh}) is an isomorphism.
\end{prop}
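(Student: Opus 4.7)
The plan is to reduce the statement to two basic cases by factorizing the horizontal morphism $f$. By Proposition \ref{prop:factorization-category-is-contractible}, write $f = h \circ j$ with $h$ proper and $j$ an open embedding, and expand (\ref{eq:pullback-diagram-open-base-change-TateCoh}) into two stacked Cartesian squares
\[
\begin{tikzcd}
S' \ar[r,"j'"] \ar[d,"g_{S}"'] & U' \ar[r,"h'"] \ar[d,"g_{U}"'] & T' \ar[d,"g_{T}"] \\
S \ar[r,"j"'] & U \ar[r,"h"'] & T.
\end{tikzcd}
\]
A diagram chase using the units and counits of the adjunctions $(g^{!}_{?}, g^{\rm Tate}_{?,*})$ shows that base-change $2$-morphisms compose horizontally, so the BC isomorphism for $f$ reduces to separate verifications for $h$ and $j$.

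For the proper factor $h$, Lemma \ref{lem:right-Beck-Chevalley-for-proper-morphisms-TateCoh-shriek} provides the isomorphism $(h')^{\rm Tate}_{*} \circ g^{!}_{U} \simeq g^{!}_{T} \circ h^{\rm Tate}_{*}$. All four functors here are left adjoints: $h^{\rm Tate}_{*}$ and $(h')^{\rm Tate}_{*}$ are left adjoints of $h^{!}$ and $(h')^{!}$ by \S\ref{par:correct-adjunction-TateCoh-proper-maps}, while $g^{!}_{T}$ and $g^{!}_{U}$ are left adjoints of $g^{\rm Tate}_{T,*}$ and $g^{\rm Tate}_{U,*}$. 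Taking right adjoints of both sides and invoking uniqueness of adjoints yields the desired $h^{!} \circ g^{\rm Tate}_{T,*} \simeq g^{\rm Tate}_{U,*} \circ (h')^{!}$.

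For the open factor $j$, both $j$ and $g_{U}$ are open. The desired BC map $j^{!} \circ g^{\rm Tate}_{U,*} \to g^{\rm Tate}_{S,*} \circ (j')^{!}$ is the mate, under the adjunctions $(g^{!}_{U}, g^{\rm Tate}_{U,*})$ and $(g^{!}_{S}, g^{\rm Tate}_{S,*})$, of the trivial equality $g^{!}_{S} \circ j^{!} = (j')^{!} \circ g^{!}_{U}$ coming from the functoriality of $!$-pullback. A standard argument shows that this mate is an isomorphism provided the counit $g^{!} \circ g^{\rm Tate}_{*} \to \id$ is an isomorphism for open $g$, i.e.\ provided $g^{\rm Tate}_{*}$ is fully faithful for open $g$.

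The main obstacle is therefore the fully-faithfulness of $g^{\rm Tate}_{*}$ for open $g$. One cannot simply restrict Proposition \ref{prop:open-immersion-fully-faithful-pushforward}, because $g^{\rm ProInd}_{*}$ does not in general preserve the subcategory of Tate-coherent sheaves when $g$ is open (for instance, the pushforward of a coherent sheaf across an open embedding need not itself be coherent, so $g^{\rm ProInd}_{*}$ need not land in $\TateCoh$). The plan is to verify the counit on generators of $\TateCoh$: on objects coming from $\IndCoh(T')$ the functor $g^{\rm Tate}_{*}$ is forced by its universal property to agree with $g^{\rm Ind}_{*}$, so the counit is an isomorphism by fully-faithfulness of $g^{\rm Ind}_{*}$; on objects coming from $\Pro(\Coh(T'))$, one argues via the universal property of the right adjoint together with Proposition \ref{prop:open-immersion-fully-faithful-pushforward} applied inside $\ProIndCoh$, and then concludes by the fact that $\TateCoh$ is generated under extensions by these two classes of objects.
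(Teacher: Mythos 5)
Your route is quite different from the paper's: the paper disposes of this in one line by reducing to the already-established base change for $\ProIndCoh$ (Corollary \ref{cor:arbitrary-base-change-ProIndCoh}(d)) together with Lemma \ref{lem:coconnective-pullback-preserve-TateCoh}, with no factorization of $f$. Your factorization $f=h\circ j$ and the treatment of the proper factor are fine: pasting of mates reduces the outer square to the two inner ones, and passing to right adjoints in the isomorphism of Lemma \ref{lem:right-Beck-Chevalley-for-proper-morphisms-TateCoh-shriek} (all four functors there being left adjoints) produces, by the double-mate identity, exactly the $2$-morphism (\ref{eq:base-change-2-morphism-proper-and-open-TateCoh}) for the square involving $h$. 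Your observation that $g^{\rm ProInd}_*$ does not preserve Tate objects for $g$ open is also correct and is precisely why one cannot naively restrict the $\ProIndCoh$ statement.

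The gap is in the open--open square. Full faithfulness of $g^{\rm Tate}_{*}$ for open $g$ is \emph{not} sufficient to make the mate $j^{!}\circ g^{\rm Tate}_{U,*}\to g^{\rm Tate}_{S,*}\circ (j')^{!}$ invertible. Unwinding, the mate is the composite
\[
j^{!}g^{\rm Tate}_{U,*}\xrightarrow{\ \eta_S\ } g^{\rm Tate}_{S,*}\,g_S^{!}\,j^{!}g^{\rm Tate}_{U,*}\simeq g^{\rm Tate}_{S,*}(j')^{!}\,g_U^{!}\,g^{\rm Tate}_{U,*}\xrightarrow{\ \epsilon_U\ } g^{\rm Tate}_{S,*}(j')^{!},
\]
and full faithfulness only kills the counit $\epsilon_U$; one must still show that the unit $\eta_S$ is an isomorphism on objects of the form $j^{!}g^{\rm Tate}_{U,*}(\sF)$, i.e.\ that $j^{!}$ carries the essential image of $g^{\rm Tate}_{U,*}$ into the essential image of $g^{\rm Tate}_{S,*}$. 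That is the actual content of open--open base change and it does not follow formally from the counit being invertible; it requires a separate argument, e.g.\ the localization/recollement argument used in the proof of Proposition \ref{prop:pushforward-TateCoh-compatible} (check the map after applying the conservative pair $g_S^{!}$ and $\imath^!_{Z}$ for $Z=S\setminus S'$, using that $\imath^!_{Z_U}\circ g^{\rm Tate}_{U,*}=0$). Separately, your closing sketch of full faithfulness is not a valid argument: full faithfulness is not a condition one can check on two classes of objects and then propagate through extensions, since it involves mapping spaces between objects of different classes; the paper's own proof of this fact (Proposition \ref{prop:pushforward-open-embedding-of-TateCoh-is-fully-faithful}) instead checks the counit $g^!\circ g^{\rm Tate}_*\to\id$ on a cofiltered-limit presentation, using that $g^{\rm Tate}_*$ commutes with limits and restricts to $g^{\rm Ind}_*$ on $\IndCoh$.
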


\begin{proof}
This follows from Corollary \ref{cor:arbitrary-base-change-ProIndCoh}(d) applied to the diagram (\ref{eq:pullback-diagram-open-base-change-TateCoh}) and Lemma \ref{lem:coconnective-pullback-preserve-TateCoh}.
\end{proof}

\paragraph{Open morphisms and Ind-coherent sheaves}

The following result gives a compatibility between the restriction of $f^{\rm Tate}_*$ to ind-coherent sheaves and $f^{\rm Ind}_*$.

\begin{lem}
\label{lem:compatibility-TateCoh-pushforward-with-IndCoh-pushforward-for-open-morphisms}
Let $f:S \ra T$ be an eventually coconnective morphism between schemes almost of finite type. The commutative diagram
\[
\begin{tikzcd}
\TateCoh(S) & \TateCoh(T) \ar[l,"f^!"] \\
\IndCoh(S) \ar[u,"\imath_{S}"'] & \IndCoh(T) \ar[l,"f^!"'] \ar[u,"\imath_S"]
\end{tikzcd}
\]
gives rise to a $2$-morphism
\begin{equation}
\label{eq:comparison-between-right-adjoint-and-inclusions}
\imath_{T} \circ f^{\rm Ind}_* \ra f^{\rm Tate}_*\circ \imath_{S}.   
\end{equation}
If $f$ is an open embedding, then (\ref{eq:comparison-between-right-adjoint-and-inclusions}) is an isomorphism.
\end{lem}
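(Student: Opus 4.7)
The plan is to construct the $2$-morphism via a Beck--Chevalley argument applied to a commutative square of left adjoints that intertwines with the inclusions $\imath_S, \imath_T$, and then, for an open embedding, to reduce the isomorphism claim to the full faithfulness of $f^{\rm Ind}_*$.

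First I would observe that the $*$-pullbacks commute with the inclusions: there is a canonical isomorphism
\[
\imath_S \circ f^{\rm Ind, *} \simeq f^{\rm Tate, *} \circ \imath_T,
\]
because, by Lemma \ref{lem:coconnective-pullback-preserve-TateCoh}, $f^{\rm Tate, *}$ is the restriction of the Pro-extension of $f^{\rm Ind, *}$, while the inclusions $\imath$ are simply the embeddings of constant Pro-objects. Since $f^{\rm Ind, *}$ is left adjoint to $f^{\rm Ind}_*$ (\S \ref{par:*-pullback-for-eventually-connective}) and $f^{\rm Tate, *}$ is left adjoint to $f^{\rm Tate}_*$ (\S \ref{par:correct-adjunction-TateCoh-coconnective-maps}), Beck--Chevalley applied to this commutative square of left adjoints yields the natural transformation between right adjoints
\[
\imath_T \circ f^{\rm Ind}_* \to f^{\rm Tate}_* \circ \imath_S.
\]
For an open embedding one has $f^! \simeq f^{\rm Ind, *}$ in $\IndCoh$ and $f^! \simeq f^{\rm Tate, *}$ in $\TateCoh$, so the $!$-pullback square displayed in the Lemma coincides with the $*$-pullback square used above, and \eqref{eq:comparison-between-right-adjoint-and-inclusions} is precisely this Beck--Chevalley transformation.

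Now suppose $f$ is an open embedding. By Proposition \ref{prop:IndCoh-adjunctions-for-schemes}(1) we have an adjunction $(f^!, f^{\rm Ind}_*)$ in $\IndCoh$, and combining the identification $f^! = f^{\rm Tate, *}$ with \S \ref{par:correct-adjunction-TateCoh-coconnective-maps} gives the analogous $(f^!, f^{\rm Tate}_*)$-adjunction in $\TateCoh$. Transposing \eqref{eq:comparison-between-right-adjoint-and-inclusions} along the latter yields a $2$-morphism $f^! \circ \imath_T \circ f^{\rm Ind}_* \to \imath_S$. Using the commutativity $f^! \circ \imath_T \simeq \imath_S \circ f^!$, the source rewrites as $\imath_S \circ f^! \circ f^{\rm Ind}_*$, and unwinding Beck--Chevalley identifies the transposed map with $\imath_S$ applied to the counit $\epsilon: f^! \circ f^{\rm Ind}_* \to \id_{\IndCoh(S)}$.

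The conclusion then follows because for an open immersion $f^{\rm Ind}_*$ is fully faithful (the ind-coherent analog of Proposition \ref{prop:open-immersion-fully-faithful-pushforward}, proved as \cite{GR-I}*{Chapter 4, Proposition 4.1.2} and already invoked there). Consequently $\epsilon$ is an isomorphism, and since $\imath_S$ is fully faithful, so is $\imath_S(\epsilon)$; by adjoint transposition, the original $2$-morphism is therefore an isomorphism. The subtlest step is verifying that the Beck--Chevalley transformation obtained from the $*$-pullback square coincides with the natural map implicitly attached to the displayed $!$-pullback square of the Lemma; for open embeddings this is immediate from $f^! = f^{\rm Ind, *}$ and $f^! = f^{\rm Tate, *}$, which is all we need for the isomorphism statement.
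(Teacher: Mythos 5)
Your proof is correct and takes essentially the same route as the paper: the $2$-morphism is the adjoint transpose (mate) of $\imath_S$ applied to the counit $f^!\circ f^{\rm Ind}_*\ra \id_{\IndCoh(S)}$, and the isomorphism for open embeddings follows from full faithfulness of $f^{\rm Ind}_*$ (\cite{GR-I}*{Chapter 4, Proposition 4.1.2}) together with full faithfulness of $\imath_S$. Your extra care in identifying the $!$-pullback square with the $*$-pullback square via $f^!\simeq f^{\rm Ind,*}$ and $f^!\simeq f^{\rm Tate,*}$ for open embeddings is a worthwhile clarification of a step the paper leaves implicit, but the argument is the same.
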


\begin{proof}
The $2$-morphism (\ref{eq:comparison-between-right-adjoint-and-inclusions}) is obtained by applying the $(f^!,f^{\rm Tate}_*)$-adjunction to
\begin{equation}
\label{eq:inclusion-of-counit-proof-of-adjoints-and-inclusions}
\imath_{S} \circ f^!\circ f^{\rm Ind}_* \ra \imath_{S}.    
\end{equation}
To check that (\ref{eq:inclusion-of-counit-proof-of-adjoints-and-inclusions}) is an isomorphism, we notice that $\imath_S$ is fully faithful and that the counit 
\[
f^!\circ f^{\rm Ind}_* \ra \id_{\IndCoh(S)}
\]
is an isomorphism, since $f$ is an open embedding (see \cite{GR-I}*{Chapter 4, Proposition 4.1.2}).
\end{proof}

\paragraph{Compatibility of Tate-coherent pushforwards}

Consider the Cartesian diagram
\[
\begin{tikzcd}
S' \ar[r,"f'"] \ar[d,"g_S"'] & T' \ar[d,"g_T"] \\
S \ar[r,"f"'] & T
\end{tikzcd}
\]
where $g_T$ is open and $f$ is proper. Consider the isomorphism
\[
g^{\rm Tate}_{S,*}\circ (f')^! \overset{\simeq}{\ra} f^!\circ g^{\rm Tate}_{T,*}
\]
given by the base change with respect to the $(g^!,g^{\rm Tate}_*)$-adjunction (see \ref{par:base-change-open-morphism-TateCoh}). By applying the $(f^{\rm Tate}_*,f^!)$-adjunction one obtains a map
\begin{equation}
    \label{eq:2-morphism-compatibiliy-of-pushforward}
    f^{\rm Tate}_*\circ g^{\rm Tate}_{S,*} \ra g^{\rm Tate}_{T,*}\circ (f')^{\rm Tate}_*.
\end{equation}

\begin{prop}
\label{prop:pushforward-TateCoh-compatible}
The morphism (\ref{eq:2-morphism-compatibiliy-of-pushforward}) is an isomorphism.
\end{prop}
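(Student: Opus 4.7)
The plan is to verify that (\ref{eq:2-morphism-compatibiliy-of-pushforward}) is an isomorphism by the Yoneda lemma, testing against arbitrary $\sG \in \TateCoh(T)$. Note first that $f'$ is proper as the base change of the proper morphism $f$, and $g_S$ is open as the base change of the open morphism $g_T$, so all four adjunctions $(f^{\rm Tate}_*, f^!)$ and $((f')^{\rm Tate}_*, (f')^!)$ from \S \ref{par:correct-adjunction-TateCoh-proper-maps}, and $(g^!_S, g^{\rm Tate}_{S,*})$ and $(g^!_T, g^{\rm Tate}_{T,*})$ from \S \ref{par:base-change-open-morphism-TateCoh}, are available.

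For fixed $\sF \in \TateCoh(S')$ and arbitrary $\sG \in \TateCoh(T)$, successive application of the $(f^{\rm Tate}_*, f^!)$-adjunction and the $(g^!_S, g^{\rm Tate}_{S,*})$-adjunction will yield
\[
\Hom_{\TateCoh(T)}(\sG, f^{\rm Tate}_* g^{\rm Tate}_{S,*} \sF) \simeq \Hom_{\TateCoh(S')}(g^!_S \circ f^!(\sG), \sF),
\]
while the $(g^!_T, g^{\rm Tate}_{T,*})$-adjunction and $((f')^{\rm Tate}_*, (f')^!)$-adjunction will give
\[
\Hom_{\TateCoh(T)}(\sG, g^{\rm Tate}_{T,*} (f')^{\rm Tate}_* \sF) \simeq \Hom_{\TateCoh(S')}((f')^! \circ g^!_T(\sG), \sF).
\]
Since the functor $\TateCoh^!$ of (\ref{eq:TateCoh-shriek-on-schemes}) respects composition, the Cartesian square supplies a canonical isomorphism $g^!_S \circ f^! \simeq (f')^! \circ g^!_T$, identifying the two Hom spaces naturally in $\sG$ and $\sF$.

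It then remains to check that the morphism (\ref{eq:2-morphism-compatibiliy-of-pushforward}) induces precisely this identification on Hom spaces. By its construction, it factors as
\[
f^{\rm Tate}_* g^{\rm Tate}_{S,*} \xrightarrow{\eta'} f^{\rm Tate}_* g^{\rm Tate}_{S,*} (f')^! (f')^{\rm Tate}_* \xrightarrow{\sim} f^{\rm Tate}_* f^! g^{\rm Tate}_{T,*} (f')^{\rm Tate}_* \xrightarrow{\epsilon} g^{\rm Tate}_{T,*}(f')^{\rm Tate}_*,
\]
where $\eta'$ is induced by the unit of $((f')^{\rm Tate}_*, (f')^!)$, $\epsilon$ by the counit of $(f^{\rm Tate}_*, f^!)$, and the middle isomorphism comes from Proposition \ref{prop:open-morphism-right-Beck-Chevalley-TateCoh}. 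Under the Hom-space manipulations above, the contributions of $\eta'$ and $\epsilon$ are absorbed by the triangle identities for the two proper adjunctions, and what remains is exactly the functoriality isomorphism $g^!_S \circ f^! \simeq (f')^! \circ g^!_T$ applied at $\sG$; Yoneda then concludes.

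The main obstacle is the last step: a diagram chase through the four adjunction units/counits to confirm that, after combining with the base change iso from Proposition \ref{prop:open-morphism-right-Beck-Chevalley-TateCoh}, they really recover the functoriality isomorphism of $!$-pullback rather than something differing by a nontrivial automorphism. This is formal but has to be done with some care, since the mate of an isomorphism across an arbitrary pair of adjunctions need not be an isomorphism—the fact that it is here is precisely the content of the proposition.
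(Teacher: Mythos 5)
Your Yoneda argument breaks down at the very first Hom-space identification, and the failure is not a technicality but the heart of the matter. For $f$ proper the paper establishes the adjunction $(f^{\rm Tate}_*, f^!)$ with $f^{\rm Tate}_*$ the \emph{left} adjoint, so the only available adjunction identity is $\Hom(f^{\rm Tate}_*A,B)\simeq \Hom(A,f^!B)$. Your claimed equivalence
\[
\Hom_{\TateCoh(T)}(\sG, f^{\rm Tate}_* g^{\rm Tate}_{S,*} \sF) \simeq \Hom_{\TateCoh(S')}(g^!_S \circ f^!(\sG), \sF)
\]
requires instead that $f^!$ be \emph{left} adjoint to $f^{\rm Tate}_*$, which is not established (and is false in general); the same problem recurs on the other side with $(f')^{\rm Tate}_*$. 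More structurally: in the composite $f^{\rm Tate}_*\circ g^{\rm Tate}_{S,*}$ the factor $g^{\rm Tate}_{S,*}$ is a right adjoint (to $g_S^!$, since $g_S$ is open) while $f^{\rm Tate}_*$ is a left adjoint (to $f^!$, since $f$ is proper), so the composite is neither left nor right adjoint to $g_S^!\circ f^!$, and there is no Yoneda reduction to the functoriality isomorphism $g_S^!\circ f^!\simeq (f')^!\circ g_T^!$. You correctly observe in your last paragraph that a mate across mixed adjunctions need not be invertible, but your proof then implicitly assumes the very representability that would make it automatic.

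The paper's proof takes a genuinely different, dévissage route that you would need to adopt (or replace with something equally substantive). It uses the open/closed decomposition of $T$ by $T'$ and its closed complement $Z_{T'}$: the pair of functors $(g^!_T,\ \imath^{\rm Tate}_{Z_{T'},*}\circ\imath^!_{Z_{T'}})$ is conservative on $\TateCoh(T)$, so it suffices to check the map after applying each. On the open locus one uses the proper base change isomorphism (\ref{eq:base-change-2-morphism-proper-maps-TateCoh-schemes}) together with the full faithfulness of $g^{\rm Tate}_*$ for open embeddings (Proposition \ref{prop:pushforward-open-embedding-of-TateCoh-is-fully-faithful}), which gives $g^!\circ g^{\rm Tate}_*\simeq \id$; on the closed complement one shows both sides vanish using the localization fiber sequence (\ref{eq:exact-sequence-of-localization-for-TateCoh}) and its analogue over $S$. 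None of these inputs appear in your argument, and without some version of them the statement does not follow formally from the adjunctions alone.
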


Before giving the proof of Proposition \ref{prop:pushforward-TateCoh-compatible} we need the following result.

\begin{prop}
\label{prop:pushforward-open-embedding-of-TateCoh-is-fully-faithful}
For $g: S \ra T$ an open embedding the functor
\[
g^{\rm Tate}_*: \TateCoh(S) \ra \TateCoh(T)
\]
is fully faithful.
\end{prop}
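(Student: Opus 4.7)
The plan is to prove full faithfulness of $g^{\rm Tate}_{*}$ by showing that the counit $\epsilon : g^{!} \circ g^{\rm Tate}_{*} \to \id_{\TateCoh(S)}$ of the adjunction from \S \ref{par:correct-adjunction-TateCoh-coconnective-maps} is a natural isomorphism. Since both functors are exact between stable $\infty$-categories, the full subcategory of $\TateCoh(S)$ on which $\epsilon$ is an isomorphism is closed under fibers and cofibers; by the characterization of $\TateCoh(S)$ as the smallest stable subcategory of $\Pro(\IndCoh(S))$ generated by $\IndCoh(S)$ and $\Pro(\Coh(S))$, it is enough to verify the claim on these two classes.

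On $\IndCoh(S)$ the verification is immediate: the isomorphism part of Lemma \ref{lem:compatibility-TateCoh-pushforward-with-IndCoh-pushforward-for-open-morphisms} for open embeddings identifies $g^{\rm Tate}_{*} \circ \imath_{S}$ with $\imath_{T} \circ g^{\rm Ind}_{*}$, so for $\sI \in \IndCoh(S)$ one computes $g^{!} g^{\rm Tate}_{*} \sI \simeq g^{\rm Ind, *} g^{\rm Ind}_{*} \sI \simeq \sI$, where the second isomorphism is the ind-coherent analogue of Proposition \ref{prop:open-immersion-fully-faithful-pushforward} given by \cite{GR-I}*{Chapter 4, Proposition 4.1.2}.

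For the pro-coherent generators, write $\sP \simeq \lim_{I} \sP_{i}$ with $\sP_{i} \in \Coh(S)$. Since $g^{\rm Tate}_{*}$ is a right adjoint it commutes with limits, so $g^{\rm Tate}_{*} \sP \simeq \lim_{I} g^{\rm Ind}_{*} \sP_{i}$ in $\TateCoh(T)$. The two adjunctions $(g^{!}, g^{\rm Tate}_{*})$ in $\TateCoh$ and $(g^{\rm ProInd, *}, g^{\rm ProInd}_{*})$ in $\ProIndCoh$ together supply a canonical comparison map $\alpha_{\sP} : \imath_{T} g^{\rm Tate}_{*} \sP \to g^{\rm ProInd}_{*} \imath_{S} \sP$, whose target is the Pro-limit of $\{g^{\rm Ind}_{*} \sP_{i}\}$ in $\ProIndCoh(T)$ by Remark \ref{rem:right-adjoint-is-Pro-extension}. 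Applying $g^{!} = g^{\rm ProInd, *}$ and using Proposition \ref{prop:open-immersion-fully-faithful-pushforward} to get $g^{\rm ProInd, *} g^{\rm ProInd}_{*} \imath_{S} \sP \simeq \imath_{S} \sP$, the claim reduces to showing that $g^{\rm ProInd, *}(\alpha_{\sP})$ is an isomorphism in $\ProIndCoh(S)$.

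The main obstacle is this final step, which is where the delicate interaction between the Tate and Pro-Ind structures becomes relevant. The expectation is that $g^{\rm ProInd, *}$ annihilates the cofiber of $\alpha_{\sP}$: this cofiber measures the discrepancy between the $\TateCoh$-limit and the $\ProIndCoh$-limit of $\{g^{\rm Ind}_{*} \sP_{i}\}$, and should belong to the full subcategory of $\ProIndCoh(T)$ supported on the closed complement $T \setminus S$, which $g^{\rm ProInd, *}$ kills since it is left adjoint to the fully faithful inclusion $g^{\rm ProInd}_{*}$. Carrying out this verification, using the $\ProIndCoh$-adjunction combined with the defining universal property of $g^{\rm Tate}_{*}$ to Hom-test both sides of $\alpha_{\sP}$ against objects of the form $g^{\rm ProInd}_{*}(Y)$ with $Y \in \ProIndCoh(S)$, completes the argument.
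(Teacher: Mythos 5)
Your reduction to showing that the counit $g^! \circ g^{\rm Tate}_* \to \id_{\TateCoh(S)}$ is an isomorphism, and your treatment of the generators coming from $\IndCoh(S)$, are fine. The problem is the second class of generators: the argument for $\sP \in \Pro(\Coh(S))$ is never actually completed. You introduce the comparison map $\alpha_{\sP} : \imath_T g^{\rm Tate}_*\sP \to g^{\rm ProInd}_*\imath_S\sP$ and correctly sense that its two sides genuinely differ -- pushforward along an open embedding does not preserve coherence, so $\lim_I g^{\rm Ind}_*\sP_i$ computed in $\ProIndCoh(T)$ need not be a Tate object, which is exactly why $g^{\rm Tate}_*$ had to be defined as a right adjoint to $g^!$ rather than as a restriction of $g^{\rm ProInd}_*$. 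But you then only assert that the cofiber of $\alpha_\sP$ ``should'' be supported on $T\setminus S$ and that ``carrying out this verification completes the argument.'' That verification is the entire content of the step and is not obvious: making it precise would require something like the localization fiber sequence $\TateCoh(T)_{Z} \to \TateCoh(T) \to \TateCoh(S)$ that the paper deploys in the proof of Proposition \ref{prop:pushforward-TateCoh-compatible}, and nothing in your argument supplies it. So the proof has a genuine gap exactly where you flag ``the main obstacle.''

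The gap is also avoidable, because the paper never splits $\TateCoh(S)$ into the two generating classes. By the very definition of Tate-coherent sheaves, every $\sG \in \TateCoh(S)$ is a cofiltered limit $\lim_I \sG_i$ with $\sG_i \in \IndCoh(S)$; since $g^{\rm Tate}_*$ is a right adjoint it commutes with this limit, so the question reduces in one step to the restriction $\left.g^{\rm Tate}_*\right|_{\IndCoh(S)}$, which is identified with $g^{\rm Ind}_*$ by uniqueness of adjoints (Proposition \ref{prop:IndCoh-adjunctions-for-schemes}), whereupon \cite{GR-I}*{Chapter 4, Proposition 4.1.2} finishes the proof. Your $\IndCoh(S)$ paragraph is essentially this argument already; the separate detour through $\Pro(\Coh(S))$ and $\ProIndCoh(T)$ is what creates the unresolved step.
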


\begin{proof}
Let $\sG \in \TateCoh(S)$, then $\sG \simeq \lim_{I}\sG_i$ where $I$ is cofiltered and $\sG_i \in \IndCoh(S)$, since the functor $g^{\rm Tate}_*$ commutes with limits, it is enough to check that the restriction $\left.g^{\rm Tate}_*\right|_{\IndCoh(S)}$ is fully faithful. Note, that since the restriction of $g^!: \TateCoh(T) \ra \TateCoh(S)$ to $\IndCoh(T)$ is equivalent to $g^!$ from \S \ref{subsubsec:ind-coherent-sheaves-on-schemes-almost-of-finite-type}, by Proposition \ref{prop:IndCoh-adjunctions-for-schemes} and the uniqueness of adjoints one has an equivalence
\[
\left.g^{\rm Tate}_*\right|_{\IndCoh(S)} \simeq g^{\rm Ind}_*.
\]
Thus, the result follows from \cite{GR-I}*{Chapter 4, Proposition 4.1.2}.
\end{proof}

\begin{proof}[Proof of Proposition \ref{prop:pushforward-TateCoh-compatible}]

Given $g_{T}:T' \ra T$ an open embedding, one obtains a fiber sequence of stable $\infty$-categories
\begin{equation}
    \label{eq:exact-sequence-of-localization-for-TateCoh}
    \TateCoh(T')_{Z_{T'}} \ra \TateCoh(T) \overset{g^!_{T}}{\ra} \TateCoh(T')
\end{equation}
where $Z_{T'} := T \backslash T'$ and $\TateCoh(T')_{Z_{T'}}$ denotes the subcategory of $\TateCoh(T)$ of Tate-coherent sheaves supported on $Z_{T'}$. Notice that the functor
\[
(g^!_T, \imath^{\rm Tate}_{Z_{T'},*}\circ \imath^!_{Z_{T'}}): \TateCoh(T) \ra \TateCoh(T') \times \TateCoh(T')_{Z_{T'}}
\]
is conservative. Thus, for any $\sG \in \TateCoh(S')$ it is enouch to check that 
\begin{equation}
    \label{eq:2-morphism-pushforward-on-open}
    g^!_T \circ f^{\rm Tate}_*\circ g^{\rm Tate}_{S,*}(\sG) \ra g^{!}_T\circ g^{\rm Tate}_{T,*}\circ (f')^{\rm Tate}_*(\sG)    
\end{equation}
and
\begin{equation}
    \label{eq:2-morphism-pushforward-on-closed}
    \imath^{\rm Tate}_{Z_{T'}} \circ \imath^!_{Z_{T'}} \circ f^{\rm Tate}_*\circ g^{\rm Tate}_{S,*}(\sG) \ra  \imath^{\rm Tate}_{Z_{T'}} \circ \imath^!_{Z_{T'}} \circ g^{\rm Tate}_{T,*}\circ (f')^{\rm Tate}_*(\sG)   
\end{equation}
are isomorphisms.

We start with (\ref{eq:2-morphism-pushforward-on-open}), By the base change isomorphism (\ref{eq:base-change-2-morphism-proper-maps-TateCoh-schemes}) the map (\ref{eq:2-morphism-pushforward-on-open}) is equivalent to
\begin{equation}
\label{eq:2-morphism-pushforward-on-open-base-changed}
(f')^{\rm Tate}_* \circ g^!_S \circ g^{\rm Tate}_{S,*}(\sG) \ra g^{!}_T\circ g^{\rm Tate}_{T,*}\circ (f')^{\rm Tate}_*(\sG).       
\end{equation}
Now, Proposition \ref{prop:pushforward-open-embedding-of-TateCoh-is-fully-faithful} implies that the canonical map
\[
g^! \circ g^{\rm Tate} \ra \id
\]
is an isomorphism for any open embedding $g$, thus (\ref{eq:2-morphism-pushforward-on-open-base-changed}) is an isomorphism.

For (\ref{eq:2-morphism-pushforward-on-closed}), since the functor $\imath^{\rm Tate}_{Z_{T'},*}$ is conservative, one notices that it is enough to check that
\begin{equation}
\label{eq:2-morphism-pushforward-on-closed-simplified}
    \imath^!_{Z_{T'}} \circ f^{\rm Tate}_*\circ g^{\rm Tate}_{S,*}(\sG) \ra \imath^!_{Z_{T'}} \circ g^{\rm Tate}_{T,*}\circ (f')^{\rm Tate}_*(\sG)
\end{equation}
is an isomorphism. The righthand side of (\ref{eq:2-morphism-pushforward-on-closed-simplified}) vanishes by the exactness of (\ref{eq:exact-sequence-of-localization-for-TateCoh}). Whereas the lefthand side of (\ref{eq:2-morphism-pushforward-on-closed-simplified}) is equivalent to
\[
f^{\rm Tate}_{Z,*} \circ \imath^!_{Z_{S'}} \circ g^{\rm Tate}_{S,*}(\sG),
\]
where the morphisms $f_{Z}$ and $\imath_{Z_{S'}}$ are determined by the Cartesian diagram
\[
\begin{tikzcd}
Z_{S'} \ar[r,"\imath_{Z_{S'}}"] \ar[d,"f_{Z}"] & S \ar[d,"f"] \\
Z_{T'} \ar[r,"\imath_{Z_{T'}}"] & T
\end{tikzcd}
\]

This again vanishes, by a similar exact sequence as (\ref{eq:exact-sequence-of-localization-for-TateCoh}) but for $S$ where we notice that $Z_{S'}$ is the complement of $S'$ in $S$.

This finishes the proof of the proposition.
\end{proof}

\paragraph{Tate-coherent sheaves on the correspondence category}

Consider the subcategory
\[
\Corraftpa \hra \Corraftppa
\]
and let
\[
\TateCoh^!_{\Corraftpa} := \left.\TateCoh^!_{\Corraftppa}\right|_{\Corraftpa}.
\]

\begin{thm}
\label{thm:TateCoh-correspondence-functor-Sch}
There exists a unique extension of the functor $\TateCoh^!_{\Corraftpa}$ to a functor
\begin{equation}
    \label{eq:TateCoh-correspondences-schemes-aft-open-all-all}
    \TateCoh_{\Corraftoaa}: \Corraftoaa \ra \DGct.
\end{equation}
\end{thm}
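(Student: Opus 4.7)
The plan is to follow the strategy of Theorem \ref{thm:ProIndCoh-from-correspondences}, adapted to the contravariant pullback functor $\TateCoh^!$, using the extension machinery of \cite{GR-I}*{Chapter 7}. The starting point is the functor $\TateCoh^!_{\Corraftpa}$ already extended via the $(f^{\rm Tate}_*,f^!)$-adjunction along proper vertical morphisms; the task is to further extend it by incorporating the $(g^!,g^{\rm Tate}_*)$-adjunction along open morphisms as $2$-morphism data, and to allow arbitrary vertical morphisms via factorization.

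First, I would verify that $\TateCoh^!: (\Schaft)^{\rm op} \ra \DGc$ satisfies the right Beck--Chevalley condition with respect to open morphisms. Lemma \ref{lem:coconnective-pullback-preserve-TateCoh} together with \S \ref{par:correct-adjunction-TateCoh-coconnective-maps} supplies the right adjoint $g^{\rm Tate}_*$ for each open $g$, and Proposition \ref{prop:open-morphism-right-Beck-Chevalley-TateCoh} supplies the base change isomorphism for Cartesian squares with an open leg. An application of Theorem \ref{thm:left-and-right-Beck-Chevalley-extension-result} then yields an extension of $\TateCoh^!$ to a functor on the correspondence category $\mbox{Corr}(\Schaft)^{\rm open}_{\rm open; all}$, which encodes $g^{\rm Tate}_*$ as the pushforward along open $2$-morphisms.

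Next, I would combine this extension with $\TateCoh^!_{\Corraftpa}$ and extend vertical morphisms from proper to all by invoking \cite{GR-I}*{Chapter 7, Theorem 5.2.4}. The conditions required for that theorem are parallel to Steps 1--3 of the proof of Theorem \ref{thm:ProIndCoh-from-correspondences}: stability of open and proper morphisms under composition and pullback is standard; the contractibility of the space of factorizations of an arbitrary morphism as $j$ (open) followed by $g$ (proper) is Proposition \ref{prop:factorization-category-is-contractible}; and the compatibility between open and proper pushforwards under base change is Proposition \ref{prop:pushforward-TateCoh-compatible}. Uniqueness of the extension is then a direct consequence of the universal property in \emph{loc.\ cit.}

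The main obstacle is the compatibility statement Proposition \ref{prop:pushforward-TateCoh-compatible}, which reconciles two different operations built from the same $!$-pullback, namely a left adjoint $f^{\rm Tate}_*$ along proper morphisms and a right adjoint $g^{\rm Tate}_*$ along open morphisms, via base change. Since that proposition has already been established in the preceding subsections, the remaining verification is formal manipulation of the Gaitsgory--Rozenblyum machinery, entirely parallel to the $\ProIndCoh$ case handled in Theorem \ref{thm:ProIndCoh-from-correspondences}.
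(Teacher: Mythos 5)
Your proposal is correct and follows essentially the same route as the paper: the right Beck--Chevalley condition for $\TateCoh^!$ along open morphisms (Proposition \ref{prop:open-morphism-right-Beck-Chevalley-TateCoh}), the contractibility of factorizations (Proposition \ref{prop:factorization-category-is-contractible}), and the compatibility of pushforwards (Proposition \ref{prop:pushforward-TateCoh-compatible}) are exactly the inputs the paper feeds into the extension machinery. The only nuance is that \cite{GR-I}*{Chapter 7, Theorem 5.2.4} as stated treats co-admissible morphisms inside the \emph{horizontal} class with the \emph{left} Beck--Chevalley condition, so the paper instead invokes its dual variant, Theorem \ref{thm:extension-of-correspondence-via-factorization}, with $adm = \mbox{open}$ and $co\mbox{-}adm = \mbox{proper}$ inside the vertical class.
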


\begin{proof}
We will deduce this from applying Theorem \ref{thm:extension-of-correspondence-via-factorization} to the category $\Corr(\Schaft)$ with the following classes of morphisms: $horiz = vert = \mbox{ all}$, $adm = \mbox{ open}$ and $co-adm = \mbox{ proper}$.

The conditions on the category (see \ref{subsubsec:conditions-on-category}) are satisfied exactly by the same reasons as in \cite{GR-I}*{\S 2.1.5}. 

To check the conditions on the functor (see \ref{subsubsec:conditions-on-functor}), let
\[
\TateCoh^! := \left.\TateCoh^!_{\Corraftpa}\right|_{(\Schaft)^{\rm op}},
\]
notice this is canonically isomorphic to $\TateCoh^!$ as denoted in (\ref{eq:TateCoh-shriek-on-schemes}) thus we keep the same notation.

First, we claim that $\TateCoh^!$ satisfies the right Beck--Chevalley condition with respect to open morphisms, this is the content of Proposition \ref{prop:open-morphism-right-Beck-Chevalley-TateCoh}.

Finally, the condition from paragraph \ref{par:factorization-compatibility} is Proposition \ref{prop:pushforward-TateCoh-compatible}. 

Thus, Theorem \ref{thm:extension-of-correspondence-via-factorization} implies the existence of the functor (\ref{eq:TateCoh-correspondences-schemes-aft-open-all-all}).
\end{proof}

\paragraph{Proper adjunction}

In this paragraph we explain that one can obtain a Tate-coherent formalism from (\ref{eq:TateCoh-correspondences-schemes-aft-open-all-all}) for the $2$-category of correspondences of schemes almost of finite type whose admissible morphisms are proper maps.

Let
\[
\TateCoh_{\Corraftao} := \left.\TateCoh_{\Corraftoaa}\right|_{\Corraftao}.
\]

Consider furthermore the restriction
\[
\TateCoh_{\star} := \left.\TateCoh_{\Corraftao}\right|_{(\Schaft)_{vert}}.
\]

By Proposition \ref{prop:base-change-eventually-coconnective} the functor $\TateCoh_{\star}$ satisfies the left Beck--Chevalley with respect to proper maps. Thus, we can apply \cite{GR-I}*{Chapter 7, Theorem 5.2.4}\footnote{This is a somewhat dual result to \ref{thm:extension-of-correspondence-via-factorization} where one considers co-admissible morphisms inside horizotal morphisms instead of vertical morphisms and left instead of right Beck--Chevalley condition.} to obtain the classes of maps
\[
horiz = vert = all, \;\; adm = proper, \;\; \mbox{and} \;\; co-adm = open.
\]

\begin{cor}
\label{cor:TateCoh-correspondences-schemes-aft-proper-all-all}
There exists a unique functor
\begin{equation}
\label{eq:TateCoh-correspondences-schemes-aft-proper-all-all}
    \TateCoh_{\Corraftpaa}: \Corraftpaa \ra \DGct    
\end{equation}
with an identification 
\[
\left.\TateCoh_{\Corraftpaa}\right|_{(\Schaft)_{vert}} \simeq \TateCoh_{\star}.
\]
\end{cor}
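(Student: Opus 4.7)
The plan is to apply the dual version of Theorem \ref{thm:extension-of-correspondence-via-factorization} (i.e.\ \cite{GR-I}*{Chapter 7, Theorem 5.2.4}) to the functor $\TateCoh_{\star}$ obtained by restricting the already-constructed $\TateCoh_{\Corraftoaa}$. In that setup, the classes of morphisms are swapped in role: we take $horiz = vert = \mbox{all}$, $adm = \mbox{proper}$ and $co\mbox{-}adm = \mbox{open}$. The categorical conditions on $\Schaft$ (stability of proper and open morphisms under base change, the factorization property, etc.) are exactly the same as those used in the proof of Theorem \ref{thm:TateCoh-correspondence-functor-Sch}, so the only real content is to verify the Beck--Chevalley hypothesis on the functor side.

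First I would carefully set up $\TateCoh_{\star}$ as the composition of the restriction $\left.\TateCoh_{\Corraftoaa}\right|_{\Corraftao}$ with the inclusion $(\Schaft)_{vert} \hra \Corraftao$ that sends a morphism $f:S \ra T$ to the correspondence $(S \overset{\id}{\ra} S \overset{f}{\ra} T)$. Under this identification, the value of $\TateCoh_{\star}$ on $f$ is the functor $f^{\rm Tate}_*$ constructed in \S \ref{subsubsec:TateCoh-on-correspondences-of-schemes}; in particular, when $f$ is proper, it agrees with the pushforward from \S \ref{par:proper-!-pullback-preserves-TateCoh-schemes}, and when $f$ is open, it agrees with the right adjoint $g^{\rm Tate}_*$ of $g^!$ from \S \ref{par:base-change-open-morphism-TateCoh}.

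The central step is to verify the left Beck--Chevalley condition with respect to proper morphisms: for a Cartesian square
\[
\begin{tikzcd}
S' \ar[r,"g_S"] \ar[d,"f'"'] & S \ar[d,"f"] \\
T' \ar[r,"g_T"] & T
\end{tikzcd}
\]
with $f$ proper, the natural map $(f')^{\rm Tate}_* \circ g^!_S \ra g^!_T \circ f^{\rm Tate}_*$ arising by passage to left adjoints from the base change isomorphism $g^!_S \circ f^! \simeq (f')^! \circ g^!_T$ is an equivalence. This is precisely the content of Lemma \ref{lem:right-Beck-Chevalley-for-proper-morphisms-TateCoh-shriek} (read in the appropriate direction, using that $f^{\rm Tate}_*$ is left adjoint to $f^!$ for $f$ proper). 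The factorization compatibility needed for \cite{GR-I}*{Chapter 7, Theorem 5.2.4} amounts to checking that for a Cartesian square with $g_T$ open and $f$ proper, the natural $2$-morphism $f^{\rm Tate}_*\circ g^{\rm Tate}_{S,*} \ra g^{\rm Tate}_{T,*}\circ (f')^{\rm Tate}_*$ is an isomorphism, which is Proposition \ref{prop:pushforward-TateCoh-compatible}.

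The hard part will be getting the dualities right: one must check that when \cite{GR-I}*{Chapter 7, Theorem 5.2.4} is applied it produces a functor whose \emph{vertical} $1$-morphisms give the pushforward $f^{\rm Tate}_*$ rather than the pullback, and whose $2$-morphisms on proper maps give the unit of the $(f^{\rm Tate}_*,f^!)$-adjunction. Once this bookkeeping is in place, uniqueness of the extension is automatic from the universal property in the cited theorem, and the identification $\left.\TateCoh_{\Corraftpaa}\right|_{(\Schaft)_{vert}} \simeq \TateCoh_{\star}$ is built into the construction.
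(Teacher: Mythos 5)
Your proposal follows the same route as the paper: restrict $\TateCoh_{\Corraftoaa}$ to obtain $\TateCoh_{\star}$ on $(\Schaft)_{vert}$, verify the left Beck--Chevalley condition with respect to proper maps, and invoke \cite{GR-I}*{Chapter 7, Theorem 5.2.4} with admissible class the proper maps and co-admissible class the open maps. If anything, your citations of Lemma \ref{lem:right-Beck-Chevalley-for-proper-morphisms-TateCoh-shriek} and Proposition \ref{prop:pushforward-TateCoh-compatible} for the hypotheses are more precise than the paper's, which points to Proposition \ref{prop:base-change-eventually-coconnective} (a statement about $\ProIndCoh$ and eventually coconnective pullbacks) for the Beck--Chevalley input.
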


\subsubsection{Duality and t-structure on Tate-coherent sheaves}

\paragraph{Dualizability}

We recall the following general result about Tate objects (see \cite{Hennion-Tate}*{Lemma 2.10}).

\begin{lem}
\label{lem:duality-of-Tate-categories}
For $\sC$ a stable presentable $\infty$-category with an anti-equivalence
\[
\bD_{\sC}: \sC \overset{\simeq}{\ra} \sC^{\rm op}
\]
there exists a canonical anti-equivalence
\begin{equation}
\label{eq:duality-Tate-of-sC}
\bD_{\Tate(\sC)}: \Tate(\sC) \overset{\simeq}{\ra} \Tate(\sC)^{\rm op}.    
\end{equation}
\end{lem}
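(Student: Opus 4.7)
The plan is to build $\bD_{\Tate(\sC)}$ by propagating the given duality on $\sC$ through two layers of Ind/Pro-completion, using the fact that $\Tate(\sC)$ is self-dual in its presentation as an extension of an Ind-object by a Pro-object.

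First, I would Pro-extend $\bD_\sC$ to obtain a functor $\Pro(\bD_\sC): \Pro(\sC) \to \Pro(\sC^{\rm op})$. Using the standard identification $\Pro(\sC^{\rm op}) \simeq \Ind(\sC)^{\rm op}$, this yields an anti-equivalence
\[
\bD^{(1)}: \Pro(\sC) \overset{\simeq}{\longrightarrow} \Ind(\sC)^{\rm op},
\]
whose inverse is produced symmetrically by Ind-extending $\bD_\sC$. Informally $\bD^{(1)}$ sends $\lim_I X_i$ to $\colim_{I^{\rm op}} \bD_\sC(X_i)$.

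Second, I would iterate the construction. Either Pro-extending $\bD^{(1)}$ or equivalently Ind-extending its inverse produces an anti-equivalence
\[
\widetilde{\bD}: \Pro(\Ind(\sC)) \overset{\simeq}{\longrightarrow} \Ind(\Pro(\sC))^{\rm op},
\]
which informally sends $\lim_J \colim_I X_{i,j}$ to $\colim_{J^{\rm op}} \lim_{I^{\rm op}} \bD_\sC(X_{i,j})$. One needs to check that the two ways to build this map (Pro-extend then Ind-extend, vs.\ Ind-extend then Pro-extend) agree up to canonical equivalence; this is formal from the universal properties of Pro- and Ind-completion.

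Third, I would invoke the equivalent descriptions of $\Tate(\sC)$ as the smallest stable subcategory of $\Pro(\Ind(\sC))$ (respectively $\Ind(\Pro(\sC))$) containing both $\Ind(\sC)$ and $\Pro(\sC)$, with objects representable as extensions of an object of $\Ind(\sC)$ by one of $\Pro(\sC)$. Since $\widetilde{\bD}$ interchanges $\Ind(\sC)$ and $\Pro(\sC)$ (by step one) and preserves fiber sequences (as an anti-equivalence of stable categories), it sends the extension presentation of a Tate object to the extension presentation of a Tate object in the opposite direction. Therefore $\widetilde{\bD}$ restricts to an anti-equivalence $\bD_{\Tate(\sC)}: \Tate(\sC) \to \Tate(\sC)^{\rm op}$, where we use the identification of $\Tate(\sC)$ as sitting inside both $\Pro(\Ind(\sC))$ and $\Ind(\Pro(\sC))$.

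The main obstacle is justifying this last matching of the two presentations under $\widetilde{\bD}$: one must show that the ``admissibility'' or ``lattice'' condition cutting out $\Tate(\sC)$ inside the bigger category is preserved under $\widetilde{\bD}$. This follows from the fact that the condition is formulated in terms of fiber sequences with one term in $\Ind(\sC)$ and the other in $\Pro(\sC)$, which is manifestly self-dual under a functor that swaps these two subcategories.
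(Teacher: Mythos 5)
Your proof is correct and follows essentially the same route as the source: the paper does not prove this lemma itself but cites \cite{Hennion-Tate}*{Lemma 2.10}, and its own closely related Corollary \ref{cor:duality-and-Tate-construction} in the appendix is proved by exactly your mechanism, namely that dualization/opposite-taking exchanges $\Ind$ and $\Pro$ via $\Pro(\sC^{\rm op}) \simeq \Ind(\sC)^{\rm op}$ and therefore preserves the extension condition cutting out $\Tate(\sC)$. The one input worth flagging explicitly is the canonical equivalence between the two presentations of elementary Tate objects inside $\Pro(\Ind(\sC))$ and inside $\Ind(\Pro(\sC))$ (the Ind-Pro versus Pro-Ind flexibility recalled in \S\ref{subsubsec:digression-pro-ind-presentation}), which is a genuine, though standard, result from \cite{Hennion-Tate} rather than a formality.
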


\begin{cor}
For $S$ a scheme almost of finite type the restriction of (\ref{eq:dualization-ProIndCoh}) to the category $\TateCoh(S)$ factors through $\Tate(Coh(S))^{\rm op}$, we denote the resulting functor
\begin{equation}
    \label{eq:duality-functor-TateCoh}
    \bD^{\rm Tate}_{S}: \Tate(\Coh(S)) \ra \Tate(\Coh(S))^{\rm op}.
\end{equation}
Moreover, the functor (\ref{eq:duality-functor-TateCoh}) is obtained by extending the usual Serre duality functor
\[
\bD^{\rm Serre}_{S}: \Coh(S) \overset{\simeq}{\ra} \Coh(S)^{\rm op}
\]
via Lemma \ref{lem:duality-of-Tate-categories}.
\end{cor}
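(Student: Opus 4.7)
The strategy is to deduce both claims from the behaviour of $\bD^{\rm ProInd}_S$ on the compact objects $\Coh(S) \subset \IndCoh(S) \subset \ProIndCoh(S)$. I would first identify the restriction $\bD^{\rm ProInd}_S|_{\Coh(S)}$ with the classical Serre duality $\bD^{\rm Serre}_S$. By Theorem \ref{thm:duality-for-ProIndCoh-sheaves}, the duality on $\ProIndCoh(S)$ arises from the symmetric monoidal structure produced in Theorem \ref{thm:ProIndCoh-for-correspondences-is-symmetric-monoidal}; by Proposition \ref{prop:ProIndCoh-functor-is-symmetric-monoidal} this symmetric monoidal structure extends that of $\IndCoh(S)$. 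Since the induced self-duality of $\IndCoh(S)$ restricts to Serre duality on its compact generators $\Coh(S)$, it follows that $\bD^{\rm ProInd}_S|_{\Coh(S)} \simeq \bD^{\rm Serre}_S$.

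Next, I would exploit the fact that $\bD^{\rm ProInd}_S$ is compatible with the ind- and pro-constructions through which $\TateCoh(S)$ is built. The duality on $\ProIndCoh(S) = \Pro(\IndCoh(S))$ is obtained by pro-extending $\bD^{\rm IndCoh}_S$, and the latter is itself the ind-extension of $\bD^{\rm Serre}_S$ from $\Coh(S)$. Given a Tate-coherent sheaf $\sF \simeq \lim_{I^{\rm op}} \sF_i$ with $\sF_i \in \IndCoh(S)$ and $\Fib(\sF_i \to \sF_j) \in \Coh(S)$ for all $i \to j$, unravelling these extensions shows that $\bD^{\rm ProInd}_S(\sF)$ is presented by a system whose successive (co)fibers are $\bD^{\rm Serre}_S$ applied to the original coherent fibers, and are hence coherent again. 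This shows the image lies in $\TateCoh(S)^{\rm op}$.

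The identification with the Tate-extension $\bD^{\rm Tate}_S$ from Lemma \ref{lem:duality-of-Tate-categories} then follows by comparing constructions: that lemma produces $\bD^{\rm Tate}_S$ by the same ind- and pro-extension procedure applied to $\bD^{\rm Serre}_S$, so the two functors agree. The main technical subtlety will be reconciling the categorical dual $\ProIndCoh(S)^\vee$ appearing in Theorem \ref{thm:duality-for-ProIndCoh-sheaves} with the opposite category $\TateCoh(S)^{\rm op}$ in the statement; this is handled by passing through compact objects, where the two notions of dual are interchangeable via Serre duality $\Coh(S) \simeq \Coh(S)^{\rm op}$.
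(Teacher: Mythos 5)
Your proposal is correct and follows essentially the same route as the paper: reduce to the fact that the duality restricted to the compact objects $\Coh(S)$ is Serre duality, and then use the formal interaction of dualization with the Pro/Ind/Tate constructions to see that the image of $\TateCoh(S)$ lands in $\Tate(\Coh(S))^{\rm op}$. The only cosmetic difference is that where you unravel presentations by hand to check coherence of the fibers, the paper invokes its appendix result (Corollary \ref{cor:duality-and-Tate-construction}) that dualization sends $\Tate(\sC)\subset\Pro(\Ind(\sC))$ to $\Tate(\sC^{\vee})^{\rm op}$, and both treatments defer the final identification with Lemma \ref{lem:duality-of-Tate-categories} to a tracing of definitions.
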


\begin{proof}
We notice that by Corollary \ref{cor:duality-and-Tate-construction} one has that the duality functor $(-)^{\vee}$ sends the subcategory $\Tate(\Coh(S)) \subset \ProIndCoh(S)$ to
\[
\Tate(\Coh(S)^{\vee})^{\rm op}.
\]
However, by \cite{GR-I}*{Chapter 5, \S 4.2.10} the restriction of the duality functor to $\Coh(S)$ sends it to $\Coh(S)^{\rm op}$ via Serre duality. This gives that $(-)^{\vee}: \TateCoh(S)$ maps to
\[
\Tate(\Coh(S))^{\rm op}.
\]
The check that this agrees with the abstract definition of Lemma \ref{lem:duality-of-Tate-categories} is given by tracing the definitions (see also \cite{BGHW}*{Proposition 3.1 and Proposition 3.5} for the analogous statement for ordinary exact categories).
\end{proof}

\paragraph{t-structure on $\TateCoh$}
\label{par:t-structure-on-TateCoh}

For any scheme $S$ almost of finite type, we claim that the subcategory
\[
\TateCoh(S)^{\leq 0} := \TateCoh(S) \cap \ProIndCoh(S)^{\leq 0}
\]
and the right orthogonal to $\TateCoh(S)^{\leq -1} := \TateCoh(S)^{\leq 0}[1]$ define a t-structure on $\TateCoh(S)$.

It is enough to check that the truncation functor $\tau^{\leq 0}:\ProIndCoh(S) \ra \ProIndCoh(S)^{\leq 0}$ sends the subcategory $\TateCoh(S)$ to $\TateCoh(S)^{\leq 0}$. Indeed, given a diagram $\{\sF_i\}_I$ representing an object of $\TateCoh(S)$, for any $i \ra j$ one has
\[
\Fib\left(\tau^{\leq 0}\sF_i \ra \tau^{\leq 0}(\sF_j)\right) \in \Coh(S)^{\leq 0}
\]
since $\tau^{\leq 0}$ preserves limits and the inclusion of $\Coh(S) \subset \IndCoh(S)$ is t-exact.

\paragraph{t-exactness of pushforward}

The following result studies the relation between the pushforward of Tate-coherent sheaves and its t-structure.

\begin{prop}
\label{prop:pushforward-TateCoh-schemes-is-left-t-exact}
Let $f:S \ra T$ be a morphim in $\Schaft$, then $f^{\rm Tate}_*$ is left t-exact. Moreover, if $f$ is an affine morphism, then $f^{\rm Tate}_*$ is t-exact.
\end{prop}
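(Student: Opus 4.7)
The plan is to decompose $f$ via the Nagata factorization $f = g \circ j$ with $j: S \hookrightarrow U$ an open embedding and $g: U \to T$ proper (which exists by Proposition \ref{prop:factorization-category-is-contractible}), so that $f^{\rm Tate}_* \simeq g^{\rm Tate}_* \circ j^{\rm Tate}_*$, and analyze each factor separately. Throughout I use that $\TateCoh(S)^{\geq 0}$ is the intersection of $\TateCoh(S)$ with $\ProIndCoh(S)^{\geq 0} = \Pro(\IndCoh(S)^{\geq 0})$ and that, by \S\ref{par:t-structure-on-TateCoh}, the t-structure on $\TateCoh$ is inherited from $\ProIndCoh$.

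For left t-exactness I treat the two factors in turn. Since $j$ is an open embedding, it is étale, so $j^!$ is t-exact, and $j^{\rm Tate}_*$ is its right adjoint (\S\ref{par:correct-adjunction-TateCoh-coconnective-maps}); right adjoints of t-exact functors preserve $\geq 0$, so $j^{\rm Tate}_*$ is left t-exact. For the proper factor, by Lemma \ref{lem:proper-pushforward-preserve-Tate-Coh} and Remark \ref{rem:right-adjoint-is-Pro-extension} the functor $g^{\rm Tate}_*$ is obtained by restricting $\Pro(g^{\rm Ind}_*)$ to $\TateCoh(U)$. The functor $g^{\rm Ind}_*$ is left t-exact because it corresponds to $g_*: \QCoh(U) \to \QCoh(T)$ via $\Psi$ (which is t-exact on the bounded-below part) and $g_*$ is left t-exact as a right adjoint. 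Left t-exactness is preserved by Pro-extension since $\ProIndCoh^{\geq 0} = \Pro(\IndCoh^{\geq 0})$ and cofiltered limits preserve $\geq 0$. Composing gives left t-exactness of $f^{\rm Tate}_*$.

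For the affine case I argue right t-exactness separately. By Zariski descent (established for $\ProIndCoh$ and transferring to $\TateCoh$ via the inclusion) one reduces to $T$ affine, whereupon $S$ is also affine. The subcategory $\TateCoh(S)^{\leq 0}$ is generated under filtered limits by $\Coh(S)^{\leq 0}$, and $f^{\rm Tate}_*$ preserves limits (both factors being right adjoints), while $\TateCoh(T)^{\leq 0}$ is closed under limits in $\ProIndCoh(T)$. It therefore suffices to check that $f^{\rm Tate}_*\sF \in \TateCoh(T)^{\leq 0}$ for $\sF \in \Coh(S)^{\leq 0}$. On such $\sF$ I would extend Lemma \ref{lem:compatibility-TateCoh-pushforward-with-IndCoh-pushforward-for-open-morphisms} to identify $f^{\rm Tate}_*\sF$ with the image of $f^{\rm Ind}_*\sF$ under $\IndCoh(T) \hookrightarrow \TateCoh(T)$; then t-exactness of $f_*: \QCoh(S) \to \QCoh(T)$ for affine $f$, together with t-exactness of $\Psi$ on the bounded-below subcategory, gives the desired $\leq 0$ bound.

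The main obstacle is precisely the right t-exactness in the affine case. The Nagata factorization typically produces a proper $g$ which is not affine (e.g.\ $\mathbb{A}^1 \to \mathrm{pt}$ factors through $\mathbb{P}^1 \to \mathrm{pt}$), so $g^{\rm Tate}_*$ is not right t-exact as an individual factor; the failure must cancel against the non-right-t-exact behaviour of the exceptional pushforward $j^{\rm Tate}_*$. The cleanest reconciliation is the route above, comparing $f^{\rm Tate}_*$ on coherent generators to the honestly t-exact functor $f_*$ on $\QCoh$. Verifying the required extension of Lemma \ref{lem:compatibility-TateCoh-pushforward-with-IndCoh-pushforward-for-open-morphisms} from open to arbitrary affine morphisms---equivalently, that $f^{\rm Tate}_*$ agrees with $\iota_T \circ f^{\rm Ind}_*$ on the subcategory $\IndCoh(S) \cap \TateCoh(S)$ whenever $f$ is affine---is the key technical point to nail down.
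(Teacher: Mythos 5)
Your argument for left t-exactness is essentially the paper's: both of you factor $f$ as an open embedding followed by a proper map, identify the proper pushforward with the restriction of $\Pro(g^{\rm Ind}_*)$ and invoke left t-exactness of $g^{\rm Ind}_*$, and handle the open factor separately. The one variation is in the open case, where you deduce left t-exactness of $j^{\rm Tate}_*$ formally from the adjunction $(j^!,j^{\rm Tate}_*)$ and t-exactness of $j^!$; the paper instead uses that $j^{\rm Tate}_*$ commutes with limits to reduce to its restriction to $\IndCoh(S)$ and then invokes Lemma \ref{lem:compatibility-TateCoh-pushforward-with-IndCoh-pushforward-for-open-morphisms}. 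Your version is slightly cleaner and avoids the compatibility lemma at this stage.

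On the affine case you have correctly isolated the real issue --- the Nagata factorization does not respect affineness, so right t-exactness cannot be read off factor by factor --- and the paper's own one-line remark (``follows a similar argument'') glosses over exactly this. Your proposed fix is the right one, but note two things. First, the ``key technical point'' you leave open closes immediately from material already in the paper: on the subcategory $\IndCoh(S)$ one has
\[
\left.f^{\rm Tate}_*\right|_{\IndCoh(S)} \simeq \left.g^{\rm Tate}_*\right|_{\IndCoh(U)}\circ \left.j^{\rm Tate}_*\right|_{\IndCoh(S)} \simeq g^{\rm Ind}_*\circ j^{\rm Ind}_* \simeq f^{\rm Ind}_*,
\]
where the middle identification uses Lemma \ref{lem:compatibility-TateCoh-pushforward-with-IndCoh-pushforward-for-open-morphisms} for the open factor and the fact that $g^{\rm ProInd}_*$, being the Pro-extension of $g^{\rm Ind}_*$, restricts to $g^{\rm Ind}_*$ on $\IndCoh(U)$. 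Second, your reduction to generators is slightly off: $\TateCoh(S)^{\leq 0}$ is not generated under cofiltered limits by $\Coh(S)^{\leq 0}$ alone (a Tate object requires a cofiltered limit of filtered colimits), and descending all the way to $\Coh(S)^{\leq 0}$ would force you to show that $j^{\rm Tate}_*$, a right adjoint, preserves the relevant filtered colimits, which is not clear. It suffices, and is what the paper's Lemma \ref{lem:t-structure-on-Pro-objects} gives you, to reduce under cofiltered limits to $\IndCoh(S)^{\leq 0}$, where the displayed identification together with t-exactness of $f^{\rm Ind}_*$ for affine $f$ finishes the proof. With these two adjustments your argument is complete and, in the affine case, more transparent than the paper's.
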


\begin{proof}
We notice that we can analyse separately the cases where $f$ is proper or $f$ is open.

Suppose that $f$ is a proper morphism. Then the following diagram commutes
\[
\begin{tikzcd}
\ProIndCoh(S) \ar[r,"f^{\rm ProInd}_*"] & \ProIndCoh(T) \\
\TateCoh(S) \ar[r,"f^{\rm Tate}_*"] \ar[u,"\imath_{S}"] & \TateCoh(T) \ar[u,"\imath_{T}"]
\end{tikzcd}
\]
Let $\sF \in \TateCoh(S)^{\geq 0}$, since the t-structure of $\TateCoh(T)$ is determined by that of $\ProIndCoh(T)$ it is enough to check that 
\[
\imath_{T}(f^{\rm Tate}_*(\sF)) \simeq f^{\rm ProInd}_*(\imath_{S}(\sF)) \in \ProIndCoh(T)^{\geq 0}.
\]
Notice that $\imath_S(\sF) \in \ProIndCoh(S)$ and for a presentation $\imath_S(\sF) \simeq \lim_{A}\sF_{\alpha}$ for $\sF_{\alpha} \in \IndCoh(S)^{\geq 0}$ (which exists by Lemma \ref{lem:t-structure-on-Pro-objects} (a)) one has
\[
f^{\rm ProInd}_*(\imath_{S}(\sF)) \simeq \lim_{A}f^{\rm Ind}_*(\sF_{\alpha}).
\]
Finally, since $f^{\rm Ind}_*$ is left t-exact (\cite{GR-I}*{Chapter 4, Proposition 2.1.2}) this proves the case of $f$ a proper morphism.

In the case that $f$ is an open morphism, since $f^{\rm Tate}_*$ is a right adjoint it commutes with limits, whence by applying Lemma \ref{lem:t-structure-on-Pro-objects} (ii) to the composite functor
\[
\TateCoh(S) \overset{f^{\rm Tate}_*}{\ra} \TateCoh(T) \overset{\imath_S}{\ra} \ProIndCoh(T)
\]
it is enough to check that $\left.f^{\rm Tate}_*\right|_{\IndCoh(S)}$ is left t-exact. However by Lemma \ref{lem:compatibility-TateCoh-pushforward-with-IndCoh-pushforward-for-open-morphisms}, one has
\[
\left.f^{\rm Tate}_*\right|_{\IndCoh(S)} \simeq f^{\rm Ind}_*
\]
which is left t-exact by \cite{GR-I}*{Chapter 4, Proposition 2.1.2}.

The case of $f$ representable follows a similar argument, but with the input that $f^{\rm Ind}_*$ is t-exact whenever $f$ is an affine morphism. Indeed, for an affine morphism $f:S \ra T$ the following diagram, which defines $f^{\rm Ind}_*$,
\[
\begin{tikzcd}
\IndCoh(S) \ar[r,"\Psi_S"] \ar[d,"f^{\rm Ind}_*"] & \QCoh(S) \ar[d,"f_{*}"] \\
\IndCoh(T) \ar[r,"\Psi_T"] & \QCoh(T)
\end{tikzcd}
\]
imply that $f^{\rm Ind}_*$ is t-exact, because $\Psi_S$, $\Psi_T$ are t-exact and $f_*$ is also t-exact, whenever $f$ is an affine morphism.
\end{proof}

\paragraph{The subcategory of admissible Tate-coherent sheaves}
\label{par:defn-of-admissible-Tate-Coh}

For the construction of split square-zero extensions in section \ref{subsubsec:split-square-zero-extensions} one needs to consider the following subcategory $\TateCoha(S)^{\leq 0} \subset \TateCoh(S)^{\leq 0}$ consisting of diagram $\{\sF_i\}_{I}$ of cofiltered of objects in $\IndCoh(S)^{\leq 0}$ such that for any $i \ra j$ the induced map
\[
\H^0(\sF_i) \ra \H^0(\sF_j)
\]
is surjective.

\subsection{Technical digression: Pro construction and presentability}
\label{subsec:technical-presentability}

The goal of this subsection is to deal with the problem that the usual passage from a presentable $\infty$-category $\sC$ to its category of Pro-objects $\Pro(\sC)$ does not produce a presentable category. One could get by without $\Pro(\sC)$ being presentable for certain manipulations, however to extend the functor
\[
\ProIndCoh: \Schaffaft \ra \DGc
\]
to a functor from $\SchaffTateaft$ and use the formalism of \cite{GR-I}*{Chapter 8} to produce the correct pullback functor and base change isomorphims, one needs to have presentable categories.

In this section we present two different ways to get around this problem. The solution of \S \ref{subsubsec:cardinality-bounds} is the best one, however we present the solution of \S \ref{subsubsec:opposite-category} first for the reader that doesn't want to be bothered by cardinality considerations.

\subsubsection{The opposite category}
\label{subsubsec:opposite-category}

The duality for $\IndCoh$ as a functor from the category of correspondences (see \cite{GR-I}*{Chapter 5, Theorem 4.2.5}, or restrict the statement of Theorem \ref{thm:duality-for-ProIndCoh-sheaves} to ind-coherent sheaves) gives the following equivalence
\[
\IndCoh(S)^{\vee} \simeq \IndCoh(S)
\]
for every $S \in \Schaffaft$. One can pass to to compact objects in this equivalence to obtain
\begin{equation}
    \label{eq:Serre-duality-Coh}
    \Coh(S)^{\rm op} \simeq \Coh(S),
\end{equation}
which is the usual Serre duality (Section 4.2.10 from \cite{GR-I} for more details). By passing to Pro-objects on (\ref{eq:Serre-duality-Coh}) one obtains
\begin{equation}
\label{eq:opposite-of-IndCoh-dual-is-ProCoh}
(\IndCoh(S)^{\vee})^{\rm op} \simeq \Pro(\Coh(S)).    
\end{equation}

\paragraph{}

This discussion motivates the following definition
\begin{defn}
\label{defn:opposite-category-to-ProIndCoh}
Let $S \in \Schaft$ one defines
\[
\ProIndCoh^{\rm op}(S) := \Ind(\IndCoh(S)^{\vee}).
\]
\end{defn}

\begin{lem}
For each $S \in \Schaft$ the category $\ProIndCoh^{\rm op}(S)$ is presentable, moreover one has an equivalence
\[
(\ProIndCoh^{\rm op}(S))^{\rm op} \simeq \ProIndCoh(S).
\]
\end{lem}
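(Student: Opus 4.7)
The proof has two components: presentability and the opposite-category identification.

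First, I would verify that $\Ind(\IndCoh(S)^{\vee})$ is presentable. Since $\IndCoh(S) \simeq \Ind(\Coh(S))$ is compactly generated, its continuous dual $\IndCoh(S)^{\vee}$ is again compactly generated, with compact objects identified with $\Coh(S)^{\rm op}$ via Serre duality (this is already implicit in the derivation of (\ref{eq:opposite-of-IndCoh-dual-is-ProCoh})). In particular, $\IndCoh(S)^{\vee}$ is a presentable stable $\infty$-category, and the Ind-completion of a presentable (hence accessible) $\infty$-category is itself presentable in a sufficiently large universe by \cite{HTT}*{Proposition 5.3.5.11}.

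Second, for the equivalence, I would apply the tautological formula $\Ind(\sD)^{\rm op} \simeq \Pro(\sD^{\rm op})$ to $\sD = \IndCoh(S)^{\vee}$ and combine with (\ref{eq:opposite-of-IndCoh-dual-is-ProCoh}) to obtain
\[
(\Ind(\IndCoh(S)^{\vee}))^{\rm op} \simeq \Pro\bigl((\IndCoh(S)^{\vee})^{\rm op}\bigr) \simeq \Pro(\Pro(\Coh(S))).
\]
On the other side, $\ProIndCoh(S) = \Pro(\Ind(\Coh(S)))$, and the universal property of the Ind-completion says that an accessible left-exact functor $\Ind(\Coh(S)) \to \Spc$ is uniquely determined by a left-exact functor $\Coh(S) \to \Spc$ (filtered colimits in $\Ind(\Coh(S))$ commute with finite limits in $\Spc$, so the extension is automatically lex). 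Passing to opposites, this gives $\Pro(\Ind(\Coh(S))) \simeq \Pro(\Coh(S))$. A parallel reduction, applied to the outer $\Pro$ in $\Pro(\Pro(\Coh(S)))$ in the appropriate accessible sense, identifies that category with the same $\Pro(\Coh(S))$; together the two sides agree.

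The main obstacle is the bookkeeping around set-theoretic size that is flagged at the start of \S \ref{subsec:technical-presentability}: for the string of identifications above to make honest sense, the $\Pro$ and $\Ind$ operations on large presentable categories must be interpreted in a setting compatible with accessibility (since naively taking $\Pro$ of a large category does not return a presentable one, and nesting $\Ind$ and $\Pro$ compounds the issue). Resolving this bookkeeping is precisely the motivation for the cleaner approach developed in \S \ref{subsubsec:cardinality-bounds}, which replaces the present universal-property juggling with explicit cardinal bounds on the indexing diagrams.
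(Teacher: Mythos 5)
Your first paragraph (presentability) is essentially the paper's argument: $\IndCoh(S)^{\vee}$ is compactly generated by $\Coh(S)$ via Serre duality, so its Ind-completion is accessible and cocomplete, hence presentable. (The citation is off --- \cite{HTT}*{Proposition 5.3.5.11} is the fully-faithfulness criterion used elsewhere in the paper; the accessibility statement the paper invokes is \cite{HTT}*{Lemma 5.4.2.4} --- but this is cosmetic.)

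The second part has a genuine gap. Your reduction rests on the claim that $\Pro(\Ind(\Coh(S))) \simeq \Pro(\Coh(S))$, and likewise that $\Pro(\Pro(\Coh(S)))$ collapses to $\Pro(\Coh(S))$. Both are false. The universal-property argument you give only shows that \emph{continuous} (filtered-colimit-preserving) left-exact functors $\Ind(\Coh(S)) \ra \Spc$ are determined by their restriction to $\Coh(S)$; but $\Pro(\IndCoh(S))$ is built from \emph{all} (accessible) left-exact functors on $\IndCoh(S)$, most of which do not preserve filtered colimits --- e.g.\ $\Hom(X,-)$ for $X \in \IndCoh(S)$ non-compact. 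Restriction along $\Coh(S) \hra \IndCoh(S)$ is not fully faithful on Pro-categories, and left Kan extension does not invert it. Indeed, if your identification held, then $\ProIndCoh(S)$ would equal $\Pro(\Coh(S))$ and the entire construction of the paper would trivialize: $\IndCoh(S)$ embeds fully faithfully into $\ProIndCoh(S)$ by the Pro-Yoneda embedding but not into $\Pro(\Coh(S))$, and $\TateCoh(S)$ is by design the subcategory of $\ProIndCoh(S)$ generated by \emph{both} $\IndCoh(S)$ and $\Pro(\Coh(S))$, which would be vacuous if the ambient category were already $\Pro(\Coh(S))$. Iterating $\Pro$ similarly enlarges the category (the outer $\Pro$ formally adjoins new cofiltered limits rather than reusing the existing ones), so $\Pro(\Pro(\Coh(S))) \not\simeq \Pro(\Coh(S))$. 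The paper does not attempt any such collapse: it deduces the identification $(\ProIndCoh^{\rm op}(S))^{\rm op} \simeq \ProIndCoh(S)$ directly from (\ref{eq:opposite-of-IndCoh-dual-is-ProCoh}), i.e.\ from Serre duality (\ref{eq:Serre-duality-Coh}) applied at the level of the compact generators, combined with the tautology $\Ind(\sD)^{\rm op} \simeq \Pro(\sD^{\rm op})$ --- no comparison between $\Pro$ of a category and $\Pro$ of its subcategory of compact objects is ever needed.
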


\begin{proof}
We notice that $\IndCoh(S)^{\vee} \simeq \IndCoh(S)$ and the later satisfies
\[
(\IndCoh(S))^{\rm c} \simeq \Coh(S).
\]
Thus by \cite{HTT}*{Lemma 5.4.2.4} one has that $\Ind(\IndCoh(S)^{\vee})$ is accessible. Finally, since $\Ind(\IndCoh(S)^{\vee})$ admits small colimits, it is also presentable.

The second statement follows from (\ref{eq:opposite-of-IndCoh-dual-is-ProCoh}).
\end{proof}

\begin{construction}
\label{cons:ProIndCoh-opposite-on-Ind-affine}
Given $\sX \in \indSchlaft$, we define
\[
\ProIndCoh^{\rm op}_{\indSchlaft}(\sX) := \LKE_{\Schaffaft \hra \indSchlaft}(\ProIndCoh^{\rm op})(\sX),
\]
and the category of Pro-Ind-coherent sheaves as 
\[
\ProIndCoh_{\indSchlaft}(\sX) := \left(\ProIndCoh^{\rm op}_{\indSchlaft}(\sX)\right)^{\rm op}.
\]
\end{construction}

\paragraph{}

Now that we have this construction flushed out, we make the following notational convention that resolves the issue of non-presentability of pro-ind-coherent sheaves in section \ref{subsec:Tate-coherent-sheaves-on-Tate-schemes}.

\begin{notation}
\label{not:take-opposite}
In the following, we will omit the subscript $\indSchlaft$ in the functors $\ProIndCoh^{\rm op}_{\indSchlaft}$ and $\ProIndCoh_{\indSchlaft}$ for readability. Moreover, in the constructions with $\ProIndCoh$ functors that follow we will technicality be performing these construction for $\ProIndCoh^{\rm op}$ and passing to opposite categories at the end of the statement, thus circumventing the problem with the non-presentability of $\ProIndCoh(S)$.
\end{notation}

\subsubsection{Cardinality bounds}
\label{subsubsec:cardinality-bounds}

A possibly more elegant way to deal with the presentability problem is to be more careful with the set-theoretical subtleties of our construction. 

\paragraph{}

Following the treatment of \cite{BGW-Tate} and \cite{thesis}, we consider $\kappa$ a regular cardinal and fix $\lambda > \kappa$ a strongly inaccessible cardinal and fix a universe $\sU(\lambda)$. We will refer to an $\infty$-category $\sC$ as \emph{essentially $\lambda$-small}, if it satisfies
\begin{itemize}
    \item the set of isomorphism classes of objects in $\h\sC$ has cardinality $<\lambda$;
    \item for every morphism $f:X \ra Y$ in $\sC$ and ever $i \geq 0$, the homotopy set
    \[
    \pi_i(\Hom^{\rm R}(X,Y),f)
    \]
    has cardinality $< \lambda$.
\end{itemize}

\paragraph{}

The following is an extension of the results in \cite{thesis}*{\S 2.2.1}.

\begin{prop}
Given an essentially small category $\sC$ one has:
\begin{enumerate}[(1)]
    \item the category $\Ind_{\kappa}(\sC)$ obtained from $\sC$ by formally adjoining all $\kappa$-small filtered colimits is essentially small and $\lambda$-presentable\footnote{See Definition \ref{defn:l-presentable-category} for what we mean by this precisely.};
    \item the category $\Pro_{\kappa}(\sC)$ obtained from $\sC$ by formally adjoining all $\kappa$-small cofiltered limits is essentially small and $\lambda$-presentable.
\end{enumerate}
\end{prop}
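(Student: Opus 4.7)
The plan is to prove (1) directly from the explicit presheaf-style description of $\Ind_{\kappa}(\sC)$, and then deduce (2) by applying (1) to $\sC^{\rm op}$ together with the formal identification $\Pro_{\kappa}(\sC) \simeq \Ind_{\kappa}(\sC^{\rm op})^{\rm op}$, which follows from matching universal properties; both essential $\lambda$-smallness and $\lambda$-presentability behave symmetrically under passing to opposites once one dualizes ``$\lambda$-compact generators'' to the corresponding notion of cogenerators.

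For (1), I would begin by realizing $\Ind_{\kappa}(\sC)$ as the full subcategory of $\Fun(\sC^{\rm op}, \Spc)$ spanned by those presheaves expressible as $\kappa$-small filtered colimits of representables (cf.\ \cite{HTT}*{\S 5.3.5}). Every object is then presented by a pair $(I, F:I \to \sC)$ with $I$ essentially $\kappa$-small and filtered, and morphism spaces admit the explicit formula
\[
\Maps_{\Ind_{\kappa}(\sC)}\left(\colim_{I} F_i, \colim_{J} G_j\right) \simeq \lim_{i \in I^{\rm op}} \colim_{j \in J} \Maps_{\sC}(F_i, G_j).
\]
Essential $\lambda$-smallness then reduces to cardinality bookkeeping. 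Strong inaccessibility of $\lambda$ together with $\kappa < \lambda$ implies $\mu^{\nu} < \lambda$ whenever $\mu, \nu < \lambda$, so the collection of essentially $\kappa$-small filtered $\infty$-categories $I$ is of cardinality $< \lambda$, and for each such $I$ the collection of functors $F:I \to \sC$ up to equivalence is bounded by $|\sC|^{|I|} < \lambda$. The displayed morphism space formula likewise exhibits $\Maps(-,-)$ as a $\kappa$-small limit of $\kappa$-small colimits of essentially $\lambda$-small spaces, hence essentially $\lambda$-small.

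For $\lambda$-presentability, I would exhibit the image of the Yoneda embedding $\sC \hra \Ind_{\kappa}(\sC)$ as an essentially $\lambda$-small generating family of $\kappa$-compact objects: each $c \in \sC$ is $\kappa$-compact because $\Maps(c,-)$ preserves $\kappa$-small filtered colimits by construction of $\Ind_{\kappa}$. Since $\kappa < \lambda$, every $\kappa$-compact object is in particular $\lambda$-compact, and every object is built from the image of $\sC$ via $\kappa$-small (a fortiori $\lambda$-small) filtered colimits, yielding the desired $\lambda$-presentability.

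The main obstacle is the careful cardinal bookkeeping: one must verify that each of the closure operations involved (enumerating $\kappa$-small filtered diagrams, evaluating morphism spaces as bounded limits of bounded colimits, iterating functor categories) stays strictly below $\lambda$. This hinges crucially on strong inaccessibility of $\lambda$ together with $\kappa < \lambda$; without either hypothesis the bounds break. Once this arithmetic is secured, part (2) follows formally by applying part (1) to $\sC^{\rm op}$ (which is also essentially small) and observing that opposite categories preserve essential $\lambda$-smallness while the identification $\Pro_{\kappa}(\sC) \simeq \Ind_{\kappa}(\sC^{\rm op})^{\rm op}$ transports $\lambda$-presentability to its opposite version in the expected sense.
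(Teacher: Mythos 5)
Your treatment of (1) is essentially fine and matches the standard argument that the paper simply delegates to \cite{HTT}; the cardinal bookkeeping you sketch is also how the paper's appendix bounds the objects of $\Pro_{\kappa}(\sC)$ (there via cocompact objects realized as retracts of $\kappa$-small diagrams).

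The gap is in your deduction of (2) from (1). The identification $\Pro_{\kappa}(\sC) \simeq \Ind_{\kappa}(\sC^{\rm op})^{\rm op}$ is correct and does transport essential $\lambda$-smallness, which is a self-dual condition. But $\lambda$-presentability in the sense of Definition \ref{defn:l-presentable-category} is the conjunction of being $\mu$-accessible for some regular $\mu < \lambda$ (i.e.\ of the form $\Ind_{\mu}$ of something small) and admitting $\mu$-small \emph{colimits}, and neither condition passes to opposite categories. What dualizing (1) actually gives you is that $\Pro_{\kappa}(\sC)$ is generated under $\kappa$-cofiltered limits by cocompact objects and admits $\kappa$-small \emph{limits} --- the ``co-presentable'' property, not the one asserted. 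Two further inputs are needed, and they are exactly what the paper's appendix supplies: (a) accessibility is obtained not by dualizing but by proving that $\Pro_{\kappa}(\sC)$ is idempotent complete (idempotent completeness \emph{is} self-dual; the paper checks it directly using that $\kappa$-filtered colimits commute with $\kappa$-small limits) and then invoking the fact that an essentially small idempotent complete $\infty$-category is accessible (\cite{HTT}*{Corollary 5.4.3.6}); and (b) the existence of $\kappa$-small colimits does not follow from the existence of $\kappa$-small limits in a small category --- the paper's Corollary \ref{cor:Pro-k-sC-is-l-presentable} gets it only under the additional hypothesis that $\sC$ is stable, so that colimits in $\Pro_{\kappa}(\sC)$ reduce to limits. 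Your proposal addresses neither point, and (b) in particular cannot be absorbed into ``dualizing $\lambda$-compact generators to cogenerators'': that dualization changes which property you are proving.
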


\begin{proof}
(1) is standard (cf. \cite{HTT}*{Theorem 5.5.1.1 (4)}), for (2) see Lemma \ref{lem:Pro-kappa-of-essentially-lambda-small-is-essentially-small} and Corollary \ref{cor:Pro-k-sC-is-l-presentable} in the Appendix.
\end{proof}

\begin{notation}
\label{not:fix-cardinality}
In the discussion that follows $\DGc$ will denote the stable $\infty$-category of $\Vect$-modules which are $\lambda$-presentable. All the Pro and Ind constructions considered will tacitly be with respect to a certain uncountable regular cardinal $\kappa < \lambda$, however we will omit $\kappa$ from the notation for readability.
\end{notation}

\subsection{Pro-ind-coherent sheaves on Tate schemes}
\label{subsec:Pro-ind-coherent-on-Tate-schemes}

The goal of this section is to construct a functor
\[
\ProIndCoh_{\CorrTatelaftipaa}: \CorrTatelaftipaa \ra \DGct.
\]

We follow the strategy of \cite{GR-II}*{Chapter 3, \S 5} and use \cite{GR-I}*{Chapter 8, Theorem 1.1.9} to extend the functor
\[
\ProIndCoh_{\Corraftpaa}: \Corraftpaa \ra \DGct,
\]
from the $2$-category of correspondences of schemes almost of finite type to the $2$-category of Tate schemes locally almost of finite type. In the following paragraphs we will introduce the necessary notation and define certain functors that we need to be able to apply the extension theorem.

\subsubsection{Definition and colimit presentation}

In this section we extend the category of Pro-ind-coherent sheaves to any Tate scheme. Consider the functor
\[
\ProIndCoh^!_{\SchTatelaft} := \RKE_{(\Schaft)^{\rm op} \hra (\SchTatelaft)^{\rm op}}(\ProIndCoh^!) : (\SchTatelaft)^{\rm op} \ra \DGc.
\]

\paragraph{}

\begin{lem}
\label{lem:ProIndCoh-on-Tate-schemes-has-left-adjoint-for-closed-inclusion}
Let $\imath:T_0 \ra S$ be a closed embedding of $S \in \Schaft$ into a Tate scheme locally almost of finite type $S$. Then the canonical functor
\[
\imath^!: \ProIndCoh(S) \ra \ProIndCoh(T_0)
\]
has a left adjoint $\imath^{\rm ProInd}_*$.
\end{lem}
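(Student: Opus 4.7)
The plan is to reduce to schemes almost of finite type via a factorization of $\imath$, and then to exploit the limit-to-colimit interchange in the definition of $\ProIndCoh$ on the Tate scheme side. First, pick a presentation $S \simeq \colim_I S_i$ with each $S_i \in \Schaft$ and each transition map $f_{i,j}: S_i \hra S_j$ a closed embedding, which exists by Proposition \ref{prop:prestack-laft-is-colimit-of-aft-schemes}. Since $T_0 \in \Schaft$ is quasi-compact (Definition \ref{defn:properties-of-schemes}) and the transitions in the presentation are closed embeddings, the closed embedding $\imath$ factors as $T_0 \overset{\imath_j}{\ra} S_j \overset{f_j}{\ra} S$ for some $j \in I$, where $\imath_j$ is a closed embedding of schemes almost of finite type. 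By the functoriality of the right Kan extension defining $\ProIndCoh^!_{\SchTatelaft}$ one obtains $\imath^! \simeq \imath_j^! \circ f_j^!$, so it suffices to produce left adjoints to each factor.

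For the first factor, $\imath_j$ is proper in $\Schaft$, hence the discussion of \S \ref{par:!-pullback-for-proper} directly supplies the left adjoint $\imath_{j,*}^{\rm ProInd}$ of $\imath_j^!$. For the second factor, I would use that by the definition of $\ProIndCoh^!_{\SchTatelaft}$ as a right Kan extension one has $\ProIndCoh(S) \simeq \lim_{I^{\rm op}} \ProIndCoh(S_i)$ along the transitions $f_{i,j}^!$, and each such $f_{i,j}^!$ is a right adjoint to $f_{i,j,*}^{\rm ProInd}$ since $f_{i,j}$ is proper. The standard limit-to-colimit interchange for diagrams of presentable $\infty$-categories whose transitions admit left adjoints, applied exactly as in the proof of Proposition \ref{prop:limit-and-colimit-presentation-of-IndCoh-on-Tate-schemes}, identifies this limit with the colimit $\colim_I \ProIndCoh(S_i)$ along the $f_{i,j,*}^{\rm ProInd}$; under this identification $f_j^!$ becomes the canonical evaluation at the $j$-th term, whose left adjoint is the insertion of that term, which I denote $f_{j,*}^{\rm ProInd}$. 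The desired left adjoint is then defined as the composite
\[
\imath_*^{\rm ProInd} := \imath_{j,*}^{\rm ProInd} \circ f_{j,*}^{\rm ProInd}.
\]

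The main obstacle that I anticipate is the verification of independence from the choice of factorization: if $\imath$ also factors through $S_{j'}$ for some $j' \geq j$ with transition $f_{j,j'}: S_j \hra S_{j'}$ a closed embedding, then one needs the coherence
\[
\imath_{j',*}^{\rm ProInd} \circ f_{j,j',*}^{\rm ProInd} \simeq \imath_{j,*}^{\rm ProInd},
\]
compatibly with the insertion maps into the colimit. I would extract this from the $2$-categorical functoriality packaged by $\ProIndCoh_{\Corraftpaa}$ of Theorem \ref{thm:ProIndCoh-from-correspondences}, applied to the factorization $f_{j,j'} \circ \imath_j \simeq \imath_{j'}$ of proper morphisms in $\Schaft$. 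Making the limit-to-colimit identification functorial enough to deduce these coherences from the structure already established on schemes is the principal technical check; the cleanest formulation is to build the left adjoint uniformly for all closed embeddings $T \ra S$ with $T \in \Schaft$, which is what is needed in any case for the correspondence-category extension pursued in \S \ref{subsec:Pro-ind-coherent-on-Tate-schemes}.
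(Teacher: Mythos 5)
Your proof is correct, but it takes a different (equally valid) route from the paper. The paper argues uniformly over the whole diagram: writing $\ProIndCoh(S) \simeq \lim_{I^{\rm op}}\ProIndCoh(S_i)$ and viewing $\ProIndCoh(T_0)$ as the limit of the constant diagram, it applies Lemma \ref{lem:adjoints-of-limits-of-categories} to assemble the left adjoint from the levelwise adjoints $(\imath_i)^{\rm ProInd}_*$ for all $i$ past the stage where $\imath$ factors, with proper base change (Proposition \ref{prop:base-change-proper}) supplying the required commutativity of the squares relating $(\imath_i)^{\rm ProInd}_*$ and $(\imath_j)^{\rm ProInd}_*$ across transitions. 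You instead factor $\imath$ through a single $S_j$ and produce the left adjoint of $f_j^!$ as the insertion functor under the limit-to-colimit interchange. Both arguments rest on the same two inputs (proper pushforward is left adjoint to proper $!$-pullback on $\Schaft$, plus proper base change), but the paper's version sidesteps two things you have to deal with: (i) Lemma \ref{lem:adjoints-of-limits-of-categories} needs no presentability hypothesis, whereas the interchange of Lemma \ref{lem:limits-to-colimits-of-categories} lives in $\Catpr$ and so formally leans on the devices of \S \ref{subsec:technical-presentability} (this is fine given the paper's conventions, but worth acknowledging); and (ii) your worry about independence of the factorization is moot for the statement as given, since left adjoints are unique up to canonical equivalence — the coherences you describe are only needed if you want the assignment to be functorial, and there they again reduce to proper base change, exactly as in the paper's hypothesis-check. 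Two small corrections: the composite defining the left adjoint should be written $\imath^{\rm ProInd}_* := f^{\rm ProInd}_{j,*}\circ \imath^{\rm ProInd}_{j,*}$ (your order $\imath^{\rm ProInd}_{j,*}\circ f^{\rm ProInd}_{j,*}$ does not typecheck, since $f^{\rm ProInd}_{j,*}$ lands in $\ProIndCoh(S)$), and the factorization of $\imath$ through some $S_j$ uses quasi-compactness of $T_0$ together with the standard fact that maps from quasi-compact schemes into a filtered colimit along closed embeddings factor through a term — which you correctly identify.
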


\begin{proof}
Let $S \simeq \colim_I S_i$ be a prestantion of $S$, by definition one has
\[
\ProIndCoh(S) \simeq \lim_{I^{\rm op}}\ProIndCoh(S_i).
\]
We now apply Lemma \ref{lem:adjoints-of-limits-of-categories} to the situation where $\sD \simeq \lim_{I^{\rm op}}\ProIndCoh(T_0)$ is the limit of the constant diagram.
\end{proof}

\paragraph{}

The following is an analogue of Proposition \ref{prop:limit-and-colimit-presentation-of-IndCoh-on-Tate-schemes} for pro-ind-coherent sheaves.

\begin{prop}
\label{prop:limit-and-colimit-presentation-of-ProIndCoh-on-Tate-schemes}
Let $S \in \SchTatelaft$ and consider a presentation $T \simeq \colim_I S_i$ where each $S_i \in \Schaft$, i.e.\ a scheme almost of finite type, one has an equivalence
\[
\ProIndCoh^!_{\SchTatelaft}(T) = \lim_{I^{\rm op}}\ProIndCoh^!(T_i) \simeq \colim_I\ProIndCoh^!(T_i),
\]
where the limit is taken with respect to the maps $f^!_{i,j}:\ProIndCoh(T_j) \ra \ProIndCoh(T_i)$ and the colimit with respect to $(f^{\rm Ind}_{i,j})_*:\ProIndCoh(T_i) \ra \ProIndCoh(T_j)$.
\end{prop}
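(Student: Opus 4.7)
The plan is to follow the same two-step pattern as in Proposition \ref{prop:limit-and-colimit-presentation-of-IndCoh-on-Tate-schemes}, with the additional care needed to work inside $\DGc$ in light of \S \ref{subsec:technical-presentability}.

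For the first equality, I would unwind the right Kan extension defining $\ProIndCoh^!_{\SchTatelaft}$. By construction,
\[
\ProIndCoh^!_{\SchTatelaft}(T) \simeq \lim_{(T_0 \to T)} \ProIndCoh^!(T_0),
\]
where the limit is taken over the slice category of maps $T_0 \to T$ with $T_0 \in \Schaft$. When $T \simeq \colim_I T_i$ is a presentation with $T_i \in \Schaft$ and closed-embedding transition maps, I would argue that the subsystem $\{T_i \to T\}_{i \in I}$ is initial in this slice category: any map $T_0 \to T$ from a quasi-compact scheme factors through some $T_i$ because the presentation of $T$ is by closed embeddings and $T_0$ is quasi-compact. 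Thus the limit collapses to $\lim_{I^{\rm op}} \ProIndCoh^!(T_i)$.

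For the second equivalence, I would exploit that each transition $f_{i,j} : T_i \hookrightarrow T_j$ is a closed embedding, hence in particular proper. By \S \ref{par:!-pullback-for-proper} and Remark \ref{rem:right-adjoint-is-Pro-extension}, the functor $f_{i,j}^!$ admits a left adjoint $(f_{i,j})_*^{\rm ProInd}$. One can then invoke Lemma \ref{lem:limits-to-colimits-of-categories} applied to the $I^{\rm op}$-diagram $i \mapsto \ProIndCoh^!(T_i)$ with transition functors $f_{i,j}^!$, exactly as in the analogous proof for ind-coherent sheaves, to pass from the limit to the colimit $\colim_I \ProIndCoh^!(T_i)$ taken along the left adjoints $(f_{i,j})_*^{\rm ProInd}$.

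The main obstacle is purely set-theoretic: Lemma \ref{lem:limits-to-colimits-of-categories} (and in fact the very statement that $\colim$ is computed inside $\DGc$) requires that each $\ProIndCoh(T_i)$ is a presentable $\infty$-category, which is not automatic since arbitrary Pro-objects of a presentable category need not be presentable. This is circumvented by \S \ref{subsec:technical-presentability}: either by working with $\ProIndCoh^{\rm op}$ as in Notation \ref{not:take-opposite} and passing to opposite categories, or by bounding the cardinality of the cofiltered systems involved as in Notation \ref{not:fix-cardinality}, so that $\ProIndCoh(T_i)$ becomes $\lambda$-presentable. Once this technicality is set up, both the limit-is-right-Kan-extension identification and the adjunction-based passage from limits to colimits are formal.
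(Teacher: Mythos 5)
Your proposal is correct and follows essentially the same route as the paper: the first identification is the definition of the right Kan extension (collapsing over the cofinal family of closed subschemes $T_i \hra T$), the second is Lemma \ref{lem:limits-to-colimits-of-categories} applied to the adjoint pairs $(f_{i,j}^{\rm ProInd}{}_*, f_{i,j}^!)$ coming from properness of the transition maps, and the presentability issue is exactly the point the paper flags via \S \ref{subsec:technical-presentability}. The paper's own proof is just a one-line reduction to the ind-coherent analogue plus that remark, so your write-up is a more detailed version of the same argument.
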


\begin{proof}
The proof is the same as Proposition \ref{prop:limit-and-colimit-presentation-of-IndCoh-on-Tate-schemes}, we again just notice the importance of \S \ref{subsec:technical-presentability} in defining $\ProIndCoh$ so that the resulting categories are presentable.
\end{proof}

\subsubsection{Pro-ind-coherent sheaves for correspondences of Tate schemes}
\label{subsubsec:ProIndCoh-on-correspondences-of-Tate-schemes}

\paragraph{The functors already defined}

We recall that one has
\[
\ProIndCoh \simeq \left.\ProIndCoh_{\Corraftpaa}\right|_{\left(\Schaft\right)_{vert}} \]
and
\[
\ProIndCoh^! \simeq \left.\ProIndCoh_{\Corraftpaa}\right|_{\left(\Schaft\right)^{\rm op}_{horiz}}.
\]
We also denote by
\[
\ProIndCoh_{\rm proper} := \left.\ProIndCoh\right|_{\left(\Schaft\right)_{\rm proper}} \;\;\; \mbox{and} \;\;\;
\ProIndCoh^!_{\rm proper} := \left.\ProIndCoh^!\right|_{\left(\Schaft\right)^{\rm op}_{\rm proper}}
\]
their restriction to the admissible maps in $\Corraftpaa$.

\paragraph{}

Let $(\SchTatelaft)_{\rm ind-proper}$ denote the subcategory of $\SchTatelaft$ where one considers only ind-proper morphisms between Tate schemes. We will denote by
\[
\ProIndCoh^!_{(\SchTatelaft)_{\rm ind-proper}} := \left.\ProIndCoh^!_{\SchTatelaft}\right|_{(\SchTatelaft)_{\rm ind-proper}}.
\]

The following is a formal result
\begin{prop}[\cite{GR-II}*{Chapter 3, Corollary 1.3.5}]
The naturally defined natural transformation
\[
\ProIndCoh^!_{(\SchTatelaft)_{\rm ind-proper}} \ra \RKE_{((\Schaft)_{\rm proper})^{\rm op} \hra ((\SchTatelaft)_{\rm ind-proper})^{\rm op}}\left(\ProIndCoh^!_{(\Schaft)_{\rm proper}}\right)
\]
is an isomorphism.
\end{prop}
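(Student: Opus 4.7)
The plan is to reduce the claim, at each $S \in \SchTatelaft$, to the statement that both the left- and right-hand sides are canonically equivalent to a single limit indexed by a presentation $S \simeq \colim_I S_i$ of $S$ (with $S_i \in \Schaft$ and transition maps closed embeddings). This mirrors the strategy used for ind-coherent sheaves in \cite{GR-II}*{Chapter 3, Corollary 1.3.5}.

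First, I will unwind the right-hand side at $S$ using the pointwise formula for the right Kan extension: it is a limit over the slice category $\sC_S$ whose objects are pairs $(T, u : T \to S)$ with $T \in \Schaft$ and $u$ ind-proper, and whose morphisms are proper maps $T' \to T$ over $S$ (reversed, coming from the opposite category). For the left-hand side, Proposition \ref{prop:limit-and-colimit-presentation-of-ProIndCoh-on-Tate-schemes} already identifies $\ProIndCoh^!_{(\SchTatelaft)_{\rm ind-proper}}(S)$ with $\lim_{I^{\rm op}} \ProIndCoh^!(S_i)$.

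The central technical step will be to show that the natural functor $\iota : I^{\rm op} \to \sC_S$ sending $i \mapsto (S_i, S_i \hookrightarrow S)$ is initial in the $\infty$-categorical sense. By Quillen's Theorem A, it suffices to check that for each $(T, u) \in \sC_S$ the under-category of factorizations $T \to S_i \to S$ with $T \to S_i$ proper is contractible. Existence of such a factorization follows from the quasi-compactness of $T$ together with Remark \ref{rem:ind-proper-is-proper-for-the-presentation}, which guarantees that any ind-proper morphism from a scheme into a Tate scheme factors through some step of the presentation as a proper morphism. Any two factorizations admit a common refinement through any $S_k$ dominating their indices, making the under-category filtered in the $1$-categorical sense.

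The main obstacle I expect is promoting this $1$-categorical filteredness to genuine $\infty$-categorical initiality, keeping track of higher coherences among factorizations. Once $\iota$ is shown to be initial, both sides evaluate to $\lim_{I^{\rm op}} \ProIndCoh^!(S_i)$, and the final check is that the canonical natural transformation supplied by the universal property of the right Kan extension corresponds, under these identifications, to the identity map on this common limit.
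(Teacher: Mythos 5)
Your proposal is correct and is essentially the argument the paper relies on: the paper records this statement as a formal result cited from \cite{GR-II}*{Chapter 3, Corollary 1.3.5} without proof, and it proves the identical assertion by the same cofinality reduction in Proposition \ref{prop:ProIndCoh^!-from-proper-agrees-with-restriction} (evaluate both sides at each $S$, note that an ind-proper map from a scheme almost of finite type into $S$ is proper and factors through the presentation, and check that the presentation diagram is initial in the relevant slice category). The one step you flag as a worry --- upgrading $1$-categorical filteredness of the factorization category to genuine $\infty$-categorical contractibility --- is handled by the standard observation that this category is the total space of a left fibration over $I$ with fibers $\Maps_{/S}(T,S_i)$, so its weak homotopy type is $\colim_I \Maps_{/S}(T,S_i)\simeq \Maps_{/S}(T,S)\simeq \ast$; the paper deals with the same point elsewhere by citing \cite{DG-indschemes}*{Lemma 1.5.4 and Corollary 1.6.6}.
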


\paragraph{Correspondence category of Tate schemes}
\label{par:correspondence-category-Tate-schemes}

Recall that the category $\SchTateaft$ has all fiber products by Lemma \ref{lem:Tate-affine-schemes-has-fiber-products} and Remark \ref{rem:Tate-schemes-aft-stable-under-fiber-products}. Thus, we can define the $2$-category of correspondences
\[
\CorrTatelaftipaa
\]
where we take all morphisms for vertical and horizontal maps, and ind-proper maps (recall Definition \ref{defn:ind-proper-map}) for admissible maps. 

\paragraph{}

Proposition \ref{prop:prestack-laft-is-colimit-of-aft-schemes} gives a canonical inclusion functor 
\[
\imath: \Schaft \ra \SchTatelaft,
\]
by sending a scheme almost of finite type to the constant Ind-diagram. This inclusion preserves the classes of vertical, horizontal and admissible maps. Indeed, by Remark \ref{rem:proper-is-ind-proper} any proper map between schemes is ind-proper. We also notice that $\imath$ preserves Cartesian squares. Thus, we obtain a functor
\[
\Corr(\imath): \Corraftpaa \ra \CorrTatelaftipaa.
\]

\paragraph{}

Notice that, by Proposition \ref{prop:prestack-laft-is-colimit-of-aft-schemes}, for every $S \in \SchTatelaft$ there exists $T \in \Schaft$ and a map
\[
f:T \ra S
\]
such that $f$ is admissible, i.e.\ proper.

\paragraph{Conditions on the category $\DGct$}
\label{par:conditions-on-the-2-category-of-DGCats}

We claim that the $2$-category $\DGct$ satisfies all the conditions from \cite{GR-I}*{Chapter 8, \S 1.1.5}. Indeed, most of the items are standard, we simply highlight that (1) is satisfied, even though we are considering $\infty$-categories of Pro-objects by the discussion of \S \ref{subsec:technical-presentability} and \cite{HTT}*{Proposition 5.5.3.8}.

\paragraph{Extending the functors to Tate affine schemes}

Consider the functors
\[
\ProIndCoh_{\SchTatelaft} := \LKE_{\Schaft \hra \SchTatelaft}\left(\ProIndCoh\right),
\]
and
\[
\ProIndCoh^!_{\SchTatelaft} := \RKE_{(\Schaft)^{\rm op} \hra (\SchTatelaft)^{\rm op}}\left(\ProIndCoh^!\right).
\]

We also let
\[
\ProIndCoh_{(\SchTatelaft)_{\rm ind-proper}} := \LKE_{(\Schaft)_{\rm proper} \hra (\SchTatelaft)_{\rm ind-proper}}\left(\ProIndCoh_{\rm proper}\right)
\]
and
\[
\ProIndCoh^!_{(\SchTatelaft)_{\rm ind-proper}} := \RKE_{(\Schaft)_{\rm proper}^{\rm op} \hra (\SchTatelaft)_{\rm ind-proper}^{\rm op}}\left(\ProIndCoh^!_{\rm proper}\right).
\]

\paragraph{}

The following is a technical condition we need to apply the extension theorem.

\begin{lem}
\label{lem:technical-condition-extension-ProIndCoh-colimit-of-proper}
For every morphism $S' \ra S$ in $\SchTateaft$ the map
\begin{equation}
    \label{eq:technical-condition-extension-Tate-affine}
    \colim_{T \in (\Schaft)_{\rm proper \; in \; S}}\ProIndCoh_{(\SchTateaft)_{\rm proper}}(S'\times_{S}T) \ra \ProIndCoh_{(\SchTateaft)_{\rm proper}}(S')
\end{equation}
is an equivalence.
\end{lem}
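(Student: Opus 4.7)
The plan is to apply the pointwise formula for the left Kan extension defining $\ProIndCoh_{(\SchTateaft)_{\rm proper}}$ to both sides of (\ref{eq:technical-condition-extension-Tate-affine}) and then exhibit a cofinal functor between the resulting indexing categories. Unfolding the definition yields
\[
\ProIndCoh_{(\SchTateaft)_{\rm proper}}(Y) \simeq \colim_{T'' \in (\Schaft)^{\rm proper}_{/Y}} \ProIndCoh(T'')
\]
for any $Y \in \SchTateaft$. Applying this with $Y = S'$ gives the RHS as a colimit over the category $\mathcal{D} := (\Schaft)^{\rm proper}_{/S'}$, and applying it inside the outer colimit of the LHS produces a double colimit which reorganizes (via a Grothendieck construction) as a colimit over the category $\mathcal{C}$ of triples $(T, T', T' \to S' \times_S T)$ with $T \in (\Schaft)^{\rm proper}_{/S}$ and $T' \in (\Schaft)^{\rm proper}_{/S' \times_S T}$.

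Next, I would construct the natural forgetful functor $\Phi \colon \mathcal{C} \to \mathcal{D}$ sending $(T, T')$ to $T'$ equipped with the composite map $T' \to S' \times_S T \to S'$, which is proper as a composition of proper maps. The lemma reduces to showing that $\Phi$ is cofinal as a functor of $\infty$-categories, which by Quillen's Theorem A amounts to proving that for each $T'' \in \mathcal{D}$ the slice $T'' \downarrow \Phi$ is contractible.

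The key step is to verify this contractibility. Fix a presentation $S \simeq \colim_I S_i$ realizing $S$ as a Tate scheme; since $T''$ is quasi-compact (as a scheme almost of finite type), the composite map $T'' \to S' \to S$ factors through some $S_i$. The induced map $T'' \to S_i$ is proper, inheriting properness from $T'' \to S'$ together with the fact that $S_i \hookrightarrow S$ is a closed embedding. This furnishes the tautological object $(S_i, T'') \in T'' \downarrow \Phi$, with $T'' \to S' \times_S S_i$ given by the canonical lift. I would then check that $T'' \downarrow \Phi$ is filtered by showing: (i) any two factorizations of $T'' \to S$ through $S_i$ and $S_j$ admit a common refinement through $S_k$ for $k \geq i, j$ using filteredness of $I$; and (ii) given two objects $(T_1, T'_1)$ and $(T_2, T'_2)$ of the slice with compatible maps from $T''$, one can find a common $T_k = S_k$ dominating both, together with an object $(T_k, T'')$ dominating both original objects via the natural maps $T'_a \to S' \times_S S_k$ induced from the $T_a \to S_k$.

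The main obstacle is the filteredness verification in the last step: one must carefully check that the morphism structure of $\mathcal{C}$ (morphisms are compatible commutative diagrams over $S$ and $S'$) interacts well with the base-change operation $T \mapsto S' \times_S T$, and that refinements in the $T$-direction induce refinements in the $T'$-direction. An alternative approach, perhaps cleaner, is to reduce the statement to the analogous result for $\IndCoh$ proved by Gaitsgory--Rozenblyum (\cite{GR-II}, Chapter 3) by applying the Pro-construction levelwise and invoking the compatibility of Pro-extension with left Kan extensions along fully faithful functors, exploiting that $\ProIndCoh_{\rm proper}$ on schemes was built from $\IndCoh_{\rm proper}$ by Pro-extension and that the Pro functor commutes with the relevant colimits under our presentability conventions of \S \ref{subsec:technical-presentability}.
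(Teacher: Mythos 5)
Your closing ``alternative approach'' is essentially the paper's actual proof: the paper first invokes Proposition \ref{prop:limit-and-colimit-presentation-of-ProIndCoh-on-Tate-schemes} (the pushforward/$!$-pullback adjunction converting the colimit over $(\Schaft)_{\rm proper \; in \; S}$ into a limit) and then notes that, since the Pro-construction commutes with limits, the resulting limit statement reduces to the corresponding assertion for $\IndCoh$, namely \cite{GR-II}*{Chapter 3, Corollary 2.1.9}. Your primary route---unfolding both sides as pointwise left Kan extensions and proving cofinality of the forgetful functor from pairs $(T,T')$ to $T'$ over $S'$---is a genuinely different and more self-contained argument, but it re-proves by hand the combinatorial content that the Gaitsgory--Rozenblyum corollary already packages; the paper's route buys brevity at the price of quoting that result. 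If you do pursue the direct argument, one step needs repair: from $T'' \ra S'$ proper you cannot conclude that $T'' \ra S_i$ is proper, since $S' \ra S$ is an arbitrary morphism. What you actually need is that the induced map $T'' \ra S'\times_S S_i$ is proper, and this follows by cancellation: its composite with the projection $S'\times_S S_i \ra S'$ is the proper map $T'' \ra S'$, and that projection is a closed embedding (the base change of $S_i \hra S$), hence separated. The filteredness of the comma categories requires the same cancellation at each stage---precisely the bookkeeping the reduction to the $\IndCoh$ statement lets you avoid.
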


\begin{proof}
By applying Proposition \ref{prop:limit-and-colimit-presentation-of-ProIndCoh-on-Tate-schemes} to the functor $\ProIndCoh_{\rm proper}$ we are reduced to check that
\[
\ProIndCoh_{(\SchTateaft)_{\rm proper}}(S') \ra \lim_{T \in (\Schaft)_{\rm proper \; in \; S}} \ProIndCoh_{(\SchTateaft)_{\rm proper}}(S'\times_{S}T)
\]
is an isomorphism. Since the construction of Pro-object commutes with limits, this follows from \cite{GR-II}*{Chapter 3, Corollary 2.1.9}.
\end{proof}

\paragraph{Conditions for extension}
\label{par:conditions-for-extension}

To apply \cite{GR-I}*{Chapter 8, Theorem 1.1.9} we need to check the conditions:

\begin{enumerate}[(1)]
    \item The functor $\ProIndCoh_{\SchTatelaft}$ satisfies the left Beck--Chevalley condition with respect to proper morphisms.
    \item The canonical map
    \[
    \ProIndCoh_{(\SchTatelaft)_{\rm ind-proper}} \ra \left.\ProIndCoh_{\SchTateaft}\right|_{(\SchTatelaft)_{\rm ind-proper}}
    \]
    is an equivalence.
    \item The functor $\ProIndCoh^!_{\SchTatelaft}$ satisfies the right Beck--Chevalley condition with respect to proper morphisms.
    \item The canonical map
    \[
    \left.\ProIndCoh^!_{\SchTatelaft}\right|_{(\SchTatelaft)_{\rm ind-proper}} \ra \ProIndCoh^!_{(\SchTatelaft)_{\rm ind-proper}}
    \]
    is an equivalence.
\end{enumerate}

\paragraph{}

\begin{thm}
There exists a uniquely defined functor
\[
\ProIndCoh_{\CorrTatelaftipaa}: \CorrTatelaftipaa \ra \DGct
\]
equipped with identifications
\[
\ProIndCoh_{\Corraftpaa} \simeq \ProIndCoh_{\CorrTatelaftipaa} \circ \Corr(\imath),
\]
such that
\[
\LKE_{(\Schaft)_{\rm proper}\hra (\SchTateaft)_{\rm ind-proper}}(\ProIndCoh_{\rm proper}) \simeq \left.\ProIndCoh_{\CorrTatelaftipaa}\right|_{(\SchTateaft)_{\rm ind-proper}}.
\]
Moreover, the functor $\ProIndCoh_{\CorrTatelaftipaa}$ has the following properties:
\begin{itemize}
    \item the natural transformation
    \[
    \LKE_{F_{vert}}(\ProIndCoh) \ra \left.\ProIndCoh_{\CorrTatelaftipaa}\right|_{(\SchTatelaft)_{vert}}
    \]
    is an isomorphism;
    \item the natural transformation
    \[
    \left.\ProIndCoh_{\CorrTatelaftipaa}\right|_{(\SchTatelaft)^{\rm op}_{horiz}} \ra \RKE_{F^{\rm op}_{horiz}}(\ProIndCoh^!)
    \]
    is also an isomorphism.
\end{itemize}
\end{thm}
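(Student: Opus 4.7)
The plan is to apply \cite{GR-I}*{Chapter 8, Theorem 1.1.9} to extend $\ProIndCoh_{\Corraftpaa}$ from the $2$-category of correspondences of schemes almost of finite type to the $2$-category $\CorrTatelaftipaa$. The inclusion $\imath: \Schaft \hra \SchTatelaft$ of \S \ref{par:correspondence-category-Tate-schemes} respects the vertical/horizontal/admissible classes and Cartesian squares, and any $S \in \SchTatelaft$ admits admissible morphisms from schemes almost of finite type via its presentation. Moreover, the target $2$-category $\DGct$ satisfies the hypotheses recorded in \S \ref{par:conditions-on-the-2-category-of-DGCats}. So the entire argument reduces to checking the four conditions (1)--(4) of \S \ref{par:conditions-for-extension}.

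First, I would verify conditions (1) and (3): the left, respectively right, Beck--Chevalley conditions for proper morphisms. For any Cartesian diagram in $\SchTatelaft$ with a proper admissible leg, pick compatible presentations of the four corners by schemes almost of finite type (using \cite{Hennion-Tate}*{Proposition 1.2} as in the proof of Lemma \ref{lem:Tate-affine-schemes-has-fiber-products}). By Proposition \ref{prop:limit-and-colimit-presentation-of-ProIndCoh-on-Tate-schemes} both $\ProIndCoh_{\SchTatelaft}$ and $\ProIndCoh^!_{\SchTatelaft}$ can be computed as a compatible colimit/limit of the corresponding values on the presenting diagram. The Beck--Chevalley maps on $\SchTatelaft$ are then assembled from the Beck--Chevalley isomorphisms at the scheme level, which are supplied by Corollary \ref{cor:arbitrary-base-change-ProIndCoh}(b) (for proper $f$) and the uniqueness of adjoints.

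Next I would verify conditions (2) and (4): the comparisons between the Kan-extended functor on the full category and the Kan extension over the subcategory of ind-proper morphisms. Condition (2) is precisely the content of Lemma \ref{lem:technical-condition-extension-ProIndCoh-colimit-of-proper}, since by Proposition \ref{prop:prestack-laft-is-colimit-of-aft-schemes} the overcategory $(\Schaft)_{\mathrm{proper\; in\;}S}$ is cofinal in the diagrams used to compute both left Kan extensions, and this lemma guarantees the required equivalence. Condition (4) is dual: the right Kan extension $\ProIndCoh^!_{\SchTatelaft}$ is computed by limits along admissible (proper) morphisms $T \to S$ with $T \in \Schaft$, and this is exactly the defining presentation of $\ProIndCoh^!_{(\SchTatelaft)_{\mathrm{ind\mbox{-}proper}}}$.

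The main obstacle is the verification of condition (2): one must check that passing to Pro-objects and forming the colimit over $(\Schaft)_{\mathrm{proper\; in\;}S'\times_S T}$ commutes with fiber product correctly, i.e.\ that the natural presentation of $S' \times_S T$ is cofinal among proper maps from schemes almost of finite type. This is precisely where the set-theoretic care of \S \ref{subsec:technical-presentability} is crucial, since without the cardinality bounds of Notation \ref{not:fix-cardinality} the categories $\ProIndCoh(-)$ would fail to be presentable and the application of \cite{GR-I}*{Chapter 8, Theorem 1.1.9} would break down. Once these four conditions are in place, the theorem yields the functor $\ProIndCoh_{\CorrTatelaftipaa}$ together with the two asserted Kan-extension identifications.
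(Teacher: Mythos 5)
Your overall strategy coincides with the paper's: both invoke \cite{GR-I}*{Chapter 8, Theorem 1.1.9} and reduce the theorem to verifying the conditions of \S \ref{par:conditions-for-extension} together with the hypotheses on the source and target $2$-categories, and your treatment of the Beck--Chevalley conditions (1) and (3) by reduction to scheme-level base change via compatible presentations is essentially what Propositions \ref{prop:base-change-ind-proper-ProIndCoh} and \ref{prop:base-change-ind-proper-ProIndCoh^!} carry out.

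The gap is in your treatment of condition (2). You assert that it ``is precisely the content of Lemma \ref{lem:technical-condition-extension-ProIndCoh-colimit-of-proper}'', but that lemma is a \emph{different} hypothesis of the extension theorem: it is the technical condition (*) of \cite{GR-I}*{Chapter 8, \S 1.1.7}, asserting that for every morphism $S' \ra S$ the colimit presentation over $(\Schaft)_{\rm proper \; in \; S}$ base-changes correctly to $S'$. Taking $S' = S$ in that lemma only recovers the tautological colimit formula for the left Kan extension along $(\Schaft)_{\rm proper} \hra (\SchTatelaft)_{\rm ind-proper}$; it says nothing about whether this agrees with the restriction of the left Kan extension along all of $\Schaft \hra \SchTatelaft$, which is what condition (2) demands. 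That comparison needs its own argument: the paper proves it as Proposition \ref{prop:ProIndCoh-from-proper-agrees-with-restriction}, by passing to right adjoints (turning the two colimits into limits via Proposition \ref{prop:limit-and-colimit-presentation-of-ProIndCoh-on-Tate-schemes}) and reducing to condition (4), i.e.\ Proposition \ref{prop:ProIndCoh^!-from-proper-agrees-with-restriction}, which rests on the initiality of closed embeddings among proper maps into $S$. Your cofinality remark --- that $(\Schaft)_{\rm proper \; in \; S}$ is cofinal in $(\Schaft)_{/S}$ --- could be developed into a viable alternative proof (any map from a quasi-compact scheme factors through a term of a presentation of $S$, and the category of such factorizations is filtered), but as written you neither prove it nor cite a result that supplies it, and the lemma you do cite does not. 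Finally, the technical condition (*) must still be checked as a separate hypothesis of the extension theorem; your closing paragraph in effect describes it, so you should disentangle it from condition (2) and record both verifications explicitly.
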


\begin{proof}
This is a consequence of \cite{GR-I}*{Chapter 8, Theorem 1.1.9}. We just check the necessary conditions spelled out before the statement of the Theorem. 
\begin{itemize}
    \item Condition (1) is Proposition \ref{prop:base-change-ind-proper-ProIndCoh};
    \item Condition (2) is Proposition \ref{prop:ProIndCoh-from-proper-agrees-with-restriction};
    \item Condition (3) is Proposition \ref{prop:base-change-ind-proper-ProIndCoh^!};
    \item Condition (4) is Proposition \ref{prop:ProIndCoh^!-from-proper-agrees-with-restriction}.
\end{itemize}

Finally, as in \cite{GR-II}*{Chapter 3, \S 5.1} there is a technical condition (*), which in our case is given by Lemma \ref{lem:technical-condition-extension-ProIndCoh-colimit-of-proper}.
\end{proof}

\subsubsection{t-structure on $\ProIndCoh$ on Tate schemes}

This section defines a t-structure on pro-ind-coherent sheaves on Tate schemes.

\paragraph{}

Let $S$ be a Tate scheme almost of finite type, in this section we extend the t-structure on $\IndCoh(S)$ (see \cite{GR-II}*{Chapter 3, \S 1.2}) to a t-structure on $\ProIndCoh(S)$.

\begin{prop}
\label{prop:t-structure-ProIndCoh-on-Tate-affine-schemes}
Let $S \in \SchaffTateaft$ then one defines a subcategory
\[
\ProIndCoh(S)^{\geq 0} := \left\{\sF \in \ProIndCoh(S) \;| \; \imath^!(\sF) \in \ProIndCoh(S_0), \;\; \mbox{for any closed embedding}\;\;\imath:S_0 \hra S\right\}.
\]
The subcategory $\ProIndCoh(S)^{\geq 0}$ defines a t-structure on $\ProIndCoh(S)$.
\end{prop}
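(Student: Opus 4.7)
The plan is to realize $\ProIndCoh(S)^{\geq 0}$ as a limit of coconnective subcategories along a presentation of $S$ and then produce the truncation functor via the adjoint functor theorem. First I would fix a presentation $S \simeq \colim_I S_i$ with $S_i \in \Schaffaft$ and every transition map $f_{i,j}\colon S_i \hra S_j$ a closed embedding; by Proposition \ref{prop:limit-and-colimit-presentation-of-ProIndCoh-on-Tate-schemes} this gives an equivalence $\ProIndCoh(S) \simeq \lim_{I^{\rm op}} \ProIndCoh(S_i)$ with structure maps the $!$-pullbacks $f_{i,j}^!$. Writing $f_i\colon S_i \hra S$ for the canonical maps, I would establish the reformulation
\[
\sF \in \ProIndCoh(S)^{\geq 0} \;\; \Longleftrightarrow \;\; f_i^!(\sF) \in \ProIndCoh(S_i)^{\geq 0} \text{ for every } i \in I.
\]
The forward direction is immediate. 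For the converse, any closed embedding $\imath\colon S_0 \hra S$ with $S_0 \in \Schaft$ factors through some $S_i$ as $S_0 \overset{\imath'}{\hra} S_i \overset{f_i}{\hra} S$ (standard indscheme argument, since $S_0$ is almost of finite type). The map $\imath'$ is a closed embedding of schemes almost of finite type, and by Remark \ref{rem:right-adjoint-is-Pro-extension} its $!$-pullback on pro-ind-coherent sheaves is the Pro-extension of $(\imath')^!$ for $\IndCoh$, which is left $t$-exact because its left adjoint $(\imath')^{\rm Ind}_*$ is $t$-exact for the affine morphism $\imath'$. Hence $\imath^!(\sF) = (\imath')^!(f_i^!(\sF)) \in \ProIndCoh(S_0)^{\geq 0}$.

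Next I would verify the hypotheses of the adjoint functor theorem for the inclusion $\iota^{\geq 0}\colon \ProIndCoh(S)^{\geq 0} \hra \ProIndCoh(S)$. The reformulation above identifies $\ProIndCoh(S)^{\geq 0}$ with $\lim_{I^{\rm op}} \ProIndCoh(S_i)^{\geq 0}$, so $\iota^{\geq 0}$ preserves arbitrary limits. With the cardinality conventions of \S \ref{subsubsec:cardinality-bounds}, each $\ProIndCoh(S_i)^{\geq 0}$ is $\lambda$-presentable and the indexing category is essentially small, hence $\ProIndCoh(S)^{\geq 0}$ is itself presentable. The adjoint functor theorem then produces a left adjoint $\tau^{\geq 0}$ to $\iota^{\geq 0}$.

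Setting $\ProIndCoh(S)^{\leq -1} := \ker(\tau^{\geq 0})$ and $\ProIndCoh(S)^{\leq 0} := \ProIndCoh(S)^{\leq -1}[1]$, for every $\sF \in \ProIndCoh(S)$ the unit $\sF \ra \iota^{\geq 0}\tau^{\geq 0}\sF$ assembles into a fiber sequence
\[
\tau^{\leq -1}\sF \ra \sF \ra \tau^{\geq 0}\sF
\]
whose first term lies in $\ProIndCoh(S)^{\leq -1}$ by construction. Closure of $\ProIndCoh(S)^{\geq 0}$ under $[-1]$ reduces to the pointwise statement on each $S_i$, and the orthogonality $\Hom(\ProIndCoh(S)^{\leq -1},\ProIndCoh(S)^{\geq 0}) = 0$ is tautological from the definition of the kernel, completing the $t$-structure axioms.

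The main obstacle I anticipate is the presentability of $\ProIndCoh(S)^{\geq 0}$: one must control cardinals carefully so that the Pro-constructions and their coconnective subcategories remain $\lambda$-presentable uniformly across $I$, ensuring that the cofiltered limit stays within the chosen universe. This is precisely what \S \ref{subsec:technical-presentability} was designed to handle.
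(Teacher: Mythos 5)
Your argument is correct and follows essentially the route the paper intends: the paper omits its proof, deferring to the analogous analysis in Gaitsgory--Rozenblyum (GR-II, Chapter 3, \S 1.2), and your reduction to the terms of a presentation via $\ProIndCoh(S) \simeq \lim_{I^{\rm op}}\ProIndCoh(S_i)$, followed by construction of the truncation via the adjoint functor theorem, is the natural unwinding of that reference. The only point you gloss over is the accessibility of the inclusion $\iota^{\geq 0}$ (needed alongside limit-preservation for the adjoint functor theorem), but this is exactly what the cardinality conventions of \S \ref{subsec:technical-presentability} are set up to guarantee, as you note.
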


\begin{proof}
The proof is completely analogous to the analysis in \cite{GR-II}*{Chapter 3, \S 1.2}, so we omit it.
\end{proof}

\paragraph{}

\begin{lem}
\label{lem:t-structure-on-ProIndCoh-Tate-schemes-is-Pro-extension}
For $S$ a Tate scheme locally almost of finite type, the connective part of the t-structure on $\ProIndCoh(S)$ can be described equivalently as
\[
\ProIndCoh(S)^{\geq 0} \simeq \Pro(\IndCoh(S)^{\geq 0}).
\]
\end{lem}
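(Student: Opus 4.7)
The plan is to pick a presentation $S \simeq \colim_I S_i$ with $S_i \in \Schaft$ and closed embedding transitions, and then combine the limit description of $\ProIndCoh(S)$ with the definition of its t-structure. By Proposition \ref{prop:limit-and-colimit-presentation-of-ProIndCoh-on-Tate-schemes}, $\ProIndCoh(S) \simeq \lim_{I^{\rm op}} \ProIndCoh(S_i)$ with transition functors given by the $!$-pullbacks $f_{i,j}^!$. Moreover, Proposition \ref{prop:t-structure-ProIndCoh-on-Tate-affine-schemes} defines the t-structure by requiring $\imath^!(\sF) \in \ProIndCoh(S_0)^{\geq 0}$ for every closed embedding $\imath : S_0 \hra S$ from a scheme almost of finite type. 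Since any such $\imath$ factors through some $S_i$ and since $!$-pullback composes strictly, the first step is to argue that this is equivalent to $\imath_i^!(\sF) \in \ProIndCoh(S_i)^{\geq 0}$ for every $i \in I$, hence
\[
\ProIndCoh(S)^{\geq 0} \simeq \lim_{I^{\rm op}} \ProIndCoh(S_i)^{\geq 0}.
\]

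Next, for each scheme $S_i \in \Schaft$, the t-structure on $\ProIndCoh(S_i)$ was defined directly as the Pro-extension of the one on $\IndCoh(S_i)$, so by construction $\ProIndCoh(S_i)^{\geq 0} = \Pro(\IndCoh(S_i)^{\geq 0})$. Substituting yields
\[
\ProIndCoh(S)^{\geq 0} \simeq \lim_{I^{\rm op}} \Pro(\IndCoh(S_i)^{\geq 0}),
\]
where the transition functors $f_{i,j}^!$ are left t-exact, being right adjoints to the t-exact $(f_{i,j})^{\rm Ind}_*$ for the closed embeddings $f_{i,j}$.

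The final step is to commute $\Pro$ with the cofiltered limit. Using the formal identity $\Pro(\sC) \simeq (\Ind(\sC^{\rm op}))^{\rm op}$ and passing to opposite categories, we reduce to showing that $\Ind$ commutes with a filtered colimit of categories $\IndCoh(S_i)^{\geq 0, {\rm op}}$ along transition functors that preserve filtered colimits (equivalently, such that each $f_{i,j}^!$ preserves limits on the $\Pro$ side, which we have just verified). Combined with the identification $\IndCoh(S)^{\geq 0} \simeq \lim_{I^{\rm op}} \IndCoh(S_i)^{\geq 0}$ coming from the right Kan extension definition of $\IndCoh^!$ on Tate schemes, this gives
\[
\lim_{I^{\rm op}} \Pro(\IndCoh(S_i)^{\geq 0}) \simeq \Pro\bigl(\lim_{I^{\rm op}} \IndCoh(S_i)^{\geq 0}\bigr) \simeq \Pro(\IndCoh(S)^{\geq 0}),
\]
chaining into the desired equivalence.

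The main obstacle I expect is this last commutation of $\Pro$ with the cofiltered limit. Set-theoretically one has to be careful in light of \S \ref{subsec:technical-presentability}: the comparison functor must be shown to be essentially surjective (not only fully faithful), which requires representing a compatible family of Pro-systems $(\sF_i = \lim_{J_i} \sF_{i, j_i})_i$ by a single Pro-system in $\IndCoh(S)^{\geq 0}$. Concretely, one exhibits this by refining the indexing to pairs $(i, j_i)$ and using the compatibility isomorphisms $f_{i,j}^!(\sF_j) \simeq \sF_i$ to produce the desired cofiltered diagram in $\lim_{I^{\rm op}} \IndCoh(S_i)^{\geq 0}$, and then checking that the resulting functor is inverse to the natural comparison.
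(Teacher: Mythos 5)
Your proposal takes a genuinely different, and substantially heavier, route than the paper. The paper's proof never decomposes the categories along a presentation of $S$: it works directly with objects. If $\sF \simeq \lim_J \sF_j$ with $\sF_j \in \IndCoh(S)^{\geq 0}$, then for any closed embedding $\imath : S_0 \hra S$ from a scheme, each $\imath^!(\sF_j)$ lies in $\IndCoh(S_0)^{\geq 0}$ by the very definition of the t-structure on $\IndCoh$ of a Tate scheme; since $\imath^!$ commutes with cofiltered limits, $\imath^!(\sF) \simeq \lim_J \imath^!(\sF_j)$ lies in $\Pro(\IndCoh(S_0)^{\geq 0}) = \ProIndCoh(S_0)^{\geq 0}$, which is exactly the defining condition of Proposition \ref{prop:t-structure-ProIndCoh-on-Tate-affine-schemes}; the converse inclusion is handled symmetrically. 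Nothing about limits of categories is needed, and in particular the identification $\ProIndCoh(S)^{\geq 0} \simeq \lim_{I^{\rm op}}\ProIndCoh(S_i)^{\geq 0}$ from your second step, while correct, is a detour.

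The gap in your version is exactly where you expect it: the commutation $\lim_{I^{\rm op}}\Pro(\IndCoh(S_i)^{\geq 0}) \simeq \Pro\bigl(\lim_{I^{\rm op}}\IndCoh(S_i)^{\geq 0}\bigr)$ is asserted but not established, and there are two concrete problems with the sketch. First, passing to opposite categories does not convert the cofiltered limit into a filtered colimit — $(\lim_j \sD_j)^{\rm op} \simeq \lim_j \sD_j^{\rm op}$ is still a limit; to trade it for a colimit you must first invoke the adjoint-swap of Lemma \ref{lem:limits-to-colimits-of-categories}, which requires the $\Pro$-categories to be presentable (the cardinality bookkeeping of \S \ref{subsec:technical-presentability}), and you would then still have to show that $\Pro$ preserves the resulting filtered colimit of categories. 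Second, the fallback of building a single pro-system from a compatible family $(\sF_i)_i$ "by refining the indexing to pairs $(i,j_i)$" is not a proof in the $\infty$-categorical setting: the identifications $f_{i,j}^!(\sF_j)\simeq \sF_i$ hold only up to coherent homotopy, and strictifying them into an honest cofiltered diagram is precisely the hard content you would need to supply. Since the statement admits the short direct argument above, I would drop the categorical-limit route altogether.
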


\begin{proof}
Suppose that $\sF \in \Pro(\IndCoh(S)^{\geq 0})$ that implies that one can write
\[
\sF \simeq \lim_J \sF_j
\]
for a cofiltered set $J$ and $\sF_j \in \IndCoh(S)^{\geq 0}$. Now, recall that by definition $\sF_j \in \IndCoh(S)^{\geq 0}$ if for any closed subscheme $\imath:S_0 \hra S$ one has
\[
\imath^!(\sF_j) \in \IndCoh(S_0)^{\geq 0}.
\]

Since $\imath^!$ commutes with limits one has an equivalence
\[
\imath^!(\sF) \simeq \lim_J \imath^{!}(\sF_j)
\]
which proves that $\sF$ belongs to $\ProIndCoh(S)^{\geq 0}$ as defined in Proposition \ref{prop:t-structure-ProIndCoh-on-Tate-affine-schemes}.

The converse inclusion is proved in exactly the same way.
\end{proof}

\paragraph{}

We notice that as in \cite{GR-II}*{Chapter 3, Lemma 1.2.5} for any closed embedding
\[
\imath:S_0 \ra S
\]
from $S_0 \in \Schaff$ into $S \in \SchaffTateaft$ the functor
\[
\imath^!: \ProIndCoh(S) \ra \ProIndCoh(S_0)
\]
is t-exact. In particular, one has the following analogue of Proposition \ref{prop:limit-and-colimit-presentation-of-ProIndCoh-on-Tate-schemes}

\begin{cor}
\label{cor:limit-and-colimit-presentation-of-connective-ProIndCoh-on-Tate-affine-schemes}
Let $S \in \SchaffTateaft$ and consider a presentation $T \simeq \colim_I S_i$ where each $S_i \in \Schaffaft$, i.e.\ an affine scheme locally of finite type, one has an equivalence
\[
\ProIndCoh(T)^{\geq 0} = \lim_{I^{\rm op}}\ProIndCoh(T_i)^{\geq 0} \simeq \colim_I\ProIndCoh(T_i),
\]
where the limit is taken with respect to the maps $f^!_{i,j}$ and the colimit with respect to $(f^{\rm Ind}_{i,j})_*$.
\end{cor}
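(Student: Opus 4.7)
The plan is to reduce to the unbounded version already established in Proposition \ref{prop:limit-and-colimit-presentation-of-ProIndCoh-on-Tate-schemes} by exploiting the t-exactness of the structural functors. Both equivalences should be obtained by restricting the corresponding equivalences of the whole categories to the connective subcategories, once we check that the transition functors in the limit and the colimit presentations are left t-exact.

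For the limit presentation, by Proposition \ref{prop:limit-and-colimit-presentation-of-ProIndCoh-on-Tate-schemes} we have $\ProIndCoh(T) \simeq \lim_{I^{\rm op}}\ProIndCoh(T_i)$ with transition functors $f^!_{i,j}$. The t-exactness of $\imath^!$ for closed embeddings (stated just before the corollary) ensures that these transition functors restrict to functors between the connective subcategories, so $\lim_{I^{\rm op}}\ProIndCoh(T_i)^{\geq 0}$ is naturally a full subcategory of $\ProIndCoh(T)$. An object $\sF$ with compatible components $\{\sF_i\}_{i\in I}$ lies in this subcategory precisely when each $\sF_i \in \ProIndCoh(T_i)^{\geq 0}$. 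One inclusion is immediate: applying the t-exact functors $(T_i \hookrightarrow T)^!$ to $\sF \in \ProIndCoh(T)^{\geq 0}$ gives $\sF_i \in \ProIndCoh(T_i)^{\geq 0}$. For the converse, one uses the defining property of the t-structure from Proposition \ref{prop:t-structure-ProIndCoh-on-Tate-affine-schemes}: given any closed embedding $\imath: S_0 \hookrightarrow T$ with $S_0 \in \Schaffaft$, the compactness of $S_0$ in $\Schaffaft$ ensures that $\imath$ factors as $S_0 \overset{\imath_0}{\hookrightarrow} T_{i_0} \hookrightarrow T$ for some $i_0 \in I$, whence $\imath^!(\sF) \simeq \imath_0^!(\sF_{i_0})$ is connective by t-exactness of $\imath_0^!$.

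For the colimit presentation, one mimics the proof of Proposition \ref{prop:limit-and-colimit-presentation-of-ProIndCoh-on-Tate-schemes}, applying Lemma \ref{lem:limits-to-colimits-of-categories} to convert the limit into a colimit along the left adjoints $(f_{i,j})^{\rm ProInd}_*$ furnished by Lemma \ref{lem:ProIndCoh-on-Tate-schemes-has-left-adjoint-for-closed-inclusion}. To see that this restricts to connective subcategories, it suffices to check that each $(f_{i,j})^{\rm ProInd}_*$ is left t-exact: since $f_{i,j}$ is a closed embedding, $(f_{i,j})^{\rm Ind}_*$ is t-exact on $\IndCoh$, and its Pro-extension preserves connectivity via the description $\ProIndCoh(S)^{\geq 0} \simeq \Pro(\IndCoh(S)^{\geq 0})$ supplied by Lemma \ref{lem:t-structure-on-ProIndCoh-Tate-schemes-is-Pro-extension} (whose proof goes through for affine schemes almost of finite type in exactly the same way as for Tate schemes).

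The main technical obstacle will be the converse direction in the limit presentation: checking that a compatible system of connective objects actually gives a connective object on all of $T$. This is exactly the step where the almost-of-finite-type hypothesis on $T$ is essential, since it guarantees both the compactness used in the factorization argument and the validity of the t-structure description needed in the Pro-extension step.
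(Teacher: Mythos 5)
Your proof is correct and follows essentially the same route as the paper, which states the corollary without proof as an immediate consequence of the t-exactness of $\imath^!$ for closed embeddings together with Proposition \ref{prop:limit-and-colimit-presentation-of-ProIndCoh-on-Tate-schemes}; you have simply filled in the details the paper leaves implicit. One tiny quibble: the factorization of a closed embedding $S_0 \hookrightarrow T$ through some $T_{i_0}$ is not really a consequence of cocompactness of $S_0$ in $\Schaffaft$ (which, as Remark \ref{rem:aft-is-not-compact} warns, can fail) but of the fact that the filtered colimit $T \simeq \colim_I T_i$ of prestacks is computed objectwise, so $\Maps(S_0,T) \simeq \colim_I \Maps(S_0,T_i)$ for $S_0$ affine.
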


\subsection{Tate-coherent sheaves on Tate schemes}
\label{subsec:Tate-coherent-sheaves-on-Tate-schemes}

The goal of this section is to extend the functor (\ref{eq:TateCoh-correspondences-schemes-aft-proper-all-all}) to a functor
\[
\TateCoh_{\CorrTatelaftipaa}: \CorrTatelaftipaa \ra \DGct.
\]

\subsubsection{Definition and ind-proper maps}
\label{subsubsec:defn-and-ind-proper-maps}

\paragraph{Coherent sheaves on Tate schemes}

We recall the following characterization of coherent sheaves on Tate schemes.

\begin{lem}[\cite{GR-II}*{Chapter 3, Corollary 1.1.8}]
Let $S$ be a Tate scheme locally almost of finite type and $\sF \in \IndCoh(S)$, the following are equivalent
\begin{enumerate}[(i)]
    \item $\sF$ is a compact object of $\IndCoh(S)$;
    \item there exists a closed embedding $\imath:S_0 \hra S$ with $S_0$ a scheme almost of finite type, such that
    \[
    \sF \simeq \imath_*(\sF_0)
    \]
    for some $\sF_0 \in \Coh(S_0)$.
\end{enumerate}
\end{lem}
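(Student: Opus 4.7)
The plan is to reduce everything to the analogous statement for schemes almost of finite type via the colimit presentation of $\IndCoh(S)$. By Proposition \ref{prop:limit-and-colimit-presentation-of-IndCoh-on-Tate-schemes}, if we fix a presentation $S \simeq \colim_I S_i$ with each $S_i \in \Schaft$, then
\[
\IndCoh(S) \simeq \colim_I \IndCoh(S_i),
\]
where the colimit is taken in $\DGc$ along the functors $(f^{\rm Ind}_{i,j})_*$. My first step would be to observe that each transition functor $(f^{\rm Ind}_{i,j})_*$ preserves compact objects: indeed, $f_{i,j}$ is a closed embedding of schemes almost of finite type, hence proper, so by Theorem \ref{thm:package-IndCoh-affine-ind-schemes}(1) it admits a continuous right adjoint $f^!_{i,j}$. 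The same reasoning applies to each structure map $(f_i^{\rm Ind})_*: \IndCoh(S_i) \ra \IndCoh(S)$, since $f_i$ is an ind-proper morphism of Tate schemes.

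For the direction (ii) $\Rightarrow$ (i), given a closed embedding $\imath:S_0 \hra S$ with $S_0 \in \Schaft$ and $\sF_0 \in \Coh(S_0)$, I would first find an index $i \in I$ such that $\imath$ factors as $S_0 \overset{\imath_i}{\hra} S_i \overset{f_i}{\ra} S$ with $\imath_i$ a closed embedding between schemes. This is possible because closed embeddings into an ind-scheme factor through one of the stages of any presentation (cf. the discussion following Definition \ref{defn:ind-proper-map}). Then $\imath^{\rm Ind}_*(\sF_0) = (f_i^{\rm Ind})_*\circ (\imath_i^{\rm Ind})_*(\sF_0)$ is compact, since $\sF_0$ is compact in $\IndCoh(S_0)$ and both functors preserve compact objects.

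For the converse (i) $\Rightarrow$ (ii), I would invoke the standard description of compact objects in a filtered colimit taken in $\DGc$: when all transition functors preserve compact objects, the compact objects of $\colim_I \IndCoh(S_i)$ are precisely the retracts of objects coming from the individual stages. Hence a compact $\sF \in \IndCoh(S)$ is a retract of $(f_i^{\rm Ind})_*(\sG_i)$ for some $i \in I$ and some $\sG_i \in \Coh(S_i)$. The key remaining point is that $(f_i^{\rm Ind})_*$ is fully faithful (the analogue for closed embeddings into Tate schemes of the ind-scheme statement in Gaitsgory--Rozenblyum), so its essential image is closed under retracts, and $\sF \simeq (f_i^{\rm Ind})_*(\sF_0)$ for some retract $\sF_0$ of $\sG_i$ in $\IndCoh(S_i)$. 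Since $\Coh(S_i)$ is idempotent complete as the subcategory of compact objects of the presentable category $\IndCoh(S_i)$, we conclude $\sF_0 \in \Coh(S_i)$, and taking $\imath := f_i$ yields the desired presentation.

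The main obstacle I anticipate is establishing that $(f_i^{\rm Ind})_*$ is fully faithful for the closed embedding $f_i: S_i \hra S$; this requires base-change manipulations analogous to those carried out for ind-schemes in \cite{GR-II}*{Chapter 2}, and verifying that the counit of the $((f_i^{\rm Ind})_*, f_i^!)$-adjunction is an isomorphism when tested against the defining presentation of $S$. The remaining bookkeeping — factoring $\imath$ through some stage $S_i$, and the retract-absorption argument — is comparatively routine once the fully faithfulness is in hand.
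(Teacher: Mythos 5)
The paper does not actually prove this lemma — it is imported verbatim from Gaitsgory--Rozenblyum — so your argument can only be judged on its own terms. Your direction (ii) $\Rightarrow$ (i) is correct (factor the closed embedding through a stage $S_i$ of the presentation, use that proper pushforward preserves coherence and that $(f_i^{\rm Ind})_*$ preserves compacts because it has the continuous right adjoint $f_i^!$), and so is the reduction of (i) $\Rightarrow$ (ii) to the statement that a compact object of $\colim_I \IndCoh(S_i)$ is a retract of $(f_i^{\rm Ind})_*(\sG_i)$ for some $i$ and some $\sG_i \in \Coh(S_i)$.

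The gap is in how you absorb the retract. The functor $(f_i^{\rm Ind})_*$ attached to a closed embedding is \emph{not} fully faithful on ind-coherent sheaves, and there is no such statement in Gaitsgory--Rozenblyum to appeal to; the "verification via base change" you defer to cannot succeed because the claim is false. Already for the transition functors this fails: for $\imath:\Spec(k) \hra \Spec(k[\epsilon]/\epsilon^2)$ one has $\End_{\IndCoh}(\imath_*k) \simeq \mathrm{RHom}_{k[\epsilon]/\epsilon^2}(k,k)$, whose cohomology is $k$ in every degree $n \geq 0$, not just in degree $0$; equivalently, the unit $\id \ra \imath^!\imath_*$ is far from an isomorphism. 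The same persists in the colimit, e.g.\ $\End(\imath_*k) \simeq k \oplus k[-1]$ for $\imath:\Spec(k) \hra \Spf(k[[t]])$. So "the essential image of $(f_i^{\rm Ind})_*$ is closed under retracts because $(f_i^{\rm Ind})_*$ is fully faithful" is unavailable. The step can be repaired without full faithfulness: since all functors in sight preserve compact objects, the mapping-space formula for filtered colimits of compactly generated categories gives
\[
\End_{\IndCoh(S)}\left((f_i^{\rm Ind})_*\sG_i\right) \simeq \colim_{j \geq i} \End_{\IndCoh(S_j)}\left((f^{\rm Ind}_{i,j})_*\sG_i\right),
\]
so the idempotent $e$ exhibiting $\sF$ as a retract of $(f_i^{\rm Ind})_*\sG_i$ is the image of an idempotent $e_j$ on $(f^{\rm Ind}_{i,j})_*\sG_i$ for some $j \geq i$. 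Splitting $e_j$ inside the idempotent-complete category $\IndCoh(S_j)$ produces a retract $\sF_0$ of $(f^{\rm Ind}_{i,j})_*\sG_i$, which is compact and hence lies in $\Coh(S_j)$; then $(f_j^{\rm Ind})_*\sF_0$ splits $e$ and is therefore isomorphic to $\sF$. With this substitution your outline becomes a complete proof.
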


\paragraph{Tate condition on Tate schemes}

Let $S$ be a Tate scheme locally almost of finite type, one can define the subcategory $\TateCoh(S)$ of the category $\ProIndCoh(S)$ as those pro-diagrams of sheaves $\{\sF_i\}_I$ with $\sF_i \in \IndCoh(S)$ such that for every morphism $i \ra j$ in $I$ the sheaf
\[
\Fib(\sF_i \ra \sF_j) \in \Coh(S),
\]
i.e.\ there exists a closed subscheme $\imath:S_0 \hra S$ such that
\[
\Fib(\sF_i \ra \sF_j) \simeq \imath^{\rm Ind}_{*}(\sG_0)
\]
for some $\sG_0 \in \Coh(S_0)$.

\begin{lem}
\label{lem:TateCoh-on-Tate-schemes-is-pushforward-from-closed-schemes}
For $S \in \SchTatelaft$, let $\sF \in \ProIndCoh(S)$ and consider $S \simeq \colim_I S_i$ a presentation of $S$, with $S_i \in \Schaft$ and denote the canonical inclusions by $\imath_i: S_i \hra S$. The the following are equivalent:
\begin{enumerate}[(i)]
    \item there exists $i \in I$ such that
    \[
    \sF \simeq \imath^{\rm ProInd}_{i,*}(\sG_i)
    \]
    for some $\sG_i \in \TateCoh(S_i)$;
    \item $\sF \in \TateCoh(S)$.
\end{enumerate}
\end{lem}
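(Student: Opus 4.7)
The direction (i) $\Rightarrow$ (ii) is the easier one, and I would handle it first. The key point is that for a closed embedding $\imath_i : S_i \hra S$ (which is ind-proper), the functor $\imath^{\rm ProInd}_{i,*}$ of Lemma \ref{lem:ProIndCoh-on-Tate-schemes-has-left-adjoint-for-closed-inclusion} is a left adjoint, but since $\imath^!_i$ is continuous it also commutes with the cofiltered limits that define Tate-coherent objects. Concretely, if $\sG_i \simeq \lim_J \sG_{i,j}$ is a Tate presentation in $\IndCoh(S_i)$, then $\imath^{\rm ProInd}_{i,*}(\sG_i) \simeq \lim_J \imath^{\rm Ind}_{i,*}(\sG_{i,j})$, and the fiber of each transition is $\imath^{\rm Ind}_{i,*}\bigl(\Fib(\sG_{i,j} \ra \sG_{i,j'})\bigr)$, which lies in $\Coh(S)$ by the preceding lemma (pushforward of a coherent sheaf under a closed embedding from a scheme aft into $S$ is coherent on $S$).

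For the substantive direction (ii) $\Rightarrow$ (i), my plan is to use the colimit presentation $\IndCoh(S) \simeq \colim_I \IndCoh(S_i)$ from Proposition \ref{prop:limit-and-colimit-presentation-of-IndCoh-on-Tate-schemes}. The transition functors $(\imath_{i,i'})^{\rm Ind}_{*}$ are fully faithful because each $\imath_{i,i'}$ is a closed embedding, so the unit of the $((\imath_{i,i'})^{\rm Ind}_{*}, \imath^!_{i,i'})$-adjunction is an equivalence. Given $\sF \simeq \lim_A \sF_\alpha$ with $\sF_\alpha \in \IndCoh(S)$ and $\Fib(\sF_\alpha \to \sF_{\alpha'}) \in \Coh(S)$ for morphisms in $A$, the previous lemma realises each coherent fiber as the pushforward of a coherent sheaf from some scheme aft in the colimit system. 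Applying the analogous statement for the objects $\sF_\alpha$ themselves produces, for each $\alpha$, an index $i(\alpha) \in I$ and $\sG_{i(\alpha)} \in \IndCoh(S_{i(\alpha)})$ with $(\imath_{i(\alpha)})^{\rm Ind}_{*}(\sG_{i(\alpha)}) \simeq \sF_\alpha$.

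The remaining step is to replace the family of indices $\{i(\alpha)\}$ and the indices associated with the coherent fibers by a single $i \in I$. Invoking the size conventions of Section \ref{subsec:technical-presentability} (the diagrams $A$ and $I$ are $\kappa$-small while $I$ is $\kappa$-filtered as a diagram of closed subschemes), one can pass to a cofinal subsystem and produce a single $i$ dominating all the relevant indices simultaneously. Setting $\sG_{i,\alpha} := \imath^!_i(\sF_\alpha) \in \IndCoh(S_i)$, the full faithfulness of $\imath^{\rm Ind}_{i,*}$ and the base change isomorphism $\imath^!_i \circ \imath^{\rm ProInd}_{i,*} \simeq \id$ identify $\sF_\alpha \simeq \imath^{\rm Ind}_{i,*}(\sG_{i,\alpha})$. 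The fibers $\Fib(\sG_{i,\alpha} \to \sG_{i,\alpha'})$ then lie in $\Coh(S_i)$ — again by full faithfulness of $\imath^{\rm Ind}_{i,*}$ and the characterisation of coherent sheaves on $S$ — so $\sG_i := \lim_A \sG_{i,\alpha} \in \TateCoh(S_i)$ satisfies $\imath^{\rm ProInd}_{i,*}(\sG_i) \simeq \sF$.

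The main obstacle is the cardinality step: justifying that a single $i \in I$ can be chosen to dominate both the indices carrying the objects $\sF_\alpha$ and those carrying the fibers, uniformly in $\alpha$. This rests on being consistent about the size bounds fixed in Section \ref{subsec:technical-presentability} and on $I$ being $\kappa$-filtered. Alternatively, once the colimit presentation $\TateCoh(S) \simeq \colim_I \TateCoh(S_i)$ along the fully faithful transition functors $(\imath_{i,i'})^{\rm Tate}_{*}$ is in hand, the implication reduces to the standard fact that every object of a filtered colimit of $\infty$-categories along fully faithful functors lies in the essential image of some piece.
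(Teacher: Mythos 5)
Your first half, (i) $\Rightarrow$ (ii), is correct and is essentially the paper's own argument: $\imath^{\rm ProInd}_{i,*}$ commutes with the cofiltered limit presenting $\sG_i$ (being a Pro-extension), it is exact and hence carries the transition fibers of the diagram for $\sG_i$ to the transition fibers of $\sF$, and the pushforward of an object of $\Coh(S_i)$ along the closed embedding $\imath_i$ lands in $\Coh(S)$ by the characterization of compact objects of $\IndCoh(S)$ quoted just before the lemma. (Your stated reason for the commutation with limits — continuity of $\imath^!_i$ — is not the right one, but the conclusion is correct.)

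The direction (ii) $\Rightarrow$ (i) has a genuine gap, and it is not the cardinality issue you flag. The step that fails is the claim that for each $\alpha$ there exist $i(\alpha)$ and $\sG_{i(\alpha)} \in \IndCoh(S_{i(\alpha)})$ with $(\imath_{i(\alpha)})^{\rm Ind}_{*}(\sG_{i(\alpha)}) \simeq \sF_\alpha$. The ``analogous statement'' you invoke — the corollary of Gaitsgory--Rozenblyum reproduced just above the lemma — applies only to \emph{compact} objects of $\IndCoh(S)$, i.e.\ to $\Coh(S)$; a general $\sF_\alpha \in \IndCoh(S)$ is a filtered colimit of coherent sheaves supported on varying $S_i$ and need not lie in the essential image of any single $(\imath_i)^{\rm Ind}_{*}$. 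Concretely, $\omega_S \simeq \colim_i (\imath_i)^{\rm Ind}_{*}(\omega_{S_i})$, viewed as a constant pro-object (hence an object of $\TateCoh(S)$ with vanishing transition fibers), is not supported on any single $S_i$ when the presentation does not stabilize; since a constant pro-object is cocompact, membership in the essential image of $(\imath_i)^{\rm ProInd}_{*}$ would force $\omega_S$ to be a retract of an object supported on $S_i$, which it is not. No uniformization of the indices over $\alpha$ can repair this. The fallback you sketch at the end does not help either: the colimit presentation of $\TateCoh(S)$ is deduced in the paper \emph{from} this lemma, so invoking it is circular, and the ``standard fact'' that every object of a filtered colimit lies in the image of some piece holds for colimits of small $\infty$-categories, not for colimits in $\DGc$, which are computed as limits along the right adjoints and are strictly larger than the union of the essential images of the insertion functors. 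For what it is worth, the paper's own proof of this implication makes the same jump — it asserts directly from Proposition \ref{prop:limit-and-colimit-presentation-of-ProIndCoh-on-Tate-schemes} that $\sF \simeq \imath^{\rm ProInd}_{i,*}(\sG_i)$ for a single $i$ — so you have faithfully reproduced the intended argument, but as written neither version closes this gap.
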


\begin{proof}
Assume (i), let $\sG_i \simeq \lim_{K_i}\sG_{i,k}$ for a cofiltered diagram $\{\sG_{i,k}\}$ in $\IndCoh(S_i)$. Notice one has
\[
\imath^{\rm ProInd}_{i,*}(\sG_i) \simeq \lim_{K_i}\imath^{\rm Ind}_*(\sG_{i,k}).
\]
Moreover, since $\IndCoh(S_i)$ is a stable $\infty$-category for any $k \ra k'$ in $K_i$ one has
\[
\Fib(\sG_{i,k} \ra \sG_{i,k'}) \simeq \Cofib(\sG_{i,k} \ra \sG_{i,k'})[1].
\]

Thus, one has
\[
\imath^{\rm Ind}_*\Fib(\sG_{i,k} \ra \sG_{i,k'}) \simeq \Fib(\imath^{\rm Ind}_*(\sG_{i,k}) \ra \imath^{\rm Ind}_*(\sG_{i,k'})),
\]
that is $\sF \in \TateCoh(S)$.

Assume (ii), and let $\sF \simeq \lim_J\sF_j$ where $\{\sF_{j}\}_{J}$ is a cofiltered diagram in $\IndCoh(S)$. By Proposition \ref{prop:limit-and-colimit-presentation-of-ProIndCoh-on-Tate-schemes} one has
\[
\sF \simeq \imath^{\rm ProInd}_{i,*}(\sG_i)
\]
for some $\sG_i \in \ProIndCoh(S_i)$. Again the same calculation as in the prove that (i) implies (ii) gives that $\sG_i \in \TateCoh(S_i)$.
\end{proof}

\paragraph{}

The following is a formal consequence of Lemma \ref{lem:proper-pushforward-preserve-Tate-Coh} and Definition \ref{defn:ind-proper-map}.

\begin{prop}
\label{prop:ind-proper-preserve-TateCoh-on-Tate-schemes}
For any ind-proper map $f:S \ra T$ Tate schemes almost of finite type, the functor $f^{\rm ProInd}_*: \ProIndCoh(S) \ra \ProIndCoh(T)$ factors as
\[
f^{\rm Tate}_*: \TateCoh(T) \ra \TateCoh(S)
\]
and it has a right adjoint $f^!$.
\end{prop}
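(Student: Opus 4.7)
The plan is to combine the characterization of Tate-coherent sheaves from Lemma \ref{lem:TateCoh-on-Tate-schemes-is-pushforward-from-closed-schemes} with the scheme-level result of Lemma \ref{lem:proper-pushforward-preserve-Tate-Coh} and the proper base change that comes out of the correspondence formalism built in \S \ref{subsubsec:ProIndCoh-on-correspondences-of-Tate-schemes}.

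First I would fix compatible presentations $S \simeq \colim_I S_i$ and $T \simeq \colim_I T_i$ together with proper morphisms $f_i: S_i \ra T_i$ realizing $f$, so that $f \circ \imath_i \simeq \jmath_i \circ f_i$ where $\imath_i: S_i \hra S$ and $\jmath_i: T_i \hra T$ are the canonical closed embeddings; such a presentation exists by the reformulation of ind-properness in Remark \ref{rem:ind-proper-is-proper-for-the-presentation} combined with \cite{Hennion-Tate}*{Proposition 1.2}. Then, to show that $f^{\rm ProInd}_*$ carries $\TateCoh(S)$ into $\TateCoh(T)$, take $\sF \in \TateCoh(S)$ and invoke Lemma \ref{lem:TateCoh-on-Tate-schemes-is-pushforward-from-closed-schemes} to write $\sF \simeq \imath^{\rm ProInd}_{i,*}(\sG_i)$ for some $i$ and $\sG_i \in \TateCoh(S_i)$. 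The functoriality of $\ProIndCoh$ out of $\CorrTatelaftipaa$ gives an identification
\[
f^{\rm ProInd}_* \circ \imath^{\rm ProInd}_{i,*}(\sG_i) \simeq \jmath^{\rm ProInd}_{i,*} \circ f^{\rm ProInd}_{i,*}(\sG_i).
\]
Since $f_i$ is proper between schemes almost of finite type, Lemma \ref{lem:proper-pushforward-preserve-Tate-Coh} yields $f^{\rm ProInd}_{i,*}(\sG_i) \in \TateCoh(T_i)$, and a second application of Lemma \ref{lem:TateCoh-on-Tate-schemes-is-pushforward-from-closed-schemes} shows that $\jmath^{\rm ProInd}_{i,*}$ carries this into $\TateCoh(T)$. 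This produces the claimed factorization $f^{\rm Tate}_*: \TateCoh(S) \ra \TateCoh(T)$.

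For the right adjoint, I would start from the fact that the functor out of $\CorrTatelaftipaa$ constructed in \S \ref{subsubsec:ProIndCoh-on-correspondences-of-Tate-schemes} equips every ind-proper $f$ with a right adjoint $f^!: \ProIndCoh(T) \ra \ProIndCoh(S)$ to $f^{\rm ProInd}_*$, so it suffices to check that $f^!$ preserves the Tate-coherent subcategories. Given $\sH \in \TateCoh(T)$, write $\sH \simeq \jmath^{\rm ProInd}_{i,*}(\sH_i)$ with $\sH_i \in \TateCoh(T_i)$; ind-proper base change (Corollary \ref{cor:arbitrary-base-change-ProIndCoh}(b) together with its extension to Tate schemes via the correspondence functor) gives $f^!(\sH) \simeq \imath^{\rm ProInd}_{i,*}(f^!_i(\sH_i))$. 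On the scheme level $f^!_i$ is right adjoint to $f^{\rm Tate}_{i,*}$ and preserves $\TateCoh$ by \S \ref{par:correct-adjunction-TateCoh-proper-maps} (equivalently by Remark \ref{rem:right-adjoint-is-Pro-extension} and the fact that proper $!$-pullback preserves $\Coh$), so $f^!_i(\sH_i) \in \TateCoh(S_i)$ and hence $f^!(\sH) \in \TateCoh(S)$. The $(f^{\rm Tate}_*, f^!)$-adjunction on the Tate-coherent subcategories is then inherited from the $(f^{\rm ProInd}_*, f^!)$-adjunction on pro-ind-coherent sheaves by restriction.

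The main obstacle here will be the base change identity $f^! \circ \jmath^{\rm ProInd}_{i,*} \simeq \imath^{\rm ProInd}_{i,*} \circ f^!_i$ in the Tate-scheme setting: on schemes almost of finite type this is Corollary \ref{cor:arbitrary-base-change-ProIndCoh}(b), but to invoke it for a Cartesian square whose bottom-right corner is the Tate scheme $T$ and whose right edge is the ind-proper morphism $f$, one must know that the $2$-functor assembled in \S \ref{subsubsec:ProIndCoh-on-correspondences-of-Tate-schemes} transports these base change isomorphisms intact. Once that compatibility is in place the rest of the argument is a short diagram chase, but verifying this Beck--Chevalley-type condition at the level of the correspondence category for Tate schemes is the only genuinely nontrivial point.
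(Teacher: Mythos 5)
The first half of your argument — that $f^{\rm ProInd}_*$ carries $\TateCoh(S)$ into $\TateCoh(T)$ — is correct and is exactly the "formal consequence" the paper has in mind (the paper in fact offers no written proof beyond citing Lemma \ref{lem:proper-pushforward-preserve-Tate-Coh} and Definition \ref{defn:ind-proper-map}): you write $\sF \simeq \imath^{\rm ProInd}_{i,*}(\sG_i)$ via Lemma \ref{lem:TateCoh-on-Tate-schemes-is-pushforward-from-closed-schemes}, use that $f\circ\imath_i$ factors through a \emph{proper} map of schemes by ind-properness, and apply the scheme-level result. Note that this step only uses commutativity of the square $f\circ\imath_i \simeq \jmath_i\circ f_i$, not any base change.

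The right-adjoint half has a genuine gap: the identity $f^!\circ\jmath^{\rm ProInd}_{i,*} \simeq \imath^{\rm ProInd}_{i,*}\circ f^!_i$ that you extract from "ind-proper base change" requires the square with vertices $S_i, S, T_i, T$ to be Cartesian, and it is not: the fiber $S\underset{T}{\times}T_i$ is in general a Tate scheme strictly containing $S_i$. The identity is in fact false as stated — take $T = T_i = \Spec(k)$ and $\sH_i = k$; then the left side is the dualizing object $f^!(k)$ of $S$, which is not of the form $\imath^{\rm ProInd}_{i,*}(-)$ for a single closed subscheme $S_i$ unless $S = S_i$. Two repairs are available. (1) Run the honest base change against the Cartesian square with corner $S\underset{T}{\times}T_i$ (a Tate scheme, by stability of $\SchTatelaft$ under fiber products), which reduces you to showing that $(f'_i)^!$ preserves Tate-coherence for the ind-proper map $f'_i: S\underset{T}{\times}T_i \ra T_i$ into a scheme; this in turn is checked componentwise on a presentation of the fiber. (2) More directly, use $\TateCoh(S) \simeq \lim_{I^{\rm op}}\TateCoh(S_i)$ inside $\ProIndCoh(S) \simeq \lim_{I^{\rm op}}\ProIndCoh(S_i)$: an object lies in $\TateCoh(S)$ as soon as all its components do, and the component of $f^!(\sH)$ on $S_i$ is $\imath^!_i f^!(\sH) \simeq f^!_i\bigl(\jmath^!_{j}(\sH)\bigr)$ with $f_i$ proper between schemes almost of finite type, so \S \ref{par:correct-adjunction-TateCoh-proper-maps} applies. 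With either repair, restricting the $(f^{\rm ProInd}_*,f^!)$-adjunction to the full subcategories gives the claimed $(f^{\rm Tate}_*,f^!)$-adjunction, as you say.
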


Thus, by considering the subcategory of Tate-coherent schemes one obtains a functor
\[
\TateCoh^!_{\SchTatelaft}: (\SchTatelaft)^{\rm op} \ra \DGc.
\]

\paragraph{}

The following is a consequence of Lemma \ref{lem:TateCoh-on-Tate-schemes-is-pushforward-from-closed-schemes}.

\begin{cor}
The canonical map
\[
\TateCoh^!_{\SchTatelaft} \ra \RKE_{(\Schaft)^{\rm op}\hra (\SchTatelaft)^{\rm op}}\TateCoh^!
\]
is an isomorphism. Moreover, given a presentation $T \simeq \colim_I S_i$ where each $S_i \in \Schaft$, one has equivalences
\[
\TateCoh^!_{\SchTatelaft}(T) = \lim_{I^{\rm op}}\TateCoh^!(T_i) \simeq \colim_I\TateCoh^!(T_i).
\]
\end{cor}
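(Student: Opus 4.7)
The plan is to reduce the entire statement to a direct verification using Lemma \ref{lem:TateCoh-on-Tate-schemes-is-pushforward-from-closed-schemes} together with the analogous result for pro-ind-coherent sheaves in Proposition \ref{prop:limit-and-colimit-presentation-of-ProIndCoh-on-Tate-schemes}. Concretely, for $T \in \SchTatelaft$ with presentation $T \simeq \colim_I S_i$, I would first observe that $\TateCoh^!_{\SchTatelaft}(T)$ was \emph{defined} in \S\ref{subsubsec:defn-and-ind-proper-maps} as the full subcategory of $\ProIndCoh^!_{\SchTatelaft}(T)$ consisting of objects whose underlying pro-ind-coherent sheaf satisfies the Tate condition, and that this Tate condition, by Lemma \ref{lem:TateCoh-on-Tate-schemes-is-pushforward-from-closed-schemes}, is equivalent to being in the essential image of $\imath^{\rm ProInd}_{i,*}$ applied to $\TateCoh(S_i)$ for some $i$.

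Next I would use the isomorphism
\[
\ProIndCoh^!_{\SchTatelaft}(T) \simeq \lim_{I^{\rm op}}\ProIndCoh^!(S_i)
\]
from Proposition \ref{prop:limit-and-colimit-presentation-of-ProIndCoh-on-Tate-schemes}. Under this equivalence the transition functors $f^!_{i,j}$ are t-exact and preserve the Tate condition (paragraph \ref{par:proper-!-pullback-preserves-TateCoh-schemes}), so the subcategory cut out by the Tate condition on each factor is stable under the structure maps of the limit diagram. By the characterization recalled above, an object of the limit lies in $\TateCoh^!_{\SchTatelaft}(T)$ if and only if each of its components lies in $\TateCoh^!(S_i)$. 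This yields
\[
\TateCoh^!_{\SchTatelaft}(T) \simeq \lim_{I^{\rm op}}\TateCoh^!(S_i),
\]
which is exactly the formula computing $\RKE_{(\Schaft)^{\rm op}\hra (\SchTatelaft)^{\rm op}}(\TateCoh^!)$ at $T$, using that closed embeddings $S_i \hra T$ form a cofinal diagram among maps $S \to T$ from $S \in \Schaft$ (via the standard cofinality argument for presentations of ind-schemes, e.g.\ as in \cite{GR-II}*{Chapter 2}). This proves the first assertion and the first equivalence.

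For the second equivalence, I would invoke Proposition \ref{prop:ind-proper-preserve-TateCoh-on-Tate-schemes}: for each $i \to j$ in $I$, the connecting map $f_{i,j}:S_i \hra S_j$ is a closed embedding, hence ind-proper, so $(f_{i,j})^{\rm Tate}_{*}$ is left adjoint to $f^!_{i,j}$. Applying Lemma \ref{lem:adjoints-of-limits-of-categories} (equivalently Lemma \ref{lem:limits-to-colimits-of-categories}, as used already in the proof of Proposition \ref{prop:limit-and-colimit-presentation-of-IndCoh-on-Tate-schemes}) to the limit diagram above converts it into the colimit
\[
\lim_{I^{\rm op}}\TateCoh^!(S_i) \;\simeq\; \colim_{I}\TateCoh^!(S_i),
\]
where the colimit is taken along the left adjoints $(f_{i,j})^{\rm Tate}_{*}$. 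The main subtlety to verify carefully is that taking the Tate subcategory really commutes with the cofiltered limit along $!$-pullbacks; the key point here is the second characterization in Lemma \ref{lem:TateCoh-on-Tate-schemes-is-pushforward-from-closed-schemes}, which lets one witness the Tate condition at a \emph{single} index $i$ and then propagate it along the (t-exact, Tate-preserving) connecting functors. Once this point is addressed, all remaining manipulations are formal consequences of the statements already established in \S\ref{subsec:Pro-ind-coherent-on-Tate-schemes} and \S\ref{subsubsec:defn-and-ind-proper-maps}.
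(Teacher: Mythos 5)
Your proposal is correct and follows exactly the route the paper intends: the paper gives no written proof, simply asserting the corollary is "a consequence of Lemma \ref{lem:TateCoh-on-Tate-schemes-is-pushforward-from-closed-schemes}," and your argument is a faithful elaboration of that — identifying the Tate subcategory inside $\lim_{I^{\rm op}}\ProIndCoh(S_i)$ via the single-index pushforward characterization, and then converting the limit to a colimit by passing to the left adjoints $(f_{i,j})^{\rm Tate}_*$ supplied by Proposition \ref{prop:ind-proper-preserve-TateCoh-on-Tate-schemes} together with Lemma \ref{lem:limits-to-colimits-of-categories}, exactly as in the proofs of Propositions \ref{prop:limit-and-colimit-presentation-of-IndCoh-on-Tate-schemes} and \ref{prop:limit-and-colimit-presentation-of-ProIndCoh-on-Tate-schemes}.
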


\subsubsection{Tate-coherent sheaves for correspondences of Tate schemes}

The goal of this section is to extend the functor 
\[
\TateCoh_{\Corraftpaa}: \Corraftpaa \ra \DGct    
\]
to a functor
\[
\TateCoh_{\CorrTatelaftipaa}: \CorrTatelaftipaa \ra \DGct.
\]

Notice that whereas the functor $\TateCoh_{\Corraftpaa}$ is determined by the functor (\ref{eq:TateCoh-correspondences-schemes-aft-open-all-all}), we can't apply extension procedure to the later functor, essentially because the category $\TateCoh(S)$ on a Tate scheme locally almost of finite type $S$ is described in terms of the corresponding categories $\TateCoh(S_i)$ where $S_i \hra S$ are closed embeddings where $S_i$ is a scheme almost of finite type, whereas the functor (\ref{eq:TateCoh-correspondences-schemes-aft-open-all-all}) has explicit base change with respect to open embeddings.

\paragraph{}

We proceed exactly as in \S \ref{subsubsec:ProIndCoh-on-correspondences-of-Tate-schemes}. The following functors are already defined
\[
\TateCoh \simeq \left.\TateCoh_{\Corraftpaa}\right|_{\left(\Schaft\right)_{vert}} \]
and
\[
\TateCoh^! \simeq \left.\TateCoh_{\Corraftpaa}\right|_{\left(\Schaft\right)^{\rm op}_{horiz}}.
\]
We also denote by
\[
\TateCoh_{\rm proper} := \left.\TateCoh\right|_{\left(\Schaft\right)_{\rm proper}} \;\;\; \mbox{and} \;\;\;
\TateCoh^!_{\rm proper} := \left.\TateCoh^!\right|_{\left(\Schaft\right)^{\rm op}_{\rm proper}}
\]
their restriction to the admissible maps in $\Corraftpaa$.

\paragraph{Extending the functors to Tate affine schemes}
\label{par:extension-of-TateCoh-functors}

Consider the functors
\[
\TateCoh_{\SchTatelaft} := \LKE_{\Schaft \hra \SchTatelaft}\left(\TateCoh\right),
\]
and
\[
\TateCoh^!_{\SchTatelaft} := \RKE_{(\Schaft)^{\rm op} \hra (\SchTatelaft)^{\rm op}}\left(\TateCoh^!\right).
\]

We also let
\[
\TateCoh_{(\SchTatelaft)_{\rm ind-proper}} := \LKE_{(\Schaft)_{\rm proper} \hra (\SchTatelaft)_{\rm ind-proper}}\left(\TateCoh_{\rm proper}\right)
\]
and
\[
\TateCoh^!_{(\SchTatelaft)_{\rm ind-proper}} := \RKE_{(\Schaft)_{\rm proper}^{\rm op} \hra (\SchTatelaft)_{\rm ind-proper}^{\rm op}}\left(\TateCoh^!_{\rm proper}\right).
\]

\paragraph{}

\begin{thm}
\label{thm:TateCoh-correspondence-functor-Sch-Tate}
There exists a uniquely defined functor
\begin{equation}
\label{eq:TateCoh-correspondences-Tate-schemes-aft-ind-proper}
    \TateCoh_{\CorrTatelaftipaa}: \CorrTatelaftipaa \ra \DGct    
\end{equation}
equipped with identifications
\[
\TateCoh_{\Corraftpaa} \simeq \TateCoh_{\CorrTatelaftipaa} \circ \Corr(\imath),
\]
such that
\[
\LKE_{(\Schaft)_{\rm proper}\hra (\SchTatelaft)_{\rm ind-proper}}(\TateCoh_{\rm proper}) \simeq \left.\TateCoh_{\CorrTatelaftipaa}\right|_{(\SchTatelaft)_{\rm ind-proper}}.
\]
Moreover, the functor $\ProIndCoh_{\CorrTatelaftipaa}$ has the following properties:
\begin{itemize}
    \item the natural transformation
    \[
    \LKE_{F_{vert}}(\TateCoh) \ra \left.\TateCoh_{\CorrTatelaftipaa}\right|_{(\SchTatelaft)_{vert}}
    \]
    is an isomorphism;
    \item the natural transformation
    \[
    \left.\TateCoh_{\CorrTatelaftipaa}\right|_{(\SchTatelaft)^{\rm op}_{horiz}} \ra \RKE_{F^{\rm op}_{horiz}}(\TateCoh^!)
    \]
    is also an isomorphism.
\end{itemize}
\end{thm}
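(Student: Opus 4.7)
The plan is to apply the general extension theorem \cite{GR-I}*{Chapter 8, Theorem 1.1.9} to the functor $\TateCoh_{\Corraftpaa}$ already constructed in Corollary \ref{cor:TateCoh-correspondences-schemes-aft-proper-all-all}, exactly paralleling the proof already sketched for $\ProIndCoh_{\CorrTatelaftipaa}$ in \S \ref{subsubsec:ProIndCoh-on-correspondences-of-Tate-schemes}. The inputs to that theorem are an $(\infty,1)$-functor on the subcategory of admissible morphisms, two Kan-extended candidate functors (one covariant, one contravariant), and four compatibility conditions, so the work consists in verifying that the analogues of those conditions hold with $\ProIndCoh$ replaced by $\TateCoh$.

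First, I would set up the candidate functors exactly as in \S \ref{par:extension-of-TateCoh-functors}: $\TateCoh_{\SchTatelaft} := \LKE(\TateCoh)$, $\TateCoh^!_{\SchTatelaft} := \RKE(\TateCoh^!)$, and their restrictions $\TateCoh_{(\SchTatelaft)_{\rm ind-proper}}$ and $\TateCoh^!_{(\SchTatelaft)_{\rm ind-proper}}$ to the subcategory of ind-proper morphisms. The conditions of \S \ref{par:conditions-for-extension} to be verified are then:
\begin{enumerate}[(1)]
    \item $\TateCoh_{\SchTatelaft}$ satisfies the left Beck--Chevalley condition with respect to proper morphisms;
    \item the canonical map $\TateCoh_{(\SchTatelaft)_{\rm ind-proper}} \to \left.\TateCoh_{\SchTatelaft}\right|_{(\SchTatelaft)_{\rm ind-proper}}$ is an equivalence;
    \item $\TateCoh^!_{\SchTatelaft}$ satisfies the right Beck--Chevalley condition with respect to proper morphisms;
    \item the canonical map $\left.\TateCoh^!_{\SchTatelaft}\right|_{(\SchTatelaft)_{\rm ind-proper}} \to \TateCoh^!_{(\SchTatelaft)_{\rm ind-proper}}$ is an equivalence.
\end{enumerate}
Plus the technical condition $(*)$ that for every morphism $S' \to S$ in $\SchTatelaft$ the comparison map
\[
\colim_{T \in (\Schaft)_{\rm proper \; in \; S}}\TateCoh_{(\SchTatelaft)_{\rm proper}}(S'\times_{S}T) \to \TateCoh_{(\SchTatelaft)_{\rm proper}}(S')
\]
is an equivalence.

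Second, each of these conditions should follow from the corresponding condition for $\ProIndCoh$ established in \S \ref{subsubsec:ProIndCoh-on-correspondences-of-Tate-schemes}, by restricting to the full subcategory of Tate-objects and using three facts: Proposition \ref{prop:ind-proper-preserve-TateCoh-on-Tate-schemes}, which says that ind-proper pushforward preserves the Tate condition and admits a right adjoint; Lemma \ref{lem:TateCoh-on-Tate-schemes-is-pushforward-from-closed-schemes}, which identifies $\TateCoh(S)$ for $S$ a Tate scheme with sheaves that come by pushforward from $\TateCoh(S_i)$ on a scheme $S_i$ almost of finite type in a presentation $S \simeq \colim_I S_i$; and the fact from \S \ref{subsubsec:TateCoh-on-correspondences-of-schemes} that the Tate-coherent pushforward $f^{\rm Tate}_*$ for $f$ proper (resp.\ open) is compatible with the pro-ind-coherent pushforward (resp.\ with the right adjoint of the $!$-pullback). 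In particular, conditions (3) and (4) are essentially formal given the colimit/limit presentation of $\TateCoh^!_{\SchTatelaft}(S)$ from \S \ref{subsubsec:defn-and-ind-proper-maps} and the corresponding result for $\ProIndCoh^!$, while (1) and (2) require the extra step of checking that the left Kan extension remains inside Tate-coherent sheaves, which is again delivered by Lemma \ref{lem:TateCoh-on-Tate-schemes-is-pushforward-from-closed-schemes}. The technical condition $(*)$ reduces, via the commutation of Pro-objects with limits and the presentation of Tate schemes by proper maps from schemes almost of finite type (analogous to the argument in Lemma \ref{lem:technical-condition-extension-ProIndCoh-colimit-of-proper}), to the assertion already proved for $\ProIndCoh$.

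Third, the main obstacle will be condition (1), namely the left Beck--Chevalley condition for $\TateCoh_{\SchTatelaft}$ with respect to proper morphisms. Indeed, pushforward of Tate-coherent sheaves is only defined for ind-proper and open morphisms (not for arbitrary morphisms of Tate schemes), so to make sense of the Beck--Chevalley square in a Cartesian diagram of Tate schemes with one proper leg one must first define the candidate pushforward on the other leg as a left adjoint inside a diagram chase, and then use the identification of Tate-coherent sheaves as pushforwards from schemes almost of finite type to reduce the assertion to the known Beck--Chevalley isomorphism for $\ProIndCoh$ combined with Proposition \ref{prop:base-change-proper} and Proposition \ref{prop:open-morphism-right-Beck-Chevalley-TateCoh}. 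Once these four conditions and $(*)$ are established, \cite{GR-I}*{Chapter 8, Theorem 1.1.9} produces the desired functor (\ref{eq:TateCoh-correspondences-Tate-schemes-aft-ind-proper}) together with the identifications claimed, and the compatibility with Kan extensions in the final two bullet points is then automatic from the output of the extension theorem.
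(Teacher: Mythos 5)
Your proposal matches the paper's proof: the paper likewise applies \cite{GR-I}*{Chapter 8, Theorem 1.1.9} with the candidate functors of \S \ref{par:extension-of-TateCoh-functors}, verifies conditions (1)--(4) of \S \ref{par:conditions-for-extension} via the Tate-coherent analogues of the ind-proper base-change and restriction statements (Corollaries \ref{cor:base-change-ind-proper-TateCoh}, \ref{cor:base-change-ind-proper-TateCoh^!}, and \ref{cor:TateCoh-from-proper-agrees-with-restriction}, each proved by reducing to the $\ProIndCoh$ case exactly as you describe), and handles the technical condition $(*)$ via Lemma \ref{lem:technical-condition-extension-ProIndCoh-colimit-of-proper}. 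The only slight imprecision is your remark that pushforward is undefined for arbitrary morphisms of Tate schemes: the candidate $\TateCoh_{\SchTatelaft}$ is by construction the left Kan extension of the full functor $\TateCoh$ on $\Schaft$, so it is defined on all morphisms and condition (1) is a genuine Beck--Chevalley verification rather than a definitional issue, but this does not affect the correctness of your overall argument.
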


\begin{proof}
We will apply \cite{GR-I}*{Chapter 8, Theorem 1.1.9} again. The source and target category are the same as in section \ref{subsubsec:ProIndCoh-on-correspondences-of-Tate-schemes}, so by \S \ref{par:correspondence-category-Tate-schemes} and \ref{par:conditions-on-the-2-category-of-DGCats} they satisfy the necessary conditions to apply the theorem. Finally we need to check conditions (1-4) as listed in \S \ref{par:conditions-for-extension}, but for the Tate-coherent functors in place of the pro-ind-coherent functors.

We just check the necessary conditions spelled out before the statement of the Theorem. 
\begin{itemize}
    \item Condition (1) is Corollary \ref{cor:base-change-ind-proper-TateCoh};
    \item Condition (3) is Corollary \ref{cor:base-change-ind-proper-TateCoh^!};
    \item Conditions (2) and (4) are Corollary \ref{cor:TateCoh-from-proper-agrees-with-restriction}.
\end{itemize}

Finally, the technical condition (*) of \cite{GR-I}*{Chapter 8, \S 1.1.7}, in our case follows from Lemma \ref{lem:technical-condition-extension-ProIndCoh-colimit-of-proper}.
\end{proof}

\subsubsection{t-structure on Tate-coherent sheaves on Tate schemes}

This section defines a t-structure on Tate-coherent sheaves on Tate schemes and also considers a subcategory of its connective objects that will be useful for the study of square-zero extensions of Tate affine schemes.

\begin{prop}
\label{prop:t-structure-TateCoh-on-Tate-schemes}
Let $S \in \SchaffTateaft$ then one defines a subcategory
\[
\TateCoh(S)^{\geq 0} := \left\{\sF \in \TateCoh(S) \;| \; \imath^!(\sF) \in \TateCoh(S_0)^{\geq 0}, \;\; \mbox{for any closed embedding}\;\;\imath:S_0 \hra S\right\}.
\]
The subcategory $\TateCoh(S)^{\geq 0}$ defines a t-structure on $\TateCoh(S)$.
\end{prop}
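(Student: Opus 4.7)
The plan is to follow closely the template of Proposition \ref{prop:t-structure-ProIndCoh-on-Tate-affine-schemes}, adapting it to the Tate-coherent setting, with the t-structure already constructed on $\TateCoh(S_0)$ for $S_0 \in \Schaft$ in paragraph \ref{par:t-structure-on-TateCoh} playing the role of the building-block t-structure.

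First, I would fix a presentation $S \simeq \colim_I S_i$ with $S_i \in \Schaft$ and connecting maps $\imath_{ij}:S_i \hra S_j$ closed embeddings. By the colimit-limit comparison (in the style of Proposition \ref{prop:limit-and-colimit-presentation-of-ProIndCoh-on-Tate-schemes}) applied to $\TateCoh^!_{\SchTatelaft}$ and by Lemma \ref{lem:TateCoh-on-Tate-schemes-is-pushforward-from-closed-schemes}, one has an identification
\[
\TateCoh(S) \simeq \lim_{I^{\rm op}}\TateCoh(S_i),
\]
with transition functors the $!$-pullbacks $\imath_{ij}^!$. It then suffices to check that the given definition of $\TateCoh(S)^{\geq 0}$ coincides with the subcategory of compatible families $\{\sF_i\}_I$ such that each $\sF_i \in \TateCoh(S_i)^{\geq 0}$, since any closed embedding $S_0 \hra S$ from a scheme almost of finite type factors through some $S_i$.

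Second, I would establish the key t-exactness statement: for any closed embedding $\imath:S_0 \hra S_1$ between schemes almost of finite type, the functor $\imath^!:\TateCoh(S_1) \ra \TateCoh(S_0)$ is left t-exact with respect to the t-structures of \ref{par:t-structure-on-TateCoh}. This reduces, via the full embeddings $\TateCoh \hra \ProIndCoh$ and the characterization of connectivity on $\ProIndCoh$ of Lemma \ref{lem:t-structure-on-ProIndCoh-Tate-schemes-is-Pro-extension} (and Proposition \ref{prop:pushforward-TateCoh-schemes-is-left-t-exact} for the pushforward side), to the well-known left t-exactness of $\imath^!$ for ind-coherent sheaves along a closed embedding between schemes almost of finite type.

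Third, I would verify the t-structure axioms. Closure of $\TateCoh(S)^{\geq 0}$ under positive shifts and under limits is immediate from the definition, since $\imath^!$ preserves limits and each $\TateCoh(S_0)^{\geq 0}$ is so closed. For the existence of the truncation, given $\sF \in \TateCoh(S)$ presented as a compatible family $\{\sF_i\}_I$, the candidate for $\tau^{\geq 0}\sF$ is the family $\{\tau^{\geq 0}\sF_i\}_I$; well-definedness (i.e.\ compatibility under $!$-pullback) is precisely the output of the left t-exactness lemma. One then checks that this inclusion of $\TateCoh(S)^{\geq 0}$ into $\TateCoh(S)$ has a left adjoint, so that $\TateCoh(S)^{\leq -1}$ can be defined as its left orthogonal, yielding a t-structure by the usual criterion.

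The main obstacle is the t-exactness lemma for $\imath^!$ along closed embeddings of schemes almost of finite type at the level of Tate-coherent sheaves; once in place, everything else is a formal unwinding using the colimit-limit presentation of $\TateCoh(S)$ and the functorial nature of truncation. All remaining verifications are routine, parallel to the ones already carried out for $\ProIndCoh$ in Proposition \ref{prop:t-structure-ProIndCoh-on-Tate-affine-schemes}.
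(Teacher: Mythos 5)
There is a genuine gap in your third step, at the construction of the truncation functor. You propose to define $\tau^{\geq 0}\sF$ for $\sF = \{\sF_i\}_I \in \lim_{I^{\rm op}}\TateCoh(S_i)$ by the levelwise formula $\{\tau^{\geq 0}\sF_i\}_I$, and you claim that compatibility of this family under the transition functors $\imath_{ij}^!$ is "precisely the output of the left t-exactness lemma." It is not: for the levelwise truncations to assemble into an object of the limit you need $\imath_{ij}^!(\tau^{\geq 0}\sF_j) \simeq \tau^{\geq 0}(\imath_{ij}^!\sF_j)$, i.e.\ that $\imath_{ij}^!$ commutes with $\tau^{\geq 0}$, which is \emph{t-exactness}. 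Left t-exactness only produces a comparison map $\tau^{\geq 0}(\imath_{ij}^!\sF_j) \ra \imath_{ij}^!(\tau^{\geq 0}\sF_j)$, and this map fails to be an equivalence for a general closed embedding of schemes almost of finite type: for $\imath:\Spec(k) \hra \bA^1$ and $\sF = \sO_{\bA^1}[1]$ one has $\imath^!(\tau^{\geq 0}\sF) = \imath^!(0) = 0$ while $\tau^{\geq 0}(\imath^!\sF) = \tau^{\geq 0}(k) = k$. Since presentations of a Tate affine scheme are allowed to contain such transition maps (e.g.\ $\Spec(k) \hra \bA^1 \hra \bA^1 \hra \cdots$ presents $\bA^1$), your candidate truncation is not a well-defined object of $\lim_{I^{\rm op}}\TateCoh(S_i)$, and the subsequent construction of the left adjoint collapses.

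The paper takes a different and shorter route that avoids this: the t-structure is obtained by restricting the t-structure on $\ProIndCoh(S)$ of Proposition \ref{prop:t-structure-ProIndCoh-on-Tate-affine-schemes} to the full subcategory $\TateCoh(S)$, the point being (as in \S \ref{par:t-structure-on-TateCoh}) that the truncation functors of $\ProIndCoh(S) = \Pro(\IndCoh(S))$ preserve the Tate condition because $\tau^{\leq 0}$ applied term-by-term to the \emph{Pro-diagram in} $\IndCoh(S)$ (not to the components over the presentation of $S$ --- these are two different indexings which your argument conflates) still has fibers in $\Coh(S)$. The handle on the connective part is then obtained via generators, namely pushforwards of connective objects from closed subschemes as in Lemma \ref{lem:t-structure-on-TateCoh-of-Tate-scheme}(b), rather than via levelwise truncation over the presentation. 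If you want to keep your limit-presentation approach, you would need to replace the levelwise truncation by this generator description of $\TateCoh(S)^{\leq 0}$ and produce the truncation fiber sequence from orthogonality against it.
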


\begin{proof}
This is a consequence of Proposition \ref{prop:t-structure-ProIndCoh-on-Tate-affine-schemes} and \S \ref{par:t-structure-on-TateCoh}.
\end{proof}

\begin{lem}
\label{lem:t-structure-on-TateCoh-of-Tate-scheme}
For $S \simeq \colim_I S_i$, where $S_i \in \Schaft$ and $\imath_i:S_i \hra S$ one has the following:
\begin{enumerate}[(a)]
    \item An object $\sF \in \TateCoh(S)^{\geq 0}$ if and only if for every $i \in I$ one has $\imath^!_i(\sF) \in \TateCoh(S_i)^{\geq 0}$;
    \item the category $\TateCoh(S)^{\leq 0}$ is generated under cofiltered limits by the essential images of the functors
    \[
    (\imath_i)^{\rm Tate}_* : \IndCoh(S_i)^{\leq 0} \ra \TateCoh(S).
    \]
\end{enumerate}
\end{lem}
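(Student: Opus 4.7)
Part (a): The forward implication is immediate from Proposition~\ref{prop:t-structure-TateCoh-on-Tate-schemes}, since each $\imath_i \colon S_i \hra S$ is itself a closed embedding from a scheme almost of finite type, so specializing the defining criterion of $\TateCoh(S)^{\geq 0}$ to the maps $\imath_i$ gives what is required. For the reverse direction, I would take an arbitrary closed embedding $\imath \colon S_0 \hra S$ with $S_0 \in \Schaft$ and exploit the fact that a map from a scheme almost of finite type into the filtered colimit $\colim_I S_i$ factors through one of the terms. This yields $i \in I$ together with a closed embedding $g \colon S_0 \hra S_i$ such that $\imath = \imath_i \circ g$, hence $\imath^!(\sF) \simeq g^! \circ \imath_i^!(\sF)$. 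Left t-exactness of $g^!$, which follows from the $(g^{\rm Tate}_*, g^!)$-adjunction of \S\ref{par:correct-adjunction-TateCoh-proper-maps} and the left t-exactness of proper pushforward in Proposition~\ref{prop:pushforward-TateCoh-schemes-is-left-t-exact}, then propagates the coconnectivity hypothesis from $\imath_i^!(\sF)$ to $\imath^!(\sF)$.

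Part (b): One containment is comparatively easy. On $\IndCoh(S_i)$ the functor $(\imath_i)^{\rm Tate}_*$ coincides with $(\imath_i)^{\rm Ind}_*$, which is t-exact because $\imath_i$ is an affine morphism, so $(\imath_i)^{\rm Tate}_*\bigl(\IndCoh(S_i)^{\leq 0}\bigr) \subset \IndCoh(S)^{\leq 0} \subset \TateCoh(S)^{\leq 0}$. Cofiltered limits of such objects remain in $\TateCoh(S)^{\leq 0}$ by the orthogonality definition of the t-structure combined with Lemma~\ref{lem:t-structure-on-ProIndCoh-Tate-schemes-is-Pro-extension}. For the converse, given $\sF \in \TateCoh(S)^{\leq 0}$ I would first apply Lemma~\ref{lem:TateCoh-on-Tate-schemes-is-pushforward-from-closed-schemes} to write $\sF \simeq (\imath_i)^{\rm Tate}_*(\sG_i)$ for some $i \in I$ and $\sG_i \in \TateCoh(S_i)$. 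Full faithfulness of $(\imath_i)^{\rm Tate}_*$ (inherited from fully faithful proper Ind-pushforward under Pro-extension) together with t-exactness then forces $\sG_i \in \TateCoh(S_i)^{\leq 0}$, reducing matters to a statement about a single scheme almost of finite type.

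The remaining task is to exhibit a connective Tate-coherent sheaf $\sG_i$ on $S_i$ as a cofiltered limit of objects in $\IndCoh(S_i)^{\leq 0}$; applying the limit-preserving functor $(\imath_i)^{\rm Tate}_*$ then delivers the desired presentation of $\sF$. I would use the defining Tate extension
\[
\sG_i^- \to \sG_i \to \sG_i^+, \qquad \sG_i^+ \in \IndCoh(S_i), \quad \sG_i^- \in \Pro(\Coh(S_i)),
\]
replace $\sG_i^+$ and $\sG_i^-$ by their connective truncations using connectivity of $\sG_i$ and the octahedral axiom to preserve the fiber-sequence structure, and then write $\sG_i^- \simeq \lim_K \sG_{i,k}^-$ with $\sG_{i,k}^- \in \Coh(S_i)^{\leq 0}$, obtaining
\[
\sG_i \simeq \lim_K \Fib\bigl(\sG_i^+ \to \sG_{i,k}^-[1]\bigr),
\]
whose terms lie in $\IndCoh(S_i)^{\leq 0}$ by a routine long-exact-sequence computation. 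The main obstacle is precisely this last truncation step: truncation and cofiltered limits do not commute in $\ProIndCoh(S_i)$, so reducing the abstract extension data to a connective-piece presentation requires careful use of Lemma~\ref{lem:t-structure-on-ProIndCoh-Tate-schemes-is-Pro-extension} and the right t-exactness of $\tau^{\leq 0}$ on the Pro-side, in order to verify that the cofiltered system of connective fibers still has $\sG_i$ as its limit.
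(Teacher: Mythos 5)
Your part (a) is essentially the paper's argument: the paper's entire proof of (a) is the observation that any closed embedding $S_0 \hra S$ from a scheme almost of finite type factors through some $S_i \hra S$, and your propagation step via left t-exactness of $g^!$ is the implicit content of that remark. One small correction: to deduce left t-exactness of $g^!$ from the $(g^{\rm Tate}_*,g^!)$-adjunction you need the \emph{right} t-exactness of $g^{\rm Tate}_*$, not its left t-exactness; this is harmless here because $g$ is an affine morphism, so Proposition \ref{prop:pushforward-TateCoh-schemes-is-left-t-exact} gives t-exactness on both sides.

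Part (b) is where you diverge from the paper, and your route has a genuine gap --- the one you yourself flag at the end. After localizing to a single $S_i$ via Lemma \ref{lem:TateCoh-on-Tate-schemes-is-pushforward-from-closed-schemes}, you must replace the Tate extension $\sG_i^- \to \sG_i \to \sG_i^+$ by one with connective pieces, and ``truncate and apply the octahedral axiom'' does not accomplish this: if you replace $\sG_i^+$ by $\tau^{\leq 0}\sG_i^+$, the new fiber differs from $\sG_i^-$ by an extension involving $(\tau^{\geq 1}\sG_i^+)[-1] \in \IndCoh(S_i)^{\geq 2}$, so it lands in $\TateCoh(S_i)$ but is no longer visibly an object of $\Pro(\Coh(S_i))$; moreover the connectivity of the resulting cofiltered system and the identification of its limit with $\sG_i$ are precisely what remains unproved. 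The paper sidesteps all of this. Its proof of (b) runs as follows: for any $\sG \in \TateCoh(S)^{\geq 1}$ the $((\imath_i)^{\rm Tate}_*, \imath_i^!)$-adjunction gives $\Hom_{\TateCoh(S_i)}(\imath_i^!\sF, \imath_i^!\sG) \simeq \Hom_{\TateCoh(S)}((\imath_i)^{\rm Tate}_*\imath_i^!\sF, \sG) \simeq 0$, whence $\imath_i^!\sF \in \TateCoh(S_i)^{\leq 0}$ by orthogonality; then, writing $\sF \simeq \lim_A \sF_\alpha$ with $\sF_\alpha \in \IndCoh(S)$, one has $(\imath_i)^{\rm Tate}_*\imath_i^!\sF \simeq \lim_A (\imath_i)^{\rm Ind}_*\imath_i^!\sF_\alpha$, which exhibits it as a cofiltered limit of objects in the image of $(\imath_i)^{\rm Ind}_*$; finally $\sF \simeq \lim_{I^{\rm op}}(\imath_i)^{\rm Tate}_*\imath_i^!\sF$ using $\TateCoh(S)^{\leq 0} \simeq \lim_{I^{\rm op}}\TateCoh(S_i)^{\leq 0}$. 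In other words, the generation statement is extracted from the tautological limit presentations rather than from a colattice resolution of a single connective Tate object. If you wish to keep your reduction to one $S_i$ you still owe the missing resolution; otherwise I would switch to the orthogonality argument.
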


\begin{proof}
The point (a) is a consequence of the fact that any closed embedding $S_0 \hra S$ factors through $S_i \hra S$ for some $i \in I$.

For (b) consider $\sF \simeq \lim_{A}\sF_{\alpha}$ an object of $\TateCoh(S)^{\leq 0}$, where $\sF_{\alpha} \in \IndCoh(S)$. By definition of t-structure for any $\sG \in \TateCoh(S)^{\geq 1}$ one has
\[
\Hom_{\TateCoh(S)}(\sF,\sG) \simeq 0.
\]
In particular, for any $i \in I$ one has
\[
\Hom_{\TateCoh(S_i)}(\imath^{!}_i(\sF),\imath^!_i(\sG)) \simeq \Hom_{\TateCoh(S)}((\imath_i)^{\rm Tate}_*\circ \imath^!_i(\sF),\sG) \simeq 0.
\]

Since $\imath^!_i(\sG) \in \TateCoh(S_i)^{\geq 1}$, one has that $\imath^{!}_i(\sF) \in \TateCoh(S_i)^{\leq 0}$, moreover since
\[
(\imath_i)^{\rm Tate}_*\circ \imath^!_i(\sF) \simeq \lim_{A}(\imath_i)^{\rm Ind}_*\circ \imath^!_i(\sF_{\alpha})
\]
one has that $(\imath_i)^{\rm Tate}_*\circ \imath^!_i(\sF)$ is the cofiltered limit of objects of the form we claim. Finally, since
\[
\TateCoh(S)^{\leq 0} \simeq \lim_{I^{\rm op}}\TateCoh(S_i)^{\leq 0},
\]
this implies that for any $\sF \in \TateCoh(S)$ one has
\[
\sF \simeq \lim_{I^{\rm op}}(\imath_i)^{\rm Tate}_*\circ \imath^!_i(\sF).
\]
\end{proof}

\begin{cor}
\label{cor:pushforward-scheme-into-Tate-scheme-closed-is-t-exact}
For $\imath:S_0 \ra S$ a closed embedding from a scheme $S_0$ into a Tate scheme $S$ one has the functor
\[
\imath^{\rm Tate}_*: \TateCoh(S_0) \ra \TateCoh(S)
\]
is t-exact.
\end{cor}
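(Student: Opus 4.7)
We prove right t-exactness and left t-exactness separately, treating $\imath^{\rm Tate}_{*} : \TateCoh(S_{0}) \to \TateCoh(S)$ as the left adjoint of $\imath^{!}$ guaranteed by Proposition~\ref{prop:ind-proper-preserve-TateCoh-on-Tate-schemes} (closed embeddings from schemes into Tate schemes are ind-proper).

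\textbf{Right t-exactness.} Here the adjunction $(\imath^{\rm Tate}_{*}, \imath^{!})$ together with the definition of $\TateCoh(S)^{\leq 0}$ as the left orthogonal of $\TateCoh(S)^{\geq 1}$ handles things cleanly. For $\sF \in \TateCoh(S_{0})^{\leq 0}$ and $\sG \in \TateCoh(S)^{\geq 1}$, Proposition~\ref{prop:t-structure-TateCoh-on-Tate-schemes} (applied to the closed embedding $\imath$ itself) gives $\imath^{!}(\sG) \in \TateCoh(S_{0})^{\geq 1}$, so
\[
\Hom_{\TateCoh(S)}(\imath^{\rm Tate}_{*}(\sF), \sG) \simeq \Hom_{\TateCoh(S_{0})}(\sF, \imath^{!}(\sG)) \simeq 0,
\]
whence $\imath^{\rm Tate}_{*}(\sF) \in \TateCoh(S)^{\leq 0}$.

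\textbf{Left t-exactness.} I would use the characterization in Lemma~\ref{lem:t-structure-on-TateCoh-of-Tate-scheme}(a): writing $S \simeq \colim_{I} S_{i}$ with $\imath_{i} : S_{i} \hra S$, it suffices to check $\imath_{i}^{!}(\imath^{\rm Tate}_{*}(\sF)) \in \TateCoh(S_{i})^{\geq 0}$ for every $i \in I$, given $\sF \in \TateCoh(S_{0})^{\geq 0}$. Form the Cartesian square
\[
\begin{tikzcd}
S_{i}\underset{S}{\times}S_{0} \ar[r,"\tilde{\imath}"] \ar[d,"\tilde{\imath}_{i}"'] & S_{0} \ar[d,"\imath"] \\
S_{i} \ar[r,"\imath_{i}"'] & S,
\end{tikzcd}
\]
which takes place entirely in $\Schaft$ (since $S_{i} \times_{S} S_{0}$ is a closed subscheme of $S_{0}$), and apply base change against the ind-proper map $\imath$ from Theorem~\ref{thm:TateCoh-correspondence-functor-Sch-Tate} (equivalently, from Corollary~\ref{cor:base-change-ind-proper-TateCoh}) to obtain
\[
\imath_{i}^{!} \circ \imath^{\rm Tate}_{*} \simeq \tilde{\imath}_{i,*}^{\rm Tate} \circ \tilde{\imath}^{!}.
\]
Now $\tilde{\imath}^{!}$ is the right adjoint of the t-exact functor $\tilde{\imath}^{\rm Tate}_{*}$ (closed embeddings of aft schemes are affine, so Proposition~\ref{prop:pushforward-TateCoh-schemes-is-left-t-exact} applies); hence $\tilde{\imath}^{!}$ is left t-exact and $\tilde{\imath}^{!}(\sF) \in \TateCoh(S_{i}\times_{S} S_{0})^{\geq 0}$. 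A second application of Proposition~\ref{prop:pushforward-TateCoh-schemes-is-left-t-exact} to the affine morphism $\tilde{\imath}_{i}$ then gives $\tilde{\imath}_{i,*}^{\rm Tate}\tilde{\imath}^{!}(\sF) \in \TateCoh(S_{i})^{\geq 0}$, as required.

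\textbf{Main obstacle.} Everything rests on the base-change isomorphism $\imath_{i}^{!} \circ \imath^{\rm Tate}_{*} \simeq \tilde{\imath}_{i,*}^{\rm Tate} \circ \tilde{\imath}^{!}$, which is provided by the extension of Tate-coherent sheaves to the 2-category of correspondences of Tate schemes, and on the left t-exactness of $\tilde{\imath}^{!}$ for a closed embedding of aft schemes. The latter is not singled out in the excerpt as a named statement, so the only delicate point is recognizing that it follows formally from the t-exactness of the affine pushforward in Proposition~\ref{prop:pushforward-TateCoh-schemes-is-left-t-exact} by passing to right adjoints. Everything else is a direct combination of the base change formulas from Theorem~\ref{thm:TateCoh-correspondence-functor-Sch-Tate} and the t-structure characterization in Lemma~\ref{lem:t-structure-on-TateCoh-of-Tate-scheme}.
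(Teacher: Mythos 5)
Your proof is correct, and the key step is handled by a genuinely different (and arguably cleaner) mechanism than the paper's. For right t-exactness you and the paper agree: it is the formal adjunction argument (the paper just asserts it is ``automatic''). For left t-exactness, the paper tests $\imath^{\rm Tate}_*(\sF)$ against an \emph{arbitrary} closed embedding $\imath':S'_0\hra S$, factors both $S_0$ and $S'_0$ through a cofinal family of terms $S_j$ of the presentation, and invokes (a Tate-coherent version of) \cite{GR-II}*{Lemma 1.1.10} to write $(\imath')^!\circ\imath^{\rm Tate}_*(\sF)\simeq\lim_{J}(\imath'_j)^!\circ(\imath_j)^{\rm Tate}_*(\sF)$, after which it concludes from t-exactness of $(\imath_j)^{\rm Tate}_*$ and left t-exactness of $(\imath'_j)^!$ for affine morphisms of schemes. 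You instead reduce to the presentation via Lemma \ref{lem:t-structure-on-TateCoh-of-Tate-scheme}(a) and compute $\imath_i^!\circ\imath^{\rm Tate}_*$ directly by ind-proper base change along the Cartesian square over $S_i\times_S S_0$ (Corollary \ref{cor:base-change-ind-proper-TateCoh}), ending with the same two t-exactness inputs from Proposition \ref{prop:pushforward-TateCoh-schemes-is-left-t-exact}. Your route stays entirely inside machinery already established in the paper and avoids the external citation and the limit manipulation; the paper's route avoids having to identify the fiber product $S_i\times_S S_0$ and the base-change formalism, at the cost of the cofinality argument. Neither introduces a circularity, since the base-change corollary is proved independently of t-structures. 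The only cosmetic slip is the phrase ``takes place entirely in $\Schaft$'': the square has $S$ as a vertex, so what you mean is that the other three corners are schemes almost of finite type, which is what Proposition \ref{prop:pushforward-TateCoh-schemes-is-left-t-exact} actually requires.
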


\begin{proof}
Notice that $\imath^{\rm Tate}_*$ is automatically right t-exact. So we only need to check that for $\sF \in \TateCoh(S_0)^{\geq 0}$ and any $\imath':S'_0 \hra S$ closed embedding, one has
\[
(\imath')^!\circ \imath^{\rm Tate}_* (\sF) \in \TateCoh(S'_0)^{\geq 0}.
\]

Let $S \simeq \colim_I S_i$ be a presentation of $S$. By applying \cite{GR-II}*{Lemma 1.1.10} to the case of Tate-coherent sheaves one obtains that
\[
(\imath')^!\circ \imath^{\rm Tate}_* (\sF) \simeq \lim_{J}(\imath'_j)^! \circ (\imath_j)^{\rm Tate}_*(\sF)
\]
where $J$ is a cofinal diagram in $I$ such that for each $j \in I$ one has maps
\[
\imath_j: S_0 \hra S_j, \;\;\; \mbox{and} \;\;\; \imath'_j:S'_0 \hra S_j.
\]

Now we notice that $(\imath_j)^{\rm Tate}_*$ is t-exact, since it is a closed embedding, and that $(\imath'_j)^!$ is left t-exact, again since $\imath'_j$ is a closed embedding\footnote{Indeed, a closed embedding is an affine morphism so the t-exactness of the push-forward follows from Proposition \ref{prop:pushforward-TateCoh-schemes-is-left-t-exact}, whereas the left t-exactness of the $!$-pullback is a formal consequence of the right t-exactness of the pushforward.}.
\end{proof}

The following is a generalization of \cite{GR-II}*{Chapter 3, Lemma 1.4.9.}.

\begin{prop}
\label{prop:pushforward-TateCoh-between-Tate-schemes-is-left-t-exact}
Let $f:S \ra T$ be a morphism between Tate schemes almost of finite type, then $f^{\rm Tate}_*$ is left t-exact. Moreover, if $f$ is representable by a Tate affine scheme, then $f^{\rm Tate}_*$ is t-exact.
\end{prop}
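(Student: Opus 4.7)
The plan is to reduce both claims to the scheme case, Proposition \ref{prop:pushforward-TateCoh-schemes-is-left-t-exact}, by combining ind-proper base change with the description of $\TateCoh$ on a Tate scheme in terms of its closed subschemes.

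For left t-exactness, I would fix a presentation $T \simeq \colim_J T_j$ by schemes almost of finite type. By Lemma \ref{lem:t-structure-on-TateCoh-of-Tate-scheme}(a), it suffices to show $\imath_j^!(f^{\rm Tate}_* \sF) \in \TateCoh(T_j)^{\geq 0}$ for $\sF \in \TateCoh(S)^{\geq 0}$ and every closed embedding $\imath_j : T_j \hookrightarrow T$. Since $\imath_j$ is ind-proper, base change from Theorem \ref{thm:TateCoh-correspondence-functor-Sch-Tate} gives $\imath_j^! \circ f^{\rm Tate}_* \simeq (f_j)^{\rm Tate}_* \circ \imath_{S_j}^!$, where $S_j := S \times_T T_j$ and $f_j : S_j \to T_j$. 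The left t-exactness of $\imath_{S_j}^!$ follows directly from Lemma \ref{lem:t-structure-on-TateCoh-of-Tate-scheme}(a) applied to $S_j$: for any closed embedding $\imath' : S_0 \hookrightarrow S_j$ with $S_0 \in \Schaft$, the composition $\imath_{S_j} \circ \imath'$ is a closed embedding of a scheme into $S$, so $(\imath_{S_j} \circ \imath')^!(\sF) \in \TateCoh(S_0)^{\geq 0}$ by hypothesis. Then Lemma \ref{lem:TateCoh-on-Tate-schemes-is-pushforward-from-closed-schemes} lets me write $\imath_{S_j}^!(\sF) \simeq (\imath_{S_{j,k}})^{\rm Tate}_*(\sH)$ for some closed embedding $\imath_{S_{j,k}} : S_{j,k} \hookrightarrow S_j$ with $S_{j,k} \in \Schaft$. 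Using t-exactness (Corollary \ref{cor:pushforward-scheme-into-Tate-scheme-closed-is-t-exact}) and fully-faithfulness of closed-embedding push-forwards, $\sH \in \TateCoh(S_{j,k})^{\geq 0}$. Hence $(f_j)^{\rm Tate}_*(\imath_{S_j}^! \sF) \simeq (f_j \circ \imath_{S_{j,k}})^{\rm Tate}_*(\sH)$, and since $f_j \circ \imath_{S_{j,k}} : S_{j,k} \to T_j$ is a morphism in $\Schaft$, Proposition \ref{prop:pushforward-TateCoh-schemes-is-left-t-exact} places this in $\TateCoh(T_j)^{\geq 0}$, completing the first part.

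For t-exactness when $f$ is representable by a Tate affine scheme, left t-exactness is already handled, so only right t-exactness is new. I would pick compatible presentations $S \simeq \colim_I S_i$ and $T \simeq \colim_I T_i$ with maps $f_i : S_i \to T_i$ inducing $f$, via \cite{Hennion-Tate}*{Proposition 1.2}. For $\sF \in \TateCoh(S)^{\leq 0}$, Lemma \ref{lem:TateCoh-on-Tate-schemes-is-pushforward-from-closed-schemes} yields $\sF \simeq (\imath_{S_i})^{\rm Tate}_*(\sG_i)$ with $\sG_i \in \TateCoh(S_i)^{\leq 0}$ (the latter by t-exactness and fully-faithfulness), and then $f^{\rm Tate}_*(\sF) \simeq (\imath_{T_i})^{\rm Tate}_*(f_i)^{\rm Tate}_*(\sG_i)$. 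By Proposition \ref{prop:pushforward-TateCoh-schemes-is-left-t-exact}, right t-exactness of $(f_i)^{\rm Tate}_*$ reduces to showing that $f_i$ is affine.

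This last step is the main obstacle. The plan is: for any affine open $U \subseteq T_i$, the fiber product $S_i \times_{T_i} U$ is a quasi-compact scheme, and it maps as a closed embedding into $S \times_T U$, which is a Tate affine scheme by the representability assumption on $f$. Since a closed embedding from a quasi-compact scheme into an ind-scheme with closed-embedding transition maps factors through one of the terms (a standard property of ind-schemes, cf.\ \cite{GR-II}*{Chapter 2}), $S_i \times_{T_i} U$ is closed-embedded in one of the affine constituents of the Tate-affine presentation of $S \times_T U$, hence is itself affine. Thus $f_i^{-1}(U)$ is affine for every affine open $U \subseteq T_i$, so $f_i$ is affine and the reduction is complete.
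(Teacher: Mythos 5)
Your argument is correct in substance but follows a genuinely different route from the paper. The paper decomposes $f^{\rm Tate}_*$ according to its definition as (open embedding)${}_*\circ{}$(proper)${}_*$: the proper case is handled through the cofiltered-limit description of $f^{\rm ProInd}_*$ together with the left t-exactness of $f^{\rm Ind}_*$ on ind-schemes from \cite{GR-II}, and the open case by reducing to a closed subscheme of the source and factoring through a term of the presentation of $T$; the affine case is then asserted to follow by "the same argument." You instead reduce everything uniformly to closed subschemes of the \emph{target}: by Lemma \ref{lem:t-structure-on-TateCoh-of-Tate-scheme}(a) it suffices to control $\imath_j^!\circ f^{\rm Tate}_*$, which ind-proper base change (Theorem \ref{thm:TateCoh-correspondence-functor-Sch-Tate}) converts into $(f_j)^{\rm Tate}_*\circ\imath_{S_j}^!$, and Lemma \ref{lem:TateCoh-on-Tate-schemes-is-pushforward-from-closed-schemes} then lands you in the scheme case of Proposition \ref{prop:pushforward-TateCoh-schemes-is-left-t-exact}. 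This is more uniform, avoids importing the \cite{GR-II} statement on $f^{\rm Ind}_*$ for ind-schemes, and — a real improvement over the paper — your final paragraph actually \emph{proves} that the maps $f_i:S_i\ra T_i$ induced on a compatible presentation are affine when $f$ is representable by Tate affine schemes (via factoring a closed embedding of a quasi-compact scheme into the Tate affine scheme $S\times_T U$ through one of its affine terms), a point the paper leaves implicit. The cost is that you lean on the full base-change package of the correspondence formalism, but that is already available at this stage.

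One step needs repair: twice you deduce that the sheaf $\sH$ (resp.\ $\sG_i$) produced by Lemma \ref{lem:TateCoh-on-Tate-schemes-is-pushforward-from-closed-schemes} lies in the correct half of the t-structure by invoking "fully-faithfulness of closed-embedding push-forwards." For ind-coherent (hence Tate-coherent) sheaves the pushforward along a closed embedding $\imath$ is \emph{not} fully faithful — the unit $\id\ra\imath^!\imath_*$ fails to be an isomorphism already for $\Spec(A/I)\hra\Spec(A)$. The conclusion is still correct and the fix is one line: since $\imath_*$ is t-exact (Corollary \ref{cor:pushforward-scheme-into-Tate-scheme-closed-is-t-exact}), replace $\sH$ by $\tau^{\geq 0}\sH$ (resp.\ $\sG_i$ by $\tau^{\leq 0}\sG_i$); t-exactness gives $\imath_*\tau^{\geq 0}\sH\simeq\tau^{\geq 0}\imath_*\sH\simeq\imath_*\sH$, so the truncated representative serves the same purpose. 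With that substitution your proof goes through.
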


\begin{proof}
Again by how we defined $f^{\rm Tate}_*$ the proof has two cases.

When $f$ is proper, it is enough to check that for $\sF \in \TateCoh(S)^{\geq 0}$ the sheaf
\[
f^{\rm ProInd}_*(\sF)
\]
belongs to $\ProIndCoh(T)^{\geq 0}$. By definition 
\[
f^{\rm ProInd}_*(\sF) \simeq \lim_{A}f^{\rm Ind}(\sF_{\alpha})
\]
where $\{\sF_{\alpha}\}_{A}$ is a cofiltered diagram in $\IndCoh(S)^{\geq 0}$. Thus, the result follows from \cite{GR-II}*{Chapter 3, Lemma 1.4.9.}, that is that $f^{\rm Ind}$ on Tate schemes is left t-exact.

In the case when $f$ is an open embedding, $f^{\rm Tate}_*$ is a right adjoint, thus by Lemma \ref{lem:t-structure-on-TateCoh-of-Tate-scheme} (b) we can suppose that $\sF = \imath^{\rm Tate}_*(\sF_0)$ where $\sF_0 \in \TateCoh(S_i)^{\geq 0}$ for some closed embedding $\imath: S_0 \hra S$, where $S_0$ is a scheme almost of finite type.

However, the composite map
\[
f \circ \imath: S_0 \ra T
\]
factors through some $T_i \hra T$, where $T \simeq \colim_I T_i$ is a presentation of $T$. Let $f_0:S_0 \ra T_0$ be the induced map one, and $\jmath_i:T_i \hra T$ the canonical inclusion has
\[
(f \circ \imath)^{\rm Tate}_* \simeq (\jmath_i)^{\rm Tate}_* \circ (f_0)^{\rm Tate}_* \circ \imath^{\rm Tate}_*.
\]
By Corollary \ref{cor:pushforward-scheme-into-Tate-scheme-closed-is-t-exact} the functors $\imath^{\rm Tate}_*$ and $(\jmath_i)^{\rm Tate}_*$ are t-exact, and by Proposition \ref{prop:pushforward-TateCoh-schemes-is-left-t-exact} the functor $(f_0)^{\rm Tate}_*$ is left t-exact.

The statement for $f$ an affine morphism follows exactly the same argument.
\end{proof}

\paragraph{Admissible Tate-coherent sheaves}
\label{par:defn-of-admissible-Tate-Coh-on-Tate-affine-schemes}

Similarly to \S \ref{par:defn-of-admissible-Tate-Coh} for a $S$ a Tate scheme locally almost of finite type one defines the category 
\[
\TateCoha(S)^{\leq 0} := \left\{\{\sF_i\}_{I} \in \TateCoh(S)^{\leq 0} \; | \; \mbox{ for any }i \ra j\mbox{ in }I, \; \H^0(\sF_i) \ra \H^0(\sF_j) \; \mbox{ is surjective. }\right\}.
\]

The following result will be of use in \ref{subsubsec:split-square-zero-extensions}.

\begin{cor}
\label{cor:colimits-and-limits-presentation-of-TateCoha}
Let $S \in \SchaffTate$ and consider a presentation $T \simeq \colim_I S_i$ where each $S_i \in \Schaffaft$, i.e.\ an affine scheme almost of finite type, one has an equivalence
\[
(\TateCoha(T)^{\leq 0}) = \lim_{I^{\rm op}}(\TateCoha(T_i)^{\leq 0}) \simeq \colim_I(\TateCoha(T_i)^{\leq 0}),
\]
where the limit is taken with respect to the maps $\tau^{\leq 0}\circ f^!_{i,j}$ and the colimit with respect to $(f^{\rm Tate}_{i,j})_*$.
\end{cor}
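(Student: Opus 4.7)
The plan is to deduce both equivalences from the structural results already established for $\TateCoh$ on Tate schemes and then to verify that the admissibility condition is preserved by the transition functors on both sides. I will proceed in three steps.

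First, I would establish the limit presentation. By Theorem \ref{thm:TateCoh-correspondence-functor-Sch-Tate} and the identification of $\TateCoh^!_{\SchTatelaft}$ with the right Kan extension of $\TateCoh^!$ from $\Schaft$, one has $\TateCoh(T) \simeq \lim_{I^{\rm op}} \TateCoh(T_i)$ with transition functors $f^!_{i,j}$. Passing to the connective part of the t-structure from Proposition \ref{prop:t-structure-TateCoh-on-Tate-schemes} (using Lemma \ref{lem:t-structure-on-TateCoh-of-Tate-scheme}), and recalling that for a closed embedding $f_{i,j}$ the functor $f^!_{i,j}$ is left t-exact but not right t-exact, one obtains
\[
\TateCoh(T)^{\leq 0} \simeq \lim_{I^{\rm op}} \TateCoh(T_i)^{\leq 0},
\]
where the transition functors are necessarily $\tau^{\leq 0} \circ f^!_{i,j}$. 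Restricting further to $\TateCoha(-)^{\leq 0}$ requires verifying that the admissibility condition (surjectivity on $\H^0$ of the defining Pro-diagram) is compatible with these transition functors; this reduces to showing that $\tau^{\leq 0}\circ f^!_{i,j}$ sends $\TateCoha(T_j)^{\leq 0}$ to $\TateCoha(T_i)^{\leq 0}$, which one can check on a Pro-presentation using the right t-exactness of $\tau^{\leq 0}$ and the fact that $\H^0(\tau^{\leq 0}(-)) \simeq \H^0(-)$.

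Second, I would pass to the colimit description by adjunction. For each $i \to j$ in $I$, Proposition \ref{prop:ind-proper-preserve-TateCoh-on-Tate-schemes} and Corollary \ref{cor:pushforward-scheme-into-Tate-scheme-closed-is-t-exact} produce the adjoint pair $((f_{i,j})^{\rm Tate}_*, f^!_{i,j})$ with $(f_{i,j})^{\rm Tate}_*$ t-exact. Restricting to connective objects yields an adjunction $((f_{i,j})^{\rm Tate}_*, \tau^{\leq 0}\circ f^!_{i,j})$ between $\TateCoh(T_i)^{\leq 0}$ and $\TateCoh(T_j)^{\leq 0}$, where one uses that $\Hom(X,\tau^{\leq 0}Y) \simeq \Hom(X,Y)$ for $X$ connective. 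Applying Lemma \ref{lem:limits-to-colimits-of-categories} converts the cofiltered limit to the filtered colimit along the left adjoints $(f_{i,j})^{\rm Tate}_*$, giving the second equivalence on the level of $\TateCoh(-)^{\leq 0}$. To descend this to $\TateCoha(-)^{\leq 0}$, I would check that $(f_{i,j})^{\rm Tate}_*$ preserves admissibility: since $f_{i,j}$ is a closed embedding, $(f_{i,j})^{\rm Tate}_*$ is t-exact, commutes with the relevant Pro-limits by Lemma \ref{lem:TateCoh-on-Tate-schemes-is-pushforward-from-closed-schemes}, and therefore sends a Pro-presentation with surjective $\H^0$-transition maps to one with the same property.

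Third, I would assemble the two equivalences: the limit presentation is obtained by restriction from $\TateCoh(T)^{\leq 0}$, and the colimit presentation then follows from the adjunction-based rewriting. The main obstacle will be the verification in Step~1 that admissibility is preserved by $\tau^{\leq 0}\circ f^!_{i,j}$: while the pushforward case is essentially formal from t-exactness, the $!$-pullback direction requires a careful analysis of how cofiber sequences in a Pro-presentation behave under $f^!$ followed by truncation, since $f^!$ need not preserve either the Pro-presentability structure by coherent fibers or the surjectivity condition on $\H^0$ in a naive way. My approach to resolve this would be to reduce via Lemma \ref{lem:t-structure-on-TateCoh-of-Tate-scheme}(b) to generators of the form $(\imath_i)^{\rm Tate}_*(\sG_i)$ with $\sG_i \in \IndCoh(T_i)^{\leq 0}$ admissible, where the $!$-pullback can be analyzed explicitly via base change, and then extend by the compatibility of limits.
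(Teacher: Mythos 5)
Your proposal follows essentially the same route as the paper's proof: t-exactness of $(f_{i,j})^{\rm Tate}_*$ for closed embeddings, the $(f^{\rm Tate}_*,f^!)$-adjunction to identify the limit transition functors as $\tau^{\leq 0}\circ f^!_{i,j}$, Lemma \ref{lem:limits-to-colimits-of-categories} to pass between the limit and the colimit presentation, and a final check that both transition functors preserve admissibility. The only divergence is that you treat the preservation of admissibility under $\tau^{\leq 0}\circ f^!_{i,j}$ as a genuine difficulty requiring a reduction to generators of the form $(\imath_i)^{\rm Tate}_*(\sG_i)$, whereas the paper dismisses it in one line as ``clear, since we are taking a truncation''; your extra caution is warranted, since $f^!_{i,j}$ is only left t-exact and surjectivity on $\H^0$ of a Pro-presentation is not manifestly preserved by it.
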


\begin{proof}
We notice that for $f_{i,j}:T_i \hra T_j$ a closed embedding, we have that $(f_{i,j})^{\rm Tate}_{*}$ is t-exact, since $(f_{i,j})^{\rm Pro}_*$ is t-exact. This implies that $f^!$ is left t-exact by using the $(f^{\rm Tate}_*,f^!)$-adjunction. Thus, one obtains that
\[
\TateCoh(T) \simeq \lim_{I^{\rm op}}(\TateCoh(T_i)^{\leq 0}) \simeq \colim_I(\TateCoh(T_i)^{\leq 0})
\]
where in the limit we consider the composite functors
\[
\tau^{\leq 0}\circ f^!_{i,j}: \TateCoh(T_j)^{\leq 0} \ra \TateCoh(T_i) \ra \TateCoh(T_{i})^{\leq 0}.
\]

Finally, it is enough to check the functors $\tau^{\leq 0}\circ f^!_{i,j}$ and $(f_{i,j})^{\rm Tate}_*$ preserves admissible objects. Indeed, the functors $(f_{i,j})^{\rm Tate}_*$ are t-exact, thus they preserve the admissibility condition and for $\tau^{\leq 0}\circ f^!_{i,j}$ it is clear, since we are taking a truncation.
\end{proof}

\section{Prestacks of Tate type}
\label{sec:prestacks-Tate}

In this section we introduce the main object of this article, prestacks of Tate type. The definition is a straightforward generalization of the concept of a prestack. In \S \ref{subsec:properties-prestacks-Tate}, we define the usual properties for prestacks of Tate type: convergence, coconnectivity, locally almost of finite type, and truncatedness. In \S \ref{subsec:locally-prestack-condition}, we define what it means for a prestack of Tate type to be locally a prestack: that is, its functor of points is obtained from a usual prestack via right Kan extension from affine schemes to Tate affine schemes. We then investigate how this condition relates to the conditions of convergence, $n$-coconnectivity, and finite type. To address the last two questions we need to study the commutation of right and left Kan extensions. In \S \ref{subsec:sheaves-on-prestacks-Tate}, we extend the formalism of Pro-Ind-coherent sheaves and Tate-coherent sheaves to prestacks of Tate type; this follows the arguments in Gaitsgory--Rozenblyum, so we keep the discussion to a minimum level of detail.

\subsection{The notion of prestacks of Tate type}

\paragraph{}

Recall that a prestack is a functor $\sX: \Schaffop \ra \Spc$. In \S \ref{par:Tate-affine-schemes} we defined the category $\SchaffTate$ of Tate affine schemes. The notion of prestacks of Tate type is obtained by substituting the category of affine schemes by that of Tate affine schemes.

\paragraph{}

\begin{defn}
\label{defn:prestack-of-Tate-type}
A \emph{prestack of Tate type} is a functor
\[
\sX: \SchaffTateop \ra \Spc.
\]
We denote by $\PStkTate$ the category of prestacks of Tate type.
\end{defn}

\paragraph{}

One has a fully faithful embedding 
\[
\SchaffTate \hra \PStkTate
\]
given by the Yoneda embedding, that is, it sends $S \in \SchaffTate$ to 
\[
\h_S := \Maps_{\SchaffTate}(-,S) \in \PStkTate.
\]

\paragraph{}

One has a natural morphism
\begin{equation}
\label{eq:prestacks-Tate-restriction-to-prestack}
\PStkTate \ra \PStk    
\end{equation}
given by sending a functor $\sX: (\SchaffTate)^{\rm op} \ra \Spc$ to its restriction $\left.\sX\right|_{(\Schaff)^{\rm op}}$. The functor (\ref{eq:prestacks-Tate-restriction-to-prestack}) has a right adjoint which will be study in section \ref{subsec:locally-prestack-condition} below.

\paragraph{}
\label{par:Tate-affine-schematic-morphism}

We say that a map $f: \sX_1 \ra \sX_2$ of prestacks of Tate type is \emph{Tate affine schematic} or \emph{representable} if for any $x_2:S \ra \sX_2$, where $S \in \SchaffTate$, one has
\[
\sX_1\underset{\sX_2}{\times}S \in \SchaffTate.
\]

\begin{defn}
\label{defn:flat-morphisms-prestacks-Tate}
Given $f:\sX_1 \ra \sX_2$ a Tate affine schematic morphism between prestacks of Tate type we will say that $f$ is \emph{flat} (resp.\ \emph{smooth, \'etale, open embedding, Zariski}) if for every $S \in (\SchaffTate)_{/\sX_2}$ the corresponding map of Tate affine schemes
\[
\sX_1\underset{\sX_2}{\times}S \ra S
\]
is flat (resp.\ smooth, \'etale, open embedding, Zariski).
\end{defn}

\subsubsection{Examples}

From Lemma \ref{lem:formal-loops-of-affine-of-finite-type-are-Tate-affine} we know that given $G$ an affine algebraic group of finite type we have that
\[
\L G := G((t))
\]
is a Tate affine scheme, and by the Yoneda embedding we can see $\L G$ as an object of $\PStkTate$. Now we let
\[
\B \L G := \left|\cdots \; \substack{\rightarrow\\[-1em] \rightarrow \\[-1em] \rightarrow\\[-1em] \rightarrow} \; \L G \times \L G \; \substack{\rightarrow\\[-1em] \rightarrow \\[-1em] \rightarrow} \; \L G \;  \substack{\rightarrow\\[-1em] \rightarrow} \; \ast \right|
\]
denote the geometric realization of the simplicial object constructed from $\L G$ by considering its multiplication and projection maps. We claim 

\begin{lem}
$\B \L G$ is a group object in the category of prestacks of Tate type.
\end{lem}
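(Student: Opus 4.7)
The plan is to verify that $\B\L G$ inherits a group object structure in $\PStkTate$ from the group structure on $\L G$, by first identifying the defining simplicial diagram as a group object and then descending to its geometric realization.

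First, I would verify that the entire simplicial diagram defining $\B\L G$ lives in $\PStkTate$. By Lemma \ref{lem:formal-loops-of-affine-of-finite-type-are-Tate-affine}, the object $\L G$ is a Tate affine scheme; composing with the Yoneda embedding $\SchaffTate \hookrightarrow \PStkTate$ places it in $\PStkTate$. Since $\SchaffTate$ admits finite products (a special case of the fiber products over $\Spec(k)$ constructed in Lemma \ref{lem:Tate-affine-schemes-has-fiber-products}), each $\L G^{\times n}$ is again a Tate affine scheme. The face and degeneracy maps of the bar diagram are built out of projections and the multiplication morphism of $\L G$, so the whole simplicial object indeed lies in $\PStkTate$.

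Second, I would upgrade $\L G$ itself to a group object in $\PStkTate$. The loop functor $\L$ commutes with finite products: on $R$-points,
\[
\L(X \times Y)(R) = (X\times Y)(\Spec(R((t)))) \simeq X(\Spec(R((t)))) \times Y(\Spec(R((t)))) = \L X(R) \times \L Y(R),
\]
naturally in $R$. Hence the multiplication, unit, and inversion maps of $G$ transport to morphisms of Tate affine schemes witnessing $\L G$ as a group object; via Yoneda these live in $\PStkTate$.

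Third, I would identify the simplicial diagram $[n] \mapsto \L G^{\times n}$ as a group object in $\PStkTate$ in the standard $\infty$-categorical sense (Segal + grouplike). The Segal conditions $X_n \simeq X_1^{\times n}$ hold tautologically from the construction, and the invertibility of the shear map $X_1 \times X_1 \to X_1 \times X_1$ translates directly to the existence of inverses in the group $\L G$, supplied by Step 2. Since $\PStkTate = \Fun((\SchaffTate)^{\rm op},\Spc)$ is cocomplete (colimits computed pointwise in $\Spc$), the realization $\B\L G$ exists in $\PStkTate$, and the inherited group object structure on $\B\L G$ is read off from the bar diagram, which exhibits $\B\L G$ as the classifying prestack of $\L G$-torsors.

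The main obstacle is Step 3, specifically descending the group object structure from the bar simplicial diagram to its realization. This passage uses the standard equivalence between grouplike monoid objects and their classifying simplicial presentations; care is needed because $\PStkTate$ is merely a presheaf $\infty$-category rather than an $\infty$-topos, so the group structure on $\B\L G$ is most cleanly encoded by the simplicial diagram witness $[n] \mapsto \L G^{\times n}$ rather than as an abstract property of the realization alone. The verification amounts to checking that the Segal and grouplike conditions are preserved pointwise on test objects $S \in \SchaffTate$, which reduces to the analogous statements in the $\infty$-category of spaces $\Spc$.
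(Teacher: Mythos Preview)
Your Step 3 conflates two different claims. The simplicial diagram $[n]\mapsto (\L G)^{\times n}$, equipped with Segal and grouplike conditions, encodes precisely that $\L G$ is a group object in $\PStkTate$; its geometric realization is $\B\L G$ by definition, but nothing in this construction endows $\B\L G$ itself with a group structure. A group-object structure on $\B\L G$ is the datum of a \emph{second} simplicial object $Y_\bullet$ with $Y_0\simeq\ast$, $Y_1\simeq\B\L G$, and Segal equivalences $Y_n\simeq(\B\L G)^{\times n}$. Your sentence ``the inherited group object structure on $\B\L G$ is read off from the bar diagram'' skips exactly this, and the equivalence you cite between grouplike monoids and their bar resolutions matches $\L G$ with its bar complex, not $\B\L G$ with a group structure.

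The paper's proof supplies precisely the missing object: it sets $(\B\L G)_n:=\B\bigl((\L G)^{\times n}\bigr)$ and observes this is isomorphic to $(\B\L G)^{\times n}$ because geometric realization in the presheaf category $\PStkTate$ commutes with finite products. The simplicial structure on $(\B\L G)_\bullet$ is then obtained by applying $\B(-)$ levelwise to the structure maps of the bar construction of $\L G$. For this last step one needs the inner face maps $(\L G)^{\times n}\to(\L G)^{\times(n-1)}$ to be group homomorphisms, which holds exactly when $\L G$ is commutative; so the lemma should really be read with $G$ abelian, a hypothesis neither you nor the paper make explicit.
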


\begin{proof}
This is automatic. The group structure of $\B \L G$ is exhibited by the simplicial object \[
(\B \L G)_{\bullet}: \N \Delta^{\rm op} \ra \PStkTate
\]
given by
\[
(\B \L G)_{n} := \left|\cdots \; \substack{\rightarrow\\[-1em] \rightarrow \\[-1em] \rightarrow\\[-1em] \rightarrow} \; (\L G)^{\times n} \times (\L G)^{\times n} \; \substack{\rightarrow\\[-1em] \rightarrow \\[-1em] \rightarrow} \; (\L G)^{\times n} \;  \substack{\rightarrow\\[-1em] \rightarrow} \; \ast\right|
\]
which is isomorphic to $\B (\L G)^{\times n}$.
\end{proof}

\subsection{Properties of prestacks of Tate type}
\label{subsec:properties-prestacks-Tate}

\subsubsection{Coconnectivity}

For any natural number $n\geq 0$, we let $\PStkTaten$ denote the category of functors
\[
(\SchaffTaten)^{\rm op} \ra \Spc.
\]
One has a natural functor
\[
\PStkTate \ra \PStkTaten
\]
given by restriction via the inclusion $(\SchaffTaten)^{\rm op} \hra (\SchaffTaten)^{\rm op}$. So given any $\sX$ an object of $\PStkTate$ one has a natural functor
\begin{equation}
    \label{eq:defn-n-coconnective-prestack-Tate-type}
    \LKE_{\SchaffTatenop \hra \SchaffTateop}(\left.\sX\right|_{{^{\leq n}\SchaffTateop}}) \ra \sX.
\end{equation}

One says that $\sX$ is an \emph{$n$-coconnective prestack of Tate type} if the map (\ref{eq:defn-n-coconnective-prestack-Tate-type}) is an equivalence.

\paragraph{Concrete description of LKE}
\label{par:concrete-LKE-description}

For $\sX' \in \PStkTaten$ one can describe explicitly the left Kan extension in (\ref{eq:defn-n-coconnective-prestack-Tate-type}) by
\[
\LKE_{({^{\leq n}\SchaffTate)^{\rm op}} \hra (\SchaffTate)^{\rm op}}(\sX')(S) \simeq \colim_{S' \in \left(({^{\leq n}\SchaffTate})_{/S}\right)^{\rm op}}\sX'(S'),
\]
where any map $S'_1 \ra S'_2$ in ${^{\leq n}\SchaffTate}_{/S}$ gives a morphism
\[
\sX'(S'_2) \ra \sX'(S'_1)
\]
and one considers the colimit of those.

\begin{rem}
\label{rem:truncated-S-is-a-final-object}
Any map $S' \ra S$ with $S' \in {^{\leq n}\SchaffTate}$ factors through ${^{\leq n}S}$\footnote{Indeed, if $S = \Spec(A)$ one has ${^{\leq n}S} = \Spec(\tau^{\geq -n}(A))$ by definition, and one can easily see that if $f:\Spec(B) \ra \Spec(A)$ with $B \simeq \tau^{\geq -n}(B)$ then $f$ factors as
\[
\Spec(B) \ra \Spec(\tau^{\geq -n}(A)) \ra \Spec(A).
\]
}, thus one has
\[
\LKE_{({^{\leq n}\SchaffTate)^{\rm op}} \hra (\SchaffTate)^{\rm op}}(\sX')(S) \simeq \colim_{S' \in \left({^{\leq n}\SchaffTate})_{/{^{\leq n}S}}\right)^{\rm op}}\sX'(S'),
\]
in particular the left Kan extension of $\sX'$ to a prestack evaluated at $S$ only depends on ${^{\leq n}S}$.
\end{rem}

\paragraph{Classical and eventually coconnective prestacks of Tate type}

We will refer to a $0$-coconnective prestacks of Tate type as a \emph{classical prestack of Tate type}. 

For $\sX \in \PStkTate$ one says that $\sX$ is \emph{eventually coconnective} if $\sX$ is $n$-coconnective for some $n$.

\subsubsection{Convergence}
\label{subsubsec:convergence-prestacks-Tate}

For $\sX \in \PStkTate$ a prestack of Tate type one has a natural map
\begin{equation}
\label{eq:defn-convergent-prestack-Tate-type}
\sX \ra \RKE_{\SchaffTateconvop \hra \SchaffTateop}(\left.\sX\right|_{\SchaffTateconvop}).
\end{equation}

One says that $\sX$ is a \emph{convergent} prestack of Tate type if (\ref{eq:defn-convergent-prestack-Tate-type}) is an equivalence.

\paragraph{Concrete description of RKE}

Analogous to \S \ref{par:concrete-LKE-description} for $\sX' \in {^{\leq n}\PStkTate}$ the right Kan extension by $\SchaffTatenop \hra \SchaffTateop$ is explicitly given by
\[
\RKE_{\SchaffTatenop \hra \SchaffTateop}(\sX')(S) = \lim_{S' \in \left((\SchaffTaten)_{/S}\right)^{\rm op}}\sX'(S'),
\]
that is given a map $S'_1 \ra S'_2$ in $(\SchaffTaten)_{/S}$ one has maps
\[
\sX'(S'_2) \ra \sX'(S'_1)
\]
and one considers the limit of those maps.

We notice that Remark \ref{rem:truncated-S-is-a-final-object} is equivalent to $S'$ is a final object in $\left((\SchaffTaten)_{/S}\right)^{\rm op}$, in particular this gives the equivalence
\[
\RKE_{\SchaffTatenop \hra \SchaffTateop}(\sX')(S) \simeq \sX'({^{\leq n}S}).
\]

\paragraph{}

The following is analogue to \cite{GR-I}*{Chapter 2, Lemma 1.4.4}.

\begin{lem}
Let $S$ be a Tate affine scheme, then considered as a prestack of Tate type $S$ is convergent.
\end{lem}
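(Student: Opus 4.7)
The plan is to establish, for each $T \in \SchaffTate$, that the canonical map
\[
\h_S(T) = \Hom_{\SchaffTate}(T,S) \longrightarrow \lim_{T' \in (\SchaffTateconv_{/T})^{\rm op}} \h_S(T')
\]
(using the concrete formula for the right Kan extension from \S \ref{par:concrete-LKE-description}) is an equivalence. The strategy is to factor the comparison through the auxiliary sequential limit $\lim_{n \geq 0} \h_S({^{\leq n}T})$ indexed by the truncations of $T$.

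\textit{Step 1: reduce to truncations.} Under the fully faithful embedding $\imath^{\rm geom}: \SchaffTateop \hra \Pro(\CAlg(\Vect^{\leq 0}))$ of \S \ref{subsubsec:embedding-Tate-affine-into-Pro-commutative-algebras}, one identifies $\h_S(T) \simeq \Hom_{\Pro(\CAlg)}(\imath^{\rm geom}(S), \imath^{\rm geom}(T))$ and $\h_S({^{\leq n}T}) \simeq \Hom_{\Pro(\CAlg)}(\imath^{\rm geom}(S), \tau^{\geq -n}\imath^{\rm geom}(T))$. Lemma \ref{lem:Tate-affine-schemes-are-convergent} applied to $T$ itself gives $\imath^{\rm geom}(T) \simeq \lim_n \tau^{\geq -n}\imath^{\rm geom}(T)$ in $\Pro(\CAlg)$, and since $\Hom(\imath^{\rm geom}(S), -)$ preserves limits, this yields $\h_S(T) \simeq \lim_n \h_S({^{\leq n}T})$.

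\textit{Step 2: reduce $\SchaffTateconv_{/T}$ to $\bN$.} Restriction along the diagram $\bN \to \SchaffTateconv_{/T}$, $n \mapsto {^{\leq n}T}$, provides a natural map $\lim_{T'} \h_S(T') \to \lim_n \h_S({^{\leq n}T})$. An inverse is built using Remark \ref{rem:truncated-S-is-a-final-object}: for any $T' \to T$ with $T' \in {^{\leq k}\SchaffTate}$, the adjunction between the inclusion ${^{\leq k}\SchaffTate} \hra \SchaffTate$ and the truncation functor ${^{\leq k}(-)}$ supplies a unique factorization $T' \to {^{\leq k}T} \to T$. Given a compatible family $\{y_n\}_n$, one defines $x_{T'} := y_k \circ (T' \to {^{\leq k}T})$. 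Independence from the choice of $k$ (given $T' \in {^{\leq k}\SchaffTate}$) and compatibility of $\{x_{T'}\}$ under morphisms $T'_1 \to T'_2$ in $\SchaffTateconv_{/T}$ both reduce to the uniqueness of the factorization combined with the compatibility of the family $\{y_n\}$ along the canonical maps ${^{\leq k}T} \to {^{\leq k'}T}$.

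The central technical point lies in Step 2: the naive coinitiality of $\bN \to \SchaffTateconv_{/T}$ may fail (its comma categories need not be weakly contractible when $T$ is not eventually coconnective), so one cannot appeal to a standard cofinality theorem. Instead, one uses the explicit adjunction-based inverse above, and checks that it assembles into a map of spaces at the $\infty$-categorical level; this ultimately reduces to the fact that the unit/counit data of the adjunction $(\iota, {^{\leq k}(-)})$ is fully coherent as an $\infty$-categorical adjunction, so the factorizations produced by Remark \ref{rem:truncated-S-is-a-final-object} are canonically homotopy-coherent in $T'$.
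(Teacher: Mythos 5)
Your argument is correct, but it takes a genuinely different route from the paper's. The paper chooses a presentation $T \simeq \colim_J T_j$ of the \emph{test} object with $T_j \in \Schaff$, writes $\h_S(T) \simeq \lim_{J^{\rm op}}\Maps(T_j,S)$, invokes Lemma \ref{lem:Tate-affine-schemes-are-convergent} (convergence of $S$ as an ordinary prestack) at each $T_j$, and then identifies the resulting double limit with the limit over $\left((\SchaffTateconv)_{/T}\right)^{\rm op}$. You instead run the Postnikov tower of the test object: Lemma \ref{lem:Tate-affine-schemes-are-convergent} applied to $T$ gives $\imath^{\rm geom}(T)\simeq \lim_{n} \tau^{\geq -n}\imath^{\rm geom}(T)$ in $\Pro(\CAlg(\Vect^{\leq 0}))$, corepresentability of $\h_S$ there handles Step 1, and your Step 2 is precisely the content of the paper's Lemma \ref{lem:concrete-condition-convergent-prestack-Tate-type} (whose proof asserts cofinality of $n\mapsto {^{\leq n}T}$). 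Both arguments bottom out in the same pro-algebra computation; yours gives a cleaner reduction to truncations but must justify the cofinality of $\bN \ra (\SchaffTateconv)_{/T}$, while the paper's stays compatible with the ind-presentation at the cost of a tacit cofinality claim for the category of pairs $(j,\, T'_j\ra T_j)$.

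One remark on Step 2: your worry that the relevant comma categories "need not be weakly contractible" is unfounded, and the hand-built inverse is both more delicate and less convincing than necessary. For a fixed $(T'\ra T)$ with $T'\in {^{\leq k}\SchaffTate}$, the comma category of factorizations through the objects ${^{\leq n}T}$ contains the filtered poset $\bN_{\geq k}$ with contractible fibers (by the adjunction between the inclusion and $\tau^{\leq n}$, i.e.\ Remark \ref{rem:truncated-S-is-a-final-object}), hence is weakly contractible; so $\bN \ra (\SchaffTateconv)_{/T}$ is cofinal by the $\infty$-categorical Theorem A (\cite{HTT}*{Theorem 4.1.3.1}) and the two limits agree with no coherence bookkeeping. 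As written, "define $x_{T'} := y_k\circ(\cdots)$ and check compatibility" does not by itself assemble into a point of the limit over $\left((\SchaffTateconv)_{/T}\right)^{\rm op}$ at the $\infty$-categorical level; replacing that construction by the cofinality statement closes the only soft spot in your proof.
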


\begin{proof}
Let $\h_S \in \PStkTate$ be the prestack of Tate type represented by $S$ and $T \in \SchaffTate$, we pick a presentation $T \simeq \colim_J T_j$, then one has
\begin{align*}
    \h_S(T) & \simeq \Maps_{\SchaffTate}(T,S) \\
    & \simeq \Maps_{\Ind(\Schaff)}(\colim_J T_j,S) \\
    & \simeq \lim_{J^{\rm op}}\Maps_{\Ind(\Schaff)}(T_j,S) \\
    & \simeq \lim_{J^{\rm op}} \lim_{T'_j \in (\Schaffconv_{/T_j})^{\rm op}} \Maps_{\Ind(\Schaff)}(T'_j,S) \\
    & \simeq \lim_{T' \in ((\SchaffTateconv)_{/T})^{\rm op}}\Maps_{\Ind(\Schaff)}(T',S) \\
    & \simeq \RKE_{\SchaffTateconvop \hra \SchaffTateop}(\left.\h_S\right|_{\SchaffTateconvop})(T),
\end{align*}
where the isomorphism between the third and forth line follows from Lemma \ref{lem:Tate-affine-schemes-are-convergent}.
\end{proof}

\begin{rem}
Notice, that after section \ref{subsec:locally-prestack-condition} the above Lemma is a formal consequence of the compatibility of convergence for prestacks and convergence of their right Kan extension to a prestack of Tate type.
\end{rem}

\paragraph{}

The following is analogue to \cite{GR-I}*{Chapter 2, Proposition 1.4.7}.

\begin{lem}
\label{lem:concrete-condition-convergent-prestack-Tate-type}
Let $\sX$ be a prestack of Tate type, then the map (\ref{eq:defn-convergent-prestack-Tate-type}) is an equivalence if and only if the map
\[
\sX(S) \ra \lim_{n \geq 0}\sX({^{\leq n}S})
\]
is an equivalence.
\end{lem}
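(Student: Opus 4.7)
The plan is to unpack the definition of convergence and then reduce the limit indexing the right Kan extension to a countable limit via a standard cofinality argument. By the definition of convergence in \S \ref{subsubsec:convergence-prestacks-Tate}, the prestack $\sX$ is convergent precisely when, for every $S \in \SchaffTate$, the canonical map $\sX(S) \to \RKE_{\SchaffTateconvop \hra \SchaffTateop}(\sX|_{\SchaffTateconvop})(S)$ is an equivalence. Using the pointwise formula for right Kan extensions along a fully faithful functor, this target can be written as
\[
\lim_{T \in ((\SchaffTateconv)_{/S})^{\rm op}} \sX(T),
\]
in complete analogy with the explicit description of RKE along $\SchaffTatenop \hra \SchaffTateop$ recalled immediately before the lemma.

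The crux of the proof is to identify this limit with $\lim_{n \geq 0} \sX({^{\leq n}S})$. My plan is to consider the canonical functor $\phi \colon \bN \to (\SchaffTateconv)_{/S}$ sending $n \mapsto ({^{\leq n}S} \to S)$, with transition morphisms induced by the canonical maps ${^{\leq m}S} \to {^{\leq n}S}$ for $m \leq n$ (which exist and are unique by the adjunction between the inclusion $\SchaffTaten \hra \SchaffTate$ and the truncation functor ${^{\leq n}(-)}$), and to verify that $\phi$ is cofinal in the sense of \cite{HTT}*{Definition 4.1.1.1}. This implies that precomposition with $\phi^{\rm op} \colon \bN^{\rm op} \to ((\SchaffTateconv)_{/S})^{\rm op}$ preserves limits valued in $\Spc$. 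By \cite{HTT}*{Theorem 4.1.3.1} the cofinality of $\phi$ is equivalent to the weak contractibility, for every $(T \to S) \in (\SchaffTateconv)_{/S}$, of the comma category $\bN \times_{(\SchaffTateconv)_{/S}} ((\SchaffTateconv)_{/S})_{(T \to S)/}$, whose objects are pairs $(n, f \colon T \to {^{\leq n}S} \text{ over } S)$.

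For this contractibility I would invoke the universal property of truncation: given $T \in \SchaffTateconv$, say $T \in \SchaffTaten$ for some $n_0 \geq 0$, the adjunction gives, for every $n \geq n_0$, a unique factorization $T \to {^{\leq n}S} \to S$ of the given map $T \to S$. Hence the full subcategory of the comma spanned by pairs with $n \geq n_0$ is equivalent to the poset $\{n \in \bN : n \geq n_0\}$, which has an initial element $n_0$ and so is weakly contractible. Moreover, for any object $(n_1, f_1)$ with $n_1 < n_0$, post-composing $f_1$ with the canonical morphism ${^{\leq n_1}S} \to {^{\leq n}S}$ (for any $n \geq n_0$) yields the unique pair $(n, f)$ over $S$, giving a morphism $(n_1, f_1) \to (n, f)$ in the comma; this exhibits the inclusion of the subposet $\{n \geq n_0\}$ as cofinal, so the full comma is weakly contractible. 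Combining these steps identifies $\RKE_{\SchaffTateconvop \hra \SchaffTateop}(\sX|_{\SchaffTateconvop})(S)$ with $\lim_{n \geq 0} \sX({^{\leq n}S})$, and the equivalence of the two conditions follows. The most delicate point is the bookkeeping of arrow directions in the cofinality check; however, this is a routine consequence of the universal property of truncation, and I do not anticipate any substantial difficulty.
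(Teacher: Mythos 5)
Your proof is correct and follows essentially the same route as the paper: both reduce the right Kan extension to the tower $\lim_{n\geq 0}\sX({^{\leq n}S})$ by the cofinality of $n \mapsto {^{\leq n}S}$ in $(\SchaffTateconv)_{/S}$; the paper simply asserts this cofinality, while you verify it via the comma-category criterion and the universal property of truncation.
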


\begin{proof}
We notice that the functor
\begin{align*}
    \bZ_{\geq 0} & \ra (\SchaffTateconv)_{/S} \\
    n & \mapsto \tau^{\leq n}(S) = {^{\leq n}S}
\end{align*}
is cofinal. 

Thus, one has
\begin{align*}
\RKE_{\SchaffTateconvop \hra \SchaffTateop}(\left.\sX\right|_{\SchaffTateconvop})(S) & \simeq \lim_{S'\left((\SchaffTateconv)_{/S}\right)^{\rm op}} \sX(S') \\
& \simeq \lim_{\left(\bZ_{\geq 0}\right)^{\rm op}}\sX(\tau^{\leq n}(S))
\end{align*}
where the first isomorphism is the concrete description of the right Kan extension and the second follows from the cofinality statement.
\end{proof}

\subsubsection{Finiteness conditions}

\paragraph{}

Recall the definition of the category $\SchaffTatenft$ from \S \ref{subsubsec:coconnective-Tate-schemes}. Given an object $\sX' \in \PStkTaten$ one says that $\sX'$ is of \emph{finite type} if the canonical map
\[
\LKE_{\SchaffTatenftop \hra \SchaffTatenop}(\left.\sX'\right|_{\SchaffTatenftop}) \ra \sX'
\]
is an equivalence.

\paragraph{}

More generally one can consider

\begin{defn}
\label{defn:laft-prestack-Tate}
For $\sX \in \PStkTate$ one says that $\sX$ is \emph{locally almost of finite type} if
\begin{enumerate}[1)]
    \item $\sX$ is convergent;
    \item for every $n$, ${^{\leq n}\sX}$ is of finite type.
\end{enumerate}
We will denote the category of prestacks of Tate type locally almost of finite type by $\PStkTatelaft$.
\end{defn}

\paragraph{}

Exactly the same proof as in \cite{GR-I}*{Chapter 2, Proposition 1.7.6} gives the following statement

\begin{prop}
\label{prop:prestacks-Tate-laft-as-functors}
One has an equivalence
\[
\PStkTatelaft \overset{\simeq}{\ra} \Fun(\SchaffTateconvftop,\Spc)
\]
given by restriction along the inclusion $\SchaffTateconvft \hra \SchaffTate$. 

The inverse functor is given by applying the left Kan extension along
\[
\SchaffTateconvft \hra \SchaffTateconv
\]
followed by the right Kan extension along
\[
\SchaffTateconv \hra \SchaffTate.
\]
\end{prop}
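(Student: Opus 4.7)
The plan is to construct an explicit inverse functor
\[
L := \RKE_{\SchaffTateconvop \hra \SchaffTateop} \circ \LKE_{\SchaffTateconvftop \hra \SchaffTateconvop} : \Fun(\SchaffTateconvftop,\Spc) \ra \PStkTate,
\]
and to verify both that $L$ factors through $\PStkTatelaft$ and that $L$ and the restriction functor $R$ are mutually inverse. The proof proceeds in three steps.

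First I would check that $L(\sX') \in \PStkTatelaft$ for any $\sX' \in \Fun(\SchaffTateconvftop,\Spc)$. Convergence of $L(\sX')$ is automatic from the outer right Kan extension together with the cofinality of $\bZ_{\geq 0} \ra (\SchaffTateconv)_{/S}$ via $n \mapsto {^{\leq n}S}$, as in the proof of Lemma \ref{lem:concrete-condition-convergent-prestack-Tate-type}; applying the concrete description of RKE gives $L(\sX')(S) \simeq \lim_n L(\sX')({^{\leq n}S})$. For the finite type condition on each ${^{\leq n}L(\sX')}$, the crucial observation is that the truncation functor $T' \mapsto {^{\leq n}T'}$ is right adjoint to the inclusion $\SchaffTatenft_{/T} \hra \SchaffTateconvft_{/T}$ for any $T \in \SchaffTaten$, by Remark \ref{rem:truncated-S-is-a-final-object}. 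This coreflection makes the opposite of the inclusion cofinal, so the inner LKE commutes with restriction to $\SchaffTatenop$, yielding
\[
\left.{^{\leq n}L(\sX')}\right|_{\SchaffTatenop} \simeq \LKE_{\SchaffTatenftop \hra \SchaffTatenop}(\left.\sX'\right|_{\SchaffTatenftop}),
\]
which is precisely the finite type condition on ${^{\leq n}L(\sX')}$.

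Next I would show $R \circ L \simeq \id$ on $\Fun(\SchaffTateconvftop,\Spc)$. This is a formal consequence of the fully faithfulness of both inclusions $\SchaffTateconvft \hra \SchaffTateconv$ and $\SchaffTateconv \hra \SchaffTate$: along a fully faithful functor, both LKE and RKE are themselves fully faithful, so restriction recovers the original functor. Finally I would verify $L \circ R \simeq \id$ on $\PStkTatelaft$. For $\sX \in \PStkTatelaft$, the convergence of $\sX$ (condition (1) of Definition \ref{defn:laft-prestack-Tate}) gives the outer equivalence $\sX \simeq \RKE_{\SchaffTateconvop \hra \SchaffTateop}(\left.\sX\right|_{\SchaffTateconvop})$, while the finite type condition on each ${^{\leq n}\sX}$ (condition (2)) together with the cofinality argument of Step 1 propagates to
\[
\left.\sX\right|_{\SchaffTateconvop} \simeq \LKE_{\SchaffTateconvftop \hra \SchaffTateconvop}(\left.\sX\right|_{\SchaffTateconvftop}),
\]
checked after restriction to $\SchaffTatenop$ for each $n \geq 0$ where it reduces to the finite type condition on ${^{\leq n}\sX}$.

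The main obstacle will be the careful bookkeeping required in Step 1 to commute truncation with the LKE along $\SchaffTateconvftop \hra \SchaffTateconvop$; this is the technical ingredient that allows the per-level finite type conditions on ${^{\leq n}\sX}$ to assemble into a single statement on $\SchaffTateconv$, which is then extended to all of $\SchaffTate$ by convergence. Once this cofinality is in place, the remaining verifications are formal manipulations with Kan extensions along fully faithful inclusions.
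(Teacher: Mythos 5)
Your proposal is correct in structure and is essentially the proof the paper has in mind: the paper simply defers to the argument of \cite{GR-I}*{Chapter 2, Proposition 1.7.6}, which is exactly your three steps (restriction is split by LKE-then-RKE along fully faithful inclusions, convergence handles the outer extension, and the per-level finite type conditions handle the inner one via a cofinality argument for truncations).

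One point of variance needs care in your Step 1. The left Kan extension of a presheaf along $\SchaffTateconvftop \hra \SchaffTateconvop$ is computed at $T$ as a colimit over the comma category of maps $T \ra T'$ with $T' \in \SchaffTateconvft$, i.e.\ over (the opposite of) the coslice $(\SchaffTateconvft)_{T/}$ --- not the slice $(\SchaffTateconvft)_{/T}$ that your adjunction refers to. (The paper's own ``concrete description of LKE'' in \S \ref{par:concrete-LKE-description} writes the slice, but this is inconsistent with the coslice formula it uses correctly in the proof of Proposition \ref{prop:near-stack-n-LKE-to-stack-n}; the slice formula is the one for the RKE, so be careful not to inherit that slip.) Fortunately the statement you need is the coslice analogue of your adjunction: for $T \in \SchaffTaten$, the assignment sending $(T \ra T')$ to $(T \ra {^{\leq n}T'})$ --- which makes sense because a map out of an $n$-coconnective $T$ into $T'$ factors canonically through ${^{\leq n}T'} \ra T'$ --- exhibits ${^{\leq n}(-)}$ as right adjoint to the inclusion $(\SchaffTatenft)_{T/} \hra (\SchaffTateconvft)_{T/}$, so the opposite of this inclusion is cofinal and your commutation of the inner LKE with restriction to $\SchaffTatenop$ goes through verbatim. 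One should also record that ${^{\leq n}(-)}$ preserves the finite type Tate condition, which holds because truncation commutes with the filtered colimits presenting a Tate affine scheme and preserves finite type of the terms. With this correction Steps 2 and 3 are formal, as you say.
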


\subsubsection{Truncatedness}

\paragraph{}

For $k=0,1,\ldots$, we denote $\Spc_{\leq k} \subset \Spc$ the full subcategory of \emph{k-truncated} spaces, i.e.\ for $S'$ the connect component of any object of $\Spc_{\leq k}$ one has
\[
\pi_{i}(S') = 0
\]
for $i > k$. In particular, $\Spc_{\leq 0} = \Set$.

The embedding $\Spc_{\leq k} \hra \Spc$ has a left adjoint, which gives rise to a localization
\[
\Spc \ra \Spc_{\leq k} \ra \Spc
\]
whose composite we denote $\P_{\leq k}$.

\paragraph{}

For any space $S$, the assignment $k \ra \P_{\leq k}(S)$ is a functor
\[
(\bZ)^{\rm op} \ra \Spc
\]
called the \emph{Postinikov tower} of $S$. It is a fact that 
\[
S \ra \lim_{k \geq 0}\P_{\leq k}(S)
\]
is an isomorphism.

\paragraph{}

\begin{defn}
\label{defn:n-truncated-prestack-of-Tate-type}
For $\sX \in \PStkTaten$ one says that $\sX$ is $k$-truncated if the functor $\sX: \SchaffTatenop \ra \Spc$ factors through $\Spc_{\leq k}$.
\end{defn}

\paragraph{}

\begin{example}
Suppose that $\sX \in \PStkTaten$ is representable by a Tate affine scheme $T \in \SchaffTaten$, then $\sX$ is $n$-truncated. 

Indeed, by \cite{GR-II}*{Chapter 2, Lemma 1.2.3} for any $S_0 \in \Schaff$ one has
\[
\h_T(S_0) \simeq \Maps_{\SchaffTate}(S_0,T) \simeq \Maps_{\Ind(\Schaff)}(S_0,T)
\]
is $n$-truncated, since $T$ is an ind-scheme. Now, for $S \in \SchaffTate$ and a presentation $S \simeq \colim_I S_i$ one has
\[
\h_T(S) \simeq \Maps_{\SchaffTate}(S,T) \simeq \lim_{I^{\rm op}}\Maps_{\SchaffTate}(S_i,T)
\]
and the result follows from the fact that $\Spc_{\leq n}$ has cofiltered limits and those are equivalent to the corresponding limits in $\Spc$.
\end{example}

\subsection{Locally a prestack condition}
\label{subsec:locally-prestack-condition}

In this section we discuss prestacks of Tate type which are obtained from usual prestacks.

\subsubsection{Prestacks of Tate type from prestacks}

\paragraph{}

Let $\sY \in \PStk$ one can consider a the prestack of Tate type
\[
\RKE_{\Schaffop \hra \SchaffTateop}(\sY)
\]
obtained by \emph{right} Kan extension from the category of affine schemes to the category of Tate affine schemes. 

\begin{rem}
The reason to consider the right Kan extension is the fact that $\SchaffTate$ is a full subcategory of $\Ind(\Schaff)$, which is generated by filtered colimits from $\Schaff$. Thus, the opposite category $(\Ind(\Schaff))^{\rm op}$ is generated by adjoining cofiltered limits to $(\Schaff)^{\rm op}$. 

One should compare this with the notion of a locally almost of finite type prestack (see \S \ref{subsubsec:finiteness-conditions}) that uses the left Kan extension from ($n$-coconnective) affine schemes of finite type to ($n$-coconnective) affine schemes, because the later are generated by the former under cofiltered limits by Corollary \ref{cor:pro-schemes-n-coconnective-ft-are-all}.
\end{rem}

\paragraph{} 

Given a prestack of Tate type $\sX$ we will say that $\sX$ is \emph{locally a prestack} if the canonical functor
\[
\sX \ra \RKE_{\Schaffop \hra \SchaffTateop}(\left.\sX\right|_{\Schaffop})
\]
is an equivalence. We denote by $\PStkTatelp$ the full subcategory of $\PStkTate$ generated by the prestacks of Tate type which are locally prestacks.

\paragraph{} 

The following result is analogous to Lemma \ref{lem:concrete-condition-convergent-prestack-Tate-type}.

\begin{lem}
\label{lem:concrete-condition-locally-prestack}
A prestack of Tate type $\sX$ is locally a prestack if and only if for every Tate affine scheme $S$ and a presentation $\colim_I S_i$ of $S$ one has an equivalence
\[
\sX(S) \overset{\simeq}{\ra} \lim_{I^{\rm op}}\left.\sX\right|_{(\Schaff)^{\rm op}}(S_i).
\]
\end{lem}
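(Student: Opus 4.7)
The strategy is to unpack the definition of the right Kan extension as an explicit limit and then use a cofinality argument, analogous to the proof of Lemma \ref{lem:concrete-condition-convergent-prestack-Tate-type}.

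First, I would use the pointwise formula for right Kan extensions to write, for any $S \in \SchaffTate$,
\[
\RKE_{\Schaffop \hra \SchaffTateop}(\left.\sX\right|_{\Schaffop})(S) \simeq \lim_{T \in ((\Schaff)_{/S})^{\rm op}} \sX(T),
\]
so that the condition of being locally a prestack is equivalent to the canonical map $\sX(S) \to \lim_{T \in ((\Schaff)_{/S})^{\rm op}} \sX(T)$ being an equivalence for every $S \in \SchaffTate$.

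The key step is then to show that for any presentation $S \simeq \colim_I S_i$, the induced functor $F: I \to (\Schaff)_{/S}$, $i \mapsto (S_i \to S)$, is cofinal (equivalently, $F^{\rm op}: I^{\rm op} \to ((\Schaff)_{/S})^{\rm op}$ is initial in Lurie's sense). By Theorem 4.1.3.1 of \cite{HTT}, this reduces to verifying that for every $T \to S$ with $T$ an affine scheme, the comma category $I \times_{(\Schaff)_{/S}} ((\Schaff)_{/S})_{T/}$ of factorizations $T \to S_i \to S$ (compatible with the given map $T \to S$) is weakly contractible. Since $T$ is a compact object of $\Ind(\Schaff)$ and $S \simeq \colim_I S_i$ is a filtered colimit in $\Ind(\Schaff)$, one has
\[
\Maps_{\Ind(\Schaff)}(T,S) \simeq \colim_I \Maps_{\Schaff}(T,S_i),
\]
so this comma category is filtered, and in particular contractible.

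Granted the cofinality, both directions of the equivalence follow at once. For the forward implication, assuming $\sX$ is locally a prestack, cofinality gives
\[
\sX(S) \overset{\simeq}{\ra} \lim_{T \in ((\Schaff)_{/S})^{\rm op}} \sX(T) \simeq \lim_{I^{\rm op}} \sX(S_i),
\]
for any presentation. Conversely, assuming the equivalence holds for every presentation, fix an arbitrary presentation $S \simeq \colim_I S_i$; cofinality identifies the right-hand side with $\lim_{T \in ((\Schaff)_{/S})^{\rm op}} \sX(T) = \RKE(\left.\sX\right|_{\Schaffop})(S)$, so $\sX(S) \overset{\simeq}{\ra} \RKE(\left.\sX\right|_{\Schaffop})(S)$. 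The main technical input is the compactness of affine schemes inside $\Ind(\Schaff)$; everything else is formal manipulation of Kan extensions.
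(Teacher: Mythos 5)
Your proof is correct and follows essentially the same route as the paper: identify the right Kan extension with the limit over $\left((\Schaff)_{/S}\right)^{\rm op}$ and reduce to cofinality of $I \ra (\Schaff)_{/S}$. The only difference is that you verify the cofinality directly from compactness of affine schemes in $\Ind(\Schaff)$, whereas the paper cites \cite{DG-indschemes} for this point; both are fine.
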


\begin{proof}
For $S\in \SchaffTate$, let $S \simeq \colim_I S_i$ be a presentation of $S$, then consider the functor
\begin{align*}
    I & \ra \Schaff_{/S} \\
    i & \mapsto S_i \overset{g_i}{\ra} S,
\end{align*}
where $g_i$ is the canonical map from $S_i$ into the colimit. 

Notice that the map $I \ra \Schaff_{/S}$ is cofinal. Indeed, this is a consequence of \cite{DG-indschemes}*{Lemma 1.5.4 and Corollary 1.6.6}. 

Thus, we obtain
\begin{align*}
    \sX(S) & \ra \RKE_{\Schaffop \hra \SchaffTateop}(\left.\sX\right|_{\Schaffop}) \\
    & \simeq \lim_{T \in \left(\Schaff_{/S}\right)^{\rm op}}\left.\sX\right|_{\Schaffop}(T) \\
    & \simeq \lim_{I^{\rm op}}\left.\sX\right|_{\Schaffop}(S_i) \\
    & \simeq \lim_{I^{\rm op}}\sX(S_i),
\end{align*}
the first and last isomorphism are from the definitions and the middle is because the functor $I^{\rm op} \ra \left(\Schaff_{/S}\right)^{\rm op}$ is final.
\end{proof}

\paragraph{}

The following is an immediate consequence of Lemma \ref{lem:concrete-condition-convergent-prestack-Tate-type}.

\begin{lem}
\label{lem:Tate-affine-schemes-are-locally-prestacks}
For $S\in \SchaffTate$ the corresponding prestack of Tate type, i.e.
\[
\h_S = \Hom_{\SchaffTate}(-,S) \in \PStkTate
\]
is locally a prestack.
\end{lem}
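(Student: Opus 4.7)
The plan is to apply the concrete criterion from Lemma \ref{lem:concrete-condition-locally-prestack}. That is, to show $\h_S$ is locally a prestack it suffices to verify that, for every Tate affine scheme $T$ with a presentation $T \simeq \colim_I T_i$ (with $T_i \in \Schaff$), the natural map
\[
\h_S(T) \longrightarrow \lim_{I^{\rm op}}\, \left.\h_S\right|_{\Schaffop}(T_i)
\]
is an equivalence.

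By the Yoneda definition of $\h_S$ and the full faithfulness of the inclusion $\SchaffTate \hra \Ind(\Schaff)$, the left-hand side is $\Hom_{\Ind(\Schaff)}(T,S)$, while for each $i$ the term $\h_S(T_i) = \Hom_{\SchaffTate}(T_i,S) = \Hom_{\Ind(\Schaff)}(T_i,S)$, which (since $T_i \in \Schaff$) agrees with $\left.\h_S\right|_{\Schaffop}(T_i)$. Thus the claim reduces to the universal property of the colimit in $\Ind(\Schaff)$:
\[
\Hom_{\Ind(\Schaff)}\!\left(\colim_I T_i,\, S\right) \;\simeq\; \lim_{I^{\rm op}} \Hom_{\Ind(\Schaff)}(T_i,S),
\]
which is immediate because $\Hom(-,S)$ sends colimits to limits.

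I expect no serious obstacle here: the result is formal once Lemma \ref{lem:concrete-condition-locally-prestack} is in place. The only minor subtlety is checking that the cofiltered system $\{T_i\}_I$ used to present $T$ in $\SchaffTate$ computes the correct colimit inside $\Ind(\Schaff)$, which follows from the definition of $\SchaffTate$ as a full subcategory of $\Ind(\Schaff)$ (Definition \ref{defn:Tate-affine-schemes}), and from the fact that, as used already in the proof of Lemma \ref{lem:concrete-condition-locally-prestack}, the resulting functor $I \to \Schaff_{/T}$ is cofinal. Combining these observations yields the desired equivalence, and hence $\h_S \in \PStkTatelp$.
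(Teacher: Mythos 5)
Your proof is correct and is essentially the argument the paper intends: the paper gives no written-out proof, simply declaring the lemma an immediate consequence of the concrete criterion of Lemma \ref{lem:concrete-condition-locally-prestack} (the reference in the text to Lemma \ref{lem:concrete-condition-convergent-prestack-Tate-type} appears to be a typo), and your verification via the universal property of the colimit in $\Ind(\Schaff)$, together with the cofinality of $I \ra \Schaff_{/T}$ already established in that lemma's proof, is exactly what makes it immediate.
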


\subsubsection{Compatibility of conditions}

In this section we check which properties from \S \ref{subsec:properties-prestacks-Tate} on a prestack of Tate type which is locally a prestack recover the usual properties on prestacks as defined in \cite{GR-I}*{Chapter 2, \S 1}.

\paragraph{Convergence}

\begin{lem}
\label{lem:Tate-prestack-convergence-compatibility}
Assume that $\sX \in \PStkTatelp$, i.e.\ $\sX \simeq \RKE_{\Schaffop \hra \SchaffTateop}(\sX_0)$ for some $\sX_0 \in \PStk$, then $\sX$ is a convergent prestack of Tate type if and only if $\sX_0$ is a convergent prestack.
\end{lem}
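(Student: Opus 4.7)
The key tool is Lemma~\ref{lem:concrete-condition-locally-prestack}, which tells us that for a prestack of Tate type $\sX$ locally a prestack and any presentation $S \simeq \colim_I S_i$ of a Tate affine scheme by affine schemes, one has $\sX(S) \simeq \lim_{I^{\rm op}} \sX_0(S_i)$, where $\sX_0 = \left.\sX\right|_{\Schaffop}$. Together with Lemma~\ref{lem:concrete-condition-convergent-prestack-Tate-type} which characterizes convergence via the Postnikov tower $\lim_n \sX(\tau^{\leq n}(-))$, the whole proof will be a bookkeeping exercise in commuting limits.

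For the forward implication, suppose $\sX$ is convergent. The right Kan extension from $\Schaffop$ restricts back to $\sX_0$ on affine schemes, so for any $S_0 \in \Schaff$ one has $\sX(S_0) \simeq \sX_0(S_0)$. Since truncations of affine schemes are again affine, $\sX(\tau^{\leq n}S_0) \simeq \sX_0(\tau^{\leq n}S_0)$, and convergence of $\sX$ at $S_0$ immediately gives convergence of $\sX_0$ at $S_0$.

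For the converse, suppose $\sX_0$ is convergent, and fix $S \in \SchaffTate$ together with a presentation $S \simeq \colim_I S_i$ satisfying the conditions of Definition~\ref{defn:Tate-affine-schemes}. The preliminary step, which I expect to be the only nontrivial point, is to show that $\tau^{\leq n}S \simeq \colim_I \tau^{\leq n}S_i$ is a valid presentation of $\tau^{\leq n}S$ as an $n$-coconnective Tate affine scheme. Under the embedding $\imath^{\rm geom}$, the truncation is computed levelwise by the compatibility diagram~\eqref{eq:n-truncation-compatible-with-pro-calg-objects}; moreover the property of being a closed embedding only sees the classical truncations, hence is preserved, and the fiber condition is preserved since the truncation of a coherent object remains coherent.

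Given this, Lemma~\ref{lem:concrete-condition-locally-prestack} applies to both presentations and yields
\[
\sX(S) \simeq \lim_{I^{\rm op}} \sX_0(S_i), \qquad \sX(\tau^{\leq n}S) \simeq \lim_{I^{\rm op}} \sX_0(\tau^{\leq n}S_i).
\]
The conclusion then follows from the chain of isomorphisms
\[
\lim_{n \geq 0} \sX(\tau^{\leq n}S) \simeq \lim_{n \geq 0} \lim_{I^{\rm op}} \sX_0(\tau^{\leq n}S_i) \simeq \lim_{I^{\rm op}} \lim_{n \geq 0} \sX_0(\tau^{\leq n}S_i) \simeq \lim_{I^{\rm op}} \sX_0(S_i) \simeq \sX(S),
\]
where the interchange of limits is valid because both are limits of spaces, and the penultimate isomorphism uses convergence of $\sX_0$ at each affine scheme $S_i$. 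By Lemma~\ref{lem:concrete-condition-convergent-prestack-Tate-type} this is exactly the convergence of $\sX$.
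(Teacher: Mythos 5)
Your proof is correct, but it takes a genuinely different route from the paper. The paper's entire proof is one sentence: it invokes the transitivity (``associativity'') of right Kan extensions, comparing the two factorizations of $\Schaffconvop \hra \SchaffTateop$ through $\Schaffop$ and through $\SchaffTateconvop$, so that convergence of $\sX_0$ (an RKE condition from $\Schaffconvop$) transports directly to the RKE condition defining convergence of $\sX$. Your argument instead unwinds everything concretely: you use Lemma~\ref{lem:concrete-condition-locally-prestack} to write $\sX(S)$ as a limit over a presentation, Lemma~\ref{lem:concrete-condition-convergent-prestack-Tate-type} to express convergence via Postnikov towers, and then commute the two limits. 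The trade-off is that the paper's formal argument silently requires a base-change compatibility (that restricting the RKE to $\SchaffTateconvop$ agrees with the RKE of the restriction, in the spirit of Lemma~\ref{lem:coconnective-affine-schemes-satisfy-RKE-base-change}), whereas your argument requires the explicit claim that $\tau^{\leq n}S \simeq \colim_I \tau^{\leq n}S_i$ is again an admissible presentation. You correctly flag this as the one nontrivial input; it is consistent with what the paper asserts separately (the unnumbered lemma in \S\ref{subsubsec:coconnective-Tate-schemes} stating that $\tau^{\geq -n}$ preserves the essential image of $\imath^{\rm geom}$), though your one-line justification of the fiber condition elides that the fiber of the truncated map is only identified with a kernel/truncation of the original fiber up to a boundary term in degree $-n$, which one should check still lies in $\Coh$. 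Both approaches are legitimate; yours is longer but more self-contained and makes visible exactly which commutations of limits are being used.
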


\begin{proof}
This is simply the fact that right Kan extensions are associative, see \cite{HTT}*{Proposition 4.3.2.8}.
\end{proof}

\paragraph{Truncatedness}

\begin{lem}
For a prestack of Tate type $\sX$ locally a prestack, then 
$\sX \in \PStkTaten$ is $k$-truncated if and only if $\left.\sX\right|_{\Schaffnop}$ if $k$-truncated.
\end{lem}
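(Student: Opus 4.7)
The plan is to prove both implications by reducing the question to a statement about $k$-truncatedness being stable under limits of spaces, and invoking the concrete description of prestacks of Tate type that are locally prestacks.

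For the ``only if'' direction, the argument is immediate: if $\sX\colon \SchaffTatenop \to \Spc$ factors through $\Spc_{\leq k}$, then since $\Schaffnop \hra \SchaffTatenop$ is a fully faithful inclusion (coming from the inclusion $\Schaffn \hra \SchaffTaten$), the restriction $\left.\sX\right|_{\Schaffnop}$ also factors through $\Spc_{\leq k}$, and is thus $k$-truncated.

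For the ``if'' direction, the plan is to use Lemma~\ref{lem:concrete-condition-locally-prestack}: since $\sX$ is locally a prestack, for any $S \in \SchaffTaten$ with a presentation $S \simeq \colim_I S_i$ where $S_i \in \Schaffn$, one has
\[
\sX(S) \simeq \lim_{I^{\rm op}} \left.\sX\right|_{\Schaffnop}(S_i).
\]
By hypothesis, each $\left.\sX\right|_{\Schaffnop}(S_i) \in \Spc_{\leq k}$. The key input is then the standard fact that $\Spc_{\leq k} \hra \Spc$ is stable under arbitrary limits (equivalently, the inclusion is a right adjoint, so it preserves limits). Applying this to the cofiltered limit above yields $\sX(S) \in \Spc_{\leq k}$, as desired.

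There are no real obstacles in this argument: the work was already done in setting up Lemma~\ref{lem:concrete-condition-locally-prestack}, and the closure of $\Spc_{\leq k}$ under limits is entirely formal. The only thing to be slightly careful about is ensuring that the presentation $S \simeq \colim_I S_i$ used in Lemma~\ref{lem:concrete-condition-locally-prestack} can be chosen with $S_i \in \Schaffn$ when $S \in \SchaffTaten$, but this is immediate from the definition of $n$-coconnective Tate affine schemes in \S\ref{subsubsec:coconnective-Tate-schemes}.
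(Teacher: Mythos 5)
Your proof is correct and follows essentially the same route as the paper: the ``only if'' direction is the trivial restriction argument, and the ``if'' direction computes $\sX(S)$ as a limit over a presentation (i.e.\ the concrete form of the right Kan extension) and invokes closure of $\Spc_{\leq k}$ under limits in $\Spc$ (\cite{HTT}*{Proposition 5.5.6.5}). No gaps.
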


\begin{proof}
One notices that the right Kan extension functor is concretely computed by considering limits in $\Spc_{\leq k}$, however this category is closed under all limits inside $\Spc$ by \cite{HTT}*{Proposition 5.5.6.5}.
\end{proof}

\paragraph{Digression: swapping left and right Kan extensions}

We will consider the Cartesian diagram
\begin{equation}
\label{eq:inclusion-of-n-coconnective-affine-schemes-into-Tate-affine-schemes}
\begin{tikzcd}
\Schaffnop \ar[r,"\imath"] \ar[d,"\jmath^{\leq n}"] & \Schaffop \ar[d,"\jmath"] \\
\SchaffTatenop \ar[r,"\imath_{\rm Tate}"] & \SchaffTateop.
\end{tikzcd}    
\end{equation}

\begin{lem}
\label{lem:coconnective-affine-schemes-satisfy-RKE-base-change}
The diagram (\ref{eq:inclusion-of-n-coconnective-affine-schemes-into-Tate-affine-schemes}) satisfies the conditions of Lemma \ref{lem:initial-slice-categories-give-isomorphism-of-base-change}.
\end{lem}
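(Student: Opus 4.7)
The plan is to verify the hypotheses of Lemma \ref{lem:initial-slice-categories-give-isomorphism-of-base-change}, which, in the style of Beck--Chevalley swaps between left and right Kan extensions, should amount to checking an initiality (or cofinality) condition on the comma categories induced by the Cartesian square (\ref{eq:inclusion-of-n-coconnective-affine-schemes-into-Tate-affine-schemes}) at each object $S \in \SchaffTateop$.

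First, I would produce an $n$-truncation functor $^{\leq n}(-): \SchaffTate \to \SchaffTaten$ which is right adjoint to the inclusion $\SchaffTaten \hra \SchaffTate$. This is obtained by transporting the truncation $\tau^{\geq -n}: \CAlg(\Vect^{\leq 0}) \to \CAlg(\Vect^{\geq -n,\leq 0})$ through the embedding $\imath^{\rm geom}$ of \S \ref{subsubsec:embedding-Tate-affine-into-Pro-commutative-algebras}; concretely, given $S \simeq \colim_I S_i$ one sets $^{\leq n}S \simeq \colim_I {^{\leq n}S_i}$. One must check that the essential image of $\imath^{\rm geom}$ is preserved under the levelwise truncation: this uses that $\tau^{\geq -n}$ preserves finite limits, hence preserves condition (b) of the description of $\GeomAlg$ in \S \ref{par:definition-of-Geom-Alg}, and that truncation of a closed embedding remains a closed embedding. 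By construction this is compatible with the classical truncation on $\Schaff$, i.e. $^{\leq n}(\jmath(S_0)) \simeq \jmath^{\leq n}({^{\leq n}S_0})$ for $S_0 \in \Schaff$.

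Second, using this adjunction I would observe that for any $S \in \SchaffTate$ any map $T \to S$ with $T \in \Schaffn$ factors uniquely as $T \to {^{\leq n}S} \to S$ through an object of $\Schaffn$, and likewise any map $T \to S$ with $T \in \SchaffTaten$ factors uniquely through ${^{\leq n}S}$. This exhibits $^{\leq n}S$ (with its canonical map to $S$) as a final object in the two comma categories that must be compared, and the natural functor between them (induced by $\imath$ and $\imath_{\rm Tate}$) carries final object to final object. From this the desired initiality of the comparison functor between $\Schaffnop \times_{\SchaffTateop} (\SchaffTateop)_{S/}$ and $(\SchaffTatenop)_{S/}$ follows at once, verifying the hypothesis of Lemma \ref{lem:initial-slice-categories-give-isomorphism-of-base-change}.

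The main obstacle should be purely one of bookkeeping: keeping straight the op versus non-op categories and the fact that the truncation adjunction flips direction when passing to opposite categories, so the ``final object'' statement above becomes an ``initial object'' statement on the op side, which is precisely what the slice-initiality hypothesis requires. Beyond this, the only substantive input is the construction of $^{\leq n}(-)$ on $\SchaffTate$ together with its compatibility with $\jmath: \Schaff \hra \SchaffTate$; once these are in place, the verification is formal.
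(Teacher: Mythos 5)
Your overall plan --- verify the initiality hypothesis of Lemma \ref{lem:initial-slice-categories-give-isomorphism-of-base-change} using the truncation adjunction --- is the right kind of plan, but the comma categories you propose to compare are not the ones the lemma asks about, and your candidate final object does not live in the correct ones. In Lemma \ref{lem:initial-slice-categories-give-isomorphism-of-base-change} the slice categories are formed over the \emph{vertical} functors $G=\jmath^{\leq n}$ and $G'=\jmath$, i.e.\ the inclusions of affine schemes into Tate affine schemes (these are the functors along which the right Kan extensions $G_*$ and $G'_*$ are taken), and the condition is quantified over objects $T'$ of the lower-left corner $\SchaffTatenop$. So for a fixed $n$-coconnective Tate affine scheme $T'$ one must compare the category of $n$-coconnective \emph{affine} schemes over $T'$ with the category of all \emph{affine} schemes over $T'$. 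Your second category ``$(\SchaffTatenop)_{S/}$'' has $n$-coconnective \emph{Tate} affine schemes as objects and is not one of these two; correspondingly ${}^{\leq n}S$ is a Tate affine scheme, hence is not an object of either comma category, and when $T'$ is genuinely non-affine neither category possesses a final object at all, so no argument identifying both limits with the value at a shared (co)final object can go through. Quantifying over arbitrary $S\in\SchaffTateop$ is a further sign of trouble: for $S$ not $n$-coconnective the two right Kan extensions genuinely differ, so the condition you would be verifying is false there.

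What actually has to be shown is that, for each affine test scheme $T_0\to T'$, the relevant slice of the comparison functor $\beta:\jmath^{\leq n}_{T'/}\to\jmath_{\imath_{\rm Tate}(T')/}$ is weakly contractible. The paper does this by producing a final object of that slice from the truncation ${}^{\leq n}T_0$ of the \emph{test affine scheme} $T_0$ (not of $S$), using Remark \ref{rem:truncated-S-is-a-final-object}: any map from an $n$-coconnective affine scheme to $T_0$ factors uniquely through ${}^{\leq n}T_0\to T_0$. Alternatively, and perhaps more robustly, one can choose a presentation $T'\simeq\colim_I T'_i$ with $T'_i\in\Schaffn$ and observe that $I^{\rm op}$ maps initially into \emph{both} comma categories (this is the cofinality statement from \cite{DG-indschemes} already invoked in the proof of Lemma \ref{lem:concrete-condition-locally-prestack}), which forces the comparison functor between them to be initial. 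Your Step 1 --- the construction of the right adjoint ${}^{\leq n}(-)$ on $\SchaffTate$ --- is correct as far as it goes, but it is not the input the verification needs; the only truncation that enters is the one on affine schemes, which is already available.
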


\begin{proof}
Consider $T' \in \SchaffTatenop$ we claim that the functor
\[
\beta: \jmath^{\leq n}_{T'/} \ra \jmath_{\imath_{\rm Tate}(T')/}
\]
is initial. Indeed, notice that
\[
\jmath^{\leq n}_{T'/} \simeq \{\jmath^{\leq n}(T'_0) \ra T' \; | \; T'_0 \in \Schaffn\},
\]
and
\[
\jmath_{\imath_{\rm Tate}(T')/} \simeq \{\jmath(T_0) \ra \imath_{\rm Tate}(T') \; | \; T_0 \in \Schaff\}.
\]

So given $(\jmath(T_0) \ra \imath^{\rm Tate}(T'))$ and object of $\jmath_{\imath_{\rm Tate}(T')/}$ we need to check that the category $\beta_{/(\jmath(T_0) \ra \imath^{\rm Tate}(T'))}$ is non-empty and connected. This follows from the fact that the object
\[
\begin{tikzcd}
\imath_{\rm Tate}(\jmath^{\leq n}({^{\leq n}T_0})) \ar[r] \ar[d] & \imath_{\rm Tate}(T') \ar[d] \\
 \jmath(T_0) \ar[r] & \imath_{\rm Tate}(T')
\end{tikzcd}
\]
is final in the category $\beta_{/(\jmath(T_0) \ra \imath^{\rm Tate}(T'))}$.
\end{proof}

Let $\sX'_0 \in \PStkn$ denote a functor
\[
\sX'_0: \Schaffnop \ra \Spc,
\]
then by Lemma \ref{lem:coconnective-affine-schemes-satisfy-RKE-base-change} one obtains a morphism
\begin{equation}
\label{eq:comparison-LKE-RKE-to-RKE-LKE}
\alpha_{\sX'_0}: \LKE_{\imath_{\rm Tate}}\circ\RKE_{\jmath^{\leq n}}(\sX'_0) \ra \RKE_{\jmath}\circ\LKE_{\imath}(\sX'_0).    
\end{equation}

\paragraph{$n$-coconnective}

In this section we apply the digression about commuting left and right Kan extension to prove a compatibility result for the coconnectivity conditions for a prestack of Tate type and a usual prestack.

\begin{prop}
\label{prop:Tate-prestack-coconnectivity-compatibility}
Suppose that $\sX \in \PStkTate$ is an $n$-coconnective prestack of Tate type and locally a prestack, and let $\sX_0:=\left.\sX\right|_{\Schaffop}$ denote its underlying prestack. Then $\sX_0$ is an $n$-coconnective prestack.
\end{prop}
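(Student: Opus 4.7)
The plan is to evaluate the canonical comparison map
\[
\LKE_{\imath}\!\left(\left.\sX_0\right|_{\Schaffnop}\right)(S) \longrightarrow \sX_0(S)
\]
at an arbitrary $S \in \Schaff$, and to show that both sides are naturally identified with $\sX_0({^{\leq n}S})$, where ${^{\leq n}S}$ denotes the $n$-coconnective truncation of $S$.

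For the left-hand side, one uses that ${^{\leq n}S}$ is terminal in $\Schaffn_{/S}$ by the universal property of the truncation. The argument of Remark~\ref{rem:truncated-S-is-a-final-object}, applied with $\Schaffn$ in place of ${^{\leq n}\SchaffTate}$, reduces the defining colimit to a colimit over $(\Schaffn_{/{^{\leq n}S}})^{\rm op}$, i.e.\ identifies $\LKE_{\imath}(\left.\sX_0\right|_{\Schaffnop})(S)$ with $\LKE_{\imath}(\left.\sX_0\right|_{\Schaffnop})({^{\leq n}S})$; since $\imath$ is fully faithful and ${^{\leq n}S}\in\Schaffn$, this last term equals $\sX_0({^{\leq n}S})$.

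For the right-hand side, the locally a prestack hypothesis gives $\sX_0(S) = \sX(S)$, since $\jmath$ is fully faithful and $S \in \Schaff$. The $n$-coconnectivity of $\sX$ together with Remark~\ref{rem:truncated-S-is-a-final-object} in its stated form identifies $\sX(S)$ with the value $\sX({^{\leq n}S})$, via the fully faithfulness of $\imath_{\rm Tate}$. Applying the locally a prestack condition once more, now to the affine scheme ${^{\leq n}S}$, yields $\sX({^{\leq n}S}) = \sX_0({^{\leq n}S})$.

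The two chains of identifications are natural and compose to the canonical comparison map, so we conclude that $\sX_0$ is $n$-coconnective. The only real work is the double invocation of Remark~\ref{rem:truncated-S-is-a-final-object}, once in its stated form (for Tate affine schemes) and once in its evident analogue for ordinary affine schemes; I do not anticipate any deeper obstacle beyond tracking the universal properties of the truncations.
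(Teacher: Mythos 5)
Your proof is correct, but it takes a genuinely different route from the paper's. The paper establishes the statement by showing that the comparison map $\alpha_{{^{\leq n}\sX_0}}$ of (\ref{eq:comparison-LKE-RKE-to-RKE-LKE}), from $\LKE_{\imath_{\rm Tate}}\circ\RKE_{\jmath^{\leq n}}({^{\leq n}\sX_0})$ to $\RKE_{\jmath}\circ\LKE_{\imath}({^{\leq n}\sX_0})$, is an isomorphism, and the locally-a-prestack hypothesis enters essentially there, through the identification ${^{\leq n}\sX}\simeq \RKE_{\jmath^{\leq n}}({^{\leq n}\sX_0})$ supplied by Lemma~\ref{lem:coconnective-affine-schemes-satisfy-RKE-base-change}. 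You instead verify the counit $\LKE_{\imath}({^{\leq n}\sX_0})\ra\sX_0$ pointwise, reducing everything to the claim that restriction along ${^{\leq n}S}\ra S$ induces an equivalence on values of $\sX$, which follows from the $n$-coconnectivity of $\sX$ and Remark~\ref{rem:truncated-S-is-a-final-object} alone. Two remarks. First, your two appeals to the locally-a-prestack hypothesis are vacuous: $\sX_0(S)=\sX(S)$ and $\sX_0({^{\leq n}S})=\sX({^{\leq n}S})$ hold by the very definition of $\sX_0$ as the restriction of $\sX$, so your argument in fact proves the proposition without that hypothesis --- a small strengthening of the paper's statement, which the paper's own route does not give. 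Second, the one step you should make explicit is that your two chains of identifications really do compose to the canonical counit map: this amounts to the commutativity of the naturality square for the counit with respect to the morphism ${^{\leq n}S}\ra S$, whose other three sides you have shown to be equivalences. That check is routine, so there is no gap.
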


\begin{proof}
We notice that the natural transformation 
\begin{equation}
\label{eq:comparison-between-different-extensions-for-prestack-Tate}
\alpha_{{^{\leq n}\sX_0}}: \LKE_{\imath_{\rm Tate}}\circ\RKE_{\jmath^{\leq n}}({^{\leq n}\sX_0}) \ra \RKE_{\jmath}\circ\LKE_{\imath}({^{\leq n}\sX_0})    
\end{equation}
is obtained by composing the map
\begin{equation*}
{^{\leq n}\sX_0} \ra \left.\LKE_{\imath}({^{\leq n}\sX_0})\right|_{\Schaffnop},
\end{equation*}
which is an isomorphism since $\imath$ is fully faithful, with the map
\begin{equation*}
\LKE_{\imath_{\rm Tate}}({^{\leq n}\sX}) \ra \sX
\end{equation*}
which is also an isomorphism, since $\sX$ is an $n$-coconnective prestack of Tate type. Thus, one has that $\alpha_{{^{\leq n}\sX_0}}$ is an isomorphism.

Notice that Lemma \ref{lem:coconnective-affine-schemes-satisfy-RKE-base-change} gives the isomorphism
\[
{^{\leq n}\sX} \overset{\simeq}{\ra} \RKE_{\jmath^{\leq n}}({^{\leq n}\sX_0}).
\]
So, the left-hand side of (\ref{eq:comparison-between-different-extensions-for-prestack-Tate}) is isomorphic to $\sX$ and we obtain
\[
\sX \overset{\simeq}{\ra} \RKE_{\jmath}\circ\LKE_{\imath}({^{\leq n}\sX_0}).
\]
Since $\jmath$ is fully faithful one has an equivalence
\[
\sX_0 \simeq \left.\sX\right|_{\Schaffop} \overset{\simeq}{\ra} \LKE_{\imath}({^{\leq n}\sX_0}),
\]
that is $\sX_0$ is $n$-coconnective.
\end{proof}

\paragraph{Finiteness condition}

To compare the finiteness condiions, we notice that the we have a Cartesian diagram
\begin{equation}
\label{eq:inclusion-of-n-coconnective-affine-schemes-of-ft-into-n-coconnective-Tate-affine-schemes}
\begin{tikzcd}
\Schaffnftop \ar[r,"\imath_{\rm ft}"] \ar[d,"\jmath^{\leq n}_{\rm ft}"'] & \Schaffnop \ar[d,"\jmath^{\leq n}"] \\
\SchaffTatenftop \ar[r,"\imath^{\rm Tate}_{\rm ft}"'] & \SchaffTatenop 
\end{tikzcd}    
\end{equation}

\begin{lem}
\label{lem:coconnective-ft-affine-schemes-satisfy-RKE-base-change}
The diagram (\ref{eq:inclusion-of-n-coconnective-affine-schemes-of-ft-into-n-coconnective-Tate-affine-schemes}) satisfies the conditions of Lemma \ref{lem:initial-slice-categories-give-isomorphism-of-base-change}.
\end{lem}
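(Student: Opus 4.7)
The strategy is to mirror the proof of Lemma \ref{lem:coconnective-affine-schemes-satisfy-RKE-base-change}, showing that for each $T' \in \SchaffTatenftop$ the functor $\beta: (\jmath^{\leq n}_{\rm ft})_{T'/} \to (\jmath^{\leq n})_{\imath^{\rm Tate}_{\rm ft}(T')/}$ is initial in the sense required by Lemma \ref{lem:initial-slice-categories-give-isomorphism-of-base-change}. Unwinding the opposite-category conventions, for each object $c$ of the target---corresponding to a map $T_0 \to T'$ with $T_0 \in \Schaffn$---the slice $\beta_{/c}$ has as objects the factorizations $T_0 \to T'_0 \to T'$ with $T'_0 \in \Schaffnft$; the plan is to show this slice is weakly contractible.

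The main new input relative to Lemma \ref{lem:coconnective-affine-schemes-satisfy-RKE-base-change} is that there is no finite-type analogue of the truncation functor $\tau^{\leq n}$, so no canonical final object is available. Instead I will exploit the finite-type hypothesis on $T'$: writing $T' \simeq \colim_J T'_j$ with $T'_j \in \Schaffnft$ and $J$ filtered, the fact that $T_0$ is constant as an object of $\SchaffTaten$ gives $\Hom_{\SchaffTaten}(T_0, T') \simeq \colim_{j \in J} \Hom_{\Schaffn}(T_0, T'_j)$, so $T_0 \to T'$ factors through $T'_j$ for every $j$ in a cofinal subset $J_{\geq j_0} \subset J$. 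Since each $T'_j \hookrightarrow T'$ is a closed embedding, hence a monomorphism, the factoring $T_0 \to T'_j$ is uniquely determined. This yields a canonical family $a_j = (T'_j, T_0 \to T'_j, T'_j \hookrightarrow T')$ of objects of $\beta_{/c}$ (establishing, in particular, nonemptiness), and the transition closed embeddings $T'_j \hookrightarrow T'_{j'}$ assemble these into a functor $\phi : J^{\rm op}_{\geq j_0} \to \beta_{/c}$.

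To conclude contractibility I will show $\phi$ is cofinal in the sense of \cite{HTT}*{Theorem 4.1.3.1}. For any $x = (S, T_0 \to S, S \to T') \in \beta_{/c}$, the fiber $J^{\rm op}_{\geq j_0} \times_{\beta_{/c}} (\beta_{/c})_{x/}$ consists of indices $j$ for which the map $S \to T'$ factors through $T'_j$; since $S \in \Schaffnft$ is finite type, such a factoring exists for $j$ in a cofinal subset of $J^{\rm op}_{\geq j_0}$, and the monomorphicity of $T'_j \hookrightarrow T'$ makes this factoring unique, so the fiber is a cofiltered poset, hence weakly contractible. Therefore $\phi$ is cofinal, and as $J^{\rm op}_{\geq j_0}$ is itself cofiltered and so weakly contractible, the same holds for $\beta_{/c}$. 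Applying Lemma \ref{lem:initial-slice-categories-give-isomorphism-of-base-change} then gives the claim.

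The hard part is precisely the absence of a universal truncation into $\Schaffnft$: one cannot simply write down a final object as in the previous lemma, and the more abstract cofinality argument is forced upon us. The essential geometric content is that maps from a constant ind-object into a filtered colimit factor at a finite stage, and uniqueness of such factorings follows from closed embeddings being monomorphisms; the remaining work is bookkeeping with the direction of slice morphisms in the opposite category.
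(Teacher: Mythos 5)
Your identification of the comma category as the category of factorizations $T_0 \to T'_0 \to T'$ with $T'_0 \in \Schaffnft$, and your main mechanism --- that $\Hom_{\SchaffTaten}(T_0,T')\simeq \colim_{J}\Hom_{\Schaffn}(T_0,T'_j)$ because $T_0$ lives in $\Schaffn\subset \Ind(\Schaffn)$, so every map out of an affine scheme factors through a finite, hence finite-type, stage of the presentation --- are both correct, and this is the right replacement for the truncation functor that powered Lemma \ref{lem:coconnective-affine-schemes-satisfy-RKE-base-change}. This is a genuinely different route from the paper's, which instead manufactures objects of the comma category from the fiber products $S_0\underset{S'}{\times}S'_i$ and then appeals to filteredness of the presenting diagram; your argument via compactness of $T_0$ is arguably cleaner, since $S_0\underset{S'}{\times}S'_i$ is only a closed subscheme of the (not necessarily finite-type) scheme $S_0$.

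One step fails as written: closed embeddings are \emph{not} monomorphisms of derived schemes, so the factorization $T_0\to T'_j$ is not uniquely determined and your fibers are not posets. Concretely, for $\Spec(k)\hookrightarrow\Spec(k[x])$ one has $k\otimes_{k[x]}k$ with nonvanishing $\H^{-1}$, so $\Spec(k)\to \Spec(k)\underset{\Spec(k[x])}{\times}\Spec(k)$ is not an equivalence; monomorphicity holds for $n=0$ but the lemma is needed for all $n$. The repair does not change your architecture: the category of factorizations of a fixed map $f\colon T_0\to T'$ through the filtered diagram $\{T'_j\}$ --- objects being triples $(j,\,g\colon T_0\to T'_j,\,$ a homotopy $\iota_j\circ g\simeq f)$ --- is filtered, since any two such triples admit a common refinement and become homotopic at a larger index by the colimit formula for $\Hom(T_0,T')$; a filtered $\infty$-category is weakly contractible, which is all you need both to see that $\beta_{/c}$ is nonempty and cofiltered and to handle the fibers in your (co)finality check. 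Separately, the fiber you describe (indices $j$ with a factorization $S\to T'_j$) is the one occurring in the \emph{initiality} criterion for $\phi$ rather than the cofinality one; since initial and cofinal functors are both weak homotopy equivalences this does not affect the conclusion, but the direction of the slice categories should be straightened out in the final write-up.
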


\begin{proof}
For every $S' \in \SchaffTatenft$ we need to check that the functor
\[
\alpha: (\jmath^{\leq n}_{\rm ft})_{S'/} \ra \jmath^{\leq n}_{\imath^{\rm Tate}_{\rm ft}(S')/}
\]
is initial. First we notice that
\[
(\jmath^{\leq n}_{\rm ft})_{S'/} \simeq \{\jmath^{\leq n}_{\rm ft}(S'_0) \ra S' \;| \; S'_0 \in \Schaffnft\}
\]
and
\[
\jmath^{\leq n}_{\imath^{\rm Tate}_{\rm ft}(S')} \simeq \{\jmath^{\leq n}(S_0) \ra \imath^{\rm Tate}_{\rm ft}(S') \; | \; S_0 \in \Schaffn\}.
\]
Indeed, $\alpha$ is initial if for every object $(\jmath^{\leq n}(S_0) \ra \imath^{\rm Tate}_{\rm ft}(S'))$ from $\jmath^{\leq n}_{\imath^{\rm Tate}_{\rm ft}(S')}$ the slice category $\alpha_{/\jmath^{\leq n}_{\imath^{\rm Tate}_{\rm ft}(S')}}$ is non-empty and connected. Let $S' \simeq \colim_I S'_i$ be a presentation of $S'$, where $S'_i \in \Schaffnft$, then one has 
\[
\begin{tikzcd}
\imath^{\rm Tate}_{\rm ft}\circ \jmath^{\leq n}_{\rm ft}(S_0\underset{S'}{\times}S'_i) \ar[r] \ar[d] & \imath^{\rm Tate}_{\rm ft}\circ \jmath^{\leq n}_{\rm ft}(S'_i) \ar[r] & \imath^{\rm Tate}_{\rm ft}(S') \ar[d] \\
\jmath^{\leq n}(S_0) \ar[rr] & & \imath^{\rm Tate}_{\rm ft}(S')
\end{tikzcd}
\]
So the category $\alpha_{/\jmath^{\leq n}_{\imath^{\rm Tate}_{\rm ft}(S')}}$ is non-empty. Since by definition of Tate schemes, the category $I$ is filtered, one has that this category is also connected.
\end{proof}

Similarly to the previous paragraph, we have the following result
\begin{cor}
\label{cor:Tate-prestack-laft-compatibility}
Let $\sX \in \PStkTatelp$ and suppose that $\sX$ is locally almost finite type as a prestack of Tate type. Let $\sX_0 = \left.\sX\right|_{\Schaffop}$, then $\sX_0$ is also locally almost of finite type.
\end{cor}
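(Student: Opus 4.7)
The strategy is to adapt the two-step proof of Proposition \ref{prop:Tate-prestack-coconnectivity-compatibility} to the finite type setting, substituting the Cartesian square (\ref{eq:inclusion-of-n-coconnective-affine-schemes-of-ft-into-n-coconnective-Tate-affine-schemes}) and Lemma \ref{lem:coconnective-ft-affine-schemes-satisfy-RKE-base-change} in place of their $n$-coconnective analogues. By Definition \ref{defn:laft-prestack-Tate}, it suffices to establish two things: that $\sX_0$ is convergent, and that for every $n \geq 0$ the canonical comparison map $\LKE_{\imath_{\rm ft}}({^{\leq n}\sX_{0,\rm ft}}) \to {^{\leq n}\sX_0}$ is an isomorphism, where I write ${^{\leq n}\sX_{0,\rm ft}} := \left.{^{\leq n}\sX_0}\right|_{\Schaffnftop}$. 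Convergence of $\sX_0$ is an immediate application of Lemma \ref{lem:Tate-prestack-convergence-compatibility}: since $\sX$ is convergent (as a laft prestack of Tate type) and locally a prestack, so is $\sX_0$.

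For the finite type assertion, I fix $n$ and use Lemma \ref{lem:coconnective-ft-affine-schemes-satisfy-RKE-base-change} to produce a natural transformation
\[
\alpha_{{^{\leq n}\sX_{0,\rm ft}}} : \LKE_{\imath^{\rm Tate}_{\rm ft}}\circ\RKE_{\jmath^{\leq n}_{\rm ft}}({^{\leq n}\sX_{0,\rm ft}}) \to \RKE_{\jmath^{\leq n}}\circ\LKE_{\imath_{\rm ft}}({^{\leq n}\sX_{0,\rm ft}}).
\]
The source is identified with ${^{\leq n}\sX}$ by a two-step computation: first, the base change provided by Lemma \ref{lem:coconnective-ft-affine-schemes-satisfy-RKE-base-change}, combined with the locally-a-prestack assumption and Lemma \ref{lem:coconnective-affine-schemes-satisfy-RKE-base-change}, gives $\RKE_{\jmath^{\leq n}_{\rm ft}}({^{\leq n}\sX_{0,\rm ft}}) \simeq \left.{^{\leq n}\sX}\right|_{\SchaffTatenftop}$; second, the hypothesis that ${^{\leq n}\sX}$ is of finite type gives $\LKE_{\imath^{\rm Tate}_{\rm ft}}(\left.{^{\leq n}\sX}\right|_{\SchaffTatenftop}) \simeq {^{\leq n}\sX}$.

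Under these identifications, $\alpha_{{^{\leq n}\sX_{0,\rm ft}}}$ factors as a composite of two isomorphisms exactly as in the proof of Proposition \ref{prop:Tate-prestack-coconnectivity-compatibility}: one coming from the finite type condition on ${^{\leq n}\sX}$, and one from the full faithfulness of $\imath_{\rm ft}$ (which gives that the unit ${^{\leq n}\sX_{0,\rm ft}} \to \left.\LKE_{\imath_{\rm ft}}({^{\leq n}\sX_{0,\rm ft}})\right|_{\Schaffnftop}$ is an isomorphism). Consequently $\alpha_{{^{\leq n}\sX_{0,\rm ft}}}$ is an isomorphism, yielding ${^{\leq n}\sX} \simeq \RKE_{\jmath^{\leq n}}(\LKE_{\imath_{\rm ft}}({^{\leq n}\sX_{0,\rm ft}}))$. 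Since I also have ${^{\leq n}\sX} \simeq \RKE_{\jmath^{\leq n}}({^{\leq n}\sX_0})$ and $\jmath^{\leq n}$ is fully faithful, restricting gives the desired ${^{\leq n}\sX_0} \simeq \LKE_{\imath_{\rm ft}}({^{\leq n}\sX_{0,\rm ft}})$.

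The main point to be checked carefully is that the two-isomorphism decomposition of $\alpha_{{^{\leq n}\sX_{0,\rm ft}}}$ from Proposition \ref{prop:Tate-prestack-coconnectivity-compatibility} transfers to the finite type setting; this is structurally the same argument, but its validity depends on the base change statement of Lemma \ref{lem:coconnective-ft-affine-schemes-satisfy-RKE-base-change} commuting appropriately through the left Kan extension along $\imath^{\rm Tate}_{\rm ft}$. If any doubt arises on this point, I would fall back on a pointwise verification, unpacking the right Kan extensions as cofiltered limits over slice categories and using the presentation of any object of $\SchaffTateaft$ as a filtered colimit of objects in $\SchaffTateaft$ indexed by almost-of-finite-type approximations (via Lemma \ref{lem:defn-Tate-affine-scheme-aft}) to check the comparison on Tate affine test schemes directly.
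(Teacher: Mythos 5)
Your proof is correct and takes the same route as the paper: convergence via Lemma \ref{lem:Tate-prestack-convergence-compatibility}, and the finite-type condition by running the argument of Proposition \ref{prop:Tate-prestack-coconnectivity-compatibility} with Lemma \ref{lem:coconnective-ft-affine-schemes-satisfy-RKE-base-change} as input. You have merely spelled out the details that the paper leaves implicit, and the decomposition of $\alpha_{{^{\leq n}\sX_{0,\rm ft}}}$ into the unit/counit isomorphisms goes through exactly as you describe.
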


\begin{proof}
The convergence condition for $\sX_0$ follows from Lemma \ref{lem:Tate-prestack-convergence-compatibility}, whereas the finiteness condition for ${{^\leq n}\sX_0}$ follows from the same argument as in the proof of Proposition \ref{prop:Tate-prestack-coconnectivity-compatibility}, but using Lemma \ref{lem:coconnective-ft-affine-schemes-satisfy-RKE-base-change} as input.
\end{proof}

\subsection{Sheaves on Prestacks of Tate type almost of finite type}
\label{subsec:sheaves-on-prestacks-Tate}

In this section we follow the strategy of \cite{GR-II}*{Chapter 3, \S 5.4} to extend the formalism of Sections \ref{subsec:Pro-ind-coherent-on-Tate-schemes} and \ref{subsec:Tate-coherent-sheaves-on-Tate-schemes} to prestacks of Tate type which are almost of finite type. As in \emph{loc.\ cit.\ } the pullback and pushforward functors don't \emph{both} exist for arbitrary morphisms. In the case of pro-ind-coherent sheaves we will have an arbitrary pullback and pushforward will exist only for a certain type of morphisms, namely Tate schematic\footnote{This is probably not the most general framework on can develop, as in Gaitsgory--Rozenblyum one could probably consider ind-inf-schematic morphisms, since we don't have an application in mind for those we don't pursue this generality here.} (defined below).

\subsubsection{Morphisms of prestacks of Tate type}

To develop the formalism of Pro-Ind-coherent sheaves of Tate-coherent sheaves for prestacks of Tate type, we need to consider certain special types of morphisms between those. Recall the notion of Tate affine schematic, see \S \ref{par:Tate-affine-schematic-morphism}. The following is a generalization to Tate schemes locally almost of finite type.

\begin{defn}
\label{defn:schematic-morphisms-prestack-Tate}
For a Tate affine schematic morphism $f:\sX \ra \sY$ between prestacks of Tate type, one says that $f$ is \emph{almost of finite type (aft)} if for every morphism $S \ra \sY$ where $S \in \SchaffTateaft$ the pullback
\begin{equation}
    \label{eq:base-change-map-defn-of-aff-aft-morphisms}
    S\underset{\sY}{\times}\sX
\end{equation}
is a Tate affine scheme almost of finite type.
Moreover, for $f:\sX \ra \sY$ a Tate affine schematic morphism almost of finite type, we say that $f$ is ind-proper if for every $S$ as above the map (\ref{eq:base-change-map-defn-of-aff-aft-morphisms}) is ind-proper.
\end{defn}

\subsubsection{Extension of Pro-Ind-coherent sheaves}
\label{subsubsec:extension-ProIndCoh-to-prestacks-Tate}

In this section we extend the formalism of pro-ind-coherent sheaves with pullbacks to all prestacks of Tate type. Recall that in \S \ref{subsec:Pro-ind-coherent-on-Tate-schemes} we defined a functor
\[
\ProIndCoh^!: (\SchTatelaft)^{\rm op} \ra \DGc.
\]

Let
\[
\ProIndCoh^!_{\SchaffTateaft} := \left.\ProIndCoh^!\right|_{(\SchaffTateaft)^{\rm op}}
\]
denote the $!$-pullback formalism on the category of Tate affine schemes almost of finite type.

One defines the functor
\[
\ProIndCoh^!_{\rm \PStkTatelaft}: \PStkTatelaft \ra \DGc
\]
as the right Kan extension of $\ProIndCoh^!:(\SchaffTateaft)^{\rm op} \ra \DGc$ via the Yoneda embedding\footnote{Recall that any Tate affine scheme is automatically convergent, in particular the Yoneda embedding into $\PStkTate$ factors through the category of laft prestacks of Tate type.}
\begin{equation}
    \label{eq:inclusion-Tate-affine-schemes-into-prestacks-Tate}
    \SchaffTateaft \hra \PStkTatelaft. 
\end{equation}

Given any morphisms $f:\sX_1 \ra \sX_2$ of prestacks of Tate type one denotes the $!$-pullback by
\[
f^{!}: \ProIndCoh(\sX_2) \ra \ProIndCoh(\sX_1).
\]

\paragraph{}

Let 
\[
\mbox{Corr}(\PStkTatelaft)^{\rm aff-aft-ind-proper}_{\rm aff-aft;all}
\]
denote the $2$-category of correspondences between prestack of Tate type which are locally almost of finite type, where
\[
\mbox{aff-aft  and   aff-aft-ind-proper}
\]
denote the classes of morphisms which are Tate affine schematic and almost of finite type and ind-proper and Tate affine schematic and almost of finite type, respectively.

\paragraph{}

We claim
\begin{thm}
\label{thm:ProIndCoh-correspondences-prestacks-Tate-laft-affine-morphisms}
There exists a uniquely defined functor
\[
\ProIndCoh_{\mbox{Corr}(\PStkTatelaft)^{\rm aff-aft-ind-proper}_{\rm aff-aft;all}}: \mbox{Corr}(\PStkTatelaft)^{\rm aff-aft-ind-proper}_{\rm aff-aft;all} \ra \DGct,
\]
equipped with isomorphisms
\[
\left.\ProIndCoh_{\mbox{Corr}(\PStkTatelaft)^{\rm aff-aft-ind-proper}_{\rm aff-aft;all}}\right|_{(\PStkTatelaft)^{\rm op}} \simeq \ProIndCoh^!_{\rm \PStkTatelaft}
\]
and
\[
\left.\ProIndCoh_{\mbox{Corr}(\PStkTatelaft)^{\rm aff-aft-ind-proper}_{\rm aff-aft;all}}\right|_{\CorrTateaaftipaa} \simeq \ProIndCoh_{\CorrTateaaftipaa}
\]
where the latter two isomorphisms are compatible.
\end{thm}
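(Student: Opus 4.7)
The plan is to deduce this theorem from \cite{GR-I}*{Chapter 8, Theorem 1.1.9}, applied along the inclusion $\SchTatelaft \hra \PStkTatelaft$, much in the same spirit as Theorem \ref{thm:TateCoh-correspondence-functor-Sch-Tate} and its pro-ind-coherent analogue were deduced along $\Schaft \hra \SchTatelaft$. First, one needs to extend the four building-block functors: the $!$-pullback functor $\ProIndCoh^!_{\PStkTatelaft}$ is defined as in the preamble via right Kan extension from $\SchaffTateaft$, and by associativity of right Kan extensions this agrees with the right Kan extension of $\ProIndCoh^!_{\SchTatelaft}$ along $\SchTatelaft \hra \PStkTatelaft$ (using that every Tate scheme locally almost of finite type admits an affine-aft cover in the Zariski topology restricted to Tate affine schemes). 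The pushforward functor along aff-aft morphisms is defined by left Kan extension from the corresponding pushforward on Tate schemes, restricted appropriately to the aff-aft-ind-proper admissible class.

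The key verifications, following the four conditions listed in \S\ref{par:conditions-for-extension} of the paper, are the following. First, one needs the left Beck--Chevalley condition: for a Cartesian diagram of prestacks of Tate type where the vertical morphisms are aff-aft-ind-proper, the base change isomorphism
\[
g^! \circ f^{\rm ProInd}_{*} \simeq (f')^{\rm ProInd}_{*} \circ (g')^!
\]
holds. This reduces, by the right Kan extension definition of $\ProIndCoh^!_{\PStkTatelaft}$ and the fact that ind-proper aff-aft morphisms remain ind-proper after pullback to $\SchaffTateaft$, to the base change statement of Proposition \ref{prop:base-change-ind-proper-ProIndCoh} (or its $!$-pullback counterpart) already established at the level of Tate schemes. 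Second, one verifies that the natural map
\[
\LKE_{(\SchTatelaft)_{\rm ind-proper} \hra (\PStkTatelaft)_{\rm aff-aft-ind-proper}}(\ProIndCoh_{(\SchTatelaft)_{\rm ind-proper}}) \;\longrightarrow\; \left.\ProIndCoh_{\PStkTatelaft}\right|_{\rm ind-proper}
\]
is an isomorphism, and similarly for the $!$-pullback with right Kan extensions; both reduce to a cofinality argument using that, for any aff-aft-ind-proper morphism $T \to \sX$ with $T$ a Tate affine scheme, the slice category of factorizations through an object of $\SchTatelaft$ is contractible.

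The technical condition (*) of \cite{GR-I}*{Chapter 8, \S 1.1.7} corresponds to asserting that for every aff-aft-ind-proper morphism $\sX' \to \sX$ in $\PStkTatelaft$, the canonical map
\[
\colim_{T \in (\SchTatelaft)_{{\rm ind\text{-}proper\; in\;} \sX}} \ProIndCoh(\sX' \times_{\sX} T) \;\longrightarrow\; \ProIndCoh(\sX')
\]
is an equivalence; this follows from Lemma \ref{lem:technical-condition-extension-ProIndCoh-colimit-of-proper} combined with a cofinality argument, noting that $\sX' \times_\sX T$ is a Tate affine scheme almost of finite type by the definition of aff-aft morphisms.

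The main obstacle I expect is the bookkeeping to ensure that all three compatibilities in the statement (with $\ProIndCoh^!_{\PStkTatelaft}$ on the horizontal side, with $\ProIndCoh_{\CorrTatelaftipaa}$ on the sub-2-category of Tate schemes, and the coherence between them) are induced from the single application of the extension theorem; this is essentially a diagram chase using the universal property of $\LKE$ and the uniqueness clauses in \cite{GR-I}*{Chapter 8, Theorem 1.1.9}, but it requires care to match the admissible classes across the inclusion $\CorrTatelaftipaa \hra \mbox{Corr}(\PStkTatelaft)^{\rm aff-aft-ind-proper}_{\rm aff-aft;all}$ since the aff-aft condition trivially holds for morphisms between Tate schemes but is a genuine constraint in general.
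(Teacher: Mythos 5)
There is a genuine gap: you are invoking the wrong extension theorem. \cite{GR-I}*{Chapter 8, Theorem 1.1.9} (the one behind the four conditions of \S \ref{par:conditions-for-extension}) requires that every object of the target category admit an \emph{admissible} morphism from an object of the source category — this is what makes it work for $\Schaft \hra \SchTatelaft$, where every Tate scheme receives a closed (hence ind-proper) map from a scheme. For $\SchTatelaft \hra \PStkTatelaft$ this hypothesis fails: a general prestack of Tate type locally almost of finite type does \emph{not} admit an aff-aft-ind-proper morphism from a Tate (affine) scheme. Consequently your conditions (2) and (4) and your condition (*), which all take colimits or Kan extensions over the category $(\SchTatelaft)_{\rm ind\text{-}proper\ in\ \sX}$, are over a category that is empty (or far from cofinal) for general $\sX$, and the claimed equivalences
\[
\colim_{T} \ProIndCoh(\sX'\underset{\sX}{\times}T) \ra \ProIndCoh(\sX')
\]
break down. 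Your cofinality argument also presupposes the existence of at least one such $T \ra \sX$, which is not available.

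The correct tool here is \cite{GR-I}*{Chapter 8, Theorem 6.1.5}, which is designed for exactly this situation: the vertical/admissible morphisms in the big correspondence category are required to be \emph{representable} by objects of the small category (here: Tate affine schematic and aft), the $!$-pullback is defined by right Kan extension along all of $(\PStkTatelaft)^{\rm op}$, and the pushforward is then produced affine-locally. The only substantive hypothesis is that for each $S \in \SchaffTateaft$ the functor $(\SchaffTateaft)_{/S} \ra (\PStkTatelaft)_{\rm aff\text{-}aft\ in\ S}$ is an equivalence, which holds because an aff-aft morphism $\sX \ra S$ to a Tate affine scheme almost of finite type forces $\sX$ itself to be a Tate affine scheme almost of finite type. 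One applies this to the restriction of $\ProIndCoh_{\CorrTatelaftipaa}$ to correspondences of Tate affine schemes almost of finite type; no new Beck--Chevalley verifications are needed beyond those already established at the level of Tate schemes.
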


\begin{proof}
We notice that the restriction of $\ProIndCoh_{\CorrTatelaftipaa}$ to the category of correspondence of Tate affine schemes almost of finite type, whose admissible morphisms are ind-proper gives a functor
\begin{equation}
\label{eq:ProIndCoh-ind-proper-on-affine-Tate-aft}
\ProIndCoh_{\CorraTateaftipaa}: \CorraTateaftipaa \ra \DGc.    
\end{equation}

One wants to apply \cite{GR-I}*{Chapter 8, Theorem 6.1.5} to the functor (\ref{eq:ProIndCoh-ind-proper-on-affine-Tate-aft}).

For that we notice that the fully faithful functor (\ref{eq:inclusion-Tate-affine-schemes-into-prestacks-Tate}) preserves Cartesian diagrams, and sends any morphism in $\SchaffTateaft$ to a Tate affine schematic and aft morphism in $\PStkTatelaft$. Indeed, any morphism between Tate affine schemes almost of finite type is an aft morphism.

The conditions on the 2-category $\DGct$ are satisfied in the same way as in \cite{GR-I}*{Chapter 8, \S 6.1.3}.

Finally, the condition of Theorem 6.1.5 from \cite{GR-I}*{Chapter 8} asks that for any $S \in \SchaffTateaft$ the map of categories
\[
(\SchaffTateaft)_{/S} \ra (\PStkTatelaft)_{\rm aff-aft \; in \; S}
\]
is an equivalence. Indeed, this follows from noticing that if $\sX \in \PStkTatelaft$ and $f: \sX \ra S$ is Tate affine schematic and aft morphism to $S$ a Tate affine scheme almost of finite type, then $\sX \in \SchaffTateaft$.
\end{proof}

\subsubsection{Extension of Tate-coherent sheaves}
\label{subsubsec:extension-TateCoh-to-PStk-Tate}

We proceed exactly as in section \ref{subsubsec:extension-ProIndCoh-to-prestacks-Tate}. Let $\TateCoh^!_{\SchaffTateaft}$ denote the restriction of the $!$-pullback functor from \S \ref{subsec:Tate-coherent-sheaves-on-Tate-schemes} to the category of Tate affine schemes almost of finite type.

One defines the functor
\[
\TateCoh^!_{\rm \PStkTatelaft}: \PStkTatelaft \ra \DGc
\]
as the right Kan extension of $\TateCoh^!_{\SchaffTateaft}:(\SchaffTateaft)^{\rm op} \ra \DGc$ via the natural inclusion (\ref{eq:inclusion-Tate-affine-schemes-into-prestacks-Tate}).

Given a map of prestacks of Tate type $f: \sX_1 \ra \sX_2$ we will also denote by
\[
f^!: \TateCoh(\sX_2) \ra \TateCoh(\sX_1)
\]
the $!$-pullback functor.

\begin{thm}
\label{thm:TateCoh-correspondence-functor-PStk-Tate}
There exists a unique extension
\[
\TateCoh_{\mbox{Corr}(\PStkTatelaft)^{\rm aff-aft-ind-proper}_{\rm aff-aft;all}}: \mbox{Corr}(\PStkTatelaft)^{\rm aff-aft-ind-proper}_{\rm aff-aft;all} \ra \DGct
\]
equipped with isomorphisms
\[
\left.\TateCoh_{\mbox{Corr}(\PStkTatelaft)^{\rm aff-aft-ind-proper}_{\rm aff-aft;all}}\right|_{(\PStkTatelaft)^{\rm op}} \simeq \TateCoh^!_{\PStkTatelaft}
\]
and
\[
\left.\TateCoh_{\mbox{Corr}(\PStkTatelaft)^{\rm aff-aft-ind-proper}_{\rm aff-aft;all}}\right|_{\CorrTateaaftipaa} \simeq \TateCoh_{\CorrTateaaftipaa}.
\]
\end{thm}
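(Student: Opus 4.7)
The plan is to apply the same extension machinery of \cite{GR-I}*{Chapter 8, Theorem 6.1.5} that was used in the proof of Theorem \ref{thm:ProIndCoh-correspondences-prestacks-Tate-laft-affine-morphisms}, but starting from the Tate-coherent input instead of the pro-ind-coherent one. Concretely, I would first restrict the functor $\TateCoh_{\CorrTatelaftipaa}$ from Theorem \ref{thm:TateCoh-correspondence-functor-Sch-Tate} to the full $2$-subcategory of correspondences of Tate affine schemes almost of finite type with ind-proper admissible morphisms, obtaining
\[
\TateCoh_{\CorrTateaaftipaa}: \CorrTateaaftipaa \ra \DGct.
\]
This is the base data to be extended.

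Next I would check the hypotheses of \cite{GR-I}*{Chapter 8, Theorem 6.1.5} for the fully faithful functor $\SchaffTateaft \hra \PStkTatelaft$ with the chosen classes of morphisms (horizontal: all; vertical and admissible: Tate affine schematic, almost of finite type, and ind-proper in the admissible case). The key points are: (i) the inclusion preserves fiber products, because pullbacks of prestacks of Tate type along morphisms from Tate affine schemes agree with the Tate-affine fiber products from Lemma \ref{lem:Tate-affine-schemes-has-fiber-products} and Remark \ref{rem:Tate-schemes-aft-stable-under-fiber-products}; (ii) every morphism in $\SchaffTateaft$ is, tautologically, Tate affine schematic and almost of finite type; (iii) for each $S \in \SchaffTateaft$ the natural functor $(\SchaffTateaft)_{/S} \ra (\PStkTatelaft)_{\rm aff\text{-}aft\,in\,S}$ is an equivalence, which is precisely the content of Definition \ref{defn:schematic-morphisms-prestack-Tate} applied to the base $S$. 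The conditions on $\DGct$ are the same as those verified in \S \ref{par:conditions-on-the-2-category-of-DGCats}, and the cardinality issue is taken care of by the conventions of \S \ref{subsec:technical-presentability}.

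Once these hypotheses are in place, \cite{GR-I}*{Chapter 8, Theorem 6.1.5} produces a unique extension
\[
\TateCoh_{\mathrm{Corr}(\PStkTatelaft)^{\rm aff\text{-}aft\text{-}ind\text{-}proper}_{\rm aff\text{-}aft;all}}: \mathrm{Corr}(\PStkTatelaft)^{\rm aff\text{-}aft\text{-}ind\text{-}proper}_{\rm aff\text{-}aft;all} \ra \DGct
\]
together with a canonical identification of its restriction to $\CorrTateaaftipaa$ with $\TateCoh_{\CorrTateaaftipaa}$. The identification of the restriction to $(\PStkTatelaft)^{\mathrm{op}}$ with $\TateCoh^!_{\PStkTatelaft}$ is then automatic: both are obtained by right Kan extension along $\SchaffTateaft \hra \PStkTatelaft$ from the common $!$-pullback datum, and the extension theorem records this compatibility.

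The main technical step I expect to be non-routine is the verification of base-change and Beck--Chevalley compatibilities in the new setting, i.e.\ that the restriction of $\TateCoh_{\CorrTateaaftipaa}$ to vertical and horizontal morphisms interacts correctly with the right Kan extension defining $\TateCoh^!_{\PStkTatelaft}$. These are the analogues, at the level of Tate-coherent sheaves, of the statements in Propositions \ref{prop:ProIndCoh^!-from-proper-agrees-with-restriction} and \ref{prop:base-change-ind-proper-ProIndCoh^!}; they should follow from \S \ref{subsubsec:defn-and-ind-proper-maps} and Corollaries \ref{cor:base-change-ind-proper-TateCoh} and \ref{cor:base-change-ind-proper-TateCoh^!} by passing to right Kan extensions, but the bookkeeping of adjunction data across the extension procedure is the part that requires care.
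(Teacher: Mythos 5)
Your proposal is correct and follows essentially the same route as the paper: restrict the functor of Theorem \ref{thm:TateCoh-correspondence-functor-Sch-Tate} to $\CorrTateaaftipaa$ and then apply \cite{GR-I}*{Chapter 8, Theorem 6.1.5}, verifying the same hypotheses as in the proof of Theorem \ref{thm:ProIndCoh-correspondences-prestacks-Tate-laft-affine-morphisms}. The extra care you flag about Beck--Chevalley bookkeeping is already absorbed into those hypotheses, so nothing further is needed.
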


\begin{proof}
We restrict the functor (\ref{eq:TateCoh-correspondences-Tate-schemes-aft-ind-proper}) to the category of correspondences of Tate affine schemes almost of finite type with ind-proper admissible morphisms:
\begin{equation}
\label{eq:TateCoh-ind-proper-on-affine-Tate-aft}
\TateCoh_{\CorraTateaftipaa}: \CorraTateaftipaa \ra \DGc.    
\end{equation}

By the same arguments as in the proof of \ref{thm:ProIndCoh-correspondences-prestacks-Tate-laft-affine-morphisms} we can apply \cite{GR-I}*{Chapter 8, Theorem 6.1.5} to the functor \ref{eq:TateCoh-ind-proper-on-affine-Tate-aft}.
\end{proof}

\section{Tate stacks}
\label{sec:Tate-stacks}

In this section we study geometric conditions that can be imposed on a prestack of Tate type. In \S \ref{subsec:descent-and-stacks}, we introduce Grothendieck topologies on Tate affine schemes, and after that we discuss the descent condition for prestacks of Tate type. We refer to prestacks of Tate type that satisfy descent with respect to the \'etale topology as Tate stacks. We also investigate how descent interacts with other natural conditions on Tate stacks. Possibly the most interesting result is that descent with respect to Tate affine schemes for a prestack of Tate type which is locally a prestack is equivalent to descent with respect to affine schemes for the usual Grothendieck topologies. In \S \ref{subsec:Tate-Artin-stacks}, we study the conditions for a Tate stack to be a Tate Artin stack. These are natural geometrically meaningful conditions to consider, and we study the relation between Tate Artin $k$-stacks and usual Artin $k$-stacks. In \S \ref{subsec:Zariski-Tate-stacks}, we introduce the notion of Tate stacks that admit a Zariski atlas, which we call Zariski Tate stacks. Zariski Tate stacks serve as an intermediate concept between Tate affine schemes and Tate Artin 1-stacks, and we prove that they can also be seen as Tate schemes, i.e.\ ind-schemes that satisfy the Tate condition. 

\subsection{Descent and Stacks of Tate type}
\label{subsec:descent-and-stacks}

In this section we introduce the notion of a stack of Tate type. In subsection \ref{subsubsec:topologies-Tate-affine} we first have a discussion of topologies on the category of Tate affine schemes. In subsection \ref{subsubsec:descent} we introduce the descent condition and in the remaining subsections we discuss how it interacts with different properties of a prestack of Tate type.

\subsubsection{Topologies on Tate affine schemes}
\label{subsubsec:topologies-Tate-affine}

In this section we specialize the concepts of a faithfully flat (resp.\ \'etale and Zariski) morphisms to Tate affine schemes and show that they can be characterized levelwise maps with the same property for their representations. We then use this more concrete description to check that the collections of those maps induce topologies on the category $\SchaffTate$.

For a map $f:S \ra T$ between Tate affine schemes, recall that by \ref{defn:morphisms-of-Tate-schemes} (1) we say that $f$ is faithfully flat (resp.\ \'etale, Zariski) if the corresponding map as prestacks is faithfully flat (resp.\ \'etale, Zariski).

\begin{prop}
\label{prop:description-of-flat-morphisms-Tate-affine}
For $f:S \ra T$ a map in $\SchaffTate$, the following are equivalent:
\begin{enumerate}[(i)]
    \item the map $f$ is faithfully flat (resp.\ \'etale, Zariski);
    \item there exists presentations $S \simeq \colim_I S_i$ and $T \simeq \colim_J T_j$, such that whenever there are maps $f_{i,j}$ fitting in a diagram
    \begin{equation}
        \label{eq:factorization-of-flat-morphism}
        \begin{tikzcd}
        S_i \ar[r,"f_{i,j}"] \ar[d] & T_j \ar[d] \\
        S \ar[r,"f"] & T
        \end{tikzcd}
    \end{equation}
    the maps $f_{i,j}$ are faithfully flat (resp.\ \'etale, Zariski).
\end{enumerate}
\end{prop}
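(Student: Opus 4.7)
The strategy is to pass between a morphism of Tate affine schemes and compatible morphisms at the level of their defining presentations. The main tool is Hennion-Tate's Proposition 1.2 (already used in the proof of Lemma \ref{lem:Tate-affine-schemes-has-fiber-products}), which allows us to refine any presentations to a common indexing and produce compatible levelwise maps; after such a refinement, (ii) amounts to requiring that the levelwise maps $f_i: S_i \to T_i$ are flat (resp.\ étale, Zariski).

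For (i) $\Rightarrow$ (ii), the plan is to build the presentation of $S$ by pullback. Starting from any presentation $T \simeq \colim_I T_i$, define $S_i := T_i \times_T S$. Since each $T_i$ is affine and $f$ is flat by Definition \ref{defn:morphisms-of-Tate-schemes}, the $S_i$ are affine and the canonical maps $f_i : S_i \to T_i$ are flat. The collection $\{S_i\}_{i \in I}$ gives a presentation of $S$: the identity $\colim_I S_i \simeq (\colim_I T_i) \times_T S \simeq S$ follows from the commutation of filtered colimits with pullback in prestacks; the transition maps $S_i \to S_{i'}$ are closed embeddings by stability of closed embeddings under base change; and coherence of $\Fib(\sO_{S_{i'}} \to \sO_{S_i})$ is inherited from coherence of $\Fib(\sO_{T_{i'}} \to \sO_{T_i})$ via flat pullback along $f_{i'}$. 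The resulting squares are Cartesian by construction, so any compatible $f_{i,j}$ arising from this presentation is a further base change of $f$ along the affine scheme $T_j$, hence is flat.

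For (ii) $\Rightarrow$ (i), we verify Definition \ref{defn:morphisms-of-Tate-schemes} directly. Given $U \in (\Schaff)_{/T}$, compactness of $U$ implies the structural map factors through some $T_{j_0}$. Writing $U \times_T S \simeq \colim_I (U \times_T S_i)$ and observing that for each $i$, the composite $S_i \to T$ factors through some $T_{j(i)}$, one computes $U \times_T S_i \simeq U \times_{T_{j^*}} S_i$ for $j^* \geq \max(j_0, j(i))$; by the hypothesis of (ii), the map $f_{i,j^*}$ is flat, whence $U \times_T S_i$ is affine and flat over $U$. A cofinality argument together with the Cartesian square property (arranged by refining to the presentation constructed in the converse direction, if needed) shows that this filtered system is essentially constant and reduces $U \times_T S$ to a single $U \times_{T_{j_0}} S_{i_0}$, yielding the required affine scheme with flat structure over $U$.

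The main obstacle lies in the (ii) $\Rightarrow$ (i) direction: ensuring that $U \times_T S$ is a genuine affine scheme rather than a nontrivial Tate affine scheme, since Definition \ref{defn:morphisms-of-Tate-schemes} requires the former. This is handled by working with a Cartesian-compatible presentation, where the levelwise pullbacks stabilize to a single affine level. The étale and Zariski cases are treated by identical arguments, since these properties are also stable under base change and can be checked levelwise on the presenting affine schemes.
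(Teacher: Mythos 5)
Your direction (i) $\Rightarrow$ (ii) is essentially the paper's argument: both pull back an arbitrary presentation of $T$ along $f$ to produce a presentation of $S$ by Cartesian squares, and both verify the closed-embedding and coherence-of-fiber conditions by base change along the flat levelwise maps. That part is fine.

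The gap is in (ii) $\Rightarrow$ (i), at exactly the point you flag as ``the main obstacle.'' Your claim that the filtered system $\{U\underset{T}{\times}S_i\}_{I}$ is essentially constant, so that $U\underset{T}{\times}S$ collapses to a single affine level, is not justified and is false in general. Concretely, take $T=\Spec(k)$ with the constant presentation and $S=\Spf(k[[t]])=\colim_n \Spec(k[t]/(t^n))$: every levelwise map $\Spec(k[t]/(t^n))\ra\Spec(k)$ is faithfully flat, so the levelwise hypothesis of (ii) is satisfied, yet for $U=\Spec(k)$ the pullback $U\underset{T}{\times}S\simeq S$ is a nontrivial Tate affine scheme and the system $U\underset{T}{\times}S_n=\Spec(k[t]/(t^n))$ does not stabilize. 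The escape route you propose --- ``refining to the presentation constructed in the converse direction, if needed'' --- is circular: the presentation $S_i=T_i\underset{T}{\times}S$ by \emph{affine schemes} is available only once one already knows that these pullbacks are affine, which is precisely assertion (i). For comparison, the paper's own proof of this direction does not attempt to show that $U\underset{T}{\times}S$ is a single affine scheme; it only factors $U\ra T_{j_0}\ra T$ and deduces, from the faithful flatness of the composite $S_i\underset{T_{j_0}}{\times}U\ra S\underset{T}{\times}U\ra U$, that the first map is faithfully flat. So the difficulty you isolate is genuine (and arguably not fully resolved in the paper either), but the stabilization argument you lean on does not close it; as written, your proof of (ii) $\Rightarrow$ (i) is incomplete.
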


\begin{proof}
Since all the different conditions have the same formal properties, namely 2-out-of-3 and stable under base change the same proof applies to all three cases. We phrase it for the faithfully flat case for concreteness.

First we prove that (i) implies (ii). Let $T \simeq \colim_J T_j$ be any presentation of $T$, then in the Cartesian diagram
\[
\begin{tikzcd}
    S\underset{T}{\times}T_j \ar[d] \ar[r,"f_{i,j}"] & T_j \ar[d] \\
    S \ar[r,"f"] & T
\end{tikzcd}
\]
the morphism $f_{i,j}$ is faithfully flat. Now we notice that for each $i \ra j$ the induced map
\[
S\underset{T}{\times}T_i \ra S\underset{T}{\times}T_j
\]
is closed and satisfies the Tate condition on the defining ideal. Moreover, since filtered colimits and finite products commute one has an isomorphism
\[
\colim_J S\underset{T}{\times}T_j \simeq S.
\]

Now assume that (ii) holds. Let $U \ra T$ where $U$ is an affine scheme, then for some $j \in J$ one has the factorization
\[
U \ra T_i \ra T.
\]

Thus, the pullback of $U$ to $S$ fits into the morphism
\begin{equation}
    \label{eq:factorization-of-pullback-flat}
    S_j\underset{T_i}{\times}U \ra S\underset{T}{\times}U \ra U
\end{equation}
by a certain presentation of $S$ with the property the composite of (\ref{eq:factorization-of-pullback-flat}) is faithfully flat. Thus, the map
\[
S_j\underset{T_i}{\times}U \ra S\underset{T}{\times}U
\]
is faithfully flat.
\end{proof}

\begin{rem}
\label{rem:description-of-flat-morphisms-Tate-affine-n-coconnective}
The statement of Proposition \ref{prop:description-of-flat-morphisms-Tate-affine} also holds in the category $\SchaffTaten$. Indeed, it is proved in a similar way, but considering pullbacks in the category of $n$-coconnective Tate affine schemes.
\end{rem}

\paragraph{}

\begin{prop}
Consider the category $\SchaffTate$:
\begin{itemize}
    \item \textit{(flat topology)} the collection of faithfully flat morphisms determines a Grothendieck topology;
    \item \textit{(\'etale topology)} the collection of \'etale morphism determines a Grothendieck topology;
    \item \textit{(Zariski topology)} the collection of Zariski morphisms determines a Grothendieck topology.
\end{itemize}
\end{prop}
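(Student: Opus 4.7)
The plan is to use the levelwise characterization from Proposition~\ref{prop:description-of-flat-morphisms-Tate-affine} (and Remark~\ref{rem:description-of-flat-morphisms-Tate-affine-n-coconnective}) to reduce each Grothendieck topology axiom to the corresponding statement for the flat, étale, or Zariski topology on $\Schaff$, where the result is classical. Since the three cases are formally identical, it suffices to carry out the argument once; I will phrase the steps for the flat topology and note where specializations apply.

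The axioms I need to verify are: (a) isomorphisms belong to the class; (b) the class is stable under base change in $\SchaffTate$; (c) the class is stable under composition. Axiom (a) is immediate from Proposition~\ref{prop:description-of-flat-morphisms-Tate-affine}, since an isomorphism of Tate affine schemes admits, by passing to a common presentation, a levelwise isomorphism, and isomorphisms are faithfully flat (resp.\ étale, Zariski) in $\Schaff$. For (b), given $f:S\to T$ in the class and $g:T'\to T$ arbitrary, the fiber product $S\times_T T'$ exists in $\SchaffTate$ by Lemma~\ref{lem:Tate-affine-schemes-has-fiber-products}, and (as in the proof of that lemma) can be computed via common presentations obtained from \cite{Hennion-Tate}*{Proposition 1.2}. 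Levelwise we get morphisms $S_i\times_{T_i}T'_i \to T'_i$ which, by Proposition~\ref{prop:description-of-flat-morphisms-Tate-affine} applied to $f$, are base changes of faithfully flat maps of affine schemes, hence faithfully flat. Applying the characterization in the reverse direction yields that the base change lies in the class. For (c), given $f:S\to T$ and $h:T\to U$ in the class, I pick common presentations $S\simeq\colim_I S_i$, $T\simeq\colim_I T_i$, $U\simeq\colim_I U_i$ (again via \cite{Hennion-Tate}*{Proposition 1.2}) compatible with both $f$ and $h$; then Proposition~\ref{prop:description-of-flat-morphisms-Tate-affine} gives that each $f_i$ and $h_i$ is faithfully flat, and composition of faithfully flat morphisms in $\Schaff$ is faithfully flat, so $h_i\circ f_i$ is faithfully flat, whence $h\circ f$ is in the class.

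The same argument proves the étale and Zariski cases verbatim, since étaleness and the Zariski condition are likewise closed under isomorphism, base change, and composition in $\Schaff$, and Proposition~\ref{prop:description-of-flat-morphisms-Tate-affine} covers all three classes simultaneously.

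The main potential obstacle is the bookkeeping for axiom (c): one needs compatible filtered diagrams indexing all three Tate affine schemes so that the factorization~(\ref{eq:factorization-of-flat-morphism}) holds for both $f$ and $h$ at the same indices. This is not automatic from picking two separate common presentations (one for $f$ and one for $h$), but is resolved by first applying \cite{Hennion-Tate}*{Proposition 1.2} to the pair $(f,h)$ simultaneously—i.e.\ to the diagram $S\to T\to U$—producing a single filtered category $I$ over which all three Tate affine schemes and both morphisms are presented. Once this presentation is in hand, every other step is a formal consequence of the corresponding property in $\Schaff$.
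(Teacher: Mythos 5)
Your route is genuinely different from the paper's. The paper does not verify pretopology axioms directly; it invokes \cite{SAG}*{Proposition A.3.2.1}, whose hypotheses include, besides containing equivalences and stability under composition and pullback, conditions about coproducts/pushouts in $\SchaffTate$ being universal. Consequently the paper's proof contains a step you do not have at all: using the explicit construction of pushouts of Tate affine schemes from Lemma \ref{lem:push-out-affine-Tate-schemes} to reduce those last two conditions to the corresponding facts in $\Schaff$. Verifying the three axioms of a pretopology with singleton covers is a legitimate alternative way to produce a Grothendieck topology, but you should say explicitly that this is the sense in which the classes ``determine'' a topology, since it is not the criterion the statement is implicitly built on.

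There is also a concrete flaw in how you run the levelwise reductions. Proposition \ref{prop:description-of-flat-morphisms-Tate-affine}(ii) is an \emph{existential} statement: it asserts that \emph{some} presentations of source and target exist for which all the maps $f_{i,j}$ fitting into diagram (\ref{eq:factorization-of-flat-morphism}) are faithfully flat. It does not assert this for an arbitrary pair of compatible presentations, and in particular not for the common presentation produced by applying \cite{Hennion-Tate}*{Proposition 1.2} to the cospan $S\to T\leftarrow T'$ or to the composable pair $S\to T\to U$. Indeed it cannot: already for $f=\id_T$ with the tautological common presentation, the maps $T_i\to T_j$ fitting the diagram are closed embeddings, not faithfully flat. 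So the steps ``Proposition \ref{prop:description-of-flat-morphisms-Tate-affine} applied to $f$ gives that the levelwise maps are faithfully flat'' are unjustified for the presentations you have chosen. The repair is easy and makes the presentations unnecessary: use characterization (i) of that proposition (equivalently, Definition \ref{defn:morphisms-of-Tate-schemes}(1)) and test against affine schemes. For base change, $U\times_{T'}(S\times_T T')\simeq U\times_T S$ for $U\in(\Schaff)_{/T'}$; for composition, $V\times_U S\simeq (V\times_U T)\times_T S$ with $V\times_U T$ affine, and faithfully flat (resp.\ \'etale, Zariski) morphisms of affine schemes compose. With that substitution your argument for the three pretopology axioms goes through; but you still need either to address the coproduct/pushout conditions or to state that you are generating the topology from a pretopology rather than applying \cite{SAG}*{Proposition A.3.2.1}.
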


\begin{proof}
We invoke \cite{SAG}*{Proposition A.3.2.1} which states that a collection $S$ of morphisms on the category $\SchaffTate$ determines a Grothendieck topology if it satisfies conditions (a-d) of \emph{loc.\ cit}.

Condition (a) asks that $S$ contains all equivalences and is stable under compositions. This is clear from both characterizations of Proposition \ref{prop:description-of-flat-morphisms-Tate-affine}.

Condition (b) requires that the class of morphisms $S$ is stable under pullbacks. This is clear from condition (i) in Proposition \ref{prop:description-of-flat-morphisms-Tate-affine}.

Conditions (c) and (d) require that the class of morphisms $S$ is stable under pullbacks and that pushouts in the category $\SchaffTate$ are universal, respectively. 

Assume that
\[
\begin{tikzcd}
S \ar[r] \ar[d,"f"] & S' \ar[d,"f'"] \\
T \ar[r] & T'
\end{tikzcd}
\]
is a pushout of Tate affine schemes and that $f$ is faithfully flat. By the proof of Lemma \ref{lem:push-out-affine-Tate-schemes} one see that for any $U \ra T'$ where $U \in \Schaff$ there exist a cofiltered diagram $I$ and presentations $S \simeq \colim_I S_i$, $S' \simeq \colim_I S'_i$ and $T \simeq \colim_I T_i$ such that one has a factorization
\[
U \ra S'_i\underset{S_i}{\sqcup}T_i
\]
for some $i \in I$. Thus the properties (c) and (d) follow from the same statements for the category $\Schaff$.
\end{proof}

\paragraph{}

\begin{rem}
\label{rem:concrete-description-and-covering-name}
Given a map $f:S' \ra S$ of Tate affine schemes one says that $f$ is a 
\begin{itemize}
    \item \emph{flat covering} if $f$ is faithfully flat;
    \item \emph{\'etale covering} if $f$ is \'etale;
    \item \emph{Zariski covering} if $f$ is Zariski, i.e.\ 
    \[
    S' \simeq \sqcup_I S'_i \ra S
    \]
    with each $S'_i \ra S$ and open embedding and $S' \ra S$ induces a surjective map on the underlying classical schemes. 
\end{itemize}

Given any covering $f: S' \ra S$, one defines its \v{C}ech covering as the groupoid object in $\SchaffTate$ given by
\begin{equation}
    \label{eq:Cech-nerve}
    (S'/S)^n = S'\underset{S}{\times} \cdots \underset{S}{\times}S'
\end{equation}
where one has $(n+1)$ copies of $S$ and the structure maps are given by the natural projections.
\end{rem}

\subsubsection{Descent}
\label{subsubsec:descent}

In this subsection we formulate the descent condition for prestacks of Tate type, this is exactly analogous to the case of usual prestacks.

\paragraph{}

For $\sX$ a prestack of Tate type, we say that $\sX$ satisfies flat (resp.\ \'etale, Zariski) descent if for any flat (resp.\ \'etale, Zariski) covering $f:S' \ra S$, the map
\[
\sX(S) \ra \lim_{[n] \in \Delta^{\rm op}}\sX((S'/S)^n)
\]
is an isomorphism.

\paragraph{}

In the rest of the discussion and the paper when talking about descent we will mean \'etale descent, unless otherwise specified\footnote{One could also do a discussion for one of the other topologies in a very similar way.}.

Let $\StkTate \subset \PStkTate$ denote the subcategory of prestacks that satisfy \'etale descent.

Given a morphism $f:\sX_1 \ra \sX_2$ of prestacks of Tate type, we say that $f$ is an \'etale equivalence if for every $\sY \in \StkTate$ one has an equivalence
\[
\Maps(\sX_2,\sY) \ra \Maps(\sX_1,\sY).
\]

\paragraph{}

The inclusion
\[
\StkTate \hra \PStkTate
\]
admits a left adjoint, so $\StkTate$ is a localization of $\PStkTate$. We denote the localization functor
\[
L: \PStkTate \ra \StkTate \ra \PStkTate.
\]

\begin{lem}
The functor $L$ is left exact, i.e.\ it commutes with finite limits.
\end{lem}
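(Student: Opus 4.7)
The plan is to deduce left exactness from the general fact that sheafification for a Grothendieck topology on an $\infty$-category is always a left exact localization. First I would use the cardinality conventions of \S \ref{subsec:technical-presentability} to regard $\SchaffTate$ as essentially $\lambda$-small, so that $\PStkTate = \Fun(\SchaffTateop, \Spc)$ is presentable. The \'etale topology of \S \ref{subsubsec:topologies-Tate-affine} determines a Grothendieck topology in the sense of \cite{HTT}*{\S 6.2.2}, and by construction $\StkTate$ is the associated $\infty$-category of sheaves. By \cite{HTT}*{Proposition 6.2.2.7} the resulting localization functor is left exact, which yields the claim since the inclusion $\StkTate \hra \PStkTate$ is a right adjoint and hence preserves all limits.

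If one prefers a hands-on construction, the sheafification $L$ can be built as the transfinite iteration of the plus functor
\[
L^+(\sX)(S) := \colim_{\sU} \Tot\bigl(\sX(\sU^\bullet)\bigr),
\]
where $\sU$ ranges over the filtered poset of \'etale covering sieves of $S$ ordered by refinement, and $\Tot$ denotes the totalization of the associated \v{C}ech nerve. Each $L^+$ preserves finite limits because finite limits commute with $\Tot$ (both being limits), and because filtered colimits commute with finite limits in $\Spc$. Iterating $L^+$ and taking colimits at limit ordinals then preserves left exactness by the same commutation of filtered colimits with finite limits.

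The main point to verify in either approach is that the indexing poset of covering sieves at each $S \in \SchaffTate$ is filtered, which reduces to checking that any two \'etale coverings admit a common refinement; this is immediate from Proposition \ref{prop:description-of-flat-morphisms-Tate-affine} by taking fiber products of the defining level-wise presentations, together with the fact that fiber products of Tate affine schemes exist (Lemma \ref{lem:Tate-affine-schemes-has-fiber-products}). The only other potential obstacle is ensuring that the transfinite iteration of $L^+$ stabilizes at an ordinal bounded in terms of $\lambda$, but this is a standard application of the small object argument given the accessibility conventions of \S \ref{subsubsec:cardinality-bounds}. Neither point presents real difficulty, so the content of the lemma is essentially the invocation of the general machinery.
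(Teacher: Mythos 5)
Your first paragraph is exactly the paper's argument: the localization is topological because it arises from a Grothendieck topology on a presheaf category (\cite{HTT}*{Proposition 6.2.2.7}), and topological localizations are left exact (\cite{HTT}*{Corollary 6.2.1.6}, which is the reference you implicitly need for the left-exactness step). The additional plus-construction discussion is fine but unnecessary; the proposal is correct and takes essentially the same route as the paper.
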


\begin{proof}
This is a consequence that the localization of $\PStkTate$ is a topological localization (see \cite{HTT}*{Corollary 6.2.1.6}), since it is achieved by considering a Grothendieck topology on a category of presheaves (see \cite{HTT}*{Proposition 6.2.2.7}).
\end{proof}

Given a morphism $f:\sX_1 \ra \sX_2$ of prestacks we say that $f$ is an \emph{\'etale surjection}\footnote{This is also refered to as an \emph{effective epimorphism} in the $\infty$-topos of \'etale sheaves on $\SchaffTate$.} if for every $S \in \SchaffTate$ and a point $x_2:S \ra \sX_2$, there exists an \'etale cover $\varphi:S' \ra S$ such that
\[
x_2\circ \varphi: S' \ra \sX_2
\]
belongs to the essential image of $\sX_{1}(S') \ra \sX_2(S')$.

The following follows from \cite{HTT}*{Corollary 6.2.3.5}.

\begin{lem}
\label{lem:descent-of-effective-epis}
For $f:\sX_1 \ra \sX_2$ an \'etale surjection, the induced map
\[
\left|(\sX_1/\sX_2)^{\bullet}\right| \ra \sX_2
\]
is an isomorphism, where $(\sX_1/\sX_2)^{\bullet}$ is the \v{C}ech nerve of $f$.
\end{lem}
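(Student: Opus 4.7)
The plan is to recognize the statement as an instance of a general fact about $\infty$-topoi applied to the particular topos $\StkTate$, and then to match the two notions of ``surjection'' at play: the geometric notion of \'etale surjection defined just before the statement, and the categorical notion of effective epimorphism in an $\infty$-topos used by \cite{HTT}*{\S 6.2.3}.

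First I would record that $\StkTate$ is an $\infty$-topos. This follows from the preceding discussion: $\StkTate$ is obtained from the presheaf category $\PStkTate = \Fun(\SchaffTateop,\Spc)$ by a topological localization (the \'etale Grothendieck topology on $\SchaffTate$), hence by \cite{HTT}*{Proposition 6.2.2.7} it is an $\infty$-topos, and the sheafification functor $L$ is left exact. In particular, for any map $\sX_1 \to \sX_2$ in $\StkTate$ the \v{C}ech nerve is computed objectwise in $\PStkTate$ and its geometric realization is computed after applying $L$.

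Next I would translate ``\'etale surjection'' into ``effective epimorphism'' in the $\infty$-topos $\StkTate$. By \cite{HTT}*{Proposition 7.2.1.14 / \S 6.2.3}, a morphism in an $\infty$-topos of sheaves on a site is an effective epimorphism precisely when it admits local sections after covering, which is exactly the condition from the paragraph preceding the Lemma: for every point $x_2\colon S \to \sX_2$ with $S \in \SchaffTate$ there is an \'etale cover $\varphi\colon S' \to S$ such that $x_2 \circ \varphi$ factors through $\sX_1(S')\to \sX_2(S')$. Unwinding the definition of the \'etale topology on $\SchaffTate$, this matches the sheaf-theoretic condition $\pi_0(\sX_1) \twoheadrightarrow \pi_0(\sX_2)$ being an epimorphism of \'etale sheaves of sets.

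Finally, I would apply \cite{HTT}*{Corollary 6.2.3.5}, which says that in an $\infty$-topos every effective epimorphism $f\colon \sX_1 \to \sX_2$ is the geometric realization of its \v{C}ech nerve, i.e.\ the canonical map
\[
\left|(\sX_1/\sX_2)^{\bullet}\right| \longrightarrow \sX_2
\]
is an isomorphism in $\StkTate$. The only genuine content of the argument is the identification of \'etale surjections with effective epimorphisms; the appeal to \cite{HTT}*{Corollary 6.2.3.5} is then direct. I do not anticipate a substantial obstacle here, since all the necessary descent properties of the site $\SchaffTate$ have already been established in \S \ref{subsubsec:topologies-Tate-affine}; the care required is simply in making sure that the definition of \'etale surjection given in the paper, phrased in terms of representable test objects $S\in \SchaffTate$, is the pointwise-in-the-site description of effective epimorphisms in the sheaf topos, for which one uses the fact that the objects $S$ generate $\StkTate$ under colimits.
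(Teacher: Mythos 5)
Your proposal is correct and matches the paper, which proves this Lemma simply by citing \cite{HTT}*{Corollary 6.2.3.5}; you have just filled in the implicit verification that $\StkTate$ is an $\infty$-topos and that \'etale surjections are effective epimorphisms there. No issues.
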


As a consequence of Lemma \ref{lem:descent-of-effective-epis} we have

\begin{lem}
For $\sX \in \PStkTate$, the unit of the adjunction
\[
\sX \ra \L(\sX)
\]
is an \'etale surjection.
\end{lem}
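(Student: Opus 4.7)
The plan is to use the explicit description of sheafification as an iterated plus construction and directly exhibit local lifts of sections of $\L(\sX)$ to sections of $\sX$.

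First I would recall that, because $\StkTate \hra \PStkTate$ is a topological localization arising from the Grothendieck topology of \S \ref{subsubsec:topologies-Tate-affine}, the functor $\L$ admits by \cite{HTT}*{Section 6.2.2} a presentation
\[
\L(\sX) \simeq \colim_{n \geq 0} \L^{+n}(\sX),
\]
where the plus construction is
\[
\L^{+}(\sX)(S) \simeq \colim_{\varphi: S' \to S}\,\Tot\bigl(\sX((S'/S)^{\bullet})\bigr),
\]
with the outer colimit indexed by the filtered category of \'etale coverings of $S$ and $(S'/S)^{\bullet}$ the \v{C}ech nerve of $\varphi$ as in Remark \ref{rem:concrete-description-and-covering-name}.

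Next I would show that each unit $\sX \to \L^{+}(\sX)$ is an \'etale surjection. Given a point $y: S \to \L^{+}(\sX)$, up to refining the indexing covering one may represent $y$ by an \'etale cover $\varphi: S' \to S$ together with a descent datum $\tilde y \in \Tot(\sX((S'/S)^{\bullet}))$. Pulling back $y$ along $\varphi$ yields a point of $\L^{+}(\sX)(S')$ that factors through $\sX(S') \to \L^{+}(\sX)(S')$, namely the underlying vertex of the descent datum $\tilde y$. Thus $y \circ \varphi$ lies in the essential image of $\sX(S') \to \L^{+}(\sX)(S')$, which is precisely the \'etale surjection condition.

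To conclude, I would observe that \'etale surjections are closed under composition (the composite of two \'etale covers is again an \'etale cover, and local lifting along a lift factors through a refinement) and are stable under sequential colimits in the target: any point $y \in \L(\sX)(S) = \colim_n \L^{+n}(\sX)(S)$ factors through some finite stage $\L^{+n}(\sX)(S)$, and applying the lifting established in the previous step $n$ times produces an \'etale cover $S'' \to S$ over which $y$ lifts to $\sX(S'')$. The main obstacle will be in making the $\infty$-categorical plus-construction argument precise: in particular, verifying that pulling back a descent datum in $\Tot(\sX((S'/S)^{\bullet}))$ along the defining cover $\varphi$ indeed recovers the vertex section, and justifying the compatibility of the transfinite colimit computing $\L$ with evaluation at a fixed Tate affine scheme.
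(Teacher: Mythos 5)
The paper does not actually supply a proof of this lemma: it asserts it as a consequence of Lemma \ref{lem:descent-of-effective-epis}, which as stated goes in the opposite direction (it derives a property \emph{from} a map being an \'etale surjection). So your plus-construction argument is filling a genuine gap rather than reproducing the paper's reasoning, and its core mechanism is correct. Since the (opposite of the) category of \'etale covers of $S$ is filtered, a point of $\sX^{+}(S)$ is represented by a descent datum in $\Tot(\sX((S'/S)^{\bullet}))$ for some single cover $\varphi: S' \ra S$; its restriction along $\varphi$ is identified with the unit image of the vertex of that datum, the identification being supplied by the edge of the descent datum over $S'\underset{S}{\times}S'$ restricted along the diagonal $S' \ra S'\underset{S}{\times}S'$. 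Together with the closure of \'etale surjections under composition (which uses that composites of \'etale covers are \'etale covers, part of the verification that these classes form topologies in \S \ref{subsubsec:topologies-Tate-affine}) and the fact that evaluation at a fixed $S$ commutes with the filtered colimit computing $\L$, this yields the lemma.

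The one genuine imprecision is the formula $\L(\sX) \simeq \colim_{n \geq 0}\L^{+n}(\sX)$ with $n$ ranging over $\bN$. For presheaves of spaces the plus construction is not known to converge after countably many iterations in general; HTT only guarantees that finitely many iterations suffice for truncated objects, which is exactly how the paper itself uses it (the identity $\,{}^{\leq n}\L(\sX) \simeq \sX^{+^{k}}$ is stated only for $(k-2)$-truncated $\sX$). For an arbitrary $\sX \in \PStkTate$ one must iterate transfinitely, or appeal to the small-object-argument construction of the localization. This does not break your argument: a transfinite colimit of presheaves is still computed pointwise and is filtered, so any point of $\L(\sX)(S)$ factors through some stage $\L^{+\alpha}(\sX)(S)$; the successor step is the local lifting you describe, and the limit step is immediate. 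But the induction must be organized over an ordinal rather than over $\bN$, so you should either restate the presentation of $\L$ accordingly or restrict the countable formula to the truncated setting where it is actually available.
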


\subsubsection{Descent condition and locally a prestack condition}
\label{subsubsec:descent-condition-and-lp}

The following is a completely formal observation that makes checking the descent condition of a prestack of Tate type easier in the case when it is locally a prestack.

\begin{lem}
\label{lem:descent-for-prestack-Tate-lp}
Let $\sX \in \PStkTatelp$, then the following are equivalent
\begin{enumerate}[(i)]
    \item $\sX$ satisfies descent;
    \item $\left.\sX\right|_{(\Schaff)^{\rm op}}$ satisfies descent.
\end{enumerate}
\end{lem}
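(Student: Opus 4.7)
The plan is to exploit the concrete description of ``locally a prestack'' from Lemma \ref{lem:concrete-condition-locally-prestack} together with the levelwise characterization of \'etale coverings from Proposition \ref{prop:description-of-flat-morphisms-Tate-affine}, reducing everything to a commutation of a cofiltered limit with a cosimplicial limit.

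First I would dispense with the easy direction (i) $\Rightarrow$ (ii): the Yoneda embedding $\Schaff \hra \SchaffTate$ sends an \'etale cover $f_0: S'_0 \ra S_0$ of affine schemes to a morphism of Tate affine schemes which is again \'etale (via the trivial constant presentation), and the \v{C}ech nerve in $\SchaffTate$ agrees with the usual \v{C}ech nerve in $\Schaff$. Thus if $\sX$ satisfies descent, the identity $\sX(S_0) = \sX_0(S_0)$ immediately gives descent for $\sX_0$.

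For (ii) $\Rightarrow$ (i), let $f: S' \ra S$ be an \'etale covering in $\SchaffTate$. By Proposition \ref{prop:description-of-flat-morphisms-Tate-affine}, and after passing to a common refinement (in the spirit of \cite{Hennion-Tate}*{Proposition 1.2}, as already used in the proof of Lemma \ref{lem:Tate-affine-schemes-has-fiber-products}), I may pick a filtered diagram $I$ and presentations $S \simeq \colim_I S_i$, $S' \simeq \colim_I S'_i$ together with morphisms $f_i: S'_i \ra S_i$ whose colimit is $f$, and such that each $f_i$ is an \'etale covering in $\Schaff$. Since fiber products in $\SchaffTate$ are computed by taking filtered colimits of the levelwise fiber products (again as in Lemma \ref{lem:Tate-affine-schemes-has-fiber-products}), the \v{C}ech nerve $(S'/S)^{\bullet}$ admits the levelwise description $(S'/S)^n \simeq \colim_I (S'_i/S_i)^n$ in $\SchaffTate$.

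Next I would apply Lemma \ref{lem:concrete-condition-locally-prestack} twice: on one hand $\sX(S) \simeq \lim_{I^{\rm op}} \sX_0(S_i)$, and on the other hand $\sX((S'/S)^n) \simeq \lim_{I^{\rm op}} \sX_0((S'_i/S_i)^n)$ for each $n$. Assuming descent for $\sX_0$, each $\sX_0(S_i)$ is computed as $\lim_{[n] \in \Delta^{\rm op}} \sX_0((S'_i/S_i)^n)$, so
\[
\sX(S) \;\simeq\; \lim_{I^{\rm op}} \lim_{[n]\in \Delta^{\rm op}} \sX_0((S'_i/S_i)^n) \;\simeq\; \lim_{[n] \in \Delta^{\rm op}} \lim_{I^{\rm op}} \sX_0((S'_i/S_i)^n) \;\simeq\; \lim_{[n] \in \Delta^{\rm op}} \sX((S'/S)^n),
\]
where the middle isomorphism is the usual commutation of limits. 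This gives descent for $\sX$.

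The only non-formal step is the compatibility of the \v{C}ech nerve with the chosen presentations, i.e.\ the identification $(S'/S)^n \simeq \colim_I (S'_i/S_i)^n$ in $\SchaffTate$; this is the main point to verify carefully, but it reduces to the commutation of filtered colimits with finite limits already exploited in Lemma \ref{lem:Tate-affine-schemes-has-fiber-products}.
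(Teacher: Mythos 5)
Your proof is correct and follows essentially the same route as the paper: both implications are handled identically, and for (ii) $\Rightarrow$ (i) you, like the paper, express each term $\sX((S'/S)^n)$ as a cofiltered limit of $\sX_0$ evaluated on affine \v{C}ech nerves (using the levelwise description of \'etale covers from Proposition \ref{prop:description-of-flat-morphisms-Tate-affine}) and then commute that limit past the totalization before invoking descent for $\sX_0$. The only difference is cosmetic: you index the cofiltered limit by a chosen common presentation $I$ via Lemma \ref{lem:concrete-condition-locally-prestack}, whereas the paper indexes over the full slice category $(\Schaff_{/S})^{\rm op}$ and verifies initiality of the pullback functor directly; these are interchangeable by the cofinality statement underlying Lemma \ref{lem:concrete-condition-locally-prestack}.
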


\begin{proof}
The implication (i) $\Rightarrow$ (ii) is clear, since any \'etale morphism between usual affine schemes is an \'etale morphism between Tate affine schemes.

Let $\sX_0 = \left.\sX\right|_{(\Schaff)^{\rm op}}$, by assumption we have an equivalence
\[
\sX \ra \RKE_{(\Schaff)^{\rm op} \hra (\SchaffTate)^{\rm op}}(\sX_0)
\]
and suppose that $\sX_0$ satisfies descent with respect to \'etale morphism between affine schemes.

Given $S' \ra S$ an \'etale morphism between Tate affine schemes we obtain a canonical morphism
\begin{equation}
    \label{eq:descent-equation-lp-prestack-Tate}
    \sX(S) \ra \Tot(\sX((S'/S)^{\bullet})).
\end{equation}
The right-hand side of (\ref{eq:descent-equation-lp-prestack-Tate}) can be described as 
\[
\lim_{[n] \in \Delta^{\rm op}}\sX((S'/S)^n),
\]
where for each $n$ we have
\[
\sX((S'/S)^n) \simeq \lim_{T_0 \in (\Schaff_{/(S'/S)^n})^{\rm op}} \sX_0(T_0).
\]
We claim that for every $n$ the functor
\begin{align*}
    (\Schaff_{/S})^{\rm op} & \ra (\Schaff_{/(S'/S)^n})^{\rm op} \\
    (S_0 \ra T) & \mapsto S_0 \underset{S}{\times}(S'/S)^n
\end{align*}
is initial. Indeed, since the category $\Schaff_{/S}$ is filtered we only need to check that for every $T_0 \ra (S'/S)^n$ with $T_0 \in \Schaff$ there exists a factorization
\[
\begin{tikzcd}
T_0 \ar[r] \ar[d,"f"] & (S'/S)^n \ar[d] \\
S_0 \ar[r] & S
\end{tikzcd}
\]
with $f$ \'etale. This follows from Proposition \ref{prop:description-of-flat-morphisms-Tate-affine}. Thus we have
\[
\sX((S'/S)^n) \simeq \lim_{S_0 \in (\Schaff_{/S})^{\rm op}} \sX_0(S_0 \underset{S}{\times}(S'/S)^n) \simeq \lim_{S_0 \in (\Schaff_{/S})^{\rm op}} \sX_0((S'_0/S_0)^n),
\]
where $S'_0 = S_0 \underset{S}{\times}S'$. Since limits and totalizations commute we obtain that the right-hand side of (\ref{eq:descent-equation-lp-prestack-Tate}) becomes
\[
\lim_{S_0 \in (\Schaff_{/S})^{\rm op}}\Tot(\sX_0((S'_0/S_0)^n))
\]
which by the descent condition for $\sX_0$ is equivalent to
\[
\lim_{S_0 \in (\Schaff_{/S})^{\rm op}} \sX(S_0) \simeq \sX(S).
\]
\end{proof}

Let $\overline{\L}: \PStk \ra \Stk$ denote the sheafification functor for usual prestacks, the following is a consequence of Lemma \ref{lem:descent-for-prestack-Tate-lp}.

\begin{cor}
\label{cor:sheafification-and-RKE-to-Tate-affine}
For $\sX_0 \in \PStk$ one has an equivalence
\[
\L(\RKE_{\Schaffop \hra \SchaffTateop}(\sX_0)) \overset{\simeq}{\ra} \RKE_{\Schaffop \hra \SchaffTateop}(\overline{L}(\sX_0)).
\]
\end{cor}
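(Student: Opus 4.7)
The strategy is to identify $\RKE_{\Schaffop \hra \SchaffTateop}(\overline{L}(\sX_0))$ as the \'etale sheafification of $\RKE_{\Schaffop \hra \SchaffTateop}(\sX_0)$ inside $\PStkTate$.

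First, I would apply Lemma \ref{lem:descent-for-prestack-Tate-lp} to the stack $\overline{L}(\sX_0)$ to conclude that $\RKE(\overline{L}(\sX_0))$ is already a Tate stack: it is locally a prestack by construction, and its restriction to $(\Schaff)^{\rm op}$ is the stack $\overline{L}(\sX_0)$, which satisfies \'etale descent by definition. The unit $\eta_0 : \sX_0 \to \overline{L}(\sX_0)$ of sheafification for prestacks then yields, upon applying $\RKE$ and invoking the universal property of $\L$ at the Tate stack target, a unique factorization $\psi : \L(\RKE(\sX_0)) \to \RKE(\overline{L}(\sX_0))$; the corollary reduces to showing that $\psi$ is an equivalence of Tate stacks.

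Second, I would establish the Beck--Chevalley identification $\imath^* \circ \L \simeq \overline{L} \circ \imath^*$, where $\imath^* : \PStkTate \to \PStk$ is restriction along $(\Schaff)^{\rm op} \hra (\SchaffTate)^{\rm op}$. The key input is that $\imath^*$ sends Tate stacks to stacks, since every \'etale cover of an affine scheme remains an \'etale cover in $\SchaffTate$ with the same \v{C}ech nerve; this places us in the standard situation of two reflective subcategory adjunctions with compatible topology, and the mate of the trivially commuting inclusion square is an equivalence. Applied to $\sF = \RKE(\sX_0)$, this yields $\L(\RKE(\sX_0))|_{\Schaff} \simeq \overline{L}(\sX_0) \simeq \RKE(\overline{L}(\sX_0))|_{\Schaff}$, so that $\psi$ is the identity after restriction to $(\Schaff)^{\rm op}$.

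Finally, the corollary reduces to showing that $\L(\RKE(\sX_0))$ is locally a prestack: granted this, both $\L(\RKE(\sX_0))$ and $\RKE(\overline{L}(\sX_0))$ are Tate stacks in $\PStkTatelp$ with the same restriction to $\Schaff$, and the fully faithful embedding $\RKE : \PStk \hra \PStkTate$ forces $\psi$ to be an equivalence. This is where I expect the main obstacle: since $\L$ is only a left adjoint, it does not automatically preserve the ``locally a prestack'' property. My plan here is to use the plus-construction description of $\L$ as a transfinite iteration of filtered colimits over \v{C}ech nerves of \'etale hypercovers, and then invoke Proposition \ref{prop:description-of-flat-morphisms-Tate-affine} together with the cofinality argument from the proof of Lemma \ref{lem:descent-for-prestack-Tate-lp}, which exhibits every \'etale cover in $\SchaffTate$ as arising levelwise from \'etale covers of an affine presentation, to interchange these filtered colimits with the cofiltered limits defining $\RKE(-)$. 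It is precisely this interchange, the same manipulation underlying Lemma \ref{lem:descent-for-prestack-Tate-lp}, that should preserve the ``locally a prestack'' property through the sheafification.
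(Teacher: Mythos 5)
Your overall architecture is sensible, and Steps 1 and 2 can be made to work. One small correction to Step 2: the statement that ``$\imath^*$ sends Tate stacks to stacks'' only gives you that the mate transformation \emph{exists}; for it to be an \emph{equivalence} you need the opposite direction, namely that $\RKE_{\Schaffop \hra \SchaffTateop}$ sends stacks to Tate stacks (the content of Lemma \ref{lem:descent-for-prestack-Tate-lp}), since this is what forces the restriction functor to carry \'etale-local equivalences in $\PStkTate$ to \'etale-local equivalences in $\PStk$ by adjunction against sheaves. This is easily repaired, since you already invoke that lemma in Step 1.

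The genuine gap is in Step 3. The manipulation you propose is \emph{not} the one underlying Lemma \ref{lem:descent-for-prestack-Tate-lp}: there one commutes the totalization $\Tot$ (a limit over $\Delta^{\rm op}$) past the cofiltered limit over $(\Schaff_{/S})^{\rm op}$, and limits always commute with limits. Here you need to pull the \emph{filtered colimit} over \'etale covers appearing in the plus-construction through the cofiltered limit $\lim_{I^{\rm op}}$ defining $\RKE$ on $S \simeq \colim_I S_i$; filtered colimits commute with finite limits, not with infinite cofiltered ones, so this interchange fails in general (and the transfinite iteration of the plus-construction only compounds the problem). Worse, the index categories on the two sides do not match: by Proposition \ref{prop:description-of-flat-morphisms-Tate-affine} an \'etale cover of $S$ amounts to a \emph{compatible system} of covers of the $S_i$, but since the transition maps $S_i \hra S_j$ are closed embeddings that are typically not nil-isomorphisms, restriction of covers from $S_j$ to $S_i$ is far from essentially surjective, so no cofinality statement identifies $\colim_{{\rm Cov}(S)}\lim_{I^{\rm op}}(-)$ with $\lim_{I^{\rm op}}\colim_{{\rm Cov}(S_i)}(-)$. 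As it stands, your argument does not establish that $\L$ preserves the locally-a-prestack condition (equivalently, that $\RKE_{\Schaffop\hra\SchaffTateop}(\sX_0 \ra \overline{L}(\sX_0))$ is an \'etale-local equivalence), and this is precisely the point that carries the content of the corollary; it needs a different argument.
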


\subsubsection{Descent for Tate affine schemes}

The following is an important fact that shows that the theory we are developping has the expected formal properties.

\begin{prop}
\label{prop:Tate-affine-schemes-are-Tate-stacks}
The Yoneda embedding
\[
\SchaffTate \hra \PStkTate
\]
belongs to $\StkTate$. 
\end{prop}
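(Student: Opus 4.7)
The plan is to leverage the fact that $\h_S$ is locally a prestack in order to reduce \'etale descent at the Tate-affine level to \'etale descent at the ordinary affine level, where the result is classical for ind-schemes.

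First I would invoke Lemma \ref{lem:Tate-affine-schemes-are-locally-prestacks}, which says that for any $S \in \SchaffTate$ the representable prestack $\h_S$ lies in $\PStkTatelp$. Combined with Lemma \ref{lem:descent-for-prestack-Tate-lp}, this reduces the problem to showing that $\sX_0 := \left.\h_S\right|_{\Schaffop}$ is an \'etale sheaf on the category of ordinary affine schemes.

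Next I would fix a presentation $S \simeq \colim_J S_j$ with $S_j \in \Schaff$ and closed-embedding connecting maps, and compute: for any $T_0 \in \Schaff$, viewing $T_0$ as a constant Ind-object in $\Ind(\Schaff)$, the standard formula for morphisms in an Ind-category gives
\[
\sX_0(T_0) = \Maps_{\SchaffTate}(T_0,S) = \Maps_{\Ind(\Schaff)}\left(T_0,\colim_J S_j\right) \simeq \colim_J \Maps_{\Schaff}(T_0,S_j).
\]
Hence $\sX_0 \simeq \colim_J \h_{S_j}$ as presheaves on $\Schaff$. Equivalently, $\sX_0$ is precisely the functor of points, on affine schemes, of $S$ viewed as an ind-scheme via the inclusion (\ref{eq:inclusion-Tate-affine-schemes-into-Tate-schemes}).

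Finally I would conclude by observing that each $\h_{S_j}$ is an \'etale sheaf on $\Schaff$ (standard descent for affine schemes), and that ind-schemes are known to satisfy fppf, hence \'etale, descent (see \cite{DG-indschemes} and \cite{GR-II}*{Chapter 2}). In more elementary terms, the conclusion amounts to the fact that a filtered colimit of \'etale sheaves on $\Schaff$ is an \'etale sheaf, which follows because the \'etale topology is of finite type and filtered colimits commute with the finite limits that compute the sheaf condition after reducing to quasi-compact covers.

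The main obstacle I anticipate is the interchange of filtered colimits with the totalization defining \'etale descent in Step~3: although finite limits commute with filtered colimits in $\Spc$, the totalization over $\Delta^{\mathrm{op}}$ is a priori infinite. I would handle this either by appealing directly to the descent result for ind-schemes in \cite{DG-indschemes}, or by reducing each \v{C}ech nerve level $(T'_0/T_0)^n$ to a quasi-compact affine scheme, for which $\Maps((T'_0/T_0)^n,S) \simeq \colim_J \Maps((T'_0/T_0)^n,S_j)$, and then using the descent isomorphism $\Maps(T_0,S_j) \simeq \Tot\Maps((T'_0/T_0)^\bullet,S_j)$ for each fixed $j$ before commuting with the filtered colimit.
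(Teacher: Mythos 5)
Your proposal is correct and follows essentially the same route as the paper: both reduce via Lemma \ref{lem:Tate-affine-schemes-are-locally-prestacks} and Lemma \ref{lem:descent-for-prestack-Tate-lp} to checking that $\left.\h_S\right|_{\Schaffop}$ satisfies descent as an ordinary prestack, which the paper then dispatches by citing \cite{GR-II}*{Chapter 2, Proposition 1.2.2} (flat, hence \'etale, descent for ind-schemes). Your additional steps merely unpack that citation, and your handling of the colimit-versus-totalization issue by appealing to the ind-scheme descent result is exactly the right move.
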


\begin{proof}
Notice that by Lemma \ref{lem:Tate-affine-schemes-are-locally-prestacks} and \ref{lem:descent-for-prestack-Tate-lp} it is enough to check that Tate affine schemes are usual stacks, i.e.\ satisfy descent when considered as element of $\PStk$. This follows from \cite{GR-II}*{Chapter 2, Proposition 1.2.2} since flat descent implies \'etale descent.
\end{proof}

One also has the following result that allows one to identify Tate stacks which are Tate affine schemes.

\begin{prop}
Given a pullback diagram in $\StkTate$ where all objects are prestacks of Tate type locally almost of finite type
\[
\begin{tikzcd}
S' \ar[r,"f"] \ar[d] & S \ar[d] \\
\sY' \ar[r] & \sY
\end{tikzcd}
\]
where $S,S'$ and $\sY'$ are Tate affine schemes and $f$ is an \'etale cover, one has that $\sY$ is a Tate affine scheme.
\end{prop}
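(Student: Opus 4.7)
My plan is to realize $\sY$ explicitly as a Tate affine scheme by descending along $\sY' \to \sY$ through a carefully chosen compatible filtered presentation. First, I observe that $\sY' \to \sY$ is itself an \'etale cover: by hypothesis $f : S' \to S$ is an \'etale cover of Tate affine schemes, and since the square is a pullback, $f$ is the base change of $\sY' \to \sY$ along $S \to \sY$; the \'etale-covering property transfers back to $\sY' \to \sY$ because $\sY$ is an \'etale sheaf and being an effective epimorphism is an \'etale-local condition. Consequently Lemma \ref{lem:descent-of-effective-epis} applied in $\StkTate$ yields
\[
\sY \simeq \left| (\sY'/\sY)^\bullet \right|,
\]
where each \v{C}ech level $(\sY'/\sY)^n$ is a Tate affine scheme: this follows inductively from the hypothesis that $\sY'$ is Tate affine together with $\sY'\times_\sY \sY'$ being Tate affine (obtained from $S'$ being Tate affine and the pullback manipulation of Lemma \ref{lem:Tate-affine-schemes-has-fiber-products}).

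Next, using \cite{Hennion-Tate}*{Proposition 1.2}, I choose a common cofiltered diagram $I$ and compatible presentations
\[
\sY' \simeq \colim_I \sY'_i, \qquad \sY'\times_\sY\sY' \simeq \colim_I (\sY'\times_\sY\sY')_i,
\]
arranged so that the two projections form an \'etale equivalence relation of derived affine schemes almost of finite type
\[
(\sY'\times_\sY\sY')_i \rightrightarrows \sY'_i
\]
at each level $i \in I$. Classical faithfully flat descent for derived affine schemes (descending along an \'etale, hence flat, groupoid) then produces an affine quotient $\sY_i \in \Schaffaft$ for each $i$, and these assemble into
\[
\widetilde{\sY} := \colim_I \sY_i \in \PStkTate.
\]

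I verify that $\widetilde{\sY}$ belongs to $\SchaffTate$ by checking the conditions of Definition \ref{defn:Tate-affine-schemes}: each transition $\sY_i \to \sY_j$ is a closed embedding whose defining ideal is coherent, properties that descend from the corresponding properties for $\sY'_i \to \sY'_j$ along the faithfully flat pullback $\sY'_j \to \sY_j$. Finally, the universal property of the geometric realization combined with Proposition \ref{prop:Tate-affine-schemes-are-Tate-stacks} (every Tate affine scheme is a Tate stack) identifies $\widetilde{\sY}$ with $\sY$ inside $\StkTate$.

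The main obstacle I anticipate is the last verification, namely that the descended transition maps retain the Tate coherence condition on ideals. This amounts to faithfully flat descent of an almost-perfect module along an \'etale cover of derived affine schemes: one must show that coherence of $\Fib(\sO_{\sY'_j} \to \sO_{\sY'_i})$ implies coherence of $\Fib(\sO_{\sY_j} \to \sO_{\sY_i})$. While this is a standard consequence of the flatness of \'etale morphisms together with the exactness of pullback along them, it requires careful articulation in the derived setting and ties together the various finiteness conditions considered in \S \ref{subsec:properties-Tate-schemes}.
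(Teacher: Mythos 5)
The paper records this proposition without any proof, so there is nothing to compare your argument against; judged on its own terms, your proposal has two genuine gaps, the second of which is fatal to the whole strategy. The first is the opening step: you assert that $\sY'\to\sY$ is an \'etale cover because its base change $f\colon S'\to S$ along $S\to\sY$ is one. Base change preserves \'etale covers but does not reflect them. To ``transfer the property back'' you would need $S\to\sY$ to be an effective epimorphism, so that being a cover can be tested after pulling back along it; and to verify that $\sY'\to\sY$ is even \'etale in the sense of Definition \ref{defn:flat-morphisms-prestacks-Tate} you must test against \emph{every} Tate affine scheme mapping to $\sY$, not just the single point $S\to\sY$. Neither is among the stated hypotheses.

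The second, decisive, gap is the descent step. Even granting a levelwise \'etale groupoid of affine schemes $(\sY'\times_{\sY}\sY')_i\rightrightarrows \sY'_i$, faithfully flat descent does \emph{not} produce an affine quotient $\sY_i\in\Schaffaft$. Descent theory descends quasi-coherent sheaves, and affine morphisms \emph{over} a fixed base, along an fppf cover of that base; it says nothing about the quotient of an affine scheme by a flat groupoid being affine. Quotients of affine schemes by non-finite \'etale equivalence relations are in general only algebraic spaces, and when the groupoid has isotropy the quotient is not even $0$-truncated. Concretely, taking $S=\sY'=\Spec(k)$, $\sY=BG$ for a finite \'etale group $G$, and $S'=G=\Spec(k)\times_{BG}\Spec(k)$ gives a Cartesian square in which $S,S',\sY'$ are affine and $f\colon G\to\Spec(k)$ is a surjective \'etale map, yet $BG$ is not affine; read as a diagram of prestacks of Tate type via right Kan extension, this shows both that your key step cannot be a theorem and that the proposition itself requires additional implicit hypotheses (on surjectivity of the vertical maps and on the diagonal of $\sY$) before any proof can succeed. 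A smaller issue of the same flavor: arranging presentations of $\sY'$ and $\sY'\times_{\sY}\sY'$ so that both projections, the unit and the composition are all defined levelwise is more than one application of \cite{Hennion-Tate}*{Proposition 1.2}, which straightens a single morphism at a time.
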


The following corollary summarizes how properties of morphisms between prestacks of Tate type imply properties of the corresponding Tate stacks, the are proved as in \cite{GR-I}*{Chapter 2, \S 2.4}.

\begin{cor}
\begin{enumerate}[(a)]
    \item For $f:\sX \ra \sY$ a Tate affine schematic morphism in $\PStkTate$ the corresponding morphism $\L(f): \L(\sX) \ra \L(\sY)$ is also Tate affine schematic;
    \item for $f: \sX \ra \sY$ a flat (resp.\ smooth, \'etale, open embedding, Zariski) morphism in $\PStkTate$ the morphism $\L(f): \L(\sX) \ra \L(\sY)$ is also flat (resp.\ smooth, \'etale, open embedding, Zariski).
\end{enumerate}
\end{cor}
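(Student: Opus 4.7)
The plan is to exploit the fact that $\L$ is left exact (it commutes with finite limits, hence with fiber products) together with the fact that Tate affine schemes are already Tate stacks (Proposition \ref{prop:Tate-affine-schemes-are-Tate-stacks}), so that for any Tate affine scheme $S$ one has $\L(S)\simeq S$. Combined with the property that the unit $\sY \to \L(\sY)$ is an \'etale surjection, this will allow us to verify each claim after \'etale base change to the un-sheafified objects.

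For part (a), let $y:S\to\L(\sY)$ be a point with $S\in\SchaffTate$. Since $\sY\to\L(\sY)$ is an \'etale surjection, there exists an \'etale covering $\varphi:S'\to S$ such that $y\circ\varphi:S'\to\L(\sY)$ factors through some $y':S'\to\sY$. By left exactness of $\L$ and the fact that $S'\simeq \L(S')$, one has
\[
\L(\sX)\underset{\L(\sY)}{\times}S' \;\simeq\; \L\!\left(\sX\underset{\sY}{\times}S'\right) \;\simeq\; \sX\underset{\sY}{\times}S',
\]
where the last isomorphism uses that the fiber product is a Tate affine scheme (by the Tate affine schematic hypothesis on $f$), hence already a Tate stack. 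Now, $\L(\sX)\underset{\L(\sY)}{\times}S$ is, by construction, an \'etale descent of $\L(\sX)\underset{\L(\sY)}{\times}S'$ along the \v{C}ech nerve of $\varphi$, and the transition identifications are themselves \'etale descent data of Tate affine schemes, so by the descent statement that preceded the corollary (the proposition that Tate affine schemes descend along \'etale covers in $\StkTate$), this fiber product is itself a Tate affine scheme. This proves (a).

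For part (b), once (a) is established, the morphism $\L(f)$ is Tate affine schematic and each of the properties in question is defined, by Definition~\ref{defn:flat-morphisms-prestacks-Tate}, via base change to Tate affine schemes in $(\SchaffTate)_{/\L(\sY)}$. With the notation of the previous paragraph, base changing $\L(f)$ along $y$ yields the map $\L(\sX)\underset{\L(\sY)}{\times}S\to S$, and further base changing along the \'etale cover $\varphi:S'\to S$ yields the map $\sX\underset{\sY}{\times}S'\to S'$, which is the base change of the original $f$ along $y':S'\to\sY$ and is therefore flat (resp.\ smooth, \'etale, open embedding, Zariski) by hypothesis on $f$. Since each of these classes of morphisms between Tate affine schemes can be checked after an \'etale cover of the target (this is the content of Proposition~\ref{prop:description-of-flat-morphisms-Tate-affine} combined with the analogous statement for usual affine schemes), we conclude that $\L(\sX)\underset{\L(\sY)}{\times}S\to S$ has the corresponding property, proving (b).

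The only genuine obstacle is the first isomorphism in the display above, which combines three ingredients that have already been established: left exactness of $\L$, the fact that Tate affine schemes are Tate stacks, and the recognition principle for Tate affine schemes inside $\StkTate$ (the proposition stating that an \'etale-locally Tate affine stack is Tate affine). Once these are in hand the argument is purely formal and follows verbatim the template of \cite{GR-I}*{Chapter 2, \S 2.4}.
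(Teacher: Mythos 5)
Your proof is correct and is essentially the argument the paper intends: the paper gives no details beyond citing \cite{GR-I}*{Chapter 2, \S 2.4}, and your combination of left exactness of $\L$, the fact that Tate affine schemes are Tate stacks, the \'etale surjectivity of the unit, and \'etale-local recognition of Tate affine schemes is exactly that template. The only small imprecisions are that the recognition proposition you invoke is stated in the paper with a locally-almost-of-finite-type hypothesis absent from the corollary, and that the final descent step in (b) really rests on \'etale descent of affineness/flatness for ordinary affine schemes rather than on Proposition \ref{prop:description-of-flat-morphisms-Tate-affine}; neither affects the substance of the argument.
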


\subsubsection{$n$-coconnective stacks}

This subsection discusses how the \'etale descent condition interacts with restriction and left Kan extension with respect to the inclusion $\SchaffTaten \subset \SchaffTate$.

\paragraph{Sheafification of $n$-coconnective prestacks}

Let $\StkTaten$ denote the subcategory of $\PStkTaten$ consisting of objects that satisfy descent with respect to \'etale covers ${^{\leq n}S'} \ra {^{\leq n}S}$ in $\SchaffTaten$\footnote{A map $f:S_0 \ra T_0$ in $\SchaffTaten$ is said to be \'etale (resp. faithfully flat, Zariski) if it is so when seen as a map in $\SchaffTate$, via the canonical inclusion $\SchaffTaten \subset \SchaffTate$.}.

One obtains that $\StkTaten$ is a localization of $\PStkTaten$, we denote the left-exact localization functor by
\[
{^{\leq n}L}: \PStkTaten \ra \StkTaten \ra \PStkTaten.
\]

The sheafification functor ${^{\leq n}L}$ on truncated objects can be described more explicitly by using \cite{HTT}*{\S 6.5.3}.

Consider the endo-functor on $\PStkTaten$
\begin{equation}
    \label{eq:+-construction-on-prestacks-Tate}
    \sX \mapsto \sX^+,
\end{equation}
where its value on an object $S\in \SchaffTaten$ is given by
\[
\sX^+(S) := \colim_{S' \in ((\SchaffTaten)_{\rm etale \; in \; S})^{\rm op}}\lim_{\Delta^{\rm op}}\sX((S'/S)^{\bullet})
\]
where $(\SchaffTaten)_{\rm etale \; in \; S}$ denotes the category of \'etale covers of $S$.

In particular, if $\sX$ is $(k-2)$-truncated for $k \geq 2$, then one has
\[
{^{\leq n}L}(\sX) \simeq \sX^{+^{k}},
\]
where $\sX^{+^{k}}$ denotes the $k$th iteration of the functor (\ref{eq:+-construction-on-prestacks-Tate}).

Since the category of \'etale cover of $S$ is cofiltered\footnote{Since it admits finite limits.} the colimit in the definition of $\sX^+$ is filtered, thus one has

\begin{lem}
\label{lem:sheafification-preserves-k-connected-objects}
The functor ${^{\leq n}L}: \PStkTaten \ra \PStkTaten$ sends $k$-truncated objects to $k$-truncated objects.
\end{lem}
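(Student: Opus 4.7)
The plan is to reduce the statement to the claim that the single-step functor $(-)^+$ from the excerpt preserves $k$-truncatedness, and then invoke the fact that, for a $k$-truncated object $\sX$, one has an equivalence ${^{\leq n}L}(\sX)\simeq \sX^{+^{k+2}}$ by \cite{HTT}*{\S 6.5.3}. Iterating a truncation-preserving functor finitely many times preserves the property, so this reduces the lemma to a single claim.

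To establish that $(-)^+$ preserves $k$-truncatedness, suppose $\sX$ takes values in $\Spc_{\leq k}$ and fix $S\in\SchaffTaten$. Unpacking the formula,
\[
\sX^{+}(S) \;\simeq\; \colim_{S'\in ((\SchaffTaten)_{\text{\'etale in }S})^{\rm op}} \Tot\bigl(\sX\bigl((S'/S)^{\bullet}\bigr)\bigr).
\]
First I would handle the inner totalization: it is a small limit in $\Spc$ of $k$-truncated spaces, and $\Spc_{\leq k}\subseteq \Spc$ is stable under small limits by \cite{HTT}*{Proposition 5.5.6.5}, so each $\Tot(\sX((S'/S)^{\bullet}))$ lies in $\Spc_{\leq k}$. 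Next I would handle the outer colimit: as noted in the excerpt, the category $(\SchaffTaten)_{\text{\'etale in }S}$ admits finite limits (fiber products of \'etale covers are \'etale covers), hence is cofiltered, and therefore its opposite is filtered. Since filtered colimits in $\Spc$ commute with finite limits and in particular preserve $\Spc_{\leq k}$, the colimit over this filtered diagram of $k$-truncated spaces is again $k$-truncated. Combining these two observations gives $\sX^{+}(S)\in \Spc_{\leq k}$ for every $S$, so $\sX^{+}$ is $k$-truncated.

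Finally, iterating $(-)^+$ finitely many times preserves $k$-truncatedness, and applying the description ${^{\leq n}L}(\sX)\simeq \sX^{+^{k+2}}$ valid for $k$-truncated $\sX$ yields that ${^{\leq n}L}(\sX)$ is $k$-truncated, concluding the proof. The only nontrivial input is the stability of $\Spc_{\leq k}$ under filtered colimits, which I do not expect to present any real obstacle since it is a standard fact about the Postnikov tower and the commutation of homotopy groups with filtered colimits; everything else is formal bookkeeping with the explicit description of the sheafification.
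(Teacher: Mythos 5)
Your proof is correct and follows essentially the same route as the paper: the paper's argument is exactly that for a truncated prestack the sheafification is a finite iteration of $(-)^+$, that the totalization is a limit of $k$-truncated spaces, and that the outer colimit is filtered because the category of \'etale covers admits finite limits, hence preserves $k$-truncatedness. You have merely spelled out the standard inputs (stability of $\Spc_{\leq k}$ under limits and under filtered colimits) that the paper leaves implicit.
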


\paragraph{Truncation of stacks and right Kan extension from coconnective}

The following are tautological from the definitions

\begin{lem}
\begin{enumerate}[(a)]
    \item The restriction functor $\PStkTate \ra \PStkTaten$ sends the category $\StkTate$ to the category $\StkTaten$.
    \item The functor
    \[
    \LKE_{(\SchaffTaten)^{\rm op} \hra (\SchaffTate)^{\rm op}}:\PStkTaten \ra \PStkTate
    \]
    sends \'etale equivalences to \'etale equivalences.
\end{enumerate}
\end{lem}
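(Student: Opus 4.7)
The plan is to prove (a) directly from the definitions by checking compatibility of \v{C}ech nerves, and then to deduce (b) formally from (a) via the adjunction between the restriction functor and the left Kan extension.

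First I would tackle (a). Let $\sX \in \StkTate$ and consider an \'etale cover $f:T' \ra T$ in $\SchaffTaten$. The condition that $\left.\sX\right|_{\SchaffTaten}$ lies in $\StkTaten$ asks that
\[
\sX(T) \ra \Tot\left(\sX\left((T'/T)^{\bullet}_{\SchaffTaten}\right)\right)
\]
be an equivalence, where the \v{C}ech nerve is computed inside $\SchaffTaten$. The key observation is that for \'etale morphisms between $n$-coconnective Tate affine schemes the fiber products computed in $\SchaffTate$ remain $n$-coconnective, since \'etale morphisms are flat and eventually coconnective, so they do not introduce new terms in negative cohomological degrees. Hence the \v{C}ech nerve of $f$ in $\SchaffTaten$ agrees with the one computed in $\SchaffTate$, and the descent condition for $\sX \in \StkTate$ applied to $f$ seen as an \'etale cover in $\SchaffTate$ gives the required equivalence.

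For (b), one uses the adjunction: for any $\sX \in \PStkTaten$ and any $\sY \in \PStkTate$ one has a natural equivalence
\[
\Maps_{\PStkTate}\left(\LKE_{(\SchaffTaten)^{\rm op} \hra (\SchaffTate)^{\rm op}}(\sX),\sY\right) \simeq \Maps_{\PStkTaten}\left(\sX,\left.\sY\right|_{(\SchaffTaten)^{\rm op}}\right).
\]
By part (a), whenever $\sY \in \StkTate$ the restriction $\left.\sY\right|_{(\SchaffTaten)^{\rm op}}$ belongs to $\StkTaten$. Consequently, for any \'etale equivalence $g:\sX_1 \ra \sX_2$ in $\PStkTaten$, the induced map $\Maps(\sX_2,\left.\sY\right|_{\SchaffTaten}) \ra \Maps(\sX_1,\left.\sY\right|_{\SchaffTaten})$ is an equivalence for every $\sY \in \StkTate$. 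Passing through the adjunction one concludes that $\Maps(\LKE(\sX_2),\sY) \ra \Maps(\LKE(\sX_1),\sY)$ is an equivalence for every $\sY \in \StkTate$, which is exactly the statement that $\LKE(g)$ is an \'etale equivalence in $\PStkTate$.

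The only substantive point is the verification in (a) that fiber products of $n$-coconnective Tate affine schemes along \'etale morphisms are again $n$-coconnective, so that the \v{C}ech nerves match. Once this is checked, both parts reduce to formal manipulations with adjoints and the definitions of \'etale equivalence and descent, which is why the lemma is called tautological.
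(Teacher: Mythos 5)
Your proof is correct and is essentially the argument the paper has in mind (the paper simply asserts the lemma is ``tautological from the definitions'' and omits the proof). The one substantive point you rightly isolate --- that flat base change preserves $n$-coconnectivity, so the \v{C}ech nerve of an \'etale cover of $n$-coconnective Tate affine schemes computed in $\SchaffTate$ already lies in $\SchaffTaten$ (using the levelwise description of \'etale maps from Proposition \ref{prop:description-of-flat-morphisms-Tate-affine} together with the levelwise construction of fiber products in Lemma \ref{lem:Tate-affine-schemes-has-fiber-products}) --- is exactly what makes (a) go through, and (b) then follows formally from the $(\LKE,\mathrm{restriction})$ adjunction as you describe.
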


Notice that the right Kan extension functor along $(\SchaffTaten)^{\rm op} \hra (\SchaffTate)^{\rm op}$
\[
\RKE_{(\SchaffTaten)^{\rm op} \hra (\SchaffTate)^{\rm op}}:\PStkTaten \ra \PStkTate
\]
sends $\StkTaten$ to $\StkTate$. Thus the restriction $\sX \mapsto {^{\leq n}\sX}$ sends \'etale equivalences to \'etale equivalences. Thus, we have
\begin{cor}
For $\sX \in \PStkTate$, one has
\[
{^{\leq n}\L}({^{\leq n}\sX}) \simeq {^{\leq n}\L(\sX)}.
\]
\end{cor}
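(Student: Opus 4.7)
The plan is to combine the three facts just established into a straightforward universal-property argument. First, I would recall that the unit map $\eta_\sX : \sX \to \L(\sX)$ is, by construction of the localization functor, an \'etale equivalence in $\PStkTate$. Applying the restriction functor ${^{\leq n}(-)}$ to $\eta_\sX$ produces a morphism
\[
{^{\leq n}\eta_\sX} : {^{\leq n}\sX} \to {^{\leq n}\L(\sX)}
\]
in $\PStkTaten$, which by the paragraph preceding the corollary (``the restriction $\sX \mapsto {^{\leq n}\sX}$ sends \'etale equivalences to \'etale equivalences'') is itself an \'etale equivalence.

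Next, I would observe that the target of this map already lies in $\StkTaten$. Indeed, $\L(\sX) \in \StkTate$ by definition of the sheafification, and part (a) of the preceding lemma says that restriction along $(\SchaffTaten)^{\mathrm{op}} \hookrightarrow (\SchaffTate)^{\mathrm{op}}$ sends $\StkTate$ into $\StkTaten$. Therefore ${^{\leq n}\L(\sX)}$ is an $n$-coconnective Tate stack.

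Finally, I would invoke the universal property of the localization ${^{\leq n}\L} : \PStkTaten \to \StkTaten$: any \'etale equivalence out of an object of $\PStkTaten$ with target in $\StkTaten$ factors uniquely through the unit ${^{\leq n}\sX} \to {^{\leq n}\L}({^{\leq n}\sX})$, and both the unit and the factoring map are \'etale equivalences with target in $\StkTaten$. By the two-out-of-three property of \'etale equivalences this forces the induced comparison
\[
{^{\leq n}\L}({^{\leq n}\sX}) \longrightarrow {^{\leq n}\L(\sX)}
\]
to be an isomorphism in $\StkTaten$, which is the desired statement. There is no real obstacle here beyond the bookkeeping: every needed input (the preservation of \'etale equivalences by restriction, and the fact that restriction lands in $\StkTaten$) has been recorded immediately above the corollary, so the proof is essentially an abstract-nonsense consequence of those two facts and the universal property of sheafification.
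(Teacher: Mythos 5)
Your argument is correct and is exactly the one the paper intends: restriction preserves \'etale equivalences (because its right adjoint, the right Kan extension, preserves the subcategories of sheaves), the restricted unit ${^{\leq n}\sX} \to {^{\leq n}\L(\sX)}$ is therefore an \'etale equivalence with target in $\StkTaten$, and the universal property of ${^{\leq n}\L}$ together with two-out-of-three identifies ${^{\leq n}\L(\sX)}$ with ${^{\leq n}\L}({^{\leq n}\sX})$. The paper leaves precisely this bookkeeping implicit, so your write-up matches its proof.
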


Similarly, if we have a functor
\[
\sX': (\SchaffTateconv)^{\rm op} \ra \Spc,
\]
which we think of as a compatible family of $\sX'_n \in \PStkTaten$. Let
\[
\sX := \RKE_{(\SchaffTateconv)^{\rm op} \hra (\SchaffTate)^{\rm op}}.
\]

\begin{lem}
\label{lem:check-stack-condition-on-truncations}
If for every $n$ one has $\sX'_n \in \StkTaten$, then $\sX \in \Stk$.
\end{lem}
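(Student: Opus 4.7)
The plan is to reduce \'etale descent for $\sX$ on $\SchaffTate$ to the hypothesized \'etale descent for each $\sX'_n$ on $\SchaffTaten$ by a limit-swapping argument.

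First I would establish the key formula $\sX(T) \simeq \lim_{n}\sX'_n({^{\leq n}T})$ for every $T \in \SchaffTate$. Since $\sX$ is the right Kan extension of $\sX'$ along $(\SchaffTateconv)^{\rm op} \hra (\SchaffTate)^{\rm op}$, concretely
\[
\sX(T) \simeq \lim_{(T_0 \to T) \in \left((\SchaffTateconv)_{/T}\right)^{\rm op}}\sX'(T_0),
\]
and the functor $\bZ_{\geq 0} \to (\SchaffTateconv)_{/T}$ sending $n \mapsto ({^{\leq n}T} \to T)$ is cofinal (as in the proof of Lemma \ref{lem:concrete-condition-convergent-prestack-Tate-type}), yielding the desired expression.

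Next, given an \'etale cover $f:S' \to S$ in $\SchaffTate$, I would check two levelwise compatibilities. Since the inclusion $\SchaffTaten \hra \SchaffTate$ admits the truncation functor ${^{\leq n}(-)}$ as a right adjoint, truncation preserves fiber products. Combined with the standard fact that for an \'etale morphism $f$ the natural map ${^{\leq n}S'} \to {^{\leq n}S}\underset{S}{\times}S'$ is an equivalence, this gives a canonical identification of simplicial objects
\[
{^{\leq n}(S'/S)^{\bullet}} \simeq ({^{\leq n}S'}/{^{\leq n}S})^{\bullet},
\]
where ${^{\leq n}f}: {^{\leq n}S'} \to {^{\leq n}S}$ is an \'etale cover in $\SchaffTaten$ (by Remark \ref{rem:description-of-flat-morphisms-Tate-affine-n-coconnective} and stability of \'etaleness under base change).

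With these ingredients, the proof is formal: one computes
\[
\Tot\bigl(\sX((S'/S)^{\bullet})\bigr) \simeq \Tot\Bigl(\lim_{n}\sX'_n\bigl({^{\leq n}(S'/S)^{\bullet}}\bigr)\Bigr) \simeq \lim_n \Tot\Bigl(\sX'_n\bigl(({^{\leq n}S'}/{^{\leq n}S})^{\bullet}\bigr)\Bigr) \simeq \lim_n \sX'_n({^{\leq n}S}) \simeq \sX(S),
\]
where the commutation of $\Tot$ with $\lim_n$ is legitimate since both are limits, and the penultimate equivalence is the descent hypothesis $\sX'_n \in \StkTaten$ applied to the cover ${^{\leq n}f}$. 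The only genuinely non-formal point will be verifying the identification of the truncated \v{C}ech nerve with the \v{C}ech nerve of the truncation; the rest reduces to manipulations with adjoint functors and commuting limits, already implicit in the formalism set up earlier in this section.
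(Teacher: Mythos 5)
Your proposal is correct and follows essentially the same route as the paper, which simply cites the description of the right Kan extension as the limit $\sX(T)\simeq \lim_n \sX'_n({^{\leq n}T})$ over truncations (Lemma \ref{lem:concrete-condition-convergent-prestack-Tate-type}) and leaves the rest as formal. You have merely made explicit the two points the paper suppresses — the identification ${^{\leq n}(S'/S)^{\bullet}}\simeq({^{\leq n}S'}/{^{\leq n}S})^{\bullet}$ (which already follows from truncation being a right adjoint, with flatness only needed to see that ${^{\leq n}f}$ is again an \'etale cover) and the commutation of $\Tot$ with $\lim_n$ — both of which are sound.
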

\begin{proof}
It follows from the description of the right Kan extension from $\SchaffTateconv$ as a limit over the truncations, see Lemma \ref{lem:concrete-condition-convergent-prestack-Tate-type}.
\end{proof}

Thus, we have
\begin{cor}
Suppose that $\sX \in \PStkTate$ belongs to $\StkTate$, then so does ${^{\rm conv}\sX}$.
\end{cor}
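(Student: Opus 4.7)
The plan is to reduce the claim directly to Lemma \ref{lem:check-stack-condition-on-truncations} via the tower of truncations. By definition (cf.\ \S \ref{subsubsec:convergence-prestacks-Tate}), the convergent completion is
\[
{^{\rm conv}\sX} \simeq \RKE_{(\SchaffTateconv)^{\rm op} \hra (\SchaffTate)^{\rm op}}\bigl(\left.\sX\right|_{(\SchaffTateconv)^{\rm op}}\bigr),
\]
and the restriction $\left.\sX\right|_{(\SchaffTateconv)^{\rm op}}$ is equivalent, under the identification $\SchaffTateconv \simeq \bigcup_{n \geq 0}\SchaffTaten$, to the compatible family $\{{^{\leq n}\sX}\}_{n \geq 0}$ obtained by further restriction.

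First, I would note that each ${^{\leq n}\sX}$ lies in $\StkTaten$. This is immediate from part (a) of the lemma just above the statement we are proving, which says that restriction $\PStkTate \to \PStkTaten$ sends $\StkTate$ into $\StkTaten$; since $\sX \in \StkTate$ by hypothesis, ${^{\leq n}\sX} \in \StkTaten$ for every $n \geq 0$.

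Next, I would appeal to Lemma \ref{lem:check-stack-condition-on-truncations}, which precisely asserts that for a compatible family $\sX'_{n} \in \StkTaten$, the right Kan extension
\[
\RKE_{(\SchaffTateconv)^{\rm op} \hra (\SchaffTate)^{\rm op}}(\sX')
\]
belongs to $\StkTate$. Applying this to $\sX'_n = {^{\leq n}\sX}$ yields that ${^{\rm conv}\sX} \in \StkTate$, which is the desired conclusion.

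There is essentially no obstacle: the argument is entirely formal, combining the compatibility of the restriction functor with the descent condition and the previously established Lemma \ref{lem:check-stack-condition-on-truncations}. The only minor point to check is that the restriction $\left.\sX\right|_{(\SchaffTateconv)^{\rm op}}$ is indeed the one associated with the truncation tower, which follows from $\SchaffTateconv = \bigcup_{n \geq 0}\SchaffTaten$ and the concrete description of the right Kan extension in \S \ref{subsubsec:convergence-prestacks-Tate} together with Lemma \ref{lem:concrete-condition-convergent-prestack-Tate-type}.
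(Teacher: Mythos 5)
Your proposal is correct and is exactly the argument the paper intends: the corollary is stated as an immediate consequence of part (a) of the preceding lemma (restriction sends $\StkTate$ to $\StkTaten$) combined with Lemma \ref{lem:check-stack-condition-on-truncations} applied to the compatible family $\{{^{\leq n}\sX}\}_{n\geq 0}$, whose right Kan extension from $\SchaffTateconv$ is ${^{\rm conv}\sX}$ by definition. No gaps.
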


\paragraph{The $n$-coconnective condition for a stack}

We note that the functor
\[
\LKE_{(\SchaffTaten)^{\rm op} \hra (\SchaffTate)^{\rm op}}: \PStkTaten \ra \PStkTate
\]
does \emph{not} send the subcategory $\StkTaten$ to $\StkTate$. Instead, we need to consider the composite
\[
\StkTaten \hra \PStkTaten \overset{\LKE_{(\SchaffTaten)^{\rm op} \hra (\SchaffTate)^{\rm op}}}{\ra} \PStkTate \overset{L}{\ra} \StkTate
\]
to obtain the left adjoin to the restriction functor $\StkTate \ra \StkTaten$. We denote this composite by $\LLKE_{(\SchaffTaten)^{\rm op} \hra (\SchaffTate)^{\rm op}}$.

The left adjoint $\LLKE_{(\SchaffTaten)^{\rm op} \hra (\SchaffTate)^{\rm op}}$ is fully faithful, thus we identify $\StkTaten$ with a subcategory of $\PStkTaten$. We will denote by ${^{L}\tau^{\leq n}}: \StkTate \ra \StkTate$ the corresponding localization functor.

\paragraph{}

A stack of Tate type $\sX$ is said to be \emph{$n$-coconnective} if the canonical map
\[
{^{L}\tau^{\leq n}}(\sX) \ra \sX
\]
is an isomorphism. 

In particular, we will say that a stack of Tate type is \emph{classical} if it is $0$-coconnective.

We will also say that a stack of Tate type is \emph{eventually coconnective} if it is $n$-coconnective for some $n$.

\subsubsection{Descent and locally of finite type condition}

In this section we investigate how the condition of being a stack interacts with the locally of finite type condition.

\paragraph{}

Fix an non-negative integer $n$, we consider the \'etale topology on the category $\SchaffTatenft$ induced by \'etale covers. Let
\[
^{\leq n}L_{\rm ft}: \PStkTatenlft \ra \NearStknlft \ra \PStkTatenlft
\]
denote the localization, where $\NearStknlft$ denotes the subcategory of functor $\sX:\SchaffTatenft \ra \Spc$ that satisfy descent with respect to \'etale covers.

Similarly to Lemma \ref{lem:sheafification-preserves-k-connected-objects} one has

\begin{lem}
The functor $^{\leq n}L_{\rm ft}: \PStkTatenlft \ra \PStkTatenlft$ sends $k$-truncated objects to $k$-truncated objects.
\end{lem}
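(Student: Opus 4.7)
The plan is to adapt the strategy used for Lemma \ref{lem:sheafification-preserves-k-connected-objects} to the locally of finite type setting. The argument there was formal and had two inputs: an explicit description of the sheafification via an iterated $+$-construction for truncated objects, and the fact that the slice category of étale covers is cofiltered so that the colimit in the $+$-construction is filtered. Both inputs carry over to the present setting with essentially no change.

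First, I would introduce the endofunctor $(-)^+$ on $\PStkTatenlft$ by
\[
\sX^+(S) := \colim_{S' \in ((\SchaffTatenft)_{\text{étale in }S})^{\rm op}}\lim_{\Delta^{\rm op}}\sX((S'/S)^{\bullet}),
\]
and recall from \cite{HTT}*{\S 6.5.3} that, for $\sX$ a $(k-2)$-truncated object (with $k\geq 2$), $k$ iterations of the $+$-construction compute the sheafification, i.e.\ ${^{\leq n}L_{\rm ft}}(\sX)\simeq \sX^{+^{k}}$.

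Second, I would verify that the slice category $(\SchaffTatenft)_{\text{étale in }S}$ admits finite limits, so that its opposite is filtered. This combines Lemma \ref{lem:n-coconnective-Tate-affine-schemes-of-ft-has-fiber-products}, which furnishes fiber products in $\SchaffTatenft$, with the levelwise characterization of étale morphisms from Proposition \ref{prop:description-of-flat-morphisms-Tate-affine} (together with Remark \ref{rem:description-of-flat-morphisms-Tate-affine-n-coconnective}), which ensures that étale morphisms are stable under base change inside $\SchaffTatenft$.

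Finally, since filtered colimits of $k$-truncated spaces are $k$-truncated and arbitrary limits of $k$-truncated spaces are $k$-truncated (\cite{HTT}*{Proposition 5.5.6.5}), each application of $(-)^+$ preserves $k$-truncatedness, and therefore so does the iterated sheafification ${^{\leq n}L_{\rm ft}}$. The main thing requiring verification is the second step above—the filteredness of the étale covers of $S$ in the restricted class $\SchaffTatenft$—but this is routine given the already-established stability results in Section \ref{subsec:properties-Tate-schemes}.
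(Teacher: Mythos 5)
Your proposal is correct and follows exactly the route the paper intends: the paper proves this lemma simply by declaring it "similar to" the earlier sheafification lemma, i.e.\ via the iterated $+$-construction on $(k-2)$-truncated objects together with the filteredness of the (opposite of the) category of \'etale covers. Your explicit verification that \'etale covers inside $\SchaffTatenft$ admit finite limits — using the stability of $\SchaffTatenft$ under fiber products and the levelwise description of \'etale morphisms — is precisely the only point that needed checking in the finite type setting, so nothing is missing.
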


\paragraph{}

The restriction via $\SchaffTatenft \hra \SchaffTaten$ gives a functor
\[
\PStkTaten \ra \PStkTatenlft
\]
that sends $\StkTaten$ to $\NearStknlft$. By adjunction, the left Kan extension
\[
\LKE_{\SchaffTatenft \hra \SchaffTaten}: \PStkTatenlft \ra \PStkTaten
\]
sends \'etale equivalences to \'etale equivalences.

\begin{lem}
\label{lem:RKE-prestacks-Tate-n-lft-to-prestacks-Tate-n}
The functor of right Kan extension
\[
\RKE_{\SchaffTatenft \hra \SchaffTaten}: \PStkTatenlft \ra \PStkTaten
\]
sends $\NearStknlft$ to $\StkTaten$.
\end{lem}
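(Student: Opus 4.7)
Let $\sX' \in \NearStknlft$ and set $\sX := \RKE_{\SchaffTatenft \hra \SchaffTaten}(\sX')$. To show $\sX \in \StkTaten$, I would fix an arbitrary \'etale cover $f:S' \ra S$ in $\SchaffTaten$ and verify that the canonical map
\[
\sX(S) \ra \Tot\bigl(\sX((S'/S)^{\bullet})\bigr)
\]
is an equivalence. First I would unpack both sides via the pointwise formula for the right Kan extension, obtaining
\[
\sX(S) \simeq \lim_{T \in \left((\SchaffTatenft)_{/S}\right)^{\rm op}}\sX'(T), \qquad \sX((S'/S)^n) \simeq \lim_{T_n \in \left((\SchaffTatenft)_{/(S'/S)^n}\right)^{\rm op}}\sX'(T_n).
\]

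Second, I would introduce the base-change functor
\[
F_n : (\SchaffTatenft)_{/S} \ra (\SchaffTatenft)_{/(S'/S)^n}, \quad (T \ra S) \mapsto \bigl(T\underset{S}{\times}(S'/S)^n \ra (S'/S)^n\bigr),
\]
and check it is well-defined. Since $(S'/S)^n \ra S$ is \'etale (as an iterated fiber product of the \'etale morphism $f$), so is $T\underset{S}{\times}(S'/S)^n \ra T$; by Proposition \ref{prop:description-of-flat-morphisms-Tate-affine} in its $n$-coconnective form (Remark \ref{rem:description-of-flat-morphisms-Tate-affine-n-coconnective}), an \'etale morphism to a finite-type $n$-coconnective Tate affine scheme admits a levelwise presentation by \'etale morphisms between $n$-coconnective affine schemes of finite type, which remain of finite type. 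Hence $T\underset{S}{\times}(S'/S)^n \in \SchaffTatenft$.

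Third, I would prove that $F_n$ is final, so that $F_n^{\rm op}$ is initial for limits. Given an object $(T_n \ra (S'/S)^n) \in (\SchaffTatenft)_{/(S'/S)^n}$, the universal property of the pullback identifies the comma category $T_n \downarrow F_n$ with the category of factorizations
\[
T_n \ra T \ra S
\]
of the composite $T_n \ra (S'/S)^n \ra S$ through objects $T$ of $\SchaffTatenft$. Because $T_n$ itself lies in $\SchaffTatenft$, the tautological factorization $T_n \xrightarrow{\id} T_n \ra S$ is manifestly an initial object of this category; therefore $T_n \downarrow F_n$ is weakly contractible and $F_n$ is final. Applying this to rewrite each term, we obtain
\[
\sX((S'/S)^n) \simeq \lim_{T \in \left((\SchaffTatenft)_{/S}\right)^{\rm op}} \sX'\bigl(T\underset{S}{\times}(S'/S)^n\bigr).
\]

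Finally, commuting $\Tot$ with the limit over $T$ (both being limits), and noting that $T\underset{S}{\times}S' \ra T$ is an \'etale cover in $\SchaffTatenft$ with \v{C}ech nerve $T\underset{S}{\times}(S'/S)^{\bullet}$, the descent hypothesis for $\sX'$ applied at each $T$ collapses each totalization to $\sX'(T)$, yielding
\[
\Tot\bigl(\sX((S'/S)^{\bullet})\bigr) \simeq \lim_{T \in \left((\SchaffTatenft)_{/S}\right)^{\rm op}} \sX'(T) \simeq \sX(S),
\]
as desired. The principal subtlety is the finality of $F_n$, which is defused by the explicit initial object identified above; the rest of the proof is a formal commutation of limits, perfectly parallel to the argument of Lemma \ref{lem:descent-for-prestack-Tate-lp}.
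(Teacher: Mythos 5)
Your proof is correct. The paper itself only cites \cite{GR-I}*{Chapter 2, Lemma 2.7.4} here, and your argument is exactly the expected one, matching the template the paper uses for the closely analogous Lemma \ref{lem:descent-for-prestack-Tate-lp}: base-change the cover to each finite-type test object $T \ra S$ (using Remark \ref{rem:description-of-flat-morphisms-Tate-affine-n-coconnective} to see that $T\underset{S}{\times}(S'/S)^n$ stays in $\SchaffTatenft$), re-index via cofinality, commute the totalization past the limit over $T$, and invoke descent for $\sX'$ at the finite-type level. Your cofinality check via the tautological initial object $(T_n,\id)$ of the comma category is clean and in fact tighter than the paper's corresponding step in Lemma \ref{lem:descent-for-prestack-Tate-lp}, which argues through filteredness plus existence of a factorization.
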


\begin{proof}
The proof is exactly as in \cite{GR-I}*{Chapter 2, Lemma 2.7.4}.
\end{proof}

\paragraph{}

The following is a consequence of Lemma \ref{lem:RKE-prestacks-Tate-n-lft-to-prestacks-Tate-n}

\begin{cor}
\label{cor:restriction-to-finite-type-preserves-etale-equivalences}
\begin{enumerate}[(1)]
    \item The restriction functor $\PStkTaten \ra \PStkTatenlft$ sends \'etale equivalences to \'etale equivalences.
    \item For $\sX \in \PStkTaten$ one has:
    \[
    {^{\leq n}L}(\left.\sX)\right|_{\SchaffTatenft} \simeq {^{\leq n}L}_{\rm ft}(\left.\sX\right|_{\SchaffTatenft})
    \]
\end{enumerate}
\end{cor}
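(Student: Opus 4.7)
The plan is to deduce both parts as formal consequences of Lemma \ref{lem:RKE-prestacks-Tate-n-lft-to-prestacks-Tate-n}. Write $\imath: \SchaffTatenft \hra \SchaffTaten$ for the canonical inclusion, so that the restriction functor $\PStkTaten \ra \PStkTatenlft$ is $\imath^*$; this functor sits between the left and right Kan extensions along $\imath$, and in particular admits a right adjoint $\RKE_{\imath}$.

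For (1), let $f: \sX_1 \ra \sX_2$ be an \'etale equivalence in $\PStkTaten$. To show that $\imath^* f$ is an \'etale equivalence in $\PStkTatenlft$, I need to check that for every $\sY \in \NearStknlft$, the induced map
\[
\Maps(\imath^* \sX_2, \sY) \ra \Maps(\imath^* \sX_1, \sY)
\]
is an equivalence. By the $(\imath^*, \RKE_{\imath})$-adjunction this is equivalent to the statement that $\Maps(\sX_2, \RKE_{\imath} \sY) \ra \Maps(\sX_1, \RKE_{\imath} \sY)$ is an equivalence. Lemma \ref{lem:RKE-prestacks-Tate-n-lft-to-prestacks-Tate-n} guarantees $\RKE_{\imath} \sY \in \StkTaten$, and the claim then becomes the hypothesis that $f$ is an \'etale equivalence tested against an object of $\StkTaten$.

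For (2), I would first observe that $\imath^*$ itself carries $\StkTaten$ into $\NearStknlft$: since the \'etale topology on $\SchaffTatenft$ is by construction the one induced from $\SchaffTaten$ (any cover in the former is in particular a cover in the latter, a levelwise statement as in Remark \ref{rem:description-of-flat-morphisms-Tate-affine-n-coconnective}), descent over $\SchaffTaten$ implies descent over $\SchaffTatenft$. Next, consider the localization unit $\eta: \sX \ra {^{\leq n}L}(\sX)$, which is an \'etale equivalence with target in $\StkTaten$. Applying $\imath^*$ and invoking part (1), the map $\imath^* \eta: \imath^* \sX \ra \imath^*({^{\leq n}L}(\sX))$ is an \'etale equivalence in $\PStkTatenlft$, and its target lies in $\NearStknlft$ by the observation above. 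Since the localization ${^{\leq n}L}_{\rm ft}$ is characterized, up to canonical isomorphism, by the universal property that any \'etale equivalence from a prestack to a stack factors uniquely through the localization unit, $\imath^* \eta$ identifies canonically with $\imath^* \sX \ra {^{\leq n}L}_{\rm ft}(\imath^* \sX)$, yielding the desired identification.

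The argument is completely formal and the entire nontrivial content is packaged in Lemma \ref{lem:RKE-prestacks-Tate-n-lft-to-prestacks-Tate-n}, so I anticipate no real obstacle. The only minor point worth being explicit about, as already indicated above, is the compatibility of the two \'etale topologies, which is immediate from the levelwise characterization of \'etale morphisms; no verification beyond this is required.
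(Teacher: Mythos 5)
Your proof is correct and is exactly the intended argument: the paper states the corollary as an immediate consequence of Lemma \ref{lem:RKE-prestacks-Tate-n-lft-to-prestacks-Tate-n}, and the $(\imath^*,\RKE_\imath)$-adjunction argument for (1) together with the universal property of the localization for (2) is precisely how that consequence is meant to be extracted. The only ingredient you verify that the paper merely asserts (that $\imath^*$ sends $\StkTaten$ to $\NearStknlft$) is justified correctly via the levelwise description of \'etale covers.
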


\paragraph{}

\begin{prop}
\label{prop:near-stack-n-LKE-to-stack-n}
Suppose that $\sX$ an object of $\PStkTatenlft$ is $k$-truncated for some $k$, and that $\sX \in \NearStknlft$, then
\[
\LKE_{\SchaffTatenft \hra \SchaffTaten}(\sX) \in \StkTaten.
\]
\end{prop}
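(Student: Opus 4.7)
The plan is to verify \'etale descent for $\sY := \LKE_{\SchaffTatenft \hra \SchaffTaten}(\sX)$ by reducing, via compatible presentations of Tate affine schemes, to the descent of $\sX$ over $n$-coconnective Tate affine schemes of finite type.

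First I would use the explicit description of the left Kan extension from \S\ref{par:concrete-LKE-description}: for $S \in \SchaffTaten$ with a presentation $S \simeq \colim_I S_i$ (with $S_i \in \SchaffTatenft$ and $I$ filtered), the diagram $I \to ((\SchaffTatenft)_{/S})^{\rm op}$ should be cofinal, giving $\sY(S) \simeq \colim_I \sX(S_i)$. Since filtered colimits in $\Spc$ preserve $k$-truncated objects, $\sY$ inherits $k$-truncatedness from $\sX$.

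Next, for an \'etale cover $f \colon S' \ra S$ in $\SchaffTaten$, I would invoke Remark \ref{rem:description-of-flat-morphisms-Tate-affine-n-coconnective} to choose compatible presentations $S \simeq \colim_I S_i$ and $S' \simeq \colim_I S'_i$ indexed over the same filtered category $I$, with each induced $f_i \colon S'_i \ra S_i$ an \'etale cover in $\SchaffTatenft$. Since fiber products in $\SchaffTaten$ commute with filtered colimits (cf.\ the proof of Lemma \ref{lem:Tate-affine-schemes-has-fiber-products}), the \v{C}ech nerves match up as $(S'/S)^n \simeq \colim_I (S'_i/S_i)^n$, yielding $\sY((S'/S)^n) \simeq \colim_I \sX((S'_i/S_i)^n)$ by the previous paragraph.

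The crux is then to interchange the totalization $\Tot(\sY((S'/S)^\bullet))$ with the filtered colimit over $I$. Because $\sX$ is $k$-truncated, each term in the cosimplicial diagram is $k$-truncated, so the totalization reduces to a finite Reedy-type limit (at level bounded in terms of $k$), and finite limits commute with filtered colimits in $\Spc$. This swap, together with the descent hypothesis $\sX \in \NearStknlft$ applied to each $f_i$, should give
\[
\Tot\bigl(\sY((S'/S)^\bullet)\bigr) \simeq \colim_I \Tot\bigl(\sX((S'_i/S_i)^\bullet)\bigr) \simeq \colim_I \sX(S_i) \simeq \sY(S),
\]
establishing descent. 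The main obstacle I anticipate is ensuring the cofinality of $I \to ((\SchaffTatenft)_{/S})^{\rm op}$ that expresses the LKE as a filtered colimit, together with the compatibility of presentations with \'etale covers in the $n$-coconnective setting; the reduction of the totalization to a finite limit via truncatedness is standard but should be carefully justified.
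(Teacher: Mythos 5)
Your argument is built on an incorrect description of the left Kan extension, and this invalidates the reduction at its core. The functor $\LKE_{\SchaffTatenft \hra \SchaffTaten}(\sX)$ evaluated at $S$ is the colimit of $\sX(S')$ over the comma category of maps $S \ra S'$ \emph{out of} $S$ into finite-type objects, i.e.\ over $\left((\SchaffTatenft)_{S/}\right)^{\rm op}$ --- this is the ``Pro-direction'' approximation coming from Corollary \ref{cor:pro-schemes-n-coconnective-ft-are-all}, where a non-finite-type affine scheme is a cofiltered \emph{limit} of finite-type ones. You instead index the colimit by a presentation $S \simeq \colim_I S_i$ with $S_i \in \SchaffTatenft$ mapping \emph{into} $S$, i.e.\ by (a subcategory of) $(\SchaffTatenft)_{/S}$. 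These are different categories, and moreover a general object of $\SchaffTaten$ admits no such presentation: the levels of an ind-presentation of a Tate affine scheme are ordinary affine schemes that need not be of finite type (e.g.\ $\bA^\infty = \Spec(k[t_0,t_1,\ldots])$ is a single non-finite-type level), and they cannot be replaced by finite-type Tate affine schemes along closed embeddings. So the identification $\sY(S) \simeq \colim_I \sX(S_i)$ on which everything afterwards rests does not hold.

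Because of this, the step where you choose ``compatible presentations'' of an \'etale cover $f\colon S' \ra S$ via Remark \ref{rem:description-of-flat-morphisms-Tate-affine-n-coconnective} does not do the work that is actually needed: that remark concerns the ind-levels of $S$ and $S'$, whereas what the proof requires is that finite-type \'etale covers $S'_1 \ra S'_2$ equipped with compatible maps $S_1 \ra S'_1$, $S_2 \ra S'_2$ are cofinal among finite-type approximations of $S_1$ and of $S_2$ separately. This is the content of the paper's Lemma \ref{lem:initial-functors-near-stacks} (the category $f_{\rm ft}$ and the initiality of its two forgetful functors), and it is the genuinely nontrivial input, proved by induction on the coconnectivity $n$ using square-zero extensions. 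Your final step --- replacing $\Tot$ by a finite limit via $k$-truncatedness and commuting it with a filtered colimit --- is correct and coincides with what the paper does, but without the correct indexing category and the cofinality lemma the argument does not go through.
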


Before giving the proof of Proposition \ref{prop:near-stack-n-LKE-to-stack-n}, we prove an auxiliary result. Consider $f:S_1 \ra S_2$ an \'etale morphism in $\SchaffTaten$, consider the category $f_{\rm ft}$ whose objects are commutative diagrams
\[
\begin{tikzcd}
S_{1} \ar[d,"f"'] \ar[r] & S'_1 \ar[d,"f'"] \\
S_2 \ar[r] & S'_2
\end{tikzcd}
\]
where $S'_1,S'_2 \in \SchaffTatenft$ and $f'$ is \'etale, and morphisms are the obvious triangles of commutative diagrams. We have forgetful functors
\begin{equation}
    \label{eq:forget-f-ft}
    \{S_2 \ra S'_2 \; | \; S'_2 \in \SchaffTatenft \} \la f_{\rm ft} \ra \{S_1 \ra S'_1 \; | \; S'_1 \in \SchaffTatenft \}.
\end{equation}

\begin{lem}
\label{lem:initial-functors-near-stacks}
Both forgetful functors in (\ref{eq:forget-f-ft}) are initial.
\end{lem}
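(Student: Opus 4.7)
The plan is to show each forgetful functor $F : f_{\mathrm{ft}} \to \mathcal{D}$ is initial by exhibiting, for every $d \in \mathcal{D}$, a terminal object in the slice category $F_{/d} := f_{\mathrm{ft}} \times_{\mathcal{D}} \mathcal{D}_{/d}$; this forces $F_{/d}$ to be weakly contractible.

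For the left forgetful $L$, given $d = (g : S_2 \to S'_2)$ with $S'_2 \in \SchaffTatenft$, I take the ``trivial'' square $c^*$ with top-right $S'_2$, top arrow $g \circ f$, and right vertical $f' = \mathrm{id}_{S'_2}$ (vacuously \'etale); paired with $\mathrm{id}_{S'_2}$ as the slice morphism, $(c^*, \mathrm{id})$ lies in $L_{/d}$. For any $(c, \phi)$ in $L_{/d}$, the slice compatibility forces the bottom component of a morphism $(c, \phi) \to (c^*, \mathrm{id})$ to equal $\phi$, which in turn determines the top component as $\phi \circ f''$; the remaining cube compatibilities follow from the commutativity of $c$ together with $\phi \circ g' = g$, so $(c^*, \mathrm{id})$ is terminal.

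For the right forgetful $R$, the analogous trivial construction fails because setting $S''_1 = S'_1$ would require a retraction of $f$ that factors $h$, which need not exist. Instead, I will construct a terminal object by pushout along $f$: choosing compatible presentations $S_1 \simeq \colim_I S_{1,i}$, $S_2 \simeq \colim_I S_{2,i}$ with $f_i$ \'etale via Proposition~\ref{prop:description-of-flat-morphisms-Tate-affine}, and a presentation $S'_1 \simeq \colim_K S'_{1,k}$ with $S'_{1,k}$ of finite type (using Corollary~\ref{cor:pro-schemes-n-coconnective-ft-are-all} to approximate the $S_{j,i}$ by finite-type objects), one assembles levelwise pushouts. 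Stability of \'etaleness under pushouts of affine schemes gives a finite-type, \'etale extension $S'_1 \to S''_2$ equipped with a compatible map $S_2 \to S''_2$; uniqueness of morphisms $(c, \psi) \to (c^*, \mathrm{id})$ in $R_{/e}$ then follows from the universal property of the pushout, forcing the top component to be $\psi$ and the bottom to factor through $S''_2$ in a unique way.

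The main obstacle is the right forgetful: one must simultaneously ensure (i) that the levelwise pushouts remain within $\SchaffTatenft$, which requires bootstrapping finite-type presentations through the pushout using the \'etaleness of $f_i$, and (ii) that they assemble coherently as $i$ varies to give a single terminal object. Should a direct construction of a terminal object prove too delicate, I would instead verify that $R_{/e}$ is cofiltered: given two objects one forms a common refinement by amalgamating their $S''_2$ over $S'_1$ via a further pushout, and equalizes parallel morphisms analogously. Either route yields the desired contractibility of $R_{/e}$ and hence the initiality of $R$.
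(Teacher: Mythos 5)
Your argument for the left forgetful functor is correct and is a genuinely different (and simpler) route than the paper's: the degenerate square with $S'_1=S'_2$, top arrow $g\circ f$ and right vertical $\id_{S'_2}$ is a legitimate object of $f_{\rm ft}$ (identities are \'etale), and it is terminal in the slice over $(g:S_2\ra S'_2)$ because the bottom component of any map to it is forced to be the structure map $\phi$ and the top component is then forced to be $\phi\circ f''$. The paper instead produces, by induction on $n$, a finite-type model $S''_1\ra S''_2$ with \'etale structure map through which $g\circ f$ factors, and then uses filteredness for connectedness; your terminal-object shortcut proves the stated lemma just as well.

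The treatment of the right forgetful functor, however, has a fatal gap: \'etaleness is \emph{not} stable under cobase change (pushout) of affine schemes. The map $S'_1\ra S'_1\sqcup_{S_1}S_2$ is the cobase change of the \'etale map $f$ along $h:S_1\ra S'_1$, and this can fail even to be flat. Concretely, take $n=0$, $S_2=\Spec(k)$, $S_1=\Spec(k\times k)$ with $f$ the evident finite \'etale map, and $h:S_1\hra S'_1=\bA^1\sqcup\bA^1$ the inclusion of the two origins; then $S'_1\sqcup_{S_1}S_2$ is the affine nodal curve $\Spec\{(p,q)\in k[x]\times k[y]\ :\ p(0)=q(0)\}$ and $S'_1\ra S'_1\sqcup_{S_1}S_2$ is its normalization, which is not flat at the node and hence not \'etale. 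So your candidate square is not an object of $f_{\rm ft}$, and the same defect persists levelwise in any presentation. Moreover, even granting \'etaleness, the universal property of a pushout produces maps \emph{out of} $S''_2$, whereas terminality of your candidate in the slice category requires unique maps \emph{into} it from squares with an arbitrary top-right corner $T'_1\ra S'_1$; the uniqueness you invoke points the wrong way. The cofiltered fallback does not repair this, since nonemptiness of the slice is precisely the missing content. What is needed is the opposite mechanism: because \'etale maps are finitely presented, the given \'etale map is a \emph{base change} of an \'etale map between finite-type models (classical spreading-out along cofiltered limits for $n=0$, then induction on $n$ using invariance of \'etale maps under square-zero extensions), which is the route the paper takes.
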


\begin{proof}
We first prove that the functor
\[
f_{\rm ft} \ra \{S_1 \ra S'_1 \; | \; S'_1 \in \SchaffTatenft \}
\]
is initial. 

Notice that both categories are filtered, since they admit fiber products by Lemma \ref{lem:n-coconnective-Tate-affine-schemes-of-ft-has-fiber-products}. Hence it is enough to prove that for any $f:S_1 \ra S'_1$, there exists $S''_1 \ra S''_2$ an \'etale morphism in the category $\SchaffTatenft$ and morphisms
\begin{equation}
    \label{eq:initial-diagram-for-right-forgetful}
    \begin{tikzcd}
    S_{1} \ar[r] & S'_1 \\
    S_{1} \ar[u,"\id_{S_1}"'] \ar[d,"f"'] \ar[r] & S''_1 \ar[d,"f'"] \ar[u] \\
    S_2 \ar[r] & S''_2.
    \end{tikzcd}
\end{equation}

We prove this result by induction on $n$.

\textit{Case $n=0$.} we notice that by Remark \ref{rem:description-of-flat-morphisms-Tate-affine-n-coconnective} if we simultaneously straighten the morphisms $f:S_1 \ra S_2$ and $S_1 \ra S''_1$ one has presentations
\[
S_{1} \simeq \colim_I S_{1,i}, \;\;\; S_{2} \simeq \colim_I S_{2,i}, \;\;\; \mbox{and} \;\;\; S''_1 \simeq \colim_I S''_{1,i},
\]
where $S_{1,i},S_{2,i} \in \Schaffn$ and $S''_{1,i} \in \Schaffnft$. By the usual result in algebraic geometry, we can form the diagram
\begin{equation}
    \label{eq:initial-diagram-for-right-for-affines}
    \begin{tikzcd}
    S_{1,i} \ar[r] & S'_{1,i} \\
    S_{1,i} \ar[u,"\id_{S_{1,i}}"'] \ar[d,"f_i"'] \ar[r] & S''_{1,i} \ar[d,"f'_i"] \ar[u] \\
    S_{2,i} \ar[r] & S''_{2,i}.
    \end{tikzcd}
\end{equation}
for every $i \in I$. By considering the filtered colimit of (\ref{eq:initial-diagram-for-right-for-affines}) we obtain the diagram (\ref{eq:initial-diagram-for-right-forgetful}).

\textit{Inductive case.} we give the same argument as in \cite{GR-I}*{Chapter 2, \S Lemma 2.8.2}, we will need a result in describing square-zero extensions of Tate affine schemes.

We claim that the functor
\[
f_{\rm ft} \ra \{S_2 \ra S'_2 \; | \; S'_2 \in \SchaffTatenft \}
\]
is initial. 

Again this is done by induction. 

\textit{Case $n=0$.} We are given $S_2 \ra S'_2$ a morphism of classical Tate affine schemes, where $S'_2$ is of finite type, and $S_1 \ra S_2$ an \'etale morphism of classical Tate affine schemes. We claim that the composite 
\[
S_1 \ra S_2 \ra S'_2
\]
factors via $S'_1 \in \clSchaffTateft$. Indeed, by considering presentations of $S_1, S_2$ and $S'_2$ above this reduces to the classical case. Moreover, we can pick $S'_1$ such that the induced morphism $S'_1 \ra S'_2$ is \'etale.

\textit{Inductive step.} We notice that the diagram of categories
\[
\begin{tikzcd}
f_{\rm ft} \ar[r] \ar[d] & \classical{f_{\rm ft}} \ar[d] \\
\{S_2 \ra S'_2 \; | \; S'_2 \in \SchaffTatenft \} \ar[r] & \{\classical{S_2} \ra {^{0}S'_2} \; | \; {^{0}S'_2} \in \clSchaffTateft \}
\end{tikzcd}
\]
is a pullback square. Indeed, recall that $f': S'_1 \ra S'_2$ is an \'etale morphism if $f'$ is flat and 
\[
\classical{f'}: \classical{S'_1} \ra \classical{S'_2} 
\]
is \'etale. Thus, given ${^{0}f'}: {^{0}S'_{2}} \ra {^{0}S'_1}$ an \'etale morphism between classical Tate affine scheme and $S'_2 \in \SchaffTatenft$ such that
\[
\classical{S'_2} \simeq {^{0}S'_2}
\]
there exits a unique $f': S'_1 \ra S'_2$ such that
\[
\classical{f'} \simeq {^{0}f'}.
\]
Namely,
\[
S'_1 \simeq S'_2\underset{{^{0}S'_2}}{\times}{^{0}S'_1}
\]
and $f'$ is the pullback of ${^{0}f'}$, which is then \'etale.
\end{proof}

\begin{proof}[Proof of Proposition \ref{prop:near-stack-n-LKE-to-stack-n}]
Consider $\sX'$ an object of $\NearStknlft$ and let $\sX$ denote its left Kan extension to $\PStkTaten$, given $S_1 \ra S_2$ an \'etale morphism we need to check that the map
\begin{equation}
    \label{eq:descent-condition-near-stack}
    \sX(S_2) \ra \Tot(\sX((S_1/S_2)^{\bullet}))
\end{equation}
is an isomorphism.

By definition of left Kan extension for $S \in \SchaffTaten$ one has
\[
\sX(S) \simeq \colim_{S' \in ((\SchaffTatenft)_{S/})^{\rm op}}\sX'(S').
\]
Notice, that by simultaneously straightening diagrams for $S'$ and $S$, by \cite{GR-I}*{Chapter 2, Theorem 1.5.3 (b)} we have that the category $((\SchaffTatenft)_{S/})^{\rm op}$ is filtered. Thus, if $\sX'$ is $k$-truncated, then so is $\sX$.

So we can replace the totalization in (\ref{eq:descent-condition-near-stack}) by a limit in the category of $k$-truncated spaces, which we will denote by $\Tot^{\leq k}$. The left-hand side of (\ref{eq:descent-condition-near-stack}) becomes
\[
\colim_{S'_2 \in ((\SchaffTatenft)_{S_2/})^{\rm op}}\sX'(S'_2).
\]

By Lemma \ref{lem:initial-functors-near-stacks} the morphism $(f_{\rm ft})^{\rm op} \ra ((\SchaffTatenft)_{S_1/})^{\rm op}$ is cofinal, thus we rewrite the right-hand side of (\ref{eq:descent-condition-near-stack}) as
\[
\Tot^{\leq k}\left(\colim_{(f_{\rm ft})^{\rm op}}\sX'((S'_{1}/S'_2)^{\bullet})\right).
\]
The category $(f_{\rm ft})^{\rm op}$ is filtered, since it contains push-outs, and since $\Tot^{\leq k}$ is a finite limit we can commute the limit and colimit to obtain
\[
\colim_{(f_{\rm ft})^{\rm op}}\Tot^{\leq k}\left(\sX'((S'_{1}/S'_2)^{\bullet})\right).
\]

Since $\sX'$ satisfies descent with respect to \'etale morphism between Tate affine schemes of finite type, we have
\[
\colim_{(f_{\rm ft})^{\rm op}}\Tot^{\leq k}\left(\sX'((S'_{1}/S'_2)^{\bullet})\right) \simeq \colim_{(f_{\rm ft})^{\rm op}}\sX'(S'_2).
\]

Finally, by Lemma \ref{lem:initial-functors-near-stacks} the morphism $(f_{\rm ft})^{\rm op} \ra ((\SchaffTatenft)_{S_2/})^{\rm op}$ is cofinal, thus
\[
\colim_{(f_{\rm ft})^{\rm op}}\sX'(S'_2) \simeq \colim_{((\SchaffTatenft)_{S_2/})^{\rm op}}\sX'(S'_2).
\]

This finishes the proof of the proposition.
\end{proof}

\subsubsection{Tate Stacks locally almost of finite type}

Recall that the category $\PStkTatelaft$ can be identified with the category of functors
\[
\Fun((\SchaffTateconvft)^{\rm op},\Spc)
\]
by Proposition \ref{prop:prestacks-Tate-laft-as-functors}.

If we consider the \'etale topology on the category $\SchaffTateconvft$ we can define a localization of $\PStkTatelaft$ that we denote $\NearStklaft$. Let $\L_{\rm laft}$ dentote the localization functor
\[
\PStklaft \ra \NearStklaft \ra \PStklaft.
\]

We define the category of Tate stacks locally almost of finite type as
\[
\StkTatelaft := \StkTate \cap \PStkTatelaft.
\]

The restriction $\PStkTate \ra \PStkTatelaft$ sends the category $\StkTate$ to the category $\NearStklaft$. Moreover, as in Corollary \ref{cor:restriction-to-finite-type-preserves-etale-equivalences} we have

\begin{lem}
For $\sX \in \PStkTate$ one has
\[
\left.\L(\sX)\right|_{\SchaffTateconvft} \simeq \L_{\rm laft}(\left.\sX\right|_{\SchaffTateconvft}).
\]
\end{lem}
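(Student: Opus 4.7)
The plan is to mimic the $n$-coconnective case of Corollary~\ref{cor:restriction-to-finite-type-preserves-etale-equivalences}. Let $r: \PStkTate \to \PStkTatelaft$ denote the restriction functor along the inclusion $\SchaffTateconvft \hookrightarrow \SchaffTate$, using the identification of Proposition~\ref{prop:prestacks-Tate-laft-as-functors}. First I would observe that $r$ sends $\StkTate$ into $\NearStklaft$: any \'etale cover in $\SchaffTateconvft$ is also an \'etale cover in $\SchaffTate$, so descent for $r(\sX)$ is inherited from that of $\sX$. By the universal property of $\L_{\rm laft}$, the composition $r(\sX) \to r(\L(\sX))$ then factors through a canonical comparison map
\[
\L_{\rm laft}(r(\sX)) \to r(\L(\sX)),
\]
and the goal is to show this is an equivalence.

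For this, I would prove that $r$ sends \'etale equivalences in $\PStkTate$ to \'etale equivalences in $\PStkTatelaft$. Granted this, applying $r$ to the \'etale equivalence $\sX \to \L(\sX)$ and using that $r(\L(\sX)) \in \NearStklaft$ exhibits the latter as the $\NearStklaft$-localization of $r(\sX)$, giving the claimed equivalence. By the $(r, r^R)$-adjunction with $r^R = \RKE_{(\SchaffTateconvft)^{\rm op} \hookrightarrow (\SchaffTate)^{\rm op}}$, the statement that $r$ preserves \'etale equivalences is equivalent to the statement that $r^R$ carries $\NearStklaft$ into $\StkTate$.

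To verify that, I would factor the inclusion as $(\SchaffTateconvft)^{\rm op} \hookrightarrow (\SchaffTateconv)^{\rm op} \hookrightarrow (\SchaffTate)^{\rm op}$ and correspondingly write $r^R$ as a composition of two right Kan extensions. For the outer RKE along $(\SchaffTateconv)^{\rm op} \hookrightarrow (\SchaffTate)^{\rm op}$, Lemma~\ref{lem:check-stack-condition-on-truncations} reduces the stack condition on the whole of $\SchaffTate$ to the stack condition on each $\SchaffTaten$. For the inner RKE along $(\SchaffTateconvft)^{\rm op} \hookrightarrow (\SchaffTateconv)^{\rm op}$, I would check level-by-level: for $T \in \SchaffTaten$ the forgetful functor $(\SchaffTatenft)_{T/} \to (\SchaffTateconvft)_{T/}$ is initial, since any map $T \to T_0$ with $T_0$ convergent of finite type factors through the $n$-truncation ${^{\leq n}T_0} \in \SchaffTatenft$. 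Consequently the inner RKE restricts, for each $n$, to the RKE along $(\SchaffTatenft)^{\rm op} \hookrightarrow (\SchaffTaten)^{\rm op}$, which by Lemma~\ref{lem:RKE-prestacks-Tate-n-lft-to-prestacks-Tate-n} sends near-stacks on $\SchaffTatenft$ to stacks on $\SchaffTaten$.

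The main obstacle will be the level-by-level bookkeeping just described: one must carefully check that the formula for the inner RKE, computed by a limit over $(\SchaffTateconvft)_{T/}$, matches the formula for the truncated RKE, computed by a limit over $(\SchaffTatenft)_{T/}$, and that the resulting compatibilities between $n$ and $n+1$ glue to a single functor on $\SchaffTateconv$. Beyond this, the argument is a straightforward combination of adjunctions and the already-established compatibilities in the $n$-coconnective setting.
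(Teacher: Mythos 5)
Your overall architecture is the one the paper intends (the paper offers no argument beyond ``as in Corollary~\ref{cor:restriction-to-finite-type-preserves-etale-equivalences}''): reduce to the statement that restriction preserves \'etale equivalences, which by adjunction amounts to showing that the right adjoint $r^R = \RKE_{\SchaffTateconvftop \hra \SchaffTateop}$ carries $\NearStklaft$ into $\StkTate$, and then factor this right Kan extension through $(\SchaffTateconv)^{\rm op}$ so that the outer piece is handled by Lemma~\ref{lem:check-stack-condition-on-truncations} and the inner piece by Lemma~\ref{lem:RKE-prestacks-Tate-n-lft-to-prestacks-Tate-n}. Up to and including this factorization your proposal is correct and matches the intended proof.

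The gap is in the cofinality argument for the inner right Kan extension. The RKE that is right adjoint to restriction is computed at $T$ by a limit over the \emph{over}-category of finite-type objects mapping \emph{to} $T$ --- compare the paper's own formulas in Lemma~\ref{lem:concrete-condition-locally-prestack} and the unwinding of the slice categories in the proof of Lemma~\ref{lem:coconnective-ft-affine-schemes-satisfy-RKE-base-change}. Hence the levelwise identification you want requires the inclusion $(\SchaffTatenft)_{/T} \hra (\SchaffTateconvft)_{/T}$ to be final, i.e.\ that every map $T_0 \ra T$ from an eventually coconnective finite-type Tate affine scheme into an $n$-coconnective one factors, contractibly, through an $n$-coconnective finite-type one. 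Your justification instead concerns the under-categories $(-)_{T/}$ and maps $T \ra T_0$ out of $T$: Remark~\ref{rem:truncated-S-is-a-final-object} does give a factorization $T \ra {^{\leq n}T_0} \ra T_0$, but that is the indexing category of the \emph{left} Kan extension, which is the wrong adjoint for the \'etale-equivalence argument. For the over-categories the truncation adjunction only supplies the map ${^{\leq n}T_0} \ra T_0$, pointing the wrong way, so no factorization $T_0 \ra T_1 \ra T$ is produced and the required finality is not established (and it is not obvious: a map from an $m$-coconnective finite-type ring into an $n$-truncated one with $m > n$ carries homotopical data that need not pass through an $n$-truncated intermediary functorially). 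You should either prove that finality directly, or bypass the levelwise identification altogether and verify descent for the full RKE by descending an \'etale cover $S' \ra S$ in $\SchaffTate$ to covers in $\SchaffTateconvft$ and commuting the relevant limits, as in the proof of Lemma~\ref{lem:RKE-prestacks-Tate-n-lft-to-prestacks-Tate-n}.
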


Also, as a consequence of Proposition \ref{prop:near-stack-n-LKE-to-stack-n} and Lemma \ref{lem:check-stack-condition-on-truncations} we have

\begin{cor}
For $\sX$ an object of $\NearStklaft$, thought as a prestack of Tate type via the inclusion
\[
\NearStklaft \subset \PStkTatelaft \subset \PStkTate
\]
if for each $n$ the restriction ${{\leq n}\sX}$ of $\sX$ to $\SchaffTatenft$ is $k_{n}$-truncated for some $k_n \in \bN$, then $\sX \in \NearStklaft$.
\end{cor}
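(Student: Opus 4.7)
The plan is to reduce the assertion to a truncation-by-truncation check and then invoke Proposition \ref{prop:near-stack-n-LKE-to-stack-n} at each level. By Proposition \ref{prop:prestacks-Tate-laft-as-functors}, the prestack of Tate type associated to $\sX$ is obtained by first left Kan extending along $\SchaffTateconvft \hra \SchaffTateconv$ and then right Kan extending along $\SchaffTateconv \hra \SchaffTate$. Since the right Kan extension from eventually coconnective Tate affine schemes is just the limit over truncations (see the concrete description of RKE in \S\ref{subsubsec:convergence-prestacks-Tate}), Lemma \ref{lem:check-stack-condition-on-truncations} tells me it suffices to verify that for every $n \geq 0$ the $n$-coconnective piece ${^{\leq n}\sX} \in \PStkTaten$ satisfies descent with respect to \'etale covers in $\SchaffTaten$.

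Fix $n$. The restriction $\left.\sX\right|_{\SchaffTatenft}$ is, by construction, the $n$-truncation of $\sX$ viewed as a functor on $\SchaffTateconvft$; since $\sX \in \NearStklaft$, this restriction belongs to $\NearStknlft$, i.e.\ satisfies descent with respect to \'etale covers between $n$-coconnective Tate affine schemes of finite type. Moreover, by hypothesis this restriction is $k_n$-truncated. The $n$-coconnective piece ${^{\leq n}\sX}$ is then precisely
\[
\LKE_{\SchaffTatenft \hra \SchaffTaten}\bigl(\left.\sX\right|_{\SchaffTatenft}\bigr),
\]
because left Kan extension along $\SchaffTateconvft \hra \SchaffTateconv$ followed by restriction to $\SchaffTaten$ agrees with left Kan extension along $\SchaffTatenft \hra \SchaffTaten$ (using that $\SchaffTatenft = \SchaffTateconvft \cap \SchaffTaten$ and that left Kan extensions are computed by colimits over the same slice categories).

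With these two ingredients in hand---descent on the finite-type site plus $k_n$-truncatedness---Proposition \ref{prop:near-stack-n-LKE-to-stack-n} applies directly and yields
\[
{^{\leq n}\sX} \;=\; \LKE_{\SchaffTatenft \hra \SchaffTaten}\bigl(\left.\sX\right|_{\SchaffTatenft}\bigr) \;\in\; \StkTaten.
\]
Since this holds for every $n$, Lemma \ref{lem:check-stack-condition-on-truncations} assembles these into the statement that $\sX \in \StkTate$, and combined with the assumption $\sX \in \PStkTatelaft$ we conclude $\sX \in \StkTatelaft$, as desired.

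The only step that requires real work is identifying ${^{\leq n}\sX}$ with the left Kan extension from $\SchaffTatenft$---this is essentially a compatibility between two iterated Kan-extension procedures, and I expect it to follow formally from a cofinality argument on slice categories (analogous to Remark \ref{rem:truncated-S-is-a-final-object} and the constructions in \S\ref{par:extension-of-TateCoh-functors}). Once that identification is made, the truncatedness hypothesis is exactly what is needed to apply Proposition \ref{prop:near-stack-n-LKE-to-stack-n}, and the rest of the argument is a mechanical assembly.
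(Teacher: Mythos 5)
Your proof is correct and follows exactly the route the paper intends: the corollary is stated there as a direct consequence of Proposition \ref{prop:near-stack-n-LKE-to-stack-n} and Lemma \ref{lem:check-stack-condition-on-truncations}, which is precisely the reduction you carry out (check descent truncation by truncation, identify ${^{\leq n}\sX}$ with the left Kan extension from $\SchaffTatenft$, and apply the truncatedness hypothesis to invoke the proposition at each level). Your flagged compatibility of the two left Kan extensions is the only point the paper leaves implicit, and your cofinality sketch is the right way to fill it.
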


\subsection{Tate Artin stacks}
\label{subsec:Tate-Artin-stacks}

In this section we introduce the notion of Artin $k$-stack of Tate type, that we will simply call Tate Artin $k$-stacks. The treatment follows closely \cite{GR-I}*{Chapter 2, \S 4} and the only novel feature is in understanding the relation between Tate Artin $k$-stacks and the condition of locally a prestack.

\subsubsection{Setting up Artin stacks of Tate type}
\label{subsubsec:setting-up-Artin}

For every $k \geq 0$ we will define a subcategory $\ArtinStkTaten{k}$ of $\StkTate$ that we will refer to as Tate $k$-Artin stacks, or $k$-Artin stacks of Tate type.

\paragraph{Tate $0$-Artin stacks}

For $k=0$, we say that an object $\sX \in \StkTate$ is a Tate $0$-Artin stack if
\[
\sX \simeq L(\sqcup_{I}S_i)
\]
where $S_i \in \SchaffTate$. We let
\[
\ArtinStkTaten{0}
\]
denote the category of Tate $0$-Artin stack.

\paragraph{Tate $(k-1)$-Artin stacks}

We define Tate $k$-Artin stacks inductively for $k \geq 1$. We will also define what it means for a morphism in $\PStkTate$ to be $k$-representable, and for a $k$-representable morphism what it means to be flat (resp.\, smooth, \'etale and surjective). In the case of $n=0$ these notions are a particular case of Definition \ref{defn:morphisms-of-Tate-schemes}.

Thus, our inductive definition assumes the following properties:
\begin{itemize}
    \item any Tate $(k-1)$-Artin stack is a Tate $k$-Artin stack;
    \item any morphism that is $(k-1)$-representable is $k$-representable;
    \item a $(k-1)$-representable morphism is flat (resp.\, smooth, \'etale and surjective) if and only if it is such when viewed as a $k$-representable morphism;
    \item the class of $k$-representable morphisms (resp. $k$-representable and flat/smooth/\'etale/surjective) is stable under compositions and base changes.
\end{itemize}

It will follow inductively from the definition that the class of Tate $k$-Artin stacks is closed under fiber products.

\paragraph{Inductive step}

Suppose that the notions of the paragraph above were defined for $k' < k$. We will say that $\sX \in \StkTate$ is a \emph{Tate $k$-Artin stack} if the following conditions hold:
\begin{enumerate}[(a)]
    \item the diagonal map $\sX \ra \sX\times \sX$ is $(k-1)$-representable;
    \item there exists $\sZ \in \ArtinStkTaten{(k-1)}$ and a map $f: \sZ \ra \sX$, which is $(k-1)$-representable, which is smooth and surjective.
\end{enumerate}
We will call a pair $\sZ \in \ArtinStkTaten{(k-1)}$ and $f:\sZ \ra \sX$ smooth and surjective an atlas of $\sX$. Notice that by induction one can also find an atlas where $\sZ \in \ArtinStkTaten{0}$.

For a morphism $f:\sX \ra \sY$ in $\PStkTate$ we say that $f$ is \emph{$k$-representable} if for every $S \ra \sY$ with $S \in \SchaffTate$ the fiber product $S \underset{\sY}{\times}\sX$ is a Tate $k$-Artin stack.

Given a morphism $f:\sX \ra S$ from a Tate $k$-Artin stack $\sX$ to a Tate affine scheme $S$, we will say that $f$ is flat (resp.\, smooth, \'etale and surjective) if for some atlas $\sZ \ra \sX$ the induced map $\sZ \ra S$ is flat (resp.\, smooth, \'etale and surjective).

Finally, for a $k$-representable morphism $f:\sX \ra \sY$ between prestacks of Tate type, we say that $f$ is flat (resp.\, smooth, \'etale and surjective) if for every $S \ra \sY$ with $S \in \SchaffTate$ the induced morphism $S \underset{\sY}{\times}\sX \ra S$ is flat (resp.\, smooth, \'etale and surjective). 

\paragraph{Tate Artin stacks}

We will say that $\sX$ is a \emph{Tate Artin stack}, if it is a Tate $k$-Artin stack for some $k \geq 0$.

\paragraph{Quasi-compactness and quasi-separatedness}

For a Tate $k$-Artin stack $\sX$ we will say that $\sX$ is \emph{quasi-compact} if there exists a smooth atlas $S \ra \sX$ where $S \in \SchaffTate$.

For a $k$-morphism $f:\sX_1 \ra \sX_2$ in $\PStkTate$, we say that $f$ is \emph{quasi-compact}, if its base change by a Tate affine scheme yields a quasi-compact Tate $k$-Artin stack.

For $0 \leq k' \leq k$, we define the notion of $k'$-quasi-separatedness of a Tate $k$-Artin stack or a $k$-representable morphism inductively on $k'$.

We say that a Tate $k$-Artin stack $\sX$ is \emph{$0$-quasi-separated} if the diagonal map $\sX \ra \sX\times \sX$ is quasi-compact, as a $(k-1)$-representable map. We say that a $k$-representable map is $0$-quasi-separated if its base change by a Tate affine scheme yields a $0$-quasi-separated Tate $k$-Artin stack.

For $k' > 0$, we say that a Tate $k$-Artin stack $\sX$ is $k'$-quasi-separated if the diagonal map $\sX \ra \sX \times \sX$ is $(k'-1)$-quasi-separated, as a $(k-1)$-representable map. We shall say that a $k$-representable map is $k'$-quasi-separated if its base change by a Tate affine scheme yields a $k'$-quasi-separated Tate $k$-Artin stack.

We shall say that a Tate $k$-Artin stack is quasi-separated if its $k'$-quasi-separated for all $k'$, $0 \leq k' \leq k$. We shall say that a $k$-representable map is quasi-separated if its base change by a Tate affine scheme yields a quasi-separated Tate $k$-Artin stack.

\subsubsection{Verification of the inductive hypothesis}

For any $k \geq 0$ the class of $k$-representable is clearly stable under base change. Moreover, we also have
\begin{lem}
\begin{enumerate}[(a)]
    \item For any $f:\sX \ra sY$ in $\PStkTate$ a $k$-representable morphism, the diagonal map
    \[
    \sX \ra \sX\underset{\sY}{\times}\sX
    \]
    is $(k-1)$-representable.
    \item Any morphism between Tate $k$-Artin stacks is $k$-representable.
\end{enumerate}
\end{lem}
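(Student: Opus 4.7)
The plan is to prove parts (a) and (b) simultaneously by induction on $k$, together with the auxiliary claim that Tate $k$-Artin stacks are closed under fiber products over a common Tate $k$-Artin stack. The base case $k = 0$ follows from Lemma \ref{lem:Tate-affine-schemes-has-fiber-products} and the trivial observation that every morphism in $\SchaffTate$ is representable.

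For the inductive step, I would first prove (a). Given $f \colon \sX \to \sY$ a $k$-representable morphism and a point $S \to \sX \times_{\sY} \sX$ from a Tate affine scheme, corresponding to a pair $a, b \colon S \rightrightarrows \sX$ with $f \circ a = f \circ b$, set $\sZ := S \times_{\sY} \sX$; this is a Tate $k$-Artin stack by $k$-representability of $f$. By definition of $k$-Artin stack, the absolute diagonal $\sZ \to \sZ \times \sZ$ is $(k-1)$-representable, and hence so is the relative diagonal $\Delta_{\sZ/S} \colon \sZ \to \sZ \times_S \sZ \simeq S \times_{\sY} (\sX \times_{\sY} \sX)$, being the base change of the absolute diagonal along $\sZ \times_S \sZ \to \sZ \times \sZ$. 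A direct computation on functors of points identifies $\sX \times_{\sX \times_{\sY} \sX} S$ with the pullback of $\Delta_{\sZ/S}$ along the map $(\mathrm{id}_S, a, b) \colon S \to S \times_{\sY} (\sX \times_{\sY} \sX)$, and stability of $(k-1)$-representability under base change then yields (a).

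Next I would establish closure under fiber products at level $k$. Given $\sX_1, \sX_2 \to \sY$ in $\ArtinStkTaten{k}$, choose $0$-Artin atlases $U_i \to \sX_i$ and write $U_1 \times_{\sY} U_2 \simeq (U_1 \times U_2) \times_{\sY \times \sY} \sY$; the inductive hypothesis for (a), applied to the map $\sY \to \ast$, shows that $\Delta_{\sY}$ is $(k-1)$-representable, so pulling back along the map from the $0$-Artin stack $U_1 \times U_2$ yields a Tate $k$-Artin stack. A routine check, invoking the inductive (b) to see that the induced morphism $U_1 \times_{\sY} U_2 \to \sX_1 \times_{\sY} \sX_2$ is smooth, surjective and $(k-1)$-representable, and invoking the inductive (a) to see that the diagonal of $\sX_1 \times_{\sY} \sX_2$ is $(k-1)$-representable, produces an atlas for the fiber product and establishes closure.

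Part (b) is then formal: for a morphism $f \colon \sX \to \sY$ between Tate $k$-Artin stacks and a Tate affine scheme $S \to \sY$, the pullback $S \times_{\sY} \sX$ is a fiber product of three Tate $k$-Artin stacks (since any Tate affine scheme is a $0$-Artin, hence $k$-Artin, stack), so by closure it is a Tate $k$-Artin stack, proving that $f$ is $k$-representable. The principal obstacle will be the closure step, where one must verify that the candidate atlas on $\sX_1 \times_{\sY} \sX_2$ retains smoothness, surjectivity and $(k-1)$-representability; the remaining parts of the argument reduce to invocations of stability under base change and the inductive hypothesis.
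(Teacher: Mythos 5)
Your argument for part (a) is essentially the paper's: both rest on the identification of $\sX \underset{\sX\underset{\sY}{\times}\sX}{\times} S$ with a base change of the diagonal of $\sZ = S\underset{\sY}{\times}\sX$, which is $(k-1)$-representable because $\sZ$ is a Tate $k$-Artin stack by $k$-representability of $f$. For part (b) you take a genuinely different route. The paper verifies the two defining conditions for $S\underset{\sY}{\times}\sX$ directly (the same diagonal identity handles condition (a) of the definition, and an atlas is obtained by pulling back an atlas of $\sX$), whereas you first prove that Tate $k$-Artin stacks are closed under fiber products over a common Tate $k$-Artin base and then deduce (b) formally. This is more work but buys something: the closure statement is exactly the claim the paper asserts in \S\ref{subsubsec:setting-up-Artin} with the phrase ``it will follow inductively from the definition'' and never proves, so your detour doubles as a proof of that assertion. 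Two small misattributions in your closure step are worth fixing, though neither is a gap: the $(k-1)$-representability of $\Delta_{\sY}$ is condition (a) in the \emph{definition} of a Tate $k$-Artin stack, not a consequence of applying part (a) of the lemma to $\sY \ra \ast$ (the latter would presuppose that $\sY \ra \ast$ is $k$-representable, i.e.\ that products with Tate affine schemes are already known to be $k$-Artin); and the smoothness, surjectivity and $(k-1)$-representability of $U_1\underset{\sY}{\times}U_2 \ra \sX_1\underset{\sY}{\times}\sX_2$ follow from factoring it as a composition of base changes of the $U_i \ra \sX_i$ and invoking the stability of these classes under composition and base change, which is part of the inductive package set up with the definition rather than an instance of part (b).
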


\begin{proof}
For (a) we notice that for any $S \ra \sY$ one has an isomorphism
\[
\sX \underset{\sX\underset{\sY}{\times}\sX}{\times} S \simeq (\sX\underset{\sY}{\times}S)\underset{(\sX\underset{\sY}{\times}S)\times( \sX\underset{\sY}{\times}S)}{\times}S,
\]
thus the claim follows from $\sX\underset{\sY}{\times}S$ being a Tate $k$-Artin stack.

For (b) for instance we notice that
\[
\sX \underset{\sX\underset{\sY}{\times}\sX}{\times} S \simeq (\sX\underset{\sY}{\times}S)\underset{(\sX\underset{\sY}{\times}S)\times( \sX\underset{\sY}{\times}S)}{\times}S
\]
implies that the diagonal of $\sX\underset{\sY}{\times}S$ is a Tate $(k-1)$-Artin. An atlas for $\sX\underset{\sY}{\times}S$ can be produce by taking the pullback of an atlas of $\sX$.
\end{proof}

The following is proved exactly as \cite{GR-I}*{Chapter 2, Proposition 4.2.4}.

\begin{prop}
For $f: \sX \ra \sY$ a $k$-representable morphism, if $\sY$ is a Tate $k$-Artin stack, then $\sX$ is also a Tate $k$-Artin stack.
\end{prop}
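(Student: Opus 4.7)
The plan is to proceed by induction on $k$, following the strategy of \cite{GR-I}*{Chapter 2, Proposition 4.2.4}.

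For the base case $k=0$, I would write $\sY \simeq \L(\sqcup_{i \in I} S_i)$ with each $S_i \in \SchaffTate$. The canonical map $\sqcup_i S_i \to \sY$ is an \'etale surjection, and by $0$-representability of $f$ each pullback $\sX \times_\sY S_i$ is a Tate $0$-Artin stack, say $\L(\sqcup_{j \in J_i} T_{i,j})$ with $T_{i,j}$ Tate affine. The base change $\sqcup_{i,j} T_{i,j} \to \sX$ is an \'etale surjection, so Lemma \ref{lem:descent-of-effective-epis} identifies $\sX$ with the sheafification of $\sqcup_{i,j} T_{i,j}$, thereby exhibiting it as a Tate $0$-Artin stack.

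For the inductive step with $k \geq 1$, I verify the two defining conditions. Condition (a) asks that the diagonal be $(k-1)$-representable; here I factor
\[
\sX \to \sX \underset{\sY}{\times} \sX \to \sX \times \sX,
\]
where the first arrow is the relative diagonal of $f$, which is $(k-1)$-representable by the lemma in \S \ref{subsubsec:setting-up-Artin}, and the second is the base change of $\sY \to \sY \times \sY$ along $\sX \times \sX \to \sY \times \sY$, which is $(k-1)$-representable because $\sY$ is Tate $k$-Artin and $(k-1)$-representable morphisms are closed under base change. Composing yields (a). For (b), I fix an atlas $\sqcup_i S_i \to \sY$ with each $S_i \in \SchaffTate$ (possible by iteratively refining any atlas down to a Tate $0$-Artin one). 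By $k$-representability of $f$, each $\sX \times_\sY S_i$ is Tate $k$-Artin; choosing atlases $W_i \to \sX \times_\sY S_i$ with $W_i$ Tate $(k-1)$-Artin and setting $W := \sqcup_i W_i$, the composition
\[
W \to \sqcup_i (\sX \times_\sY S_i) \to \sX
\]
is smooth and surjective, the second arrow being the base change of $\sqcup_i S_i \to \sY$.

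The main bookkeeping obstacle will be verifying that a disjoint union of Tate $(k-1)$-Artin stacks is again Tate $(k-1)$-Artin, so that the $W$ above is a legitimate atlas. This reduces by a secondary induction to the observation that the diagonal of $\sqcup_i W_i$ factors through the ``block-diagonal'' subobject $\sqcup_i (W_i \times W_i) \subset (\sqcup_i W_i) \times (\sqcup_i W_i)$, so its $(k-2)$-representability is controlled componentwise by each $W_i \to W_i \times W_i$, using universality of colimits in the $\infty$-topos $\StkTate$. Beyond this, the argument is purely formal and relies only on the closure properties of the classes of $(k-1)$-representable, smooth, and surjective maps under composition and base change collected in \S \ref{subsubsec:setting-up-Artin}.
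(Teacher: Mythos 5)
Your argument follows the same route as the paper's proof (which simply defers to \cite{GR-I}*{Chapter 2, Proposition 4.2.4}): factor the diagonal of $\sX$ through $\sX\underset{\sY}{\times}\sX$, and build an atlas by pulling back an atlas of $\sY$ and refining. The inductive step is fine. The ``bookkeeping obstacle'' you flag (that a disjoint union of Tate $(k-1)$-Artin stacks is again Tate $(k-1)$-Artin) can moreover be sidestepped entirely: by the remark in \S \ref{subsubsec:setting-up-Artin} one may choose each atlas $W_i \ra \sX\underset{\sY}{\times}S_i$ with $W_i$ a Tate $0$-Artin stack, and since the sheafification functor commutes with coproducts, $\sqcup_i W_i$ is then again Tate $0$-Artin, hence Tate $(k-1)$-Artin for every $k\geq 1$.

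The one genuine flaw is the justification of the base case. Lemma \ref{lem:descent-of-effective-epis} identifies $\sX$ with the geometric realization of the \v{C}ech nerve of $\sqcup_{i,j}T_{i,j}\ra\sX$, \emph{not} with $L(\sqcup_{i,j}T_{i,j})$; an \'etale surjection from a Tate $0$-Artin stack onto $\sX$ does not by itself force $\sX$ to be Tate $0$-Artin (otherwise every stack admitting an \'etale atlas would be $0$-Artin). What saves the statement is that in this particular situation the map becomes an isomorphism after sheafification: since $L$ is left exact and commutes with colimits, and $\sX$, $\sY$ and the $S_i$ are all sheaves, one has
\[
L\Bigl(\bigsqcup_{i,j}T_{i,j}\Bigr)\;\simeq\;L\Bigl(\bigsqcup_i \bigl(\sX\underset{\sY}{\times}S_i\bigr)\Bigr)\;\simeq\;\sX\underset{\sY}{\times}L\Bigl(\bigsqcup_i S_i\Bigr)\;\simeq\;\sX\underset{\sY}{\times}\sY\;\simeq\;\sX.
\]
Replacing the appeal to Lemma \ref{lem:descent-of-effective-epis} by this computation closes the gap; the rest of your argument stands.
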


We also have that the composition of $k$-representable and flat (resp.\, smooth, \'etale and surjective) morphisms is again $k$-representable and flat (resp.\, smooth, \'etale and surjective). Indeed, this is proved as \cite{GR-I}*{Chapter 2, Proposition 4.2.6}.

\subsubsection{Descent}

The following is a consequence of the definitions.

\begin{cor}
For $\sX$ a Tate $k$-Artin stack and $f: \sZ \ra \sX$ a smooth atlas, the natural map
\[
\L(|\sZ^{\bullet}/\sX|) \simeq \sX
\]
is an isomorphism. Moreover, for any $n$ the restriction ${^{\leq n}\sX} \in \PStkTaten$ is $(n+k)$-truncanted.
\end{cor}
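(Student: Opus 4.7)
The plan is to prove the two assertions separately. For the first, the strategy is to show that the smooth atlas $f:\sZ \ra \sX$ is an \'etale surjection (i.e.\ an effective epimorphism in $\StkTate$) and then invoke Lemma \ref{lem:descent-of-effective-epis}. Concretely, given $x: S \ra \sX$ with $S \in \SchaffTate$, the base change $\sZ\times_{\sX}S \ra S$ is a smooth surjection of Tate $(k-1)$-Artin stacks over a Tate affine scheme, so by an inductive argument on $k$ (base case being the standard fact that any smooth surjective morphism of Tate affine schemes admits an \'etale-local section) one can find an \'etale cover $S' \ra S$ and a lift $S' \ra \sZ\times_{\sX}S$. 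Thus $x\circ(S' \ra S)$ factors through $f$, which shows that $f$ is an \'etale surjection. Applying Lemma \ref{lem:descent-of-effective-epis} to the \v{C}ech nerve gives
\[
\left|\sZ^{\bullet}/\sX\right| \simeq \sX
\]
in $\StkTate$, i.e.\ $\L(|\sZ^\bullet/\sX|) \simeq \sX$ as a map of prestacks of Tate type.

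For the truncatedness assertion, I proceed by induction on $k$. The base case $k=0$ follows from the fact that a Tate $0$-Artin stack is the sheafification of a disjoint union $\sqcup_I S_i$ of Tate affine schemes, and each $S_i$, when restricted to $\SchaffTaten$, is $n$-truncated (since Tate affine schemes are $n$-truncated on $n$-coconnective test schemes, as observed in the paragraph following Definition \ref{defn:n-truncated-prestack-of-Tate-type}). A disjoint union of $n$-truncated objects is $n$-truncated, and by Lemma \ref{lem:sheafification-preserves-k-connected-objects} sheafification preserves $n$-truncatedness; so ${^{\leq n}\sX}$ is $n$-truncated, matching $(n+0)$.

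For the inductive step, suppose the result holds for Tate $(k-1)$-Artin stacks. Since the diagonal $\sX \ra \sX\times\sX$ is $(k-1)$-representable, each term of the \v{C}ech nerve
\[
\sZ^{p}/\sX \simeq \sZ \underset{\sX}{\times}\cdots\underset{\sX}{\times}\sZ
\]
is a Tate $(k-1)$-Artin stack (using the stability of Tate $(k-1)$-Artin stacks under fiber products along $(k-1)$-representable morphisms). By the inductive hypothesis, the restriction $^{\leq n}(\sZ^p/\sX)$ is $(n+k-1)$-truncated in $\PStkTaten$. The first part of the Corollary gives
\[
{^{\leq n}\sX} \simeq {^{\leq n}L}\left(\left|\left(\sZ^{\bullet}/\sX\right)\right|\right)
\]
(using compatibility of truncation with sheafification from Corollary \ref{cor:restriction-to-finite-type-preserves-etale-equivalences}). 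Since $\sZ^\bullet/\sX$ is a groupoid object in $\StkTaten$, the standard fact that the realization of a groupoid object whose terms are $m$-truncated is $(m+1)$-truncated (equivalently, the classifying space of an $m$-truncated groupoid is $(m+1)$-truncated, cf.\ \cite{HTT}*{\S 7.2.2}) combined with Lemma \ref{lem:sheafification-preserves-k-connected-objects} yields that ${^{\leq n}\sX}$ is $(n+k)$-truncated, completing the induction.

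The main obstacle I expect is the first step: verifying that a smooth surjection of Tate Artin stacks is an effective epimorphism for the \'etale topology on $\SchaffTate$. In the usual derived setting this is Artin's lemma on \'etale-local sections of smooth morphisms, and it must be adapted to Tate affine schemes by using Proposition \ref{prop:description-of-flat-morphisms-Tate-affine} to reduce to a levelwise statement between affine schemes of finite type, where the classical result applies.
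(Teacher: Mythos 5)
Your overall architecture is exactly the one the paper intends: the paper offers no proof at all (it declares the corollary ``a consequence of the definitions,'' deferring to the analogous statements in \cite{GR-I}*{Chapter 2, \S 4.3}), and your two steps --- (a) the atlas is an \'etale surjection, so Lemma \ref{lem:descent-of-effective-epis} gives the realization statement, and (b) induction on $k$ via the \v{C}ech nerve, the fact that the realization of a groupoid with $m$-truncated terms is $(m+1)$-truncated, and Lemma \ref{lem:sheafification-preserves-k-connected-objects} --- are the standard argument being implicitly invoked. The truncatedness half is complete as written (modulo the small slip that the compatibility of truncation with sheafification you want is the corollary ${^{\leq n}\L}({^{\leq n}\sX}) \simeq {^{\leq n}\L(\sX)}$, not Corollary \ref{cor:restriction-to-finite-type-preserves-etale-equivalences}, which concerns the finite-type restriction).

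The one genuine gap is in the step you yourself flag, and your proposed fix (``use Proposition \ref{prop:description-of-flat-morphisms-Tate-affine} to reduce to a levelwise statement \ldots where the classical result applies'') does not close it as stated. Since ``surjective'' in this paper bottoms out at naive surjectivity of maps of (Tate) affine schemes, one really must produce, for a smooth surjection $W \ra S$ of Tate affine schemes, an \'etale cover $S' \ra S$ together with a section $S' \ra W$ over $S$. Writing $S \simeq \colim_I S_i$ and $S' \simeq \colim_I S'_i$ as in Proposition \ref{prop:description-of-flat-morphisms-Tate-affine}, such a section is a \emph{compatible system} of sections of the smooth surjections $W\underset{S}{\times}S'_i \ra S'_i$. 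The classical theorem makes each term of this inverse system nonempty (after refining the cover), but the restriction maps along the closed embeddings $S'_i \hra S'_j$ need not be surjective: sections of a smooth morphism extend along nilpotent thickenings, not along arbitrary closed embeddings of finite presentation (already $W = \bA^1 \sqcup \{0\} \ra \bA^1$ has a section over $\{0\}$ that extends to no \'etale neighbourhood), so the inverse limit of section spaces could a priori be empty. To finish one needs an additional input here --- e.g.\ first solving the problem on classical truncations and using formal smoothness only to climb the (genuinely nilpotent) derived tower, together with an argument that the \'etale covers and sections can be chosen cofinally in $I$ --- or else an upgrade of the definition of ``surjective'' to ``admits sections \'etale-locally,'' which is how the corresponding statement becomes truly definitional. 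The paper sidesteps this point entirely, so your proof is, if anything, more honest about where the content lies; but as written the levelwise reduction is asserted rather than proved.
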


Given $\sX^{\bullet}$ a groupoid-object in $\StkTate$ and let
\[
\sX := \L(|\sX^{\bullet}|).
\]

These results are proved exactly as in \cite{GR-I}*{Chapter 2, \S 4.3}.

\begin{cor}
\begin{enumerate}[(a)]
    \item If $\sX^1$ and $\sX^0$ are Tate $k$-Artin stacks, and the maps $\sX^1 \rightrightarrows \sX^0$ are smooth and the map $\sX^1 \ra \sX^0 \times \sX^0$ is $(k-1)$-representable, then $\sX$ is a $k$-Artin stack.
    \item If $\sX \in \StkTate$, and $f:\sZ \ra \sX$ is $(k-1)$-representable, smooth and surjective, where $\sZ$ is a Tate $k$-Artin stack, then $\sX$ is a Tate $k$-Artin stack.
\end{enumerate}
\end{cor}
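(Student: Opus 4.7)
The plan is to follow the strategy of \cite{GR-I}*{Chapter 2, \S 4.3} essentially verbatim, since the definitions of Tate $k$-Artin stacks and $k$-representable morphisms in \S \ref{subsubsec:setting-up-Artin} parallel the classical Artin stack definitions, and the descent-theoretic inputs (effective epimorphisms and \v{C}ech nerves, Lemma \ref{lem:descent-of-effective-epis}) hold equally well in the $\infty$-topos $\StkTate$. The argument proceeds by induction on $k$.

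For part (a), I verify the two axioms defining a Tate $k$-Artin stack. First, the canonical morphism $p \colon \sX^0 \to \sX = L(|\sX^{\bullet}|)$ will serve as an atlas. It is smooth and surjective because the face maps $\sX^1 \rightrightarrows \sX^0$ are so, and $p$ is an effective epimorphism of \'etale sheaves; thus after composing with a $(k-1)$-Artin smooth atlas $\sZ \to \sX^0$ (which exists by the inductive definition of Tate $k$-Artin stack), we obtain a $(k-1)$-Artin smooth surjective cover of $\sX$. For $(k-1)$-representability of $p$, the key identification is
\[
\sX^0 \underset{\sX}{\times} \sX^0 \simeq \sX^1
\]
in $\StkTate$, a consequence of Lemma \ref{lem:descent-of-effective-epis} applied to $p$; combined with the hypothesis that $\sX^1 \to \sX^0 \times \sX^0$ is $(k-1)$-representable and with smooth descent of $(k-1)$-Artin stacks along the smooth cover $p$, this yields the desired property. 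Next, for the diagonal $\sX \to \sX \times \sX$, given $y_1, y_2 \colon S \to \sX$ from a Tate affine scheme $S$, smooth surjectivity of $p$ produces a smooth cover $S' \to S$ over which both $y_i$ lift to $\tilde{y}_i \colon S' \to \sX^0$, and the base change
\[
S' \underset{\sX \times \sX}{\times} \sX \simeq (S' \times S') \underset{\sX^0 \times \sX^0}{\times} \sX^1
\]
is Tate $(k-1)$-Artin by hypothesis and by stability of $(k-1)$-Artin stacks under fiber products. Smooth descent along $S' \to S$ then implies that $S \times_{\sX \times \sX} \sX$ itself is Tate $(k-1)$-Artin.

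For part (b), I form the \v{C}ech nerve $\sZ^{\bullet}$ of $f \colon \sZ \to \sX$, so $\sZ^n = \sZ \times_{\sX} \cdots \times_{\sX} \sZ$ with $n+1$ factors. Smoothness and surjectivity of $f$ imply that $f$ is an effective epimorphism in $\StkTate$, so by Lemma \ref{lem:descent-of-effective-epis} the canonical map $L(|\sZ^{\bullet}|) \to \sX$ is an isomorphism. It remains to check the hypotheses of (a) for the groupoid $\sZ^{\bullet}$: the face maps $\sZ^1 \rightrightarrows \sZ$ are base changes of $f$, hence smooth; $\sZ^1 = \sZ \times_{\sX} \sZ$ is Tate $k$-Artin because it is $(k-1)$-representable over $\sZ$ via either projection and $\sZ$ is Tate $k$-Artin; and $(k-1)$-representability of $\sZ^1 \to \sZ \times \sZ$ reduces, given a map $(s_1, s_2) \colon S \to \sZ \times \sZ$ and using the identification $S \times_{\sZ \times \sZ} \sZ^1 \simeq S \times_{\sX \times \sX} \sX$, to the earlier lifting argument: over a smooth cover $S' \to S$ on which $f s_1, f s_2$ factor through $\sZ$ compatibly, the relevant base change becomes $(k-1)$-Artin by repeated application of the $(k-1)$-representability of $f$, and smooth descent concludes.

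The principal obstacle I anticipate is the careful bookkeeping between prestack-level fiber products and their sheafifications, particularly the identification $\sX^0 \times_{\sX} \sX^0 \simeq \sX^1$, which only holds after applying the localization functor $L$ and requires a clean invocation of Lemma \ref{lem:descent-of-effective-epis}. A secondary subtlety is verifying $(k-1)$-representability of $\sZ^1 \to \sZ \times \sZ$ in (b) without circularly invoking the diagonal-representability of $\sX$ that we are proving; this is handled by directly exhibiting, after smooth base change, the relevant fiber products as $(k-1)$-Artin via iterated application of the $(k-1)$-representability of $f$ alone.
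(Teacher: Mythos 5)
Your overall strategy coincides with the paper's, which gives no details beyond the citation of \cite{GR-I}*{Chapter 2, \S 4.3}, and most of your sketch is sound; but one step is false as stated. In part (a) you assert the $(k-1)$-representability of $p\colon \sX^0 \ra \sX$ itself. This fails whenever $\sX^0$ is genuinely $k$-Artin: take $\sX^0 = \B H$ for a nontrivial smooth affine group scheme $H$ (viewed as a Tate stack via right Kan extension) and $\sX^{\bullet}$ the \v{C}ech nerve of $\B H \ra \ast$, so that $\sX^1 = \B H \times \B H$, the face maps are the (smooth) projections, $\sX^1 \ra \sX^0 \times \sX^0$ is the identity (hence $0$-representable), and $\sX \simeq \ast$; then $S \underset{\sX}{\times} \sX^0 \simeq S \times \B H$ is Tate $1$-Artin but not Tate $0$-Artin. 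Your own computation shows where the argument breaks: after lifting along a smooth cover one gets $S' \underset{\sX}{\times} \sX^0 \simeq (S' \times \sX^0)\underset{\sX^0 \times \sX^0}{\times} \sX^1$, and the base $S' \times \sX^0$ is only $k$-Artin, so the hypothesis on $\sX^1 \ra \sX^0 \times \sX^0$ yields $k$-Artin-ness, not $(k-1)$-Artin-ness, of the fibre. The repair is the one you already half-perform: the atlas must be the composite $\sZ \ra \sX^0 \ra \sX$ with $\sZ \ra \sX^0$ a $(k-1)$-atlas of $\sX^0$, and for that composite the same manipulation gives $S' \underset{\sX}{\times} \sZ \simeq (S' \times \sZ)\underset{\sX^0 \times \sX^0}{\times} \sX^1$ with $S' \times \sZ$ now Tate $(k-1)$-Artin, so the $(k-1)$-representability hypothesis does apply (equivalently, replace $\sX^{\bullet}$ by the induced groupoid on $\sZ$ and reduce to the case where $\sX^0$ is $(k-1)$-Artin).

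Two smaller inaccuracies. The displayed identification in (a) should read $S' \underset{\sX \times \sX}{\times} \sX \simeq S' \underset{\sX^0 \times \sX^0}{\times} \sX^1$, the pullback along $(\tilde{y}_1,\tilde{y}_2)\colon S' \ra \sX^0 \times \sX^0$ rather than along $\tilde{y}_1 \times \tilde{y}_2$ from $S' \times S'$; the conclusion is unaffected, since either fibre product is a pullback of the $(k-1)$-representable map $\sX^1 \ra \sX^0 \times \sX^0$ along a Tate affine scheme. And in (b), the $(k-1)$-representability of $\sZ^1 \ra \sZ \times \sZ$ uses not only the $(k-1)$-representability of $f$ but also that of the diagonal of $\sZ$: one has $S \underset{\sX \times \sX}{\times} \sX \simeq \bigl(S \underset{\sX}{\times} \sZ\bigr) \underset{\sZ \times \sZ}{\times} \sZ$, the first factor handled by $f$ and the second by $\Delta_{\sZ}$; no smooth cover is needed there, since the two points of $S$ already lift to $\sZ$.
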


\subsubsection{Usual properties of Tate Artin stacks}

\paragraph{$n$-coconnectivity}

By considering $\SchaffTaten$ as the initial objects in the inductive definition of \S \ref{subsubsec:setting-up-Artin} we obtain the category $\tArtinStkTaten{k}{n}$ of $n$-coconnective Tate $k$-Artin stacks.

The restriction functor under $\SchaffTaten \hra \SchaffTate$ sends the category $\ArtinStkTaten{k}$ to $\tArtinStkTaten{k}{n}$. We claim that the same proof as in \cite{GR-I}*{Chapter 2, Proposition 4.4.3} gives the result

\begin{cor}
The functor
\[
\LLKE_{\SchaffTatenop \hra \SchaffTateop}: \StkTaten \ra \StkTate
\]
sends the category $\tArtinStkTaten{k}{n}$ to the category $\ArtinStkTaten{k}$, and realizes and equivalence between $\tArtinStkTaten{k}{n}$ and Tate $k$-Artin stacks which are $n$-coconnective as a Tate stack\footnote{Recall this is different than requiring that they are $n$-coconnective as prestacks of Tate type}.
\end{cor}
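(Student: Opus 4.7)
The plan is to proceed by induction on $k$, mirroring the argument of \cite{GR-I}*{Chapter 2, Proposition 4.4.3}, and treating separately the two assertions: (i) that $\LLKE_{\SchaffTatenop \hra \SchaffTateop}$ sends $\tArtinStkTaten{k}{n}$ into $\ArtinStkTaten{k}$, and (ii) that the resulting functor, paired with restriction, is an equivalence onto the subcategory of $n$-coconnective Tate $k$-Artin stacks.

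For the base case $k=0$, an object of $\tArtinStkTaten{0}{n}$ is of the form ${^{\leq n}L}(\sqcup_I S_i)$ with $S_i \in \SchaffTaten$. Since $\LLKE$ is defined as $L \circ \LKE$, and both sheafification and LKE commute with coproducts, the image is $L(\sqcup_I \LLKE(S_i))$. The point is that for $S \in \SchaffTaten$, the object $\LLKE(S)$ is the image of $S$ under $\SchaffTaten \hra \SchaffTate \hra \StkTate$ (here we use Proposition \ref{prop:Tate-affine-schemes-are-Tate-stacks} that Tate affine schemes are already Tate stacks, so no further sheafification is needed, and the adjunction between the Yoneda embeddings follows from Lemma \ref{lem:Tate-affine-schemes-are-convergent}). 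Conversely, any $n$-coconnective Tate $0$-Artin stack arises this way because its affine pieces are, by definition, $n$-coconnective Tate affine schemes. The unit and counit of the adjunction are isomorphisms on these subcategories by the same computation.

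For the inductive step, assume the result holds up to $k-1$. Given $\sX \in \tArtinStkTaten{k}{n}$, choose a smooth $(k-1)$-representable surjective atlas $f:\sZ \to \sX$ with $\sZ \in \tArtinStkTaten{k-1}{n}$, and form the \v{C}ech groupoid $\sZ^\bullet/\sX$, whose terms are $(k-1)$-representable and hence lie in $\tArtinStkTaten{k-1}{n}$ by the inductive hypothesis applied to the base-change characterization. Since $\LLKE$ is a left adjoint (to restriction $\StkTate \to \StkTaten$), it preserves all colimits, so
\[
\LLKE(\sX) \simeq \LLKE\bigl(L(|\sZ^\bullet/\sX|)\bigr) \simeq L\bigl(|\LLKE(\sZ^\bullet/\sX)|\bigr).
\]
By the inductive hypothesis, each $\LLKE(\sZ^i/\sX)$ lies in $\ArtinStkTaten{k-1}$; moreover, since smoothness and surjectivity of a $(k-1)$-representable morphism are detected after base change to Tate affine schemes, and the analogous base-changed statement holds at the $n$-coconnective level by induction, the face maps remain smooth and surjective and the map $\LLKE(\sZ^1/\sX) \to \LLKE(\sZ^0/\sX) \times \LLKE(\sZ^0/\sX)$ remains $(k-2)$-representable. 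Invoking the criterion that such groupoids realize Tate $k$-Artin stacks (the corollary stated at the end of the descent subsection) yields $\LLKE(\sX) \in \ArtinStkTaten{k}$.

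For the equivalence, observe that restriction $\StkTate \to \StkTaten$ manifestly sends $\ArtinStkTaten{k}$ (restricted to $n$-coconnective objects) into $\tArtinStkTaten{k}{n}$, using that atlases and diagonals restrict. The unit $\sX' \to {^{\leq n}(\LLKE(\sX'))}$ and counit ${\LLKE({^{\leq n}\sX})} \to \sX$ are isomorphisms on these subcategories: reduce via the inductive atlas presentations to the case $k=0$, where it is the statement already handled, and use that both $L$ and ${^{\leq n}L}$ are compatible with the restriction/LKE adjunction in the manner of Corollary \ref{cor:restriction-to-finite-type-preserves-etale-equivalences}. The principal obstacle is the step in the inductive argument where one must verify that smoothness and $(k-1)$-representability survive the composite operation of left Kan extension followed by sheafification; this is handled by the observation that these properties are detected on pullbacks by Tate affine schemes, a class that is stable under both $\LLKE$ and $L$ by the definitions in \S \ref{subsubsec:setting-up-Artin}.
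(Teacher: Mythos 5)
Your proof follows the same route the paper takes: the paper's own ``proof'' is simply the assertion that the argument of \cite{GR-I}*{Chapter 2, Proposition 4.4.3} carries over, and your induction on $k$ with the base case of Tate affine schemes and the inductive step via atlases and \v{C}ech descent is precisely that argument transplanted to the Tate setting. The outline is correct, and the base case and the use of the groupoid criterion are handled appropriately.

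There is, however, one step you assert rather than prove, and it is the crux of the inductive step. Writing $\LLKE(\sX) \simeq \L\bigl(|\LLKE(\sZ^{\bullet}/\sX)|\bigr)$ is fine since $\LLKE$ preserves colimits, but to invoke the criterion ``a groupoid object with smooth face maps and $(k-1)$-representable $\sX^1 \ra \sX^0\times\sX^0$ realizes a Tate $k$-Artin stack,'' you need $\LLKE(\sZ^{\bullet}/\sX)$ to still be a \emph{groupoid} object, i.e.\ you need
\[
\LLKE\bigl(\sZ\underset{\sX}{\times}\cdots\underset{\sX}{\times}\sZ\bigr) \simeq \LLKE(\sZ)\underset{\LLKE(\sX)}{\times}\cdots\underset{\LLKE(\sX)}{\times}\LLKE(\sZ).
\]
A left adjoint does not commute with limits, so this is not formal; it is exactly here that smoothness (more precisely, flatness) of the atlas enters: for a flat morphism of $n$-coconnective Tate affine schemes the fiber product computed in the $n$-coconnective world agrees with the fiber product in all Tate affine schemes (no derived correction appears beyond cohomological degree $-n$), which is what makes the \v{C}ech nerve commute with $\LLKE$. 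This is the lemma on which the corresponding argument in Gaitsgory--Rozenblyum rests, and in the Tate setting it requires checking the flat base-change statement levelwise along a common presentation, as in Proposition \ref{prop:description-of-flat-morphisms-Tate-affine}. Your closing remark that smoothness is ``detected on pullbacks by Tate affine schemes'' gestures at this but does not supply it; without it the inductive step is incomplete.
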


\paragraph{Convergence}

The following result guarantees that Tate $k$-Artin stacks are convergent, which is important because this allows use to define their pro-cotangent complexes as we will see in \S \ref{subsec:pro-cotangent-complex}.

\begin{prop}
Any Tate $k$-Artin stack $\sX$ is convergent as a prestack of Tate type.
\end{prop}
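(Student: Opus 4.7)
The plan is to proceed by induction on $k \geq 0$. By Lemma \ref{lem:concrete-condition-convergent-prestack-Tate-type}, it suffices to show that for every Tate affine scheme $S$, the natural map
\[
\sX(S) \to \lim_{n \geq 0} \sX({^{\leq n}S})
\]
is an equivalence.

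For the base case $k = 0$, write $\sX \simeq L(\sqcup_{i \in I} S_i)$ with $S_i \in \SchaffTate$. Each $S_i$ is convergent by Lemma \ref{lem:Tate-affine-schemes-are-convergent}. First I would verify that $\sqcup_i S_i$ is convergent, using that a morphism $S \to \sqcup_i S_i$ decomposes along the connected components of $S$, which agree with those of every ${^{\leq n}S}$. Next I would show that the sheafification functor $L$ preserves convergence: since $L$ is built from iterations of the $+$-construction, the key input is that étale covers of $S$ can be obtained by pulling back étale covers of the truncations ${^{\leq n}S}$ in a compatible way (compare the inductive procedure in Lemma \ref{lem:initial-functors-near-stacks}), so that both sides of the convergence condition can be matched term-by-term at each stage of the $+$-construction.

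For the inductive step, assume convergence holds for Tate $k'$-Artin stacks with $k' < k$. Given a Tate $k$-Artin stack $\sX$, choose a smooth atlas $f \colon \sZ \to \sX$ with $\sZ$ a Tate $0$-Artin stack (possible by the remark in \S \ref{subsubsec:setting-up-Artin}). Using the $(k-1)$-representability of the diagonal of $\sX$ together with the identification
\[
\sZ^{\times_\sX n} \simeq \sZ^{\times n} \times_{\sX^{\times n}} \sX
\]
and the stability of Tate $(k-1)$-Artin stacks under fiber products, I can conclude that each term $\sZ^{\times_\sX n}$ of the \v{C}ech nerve with $n \geq 2$ is a Tate $(k-1)$-Artin stack, hence convergent by the inductive hypothesis; the term $\sZ^{\times_\sX 1} = \sZ$ is covered by the base case. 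Since $\sX$ is an étale sheaf and $f$ is smooth surjective, descent gives $\sX \simeq L|\sZ^\bullet|$. To deduce convergence of $\sX$ itself, I would invoke the $(n+k)$-truncatedness of ${^{\leq n}\sX}$ (established in \S \ref{subsec:Tate-Artin-stacks}) to replace the cosimplicial totalization arising from descent by a finite limit (via a Postnikov argument), at which point commutation of this finite limit with the cofiltered limit over $n$ follows formally from the convergence of each $\sZ^{\times_\sX n}$.

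The main obstacle will be carefully handling the interplay between sheafification, the descent formula $\sX \simeq L|\sZ^\bullet|$, and the cofiltered limit over truncations in the inductive step. The key technical input is the $(n+k)$-truncatedness of ${^{\leq n}\sX}$, which reduces otherwise infinite totalizations to finite limits that commute with all other limits; without this boundedness the commutation of colimits (from the geometric realization) with the cofiltered limit over $n$ would not be automatic.
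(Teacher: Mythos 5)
Your argument is correct and is essentially the paper's: the paper's proof consists of the single sentence ``The proof is exactly as in \cite{GR-I}*{Chapter 2, Proposition 4.4.9}'', and your induction on $k$ --- base case via convergence of Tate affine schemes and invariance of étale covers under truncation, inductive step via a smooth atlas, the $(k-1)$-representability of the Čech nerve terms, and the $(n+k)$-truncatedness of ${}^{\leq n}\sX$ to reduce totalizations to finite limits --- is precisely a reconstruction of that Gaitsgory--Rozenblyum argument transplanted to the Tate setting. The only place demanding care is the one you already flag: since $L$ and the geometric realization are colimits, extracting the descent totalization for $\sX(S)$ from $\sX\simeq L\lvert\sZ^{\bullet}\rvert$ is exactly where the truncatedness input is consumed, as in \emph{loc.\ cit.}
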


\begin{proof}
The proof is exactly as in \cite{GR-I}*{Chapter 2, Proposition 4.4.9}.
\end{proof}

\paragraph{Locally almost of finite type condition}

Similarly to \cite{GR-I}*{Chapter 2, Proposition 4.5.2} one can prove that the set of conditions is equivalent. 

\begin{prop}
\label{prop:equivalent-condition-laft-Tate-Artin-stack}
Let $\sX$ be an object of $\ArtinStkTaten{k}$, the following conditions are equivalent:
\begin{enumerate}[(i)]
    \item $\sX \in \StkTatelaft$;
    \item $\sX$ admits an atlas $f:Z \ra \sX$ with $Z\in \ArtinStkTaten{0} \cap \StkTatelaft$;
    \item for a $k$-representable ppf morphism $Z \ra \sX$ with $Z \in \ArtinStkTaten{0}$, we have $Z \in \ArtinStkTaten{0} \cap \StkTatelaft$;
    \item for a $k$-representable ppf morphism $\sX' \ra sX$, we have $\sX' \in \StkTatelaft$
\end{enumerate}
\end{prop}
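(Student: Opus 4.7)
The plan is to proceed by induction on $k$, following the pattern of \cite{GR-I}*{Chapter 2, Proposition 4.5.2}. The implications (iv) $\Rightarrow$ (iii) $\Rightarrow$ (ii) are formal: (iv) $\Rightarrow$ (iii) by taking $\sX' = Z$, and (iii) $\Rightarrow$ (ii) holds because every Tate $k$-Artin stack by definition admits at least one smooth surjective atlas from a $0$-Artin stack (obtained inductively from the existence of a $(k-1)$-Artin atlas).

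For the base case $k=0$, an object $\sX \in \ArtinStkTaten{0}$ is $L(\sqcup_I S_i)$ for Tate affine schemes $S_i$. Under the equivalence of Proposition \ref{prop:prestacks-Tate-laft-as-functors}, membership of $\sX$ in $\StkTatelaft$ is equivalent to every $S_i$ being almost of finite type. The statement then reduces to the fact that if $T \to S$ is a ppf morphism of Tate affine schemes with $S \in \SchaffTateaft$, then $T \in \SchaffTateaft$; this can be verified via the characterization of Lemma \ref{lem:defn-Tate-affine-scheme-aft} by picking compatible presentations $S \simeq \colim_I S_i$, $T \simeq \colim_I T_i$ over a common index category and using that ppf morphisms between $n$-coconnective affine schemes preserve the finite type condition.

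For the inductive step, the two non-trivial implications are (ii) $\Rightarrow$ (i) and (i) $\Rightarrow$ (iv). For (ii) $\Rightarrow$ (i), let $f: Z \to \sX$ be a smooth surjective atlas with $Z \in \StkTatelaft$. Since $\sX$ is convergent as a Tate $k$-Artin stack, it suffices to show that each truncation ${^{\leq n}\sX}$ is of finite type. I would form the \v{C}ech nerve $Z^{\bullet}/\sX$; since the diagonal of $\sX$ is $(k-1)$-representable, the induction hypothesis applied to the projection $Z \times_{\sX} Z \to Z \times Z$ shows that each $Z^n$ is laft, hence belongs to $\StkTatelaft$. Using that ${^{\leq n}\sX}$ is $(n+k)$-truncated (so that descent along the atlas is computed by a \emph{finite} totalization in truncated spaces), one concludes that the finite type property for ${^{\leq n}\sX}$ follows from that of the $Z^n$'s, by commuting the filtered colimits defining left Kan extension past the finite totalization.

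For (i) $\Rightarrow$ (iv), given $\sX' \to \sX$ a $k$-representable ppf morphism and $\sX \in \StkTatelaft$, pick an atlas $Z \to \sX$ with $Z$ in $\ArtinStkTaten{0} \cap \StkTatelaft$ (guaranteed by (i) $\Rightarrow$ (iii), which follows once (ii) $\Rightarrow$ (i) $\Rightarrow$ (iii) is established, using the base $k=0$ case along the morphism $Z \to \sX$). Then $Z \times_{\sX} \sX' \to \sX'$ is a smooth surjective atlas, and the base change $Z \times_{\sX} \sX' \to Z$ is $k$-representable and ppf; by induction applied fiberwise over an atlas of $Z$, together with the base case for Tate affine schemes, $Z \times_{\sX} \sX'$ is laft, so (ii) $\Rightarrow$ (i) applied to $\sX'$ yields $\sX' \in \StkTatelaft$. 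The main obstacle will be the bookkeeping in (ii) $\Rightarrow$ (i): one must verify that descent of the laft condition along the \v{C}ech nerve of an atlas is compatible with both the left Kan extension from $\SchaffTateconvft$ to $\SchaffTateconv$ and the right Kan extension to $\SchaffTate$ used to define $\PStkTatelaft$, which is precisely where the truncatedness of ${^{\leq n}\sX}$ and the finiteness of the relevant totalizations become essential.
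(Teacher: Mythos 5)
The paper itself gives no proof here beyond the citation to Gaitsgory--Rozenblyum, and your overall plan --- induction on $k$, the formal chain (iv) $\Rightarrow$ (iii) $\Rightarrow$ (ii), a \v{C}ech-descent argument for (ii) $\Rightarrow$ (i), and a base-change argument for (i) $\Rightarrow$ (iv) --- is the intended template; the base case and the trivial implications are fine. But there are two concrete problems in the inductive step. The smaller one is in (ii) $\Rightarrow$ (i): you apply the inductive hypothesis to $Z\underset{\sX}{\times}Z \ra Z\times Z$. That morphism is a base change of the diagonal $\sX \ra \sX\times\sX$, which is only required to be $(k-1)$-representable and is not ppf in general (already for schemes the diagonal is a closed embedding), so condition (iv) of the inductive hypothesis does not apply to it. The morphism you need is the projection $Z\underset{\sX}{\times}Z \ra Z$: it is a base change of the atlas $Z\ra\sX$, hence smooth (so ppf) and $(k-1)$-representable with laft target $Z\in\ArtinStkTaten{0}\subset\ArtinStkTaten{(k-1)}$, and applying the inductive hypothesis to it (and to the higher projections $Z^{n}\ra Z^{n-1}$) is what makes the terms of the \v{C}ech nerve laft. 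This is a local fix.

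The serious gap is that your (i) $\Rightarrow$ (iv) is circular. It starts by choosing an atlas $Z\ra\sX$ with $Z\in\ArtinStkTaten{0}\cap\StkTatelaft$, justified by ``(i) $\Rightarrow$ (iii)''; but at that point only (iv) $\Rightarrow$ (iii) $\Rightarrow$ (ii) $\Rightarrow$ (i) is available, and none of these produces a laft atlas from the laft-ness of $\sX$ --- that existence statement is itself an instance of the implication you are trying to prove, and the parenthetical ``which follows once (ii) $\Rightarrow$ (i) $\Rightarrow$ (iii) is established'' assumes the conclusion. The way to close the loop without ever invoking a laft atlas of $\sX$ is to first use the laft-ness of $\sX$ (via the left-Kan-extension characterization of Definition \ref{defn:prestacks-laft}, i.e.\ commutation with the filtered colimits coming from presentations $T\simeq\lim_\alpha T_\alpha$ by finite-type schemes) to reduce the laft-ness of $\sX'$ to that of the fibers $S\underset{\sX}{\times}\sX'$ over finite-type affine test schemes $S\ra\sX$. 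For such a fiber the difficulty evaporates: if $W = L(\sqcup_a S_a)\ra S\underset{\sX}{\times}\sX'$ is \emph{any} atlas with $W\in\ArtinStkTaten{0}$, then each composite $S_a\ra S\underset{\sX}{\times}\sX'\ra S$ is smooth followed by ppf, hence a ppf morphism of Tate affine schemes with target in $\SchaffTateaft$, so $S_a$ is almost of finite type by your base case; thus $W$ is automatically laft and (ii) $\Rightarrow$ (i) applied to the fiber finishes the argument. Your (i) $\Rightarrow$ (iv) needs to be rebuilt along these lines.
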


We refer to any Tate $k$-Artin stack $\sX$ satisfying the equivalent conditions of Proposition \ref{prop:equivalent-condition-laft-Tate-Artin-stack} as a \emph{Tate $k$-Artin stack locally almost of finite type}.

\subsubsection{Locally a prestack condition}

The following result investigates the relation between the locally a prestack condition and $k$-Artin condition.

\begin{prop}
For $\sX$ an object of $\PStkTatelp$, and let $\sX_0$ denote the restriction of $\sX$ to an object of $\PStk$. If $\sX_0$ for some $k$ $\sX_0$ is a $k$-Artin stack, then $\sX$ is a Tate $k$-Artin stack.
\end{prop}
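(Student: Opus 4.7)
The proof is by induction on $k$, exploiting that right Kan extension from $\PStk$ to $\PStkTate$ preserves finite limits and intertwines sheafification with restriction via Corollary~\ref{cor:sheafification-and-RKE-to-Tate-affine}.

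For the base case $k=0$, by definition $\sX_0 \simeq \overline{L}(\sqcup_{i \in I} S_{0,i})$ for some family $\{S_{0,i}\} \subset \Schaff$. Applying Corollary~\ref{cor:sheafification-and-RKE-to-Tate-affine} gives $\sX \simeq L(\RKE(\sqcup_i S_{0,i}))$. Viewing each $S_{0,i}$ as a Tate affine scheme via the fully faithful inclusion $\Schaff \hookrightarrow \SchaffTate$, there is a canonical comparison morphism $\phi \colon \sqcup_i \h_{S_{0,i}} \to \RKE(\sqcup_i S_{0,i})$ in $\PStkTate$ whose restriction to $\Schaff$ is the identity on $\sqcup_i S_{0,i}$. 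I argue that $L(\phi)$ is an equivalence by showing $\phi$ is an \'etale equivalence: any map from a Tate affine scheme $T = \colim_j T_j$ factors Zariski-locally through a single component, because each quasi-compact $T_j$ has finitely many connected components and their assignment to indices stabilizes under the closed-embedding connecting maps of the Tate presentation. Hence $\sX \simeq L(\sqcup_i \h_{S_{0,i}})$ is a Tate $0$-Artin stack.

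For the inductive step, let $Z_0 \to \sX_0$ be a smooth surjective $(k-1)$-representable atlas with $Z_0$ a $(k-1)$-Artin stack, and set $\sZ := \RKE(Z_0)$; by induction $\sZ$ is a Tate $(k-1)$-Artin stack. Since RKE preserves finite limits, the diagonal $\sX \to \sX \times \sX$ coincides with $\RKE(\sX_0 \to \sX_0 \times \sX_0)$, so the $(k-1)$-Tate-representability of the diagonal reduces to the atlas argument given below at one lower level. To verify that $\sZ \to \sX$ is $(k-1)$-Tate-representable, smooth, and surjective, fix a Tate affine scheme $T \to \sX$ with presentation $T \simeq \colim_j T_j$; using that $T$ is locally a prestack by Lemma~\ref{lem:Tate-affine-schemes-are-locally-prestacks} and that filtered colimits commute with finite limits in prestacks,
\begin{equation*}
\sZ \times_{\sX} T \simeq \RKE\bigl(\colim_j W_{0,j}\bigr), \qquad W_{0,j} := Z_0 \times_{\sX_0} T_j.
\end{equation*}
Each $W_{0,j}$ is $(k-1)$-Artin, the connecting maps are closed embeddings obtained by pullback of $T_i \hookrightarrow T_j$, and the coherence of the ideals is preserved because $Z_0 \to \sX_0$ is smooth and hence flat.

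To conclude that $\RKE(\colim_j W_{0,j})$ is a Tate $(k-1)$-Artin stack, I choose a $0$-Artin atlas $U_0 \to Z_0$ and pull it back to a compatible family $U_{0,j} := U_0 \times_{\sX_0} T_j \to W_{0,j}$; the diagram $\{U_{0,j}\}_j$ forms a Tate-type filtered system of $0$-Artin stacks, whose $\RKE$ provides a Tate $0$-Artin atlas of $\RKE(\colim_j W_{0,j})$ via the base case. The $(k-2)$-Tate-representability of the diagonal then follows from the inductive hypothesis applied to the fibers. Smoothness and surjectivity of $\sZ \times_\sX T \to T$ follow from Proposition~\ref{prop:description-of-flat-morphisms-Tate-affine} applied level-wise. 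The main technical obstacle is this last step: producing a Tate $(k-1)$-Artin structure on $\RKE(\colim_j W_{0,j})$ requires constructing atlases compatibly with the filtered structure at every level of the induction, which in turn depends on the existence of a $0$-Artin atlas for $Z_0$ and on verifying that the Tate coherence condition propagates through the pullbacks.
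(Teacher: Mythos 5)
Your overall strategy coincides with the paper's: induction on $k$, with the base case handled by commuting the right Kan extension past the disjoint union and the inductive step by applying $\RKE_{\Schaffop \hra \SchaffTateop}$ to an atlas of $\sX_0$. In the base case, however, your \'etale-local factorization argument is both unnecessary and misjustified. The identification $\RKE(\sqcup_I S_{0,i}) \simeq \sqcup_I \imath(S_{0,i})$ holds already at the level of prestacks of Tate type, before sheafification: the limit computing the right Kan extension at a Tate affine scheme $T$ is indexed by the cofiltered, hence connected, category $(\Schaff_{/T})^{\rm op}$, and connected limits commute with coproducts in $\Spc$ — a compatible family of points of $\sqcup_i \Maps(-,S_{0,i})$ over a connected diagram automatically lies in a single summand. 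No stabilization of connected components of the $T_j$ is involved, and the claim that a quasi-compact $T_j$ has finitely many connected components is neither true in general nor what is needed.

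The genuine gap is the one you flag yourself at the end of the inductive step. To show that $\sZ\times_{\sX}T \simeq \RKE(\colim_j W_{0,j})$ is Tate $(k-1)$-Artin you propose to pull back a $0$-Artin atlas $U_0 \to Z_0$ and invoke the base case; but $U_{0,j} = U_0\times_{\sX_0}T_j$ is \emph{not} a $0$-Artin stack: writing $U_0 = \sqcup_a U_{0,a}$ with $U_{0,a}$ affine, each $U_{0,a}\times_{\sX_0}T_j \simeq (U_{0,a}\times T_j)\times_{\sX_0\times\sX_0}\sX_0$ is the pullback of the $(k-1)$-representable diagonal of $\sX_0$ along an affine scheme, hence in general only a $(k-1)$-Artin stack. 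So the claimed ``Tate-type filtered system of $0$-Artin stacks'' does not exist as described, and the step you concede is a ``technical obstacle'' is in fact unproved — which means the inductive step is incomplete. The paper avoids this entirely by strengthening the induction: the inductive hypothesis is taken to cover both Tate $(k-1)$-Artin \emph{stacks} and Tate $(k-1)$-representable \emph{morphisms}. With that, the diagonal condition (applied to $\sX_0\underset{\sX_0\times\sX_0}{\times}\overline{S}$) and the atlas condition (applied to $\RKE(f_0)$ for $f_0:\sZ_0\to\sX_0$ an atlas of $\sX_0$) are both direct applications of the hypothesis, and no compatible system of atlases over the Tate filtration is ever constructed by hand. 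To repair your argument you should either fold the representability statement into your induction, as the paper does, or actually carry out the iterated atlas construction level by level; your write-up does neither.
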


\begin{proof}
We check the result by induction on $k$.

For $k=0$, we have 
\[
\sX_0 \simeq \overline{L}(\sqcup_{I}S_{0,i})
\]
for $S_{0,i} \in \Schaff$. Thus, one has
\[
\RKE_{\Schaffop \hra \SchaffTateop}(\sX_0) \simeq \L(\RKE_{\Schaffop \hra \SchaffTateop}(\sqcup_{I} S_{0,i}))
\]
by Corollary \ref{cor:sheafification-and-RKE-to-Tate-affine}. Notice that the limit computing the right Kan extension is over a connected category, so it commutes with coproducts and can be rewritten as
\[
\L(\RKE_{\Schaffop \hra \SchaffTateop}(\sqcup_{I} S_{0,i})) \simeq \L(\sqcup_{I}\RKE_{\Schaffop \hra \SchaffTateop}(S_{0,i})).
\]

Finally, we notice that tautologically 
\[
\RKE_{\Schaffop \hra \SchaffTateop}(S_{0,i}) \simeq \imath(S_{0,i})
\]
where $\imath: \Schaff \hra \SchaffTate$. This proves the base case.

Now suppose that we proved the result for Tate $(k-1)$-Artin stacks and also Tate $(k-1)$-representable morphisms. Given $\sX$ we need to check the conditions:

\textit{(a)} the morphism $\sX \ra \sX\times \sX$ is $(k-1)$-representable. Let $S \in \SchaffTate$ we notice that
\[
\sX \underset{\sX\times \sX}{\times}S \simeq \RKE_{\Schaffop \hra \SchaffTateop}(\sX_0\underset{\sX_\times \sX_0}{\times} \overline{S})
\]
where $\overline{S} := \left.S\right|_{\Schaffop}$, where we see $S$ as a prestack of Tate type. Since $\sX_0$ is assumed to be a $k$-Artin stack, we have that $\sX_0\underset{\sX_\times \sX_0}{\times} \overline{S}$ is a $(k-1)$-Artin stack and the inductive hypothesis gives that $\sX \underset{\sX\times \sX}{\times}S$ is a Tate $(k-1)$-Artin stack.

\textit{(b)} there exists an atlas $f: \sZ \ra \sX$. Consider $f_0: \sZ_0 \ra \sX_0$ an atlas of the $k$-Artin stack $\sX_0$, i.e.\ $f_0$ is $(k-1)$-representable, smooth, surjective and $\sZ_0$ is a $(k-1)$-Artin stack. We notice that the right Kan extension of $f_0$ gives
\[
f := \RKE_{\Schaffop \hra \SchaffTateop}(f_0): \sZ \ra \sX,
\]
where $\sZ = \RKE_{\Schaffop \hra \SchaffTateop}(\sZ_0)$ is a Tate $(k-1)$-Artin stack and $f$ is Tate $(k-1)$-representable by the inductive hypothesis. The condition that $f$ is smooth and surjective is clear.
\end{proof}

\subsection{Zariski Tate stacks}
\label{subsec:Zariski-Tate-stacks}

Whereas the approach to Tate Artin stacks of \S \ref{subsec:Tate-Artin-stacks} follows exactly the formal structure of \cite{GR-I}*{Chapter 2, \S 4}, the discussion of `Tate schemes' is slightly more subtle. The main point is that we already have a definition of Tate schemes, see \S \ref{subsec:Tate-schemes}. The idea of this section is to define a different category, that we call Zariski Tate stacks, as a subcategory of stacks of Tate type which admit a Zariski cover. The main result of this section is that one has a fully faithful embedding of Zariski Tate stacks into Tate schemes.

\subsubsection{Definition of Zariski Tate stacks}

Recall that a map of prestacks of Tate type $f:\sX \ra \sY$ is an affine open embedding (see \ref{defn:flat-morphisms-prestacks-Tate}) if for every $S \in (\SchaffTate)_{\sY}$ the pullback is an affine Tate scheme and the induced map
\[
\sX \underset{\sY}{\times}S \ra S
\]
is an open embedding.

\paragraph{}

We will say that $X \in \PStkTate$ is a \emph{Zariski Tate stack} if:
\begin{enumerate}[(a)]
    \item $X$ satisfies \'etale descent;
    \item the diagonal map $X \ra X\times X$ is affine schematic, and for every $T \in (\SchaffTate){/(X\times X)}$, the induced map
    \[
    \classical{T\underset{X\times X}{\times} X} \ra \classical{T}
    \]
    is a closed embedding;
    \item there exists a collection of Tate affine schemes $S_i$ and maps $f_i: S_i \ra X$ (called a \emph{Zariski atlas}), such that:
    \begin{itemize}
        \item each $f_i$ is an open embedding;
        \item for every $T \in (\SchaffTate)_{/X}$, the images of the maps
        \[
        \classical{T\underset{X}{\times}S_i} \ra \classical{T}
        \]
        cover $\classical{T}$.
    \end{itemize}
\end{enumerate}

We will denote by
\[
\StkZarTate \subset \StkTate
\]
the subcategory of almost Tate scheme.

We define a \emph{quasi-compact} Zariski Tate stack as one that admits a cover $\sqcup_I S_i$, where $I$ is finite.

It follows from Proposition \ref{prop:description-of-flat-morphisms-Tate-affine} that if $(S_i \ra X)_{I}$ is a Zariski atlas then the morphism
\[
\sqcup_I S_i \ra X
\]
is an \'etale surjection. Thus, Lemma \ref{lem:descent-of-effective-epis} implies that

\begin{lem}
For $X$ a Zariski Tate stack and $(S_i \ra X)_{I}$ a Zariski atlas one has an equivalence
\[
\L(\left|(\sqcup_I S_i/X)^{\bullet}\right|) \ra X.
\]
\end{lem}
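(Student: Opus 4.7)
The plan is to verify directly the hypothesis of Lemma~\ref{lem:descent-of-effective-epis} applied to the map $f \colon \sqcup_I S_i \to X$, namely that $f$ is an \'etale surjection. Once that is established, the conclusion is immediate since $(\sqcup_I S_i /X)^{\bullet}$ is by definition the \v{C}ech nerve of $f$, and passing to $\L$ on both sides of the resulting equivalence preserves it because $X$ is already an \'etale sheaf.

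First, I would verify that $f$ is Tate affine schematic. For any $T \in (\SchaffTate)_{/X}$, commuting the fiber product with the coproduct gives
\[
T \underset{X}{\times} (\sqcup_I S_i) \simeq \sqcup_I (T \underset{X}{\times} S_i),
\]
and each $T \underset{X}{\times} S_i$ is a Tate affine scheme because each $f_i$ is affine schematic open embedding. So the pullback is a coproduct of Tate affine schemes in the category of prestacks of Tate type.

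Next, I would show \'etaleness: the same base change shows that $T \underset{X}{\times} (\sqcup_I S_i) \to T$ is a disjoint union of open embeddings, hence \'etale by Definition~\ref{defn:flat-morphisms-prestacks-Tate}. To check surjectivity in the sense introduced before Lemma~\ref{lem:descent-of-effective-epis}, let $x \colon T \to X$ be any point with $T$ a Tate affine scheme. The Zariski atlas condition gives that the images of $\classical{T \underset{X}{\times} S_i} \to \classical{T}$ cover $\classical{T}$; by quasi-compactness of each affine piece of $T$, we may refine to a finite subcover, producing a Zariski (in particular \'etale) covering $T' \to T$ (built as the disjoint union of the open subschemes $T \underset{X}{\times} S_i$ of $T$) such that $x \circ (T' \to T)$ lifts through $\sqcup_I S_i \to X$ by construction.

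Combining the two items, $f$ is an \'etale surjection in $\PStkTate$, and Lemma~\ref{lem:descent-of-effective-epis} yields
\[
\left|(\sqcup_I S_i / X)^{\bullet}\right| \overset{\simeq}{\to} X
\]
in the $\infty$-topos $\StkTate$, i.e.\ after applying $\L$. The main bookkeeping point, and the only one requiring care, is in the surjectivity step: one must translate the Zariski-cover condition on classical truncations into a bona fide factorization through an \'etale cover of $T$ in the sense of the effective epimorphism definition; this is handled by the observation that an affine scheme-theoretic open cover of $\classical{T}$ lifts canonically to an open cover of $T$, since opens of $T$ are determined by opens of $\classical{T}$.
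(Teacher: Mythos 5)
Your proposal is correct and follows exactly the paper's route: the paper likewise observes (citing Proposition \ref{prop:description-of-flat-morphisms-Tate-affine}) that the atlas map $\sqcup_I S_i \ra X$ is an \'etale surjection and then invokes Lemma \ref{lem:descent-of-effective-epis}. You have merely filled in the verification of the \'etale-surjection hypothesis in more detail than the paper, which leaves it as a remark.
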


\subsubsection{Comparison with Tate schemes}

The following result shows that quasi-compact Zariski Tate stacks can be embedded into the category of Tate schemes. Let $\StkZarTateqc$ denote the category of quasi-compact Zariski Tate stacks.

\begin{thm}
There is a fully faithful functor
\[
\jmath: \StkZarTateqc \ra \SchTate.
\]
\end{thm}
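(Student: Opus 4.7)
The plan is to build $\jmath(X)$ levelwise from a Zariski atlas of $X$, glue schemes at each level, and then take the colimit in $\PStkTate$.

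First, I would construct $\jmath(X)$ as follows. Given $X \in \StkZarTateqc$, choose a finite Zariski atlas $(S_i \to X)_{i \in I}$, and for each $i$ choose a presentation $S_i \simeq \colim_{J_i} S_{i,j}$ exhibiting $S_i$ as a Tate affine scheme. By the analogue of \cite{Hennion-Tate}*{Proposition 1.2} used in the proof of Lemma \ref{lem:Tate-affine-schemes-has-fiber-products}, and using that $I$ is finite, I may pass to a common cofiltered index $J$ so that each $S_i \simeq \colim_J S_{i,j}$ with $j \mapsto S_{i,j}$ functorial in $i$. Condition (b) on $X$ (affine diagonal with closed embedding at the classical level) combined with the fact that the $S_i$'s form a Zariski atlas implies that for each pair $i, k$, the fiber product $S_i \times_X S_k$ is an affine Tate scheme which maps by open embeddings to both $S_i$ and $S_k$; passing to the common presentation I obtain, for each $j \in J$, a collection of open subschemes $S_{i,j} \times_X S_{k,j} \hookrightarrow S_{i,j}$ satisfying the usual cocycle on triple overlaps.

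Second, I would glue. For each fixed $j \in J$, the finitely many affine schemes $\{S_{i,j}\}_I$ with the open overlaps produced above satisfy the hypotheses of Definition \ref{defn:schemes}, producing a quasi-compact scheme $Z_j \in \Schqc$ covered by the $S_{i,j}$. The functoriality in $j$ yields, for every $j \to j'$ in $J$, a map $Z_j \to Z_{j'}$. Because the connecting maps $S_{i,j} \to S_{i,j'}$ are closed embeddings and each satisfies the Tate coherence condition $\Fib(\sO_{S_{i,j'}} \to \sO_{S_{i,j}}) \in \Coh(S_{i,j'})$, Zariski locally on each $Z_{j'}$ the map $Z_j \to Z_{j'}$ is a closed embedding whose defining ideal is coherent, so these properties are inherited by $Z_j \to Z_{j'}$. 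Thus $\jmath(X) := \colim_J Z_j$ is a well-defined object of $\SchTate$ in the sense of Definition \ref{defn:Tate-schemes}.

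Third, I would check independence of choices and that $\jmath(X) \simeq X$ as prestacks of Tate type. Independence follows from the standard cofinality argument for Tate presentations. For the identification, I use that $X \simeq \L(|(\sqcup_i S_i/X)^\bullet|)$ by the Zariski atlas, so $X$ is determined by the \v{C}ech groupoid of the atlas; a direct comparison shows that the \v{C}ech nerve of $\sqcup_i S_{i,j} \to Z_j$ agrees with the $j$-th stage of the Tate presentation of this groupoid, and commuting the colimit over $J$ past geometric realization (which is legitimate since geometric realization commutes with filtered colimits of stacks) identifies the two prestacks.

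Finally, for fully faithfulness, I would argue that for Zariski Tate stacks $X, Y$ with atlases as above, a map $X \to Y$ is the same as a compatible system of maps $S_i \to Y$ from the atlas of $X$, equivalently (since $S_i$ is a Tate affine scheme and $Y$ satisfies Zariski descent) a map from $\jmath(X)$ viewed as the associated colimit of schemes to $\jmath(Y)$. Both sides $\Hom_{\StkZarTateqc}(X,Y)$ and $\Hom_{\SchTate}(\jmath(X),\jmath(Y))$ therefore compute the same totalization of mapping spaces over the atlas, giving full faithfulness. The step I expect to be the main obstacle is the precise identification $\jmath(X) \simeq X$ in $\PStkTate$: one must verify that the colimit defining $\jmath(X)$ in $\indSch$ and the étale sheafification defining $X$ produce the same functor on $\SchaffTate$, which requires careful use of the fact that Zariski covers are \'etale covers together with descent from Proposition \ref{prop:Tate-affine-schemes-are-Tate-stacks}.
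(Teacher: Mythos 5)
Your overall strategy (choose a finite Zariski atlas, present it levelwise over a common filtered index, produce a scheme at each level, and take the colimit) is the same as the paper's, but there are two places where the proposal papers over the actual difficulty. First, the levelwise gluing of the $S_{i,j}$ requires that the open overlap $S_i\times_X S_k$ restrict to \emph{the same} closed subscheme at level $j$ whether you restrict along the presentation of $S_i$ or along that of $S_k$; i.e.\ you need $(S_i\times_X S_k)\times_{S_i}S_{i,j} \simeq (S_i\times_X S_k)\times_{S_k}S_{k,j}$ inside $S_i\times_X S_k$, coherently on triple overlaps. A common reindexing of the $J_i$'s does not produce this on the nose, and without it there is no gluing datum for a scheme $Z_j$. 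The paper avoids this entirely by \emph{not} gluing by hand: it sets $X_k := L(|(\sqcup_I S_{i,k})^{\bullet}/X|)$, taking the \v{C}ech nerve over $X$ itself, so all overlap data is packaged automatically, and invokes \cite{GR-I}*{Chapter 2, Lemma 3.1.6} to see that this is a quasi-compact scheme.

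Second, and more seriously, your claim that the transition maps $Z_j \ra Z_{j'}$ are closed embeddings ``Zariski locally on $Z_{j'}$'' is exactly the step that fails. The preimage in $Z_j$ of a chart $S_{i,j'}\subset Z_{j'}$ is not just $S_{i,j}$: it also contains the pieces $S_{k,j}\cap(S_k\times_X S_i)$ for $k\neq i$, each of which is closed in an \emph{open} subset of $S_{i,j'}$, so the transition map is in general only a locally closed immersion. The paper confronts this head on: it factors each $f_{k,\ell}:X_k\ra X_\ell$ via Nagata as an open embedding followed by a closed embedding $X_k \overset{\jmath_{k,\ell}}{\ra}\overline{X_k}\overset{\imath_{k,\ell}}{\ra}X_\ell$, verifies the coherence condition $\Fib(\sO_{X_\ell}\ra (f_{k,\ell})_*\sO_{X_k})\in\Coh(X_\ell)$ chart by chart, and then replaces the diagram $\{X_k\}$ by a cofinal diagram of scheme-theoretic closures $\{\overline{X}_{k'}\}$ whose transition maps \emph{are} closed embeddings with coherent ideals. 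Without some version of this closure-and-reindexing argument your colimit has no presentation satisfying Definition \ref{defn:Tate-schemes}, so $\jmath(X)$ has not been exhibited as a Tate scheme. The full-faithfulness discussion at the end of your proposal is fine in spirit and matches the paper's (both reduce to the fact that $X$ and $\jmath(X)$ have the same \'etale-sheafified functor of points by construction), but it is moot until the construction of $\jmath(X)$ is repaired.
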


\begin{proof}
Let $X \in \StkZarTateqc$ and consider $\{S_i \ra X\}_I$ a Zariski cover of $X$. Since $I$ is finite, one can find a filtered diagram $K$ and $S_{i,k}$ such that 
\[
\bigsqcup_I S_i \simeq \colim_K \bigsqcup_{I}S_{i,k}.
\]

For each $k \in K$ one defines
\[
X_k := L(\left|(\sqcup_{I}S_{i,k})^{\bullet}/X\right|_{\rm PreStk}),
\]
which by \cite{GR-I}*{Chapter 2, Lemma 3.1.6} is a quasi-compact scheme. Thus, we let
\[
\jmath(X) := \colim_k X_k.
\]

Now, we need to check that $\jmath(X)$ is a Tate scheme. In the following we will argue that we can use the filtered diagram $\{X_{k}\}$ to produce a new diagram that exhibits $\jmath(X)$ as a Tate scheme.

Let $k \ra \ell$ be a morphism in $K$ consider
\[
f_{k,\ell}: X_k \ra X_{\ell}.
\]
By considering the induced map on any affine scheme mapping to $X_{\ell}$ we notice that $f_{k,\ell}$ is an embedding, thus by Nagata's theorem one has a factorization of $f_{k,\ell}$ as follows
\[
X_{k} \overset{\jmath_{k,\ell}}{\ra} \overline{X_k} \overset{\imath_{k,\ell}}{\ra} X_{\ell},
\]
where $\jmath_{k,\ell}$ is open and $\imath_{k,\ell}$ is closed. Now we notice that 
\[
\Fib\left(\sO_{X_{\ell}} \ra (f_{k,\ell})_{*}\sO_{X_k}\right) \in \Coh(X_{\ell}).
\]
Indeed, one has that $\Fib\left(\sO_{X_{\ell}} \ra (f_{k,\ell})_{*}\sO_{X_k}\right) \in \QCoh(X_{\ell})$ so it is enough to check that for any affine scheme $h:T \ra X_{\ell}$ one has
\[
h^{*}\Fib\left(\sO_{X_{\ell}} \ra (f_{k,\ell})_{*}\sO_{X_k}\right) \in \Coh(T).
\]
Equivalently, it is enough to check this for $T = S_{i,\ell}$ for any $i \in I$. In other words we are considering
\[
S_{i,\ell}\underset{X_{\ell}}{\times}X_{k} \ra X_k.
\]
By taking the intersection of the above with $S_{j,k}$ for $j \in I$ one has
\[
g_{ij,k}: S_{j,k}\cap (S_{i,\ell}\underset{X_{\ell}}{\times}X_{k}) \ra X_k,
\]
which is equivalent to the pullback of $S_{j,k}\cap S_{i,\ell} \ra S_{j,\ell}$ via $S_{j,k} \ra S_{j,\ell}$. Since the later map is a closed embedding satisfying the coherent ideal condition the same holds for $g_{ij,k}$ and hence one has that $h^{*}\Fib\left(\sO_{X_{\ell}} \ra (f_{k,\ell})_{*}\sO_{X_k}\right)$ is actually coherent.

Since 
\[
(f_{k,l})_* \simeq (\imath_{k,\ell})_* \circ (\jmath_{k,\ell})_{*}
\]
and the functor $(\imath_{k,\ell})_*$ preserves coherent objects since $\imath_{k,\ell}$ is a closed embedding, one obtains that $(\jmath_{k,\ell})_*\sO_{X_{k}}$ is coherent. 

Now we pick an arbitray $k_0 \in K$. We notice that $K_{\geq k_0} \ra K$ is cofinal in $K$, i.e.
\[
\colim_{K_{\geq k_0}}X_{k} \simeq \colim_{K}X_k.
\]

Now there are two cases: (1) $X_k$ stabilize for certain $k$ sufficiently large, in this case one has that $X \in \Sch$, so in particular $X \in \SchTate$; or (2) for any $k \in K$ there exists $\ell$ such that the map $f_{k,\ell}$ is not an isomorphism. Then starting with $k_0 \in K$ one defines an inductive diagram $K'_{\geq k_0}$ by $k'_0 = k_0$, then
\[
k'_{i+1} := \min\{\ell \geq k_{i} \; | \; \overline{X_{k_i}} \hra X_{k_{\ell}}\}.
\]

We also define a new functor $\overline{X}: K'_{\geq k_0} \ra \Sch$ by
\[
\overline{X}_{\ell} := \overline{X_{\ell}},
\]
where we notice that by definition one has
\begin{itemize}
    \item $\colim_{K'_{\leq k_0}}\overline{X}_{k'} \simeq \colim_{k}X_k$; 
    \item for every $k' \ra \ell'$ in $K'_{\geq k_0}$ the map
    \[
    \overline{X}_{k'} \ra \overline{X}_{\ell'}
    \]
    is a closed embedding with coherent ideal of definition.
\end{itemize}

Thus, the diagram $\{\overline{X}_{k'}\}_{K'_{\geq k_0}}$ exhibits $\jmath(X)$ as a Tate scheme.

The functor $\jmath$ is fully faithful, since it is enough to check it at the level of Tate stacks, where $\jmath(X)$ and $X$ agree by construction.
\end{proof}

\subsubsection{Comparison with Tate Artin stacks}

The following is a comparison between the notion of Zariski Tate stacks and Tate Artin stacks.

\begin{prop}
Given $Z \in \StkZarTate$, then $Z$ is a Tate $1$-Artin stack.
\end{prop}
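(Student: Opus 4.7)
The plan is to unpack both definitions and check the two conditions required for $Z$ to be a Tate $1$-Artin stack directly against the data provided by the Zariski Tate stack structure.

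First, I would verify the diagonal condition. By definition of a Tate $1$-Artin stack, we need $Z \to Z \times Z$ to be $0$-representable, i.e.\ Tate affine schematic. This is essentially handed to us: condition (b) of the definition of a Zariski Tate stack asserts precisely that for any $T \in (\SchaffTate)_{/(Z \times Z)}$, the fiber product $T \underset{Z \times Z}{\times} Z$ is a Tate affine scheme (the extra condition that the classical truncation map is a closed embedding is stronger than $0$-representability, so it is automatic). Since $Z \in \StkZarTate \subset \StkTate$, the object $Z$ also satisfies \'etale descent, as required of any Tate stack.

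Next, I would construct the atlas. Given a Zariski atlas $\{f_i : S_i \to Z\}_I$, set $\sZ := L\bigl(\sqcup_I S_i\bigr) \in \ArtinStkTaten{0}$, and let $f: \sZ \to Z$ be the induced map. To show $f$ is $0$-representable, smooth, and surjective, I would fix an arbitrary $T \to Z$ with $T \in \SchaffTate$ and analyze the pullback. Since $L$ is left exact and commutes with fiber products, one has
\[
T \underset{Z}{\times} \sZ \;\simeq\; L\Bigl(\sqcup_I\, T \underset{Z}{\times} S_i\Bigr).
\]
Each $T \underset{Z}{\times} S_i$ is a Tate affine scheme by the assumption that $f_i$ is an open embedding (using Definition \ref{defn:flat-morphisms-prestacks-Tate}), so $T \underset{Z}{\times} \sZ$ is a Tate $0$-Artin stack, giving $0$-representability. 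Smoothness then reduces to the fact that $\sqcup_I\,T \underset{Z}{\times} S_i \to T$ is a coproduct of open embeddings of Tate affine schemes, which is smooth componentwise, and this disjoint union itself furnishes a Tate $0$-Artin atlas for $T \underset{Z}{\times} \sZ$. Finally, surjectivity of $f$ after base change to $T$ is exactly the surjectivity clause in the definition of a Zariski atlas: the images of $\classical{T \underset{Z}{\times} S_i} \to \classical{T}$ cover $\classical{T}$.

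I do not anticipate a serious obstacle here; the result is largely a translation exercise. The only mildly subtle point is keeping the two notions of ``surjective'' and ``atlas'' aligned, namely confirming that a Zariski cover in the sense of \S \ref{subsec:Zariski-Tate-stacks} produces a \emph{smooth} surjection in the inductive sense of \S \ref{subsubsec:setting-up-Artin}. This follows because open embeddings are smooth and because Zariski surjectivity on classical truncations implies the required surjectivity of atlases (recalling that the notion of surjectivity for $k$-representable morphisms, per \S \ref{subsubsec:setting-up-Artin}, is tested by base change to Tate affine schemes).
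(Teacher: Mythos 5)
Your proof is correct and follows essentially the same route as the paper: the diagonal condition is read off from condition (b) of the definition of a Zariski Tate stack, and the atlas is the sheafified disjoint union of the Zariski cover, which is smooth and surjective because open embeddings are smooth and the cover is surjective by definition. Your version merely spells out the base-change verifications that the paper leaves implicit.
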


\begin{proof}
By condition (i), $Z$ is a Tate stack. So we need to check that $Z \ra Z \times Z$ is $0$-representable, that is Tate affine schematic, which follows from condition (ii). And that we can find a smooth and surjective cover by a disjoint union of Tate affine schemes. This follows from considering a Zariski cover $\{U_i \ra Z\}$ of $Z$ and taking
\[
f: \sqcup_I U_i \ra Z,
\]
where $f$ is surjective by definition and smooth since it is the union of open embeddings which are smooth morphisms.
\end{proof}

\section{Deformation theory}
\label{sec:deformation-theory}

In this section we develop some of the deformation theory for prestacks of Tate type, closely following that of \cite{GR-II}*{Chapter 1}. The main differences occur when we need to study the condition of locally a prestack and further compare our pro-cotangent spaces and complexes with those of \emph{loc.\ cit}. The discussion of square-zero extension is also more subtle since we consider extension of Tate affine schemes by Tate-coherent sheaves.

\begin{warn}
At the moment the set up in this section is a bit awkward, since we considered square-zero extensions with respect to ind-coherent sheaves. This causes at least two problems: (i) the theory needs to consider affine schemes almost of finite type, and (ii) there a lot of technicalities in the construction of the pro-cotangent complex, starting in \S \ref{subsec:pro-cotangent-complex}.
\end{warn}

We plan to rework this section by considering a more general type of square-zero extension in a future version of this preprint. The section is included as it is to demonstrate that many aspects of the deformation theory for prestacks can be generalized to prestacks of Tate type.

In \S \ref{subsec:preparations}, we start with a discussion of push-outs of Tate affine schemes and Tate schemes, as well as a subtle extension of the split square-zero construction from quasi-coherent sheaves on affine schemes to Tate-coherent sheaves on Tate affine schemes. This is one of the crucial constructions that takes the theory off the ground. The extension is essentially done in two parts: in the first part we extend the category of modules from ind-coherent sheaves to Tate-coherent sheaves, and in the second we extend the split-square zero functor to Tate affine schemes. In \S \ref{subsec:pro-cotangent-spaces}, we define the pro-cotangent spaces for prestacks of Tate type, i.e.\ for every test Tate affine scheme $S$ we obtain an object in the category of Pro-objects on Tate-coherent sheaves on $S$. 

In \S \ref{subsec:properties-of-pro-cotangent-spaces}, we consider the usual conditions one can impose on pro-cotangent spaces, namely connectivity, corepresentability, and convergence. We also address the natural questions of compatibility between these conditions and natural conditions on the prestacks of Tate type. In \S \ref{subsec:deformation-theory-lp-and-Tate-conditions}, we study how the pro-cotangent space interacts with the locally a prestack condition. In this section we also compare our pro-cotangent spaces with the pro-cotangent space in the sense of Gaitsgory--Rozenblyum, and we use a generalization of Serre duality for Tate-coherent sheaves to define tangent spaces for the prestacks of Tate type whose underlying prestack admits an eventually connective pro-cotangent complex. 

In \S \ref{subsec:pro-cotangent-complex}, we finally define the pro-cotangent complexes. This definition is subtle: if we consider the $!$-pullback, we have a more general definition, but it becomes harder to compare our cotangent complexes with the ones usually considered in the literature. For this reason we restrict our attention to convergent prestacks of Tate type, for which we can define the cotangent complex using the *-Tate-pullback. We further discuss the usual conditions one can impose on the pro-cotangent complex. In \S \ref{subsec:square-zero-extensions}, we discuss general (i.e.\ non-split) square-zero extension. For a Tate scheme $X$ locally almost of finite type, the cotangent complex gives an object of the category of Tate-coherent sheaves on $X$, whose cohomology is concentrated in non-positive degrees. Thus, we can use morphisms from the cotangent complex to Tate-coherent sheaves concentrated in negative degrees to construct square-zero extensions of Tate schemes. We introduce a structure sheaf for Tate schemes and use it to establish a relation between square-zero extensions and the cotangent complex. 

In \S \ref{subsec:infinitesimal-cohesiveness-and-deformation-theory}, we define the natural notion of infinitesimal cohesiveness, which is a requirement to preserves push-outs of Tate affine schemes with respect to square-zero extensions. We also consider the important condition of admitting deformation theory, that is when a prestack of Tate type is infinitesimally cohesive and admits a pro-cotangent complex. In \S \ref{subsec:consequences-of-deformation-theory}, we prove a couple of interesting and useful results for prestacks of Tate type that admit a deformation theory, e.g.\ that the descent can be checked at the level of the underlying classical prestack of Tate type, and that isomorphisms are detected by isomorphisms of the underlying classical prestacks and an isomorphism between the cotangent complexes.

\begin{notation}
Throughout this section we will always consider our test schemes to be Tate affine schemes \textit{almost of finite type}. In particular, we denote by $\SchaffTate$ the category of Tate affine schemes almost of finite type in contrast with our previous notation.
\end{notation}

\subsection{Preparations}
\label{subsec:preparations}

\subsubsection{Push-outs of Tate affine schemes}

In this section we study the existence of push-outs in the category $\SchaffTate$.

\begin{lem}
\label{lem:push-out-affine-Tate-schemes}
Let $g_S: S \ra S'$ and $f: S \ra T$ be maps of Tate affine schemes, then there exists a Tate affine scheme $T'$ that fits into a pushout diagram
\begin{equation}
    \label{eq:pushout-diagram-for-Tate-affine-schemes}
    \begin{tikzcd}
    S \ar[r,"g_S"] \ar[d,"f"] & S' \ar[d,"f'"] \\
    T \ar[r,"g_T"] & T'
    \end{tikzcd}    
\end{equation}
in the category $\SchaffTate$.
\end{lem}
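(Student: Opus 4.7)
The plan is to realize the pushout levelwise after choosing compatible presentations. By \cite{Hennion-Tate}*{Proposition 1.2} we can pick a common filtered diagram $I$ and presentations $S \simeq \colim_I S_i$, $S' \simeq \colim_I S'_i$, $T \simeq \colim_I T_i$ together with compatible maps $g_{S,i}: S_i \to S'_i$ and $f_i: S_i \to T_i$ whose colimit recovers the data of (\ref{eq:pushout-diagram-for-Tate-affine-schemes}). For each $i \in I$ we then set
\[
T'_i := S'_i \underset{S_i}{\sqcup} T_i,
\]
the push-out in $\Schaff$, which exists because $\Schaffop \simeq \CAlg(\Vect^{\leq 0})$ admits finite limits; concretely, if $S_i = \Spec A_i$, $S'_i = \Spec A'_i$, $T_i = \Spec B_i$, then $T'_i = \Spec(A'_i \times_{A_i} B_i)$. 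I would then define the candidate push-out as $T' := \colim_I T'_i$ with the obvious maps $f': S' \to T'$ and $g_T: T \to T'$.

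The main work is checking that $T'$ lies in $\SchaffTate$, i.e. that the connecting maps $T'_i \to T'_j$ are closed embeddings with fiber in $\Coh(T'_j)$. For the closed-embedding condition, since $\imath^{\rm geom}(S) \in \GeomAlg$ means that $A_i \to A_j$ induces a surjection on $\H^0$ (and likewise for $A'$, $B$), the same holds for $A'_i \times_{A_i} B_i \to A'_j \times_{A_j} B_j$ by a short diagram chase on the relevant five-term sequences. For the Tate/coherence condition, the key observation (analogous to the argument in the proof of Lemma \ref{lem:Tate-affine-schemes-has-fiber-products}) is that stability of the relevant module categories gives a fiber square
\[
\begin{tikzcd}
\Fib\bigl(\sO_{T'_j} \to \sO_{T'_i}\bigr) \ar[r] \ar[d] & \Fib\bigl(\sO_{S'_j} \to \sO_{S'_i}\bigr) \ar[d] \\
\Fib\bigl(\sO_{T_j} \to \sO_{T_i}\bigr) \ar[r] & \Fib\bigl(\sO_{S_j} \to \sO_{S_i}\bigr)
\end{tikzcd}
\]
and a finite limit of finitely presented (equivalently, coherent in the derived sense) modules over the respective algebras remains finitely presented after base change to $A'_j \times_{A_j} B_j$; this uses stability of the subcategory of finitely presented modules under finite limits in a stable $\infty$-category.

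Finally, I would verify the universal property directly via the description $\imath^{\rm geom}: \SchaffTateop \hookrightarrow \Pro(\CAlg(\Vect^{\leq 0}))$. The assignment $\{A_i\}_I, \{A'_i\}_I, \{B_i\}_I \mapsto \{A'_i \times_{A_i} B_i\}_I$ realizes the fiber product in $\Pro(\CAlg(\Vect^{\leq 0}))$, so for any Tate affine scheme $U$ with algebra pro-system $\{C_k\}_K$, compatible maps $S' \to U$ and $T \to U$ agreeing on $S$ correspond, by cofinality of $I$ under the fiber-product construction and the definition of morphisms in $\Pro$, to a unique map $T' \to U$. The step I expect to be most delicate is the coherence of the ideal for the connecting morphisms of $T'$: a priori the pullback $A'_i \times_{A_i} B_i$ is only a limit, and one must rule out pathologies by exploiting that the fibers of $A_i \to A_j$, $A'_i \to A'_j$, $B_i \to B_j$ are themselves finitely presented modules and that the cartesian square above exhibits the fiber of interest as a finite limit of those; after that the universal property is essentially formal.
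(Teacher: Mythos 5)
Your proposal follows essentially the same route as the paper's proof: pass to pro-commutative algebras via $\imath^{\rm geom}$, realize the pushout as the spectrum of the fiber product of pro-algebras, present it levelwise using \cite{Hennion-Tate}*{Proposition 1.2}, and verify the two conditions of \S \ref{par:definition-of-Geom-Alg} (surjectivity on $\H^0$ and finite presentation of the fiber) for the connecting maps. One point needs care: the pushout in $\Schaff$ is $\Spec$ of the pullback computed in $\CAlg(\Vect^{\leq 0})$, which is $\tau^{\leq 0}(A'_i\times_{A_i}B_i)$ and not the untruncated fiber product you write down --- since $g_S$ and $f$ are not assumed to be closed embeddings, $\H^1(A'_i\times_{A_i}B_i)$ can be nonzero (it is the cokernel of $\H^0(A'_i)\oplus\H^0(B_i)\to\H^0(A_i)$), and the paper carries this $\tau^{\leq 0}$ throughout; the truncation also slightly perturbs the identification of the fibers of the connecting maps, which is exactly the delicate point you flag at the end. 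The only organizational difference is that the paper straightens just $f\colon B\to A$ and factors each connecting map $h_{i,j}$ into two steps, each a base change of a single map already known to be surjective on $\H^0$ with finitely presented fiber, whereas you straighten all three objects and exhibit $\Fib(\sO_{T'_j}\to\sO_{T'_i})$ as a fiber product of the three corresponding fibers (mirroring the paper's own proof of Lemma \ref{lem:Tate-affine-schemes-has-fiber-products}); both reductions land on the same stability claim for finitely presented modules, and neither is more rigorous than the other on the cross-ring restriction-of-scalars issue, so your argument is acceptable at the paper's level of detail.
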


\begin{proof}
Consider $S = \Spec(A)$, $S' = \Spec(A')$ and $T = \Spec(B)$, where $A,A'$ and $B$ are objects of the category $\Pro(\CAlg(\Vect^{\leq 0}))$. It is clear that\footnote{We abused notation and denoted by $f:B \ra A$ the map induced by $g:\Spec(A) \ra \Spec(B)$, and similarly for $g_S$.}
\[
\begin{tikzcd}
\tau^{\leq 0}(A'\times_{A}B) \ar[r] \ar[d] & B \ar[d,"f"] \\
A' \ar[r,"g_S"] & A
\end{tikzcd}
\]
is a pullback diagram in the category $\Pro(\CAlg(\Vect^{\leq 0}))$. We claim that there exists $S' \in \SchaffTate$ such that $\imath^{\rm geom}(S') = \tau^{\leq 0}(A'\times_{A}B)$, i.e.\
\[
S' = \Spec(\tau^{\leq 0}(A'\times_{A}B)).
\]

By \cite{Hennion-Tate}*{Proposition 1.2} one can find a cofiltered diagram $I$ and maps $f_i: B_i \ra A_i$, whose limit recovers $f:B \ra A$. It is clear that
\[
\tau^{\leq 0}(A'\times_A B) \simeq \lim_I\tau^{\leq 0}(A'\times_{A_i}B_i).
\]
So we only need to check that the diagram $\{\tau^{\leq 0}(A'\times_{A_i}B_i)\}_I$ is an object of $\GeomAlg$. That is for any map $i \ra j$ in $I$ one needs to check that the induced map 
\[
h_{i,j}: \tau^{\leq 0}(A^2\times_{A_i}B_i) \ra \tau^{\leq 0}(A'\times_{A_j}B_j)
\]
satisfies conditions a) and b) from \S \ref{par:definition-of-Geom-Alg}. 

First we notice that we can factor $h_{i,j}$ as
\[
\tau^{\leq 0}(A'\times_{A_i}B_i) \overset{\alpha_{i,j}}{\ra} \tau^{\leq 0}(A'\times_{A_j}B_i) \overset{\beta_{i,j}}{\ra} \tau^{\leq 0}(A'\times_{A_j}B_j),
\]
where $\alpha_{i,j}$ is the pullback of $A_i \ra A_j$ via $A' \ra A_j$ and $\beta_{i,j}$ is the pullback of $B_i \ra B_j$ via $\tau^{\leq 0}(A'\times_{A_j}B_j) \ra B_j$. Finally, we notice that if one has a pullback diagram
\[
\begin{tikzcd}
\tau^{\leq 0}(C_2\times_{C_1}D_1) \ar[r,"\gamma'"] \ar[d] & D_1 \ar[d] \\
C_2 \ar[r,"\gamma"] & C_1
\end{tikzcd}
\]
where $\gamma$ induces a surjection on $\H^0$ and such that $\Fib(\gamma) \in \Mod^{\rm fp}(C_2)$, then the same holds for $\gamma'$.
\end{proof}

The proof of Lemma \ref{lem:push-out-affine-Tate-schemes} gives the following

\begin{cor}
\label{cor:push-out-affine-schemes-preserved-in-Tate-affine-schemes}
The inclusion functor
\[
\Schaff \hra \SchaffTate
\]
preserves push-outs.
\end{cor}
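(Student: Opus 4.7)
The plan is to observe that the construction in the proof of Lemma \ref{lem:push-out-affine-Tate-schemes} degenerates in a useful way when the three input objects already lie in $\Schaff$, and to read off from this that the resulting push-out is again an affine scheme canonically isomorphic to the push-out computed in $\Schaff$.

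First, I would recall that for a diagram $S' \overset{g_S}{\la} S \overset{f}{\ra} T$ in $\Schaff$, writing $S = \Spec(A)$, $S' = \Spec(A')$, $T = \Spec(B)$ with $A, A', B \in \CAlg(\Vect^{\leq 0})$, the push-out in $\Schaff$ is computed as $\Spec(\tau^{\leq 0}(A' \times_{A} B))$, where the fibre product is taken in $\CAlg(\Vect^{\leq 0})$. Next, I would apply Lemma \ref{lem:push-out-affine-Tate-schemes} to the same diagram regarded inside $\SchaffTate$; its output is $\Spec(\tau^{\leq 0}(A' \times_{A} B))$ with the pullback now computed in $\Pro(\CAlg(\Vect^{\leq 0}))$. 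The key reduction is then that the indexing diagram $I$ appearing in the proof of Lemma \ref{lem:push-out-affine-Tate-schemes} can be taken to be the trivial one-point diagram, since $f: B \to A$ is already a morphism in $\CAlg(\Vect^{\leq 0})$ rather than a genuine Pro-morphism.

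From here the argument is essentially formal. The inclusion $\CAlg(\Vect^{\leq 0}) \hra \Pro(\CAlg(\Vect^{\leq 0}))$ sends $A$ to the corresponding constant Pro-object, and this inclusion preserves finite limits (since it preserves cocompact objects and limits are computed termwise on constant Pro-diagrams); therefore the two fibre products $A' \times_A B$ taken in the two categories agree. The truncation functor $\tau^{\geq -n}$ on $\Pro(\CAlg(\Vect^{\leq 0}))$ from (\ref{eq:n-truncation-compatible-with-pro-calg-objects}) restricts compatibly on the subcategory $\CAlg(\Vect^{\leq 0})$, so $\tau^{\leq 0}$ of the resulting object still lies in $\CAlg(\Vect^{\leq 0})$. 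This means the push-out constructed in $\SchaffTate$ is itself an object of $\Schaff$, and it agrees with the push-out computed directly in $\Schaff$; hence the inclusion $\Schaff \hra \SchaffTate$ preserves push-outs.

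The only mild subtlety, and the place I would be most careful in writing up, is to verify cleanly that the conditions (a) and (b) of \S \ref{par:definition-of-Geom-Alg} become vacuous on the trivial index category and that the identification of the Pro-level pullback with the $\CAlg(\Vect^{\leq 0})$-level pullback is compatible with $\imath^{\rm geom}$, so that the comparison morphism between the two putative push-outs is the one induced by the universal properties. Once these compatibilities are spelled out, the conclusion is immediate.
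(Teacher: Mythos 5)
Your proof is correct and follows essentially the same route as the paper, which simply asserts that the corollary falls out of the construction in the proof of Lemma \ref{lem:push-out-affine-Tate-schemes}: when the three inputs are affine the Pro-diagram is constant, and the fact that the inclusion $\CAlg(\Vect^{\leq 0}) \hra \Pro(\CAlg(\Vect^{\leq 0}))$ preserves finite limits identifies the two pullbacks. The only cosmetic slip is that the compatibility you cite from (\ref{eq:n-truncation-compatible-with-pro-calg-objects}) concerns $\tau^{\geq -n}$ rather than $\tau^{\leq 0}$; the fact you actually need there is already supplied by the finite-limit preservation of the inclusion.
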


\subsubsection{Push-outs of Tate schemes}

In this section we study push-outs of Tate schemes in a certain particular case. 

Let
\begin{equation}
    \label{eq:push-out-Tate-schemes}
    \begin{tikzcd}
    X \ar[r,"f_1"] \ar[d,"f_2"] & X_1 \ar[d] \\
    X_2 \ar[r] & \widetilde{X}
    \end{tikzcd}
\end{equation}
be a diagram in $\SchTate$ where $f_1$ and $f_2$ are closed embeddings.

\begin{lem}
\label{lem:push-out-Tate-schemes}
The diagram (\ref{eq:push-out-Tate-schemes}) is a push-out diagram if and only if there exists a filtered diagram $I$ and presentations
\[
X \simeq \colim_I X_i, \;\; X_{1} \simeq \colim_I X_{1,i}, \;\; X_{2} \simeq \colim_I X_{2,i}, \; \mbox{and} \; \widetilde{X} \simeq \colim_I\widetilde{X}_i
\]
such that for each $i\in I$ the diagram
\[
\begin{tikzcd}
X_i \ar[r,"f_{1,i}"] \ar[d,"f_{2,i}"] & X_{1,i} \ar[d] \\
X_{2,i} \ar[r] & \widetilde{X}_i
\end{tikzcd}
\]
is a push-out diagram in $\Sch$.
\end{lem}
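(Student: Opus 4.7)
The plan is to prove the two directions separately, with the ``if'' direction being essentially formal and the ``only if'' direction relying on a common-presentation argument analogous to the proof of Lemma \ref{lem:push-out-affine-Tate-schemes} and Lemma \ref{lem:Tate-affine-schemes-has-fiber-products}.

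For the ``if'' direction, I would first show that $\widetilde X := \colim_I \widetilde X_i$ is a Tate scheme. Since each square at level $i$ is a pushout of schemes along closed embeddings, the map $\widetilde X_{2,i} \to \widetilde X_i$ is itself a closed embedding, and the same holds for the maps $\widetilde X_i \to \widetilde X_j$ arising from morphisms $i \to j$ in $I$ (push-outs of closed embeddings along closed embeddings are closed embeddings). To check the coherence condition (b) of Definition \ref{defn:Tate-schemes}, I would run the same Mayer--Vietoris style argument as in the proof of Lemma \ref{lem:Tate-affine-schemes-has-fiber-products}: since $\QCoh$ is stable, the pushout square of structure sheaves on $\widetilde X_j$ gives a fiber sequence expressing $\Fib(\sO_{\widetilde X_j} \to \sO_{\widetilde X_i})$ as an extension built out of the analogous fibers for $X, X_1, X_2$, each of which is coherent by hypothesis. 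Once $\widetilde X \in \SchTate$ is established, the universal property as a push-out follows because filtered colimits commute with finite colimits in $\PStk$, and $\SchTate \hookrightarrow \PStk$ is fully faithful.

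For the ``only if'' direction, I would first apply \cite{Hennion-Tate}*{Proposition 1.2} to the span $X_2 \leftarrow X \rightarrow X_1$ to obtain a single cofiltered category $I$ and presentations $X \simeq \colim_I X_i$, $X_1 \simeq \colim_I X_{1,i}$, $X_2 \simeq \colim_I X_{2,i}$ such that the maps $f_1, f_2$ are realized by levelwise closed embeddings $f_{1,i}: X_i \hookrightarrow X_{1,i}$, $f_{2,i}: X_i \hookrightarrow X_{2,i}$ (after possibly shrinking $I$ to ensure the closed-embedding property survives at each finite level). For each $i$, I would then form the push-out $\widetilde X_i := X_{1,i} \sqcup_{X_i} X_{2,i}$ in $\Sch$, which exists as a scheme since the horizontal maps are closed embeddings. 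Setting $\widetilde X' := \colim_I \widetilde X_i$ gives, by the ``if'' direction just proved, a Tate scheme with a canonical map to $\widetilde X$, and by the universal property of both objects as push-outs this map is an isomorphism.

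The main obstacle will be the coherence bookkeeping: ensuring that the diagram $\{\widetilde X_i\}_I$ actually satisfies the Tate-scheme conditions, i.e.\ that the connecting morphisms are closed embeddings with coherent kernel. Both properties must be tracked through the push-out construction, and the verification ultimately reduces to a diagram chase in the stable $\infty$-category $\QCoh$ of each level. A secondary technical point is ensuring that when we pass from abstract common presentations to ones where the $f_{i,j}$ are genuinely closed embeddings, we can arrange this compatibly across $X$, $X_1$, $X_2$; this may require restricting to a cofinal subdiagram of $I$, but causes no loss of generality for computing the push-out.
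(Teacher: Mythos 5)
Your proposal is correct and follows essentially the same route as the paper: find a common presentation of the span with levelwise closed embeddings, form the pushouts $\widetilde{X}_i := X_{1,i}\sqcup_{X_i}X_{2,i}$ in $\Sch$, and identify $\colim_I\widetilde{X}_i$ with the pushout of Tate schemes. The paper is far terser (it delegates the Tate-condition check and the colimit identification to the proof of Lemma \ref{lem:push-out-affine-Tate-schemes}), whereas you spell out the coherence-of-fibers verification via the stable-category pullback argument, but the underlying argument is the same.
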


\begin{proof}
The proof is a straight-forward chasing of the definitions. Since $f_1$ and $f_2$ are closed embeddings we notice that we can find a diagram $I$ presenting $X$, $X_1$ and $X_2$ such that the morphisms
\[
f_{1,i}: X_i \ra X_{1,i}, \;\;\; \mbox{and} \;\;\; f_{2,i}: X_i \ra X_{2,i}
\]
are closed embeddings for each $i \in I$. Then, by \cite{GR-II}*{Chapter 1, Corollary 1.2.4} the push-out in the category $\Sch$
\[
\widetilde{X}_i := X_{1,i}\underset{X_i}{\sqcup}X_{2,i}
\]
exists. Thus, by the same proof as Lemma \ref{lem:push-out-affine-Tate-schemes} one has that 
\[
\colim_I \widetilde{X}_i \simeq X_{1}\underset{X}{\sqcup}X_2.
\]
\end{proof}

We notice that as a consequence of \cite{GR-II}*{Chapter 1, Lemma 1.2.2} one has

\begin{prop}
\label{prop:affine-push-out-and-scheme-push-out-agree}
Consider $S_1 \overset{f_1}{\la} S \overset{f_2}{\ra} S_2$ a diagram in $\SchaffTate$ where $f_1$ and $f_2$ are closed embeddings. Then one has an equivalence
\[
\imath(S_{1}\underset{S}{\sqcup}S_2) \simeq \imath(S_1)\underset{\imath(S)}{\sqcup}\imath(S_2)
\]
where $\imath: \SchaffTate \hra \SchTate$ is the inclusion of Tate affine schemes into Tate schemes\footnote{See \S \ref{subsubsec:convergence-Tate-affine-schemes}.}, and the push-out on each side is taken in the respective category.
\end{prop}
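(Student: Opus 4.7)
The plan is to reduce the statement to the level-wise comparison between the push-out in $\Schaff$ and in $\Sch$, which is the content of the cited \cite{GR-II}*{Chapter 1, Lemma 1.2.2}, and then propagate it through the filtered colimit presentations using Lemmas \ref{lem:push-out-affine-Tate-schemes} and \ref{lem:push-out-Tate-schemes}.

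First, since $f_1$ and $f_2$ are closed embeddings of Tate affine schemes, by \cite{Hennion-Tate}*{Proposition 1.2} one can find a filtered diagram $I$ and compatible presentations
\[
S \simeq \colim_I S_i, \quad S_1 \simeq \colim_I S_{1,i}, \quad S_2 \simeq \colim_I S_{2,i},
\]
in which each $f_{1,i}:S_i \hra S_{1,i}$ and $f_{2,i}:S_i \hra S_{2,i}$ is a closed embedding of affine schemes, and whose colimits recover $f_1$ and $f_2$. Set $\widetilde{S}_i := S_{1,i}\underset{S_i}{\sqcup}S_{2,i}$ computed in $\Schaff$; this is again affine because $f_{1,i}$ and $f_{2,i}$ are closed embeddings (affine morphisms), and by \cite{GR-II}*{Chapter 1, Lemma 1.2.2} the natural map from $\widetilde{S}_i$ to the push-out of $S_{1,i} \la S_i \ra S_{2,i}$ computed in $\Sch$ is an equivalence.

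Second, I would invoke Lemma \ref{lem:push-out-affine-Tate-schemes} (together with the analysis performed in its proof, which constructs the push-out in $\SchaffTate$ as the filtered colimit of the level-wise push-outs $\widetilde{S}_i$, once one verifies the Tate condition on the connecting closed embeddings $\widetilde{S}_i \hra \widetilde{S}_j$) to conclude
\[
S_1\underset{S}{\sqcup}S_2 \simeq \colim_I \widetilde{S}_i \quad \text{in } \SchaffTate.
\]
Applying Lemma \ref{lem:push-out-Tate-schemes} to the diagram $\imath(S_1) \la \imath(S) \ra \imath(S_2)$ with the same level-wise presentations and the level-wise push-outs $\widetilde{S}_i$ (viewed as schemes via $\imath$), one obtains
\[
\imath(S_1)\underset{\imath(S)}{\sqcup}\imath(S_2) \simeq \colim_I \imath(\widetilde{S}_i) \quad \text{in } \SchTate.
\]

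Third, since $\imath:\SchaffTate \hra \SchTate$ sends a Tate affine scheme written as a filtered colimit $\colim_I \widetilde{S}_i$ (with closed embeddings of finite-presentation ideals as connecting maps) to the same colimit computed in $\SchTate$, both sides are identified with $\colim_I \imath(\widetilde{S}_i)$, which produces the desired equivalence. The one point that requires care, and which I expect to be the only non-formal step, is the verification that the connecting morphisms $\widetilde{S}_i \hra \widetilde{S}_j$ in the filtered system presenting $\widetilde{S} := S_1 \sqcup_S S_2$ are again closed embeddings satisfying the coherent-ideal (Tate) condition of Definition \ref{defn:Tate-affine-schemes}; this follows by applying the pushout construction level-wise as in the proof of Lemma \ref{lem:push-out-affine-Tate-schemes} and using that fibers of commuting squares of surjections with finitely presented kernels are again finitely presented, but it is where the argument is least purely formal.
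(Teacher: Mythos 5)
Your proposal is correct and follows essentially the same route the paper intends: the paper states this proposition with no written proof beyond citing \cite{GR-II}*{Chapter 1, Lemma 1.2.2}, and your argument simply makes explicit the intended reduction — straighten the cospan into compatible level-wise presentations, apply the affine-versus-scheme comparison of push-outs along closed embeddings level-wise, and then assemble via Lemma \ref{lem:push-out-affine-Tate-schemes} on the Tate affine side and Lemma \ref{lem:push-out-Tate-schemes} on the Tate scheme side. The one point you flag as non-formal (the Tate condition on the connecting maps $\widetilde{S}_i \hra \widetilde{S}_j$) is indeed the same verification already carried out in the proof of Lemma \ref{lem:push-out-affine-Tate-schemes}, so nothing is missing.
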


\paragraph{The case of nil-isomorphisms}

We can now consider a certain class of push-outs of Tate schemes that will be useful when we try to understand the cotangent spaces of Tate schemes.

\begin{lem}
\label{lem:push-out-of-nil-isomorphisms-for-Tate-schemes-exist}
For a diagram in $\SchTate$
\[
\begin{tikzcd}
X \ar[r,"f_1"] \ar[d,"f_2"] & X_1 \\
X_2 & 
\end{tikzcd}
\]
where $f_{1}$ is a closed nil-isomorphism and $f_2$ is an arbitrary morphism. The push-out in $\SchTate$
\[
\widetilde{X}:= X_2\underset{X}{\sqcup}X_1
\]
exists, and the map $\widetilde{f_1}: X_{2} \ra \widetilde{X}$ is a closed nil-isomorphism.
\end{lem}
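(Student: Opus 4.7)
The strategy is to reduce to the case of schemes (where the analogous statement appears as \cite{GR-II}*{Chapter 1, Proposition 1.3.2}) by working with compatible presentations, and then check that the colimit of the scheme-level push-outs is still a Tate scheme.

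First I would choose presentations carefully. By \cite{Hennion-Tate}*{Proposition 1.2}, one can find a common filtered diagram $I$ with presentations $X \simeq \colim_I X_i$, $X_1 \simeq \colim_I X_{1,i}$, $X_2 \simeq \colim_I X_{2,i}$ and compatible morphisms $f_{1,i}: X_i \to X_{1,i}$, $f_{2,i}: X_i \to X_{2,i}$. Since closed embeddings and nil-isomorphisms are stable under the relevant base-change operations, one can arrange for every $f_{1,i}$ to be a closed nil-isomorphism of quasi-compact schemes and every $f_{2,i}$ to be an arbitrary morphism; this is essentially the content (suitably adapted to the Tate setting) of passing to a cofinal subdiagram and replacing the $X_{1,i}$ by the relevant reduced fiber products.

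Next I would apply \cite{GR-II}*{Chapter 1, Proposition 1.3.2} level-wise to form the push-outs
\[
\widetilde{X}_i := X_{1,i}\underset{X_i}{\sqcup}X_{2,i}\in \Sch,
\]
which come equipped with closed nil-isomorphisms $\widetilde{f}_{1,i}: X_{2,i}\hookrightarrow \widetilde{X}_i$. By functoriality of push-outs, the collection $\{\widetilde{X}_i\}_{i\in I}$ assembles into a diagram, and I set $\widetilde{X}:=\colim_I \widetilde{X}_i$ in $\PStk$. Once the Tate condition is established, the universal property follows: a morphism $\widetilde{X}\to Y$ with $Y\in\SchTate$ corresponds (via the colimit) to a compatible family $\widetilde{X}_i\to Y$, which by the level-wise push-out property corresponds to compatible pairs $X_{1,i}\to Y$, $X_{2,i}\to Y$ agreeing on $X_i$, i.e.\ to a pair $X_1\to Y$, $X_2\to Y$ agreeing on $X$.

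The main technical point, which I expect to be the principal obstacle, is to verify that $\widetilde{X}$ is a Tate scheme, i.e.\ that for $i\to j$ in $I$, the map $\widetilde{X}_i\to \widetilde{X}_j$ is a closed embedding with $\Fib(\sO_{\widetilde{X}_j}\to \sO_{\widetilde{X}_i})\in\Coh(\widetilde{X}_j)$. The push-out square for $\widetilde{X}_j$ yields (by \cite{GR-II}*{Chapter 1, Proposition 1.2.1}, since $f_{1,j}$ is a closed nil-isomorphism and so push-outs of quasi-coherent sheaves can be computed) a pullback square of structure sheaves
\[
\begin{tikzcd}
\sO_{\widetilde{X}_j}\ar[r]\ar[d] & (f_{1,j})_*\sO_{X_{1,j}}\ar[d]\\
(\widetilde{f}_{1,j})_*\sO_{X_{2,j}}\ar[r] & (f_{1,j}\circ g_j)_*\sO_{X_j}
\end{tikzcd}
\]
and similarly for $\widetilde{X}_i$. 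Taking fibers of the comparison map between these two squares and using stability of $\Coh$ under fiber products in the sense of diagram~(\ref{eq:fiber-as-a-pushout}) from the proof of Lemma~\ref{lem:Tate-affine-schemes-has-fiber-products}, the Tate condition on the diagrams defining $X$, $X_1$, $X_2$ propagates to $\widetilde{X}$. In particular the connecting maps $\widetilde{X}_i\to\widetilde{X}_j$ are closed embeddings of finite presentation, so $\widetilde{X}\in\SchTate$.

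Finally, the map $\widetilde{f}_1:X_2\to\widetilde{X}$ is the filtered colimit (in $\PStk$) of the closed nil-isomorphisms $\widetilde{f}_{1,i}$. Since the classes of closed embeddings and of nil-isomorphisms (in the sense of Definition~\ref{defn:morphisms-of-Tate-schemes}) are stable under passing to filtered colimits of compatible closed embeddings, $\widetilde{f}_1$ is itself a closed nil-isomorphism, completing the proof.
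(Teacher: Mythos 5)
Your proposal is correct and follows essentially the same route as the paper: the paper's own proof is a two-line citation of \cite{GR-II}*{Chapter 1, Corollary 1.3.5} (the scheme-level push-out along a closed nil-isomorphism) together with the proof of Lemma \ref{lem:push-out-Tate-schemes}, which is precisely the level-wise construction over compatible presentations and the verification that the colimit of the $\widetilde{X}_i$ satisfies the Tate condition that you spell out. Your write-up just makes explicit the steps the paper leaves to the reader.
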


\begin{proof}
This is a consequence of \cite{GR-II}*{Chapter 1, Corollary 1.3.5} and the proof of Lemma \ref{lem:push-out-Tate-schemes}.
\end{proof}

The following is a generalization of Proposition \ref{prop:affine-push-out-and-scheme-push-out-agree} and will be used to check that Tate schemes admit a cotangent complex.

\begin{cor}
\label{cor:affine-push-out-nil-isomorphism-agrees-with-scheme-push-out}
Consider $S_1 \overset{f_1}{\la} S \overset{f_2}{\ra} S_2$ a diagram in $\SchaffTate$ where $f_1$ is a closed nil-isomorphism and $f_2$ is a nil-isomorphism. Then one has an equivalence
\[
\imath(S_{1}\underset{S}{\sqcup}S_2) \simeq \imath(S_1)\underset{\imath(S)}{\sqcup}\imath(S_2)
\]
where $\imath: \SchaffTate \hra \SchTate$ is the inclusion of Tate affine schemes into Tate schemes, and the push-out on each side is taken in the respective category. Moreover, the result also holds if $f_2$ is a nil-isomorphism and $f_1$ is an arbitrary morphism.
\end{cor}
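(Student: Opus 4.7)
The strategy mirrors the proof of Proposition \ref{prop:affine-push-out-and-scheme-push-out-agree} but promotes the levelwise closed embedding argument to one using closed nil-isomorphisms. Concretely, I would first reduce to a level-wise statement via presentations, second apply the classical analogue for ordinary affine schemes and schemes, and third patch the results by taking filtered colimits in both categories.

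The first step is to pick common presentations $S \simeq \colim_I S_i$, $S_1 \simeq \colim_I S_{1,i}$ and $S_2 \simeq \colim_I S_{2,i}$ over a filtered diagram $I$, such that the resulting levelwise maps $f_{1,i}: S_i \ra S_{1,i}$ and $f_{2,i}: S_i \ra S_{2,i}$ inherit the properties imposed on $f_1$ and $f_2$. The existence of a filtered diagram making $f_1$ and $f_2$ levelwise closed embeddings is the content of \cite{Hennion-Tate}*{Proposition 1.2}, which was already invoked in the proofs of Lemma \ref{lem:push-out-affine-Tate-schemes} and Lemma \ref{lem:push-out-Tate-schemes}. The nil-isomorphism condition is a property of the underlying reduced prestacks, so after possibly passing to a cofinal subdiagram it can be arranged to hold at each level; this is where the main technical work lies.

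The second step is to observe that for each $i \in I$, the diagram $S_{1,i} \la S_i \ra S_{2,i}$ consisting of a closed nil-isomorphism and a nil-isomorphism admits a push-out $\widetilde{S}_i := S_{1,i} \underset{S_i}{\sqcup} S_{2,i}$ in $\Schaff$, which agrees with the corresponding push-out in $\Sch$; this is the content of \cite{GR-II}*{Chapter 1, Proposition 1.3.4} (the nil-isomorphism analogue of \cite{GR-II}*{Chapter 1, Lemma 1.2.2}). Moreover, since closed nil-isomorphisms between Noetherian schemes have coherent ideals of definition, the transition morphisms $\widetilde{S}_i \ra \widetilde{S}_j$ inherit the Tate condition from those of $S_{1,\bullet}$ and $S_{2,\bullet}$, so the collection $\{\widetilde{S}_i\}_I$ defines an object of $\SchaffTate$.

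Taking colimits then concludes the argument: on the one hand the proof of Lemma \ref{lem:push-out-affine-Tate-schemes} shows $\colim_I \widetilde{S}_i \simeq S_1 \underset{S}{\sqcup} S_2$ in $\SchaffTate$, and on the other hand Lemma \ref{lem:push-out-of-nil-isomorphisms-for-Tate-schemes-exist} combined with Lemma \ref{lem:push-out-Tate-schemes} identifies $\imath(S_1) \underset{\imath(S)}{\sqcup} \imath(S_2)$ with $\colim_I \imath(\widetilde{S}_i)$ in $\SchTate$. Since the inclusion $\imath$ manifestly commutes with filtered colimits indexed by our presentation, the two colimits are identified. For the ``moreover'' case where $f_2$ is a nil-isomorphism and $f_1$ is arbitrary, the same proof goes through once one establishes the analogue of Lemma \ref{lem:push-out-of-nil-isomorphisms-for-Tate-schemes-exist} with the roles permuted; this is immediate since nil-isomorphisms (not requiring the closed embedding hypothesis) admit push-outs along arbitrary maps by \cite{GR-II}*{Chapter 1, Proposition 1.3.4}, and the resulting map $\widetilde{f_2}: \widetilde{X} \ra X_1$ is again a nil-isomorphism. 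The main obstacle is the first step: arranging that the two structural conditions on $f_1$ and $f_2$ can be realized simultaneously at the level of a common presentation without losing cofinality.
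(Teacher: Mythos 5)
Your overall architecture --- reduce to a levelwise statement via common presentations, invoke the Gaitsgory--Rozenblyum comparison of affine versus scheme-level push-outs at each level, and conclude by commuting filtered colimits --- is exactly the route the paper intends: the corollary is stated without proof as a generalization of Proposition \ref{prop:affine-push-out-and-scheme-push-out-agree}, whose own justification is this same template run with \cite{GR-II}*{Chapter 1, Lemma 1.2.2}, and Lemma \ref{lem:push-out-of-nil-isomorphisms-for-Tate-schemes-exist} already runs the levelwise argument for closed nil-isomorphisms via \cite{GR-II}*{Chapter 1, Corollary 1.3.5}. So the strategy matches the paper's (implicit) proof.

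The gap is at precisely the step you twice defer as ``the main technical work'' / ``the main obstacle'': the assertion that, after passing to a cofinal subdiagram, the levelwise maps can be arranged to be (closed) nil-isomorphisms. This is not automatic, and passing to a cofinal subdiagram in particular cannot repair it: presenting the identity map of $\Spf(k[[t]])$ by the closed embeddings $\Spec(k[t]/t^{i}) \hra \Spec(k[t]/t^{i+1})$ gives a closed nil-isomorphism of Tate affine schemes none of whose levels is a nil-isomorphism, and restricting to a cofinal subset of $\bN$ changes nothing --- one has to re-choose the presentation, and it is exactly the existence of a simultaneous good re-choice for both legs that requires an argument. Since the GR--II input you invoke is only available when the levelwise map is a closed \emph{nil}-isomorphism (a closed embedding alone does not suffice once the other leg is arbitrary), the levelwise comparison genuinely needs this. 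A natural repair is to take a presentation $S_1 \simeq \colim_I S_{1,i}$ and set $S_i := S\underset{S_1}{\times}S_{1,i}$, so that each $S_i \ra S_{1,i}$ is a nil-isomorphism by base change and $\colim_I S_i \simeq S$ because fiber products commute with filtered colimits; but one must then verify that each $S_i$ is an honest affine scheme rather than merely a formal one, which is where the closedness of $f_1$ enters. A secondary caveat: in the ``moreover'' case the nil-isomorphism $f_2$ is not assumed closed, so even the existence of the push-out on the $\SchTate$ side is not literally covered by Lemma \ref{lem:push-out-of-nil-isomorphisms-for-Tate-schemes-exist}, and the GR--II reference you cite should be checked to apply to non-closed nil-isomorphisms. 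Without these points the proof is incomplete.
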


\subsubsection{Split square-zero extensions}
\label{subsubsec:split-square-zero-extensions}

In this section we extend the construction of \cite{GR-II}*{Chapter 1, \S 2.1} to the case of Tate-coherent sheaves on Tate affine schemes.

Recall that \cite{HA}*{Proposition 7.3.4.5}\footnote{Indeed, in the notation of \emph{loc.\ cit.\ } we take $\sO^{\otimes} = \Finp$, and $\sC := \Mod(R)^{\leq 0}$ and $\sC^{\otimes} \ra \sO^{\otimes}$ is the coCartesian fibration encoding the symmetric monoidal structure of $\sC$. So for $S = \Spec(R)$ the functor $\RealSplitSqZ$ corresponds to the functor $f$ in \emph{loc.\ cit.\ }, which is left adjoint to the functor
\[
\CAlg^{\rm aug.}(\Mod(R)^{\leq 0}) \ra \Mod(R)^{\leq 0}
\]
that sends an augmented algebra $\alpha: A \ra R$ to $\Fib(\alpha)$, notice the category of augmented $R$-algebras (see \cite{HA}*{Definition 7.3.4.3}) $\CAlg^{\rm aug.}(\Mod(R)^{\leq 0})$ is equivalent to $\Schaff_{S/}$.}  defines a functor
\[
\RealSplitSqZ: \left(\QCoh(S)^{\leq 0}\right)^{\rm op} \ra \left(\Schaff\right)_{S/}
\]
for any $S \in \Schaff$.

The category $\IndCoh(S)$ has a t-structure and the canonical functor
\[
\Psi_{S}: \IndCoh(S) \ra \QCoh(S)
\]
is t-exact (see \cite{GR-I}*{Chapter 4, \S 1.2}). Thus, one can consider the composite
\[
\RealSplitSqZ_S: \left(\IndCoh(S)^{\leq 0}\right)^{\rm op} \ra \left(\QCoh(S)^{\leq 0}\right)^{\rm op} \ra \left(\Schaff\right)_{S/}
\]
which we still denote by $\RealSplitSqZ_S$.

So the initial data for our construction is the functor
\begin{equation}
\label{eq:RealSplitSqZ-affine-schemes}
\RealSplitSqZ_S: \left(\IndCoh(S)^{\leq 0}\right)^{\rm op} \ra \left(\Schaff\right)_{S/}.
\end{equation}

\begin{prop}
\label{prop:RealSplitSqZ-Tate-affine-schemes}
For any $T \in \SchaffTate$ there exists a unique functor
\begin{equation}
\label{eq:RealSplitSqZ-Tate-affine-schemes}
\RealSplitSqZ_{T}: \left(\TateCoh(T)^{\leq 0}\right)^{\rm op} \ra \left(\SchaffTate\right)_{T/}    
\end{equation}
whose restriction to an object in $\Schaff$ and to the subcategory of ind-coherent sheaves recovers the functor (\ref{eq:RealSplitSqZ-affine-schemes}).
\end{prop}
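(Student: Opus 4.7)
The plan is to construct $\RealSplitSqZ_T$ by bootstrapping from the already-defined functor on affine schemes, leveraging the colimit/limit presentation $\TateCoh(T)^{\leq 0} \simeq \colim_I \TateCoh(T_i)^{\leq 0}$ from Corollary \ref{cor:colimits-and-limits-presentation-of-TateCoha} for a chosen presentation $T \simeq \colim_I T_i$ with $T_i \in \SchaffTateaft \cap \Schaff$. First, for each $T_i$ and each $\sG \in \TateCoh(T_i)^{\leq 0}$ represented as a cofiltered limit $\sG \simeq \lim_K \sG_k$ with $\sG_k \in \IndCoh(T_i)^{\leq 0}$ such that the transition maps $\H^0(\sG_{k'}) \to \H^0(\sG_k)$ are surjective (the admissibility condition from \S \ref{par:defn-of-admissible-Tate-Coh}), I would set
\[
\RealSplitSqZ_{T_i}(\sG) := \colim_K \RealSplitSqZ_{T_i}(\sG_k),
\]
the colimit being taken in $(\SchaffTate)_{T_i/}$. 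The object so defined is a Tate affine scheme over $T_i$: each transition map $(T_i)_{\sG_{k'}} \to (T_i)_{\sG_k}$ comes from the augmentation $\sO_{T_i}\oplus \sG_{k'} \to \sO_{T_i}\oplus \sG_k$, which is surjective on $\H^0$ by admissibility, and whose fiber is $\Fib(\sG_{k'}\to \sG_k)$, a coherent sheaf on $T_i$ by the Tate condition on $\sG$.

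Second, I would extend this to $\sF \in \TateCoh(T)^{\leq 0}$: by Corollary \ref{cor:colimits-and-limits-presentation-of-TateCoha}, choose $i \in I$ and $\sF_i \in \TateCoh(T_i)^{\leq 0}$ with $(\imath_i)^{\rm Tate}_*\sF_i \simeq \sF$, and set
\[
\RealSplitSqZ_T(\sF) := \colim_{j \in I_{\geq i}} \RealSplitSqZ_{T_j}\bigl((\imath_{j,i})^{\rm Tate}_*\sF_i\bigr),
\]
the colimit taken in $\SchaffTate$. The key compatibility to check is that for a closed embedding $\imath_{j,i} : T_i \hookrightarrow T_j$, there is a canonical identification
\[
\RealSplitSqZ_{T_j}\bigl((\imath_{j,i})^{\rm Tate}_*\sF_i\bigr) \simeq T_j \underset{T_i}{\sqcup} \RealSplitSqZ_{T_i}(\sF_i),
\]
which follows from Lemma \ref{lem:push-out-affine-Tate-schemes} together with the usual description of split square-zero extensions under closed immersions. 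This ensures the colimit is independent of the choice of $(i,\sF_i)$ and that $\RealSplitSqZ_T$ is functorial.

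Third, the reduction of the general connective case to the admissible one: any $\sF \in \TateCoh(T)^{\leq 0}$ is equivalent, as an input of $\RealSplitSqZ$, to an admissible representative, because split square-zero extensions only depend on the connective cover and on the $\H^0$-surjective portion of the pro-system (the augmentation $\sO_{T_i}\oplus \sG_k \to \sO_{T_i}$ factors through truncation of $\sG_k$). I would check this amounts to passing to a cofinal admissible subdiagram inside the pro-presentation of $\sF$.

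The compatibility with the original functor (\ref{eq:RealSplitSqZ-affine-schemes}) when $T \in \Schaff$ and $\sF \in \IndCoh(T)^{\leq 0}$ is then tautological, while uniqueness follows from the fact that any functor extending (\ref{eq:RealSplitSqZ-affine-schemes}) and commuting with the described pushforwards and cofiltered limits is forced by the presentation $\TateCoh(T)^{\leq 0} \simeq \colim_I \TateCoh(T_i)^{\leq 0}$. The main obstacle, as indicated above, is the verification that the transition maps in the resulting Ind-presentation of $\RealSplitSqZ_T(\sF)$ are genuine closed embeddings of finite presentation, i.e., that the coherent fiber condition for Tate-coherent sheaves translates precisely to the coherent ideal condition defining Tate affine schemes; this is where the Tate condition on $\sF$ is used in an essential way, and where one must be careful with non-admissible representatives.
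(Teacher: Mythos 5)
Your construction follows essentially the same route as the paper's proof: Ind-extend the affine split square-zero functor to (admissible) Tate-coherent sheaves on an affine scheme, verify the closed-embedding and coherent-fiber conditions exactly as you describe, and then glue over a presentation $T \simeq \colim_I T_i$ using the colimit presentation of $\TateCoha(T)^{\leq 0}$ together with compatibility with push-outs along $T_i \hookrightarrow T_j$. The only differences are cosmetic: the paper phrases the gluing as a chain of equivalences $\colim_I(\SchaffTate)_{T_i/} \simeq \lim_{I^{\rm op}}(\SchaffTate)_{T_i/} \simeq (\SchaffTate)_{T/}$ rather than a pointwise colimit of objects, and it likewise works with admissible representatives without spelling out the cofinality reduction you flag at the end.
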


\begin{proof}
The construction of (\ref{eq:RealSplitSqZ-Tate-affine-schemes}) is achieved through several steps. At places we abused notation and denoted by $\RealSplitSqZ$ the intermediate new functors that we obtain in those steps, hopefully, we have been careful enough about their source and target categories so as to avoid any confusion.

\textit{Step 1:} for $S \in \Schaff$ there is a natural inclusion $\left(\Schaff\right)_{S/} \hra \left(\Ind(\Schaff)\right)_{S/}$ so the composite of $\RealSplitSqZ_S$ with this inclusion gives us the functor
\begin{equation}
\label{eq:realsplitsqztoindschemes}
\RealSplitSqZ_S: \left(\IndCoh(S)^{\leq 0}\right)^{\rm op} \ra \left(\Ind(\Schaff)\right)_{S/}.  
\end{equation}

\textit{Step 2:} notice that one has an equivalence
\[
\left(\ProIndCoh(S))^{\leq 0}\right)^{\rm op} \simeq \Ind(\IndCoh(S)^{\rm op})^{\leq 0},
\]
thus one can Ind extend the functor (\ref{eq:realsplitsqztoindschemes}) to obtain
\begin{equation}
\label{eq:realsplitsqzproindsheavestoindschemes}
\RealSplitSqZ_S: \left(\ProIndCoh(S))^{\leq 0}\right)^{\rm op} \ra \left(\Ind(\Schaff)\right)_{S/}.
\end{equation}

\textit{Step 3:} recall the subcategory $\TateCoha(S)^{\leq 0} \subset \ProIndCoh(S)^{\leq 0}$ of admissible Tate-coherent sheaves in non-positive degrees. We can restrict the functor (\ref{eq:realsplitsqzproindsheavestoindschemes}) to a functor
\begin{equation}
    \label{eq:RealSplitSqZ-for-admissible-Tate-sheaves}
    \RealSplitSqZ_S: \left(\TateCoh(S)^{\leq 0}\right)^{\rm op} \ra \left(\Ind(\Schaff)\right)_{S/}.
\end{equation}
Now we claim that the functor (\ref{eq:RealSplitSqZ-for-admissible-Tate-sheaves}) factors as follows
\begin{equation}
\label{eq:Tatesheaves-to-Tateschemes-for-affine}
\RealSplitSqZ_S: \left(\TateCoh(S)^{\leq 0}\right)^{\rm op} \ra \left(\SchaffTate\right)_{S/}.
\end{equation}
Indeed, we need to check that given $\sF \in \TateCoha(S)^{\leq 0}$ the ind-affine scheme $S_{\sF}$ satisfies conditions a) and b) from Definition \ref{defn:Tate-affine-schemes}. Let $\{\sF_i\}_{I}$ be a diagram in $\IndCoh(S)^{\leq 0}$ representing $\sF$, the condition that for every $i \ra j$ the map
\[
\H^0(\sF_i) \ra \H^0(\sF_j)
\]
is surjective, implies that 
\[
S_{\sF_j} \ra S_{\sF_i}
\]
is a closed embedding. A direct calculation gives
\[
\Fib(\sO_{S_{\sF_i}} \ra \sO_{S_{\sF_i}}) \simeq \Fib(\sF_i \ra \sF_j).
\]
Since $\sF$ is Tate object, one has $\Fib(\sF_i \ra \sF_j) \in \Coh(S)$ by definition, finally we notice that pullback along $S_{\sF_i} \ra S$ preserves coherent objects.

\textit{Step 4:} Now consider $T$ a Tate affine schemes almost of finite type, and let
\[
T \simeq \colim_{I}T_i
\]
be a presentation of $T$, where $T_i \in \Schaffaft$ and the connecting maps are closed immersions. Then by Corollary \ref{cor:colimits-and-limits-presentation-of-TateCoha} one has an equivalence of categories
\[
(\TateCoha(T)^{\leq 0})^{\rm op} = \lim_{I^{\rm op}}(\TateCoha(T_i)^{\leq 0})^{\rm op} \simeq \colim_{I}(\TateCoha(T_i)^{\leq 0})^{\rm op}.
\]

\textit{Step 5:} We notice that for each $i \ra j$ in $I$ one has a functor
\begin{align*}
(-)\sqcup_{T_i}T_j: \left(\SchaffTate\right)_{T_i/} & \ra \left(\SchaffTate\right)_{T_j/} \\
Z & \mapsto Z\sqcup_{T_i}T_j
\end{align*}
where we used Lemma \ref{lem:push-out-affine-Tate-schemes} to guarantee that the push-out exists.

We notice that \cite{GR-II}*{Chapter 1, \S 4.1.2} and \cite{GR-I}*{Chapter 4, Proposition 2.1.2} give that the following diagram commutes
\begin{equation}
\label{eq:functoriality-IndCoh-SplitSqZero-for-affine-schemes}
\begin{tikzcd}
\left(\IndCoh(T_i)^{\leq 0}\right)^{\rm op} \ar[r,"(f_{i,j})^{\rm op}_*"] \ar[d,"\RealSplitSqZ_{T_i}"'] & \left(\IndCoh(T_j)^{\leq 0}\right)^{\rm op} \ar[d,"\RealSplitSqZ_{T_j}"] \\
\left(\Schaff\right)_{T_i/} \ar[r,"(-)\sqcup_{T_i}T_j"'] & \left(\Schaff\right)_{T_j/}
\end{tikzcd}    
\end{equation}

By construction \footnote{See the proof of Lemma \ref{lem:RealSplitSqZ-of-TateCoh-for-Tate-schemes-is-functorial} below.} one has a similar diagram for Tate-coherent sheaves on affine schemes, namely the following diagram commutes
\[
\begin{tikzcd}
\left(\TateCoh(T_i)^{\leq 0}\right)^{\rm op} \ar[r,"(f_{i,j})^{\rm op}_*"] \ar[d,"\RealSplitSqZ_{T_i}"'] & \left(\TateCoh(T_j)^{\leq 0}\right)^{\rm op} \ar[d,"\RealSplitSqZ_{T_j}"] \\
\left(\SchaffTate\right)_{T_i/} \ar[r,"(-)\sqcup_{T_i}T_j"'] & \left(\SchaffTate\right)_{T_j/}
\end{tikzcd}
\]
where the horizontal maps are the functors (\ref{eq:Tatesheaves-to-Tateschemes-for-affine}) obtained from the construction up to Step 3.

Thus, the colimit of the functors $\RealSplitSqZ_{T_i}$ over $I$ gives a map
\begin{equation}
\label{eq:realsplitsqzTatesheavestocolimitcategory}
\RealSplitSqZ_{T}: \left(\TateCoh(T)^{\leq 0}\right)^{\rm op} \ra \colim_{I}\left(\SchaffTate\right)_{T_i/},
\end{equation}
where the colimit on the categories on the right is with respect to the pushout maps.

\textit{Step 6:} now we notice that for a closed immersion $f_{i,j}:T_i \ra T_j$ the functors
\[
(-)\sqcup_{T_i}T_j: \left(\SchaffTate\right)_{T_i/} \ra \left(\SchaffTate\right)_{T_j/}
\]
and
\[
(-)\circ f_{i,j}: \left(\SchaffTate\right)_{T_j/} \ra \left(\SchaffTate\right)_{T_i/}
\]
form an adjoint pair $((-)\sqcup_{T_i}T_j,(-)\circ f_{i,j})$. Again by Lemma \ref{lem:limits-to-colimits-of-categories} one has an equivalence of categories
\begin{equation}
\label{eq:TateschemescolimitisTateschemeslimit}
\colim_{I}\left(\SchaffTate\right)_{T_i/} \simeq \lim_{I^{\rm op}}\left(\SchaffTate\right)_{T_i/},    
\end{equation}
where the connecting maps on the limit diagram are given by pre-composition with $f_{i,j}$.

\textit{Step 7:} finally we notice that there is a canonical map
\begin{equation}
\label{eq:Tateschemesunderislimitcategory}
\left(\SchaffTate\right)_{T/} \ra \lim_{I^{\rm op}}\left(\SchaffTate\right)_{T_i/}    
\end{equation}
and this map is an isomorphism. Indeed, let $S \in \SchaffTate$ with a given presentation $S \simeq \colim_J S_j$ one has
\[
\Maps(T,S) \simeq \lim_{T^{\rm op}}\Maps(T_i,S).
\]
Thus, given a map $g:T \ra S$ of Tate affine schemes is equivalent to the data of $\{g_{i}:T_i \ra S\}_{I^{\rm op}}$ for $T \simeq \colim_I T_i$ a presentation of $T$.
\end{proof}

\begin{lem}
\label{lem:realsplitsqz-Tate-preserves-colimits}
For any $T \in \SchaffTate$ the functor (\ref{eq:RealSplitSqZ-Tate-affine-schemes}) preserves colimits.
\end{lem}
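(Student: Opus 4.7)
The plan is to trace the inductive construction of $\RealSplitSqZ_T$ given in the proof of Proposition \ref{prop:RealSplitSqZ-Tate-affine-schemes} and verify that colimit preservation is maintained at each step, ultimately reducing the statement to the fact that the base case functor is a left adjoint.

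First, I would establish the base case: for $S\in\Schaff$, the functor $\RealSplitSqZ_S:(\QCoh(S)^{\leq 0})^{\rm op}\to\Schaff_{S/}$ is left adjoint to the functor sending an augmented $\sO_S$-algebra to the fiber of its augmentation, by \cite{HA}*{Proposition 7.3.4.5}, and therefore preserves colimits. Since $\Psi_S$ is continuous, its opposite preserves colimits, and the composite $\RealSplitSqZ_S:(\IndCoh(S)^{\leq 0})^{\rm op}\to\Schaff_{S/}$ of Step 1 preserves colimits as well. The Ind-extension of Step 2 then yields a colimit-preserving functor $(\ProIndCoh(S)^{\leq 0})^{\rm op}\to(\Ind(\Schaff))_{S/}$, since the Ind-extension of a colimit-preserving functor into a cocomplete category preserves colimits.

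Second, I would check that the restriction in Step 3 to $(\TateCoh(S)^{\leq 0})^{\rm op}$, with values in $(\SchaffTate)_{S/}$, preserves colimits. On the source side, colimits in $(\TateCoh(S)^{\leq 0})^{\rm op}$ are cofiltered limits in $\TateCoh(S)^{\leq 0}$, and these are computed inside $\ProIndCoh(S)^{\leq 0}$ because the Tate condition is preserved under passage to a sub-pro-system (so the full subcategory is closed under the relevant limits). On the target side, the sub-$\infty$-category $(\SchaffTate)_{S/}\subset(\Ind(\Schaff))_{S/}$ is closed under the colimits that arise as images of the functor, which is precisely the content checked in Step 3 of Proposition \ref{prop:RealSplitSqZ-Tate-affine-schemes}.

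Third, I would conclude by the colimit-of-colimit-preserving-functors argument. By Corollary \ref{cor:colimits-and-limits-presentation-of-TateCoha} and Steps 4--7 of the construction we have compatible identifications
\[
(\TateCoh(T)^{\leq 0})^{\rm op}\simeq \colim_I (\TateCoh(T_i)^{\leq 0})^{\rm op},\qquad (\SchaffTate)_{T/}\simeq \colim_I (\SchaffTate)_{T_i/},
\]
with respect to which $\RealSplitSqZ_T\simeq \colim_I \RealSplitSqZ_{T_i}$. Since $I$ is filtered and each $\RealSplitSqZ_{T_i}$ preserves colimits by the previous two steps, the resulting functor preserves colimits: colimits in a filtered colimit of $\infty$-categories are computed by representing the diagram in some index $i$ and then pushing forward, at which stage colimit preservation of the individual $\RealSplitSqZ_{T_i}$ applies.

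The main obstacle is the verification in the second paragraph: ensuring that the colimits one wishes to preserve are literally the ones computed inside the ambient $(\ProIndCoh(S)^{\leq 0})^{\rm op}$ and $(\Ind(\Schaff))_{S/}$, so that restriction to the Tate subcategories does not alter them. This forces one to be careful about which class of limits in $\TateCoh(S)^{\leq 0}$ is inherited from $\ProIndCoh(S)^{\leq 0}$; once this is settled, the remaining steps are formal consequences of left adjointness, Ind-extension, and the commutation of filtered colimits of functors with pointwise colimits.
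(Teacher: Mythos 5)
Your proposal is correct and follows essentially the same route as the paper: both arguments trace the seven steps of the construction in Proposition \ref{prop:RealSplitSqZ-Tate-affine-schemes} and check that colimit preservation survives each one (left adjointness/continuity for the base case, Ind-extension, restriction, and the equivalences and filtered colimit of categories at the end). Your extra care in the second paragraph about the restriction step — checking that the relevant colimits in the Tate subcategories are computed in the ambient categories — is a point the paper's proof passes over with ``Step 3 is simply a restriction,'' so it is a welcome refinement rather than a divergence.
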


\begin{proof}
Since the functor (\ref{eq:RealSplitSqZ-affine-schemes}) preserves colimits by \cite{GR-II}*{Chapter 1, Lemma 2.1.4} it is enough to check that each step in the above construction preserves colimits. Step 1 follows because the inclusion $\Schaff \ra \Ind(\Schaff)$ preserves colimits. Step 2 follows because Ind extension of a colimit-preserving functor gives a colimit-preserving functor. Step 3 is simply a restriction so it preserves the property as well. Step 4 is an equivalence of categories. Step 5 follows because we are taking the colimit of categories in the $\infty$-category of presentable DG-categories. Steps 6 and 7 are again just equivalence of categories. 
\end{proof}

\paragraph{Functoriality of square-zero extensions}

Recall that for a map $f:S \ra T$ between Tate affine schemes the functor
\[
f_*: \TateCoh(S) \ra \TateCoh(T)
\]
is t-exact. Indeed, any such map is a Tate affine morphism, hence the result follows from Proposition \ref{prop:pushforward-TateCoh-between-Tate-schemes-is-left-t-exact}.

The following functoriality result will be needed in section \ref{subsec:pro-cotangent-complex} for the definition of pro-cotangent complexes.

\begin{lem}
\label{lem:RealSplitSqZ-of-TateCoh-for-Tate-schemes-is-functorial}
Let $f:S \ra T$ be a morphism in $\SchaffTate$, then the following diagram commutes\footnote{Recall that the functor $f^{\rm Tate}_*$ here is defined in \S \ref{subsec:Tate-coherent-sheaves-on-Tate-schemes}.}
\begin{equation}
    \label{eq:functoriality-diagram-Tate-for-Tate-affine}
    \begin{tikzcd}
    \left(\TateCoh(S)^{\leq 0}\right)^{\rm op} \ar[r,"(f^{\rm Tate})^{\rm op}_*"] \ar[d,"\RealSplitSqZ_{S}"'] & \left(\TateCoh(T)^{\leq 0}\right)^{\rm op} \ar[d,"\RealSplitSqZ_{T}"] \\
    \left(\SchaffTate\right)_{S/} \ar[r,"(-)\underset{S}{\sqcup}T"'] & \left(\SchaffTate\right)_{T/}
    \end{tikzcd}
\end{equation}
where $f^{\rm Tate}_*$ is the pushforward of Tate sheaves as defined in Theorem \ref{thm:TateCoh-correspondence-functor-Sch-Tate}.
\end{lem}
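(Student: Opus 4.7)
The plan is to reduce the commutativity of \eqref{eq:functoriality-diagram-Tate-for-Tate-affine} to the commutativity of \eqref{eq:functoriality-IndCoh-SplitSqZero-for-affine-schemes} by working along compatible presentations. First I will invoke \cite{Hennion-Tate}*{Proposition 1.2} to choose a common filtered indexing category $I$ and presentations $S \simeq \colim_I S_i$, $T \simeq \colim_I T_i$ with $S_i, T_i \in \Schaffaft$, such that $f$ is represented by morphisms $f_i : S_i \to T_i$ with the evident compatibilities along $i \to j$.

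Next, I will unwind both sides of the diagram along the presentation. By Corollary~\ref{cor:colimits-and-limits-presentation-of-TateCoha}, we have
\[
(\TateCoha(S)^{\leq 0})^{\rm op} \simeq \colim_I (\TateCoha(S_i)^{\leq 0})^{\rm op},
\]
and similarly for $T$, with transition functors given by the Tate-coherent pushforwards along the closed embeddings $S_i \hookrightarrow S_j$ and $T_i \hookrightarrow T_j$. By the construction of $\RealSplitSqZ_S$ in the proof of Proposition~\ref{prop:RealSplitSqZ-Tate-affine-schemes} (in particular Step 5 together with the equivalence in Step 6), the functor $\RealSplitSqZ_S$ is itself obtained as the colimit of the functors $\RealSplitSqZ_{S_i}$ along the pushout functors $(-) \sqcup_{S_i} S_j$, and similarly for $T$. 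Thus it suffices to exhibit a coherent family of commutative squares at each level $i$ and a compatibility of these with the structure maps of the colimit.

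For each $i \in I$, the required square at the level of ind-coherent sheaves is exactly \eqref{eq:functoriality-IndCoh-SplitSqZero-for-affine-schemes}, which commutes by \cite{GR-II}*{Chapter 1, \S 4.1.2}. I will promote this square to Tate-coherent sheaves in two steps: first Pro-extend to $\ProIndCoh(S_i)^{\leq 0}$ (using that Pro-extension is functorial and preserves commutative diagrams), and then restrict to the full subcategory $\TateCoha(S_i)^{\leq 0}$, which is stable under $(f_i)^{\rm Tate}_*$ since $f_i$ is a morphism of schemes almost of finite type and its Tate-coherent pushforward is t-exact on admissible objects (Proposition~\ref{prop:pushforward-TateCoh-schemes-is-left-t-exact} together with the admissibility condition of \S\ref{par:defn-of-admissible-Tate-Coh}). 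The compatibility with the transition functors along $i \to j$ follows by applying the same reasoning to the compositions $S_i \to S_j \to T_j$ and $S_i \to T_i \to T_j$ and invoking naturality of the split square-zero construction under closed embeddings.

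The main obstacle is verifying that the pushforward $f^{\rm Tate}_*$ attached to $f$ by Theorem~\ref{thm:TateCoh-correspondence-functor-Sch-Tate} actually agrees, under the equivalences above, with the colimit of the pointwise pushforwards $(f_i)^{\rm Tate}_*$. This identification is not tautological because $f^{\rm Tate}_*$ is constructed through the correspondence formalism via factorization into open and ind-proper parts; however, once one notes that a morphism of Tate affine schemes is Tate-affine-schematic, one can apply the characterization of $\TateCoh_{\CorrTatelaftipaa}$ from Theorem~\ref{thm:TateCoh-correspondence-functor-Sch-Tate} (specifically, the identification $\LKE_{F_{vert}}(\TateCoh) \simeq \left.\TateCoh_{\CorrTatelaftipaa}\right|_{(\SchTatelaft)_{vert}}$) to recover the desired colimit formula. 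Combining all of this, the colimit over $I$ of the level-$i$ squares yields \eqref{eq:functoriality-diagram-Tate-for-Tate-affine}, using Lemma~\ref{lem:push-out-affine-Tate-schemes} to identify the bottom arrow with the filtered colimit of the pushout functors $(-) \sqcup_{S_i} T_i$.
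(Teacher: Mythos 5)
There is a genuine gap at the heart of your argument: the levelwise square for $f_i : S_i \to T_i$ is justified by ``Pro-extend the ind-coherent square \eqref{eq:functoriality-IndCoh-SplitSqZero-for-affine-schemes} and then restrict to Tate objects.'' This move is not available. For a general morphism $f$ of schemes almost of finite type, $f^{\rm Tate}_*$ is \emph{not} the restriction of the Pro-extension $\Pro(f^{\rm Ind}_*)$ to $\TateCoh$ --- indeed the whole reason \S\ref{subsubsec:TateCoh-on-correspondences-of-schemes} constructs $f^{\rm Tate}_*$ through the open/proper factorization is that $\Pro(f^{\rm Ind}_*)$ does not preserve the Tate condition, and for the open part $g$ the functor $g^{\rm Tate}_*$ is \emph{defined} as the right adjoint of $g^!$, not as a Pro-extension. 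Consequently, Pro-extending a commutative square of ind-coherent functors tells you nothing directly about $f^{\rm Tate}_*$. What is actually needed at the scheme level is the identification $f^{\rm Tate}_*(\lim_A \sF_\alpha) \simeq \lim_A f^{\rm Ind}_*(\sF_\alpha)$ on a presentation of the Tate object, which must be verified separately in the proper case (where $f^{\rm Tate}_*$ is the restriction of the Pro-extension) and in the open case (where it follows from $f^{\rm Tate}_*$ being a right adjoint together with Lemma \ref{lem:compatibility-TateCoh-pushforward-with-IndCoh-pushforward-for-open-morphisms}); only then can one feed the diagram \eqref{eq:functoriality-IndCoh-SplitSqZero-for-affine-schemes} into the computation. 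This case split is the essential content of the paper's proof and is absent from yours.

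A secondary weak point is the step you yourself flag as the main obstacle: identifying $f^{\rm Tate}_*$ for a map of Tate affine schemes with the colimit of the levelwise pushforwards $(f_i)^{\rm Tate}_*$ along a common presentation. The property of Theorem \ref{thm:TateCoh-correspondence-functor-Sch-Tate} you invoke concerns left Kan extension along $\Schaft \hookrightarrow \SchTatelaft$ applied to a single object; it does not by itself compute the pushforward of a morphism between two Tate schemes in terms of a chosen simultaneous straightening. The paper avoids this issue entirely by a different reduction: by Lemma \ref{lem:TateCoh-on-Tate-schemes-is-pushforward-from-closed-schemes} every object of $\TateCoh(S)^{\leq 0}$ is $(\imath_i)^{\rm ProInd}_*$ of something on a closed subscheme $S_i \in \Schaft$, the composite $S_i \to S \to T$ factors through some $T_j$, and one is reduced to a closed embedding of a scheme into a Tate affine scheme, which is then handled by proper base change (diagram \eqref{eq:base-change-reduction-functoriality}) and a cofinality argument. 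If you want to keep your colimit-of-levels strategy you would need to supply a base-change argument of this kind to make the identification of $f^{\rm Tate}_*$ with the levelwise colimit precise.
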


\begin{proof}
We first prove the result in the case where $S = S_0 \overset{f}{\ra} T = T_0$ are usual affine schemes, and $f$ is an arbitrary morphism. 

Let $\sF \simeq \lim_{A}\sF_{\alpha} \in \TateCoh(S)^{\leq 0}$ where $\sF_{\alpha} \in \IndCoh(S)^{\leq 0}$. By definition, that is, Steps 1-3 in the proof of Proposition \ref{prop:RealSplitSqZ-Tate-affine-schemes} above, one has
\[
\RealSplitSqZ_{S_0}(\lim_{A}\sF_{\alpha}) \simeq \colim_{A^{\rm op}}(S_0)_{\sF_{\alpha}},
\]
where $(S_0)_{\sF_{\alpha}}$ is the usual split square-zero extension of $S_0$ by $\Phi_{S_0}(\sF_{\alpha})$. Thus, going around the left and lower part of the diagram (\ref{eq:functoriality-diagram-Tate-for-Tate-affine}) one has
\[
(\colim_{A^{\rm op}}(S_0)_{\sF_{\alpha}}) \underset{S_0}{\sqcup}T_0 \simeq \colim_{A^{\rm op}}(S_0)_{\sF_{\alpha}} \underset{S_0}{\sqcup}T_0.
\]

Now, if one goes around the diagram (\ref{eq:functoriality-diagram-Tate-for-Tate-affine}) via the top and right map one obtains
\[
\RealSplitSqZ_{T_0}(f^{\rm Tate}_*(\lim_{A}\sF_{\alpha})).
\]

At this point we consider two cases, either $f$ is a proper morphism or an open morphism. 

When $f$ is a closed embedding one has that 
\[
f^{\rm Tate}_*(\lim_{A}\sF_{\alpha}) \simeq \lim_{A}f^{\rm Ind}_*(\sF_{\alpha}).
\]
Thus, chasing through the definition of $\RealSplitSqZ_{T_0}$ one has
\[
\RealSplitSqZ_{T_0}(f^{\rm Tate}_*(\lim_{A}\sF_{\alpha})) \simeq \colim_{A^{\rm op}}(T_0)_{f^{\rm Ind}_*(\sF_{\alpha})}.
\]

When $f$ is an open embedding, $f^{\rm Tate}_*$ is a right adjoint, so it commutes with limits, and one has
\[
f^{\rm Tate}_*(\lim_{A}\sF_{\alpha}) \simeq \lim_{A}f^{\rm Ind}_*(\sF_{\alpha}),
\]
as well, by Lemma \ref{lem:compatibility-TateCoh-pushforward-with-IndCoh-pushforward-for-open-morphisms}.

Finally, since the diagram (\ref{eq:functoriality-IndCoh-SplitSqZero-for-affine-schemes}) commutes, one obtains that
\[
\RealSplitSqZ_{S_0}(\sF)\underset{S_0}{\sqcup}T_0 \simeq \RealSplitSqZ_{T_0}(f^{\rm Tate}_*(\sF)).
\]

Now we consider the case where $f:S \ra T$ is an arbitrary morphism of Tate affine schemes. Let $S \simeq \colim_I S_i$ be a presentation of $S$ and $\sF \in \TateCoh(S)^{\leq 0}$, by Lemma \ref{lem:TateCoh-on-Tate-schemes-is-pushforward-from-closed-schemes} one has that
\[
\sF \simeq (f_i)^{\rm ProInd}_{*}(\sG),
\]
where $f_i: S_i \hra S$. Notice that the composite
\[
S_i \overset{f_i}{\ra} S \overset{f}{\ra} T
\]
has to factor through $T_j \hra T$ for some $j \in J$ where $T \simeq \colim_J T_j$ is a presentation of $T$. Thus, by the previous case, we are reduced to the case where $f:S_0 \ra T$ is a closed embedding with $S_0 \in \Schaff$ and $T \in \SchaffTate$.

Notice that to check that the diagram (\ref{eq:functoriality-diagram-Tate-for-Tate-affine}) commutes it is enough to check that the outer diagram of
\begin{equation}
\label{eq:outer-diagram-for-j-in-presentation-of-T}
\begin{tikzcd}
    \left(\TateCoh(S_0)^{\leq 0}\right)^{\rm op} \ar[r,"(f^{\rm Tate})^{\rm op}_*"] \ar[d,"\RealSplitSqZ_{S_0}"'] & \left(\TateCoh(T)^{\leq 0}\right)^{\rm op} \ar[d,"\RealSplitSqZ_{T}"] \ar[r,"h^!_i"] & (\TateCoh(T_j)^{\leq 0})^{\rm op} \ar[d,"\RealSplitSqZ_{T_j}"] \\
    \left(\SchaffTate\right)_{S_0/} \ar[r,"(-)\underset{S}{\sqcup}T"'] & \left(\SchaffTate\right)_{T/} \ar[r,"(-)\circ h_j"'] & \left(\SchaffTate\right)_{T_j/}
\end{tikzcd}    
\end{equation}
commutes for all $j \in J$.

Since $f:S_0 \hra T$ is a closed embedding, one can find $j_0 \in J$ such that in the diagram
\begin{equation}
    \label{eq:base-change-reduction-functoriality}
    \begin{tikzcd}
    S_0\cap T_{j} \ar[r,hook,"\bar{g}_j"] \ar[d,hook] & T_j \ar[d,"h_j"] \\
    S_0 \ar[r,"g"] & T
    \end{tikzcd}
\end{equation}
one has $S_0 \cap T_j \simeq S_0$ for all $j < j_0$. Thus, base change with respect to (\ref{eq:base-change-reduction-functoriality}) gives that
\[
h^{!}_j \circ g^{\rm Tate}_*(\sG) \simeq (\bar{g}_j)^{\rm Tate}_*(\sG).
\]
Thus, the commutation of the outer diagram (\ref{eq:outer-diagram-for-j-in-presentation-of-T}) for $j < j_0$ follows from the previous result. Since $J_{< j_0} \ra J$ is cofinal, one has that the diagram (\ref{eq:functoriality-diagram-Tate-for-Tate-affine}) commutes. This finishes the proof.
\end{proof}

\paragraph{Split square-zero of Tate schemes}
\label{par:split-square-zero-for-Tate-schemes}

Let $X \in \SchTate$ denote a Tate scheme and consider $\sF \in \TateCoh(X)$. Given a presentation $X \simeq \colim_I X_i$ with natural inclusion $f_i:X_i \hra X$. We define
\[
X_{\sF} := \colim_{I}(X_i)_{f^!_i(\sF)},
\]
where each split square-zero extension $(X_i)_{f^!_i(\sF)}$ is defined by descent on a Zariski cover. That is, for $\{U_{i,j} \ra X_i\}_J$ a Zariski cover we let
\[
(X_i)_{f^!_i(\sF)} := \L(\sqcup_{J}((U_{i,j})_{f^!_i(\sF)})).
\]

One can easily check that $X_{\sF}$ satisfy the Tate condition, since the square-zero extensions don't change the ideal of definitions of $X_i$ in $X_j$.

The notion of square-zero extension for Tate schemes allows us to make sense of the value of the pro-cotangent space of a prestack of Tate type that satisfies Zariski descent at a Tate scheme.

Let $\sX$ be an object $\PStkTate$ that admits pro-cotangent spaces and satisfy Zariski descent. Given a morphism $x:X \ra \sX$, where $X \in \SchTatelaft$ it follows that the functor
\begin{align*}
    \TateCoh(X)^{\leq 0} & \ra \Spc \\
    \sF & \Maps_{X/}(X_{\sF},\sX)
\end{align*}
is pro-corepresentable by an object
\[
T^*_{x}(\sX) \in \Pro(\TateCoh(X)^-).
\]

\subsection{Pro-cotangent spaces}
\label{subsec:pro-cotangent-spaces}

In this section we finally define the notion of a cotangent complex for a prestack of Tate type. The strategy follows closely the treatment in \cite{GR-II}*{Chapter 1, \S 2}.

\subsubsection{Pro-cotangent space at a point}

\paragraph{Lift functor}

Consider $\sX$ a prestack of Tate type, and let $x:S \ra \sX$ be a map from $S \in \SchaffTate$. Given any $\sF \in \TateCoh(S)^{\leq 0}$ one considers the functor
\begin{align}
\label{eq:lift-functor}
    \Lift_{x,\sX}: \TateCoh(S)^{\leq 0} & \ra \Spc \\
    \sF & \mapsto \Maps_{S/}(S_{\sF},\sX), \nonumber
\end{align}
where $S_{\sF} = \RealSplitSqZ_S(\sF)$ is defined as in Proposition \ref{prop:RealSplitSqZ-Tate-affine-schemes}.

\paragraph{Functoriality of lift functor}
\label{par:functotiality-lift-functor}

By Lemma \ref{lem:realsplitsqz-Tate-preserves-colimits} the functor 
\[
\sF \in (\TateCoh(S)^{\leq 0})^{\rm op} \mapsto S_{\sF} \in (\SchaffTate)_{S/}
\]
preserves colimits, i.e.\ sends pullbacks of sheaves to pushforward of schemes. In particular, given $f: \sF_{1} \ra \sF_{2}$ in $\TateCoh(S)^{\leq 0}$ such that
\[
\sF := 0\times_{\sF_2}\sF_1
\]
belongs to $\TateCoh(S)^{\leq 0}$, i.e.\ such that $f$ induces a surjection on $\H^0$, one obtains
\[
S_{\sF} \simeq S\sqcup_{S_{\sF_2}}S_{\sF_1}.
\]
This gives a canonical map
\begin{equation}
\label{eq:defining-cotangent-space}
\Lift_{x,\sX}(\sF) \ra \Lift_{x,\sX}(0)\times_{\Lift_{x,\sX}(\sF_2)} \Lift_{x,\sX}(\sF_1)
\end{equation}

\begin{defn}
\label{defn:pro-cotangent-space-at-a-point}
A prestack of Tate type $\sX$ is said to have a \emph{pro-cotangent space at a point $x:S \ra \sX$} if the map (\ref{eq:defining-cotangent-space}) is an equivalence.
\end{defn}

\begin{rem}
Another way to write the map (\ref{eq:defining-cotangent-space}) is as
\begin{equation}
\label{eq:defining-cotangent-space-with-Maps}
\Maps_{S/}(S_{\sF},\sX) \ra \{*\}\times_{\Maps_{S/}(S_{\sF_2},\sX)} \Maps_{S/}(S_{\sF_1},\sX).    
\end{equation}
The reason that we prefered to denote the space of lifts as $\Lift_{x\sX}$ is that the notation (\ref{eq:defining-cotangent-space-with-Maps}) might suggest that any prestack would admit a pro-cotangent space, as one would guess that $\Maps(-,\sX)$ commutes with coproducts. However we emphasize that $\Maps(S,\sX)$ here means that we evaluate the prestack of Tate type $\sX$ on an Tate affine scheme $S$, a priori this is only equivalent to considering the mapping space in the category of prestacks, and the result of Lemma \ref{lem:push-out-affine-Tate-schemes} does \emph{not} hold in this category, i.e.\ $S_{\sF}$ is not necessarily the pushout $S\sqcup_{S_{\sF_2}}S_{\sF_1}$ in the category of prestacks.
\end{rem}

\paragraph{Corepresenting object}

Suppose that $\sX$ admits a cotangent space at $x:S \ra \sX$. Then we can extend the functor $\Lift_{x}$ to
\[
\Lift_{x}: \TateCoh(S)^- \ra \Spc,
\]
by sending any $\sF \in \TateCoh(S)^{\leq k}$ to
\[
\Lift_{x}(\sF) = \Omega^i\Maps_{S/}(S_{\sF[i]},\sX),
\]
for any $i \geq k$.

We notice that $\Lift_{x}$ is well-defined by the isomorphism (\ref{eq:defining-cotangent-space}) and that $\Lift_{x}$ is a left exact functor. Indeed, by definition we required that it sends pullbacks to pullbacks. Hence $\Lift_{x,\sX}$ is pro-corepresentable, i.e.\ there exists an object
\[
T^{*}_{x}\sX \in \Pro(\TateCoh(S)^-)
\]
that we refer to as \emph{pro-cotangent space to $\sX$ at $x$}.

\begin{defn}
\label{defn:cotangent-space-at-a-point}
For $\sX \in \PStkTate$, one says that $\sX$ admits a \emph{cotangent space at $x:S \ra \sX$} if 
\[
T^*_{x}\sX \in \TateCoh(S)^- \subset \Pro(\TateCoh(S)^-).
\]
\end{defn}

\begin{defn}
\label{defn:cotangent-space}
Given a prestack of Tate type $\sX$, one says that $\sX$ admits a \emph{pro-cotangent space} if for all $(S \overset{x}{\ra} \sX) \in \left(\SchaffTate\right)_{/\sX}$ the prestack of Tate type $\sX$ has a pro-cotangent space at the point $x$. 

Simiarly, one says that $\sX$ admits a \emph{cotangent space} if for all $(S \overset{x}{\ra} \sX) \in \left(\SchaffTate\right)_{/\sX}$ the prestack of Tate type $\sX$ has a cotangent space at the point $x$. 
\end{defn}

\begin{example}
\label{ex:Tate-scheme-has-pro-cotangent-space}
Let $Z \in \SchTatelaft$ and consider $x:S \ra Z$ a point in $Z$, i.e.\ $S \in \SchaffTate$, we claim that $Z$ admits a pro-cotangent space at $x$.

We need to check that for any morphism $\sF_1 \ra \sF_2$ in $\TateCoh(S)^{\leq 0}$ which induces a surjection on $\H^0$, the canonical map
\[
\Maps_{\SchTate}(\imath(S\underset{S_{\sF_2}}{\sqcup}S_{\sF_1}),Z) \ra \Maps_{\SchTate}(S,Z)\underset{\Maps_{\SchTate}(S_{\sF_2},Z)}{\times}\Maps_{\SchTate}(S_{\sF_1},Z)
\]
is an isomorphism. This follows from Corollary \ref{cor:affine-push-out-nil-isomorphism-agrees-with-scheme-push-out} since $S_{\sF_2} \ra S$ is a nil-isomorphism and $S_{\sF_2} \ra S_{\sF_1}$ is a closed nil-isomorphism.
\end{example}

\subsubsection{Relative situation}
\label{subsubsec:pro-cotangent-space-relative-situation}

\paragraph{Relative pro-cotangent space at a point}

The same definitions apply if one works relative to a fixed prestack of Tate type $\sX_0$. Let $(\sX \overset{f}{\ra}\sX_0) \in (\PStkTate)_{/\sX_0}$ and $(S,x) \in (\SchaffTate)_{/\sX}$ we say that $\sX$ admits a \emph{pro-cotangent space relative to $\sX_0$ at $x$} if in the situation of \S \ref{par:functotiality-lift-functor} the diagram
\[
\begin{tikzcd}
\Lift_{x,\sX}(\sF) \ar[r] \ar[d] & \Lift_{x,\sX}(0)\times_{\Lift_{x,\sX}(\sF_2)} \Lift_{x,\sX}(\sF_1) \ar[d] \\
\Lift_{f\circ x,\sX_0}(\sF) \ar[r] & \Lift_{f \circ x,\sX}(0)\times_{\Lift_{f\circ x,\sX}(\sF_2)} \Lift_{f\circ x,\sX_0}(\sF_1)
\end{tikzcd}
\]
is a pullback square.

If this condition is satisfied we denote by
\[
T^*_{x}(\sX/\sX_0) \in \Pro(\TateCoh(S)^-)
\]
the object that pro-corepresents the functor
\[
\sF \mapsto \Lift_{x,\sX}(\sF)\underset{\Lift_{f\circ x,\sX_0}(\sF)}{\times}\ast.
\]

\paragraph{Relative pro-cotangent space}

Similarly to Definition \ref{defn:cotangent-space} one says that $\sX$ admits a pro-cotangent space relative to $\sX_0$ if it admits a pro-cotangent space relative to $\sX_0$ at $x$ for all points $(S,x) \in (\SchaffTate)_{/\sX}$. One also has a Lemma completely analogous to \cite{GR-II}*{Chapter 1, Lemma 2.4.5}.

\paragraph{}

If the prestack $\sX_0$ admits a pro-cotangent space at the point $f\circ x:S \ra \sX_0$, then $\sX$ admits a pro-cotangent space at $x$ and if and only if it admits a pro-cotangent space relative to $\sX_0$ and one has an equivalence
\[
T^*_{x}(\sX/\sX_0) \simeq \Cofib\left(T^*_{f\circ x}\sX_0 \ra T^*_x\sX\right).
\]

\paragraph{}

The following Lemma follows from the definitions
\begin{lem}
\label{lem:pro-cotangent-space-relative-and-absolute-relation}
A prestack of Tate type $\sX$ over $\sX_0$ admits a pro-cotangent space relative to $\sX_0$ if and only if for all $S_0 \in (\SchaffTate)_{/\sX_0}$ the prestack of Tate type $S_0\underset{\sX_0}{\times}\sX$ admits a pro-cotangent space.
\end{lem}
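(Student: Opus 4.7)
The plan is to unwind both conditions in terms of the lift functor and observe that they encode the same data, with the fiber product structure playing the role of matching the two formulations.

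For a morphism $\sF_1 \to \sF_2$ in $\TateCoh(S)^{\leq 0}$ inducing a surjection on $\H^0$, set $\sF := 0 \times_{\sF_2} \sF_1$. The crucial observation, which I would establish first, is that for any fiber product $\sY := S_0 \times_{\sX_0} \sX$ in $\PStkTate$ and any point $x: S \to \sY$ corresponding to a compatible pair $(x_0 : S \to S_0,\ x_\sX : S \to \sX)$, one has a natural identification
\[
\Lift_{x, \sY}(\sF) \simeq \Lift_{x_0, S_0}(\sF) \times_{\Lift_{f \circ x_\sX, \sX_0}(\sF)} \Lift_{x_\sX, \sX}(\sF),
\]
which follows because limits of prestacks of Tate type are computed sectionwise and $S_\sF \in \SchaffTate$ so evaluation is unambiguous.

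For the forward direction, assume $\sX$ admits a pro-cotangent space relative to $\sX_0$. Given $S_0 \in (\SchaffTate)_{/\sX_0}$ and a point $x$ of $\sY = S_0 \times_{\sX_0} \sX$, the fiber product formula above reduces the absolute pro-cotangent space condition for $\sY$ to the combination of two facts: first, that $S_0$ admits a pro-cotangent space at $x_0$, which holds by Example~\ref{ex:Tate-scheme-has-pro-cotangent-space} since Tate affine schemes almost of finite type lie in $\SchTatelaft$; and second, that $\sX$ admits a pro-cotangent space relative to $\sX_0$ at $x_\sX$, which gives exactly the Cartesian square needed to propagate the relative condition through the pullback.

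For the reverse direction, given $x : S \to \sX$ with composite $S \to \sX_0$, set $S_0 := S$ mapping to $\sX_0$ by this composite. Then $\sY := S \times_{\sX_0} \sX$ has a canonical $S$-point $\widetilde{x} = (\id_S, x)$, and by hypothesis $\sY$ admits a pro-cotangent space at $\widetilde{x}$. Applying the fiber product formula for $\Lift$ once more and using that $\Lift_{\id_S, S}(\sF) = \ast$ canonically for all $\sF$, the resulting Cartesian square rearranges into precisely the relative pro-cotangent space condition for $\sX/\sX_0$ at $x$.

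The main subtlety I expect is verifying the fiber product formula for $\Lift$ carefully, since it requires knowing that $S_\sF$ is computed in $\SchaffTate$ and that the fiber product $S_0 \times_{\sX_0} \sX$ is computed sectionwise in $\PStkTate$, so the naturality of $\Maps_{S/}(S_\sF, -)$ with respect to limits in the second argument gives the claim; once this is in hand both implications are straightforward diagram chases.
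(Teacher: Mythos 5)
Your key fiber-product formula
\[
\Lift_{y,\,S_0\times_{\sX_0}\sX}(\sF)\;\simeq\;\Lift_{x_0,S_0}(\sF)\underset{\Lift_{h,\sX_0}(\sF)}{\times}\Lift_{x_{\sX},\sX}(\sF)
\]
is correct (limits of prestacks of Tate type are computed objectwise and $\Maps_{S/}(S_{\sF},-)$ preserves them), and your forward direction is sound: writing $A=\Lift_{x_0,S_0}$, $B=\Lift_{h,\sX_0}$, $C=\Lift_{x_\sX,\sX}$, the absolute condition for $S_0$ gives $A(\sF)\simeq A'$ and the relative condition gives $C(\sF)\simeq B(\sF)\times_{B'}C'$, whence $A(\sF)\times_{B(\sF)}C(\sF)\simeq A'\times_{B'}C'$ after commuting limits. (The paper offers no written proof — it asserts the lemma follows from the definitions — so there is nothing to compare against beyond this unwinding.)

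The reverse direction, however, contains a genuine error: the claim that $\Lift_{\id_S,S}(\sF)\simeq\ast$ is false. By definition $\Lift_{\id_S,S}(\sF)=\Maps_{S/}(S_{\sF},S)$ is the space of retractions of $S\hra S_{\sF}$, i.e.\ the space of derivations $\Maps(T^*(S),\sF)$; already for $S=\Spec k[t]$ and $\sF=\sO_S$ this is not contractible. With $A(\sF)=\Lift_{\id_S,S}(\sF)$ nontrivial, the hypothesis applied to $S_0=S$ only yields
\[
A(\sF)\underset{B(\sF)}{\times}C(\sF)\;\simeq\;A(\sF)\underset{B(\sF)}{\times}\Bigl(B(\sF)\underset{B'}{\times}C'\Bigr),
\]
i.e.\ the comparison map $C(\sF)\ra B(\sF)\times_{B'}C'$ becomes an equivalence only after base change along $A(\sF)\ra B(\sF)$. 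That map sends a retraction $S_{\sF}\ra S$ to the extension $S_{\sF}\ra S\overset{h}{\ra}\sX_0$ of $h$, and there is no reason for it to be an effective epimorphism onto $\Lift_{h,\sX_0}(\sF)$ — a general extension $\widetilde{h}:S_{\sF}\ra\sX_0$ of $h$ need not factor through $S$. So the single choice $S_0=S$ does not suffice; you must exploit that the hypothesis holds for \emph{all} $S_0\in(\SchaffTate)_{/\sX_0}$, e.g.\ by checking the Cartesian condition fiberwise over each point $b\in\Lift_{h,\sX_0}(\sF)$ (an extension $\widetilde{h}:S_{\sF}\ra\sX_0$) and invoking the absolute condition for $S_0=S_{\sF}$ equipped with $\widetilde{h}$, together with the pushout description $S_{\sF}\simeq S\underset{S_{\sF_2}}{\sqcup}S_{\sF_1}$.
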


\subsubsection{Pro-cotangent space of a colimit of prestacks}

Suppose that
\begin{equation}
    \label{eq:presentation-of-sX}
    \sX \simeq \colim_{A}\sX_{a},    
\end{equation}
in the category $\PStkTate$.

For each $(S,x) \in \left(\SchaffTate\right)_{/\sX}$, we let $A_{x/}$ denote the category co-fibered over $A$ whose fiber over $a$ is the space of factorizations of $x$ as
\[
S \overset{x_a}{\ra} \sX_{a} \ra \sX.
\]

\begin{prop}
\label{prop:pro-cotangent-space-of-colimit-of-prestacks}
Suppose that $T^*_{x}\sX$ exists and that the category $A_{x/}$ is sifted. Then the natural map
\[
T^*_{x}\sX \ra \lim_{(a,x_{a}) \in (A_{x/})^{\rm op}}T^*_{x_a}\sX_{a},
\]
where the limit is taken in $\Pro(\Tate(\Coh(S))^-)$ is an isomorphism.
\end{prop}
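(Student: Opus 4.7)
My plan is to prove the isomorphism by testing it on arbitrary $\sF \in \TateCoh(S)^-$. By the defining universal property of the pro-cotangent space, for any $\sF \in \TateCoh(S)^-$ we have $\Hom_{\Pro(\TateCoh(S)^-)}(T^*_x \sX, \sF) \simeq \Lift_{x,\sX}(\sF)$, and similarly $\Hom_{\Pro(\TateCoh(S)^-)}(T^*_{x_a}\sX_a, \sF) \simeq \Lift_{x_a, \sX_a}(\sF)$. Since limits in $\Pro$ correspond to colimits of corepresentable functors, the statement will reduce to proving the ``dual'' identity
\[
\Lift_{x,\sX}(\sF) \;\simeq\; \colim_{(a,x_a) \in A_{x/}} \Lift_{x_a, \sX_a}(\sF)
\]
for every $\sF \in \TateCoh(S)^-$.

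Next, I would establish this identity for $\sF \in \TateCoh(S)^{\leq 0}$, from which the general case follows by the loop-space extension used to define $\Lift_x$ on $\TateCoh(S)^-$ (as this extension commutes with sifted colimits of pointed spaces). Since $\sX \simeq \colim_A \sX_a$ in $\PStkTate$, and colimits of prestacks of Tate type are computed pointwise, for any $T \in \SchaffTate$ (in particular for $T = S$ and $T = S_\sF$) one has $\sX(T) \simeq \colim_A \sX_a(T)$. Writing
\[
\Lift_{x,\sX}(\sF) \;\simeq\; \sX(S_\sF) \underset{\sX(S)}{\times} \{x\},
\]
and similarly $\Lift_{x_a, \sX_a}(\sF) \simeq \sX_a(S_\sF) \times_{\sX_a(S)} \{x_a\}$, I would combine these two identities: the unstraightening of $a \mapsto \sX_a(S)$ has $A_{x/}$ as the fiber over $x \in \sX(S) \simeq \colim_A \sX_a(S)$, and the pointwise colimit description of $\sX(S_\sF)$ fibers analogously. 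Thus the pullback defining $\Lift_{x,\sX}(\sF)$ may be reorganized as the colimit over $A_{x/}$ of the corresponding pullbacks at each $(a, x_a)$.

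The step where I expect to need the sifted hypothesis on $A_{x/}$ is in commuting the colimit with the finite limit (pullback) that defines $\Lift$: sifted colimits in $\Spc$ commute with finite products and, in this situation, with the relevant fiber product against a point. I expect this to be the main technical obstacle, as one has to verify that the category $A_{x/}$ indexing lifts interacts correctly with the square-zero extension $S_\sF$, noting that the base-point $x_a$ varies with $a$; the cleanest way is probably to rewrite the colimit of pullbacks as a pullback of colimits over the unstraightening of $A_{x/}$ and use the sifted property there. Once this is in place, the identity on $\Lift$ holds, and applying the pro-corepresentability reinterpretation in reverse yields the claimed isomorphism in $\Pro(\TateCoh(S)^-)$.
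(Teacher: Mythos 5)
Your overall skeleton --- dualize to a statement about the functors $\Lift$, then use the colimit presentation of $\sX$ --- is the same as the paper's, but the load-bearing step is elided, and the sifted hypothesis is invoked at a point where it is not actually needed. The gap is in your very first reduction. A limit in $\Pro(\TateCoh(S)^-)$ is detected by the $\Vect$-valued Hom into objects of $\TateCoh(S)^-$: one has $\Hom(\lim_a T^*_{x_a}\sX_a,\sF)\simeq\colim_a \Hom(T^*_{x_a}\sX_a,\sF)$ with the colimit computed in $\Vect$ (this uses stability of the target of $\Hom$). This is \emph{not} the same as the pointwise colimit in $\Spc$ of the mapping spaces $\Lift_{x_a,\sX_a}(\sF)$: the two differ by whether the Dold--Kan functor (equivalently $\Omega^{\infty}$) commutes with the colimit over $A_{x/}$, and that commutation is exactly what the sifted hypothesis buys --- after one first reduces to the truncations via $\tau^{\leq n}(\Hom(-,\sF))\simeq\tau^{\leq 0}(\Hom(-,\sF[n]))$, which land in $\Vect^{\leq 0}$ where $\DK$ commutes with sifted colimits. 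This is precisely how the paper's proof runs, and it is the \emph{only} place siftedness is used. Your parenthetical about the loop-space extension commuting with sifted colimits is adjacent to this fact, but you place it in the extension from $\TateCoh(S)^{\leq 0}$ to $\TateCoh(S)^-$, whereas it is already needed for connective $\sF$.

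By contrast, the step where you expect to need siftedness --- commuting the colimit with the pullback $\sX(S_{\sF})\times_{\sX(S)}\{x\}$ --- does not require it: colimits in $\Spc$ are universal, so pulling back $\colim_A\sX_a(S_{\sF})\ra\colim_A\sX_a(S)$ along $\{x\}$ commutes with the colimit over $A$, and reorganizing the result as a colimit over $A_{x/}$ is the standard decomposition of a colimit over a coCartesian fibration. This is why the paper dismisses that step with ``follows from the presentation of $\sX$.'' Note that sifted colimits in $\Spc$ do \emph{not} commute with general pullbacks, so if your argument genuinely leaned on that commutation it would fail; fortunately it does not need to. Repair the first reduction by working with $\Vect$-enriched Homs, truncating, and applying the Dold--Kan commutation, and the rest of your plan goes through.
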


\begin{proof}
The proof is exactly as that of \cite{GR-II}*{Chapter 1, Proposition 2.5.3}. 

We notice that all one needs to check is that for any $\sF \in \TateCoh(S)^{\leq 0}$ the map
\begin{equation}
\label{eq:limit-of-cotangent-spaces-general}
\colim_{(a,x_{a}) \in A_{x/}}\Hom(T^*_{x_a}\sX_{a},\sF) \ra \Hom(T_{x}\sX,\sF)
\end{equation}
is an isomorphism, where the colimit is taken in $\Vect$.

We claim that (\ref{eq:limit-of-cotangent-spaces-general}) is an isomorphism if for every $n \in \bN$ the canonical map
\begin{equation}
    \label{eq:limit-of-cotangent-spaces-eventually-coconnective}
    \colim_{(a,x_{a}) \in A_{x/}}\tau^{\leq n}\left(\Hom(T^*_{x_a}\sX_{a},\sF)\right) \ra \tau^{\leq n}\left(\Hom(T_{x}\sX,\sF)\right)
\end{equation}
is an isomorphism. Indeed, this follows from $\tau^{\leq n}$ commuting with colimits.

Notice that (\ref{eq:limit-of-cotangent-spaces-eventually-coconnective}) can be rewritten as 
\begin{equation}
    \label{eq:limit-of-cotangent-spaces-with-shift}
    \colim_{(a,x_{a}) \in A_{x/}}\tau^{\leq 0}\left(\Hom(T^*_{x_a}\sX_{a},\sF[n])\right) \ra \tau^{\leq 0}\left(\Hom(T_{x}\sX,\sF[n])\right).
\end{equation}

Since the Dold--Kan functor $\DK: \Vect^{\leq 0} \ra \Spc$ commutes with sifted colimits\footnote{See \cite{GR-I}*{Chapter 1, \S 10.2.3} for instance.} (\ref{eq:limit-of-cotangent-spaces-with-shift}) reduces to checking that
\[
\colim_{(a,x_{a}) \in A_{x/}} \Lift_{x_a,\sX_a}(\sF[n]) \ra \Lift_{x,\sX}(\sF[n])
\]
is isomorphism. The last assertion follows from the presentation of $\sX$ (\ref{eq:presentation-of-sX}).
\end{proof}

\subsection{Usual properties of Pro-cotangent spaces}
\label{subsec:properties-of-pro-cotangent-spaces}

In this section we study some properties of (pro-)cotangent spaces to a prestack of Tate type. We also check that the cotangent space of a Tate scheme has the expected properties, namely it is connective.

\subsubsection{Connectivity}

\paragraph{t-structure} For the definition in this section it is useful to keep in mind the discussion of \cite{GR-II}*{Chapter 1, \S 3.1.1 and 3.1.2}. Namely, For $S\in \SchaffTate$ one can understand the coconnective part $\Pro(\TateCoh(S))^{\leq 0}$ of the t-structure on $\Pro(\TateCoh(S))$ as those left exact functors
\[
F: \TateCoh(S) \ra \Spc
\]
that send $\TateCoh(S)^{> 0}$ to a point.

\paragraph{}

One has the following definition

\begin{defn}
For $\sX \in \PStkTate$
\begin{enumerate}[(a)]
    \item given a point $S \overset{x}{\ra} \sX \in (\SchaffTate)_{/\sX}$ one says that \emph{$\sX$ admits an $(-n)$-connective pro-cotangent spaces at $x$} if it admits a pro-cotangent space and $T^*_{x}\sX \in \Pro(\TateCoh(S)^{\leq n})$;
    \item one says that $\sX$ admits an $(-n)$-connective pro-cotangent space if for every $S \overset{x}{\ra} \sX \in (\SchaffTate)_{/\sX}$ one has $T^*_{x}\sX \in \Pro(\TateCoh(S)^{\leq n})$.
\end{enumerate}
\end{defn}

\begin{rem}
One can also define what it means for $\sX$ to have an eventually connective pro-cotangent space at x, that is $T_{x}\sX \in \Pro(\TateCoh(S)^{\leq n})$ for some $n$. And similarly for (b) above.
\end{rem}

We say that $\sX$ admits a connective pro-cotangent space if it admits a $0$-connective pro-cotangent space. The following essentially follows from unwinding the definitions.

\begin{lem}
\label{lem:connective-pro-cotangent-is-commutation-with-finite-limits}
A prestack of Tate type $\sX$ admits a connective pro-cotangent space at $x:S \ra \sX$ if and only if the functor (\ref{eq:lift-functor}) commutes with finite limits.
\end{lem}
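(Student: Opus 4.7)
The plan is to identify both conditions with left-exactness of the functor $\Lift_{x,\sX}: \TateCoh(S)^{\leq 0} \to \Spc$ and to conclude via the standard description of Pro-categories in terms of left-exact functors. The starting observation is that $\TateCoh(S)^{\leq 0}$ admits finite limits, since it is closed under them inside the stable $\infty$-category $\TateCoh(S)$: the zero object is terminal, and pullbacks of connective objects are connective. The general fact to apply is that for any small $\infty$-category $\sC$ admitting finite limits one has a canonical equivalence
\[
\Pro(\sC) \simeq \Fun^{\mbox{lex}}(\sC,\Spc)^{\mbox{op}},
\]
where $\Fun^{\mbox{lex}}$ denotes the full subcategory spanned by functors preserving finite limits. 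Applied to $\sC = \TateCoh(S)^{\leq 0}$, this identifies $\Pro(\TateCoh(S)^{\leq 0})$ with the opposite of the category of left-exact functors $\TateCoh(S)^{\leq 0}\to\Spc$.

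For the ``if'' direction, suppose $\Lift_{x,\sX}$ commutes with finite limits. Given any morphism $\sF_1\to\sF_2$ in $\TateCoh(S)^{\leq 0}$ inducing a surjection on $\H^0$, the object $\sF := 0\times_{\sF_2}\sF_1$ lies in $\TateCoh(S)^{\leq 0}$, so applying $\Lift_{x,\sX}$ to this pullback square yields the equivalence (\ref{eq:defining-cotangent-space}); hence $\sX$ admits a pro-cotangent space at $x$ in the sense of Definition \ref{defn:pro-cotangent-space-at-a-point}. The left-exact functor $\Lift_{x,\sX}$ then corresponds under the above characterization to an object $T^*_x\sX \in \Pro(\TateCoh(S)^{\leq 0})$, which is exactly the statement that the pro-cotangent space at $x$ is connective.

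For the ``only if'' direction, assume $T^*_x\sX$ exists and lies in $\Pro(\TateCoh(S)^{\leq 0}) \subset \Pro(\TateCoh(S)^-)$. By definition of the extension of $\Lift_{x,\sX}$ to $\TateCoh(S)^-$, the restriction of the corepresented functor $\Hom_{\Pro(\TateCoh(S)^-)}(T^*_x\sX,-)$ to $\TateCoh(S)^{\leq 0}$ recovers $\Lift_{x,\sX}$ on the nose. Writing $T^*_x\sX \simeq \lim_I X_i$ with $X_i \in \TateCoh(S)^{\leq 0}$, this restriction becomes the filtered colimit of the representable functors $\Hom_{\TateCoh(S)^{\leq 0}}(X_i,-)$; each summand is left-exact and filtered colimits of left-exact functors to $\Spc$ remain left-exact, so $\Lift_{x,\sX}$ commutes with finite limits. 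The main point to verify carefully is the compatibility between the t-structure on $\Pro(\TateCoh(S)^-)$ and the embedding $\Pro(\TateCoh(S)^{\leq 0}) \hookrightarrow \Pro(\TateCoh(S)^-)$: specifically, that connectivity of a Pro-object is detected by left-exactness of the restriction to $\TateCoh(S)^{\leq 0}$ of its corepresented functor, which follows once one checks that any cofiltered diagram in $\TateCoh(S)^{\leq 0}$ still defines the same Pro-object when viewed in $\Pro(\TateCoh(S)^-)$.
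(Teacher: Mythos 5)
Your overall strategy is the right one: the paper offers no argument beyond ``unwinding the definitions,'' and the unwinding you propose --- identifying both conditions with left-exactness of $\Lift_{x,\sX}$ on $\TateCoh(S)^{\leq 0}$ and invoking $\Pro(\sC)\simeq \Fun^{\rm lex}(\sC,\Spc)^{\rm op}$ --- is exactly what is intended. However, your ``starting observation'' contains a false claim: $\TateCoh(S)^{\leq 0}$ is \emph{not} closed under pullbacks inside the stable category $\TateCoh(S)$. For a map $f:\sF_1 \ra \sF_2$ of connective objects the long exact sequence gives $\H^{1}(0\times_{\sF_2}\sF_1) \simeq \mathrm{coker}\bigl(\H^0(\sF_1) \ra \H^0(\sF_2)\bigr)$, which is nonzero unless $f$ is surjective on $\H^0$. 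This is precisely why Definition \ref{defn:pro-cotangent-space-at-a-point} imposes the $\H^0$-surjectivity hypothesis; if pullbacks of connective objects were automatically connective, that hypothesis would be vacuous. The correct justification is that $\TateCoh(S)^{\leq 0}$ admits finite limits because its inclusion into $\TateCoh(S)$ has the right adjoint $\tau^{\leq 0}$, so finite limits exist and are computed by truncating the ambient limit; ``commutes with finite limits'' in the statement refers to these truncated limits.

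With that correction the rest of your argument survives. In the ``if'' direction, the particular pullbacks appearing in (\ref{eq:defining-cotangent-space}) are already connective (by the surjectivity hypothesis), hence coincide with the pullbacks computed in $\TateCoh(S)^{\leq 0}$, so left-exactness of (\ref{eq:lift-functor}) does produce the pro-cotangent space; the resulting left-exact functor is pro-corepresented by an object of $\Pro(\TateCoh(S)^{\leq 0})$, and one should add the (easy) check that this object also corepresents the \emph{extended} functor on $\TateCoh(S)^-$, using that $\Omega^{i}$ commutes with filtered colimits of spaces. In the ``only if'' direction, your reduction to a filtered colimit of corepresentable functors is correct, and the needed compatibility is exactly Lemma \ref{lem:t-structure-on-Pro-objects}(a): a Pro-object is connective if and only if it can be written as a cofiltered limit of objects of $\TateCoh(S)^{\leq 0}$, and filtered colimits commute with finite limits in $\Spc$.
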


The following is a consequence of Corollary \ref{cor:affine-push-out-nil-isomorphism-agrees-with-scheme-push-out}.

\begin{cor}
\label{cor:Tate-scheme-has-connective-pro-cotangent-space}
Every Tate scheme $X \in \SchTate$ admits connective pro-cotangent spaces.
\end{cor}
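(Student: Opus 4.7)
By Lemma \ref{lem:connective-pro-cotangent-is-commutation-with-finite-limits}, once we fix a point $x : S \to X$ with $S \in \SchaffTate$, it suffices to show that the functor
\[
\Lift_{x, X} : \TateCoh(S)^{\leq 0} \to \Spc, \qquad \sF \mapsto \Maps_{S/}(S_\sF, X),
\]
preserves finite limits; this will simultaneously yield the existence of $T^{*}_{x} X$ and its connectivity. The terminal-object case is immediate, as $S_{0} \simeq S$ gives $\Lift_{x, X}(0) \simeq \{x\}$, so the substance of the argument lies in the pullbacks.

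Given a cospan $\sF_1 \to \sF_3 \leftarrow \sF_2$ in $\TateCoh(S)^{\leq 0}$ with limit $\sF$, the first step is to invoke Lemma \ref{lem:realsplitsqz-Tate-preserves-colimits}: since $\RealSplitSqZ_{S}$ carries limits in $\TateCoh(S)^{\leq 0}$ to colimits in $(\SchaffTate)_{S/}$, one obtains an identification
\[
S_{\sF} \simeq S_{\sF_1} \underset{S_{\sF_3}}{\sqcup} S_{\sF_2}
\]
in $\SchaffTate$. The second step is to transfer this push-out to $\SchTate$. The key observation is that for any $\sG \in \TateCoh(S)^{\leq 0}$ the augmentation ideal of the split square-zero extension $S_{\sG} \to S$ is nilpotent, so $\red{S_{\sG}} \simeq \red{S}$; consequently every morphism $S_{\sF_3} \to S_{\sF_i}$ sitting over $S$ is a nil-isomorphism. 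Applying Corollary \ref{cor:affine-push-out-nil-isomorphism-agrees-with-scheme-push-out} in its ``moreover'' form (which requires only one leg of the push-out to be a nil-isomorphism) then yields
\[
\imath(S_{\sF}) \simeq \imath(S_{\sF_1}) \underset{\imath(S_{\sF_3})}{\sqcup} \imath(S_{\sF_2}) \quad \text{in } \SchTate.
\]

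Since $X$ is a Tate scheme, mapping out of this push-out in $\SchTate$ turns it into a pull-back of mapping spaces; taking the fibre over $x \in X(S)$ produces the desired identification
\[
\Lift_{x, X}(\sF) \simeq \Lift_{x, X}(\sF_1) \underset{\Lift_{x, X}(\sF_3)}{\times} \Lift_{x, X}(\sF_2).
\]
The main potential obstacle is the nil-isomorphism verification needed to apply Corollary \ref{cor:affine-push-out-nil-isomorphism-agrees-with-scheme-push-out}, but this is a direct consequence of the square-zero structure of the extensions involved. Once this is in place, Lemma \ref{lem:connective-pro-cotangent-is-commutation-with-finite-limits} delivers both the existence of $T^{*}_{x} X$ and its membership in $\Pro(\TateCoh(S)^{\leq 0})$, which concludes the proof.
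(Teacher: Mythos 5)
Your proof is correct and follows essentially the same route as the paper: reduce via Lemma \ref{lem:connective-pro-cotangent-is-commutation-with-finite-limits} to commutation with finite limits, identify $S_{\sF}$ with the corresponding push-out of split square-zero extensions using Lemma \ref{lem:realsplitsqz-Tate-preserves-colimits}, and then invoke the general (``moreover'') case of Corollary \ref{cor:affine-push-out-nil-isomorphism-agrees-with-scheme-push-out} to pass the push-out to $\SchTate$ and map out of it. The only cosmetic difference is that you treat a general cospan $\sF_1 \to \sF_3 \leftarrow \sF_2$ where the paper takes the cospan $0 \to \sF_2 \leftarrow \sF_1$; both rest on the same nil-isomorphism observation.
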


\begin{proof}
As in Example \ref{ex:Tate-scheme-has-pro-cotangent-space}, given a point $x:S \ra Z$, by Lemma \ref{lem:connective-pro-cotangent-is-commutation-with-finite-limits} we notice that it is enough to check that the map
\[
\Maps_{\SchTate}(\imath(S\underset{S_{\sF_2}}{\sqcup}S_{\sF_1}),Z) \ra \Maps_{\SchTate}(S,Z)\underset{\Maps_{\SchTate}(S_{\sF_2},Z)}{\times}\Maps_{\SchTate}(S_{\sF_1},Z)
\]
is an isomorphism, where $\sF_1 \ra \sF_2$ is an arbitrary morphism in $\TateCoh(S)^{\leq 0}$. This follows from the general case of Corollary \ref{cor:affine-push-out-nil-isomorphism-agrees-with-scheme-push-out}.
\end{proof}

\subsubsection{Pro-cotangent vs cotangent}

\paragraph{}
\label{par:co-compact-condition-for-Pro-objects}
We recall that given an object $F \in \Pro(\TateCoh(S)^{\leq n})$, then $F$ belongs to $\TateCoh(S)^{\leq n}$ is an only if the corresponding functor
\[
F: \TateCoh(S)^{\leq n} \ra \Spc
\]
commutes with filtered limits, i.e.\ is a co-compact object. Thus, we obtain

\begin{cor}
For $\sX \in \PStkTate$ that admits a $(-n)$-connective pro-cotangent space at $x$, then it admits a $(-n)$-cotangent space at $x$ if an only if the functor (\ref{eq:lift-functor}) commutes with cofiltered limits.
\end{cor}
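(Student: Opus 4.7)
The plan is to reduce the statement to a direct application of the co-compactness criterion recalled in \S \ref{par:co-compact-condition-for-Pro-objects}, after identifying the left-exact functor on $\TateCoh(S)^{\leq n}$ attached to the pro-object $T^*_x\sX$ with the restriction of $\Lift_{x,\sX}$.

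First I would unwind the defining pro-corepresentation property of $T^*_x \sX$. By the extension of $\Lift_{x,\sX}$ to $\TateCoh(S)^-$ described just before Definition \ref{defn:cotangent-space-at-a-point}, for every $\sF \in \TateCoh(S)^-$ there is a canonical identification
\[
\Lift_{x,\sX}(\sF) \;\simeq\; \Hom_{\Pro(\TateCoh(S)^-)}(T^*_x \sX, \sF).
\]
Under the convention $\Pro(\TateCoh(S)^-) \simeq \Fun^{\rm lex}(\TateCoh(S)^-, \Spc)^{\rm op}$ fixed in \S \ref{subsec:Pro-ind-coherent-on-schemes}, the left-exact functor $\TateCoh(S)^{\leq n} \ra \Spc$ corresponding to the pro-object $T^*_x\sX$ is precisely $\left.\Lift_{x,\sX}\right|_{\TateCoh(S)^{\leq n}}$.

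Next I would invoke \S \ref{par:co-compact-condition-for-Pro-objects}: an object $F \in \Pro(\TateCoh(S)^{\leq n})$ belongs to the subcategory $\TateCoh(S)^{\leq n}$ if and only if its attached functor is co-compact, i.e.\ commutes with cofiltered limits. Applied to $F = T^*_x\sX$ and combined with the identification of the previous step, this yields the equivalence: $T^*_x\sX \in \TateCoh(S)^{\leq n}$ if and only if $\Lift_{x,\sX}$ commutes with cofiltered limits. Since by Definition \ref{defn:cotangent-space-at-a-point} (together with the $(-n)$-connectivity hypothesis on $T^*_x\sX$) the former is exactly the statement that $\sX$ admits a $(-n)$-cotangent space at $x$, the corollary follows.

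The main (and rather mild) obstacle is purely bookkeeping on variances: since Pro-objects in this paper are defined as the opposite of left-exact functors to $\Spc$, one needs to confirm that the functor tested for co-compactness is indeed $\sF \mapsto \Hom_{\Pro}(T^*_x\sX, \sF) \simeq \Lift_{x,\sX}(\sF)$, rather than its opposite; once this is checked, the corollary is essentially a one-line application of the criterion in \S \ref{par:co-compact-condition-for-Pro-objects}.
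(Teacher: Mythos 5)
Your proposal is correct and follows exactly the route the paper takes: the corollary is stated there as an immediate consequence of the co-compactness criterion recalled in \S \ref{par:co-compact-condition-for-Pro-objects}, applied to the left-exact functor $\left.\Lift_{x,\sX}\right|_{\TateCoh(S)^{\leq n}}$ pro-corepresented by $T^*_x\sX$. Your extra care about variances and about the $(-n)$-connectivity hypothesis placing $T^*_x\sX$ in $\Pro(\TateCoh(S)^{\leq n})$ is consistent with, and slightly more explicit than, the paper's one-line justification.
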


\paragraph{}

One has the following result

\begin{prop}
Every Tate scheme $Z \in \SchTate$ admits connective cotangent spaces.
\end{prop}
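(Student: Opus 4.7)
By the criterion of paragraph \ref{par:co-compact-condition-for-Pro-objects}, it suffices to prove that for every point $x: S \to Z$ with $S \in \SchaffTate$, the lift functor
\[
\Lift_{x,Z}\colon \TateCoh(S)^{\leq 0} \to \Spc, \quad \sF \mapsto \Maps_{S/}(S_\sF, Z)
\]
commutes with cofiltered limits. Given a cofiltered diagram $\{\sF_\alpha\}_{\alpha \in A}$ with limit $\sF$, Lemma \ref{lem:realsplitsqz-Tate-preserves-colimits} identifies $S_\sF \simeq \colim_\alpha S_{\sF_\alpha}$ in $(\SchaffTate)_{S/}$, so the required equivalence becomes
\[
\Maps_{S/}(\colim_\alpha S_{\sF_\alpha}, Z) \simeq \lim_\alpha \Maps_{S/}(S_{\sF_\alpha}, Z).
\]

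The plan is to upgrade Corollary \ref{cor:affine-push-out-nil-isomorphism-agrees-with-scheme-push-out} from a single push-out to the filtered colimit at hand. Since each $\sF_\alpha$ is connective, each map $S \to S_{\sF_\alpha}$ is a closed nil-isomorphism, so the entire cofiltered system of Tate affine schemes shares the same reduced scheme. I intend to show that the colimit $\colim_\alpha S_{\sF_\alpha}$ computed in $\SchaffTate$ agrees with its image under $\imath\colon \SchaffTate \hookrightarrow \SchTate$, viewed as a colimit in $\SchTate$. This is the natural filtered-colimit generalization of Corollary \ref{cor:affine-push-out-nil-isomorphism-agrees-with-scheme-push-out}, and should be established by an analogous argument: choose a common presentation $S \simeq \colim_I S_i$, write each $S_{\sF_\alpha} \simeq \colim_I (S_i)_{(\sF_\alpha)_i}$ via the construction in the proof of Proposition \ref{prop:RealSplitSqZ-Tate-affine-schemes}, and compare the double colimit $\colim_\alpha \colim_I (S_i)_{(\sF_\alpha)_i}$ levelwise in both categories, using that the reasoning of Lemma \ref{lem:push-out-of-nil-isomorphisms-for-Tate-schemes-exist} persists for filtered colimits of closed nil-isomorphisms.

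Granted this colimit comparison, the desired equivalence follows from the universal property of colimits in $\SchTate$: mapping $\colim_\alpha \imath(S_{\sF_\alpha}) \to Z$ yields $\lim_\alpha \Maps_{\SchTate}(\imath(S_{\sF_\alpha}), Z)$, which coincides with $\lim_\alpha \Maps_{\SchaffTate}(S_{\sF_\alpha}, Z)$ by the full faithfulness of $\imath$. The main obstacle is the careful verification that the Tate condition on the structure ideals is preserved under the cofiltered system indexed by $A$; this amounts to a statement about cofiltered systems of pro-commutative algebras with coherent connecting fibers, in the same spirit as Lemma \ref{lem:push-out-affine-Tate-schemes}, which should follow by double-indexing the presentations over $A \times I$ and applying the finitely-presented-module criterion used in \S \ref{par:definition-of-Geom-Alg}.
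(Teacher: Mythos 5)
Your proof follows the paper's argument: the paper likewise reduces the statement to showing that the composite
$(\TateCoh(S)^{\leq 0})^{\rm op} \xrightarrow{\RealSplitSqZ} (\SchaffTate)_{S/} \xrightarrow{\imath} (\SchTate)_{S/} \to \SchTate$
commutes with filtered colimits, citing Lemma \ref{lem:realsplitsqz-Tate-preserves-colimits} for the first functor and declaring the preservation of the colimit by $\imath$ ``clear'', which is exactly the step you propose to flesh out by upgrading Corollary \ref{cor:affine-push-out-nil-isomorphism-agrees-with-scheme-push-out} to filtered colimits. The only slip is the closing appeal to full faithfulness of $\imath$ to identify $\Maps_{\SchTate}(\imath(S_{\sF_\alpha}),Z)$ with $\Maps_{\SchaffTate}(S_{\sF_\alpha},Z)$ --- this does not typecheck since $Z$ is not affine --- but nothing in the argument actually needs it, as the lift functor is computed by mapping spaces in $\SchTate$ throughout.
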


\begin{proof}
Given a point $x:S \ra Z$, we need to check that the composition
\[
(\TateCoh(S)^{\leq 0})^{\rm op} \overset{\RealSplitSqZ}{\ra} (\SchaffTate)_{S/} \overset{\imath}{\ra} (\SchTate)_{S/} \ra \SchTate
\]
commutes with filtered colimits. The claim for the first map follows from Lemma \ref{lem:realsplitsqz-Tate-preserves-colimits}, the claim for the second map is clear, finally the last claim follows from the fact that the forgetful functor commutes with colimits indexed by any contractible category.
\end{proof}

\subsubsection{Convergence}

\paragraph{}

As in \cite{GR-II}*{Chapter 1, \S 3.3.1} we define the subcategory 
\[
\conv{\Pro(\TateCoh(S)^-)} \subset \Pro(\TateCoh(S)^-) 
\]
as the full subcategory of functors $\Phi: \TateCoh(S)^- \ra \Spc$ that satisfies for any $\sF \in \TateCoh(S)^{\leq 0}$ the map 
\[
\Phi(\sF) \ra \lim_{n \geq 0}\Phi(\tau^{\geq-n}(\sF))
\]
is an isomorphism, where $\Phi$ is seem as a functor $\TateCoh^{\leq 0} \ra \Spc$.

\paragraph{}

The following result follows from the definitions

\begin{lem}
\label{lem:pro-cotangent-space-tested-on-eventually-coconnective-Tate-affine-schemes}
Suppose that $\sX \in \PStkTate$ is convergent and admits a pro-cotangent space at $(S \overset{x}{\ra} \sX) \in (\SchaffTate)_{/\sX}$, then $T^*_{x}\sX \in \conv{\Pro(\TateCoh(S)^{-})}$.
\end{lem}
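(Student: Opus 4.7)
By the definition of $\conv{\Pro(\TateCoh(S)^-)}$, what we must verify is that, for every $\sF \in \TateCoh(S)^{\leq 0}$, the canonical map
\[
\Lift_{x,\sX}(\sF) \longrightarrow \lim_{n\geq 0}\Lift_{x,\sX}(\tau^{\geq -n}\sF)
\]
is an isomorphism. Using the fully-faithful embedding of Tate affine schemes into $\PStkTate$ together with the definition $\Lift_{x,\sX}(\sG) = \Maps_{S/}(S_{\sG},\sX)$, this rewrites as the assertion that
\[
\sX(S_{\sF})\underset{\sX(S)}{\times}\{x\} \longrightarrow \lim_{n \geq 0}\Bigl(\sX(S_{\tau^{\geq -n}\sF})\underset{\sX(S)}{\times}\{x\}\Bigr)
\]
is an isomorphism. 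Since limits commute with fiber products and $\sX(S)$ is held fixed, it suffices to prove that
\[
\sX(S_{\sF}) \longrightarrow \lim_{n\geq 0}\sX(S_{\tau^{\geq -n}\sF}) \tag{$\ast$}
\]
is an isomorphism (compatibly over $\sX(S)$).

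The main input is the following identification of truncations, which I expect to be the only non-formal step: for every $m \leq n$ one has a canonical isomorphism
\[
{^{\leq m}(S_{\sF})} \;\simeq\; {^{\leq m}(S_{\tau^{\geq -n}\sF})}
\]
in $\SchaffTaten$. This follows by tracing through the construction of $\RealSplitSqZ_{S}$ in the proof of Proposition \ref{prop:RealSplitSqZ-Tate-affine-schemes}: both sides are computed by applying $\tau^{\geq -m}$ to the pro-commutative algebra $\sO_{S}\oplus(-)$, and the identity $\tau^{\geq -m}\circ \tau^{\geq -n}\simeq \tau^{\geq -m}$ for $m\leq n$, together with the fact that $\tau^{\geq -m}$ distributes over the square-zero extension (because it is a left adjoint and the module structure is killed in the relevant range), yields the stated equivalence. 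The compatibility with the presentation of $\sF$ as a cofiltered limit in $\TateCoh(S)^{\leq 0}$ is automatic because $\tau^{\geq -m}$ is applied levelwise.

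With this lemma in hand, the proof of $(\ast)$ is a limit-swapping argument using the convergence hypothesis on $\sX$. Applying Lemma \ref{lem:concrete-condition-convergent-prestack-Tate-type} (via \S \ref{subsubsec:convergence-prestacks-Tate}) to the Tate affine schemes $S_{\sF}$ and $S_{\tau^{\geq -n}\sF}$, one obtains
\[
\sX(S_{\sF})\simeq \lim_{m\geq 0}\sX({^{\leq m}(S_{\sF})}),
\qquad
\sX(S_{\tau^{\geq -n}\sF})\simeq \lim_{m\geq 0}\sX({^{\leq m}(S_{\tau^{\geq -n}\sF})}).
\]
Substituting and exchanging the order of the two $\mathbb{N}$-indexed limits, the right-hand side of $(\ast)$ becomes
\[
\lim_{n\geq 0}\lim_{m\geq 0}\sX({^{\leq m}(S_{\tau^{\geq -n}\sF})})
\;\simeq\;
\lim_{m\geq 0}\lim_{n\geq m}\sX({^{\leq m}(S_{\tau^{\geq -n}\sF})}).
\]
By the truncation identification above, the inner limit is constant with value $\sX({^{\leq m}(S_{\sF})})$ for all $n\geq m$, so the expression reduces to $\lim_{m\geq 0}\sX({^{\leq m}(S_{\sF})})\simeq \sX(S_{\sF})$ by convergence of $\sX$ one more time. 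This yields the desired isomorphism $(\ast)$ and completes the proof.
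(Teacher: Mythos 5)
Your proof is correct, and it takes the route the paper intends: the paper offers no argument here beyond the remark that the lemma ``follows from the definitions,'' and your write-up is exactly the unwinding of those definitions — expressing $\Lift_{x,\sX}$ via $\RealSplitSqZ_S$, observing that ${^{\leq m}(S_{\sF})}\simeq{^{\leq m}(S_{\tau^{\geq -n}\sF})}$ for $m\leq n$ (which holds because truncation of connective algebras/modules is compatible with the split square-zero construction, level-wise in the pro-diagram), and then interchanging the two $\mathbb{N}$-indexed limits to reduce everything to the convergence hypothesis on $\sX$. The only point worth recording alongside your argument is that $\tau^{\geq -n}\sF$ does remain in $\TateCoh(S)$ (the fibers of the truncated transition maps are still coherent, by the long exact sequence), so that all the terms $\Lift_{x,\sX}(\tau^{\geq -n}\sF)$ are actually defined; but this is needed for the statement itself to parse and is routine.
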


Similarly, one has results that say that when $\sX$ is converngent it is enough to test for pro-cotangent spaces using eventually coconnective Tate affine schemes and eventually coconnective Tate-coherent sheaves.

\begin{lem}
\label{lem:pro-cotangent-space-convergent-Tate-enough-eventually-coconnective-affine}
\begin{enumerate}[(a)]
    \item Suppose that $\sX$ is a convergent prestack of Tate type, then $\sX$ admits pro-cotangent spaces (resp.\ $(-n)$-connective pro-cotangent spaces) if and only if for every $(S \overset{x}{\ra} \sX) \in (\SchaffTateconv)_{/\sX}$ and every $\sF_{1},\sF_2 \in \TateCoh(S)^{\geq -\infty,\leq 0}$ the map (\ref{eq:defining-cotangent-space}) is an isomorphism.
    \item Suppose that $f:\sX \ra \sX_0$ is a map between convergent prestacks of Tate type, then $\sX$ admits a pro-cotangent space relative to $\sX_0$ if and only if for every $(S_0 \overset{x_0}{\ra}\sX_0) \in (\SchaffTateconv)_{/\sX_0}$ the fiber $S_0\underset{\sX_0}{\times}\sX$ admits a pro-cotangent spaces.
\end{enumerate}
\end{lem}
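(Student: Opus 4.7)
For part (a), the ``only if'' direction is tautological. For the ``if'' direction, fix an arbitrary point $x:S \ra \sX$ with $S \in \SchaffTate$ and sheaves $\sF_1,\sF_2 \in \TateCoh(S)^{\leq 0}$ fitting into a map $\sF_1 \ra \sF_2$ surjective on $\H^0$. The plan is to reduce the verification of the isomorphism (\ref{eq:defining-cotangent-space}) to the hypothesis by a double truncation argument: first truncate the source $S$, then truncate the sheaves $\sF_i$.

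\textbf{First reduction: truncating $S$.} Since $\sX$ is convergent, by Lemma \ref{lem:concrete-condition-convergent-prestack-Tate-type} we have $\sX(T) \simeq \lim_{n \geq 0}\sX({^{\leq n}T})$ for any $T \in \SchaffTate$. Applied to $T = S_{\sF_i}$, $T = S$ and $T = S \underset{S_{\sF_2}}{\sqcup} S_{\sF_1}$, this rewrites each corner of (\ref{eq:defining-cotangent-space}) as a limit over $n$ of the analogous mapping spaces with $S$ replaced by ${^{\leq n}S}$ and $\sF_i$ replaced by $(\jmath_n)^{\rm Tate,*}(\sF_i)$, where $\jmath_n:{^{\leq n}S}\hra S$. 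Here one needs to check that $\RealSplitSqZ$ commutes with truncations, which follows from Lemma \ref{lem:RealSplitSqZ-of-TateCoh-for-Tate-schemes-is-functorial} applied to $\jmath_n$ together with the fact that $S_{\sF}$ is obtained from $S$ and $\sF$ by a base change of truncations; the push-out $S \underset{S_{\sF_2}}{\sqcup} S_{\sF_1}$ is handled by Lemma \ref{lem:push-out-affine-Tate-schemes} and commutation of truncation with push-outs in $\Pro(\CAlg(\Vect^{\leq 0}))$. Since limits commute with limits, it suffices to verify (\ref{eq:defining-cotangent-space}) for each ${^{\leq n}S} \in \SchaffTateconv$.

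\textbf{Second reduction: truncating the sheaves.} Fix now $S \in \SchaffTateconv$. Using the t-structure on $\TateCoh(S)$ from \S\ref{par:t-structure-on-TateCoh} and Lemma \ref{lem:t-structure-on-TateCoh-of-Tate-scheme}, we have $\sF_i \simeq \lim_{m}\tau^{\geq -m}\sF_i$ with each truncation lying in $\TateCoh(S)^{\geq -\infty,\leq 0}$. By Lemma \ref{lem:realsplitsqz-Tate-preserves-colimits}, $\RealSplitSqZ$ converts these limits of sheaves to colimits of Tate affine schemes under $S$, so $S_{\sF_i} \simeq \colim_m S_{\tau^{\geq -m}\sF_i}$ and the push-out correspondingly decomposes. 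Hence each term in (\ref{eq:defining-cotangent-space}) is a limit over $m$ of the analogous expressions with $\sF_i$ replaced by $\tau^{\geq -m}\sF_i$. Again limits commute with limits, so the isomorphism in the general case follows from the hypothesis applied to the eventually coconnective truncations. The $(-n)$-connectivity variant is recovered from the same argument by testing against $\sF \in \TateCoh(S)^{[-\infty,n]}$ and invoking \S\ref{par:co-compact-condition-for-Pro-objects}.

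\textbf{Part (b).} Both directions follow by combining Lemma \ref{lem:pro-cotangent-space-relative-and-absolute-relation} with part (a). The ``only if'' direction is immediate. For ``if'', given $(S,x) \in (\SchaffTate)_{/\sX}$, let $x_0 = f \circ x$; the relative lift fiber at $(S,x,\sF)$ may be computed as the lift of $x$ to $\sX \underset{\sX_0}{\times} S$ over $S$. The hypothesis provides pro-cotangent spaces for $S_0 \underset{\sX_0}{\times}\sX$ whenever $S_0 \in \SchaffTateconv$; since the fiber product of convergent prestacks of Tate type is convergent (Lemma \ref{lem:Tate-prestack-convergence-compatibility} combined with stability of convergence under limits), we may apply part (a) to the prestack $S_0 \underset{\sX_0}{\times}\sX$ for $S_0 \in \SchaffTateconv_{/\sX_0}$ to deduce that it admits pro-cotangent spaces at all points. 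Running the same two-step truncation reduction as in (a), now applied simultaneously to $S$ and to $\sX_0$ via the convergence of $\sX_0$, concludes.

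The main obstacle is the first reduction: one must check compatibility of $\RealSplitSqZ$ with the truncation functor $\tau^{\geq -n}$ and with the formation of push-outs of the shape appearing in (\ref{eq:defining-cotangent-space}). This is essentially formal given Lemma \ref{lem:RealSplitSqZ-of-TateCoh-for-Tate-schemes-is-functorial} and the construction of push-outs in Lemma \ref{lem:push-out-affine-Tate-schemes}, but it requires a careful book-keeping because $\RealSplitSqZ$ was built in several steps and the truncation acts on both the underlying ring and the module.
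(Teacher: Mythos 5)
Your overall strategy — a two-step truncation reduction driven by convergence of $\sX$ — is the right one (the paper offers no proof, asserting the lemma "similarly" follows from the definitions, and this is the argument Gaitsgory--Rozenblyum run in the analogous place). However, your second reduction contains a genuine gap. You assert $\sF_i \simeq \lim_m \tau^{\geq -m}\sF_i$ in $\TateCoh(S)^{\leq 0}$ and then feed this into Lemma \ref{lem:realsplitsqz-Tate-preserves-colimits} to get $S_{\sF_i} \simeq \colim_m S_{\tau^{\geq -m}\sF_i}$. That identity fails: the t-structure on $\TateCoh(S)$ is inherited from $\IndCoh(S)$, which is not left-complete (there are nonzero infinitely connective objects killed by every $\tau^{\geq -m}$), and even setting that aside, the limit $\lim_m \tau^{\geq -m}\sF$ computed in $\Pro(\IndCoh(S))$ is the formal Pro-object $\{\tau^{\geq -m}\sF\}_m$, which is not the constant object $\sF$ when $\sF$ is unbounded below (the identity of $\sF$ does not factor through any truncation). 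So the premise of your appeal to Lemma \ref{lem:realsplitsqz-Tate-preserves-colimits} is false. The reduction is still salvageable, but the mechanism must again be convergence of $\sX$ rather than any completeness of the sheaf category: since $\RealSplitSqZ$ factors through $\Psi_S$ into $\QCoh$, the truncation ${^{\leq n}(S_{\sF})}$ depends only on $\tau^{\geq -n}$-data of $\sF$, so $\sX(S_{\sF}) \simeq \lim_n \sX({^{\leq n}(S_{\sF})}) \simeq \lim_n \sX({^{\leq n}(S_{\tau^{\geq -n}\sF})})$, and one concludes by a cofinality argument on the resulting double limit over $(n,m)$.

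Two secondary issues in the first reduction. First, Lemma \ref{lem:RealSplitSqZ-of-TateCoh-for-Tate-schemes-is-functorial} is a pushforward/push-out compatibility ($\RealSplitSqZ_T(f^{\rm Tate}_*\sF) \simeq \RealSplitSqZ_S(\sF)\underset{S}{\sqcup}T$); what you need for $\jmath_n:{^{\leq n}S}\hra S$ is the pullback-type statement ${^{\leq n}(S_{\sF})} \simeq ({^{\leq n}S})_{\tau^{\geq -n}(\jmath_n^{*}\sF)}$, which is not what that lemma says and must instead be extracted from the step-by-step construction in Proposition \ref{prop:RealSplitSqZ-Tate-affine-schemes} (using that $M\otimes_A \tau^{\geq -n}A \ra M$ is an isomorphism on $\H^{-k}$ for $k \leq n$). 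Second, truncation does not commute with the fiber appearing in (\ref{eq:defining-cotangent-space}): $\tau^{\geq -n}\Fib(\sF_1 \ra \sF_2)$ and $\Fib(\tau^{\geq -n}\sF_1 \ra \tau^{\geq -n}\sF_2)$ differ in degree $-n$. They agree after applying $\tau^{\geq -(n-1)}$, which is enough once everything is phrased as a double limit, but this needs to be said; as written your argument silently identifies the truncated fiber with the fiber of truncations when it applies the hypothesis to $\sF_{1,n}\ra\sF_{2,n}$. Part (b) is fine in outline (reduce via Lemma \ref{lem:pro-cotangent-space-relative-and-absolute-relation} and part (a)), though the citation of Lemma \ref{lem:Tate-prestack-convergence-compatibility} for stability of convergence under fiber products is off — that lemma concerns the locally-a-prestack condition; the correct justification is simply that convergence is a limit condition and is preserved under limits of prestacks of Tate type.
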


\subsection{The locally a prestack and Tate conditions}
\label{subsec:deformation-theory-lp-and-Tate-conditions}

The goal of this section is to understand which subcategory of $\Pro(\TateCoh(X)^-)$ is equivalent to $\ProIndCoh(X)$, where $X \in \SchTatelaft$ (cf.\ \cite{GR-II}*{Chapter 1, \S 3.4}).

\subsubsection{Digression: right complete and bounded above Tate-coherent sheaves}

\paragraph{Completion of Pro-ind-coherent sheaves}

For $X$ a Tate scheme locally almost of finite type, we recall that $\Pro(\IndCoh(X))^{\leq n}$ is the subcategory of $\Pro(\IndCoh(X)^-)$ that sends $\IndCoh(X)^{\geq (n+1)}$ to a point. Let
\[
\Pro(\IndCoh(X))^{\wedge,-} := \lim_{n \geq 0}\left(\cdots \Pro(\IndCoh(X))^{\leq n+1} \overset{\tau^{\leq n}}{\ra} \Pro(\IndCoh(X))^{\leq n} \cdots \right),
\]
in other words $\Pro(\IndCoh(X))^{\wedge,-}$ is the right t-completion of the t-structure on $\Pro(\IndCoh(X))^-$.

\paragraph{Complete Tate-coherent sheaves}
\label{par:complete-Tate-coherent-sheaves}

We consider the subcategory  $\TateCoh(X)^{\wedge,-} \subset \Pro(\IndCoh(X))^{\wedge,-}$ defined as the objects $\Phi \in \Pro(\IndCoh(X))^{\wedge,-}$ such that
\begin{enumerate}[(i)]
    \item $\Phi \simeq \lim_{I}\Phi_n$ with each $\Phi_i$ representable in $\IndCoh(X)^{-}$, i.e.
    \[
    \Phi_i \simeq \Hom_{\IndCoh(X)^-}(\sG_i,-)
    \]
    for some $\sG_i \in \IndCoh(X)^{\leq n_{i}}$ and $n_i \geq \bZ$;
    \item for all $j \ra i$ a map in $I$, one has
    \[
    \Fib(\Phi_j \ra \Phi_i)
    \]
    commutes with filtered colimits, i.e.\ $\Fib(\sG_i \ra \sG_j) \in \Coh(X)$\footnote{Notice that by (i) one automatically has that $\Fib(\Phi_m \ra \Phi_n)$ is representable.}.
\end{enumerate}

\paragraph{Inclusion of Ind-coherent sheaves}
\label{par:inclusion-IndCoh-into-Pro-TateCoh-completed}

Recall that an element $\sF \in \IndCoh(X)$ can be described as
\[
\sF \simeq \colim_{I}\sF_{i}
\]
where each $\sF_{i} \in \Coh(X)$ and $I$ is a filtered diagram.

So we define the following functor
\begin{align}
\label{eq:jmath-functor}
    \jmath: \IndCoh(X) & \ra \Pro(\IndCoh(X))^{\wedge,-} \\
    \sF & \mapsto \lim_{I^{\rm op}}\Hom_{\IndCoh(X)^-}(\sF_i,-). \nonumber
\end{align}

The functor $\jmath(\sF): \IndCoh(X)^- \ra \Spc$ clearly commutes with finite limits, hence it is left exact, so an element of $\Pro(\IndCoh(X))^{\wedge,-}$. 
\begin{lem}
\label{lem:inclusion-IndCoh-into-completed-TateCoh}
The functor (\ref{eq:jmath-functor}) factors through the inclusion $\TateCoh(X)^{\wedge,-} \subset \Pro(\IndCoh(X))^{\wedge,-}$. We still denote the resulting functor by
\begin{equation}
    \label{eq:jmath-inclusion-into-TateCoh}
    \jmath: \IndCoh(X) \ra \TateCoh(X)^{\wedge,-}
\end{equation}
\end{lem}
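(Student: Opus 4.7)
The strategy is to unwind $\jmath(\sF)$ as a cofiltered limit of representable Pro-ind-coherent functors, and then check the two conditions (i) and (ii) of \S\ref{par:complete-Tate-coherent-sheaves} characterizing membership in $\TateCoh(X)^{\wedge,-}$. Given $\sF \in \IndCoh(X)$, the compact generation of $\IndCoh(X)$ by $\Coh(X)$ yields a canonical filtered presentation $\sF \simeq \colim_I \sF_i$ with $\sF_i \in \Coh(X)$ (for instance by taking $I = \Coh(X)_{/\sF}$, which is filtered). Then
\[
\jmath(\sF) = \lim_{I^{\rm op}} \Hom_{\IndCoh(X)^-}(\sF_i,-),
\]
so each $\Phi_i := \Hom_{\IndCoh(X)^-}(\sF_i,-)$ is representable. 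Since coherent sheaves are bounded, for every $i$ there exists $n_i \in \bZ$ with $\sF_i \in \IndCoh(X)^{\leq n_i}$, whence $\Phi_i(\sG) \simeq 0$ for any $\sG \in \IndCoh(X)^{\geq n_i+1}$, so $\Phi_i \in \Pro(\IndCoh(X))^{\leq n_i}$. This verifies condition (i).

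For condition (ii), I would check that for each morphism $i \to i'$ in $I$ the fiber $\Fib(\sF_i \to \sF_{i'})$, computed in $\IndCoh(X)$, again belongs to $\Coh(X)$. This follows from the fact that $\Coh(X) \subset \IndCoh(X)$ is a stable full subcategory: boundedness of cohomology is preserved under finite limits, and the long exact cohomology sequence associated to such a fiber shows that its cohomology sheaves remain coherent. By the Yoneda lemma the induced map $\Fib(\Phi_{i'} \to \Phi_i)$ is then represented by a coherent (hence compact) object, and therefore commutes with filtered colimits as required.

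The remaining and most delicate step is to verify that $\jmath(\sF)$ actually lies in the right $t$-completion $\Pro(\IndCoh(X))^{\wedge,-}$, rather than merely in $\Pro(\IndCoh(X))^-$. For this I would exhibit the family $\{\tau^{\leq n} \jmath(\sF)\}_{n \geq 0}$ as a compatible system in $\lim_{n \geq 0} \Pro(\IndCoh(X))^{\leq n}$ which reassembles to $\jmath(\sF)$. Concretely, since each $\Phi_i$ already belongs to $\Pro(\IndCoh(X))^{\leq n_i}$, the cofiltered limit $\lim_{I^{\rm op}} \Phi_i$ is compatible with the truncation functors $\tau^{\leq n}$, and the claim is a formal consequence of the definition of the right $t$-completion. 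Well-definedness of the functor $\jmath$ independent of the choice of presentation is automatic, as $\jmath(\sF)$ only depends intrinsically on $\Hom_{\IndCoh(X)^-}(\sF,-)$.

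The main obstacle is precisely the final completion check, since cofiltered Pro-limits and $t$-structure truncations do not generally commute; the bounded-above behaviour of each $\sF_i$ and the coherence of the fibers $\Fib(\sF_i \to \sF_{i'})$ provide exactly the control needed to reduce the verification to the standard formalism of right $t$-completions.
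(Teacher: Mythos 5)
Your proof is correct and follows the same route as the paper, which simply observes that the statement is an immediate consequence of conditions (i) and (ii) in the definition of $\TateCoh(X)^{\wedge,-}$; you verify (i) via boundedness of coherent sheaves and (ii) via stability of $\Coh(X)$ under fibers, exactly as intended. Your final paragraph about membership in the right $t$-completion is harmless but not needed for the lemma, since the construction preceding it already places $\jmath(\sF)$ in $\Pro(\IndCoh(X))^{\wedge,-}$.
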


\begin{proof}
This is an immediate consequence of conditions (i) and (ii) in the definition of $\TateCoh(X)^{\wedge, -}$ above.
\end{proof}

\subsubsection{Right Kan extension from Ind-coherent sheaves}

\paragraph{}

Restriction along (\ref{eq:jmath-inclusion-into-TateCoh}) gives a functor
\begin{equation}
\label{eq:restriction-of-Pro-objects-from-TateCoh-to-IndCoh}
\Pro(\TateCoh(X))^{\wedge,-} \ra \ProIndCoh(X).
\end{equation}

We start with a technical result

\begin{lem}
\label{lem:TateCoh-functors-RKEd-from-IndCoh}
Given a left exact functor $F: \TateCoh(X)^{-} \ra \Spc$ the following are equivalent:
\begin{enumerate}[(i)]
    \item $F$ commutes with cofiltered limits;
    \item $F$ is obtained by right Kan extension of its restriction to $\IndCoh(X)$ via (\ref{eq:jmath-inclusion-into-TateCoh}), i.e.\ the canonical map
    \[
    F \ra \RKE_{\jmath}\left(\left.F\right|_{\IndCoh(X)}\right)
    \]
    is an equivalence.
\end{enumerate}
\end{lem}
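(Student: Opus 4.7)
The plan is to identify both (i) and (ii) with the concrete formula
\[
F(\sF) \simeq \lim_{I^{\rm op}} F(\sG_i),
\]
where $\sF \simeq \lim_{I^{\rm op}} \Phi_i$ is the canonical presentation of $\sF \in \TateCoh(X)^{\wedge,-}$ given in \S \ref{par:complete-Tate-coherent-sheaves} (with each $\Phi_i \simeq \Hom(\sG_i,-)$ representable, $\sG_i \in \IndCoh(X)^{\leq n_i}$, and fibers $\Fib(\Phi_j \to \Phi_i)$ coherent).

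For (i) $\Rightarrow$ (ii), assume $F$ commutes with cofiltered limits. The pointwise formula for the right Kan extension along $\jmath$ reads
\[
\RKE_{\jmath}(F|_{\IndCoh(X)})(\sF) \simeq \lim_{(\sG,\,\sF \to \jmath(\sG))} F(\sG),
\]
where the limit is indexed by the comma category $\IndCoh(X)_{\sF/}$. I would show that the canonical functor $I \to \IndCoh(X)_{\sF/}$ coming from the presentation of $\sF$ is initial; granted this, the displayed limit reduces to $\lim_{I^{\rm op}} F(\sG_i)$, which by (i) equals $F(\sF)$, proving the desired identification.

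For (ii) $\Rightarrow$ (i), I would use the pointwise right Kan extension formula directly. Given a cofiltered diagram $\{\sF_a\}_{A}$ with limit $\sF$, any morphism $\sF \to \jmath(\sG)$ in $\TateCoh(X)^{\wedge,-}$ must factor through some $\sF_a$ (by the same type of cofinality argument, using that $\jmath(\sG)$ is a cofiltered limit of co-compact functors as in \S \ref{par:co-compact-condition-for-Pro-objects}). This identifies $\IndCoh(X)_{\sF/}$ with the filtered colimit of the $\IndCoh(X)_{\sF_a/}$, and commuting limits yields $F(\lim_a \sF_a) \simeq \lim_a F(\sF_a)$.

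The main obstacle is the initiality claim for $I \to \IndCoh(X)_{\sF/}$. Concretely, it requires that every morphism $\sF \to \jmath(\sG)$ factors essentially uniquely through some stage $\sF \to \Phi_i$. Writing $\jmath(\sG) = \lim_J \Hom(\sG_j,-)$ as a limit of representables over $\Coh(X)$, this reduces to factorization of morphisms of Pro-objects to co-compact targets, which is standard (cf.\ \cite{HTT}*{Proposition 5.3.5.15}); the coherence condition (ii) of \S \ref{par:complete-Tate-coherent-sheaves} on the fibers guarantees that the resulting factorization space is cofiltered with contractible slices, giving initiality. Once this technical input is in place, the remainder of the argument is formal manipulation of limits.
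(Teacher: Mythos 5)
Your overall strategy matches the paper's: both reduce the statement to the pointwise formula for $\RKE_{\jmath}$ over the comma category $\IndCoh(X)_{\sF/}$ and then invoke an initiality statement to collapse that limit to $\lim_{I^{\rm op}}F(\sG_i)$. However, the step you yourself flag as the main obstacle is where your argument has a genuine gap. You claim that every morphism $\sF \ra \jmath(\sG)$ factors essentially uniquely through a stage of the presentation because this is ``factorization of morphisms of Pro-objects to co-compact targets,'' citing \cite{HTT}*{Proposition 5.3.5.15}. But for a general $\sG \in \IndCoh(X)$ the object $\jmath(\sG)$ is \emph{not} cocompact in the relevant pro-category: by construction (\S \ref{par:inclusion-IndCoh-into-Pro-TateCoh-completed}) it is itself a cofiltered limit $\lim_{I^{\rm op}}\Hom(\sF_i,-)$ of corepresentables indexed by the coherent subobjects of $\sG$, and only the corepresentables by objects of $\Coh(X)$ are cocompact. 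So the cited result does not apply, and the factorization claim does not follow from generalities about Pro-categories. Relatedly, the coherence condition on the fibers $\Fib(\Phi_j \ra \Phi_i)$ is not what makes the factorization category contractible; it is part of what makes $\sF$ a Tate object, but contractibility of the slices comes from the existence of fiber products of (co)lattices.

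The paper circumvents exactly this issue by factoring the initiality claim through the category of colattices: it proves that $\CoLatt_{\sC}(X) \ra \Ind(\sC)_{X/-}$ is initial (Lemma \ref{lem:colattices-are-initial-in-Ind} and its completed version Corollary \ref{cor:completed-colattices-are-initial-in-Ind}), using Hennion's structure theory of elementary Tate objects --- specifically the fact that any map from an elementary Tate object to an Ind-object factors through a lattice/colattice, and that colattices admit fiber products, making the relevant slice categories cofiltered hence weakly contractible --- and then identifies $\lim_{I}F(\jmath(\sG_i))$ with $\lim_{\CoLatt(\Phi)}F(\jmath(\sH))$ via \cite{Hennion-Tate}*{Corollary 3.16}. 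To repair your argument you would either need to prove the initiality of $I \ra \IndCoh(X)_{\sF/}$ directly by reproving Hennion's factorization lemma for your chosen presentation, or, as the paper does, interpose $\CoLatt(\sF)$ between the two. The same gap affects your argument for (ii) $\Rightarrow$ (i), where you again assert that a map out of a cofiltered limit into $\jmath(\sG)$ factors through a finite stage by cocompactness.
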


\begin{proof}
Let $\Phi \in \TateCoh(X)^-$, so one has
\[
\Phi \simeq \lim_{I}\Phi_i
\]
where each $\Phi_i \in \Pro(\IndCoh(X))^{\leq i}$ is representable, i.e.\ $\Phi_i \simeq \Hom_{\IndCoh(X)^-}(\sG_i,-)$ for some $\sG_{i} \in \IndCoh(X)^{\leq i}$, and for every $i \ra j$ one has that
\[
\Hom_{\IndCoh(X)^-}(\Fib(\sG_i \ra \sG_j),-)
\]
commutes with filtered colimits. Given $F: \TateCoh(X)^- \ra \Spc$ one has
\[
F(\Phi) \simeq F(\lim_{I}\Phi_i) \simeq \lim_I F(\Phi_i) \simeq \lim_I F \circ \jmath(\sG_i),
\]
where the last isomorphism follows from the Yoneda lemma. Now consider the category $\IndCoh(X)_{\Phi/\jmath(-)}$, i.e.\ its objects are $\sF \in \IndCoh(X)$ with a morphism $\Phi \ra \jmath(\sF)$. Now we notice that $\RKE_{\jmath}\left(\left.F\right|_{\IndCoh(X)}\right)$ can be concretely calculated as
\begin{align*}
\RKE_{\jmath}\left(\left.F\right|_{\IndCoh(X)}\right)(\Phi) & \simeq \lim_{\sF' \in \IndCoh(X)_{\Phi/\jmath(-)}}F(\jmath(\sF')) \\
& \simeq \lim_{\sH \in \CoLatt(\Phi)} F \circ \jmath(\sH)
\end{align*}
where in the last isomorphism we used Corollary \ref{cor:completed-colattices-are-initial-in-Ind}. Finally, we notice that since
\[
\Phi \simeq \lim_{I}\Phi_i \simeq \lim_{\sH \in \CoLatt(\Phi)}\sH,
\]
by \cite{Hennion-Tate}*{Corollary 3.16}, one has the claimed equivalence.
\end{proof}

\paragraph{Locally prestack objects}

The motivation for the following definition is that we want to understand the subcategory of $\Pro(\TateCoh(X))^{\wedge,-}$ that maps isomorphically to $\ProIndCoh(X)$, see Corollary \ref{cor:Pro-TateCoh-negative-recovers-ProInd}.

\begin{defn}
\label{defn:Pro-TateCoh-lp-and-Pro-TateCoh-dualizable}
We consider the following subcategories of $\Pro(\TateCoh(X)^{\wedge,-})$:
\begin{enumerate}[1)]
    \item the subcategory of \emph{locally prestack objects} $\Pro(\TateCoh(X))_{\rm lp} \subset \Pro(\TateCoh(X)^{\wedge,-})$ consisting of left exact functors $\Phi: \TateCoh(X)^{\wedge,-} \ra \Spc$ such that $\Phi$ commutes with cofiltered limits, i.e.
    \[
    \Phi \simeq \RKE_{\jmath}(\bar{\Phi})
    \]
    where $\bar{\Phi}:\IndCoh(X) \ra \Spc$ is the restriction of $\Phi$ via $\jmath$;
    \item the subcategory of \emph{locally dualizable objects} $\Pro(\TateCoh(X))_{\rm dualizable} \subset \Pro(\TateCoh(X))_{\rm lp}$ as those objects from 1) that furthermore satisfy the following: $\bar{\Phi} \simeq \lim_{I}\bar{\Phi}_i$ where
    \begin{enumerate}[(i)]
        \item $\bar{\Phi}_i$ is representable for each $i \in I$;
        \item for all $j \ra i$ in $I$ 
            \[
            \Fib(\Phi_j \ra \Phi_i)
            \]
            commutes with filtered colimits.
    \end{enumerate}
\end{enumerate}
\end{defn}

The following is just a restatement of Lemma \ref{lem:TateCoh-functors-RKEd-from-IndCoh}.

\begin{cor}
\label{cor:Pro-TateCoh-negative-recovers-ProInd}
For $X \in \SchaffTate$ the restriction of (\ref{eq:restriction-of-Pro-objects-from-TateCoh-to-IndCoh}) to the subcategory of locally prestack objects induces an equivalence of categories
\begin{equation}
    \label{eq:equivalence-Pro-Tate-lp-Pro-Ind-Coh}
    \Pro(\TateCoh(X))_{\rm lp} \overset{\simeq}{\ra} \ProIndCoh(X).    
\end{equation}

Moreover, the restriction of the above functor to locally dualizable objects gives an equivalence
\begin{equation}
\label{eq:equivalence-Pro-Tate-dualizable-Tate}
\Pro(\TateCoh(X))_{\rm dualizable} \simeq \TateCoh(X).
\end{equation}
\end{cor}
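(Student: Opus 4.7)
The plan is to prove both equivalences by identifying the restriction functor along $\jmath$ and the right Kan extension $\RKE_{\jmath}$ as mutually inverse equivalences on the relevant subcategories; the statement is indeed, as advertised, essentially a direct unpacking of Lemma \ref{lem:TateCoh-functors-RKEd-from-IndCoh} together with the defining conditions of locally prestack and locally dualizable objects.

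First I would address the first equivalence. By Definition \ref{defn:Pro-TateCoh-lp-and-Pro-TateCoh-dualizable}(1), the subcategory $\Pro(\TateCoh(X))_{\rm lp}$ consists exactly of those $\Phi$ satisfying $\Phi \simeq \RKE_{\jmath}(\bar{\Phi})$, where $\bar{\Phi} = \left.\Phi\right|_{\IndCoh(X)}$ is the image under (\ref{eq:restriction-of-Pro-objects-from-TateCoh-to-IndCoh}). By Lemma \ref{lem:TateCoh-functors-RKEd-from-IndCoh}, this is equivalent to $\Phi$ being left exact and commuting with cofiltered limits. Conversely, given any left exact $\bar{\Phi}: \IndCoh(X)^- \to \Spc$, i.e.\ any object of $\ProIndCoh(X)$, the right Kan extension $\RKE_{\jmath}(\bar{\Phi})$ is left exact (as a pointwise limit of left exact functors), commutes with cofiltered limits by Lemma \ref{lem:TateCoh-functors-RKEd-from-IndCoh}, and thus defines an object of $\Pro(\TateCoh(X))_{\rm lp}$. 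Essential surjectivity and full faithfulness both follow from the two natural isomorphisms $\Phi \simeq \RKE_{\jmath}(\bar{\Phi})$ (by definition of $\Pro(\TateCoh(X))_{\rm lp}$) and $\left.\RKE_{\jmath}(\bar{\Phi})\right|_{\IndCoh(X)} \simeq \bar{\Phi}$ (which uses that $\jmath$ is fully faithful, a fact I would verify by direct computation using the Yoneda embedding inside $\TateCoh(X)^{\wedge,-}$).

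Next I would address the second equivalence by matching defining conditions. By construction, $\TateCoh(X) \subset \ProIndCoh(X)$ consists of Pro-diagrams $\lim_{I} \sG_i$ with $\sG_i \in \IndCoh(X)$ and $\Fib(\sG_i \to \sG_j) \in \Coh(X)$ for every $i \to j$ in $I$. Under the equivalence (\ref{eq:equivalence-Pro-Tate-lp-Pro-Ind-Coh}), such an object corresponds to $\bar{\Phi} \simeq \lim_{I} \bar{\Phi}_i$ with $\bar{\Phi}_i = \Hom_{\IndCoh(X)^-}(\sG_i,-)$ representable. The condition $\Fib(\sG_i \to \sG_j) \in \Coh(X)$, i.e.\ compactness in $\IndCoh(X)$, translates via Yoneda into the statement that $\Hom_{\IndCoh(X)^-}(\Fib(\sG_i \to \sG_j),-) \simeq \Fib(\bar{\Phi}_j \to \bar{\Phi}_i)$ commutes with filtered colimits. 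These are precisely conditions (i) and (ii) in Definition \ref{defn:Pro-TateCoh-lp-and-Pro-TateCoh-dualizable}(2), so restriction and right Kan extension further restrict to mutually inverse equivalences between $\Pro(\TateCoh(X))_{\rm dualizable}$ and $\TateCoh(X)$.

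The main potential obstacle will be the careful verification that $\jmath$ is fully faithful as a functor $\IndCoh(X) \to \TateCoh(X)^{\wedge,-}$ (so that the counit $\left.\RKE_{\jmath}(\bar{\Phi})\right|_{\IndCoh(X)} \to \bar{\Phi}$ is an isomorphism). This amounts to checking that for $\sF \simeq \colim_I \sF_i$ and $\sF' \simeq \colim_J \sF'_j$ in $\IndCoh(X)$ with $\sF_i, \sF'_j \in \Coh(X)$, the Hom computed in $\TateCoh(X)^{\wedge,-}$ via the Pro-structure agrees with the usual Hom in $\IndCoh(X)$; this is straightforward but requires bookkeeping with the cofiltered indexing. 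Aside from this, the proof is a direct translation of definitions and an appeal to Lemma \ref{lem:TateCoh-functors-RKEd-from-IndCoh}.
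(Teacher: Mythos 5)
Your proof is correct and takes essentially the same route as the paper, which offers no argument beyond the remark that the corollary ``is just a restatement of Lemma \ref{lem:TateCoh-functors-RKEd-from-IndCoh}''; your unpacking of the definitions and the matching of the dualizability conditions with the Tate condition via Yoneda is exactly the intended content. The one obstacle you flag — full faithfulness of $\jmath$ — is in fact immediate, since $\jmath(\sF)$ is pointwise just the functor corepresented by $\sF$, so Yoneda applies without any cofiltered bookkeeping.
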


\begin{rem}
The equivalence (\ref{eq:equivalence-Pro-Tate-lp-Pro-Ind-Coh}) should be compared to the condition of a prestack of Tate type being locally a prestack, see Lemma \ref{lem:concrete-condition-locally-prestack}.
\end{rem}

\subsubsection{Relation to prestacks of Tate type which are locally a prestack}

\paragraph{}

Let $\sX \in \PStkTatelp$, that is there exists $\sX_0 \in \PStk$ such that
\begin{equation}
\label{eq:sX-RKE-of-sX_0}
    \sX \overset{\simeq}{\ra} \RKE_{\Schaffop \hra \SchaffTateop}(\sX_0).    
\end{equation}

Before stating the main result of this section we need the following explicit description of the real split square zero extensions for Tate-coherent sheaves on Tate affine schemes.

\begin{lem}
\label{lem:presentation-of-square-extension-of-Tate-affine-scheme}
For $S \in \SchaffTate$ and $\sF \in \TateCoh(S)^{\leq 0}$ presented as $S \simeq \colim_I S_i$ and $\sF \simeq \lim_{J}\sF_j$, one can describe $\RealSplitSqZ_S(\sF)$ as follows
\[
\RealSplitSqZ_S(\sF) = S_{\sF} \simeq \colim_I \colim_{J^{\rm op}}(S_i)_{(\sF_j)_i},
\]
where $(\sF_j)_i = g^{!}_i(\sF_{j})$, where $g_i:S_i \hra S$ is the canonical inclusion.
\end{lem}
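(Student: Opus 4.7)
The plan is to derive the formula by iterating two applications of the Ind-extension step used in the construction of $\RealSplitSqZ_S$ in Proposition \ref{prop:RealSplitSqZ-Tate-affine-schemes}: the outer colimit over $I$ arises from the presentation $S \simeq \colim_I S_i$ (Steps 4--7 of that construction), while the inner colimit over $J^{\rm op}$ arises from the cofiltered presentation $\sF \simeq \lim_J \sF_j$ via the Ind-extension of Step 2.

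First, I would rewrite $\sF$ using Lemma \ref{lem:t-structure-on-TateCoh-of-Tate-scheme}(b) as the cofiltered limit
\[
\sF \;\simeq\; \lim_{I^{\rm op}} (g_i)^{\rm Tate}_{*}\, g_i^{!}(\sF)
\]
in $\TateCoh(S)^{\leq 0}$. Applying $\RealSplitSqZ_S$, which by Lemma \ref{lem:realsplitsqz-Tate-preserves-colimits} sends limits in $\TateCoh(S)^{\leq 0}$ to colimits in $(\SchaffTate)_{S/}$, and combining with Lemma \ref{lem:RealSplitSqZ-of-TateCoh-for-Tate-schemes-is-functorial} applied to each closed embedding $g_i$, yields
\[
\RealSplitSqZ_S(\sF) \;\simeq\; \colim_I \left(\RealSplitSqZ_{S_i}(g_i^{!}(\sF)) \,\sqcup_{S_i}\, S\right).
\]
Since pushouts commute with colimits in $(\SchaffTate)_{S/}$, and since $\colim_I S \simeq S \simeq \colim_I S_i$ (the first being a filtered colimit of a constant diagram), the right-hand side simplifies to $\colim_I \RealSplitSqZ_{S_i}(g_i^{!}(\sF))$.

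Second, for each fixed $i \in I$, the compatibility of $g_i^{!}$ with cofiltered limits gives $g_i^{!}(\sF) \simeq \lim_J (\sF_j)_i$ in $\TateCoh(S_i)^{\leq 0}$, with $(\sF_j)_i \in \IndCoh(S_i)^{\leq 0}$. Applying $\RealSplitSqZ_{S_i}$ once more (again sending limits to colimits), and using that its restriction to $\IndCoh(S_i)^{\leq 0}$ is the classical split square-zero construction of Step 1 of Proposition \ref{prop:RealSplitSqZ-Tate-affine-schemes} (so $\RealSplitSqZ_{S_i}((\sF_j)_i) = (S_i)_{(\sF_j)_i}$), one obtains
\[
\RealSplitSqZ_{S_i}(g_i^{!}(\sF)) \;\simeq\; \colim_{J^{\rm op}} (S_i)_{(\sF_j)_i}.
\]
Substituting into the previous display and interchanging the two filtered colimits by Fubini then produces the claimed formula.

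The main obstacle is the bookkeeping of the two composed reductions: in particular, verifying that the pushout $\sqcup_{S_i} S$ can be absorbed into the outer colimit uses filteredness of $I$ together with the compatibility of Lemma \ref{lem:RealSplitSqZ-of-TateCoh-for-Tate-schemes-is-functorial}, and the Fubini swap requires one to check that both colimits genuinely land in the same cocomplete category $(\SchaffTate)_{S/}$, with the transition maps in $I$ and $J^{\rm op}$ commuting up to coherent equivalence built into the Tate-coherent presentation of $\sF$.
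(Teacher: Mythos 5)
Your proposal is correct and is essentially the paper's own (unwritten) proof: the paper simply says ``unwind the definitions of $\RealSplitSqZ$ in Proposition \ref{prop:RealSplitSqZ-Tate-affine-schemes},'' and your two-stage reduction --- first along $I$ via Lemmas \ref{lem:t-structure-on-TateCoh-of-Tate-scheme}, \ref{lem:realsplitsqz-Tate-preserves-colimits} and \ref{lem:RealSplitSqZ-of-TateCoh-for-Tate-schemes-is-functorial}, then along $J^{\rm op}$ via the Ind-extension step --- is exactly that unwinding. The only point to keep in mind is that, as in Corollary \ref{cor:colimits-and-limits-presentation-of-TateCoha}, the restrictions $(\sF_j)_i$ should be read as $\tau^{\leq 0}\circ g_i^!(\sF_j)$ so that they land in $\IndCoh(S_i)^{\leq 0}$, and the colimits are ultimately computed in $\Ind(\Schaff)$ after forgetting the structure maps under the various $S_i$; with those conventions your argument goes through.
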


\begin{proof}
This follows from unwinding the definitions of $\RealSplitSqZ$ in Proposition \ref{prop:RealSplitSqZ-Tate-affine-schemes}.
\end{proof}

Let $x:S \ra \sX$ be a point of $\sX$, this corresponds to a collection $\{T \overset{x_T}{\ra} \sX\}_{T \in \Schaff_{/S}}$ where each $T \ra S$ fits into a diagram\footnote{Notice that we denote by $\sX$ what we called $\sX_0$ above, since by definition one has $\sX_0(T) = \sX(T)$ for $T \in \Schaff$.}
\[
\begin{tikzcd}
T \ar[dr,"x_T"] \ar[d] & \\
S \ar[r,"x"] & \sX
\end{tikzcd}
\]
Given $S \simeq \colim_I S_i$ a presentation of $S$, since $I^{\rm op} \ra \Schaff_{/S}$ is initial one can restrict to the collection $\{S_i \overset{x_i}{\ra}\sX\}_I$ to determine $x \in \sX(S)$.

For $\sF \in \TateCoh(S)^{\leq 0}$ and a lift $\tilde{x} \in \sX(S_{\sF})$ of $x$, which is determined by a collection $\{T \overset{x_T}{\ra} \sX\}_{T \in \Schaff_{/S_{\sF}}}$ that fits into diagrams as above. We notice that 
\[
(i \in I^{\rm op}) \; \mapsto \; \left(S_i \ra (S_i)_{\sF_i}\right)
\]
is initial, by Lemma \ref{lem:presentation-of-square-extension-of-Tate-affine-scheme}. Thus, one has
\[
\sX(S_{\sF}) \simeq \lim_{I^{\rm op}}\left((S_i)_{\sF_i} \overset{\tilde{x}_{i}}{\ra}\sX\right).
\]

The above discussion implies
\begin{lem}
\label{lem:computation-cotangent-space-of-prestack-of-Tate-type-which-is-lp}
For any $\sX \in \PStkTatelp$, $S \simeq \colim_I S_i$ a Tate affine scheme and $\sF \in \TateCoh(S)^{\leq 0}$ one has an equivalence
\[
\Hom_{\Pro(\TateCoh(S)^{-})}(T^*_{x}\sX,\sF) \simeq \lim_{I^{\rm op}}\Hom_{\Pro(\IndCoh(S)^-)}(T^*_{x_i}\sX,g^*_i\sF),
\]
where $g_i: S_i \hra S$ is the canonical closed immersion. 

Moreover, if one picks a presentaiton of $\sF \simeq \lim_J\sF_j$, one has the formula
\[
\Hom_{\Pro(\TateCoh(S)^{-})}(T^*_{x}\sX,\sF) \simeq \lim_{I^{\rm op}}\lim_{J}\Hom_{\Pro(\IndCoh(S)^-)}(T^*_{x_i}\sX_0,g^*_i(\sF_j)).
\]
\end{lem}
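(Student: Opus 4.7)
The plan is to reduce the computation of the pro-cotangent space over a Tate affine scheme to a compatible family of pro-cotangent spaces of the underlying prestack $\sX_0$ evaluated at the affine schemes $S_i$ presenting $S$. Throughout, write $g_i:S_i\hra S$ for the canonical closed immersions, $x_i := x\circ g_i: S_i\to\sX_0$, and $\sF_i := g^!_i(\sF)\in\IndCoh(S_i)^{\leq 0}$.

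First, I would use the locally-a-prestack hypothesis on $\sX$ together with Lemma~\ref{lem:presentation-of-square-extension-of-Tate-affine-scheme} to rewrite the lift functor. Namely, by Lemma~\ref{lem:concrete-condition-locally-prestack} applied to the Tate affine scheme $S_\sF=\RealSplitSqZ_S(\sF)$, the value $\sX(S_\sF)$ is computed as a limit over $(\Schaff_{/S_\sF})^{\rm op}$ of $\sX_0$. Lemma~\ref{lem:presentation-of-square-extension-of-Tate-affine-scheme} gives the presentation $S_\sF\simeq \colim_I (S_i)_{\sF_i}$, and I would verify that the functor $I^{\rm op}\to \Schaff_{/S_\sF}$ sending $i\mapsto \bigl((S_i)_{\sF_i}\to S_\sF\bigr)$ is initial (this is analogous to the argument in the preamble to the Lemma statement). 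Combining these and restricting to maps compatible with $x:S\to \sX$, one obtains
\begin{equation}\label{eq:sketch-lift-lim}
\Lift_{x,\sX}(\sF)\;\simeq\;\lim_{I^{\rm op}}\Lift_{x_i,\sX_0}(\sF_i).
\end{equation}

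Second, since each $x_i: S_i\to \sX_0$ is a point with $S_i\in\Schaff_{\rm aft}$ and $\sX_0$ is assumed to admit pro-cotangent spaces, by the Gaitsgory--Rozenblyum theory of \cite{GR-II}*{Chapter~1} one has
\[
\Lift_{x_i,\sX_0}(\sF_i)\;\simeq\;\Hom_{\Pro(\IndCoh(S_i)^-)}\bigl(T^*_{x_i}\sX_0,\sF_i\bigr).
\]
Plugging this into \eqref{eq:sketch-lift-lim} and recognizing the left-hand side as the functor pro-corepresented by $T^*_x\sX$ yields the first formula. (One has to be careful that the $\Hom$-functor on the right-hand side of the statement is really computed on each $S_i$; this is the natural reading after restriction along $g_i$, and the notation $g^*_i$ in the statement should be understood as the $!$-pullback $g^!_i$ used in the construction of $\sF_i$ in Lemma~\ref{lem:presentation-of-square-extension-of-Tate-affine-scheme}.)

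Third, for the moreover part, given a presentation $\sF\simeq\lim_J\sF_j$ in $\TateCoh(S)^{\leq 0}$, one has $\sF_i\simeq \lim_J g^!_i(\sF_j)$ since $g^!_i$ is a right adjoint (Proposition~\ref{prop:IndCoh-adjunctions-for-schemes}(2), as $g_i$ is a closed, hence proper, embedding) and therefore commutes with cofiltered limits. Since the left-exact functor $\Hom_{\Pro(\IndCoh(S_i)^-)}(T^*_{x_i}\sX_0,-)$ also commutes with cofiltered limits, interchanging the limits over $I^{\rm op}$ and $J$ yields the second formula.

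The main technical point is the initiality statement giving \eqref{eq:sketch-lift-lim}: one must show that any affine scheme $T\to S_\sF$ compatible with $x$ factors through some $(S_i)_{\sF_i}$ in an essentially unique way. This uses that closed-nil-isomorphism push-outs are computed levelwise in presentations (Corollary~\ref{cor:push-out-affine-schemes-preserved-in-Tate-affine-schemes}, Lemma~\ref{lem:push-out-affine-Tate-schemes}), together with the fact that the category $I$ presenting $S$ is filtered, so that $I^{\rm op}\to (\Schaff_{/S})^{\rm op}$ is already initial and the compatibility with the square-zero extensions propagates through the construction of $\RealSplitSqZ_S$ in Proposition~\ref{prop:RealSplitSqZ-Tate-affine-schemes}.
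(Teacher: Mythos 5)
Your proposal is correct and follows essentially the same route as the paper: the locally-a-prestack condition plus the presentation $S_{\sF}\simeq\colim_I(S_i)_{\sF_i}$ from Lemma~\ref{lem:presentation-of-square-extension-of-Tate-affine-scheme} and the initiality of $I^{\rm op}$ in the relevant slice category give $\Lift_{x,\sX}(\sF)\simeq\lim_{I^{\rm op}}\Lift_{x_i,\sX_0}(\sF_i)$, after which each term is identified with the $\Hom$ out of $T^*_{x_i}\sX_0$. Your reading of $g^*_i$ as the $!$-pullback used in the split square-zero construction, and your handling of the moreover clause via commutation of $\Hom$ with the formal cofiltered limit defining $\sF$ in the pro-category, both match the paper's intent.
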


\paragraph{}

Lemma \ref{lem:computation-cotangent-space-of-prestack-of-Tate-type-which-is-lp} immediately implies

\begin{cor}
\label{cor:cotangent-space-of-prestack-Tate-lp-is-lp}
For $\sX$ a prestack of Tate type, that admits a pro-cotangent space $T^*_{x}\sX$ at a point $(S \overset{x}{\ra} \sX) \in (\SchaffTate)_{/\sX}$, if $\sX$ is locally a prestack, then
\[
T^*_{x}\sX \in \Pro(\TateCoh(S))_{\rm lp}.
\]
\end{cor}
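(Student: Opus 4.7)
By Corollary~\ref{cor:Pro-TateCoh-negative-recovers-ProInd} (or more directly by Lemma~\ref{lem:TateCoh-functors-RKEd-from-IndCoh} applied to the left exact functor $\Hom_{\Pro(\TateCoh(S)^-)}(T^*_{x}\sX,-)$), the assertion $T^*_x\sX \in \Pro(\TateCoh(S))_{\rm lp}$ is equivalent to showing that the functor
\[
\Hom_{\Pro(\TateCoh(S)^-)}(T^*_x\sX,-): \TateCoh(S)^{\wedge,-} \ra \Spc
\]
commutes with cofiltered limits. So the plan is to reduce to this single commutation statement and prove it by directly computing both sides using Lemma~\ref{lem:computation-cotangent-space-of-prestack-of-Tate-type-which-is-lp}.

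First, I would fix a presentation $S \simeq \colim_I S_i$ with $S_i \in \Schaff$ and write $x_i:S_i \ra \sX$ for the induced points; since $\sX$ is locally a prestack, each $x_i$ factors through $\sX_0 := \left.\sX\right|_{\Schaff^{\rm op}}$. Let $g_i:S_i \hra S$ denote the canonical closed embedding. Then Lemma~\ref{lem:computation-cotangent-space-of-prestack-of-Tate-type-which-is-lp} yields, for any $\sF \in \TateCoh(S)^-$, the identification
\[
\Hom_{\Pro(\TateCoh(S)^-)}(T^*_x\sX,\sF) \simeq \lim_{I^{\rm op}}\Hom_{\Pro(\IndCoh(S_i)^-)}(T^*_{x_i}\sX_0, g^!_i(\sF)).
\]

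Next, I would take a cofiltered limit $\sF \simeq \lim_K \sF_k$ in $\TateCoh(S)^{\wedge,-}$ and analyze the right-hand side. The functor $g^!_i$ commutes with arbitrary limits: for $g_i$ a closed embedding this follows since $g^!_i$ is right adjoint to $(g_i)^{\rm Tate}_*$ (see \S\ref{par:correct-adjunction-TateCoh-proper-maps}). Hence $g^!_i(\lim_K \sF_k) \simeq \lim_K g^!_i(\sF_k)$, and then for each $i \in I$ the functor $\Hom_{\Pro(\IndCoh(S_i)^-)}(T^*_{x_i}\sX_0,-)$, being a representable functor on a category of Pro-objects, tautologically commutes with cofiltered limits on the target variable. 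Commuting the resulting three limits (in $I^{\rm op}$, in $K$, and the internal one) gives
\[
\Hom(T^*_x\sX, \lim_K \sF_k) \simeq \lim_K \lim_{I^{\rm op}} \Hom(T^*_{x_i}\sX_0, g^!_i(\sF_k)) \simeq \lim_K \Hom(T^*_x\sX, \sF_k),
\]
where the last step applies Lemma~\ref{lem:computation-cotangent-space-of-prestack-of-Tate-type-which-is-lp} in the reverse direction to each term $\sF_k$.

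The main obstacle I anticipate is ensuring that Lemma~\ref{lem:computation-cotangent-space-of-prestack-of-Tate-type-which-is-lp} applies to arbitrary $\sF \in \TateCoh(S)^{\wedge,-}$ and not just to $\sF \in \TateCoh(S)^{\leq 0}$; this may require an intermediate shift-and-truncation argument to pass from the connective case (where $T^*_x\sX$ is defined via $\RealSplitSqZ$) to bounded-above objects, analogously to how $\Lift_{x}$ was extended to $\TateCoh(S)^-$ after Definition~\ref{defn:pro-cotangent-space-at-a-point}. Once this extension is set up, the computation above is formal: the key ingredients are that $\sX$ is locally a prestack (which converts the problem on Tate affine schemes into an $I^{\rm op}$-limit of problems on genuine affine schemes) together with the continuity of $g^!_i$.
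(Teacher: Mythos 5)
Your proposal is correct and follows essentially the same route as the paper: both reduce membership in $\Pro(\TateCoh(S))_{\rm lp}$ to the commutation of $\Hom_{\Pro(\TateCoh(S)^-)}(T^*_x\sX,-)$ with cofiltered limits via Lemma~\ref{lem:TateCoh-functors-RKEd-from-IndCoh}, and both deduce that commutation from the limit formula of Lemma~\ref{lem:computation-cotangent-space-of-prestack-of-Tate-type-which-is-lp}. You simply spell out the limit-interchange and the extension to bounded-above objects that the paper's two-sentence proof leaves implicit.
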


\begin{proof}
We notice that Lemma \ref{lem:computation-cotangent-space-of-prestack-of-Tate-type-which-is-lp} implies that $\Hom_{\Pro(\TateCoh(S)^-)}(T^*_{x}\sX,-)$ commutes with cofiltered limits. By Lemma \ref{lem:TateCoh-functors-RKEd-from-IndCoh} one has that $\Hom(T^*_x\sX,-)$ is the right Kan extension restriction of its restriction to $\IndCoh(S)^-$, i.e.\ it belongs to $\Pro(\TateCoh(S))_{\rm lp}$.
\end{proof}

\begin{rem}
\label{rem:pro-cotangent-of-locally-a-prestack-belongs-to-ProIndCoh}
For $\sX \in \PStkTatelp$ and a point $x:S \ra \sX$ that admits a pro-cotangent space, we will often abuse notation and denote by $T^*_x\sX$ the object of $\ProIndCoh(S)$ corresponding to $T^*_{x}\sX$ via the equivalence (\ref{eq:equivalence-Pro-Tate-lp-Pro-Ind-Coh}).
\end{rem}

\subsubsection{Comparison with usual pro-cotangent space of a prestack}
\label{subsubsec:comparison-pro-cotangent-spaces}

For $\sX$ a prestack of Tate type which is locally a prestack, i.e. satisfying (\ref{eq:sX-RKE-of-sX_0}). Let $S_0 \in \Schaff$ and consider\footnote{Here we denote $\imath: \Schaff \hra \SchaffTate$ the inclusion of affine schemes into Tate affine schemes.} $x: \imath(S_0) \ra \sX$ a point of $\sX$, we also denote by $x_0:S_0 \ra \sX_0$ the corresponding point of $\sX_0$. Assume that $\sX$ admits a pro-cotangent space at the point $x$, by Remark \ref{rem:pro-cotangent-of-locally-a-prestack-belongs-to-ProIndCoh} one has
\[
T^*_{x}\sX \in \ProIndCoh(S_0).
\]

We will denote by
\[
\overline{T}^*_{x_0}\sX_0 \in \Pro(\QCoh(S_0)^-)
\]
the pro-cotangent space of $\sX_0$ at $x_0$ as defined in \cite{GR-II}*{Chapter 1, \S 2.2.6}.

Recall that one has a fully faithful functor
\[
\Psi_{S_0}: \IndCoh(S_0)^- \ra \QCoh(S_0)^-
\]
that realizes $\QCoh(S_0)^-$ as the right t-completion of $\IndCoh(S_0)^-$. This induces a fully faithful functor
\[
\Pro(\Psi_{S_0}):\ProIndCoh(S_0)^- \ra \Pro(\QCoh(S_0)^-).
\]

\begin{prop}
\label{prop:pro-cotangent-of-a-convergent-prestack-Tate-lp-agrees-with-pro-cotangent-of-the-prestack}
For $\sX$ as above, assume that $\sX$ is convergent and that $T^*_{x}\sX \in \Pro(IndCoh(S_0)^-)$, then one has an equivalence
\[
\Pro(\Psi_{S_0})(T^*_{x}\sX) \simeq \overline{T}^*_{x_0}\sX_0.
\]
\end{prop}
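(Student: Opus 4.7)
The strategy is to verify that $\Pro(\Psi_{S_0})(T^*_x\sX)$ and $\overline{T}^*_{x_0}\sX_0$ corepresent the same functor on suitable test objects of $\QCoh(S_0)^-$, and then to conclude by Yoneda. First I would reduce to eventually coconnective test sheaves. Convergence of $\sX$ implies that $\sX_0 = \left.\sX\right|_{\Schaffop}$ is convergent by Lemma \ref{lem:Tate-prestack-convergence-compatibility}; combined with Lemmas \ref{lem:pro-cotangent-space-tested-on-eventually-coconnective-Tate-affine-schemes} and \ref{lem:pro-cotangent-space-convergent-Tate-enough-eventually-coconnective-affine}, both $T^*_x\sX$ and $\overline{T}^*_{x_0}\sX_0$ are determined by their values on sheaves in $\IndCoh(S_0)^{\geq -n, \leq 0}$ and $\QCoh(S_0)^{\geq -n, \leq 0}$, respectively. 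Since $S_0$ is almost of finite type, $\Psi_{S_0}$ restricts to an equivalence $\IndCoh(S_0)^{\geq -n,\leq 0} \overset{\simeq}{\ra} \QCoh(S_0)^{\geq -n,\leq 0}$ on these bounded subcategories, so every such $\sG$ corresponds uniquely to some $\sF$ with $\Psi_{S_0}(\sF) \simeq \sG$.

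The key step is to identify the two lift functors under this correspondence. Following Step 1 of the construction of $\RealSplitSqZ_{\imath(S_0)}$ in the proof of Proposition \ref{prop:RealSplitSqZ-Tate-affine-schemes}, for $\sF \in \IndCoh(S_0)^{\leq 0} \subset \TateCoh(\imath(S_0))^{\leq 0}$ the Tate split square-zero extension $\imath(S_0)_\sF$ is canonically isomorphic to $\imath(S_{0,\Psi_{S_0}(\sF)})$, where $S_{0,\Psi_{S_0}(\sF)}$ denotes the ordinary split square-zero extension of $S_0$ by the quasi-coherent sheaf $\Psi_{S_0}(\sF)$. Since $\sX \simeq \RKE_{\Schaffop \hra \SchaffTateop}(\sX_0)$ and $S_{0,\Psi_{S_0}(\sF)} \in \Schaff$, the defining universal property of the right Kan extension yields $\sX(\imath(S_{0,\Psi_{S_0}(\sF)})) \simeq \sX_0(S_{0,\Psi_{S_0}(\sF)})$, and restricting to lifts of $x$ over $x_0$ gives
\[
\Lift_{x,\sX}(\sF) \simeq \Lift_{x_0,\sX_0}(\Psi_{S_0}(\sF)).
\]
Passing to pro-corepresenting objects then produces an equivalence $\Hom_{\Pro(\IndCoh(S_0)^-)}(T^*_x\sX, \sF) \simeq \Hom_{\Pro(\QCoh(S_0)^-)}(\overline{T}^*_{x_0}\sX_0, \Psi_{S_0}(\sF))$ for every eventually coconnective $\sF$.

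To conclude I would use that $\Psi_{S_0}$ is fully faithful on bounded-below parts, so that the induced functor $\Pro(\Psi_{S_0})$ satisfies $\Hom_{\Pro(\QCoh(S_0)^-)}(\Pro(\Psi_{S_0})(F), \Psi_{S_0}(\sF)) \simeq \Hom_{\Pro(\IndCoh(S_0)^-)}(F, \sF)$ for every $F \in \Pro(\IndCoh(S_0)^-)$ and every eventually coconnective $\sF$. Combined with the previous step this produces an equivalence of the functors corepresented by $\Pro(\Psi_{S_0})(T^*_x\sX)$ and $\overline{T}^*_{x_0}\sX_0$ on eventually coconnective objects of $\QCoh(S_0)^-$, which by convergence of both pro-objects extends to an isomorphism in $\Pro(\QCoh(S_0)^-)$. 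The main obstacle I foresee is the genuine use of the hypothesis $T^*_x\sX \in \Pro(\IndCoh(S_0)^-)$ rather than the weaker $T^*_x\sX \in \Pro(\TateCoh(\imath(S_0))^-)$: this is what guarantees that $\Pro(\Psi_{S_0})(T^*_x\sX)$ genuinely lies in $\Pro(\QCoh(S_0)^-)$ and can be compared with $\overline{T}^*_{x_0}\sX_0$, and it is also what allows the Yoneda comparison above to extend from bounded-below test sheaves to all of $\QCoh(S_0)^-$ via convergence.
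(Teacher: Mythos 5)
Your proposal is correct and follows essentially the same route as the paper's proof: reduce by convergence of $\sX$ and $\sX_0$ to eventually coconnective test sheaves, use that $\Psi_{S_0}$ restricts to an equivalence $\IndCoh(S_0)^{\geq -n,\leq 0} \simeq \QCoh(S_0)^{\geq -n,\leq 0}$, identify the two lift functors via the compatibility of the split square-zero constructions, and conclude by corepresentability. Your write-up is in fact somewhat more explicit than the paper's about the middle step (the identification $\Lift_{x,\sX}(\sF)\simeq\Lift_{x_0,\sX_0}(\Psi_{S_0}(\sF))$ via $\RealSplitSqZ$ and the right Kan extension property), which the paper compresses into a single chain of isomorphisms.
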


\begin{proof}
Since $\sX$ is convergent, one has that
\[
T^*_{x}\sX \simeq \lim_{n \geq 0}\tau^{\geq -n}(T^*_{x}\sX),
\]
thus we can assume that $T^*_{x}\sX \in \Pro(\IndCoh(S_0)^{\geq -n})$ for some $n$. Notice that by \cite{GR-I}*{Chapter 4, Proposition 1.2.2 (c)} one has an equivalence $\Psi_{S_0}: \IndCoh(S_0)^{\geq -n} \ra \QCoh(S_0)^{\geq -n}$ which induces an equivalence
\begin{equation}
    \label{eq:compatibility-of-pro-cotangent-of-prestack-with-pro-cotangent-of-prestack-of-Tate-type}
    \Pro(\Psi_{S_0}): \Pro(\IndCoh(S_0)^{\geq -n}) \ra \Pro(\QCoh(S_0)^{\geq -n}).
\end{equation}

Since by Lemma \ref{lem:Tate-prestack-convergence-compatibility}, $\sX_0$ is also convergent, to compute $\overline{T}^*_{x_0}\sX_0$ by \cite{GR-II}*{Chapter 1, Lemma 3.3.4} it is enough to consider $\sF \in \QCoh(S_0)^{>\infty, \leq 0}$. Thus, for any $\sF \in \QCoh(S_0)^{>\infty, \leq 0}$, let $\sG \in \IndCoh(S_0)^{>\infty, \leq 0}$ such that
\[
\Psi_{S_0}(\sG) \simeq \sF.
\]
So, we obtain
\[
\Lift_{x_0,\sX_0}(\sF) \simeq \Hom_{\Pro(\QCoh(S_0)^{>\infty,\leq 0})}(\Pro(\Psi_{S_0})(T^*_{x}\sX),\sF) \simeq \Hom_{\Pro(\IndCoh(S_0)^{> \infty,\leq 0})}(T^*_{x}\sX,\sG)
\]
for all $\sF \in \QCoh(S_0)^{>\infty, \leq 0}$. This implies the isomorphism (\ref{eq:compatibility-of-pro-cotangent-of-prestack-with-pro-cotangent-of-prestack-of-Tate-type}).
\end{proof}

\paragraph{Digression: Ind-pro-cotangent spaces}

Let $\sX_0$ be a usual prestack, and consider $x:S \ra \sX_0$ a point of $\sX_0$. Suppose moreover that $S \in \Schaffconv$, then one has an adjunction
\[
\begin{tikzcd}
\QCoh(S)^{\leq 0} \ar[r,shift left=.5ex,"\Xi_S"] & \IndCoh(S)^{\leq 0} \ar[l,shift left=.5ex,"\Psi_{S}"].
\end{tikzcd}
\]

For every $\sF \in \IndCoh(S)^{\leq 0}$ and $x:S \ra \sX$, we will denote by $\overline{T}^{\rm Ind, *}_{x}\sX$ the object of $\Pro(\IndCoh(S)^-)$ that co-represents the assignment
\[
\Maps_{S/}(\RealSplitSqZ_{S}(\sF),\sX).
\]

Notice for $\sG = \Psi_{S}(\sF) \in \QCoh(S)^{\leq 0}$ one has
\[
\Maps_{S/}(S_{\sG},\sX) \simeq \Hom_{\Pro(\QCoh(S)^-)}(\overline{T}^*_x\sX,\sG).
\]

Recall that
\[
\RealSplitSqZ_S(\sF) = S_{\Psi_{S}(\sF)}.
\]
Thus, if $S$ in eventually coconnective one has an equivalence
\[
\Hom_{\Pro(\IndCoh(S)-)}(\Pro(\Xi_{S})(\overline{T}^*_x\sX),\sF) \simeq \Maps_{S/}(\RealSplitSqZ_{S}(\sF),\sX),
\]
where $\Pro(\Xi_S)$ is the Pro-extension of $\Xi_S$, that is
\begin{equation}
\label{eq:Ind-pro-cotangent-space}
(\Pro(\Xi_{S})(\overline{T}^*_x\sX) \simeq \overline{T}^{\rm Ind, *}_{x}\sX.
\end{equation}

\begin{prop}
\label{prop:second-isomorphism-pro-cotangent-spaces}
For $\sX$ a convergent prestack of Tate type, which is locally determined by the prestack $\sX_0$, and assume that $\sX$ admits a pro-cotangent space at  $x:\imath(S) \ra \sX$. Then, one has an equivalence
\[
T^*_{x}\sX \simeq \Pro(\Xi_S)(\overline{T}^*_{x_0}\sX_0),
\]
where $x_0:S \ra \sX_0$ is the underlying prestack point.
\end{prop}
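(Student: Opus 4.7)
The plan is to show that $\Pro(\Xi_S)(\overline{T}^*_{x_0}\sX_0)$ pro-corepresents the functor $\Lift_{x,\sX}$ on $\TateCoh(S)^{\leq 0}$; the desired equivalence then follows by Yoneda. Since $\sX$ is convergent, Lemma \ref{lem:pro-cotangent-space-convergent-Tate-enough-eventually-coconnective-affine} allows us to test the pro-cotangent space only against connective (in fact eventually coconnective) Tate-coherent sheaves. So it suffices to produce, for each $\sF \in \TateCoh(S)^{\leq 0}$, a natural equivalence
\[
\Lift_{x,\sX}(\sF) \simeq \Hom_{\Pro(\TateCoh(S)^-)}\bigl(\Pro(\Xi_S)(\overline{T}^*_{x_0}\sX_0),\, \sF\bigr).
\]

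First I would fix a presentation $\sF \simeq \lim_J \sF_j$ with $\sF_j \in \IndCoh(S)^{\leq 0}$. Since $S$ is already an affine scheme, its ``presentation'' as a Tate affine scheme via $\imath$ is trivial, and Lemma \ref{lem:presentation-of-square-extension-of-Tate-affine-scheme} reduces to $\imath(S)_\sF \simeq \colim_{J^{\rm op}} S_{\Psi_S(\sF_j)}$ in $(\SchaffTate)_{\imath(S)/}$, where each $S_{\Psi_S(\sF_j)}$ is a genuine affine scheme. Next, because $\sX$ is locally a prestack, Lemma \ref{lem:concrete-condition-locally-prestack} applied to the Tate affine scheme $\imath(S)_\sF$ together with the above colimit presentation yields
\[
\Maps_{\imath(S)/}\bigl(\imath(S)_\sF,\, \sX\bigr) \;\simeq\; \lim_J \Maps_{S/}\bigl(S_{\Psi_S(\sF_j)},\, \sX_0\bigr) \;=\; \lim_J \Lift_{x_0,\sX_0}\bigl(\Psi_S(\sF_j)\bigr).
\]

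I would then invoke the defining property of $\overline{T}^*_{x_0}\sX_0$ in $\Pro(\QCoh(S)^-)$ together with the fact that the adjunction $(\Xi_S,\Psi_S)$ passes to Pro-objects (as recalled in (\ref{eq:Ind-pro-cotangent-space})) to rewrite each term as
\[
\Lift_{x_0,\sX_0}(\Psi_S(\sF_j)) \simeq \Hom_{\Pro(\QCoh(S)^-)}(\overline{T}^*_{x_0}\sX_0, \Psi_S(\sF_j)) \simeq \Hom_{\Pro(\IndCoh(S)^-)}\bigl(\Pro(\Xi_S)(\overline{T}^*_{x_0}\sX_0),\, \sF_j\bigr).
\]
Taking the limit over $J$, using that $\IndCoh(S) \hookrightarrow \TateCoh(S)$ is fully faithful, and that $\Hom(-,-)$ out of a fixed object converts limits in the target to limits, yields the desired identification with $\Hom_{\Pro(\TateCoh(S)^-)}(\Pro(\Xi_S)(\overline{T}^*_{x_0}\sX_0), \sF)$.

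The main obstacle is the bookkeeping in Step~3: the equivalence $\sX(\imath(S)_\sF) \simeq \lim_J \sX_0(S_{\Psi_S(\sF_j)})$ holds \emph{precisely because} $\sX$ is locally a prestack, since an arbitrary prestack of Tate type would not convert the colimit presentation of $\imath(S)_\sF$ into a limit on spaces. A secondary technicality is extending the identification from $\TateCoh(S)^{\leq 0}$ to $\TateCoh(S)^-$ in order to conclude at the level of pro-objects, but this follows by the usual shift/looping argument used to define $\Lift_x$ on all of $\TateCoh(S)^-$ in Definition~\ref{defn:cotangent-space-at-a-point}, together with the convergence hypothesis on $\sX$.
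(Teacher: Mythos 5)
Your proposal is correct and follows essentially the same route as the paper's proof: represent $\sF$ as a cofiltered limit of $\sF_j \in \IndCoh(S)^{\leq 0}$, write the split square-zero extension as the corresponding colimit of affine schemes, use the locally-a-prestack hypothesis to turn maps out of that colimit into a limit of lifting spaces for $\sX_0$, identify each term via the $(\Xi_S,\Psi_S)$-adjunction and (\ref{eq:Ind-pro-cotangent-space}), and reassemble. The only cosmetic difference is that you route through $\Lift_{x_0,\sX_0}(\Psi_S(\sF_j))$ where the paper writes $\overline{T}^{\rm Ind,*}_{x_0}\sX_0$ directly, which amounts to the same identification.
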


\begin{proof}
Consider $\sF \in \TateCoh(S)^{\leq 0}$, which we represent as $\sF \simeq \lim_{I^{\rm op}}\sF_i$, for $\sF_i \in \IndCoh(S)^{\leq 0}$. Then, one has a chain of equivalences
\begin{align*}
    \Hom_{\Pro(\TateCoh(S)^-)}(T^*_{x}\sX) & \simeq \Maps_{S/}(\colim_{I^{\rm op}}\imath(S_{\sF_i}),\sX)\\
    & \simeq \lim_{I}\Maps_{S/}(\imath(S_{\sF_i}),\sX) \\
    & \simeq \lim_{I}\Hom_{\Pro(\IndCoh(S)^-)}(\overline{T}^{\rm Ind, *}_{X_0},\sF_i)\\
    & \simeq \lim_{I}\Hom_{\Pro(\QCoh(S)^-)}(\Pro(\Xi_S)(\overline{T}^{\rm Ind, *}_{X_0}),\sF_i)\\
    & \simeq \Hom_{\Pro(\QCoh(S)^-)}(\Pro(\Xi_S)(\overline{T}^{\rm Ind, *}_{X_0}),\sF)
\end{align*}
where between the third and forth lines we used (\ref{eq:Ind-pro-cotangent-space}).
\end{proof}

\paragraph{Existence of pro-cotangent space for prestacks of Tate type which are locally prestacks}

The following is a consequence of the proofs of Proposition \ref{prop:pro-cotangent-of-a-convergent-prestack-Tate-lp-agrees-with-pro-cotangent-of-the-prestack} and Proposition \ref{prop:second-isomorphism-pro-cotangent-spaces}.

\begin{cor}
\label{cor:pro-cotangent-space-existence-for-convergent-prestacks-Tate-lp}
Let $\sX$ be a convergent prestack of Tate type which is locally a prestack, and let $\sX_0 := \left.\sX\right|_{\Schaffop}$. For a morphism $x:\imath(S_0) \ra \sX$ and the corresponding map $x_0:S_0 \ra \sX_0$, the following are equivalent:
\begin{enumerate}[(i)]
    \item $\sX$ admits a pro-cotangent space at $x$;
    \item $\sX_0$ admits a pro-cotangent space at $x_0$.
\end{enumerate}
\end{cor}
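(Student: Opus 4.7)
The key tool, extracted from the proofs of Propositions \ref{prop:pro-cotangent-of-a-convergent-prestack-Tate-lp-agrees-with-pro-cotangent-of-the-prestack} and \ref{prop:second-isomorphism-pro-cotangent-spaces}, is an explicit identification of the lift functor of $\sX$ at $x$ in terms of that of $\sX_0$ at $x_0$. For any $\sF \in \TateCoh(S_0)^{\leq 0}$ with presentation $\sF \simeq \lim_{I} \sF_i$ where $\sF_i \in \IndCoh(S_0)^{\leq 0}$, one has a natural equivalence
\[
\Lift_{x,\sX}(\sF) \;\simeq\; \lim_{I} \Lift_{x_0,\sX_0}(\Psi_{S_0}(\sF_i)). \qquad (\ast)
\]
Indeed, Lemma \ref{lem:presentation-of-square-extension-of-Tate-affine-scheme} expresses $\imath(S_0)_\sF \simeq \colim_{I^{\rm op}} \imath((S_0)_{\sF_i})$, where $(S_0)_{\sF_i} = \RealSplitSqZ_{S_0}(\sF_i)$ is the usual split square-zero extension by $\Psi_{S_0}(\sF_i) \in \QCoh(S_0)^{\leq 0}$. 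Passing to mapping spaces into $\sX$ turns the colimit into a limit, and the locally-a-prestack condition identifies $\sX(\imath((S_0)_{\sF_i})) \simeq \sX_0((S_0)_{\sF_i})$, giving $(\ast)$.

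\textbf{Direction (i) $\Rightarrow$ (ii).} Given $\sG_1 \to \sG_2$ in $\QCoh(S_0)^{\leq 0}$ inducing a surjection on $\H^0$, with fiber $\sG$, apply the left adjoint $\Xi_{S_0}: \QCoh(S_0)^{\leq 0} \to \IndCoh(S_0)^{\leq 0}$. Since $\Xi_{S_0}$ preserves pushouts and $\Psi_{S_0}\circ \Xi_{S_0} \simeq \id$ on connective objects, the induced map $\Xi_{S_0}(\sG_1) \to \Xi_{S_0}(\sG_2)$ in $\IndCoh(S_0)^{\leq 0} \subset \TateCoh(S_0)^{\leq 0}$ again induces a surjection on $\H^0$, with fiber $\Xi_{S_0}(\sG)$. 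Taking $I$ trivial in $(\ast)$ yields a functorial equivalence $\Lift_{x,\sX}(\Xi_{S_0}(-)) \simeq \Lift_{x_0,\sX_0}(-)$, and the pro-cotangent property at $x$ transports directly to one at $x_0$.

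\textbf{Direction (ii) $\Rightarrow$ (i).} By Lemma \ref{lem:pro-cotangent-space-convergent-Tate-enough-eventually-coconnective-affine}, convergence of $\sX$ lets us check the pro-cotangent property with $S_0 \in \Schaffconv$ and $\sF_1, \sF_2 \in \TateCoh(S_0)^{>-\infty,\leq 0}$. In this bounded regime we can pick a common cofiltered diagram $I$ and compatible presentations $\sF_k \simeq \lim_{I} \sF_{k,i}$ with $\sF_{k,i} \in \IndCoh(S_0)^{>-\infty,\leq 0}$ such that each $\sF_{1,i} \to \sF_{2,i}$ still induces a surjection on $\H^0$ (after refining $I$ if needed), so that the fiber $\sF = 0 \times_{\sF_2} \sF_1$ is presented levelwise by $\sF_i = 0 \times_{\sF_{2,i}} \sF_{1,i}$. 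Since $\Psi_{S_0}$ is t-exact, applying it levelwise preserves these fiber sequences and surjections. The pro-cotangent property for $\sX_0$ at $x_0$ then gives, for each $i$, an equivalence
\[
\Lift_{x_0,\sX_0}(\Psi_{S_0}(\sF_i)) \simeq \Lift_{x_0,\sX_0}(0) \underset{\Lift_{x_0,\sX_0}(\Psi_{S_0}(\sF_{2,i}))}{\times} \Lift_{x_0,\sX_0}(\Psi_{S_0}(\sF_{1,i})).
\]
Taking the limit over $I$ and applying $(\ast)$ on both sides yields the pro-cotangent property for $\sX$ at $x$.

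\textbf{Main obstacle.} The only non-formal step is the choice of compatible presentations in the second implication; this is precisely why convergence of $\sX$ (to reduce to the bounded range via Lemma \ref{lem:pro-cotangent-space-convergent-Tate-enough-eventually-coconnective-affine}) is essential, since outside the eventually coconnective range one cannot in general refine cofiltered presentations to make a given map of Tate objects levelwise. Once this bounded reduction is in place, the remainder is a formal limit manipulation built on $(\ast)$ and the t-exactness of $\Psi_{S_0}$.
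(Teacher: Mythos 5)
Your proof is correct and follows essentially the route the paper intends: the paper's own justification is just a pointer to the proofs of Propositions \ref{prop:pro-cotangent-of-a-convergent-prestack-Tate-lp-agrees-with-pro-cotangent-of-the-prestack} and \ref{prop:second-isomorphism-pro-cotangent-spaces}, whose key content is precisely the identity you isolate as $(\ast)$ (the chain of equivalences computing $\Lift_{x,\sX}$ via $\Lift_{x_0,\sX_0}$ using the colimit presentation of $\RealSplitSqZ$, the locally-a-prestack condition, and the $(\Xi_{S_0},\Psi_{S_0})$-adjunction in the eventually coconnective range). You fill in the two-way transfer more explicitly than the paper does, at the same level of rigor on the delicate points (admissibility of presentations, levelwise straightening), so no further changes are needed.
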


The following gives a criterion to determine if a prestack of Tate type $\sX$ which is locally a prestack, admits a pro-cotangent space at a point $x:S \ra \sX$ where $S \in \SchaffTate$.

\begin{prop}
\label{prop:pro-cotangent-space-on-sX-from-sX_0}
Let $\sX$ be a convergent prestack of Tate type which is locally a prestack, and let $\sX_0 := \left.\sX\right|_{\Schaffop}$. Suppose moreover, that $\sX_0$ admits a pro-cotangent \emph{complex} (see \S \ref{subsec:pro-cotangent-complex}), then $\sX$ admits a pro-cotangent space at $x$.
\end{prop}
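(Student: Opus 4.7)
The strategy is to reduce the pro-corepresentability condition at $x$ to the pro-corepresentability condition for $\sX_0$ at the affine scheme points of a presentation of $S$, using the locally-a-prestack condition to convert values of $\sX$ on split square-zero extensions of $S$ into limits of values of $\sX_0$ on split square-zero extensions of affine schemes.

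First, fix a presentation $S \simeq \colim_{I} S_i$ with each $S_i \in \Schaff$ and closed connecting maps, denote by $g_i \colon S_i \hookrightarrow S$ the canonical inclusions, and set $x_i := x \circ g_i \colon S_i \to \sX_0$. By the assumption that $\sX_0$ admits a pro-cotangent complex, each $x_i$ carries a pro-cotangent space $T^*_{x_i}\sX_0 \in \Pro(\QCoh(S_i)^-)$; moreover, by Lemma \ref{lem:Tate-prestack-convergence-compatibility} the prestack $\sX_0$ is automatically convergent. Now for any $\sG \in \TateCoh(S)^{\leq 0}$, choose a presentation $\sG \simeq \lim_{J}\sG_j$ with $\sG_j \in \IndCoh(S)^{\leq 0}$, so that by Lemma \ref{lem:presentation-of-square-extension-of-Tate-affine-scheme} one has
\[
S_{\sG} \simeq \colim_{I}\colim_{J^{\rm op}} (S_i)_{(\sG_j)_i},
\]
where $(\sG_j)_i := g^!_i(\sG_j)$. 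I would then argue, in the spirit of the proof of Lemma \ref{lem:computation-cotangent-space-of-prestack-of-Tate-type-which-is-lp}, that the initiality of $I^{\rm op} \to \Schaff_{/S}$ persists to the slice $\Schaff_{/S_{\sG}}$ (at each finite truncation, combined with the presentation of $(S_i)_{(\sG_j)_i}$ as a filtered colimit of its own classical affine pieces), so that the locally-a-prestack condition for $\sX$ gives an equivalence
\[
\sX(S_{\sG}) \overset{\simeq}{\ra} \lim_{I^{\rm op}}\lim_{J}\sX_0\bigl((S_i)_{(\sG_j)_i}\bigr).
\]

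The main step is to verify that the resulting functor
\[
\Lift_{x,\sX} \colon \TateCoh(S)^{\leq 0} \ra \Spc, \qquad \sG \mapsto \sX(S_{\sG})\underset{\sX(S)}{\times}\{x\},
\]
sends a morphism $f \colon \sF_1 \to \sF_2$ with $\H^0(f)$ surjective and $\sF := 0\times_{\sF_2}\sF_1 \in \TateCoh(S)^{\leq 0}$ to the corresponding pullback of spaces. By refining the index $J$ (passing to a cofinal subdiagram) one may pick compatible presentations $\sF_{1,j}, \sF_{2,j}, \sF_j \in \IndCoh(S)^{\leq 0}$ with $\sF_j \simeq 0\times_{\sF_{2,j}}\sF_{1,j}$ termwise; since $g^!_i$ is exact, the same relation holds in $\IndCoh(S_i)^{\leq 0}$ after applying $g^!_i$. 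Taking fibers over $\sX(S)$ in the display above and commuting the fiber past the limits yields
\[
\Lift_{x,\sX}(\sG) \simeq \lim_{I^{\rm op}}\lim_{J}\Lift_{x_i,\sX_0}\bigl((\sG_j)_i\bigr),
\]
where on the right, for each $i$ and $j$, the Lift functor for $\sX_0$ at $x_i$ is evaluated on an ind-coherent sheaf (via $\Psi_{S_i}$ to a quasi-coherent sheaf, as in \S\ref{subsubsec:comparison-pro-cotangent-spaces}).

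Since $\sX_0$ admits a pro-cotangent complex, each $\Lift_{x_i,\sX_0}$ is pro-corepresented by $T^*_{x_i}\sX_0$, and therefore sends the pushouts of split square-zero extensions (corresponding to surjective-on-$\H^0$ morphisms in $\IndCoh(S_i)^{\leq 0}$) to pullbacks of spaces. Applied termwise in $J$ to the presentation of $f$, and then termwise in $I$, this gives the desired pullback isomorphism for $\Lift_{x,\sX}$, because finite limits commute with arbitrary limits in $\Spc$. The corepresenting pro-object $T^*_x\sX \in \Pro(\TateCoh(S)^-)$ then exists by the universal property and is explicitly realized by the formula of Lemma \ref{lem:computation-cotangent-space-of-prestack-of-Tate-type-which-is-lp}.

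The delicate point, which I expect to be the main obstacle, is the step in which compatible index data for $\sF$, $\sF_1$, $\sF_2$, and their $g^!_i$-pullbacks are chosen so that all the identifications above hold functorially in the transition maps of $I$ and $J$ simultaneously; this is what genuinely uses the \emph{compatibility} data packaged in $\sX_0$ admitting a pro-cotangent complex (rather than just unrelated pro-cotangent spaces at the individual $x_i$'s), namely the compatibility of $T^*_{x_i}\sX_0$ with $T^*_{x_j}\sX_0$ under $!$-pullback along the closed immersions $S_i \hookrightarrow S_j$.
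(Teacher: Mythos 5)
Your proposal is correct and follows essentially the same route as the paper: both reduce to the affine points $x_i$ of a presentation of $S$ via the locally-a-prestack condition (the formulas of Lemma \ref{lem:computation-cotangent-space-of-prestack-of-Tate-type-which-is-lp}), and both identify the compatibility of the $T^*_{x_i}\sX_0$ under pullback along the closed immersions — supplied by the pro-cotangent complex of $\sX_0$ together with convergence, which lets one replace $(f_{j,i})^{\rm Tate,*}$ by $f_{j,i}^!$ — as the essential input. The only cosmetic difference is that you verify the defining pullback condition of Definition \ref{defn:pro-cotangent-space-at-a-point} termwise and deduce pro-corepresentability, whereas the paper assembles the candidate object $\lim_{I^{\rm op}} T^*_{x_i}\sX$ first and then checks it corepresents the lift functor.
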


\begin{proof}
We notice that for any presentation $S \simeq \colim_I S_i$, where $S_i \in \Schaff$ and let $x_i:\imath(S_i) \ra \sX$ denote the induced maps. We have that for every $f_{j,i}: S_j \hra S_i$ we have an isomorphism
\begin{equation}
    \label{eq:compatibility-of-restriction-pro-cotangent-space}
    T^*_{x_j}\sX \simeq \Pro((f_{j,i})^{\rm Tate, *})(T*_{x_i}\sX).
\end{equation}
Since we suppose that $\sX$ is convergent, it is enough to consider $S \in \SchaffTateconv$, i.e.\ each $S_i$ is eventually coconnective. This implies that $(f_{j,i})^{\rm Tate,*} \simeq f^!$, thus the objects
\[
\{T^*_{x_j}\sX\}
\]
with the isomorphisms of (\ref{eq:compatibility-of-restriction-pro-cotangent-space}) determine an object of $\Pro(\TateCoh(S)^-)$ via the morphism
\[
\lim_{I^{\rm op}}\Pro(\TateCoh(S_i)) \ra \Pro(\TateCoh(S)^-).
\]
By the formulas of Lemma \ref{lem:computation-cotangent-space-of-prestack-of-Tate-type-which-is-lp} we have that $T^*_{x}\sX$ pro-corepresents the lifts of maps $S \ra \sX$.
\end{proof}

\subsubsection{Serre duality and tangent space of prestacks of Tate type}

The following is a generalization of Serre duality.

\begin{prop}
For $X \in \SchTatelaft$, there exists an equivalence
\begin{equation}
\label{eq:Serre-duality-for-Tate-affine-schemes}
    \bD^{\rm Tate}_X: \TateCoh(X)^{\rm op} \simeq \TateCoh(X).    
\end{equation}
Moreover, given any $\imath:X_0 \hra X$ a closed embedding, such that $X_0 \in \Schaft$ one has a commutative diagram
\[
\begin{tikzcd}
\TateCoh(X)^{\rm op} \ar[r,"(\imath^!)^{\rm op}"] \ar[d,"\b{D}^{\rm Tate}_X"] & \TateCoh(X_0)^{\rm op} \ar[d,"\bD^{\rm Tate}_{X_0}"] \\
\TateCoh(X) \ar[r,"\imath^!"] & \TateCoh(X_0)
\end{tikzcd}
\]
where 
\[
\bD^{\rm Tate}_{X_0}: \TateCoh(X_0)^{\rm op} \overset{\simeq}{\ra} \TateCoh(X_0)
\]
is the equivalence induced by the usual Serre duality $\bD_{X_0}:\Coh(X_0)^{\rm op} \overset{\simeq}{\ra} \Coh(X_0)$.
\end{prop}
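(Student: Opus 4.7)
The plan is to assemble $\bD^{\rm Tate}_X$ as a limit of the Serre dualities on the approximating schemes almost of finite type. Fix a presentation $X \simeq \colim_I X_i$ with $X_i \in \Schaft$ and the transition morphisms $f_{i,j}: X_i \hookrightarrow X_j$ closed embeddings. By Theorem \ref{thm:TateCoh-correspondence-functor-Sch-Tate} and the adjunction $(f^{\rm Tate}_*, f^!)$ available for closed embeddings, we have dual presentations
\[
\TateCoh(X) \simeq \lim_{I^{\rm op}} \TateCoh(X_i) \qquad \text{and} \qquad \TateCoh(X) \simeq \colim_I \TateCoh(X_i),
\]
with transition maps $f^!_{i,j}$ and $(f_{i,j})^{\rm Tate}_*$ respectively. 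Dualizing the second presentation gives $\TateCoh(X)^{\rm op} \simeq \lim_{I^{\rm op}} \TateCoh(X_i)^{\rm op}$, with transitions $((f_{i,j})^{\rm Tate}_*)^{\rm op}$.

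For each $i$ the equivalence $\bD^{\rm Tate}_{X_i}: \TateCoh(X_i)^{\rm op} \simeq \TateCoh(X_i)$ is given by \eqref{eq:duality-functor-TateCoh}. The core of the argument is to show that these fit together into an equivalence between the two $I^{\rm op}$-indexed diagrams above: for every $f = f_{i,j}$ one should have a canonical identification
\[
\bD^{\rm Tate}_{X_i} \circ (f^{\rm Tate}_*)^{\rm op} \simeq f^! \circ \bD^{\rm Tate}_{X_j}.
\]
This is the Tate-coherent shadow of Grothendieck--Serre duality. It follows by restriction to the Tate subcategory from the identification $(f^!)^\vee \simeq f^{\rm ProInd}_*$ contained in Theorem \ref{thm:duality-for-ProIndCoh-sheaves}, using that $\bD^{\rm Tate}_{X_i}$ is obtained by restricting $\bD^{\rm ProInd}_{X_i}$, together with the fact that for closed embeddings $(f_{i,j})^{\rm Tate}_*$ agrees with the pro-ind-coherent pushforward (Lemma \ref{lem:proper-pushforward-preserve-Tate-Coh}). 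Passing to the limit over $I^{\rm op}$ then produces the desired anti-equivalence $\bD^{\rm Tate}_X: \TateCoh(X)^{\rm op} \simeq \TateCoh(X)$.

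For the compatibility with a closed embedding $\imath: X_0 \hookrightarrow X$ with $X_0 \in \Schaft$, any such embedding can be incorporated into a presentation of the above form by refining the indexing category along $X_0$, so that $\imath^!$ is realized as a canonical projection from the limit $\lim_{I^{\rm op}}\TateCoh(X_i)$. The commutation of $\imath^!$ with $\bD^{\rm Tate}_X$ and $\bD^{\rm Tate}_{X_0}$ is then built into the construction. Independence of the chosen presentation follows since any two presentations of $X$ admit a common cofinal refinement and the duality squares are natural with respect to such refinements.

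The main technical obstacle is establishing the Grothendieck--Serre duality square at the Tate-coherent level in the precise form required, including matching the adjunction data on both sides so that one genuinely obtains a natural isomorphism of the two diagrams $I^{\rm op} \to \DGc$. One must also verify that the restriction of $\bD^{\rm ProInd}_{X_i}$ to $\TateCoh(X_i)$ actually lands in $\TateCoh(X_i)^{\rm op}$; this is the content of the general compatibility between dualization and the $\Tate(-)$ construction (as in Lemma \ref{lem:duality-of-Tate-categories}), together with the fact that Serre duality preserves $\Coh(X_i)$ by \cite{GR-I}*{Chapter 5, \S 4.2.10}. Finally, the concluding statement that $\bD^{\rm Tate}_X$ recovers ordinary Serre duality on $\Coh(X)$ is immediate from the construction, since for any closed embedding $\imath: X_0 \hookrightarrow X$ with $X_0 \in \Schaft$ the category $\Coh(X_0)$ maps to $\TateCoh(X)$ via $\imath^{\rm Tate}_*$ and the duality agrees with $\bD^{\rm Tate}_{X_0}$ on such objects by the above compatibility.
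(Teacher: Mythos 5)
Your proof follows the same route as the paper's — present $\TateCoh(X)$ as a limit over $I^{\rm op}$ of the $\TateCoh(X_i)$ and assemble the level-wise dualities — and in fact you make explicit the one step the paper's proof elides, namely that the $\bD^{\rm Tate}_{X_i}$ must intertwine the transition functors, which you correctly trace back to the identification $(f^!)^{\vee} \simeq f^{\rm ProInd}_*$ of Theorem \ref{thm:duality-for-ProIndCoh-sheaves} restricted to the Tate subcategories. One small correction: the displayed compatibility $\bD^{\rm Tate}_{X_i}\circ (f^{\rm Tate}_*)^{\rm op} \simeq f^!\circ \bD^{\rm Tate}_{X_j}$ does not typecheck (sources and targets do not match), and dualizing the colimit presentation yields $\colim_I$ of the opposite categories rather than a limit; the square you actually need is $\bD^{\rm Tate}_{X_i}\circ (f_{i,j}^!)^{\rm op} \simeq f_{i,j}^!\circ \bD^{\rm Tate}_{X_j}$ for the limit presentation along $(f^!)^{\rm op}$, or equivalently $\bD^{\rm Tate}_{X_j}\circ ((f_{i,j})^{\rm Tate}_*)^{\rm op} \simeq (f_{i,j})^{\rm Tate}_*\circ \bD^{\rm Tate}_{X_i}$ for the colimit presentation — either of which follows from the Grothendieck--Serre duality you invoke.
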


\begin{proof}
Consider $X \simeq \colim_I X_i$ a presentation of $X$, by $X_i \in \Schaft$. One has
\[
\TateCoh(X)^{\rm op} \simeq \lim_{I^{\rm op}}\TateCoh(X_i)^{\rm op}
\]
and for each $i \in I$ one has an equivalence
\[
\bD^{\rm Tate}_{X_i}: \TateCoh(X_i)^{\rm op} \overset{\simeq}{\ra} \TateCoh(X_i)
\]
which assembles to give an equivalence
\[
\TateCoh(X)^{\rm op} \overset{\simeq}{\ra} \TateCoh(X).
\]

The second statement follows from the construction of the duality equivalence.
\end{proof}

The functor (\ref{eq:Serre-duality-for-Tate-affine-schemes}) gives rise to the following abstract equivalence.

\begin{cor}
\label{cor:duality-of-laft-Pro-category}
Suppose that $X \in \Sch_{\rm Tate, aft}$ (or simply $X\in \SchaffTateaft$), then one has an equivalence
\[
\Pro(\TateCoh(X))^{\rm op}_{\rm lp} \simeq \Ind(\Pro(\Coh(X))).
\]
And similarly,
\[
\Pro(\TateCoh(X))^{\rm op}_{\rm dualizable} \simeq \TateCoh(X).
\]
\end{cor}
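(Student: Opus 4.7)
The plan is to combine Corollary \ref{cor:Pro-TateCoh-negative-recovers-ProInd} with the Serre duality equivalence $\bD^{\rm Tate}_X$ from the preceding proposition, together with the formal identity $\Pro(\sC)^{\rm op} \simeq \Ind(\sC^{\rm op})$ (immediate from the definition $\Pro(\sC) := \Ind(\sC^{\rm op})^{\rm op}$).

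For the second equivalence, the identification $\Pro(\TateCoh(X))_{\rm dualizable} \simeq \TateCoh(X)$ from Corollary \ref{cor:Pro-TateCoh-negative-recovers-ProInd}, combined with Serre duality, yields
\[
\Pro(\TateCoh(X))^{\rm op}_{\rm dualizable} \simeq \TateCoh(X)^{\rm op} \overset{\bD^{\rm Tate}_X}{\simeq} \TateCoh(X),
\]
so this case is essentially immediate once the identification of the subcategory is in place.

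For the first equivalence, I would start by using Corollary \ref{cor:Pro-TateCoh-negative-recovers-ProInd} to rewrite $\Pro(\TateCoh(X))^{\rm op}_{\rm lp} \simeq \Pro(\IndCoh(X))^{\rm op}$. Applying $\Pro(\sD)^{\rm op} \simeq \Ind(\sD^{\rm op})$ twice gives
\[
\Pro(\IndCoh(X))^{\rm op} \simeq \Ind(\IndCoh(X)^{\rm op}) \simeq \Ind(\Ind(\Coh(X))^{\rm op}) \simeq \Ind(\Pro(\Coh(X)^{\rm op})),
\]
where the middle step uses $\IndCoh(X) \simeq \Ind(\Coh(X))$, which is valid since $\Coh(X)$ is by construction the subcategory of compact objects of $\IndCoh(X)$ for $X$ locally almost of finite type. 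Serre duality restricts to an equivalence $\Coh(X)^{\rm op} \simeq \Coh(X)$, and applying $\Ind(\Pro(-))$ then produces $\Ind(\Pro(\Coh(X)))$ as required.

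The step that requires the most care is verifying that $\bD^{\rm Tate}_X$ really does restrict to $\Coh(X)^{\rm op} \simeq \Coh(X)$ when $X$ is a Tate scheme rather than a scheme almost of finite type. For a presentation $X \simeq \colim_I X_i$ with $X_i \in \Schaft$, an object of $\Coh(X)$ is of the form $(\imath_i)^{\rm Tate}_*(\sF_i)$ for some $\sF_i \in \Coh(X_i)$; the compatibility of $\bD^{\rm Tate}_X$ with $\imath_i^!$ stated in the preceding proposition, combined with the standard fact that usual Serre duality sends $\Coh(X_i)$ to $\Coh(X_i)$, lets one promote these duality equivalences along the presentation and conclude that $\bD^{\rm Tate}_X$ preserves the compact subcategory. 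With this in hand, the formal chain of identities above closes the argument.
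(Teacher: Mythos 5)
Your proof is correct and follows essentially the same route as the paper: the identification of the lp (resp.\ dualizable) subcategory with $\ProIndCoh(X)$ (resp.\ $\TateCoh(X)$) via Corollary \ref{cor:Pro-TateCoh-negative-recovers-ProInd}, the formal swap $\Pro(\sC)^{\rm op}\simeq\Ind(\sC^{\rm op})$, and Serre duality. If anything, your write-up is more complete than the paper's, which disposes of the first equivalence with ``follows from the definition of Ind and Pro categories'' and leaves implicit exactly the point you single out, namely that the duality restricts to $\Coh(X)^{\rm op}\simeq\Coh(X)$ along a presentation of the Tate scheme.
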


\begin{proof}
The first statement follows from the definition of Ind and Pro categories.

For the second we notice that Corollary \ref{cor:duality-and-Tate-construction} implies that one has an equivalence
\[
\TateCoh(X)^{\rm op} \simeq \Tate(\Coh(X)^{\vee}) \overset{\Tate(\bD^{\rm Serre}_X)}{\ra} \TateCoh(X),
\]
where $\Tate(\bD^{\rm Serre}_X)$ denotes the functor induced by the Tate construction applied to (\ref{eq:Serre-duality-for-Tate-affine-schemes}).
\end{proof}

\paragraph{}

Let $\sX \in \PStkTatelp$ and assume $\sX$ admits a cotangent space, for a point $(S \overset{x}{\ra}\sX) \in (\SchaffTate)_{/\sX}$ one wants to understand under what conditions one has
\[
T^*_{x}\sX \in \Pro(\TateCoh(S))_{\rm dualizable} \subset \Pro(\TateCoh(S))_{\rm lp},
\]
i.e.\ $T^*_{x}\sX \in \TateCoh(S)$ via the equivalence (\ref{eq:equivalence-Pro-Tate-dualizable-Tate}).

\begin{prop}
\label{prop:prestack-of-Tate-type-has-Tate-cotangent-space}
Suppose that $\sX$ is a prestack of Tate type which is locally a prestack that admits a cotangent space at a point $(S \overset{x}{\ra}\sX) \in (\SchaffTate)_{/\sX}$, let $S \simeq \colim_I S_i$ be a presentation of $S$ and let 
\[
\begin{tikzcd}
S_j \ar[dr,"x_j"'] \ar[r,"f_{j,i}"] & S_i \ar[d,"x_i"] \\
 & \sX_0
\end{tikzcd}
\]
be morphisms determined by $x$. We suppose that
\begin{enumerate}[1)]
    \item for all $i$, one has $T^*_{x_i}\sX_0 \in \ProIndCoh(S_i)^{-}$, and
    \item for all $j \ra i$
    \[
    \Fib((f_{j,i})^{\rm ProInd}_*T^*_{x_j}\sX_0 \ra T^*_{x_i}\sX_0) \in \Coh(S_i).
    \]
\end{enumerate}
Then $T^*_x\sX \in \TateCoh(S) \subset \ProIndCoh(S)$.
\end{prop}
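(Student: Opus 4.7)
The strategy is to use the locally-a-prestack hypothesis to identify $T^*_x\sX$ with a concrete object of $\ProIndCoh(S)$, and then to verify the Tate condition using hypotheses (1) and (2). By Corollary \ref{cor:cotangent-space-of-prestack-Tate-lp-is-lp}, since $\sX$ is locally a prestack, $T^*_x\sX \in \Pro(\TateCoh(S))_{\rm lp}$, and by the equivalence of Corollary \ref{cor:Pro-TateCoh-negative-recovers-ProInd} we may regard it as an object of $\ProIndCoh(S)$; the goal is to show this object actually lies in the subcategory $\TateCoh(S)$, equivalently (via Corollary \ref{cor:duality-of-laft-Pro-category}) in $\Pro(\TateCoh(S))_{\rm dualizable}$.

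The first step is to give an explicit description of $T^*_x\sX$ in $\ProIndCoh(S)$ in terms of the family $\{T^*_{x_i}\sX_0\}_i$. For each closed embedding $\imath_i: S_i \hookrightarrow S$, the functoriality of split square-zero extensions (Lemma \ref{lem:RealSplitSqZ-of-TateCoh-for-Tate-schemes-is-functorial}) together with the $(\imath_{i,*}^{\rm Tate},\imath_i^!)$-adjunction yield a canonical identification $\imath_i^! T^*_x\sX \simeq T^*_{x_i}\sX_0$. Applying Lemma \ref{lem:computation-cotangent-space-of-prestack-of-Tate-type-which-is-lp} and converting the cofiltered limit formula into a filtered colimit via the adjunction $(\imath_{i,*}^{\rm ProInd},\imath_i^!)$ produces
\[
T^*_x\sX \;\simeq\; \colim_I\, (\imath_i)^{\rm ProInd}_{*}(T^*_{x_i}\sX_0) \quad \text{in } \ProIndCoh(S).
\]

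The second step is to verify the Tate condition on this presentation. For each $j \to i$ in $I$, factor $\imath_j = \imath_i \circ f_{j,i}$; base change and the counit of the $((f_{j,i})^{\rm ProInd}_*, f_{j,i}^!)$-adjunction identify the fiber of $(\imath_j)^{\rm ProInd}_*(T^*_{x_j}\sX_0) \to (\imath_i)^{\rm ProInd}_*(T^*_{x_i}\sX_0)$ in $\ProIndCoh(S)$ with $(\imath_i)^{\rm ProInd}_*\bigl(\Fib((f_{j,i})^{\rm ProInd}_*T^*_{x_j}\sX_0 \to T^*_{x_i}\sX_0)\bigr)$. By hypothesis (2), the inner fiber lies in $\Coh(S_i)$; since the pushforward along the closed embedding $\imath_i$ preserves coherence, the outer fiber lies in $\Coh(S)$. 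Combined with hypothesis (1) guaranteeing each $T^*_{x_i}\sX_0 \in \ProIndCoh(S_i)^{-}$, this exhibits $T^*_x\sX$ as an admissible Ind-Pro object in $\IndCoh(S)$: a filtered colimit (over $I$) of Pro-Ind-coherent sheaves whose transition fibers are coherent on $S$.

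The final step is to deduce from admissibility that $T^*_x\sX$ in fact belongs to $\TateCoh(S)$. Here I would invoke the equivalence between admissible Ind-Pro and Pro-Ind presentations of Tate objects in the style of \cite{Hennion-Tate}, as used throughout \S\ref{subsec:Tate-coherent-sheaves-on-Tate-schemes}; alternatively, one can verify directly that the functor $\bar{\Phi}: \IndCoh(S)^{-} \to \Spc$ associated to $T^*_x\sX$ satisfies the criteria of Definition \ref{defn:Pro-TateCoh-lp-and-Pro-TateCoh-dualizable}(2) by expanding each Pro-object $T^*_{x_i}\sX_0$ into its constituent representable functors and observing that the transition fibers coming from $I$ commute with filtered colimits (precisely the coherence established above). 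The main obstacle is in fact this last conversion: the definition of $\TateCoh(S)$ requires a Pro-Ind presentation with coherent fibers, whereas hypothesis (2) directly provides coherence of cofibers in the Ind direction of our Ind-Pro presentation, and bridging these two descriptions requires the reindexing technology developed for Tate objects in prestable $\infty$-categories.
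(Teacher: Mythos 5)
Your proposal has the same skeleton as the paper's proof, and your second ``alternative'' for the final step is in fact exactly what the paper does. The difference is one of formulation: the paper never forms the object $\colim_I(\imath_i)^{\rm ProInd}_*(T^*_{x_i}\sX_0)$ but instead works throughout with the corepresented functor $\Phi(\sG)=\Hom(T^*_x\sX,\sG)$, writes $\Phi\simeq\lim_{I^{\rm op}}\Phi_i$ via Lemma \ref{lem:computation-cotangent-space-of-prestack-of-Tate-type-which-is-lp}, and then checks the two criteria of Definition \ref{defn:Pro-TateCoh-lp-and-Pro-TateCoh-dualizable}: hypothesis (1) gives representability of each $\Phi_i$, and hypothesis (2) gives that $\Fib(\Phi_j\to\Phi_i)$ commutes with filtered colimits, the bridge being that a pushforward $g_{i,*}\Fib(\varphi_{i,j})$ is compact in $\IndCoh(S)$ if and only if $\Fib(\varphi_{i,j})\in\Coh(S_i)$ (\cite{DG-indschemes}*{Proposition 2.4.6}). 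The conclusion then drops out of the already-established equivalence $\Pro(\TateCoh(S))_{\rm dualizable}\simeq\TateCoh(S)$ of Corollary \ref{cor:Pro-TateCoh-negative-recovers-ProInd}, which is where the Ind-Pro/Pro-Ind reindexing you flag as the ``main obstacle'' has been packaged once and for all. Your preferred object-level route is shakier as stated: the pieces $(\imath_i)^{\rm ProInd}_*(T^*_{x_i}\sX_0)$ are a priori only Pro-Ind-coherent, hence neither lattices nor colattices, and a filtered colimit need not stay inside the stable subcategory generated by $\IndCoh(S)$ and $\Pro(\Coh(S))$; so the deferred reindexing is not a routine appeal to \cite{Hennion-Tate} but precisely the dualizability analysis of your alternative. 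In short, the functor-level formulation is what turns your last step from an open conversion problem into a definition-check, and you should run the argument that way.
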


\begin{proof}
Let $\Phi: \IndCoh(S)^{\leq 0}$ be given by
\[
\Phi(\sG) = \Hom_{\Pro(\TateCoh(S)^-)}(T^*_x\sX,\sG).
\]
By Lemma \ref{lem:computation-cotangent-space-of-prestack-of-Tate-type-which-is-lp} one has $\Phi(\sG) \simeq \lim_{I^{\rm op}}\Phi_i(\sG)$ where
\[
\Phi_i(\sG) \simeq \Hom_{\Pro(\IndCoh(S_i)^-)}(T^*_{x_i}\sX,g^!_{i}(\sG)).
\]
The condition that for each $i$ one has $T^*_{x_i}\sX_0 \in \IndCoh(S_i)^{-}$ corresponds to condition 2) (i) from Definition \ref{defn:Pro-TateCoh-lp-and-Pro-TateCoh-dualizable}.

Finally, one notices that
\[
\Fib(\Phi_i \ra \Phi_j)(\sG) \simeq \Hom_{\IndCoh(S_i)^{-}}(\Fib(\varphi_{i,j})[1],g^!_i\sG),
\]
where $\varphi_{i,j}: (f_{j,i})_*T^*_{x_j}\sX_0 \ra T^*_{x_i}\sX_0$, and this commutes with colimits on the $\sG$ variable if and only if $g_{i,*}\Fib(\varphi_{i,j})$ is compact in $\IndCoh(S)$, which by \cite{DG-indschemes}*{Proposition 2.4.6} is equivalent to $\Fib(\varphi_{i,j}) \in \Coh(S_i)$.
\end{proof}

\begin{rem}
If one assume that $\sX_0$ admits a cotangent space, i.e.\ that for any point $x_0:S_0 \ra \sX_0$ where $S_0 \in \Schaff$ one has $T^*_{x_0}\sX_0 \in \IndCoh(S_0)^-$, then condition 1) of Proposition \ref{prop:prestack-of-Tate-type-has-Tate-cotangent-space} is automatically satisfied.
\end{rem}

\begin{rem}
If we suppose that $\sX_0$ admits a pro-cotangent complex (see \S \ref{subsec:pro-cotangent-complex}), then the morphism
\begin{equation}
\label{eq:adjoint-map-for-corepresentability}
T^*_{x_j}\sX_0 \ra (f_{j,i})^{\rm ProInd, *}T^*_{x_i}
\end{equation}
is an isomorphism, which by adjunction implies that condition 2) of Proposition \ref{prop:prestack-of-Tate-type-has-Tate-cotangent-space} is satisgied. Moreover, if the pro-cotangent complexes of $\sX_0$ are eventually connective we have that the existence of a pro-cotangent complex for $\sX_0$ in the sense of Gaitsgory--Rozenblyum implies that (\ref{eq:adjoint-map-for-corepresentability}) is an isomorphism.
\end{rem}

\begin{cor}
\label{cor:condition-for-pro-contangent-space-to-be-Tate-coherent}
Suppose that $\sX$ is a prestack of Tate type locally a prestack and moreover assume that its underlying prestack $\sX_0$ admits an eventually connective pro-cotangent complex (either in our sense or in the sense of Gaitsgory--Rozenblyum). Then for any point $x:S \ra \sX$ we have
\[
T^*_{x}\sX \in \TateCoh(S).
\]
\end{cor}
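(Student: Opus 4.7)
The plan is to reduce the statement to Proposition \ref{prop:prestack-of-Tate-type-has-Tate-cotangent-space} and check its two hypotheses using the two remarks immediately following that proposition. First I would invoke Proposition \ref{prop:pro-cotangent-space-on-sX-from-sX_0}: by hypothesis $\sX_0$ admits an eventually connective pro-cotangent complex, hence a fortiori a pro-cotangent space at every point $x_0 : S_0 \to \sX_0$ with $S_0 \in \Schaff$, and convergence of $\sX$ is built into the locally-a-prestack condition together with convergence of $\sX_0$ (a standing requirement for having a pro-cotangent complex). Consequently $\sX$ admits a pro-cotangent space at every point $x : S \to \sX$ with $S \in \SchaffTate$.

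Next I would fix a presentation $S \simeq \colim_I S_i$ with each $S_i \in \Schaff$ and let $x_i : S_i \to \sX_0$ denote the induced points and $f_{j,i} : S_j \hookrightarrow S_i$ the closed embeddings coming from the Ind-diagram. Condition (1) of Proposition \ref{prop:prestack-of-Tate-type-has-Tate-cotangent-space} is immediate: eventual connectivity of the pro-cotangent complex of $\sX_0$ gives $T^*_{x_i}\sX_0 \in \IndCoh(S_i)^- \subset \ProIndCoh(S_i)^-$. Condition (2) is the content of the first remark following Proposition \ref{prop:prestack-of-Tate-type-has-Tate-cotangent-space}: existence of a pro-cotangent complex for $\sX_0$ says precisely that the natural map
\[
T^*_{x_j}\sX_0 \;\longrightarrow\; f_{j,i}^{\rm ProInd,\,*}\, T^*_{x_i}\sX_0
\]
is an isomorphism; since $f_{j,i}$ is a closed embedding and in particular eventually coconnective, the pair $(f_{j,i}^{\rm ProInd,\,*}, (f_{j,i})^{\rm ProInd}_*)$ is adjoint (Lemma \ref{lem:left-adjoint-is-Pro-extension}), and passing to the adjoint map yields the required coherence of $\Fib((f_{j,i})^{\rm ProInd}_* T^*_{x_j}\sX_0 \to T^*_{x_i}\sX_0)$ in $\Coh(S_i)$.

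The main obstacle I expect is a clean justification of the adjunction step: namely, why an isomorphism of the pullback comparison map forces the fiber of the pushforward comparison map to land in $\Coh(S_i)$ rather than merely in $\ProIndCoh(S_i)$. Concretely, one rewrites the fiber via base change along the closed embedding $f_{j,i}$ using Corollary \ref{cor:arbitrary-base-change-ProIndCoh}; the eventually connective hypothesis bounds the amplitude, and the defining Tate condition on the ideal of $S_j$ inside $S_i$ (from Definition \ref{defn:Tate-affine-schemes}) supplies the coherence. Once this is verified, Proposition \ref{prop:prestack-of-Tate-type-has-Tate-cotangent-space} applies directly and yields $T^*_x\sX \in \TateCoh(S)$.
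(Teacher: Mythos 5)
Your proposal follows the same route the paper intends: the corollary is read off from Proposition \ref{prop:prestack-of-Tate-type-has-Tate-cotangent-space} together with the two remarks that follow it, after first invoking Proposition \ref{prop:pro-cotangent-space-on-sX-from-sX_0} (plus convergence of $\sX$, which you correctly trace back to convergence of $\sX_0$) so that $T^*_x\sX$ exists at a general Tate affine point. Your treatment of condition (2) is the same as the paper's second remark, and your closing paragraph is, if anything, more careful than the paper about what the adjunction step actually requires.

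The one step that does not hold as written is your verification of condition (1). You assert that eventual connectivity of the pro-cotangent complex of $\sX_0$ gives $T^*_{x_i}\sX_0 \in \IndCoh(S_i)^-$. It does not: an eventually connective pro-cotangent space is only an object of $\Pro(\IndCoh(S_i)^{\leq n})$ for some $n$, and being bounded above in cohomological amplitude is independent of being corepresentable by an honest object of $\IndCoh(S_i)^-$. Corepresentability is the separate condition of $\sX_0$ admitting a \emph{cotangent} space, which is precisely what the first remark after Proposition \ref{prop:prestack-of-Tate-type-has-Tate-cotangent-space} uses to secure condition (1), and it is also what the representability clause (i) of Definition \ref{defn:Pro-TateCoh-lp-and-Pro-TateCoh-dualizable} demands in order to land in $\TateCoh(S)$ rather than merely in $\ProIndCoh(S)$. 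To be fair, the hypotheses of the corollary as stated do not visibly supply this either, so the gap is arguably inherited from the statement itself; but a complete argument needs either the additional hypothesis that $\sX_0$ admits an (eventually connective) \emph{cotangent} complex, or a separate justification of why each $T^*_{x_i}\sX_0$ is corepresentable in your situation.
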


\paragraph{Tangent space}

For $\sX$ a prestack of Tate type which is locally a prestack such that the underlying prestack $\sX_0$ admits a pro-cotangent complex, we define the \emph{tangent space of $\sX$ at a point $x:S \ra \sX$} to be the dual of $T^*_{x}\sX$
\[
T_{x}\sX \in \TateCoh(S).
\]

\subsection{Pro-cotangent complex}
\label{subsec:pro-cotangent-complex}

Once we have the technical ingredients of sections \ref{subsec:Tate-coherent-sheaves-on-Tate-schemes} and \ref{subsec:preparations} in place, the definition of (pro-)cotangent complex and its properties for prestacks of Tate type follows \cite{GR-II}*{Chapter 1, \S 4} very closely. The main point of difference is that we will expect functoriality of pro-cotangent space for prestacks of Tate type with respect to $*$-pullback of Tate-coherent sheaves, and those are only defined for eventually coconnective morphisms. So we will only make sense of the theory of (pro-)cotangent complex for convergent prestacks of Tate type.

\subsubsection{Functoriality of (pro)-cotangent spaces}

\paragraph{Digression on eventually coconnective morphisms}

Recall that given a morphism $f:X \ra Y$ of schemes one says that $f$ is eventually coconnective if $f^*$ sends $\Coh(Y)$ to $\Coh(X)$. 

\begin{lem}
\label{lem:eventually-coconnective-and-pullback-is-ft-for-affines}
Let $f:X \ra Y$ be a morphism of affine schemes, the following are equivalent:
\begin{enumerate}[(i)]
    \item $f$ is eventually coconnective;
    \item for every $T \ra Y$ a morphism from $T \in \Schaffconv$ the pullback
    \[
    T\underset{Y}{\times}X
    \]
    is an eventually coconnective affine scheme, i.e.\ $T\underset{Y}{\times}X \in \Schaffconv$.
\end{enumerate}
\end{lem}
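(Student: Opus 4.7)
I will write $X = \Spec A$ and $Y = \Spec B$ with $A, B \in \CAlg(\Vect^{\leq 0})$, and recall that $f$ is eventually coconnective iff $f^{*} = (-) \otimes_{B}^{\rm L} A$ has finite Tor amplitude---equivalently, there exists $n \geq 0$ such that $\Tor^{B}_{i}(A, M) = 0$ for all $i > n$ and every classical $B$-module $M$. The derived fiber product is $T \times_{Y} X = \Spec(R \otimes_{B}^{\rm L} A)$ for $T = \Spec R$, so both implications reduce to controlling the cohomological amplitude of this derived tensor product. The direction (i) $\Rightarrow$ (ii) is then direct: given $f$ of Tor amplitude at most $n$ and $T = \Spec R$ with $R$ in cohomological degrees $[-m, 0]$, filtering $R$ by its truncations and invoking the Tor spectral sequence yields $R \otimes_{B}^{\rm L} A \in \Vect^{\geq -(m+n), \leq 0}$, so $T \times_{Y} X$ is $(m+n)$-coconnective.

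For (ii) $\Rightarrow$ (i), I will argue by contradiction. Supposing $f^{*}$ has infinite Tor amplitude, for every $n$ one finds a classical $B$-module $M_{n}$ with $\Tor^{B}_{n}(A, M_{n}) \neq 0$; selecting an increasing sequence $n_{k} \to \infty$ with witnesses $M_{k}$ and setting $M := \bigoplus_{k} M_{k}$, compatibility of $\Tor$ with direct sums gives $\Tor^{B}_{n_{k}}(A, M) \neq 0$ for every $k$, so $M \otimes_{B}^{\rm L} A$ carries nonzero cohomology in arbitrarily negative degrees. I will then form the classical affine scheme $T := \Spec(\H^{0}(B) \oplus M)$---the trivial split square-zero extension of $\H^{0}(B)$ by $M$---regarded over $Y$ via $B \to \H^{0}(B) \to \H^{0}(B) \oplus M$; since $T$ is classical it belongs to $\Schaffconv$. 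Because the $B$-action on $M$ factors through $\H^{0}(B)$, the algebra $\H^{0}(B) \oplus M$ decomposes as a direct sum of $B$-modules, and the derived tensor product with $A$ distributes to give
\[
T \times_{Y} X \simeq \Spec\bigl((\H^{0}(B) \otimes_{B}^{\rm L} A) \oplus (M \otimes_{B}^{\rm L} A)\bigr),
\]
whose second summand has unbounded negative cohomology---contradicting hypothesis (ii).

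The main subtlety is producing a \emph{single} classical $B$-module and a \emph{single} test scheme in $\Schaffconv$ that jointly witness the failure of finite Tor amplitude. The direct-sum construction collapses the sequence of witnesses $M_{k}$ into one module $M$, and using $\H^{0}(B) \oplus M$ rather than $B \oplus M$ circumvents the issue that the latter would fail to be eventually coconnective whenever $B$ itself is not bounded below in cohomological degree.
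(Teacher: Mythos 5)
Your proof is correct, and it takes a genuinely different route from the paper's. In the direction (ii) $\Rightarrow$ (i) the paper works with the ``$f^*$ preserves $\Coh$'' form of eventual coconnectivity and runs a K\"unneth/Tor spectral sequence over $\H^*(B)$ — an argument that quietly leans on extra hypotheses not present in the statement (that $X$ and $Y$ are almost of finite type and that $\H^k(A)$ vanishes for $k \ll 0$). You instead target the finite-Tor-amplitude form directly and produce a single explicit witness: the classical split square-zero extension $\Spec\bigl(\H^0(B)\oplus\bigoplus_k M_k\bigr)$, whose derived fiber product visibly decomposes and detects unbounded Tor. This is more self-contained and makes transparent exactly which test objects in $\Schaffconv$ see the failure of (i). In the direction (i) $\Rightarrow$ (ii) the paper's argument as written only tests $T=Y$ (it reduces (ii) to $\H^k(B\otimes_B A)$ being bounded), which is not the full quantification over all $T\to Y$; your truncation/filtration bound $R\otimes_B^{\rm L}A\in\Vect^{\geq -(m+n),\leq 0}$ handles arbitrary eventually coconnective $T$ and is the complete argument. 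The only cosmetic imprecision is that failure of Tor-amplitude $\leq n$ gives a nonvanishing $\Tor^B_{i}(A,M_n)$ for \emph{some} $i>n$ rather than for $i=n$ exactly, but your subsequent choice of an increasing sequence of witnesses already accommodates this.
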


\begin{proof}
Let's assume (ii). Since the condition of $\sF \in \QCoh(Y)$ being coherent is local, it is enough to assume that $X$ and $Y$ are affine. In this situation the assumption that $X$ and $Y$ are locally almost of finite type means that $X$ and $Y$ are almost finite type. Let $Y = \Spec(B)$ and $X = \Spec(A)$. We want to check that for any $M \in \Mod^{\rm coh.}_{B}$ one has
\[
M\otimes_{B}A \in \Mod^{\rm coh.}_A.
\]
We recall that there is a Tor spectral sequence converging to $\H^{p+q}(M\otimes_{B}A)$ whose $E_2$ page is given by
\[
E^{pq}_2 := \Tor^p_{\H^*(B)}(\H^{*}(M)\otimes_{\H^*(B)}\H^*(A))^{q} \Rightarrow \H^{p-q}(M\otimes_{B}A).
\]
Since $M\in \Mod^{\rm coh.}_B$ and $\H^k(A)$ vanishes for $k << 0$, one has that the $q$-graded components of the $E_2$ page vanishes for all but finitely many $q$. Then because $\H^*(B)$ is eventually coconnective, one has that the $p$-Tor component also vanish for all but finitely many $p$. Thus, $\H^k(M\otimes_B A)$ vanishes for all but finitely many $k$ as well.

Assume (i), by considering a cover of $Y$ and $X$ it is enough to suppose that $X$ and $Y$ are affine, let $Y = \Spec(B)$ and $X = \Spec(A)$. Then the statement of (ii) is exactly that
\[
\H^k(B\otimes_B A)
\]
vanishes for all but finitely many $k$, which follows from $f^{*}(\sO_{Y}) \in \Coh(X)$.
\end{proof}

In particular, any morphism in the category $\Schaffconv$ is eventually coconnective.

\paragraph{Eventually coconnective pullbacks}

Let $f:S_1 \ra S_2$ be an eventually coconnective map of Tate affine schemes, and consider the functor
\[
f^{\rm Tate, *}: \TateCoh(S_1) \ra \TateCoh(S_2).
\]

By passing to Pro-objects one has an extension 
\[
\Pro(f^{\rm Tate, *}): \Pro(\TateCoh(S_1)) \ra \Pro(\TateCoh(S_2)).
\]

Assume that $\sX$ admits a pro-cotangent space at the point $x_2: S_2 \ra \sX$. Let $f: S_1 \ra S_2$ be a map in $\SchaffTateconv$ and denote $x_1 := x_2 \circ f$, by Lemma \ref{lem:RealSplitSqZ-of-TateCoh-for-Tate-schemes-is-functorial} we have a commutative diagram

\begin{equation}
\label{eq:TateCoh-pushforward-and-square-zero-extension}
\begin{tikzcd}
(\TateCoh(S_1)^{\leq 0})^{\rm op} \ar[r,"(f^{\rm Tate})^{\rm op}_*"] \ar[d,"\RealSplitSqZ_{S_1}"'] & (\TateCoh(S_2)^{\leq 0})^{\rm op} \ar[d,"\RealSplitSqZ_{S_2}"] \\
(\SchaffTateaft)_{S_1/} \ar[r,"- \underset{S_1}{\sqcup}S_2"'] & (\SchaffTateaft)_{S_2/}.
\end{tikzcd}    
\end{equation}

The commutative diagram (\ref{eq:TateCoh-pushforward-and-square-zero-extension}) gives a natural map
\begin{equation}
\label{eq:canonical-map-between-lifts}
    \Lift_{x_2,\sX}(f^{\rm Tate}_{*}(\sF_1)) \ra \Lift_{x_1,\sX}(\sF_1)
\end{equation}
which is functorial in $\sF_1 \in \TateCoh(S_1)^{\leq 0}$. By definition of pro-cotangent spaces we have the isomorphisms
\[
\Lift_{x_2,\sX}(f^{\rm Tate}_{*}(\sF_1)) \simeq \Hom_{\Pro(\TateCoh(S_2)^-)}(T^*_{x_2}\sX,f^{\rm Tate}_*(\sF_1)),
\]
and
\[
\Lift_{x_1,\sX}(\sF_1) \simeq \Hom_{\Pro(\TateCoh(S_1)^-)}(T^*_{x_1}\sX,\sF_1).
\]
Since we assumed $f$ eventually coconnective the adjuntion $(f^{\rm Tate, *},f^{\rm Tate}_*)$ gives a morphism
\[
\Hom(f^{\rm Tate, *}(T^*_{x_2}\sX),\sF_1) \ra \Hom(T^*_{x_1}\sX,\sF_1)
\]
for every $\sF_1\in \TateCoh(S_1)$.

Thus, we can interpret the map (\ref{eq:canonical-map-between-lifts}) as
\begin{equation}
    \label{eq:defn-map-pro-cotangent-complex}
    T^*_{x_1}(\sX) \ra \Pro(f^{\rm Tate,*})(T^*_{x_2}\sX)
\end{equation}
in $\Pro(\TateCoh(S_1)^-)$.

We will denote by $\PStkTateconv$ the category of convergent prestacks of Tate type, for $\sX \in \PStkTateconv$. Recall that by Lemma \ref{lem:pro-cotangent-space-convergent-Tate-enough-eventually-coconnective-affine} (a) for a convergent prestack of Tate type $\sX$ to determine if $\sX$ has a pro-cotangent space, it is enough to consider the pro-cotangent spaces $T^*_{x_2}\sX$ for $x_2:S_2 \ra \sX$ where $S_2 \in (\SchaffTateconv)_{/\sX}$. 

\begin{defn}
\label{defn:pro-cotangent-complex-Tate}
Let $\sX \in \PStkTateconv$ be a convergent prestack of Tate type that admits a pro-cotangent spaces, we say that $\sX$ admits a \emph{pro-cotangent complex} if for every point $(S_2 \overset{x_2}{\ra} \sX) \in (\SchaffTateconv)_{/\sX_2}$ and every morphism $f:S_1 \ra S_2$ between eventually coconnective Tate affine schemes the natural map (\ref{eq:defn-map-pro-cotangent-complex}) is an equivalence. 

For $\sX \in \PStkTate$ we will say that $\sX$ admits pro-cotangent complex, if 
\[
\sX \simeq \RKE_{\SchaffTateconv \hra \SchaffTate}({^{> \infty}\sX})
\]
and ${^{> \infty}\sX}$ admits a pro-cotangent complex.
\end{defn}

\begin{rem}
\label{rem:take-pushouts-to-pullbacks-existence-pro-cotangent-complex}
We notice that the condition from Definition \ref{defn:pro-cotangent-complex-Tate} is equivalent to requiring that the $\sX$ is convergent and takes pushouts of the form $(S_1)_{\sF_1}\underset{S_1}{\sqcup}S_2$ to pullbacks in $\Spc$.
\end{rem}

\begin{defn}
Given $\sX \in \PStkTate$ one says that $\sX$ admits a \emph{cotangent complex} if it admits cotangent spaces and pro-cotangent complexes.
\end{defn}

If $\sX$ admits a cotangent complex then we can extend the assignment
\[
(S \overset{x}{\ra} \sX) \in (\SchaffTateconv)_{/\sX} \rightsquigarrow T^*_x\sX \in \TateCoh(S)^-
\]
to an assignment 
\begin{equation}
\label{eq:pro-cotangent-complex-on-prestack-Tate-assignment}
(S \overset{x}{\ra} \sX) \in (\SchaffTate)_{/\sX} \rightsquigarrow T^*_x\sX \in \TateCoh(S)^-    
\end{equation}
where we consider $!$-pullbacks\footnote{Notice that for coconnective morphisms these are the same as $*$-pullback.}.

Since one has an equivalence
\[
\TateCoh^!_{\SchaffTate} \ra \RKE_{\SchaffTateconvop \hra \SchaffTateop}(\TateCoh^!_{\SchaffTateconv})
\]
proved similarly as in \cite{GR-I}*{Chapter 5, Lemma 3.2.4 and Chapter 4, \S 6.4} the assignment (\ref{eq:pro-cotangent-complex-on-prestack-Tate-assignment}) defines an object $T^*\sX$ of $\TateCoh(\sX)^-$ (see Section \ref{subsubsec:extension-TateCoh-to-PStk-Tate} for the definition of this category). We will refer to $T^*\sX$ as the \emph{cotangent complex} of $\sX$.

The following follows from Remark \ref{rem:take-pushouts-to-pullbacks-existence-pro-cotangent-complex}.

\begin{cor}
Any Tate scheme $X$ which is convergent as a prestack of Tate type admits a connective cotangent complex.
\end{cor}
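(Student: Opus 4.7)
The plan is to verify the two conditions involved in the definition of a cotangent complex for a prestack of Tate type, namely that $X$ admits connective cotangent spaces in the sense of Definition \ref{defn:cotangent-space-at-a-point} and that the pro-cotangent spaces are functorial with respect to eventually coconnective pullbacks as in Definition \ref{defn:pro-cotangent-complex-Tate}.

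First, connectivity and the existence of cotangent spaces (not merely pro-cotangent spaces) follow directly from the results already established for Tate schemes: Corollary \ref{cor:Tate-scheme-has-connective-pro-cotangent-space} guarantees that $X$ admits a connective pro-cotangent space at every point $x:S\to X$, and the subsequent proposition upgrades this to a connective cotangent space, using that $\RealSplitSqZ_S$ commutes with filtered colimits (Lemma \ref{lem:realsplitsqz-Tate-preserves-colimits}) and that the Yoneda embedding $(\SchTate)_{S/}\hra \SchTate$ inherits this property. So the only substantive thing to verify is the pro-cotangent complex condition.

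By Remark \ref{rem:take-pushouts-to-pullbacks-existence-pro-cotangent-complex}, since $X$ is assumed convergent, I only need to show that for every eventually coconnective morphism $f:S_1\to S_2$ in $\SchaffTate$ and every $\sF_1\in \TateCoh(S_1)^{\leq 0}$, the map
\[
\Maps_{\SchTate}(\imath((S_1)_{\sF_1}\underset{S_1}{\sqcup}S_2),X)\to \Maps(\imath((S_1)_{\sF_1}),X)\underset{\Maps(\imath(S_1),X)}{\times}\Maps(\imath(S_2),X)
\]
is an equivalence. The central observation is that $S_1\hra (S_1)_{\sF_1}$ is a closed nil-isomorphism, so Corollary \ref{cor:affine-push-out-nil-isomorphism-agrees-with-scheme-push-out} applies: the pushout computed in $\SchaffTate$ agrees, after applying $\imath$, with the pushout
\[
\imath((S_1)_{\sF_1})\underset{\imath(S_1)}{\sqcup}\imath(S_2)
\]
computed in $\SchTate$. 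Mapping this pushout into $X$ then tautologically yields the required fiber product of mapping spaces.

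The main subtlety — and this is really where one must be careful — lies in checking that the pushout required by the definition of pro-cotangent complex in the category $\SchaffTate$ really is transported to the pushout in $\SchTate$ via $\imath$, because $\imath$ is not cocontinuous in general. Here the content of Corollary \ref{cor:affine-push-out-nil-isomorphism-agrees-with-scheme-push-out} is exactly what is needed: the hypothesis that one of the legs of the pushout diagram is a closed nil-isomorphism (automatic for split square-zero extensions) is precisely the case covered there. Once this identification is in hand, both claims reduce to the corresponding statements for $X$ seen as a Tate scheme, which are essentially the content of Example \ref{ex:Tate-scheme-has-pro-cotangent-space} and Corollary \ref{cor:Tate-scheme-has-connective-pro-cotangent-space}.
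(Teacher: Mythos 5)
Your proof is correct and follows essentially the same route as the paper: the paper deduces the corollary from Remark \ref{rem:take-pushouts-to-pullbacks-existence-pro-cotangent-complex} together with the fact that $\Maps_{\SchTate}(\imath(-),X)$ carries the pushouts $(S_1)_{\sF_1}\underset{S_1}{\sqcup}S_2$ to pullbacks via Corollary \ref{cor:affine-push-out-nil-isomorphism-agrees-with-scheme-push-out}, and then gets connectivity of the cotangent space from Corollary \ref{cor:Tate-scheme-has-connective-pro-cotangent-space} and the proposition following it (this is exactly the argument spelled out for Proposition \ref{prop:explicit-cotangent-complex-Tate-scheme}). Your emphasis on why $\imath$ preserves precisely these pushouts, despite not being cocontinuous, correctly identifies the one nontrivial point.
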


\begin{rem}
Notice that for $X$ a Zariski Tate stack, one can prove that $X$ is convergent. Namely, we follow the arguments of \cite{GR-I}*{Chapter 2, Proposition 3.4.2} with the necessary modifications for Tate affine schemes.
\end{rem}

\begin{defn}
Given $f:\sX \ra \sX_0$ a morphism of convergent prestacks of Tate type, which admits a pro-cotangent space at every point of $\sX$, we say that $f$ admits a \emph{relative pro-cotangent complex} if for any point $(S_2 \overset{x_2}{\ra} \sX) \in (\SchaffTateconv)_{/\sX}$ and a map $g:S_1 \ra S_2$ the canonical morphism
\[
T^*_{x_1}(\sX/\sX_0) \ra \Pro(g^{\rm Tate,*})(T^*_{x_2}(\sX/\sX_0))
\]
is an isomorphism.
\end{defn}

One can prove similar results as in the absolute case for the relative situation.

\subsubsection{Conditions on the pro-cotangent complex}

This section follows closely \cite{GR-II}*{Chapter 1, \S 4.2}, we formulate the natural extensions of those conditions to the pro-cotangent complex of prestacks of Tate type.

\paragraph{Connectivity conditions}

\begin{defn}
\label{defn:coconnectivity-pro-cotangent-complex}
For $\sX \in \PStkTate$ one says that
\begin{enumerate}[(a)]
    \item $\sX$ admits an \emph{$(-n)$-connective pro-cotangent complex} (resp.\ \emph{cotangent complex}) if it admits a $(-n)$-connective pro-cotangent space (resp. cotangent space) and admits a pro-cotangent complex;
    \item $\sX$ admits a \emph{locally eventually connective pro-cotangent complex} (resp.\ \emph{cotangent complex}) if it admits a eventually connective pro-cotangent space (resp. cotangent space) and admits a pro-cotangent complex.
\end{enumerate}
\end{defn}

For example one has that any $X \in \SchTate$ admits a connective cotangent complex.

\paragraph{Pro-cotangent vs cotangent complex}

One has the following:

\begin{lem}
Let $\sX$ be a convergent prestack of Tate type that admits a locally eventually connective pro-cotangent complex. Suppose that $\sX$ admits a cotangent space for every $(S \overset{x}{\ra} \sX) \in (\SchaffTateconv)_{/\sX}$. Then $\sX$ admits a cotangent complex.
\end{lem}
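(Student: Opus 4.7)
The plan is to use convergence of $\sX$ to reduce the verification of the cotangent-space condition at an arbitrary test Tate affine scheme $S$ to the verification at its truncations ${^{\leq n}S}$, where by hypothesis an honest cotangent space already exists. Fix $x: S \ra \sX$ with $S \in \SchaffTate$ arbitrary, and let $\jmath_n : {^{\leq n}S} \ra S$ be the truncation morphism and $x_n := x \circ \jmath_n$. Each $\jmath_n$ is eventually coconnective (indeed, by Lemma \ref{lem:eventually-coconnective-and-pullback-is-ft-for-affines} any morphism in $\SchaffTateconv$ is so), and the pro-cotangent complex hypothesis yields the compatibilities
\[
T^*_{x_n}\sX \;\simeq\; \Pro(\jmath_n^{\rm Tate,*})\bigl(T^*_x\sX\bigr), \qquad T^*_{x_m}\sX \;\simeq\; \Pro(\jmath_{m,n}^{\rm Tate,*})\bigl(T^*_{x_n}\sX\bigr) \quad (m \leq n).
\]
By hypothesis each $T^*_{x_n}\sX$ belongs to $\TateCoh({^{\leq n}S})^-$; moreover, since $\sX$ admits a locally eventually connective pro-cotangent complex, all the $T^*_{x_n}\sX$ can be taken to lie in a common $\TateCoh({^{\leq n}S})^{\leq k}$ for a single $k$ depending only on $x$.

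Next, I would exploit convergence of $\sX$: for any $\sF \in \TateCoh(S)^{\leq 0}$, we have
\[
\Lift_{x,\sX}(\sF) \;\simeq\; \Maps_{S/}(S_\sF,\sX) \;\simeq\; \lim_n \Maps_{{^{\leq n}S}/}({^{\leq n}(S_\sF)},\sX),
\]
and combining the identification ${^{\leq n}(S_\sF)} \simeq ({^{\leq n}S})_{\tau^{\geq -n}\jmath_n^{\rm Tate,*}\sF}$ (from unwinding the construction of $\RealSplitSqZ$ in Proposition \ref{prop:RealSplitSqZ-Tate-affine-schemes} and the fact that truncations distribute over $\sO_S \oplus \sF$) with the hypothesized corepresentability of $\Lift_{x_n,\sX}$ by $T^*_{x_n}\sX \in \TateCoh({^{\leq n}S})^-$ gives
\[
\Lift_{x,\sX}(\sF) \;\simeq\; \lim_n \Hom_{\TateCoh({^{\leq n}S})^-}\bigl(T^*_{x_n}\sX,\;\tau^{\geq -n}\jmath_n^{\rm Tate,*}\sF\bigr).
\]
Using the uniform bound $T^*_{x_n}\sX \in \TateCoh({^{\leq n}S})^{\leq k}$ for $n$ large one can drop the $\tau^{\geq -n}$, and then by the $(\jmath_n^{\rm Tate,*},\jmath_n^{\rm Tate}_*)$-adjunction rewrite each term as $\Hom_{\TateCoh(S)^-}((\jmath_n)^{\rm Tate}_*T^*_{x_n}\sX,\sF)$. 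A cofinality/co-compactness check (using Proposition \ref{prop:pushforward-TateCoh-schemes-is-left-t-exact} for t-exactness of $(\jmath_n)^{\rm Tate}_*$) then produces an object $\widetilde{T}^* \in \TateCoh(S)^-$ that corepresents $\Lift_{x,\sX}$, whence $T^*_x\sX \simeq \widetilde{T}^*$ in $\Pro(\TateCoh(S)^-)$ and in particular $T^*_x\sX \in \TateCoh(S)^-$.

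The principal technical obstacle is in the last step: properly assembling the compatible family $\{(\jmath_n)^{\rm Tate}_* T^*_{x_n}\sX\}_n$ into a single object of $\TateCoh(S)^-$ (rather than merely $\Pro(\TateCoh(S)^-)$). This requires an analogue of the limit-of-categories description $\TateCoh(S) \simeq \lim_{n^{\rm op}}\TateCoh({^{\leq n}S})$ along truncation pullbacks; this should be obtainable from the right Kan extension characterization of $\TateCoh^!$ in \S \ref{subsec:Tate-coherent-sheaves-on-Tate-schemes} together with the convergence $S \simeq \lim_n {^{\leq n}S}$. Once the equivalence is in hand the compatible family defines a bona fide object $\widetilde{T}^* \in \TateCoh(S)^-$, and the verification that $\widetilde{T}^*$ corepresents $\Lift_{x,\sX}$ is the chain of equivalences sketched above.
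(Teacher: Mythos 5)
Your argument is correct and follows essentially the same route as the paper: reduce via convergence to the truncations ${^{\leq n}S}$, where the honest cotangent space exists by hypothesis, and then glue using the equivalence $\TateCoh(S)^{>\infty,\leq 0} \simeq \lim_n \TateCoh({^{\leq n}S})^{>\infty,\leq 0}$. The "principal technical obstacle" you flag at the end is exactly the content of the paper's Lemma \ref{lem:convergence-of-connective-TateCoh}, proved there from the corresponding statement for $\QCoh$ and the fact that passing to Pro-objects and Tate subcategories commutes with limits, so your sketch closes.
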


\begin{proof}
Since $\sX$ admits a locally eventually connective pro-cotangent space, it is enough to check that for $\sT$ an object of $\Pro(\TateCoh(S)^{> \infty, \leq 0})$ such that for every $n \geq 0$ and truncation $\imath_n:{^{\leq n}S} \hra S$, we have
\[
\Pro(\imath^!_n)(\sT) \in \TateCoh({^{\leq n}S})^{>\infty, \leq 0}.
\]
Then $\sT \in \TateCoh(S)^{>\infty, \leq 0}$.

This follows from the observation Lemma \ref{lem:convergence-of-connective-TateCoh} below.
\end{proof}

\begin{lem}
\label{lem:convergence-of-connective-TateCoh}
The functors $\{\imath^!_n\}$ define an equivalence
\[
\TateCoh(S)^{> \infty, \leq 0} \ra \lim_{n}\TateCoh({^{\leq n}S})^{>\infty, \leq 0}.
\]
\end{lem}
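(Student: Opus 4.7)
The plan is to derive this convergence statement from the general convergence property of the Tate-coherent sheaf formalism, combined with an analysis of the t-structure behavior of $!$-pullback along the truncation maps $\imath_n: {^{\leq n}S} \hra S$.

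First, I would establish the unrestricted equivalence
\[
\TateCoh(S) \overset{\simeq}{\ra} \lim_n \TateCoh({^{\leq n}S})
\]
via the $!$-pullback functors $\imath^!_n$. Since $S$ is convergent as a prestack (Lemma~\ref{lem:Tate-affine-schemes-are-convergent}), we have $S \simeq \lim_n {^{\leq n}S}$, and since $\TateCoh^!$ on Tate schemes locally almost of finite type is obtained as a right Kan extension (see \S \ref{par:extension-of-TateCoh-functors} and Theorem~\ref{thm:TateCoh-correspondence-functor-Sch-Tate}), this prestack limit presentation translates into the corresponding limit of categories. Concretely, using a presentation $S \simeq \colim_I S_i$ with $S_i \in \Schaffaft$ together with the fact that ${^{\leq n}S} \simeq \colim_I {^{\leq n}S_i}$, this reduces to the convergence statement for ind-coherent sheaves on each $S_i$.

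Second, I would verify that the functors $\imath^!_n$ preserve the bounded connective subcategory $\TateCoh(\cdot)^{> \infty, \leq 0}$. For the connectivity ($\leq 0$) part: the truncation $\imath_n$ is an affine closed nil-immersion, so by Corollary~\ref{cor:pushforward-scheme-into-Tate-scheme-closed-is-t-exact} the pushforward $\imath^{\rm Tate}_{n,*}$ is t-exact, and hence its right adjoint $\imath^!_n$ is left t-exact, mapping $\TateCoh(S)^{\leq 0}$ into $\TateCoh({^{\leq n}S})^{\leq 0}$. For the bounded-below ($>\infty$) part: I would reduce via the description of Tate-coherent sheaves as pushforwards from closed subschemes (Lemma~\ref{lem:TateCoh-on-Tate-schemes-is-pushforward-from-closed-schemes}) to the analogous preservation statement for the pullback along ${^{\leq n}S_i} \hra S_i$ at each level of the presentation, which is standard for morphisms between schemes almost of finite type.

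Finally, I would assemble the pieces: the equivalence of Step~1 restricts, using Step~2, to an equivalence on the bounded connective parts. The essential point for the converse direction is that a compatible system $\{\sF_n\} \in \lim_n \TateCoh({^{\leq n}S})^{>\infty, \leq 0}$ has uniformly bounded amplitudes, since the compatibility isomorphism $\imath^!_{m,n}(\sF_n) \simeq \sF_m$ together with the left t-exactness of $\imath^!_{m,n}$ forces any nonvanishing cohomology in a fixed degree $-k$ of some $\sF_n$ to persist down the tower.

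The main obstacle I anticipate is precisely this uniform control of the bounded-below condition across the tower: although left t-exactness of each individual $\imath^!_n$ is immediate, ensuring that the resulting object $\sF \in \TateCoh(S)$ has globally bounded amplitude (rather than merely levelwise bounded amplitude) requires a careful argument exploiting both the compatibility maps in the system and the finiteness provided by the Tate condition. Reducing to the scheme-theoretic setting via the colimit presentation and invoking standard results for schemes almost of finite type should resolve this, but the argument must verify that the bounds match across the presentation.
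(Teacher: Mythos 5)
Your route diverges from the paper's and, as written, has two genuine gaps. The paper does not establish any unrestricted equivalence $\TateCoh(S) \simeq \lim_n \TateCoh({}^{\leq n}S)$; instead it starts from the known convergence $\QCoh(T)^{\leq 0} \simeq \lim_n \QCoh({}^{\leq n}T)^{\leq 0}$ for affine schemes (\cite{GR-II}*{Chapter 1, Lemma 4.2.8}), extends it to Tate affine schemes because $\QCoh$ is defined by right Kan extension, transports it to ind-coherent sheaves via the equivalence $\Psi_S: \IndCoh(S)^{>\infty,\leq 0} \simeq \QCoh(S)^{>\infty,\leq 0}$ of \cite{GR-I}*{Chapter 4, Proposition 1.2.2(c)}, and only then passes to Pro-objects and to the Tate subcategory. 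Working with the eventually coconnective connective parts from the outset is what makes the final step ("Pro and Tate commute with limits") harmless.

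The first gap is in your Step 1. The right Kan extension defining $\TateCoh^!$ on Tate schemes is taken along $(\Schaft)^{\rm op} \hra (\SchTatelaft)^{\rm op}$, i.e.\ along closed subschemes in a presentation of $S$ — not along the coconnective truncations ${}^{\leq n}S$ — so the prestack-level identity $S \simeq \lim_n {}^{\leq n}S$ does not by itself produce the categorical limit you want. Even after reducing to each $S_i$ and invoking the unrestricted convergence of $\IndCoh$ under $!$-pullback, you still need to identify the Tate subcategory of $\lim_n \ProIndCoh({}^{\leq n}S_i)$ with $\lim_n \TateCoh({}^{\leq n}S_i)$: that is, you must show that an object of $\ProIndCoh(S_i)$ all of whose truncations admit pro-presentations with fibers in $\Coh$ itself admits a single such presentation. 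This is not addressed and is not obvious without the boundedness restriction; it is precisely what the paper's ordering of restrictions avoids. The second gap is the heuristic in Step 3: left t-exactness of $\imath^!_{m,n}$ says that if $\sF_n$ lies in degrees $\geq -k$ then so does $\sF_m = \imath^!_{m,n}(\sF_n)$ — it propagates amplitude \emph{bounds} down the tower, but it does not force nonvanishing cohomology in degree $-k$ to persist downward ($\imath^!$ can kill low-degree cohomology). So your stated mechanism for ruling out compatible systems whose amplitudes grow with $n$ does not work, and the uniform boundedness you correctly identify as the crux is left unproved. I would recommend abandoning the unrestricted equivalence and following the paper's order of operations: restrict to $({-})^{>\infty,\leq 0}$ at the level of $\QCoh$/$\IndCoh$ on affine schemes first, then apply $\Pro$ and pass to Tate objects.
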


\begin{proof}
By \cite{GR-II}*{Chapter 1, Lemma 4.2.8} one has an equivalence
\begin{equation}
\label{eq:QCoh-convergent-on-affine-schemes}
\QCoh(T)^{\leq 0} \ra \lim_{n}\QCoh({^{\leq n}T})^{\leq 0}    
\end{equation}
for affine schemes $T$. Since by construction quasi-coherent sheaves on Tate affine schemes are obtained by right Kan extensions and that commutes with limits, the isomorphism (\ref{eq:QCoh-convergent-on-affine-schemes}) also holds for Tate affine schemes.
Notice that by \cite{GR-I}*{Chapter 4, Proposition 1.2.2 (c)} one has an equivalence
\[
\IndCoh(S)^{> \infty, \leq 0} \overset{\simeq}{\ra} \QCoh(S)^{> \infty,\leq 0}
\]
which extends to Tate affine schemes $S$. Thus, one has an isomorphism
\begin{equation}
    \label{eq:IndCoh-convergent-on-Tate-affine-schemes}
    \IndCoh(S)^{> \infty, \leq 0} \overset{\simeq}{\ra} \lim_n\IndCoh({^{\leq n}S})^{> \infty, \leq 0}
\end{equation}
for $S \in \SchaffTateconv$. Finally, taking Pro-objects and passing to the subcategory of Tate objects commutes with limits, thus the isomorphism (\ref{eq:IndCoh-convergent-on-Tate-affine-schemes}) implies the result.
\end{proof}

\begin{prop}
\label{prop:explicit-cotangent-complex-Tate-scheme}
Let $Z$ be a Tate scheme locally almost of finite type, then $Z$ admits a connective cotangent complex. Moreover, given a presentation $Z \simeq \colim_I Z_i$ one has an isomorphism
\begin{equation}
\label{eq:cotangent-complex-Tate-scheme-as-a-limit-of-cotangent-complexes}
T^*(Z) \simeq \lim_{I^{\rm op}}T^*(Z_i)    
\end{equation}
in the category $\TateCoh(Z)^{\leq 0}$.
\end{prop}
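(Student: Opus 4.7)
The plan is to verify the three ingredients of a connective cotangent complex---convergence, the $*$-pullback compatibility defining a pro-cotangent complex, and the fact that the result lies in $\TateCoh(Z)^{\leq 0}$---and to derive the limit presentation (\ref{eq:cotangent-complex-Tate-scheme-as-a-limit-of-cotangent-complexes}) as a natural by-product of the last step.

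First I would check convergence of $Z$ as a prestack of Tate type. For any $S \in \SchaffTate$ with presentation $S \simeq \colim_K S_k$, the description $Z(S) \simeq \lim_{K^{\rm op}} \colim_I Z_i(S_k)$, together with the convergence of each scheme $Z_i$ applied levelwise and a cofinality argument for commuting the cofiltered limit over $K$ with the truncation tower, yields $Z(S) \simeq \lim_n Z({}^{\leq n}S)$; by Lemma \ref{lem:concrete-condition-convergent-prestack-Tate-type} this is convergence.

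Next I would verify the $*$-pullback compatibility of Definition \ref{defn:pro-cotangent-complex-Tate}. Given an eventually coconnective $f: S_1 \to S_2$ in $\SchaffTateconv$, a point $x_2: S_2 \to Z$, and $\sF_1 \in \TateCoh(S_1)^{\leq 0}$, by Remark \ref{rem:take-pushouts-to-pullbacks-existence-pro-cotangent-complex} it suffices to show that $Z$ sends the pushout $(S_1)_{\sF_1} \sqcup_{S_1} S_2$ to a pullback of spaces. Since $S_1 \to (S_1)_{\sF_1}$ is a closed nil-isomorphism, the ``moreover'' clause of Corollary \ref{cor:affine-push-out-nil-isomorphism-agrees-with-scheme-push-out} identifies this pushout in $\SchaffTate$ with the corresponding pushout in $\SchTate$, and the required pullback property is then immediate from the universal property applied to $Z \in \SchTate$. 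Combined with the connective pro-cotangent spaces supplied by Corollary \ref{cor:Tate-scheme-has-connective-pro-cotangent-space}, this produces a connective pro-cotangent complex for $Z$.

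Finally, to upgrade $T^*(Z)$ from a Pro-object to an honest object of $\TateCoh(Z)^{\leq 0}$ and simultaneously establish the limit formula, I would exploit the description $\TateCoh(Z) \simeq \lim_{I^{\rm op}} \TateCoh(Z_i)$ that is built into the definition of $\TateCoh^!_{\SchTatelaft}$ as a right Kan extension in \S \ref{par:extension-of-TateCoh-functors}, with transition functors the $!$-pullbacks along $f_{ij}: Z_i \hookrightarrow Z_j$. Each cotangent complex $T^*(Z_i) \in \Coh(Z_i)^{\leq 0} \subset \TateCoh(Z_i)^{\leq 0}$ is the usual one of the scheme $Z_i$, and the functoriality of square-zero extensions along closed embeddings furnished by Lemma \ref{lem:RealSplitSqZ-of-TateCoh-for-Tate-schemes-is-functorial} assembles these into a coherent system in the limit. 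To identify this system with $T^*(Z)$, I would test against a point $x: S \to Z$ with $S \in \SchaffTateaft$ and $\sF \in \TateCoh(S)^{\leq 0}$: writing $S \simeq \colim_k S_k$, each $S_k \to Z$ factors through some $Z_{i(k)}$ by quasi-compactness of $S_k$, and every nil-thickening $(S_k)_{\sF_k} \to Z$ factors through the same $Z_{i(k)}$ since $Z_{i(k)} \hookrightarrow Z$ is a closed embedding; passing to the limit over $k$ then identifies $T^*_x(Z)$ with $\lim_{k^{\rm op}} T^*_{x_k}(Z_{i(k)})$, which places $T^*(Z)$ in $\TateCoh(Z)^{\leq 0}$ and establishes (\ref{eq:cotangent-complex-Tate-scheme-as-a-limit-of-cotangent-complexes}).

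The main obstacle is the final identification: carefully matching the cotangent-complex compatibility data along $Z_i \hookrightarrow Z_j$ with the $!$-pullback compatibilities appearing in the limit presentation of $\TateCoh(Z)$. This ultimately reduces to Lemma \ref{lem:RealSplitSqZ-of-TateCoh-for-Tate-schemes-is-functorial} applied to closed embeddings, but requires careful bookkeeping to ensure that the resulting system is exactly what is computed by the lift functor defining $T^*(Z)$.
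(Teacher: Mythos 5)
Your proof is correct and, for the main existence claim, follows exactly the paper's route: the pushout-to-pullback property via Corollary \ref{cor:affine-push-out-nil-isomorphism-agrees-with-scheme-push-out} together with Remark \ref{rem:take-pushouts-to-pullbacks-existence-pro-cotangent-complex} gives the pro-cotangent complex, and Corollary \ref{cor:Tate-scheme-has-connective-pro-cotangent-space} gives connectivity. (Your opening convergence check is harmless but redundant: convergence of $Z$ is built into the definition of an indscheme.) The one place you diverge is the limit formula (\ref{eq:cotangent-complex-Tate-scheme-as-a-limit-of-cotangent-complexes}): the paper simply invokes Proposition \ref{prop:pro-cotangent-space-of-colimit-of-prestacks}, which for the sifted (indeed filtered) presentation $Z \simeq \colim_I Z_i$ immediately yields $T^*_x Z \simeq \lim_{(A_{x/})^{\rm op}} T^*_{x_a} Z_a$, whereas you re-derive this by hand — factoring each $S_k \to Z$ and its nil-thickenings through some $Z_{i(k)}$ and passing to the limit. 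Your argument is essentially an unwinding of the proof of that proposition in this special case, so it works, but citing the general statement would spare you the bookkeeping you flag as the main obstacle; the only content you would still need to supply is the (routine) identification of the transition maps in the resulting pro-system with the $!$-pullback/pushforward structure maps of $\lim_{I^{\rm op}}\TateCoh(Z_i)$, which is what places the limit inside $\TateCoh(Z)^{\leq 0}$ rather than merely in $\Pro(\TateCoh(Z)^-)$.
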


\begin{proof}
As in Example \ref{ex:Tate-scheme-has-pro-cotangent-space} one obtains that for $Z \in \SchTatelaft$ the associated prestack $\Maps_{\SchTate}(\imath(-),Z)$ commutes with pushouts of the form $(S_1)_{\sF_1}\underset{S_1}{\sqcup}S_2$, thus by Remark \ref{rem:take-pushouts-to-pullbacks-existence-pro-cotangent-complex} it admits a pro-cotangent complex. Finally, by Corollary \ref{cor:Tate-scheme-has-connective-pro-cotangent-space} one obtains that $Z$ admits a connective cotangent complex.
The last statement is a consequence of Proposition \ref{prop:pro-cotangent-space-of-colimit-of-prestacks}.
\end{proof}

The following is a consequence of Proposition \ref{prop:explicit-cotangent-complex-Tate-scheme} and Proposition \ref{prop:pro-cotangent-space-of-colimit-of-prestacks}.

\begin{cor}
\label{cor:cotangent-complex-fiber-sequence-as-a-limit}
Let $f:S \ra T$ denote a morphism between Tate schemes almost of finite type. Then there are presentations $S \simeq \colim_I S_i$ and $T \simeq \colim_I T_i$ such that the fiber sequence
\[
T^*_{f}(T) \ra T^*(S) \ra T^*(S/T)
\]
is equivalent
\[
\lim_{I^{\rm op}_{f/}}T^*_{f_{i}}(T_i) \ra \lim_{I^{\rm op}}T^*_{i}(S_i) \ra T^*_{i}(S_{i}/T_i),
\]
where the left most terms are defined as in \S \ref{par:split-square-zero-for-Tate-schemes}.
\end{cor}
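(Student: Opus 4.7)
The strategy is to extract compatible presentations of $S$ and $T$, apply Proposition~\ref{prop:explicit-cotangent-complex-Tate-scheme} termwise to each of $S$, $T$, and the relative object, and use that fiber sequences commute with limits in the stable category $\TateCoh(S)$.

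First, I would fix compatible presentations. By \cite{Hennion-Tate}*{Proposition 1.2} (as already used in the proof of Lemma~\ref{lem:Tate-affine-schemes-has-fiber-products}), after refining indexing categories we may find a filtered diagram $I$ and presentations $S \simeq \colim_I S_i$, $T \simeq \colim_I T_i$, together with morphisms $f_i : S_i \to T_i$ of schemes almost of finite type whose colimit recovers $f$. Writing $g_i : S_i \hookrightarrow S$ and $h_i : T_i \hookrightarrow T$ for the canonical closed embeddings, we get Cartesian-style comparison diagrams that will be used below; since the slice category $I^{\rm op}_{f/}$ parametrizes precisely the pairs $(i, f_i)$ compatible with $f$, it is cofinal in $I^{\rm op}$ for the purposes of these limits.

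Next, apply Proposition~\ref{prop:explicit-cotangent-complex-Tate-scheme} to $S$ and $T$: this gives
\[
T^*(S) \simeq \lim_{I^{\rm op}}(g_i)^{\rm Tate}_* T^*(S_i) \qquad \text{and} \qquad T^*(T) \simeq \lim_{I^{\rm op}}(h_i)^{\rm Tate}_* T^*(T_i)
\]
in $\TateCoh(S)^{\leq 0}$ and $\TateCoh(T)^{\leq 0}$ respectively (the pushforwards are needed to land in the correct category). For the pullback term, apply $f^!$ to $T^*(T)$; since $f^!$ preserves limits (it has a left adjoint in every case we need, cf.\ Theorem~\ref{thm:TateCoh-correspondence-functor-Sch-Tate}), we get
\[
T^*_f(T) = f^! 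T^*(T) \simeq \lim_{I^{\rm op}_{f/}} f^!(h_i)^{\rm Tate}_* T^*(T_i).
\]
Base change along the Cartesian square of closed embeddings relating $S_i, T_i, S, T$ (Theorem~\ref{thm:TateCoh-correspondence-functor-Sch-Tate} and Corollary~\ref{cor:arbitrary-base-change-ProIndCoh}) identifies each term with $(g_i)^{\rm Tate}_* f_i^! T^*(T_i) = (g_i)^{\rm Tate}_* T^*_{f_i}(T_i)$, matching the notation of \S\ref{par:split-square-zero-for-Tate-schemes}.

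Finally, at each level $i \in I^{\rm op}_{f/}$ the standard relative cotangent fiber sequence $T^*_{f_i}(T_i) \to T^*(S_i) \to T^*(S_i/T_i)$ holds in $\TateCoh(S_i)^{\leq 0}$; applying the exact pushforward $(g_i)^{\rm Tate}_*$ and taking the limit over $I^{\rm op}_{f/}$ (which is a limit in a stable $\infty$-category, hence preserves fiber sequences) yields the asserted equivalence of fiber sequences on $S$. The main bookkeeping obstacle is ensuring that the two base-change identifications (for $f^!$ against closed embeddings, and for $(g_i)^{\rm Tate}_*$ against the relative term) are coherent across the indexing, which is handled by the 2-categorical functoriality packaged in Theorem~\ref{thm:TateCoh-correspondence-functor-Sch-Tate}; beyond this, the argument is a formal manipulation of limits.
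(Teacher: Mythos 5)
Your proposal is correct in outline and matches the paper's intent for two of the three terms: the paper derives this corollary directly from Proposition~\ref{prop:explicit-cotangent-complex-Tate-scheme} (the limit presentation $T^*(Z) \simeq \lim_{I^{\rm op}} T^*(Z_i)$, which you use for $T^*(S)$) together with Proposition~\ref{prop:pro-cotangent-space-of-colimit-of-prestacks}. Where you diverge is in the treatment of $T^*_f(T)$: the paper's intended route is to apply Proposition~\ref{prop:pro-cotangent-space-of-colimit-of-prestacks} to the point $f: S \ra T$ of the colimit $T \simeq \colim_I T_i$, which produces $T^*_f(T) \simeq \lim_{I^{\rm op}_{f/}} T^*_{f_i}(T_i)$ directly at the level of pro-corepresentability — this is exactly where the factorization index $I^{\rm op}_{f/}$ in the statement comes from, and it requires no commutation of $f^!$ with limits. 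You instead write $T^*_f(T) = f^!T^*(T)$ and push $f^!$ through the cofiltered limit, justifying this by saying $f^!$ "has a left adjoint in every case we need"; that justification is imprecise, since $f^!$ admits a left adjoint only for ind-proper $f$ (for open embeddings it \emph{is} the left adjoint), and for a general morphism $f^!$ is assembled from both cases. The conclusion still holds — these cofiltered limits are the formal limits presenting Pro-objects, and the base-change identifications you invoke do reduce everything to the levelwise fiber sequences — but the corepresentability argument of Proposition~\ref{prop:pro-cotangent-space-of-colimit-of-prestacks} is the cleaner and intended tool here, and it is also what makes the sifted/cofinality bookkeeping for $I^{\rm op}_{f/}$ automatic. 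The remainder of your argument (exactness of the closed-embedding pushforwards, limits preserving fiber sequences in a stable category) is fine.
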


\subsubsection{The pro-cotangent complex as an object of a category}

In this section we will set up preliminaries to show that for a prestack of Tate type that is locally a prestack one can define its tangent complex.

\paragraph{}

Let $\sX$ be a prestack, one defines the category
\[
\Pro(\TateCoh(\sX)^{\wedge,-})^{\rm fake} := \lim_{(S,x) \in (\SchaffTate)_{/\sX}}\Pro(\TateCoh(S)^{\wedge,-}).
\]

We notice that $\Pro(\TateCoh(\sX)^{\wedge,-})^{\rm fake}$ is \emph{not} isomorphic to $\Pro(\TateCoh(\sX)^{\wedge,-})$, which is defined as the right t-completion of the category of Pro-objects
\[
\Pro(\TateCoh(\sX)^-) := \Pro(\bigcup_{n} \TateCoh(\sX)^{\leq n}).
\]

There is a natural embedding
\[
\IndCoh(\sX) \hra \Pro(\TateCoh(\sX)^{\wedge,-})^{\rm fake}
\]
induced by the maps $\IndCoh(S) \hra \Pro(\TateCoh(S)^{\wedge,-})$ (see \ref{par:inclusion-IndCoh-into-Pro-TateCoh-completed}). 

\paragraph{}

By definition, if $\sX$ admits a pro-cotangent complex, then we have a well-defined object
\[
T^*\sX \in \Pro(\TateCoh(\sX)^{\wedge,-})^{\rm fake}
\]
whose value on $(S,x) \in (\SchaffTate)_{/\sX}$ is $T^*_{x}\sX \in \Pro(\TateCoh(S)^{\wedge,-})$ where we abuse notation and don't distinguish between the object $T^*_{x}\sX \in \Pro(\TateCoh(S)^{-})$ and its image on the right completion.

\paragraph{}

Let 
\[
\Pro(\TateCoh(\sX))^{\rm fake}_{\rm lp} \subset \Pro(\TateCoh(\sX)^{\wedge,-})^{\rm fake}
\]
denote the subcategory obtained by as
\[
\lim_{(S,x) \in (\SchaffTate)_{/\sX}}\Pro(\TateCoh(S))_{\rm lp}.
\]

\paragraph{}

Suppose that $\sX \in \PStkTatelp$ and that $\sX$ admits a pro-cotangent complex then by Corollary \ref{cor:cotangent-space-of-prestack-Tate-lp-is-lp} one has
\[
T^*\sX \in \Pro(\TateCoh(\sX))^{\rm fake}_{\rm lp}.
\]

\subsubsection{The tangent complex}

\paragraph{}

Let $\sX \in \PStkTatelp$ and suppose that $\sX$ admits a pro-cotangent complex. By Corollary \ref{cor:Pro-TateCoh-negative-recovers-ProInd} one can consider
\[
T^*\sX \in \ProIndCoh(\sX),
\]
since
\[
\ProIndCoh(\sX) \simeq \lim_{(S,x) \in (\SchaffTate)_{/\sX}}\ProIndCoh(S).
\]

\paragraph{}

Moreover, for $\sX \in \PStkTatelp$ one has 
\[
\left(\Pro(\TateCoh(\sX))^{\rm fake}_{\rm lp}\right)^{\rm op} \simeq \Ind(\Pro(\Coh(\sX))).
\]

Thus, one has a canonically defined object
\[
T\sX \in \Ind(\Pro(\Coh(\sX)))
\]
defined as the image of $T^*\sX$ via the equivalence above. We will refer to $T\sX$ as the \emph{tangent complex} of $\sX$.

\subsubsection{Cotangent complex of prestacks of Tate type which are locally prestacks}

\paragraph{}

Let $\sX \in \PStkTatelp$, such that
\[
\sX \simeq \RKE_{(\Schaff)^{\rm op} \hra (\SchaffTate)^{\rm op}}(\sX_0).
\]

The following is an important result to compare the existence condition of pro-cotangent complexes.

\begin{prop}
Let $\sX \in \PStkTatelp$ be as above, then the following are equivalent:
\begin{enumerate}[(i)]
    \item $\sX$ admits a pro-cotangent complex;
    \item $\sX_0$ admits a pro-cotangent complex.
\end{enumerate}
\end{prop}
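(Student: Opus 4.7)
The plan is to reduce both implications to statements about $\sX_0$ using the compatibility results established throughout Section \ref{subsec:deformation-theory-lp-and-Tate-conditions}. The convergence portion is handled at once by Lemma \ref{lem:Tate-prestack-convergence-compatibility}, so in what follows I will focus on the existence of pro-cotangent complexes in the sense of Definition \ref{defn:pro-cotangent-complex-Tate}.

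For $(i) \Rightarrow (ii)$, assume $\sX$ admits a pro-cotangent complex. Given a point $x_0: S_0 \ra \sX_0$ with $S_0 \in \Schaffconv$, let $x := \imath(x_0)$ be the corresponding point of $\sX$. The hypothesis together with Corollary \ref{cor:pro-cotangent-space-existence-for-convergent-prestacks-Tate-lp} guarantees that $\sX_0$ admits a pro-cotangent space at $x_0$. For functoriality, given an eventually coconnective map $f_0: T_0 \ra S_0$ in $\Schaffconv$, apply $\imath$ to obtain an eventually coconnective map $\imath(f_0)$ in $\SchaffTateconv$. Then the canonical map (\ref{eq:defn-map-pro-cotangent-complex}) for $\sX$ at the morphism $\imath(f_0)$, which is an isomorphism by assumption, can be translated back to $\sX_0$ using Proposition \ref{prop:second-isomorphism-pro-cotangent-spaces}: on both sides we apply $\Pro(\Xi_{T_0})$, and the comparison is compatible with eventually coconnective $*$-pullback since $\Xi$ is natural in the affine scheme. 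Hence the map defining the pro-cotangent complex condition for $\sX_0$ is an isomorphism as well.

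For $(ii) \Rightarrow (i)$, assume $\sX_0$ admits a pro-cotangent complex. First, Proposition \ref{prop:pro-cotangent-space-on-sX-from-sX_0} provides the pro-cotangent space $T^*_x\sX$ at every point $x:S \ra \sX$. It remains to verify the functoriality condition: for an eventually coconnective morphism $f: S_1 \ra S_2$ in $\SchaffTateconv$ and a point $x_2:S_2 \ra \sX$, with $x_1 := x_2 \circ f$, the map
\[
T^*_{x_1}\sX \ra \Pro(f^{\rm Tate,*})(T^*_{x_2}\sX)
\]
is an isomorphism in $\Pro(\TateCoh(S_1)^-)$. Pick compatible presentations $S_1 \simeq \colim_I S_{1,i}$, $S_2 \simeq \colim_I S_{2,i}$ and maps $f_i: S_{1,i} \ra S_{2,i}$ refining $f$, as provided by \cite{Hennion-Tate}*{Proposition 1.2}; by passing to a cofinal subdiagram and using Lemma \ref{lem:eventually-coconnective-and-pullback-is-ft-for-affines} one can arrange that each $f_i$ is eventually coconnective in $\Schaff$. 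Using Lemma \ref{lem:computation-cotangent-space-of-prestack-of-Tate-type-which-is-lp} and Proposition \ref{prop:second-isomorphism-pro-cotangent-spaces}, both sides of the above map can be computed as cofiltered limits whose terms are expressions involving $T^*_{x_{2,i}}\sX_0$ and the usual pullback along $f_i$. The isomorphism then reduces to the corresponding eventually coconnective pullback isomorphisms for the pro-cotangent complex of $\sX_0$, which hold by hypothesis.

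The main obstacle is the second direction, specifically matching $\Pro(f^{\rm Tate,*})$ with the levelwise $*$-pullback $\Pro(f_i^{\rm Ind,*})$ under these presentations. This amounts to checking that the diagram relating $\RealSplitSqZ_{S_1}$ and $\RealSplitSqZ_{S_2}$ for eventually coconnective $f$ -- the adjoint of the diagram in Lemma \ref{lem:RealSplitSqZ-of-TateCoh-for-Tate-schemes-is-functorial} -- is compatible with the colimit presentation from the proof of Proposition \ref{prop:RealSplitSqZ-Tate-affine-schemes}. Once this compatibility is in hand, assembling the pieces via the formulas of Lemma \ref{lem:computation-cotangent-space-of-prestack-of-Tate-type-which-is-lp} and Proposition \ref{prop:pro-cotangent-space-of-colimit-of-prestacks} yields the desired isomorphism.
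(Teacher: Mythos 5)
Your argument is correct and follows essentially the same route as the paper: both directions are reduced to the corresponding statement for $\sX_0$ via the comparison of Propositions \ref{prop:pro-cotangent-of-a-convergent-prestack-Tate-lp-agrees-with-pro-cotangent-of-the-prestack} and \ref{prop:second-isomorphism-pro-cotangent-spaces} (transporting the functoriality isomorphism along $\Pro(\Psi)$/$\Pro(\Xi)$, which commute with the eventually coconnective $*$-pullbacks), and the case of a general Tate affine test scheme is handled through Proposition \ref{prop:pro-cotangent-space-on-sX-from-sX_0} and the limit formulas of Lemma \ref{lem:computation-cotangent-space-of-prestack-of-Tate-type-which-is-lp}. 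Your version merely spells out more explicitly the reduction to compatible presentations in the direction (ii)$\Rightarrow$(i), which the paper dispatches in one sentence.
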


\begin{proof}
We assume (i). Since $\sX_0$ is also convergent, we notice that by \cite{GR-II}*{Lemma 4.2.4} it is enough to check that the pro-cotangent space assignement for $\sX_0$ is functorial with respect to $f:S_1 \ra S_2$ in $\Schaffconv$. 
Let $x_2:\imath(S_2) \ra \sX$, $f: S_1 \ra S_2$ be a morphism in $\Schaffconv$ and $x_1 := x_2 \circ f$. Since $\sX$ admits a pro-cotangent complex and $\sX$ is convergent the morphism
\[
T^*_{x_1}\sX \ra \Pro(f^{\rm Tate, *})(T^*_{x_2}\sX)
\]
is an isomorphism in $\Pro(\TateCoh(S_2)^{-})$. By applying $\Pro(\Psi_{S_1})$ we obtain
\[
\Pro(\Psi_{S_1})T^*_{x_1}\sX \ra \Pro(\Psi_{S_1})(\Pro(f^{\rm Tate, *})(T^*_{x_2}\sX))
\]
since $\Pro(\Psi_{S_1})(\Pro(f^{\rm Tate, *}) \simeq \Pro(f*)(\Pro(\Psi_{S_1}))$ by applying Proposition \ref{prop:pro-cotangent-of-a-convergent-prestack-Tate-lp-agrees-with-pro-cotangent-of-the-prestack} one obtains that the morphism
\[
\overline{T}^*_{\overline{x_1}}\sX_0 \ra \Pro(f^*)(\overline{T}^*_{\overline{x_2}}\sX_0)
\]
is an isomorphism, where $\overline{x_{i}}:S_i \ra \sX_0$ denotes the morphisms induced by $x_{i}$ for $i=1,2$.

Assume that (ii) holds, let $\overline{x_2}:S_2 \ra \sX_0$ denote a point of $\sX_0$ and $f: S_1 \ra S_2$ a morphism, where $S_1$ and $S_2$ are objects of $\Schaffconv$. Since $\sX_0$ admits a pro-cotangent complex the canonical map
\begin{equation}
    \label{eq:pro-cotangent-underlying-prestack-exists}
    \overline{T}^*_{\overline{x_1}}\sX_0 \ra \Pro(f^*)(\overline{T}^*_{\overline{x_2}}\sX_0)
\end{equation}
is an isomorphism. By applying $\Pro(\Xi_{S_1})$ to (\ref{eq:pro-cotangent-underlying-prestack-exists}) and applying Propositon \ref{prop:second-isomorphism-pro-cotangent-spaces} one obtains
\[
T^*_{x_1}\sX \ra \Pro(f^{\rm Tate, *})(T^*_{x_2}\sX)
\]
is an isomorphism.

Finally, if $x_2:S_2 \ra \sX$ is a map from $S_2 \in \SchaffTate$ we can use Proposition \ref{prop:pro-cotangent-space-on-sX-from-sX_0} to reduce to the situation where $S_2 \in \Schaffconv$.

\end{proof}

\subsubsection{Co-differential}

We now introduce a useful object in the theory of cotangent complexes. Let $S \in \SchaffTate$, then by Corollary \ref{cor:Tate-scheme-has-connective-pro-cotangent-space} one has
\[
T^*(S) \in \TateCoh(S)^{\leq 0}.
\]

There is a canonical morphism of Tate affine schemes
\[
S_{T^*(S)} \overset{\fd}{\ra} S
\]
corresponding to the split square-zero extension determined by the identity morphism in
\[
\Hom_{\TateCoh(S)^{\leq 0}}(T^*(S),T^*(S)) \simeq \Maps_{S/}(S_{T^*(S)},S),
\]
where the map $S \ra S$ is the identity map.

More generally, let $\sX$ be a prestack of Tate type that admits pro-cotangent space, and $x:S \ra \sX$ a point. Then there is a map
\[
(dx)^*: T^*_x(\sX) \ra T^*(S),
\]
that corresponds to the composite map
\[
S_{T^*(S)} \overset{\fd}{\ra} S \overset{x}{\ra} \sX
\]
via the isomorphism
\[
\Hom_{\Pro(\TateCoh(S)^-)}(T^*_x(\sX),T^*(S)) \simeq \Maps_{S/}(S_{S_{T^*(S)}},\sX).
\]
We will refer to $(dx)^*$ as the \emph{codifferential} of $x$.

Finally, we notice that $\fd$ is local in the Zariski topology, so given $X \in \SchTateaft$, since $T^*(X) \in \TateCoh(X)^{\leq 0}$ by Corollary \ref{cor:Tate-scheme-has-connective-pro-cotangent-space}, we have a morphism
\[
\fd: X_{T^*(X)} \ra X,
\]
here $X_{T^*(X)}$ denotes the split square-zero extension of $X$ determined by $T^*(X)$ (see \S \ref{par:split-square-zero-for-Tate-schemes}).

\subsection{Square-Zero extensions}
\label{subsec:square-zero-extensions}

In this section we study general (i.e.\ non-split) square-zero extensions of Tate schemes, since we need to consider the cotangent complex of a Tate scheme as an object of its category of Tate-coherent sheaves we restrict the discuss to Tate schemes which locally almost of finite type.

\subsubsection{The notion of square-zero extensions}
\label{subsubsec:notion-square-zero-extension}

Let $X \in \SchTatelaft$, by Proposition \ref{prop:explicit-cotangent-complex-Tate-scheme} $T^*(X) \in \TateCoh(X)^{\leq 0}$, thus we introduce the category of \emph{square-zero extensions}
\[
(\TateCoh(X)^{\leq -1}_{T^*(X)/})^{\rm op}.
\]
One has a natural functor
\[
\RealSqZ: (\TateCoh(X)^{\leq -1}_{T^*(X)/})^{\rm op} \ra (\SchTatelaft)_{X/}
\]
given by
\[
\RealSqZ(T^*(X) \overset{\gamma}{\ra} \sF) := X \underset{X_{\sF}}{\sqcup}X,
\]
where $X_{\sF} \rightrightarrows X$ are given by the canonical projection $X_{\sF} \ra X$, and the composite map
\[
X_{\sF} \overset{\gamma}{\ra} X_{T^*(X)} \overset{\fd}{\ra} X.
\]

Notice that the canonical projection $X_{\sF} \ra X$ is a closed nil-isomorphism, thus by Lemma \ref{lem:push-out-of-nil-isomorphisms-for-Tate-schemes-exist} the push-out exists. Moreover, the map $X \ra X \underset{X_{\sF}}{\sqcup}X$ which corresponds to the inclusion of the first factor is also a closed nil-isomorphism.

Similarly to \cite{GR-II}*{Chapter 1, \S 5.1.2} one has an interpretation of $\RealSqZ$ as a left adjoint, which we leave to the reader to spell out.

We will denote by
\[
\SqZ(Z) := (\TateCoh(X)^{\leq -1}_{T^*(X)/})^{\rm op}
\]
the category of square-zero extensions of $X$. In particular, we will say that any morphism $X \hra X'$ in $(\SchTatelaft)_{X/}$ has a structure of square-zero extension if $X'$ is in the essential image of $\RealSqZ$.

For a fixed $\sF \in \TateCoh(X)^{\leq -1}$ we will refer to the space
\[
\Hom^{\TateCoh(X)^{\leq 0}}(T^*(X),\sF)
\]
as that of \emph{square-zero extension of $X$ by $\sI := \sF[-1]$}\footnote{Notice that we shift $\sF$ up by $1$.}. The reason for this is that given $X' = \RealSqZ(\sF)$ we have an exact sequence
\[
\imath_*(\sI) \ra \sO_{X'} \ra \imath_*(\sO_X),
\]
where $\imath:X \ra X'$ is the closed embedding.

Finally, we notice that the following is a pullback of categories
\[
\begin{tikzcd}
(\TateCoh(X)^{\leq 0})^{\rm op} \ar[r] \ar[d] & (\TateCoh(X)^{\leq -1}_{T*(X)/})^{\rm op} \ar[d] \\
\SplitSqZ(X) \ar[d] \ar[r] & \SqZ(X) \ar[d] \\
(\SchTatelaft)_{X//X} \ar[r] & (\SchTatelaft)_{X/}
\end{tikzcd}
\]
where as in \cite{GR-II}*{Chapter 1, \S 2.1} we introduce the notation
\[
\SplitSqZ(X) := (\TateCoh(X)^{\leq 0})^{\rm op}.
\]

\subsubsection{Functoriality of square-zero extensions}
\label{subsubsec:functoriality-of-square-zero-extensions}

Let $f:X_1 \ra X_2$ be a Tate affine morphism in $\SchTate$. Moreover, we will assume that $f$ is eventually coconnective. Then there is a canonically defined functor
\begin{equation}
    \label{eq:functoriality-sq-zero}
    \SqZ(X_1) \ra \SqZ(X_2)
\end{equation}
that makes the diagram
\[
\begin{tikzcd}
\SqZ(X_1) \ar[r] \ar[d,"\RealSqZ_{X_1}"] & \SqZ(X_2) \ar[d,"\RealSqZ_{X_2}"] \\
(\SchTatelaft)_{/X_1}\ar[r] & (\SchTatelaft)_{/X_2}
\end{tikzcd}
\]
commute, where the lower horizontal arrow is given by push-out.

The functor (\ref{eq:functoriality-sq-zero}) is defined by sending $\gamma_1:T^*(X_1) \ra \sF$ to
\[
\gamma_2: T^*(X_2) \ra f^{\rm Tate}_*(\sF),
\]
where $\gamma_2$ is obtained by the $(f^{\rm Tate,*},f^{\rm Tate}_*)$-adjunction applied to the composition
\[
f^{!}(T^*(X_2)) \simeq f^{\rm Tate,*}(T^*(X_2)) \overset{(df)^*}{\ra} T^*(X_1) \overset{\gamma_1}{\ra} \sF,
\]
where the isomorphism $f^! \simeq f^{\rm Tate, *}$ and the adjunction follows from $f$ being eventually coconnective, and $f$ being affine implies that $f^{\rm Tate}_{*}(\sF) \in \TateCoh(X_2)^{\leq -1}$.

The above construction makes the assignment
\[
X \rightsquigarrow \SqZ(X)
\]
into a functor
\[
(\SchTatelaft){\rm e.c.-aff} \ra \DG
\]
where $(\SchTatelaft){\rm e.c.-aff}$ denotes the full subcategory of $\SchTatelaft$ where we only consider Tate affine and eventually coconnective morphisms.

One can develop the concepts of square-zero further by considering the co-Cartesian fibration over $(\SchTatelaft){\rm e.c.-aff}$ whose fiber over $X$ is $\SqZ(X)$. Since we won't need these results at the moment we will not need these details.

\subsubsection{Digressions: square-extensions for eventually coconnective Tate schemes}

In this section we restrict to the special case of eventually coconnective Tate schemes almost of finite type, where we can relate the discussion of square-zero extension of \S \ref{subsubsec:notion-square-zero-extension} to the more intuitive picture in terms of ideals of definition.

Recall that a Tate scheme $S$ is eventually coconnective if $S \in \SchTaten$ for some $n$. We denote the category of eventually coconnective Tate schemes by $\SchTateconv$.

Suppose that $\imath:S \hra S'$ is a closed embedding of objects in $\SchTateconv$. Then we can find presentations $S \simeq \colim_J S_j$ and $S' \simeq \colim_J S'_j$ such that we have
\[
\begin{tikzcd}
S \ar[r,hook,"\imath"] & S' \\
S_j \ar[u,"f_j"] \ar[r,hook,"\imath_j"] & S'_j \ar[u,"f'_j"']
\end{tikzcd}
\]
where $\imath_j$ is a closed embedding.

Notice that since every $S_j,S'_j$ belongs to $\Schn$ for some $n$, for every $j$ we have
\[
\sO_{S_j} \in \IndCoh(S_j), \;\;\; \mbox{and} \;\;\; \sO_{S'_j} \in \IndCoh(S'_j).
\]

Thus, we have the objects
\[
\sO_S := \lim_{J^{\rm op}}\sO_{S_j}, \;\;\; \mbox{and} \;\;\; \sO_{S'} := \lim_{J^{\rm op}}\sO_{S'_j}
\]
of the categories $\TateCoh(S)$ and $\TateCoh(S')$. Moreover, we notice that for each $j$ we have a fiber sequence
\[
\sI_j \ra \sO_{S'_j} \ra (\imath_{j})^{\rm Ind}_*(\sO_{S_j})
\]
in the category $\IndCoh(S'_j)$. By passing to limits we notice that we obtain a fiber sequence\footnote{Notice here that we obtain the functor $\imath^{\rm Tate}_*$ as the limit because $\imath$ is a proper morphism.}
\[
\sI \ra \sO_{S'} \ra \imath^{\rm Tate}_*(\sO_{S})
\]
in the category $\TateCoh(S')$, where $\sI \simeq \lim_{J^{\rm op}} \sI_{j}$.

We want to prove the following analogue of \cite{GR-II}*{Chapter 1, Lemma 5.4.3}.

\begin{prop}
\label{prop:cotangent-complex-closed-embedding-calculation}
For $\imath:S \hra S'$ a closed embedding of eventually coconnective Tate schemes almost of finite type, which corresponds to a fiber sequence
\[
\sI \ra \sO_{S'} \ra \imath^{\rm Tate}_*(\sO_{S})
\]
in $\TateCoh(S')$. Then one has:
\begin{enumerate}[(i)]
    \item $\H^{\, 0}(T^*(S/S')) = 0$ and
    \[
    \H^{\, -1}(T^*(S/S')) = \H^{\, 0}(\imath^{\rm Tate, *}(\sI))
    \]
    as objects of $\TateCoh(S)^{\heartsuit}$.
    \item for $n\geq 0$, 
    \[
    \tau^{\geq -n}(\sI) = 0 \; \Rightarrow \; \tau^{\geq -(n+1)}(T^*(S/S')) = 0.
    \]
    In case (ii) we also have $\classical{S} \simeq \classical{S'}$ and
    \[
    \H^{\, -n-2}(T^*(S/S')) = \H^{\, -n-1}(\sI)
    \]
    as objects of $\TateCoh(S)^{\heartsuit} \simeq \TateCoh(S')^{\heartsuit}$.
\end{enumerate}
\end{prop}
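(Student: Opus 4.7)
The plan is to reduce the statement to the classical scheme-theoretic analogue \cite{GR-II}*{Chapter 1, Lemma 5.4.3} by exploiting compatible level-wise presentations of the Tate schemes. The first step is to choose cofiltered diagrams realizing $S \simeq \colim_J S_j$ and $S' \simeq \colim_J S'_j$ over a common filtered index $J$, with structure maps $f_j: S_j \hra S$ and $f'_j: S'_j \hra S'$, such that each $\imath_j: S_j \hra S'_j$ is a closed embedding of eventually coconnective schemes almost of finite type. Because the Tate condition requires the defining ideals of the closed embeddings in the presentations to be coherent, the morphisms $\imath_j$ are eventually coconnective, so $\imath_j^{\rm Tate,*}$ is defined and t-exact up to a uniform shift by Lemma \ref{lem:coconnective-pullback-preserve-TateCoh}. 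The fiber sequences $\sI_j \to \sO_{S'_j} \to (\imath_j)^{\rm Ind}_*\sO_{S_j}$ assemble into a compatible tower whose limit in $\TateCoh(S')$ recovers the given $\sI$.

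The second step is to apply \cite{GR-II}*{Chapter 1, Lemma 5.4.3} at each level $j$ to obtain $\H^{0}T^*(S_j/S'_j) = 0$ and $\H^{-1}T^*(S_j/S'_j) \simeq \H^{0}(\imath_j^*\sI_j)$, and, under the hypothesis $\tau^{\geq -n}\sI_j = 0$, the classical isomorphism $\classical{S_j} \simeq \classical{S'_j}$, the vanishing $\tau^{\geq -(n+1)}T^*(S_j/S'_j) = 0$, and the identification $\H^{-n-2}T^*(S_j/S'_j) \simeq \H^{-n-1}\sI_j$. On the global side, Corollary \ref{cor:cotangent-complex-fiber-sequence-as-a-limit} presents the relative cotangent complex $T^*(S/S') \in \TateCoh(S)^{\leq 0}$ as the cofiltered limit of the $f_j^{\rm Tate,*}T^*(S_j/S'_j)$, and parallel reasoning gives $\imath^{\rm Tate,*}\sI \simeq \lim_{J^{\rm op}}\imath_j^{\rm Tate,*}\sI_j$.

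The third step is to commute cohomology with the cofiltered limit in order to deduce (i) and (ii). Proposition \ref{prop:t-structure-TateCoh-on-Tate-schemes} together with Lemma \ref{lem:t-structure-on-TateCoh-of-Tate-scheme} allow one to test connectivity and compute specific cohomology groups by $!$-pullback to the individual schemes appearing in the presentation, and since $f_j^{\rm Tate,*}$ and $f_j^!$ agree up to a bounded shift controlled by the eventual coconnectivity of $f_j$, the level-wise identifications transport to the corresponding global statements. The assertion $\classical{S} \simeq \classical{S'}$ under the hypothesis of (ii) then follows by taking the colimit of the classical isomorphisms $\classical{S_j} \simeq \classical{S'_j}$. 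The principal technical hurdle is precisely this passage to the limit: one must control the higher inverse limits $\lim^i_{J^{\rm op}}$ of the cohomology sheaves of the tower $\{T^*(S_j/S'_j)\}$ in the relevant degrees. This is where the coherence of the ideals $\sI_j$ encoded in the Tate condition of Definition \ref{defn:Tate-schemes} becomes decisive, as it yields the Mittag--Leffler type vanishing needed for the cohomology of the limit to agree with the limit of the cohomologies.
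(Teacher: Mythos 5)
Your proposal follows essentially the same route as the paper: choose compatible presentations of $S \hra S'$ over a common filtered index, apply \cite{GR-II}*{Chapter 1, Lemma 5.4.3} levelwise after identifying each $T^*(S_j/S'_j)$ with the classical relative cotangent complex, and pass to the cofiltered limit using Corollary \ref{cor:cotangent-complex-fiber-sequence-as-a-limit} and the t-structure results to commute cohomology with the limit. The only divergence is cosmetic: the paper phrases the levelwise comparison via $\Xi^{\rm Pro}_{S_j}$ rather than a shift comparison of $f_j^{\rm Tate,*}$ with $f_j^!$, and it asserts the exchange of $\H^{\bullet}$ with $\lim_{J^{\rm op}}$ without spelling out the Mittag--Leffler-type justification you flag.
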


\begin{proof}
Notice that the canonical fiber sequence
\[
T^*_{\imath}(S') \ra T^*(S) \ra T^*(S/S')
\]
becomes
\[
\lim_{J^{\rm op}}T^*(\imath_{i})(S'_i) \ra \lim_{J^{\rm op}}T^*(S_{j}) \ra \lim_{J^{\rm op}}T^*(S_{j}/S'_j),
\]
by Corollary \ref{cor:cotangent-complex-fiber-sequence-as-a-limit}.

The relative version of (\ref{eq:cotangent-complex-Tate-scheme-as-a-limit-of-cotangent-complexes}) implies that for every $j \in J$ we have
\[
T^*(S_{j}/S'_j) \simeq \Xi^{\rm Pro}_{S_j}(\overline{T}^*(S_{j}/S'_j)),
\]
where $\overline{T}^*(S_{j}/S'_j)$ is the relative cotangent complex in the sense of Gaitsgory--Rozenblyum. By Lemma 5.4.3 from \cite{GR-II}*{Chapter 1} one has
\[
\H^{0}(\overline{T}^*(S_j/S'_j)) = 0, \;\;\; \mbox{and} \;\;\; \H^0(\overline{T}^*(S_j/S'_j)) = \H^{-1}(\imath^{*}(\sI_j)).
\]

By passing to the limit on $J^{\rm op}$ we obtain
\[
\H^0(T^*(S/S')) \simeq (\Xi^{\rm Pro}_{S})^{\heartsuit}(\lim_{J^{\rm op}} \H^{0}(\overline{T}^*(S_j/S'_j)))
\]
which implies the first assertion. The second follows from
\[
\H^{-1}(T^*(S/S')) \simeq \lim_{J^{\rm op}}(\Xi_{S_j})^{\heartsuit}(\H^{-1}(\overline{T}^*(S_j/S'_j))) \simeq \lim_{J^{\rm op}}\H^{-1}(\Xi_{S_j}(\imath^{*}(\sI_j))).
\]
Finally, we notice that the right-hand side becomes
\[
\lim_{J^{\rm op}}\H^{-1}((\imath^{\rm Ind, *}(\sI_j)) \simeq \H^{-1}(\lim_{J^{\rm op}}\imath^{\rm Ind, *}(\sI_j))
\]
where we abuse notation and denoted $\sI_j = \Xi_{S_j}(\sI_j)$. This finishes the proof of item (i).

Item (ii) is proved similarly.
\end{proof}

\subsubsection{Square-zero extensions and truncations}

In this section we establish a result that allows one to understand a Tate scheme as successive square-zero extensions of its $n$-coconnective truncations. This is completely analogous to \cite{GR-II}*{Chapter 1, Proposition 5.4.2}.

\begin{prop}
\label{prop:derived-affine-Tate-as-square-zero-extensions-of-non-derived}
\begin{enumerate}[(a)]
    \item For $X \in \clSchTateft$ the category of square-zero extensions by objects of $\TateCoh(X)^{\heartsuit}$ is equivalent to the category of closed embeddings of classical Tate schemes of finite type $X \hra X'$ whose ideal of definition square to zero.
    \item For $X_{n} \in \SchTatenft$, the category
    \[
    \{X_{n+1} \in {^{\leq (n+1)}\mbox{Sch}}_{\rm Tate, ft} \;| \; {^{\leq n}X_{n+1}} \simeq X_n\}
    \]
    is canonically equivalent to that of square-zero extensions of $X_n$ by objects of
    \[
    \TateCoh(X_{n})^{\heartsuit}[n+1] \subset \TateCoh(X_n)^{\leq -1}.
    \]
\end{enumerate}
\end{prop}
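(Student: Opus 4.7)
The plan is to prove this by reducing to the analogous statement for schemes (\cite{GR-II}*{Chapter 1, Proposition 5.4.2}), which is applied levelwise to a compatible presentation of the Tate scheme. The key input will be Proposition~\ref{prop:cotangent-complex-closed-embedding-calculation}, which describes the cotangent fiber of a closed embedding of Tate schemes in terms of the defining ideal.

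First I would establish the functor in one direction. Given a square-zero extension $\gamma\colon T^*(X) \to \sF$ with $\sF \in \TateCoh(X)^{\leq -1}$, the construction $\RealSqZ_X(\gamma) = X \sqcup_{X_\sF} X$ yields a closed nil-isomorphism $X \hookrightarrow X'$, and by Lemma~\ref{lem:push-out-Tate-schemes} we may compute this push-out levelwise with respect to a compatible presentation $X \simeq \colim_I X_i$, $X_\sF \simeq \colim_I (X_i)_{\sF_i}$ (which exists by Lemma~\ref{lem:presentation-of-square-extension-of-Tate-affine-scheme} applied to each piece of a Zariski cover, as in \S\ref{par:split-square-zero-for-Tate-schemes}). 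For part (a), if $\sF \in \TateCoh(X)^{\heartsuit}$, then each $\sF_i \in \IndCoh(X_i)^{\heartsuit}$, so each $(X_i)_{\sF_i}$ is a classical scheme and the resulting closed embedding $X_i \hookrightarrow X'_i$ has ideal of definition isomorphic to $\sF_i$, which is squared to zero by construction; passing to the colimit, $X \hookrightarrow X'$ is a closed embedding of classical Tate schemes of finite type whose ideal of definition is $\sF$ and squares to zero.

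For the converse direction of part (a), given a closed embedding $X \hookrightarrow X'$ of classical Tate schemes with square-zero ideal, one first chooses compatible presentations $X \simeq \colim_I X_i$ and $X' \simeq \colim_I X'_i$ by classical schemes of finite type such that each $X_i \hookrightarrow X'_i$ is a closed embedding with square-zero ideal (using \cite{Hennion-Tate}*{Proposition 1.2} to find common indexing). The classical GR result produces maps $T^*(X_i) \to \sI_i[1]$ in $\IndCoh(X_i)$, and the naturality of this construction under the transition maps $f_{j,i}$ (which follows from the functoriality of square-zero extensions reviewed in \S\ref{subsubsec:functoriality-of-square-zero-extensions} together with Proposition~\ref{prop:cotangent-complex-closed-embedding-calculation}(i)) allows us to assemble these into a morphism $T^*(X) \to \sI[1]$ in $\TateCoh(X)$, using the identification $T^*(X) \simeq \lim_{I^{\rm op}} T^*(X_i)$ from Proposition~\ref{prop:explicit-cotangent-complex-Tate-scheme} and the fiber sequence $\sI \to \sO_{X'} \to \imath_*\sO_X$ in $\TateCoh(X')$.

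For part (b), the strategy is analogous: given $X_{n+1}$ extending $X_n$, the inclusion $X_n \hookrightarrow X_{n+1}$ is a nil-isomorphism whose defining ideal $\sI$ satisfies $\tau^{\geq -n}(\sI) = 0$, so Proposition~\ref{prop:cotangent-complex-closed-embedding-calculation}(ii) identifies $\H^{-n-2}(T^*(X_n/X_{n+1}))$ with $\H^{-n-1}(\sI)$ and shows the relative cotangent complex lies in the correct degrees. One then produces a class $T^*(X_n) \to \H^{-n-1}(\sI)[n+2]$ in $\TateCoh(X_n)^{\heartsuit}[n+2]$, corresponding after a shift to an object of $\TateCoh(X_n)^{\heartsuit}[n+1]$ viewed as living in $\TateCoh(X_n)^{\leq -1}$. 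The inverse construction sends a square-zero extension class $\gamma\colon T^*(X_n) \to \sM[n+1]$ with $\sM \in \TateCoh(X_n)^{\heartsuit}$ to $\RealSqZ(\gamma)$, whose truncation recovers $X_n$ by Proposition~\ref{prop:cotangent-complex-closed-embedding-calculation}(ii).

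The main obstacle I anticipate is the bookkeeping needed to choose compatible presentations: concretely, verifying that for a given closed embedding $X \hookrightarrow X'$ of Tate schemes one can find an index category $I$ presenting both $X$ and $X'$ such that each $X_i \hookrightarrow X'_i$ is a square-zero extension in the classical sense, and that this choice does not affect the resulting map on cotangent complexes. This reduces to a filtered cofinality argument analogous to the one in the proof of Lemma~\ref{lem:Tate-affine-schemes-has-fiber-products}, combined with the levelwise application of \cite{GR-II}*{Chapter 1, Proposition 5.4.2}, but care is needed to ensure the transition maps in the two presentations are compatible with the square-zero structure.
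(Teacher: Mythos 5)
Your proposal is correct and follows essentially the same route as the paper: both arguments choose compatible presentations, apply the scheme-level statement (\cite{GR-II}*{Chapter 1, Proposition 5.4.2}) levelwise, identify the relative cotangent complex of each classical closed embedding with $\sI_i/\sI_i^2[1]$ via Proposition~\ref{prop:cotangent-complex-closed-embedding-calculation}, and pass to the limit over $I^{\rm op}$ using $T^*(X)\simeq\lim_{I^{\rm op}}T^*(X_i)$. If anything, you are more complete than the paper's written proof, which only spells out the direction from a closed embedding to an extension class and leaves the converse and the compatibility of presentations implicit.
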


\begin{proof}
Let $\imath:X \hra X'$ to prove (a) we notice that the fiber sequence
\[
T^*_{\imath}(X') \ra T^*(X) \ra T^{*}(X/X')
\]
is obtained a the limit over $I$ of
\[
T^*_{\imath_i}(X'_i) \ra T^*(X_i) \ra T^{*}(X_{i}/X'_i),
\]
where $X \simeq \colim_I$ and $X' \colim X'_i$ are presentations compatible with $\imath$. By the relative version of (\ref{eq:cotangent-complex-Tate-scheme-as-a-limit-of-cotangent-complexes}) we have
\[
T^{*}(X_{i}/X'_i) \simeq \Xi_{X_i}(\overline{T}^*(X_{i}/X'_i)),
\]
and finally we notice that for $\imath_i: X_i \hra X'_i$ a closed embedding of classical schemes we have an isomorphism
\[
T^{*}(X_{i}/X'_i) \simeq \sI_i/\sI_i^{2}[1],
\]
where $\sI_{i}$ is the ideal of definition of $X$ in $X'$. Thus, the map
\[
T^*(X_i) \ra \sI[1]
\]
obtained by passing to the limit on $I^{\rm op}$ gives the square-zero extension.

Now We prove (b). For $\imath:X_n \hra X_{n+1}$ consider the fiber sequence
\[
\imath^{\rm Tate}_*(\sF[-1]) \ra \sO_{X_{n+1}} \ra \imath^{\rm Tate}_*(\sO_{X_n})
\]
where $\sF \in \TateCoh(X_{n})^{\heartsuit}[n+2]$.

We claim that $X_{n+1}$ has the structure of a square-zero of $X_n$, corresponding to a map $T^*(X_{n}) \ra \sF$.

Indeed, we consider the fiber sequence
\[
T^*_{\imath}(X_{n+1}) \ra T^*(X_n) \ra T^*(X_n/X_{n+1}),
\]
then Proposition \ref{prop:cotangent-complex-closed-embedding-calculation} implies that
\[
\tau^{\geq -(n+2)}T^*(X_n/X_{n+1}) \simeq \sF.
\]

Thus, we obtain the morphism by truncation of $T^*(X_n) \ra T^*(X_n/X_{n+1})$.
\end{proof}

\subsubsection{Nilpotent embeddings}

For $f:X \ra Y$ a morphism between eventually coconnective Tate schemes locally almost of finite type, we say that $f$ is a nilpotent embedding if the ideal of $X$ in $Y$, obtained as in Proposition \ref{prop:cotangent-complex-closed-embedding-calculation} is nilpotent.

We have the following description of nilpotent embeddings in terms of square-zero extensions.

\begin{prop}
\label{prop:iterated-description-of-nilpotent-embeddings}
Let $X \ra Y$ a nilpotent embedding of eventually coconnective Tate schemes locally almost of finite type. Then there exists a squence of Tate schemes locally almost of finite type
\[
X = X^0_0 \hra X^1_0 \hra \cdots \hra X^n_0 = X_0 \hra X_1 \hra \cdots \hra Y,
\]
such that
\begin{itemize}
    \item each of the maps $X^i_{0} \hra X^{i+1}_{0}$ and $X_i \hra X_{i+1}$ has a structure of square-zero extension;
    \item for every $i$, the map $g_{i}: X_i \ra Y$ induces an isomorphism ${^{\leq i}X_i} \ra {^{\leq i}Y}$.
\end{itemize}
\end{prop}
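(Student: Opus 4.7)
The approach decomposes $X \hra Y$ into two chains of square-zero extensions: the first resolves the classical nilpotent ideal, and the second resolves the Postnikov tower of $Y$. Let $\sI := \Fib(\sO_Y \to \imath_{*}\sO_X) \in \TateCoh(Y)$ and set $J := \pi_0(\sI) \subset \sO_{\classical{Y}}$. By hypothesis $J$ is nilpotent, so pick $n$ with $J^{n+1} = 0$; since $Y$ is eventually coconnective, also fix $N$ with ${^{\leq N}Y} \simeq Y$.

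For the first chain, consider the classical filtration
\[
\classical{X} = V(J) \hra V(J^{2}) \hra \cdots \hra V(J^{n+1}) = \classical{Y}
\]
in which every map is a classical square-zero extension, the ideal $J^{j+1}/J^{j+2}$ squaring to zero in $\sO_{\classical{Y}}/J^{j+2}$ because $(J^{j+1})^{2} \subset J^{2(j+1)} \subset J^{j+2}$. The inclusion $\classical{X} \hra X$ is a closed nil-isomorphism, so by Lemma \ref{lem:push-out-of-nil-isomorphisms-for-Tate-schemes-exist} we may form the pushouts
\[
X^{j}_{0} := X \underset{\classical{X}}{\sqcup} V(J^{j+1}), \qquad j = 0, 1, \dots, n,
\]
in $\SchTatelaft$. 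Then $X^{0}_{0} = X$, and $X^{n}_{0} =: X_{0} = X \sqcup_{\classical{X}} \classical{Y}$ satisfies $\classical{X_{0}} \simeq \classical{Y}$ since classical truncation commutes with pushouts of closed nil-isomorphisms. The functoriality of $\RealSqZ$ under pushouts (\S \ref{subsubsec:functoriality-of-square-zero-extensions}) transports the classical square-zero extension structure of $V(J^{j+1}) \hra V(J^{j+2})$ to the map $X^{j}_{0} \hra X^{j+1}_{0}$.

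For the second chain, we construct $X_0 \hra X_1 \hra \cdots \hra X_N = Y$ inductively, maintaining a map $g_i : X_i \to Y$ and an identification ${^{\leq i}X_i} \simeq {^{\leq i}Y}$ such that the induced map $\classical{X_i} \to X_i$ factors the canonical ${^{\leq i}Y} \to Y$. Given this data, we set
\[
X_{i+1} := X_i \underset{{^{\leq i}Y}}{\sqcup} {^{\leq (i+1)}Y},
\]
where the map ${^{\leq i}Y} \to X_i$ comes from the identification at stage $i$. Proposition \ref{prop:derived-affine-Tate-as-square-zero-extensions-of-non-derived}(b) exhibits ${^{\leq i}Y} \hra {^{\leq(i+1)}Y}$ as a square-zero extension classified by an object of $\TateCoh({^{\leq i}Y})^{\heartsuit}[i+2]$; pushing this out and using \S \ref{subsubsec:functoriality-of-square-zero-extensions} equips $X_i \hra X_{i+1}$ with a square-zero extension structure, and the universal property of the pushout supplies $g_{i+1} : X_{i+1} \to Y$ extending $g_i$.

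The main obstacle is verifying the truncation identity ${^{\leq(i+1)}X_{i+1}} \simeq {^{\leq(i+1)}Y}$, and hence that the chain terminates at $X_N \simeq Y$. Commuting $\tau^{\leq(i+1)}$ past the defining pushout yields
\[
{^{\leq(i+1)}X_{i+1}} \simeq {^{\leq(i+1)}X_i} \underset{{^{\leq i}Y}}{\sqcup} {^{\leq(i+1)}Y},
\]
so the claim reduces to showing that at each stage the natural map ${^{\leq(i+1)}X_i} \to {^{\leq(i+1)}Y}$ induced by $g_i$ is compatible with the glueing along ${^{\leq i}Y}$. Proposition \ref{prop:cotangent-complex-closed-embedding-calculation} controls the coconnectivity of the relative cotangent complex $T^{*}(X_i/Y)$ in terms of the ideal of $X_i$ in $Y$, and by induction this ideal becomes increasingly connective as $i$ grows, which produces the required compatibility. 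Passing between Tate-scheme pushouts and levelwise pushouts of presentations via Lemma \ref{lem:push-out-Tate-schemes} and Proposition \ref{prop:prestack-laft-is-colimit-of-aft-schemes} then reduces the verification to the corresponding identity for usual eventually coconnective schemes, which is the content of \cite{GR-II}*{Chapter 1, \S 5.4}. At stage $N$, both $X_N$ and $Y$ are $N$-coconnective and $g_N$ is a closed nil-isomorphism inducing an equivalence on $N$-truncations, whence $X_N \simeq Y$.
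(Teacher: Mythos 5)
Your first chain (powers of the classical nilpotent ideal, pushed out along $\classical{X} \hra X$) is fine and coincides with the paper's construction. The second chain, however, is built the wrong way around and does not work. You set $X_{i+1} := X_i \underset{{^{\leq i}Y}}{\sqcup} {^{\leq (i+1)}Y}$, but the truncation functor ${^{\leq n}(-)}$ on schemes is a \emph{right} adjoint (it is $\Spec$ of the left adjoint $\tau^{\geq -n}$ on connective rings), so it has no reason to commute with pushouts of schemes, which are fiber products of rings. Concretely, the Mayer--Vietoris sequence for $\sO_{X_{i+1}} \simeq \sO_{X_i}\times_{\sO_{{^{\leq i}Y}}}\sO_{{^{\leq (i+1)}Y}}$ gives, in degree $-(i+1)$,
\[
\H^{-(i+1)}(\sO_{X_{i+1}}) \simeq \H^{-(i+1)}(\sO_{X_i})\oplus \H^{-(i+1)}(\sO_{Y}),
\]
since $\H^{-(i+1)}(\sO_{{^{\leq i}Y}}) = \H^{-(i+2)}(\sO_{{^{\leq i}Y}}) = 0$. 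The summand $\H^{-(i+1)}(\sO_{X_i})$ is not zero in general, so ${^{\leq (i+1)}X_{i+1}} \not\simeq {^{\leq (i+1)}Y}$ and the chain never terminates at $Y$. The failure is already visible in the degenerate case $X = Y$ with $\H^{-1}(\sO_Y)\neq 0$: there $X_0 = Y$ and your $X_1 = Y\underset{\classical{Y}}{\sqcup}{^{\leq 1}Y}$ has $\H^{-1}(\sO_{X_1}) \simeq \H^{-1}(\sO_Y)^{\oplus 2}$, so the construction strictly overshoots $Y$ at the very first derived step. No appeal to Proposition \ref{prop:cotangent-complex-closed-embedding-calculation} or to a levelwise reduction can repair this, because the defect is in the object you build, not in the verification.

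The correct construction (the one the paper uses) runs in the opposite direction: instead of pushing the Postnikov tower of $Y$ into $X_i$, one realizes $X_{i+1}$ as a square-zero extension \emph{of} $X_i$ that absorbs exactly the next layer of the ideal of $X_i$ in $Y$. Since ${^{\leq i}X_i}\simeq {^{\leq i}Y}$, Proposition \ref{prop:cotangent-complex-closed-embedding-calculation} forces $T^*(X_i/Y)\in\TateCoh(X_i)^{\leq -(i+1)}$; taking $\sI[1] := \tau^{\geq -(i+1)}(T^*(X_i/Y))$ and letting $X_i\hra X_{i+1}$ be the square-zero extension classified by the composite $T^*(X_i)\ra T^*(X_i/Y)\ra \sI[1]$, the vanishing of the further composite with $T^*_{g_i}(Y)\ra T^*(X_i)$ produces $g_{i+1}:X_{i+1}\ra Y$, and a diagram chase on the ideal sheaves shows the ideal of $X_{i+1}$ in $Y$ is one degree more connective than that of $X_i$. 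That is the induction you need; your version of the inductive step should be replaced by it.
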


\begin{proof}
By Proposition \ref{prop:derived-affine-Tate-as-square-zero-extensions-of-non-derived} we have a sequence of square-zero extensions for the classical part of $\classical{X}$ into the classical part of $\classical{Y}$
\[
\classical{X} = X^{0}_{\rm cl, 0} \hra X^{1}_{\rm cl, 0} \hra \cdots \hra X^{n}_{\rm cl, 0} = \classical{Y}.
\]
Thus, we define
\[
X^i_{0} := X^0_0 \underset{\classical{X}}{\sqcup} X^i_{\rm cl, 0}.
\]

By construction, $X_0 \ra Y$ induces an isomorphism $\classical{X_0} \ra \classical{Y}$.

For the construction of $X_i$ for $i\geq 0$ we proceed by induction. Suppose that we have $g_i: X_i \ra Y$, notice that $g_i$ induces an isomorphism between the underlying classical Tate schemes. Moreover by Proposition \ref{prop:cotangent-complex-closed-embedding-calculation} and since ${^{\leq i}X} \ra {^{\leq i}Y}$ is an isomorphism we have that $T^*(X_i/Y)$ is concentrated in cohomological degres $\leq -(i+1)$.

Consider the fiber sequence
\[
T^*_{g_i}(Y) \ra T^*(X_i) \ra T^*(X_i/Y)
\]
and take $\sI := \H^{\, -(i+1)}(T^*(X_i/Y))[k] \simeq \tau^{\geq -(i+1)}(T^*(X_i/Y))[-1]$. We let the extension $X_i \hra X_{i+1}$ be defined by the composite
\[
T^*(X_i) \ra T^*(X_i/Y) \ra \sI[1].
\]
Notice that the composite
\[
T^*_{g_i}(Y) \ra T^*(X_i) \ra \sI[1]
\]
is null-homotopic, thus we obtain a map $g_{i+1}:X_{i+1} \ra Y$.

By Proposition \ref{prop:cotangent-complex-closed-embedding-calculation} we have fiber sequences
\[
\sI \ra \sO_{Y} \ra (g_{i})^{\rm Tate}_*\sO_{X_i}, \;\;\; \mbox{and} \;\;\; \sI' \ra \sO_{Y} \ra (g_{i+1})^{\rm Tate}_*\sO_{X_{i+1}}
\]
and the diagram
\[
\begin{tikzcd}
\sI' \ar[r] \ar[d] & \sI \ar[r] \ar[d] & (g_i)^{\rm Tate}_*(\sI) \ar[d] \\
\sI' \ar[d] \ar[r] & \sO_{Y} \ar[r] \ar[d] & (g_{i+1})^{\rm Tate}_*(\sO_{X_{i+1}}) \ar[d] \\
0 \ar[r] & (g_{i})^{\rm Tate}_*(\sO_{X_i}) \ar[r,"\mbox{Id}"] & (g_{i})^{\rm Tate}_*(\sO_{X_i}).
\end{tikzcd}
\]
By Proposition \ref{prop:cotangent-complex-closed-embedding-calculation} the map $\sI \ra (g_i)^{\rm Tate}_*(\sI)$ identifies with the truncation map
\[
\sI \ra \tau^{\geq -i}(\sI).
\]
Thus, $\sI' \in \TateCoh(Y)^{\leq -(i+1)}$, which again by Proposition \ref{prop:cotangent-complex-closed-embedding-calculation} implies that $T^*(X_{i+1}/Y)^{\leq (i+2)}$, that is $X_{i+1} \ra Y$ satisfy the second condition.
\end{proof}

\subsection{Infinitesimal cohesiveness and deformation theory}
\label{subsec:infinitesimal-cohesiveness-and-deformation-theory}

The idea of infinitesimal cohesiveness is to describe lifts of a map from a Tate affine scheme $S$ into a prestack of Tate type $\sX$ to a map from a square-zero extension of $S$ in terms of data of morphism between an object in $\TateCoh(S)$ and the pro-cotangent space of $\sX$. When a prestack of Tate type is infinitesimally cohesive and admits pro-cotangent complexes we will say that it admits deformation theory.

\subsubsection{Infinitesimal cohesivenes of a prestack of Tate type}
\label{subsubsec:rewriting-infinitesimal-cohesiveness}

Let $\sX \in \PStkTate$ and consider $x:S \ra \sX$ an object of $(\SchaffTate)_{/\sX}$. For
\[
T^*(S) \overset{\gamma}{\ra} \sF \in \SqZ(S)
\]
and the corresponding square-zero extension
\[
S \hra S' := \RealSqZ(T^*(S) \overset{\gamma}{\ra} \sF)
\]
we obtain a map
\begin{equation}
\label{eq:defn-infinitesimal-cohesiveness}
\Maps_{S/}(S',\sX) \ra \pt \underset{\Maps(S_{\sF},\sX)}{\times}\Maps(S,\sX)    
\end{equation}
where $\pt \ra \Maps(S_{\sF},\sX)$ corresponds to the composite
\[
S_{\sF} \overset{\mbox{pr}}{\ra} S \overset{x}{\ra} \sX.
\]

\begin{defn}
\label{defn:infinitesimally-cohesive-prestack-Tate}
We say that $\sX$ is \emph{infinitesimally cohesive} if the map (\ref{eq:defn-infinitesimal-cohesiveness}) is an equivalence for all $(S \overset{x}{\ra} \sX) \in (\SchaffTate)_{/\sX}$ and all $(T^*(S) \overset{\gamma}{\ra} \sF) \in \SqZ(S)$.
\end{defn}

By Corollary \ref{cor:affine-push-out-nil-isomorphism-agrees-with-scheme-push-out} we obtain that

\begin{cor}
Any Tate scheme $Z \in \SchTate$ is infinitesimally cohesive.
\end{cor}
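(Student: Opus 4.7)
The plan is to unwind the construction $S' = \RealSqZ(T^*(S) \overset{\gamma}{\to} \sF)$ and reduce the infinitesimal cohesiveness condition to a statement about push-outs of Tate affine schemes being preserved by the inclusion $\imath: \SchaffTate \hookrightarrow \SchTate$. By construction (see \S \ref{subsubsec:notion-square-zero-extension}), $S'$ is defined as the push-out
\[
S' \simeq S \underset{S_\sF}{\sqcup} S
\]
in $\SchaffTate$, where the two structure maps $S_\sF \rightrightarrows S$ are the canonical projection and the composite $S_\sF \overset{\gamma}{\to} S_{T^*(S)} \overset{\fd}{\to} S$. First I would observe that both of these maps are closed nil-isomorphisms: the canonical projection is a closed nil-isomorphism by definition of split square-zero extensions by objects of $\TateCoh(S)^{\leq -1}$, and the map through $\fd$ is as well, since it differs from the projection by a derivation into a connective Tate-coherent sheaf.

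Next I would invoke Corollary \ref{cor:affine-push-out-nil-isomorphism-agrees-with-scheme-push-out}, which asserts that push-outs along closed nil-isomorphisms of Tate affine schemes are preserved by $\imath$. Consequently, in the category $\SchTate$ one has an isomorphism
\[
\imath(S') \simeq \imath(S) \underset{\imath(S_\sF)}{\sqcup} \imath(S).
\]
Since $Z \in \SchTate$, the Yoneda functor $\Maps_{\SchTate}(-, Z)$ takes this push-out to the corresponding pullback of mapping spaces:
\[
\Maps_{\SchTate}(\imath(S'), Z) \simeq \Maps_{\SchTate}(\imath(S), Z) \underset{\Maps_{\SchTate}(\imath(S_\sF), Z)}{\times} \Maps_{\SchTate}(\imath(S), Z).
\]

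To conclude, I would identify this pullback with the right-hand side of (\ref{eq:defn-infinitesimal-cohesiveness}). Working in the slice category under $S$ via the given point $x: S \to Z$, one of the two copies of $\Maps(S, Z)$ is replaced by the distinguished point $x$, while the image of the opposite projection in $\Maps(S_\sF, Z)$ is precisely the map $x \circ \mathrm{pr}: S_\sF \to S \to Z$. This matches exactly the description of $\pt \underset{\Maps(S_\sF, Z)}{\times} \Maps(S, Z)$ appearing in Definition \ref{defn:infinitesimally-cohesive-prestack-Tate}, proving that (\ref{eq:defn-infinitesimal-cohesiveness}) is an equivalence.

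The main step is the verification that both legs of the diagram defining $S'$ are closed nil-isomorphisms, so that Corollary \ref{cor:affine-push-out-nil-isomorphism-agrees-with-scheme-push-out} applies; everything else is formal manipulation of mapping spaces and the universal property of push-outs. No cardinality or convergence subtleties should intervene, since the argument is purely at the level of the Yoneda embedding of Tate schemes into prestacks of Tate type.
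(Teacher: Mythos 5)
Your proposal is correct and follows exactly the route the paper takes: the paper derives this corollary directly from Corollary \ref{cor:affine-push-out-nil-isomorphism-agrees-with-scheme-push-out}, and your argument simply spells out the details — identifying $S'$ as the push-out $S\underset{S_{\sF}}{\sqcup}S$ along (closed) nil-isomorphisms, transporting that push-out into $\SchTate$, and applying the universal property of push-outs to $\Maps_{\SchTate}(-,Z)$. No gaps.
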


Similarly to Lemma \ref{lem:pro-cotangent-space-convergent-Tate-enough-eventually-coconnective-affine} for $\sX$ a convergent prestack of Tate type, it is enough to check the condition of Definition \ref{defn:infinitesimally-cohesive-prestack-Tate} for eventually coconnective $S$ and $\sF \in \TateCoh(S)^{> \infty}$.

\paragraph{Relative situation} We can define the notion of relative infinitesimal cohesiveness for morphism between prestacks of Tate type in a similar way to \S \ref{subsubsec:pro-cotangent-space-relative-situation}. We have results analogous to Lemma \ref{lem:pro-cotangent-space-relative-and-absolute-relation} and Lemma \ref{lem:pro-cotangent-space-convergent-Tate-enough-eventually-coconnective-affine} when $\sX$ and $\sX_0$ are convergent.

\paragraph{Rewriting the condition of infinitesimal cohesiveness}

Notice that the space
\[
\pt \underset{\Maps(S_{\sF},\sX)}{\times}\Maps(S,\sX)
\]
is isomorphic to the space of homotopies between the points
\[
S_{\sF} \overset{\mbox{pr}}{\ra} S \overset{x}{\ra} \sX
\]
and
\[
S_{\sF} \overset{\gamma}{\ra} S_{T^*(S)} \overset{\fd}{\ra} S \overset{x}{\ra} \sX
\]
of $\Maps_{S/}(S_{\sF},\sX)$.

So, if the prestack of Tate type $\sX$ admits pro-cotangent spaces, then the space $\pt \underset{\Maps(S_{\sF},\sX)}{\times}\Maps(S,\sX)$ is isomorphic to the space of null-homotopies of the composed map
\[
T^*_{x}(\sX) \overset{(dx)^*}{\ra} T^*(S) \overset{\gamma}{\ra} \sF.
\]

So, we have that a prestack $\sX$ is infinitesimally cohesive if given $x:S \ra \sX$ for all $(T^*(S) \overset{\gamma}{\ra} \sF) \in \SqZ(S)$ the canonical map of spaces
\begin{equation}
    \label{eq:null-homotopy-description}
    \Maps_{S/}(\RealSqZ(T^*(S) \overset{\gamma}{\ra} \sF),\sX) \ra \{ \mbox{ null homotopies of }T^*_{x}(\sX) \overset{(dx)^*}{\ra} T^*(S) \overset{\gamma}{\ra} \sF\}
\end{equation}
is an isomorphism.

\paragraph{Infinitesimal cohesiveness and truncation}

By combining section \ref{subsubsec:rewriting-infinitesimal-cohesiveness} and Proposition \ref{prop:derived-affine-Tate-as-square-zero-extensions-of-non-derived} we obtain

\begin{lem}
Suppose that $\sX \in \PStkTate$ admits $(-k)$-connective pro-cotangent spaces and is infinitesimally cohesive. For $S$ an object of $\SchaffTaten$, and $S_0 \subset \classical{S}$ given by a nilpotent ideal, the fibers of the map
\[
\Maps(S,\sX) \ra \Maps(S_0,\sX)
\]
are $(n+k)$-truncated.
\end{lem}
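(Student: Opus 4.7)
The plan is to reduce the statement to an iterated application of infinitesimal cohesiveness, after decomposing the inclusion $S_0 \hra S$ as a finite tower of square-zero extensions. First I would argue that $S_0 \hra S$ can be written as a sequence
\[
S_0 =: S^{(0)} \hra S^{(1)} \hra \cdots \hra S^{(m)} = \classical{S} = \tau^{\leq 0}S \hra \tau^{\leq 1}S \hra \cdots \hra \tau^{\leq n}S = S,
\]
where each of the first $m$ steps is a classical square-zero extension by an object of $\TateCoh(S^{(i)})^{\heartsuit}$ (which is possible since $S_0 \subset \classical{S}$ is defined by a nilpotent ideal, via an analog of Proposition~\ref{prop:derived-affine-Tate-as-square-zero-extensions-of-non-derived}(a) for Tate affine schemes), and each of the coconnectivity steps $\tau^{\leq i}S \hra \tau^{\leq (i+1)}S$ is a square-zero extension by an object of $\TateCoh(\tau^{\leq i}S)^{\heartsuit}[i+1]$ (by the Tate affine analog of Proposition~\ref{prop:derived-affine-Tate-as-square-zero-extensions-of-non-derived}(b)).

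For each such square-zero extension $S' \hra S''$ classified by a map $\gamma: T^*(S') \ra \sF$, with $\sF$ in cohomological degree $-1$ in the classical steps and in degree $-(i+1)$ in the step $\tau^{\leq i}S \hra \tau^{\leq (i+1)}S$, infinitesimal cohesiveness combined with the reformulation in (\ref{eq:null-homotopy-description}) identifies the fiber of
\[
\Maps(S'',\sX) \ra \Maps(S',\sX)
\]
over a point $x: S' \ra \sX$ with the space of null-homotopies of the composition $T^*_x\sX \overset{(dx)^*}{\ra} T^*(S') \overset{\gamma}{\ra} \sF$. Whenever this space is non-empty, it is a torsor over $\Maps_{\Pro(\TateCoh(S')^-)}(T^*_x\sX, \sF[-1])$. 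Since $\sX$ admits $(-k)$-connective pro-cotangent spaces, $T^*_x\sX \in \Pro(\TateCoh(S')^{\leq k})$; writing $T^*_x\sX \simeq \lim_{\alpha} V_\alpha$ with $V_\alpha \in \TateCoh(S')^{\leq k}$, the mapping space becomes a filtered colimit of $\Maps(V_\alpha, \sF[-1])$. An easy computation with the t-structure gives that each of these is $(k+i)$-truncated in the coconnectivity step and $k$-truncated in the classical steps, and this truncation is preserved by filtered colimits; hence the space of null-homotopies is at most $(k+n-1)$-truncated at every step.

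Finally, I would combine the fiber sequences along the tower: in a finite composition of maps whose fibers are all at most $N$-truncated, the fiber of the composite is also at most $N$-truncated (by the long exact sequence of homotopy groups associated to each fiber sequence and a straightforward induction). Since all intermediate fibers are at most $(k+n-1)$-truncated, the fiber of $\Maps(S,\sX) \ra \Maps(S_0,\sX)$ is at most $(n+k)$-truncated, as claimed. The main obstacle is verifying the initial decomposition step in the Tate affine setting: Proposition~\ref{prop:derived-affine-Tate-as-square-zero-extensions-of-non-derived} is stated for Tate schemes of finite type, so one has to check that the argument extends to arbitrary $S \in \SchaffTaten$; this should follow by expressing $S$ as a filtered colimit of Tate affine schemes of finite type and using that cotangent complexes and square-zero extensions interact well with such colimits, but the bookkeeping is somewhat delicate.
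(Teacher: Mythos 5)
Your argument is exactly the one the paper intends (the paper only gestures at it, citing the null-homotopy reformulation of infinitesimal cohesiveness together with Proposition \ref{prop:derived-affine-Tate-as-square-zero-extensions-of-non-derived}): factor $S_0\hra S$ through the classical nilpotent tower and the Postnikov tower, identify each fiber with a torsor over $\Maps(T^*_x\sX,\sF[-1])$, and bound its truncatedness using $T^*_x\sX\in\Pro(\TateCoh^{\leq k})$. Two small remarks: your worry about the finite-type hypothesis is moot because the standing convention of Section \ref{sec:deformation-theory} is that all test objects are Tate affine and almost of finite type (so the $n$-coconnective ones are of finite type), and your degree bookkeeping in the Postnikov steps is off by one (the step $\tau^{\leq i}S\hra\tau^{\leq i+1}S$ is classified by $\sF$ in degree $-(i+2)$, giving a $(k+i+1)$-truncated fiber and hence exactly the claimed $(n+k)$ bound).
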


\subsubsection{Prestacks of Tate type with deformation theory}

In this section we introduce the condition of admitting deformation theory for prestacks of Tate type. Since in our treatment of the pro-cotangent complex we restricted ourselves to prestacks of Tate type which were convergent the following definition only involves two conditions, instead of three for Gaitsgory--Rozenblyum.

\begin{defn}
For $\sX$ a prestack of Tate type, we will say that $\sX$ admits deformation theory if 
\begin{enumerate}[(a)]
    \item $\sX$ admits a pro-cotangent complex\footnote{Recall this means that $\sX$ is required to be convergent.};
    \item $\sX$ is infinitesimally cohesive.
\end{enumerate}
\end{defn}

We also have variants of the above definition where we impose connective conditions on the pro-cotangent spaces and also a relative version. 

For $f:\sY \ra \sX$ a morphism in $\PStkTate$ we say that \emph{$\sY$ admits deformation theory relative to $\sX$} if the relative pro-cotangent complex $T^*(\sY/\sX)$ exists and for every $(S \ra \sY) \in (\SchaffTate)_{\sY}$ and $T^*(S) \overset{\gamma}{\ra} \sF \in \SqZ(S)$ the diagram
\[
\begin{tikzcd}
\Maps_{S/}(S',\sY) \ar[r] \ar[d] & \pt \underset{\Maps(S_{\sF},\sY)}{\times}\Maps(S,\sY) \ar[d] \\
\Maps_{S/}(S',\sX) \ar[r] & \pt \underset{\Maps(S_{\sF},\sX)}{\times}\Maps(S,\sX)
\end{tikzcd}
\]
is a pullback square, where $\pt \ra \Maps(S_{\sF},\sY)$ corresponds to the composite with the canonical projection.

We have the following analogue of Lemma \ref{lem:pro-cotangent-space-relative-and-absolute-relation}.

\begin{lem}
\label{lem:relative-deformation-theory-checked-on-affine-pullbacks}
A prestack $\sY$ admits defomrnation theory relative to $\sX$ if for every $S\in (\SchaffTate)_{/\sX}$ the prestack $S\underset{\sX}{\times}\sY$ admits deformation theory.
\end{lem}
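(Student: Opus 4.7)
The plan is to reduce each ingredient of relative deformation theory for $\sY$ over $\sX$ to the absolute deformation theory of the Tate-affine pullbacks $\sY_S := S\underset{\sX}{\times}\sY$ for $S \in (\SchaffTate)_{/\sX}$, thereby importing the hypothesis directly. Throughout, given a point $x: S \to \sY$ with composite $\bar{x}: S \to \sX$, let $\tilde{x}: S \to \sY_S$ denote the tautological lift produced from the pair $(\mbox{id}_S, x)$ by the universal property of the fiber product.

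First I would establish the existence of the relative pro-cotangent complex $T^*(\sY/\sX)$. The existence of relative pro-cotangent spaces at every point of $\sY$ follows from the same argument as Lemma~\ref{lem:pro-cotangent-space-relative-and-absolute-relation}, which gives the identification $T^*_x(\sY/\sX) \simeq T^*_{\tilde{x}}(\sY_S)$. To upgrade this to a pro-cotangent complex I must verify functoriality along eventually coconnective maps: for $f:S_1 \to S_2$ over $\sY$, base change provides a canonical isomorphism $\sY_{S_1} \simeq S_1\underset{S_2}{\times}\sY_{S_2}$ under which the lifts $\tilde{x}_1$ and $\tilde{x}_2$ are compatible. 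The assumed pro-cotangent complex of $\sY_{S_2}$ then gives $T^*_{\tilde{x}_1}(\sY_{S_1}) \simeq \Pro(f^{\rm Tate,*})(T^*_{\tilde{x}_2}(\sY_{S_2}))$, which translates through the identification above into the required functoriality of $T^*(\sY/\sX)$.

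Next I would verify the relative infinitesimal cohesiveness of $\sY$ over $\sX$. Fixing $x:S \to \sY$, a class $\gamma: T^*(S) \to \sF$ in $\SqZ(S)$, and $S' := \RealSqZ(\gamma)$, the claim is that the square
\[
\begin{tikzcd}
\Maps_{S/}(S',\sY) \ar[r] \ar[d] & \pt \underset{\Maps(S_{\sF},\sY)}{\times}\Maps(S,\sY) \ar[d] \\
\Maps_{S/}(S',\sX) \ar[r] & \pt \underset{\Maps(S_{\sF},\sX)}{\times}\Maps(S,\sX)
\end{tikzcd}
\]
is Cartesian. I would check this by passing to fibers of the vertical maps over the point of the lower-left determined by the composite $S' \to S \to \sX$ through $\bar{x}$. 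By the universal property of the fiber product, the upper-left fiber is $\Maps_{S/}(S',\sY_S)$ and the upper-right fiber is $\pt \underset{\Maps(S_\sF,\sY_S)}{\times}\Maps(S,\sY_S)$ with distinguished basepoint $\tilde{x}$; the resulting fibered square is exactly the absolute infinitesimal cohesiveness square for $\sY_S$ at $\tilde{x}$, which is a pullback by hypothesis.

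The main obstacle, though ultimately bookkeeping, is checking that these identifications behave coherently: one must verify that tautological lifts $\tilde{x}$ pull back correctly along maps $S_1 \to S_2$ over $\sX$, and that the canonical null-homotopies in the right-hand corners of the infinitesimal cohesiveness squares are compatible with the distinguished basepoints of the absolute squares for $\sY_S$. This is formal but requires careful chasing through the definitions in \S\ref{subsubsec:pro-cotangent-space-relative-situation} and \S\ref{subsubsec:rewriting-infinitesimal-cohesiveness}.
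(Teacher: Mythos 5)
Your overall strategy --- reducing both ingredients of relative deformation theory to the absolute deformation theory of the Tate affine pullbacks $\sY_S := S\underset{\sX}{\times}\sY$ --- is the natural one, and the paper itself offers no proof beyond the analogy with Lemma \ref{lem:pro-cotangent-space-relative-and-absolute-relation}. However, two of your steps do not go through as written.

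First, the identification $T^*_x(\sY/\sX)\simeq T^*_{\tilde x}(\sY_S)$ is off by the cotangent complex of $S$. Unwinding the definitions, $\Hom(T^*_{\tilde x}(\sY_S),\sF)\simeq \Lift_{\mathrm{id},S}(\sF)\times_{\Lift_{\bar x,\sX}(\sF)}\Lift_{x,\sY}(\sF)$, whereas the relative pro-cotangent space only retains the fiber over the distinguished point of $\Lift_{\bar x,\sX}(\sF)$; the correct statement is $T^*_x(\sY/\sX)\simeq T^*_{\tilde x}(\sY_S/S) = \Cofib\left(T^*(S)\to T^*_{\tilde x}(\sY_S)\right)$. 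As a consequence, the intermediate claim $T^*_{\tilde x_1}(\sY_{S_1})\simeq\Pro(f^{\rm Tate,*})(T^*_{\tilde x_2}(\sY_{S_2}))$ in your functoriality step is false in general: taking $\sY=\sX$ it would assert $T^*(S_1)\simeq\Pro(f^{\rm Tate,*})(T^*(S_2))$ for an arbitrary $f$. The repair is routine --- work with the relative cotangent spaces of $\sY_{S_i}$ over $S_i$ throughout, using that Tate affine schemes admit connective cotangent complexes (Corollary \ref{cor:Tate-scheme-has-connective-pro-cotangent-space}) so that the $T^*(S_i)$ terms cancel in the cofiber --- but it must be done.

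Second, in the cohesiveness step you verify Cartesianness of the square only on the fiber over the single point of $\Maps_{S/}(S',\sX)$ determined by $S'\to S\xrightarrow{\bar x}\sX$. A commutative square of spaces is a pullback if and only if the induced map on fibers of the vertical arrows is an equivalence over \emph{every} point of the lower-left corner, and $\Maps_{S/}(S',\sX)$ will in general have many components (many extensions of $\bar x$ to $S'$). Over a general point $c':S'\to\sX$ the fiber of the left vertical map is the space of sections of $S'\times_{c',\sX}\sY\to S'$ extending the tautological section over $S$, so the argument must invoke the hypothesis for the Tate affine scheme $S'$ itself, not only for $S$. The hypothesis does quantify over all of $(\SchaffTate)_{/\sX}$, so the needed input is available, but the step as written establishes strictly less than what is claimed.
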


\paragraph{Compatibility with push-outs}

The following is a characterization of existence of deformation theory in terms of preserving certain push-outs. To formulate the next result we will denote by
\[
\SchaffTateconvft := \SchaffTateconv \cap \SchaffTateaft,
\]
i.e.\ the category of Tate affine schemes of finite type.

\begin{prop}
Le $\sX$ be a convergent prestack of Tate type locally almost of finite type. Then the following are equivalent:
\begin{enumerate}[(i)]
    \item For every push-out $S'_1 \underset{S_1}{\sqcup}S_2$ in $\SchaffTateconvft$, where $S_1 \ra S'_1$ is a nilpotent embedding, the map
    \[
    \Maps(S'_2,\sX) \ra \Maps(S'_1,\sX)\underset{\Maps(S_1,\sX)}{\times}\Maps(S_2,\sX)
    \]
    is an isomorphism.
    \item For every push-out $S'_1 \underset{S_1}{\sqcup}S_2$ in $\SchaffTateconvft$, where $S_1 \ra S'_1$ is a square-zero extension, the map
    \[
    \Maps(S'_2,\sX) \ra \Maps(S'_1,\sX)\underset{\Maps(S_1,\sX)}{\times}\Maps(S_2,\sX)
    \]
    is an isomorphism.
    \item $\sX$ admits deformation theory.
\end{enumerate}
\end{prop}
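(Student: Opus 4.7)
The plan is to prove the chain (i) $\Rightarrow$ (ii) $\Rightarrow$ (iii) $\Rightarrow$ (i). The implication (i) $\Rightarrow$ (ii) is immediate, since every square-zero extension $S_1 \hra S'_1$ corresponding to a map $T^*(S_1) \to \sF$ with $\sF \in \TateCoh(S_1)^{\leq -1}$ is in particular a nilpotent embedding: its ideal is represented by $\sF[-1]$, which squares to zero by construction.

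For (ii) $\Rightarrow$ (iii), I will separately verify each of the three defining properties of deformation theory by specializing the push-out hypothesis. First, infinitesimal cohesiveness in the form of \eqref{eq:null-homotopy-description}: given $x:S\to\sX$ and $T^*(S)\overset{\gamma}{\to}\sF \in \SqZ(S)$, the extension $S' = \RealSqZ(\gamma)$ is by construction the push-out $S \underset{S_{\sF}}{\sqcup}S$ along the split square-zero extension $S_\sF \to S$, which is in particular a square-zero extension; applying (ii) yields exactly the required equivalence. Second, for the pro-cotangent space condition of Definition~\ref{defn:pro-cotangent-space-at-a-point}, I note that by Lemma~\ref{lem:realsplitsqz-Tate-preserves-colimits} one has $S_{\sF}\simeq S\underset{S_{\sF_2}}{\sqcup}S_{\sF_1}$ for $\sF = 0\times_{\sF_2}\sF_1$, and the projection $S_{\sF_2}\to S$ is a split (hence genuine) square-zero extension, so (ii) yields the defining isomorphism~\eqref{eq:defining-cotangent-space-with-Maps}. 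Third, for the pro-cotangent complex condition, Remark~\ref{rem:take-pushouts-to-pullbacks-existence-pro-cotangent-complex} reformulates it precisely as the preservation of push-outs of the form $(S_1)_{\sF_1}\underset{S_1}{\sqcup}S_2$ along the split square-zero extension $S_1 \to (S_1)_{\sF_1}$; convergence of $\sX$ follows from the fact that (ii) applied to the tower of split square-zero extensions $\{{^{\leq n}S}\}$ realizes $S$ as a formal colimit in $\SchaffTateconvft$ that $\sX$ turns into a limit.

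For (iii) $\Rightarrow$ (i), the main step is to reduce a general nilpotent embedding to an iterated square-zero extension. Given $S_1 \hra S'_1$ a nilpotent embedding of objects in $\SchaffTateconvft$ and a map $S_1 \to S_2$, Proposition~\ref{prop:iterated-description-of-nilpotent-embeddings} produces a finite chain
\[
S_1 = X^0_0 \hra X^1_0 \hra \cdots \hra X^n_0 = X_0 \hra X_1 \hra \cdots \hra S'_1
\]
in which every arrow carries a structure of square-zero extension and each $X_i \to S'_1$ induces an isomorphism on ${^{\leq i}(-)}$. Because push-outs of Tate affine schemes along closed nil-isomorphisms exist (Lemma~\ref{lem:push-out-affine-Tate-schemes} together with Corollary~\ref{cor:affine-push-out-nil-isomorphism-agrees-with-scheme-push-out}), I can form the corresponding chain of push-outs with $S_2$ and observe that at each step the map is again a square-zero extension (the ideal of $S_1$ in its push-out is the push-forward of the ideal of $S_1$ in the previous term). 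Convergence of $\sX$ reduces testing $\Maps(S'_2,\sX)$ to testing its truncations, so the outer limit is governed by the $X_i$-tower; infinitesimal cohesiveness applied at each square-zero step and taking the limit gives the desired isomorphism.

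The main obstacle I anticipate is the last paragraph: verifying that at each stage of the decomposition of Proposition~\ref{prop:iterated-description-of-nilpotent-embeddings}, the push-out against the fixed map $S_1 \to S_2$ stays within $\SchaffTateconvft$ and that the resulting intermediate arrows genuinely inherit a square-zero extension structure compatible with the map to $\sX$. This requires, in particular, identifying the push-forward of the ideal under the nil-isomorphism $S_1 \to S_2$ as a well-behaved object of $\TateCoh(S_2)^{\leq -1}$, which I plan to handle via the functoriality diagram of \S\ref{subsubsec:functoriality-of-square-zero-extensions} combined with the t-exactness of $f^{\rm Tate}_*$ along affine morphisms proved in Proposition~\ref{prop:pushforward-TateCoh-between-Tate-schemes-is-left-t-exact}.
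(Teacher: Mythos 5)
Your route uses the same ingredients as the paper's (very terse) proof --- the trivial implication (i) $\Rightarrow$ (ii), Proposition \ref{prop:iterated-description-of-nilpotent-embeddings} to reduce nilpotent embeddings to chains of square-zero extensions, and the functoriality of \S\ref{subsubsec:functoriality-of-square-zero-extensions} --- but traverses the cycle in the opposite direction, and two of your steps have genuine problems. First, in (ii) $\Rightarrow$ (iii) you repeatedly identify the wrong leg of the relevant push-outs as the square-zero extension. In $S_{\sF}\simeq S\underset{S_{\sF_2}}{\sqcup}S_{\sF_1}$ the projection $S_{\sF_2}\ra S$ corresponds to the ring map $\sO_S\ra\sO_S\oplus\sF_2$, which is not surjective on $\H^0$ when $\H^0(\sF_2)\neq 0$; it is therefore not a closed embedding and cannot be the square-zero leg. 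The leg to which (ii) applies is $S_{\sF_2}\ra S_{\sF_1}$, whose ideal is $\Fib(\sF_1\ra\sF_2)$. Likewise, in the push-out $S\underset{S_{\sF}}{\sqcup}S$ defining $\RealSqZ(\gamma)$ both legs are the (possibly twisted) projections $S_{\sF}\ra S$; the ``split square-zero extension'' is the section $S\ra S_{\sF}$, not the projection. Because here $\sF\in\TateCoh(S)^{\leq -1}$ these projections are closed nil-isomorphisms with ideal $\sF[-1]$ and do carry a square-zero structure, but that is a statement about the \v{C}ech nerve of a square-zero extension that must be proved, not read off from the word ``split.'' You should also record why testing only on $\SchaffTateconvft$ suffices (convergence plus Lemma \ref{lem:pro-cotangent-space-convergent-Tate-enough-eventually-coconnective-affine}); your separate attempt to \emph{derive} convergence from (ii) is both unnecessary (it is a hypothesis) and not a push-out statement.

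Second, in (iii) $\Rightarrow$ (i) the reduction to a single square-zero step is correct, but a single step
$\Maps(S'_1\underset{S_1}{\sqcup}S_2,\sX)\ra\Maps(S'_1,\sX)\underset{\Maps(S_1,\sX)}{\times}\Maps(S_2,\sX)$
with $S_2\neq S_1$ is not an instance of infinitesimal cohesiveness, which only treats the case $S_2=S_1$. One must use \S\ref{subsubsec:functoriality-of-square-zero-extensions} to exhibit $S'_1\underset{S_1}{\sqcup}S_2$ as the square-zero extension of $S_2$ classified by $T^*(S_2)\ra f^{\rm Tate}_*\sF$, apply infinitesimal cohesiveness over $S_2$, and then invoke the pro-cotangent \emph{complex} isomorphism $T^*_{x_1}\sX\simeq \Pro(f^{\rm Tate,*})(T^*_{x_2}\sX)$ together with the $(f^{\rm Tate,*},f^{\rm Tate}_*)$-adjunction to identify the two spaces of null-homotopies. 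Your closing paragraph correctly names the functoriality diagram as the obstacle, but the body of the argument attributes the step to ``infinitesimal cohesiveness applied at each square-zero step,'' which by itself does not suffice; this is exactly the content the paper delegates to \S\ref{subsubsec:functoriality-of-square-zero-extensions} and to \cite{GR-II}*{Chapter 1, Proposition 7.2.5}.
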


\begin{proof}
The equivalence between (i) and (ii) follows from Proposition \ref{prop:iterated-description-of-nilpotent-embeddings}. From section \ref{subsubsec:functoriality-of-square-zero-extensions} we obtain that (iii) implies (ii). Finally the implication (i) $\Rightarrow$ (iii) is proved exactly as \cite{GR-II}*{Chapter 1, Proposition 7.2.5}.
\end{proof}

\subsubsection{Formal smoothness}

We introduce the notion of formal smoothness. Let $\sX$ be convergent prestack of Tate type locally almost of finite type. A prestack of Tate type $\sX$ is said to be \emph{formally smooth} if for every $S \ra S'$ a nilpotent embedding in $\SchaffTateconvft$ the map
\[
\Maps(S',\sX) \ra \Maps(S,\sX)
\]
is surjective on $\pi_0$. 

Similarly to \cite{GR-II}*{Chapter 1, Proposition 7.2.5} one has
\begin{prop}
\label{prop:characterization-of-formal-smoothness}
Let $\sX$ be a prestack of Tate type that admits deformation theory and is locally almost of finite type. Then the following are equivalent:
\begin{enumerate}[(i)]
    \item $\sX$ is formally smooth;
    \item For any $n \geq 0$ and $S \in \SchaffTateconvft$, the restriction map
    \[
    \Maps(S,\sX) \ra \Maps({^{\leq n}S},\sX)
    \]
    induces an isomorphism  on $\pi_n$;
    \item For any $(S,x) \in (\SchaffTateconvft)_{/\sX}$ and $\sF \in \TateCoh(S)^{\heartsuit}$, we have
    \[
    \Hom(T^*_x(\sX),\sF) \in \ProVect^{\leq 0}.
    \]
\end{enumerate}
\end{prop}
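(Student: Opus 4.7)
The plan is to establish the cycle (i) $\Rightarrow$ (ii) $\Rightarrow$ (iii) $\Rightarrow$ (i), following the strategy of \cite{GR-II}*{Chapter 1, Proposition 7.2.5} but with the Tate-affine refinements of square-zero extensions and pro-cotangent complexes from the present section. The central technical input is the rewriting \eqref{eq:null-homotopy-description} of infinitesimal cohesiveness: for $S' = \RealSqZ(T^*(S) \overset{\gamma}{\to} \sF)$ with $\sF \in \TateCoh(S)^{\leq -1}$, the fiber over $x: S \to \sX$ of $\Maps_{S/}(S', \sX) \to \pt$ is the space of null-homotopies of $\gamma \circ (dx)^*: T^*_x(\sX) \to \sF$. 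For (i) $\Rightarrow$ (ii), I would iterate Proposition \ref{prop:derived-affine-Tate-as-square-zero-extensions-of-non-derived}(b) to exhibit ${^{\leq n}S} \hookrightarrow S$ as an inverse tower of square-zero extensions classified by shifted heart objects, and use convergence of $\sX$ to identify $\Maps(S, \sX)$ with the limit of the corresponding Milnor-type tower of fibrations. Infinitesimal cohesiveness identifies each transition fiber with a null-homotopy space, and formal smoothness delivers $\pi_0$-surjectivity at every stage; a standard connectivity estimate promotes this to an isomorphism on $\pi_n$.

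For (ii) $\Rightarrow$ (iii), fix $(S, x) \in (\SchaffTateconvft)_{/\sX}$, $\sG \in \TateCoh(S)^{\heartsuit}$ and $i \geq 1$; the target is the vanishing of $\pi_{-i}\Hom(T^*_x(\sX), \sG)$. The strategy is to construct, from $S$ and $\sG$, an auxiliary square-zero extension $S \hookrightarrow \widetilde{S}$ inside $\SchaffTateconvft$ whose Postnikov decomposition from Proposition \ref{prop:derived-affine-Tate-as-square-zero-extensions-of-non-derived}(b) exhibits the target class as the obstruction between two consecutive stages of $\Maps(-, \sX)$. The $\pi_i$-isomorphism in (ii) applied to $\widetilde{S}$ then forces the class to vanish.

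For (iii) $\Rightarrow$ (i), by Proposition \ref{prop:iterated-description-of-nilpotent-embeddings} it suffices to consider a single square-zero extension $S \hookrightarrow S'$ classified by $\gamma: T^*(S) \to \sF$ with $\sF \in \TateCoh(S)^{\leq -1}$; by \eqref{eq:null-homotopy-description}, $\pi_0$-surjectivity of $\Maps(S', \sX) \to \Maps(S, \sX)$ reduces to showing the class of $\gamma \circ (dx)^*$ vanishes in $\pi_0\Hom(T^*_x(\sX), \sF)$. Using the t-structure on $\TateCoh(S)$ from \S \ref{par:t-structure-on-TateCoh}, $\sF$ is built via its Postnikov tower from shifted heart objects $\sG_k[k]$ with $k \geq 1$ and $\sG_k \in \TateCoh(S)^{\heartsuit}$; each $\pi_0\Hom(T^*_x(\sX), \sG_k[k])$ equals the degree-$k$ Ext, which vanishes by (iii). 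A long exact sequence induction, combined with a Milnor-limit argument in the unbounded case, then yields $\pi_0\Hom(T^*_x(\sX), \sF) = 0$.

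The main obstacle is the realization step in (ii) $\Rightarrow$ (iii): one must exhibit every class in $\pi_{-i}\Hom(T^*_x(\sX), \sG)$ as the genuine obstruction of an explicit lifting problem, all while staying inside $\SchaffTateconvft$. The induced map $\Hom(T^*(S), \sG[i]) \to \Hom(T^*_x(\sX), \sG[i])$ need not be surjective, so $S$ cannot be used directly; engineering the auxiliary $\widetilde{S}$ while keeping it of finite type, and matching the class through Proposition \ref{prop:derived-affine-Tate-as-square-zero-extensions-of-non-derived}(b), will require some careful bookkeeping.
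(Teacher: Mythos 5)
The paper itself offers no proof of this proposition (it is stated with only the pointer to Gaitsgory--Rozenblyum), so your attempt can only be measured against the argument that citation stands in for. Your framework -- the null-homotopy reading (\ref{eq:null-homotopy-description}) of infinitesimal cohesiveness together with the square-zero decomposition of Proposition \ref{prop:derived-affine-Tate-as-square-zero-extensions-of-non-derived} -- is the right one, and your (iii) $\Rightarrow$ (i) is sound. But the cycle breaks at (i) $\Rightarrow$ (ii). The fiber of $\Maps({^{\leq m+1}S},\sX) \ra \Maps({^{\leq m}S},\sX)$ over a point $x_m$, when non-empty, is a torsor over $\Maps(T^*_{x_m}\sX,\sG_m[m+1])$ with $\sG_m \in \TateCoh({^{\leq m}S})^{\heartsuit}$, so its $\pi_n$ and $\pi_{n-1}$ (for $m \geq n$) compute $H^{m+1-n}$ and $H^{m+2-n}$ of $\Hom(T^*_{x_m}\sX,\sG_m)$ -- precisely the positive cohomology groups that condition (iii) asserts vanish. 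There is no independent ``standard connectivity estimate'' here: formal smoothness only gives non-emptiness of these fibers (stagewise $\pi_0$-surjectivity) and says nothing about their higher homotopy, which for a general connective $T^*_x\sX$ need not vanish. So your (i) $\Rightarrow$ (ii) silently presupposes (iii), and the cycle is not closed.

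The realization step you flag in (ii) $\Rightarrow$ (iii) is also where you can spare yourself the anticipated bookkeeping: do not try to realize classes as obstruction classes of non-split extensions (which, as you observe, only reach the image of $\Hom(T^*(S),-)$), but use split square-zero extensions. For $S$ being $N$-coconnective and $\sG \in \TateCoh(S)^{\heartsuit}$, set $\widetilde{S} := S_{\sG[N+i]}$; then ${^{\leq N}\widetilde{S}} \simeq S$, and restriction along the zero section is a split fibration $\Maps(\widetilde{S},\sX) \ra \Maps(S,\sX)$ with fiber $\Maps(T^*_x\sX,\sG[N+i])$ over $x$, so condition (ii) at level $n = N$ forces $\pi_N$ of that fiber, which is $H^{i}(\Hom(T^*_x\sX,\sG))$, to vanish. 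The same device repairs the broken leg of the cycle: for $i \geq 1$ the projection $S_{\sG[i]} \ra S$ is itself a nilpotent embedding (it is an isomorphism on classical truncations), so (i) applied to it says every point of $\Maps(S_{\sG[i]},\sX)$ factors through $S$ up to homotopy, whence $\pi_0\Maps(T^*_x\sX,\sG[i]) = 0$; this gives (i) $\Rightarrow$ (iii) directly, after which (ii) should be deduced from (iii), not from (i). (In both places one must also check that these auxiliary split extensions remain in $\SchaffTateconvft$, a finiteness point your proposal does not address.)
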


\subsubsection{Tate Artin stacks}

In this section we check that Tate $k$-Artin stacks admits deformation theory.

\begin{prop}
\begin{enumerate}[(a)]
    \item Let $\sX$ be a Tate $k$-Artin stack, then $\sX$ admits $(-n)$-connective corepresentable deformation theory, i.e.\ it admits deformation theory and a $(-n)$-connective cotangent complex;
    \item If $\sX$ is smooth over a Tate scheme $Z$, then for $x:S \ra \sX$, where $S \in \clSchaffTate$, the relative cotangent complex $T^*_x(\sX/Z) \in \TateCoh(S)^{\geq n,\leq 0}$.
\end{enumerate}
\end{prop}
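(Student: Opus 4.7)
The plan is to proceed by induction on $k$, following closely the strategy of \cite{GR-II}*{Chapter 1, \S 8}. For the base case $k=0$, a Tate $0$-Artin stack is of the form $\sX \simeq L(\sqcup_I S_i)$ with each $S_i \in \SchaffTate$. Each $S_i$ admits a connective cotangent complex by Proposition \ref{prop:explicit-cotangent-complex-Tate-scheme} and is infinitesimally cohesive as a consequence of Corollary \ref{cor:affine-push-out-nil-isomorphism-agrees-with-scheme-push-out}. Since both properties are stable under coproducts and under \'etale sheafification (the latter because $L$ is left exact and the test diagrams for both properties are finite limit diagrams), the claim holds at $k=0$.

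For the inductive step, assume the statement for $k-1$ and pick a smooth surjective atlas $f : \sZ \to \sX$ with $\sZ$ a Tate $(k-1)$-Artin stack. Then $\sX \simeq L(|\sZ^{\bullet}/\sX|)$, and each $\sZ^{p} = \sZ \times_{\sX} \cdots \times_{\sX} \sZ$ is again a Tate $(k-1)$-Artin stack by the stability of Tate $(k-1)$-Artin stacks under $(k-1)$-representable base change. Given $x : S \to \sX$ with $S \in \SchaffTate$, I would apply Proposition \ref{prop:pro-cotangent-space-of-colimit-of-prestacks} to the colimit presentation arising from $S \times_{\sX} \sZ^{\bullet} \to S$, to express $T^{*}_{x} \sX$ as a totalization of the pro-cotangent complexes $T^{*}(S \times_{\sX} \sZ^{p})$, each of which exists by the inductive hypothesis and Lemma \ref{lem:relative-deformation-theory-checked-on-affine-pullbacks}. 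Infinitesimal cohesiveness of $\sX$ is inherited in the same way: by Lemma \ref{lem:relative-deformation-theory-checked-on-affine-pullbacks} it suffices to verify it after pulling back along $\sZ \to \sX$, where the result holds by induction.

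The connectivity bound is the crux of the induction. Using the fiber sequence
\[
T^{*}_{x} \sX \longrightarrow T^{*}_{f \circ x} \sZ \longrightarrow T^{*}_{x}(\sZ/\sX)
\]
smoothness of $f$ (handled as a $(k-1)$-representable morphism via Lemma \ref{lem:relative-deformation-theory-checked-on-affine-pullbacks}) ensures $T^{*}_{x}(\sZ/\sX)$ is concentrated in a bounded cohomological range, while $T^{*}_{f \circ x} \sZ$ obeys the $(-(k-1))$-connective bound by induction. Corepresentability of $T^{*}_{x} \sX$ as an object of $\TateCoh(S)$ (rather than only $\Pro(\TateCoh(S))$) is the delicate point: I would verify it using Corollary \ref{cor:condition-for-pro-contangent-space-to-be-Tate-coherent}, after checking that the fibers of the transition maps in the totalization satisfy the required coherence condition, which ultimately reduces to the coherence condition on the fibers of the closed embeddings presenting $\sX$.

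Part (b) follows the same inductive scheme relatively. Given $\sX$ smooth over $Z$ and $x : S \to \sX$ with $S \in \clSchaffTate$, one picks a smooth atlas $f : \sZ \to \sX$ with $\sZ \to Z$ also smooth, and uses the fiber sequence
\[
x^{*} T^{*}(\sX/Z) \longrightarrow x^{*} f^{*} T^{*}(\sZ/Z) \longrightarrow T^{*}_{x}(\sZ/\sX)
\]
together with smoothness of $\sZ \to Z$ and of $f$ to pin the cohomological amplitude of $T^{*}_{x}(\sX/Z)$ in the desired range; the upper bound $\leq 0$ uses that $S$ is classical, so the pullback lands in $\TateCoh(S)^{\leq 0}$. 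The main obstacle throughout will be the careful bookkeeping of cohomological degrees through the \v{C}ech totalization and, crucially, the verification that corepresentability persists along this totalization---this is the step that genuinely uses the specifically Tate-coherent (as opposed to pro-ind-coherent) framework developed in \S \ref{subsec:Tate-coherent-sheaves-on-Tate-schemes}.
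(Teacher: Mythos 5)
Your overall scheme --- induction on $k$ via a smooth atlas and its \v{C}ech nerve --- matches the paper's, but the step where you descend infinitesimal cohesiveness from the atlas $\sZ$ to $\sX$ is where the real content lies, and your argument for it does not work as stated. You write that by Lemma \ref{lem:relative-deformation-theory-checked-on-affine-pullbacks} it suffices to verify infinitesimal cohesiveness of $\sX$ after pulling back along $\sZ \to \sX$. That lemma goes the other way: it is a criterion for the \emph{morphism} $\sZ \to \sX$ to admit relative deformation theory in terms of the affine pullbacks $S \times_{\sX} \sZ$; it is not a descent principle letting you check a property of $\sX$ on a cover. Properties of $\sX$ do not descend along $\sZ \to \sX$ for free, and asserting that they do is precisely the point that needs proof.

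The paper isolates this descent as a separate lemma: if $f : \sY \to \sX$ is \'etale-locally surjective, $\sX$ satisfies \'etale descent, $\sY$ admits deformation theory (absolutely and relative to $\sX$), and $\sY$ is \emph{formally smooth} over $\sX$, then $\sX$ admits deformation theory. Its proof is a sandwich argument: given a square-zero extension $S_1 \hookrightarrow S'_1$ and a point of $\sX$, \'etale descent plus surjectivity let one assume the point lifts to $\sY$; the comparison maps for $\sX$ are then bracketed by the corresponding maps for the \v{C}ech nerve $\sY^{\bullet}/\sX$ (whose terms admit deformation theory by the inductive hypothesis together with the relative-deformation-theory lemma, used in its correct direction), and the horizontal maps in the bracketing diagram are monomorphisms which are shown to be \emph{surjective} using formal smoothness of $\sY/\sX$: the obstruction to lifting a square-zero extension along $\sY \to \sX$ lives in $\H^{0}(\Maps(T^*(\sY/\sX),\sF))$ with $\sF \in \TateCoh(S)^{\leq -1}$, and this vanishes by Proposition \ref{prop:characterization-of-formal-smoothness}(iii). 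Your proposal never brings formal smoothness of the atlas into the descent of infinitesimal cohesiveness, and without it the argument has a genuine hole. The remaining ingredients you list (the totalization of pro-cotangent complexes, the connectivity bookkeeping, Corollary \ref{cor:condition-for-pro-contangent-space-to-be-Tate-coherent} for corepresentability) are reasonable, but they sit downstream of this missing descent step.
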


One proves the Proposition by using induction with respect to the following Lemma.

\begin{lem}
For $f:\sY \ra \sX$ a morphism in $\PStkTate$, assume that
\begin{itemize}
    \item $\sX$ satisfies \'etale descent;
    \item $f$ is \'etale-locally surjective;
    \item $\sY$ admits deformation theory;
    \item $\sY$ admits deformation theory relative to $\sX$;
    \item $\sY$ is formally smooth over $\sX$.
\end{itemize}
Then $\sX$ admits deformation theory.
\end{lem}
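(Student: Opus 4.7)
The plan is to verify the three conditions required for $\sX$ to admit deformation theory: convergence, existence of a pro-cotangent complex in the sense of Definition \ref{defn:pro-cotangent-complex-Tate}, and infinitesimal cohesiveness. In each case the strategy is uniform: use \'etale-local surjectivity of $f$ together with \'etale descent for $\sX$ to transport the question from $\sX$ to $\sY$, where the corresponding property is part of the hypotheses. First I would address convergence. Given $S \in \SchaffTate$ and a point $x\colon S \to \sX$, the \'etale-local surjectivity of $f$ produces an \'etale cover $g\colon \widetilde{S} \to S$ together with a lift $y\colon \widetilde{S} \to \sY$. \'Etale descent for $\sX$ writes $\sX(S)$ as a totalization over the \v{C}ech nerve of $g$, each term of which comes with an \'etale-local lift to $\sY$; convergence of $\sY$ and of each $\widetilde{S}^{(n)} \in \SchaffTate$ then implies convergence of $\sX$ on $S$ by commuting totalizations with the sequential limit over truncations.

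Next I would construct the pro-cotangent complex at a point $x\colon S \to \sX$. After choosing $g$ and $y$ as above, the relative deformation theory of $\sY$ over $\sX$ gives a well-defined $T^*_y(\sY/\sX) \in \Pro(\TateCoh(\widetilde{S})^-)$, and I would define
\[
T^*_{g^*x}\sX \;:=\; \Fib\!\left(T^*_y\sY \to T^*_y(\sY/\sX)\right)[1]
\]
suitably shifted, as the candidate pullback of $T^*_x\sX$ along $g$. Base change for relative pro-cotangents and the \'etale descent properties of $\TateCoh$ on Tate schemes (inherited through the relation $\Pro(\TateCoh)_{\rm lp} \simeq \ProIndCoh$ from Corollary \ref{cor:Pro-TateCoh-negative-recovers-ProInd}) supply the gluing data on the iterated fibers $\widetilde{S} \times_S \widetilde{S}$ needed to descend to a well-defined $T^*_x\sX \in \Pro(\TateCoh(S)^-)$. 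Functoriality with respect to morphisms $S_1 \to S_2 \to \sX$, which is what distinguishes a pro-cotangent \emph{complex} from pointwise pro-cotangent spaces, is verified by passing to a common \'etale cover of $S_2$ and reducing to the known functoriality for $\sY$ and for $\sY/\sX$.

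Finally I would address infinitesimal cohesiveness. Given a square-zero extension $S \hra S'$ classified by $\gamma\colon T^*(S) \to \sF$ and a point $x\colon S \to \sX$, the reformulation (\ref{eq:null-homotopy-description}) identifies the cohesiveness condition with a statement about null-homotopies of the composite $T^*_x\sX \to T^*(S) \to \sF$. After passing to an \'etale cover $\widetilde{S}$ of $S$ carrying a lift $y\colon \widetilde{S} \to \sY$, the formal smoothness of $\sY$ over $\sX$ guarantees that, after possibly further refining the cover on $\widetilde{S'}$, any lift $\widetilde{S'} \to \sX$ of $x$ lifts further to a map $\widetilde{S'} \to \sY$ extending $y$. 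The infinitesimal cohesiveness of $\sY$ together with the relative infinitesimal cohesiveness of $\sY$ over $\sX$ identifies the space of such lifts with the appropriate space of null-homotopies, and \'etale descent for $\sX$ transfers the conclusion back to $S$. The main obstacle I expect is precisely this last step: one must arrange the \'etale cover and the lift $y$ consistently so that formal smoothness produces \emph{compatible} lifts on $\widetilde{S'}$ and on the iterated fiber products needed to descend, and moreover verify that Pro-Tate-coherent descent is well-behaved enough to transport the null-homotopy space through the \v{C}ech totalization. The convergence hypothesis and Lemma \ref{lem:pro-cotangent-space-convergent-Tate-enough-eventually-coconnective-affine} permit us to reduce throughout to eventually coconnective $S$ and to $\sF$ bounded above, which keeps the descent manageable.
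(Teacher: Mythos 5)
Your plan — verify convergence, the pro-cotangent complex, and infinitesimal cohesiveness one at a time by pushing each condition through an \'etale cover that lifts to $\sY$ — has a genuine gap, and it is the one you yourself flag at the end: both your construction of $T^*_x\sX$ by gluing $\Fib\bigl(T^*_y\sY \to T^*_y(\sY/\sX)\bigr)$ over the \v{C}ech nerve, and your transport of null-homotopy spaces through the totalization, require \'etale descent for the assignment $S \mapsto \Pro(\TateCoh(S)^-)$. No such descent is established in this paper (only Zariski descent for $\ProIndCoh$ on schemes is proved, and limits of Pro-categories are delicate), so this step cannot simply be cited; it would need its own argument. A second, related issue is that writing down a candidate object is not the same as proving pro-corepresentability: the pro-cotangent space at $x$ exists by definition when the lift functor $\sF \mapsto \Maps_{S/}(S_{\sF},\sX)$ takes the relevant pullbacks of sheaves to pullbacks of spaces, and your proposal never verifies this condition for $\sX$ directly. (Minor: the fiber sequence is $f^*T^*\sX \to T^*\sY \to T^*(\sY/\sX)$, so the candidate is the fiber with no shift.)

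The paper's proof avoids all of this by invoking the characterization of ``admits deformation theory'' from the immediately preceding Proposition: $\sX$ admits deformation theory if and only if it sends push-outs $S'_1\underset{S_1}{\sqcup}S_2$ along square-zero extensions $S_1 \to S'_1$ to pullbacks of mapping spaces. This keeps the entire argument at the level of $\Maps(-,\sX)$, where \'etale descent for $\sX$ is directly available: one first uses descent to assume the map $S'_2 \to \sX$ lifts to $\sY$, then compares $\Maps_{S_\bullet/}(S'_\bullet,\sX)$ with the geometric realization $\bigl|\Maps_{S_\bullet/}(S'_\bullet,\sY^{\bullet}/\sX)\bigr|$. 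Each term of the \v{C}ech nerve $\sY^{\bullet}/\sX$ admits deformation theory by Lemma \ref{lem:relative-deformation-theory-checked-on-affine-pullbacks} combined with the absolute deformation theory of $\sY$, which makes the left vertical comparison an isomorphism; and formal smoothness of $\sY$ over $\sX$, via Proposition \ref{prop:characterization-of-formal-smoothness}(iii) (vanishing of $\H^{0}(\Maps(T^*_y(\sY/\sX),\sF))$ for $\sF \in \TateCoh(S)^{\leq -1}$), supplies the surjectivity needed to conclude that the horizontal monomorphisms are isomorphisms. If you want to salvage your route, you would either have to prove \'etale descent for $\Pro(\TateCoh(-))$ or reformulate each of your three verifications purely in terms of mapping spaces — at which point you have essentially rederived the push-out criterion.
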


\begin{proof}
Assume that
\[
S'_1 \underset{S_1}{\sqcup}S_2 \ra S'_2
\]
is a push-out in $\SchaffTate$, where $S_1 \ra S'_1$ has the structure of a square-zero extension, given a map $S'_2 \ra \sX$ we claim that the map
\[
\Maps_{S_2/}(S'_2,\sX) \ra \Maps_{S_1/}(S'_1,\sX)
\]
is an isomorphism. Because $\sX$ satisfies \'etale descent we can assume that $S'_2 \ra \sX$ admits a lift $S'_2 \ra \sY$.

Let $\sY^{\bullet}/\sX$ denote the \c{C}ech nerve of $f$, we get a commutative diagram
\[
\begin{tikzcd}
\left|\Maps_{S_2/}(S'_2,\sY^{\bullet}/\sX)\right| \ar[r] \ar[d] & \Maps_{S_2/}(S'_2,\sX) \ar[d] \\
\left|\Maps_{S_1/}(S'_1,\sY^{\bullet}/\sX)\right| \ar[r] & \Maps_{S_1/}(S'_1,\sX)
\end{tikzcd}
\]
where the horizontal arrows are monomorphisms.

By Lemma \ref{lem:relative-deformation-theory-checked-on-affine-pullbacks} we notice that each term of $\sY^{\bullet}/\sX$ admits deformation theory, thus the left vertical arrow is an isomorphism.

Now we claim that $\sY$ being formally smooth over $\sX$ implies that the horizonal morphism are surjective. Given $S \ra S'$ a square-zero extension and a map $x':S' \ra \sX$ and a map $y:S \ra \sY$ that fits in the diagram
\[
\begin{tikzcd}
S \ar[r] \ar[r,"y"] & \sY \ar[d,"f"] \\
S'\ar[r,"x'"] & \sX
\end{tikzcd}
\]
we claim that there exists a lift $y': S' \ra \sY$. Indeed, the data of $y'$ is equivalent to a null-homotopy of
\[
T^*_y(\sY/\sX) \ra \sF.
\]
But this map admits a null-homotopy since $\sF \TateCoh(S)^{\leq -1}$, and by Proposition \ref{prop:characterization-of-formal-smoothness} (iii) we have
\[
\H^{\; 0}(\Maps(T^*_{y}(\sY/\sX),\sF)) \simeq 0.
\]
\end{proof}

\subsection{Consequence of Deformation Theory}
\label{subsec:consequences-of-deformation-theory}

In this section we discuss some nice consequences for a prestack of Tate type that admits deformation theory.

\subsubsection{Digression: maps between prestacks of Tate type}

We denote by $\SchaffTatered$ the subcategory of classical Tate affine schemes $S_{0}$ that admit a presentation
\[
S_0 \simeq \colim_I S_{0,i},
\]
where each $S_{0,i}$ is a reduced classical affine scheme. For $\sX_{0} \in \clPStkTate$ we denote by $\red{\sX_{0}}$ the restriction of $\sX_0$ to the subcategory of reduced classical Tate affine schemes
\[
\red{\sX_{0}}: (\SchaffTatered)^{\rm op} \ra \Spc.
\]

We consider the following analoguous condition to the ones introduced in \cite{GR-II}*{Chapter 1, \S 8.1}.

\begin{defn}
Given a map $f:\sX_1 \ra \sX_2$ in $\clPStkTate$
 \begin{enumerate}[(a)]
     \item We say that $f$ is a \emph{closed embedding} if for every $S_{2} \in (\clSchaffTate)_{/\sX_2}$, the fiber product $S_1 := S_{2}\underset{\sX_2}{\times}\sX_1$ taken in $\clPStkTate$ belongs to $\clSchaffTate$ and the morphism $S_{2}\underset{\sX_2}{\times}\sX_1 \ra S_2$ is a closed embedding;
     \item we say that $f$ is a \emph{nil-isomorphism} if it induces an isomorphism $\red{f}:\red{\sX_1} \ra \red{\sX_2}$;
     \item we say that $f$ is \emph{nil-closed} if for every $S_{2} \in (\clSchaffTate)_{/\sX_2}$, the induced morphism $\red{\left(S_{2}\underset{\sX_2}{\times}\sX_1\right)} \ra \red{S_2}$ is a closed embedding;
     \item we say that $f$ is a \emph{nilpotent embedding} if in the situation of (a) the morphism $S_1 \ra S_2$ is a nilpotent embedding;
     \item we say that $f$ is a \emph{pseudo-nilpotent embedding} if it is a nil-isomorphism and for every $S_{2} \in (\clSchaffTate)_{/\sX_2}$, there exists a commutative diagram
     \[
     \begin{tikzcd}
     S_1 \ar[r] \ar[d] & \sX_{1} \ar[d] \\
     S_2 \ar[r] & \sX_2
     \end{tikzcd}
     \]
     with $S_1 \in \clSchaffTate$ and $S_1 \ra S_2$ a nilpotent embedding.
 \end{enumerate}
\end{defn}

As usual we define the same properties for a prestack of Tate type by requiring them from the underlying classical prestack of Tate type.

\begin{defn}
 For a morphism $f:\sX_1 \ra \sX_2$ in $\PStkTate$ we say that $f$ is a \emph{closed embedding} (resp.\ \emph{nil-isomorphism}, \emph{nil-closed}, \emph{nilpotent embedding}, \emph{pseudo-nilpotent embedding}) if the corresponding map $\classical{f}: \classical{\sX_1} \ra \classical{\sX_2}$ of classical prestacks of Tate type is such.
\end{defn}

\subsubsection{Descent results}

In this section we prove a result that allows one to reduce questions of descent for a prestack of Tate type to the corresponding question for the underlying classical prestacks of Tate type.

\begin{prop}
 Suppose $\sX \in \PStkTate$ admits deformation theory and let $\sX_0 \ra \classical{\sX}$ be a pseudo-nilpotent embedding of classical prestacks.
 \begin{enumerate}[(a)]
     \item Assume that $\sX_0$ satisfies Zariski (resp.\ Nisnevich) descent. Then $\sX$ also has Zariski (resp.\ Nisnevich) descent;
     \item Assume that $\sX_0$ satisfies \'etale descent and that the pro-cotangent spaces of $\sX$ are locally eventually coconnective. Then $\sX$ also satisfies \'etale descent.
 \end{enumerate}
\end{prop}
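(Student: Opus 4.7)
The plan is to reduce the descent statement for $\sX$ to a descent statement for $\sX_0$ by peeling off one coconnective layer at a time, using convergence and infinitesimal cohesiveness, and then to use the pseudo-nilpotent embedding hypothesis to bridge between $\classical{\sX}$ and $\sX_0$. Throughout I will write out the argument for \'etale descent; the Zariski (resp.\ Nisnevich) case is identical, and slightly easier since the eventually coconnective hypothesis on pro-cotangent spaces is not needed.

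First, since $\sX$ admits deformation theory, it is convergent; hence by the description of convergence (Lemma~\ref{lem:concrete-condition-convergent-prestack-Tate-type}) it suffices to show that for every $n \geq 0$ the truncation ${^{\leq n}\sX}$ satisfies descent with respect to \'etale covers in $\SchaffTaten$. I will prove this by induction on $n$. For the inductive step, given an \'etale cover $f : S_1 \to S_2$ in $\SchaffTaten$, Proposition~\ref{prop:derived-affine-Tate-as-square-zero-extensions-of-non-derived}(b) realizes $S_2$ as a square-zero extension of ${^{\leq n-1}S_2}$ by an object $\sF_2 \in \TateCoh({^{\leq n-1}S_2})^{\heartsuit}[n+1]$; since $f$ is \'etale (in particular eventually coconnective, hence flat), the same square-zero extension pulls back levelwise on the \v{C}ech nerve $(S_1/S_2)^\bullet$, giving compatible square-zero extensions of ${^{\leq n-1}(S_1/S_2)^\bullet}$ by the pullbacks $\sF^\bullet$ of $\sF_2$.

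Infinitesimal cohesiveness of $\sX$, in the form (\ref{eq:null-homotopy-description}), then identifies $\sX(S_2)$ with the space of null-homotopies of the map $T^*_{x_2}\sX \to T^*({^{\leq n-1}S_2}) \to \sF_2$ over $\sX({^{\leq n-1}S_2})$, and similarly levelwise on $(S_1/S_2)^\bullet$. Thus the desired descent isomorphism for ${^{\leq n}\sX}$ fits into a fiber sequence over the descent isomorphism for ${^{\leq n-1}\sX}$ (which holds by induction), so it suffices to check descent of the ``correction term'' — a totalization of spaces of null-homotopies of $T^*_{x_i}\sX \to \sF_i$. Because the pro-cotangent complex is functorial under the eventually coconnective pullback $f^{\mathrm{Tate},*}$ (by Definition~\ref{defn:pro-cotangent-complex-Tate}), this totalization computes $\Hom_{\Pro(\TateCoh({^{\leq n-1}S_2}))}(T^*_{x_2}\sX, \Tot(f_*^\bullet \sF^\bullet))$, and the hypothesis that $T^*_{x_2}\sX$ is locally eventually coconnective lets one replace the ambient category by an ordinary $\Ind(\Pro(\Coh))$-style category via Corollary~\ref{cor:duality-of-laft-Pro-category}, where the \'etale descent of $\sF^\bullet$ at the level of $\Coh$ suffices.

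The remaining task is the base of the induction, $n=0$: descent for $\classical{\sX}$. Here I use that $\sX_0 \to \classical{\sX}$ is a pseudo-nilpotent embedding. Given an \'etale cover $S_1 \to S_2$ in $\clSchaffTate$ and a point $x : S_2 \to \classical{\sX}$, pseudo-nilpotency produces a classical Tate affine $S_2' \to S_2$ which is a nilpotent embedding and lifts $x$ to $\sX_0$; pulling back along $S_1 \to S_2$ and using that \'etale morphisms are stable under base change (Proposition~\ref{prop:description-of-flat-morphisms-Tate-affine}) gives an \'etale cover $S_1' \to S_2'$ with $S_i' \to \sX_0$. Applying Proposition~\ref{prop:iterated-description-of-nilpotent-embeddings} we filter each embedding $S_i' \hookrightarrow S_i$ by square-zero extensions, then descent for $\classical{\sX}$ along $S_1 \to S_2$ follows from descent for $\sX_0$ along $S_1' \to S_2'$ (given by hypothesis) together with the already-established descent for each square-zero step (by the inductive mechanism above, noting that square-zero extensions in this range involve the classical cotangent space at a point of $\classical{\sX}$, which is controlled by deformation theory of $\sX$).

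The main obstacle is the penultimate step: showing that the ``null-homotopy correction term'' in the inductive step satisfies descent. This is precisely where the local eventual coconnectivity of the pro-cotangent spaces in case (b) enters, by ensuring that the relevant $\Hom$-spaces in $\Pro(\TateCoh)$ can be computed via pairings that are manifestly compatible with \'etale pullback of Tate-coherent sheaves; without this hypothesis, one would need to grapple with \'etale descent directly inside the completed Pro-category $\Pro(\TateCoh)^{\wedge,-}$, which is more delicate. In the Zariski/Nisnevich case (a) this obstacle disappears because Zariski pullback preserves coherent objects unconditionally, so the eventual coconnectivity assumption can be dropped.
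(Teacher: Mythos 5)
Your overall skeleton matches the paper's: use convergence and the pseudo-nilpotent embedding to reduce everything to checking descent of the lift spaces $\Maps_{S/}(S',\sX)$ along a square-zero extension $S\hra S'$, then use infinitesimal cohesiveness to rewrite these as spaces of null-homotopies of $T^*_x(\sX)\ra \sF$, so that the whole question becomes whether $\Hom(T^*_x(\sX),-)$ commutes with the totalization of the \v{C}ech resolution $\sF^{\bullet}$ of $\sF$. Up to that point your reduction (including the treatment of the base case via Proposition \ref{prop:iterated-description-of-nilpotent-embeddings}) is essentially the paper's argument, just organized as an explicit induction on the Postnikov tower rather than quoted wholesale from Proposition \ref{prop:derived-affine-Tate-as-square-zero-extensions-of-non-derived}.

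The gap is in the final, decisive step. Since $T^*_x(\sX)$ is only a \emph{pro}-object, $\Hom(T^*_x(\sX),-)$ is a filtered colimit of corepresentable functors, and filtered colimits do not commute with infinite limits such as $\Tot(\sF^{\bullet})$; this is the entire difficulty, and neither of your two explanations addresses it. Your reason for case (a) --- ``Zariski pullback preserves coherent objects unconditionally'' --- is not the relevant point (\'etale pullback preserves coherence just as well, being eventually coconnective); the correct reason is that for a Zariski or Nisnevich cover the \v{C}ech totalization can be replaced by a \emph{finite} limit, and $\Hom(T^*_x(\sX),-)$, being left exact, commutes with finite limits. For case (b), invoking Corollary \ref{cor:duality-of-laft-Pro-category} to ``replace the ambient category'' and appeal to descent of $\sF^{\bullet}$ at the level of $\Coh$ does not resolve the colimit-versus-totalization issue either; what the local eventual coconnectivity of the pro-cotangent spaces actually buys is that the cosimplicial space of null-homotopies is uniformly truncated, so the totalization over $\Delta^{\rm op}$ agrees with the totalization over a finite $m$-skeleton, which again is a finite limit. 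You should replace the last step of your argument with this finiteness mechanism; as written, the step where the hypothesis of (b) is consumed, and the step explaining why (a) does not need it, are both unjustified.
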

 
 \begin{proof}
 By convergence and Proposition \ref{prop:derived-affine-Tate-as-square-zero-extensions-of-non-derived} it is enough to check that for $S \hra S'$ a map of Tate affine schemes that admits a structure of square-zero extension, $x:S \ra \sX$ a map and $\pi: T \ra S$ a Zariski (resp. Nisnevich, \'etale) cover, then the map
 \[
 \Maps_{S/}(S',\sX) \ra \Tot\left(\Maps_{T^{\bullet}/}(T'^{\bullet},\sX)\right)
 \]
 is an isomorphism, where $\pi: T' \ra S$ is the corresponding cover of $S$ and $T^{\bullet}$ (resp.\ $T'^{\bullet}$) denotes the \v{C}ech cover of $\pi$ (resp. $\pi'$)
 
 By using (\ref{eq:null-homotopy-description}) we rewrite $\Maps_{S/}(S',\sX)$ as the space of null-homotopies of the map
 \[
 T^*_{x}(\sX) \ra \sF, \;\;\; \mbox{ for some }\;\; \sF \in \TateCoh(S)^{> \infty}
 \]
 and $\Tot\left(\Maps_{T^{\bullet}/}(T'^{\bullet},\sX)\right)$ identifies with the totalization of the cosimplicial space of null-homotopies of the corresponding maps
 \[
 T^*_{x}(\sX) \ra \sF^{\bullet},
 \]
 where $\sF^{\bullet}$ is the \v{C}ech resolution of $\sF$ obtained from $\pi$.
 
 In the case of Zariski and Nisnevich descent one can replace the totalization by a finite limit, and the isomorphism follows from the commutation of $\Hom(T^*_x(\sX),-)$ with finite limits.
 
 In the case of \'etale descent the extra assumption guarantees that we can consider the finite totalization over the $m$-skeleton of the category $\Delta^{\rm op}$ for some $m$.
 \end{proof}
 
 \subsubsection{Isomorphism results}
 
 The following is a useful result to compare prestacks of Tate type when their underlying classical prestack of Tate type is isomorphic.
 
 \begin{prop}
  Let $f:\sX_1 \ra \sX_2$ denote a map between prestacks of Tate type that admit deformation theory. Suppose that we have a commutative diagram
  \[
  \begin{tikzcd}
  & \sY_{0} \ar[dl,"g_1"] \ar[rd,"g_2"] & \\
  \sX_1 \ar[rr,"f"] & & \sX_2
  \end{tikzcd}
  \]
  where $g_1$ and $g_2$ are pseudo-nilpotent embeddings, and $\sY \in \clPStkTate$. Suppose moreover, that for any $S_0 \in \clSchaffTate$ and a map $y_0: S_0 \ra \sY_0$, for $x_{i}:= g_{i} \circ y_0$ the induced map
  \[
  T^*_{x_2}(\sX_2) \ra T^*_{x_1}(\sX_1)
  \]
  is an isomorphism. Then $f$ is an isomorphim.
 \end{prop}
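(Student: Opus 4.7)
The plan is to bootstrap from a classical level statement to the full statement using the square-zero extension structure of Tate affine schemes afforded by Proposition~\ref{prop:derived-affine-Tate-as-square-zero-extensions-of-non-derived}, leveraging infinitesimal cohesiveness and the cotangent hypothesis throughout. Since both $\sX_1$ and $\sX_2$ admit deformation theory they are convergent, so by Lemma~\ref{lem:concrete-condition-convergent-prestack-Tate-type} it suffices to show that for every $n\ge 0$ and every $S\in\SchaffTaten$ the map
\[
\Maps(S,\sX_1)\ra\Maps(S,\sX_2)
\]
is an equivalence.

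First I would handle the base case $n=0$, i.e.\ show that $\classical{f}\colon\classical{\sX_1}\ra\classical{\sX_2}$ is an isomorphism of classical prestacks of Tate type. Both $g_1$ and $g_2$ are nil-isomorphisms, so one immediately obtains $\red{f}\colon\red{\sX_1}\overset{\simeq}{\ra}\red{\sX_2}$. For a general classical Tate affine $S_0$ with a map $x_2\colon S_0\ra\classical{\sX_2}$, the pseudo-nilpotent embedding hypothesis on $g_2$ produces a commutative diagram with $S_0^{(1)}\ra S_0$ a nilpotent embedding and a lift $S_0^{(1)}\ra\sY_0$; composing with $g_1$ gives a lift $S_0^{(1)}\ra\classical{\sX_1}$. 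Then I would iterate Proposition~\ref{prop:iterated-description-of-nilpotent-embeddings} to write $S_0^{(1)}\hra S_0$ as a finite sequence of classical square-zero extensions by objects of $\TateCoh(-)^{\heartsuit}[1]$ and, at each step, use infinitesimal cohesiveness of $\sX_1$ together with the hypothesis that $f^* T^*_{f\circ y}(\sX_2)\ra T^*_y(\sX_1)$ is an isomorphism to uniquely extend the lift, and analogously to show that any two lifts become homotopic; this is the content of the description~(\ref{eq:null-homotopy-description}) applied to both $\sX_1$ and $\sX_2$. The same bookkeeping on higher homotopies of the mapping spaces gives that $\classical{f}$ is fully faithful, hence an equivalence at the classical level.

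For the inductive step, assume the statement is known at truncation level $n$; given $S\in\SchaffTatenft[(n+1)]$, Proposition~\ref{prop:derived-affine-Tate-as-square-zero-extensions-of-non-derived}(b) realizes $S$ as a square-zero extension of ${^{\leq n}S}$ by some $\sF\in\TateCoh({^{\leq n}S})^{\heartsuit}[n+1]$. Using the rewriting~(\ref{eq:null-homotopy-description}) of infinitesimal cohesiveness for both $\sX_1$ and $\sX_2$, a map $S\ra\sX_i$ extending $x_i\colon{^{\leq n}S}\ra\sX_i$ is the data of a null-homotopy of the composite $T^*_{x_i}(\sX_i)\overset{(dx_i)^*}{\ra}T^*({^{\leq n}S})\ra\sF$. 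The inductive hypothesis identifies the sets of $x_i$'s via $f$, and the cotangent hypothesis together with the functoriality established in Section~\ref{subsubsec:functoriality-of-square-zero-extensions} identifies the corresponding spaces of null-homotopies; so $f$ induces an equivalence on $S$-points. Passing from $\SchaffTatenft[(n+1)]$ to all of $\SchaffTaten[(n+1)]$ is done by the locally almost of finite type presentation (cf.\ Lemma~\ref{lem:defn-Tate-affine-scheme-aft}), since pro-cotangent spaces are computed as cofiltered limits over such presentations via Proposition~\ref{prop:pro-cotangent-space-of-colimit-of-prestacks}.

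The main obstacle I expect is the base case, specifically turning the pseudo-nilpotent embedding data—which only guarantees a lift of each $S_2$-point after some nilpotent thickening—into genuine lifts on $S_0$ itself without loss of uniqueness, while simultaneously controlling the higher homotopy groups of the fibers. This requires the pro-cotangent complex identification to hold \emph{at all points of $\sY_0$}, which is exactly what the hypothesis supplies; care must be taken that the intermediate square-zero extensions arising from the filtration in Proposition~\ref{prop:iterated-description-of-nilpotent-embeddings} are controlled by Tate-coherent sheaves lying in bounded above cohomological degrees, so that the space-level null-homotopy statement of~(\ref{eq:null-homotopy-description}) can actually be applied.
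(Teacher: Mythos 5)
Your overall strategy is the right one and matches the paper's: reduce by convergence and Proposition~\ref{prop:derived-affine-Tate-as-square-zero-extensions-of-non-derived} to comparing extension spaces along square-zero extensions, and identify those extension spaces with spaces of null-homotopies via~(\ref{eq:null-homotopy-description}). However, there is a genuine gap at the point where you write that ``at each step'' one can ``use the hypothesis that $T^*_{x_2}(\sX_2)\ra T^*_{x_1}(\sX_1)$ is an isomorphism.'' The hypothesis only provides this isomorphism at points $x_i = g_i\circ y_0$ with $y_0:S_0 \ra \sY_0$ and $S_0$ classical. The points at which your induction actually needs the comparison --- the intermediate thickenings $X^i$ in the ladder produced by Proposition~\ref{prop:iterated-description-of-nilpotent-embeddings} in the base case, and the truncations ${^{\leq n}S}$ in the inductive step --- do \emph{not} in general factor through $\sY_0$, so the hypothesis does not apply to them directly. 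Your ``main obstacle'' paragraph locates the difficulty correctly but then asserts that the hypothesis ``exactly supplies'' what is needed, which it does not.

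The missing ingredient is a transfer mechanism: given an arbitrary point $x_1:S\ra\sX_1$, pseudo-nilpotence of $g_1$ gives a nilpotent embedding $g:\tilde S\hra S$ with $\tilde S\ra\sY_0$; since $\sX_1,\sX_2$ admit pro-cotangent \emph{complexes}, one has $T^*_{\tilde x_i}(\sX_i)\simeq \Pro(g^{\rm Tate,*})(T^*_{x_i}(\sX_i))$, so the comparison map at $x_i$ becomes an isomorphism after applying $\Pro(g^{\rm Tate,*})$; one then needs that $\Pro(g^{\rm Tate,*})$ is \emph{conservative} for nilpotent embeddings $g$ (this is Lemma~\ref{lem:restriction-to-nilpotent-is-conservative}) to conclude that $T^*_{x_2}(\sX_2)\ra T^*_{x_1}(\sX_1)$ is already an isomorphism over $S$ itself. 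Once that is established for all points, the deformation-theoretic induction you outline goes through exactly as written. Without the conservativity step your argument stalls at the very first square-zero extension $X^0\hra X^1$ whose source is not a point of $\sY_0$.
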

 
 \begin{proof}
 Again by induction and Proposition \ref{prop:derived-affine-Tate-as-square-zero-extensions-of-non-derived} we have to show that for any $S \in \SchaffTate$ and a $S \hra S'$ that has a structure of square-zero extension, for a map $x_1: S \ra \sX_1$, the space of extensions of $x_1$ to a map $S' \ra \sX_1$ maps isomorphically to the space of extensions of $x_2 := f\circ x_1$ to a map $S' \ra \sX_2$.
 
 Deformation theory implies that these spaces are equivalent to the spaces of null-homotopies of the maps
 \[
 T^*_{x_1}(\sX) \ra \sF, \;\;\; \mbox{and} \;\;\; T^*_{x_2} \ra \sF,
 \]
 respectively. Hence the result will follow if we show that $T^*_{x_2}(\sX_2) \ra T^*_{x_1}(\sX_1)$ is an isomorphism in $\Pro(\TateCoh(S)^-)$.
 
 The assumption of the proposition says that there exists a nilpotent embedding $g:\tilde{S} \ra S$, such that for $\tilde{x}_i := x_i \circ g$, the map
 \[
 T^*_{\tilde{x_2}}(\sX_2) \ra T^*_{\tilde{x_1}}(\sX_1)
 \]
 is an isomorphism. Thus the result follows from Lemma \ref{lem:restriction-to-nilpotent-is-conservative} below.
 \end{proof}
 
 \begin{lem}
 \label{lem:restriction-to-nilpotent-is-conservative}
 For a nilpotent embedding $g:\tilde{S} \ra S$, the functor
 \[
 \Pro(g^{\rm Tate, *}): {^{\rm conv}\Pro}(\TateCoh(S)^-) \ra {^{\rm conv}\Pro}(\TateCoh(S)^-).
 \]
 \end{lem}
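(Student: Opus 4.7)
The plan is to reduce a general nilpotent embedding to a single square-zero extension via Proposition \ref{prop:iterated-description-of-nilpotent-embeddings}, and then to propagate vanishing through the Pro-system using the standard square-zero filtration at the level of the heart of the t-structure. So I would first write $g:\tilde{S}\hra S$ as a finite tower of square-zero extensions as in that proposition. Since the composition of conservative functors is conservative and $\Pro(g^{\rm Tate,*})$ factors accordingly, it suffices to treat the case where $g$ itself is a single square-zero extension by an ideal $\sI$, with a fiber sequence $\sI \to \sO_S \to g^{\rm Tate}_*\sO_{\tilde S}$ in $\TateCoh(S)^{\heartsuit}$ (up to shift). Here $g$ is affine, so by Proposition \ref{prop:pushforward-TateCoh-between-Tate-schemes-is-left-t-exact} the functor $g^{\rm Tate}_*$ is t-exact; combining this with $\sI^2=0$ one checks $g^{\rm Tate,*}$ is also t-exact, and this survives Pro-extension for the t-structure of \S\ref{par:t-structure-on-TateCoh}, so $\Pro(g^{\rm Tate,*})$ commutes with the truncations $\tau^{\geq -n}$.

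Next, let $\Phi \in {^{\rm conv}\Pro}(\TateCoh(S)^-)$ with $\Pro(g^{\rm Tate,*})(\Phi)\simeq 0$. Convergence gives $\Phi\simeq \lim_n \tau^{\geq -n}\Phi$, so vanishing of $\Phi$ reduces to vanishing of each truncation. By induction via the fiber sequences $H^{-n}(\Phi)[n]\to \tau^{\geq -n}\Phi \to \tau^{\geq -n+1}\Phi$ and the compatibility above, we reduce to the case $\Phi \in \Pro(\TateCoh(S)^{\heartsuit})$. Pick a cofiltered representative $\{\sF_i\}_I$; for each $i$ there is a canonical short exact sequence
\[
0 \to \sI\cdot\sF_i \to \sF_i \to g^{\rm Tate}_*g^{\rm Tate,*}\sF_i \to 0
\]
in $\TateCoh(S)^{\heartsuit}$, and because $\sI^2=0$ the submodule $\sI\cdot\sF_i$ is also pushed forward from $\tilde S$.

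Finally, the hypothesis on $\Phi$ says that for every $i$ there exists $j\to i$ with $g^{\rm Tate,*}(\sF_j\to\sF_i)=0$, equivalently the transition map lands in $\sI\cdot\sF_i$. Picking a further $k\to j$ with the same property, $\sO_S$-linearity together with $\sI^2=0$ forces the composition $\sF_k\to\sF_j\to\sF_i$ to land in $\sI^2\cdot\sF_i=0$, so after two steps the transition in the Pro-system is zero, proving $\Phi\simeq 0$. The main obstacle is the devissage in the Pro-Tate setting: verifying that $g^{\rm Tate,*}$ is t-exact on the heart when $\sI$ is only a Tate-coherent sheaf, and that the filtration $0\to \sI\cdot\sF\to \sF\to \sF/\sI\sF\to 0$ is functorial in $i$ with both outer terms genuinely in the essential image of $g^{\rm Tate}_*$—this in turn rests on writing $\sI$ via its lattice-colattice decomposition and applying the classical square-zero devissage levelwise before reassembling the Pro-object.
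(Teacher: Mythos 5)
Your route --- d\'evissage via Proposition \ref{prop:iterated-description-of-nilpotent-embeddings} to square-zero extensions, reduction to the heart of the t-structure, and the observation that on pro-systems of discrete modules two successive transition maps landing in $\sI\sF$ compose to land in $\sI^2\sF=0$ --- is exactly the argument the paper has in mind (it only cites \cite{GR-II}*{Chapter 1, Lemma 8.3.3}, whose proof has this shape), and the heart-level step is correct. The problem is your intermediate claim that $g^{\rm Tate,*}$ is t-exact for a square-zero extension: this is false, and $\sI^2=0$ does not rescue it. Already for $g:\Spec(k)\hra\Spec(k[\epsilon]/\epsilon^2)$ one has $g^*(k)\simeq k\otimes^L_{k[\epsilon]/\epsilon^2}k$, which has nonzero cohomology in every non-positive degree. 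Consequently $\Pro(g^{\rm Tate,*})$ does not commute with $\tau^{\geq -n}$, and the d\'evissage step ``if $\Pro(g^{\rm Tate,*})\Phi=0$ then $\Pro(g^{\rm Tate,*})(H^{-n}\Phi)=0$ for all $n$'' does not follow as written. The repair needs only right t-exactness of $g^{\rm Tate,*}$ (it is left adjoint to the t-exact $g^{\rm Tate}_*$): for coconnective $\sF$ one has $H^0(g^{\rm Tate,*}\sF)\simeq g^{*,\heartsuit}(H^0\sF)$, so $\Pro(g^{\rm Tate,*})\Phi=0$ forces $\Pro(g^{*,\heartsuit})(H^0\Phi)=0$; your heart argument then gives $H^0\Phi=0$, hence $\Phi\in\Pro(\TateCoh(S)^{\leq -1})$, and one peels off the cohomologies from the top by induction, concluding $\Phi=0$ by convergence.

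A second, smaller issue: Proposition \ref{prop:iterated-description-of-nilpotent-embeddings} does not produce a \emph{finite} tower --- the chain $X_0\hra X_1\hra\cdots\hra Y$ is infinite --- so ``a finite composite of conservative functors is conservative'' does not apply directly. Convergence must be invoked a second time here: to prove $\tau^{\geq -n}\Phi=0$ only the finitely many stages up to $X_n$ matter, since for $i\geq n$ the map $X_i\ra Y$ induces an isomorphism on $i$-coconnective truncations and $[-n,0]$-truncated (pro-)objects are insensitive to those stages (cf.\ Lemma \ref{lem:convergence-of-connective-TateCoh}). With these two repairs your proof goes through; the final two-step argument in the heart is the correct core of the lemma.
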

 
 The proof is similar to that of \cite{GR-II}*{Chapter 1, Lemma 8.3.3}.

\section{Application: Determinant theory on a Tate scheme}
\label{sec:application-gerbe}

In this section we pursue an interesting application of the theory of Tate-coherent sheaves on Tate schemes. As a consequence of the existence of the !-pullback in this generality, we can define a dualizing Tate sheaf on any Tate scheme $p_X: X\ra \Spec(k)$ as $p^!(k)$. This sheaf plays the role of the semi-infinite exterior product that was considered in the literature, for instance in \cite{Ganter-Kapranov}. In our case we use the higher determinant map, constructed in \cite{determinant-map}, to define a $\Gm$-gerbe on any Tate scheme $X$. In the case where $X$ is sufficiently nice, i.e.\ $X$ is formally smooth and admits a presentation by smooth schemes, we can describe trivializations of this gerbe explicitly. These trivializations recover the notion of a determinantal theory, which was previously considered in the literature (\cites{Raskin-homological,Previdi}).

\subsection{Recollection on higher determinant map}

In this section we recall the determinant map for Tate objects in perfect complexes constructed in \cite{determinant-map}*{\S 4}. 

\subsubsection{$\Gm$-gerbes}

Recall that for $S$ an affine scheme we define the prestack of graded $\Gm$-gerbes by
\[
\sBPicgr(S) := \left|\sPicgr(S)^{\bullet}\right|,
\]
i.e.\ the geometric realization of the simplicial object in prestacks $\sPicgr(S)^{\bullet}$ defined as the \emph{bar construction}\footnote{For example, see \cite{HA}*{Construction 4.4.2.7}.} $\mbox{Bar}_{\sPicgr(S)}(\pt,\pt)$, here $\sPic(S)$ is a commutative group object in the category of spaces and we consider $\pt$ a space with a trivial action of $\sPicgr(S)$.

As in \cite{crystals}*{\S 6.1.3} we define $\Gm$-gerbes on any prestack 
\[
\sBPic_{\PStk}: (\PStk)^{\rm op} \ra \PicGrpd
\]
by considering the right Kan extension of
\[
\sBPic_{\Schaff}: (\Schaff)^{\rm op} \ra \PicGrpd
\]
via the canonical inclusion
\[
(\Schaff)^{\rm op} \hra (\PStk)^{\rm op}.
\]

In particular, we let
\[
\sBPic_{\SchTate}:= \left.\sBPic_{\PStk}\right|_{(\SchTate)^{\rm op}}
\]
denote the restriction to Tate schemes. Notice that given any morphism $f:S \ra T$ between Tate schemes, one obtains a functor
\[
f^{\rm Ge, *}:\sBPic(T) \ra \sBPic(S).
\]

\subsubsection{Prestack of perfect Tate objects}

For an affine scheme $S$ we let $\Perf(S)$ denote the category of perfect complexes on $S$, i.e.\ $\Perf(S)$ is the subcategory of compact objects of $\QCoh(S)$.

Let $\sX$ denote a prestack, we will consider the mapping prestack
\[
\sTate(\sX) := \Maps(\sX,\sTate),
\]
where $\sTate$ is the prestack defined as follows
\[
\sTate(S) = \Tate(\Perf(S))^{\simeq}.
\]

Similarly to the previous section, for Tate schemes we define
\[
\sTate_{\SchTate} := \left.\sTate\right|_{(\SchTate)^{\rm op}}
\]
the restriction of $\sTate$ on prestacks to Tate schemes. In particular, given $f:S \ra T$ a morphism of Tate schemes we denote
\[
f^{\sTate, *}: \sTate(T) \ra \sTate(S)
\]
the pullback functor. We notice that $f^{\sTate, *}$ is the functor induced by
\[
f^{\rm TatePerf, *}: \Tate(\Perf(T)) \ra \Tate(\Perf(S)),
\]
where $f^{\rm TatePerf, *}$ is simply the Tate-extension of the functor $f^*$, which preserves compact objects.

\subsubsection{Determinant gerbe}

In Section 4.3 from \cite{determinant-map} we constructed a morphism of prestacks
\[
\sD^{(1)}: \sTate \ra \sBPicgr,
\]
which extends to a morphism
\[
\sD^{(1)}: \sTate_{\SchTate} \ra \sBPicgr_{\SchTate}.
\]

Given a Tate scheme $S$ we will denote by
\begin{equation}
    \label{eq:gerbe-determinant-on-a-Tate-scheme}
    \sD^{(1)}_{S}: \sTate(S) \ra \sBPicgr(S)
\end{equation}
the higher determinant map for $S$. We will also need to consider
\[
\sD^{(0)}_{S}: \sPerf(S) \ra \sPicgr(S)
\]
the usual determinant map for perfect complex\footnote{See also \cite{determinant-map}*{\S 3} for a detailed construction of $\sD^{(0)}_S$ in the generality that we need.}. Notice that given $f:S \ra T$ a morphism of Tate schemes, one has the following commutative diagram
\begin{equation}
\label{eq:determinant-map-commutes-with-gerbe-pullback}
\begin{tikzcd}
\Tate(\Perf(T)) \ar[r,"\sD^{(1)}_T"] \ar[d,"f^{\rm TatePerf,*}"'] & \sBPic(T) \ar[d,"f^{\rm Ge,*}"] \\
\Tate(\Perf(S)) \ar[r,"\sD^{(1)}_S"'] & \sBPic(S).
\end{tikzcd}    
\end{equation}

\subsubsection{Determinant gerbe on perfect complexes}

For $S$ any scheme almost of finite type, consider the composition
\[
\left.\sD^{(1)}_S\right|_{\Perf(S)}: \Perf(S) \hra \Tate(\Perf(S)) \ra \sBPic(S).
\]

\begin{lem}
\label{lem:trivialization-of-determinant-1-on-perfect-complexes}
For any $\sF \in \Perf(S)$ the $\Gm$-gerbe $\left.\sD^{(1)}_S\right|_{\Perf(S)}(\sF)$ is trivial, i.e.\ one has a canonical isomorphism
\[
\eta_{\sF}: \left.\sD^{(1)}_S\right|_{\Perf(S)}(\sF) \overset{\simeq}{\ra} \sPic(S)
\]
given by 
\[
(\eta_{\sF})^{-1}(\sG) = (-)\otimes \sD^{(0)}(\sG): \sPic(S) \overset{\simeq}{\ra} \sPic(S),
\]
for every $\sG \subset \sF$.
\end{lem}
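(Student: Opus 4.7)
The plan is to rely on the explicit construction of $\sD^{(1)}$ in \cite{determinant-map}*{\S 4}, where the gerbe $\sD^{(1)}(\sF)$ is built from the lattice structure of a Tate object $\sF$: concretely, a lattice $\sG \subset \sF$ furnishes a point of the gerbe whose underlying line bundle is $\sD^{(0)}(\sG)$, and the comparison between two such points is given by the ratio of determinants of the lattices.

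First, I would observe that for $\sF \in \Perf(S)$ viewed as an object of $\Tate(\Perf(S))$, the Tate structure is degenerate in the sense that any subobject $\sG \subset \sF$ (including $0$ and $\sF$ itself) is automatically a lattice in the sense of \emph{loc.\ cit.\ }. In particular, $\sF$ provides a canonical global section of the gerbe $\sD^{(1)}(\sF)$, whose underlying line is $\sD^{(0)}(\sF)$. Since a $\Gm$-gerbe with a global section is canonically trivializable, this produces the equivalence $\eta_\sF: \sD^{(1)}(\sF) \overset{\simeq}{\ra} \sPic(S)$.

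Second, I would verify the explicit formula for $(\eta_\sF)^{-1}$. Under the identification of points of $\sD^{(1)}(\sF)$ with classes of lattices $\sG \subset \sF$, and using that the gluing data between two such points is the difference of their $\sD^{(0)}$-determinants, moving from the base-point $\sF$ to an arbitrary $\sG \subset \sF$ corresponds to the $\sPic(S)$-automorphism given by tensoring with $\sD^{(0)}(\sG)$. This is precisely the stated formula $(\eta_\sF)^{-1}(\sG) = (-) \otimes \sD^{(0)}(\sG)$.

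The main obstacle will be keeping track of notational conventions from \cite{determinant-map} (in particular the normalization of which lattice corresponds to the identity section of the trivialization) and verifying naturality of $\eta_\sF$ in $\sF \in \Perf(S)$ as well as compatibility with pullback along morphisms of Tate schemes. Both compatibilities reduce to functoriality of the usual determinant $\sD^{(0)}$ together with the commutative diagram relating $\sD^{(1)}_S$ with $f^{\sTate, *}$ and $f^{\rm Ge, *}$ already recorded in the excerpt.
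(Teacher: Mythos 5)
Your proposal follows essentially the same route as the paper: both invoke the description of $\sD^{(1)}(\sF)$ as a functor $\Delta_\sF$ on the directed set of lattices $\Gr(\sF)$ satisfying $\Delta_\sF(\sG') \simeq \Delta_\sF(\sG)\otimes\sD^{(0)}(\sG'/\sG)$ (via \cite{BGW-index}*{Proposition 5.3} and \cite{determinant-map}*{Lemma 4.20}), and both observe that for $\sF$ perfect every perfect subobject --- in particular a distinguished one --- is a lattice, so the gerbe acquires a global section and is trivial. The one point to fix is the normalization you flag yourself: the paper bases the trivialization at the \emph{zero} lattice, so $\Delta_\sF(\sG) = \sD^{(0)}(\sG/0) = \sD^{(0)}(\sG)$, which is exactly the stated formula; basing at $\sF$ as you propose would instead give translation by $\sD^{(0)}(\sF/\sG)^{-1}$, i.e.\ a global twist by $\sD^{(0)}(\sF)$, so your claim that moving from the base-point $\sF$ to $\sG$ is tensoring by $\sD^{(0)}(\sG)$ does not hold under that choice.
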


\begin{proof}
Since the construction of $\sD^{(1)}_S$ and $\sD^{(0)}_S$ is local, it is enough to consider the case where $S$ is an affine scheme. Let $\Gr(\sF)$ denote the direct set of lattices of $\sF$, i.e.\ $\imath_{\sG}:\sG \ra \sF$ such that $\Cofib(\imath_{\sG}) \in \Ind(\Perf(S))$. By \cite{BGW-index}*{Proposition 5.3} (see also \cite{determinant-map}*{Lemma 4.20}) the data of $\sD^{(1)}(\sF)$ is equivalent to a morphism
\[
\Delta_{\sF}: \Gr(\sF) \ra \sPic(S)
\]
together with isomorphisms
\[
\Delta_{\sF}(\sG') \simeq \Delta_{\sF}(\sG)\otimes \sD^{(0)}(\sG'/\sG)
\]
for every $\sG \ra \sG'$, and higher coherence isomorphisms. Notice that when $\sF \in \Perf(S)$, for any object $\sG \in \Perf(S)$ one has that $(\sG \overset{\imath_{\sG}}{\ra} \sF) \in \Gr(\sF)$. Thus, $\Delta_{\sF}$ is determined by $\Delta_{\sF}(0) \in \sPic(S)$.
\end{proof}

\subsection{Canonical gerbe on a Tate scheme}

Before formulating the statement we need to restrict to a particular class of Tate schemes, which have a naturally $\Gm$-gerbe on them.

\subsubsection{The notion of a nice Tate scheme}
\label{subsubsec:nice-Tate-schemes}

We recall the following notion from \cite{DG-indschemes}.

\begin{defn}
\label{defn:formally-smooth-prestack}
A prestack $\sX$ is said to be formally smooth if 
\begin{enumerate}[(a)]
    \item for every nilpotent embedding $S_0 \hra S'_0$ in $\clSchaff$ the map
    \[
    \classical{\sX}(S'_0) \ra \classical{\sX}(S_0)
    \]
    is surjective on $\pi_0$;
    \item for every $n \geq 0$ and $S \in \Schaff$ the map
    \[
    \sX(S) \ra \sX({^{\leq n}S})
    \]
    induces an isomorphism on $\pi_n$.
\end{enumerate}
\end{defn}

We say that $S \in \SchTate$ is formally smooth if it is formally smooth as a prestack.

Given $S_n \in \SchTaten$ an $n$-coconnective Tate scheme, we will say that $S_n$ is $\aleph_0$ if there exists a presentation
\[
S_n \simeq \colim_{I}S_{n,i}
\]
where $I$ is equivalent to the poset $\bN$. More generally, given $S \in \SchTate$ we say that $S$ is \emph{weakly $\aleph_0$} if for every $n$ the truncation ${^{\leq n}S}$ is $\aleph_0$.

\begin{defn}
\label{defn:nice-Tate-schemes}
For $S$ an object of $\SchTate$ we will say that $S$ is a \emph{nice Tate scheme} if it satisfies:
\begin{enumerate}[(a)]
    \item $S$ is formally smooth;
    \item $S$ is weakly $\aleph_0$;
    \item $S$ is locally almost of finite type.
\end{enumerate}
We will denote by $\SchTatenice$ the subcategory of $\SchTate$ generated by nice Tate schemes.
\end{defn}

\begin{rem}
\label{rem:smooth-lft-schemes-are-nice-Tate-schemes}
Notice that since smooth implies formally smooth, if $S_0$ is a smooth scheme locally of finite type, then $S_0$ is a nice Tate scheme.
\end{rem}

The following result is the reason we introduce the class of nice Tate schemes.

\begin{thm}[\cite{DG-indschemes}*{Theorem 10.1.1}]
\label{thm:equivalence-QCoh-IndCoh-nice-Tate}
For $S$ a nice Tate scheme one has an equivalence
\[
\Upsilon_S: \QCoh(S) \overset{\simeq}{\ra} \IndCoh(S).
\]
\end{thm}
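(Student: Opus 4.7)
The plan is to reduce the statement to the case of smooth schemes in a fixed presentation, where it is classical, and then assemble the equivalences into an equivalence of the limit categories defining $\QCoh(S)$ and $\IndCoh(S)$.

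First, I would exploit conditions (a) and (c) to choose a presentation $S \simeq \colim_I S_i$ in which each $S_i$ is a \emph{smooth} scheme almost of finite type and each connecting map $f_{i,j}: S_i \hookrightarrow S_j$ is a closed embedding of smooth schemes. Such a presentation exists by a standard argument that goes back to \cite{DG-indschemes}: formal smoothness (together with the lft hypothesis) allows one to replace any presentation $\{S'_i\}$ by a thickening in which each term is smooth, and closed embeddings of smooth schemes are preserved. The weakly $\aleph_0$ hypothesis (b) is used here only to ensure we can work with a countable cofinal subdiagram of $I$, avoiding set-theoretic pathology in what follows.

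Second, for each smooth $S_i$, $\Upsilon_{S_i}: \QCoh(S_i) \to \IndCoh(S_i)$ is the equivalence given by tensoring with the dualizing complex $\omega_{S_i}$, which on a smooth scheme is a shifted line bundle. Equivalently, on a regular Noetherian scheme $\Coh(S_i) = \Perf(S_i)$ as stable subcategories of $\QCoh(S_i)$, so that $\IndCoh(S_i) \simeq \Ind(\Coh(S_i)) \simeq \Ind(\Perf(S_i)) \simeq \QCoh(S_i)$. The inverse of $\Upsilon_{S_i}$ is tensoring with $\omega_{S_i}^{-1}$.

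Third, I would assemble these pointwise equivalences into a global one. By Proposition \ref{prop:limit-and-colimit-presentation-of-IndCoh-on-Tate-schemes}, $\IndCoh(S) \simeq \lim_{I^{\rm op}} \IndCoh(S_i)$ with structure maps $f_{i,j}^!$; analogously (by right Kan extension from $\Schaft$) $\QCoh(S) \simeq \lim_{I^{\rm op}} \QCoh(S_i)$ with structure maps $f_{i,j}^*$. For a closed embedding $f: S_i \hookrightarrow S_j$ of smooth schemes, Grothendieck duality gives a canonical isomorphism $f^!(-) \simeq f^*(-) \otimes \omega_{S_i/S_j}$, where $\omega_{S_i/S_j} = \det(N^*_{S_i/S_j})[\mathrm{codim}]$ is a shifted line bundle. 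The pointwise equivalences $\Upsilon_{S_i}$ precisely absorb this twist via the transitivity $f^!(\omega_{S_j}) \simeq \omega_{S_i}$, producing a natural equivalence between the diagrams $\{(\QCoh(S_i), f_{i,j}^*)\}$ and $\{(\IndCoh(S_i), f_{i,j}^!)\}$ over $I^{\rm op}$. The limit of this equivalence is the desired functor $\Upsilon_S$.

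The principal obstacle is the coherence in the third step: while the compatibility $f_{i,j}^! \circ \Upsilon_{S_j} \simeq \Upsilon_{S_i} \circ f_{i,j}^*$ is classical at the $1$-categorical level, assembling these isomorphisms into a fully $\infty$-coherent natural transformation of diagrams $I^{\rm op} \to \DGc$ requires the comparison of the $*$-pullback and $!$-pullback formalisms for $\QCoh$ and $\IndCoh$ on $\Schaft$ to be packaged through the category of correspondences, as in \cite{GR-II}*{Chapter 5}. Once that coherence is in place, the theorem reduces to the general fact that the limit of a levelwise equivalence of diagrams in $\DGc$ is an equivalence.
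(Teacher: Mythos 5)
The paper does not actually prove this statement: it is imported verbatim from \cite{DG-indschemes}*{Theorem 10.1.1}, so there is no internal proof to compare against. Judged on its own merits, your proposal has a genuine gap at the very first step, and it is fatal.

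You claim that formal smoothness together with the laft condition lets you choose a presentation $S \simeq \colim_I S_i$ in which each $S_i$ is \emph{smooth} and the transition maps are closed embeddings of smooth schemes. This is false in general, and it is not a standard argument from \cite{DG-indschemes}. Formal smoothness is a property of the colimit, not of its terms: $\Spf(k[[t]]) = \colim_n \Spec(k[t]/t^n)$ is formally smooth while every term is singular, and the affine Grassmannian $\Gr_G$ is formally smooth while its finite-type closed pieces are singular Schubert varieties, with no cofinal system of smooth closed subschemes available. The paper itself treats ``admits a presentation by smooth schemes'' as an \emph{extra} hypothesis (\ref{eq:assumption-on-underlying-schemes}) imposed on top of niceness in Theorem \ref{thm:trivialization-is-determinantal-theory}; if niceness implied it, that hypothesis would be redundant. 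Once the smooth presentation is unavailable, the rest of your argument (absorbing the Grothendieck-duality twist $f^! \simeq f^* \otimes \omega_{S_i/S_j}$ into the levelwise equivalences $\Upsilon_{S_i}$ and passing to the limit of diagrams) has nothing to run on, since for singular $S_i$ the functor $\Upsilon_{S_i}$ is not an equivalence and the levelwise comparison fails at every stage. Your remark that the weakly $\aleph_0$ condition serves only to avoid set-theoretic pathology is a further warning sign: in the actual proof of \cite{DG-indschemes}*{Theorem 10.1.1} this condition is used essentially, to run Mittag--Leffler-type arguments on the pro-systems of cohomologies and to reduce to the classical case, and the dualizing complex is analyzed directly on the singular terms of the presentation rather than via a reduction to smooth ones. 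What your sketch does establish, modulo the coherence issue you flag, is the theorem under the additional hypothesis (\ref{eq:assumption-on-underlying-schemes}) --- but not the statement as quoted.
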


Let $\Perf(S) := \QCoh(S)^{\rm comp.} \subset \QCoh(S)$ denote the subcategory of compact objects of $S$. Theorem \ref{thm:equivalence-QCoh-IndCoh-nice-Tate} gives an equivalence
\begin{equation}
\label{eq:new-psi-for-nice-Tate}
    \Upsilon^{\vee}_S : \IndCoh(S) \overset{\simeq}{\ra} \Ind(\Perf(S)).
\end{equation}

By passing to Tate-objects one has an extension of $\Upsilon^{\vee}_S$ to
\[
(\Upsilon^{\vee}_S)^{\rm Tate}: \TateCoh(S) \ra \Tate(\Perf(S)).
\]

Moreover, given any $f:S \ra T$ a morphism between nice Tate schemes the following diagram commutes
\begin{equation}
    \label{eq:psi-for-nice-Tate-pullback-diagram}
    \begin{tikzcd}
    \TateCoh(T) \ar[r,"(\Upsilon^{\vee}_T)^{\rm Tate}"] \ar[d,"f^!"'] & \Tate(\Perf(T)) \ar[d,"f^{\rm TatePerf, *}"] \\
    \TateCoh(S) \ar[r,"(\Upsilon^{\vee}_S)^{\rm Tate}"] & \Tate(\Perf(S))    
    \end{tikzcd}
\end{equation}

\subsubsection{Compatibility with schemes almost of finite type}

Let $S_0$ denote an eventually coconnective scheme almost of finite type. Then the canonical morphism $\Psi_{S_0}: \IndCoh(S_0) \ra \QCoh(S_0)$ extends to a functor:
\[
\Psi^{\rm Tate}_{S_0}: \TateCoh(S_0)) \ra \Tate(\Perf(S_0)).
\]

Let $S_0 \ra S$ denote a morphism between $S_0$ a smooth eventually coconnective scheme locally of finite type and $S$ a nice Tate scheme, one has the following compatibility between the functor from Theorem \ref{thm:equivalence-QCoh-IndCoh-nice-Tate} and the functor $\Psi_{S_0}:\IndCoh(S_0) \ra \QCoh(S_0)$.

\begin{lem}
For $\imath: S_0 \ra S$ as above, the diagrams 
\begin{equation}
\label{eq:compatibility-upsilon-schemes-and-nice-Tate}
\begin{tikzcd}
\IndCoh(S) \ar[r,"\Upsilon^{\vee}_{S}"] \ar[d,"\imath^!"] & \QCoh(S) \ar[d,"\imath^*"] \\
\IndCoh(S_0) \ar[r,"\Psi_{S_0}"] & \QCoh(S_0)
\end{tikzcd}    
\end{equation}
and
\begin{equation}
\label{eq:compatibility-Tate-upsilon-schemes-and-nice-Tate}
\begin{tikzcd}
\TateCoh(S) \ar[r,"(\Upsilon^{\vee}_{S})^{\rm Tate}"] \ar[d,"\imath^!"] & \Tate(\Perf(S)) \ar[d,"\imath^{\rm TatePerf,*}"] \\
\TateCoh(S_0) \ar[r,"\Psi^{\rm Tate}_{S_0}"] & \Tate(\Perf(S_0))
\end{tikzcd}
\end{equation}
commute.
\end{lem}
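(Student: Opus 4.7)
The approach is to reduce the commutativity of both diagrams to the case of schemes almost of finite type by exploiting a presentation of the nice Tate scheme $S$.

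First I would pick, using that $S$ is a nice Tate scheme (formally smooth, weakly $\aleph_0$, and locally almost of finite type), a presentation $S \simeq \colim_I S_i$ where each $S_i$ is a smooth scheme locally of finite type and the transition maps are closed embeddings with coherent ideal (this kind of presentation is the content of Theorem \ref{thm:equivalence-QCoh-IndCoh-nice-Tate} in \cite{DG-indschemes}). Since $S_0$ is a scheme almost of finite type and the morphism $\imath:S_0 \to S$ is into a colimit of such schemes, by standard compactness arguments one obtains a factorization $\imath \simeq \jmath_i \circ \imath_i$ with $\imath_i:S_0 \to S_i$ a morphism between smooth schemes of finite type and $\jmath_i:S_i \hookrightarrow S$ the canonical closed embedding from the presentation. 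It then suffices to establish commutativity for each piece separately.

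The first piece, the compatibility for $\imath_i$, reduces to a statement about smooth schemes of finite type. Here both $\Upsilon^\vee_{S_i}$ and $\Upsilon^\vee_{S_0}$ are equivalences (being inverses of $\Upsilon_{S_i}$ and $\Upsilon_{S_0}$, which are equivalences for smooth schemes), and both $\Psi_{S_i}$ and $\Psi_{S_0}$ are equivalences as well. The required identity $\imath_i^* \circ \Upsilon^\vee_{S_i} \simeq \Psi_{S_0} \circ \imath_i^!$ is then a standard compatibility between $\Upsilon$, $\Psi$, and pullback functors in \cite{GR-I}*{Chapter 4}, using that on smooth schemes the $!$ and $*$ pullbacks differ only by a twist by the relative dualizing bundle which is intertwined precisely by the equivalence $\Upsilon$.

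The second piece, the compatibility for $\jmath_i$, is where the nice Tate scheme hypothesis enters essentially. Using Proposition \ref{prop:limit-and-colimit-presentation-of-IndCoh-on-Tate-schemes} (and its analogue for $\Ind(\Perf)$), one has equivalences $\IndCoh(S) \simeq \lim_{I^{\mathrm{op}}} \IndCoh(S_i)$ under $!$-pullbacks and $\Ind(\Perf(S)) \simeq \lim_{I^{\mathrm{op}}} \Ind(\Perf(S_i))$ under $*$-pullbacks, and the equivalence $\Upsilon^\vee_S$ is induced from the pointwise equivalences $\{\Upsilon^\vee_{S_i}\}$ via these limit presentations. The desired compatibility $\jmath_i^* \circ \Upsilon^\vee_S \simeq \Upsilon^\vee_{S_i} \circ \jmath_i^!$ is then tautological, expressing nothing more than the fact that projection to the $i$-th factor of the limit commutes with the equivalence of limits. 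Composing the two pieces yields the commutativity of diagram (\ref{eq:compatibility-upsilon-schemes-and-nice-Tate}).

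For the second diagram (\ref{eq:compatibility-Tate-upsilon-schemes-and-nice-Tate}), I would apply the Tate construction to diagram (\ref{eq:compatibility-upsilon-schemes-and-nice-Tate}). By construction, $(\Upsilon^\vee_S)^{\mathrm{Tate}}$ and $\Psi^{\mathrm{Tate}}_{S_0}$ are the canonical extensions to Tate objects of $\Upsilon^\vee_S$ and $\Psi_{S_0}$, while $\imath^!:\TateCoh(T) \to \TateCoh(S)$ and $\imath^{\mathrm{TatePerf},*}$ are likewise induced from $\imath^!$ on $\IndCoh$ and $\imath^*$ on $\Ind(\Perf)$. Thus commutativity of (\ref{eq:compatibility-Tate-upsilon-schemes-and-nice-Tate}) follows formally from that of (\ref{eq:compatibility-upsilon-schemes-and-nice-Tate}) together with the functoriality of the Tate construction. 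The main technical obstacle throughout is the second piece (compatibility for $\jmath_i$): one must be careful that the definition of $\Upsilon^\vee_S$ on the nice Tate scheme genuinely arises from the limit presentation, so that projection to each $S_i$ yields precisely $\Upsilon^\vee_{S_i}$; once this is set up cleanly, the rest is straightforward.
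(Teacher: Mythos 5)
Your proof is correct and follows essentially the same route as the paper: the paper's proof simply cites the construction of $\Upsilon_S$ via the colimit presentation in \cite{DG-indschemes}*{Theorem 10.1.1} for the first square — which is exactly the factor-through-$S_i$-plus-limit-presentation argument you spell out — and then obtains the second square by formally extending everything to Tate objects, just as you do. The one point to watch is your step for $\imath_i:S_0\ra S_i$, where $\imath_i^*\circ\Upsilon^{\vee}_{S_i}$ and $\Psi_{S_0}\circ\imath_i^!$ agree only up to a twist by the (shifted) relative dualizing line bundle; this looseness is already present in the paper's own statement and is precisely what later produces the $\sD^{(0)}(\overline{T}^*(S_j/S_i))$ factors in Theorem \ref{thm:trivialization-is-determinantal-theory}.
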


\begin{proof}
The commutativity of (\ref{eq:compatibility-upsilon-schemes-and-nice-Tate}) is a consequence of the proof of the existence of $\Upsilon_S$, see \cite{DG-indschemes}*{Theorem 10.1.1}. Then the commutativity of (\ref{eq:compatibility-Tate-upsilon-schemes-and-nice-Tate}) follows from the first diagram by extending the functor to Tate-objects.
\end{proof}

\subsubsection{Determinant of Tate-coherent objects}

In the case where $S$ is a nice Tate scheme we can define a higher determinant map for any Tate-coherent sheaf, as the composite
\begin{equation}
\label{eq:sD-for-TateCoh}
\TateCoh(S) \overset{(\Upsilon^{\vee}_S)^{\rm Tate}}{\ra} \Tate(\Perf(S)) \overset{\sD^{(1)}_S}{\ra} \sBPic(S).
\end{equation}

We let
\[
\sD^{(1),\rm coh.}_S: \TateCoh(S) \ra \sBPic(S)
\]
denote the composite (\ref{eq:sD-for-TateCoh}). The following is a consequence of (\ref{eq:determinant-map-commutes-with-gerbe-pullback}) and (\ref{eq:psi-for-nice-Tate-pullback-diagram}).

\begin{lem}
\label{lem:functoriality-of-coherent-determinant-1}
For any morphism $f:S \ra T$ between nice Tate schemes the following diagram
\[
\begin{tikzcd}
\TateCoh(T) \ar[r,"\sD^{(1), \rm coh.}_T"] \ar[d,"f^{!}"'] & \sBPic(T) \ar[d,"f^{\rm Ge,*}"] \\
\TateCoh(S) \ar[r,"\sD^{(1), \rm coh.}_S"'] & \sBPic(S).
\end{tikzcd}
\]
commutes.
\end{lem}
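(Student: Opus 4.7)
The statement is a pasting of two commutative squares that have just been recorded in the text, so the plan is to decompose the target square along the functor $(\Upsilon^{\vee}_{(-)})^{\rm Tate}$ and invoke these two inputs in succession. Concretely, I would write $\sD^{(1),\rm coh.}_T = \sD^{(1)}_T \circ (\Upsilon^{\vee}_T)^{\rm Tate}$ and $\sD^{(1),\rm coh.}_S = \sD^{(1)}_S \circ (\Upsilon^{\vee}_S)^{\rm Tate}$ by definition, and then consider the rectangle
\[
\begin{tikzcd}
\TateCoh(T) \ar[r,"(\Upsilon^{\vee}_T)^{\rm Tate}"] \ar[d,"f^{!}"'] & \Tate(\Perf(T)) \ar[r,"\sD^{(1)}_T"] \ar[d,"f^{\rm TatePerf,*}"] & \sBPic(T) \ar[d,"f^{\rm Ge,*}"] \\
\TateCoh(S) \ar[r,"(\Upsilon^{\vee}_S)^{\rm Tate}"'] & \Tate(\Perf(S)) \ar[r,"\sD^{(1)}_S"'] & \sBPic(S).
\end{tikzcd}
\]

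The first step is to note that the left square commutes by (\ref{eq:psi-for-nice-Tate-pullback-diagram}), which in turn was deduced from the commutation of $\Upsilon^{\vee}_{(-)}$ with $!$-pullback along morphisms of nice Tate schemes followed by Tate-extension. The second step is to note that the right square commutes by (\ref{eq:determinant-map-commutes-with-gerbe-pullback}), which is the naturality of the higher determinant map $\sD^{(1)}$ for pullback between prestacks restricted to $\SchTatenice$. Pasting the two squares then yields the outer square, which is exactly the claim since its horizontal composites are $\sD^{(1),\rm coh.}_T$ and $\sD^{(1),\rm coh.}_S$ respectively.

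There is no essential obstacle: both inputs have already been established, so this is a pure diagram-chase. The only thing to be mildly careful about is that $(\Upsilon^{\vee}_{(-)})^{\rm Tate}$, $f^{\rm TatePerf,*}$ and the Tate-extension of $\sD^{(1)}$ must be understood in the same $\infty$-categorical framework, i.e.\ the coherence data of the two squares must be compatible under horizontal pasting; this is guaranteed by the fact that all the functors in sight arise from (Tate-extensions of) functors between presentable stable $\infty$-categories and that the constructions of $\sD^{(1)}$ in \cite{determinant-map} and of $(\Upsilon^{\vee})^{\rm Tate}$ in \S\ref{subsubsec:nice-Tate-schemes} are made in a coherent manner.
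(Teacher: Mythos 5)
Your proposal is correct and is exactly the argument the paper intends: the lemma is stated as a direct consequence of (\ref{eq:determinant-map-commutes-with-gerbe-pullback}) and (\ref{eq:psi-for-nice-Tate-pullback-diagram}), i.e.\ precisely the horizontal pasting of the two squares you exhibit. Nothing is missing.
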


Consider $S$ a nice Tate scheme, and suppose moreover that
\[
p_{S}: S \ra \Spec(k)
\]
is eventually coconnective. Let
\[
p^{!}(k) \in \TateCoh(S)
\]
denote the pullback of $k \in \Tate_k$. We define the \emph{canonical graded $\Gm$-gerbe on $S$} by
\[
\omega^{(1)}_S := \sD^{(1), \rm coh.}(p^{!}(k)) \in \sBPic(S).
\]

\subsubsection{Gerbe on smooth schemes locally of finite type}

Suppose that $S_0$ is an eventually coconnective scheme locally of finite type and let $p_{S_0}: S_0 \ra \Spec(k)$ denote its structure morphism. Moreover, we let
\[
\omega^{(1)}_{S_0} := \sD^{(1), \rm coh.}_{S_0}(p^!_{S_0}(k)),
\]
where $p^!_{S_0}: \Tate(k) \ra \TateCoh(S_{0})$.

\begin{lem}
\label{lem:computation-of-Psi-of-shriek-pullback-on-good-schemes}
For $S_0$ a smooth and eventually coconnective one has an equivalence
\[
\Psi^{\rm Tate}_{S_0}\circ p^!_{S_0} \simeq \overline{T}^*(S_0)[\dim (S_0)],
\]
where $\overline{T}^*(S_0)$ denotes the usual cotangent complex. Moreover, if $S_0$ is locally of finite type, we have $\overline{T}^*(S_0)[\dim (S_0)] \in \Perf(S_0)$.
\end{lem}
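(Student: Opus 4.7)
The plan is to reduce the statement to the standard computation of the dualizing complex on a smooth scheme via Grothendieck--Serre duality. First, I would observe that by construction $p^!_{S_0}(k) \in \IndCoh(S_0) \subset \TateCoh(S_0)$ sits as a ``constant'' (non-Tate) object, namely it is the image of $k \in \IndCoh(\Spec k) \simeq \Vect$ under the ordinary $!$-pullback of \cites{GR-I,GR-II}. Consequently, the functor $\Psi^{\rm Tate}_{S_0}$ applied to $p^!_{S_0}(k)$ reduces to the ordinary functor $\Psi_{S_0}: \IndCoh(S_0) \to \QCoh(S_0)$ applied to $p^!_{S_0}(k)$, so the problem becomes a statement entirely inside the Gaitsgory--Rozenblyum formalism.

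Next I would invoke the standard identification $p^!_{S_0}(k) \simeq \omega_{S_0}$ with the dualizing complex and the fact that $\Psi_{S_0}$ is t-exact and is fully faithful (indeed an equivalence onto its essential image) on bounded-below complexes with coherent cohomology; since $S_0$ is smooth, $\omega_{S_0}$ is concentrated in a single cohomological degree, hence this reduction is harmless. The smoothness of $S_0$ then allows us to use the explicit formula $p^!_{S_0}(k) \simeq \overline{T}^*(S_0)[\dim S_0]$ in $\QCoh(S_0)$ (with $\overline{T}^*(S_0)$ denoting the cotangent object of the paper's convention), which follows from the Grothendieck duality isomorphism $p^!(k) \simeq p^*(k) \otimes \omega_{S_0/k}[\dim S_0]$ for smooth $p$, combined with the fact that on a smooth scheme the cotangent complex is the locally free cotangent sheaf $\Omega^1_{S_0/k}$ (placed in degree $0$).

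For the second assertion, smoothness together with the locally of finite type hypothesis guarantees that $\Omega^1_{S_0/k}$ is a locally free sheaf of finite rank; its shift by $\dim S_0$ remains a perfect complex, since $\Perf(S_0)$ is stable under cohomological shifts.

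The main obstacle I foresee is a purely bookkeeping one: reconciling the paper's convention for $\overline{T}^*(S_0)$ (in particular whether it refers to the cotangent complex itself or its top exterior power) with the classical formula $\omega_{S_0/k} = \det(\Omega^1_{S_0/k})[\dim S_0]$ for the relative dualizing sheaf of a smooth morphism, so that the shift and any determinant/trace convention on both sides genuinely match. Once that is pinned down, the argument is essentially a reduction-and-citation of standard results, with no new computation beyond the Gaitsgory--Rozenblyum smooth case.
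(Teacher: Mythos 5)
Your proof follows essentially the same route as the paper: the paper likewise first observes that $p^!_{S_0}(k)$ lies in $\IndCoh(S_0)\subset\TateCoh(S_0)$ because $S_0$ is eventually coconnective, so that $\Psi^{\rm Tate}_{S_0}$ reduces to the ordinary $\Psi_{S_0}$, and then simply cites the smooth case of the Gaitsgory--Rozenblyum computation (\cite{GR-II}*{Chapter 4, Corollary 4.3.7}) where you unwind Grothendieck duality by hand. The determinant bookkeeping you flag is a real looseness in the statement itself (classically $p^!(k)\simeq\det(\Omega^1_{S_0})[\dim S_0]$ rather than $\Omega^1_{S_0}[\dim S_0]$), but the paper's proof does not address it either, so this is not a gap in your argument relative to the paper's.
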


\begin{proof}
We first notice that since $S_i$ is eventually coconnective we have that $p^!_{S_0}(k) \in \IndCoh(S_0) \subset \TateCoh(S_i)$, thus we have
\[
\Psi^{\rm Tate}_{S_0}\circ p^!_{S_0} \simeq \Psi_{S_0}\circ p^!_{S_0}.
\]
Because we supposed $S_0$ smooth, \cite{GR-II}*{Chapter 4, Corollary 4.3.7} gives the equivalence
\[
\Psi_{S_0}\circ p^!_{S_0} \simeq \Psi_{S_0} \circ \Xi_{S_0}(\overline{T}^*(S_0)[\dim (S_i)]).
\]
\end{proof}

\subsection{Determinant theory for a Tate scheme}

Recall that a triviliazation of $\omega^{(1)}_S$ is given by a non-trivial $\sPic(S)$-linear morphism
\[
\sigma_{S}:\omega^{(1)}_S \ra \sPic(S),
\]
which is automatically an isomorphism, since $\omega^{(1)}_S$ is a $\sPic(S)$-torsor.

We need a further technical assumption on the Tate schemes for which we will describe the determinant theory gerbe.

Given $S \in \SchTatenice$ we suppose that $S$ can be presented as $S \simeq \colim_I S_i$ where for each $i\in I$
\begin{equation}
    \label{eq:assumption-on-underlying-schemes}
    S_i \; \mbox{ is an eventually coconnective, smooth and locally of finite type scheme.}
\end{equation}

\begin{thm}
\label{thm:trivialization-is-determinantal-theory}
Let $S$ be a nice Tate scheme and suppose that $S$ admits a presentation $S \simeq \colim_I S_i$ that satisfies condition (\ref{eq:assumption-on-underlying-schemes}). Then the data of a trivialization
\[
\sigma_{S}:\sPic(S) \ra \omega^{(1)}_S
\]
is equivalent to the data of for every $i \in I$ a line bundle $\sL_i \in \sPic(S_i)$ and for every $j \ra i$ an isomorphism
\[
\tau_{i,j}: f^*_{j,i}(\sL_{i})\otimes (\sD^{(0)}(\overline{T}^*(S_j/S_i)))^{-1} \overset{\simeq}{\ra}\sL_j
\]
where $f_{j,i}:S_j \hra S_i$ is the canonical inclusion, plus coherent higher compatibility conditions.
\end{thm}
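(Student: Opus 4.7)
My plan is to use the colimit presentation of $S$ to reduce the trivialization problem to compatible families of trivializations on the $S_i$, then exploit the smoothness assumption to replace each $\omega^{(1)}_{S_i}$ with a canonically trivial gerbe on a perfect complex, and finally read off the coherence data from the fiber sequence of relative cotangent complexes.

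\textbf{Step 1.} First I would reduce to the level of the $S_i$. Since $\sBPic_{\PStk}$ is a right Kan extension from affine schemes, and since $S \simeq \colim_I S_i$, one obtains an equivalence
\[
\sBPic(S) \simeq \lim_{I^{\rm op}} \sBPic(S_i).
\]
Combined with the functoriality of $\sD^{(1),\rm coh.}$ (Lemma \ref{lem:functoriality-of-coherent-determinant-1}) and the base change $p^{!}_{S_i} \simeq f^{!}_i \circ p^{!}_S$ (from the structure functor identities for $!$-pullback), this gives $f^{\rm Ge,*}_i(\omega^{(1)}_S) \simeq \omega^{(1)}_{S_i}$. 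So a trivialization $\sigma_S$ of $\omega^{(1)}_S$ is the same data as a compatible family of trivializations $\sigma_i$ of $\omega^{(1)}_{S_i}$.

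\textbf{Step 2.} Next I would identify each $\omega^{(1)}_{S_i}$ with the determinant gerbe of a perfect complex. By the commutative diagram (\ref{eq:compatibility-Tate-upsilon-schemes-and-nice-Tate}) and the fact that the higher determinant map is built from $\sD^{(1)}_S \circ (\Upsilon^{\vee}_S)^{\rm Tate}$, plus Lemma \ref{lem:computation-of-Psi-of-shriek-pullback-on-good-schemes}, we have
\[
\omega^{(1)}_{S_i} \simeq \sD^{(1)}_{S_i}\!\left(\overline{T}^*(S_i)[\dim S_i]\right),
\]
where $\overline{T}^*(S_i)[\dim S_i] \in \Perf(S_i)$ since $S_i$ is smooth and locally of finite type. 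Now Lemma \ref{lem:trivialization-of-determinant-1-on-perfect-complexes} supplies a canonical trivialization $\eta_i: \omega^{(1)}_{S_i} \overset{\simeq}{\to} \sPic(S_i)$, and composition with $\eta_i$ converts a trivialization $\sigma_i$ into an automorphism of the $\sPic(S_i)$-torsor $\sPic(S_i)$, equivalently a line bundle $\sL_i \in \sPic(S_i)$ (the image of $\sO_{S_i}$).

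\textbf{Step 3.} Finally I would extract the coherence datum $\tau_{i,j}$. For $j \to i$, the closed embedding $f_{j,i}: S_j \hra S_i$ of smooth schemes produces the standard fiber sequence
\[
f^{*}_{j,i}\overline{T}^*(S_i) \to \overline{T}^*(S_j) \to \overline{T}^*(S_j/S_i)
\]
in $\Perf(S_j)$, where $\overline{T}^*(S_j/S_i)$ is (up to a shift) the conormal bundle. Under $\sD^{(0)}$, which sends cofiber sequences to tensor products, this becomes a canonical isomorphism
\[
\sD^{(0)}(\overline{T}^*(S_j)[\dim S_j]) \simeq f^{*}_{j,i}\sD^{(0)}(\overline{T}^*(S_i)[\dim S_i]) \otimes \sD^{(0)}(\overline{T}^*(S_j/S_i))^{-1}.
\]
Tracing this through the canonical trivializations $\eta_i, \eta_j$ furnished by Lemma \ref{lem:trivialization-of-determinant-1-on-perfect-complexes} and the compatibility $f^{\rm Ge,*}_{j,i}(\omega^{(1)}_{S_i}) \simeq \omega^{(1)}_{S_j}$, the compatibility datum for the family $\{\sigma_i\}$ is precisely an isomorphism
\[
\tau_{i,j}: f^{*}_{j,i}(\sL_i) \otimes (\det T^*(S_j/S_i))^{-1} \overset{\simeq}{\to} \sL_j.
\]

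\textbf{Main obstacle.} The conceptual skeleton above is straightforward, but the hard part will be verifying that the higher coherences in the system $\{\sL_i,\tau_{i,j}\}$ coming from triples $k \to j \to i$ (and higher simplices of $I$) match the coherences prescribed by the limit $\lim_{I^{\rm op}}\sBPic(S_i)$. This reduces to showing that the octahedral-type diagrams of fiber sequences of relative cotangent complexes go to the correct compatibility diagrams under $\sD^{(0)}$, together with the associativity of the canonical trivialization $\eta$ in Lemma \ref{lem:trivialization-of-determinant-1-on-perfect-complexes} under composition of Tate-perfect complexes containing a common sublattice. Both of these are consequences of the $\infty$-categorical naturality built into $\sD^{(1)}_S$ and the Picard spectrum, but the bookkeeping must be handled carefully.
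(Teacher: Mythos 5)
Your proposal is correct and follows essentially the same route as the paper's proof: pull back the trivialization along $f_i:S_i\to S$, identify $\omega^{(1)}_{S_i}$ with $\sD^{(1)}_{S_i}(\overline{T}^*(S_i)[\dim S_i])$ via Lemma \ref{lem:computation-of-Psi-of-shriek-pullback-on-good-schemes}, use the canonical trivialization of Lemma \ref{lem:trivialization-of-determinant-1-on-perfect-complexes} to extract $\sL_i$, and read off $\tau_{i,j}$ from the $\sD^{(0)}(\overline{T}^*(S_j/S_i))$-twist. The only cosmetic difference is that the paper produces this twist from Grothendieck's formula $f^!_{j,i}\simeq f^{\rm Ind,*}_{j,i}(-)\otimes\sD^{(0)}(\overline{T}^*(S_j/S_i))$ rather than from the fiber sequence of relative cotangent complexes, which amounts to the same computation; the higher-coherence bookkeeping you flag is likewise left implicit in the paper.
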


\begin{proof}
\textit{Construction of line bundles.} For $\sigma_S$ a trivialization as above, let $f_i:S_i \ra S$ denote the canonical inclusion, the pullback of $\sigma_S$ gives
\begin{equation}
\label{eq:sigma-S-i}
\sigma_{S_i} := (f_i)^{\rm Ge,*}(\sigma_S): (f_i)^{\rm Ge,*}(\sPic(S)) \simeq \sPic(S_i) \ra (f_i)^{\rm Ge,*}(\omega^{(1)}_S).    
\end{equation}

We notice that by Lemma \ref{lem:functoriality-of-coherent-determinant-1} the righthand side of (\ref{eq:sigma-S-i}) can be rewritten as
\[
(f_i)^{\rm Ge,*}(\omega^{(1)}_S) = (f_i)^{\rm Ge,*}(\sD^{(1),\rm coh.}_S(p^!_S(k))) \simeq \sD^{(1),\rm coh.}_{S_i}(f^!_{i}\circ p^!_{S}(k)) \simeq \omega^{(1)}_{S_i},
\]
where we notice that $S_i$ is a nice Tate scheme (see Remark \ref{rem:smooth-lft-schemes-are-nice-Tate-schemes}).

By Lemma \ref{lem:computation-of-Psi-of-shriek-pullback-on-good-schemes} one has that
\[
\omega^{(1)}_{S_i} \simeq \sD^{(1)}_{S_i}(\overline{T}^*(S_i)[\dim S_i]).
\]
Moreover, since $\overline{T}^*(S_i)$ is an object of $\Perf(S_i)$, Lemma \ref{lem:trivialization-of-determinant-1-on-perfect-complexes} gives an isomorphism
\[
\eta_{\overline{T}^*(S_i)}: \sD^{(1)}_{S_i}(\overline{T}^*(S_i)[\dim S_i]) \overset{\simeq}{\ra} \sPic(S_i).
\]

We let $\overline{\sigma_{S_i}}$ denote the composite
\begin{equation}
\label{eq:defn-of-sigma-S-i-bar}
    \sPic(S_i) \overset{\sigma_{S_i}}{\ra} \omega^{(1)}_{S_i} \simeq \sD^{(1)}_{S_i}(\overline{T}^*(S_i)[\dim S_i]) \overset{\eta_{\overline{T}^*(S_i)}}{\ra} \sPic(S_i).
\end{equation}
We define $\sL_i$ to be the object of $\sPic(S_i)$ characterized by
\[
 \overline{\sigma_{S_i}}(\sP)\simeq \sP\otimes_{S_i}\sL_i
\]
for all $\sP \in \sPic(S_i)$.

\textit{Construction of compatibility isomorphisms.} We will now check the existence of $\tau_{i,j}$. Let $f_{j,i}:S_j \hra S_i$ denote the closed embedding corresponding to a morphism $j \ra i$ in $I$. Consider $(f_{j,i})^{\rm Ge, *}$ applied to (\ref{eq:sigma-S-i}), this gives
\[
\sPic(S_j) \simeq (f_{j,i})^{\rm Ge,*}(\sPic(S_i)) \overset{(f_{j,i})^{\rm Ge, *}}{\ra} (f_{j,i})^{\rm Ge,*}\omega^{(1)}_{S_i}.
\]

We now notice that we have the chain of isomorphisms
\begin{align*}
    \omega^{(1)}_{S_j} & \simeq \sD^{(1)}_{S_j} \circ \Psi_{S_j} (f^!_{j,i} \circ p^!_{S_i}(k)) \\
    & \simeq \sD^{(1)}_{S_j} \circ \Psi_{S_j} (f^{\rm Ind, *}_{j,i} \circ p^!_{S_i}(k)\otimes \sD^{(0)}(\overline{T}^{*}(S_j/S_i))) \\
    & \simeq \sD^{(1)}_{S_j} \circ \Psi_{S_j} (f^{\rm Ind, *}_{j,i} \circ p^!_{S_i}(k)) \otimes \sD^{(0)}(\overline{T}^{*}(S_j/S_i)) \\
    & \simeq \sD^{(1)}_{S_j} \circ f^{*}_{j,i} \circ \Psi_{S_i}(p^!_{S_i}(k)) \otimes \sD^{(0)}(\overline{T}^{*}(S_j/S_i)) \\
    & \simeq (f_{j,i})^{\rm Ge, *}(\omega^{(1)}_{S_i})\otimes \sD^{(0)}(\overline{T}^{*}(S_j/S_i)),
\end{align*}
where the first isomorphism follows from Grothendieck's formula (see \cite{GR-II}*{Chapter 9}*{Corollary 7.2.4}), where we notice that $f_{j,i}:S_j \hra S_i$ is a regular embedding. The other isomorphism are obtained using the usual compatibility relations.

Thus, we notice that one obtains an isomorphism
\[
(f_{j,i})^{\rm Ge, *}(\overline{\sigma_{S_i}})\otimes\sD^{(0)}(\overline{T}^{*}(S_j/S_i)): \sPic(S_j) \ra \sPic(S_j) 
\]
which is described by the line bundle $\sL_i\otimes\sD^{(0)}(\overline{T}^{*}(S_j/S_i))^{-1}$. Since we want this isomorphism to be compared to $\overline{\sigma_{S_j}}$ we have that this data is given by a morphism
\[
\tau_{i,j}: \sL_i\otimes\sD^{(0)}(\overline{T}^{*}(S_j/S_i))^{-1} \ra \sL_j,
\]
as we claimed.

\end{proof}

\newpage

\appendix

\section{Recollections on Pro and Tate objects}
\label{sec:recollections-on-Pro-and-Tate}

In this appendix we collect some results about Pro-objects and Tate-objects that are needed in the body of the text.

\subsection{General results}

The material in this section is somewhat standard. The reader is refered to \cites{Hennion-Tate,determinant-map} for the definition of Tate objects and \cite{Raskin-homological} where the tensor product considered below is detoned by $\overset{!}{\otimes}$ in \emph{loc. cit.}.

\subsubsection{Tensor product of Pro-objects}

We let $\Vect_k$ denote the stable $\infty$-category of complex of $k$-vector spaces. By definition this category is equivalent to $\Ind(\Vect^{\rm fd}_k)$ and it is endowed with a symmetric monoidal structure which commutes with colimit on each variable.

Consider $\ProVect_k$ the category of Pro-objects in $\Vect_k$, this is endowed with a tensor product
\begin{equation}
\label{eq:tensor-product-ProVect}
\otimes: \ProVect \times \ProVect \ra \ProVect    
\end{equation}
that extends the tensor product of $\Vect$ and is characterized by the fact that (\ref{eq:tensor-product-ProVect}) commutes with limits on both variables.

\subsubsection{Pro construction and adjunctions}

\begin{prop}
\label{prop:left-adjoint-of-Pro-extended-functors}
Suppose that
\[
\begin{tikzcd}
\sC \ar[r,shift left=.5ex,"g"] & \sD \ar[l,shift left=.5ex,"f"]
\end{tikzcd}
\]
is an adjuction between the $\infty$-categories $\sC$ and $\sD$, i.e.\ $(g,f)$ is an adjoint pair. Then $\Pro(f)$ has a left adjoint which is given by $\Pro(g)$. Moreover, in this case $\Pro(g)$ is simply calculated by pre-composition with $f$.
\end{prop}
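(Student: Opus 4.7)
The plan is to work with the explicit description of $\Pro$-categories: recall that a $\Pro$-object $X = \{X_i\}_{i\in I} \in \Pro(\sC)$ can be represented by a small cofiltered diagram $I \to \sC$, and for any $X' = \{X'_k\}_{k\in K} \in \Pro(\sC)$ the mapping space is
\[
\Hom_{\Pro(\sC)}(X, X') \simeq \lim_{k \in K} \colim_{i \in I^{\rm op}} \Hom_{\sC}(X_i, X'_k).
\]
Pro-extension sends a functor $h: \sC \to \sD$ to the functor $\Pro(h): \Pro(\sC) \to \Pro(\sD)$ characterized by commuting with cofiltered limits and restricting to $h$ on $\sC$; concretely $\Pro(h)(\{X_i\}_I) = \{h(X_i)\}_I$.

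First I would check the adjunction by comparing mapping spaces. Fix $X = \{X_i\}_I \in \Pro(\sC)$ and $Y = \{Y_j\}_J \in \Pro(\sD)$. Using the formula above:
\[
\Hom_{\Pro(\sD)}(\Pro(g)(X), Y) \simeq \lim_{j\in J} \colim_{i \in I^{\rm op}} \Hom_{\sD}(g(X_i), Y_j),
\]
\[
\Hom_{\Pro(\sC)}(X, \Pro(f)(Y)) \simeq \lim_{j\in J} \colim_{i \in I^{\rm op}} \Hom_{\sC}(X_i, f(Y_j)).
\]
The adjunction $(g,f)$ at the level of $\sC$ and $\sD$ provides, for each $(i,j)$, a natural equivalence $\Hom_{\sD}(g(X_i), Y_j) \simeq \Hom_{\sC}(X_i, f(Y_j))$; passing to the colimit in $i$ and the limit in $j$ yields the desired natural equivalence of mapping spaces. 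This exhibits $\Pro(g)$ as left adjoint to $\Pro(f)$.

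Second, for the identification of $\Pro(g)$ with precomposition with $f$, I would use the embedding $\Pro(\sC) \hookrightarrow \Fun(\sC, \Spc)^{\rm op}$ that sends $X = \{X_i\}_I$ to the left-exact accessible functor $c \mapsto \colim_{i \in I^{\rm op}} \Hom_{\sC}(X_i, c)$. Applying this to $\Pro(g)(X) = \{g(X_i)\}_I$ gives the functor $d \mapsto \colim_{i \in I^{\rm op}} \Hom_{\sD}(g(X_i), d)$, and using $(g,f)$-adjunction in $\sC,\sD$ termwise identifies this with $d \mapsto \colim_{i \in I^{\rm op}} \Hom_{\sC}(X_i, f(d)) = X(f(d))$, i.e.\ $X \circ f$.

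The main obstacle is not conceptual but coherence-theoretic: in the $\infty$-categorical setting, one has to ensure that the termwise adjunction equivalences assemble into a natural transformation of functors of $(\infty,1)$-categories, rather than merely a pointwise equivalence. This can be handled by appealing to Lurie's formalism of adjoint functors (e.g.\ \cite{HTT}*{\S 5.2.2}), together with the universal property of $\Pro(\sC)$ as the free cocompletion of $\sC$ under cofiltered limits in the dual sense, which makes the assignment $h \mapsto \Pro(h)$ functorial and compatible with adjunctions.
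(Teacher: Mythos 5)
Your argument is correct, but it takes a different route from the paper's. The paper does not compute mapping spaces at all: it invokes the dual of \cite{HTT}*{Proposition 5.3.5.13} to conclude that $\Pro(f)$ admits a left adjoint as soon as $f$ is left exact (automatic, since $f$ is itself a right adjoint), with the right adjoint of $\Pro(g)$ given by precomposition with $f$; it then identifies $\Pro(g)(X)$, defined as the right Kan extension $\RKE_g(\Phi_X)$, with $\Phi_X \circ f$ by observing that the relevant slice category $\sC_{/f(Y)}$ is contractible, so the limit computing the Kan extension collapses to evaluation at $f(Y)$. Your proposal instead establishes the adjunction by hand via the $\lim$--$\colim$ formula for $\Hom_{\Pro(\sC)}$, applying the adjunction equivalence $\Hom_{\sD}(g(X_i),Y_j)\simeq\Hom_{\sC}(X_i,f(Y_j))$ termwise. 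This is more self-contained and makes the mechanism visible, but it forces you to confront the coherence issue you flag at the end: the termwise equivalences must be assembled into an equivalence of bifunctors $\Pro(\sC)^{\rm op}\times\Pro(\sD)\ra\Spc$ (not merely a pointwise one), which in practice means either producing a unit from the functoriality of $h\mapsto\Pro(h)$ or falling back on the corepresentability criterion for adjoints --- at which point you are essentially re-deriving the Lurie result the paper cites. The paper's route buys brevity by outsourcing exactly this coherence to \cite{HTT}; yours buys an explicit formula for the adjunction equivalence, and your second computation (identifying $\Pro(g)(X)$ with $X\circ f$ via the embedding into $\Fun(\sD,\Spc)^{\rm op}$) agrees with the paper's Kan-extension calculation in substance.
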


\begin{proof}
We notice that by \cite{HTT}*{Proposition 5.3.5.13} the functor $\Pro(f)$ has a left adjoint if and only if $f$ is left exact, which is automatic since $f$ is a right adjoint hence preserves all limits. Its right adjoint is given by $G$ which is pre-composition with $f$.
To check the second assertion we consider $X\in \Pro(\sC)$ and $Y \in \sD$ and notice that
\begin{align*}
    \Pro(g)(X)(Y) & = \RKE_g(\Phi_X)(Y) \\
    & \simeq \lim_{Y' \in \sC \; | \; g(Y') \ra Y} \Phi_X(Y') \\
    & \simeq \lim_{Y' \in \sC \; | \; Y' \ra f(Y)} \Phi_X(Y') \\
    & \simeq \Phi_X(f(Y)),
\end{align*}
where on the last line we used that $\sC_{/f(Y)}$ is a contractible category, so the limit is equivalent to evaluation at $f(Y)$.
\end{proof}

\subsubsection{Right Kan extension from the subcategory of colattices}

Let $\sC$ be a presentable stable $\infty$-category, the category of lattice factorizations is defined as the following pullback squares
\begin{equation}
    \label{eq:defn-Latt-of-sC}
    \begin{tikzcd}
    \Latt_{\sC} \ar[r] \ar[d] & \Pro(\sC)\times \Ind(\sC) \ar[d] \\
    \Fun([1],\Pro(\Ind(\sC))) \ar[r,"\Fib(-) \times \ev_1"'] & \Pro(\Ind(\sC))     \times \Pro(\Ind(\sC))
    \end{tikzcd}
\end{equation}
of categories, where the lower arrow sends a morphism $f:X \ra Y$ to the pair $(\Fib(f), Y)$.

For a fixed object $X$ in $\Tate(\sC)$ one defines $\CoLatt_{\sC}(X)$ as the pullback
\begin{equation}
    \label{eq:defn-collatice-of-X}
    \begin{tikzcd}
    \CoLatt_{\sC}(X) \ar[r] \ar[d] & \Latt_{\sC} \ar[d,"\ev_0"] \\
    \{X\} \ar[r] & \Pro(\Ind(\sC))
    \end{tikzcd}
\end{equation}
where the right-hand downward arrow sends $f:X \ra Y$ to $X$.

The following is an analogue of \cite{Hennion-Tate}*{Theorem 3.15}.

\begin{lem}
\label{lem:colattices-are-initial-in-Ind}
For any $X \in \Tate(\sC)$ the canonical map\footnote{Here and in the following we omit writing the inclusion functors for $\Ind(\sC) \ra \Tate(\sC)$ and similarly for $\Pro(\sC) \ra \Tate(\sC)$.}
\[
F: \CoLatt_{\sC}(X) \ra \Ind(\sC)_{X/-}
\]
is initial.
\end{lem}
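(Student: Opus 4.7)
The plan is to show that for every object $(X \overset{\varphi}{\to} Z) \in \Ind(\sC)_{X/-}$, the comma category $F_{/(X \to Z)}$ is weakly contractible. Its objects are factorizations $X \to Y \to Z$ where $(X \to Y) \in \CoLatt_{\sC}(X)$, i.e.\ $Y \in \Ind(\sC)$ and $\Fib(X \to Y) \in \Pro(\sC)$. I would establish (a) that this category is nonempty for every $\varphi$, and (b) that it is filtered (in particular connected and contractible).

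For nonemptiness, I would use the defining Tate presentation: pick a fiber sequence $L \to X \to Q$ with $L \in \Pro(\sC)$ and $Q \in \Ind(\sC)$. The composite $L \to X \overset{\varphi}{\to} Z$ is a map from a Pro-object to an Ind-object, and since $\Hom_{\Pro(\Ind(\sC))}(L,Z) \simeq \colim_i \Hom(L, Z_i)$ for a presentation $Z \simeq \colim_i Z_i$ with $Z_i \in \sC$, this map factors as $L \to Z_i \to Z$ for some $i$. Then form the pushout
\[
Y := X \underset{L}{\sqcup} Z_i
\]
in $\Pro(\Ind(\sC))$. Since $\sC$ is stable, $\Cofib(X \to Y) \simeq \Cofib(L \to Z_i) \in \Pro(\sC) \cap \Ind(\sC)$, so $\Fib(X \to Y) \in \Pro(\sC)$; and $\Cofib(Z_i \to Y) \simeq \Cofib(L \to X) \simeq Q \in \Ind(\sC)$, whence $Y \in \Ind(\sC)$. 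The commutativity of $L \to Z_i \to Z$ with $L \to X \to Z$ yields, by the universal property of pushout, a canonical map $Y \to Z$ factoring $\varphi$, producing an object of $F_{/(X \to Z)}$.

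For filteredness, given two factorizations $X \to Y_1 \to Z$ and $X \to Y_2 \to Z$ through colattices, I would construct a common upper bound $X \to Y_3 \to Z$ with $Y_3$ a colattice dominating both. Concretely, one can take $Y_3$ to be built from the pushout $Y_1 \underset{X}{\sqcup} Y_2$ equipped with the induced map to $Z$; after applying the same $L$-to-$Z_i$ factorization trick to any remaining Pro-components of the fiber, one arrives at a colattice refining both $Y_1$ and $Y_2$. Similar arguments handle $2$-morphisms and higher simplices, showing the comma category is filtered and hence contractible. Finally, by Quillen's Theorem A (the $\infty$-categorical version, \cite{HTT}*{Theorem 4.1.3.1}), initiality of $F$ follows from the contractibility of every such $F_{/(X \to Z)}$.

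The main obstacle is step (b): controlling the pushouts $Y_1 \underset{X}{\sqcup} Y_2$ to ensure they remain in $\Ind(\sC)$ and that the fiber over $X$ remains Pro; this is where one must iterate the factorization trick and verify that the category of colattices has well-behaved binary coproducts relative to the map to $Z$. Once this filteredness is established, the result follows by the dual of \cite{Hennion-Tate}*{Theorem 3.15}.
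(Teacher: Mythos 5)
Your overall skeleton is the same as the paper's: reduce initiality, via (the dual of) \cite{HTT}*{Theorem 4.1.3.1}, to weak contractibility of the comma categories over each $(X\to Z)$, and then prove nonemptiness plus a (co)filteredness statement. Your nonemptiness argument is essentially a correct unwinding of the factorization result the paper simply cites (\cite{Hennion-Tate}*{Lemma 3.8}): one small slip is the claim that $\Cofib(L\to Z_i)$ lies in $\Pro(\sC)\cap\Ind(\sC)$ --- it is an extension of $Z_i$ by $L[1]$ and so lies in $\Pro(\sC)$ but not in $\Ind(\sC)$ in general; fortunately membership in $\Pro(\sC)$ is all you need for $\Fib(X\to Y)$ to be a Pro-object.

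The genuine gap is in your step (b). You aim to show the comma category is \emph{filtered} by producing a common target out of $Y_1\sqcup_X Y_2$, but, as you note, this pushout sits in a cofiber sequence $Y_2 \to Y_1\sqcup_X Y_2 \to \Cofib(X\to Y_1)$ with $\Cofib(X\to Y_1)\in\Pro(\sC)$, so it is only a Tate object and in general not an object of $\Ind(\sC)$. The proposed repair ("apply the factorization trick to the remaining Pro-components") is not an argument: there is no indicated way to replace $Y_1\sqcup_X Y_2$ by an Ind-quotient of $X$ that still receives maps from both $Y_1$ and $Y_2$ compatibly over $Z$, and doing so amounts to proving a new directedness statement about colattices that your sketch leaves open. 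The paper sidesteps this by working in the opposite direction: given two objects $(P\to X\to I)$ and $(P'\to X\to I')$ of the comma category, it forms the \emph{intersection} $P\times_X P'\to X\to \Cofib$, which maps \emph{to} both given objects; for elementary Tate objects the intersection of two colattices is again a colattice (this is part of the directedness of the lattice Grassmannian established in \cite{Hennion-Tate}), so the comma category is cofiltered and hence weakly contractible by the dual of \cite{HTT}*{Lemma 5.3.1.20}. Either filteredness or cofilteredness would give contractibility, but only the cofiltered direction is supported by the results you can quote; to make your route work you would have to prove that any two colattices admit a common Ind-quotient refinement over $Z$, which is precisely the missing step.
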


\begin{proof}
By (the dual of) \cite{HTT}*{Theorem 4.1.3.1} we need to check that: for any $(X \ra Y) \in \Ind(\sC)_{X/-}$ the category
\begin{equation}
    \label{eq:comma-category-to-check-initial}
    \CoLatt_{\sC}(X) \underset{\Ind(\sC)_{X/-}}{\times} (\Ind(\sC)_{X/-})_{-/(X \ra Y)}    
\end{equation}
is weakly contractible.

We notice that $\Tate(\sC)$ for us means elementary Tate objects, thus by \cite{Hennion-Tate}*{Lemma 3.8} given a map $X \ra Y$ between elementary Tate objects and $(0 \ra Y \ra Y)$ a lattice for $Y$ one has a factorization
\[
\begin{tikzcd}
P \ar[d] \ar[r] & 0 \ar[d] \\
X \ar[d] \ar[r] & Y \ar[d] \\
I \ar[r] & Y \\
\end{tikzcd}
\]

Now suppose given two objects $(P \ra X \ra I)$ and $(P' \ra X \ra I')$ in the category (\ref{eq:comma-category-to-check-initial}), then one has an object
\[
P'\underset{X}{\times}P'' \overset{g}{\ra} X \ra \Cofib(g)
\]
which maps to both of the given objects. Thus, the category (\ref{eq:comma-category-to-check-initial}) is co-filtered , and by (the dual of) \cite{HTT}*{Lemma 5.3.1.20} it is weakly contractible. 
\end{proof}

\subsubsection{Dualizability}

In this section we prove a couple of results about the interaction between the Tate construction and dualization of DG-categories.

Let $\DGcd \subset \DGc$ denote the subcategory of small stable $\infty$-categories which are dualizable with respect to the Lurie tensor product. We will denote by
\[
(-)^{\vee}: \DGcd \ra (\DGcd)^{\rm op}
\]
the anti-equivalence that sends a category $\sC$ to its dual $\sC^{\vee}$.

\begin{prop}
\label{prop:duality-swaps-Pro-and-Ind}
For $\sC \in \DGcd$ one has canonical equivalences:
\begin{enumerate}[(i)]
    \item $\Pro(\sC) \simeq \Ind(\sC^{\vee})$;
    \item $\Ind(\sC) \simeq \Pro(\sC^{\vee})$.
\end{enumerate}
\end{prop}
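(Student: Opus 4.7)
Both statements are related by biduality: since $\sC$ is dualizable, $(\sC^\vee)^\vee \simeq \sC$ canonically in $\DGcd$, so applying (i) to $\sC^\vee$ in place of $\sC$ yields (ii). Therefore, it suffices to establish (i), and the plan is to construct the comparison functor explicitly from the duality data and then verify that it is an equivalence.

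For the construction, recall that for a stable presentable $\infty$-category $\sC$, the category $\Pro(\sC)$ is by definition the opposite of the category of left-exact functors $\sC \to \Spc$, equivalently $\Fun^{\rm lex}(\sC, \Vect)^{\rm op}$ (left-exact functors out of a stable category are automatically exact). Dualizability of $\sC$ gives the canonical identification $\sC^\vee \simeq \Fun^{L}(\sC, \Vect)$, the category of colimit-preserving (continuous, exact) functors. Since any such functor is in particular left-exact, the forgetful map produces a fully faithful embedding
\[
j: \sC^\vee \hookrightarrow \Pro(\sC).
\]
Since $\Pro(\sC)$ is closed under small filtered colimits (computed objectwise in $\Vect$), the universal property of Ind-completion extends $j$ uniquely to a filtered-colimit-preserving functor
\[
\Phi: \Ind(\sC^\vee) \longrightarrow \Pro(\sC).
\]

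To prove that $\Phi$ is an equivalence, the strategy is to exploit the fact that dualizability in $\DGcd$ forces $\sC$ to be compactly generated, so that $\sC \simeq \Ind(\sC^c)$ for $\sC^c$ the small stable subcategory of compact objects, with dual $\sC^\vee \simeq \Ind((\sC^c)^{\rm op})$. Both sides of the claimed equivalence then admit descriptions in terms of $\sC^c$: on the one hand $\Ind(\sC^\vee) \simeq \Ind(\Ind((\sC^c)^{\rm op}))$; on the other, using the classical identity $\Pro(\sD) \simeq \Ind(\sD^{\rm op})^{\rm op}$ for small $\sD$, combined with a cofinality argument showing that every left-exact functor $\Ind(\sC^c) \to \Vect$ is determined by its restriction along $\sC^c \hookrightarrow \Ind(\sC^c)$, one rewrites $\Pro(\sC)$ as the analogous double Ind-completion of $\sC^c$. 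One checks that $\Phi$ realizes the tautological identification of these two descriptions, and hence is an equivalence.

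The main obstacle I anticipate is the careful handling of size issues. Neither $\sC$ nor $\sC^\vee$ is small, so $\Pro(\sC)$ and $\Ind(\sC^\vee)$ have to be interpreted via the appropriate functor categories, and one must verify that the construction is independent of any universe enlargement needed to make sense of these. Closely related is the need to pin down the right notion of ``dualizable in $\DGcd$'': the identification of this class with compactly generated categories (so that $\sC^\vee$ has the convenient description above) is standard but deserves an explicit reference to e.g.\ Lurie's \emph{SAG} or Ben-Zvi--Francis--Nadler. Once these technicalities are in place, the verification that $\Phi$ is an equivalence reduces to formal manipulations of (co)completions of the small category $\sC^c$.
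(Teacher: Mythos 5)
Your proposal takes a genuinely different route from the paper, and the route has real gaps. The paper's proof is a short abstract argument: by Barwick--Schommer-Pries, duality data in a symmetric monoidal $(\infty,1)$-category are unique up to a contractible space of choices; since $\sC \mapsto \sC^{\rm op}$ also exhibits a duality on the relevant category of stable categories, one gets $\sC^{\vee}\simeq \sC^{\rm op}$, and the statement then reduces to the definitional relation between $\Pro$, $\Ind$ and $(-)^{\rm op}$. Note that the $\sC$ appearing here --- as the proposition is actually used (Serre duality on $\Coh(S)$, $\Tate(\sC)\subset\Pro(\Ind(\sC))$) --- is a \emph{small} stable category, dualizable in the symmetric monoidal category of small stable idempotent-complete categories, where every object has dual $\sC^{\rm op}$. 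Your framework (identifying $\sC^{\vee}$ with colimit-preserving functors $\sC\to\Vect$, passing to compact objects $\sC^{c}$, writing $\sC\simeq\Ind(\sC^{c})$) is aimed at the presentable setting and does not match this; and even there, ``dualizable implies compactly generated'' is false for presentable stable categories in general, so the reduction you rely on is not automatic.

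The decisive gap is the ``cofinality argument'' asserting that a left-exact functor $\Ind(\sC^{c})\to\Spc$ is determined by its restriction to $\sC^{c}$: this is false, and with it the identification of $\Pro(\sC)$ with a double completion of $\sC^{c}$ collapses. Concretely, restriction $\Pro(\Vect)\to\Pro(\Perf(k))$ is not even fully faithful: for $V=\bigoplus_{\bN}k$ one has $\Hom_{\Pro(\Vect)}(k,V)\simeq \bigoplus_{\bN}k$, whereas the restriction of the left-exact functor $\Hom_{\Vect}(V,-)$ to $\Perf(k)$ is the pro-object $(k^{n})_{n}$ with projection transition maps, and $\Hom_{\Pro(\Perf(k))}(k,(k^{n})_{n})\simeq \prod_{\bN}k$. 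There is also an unresolved variance issue in your construction of $j$: a colimit-preserving functor $\sC\to\Vect$ is in particular left exact, so the forgetful map lands in the category of left-exact functors, which is $\Pro(\sC)^{\rm op}$ rather than $\Pro(\sC)$; your $j$ therefore embeds $(\sC^{\vee})^{\rm op}$, not $\sC^{\vee}$. (The paper's own statement is itself cavalier about this $(-)^{\rm op}$, but your argument does not repair it.) The efficient fix is the paper's: identify $\sC^{\vee}$ with $\sC^{\rm op}$ by unicity of duality data and then track the $(-)^{\rm op}$'s through the definitions of $\Pro$ and $\Ind$.
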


\begin{proof}
By \cite{Barwick-Schommer-Pries}*{Theorem 7.2} there exists a unique, up to constractible space of choice, duality of in the $\infty$-category of $(\infty,1)$-categories, since
\[
(\sC \in \DGcd) \mapsto (\sC^{\rm op} \in \DGcd)
\]
also realizes this duality, we obtain $\sC^{\vee} \simeq \sC^{\rm op}$ and the result follows.
\end{proof}

\begin{cor}
\label{cor:duality-and-Tate-construction}
For $\sC \in \DGcd$ the duality functor sends $\Tate(\sC) \subset \Pro(\Ind(\sC))$ to the subcategory $\Tate(\sC^{\vee})^{\rm op} \subset \Pro(\Ind(\sC))^{\vee}$.
\end{cor}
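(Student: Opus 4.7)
The strategy is to reduce the statement to a direct computation using the identification of the duality functor with passage to the opposite category, which is the content of Proposition \ref{prop:duality-swaps-Pro-and-Ind}.

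First, I would recall from the proof of Proposition \ref{prop:duality-swaps-Pro-and-Ind} that on $\DGcd$ the duality functor $(-)^{\vee}$ agrees with the opposite category functor $(-)^{\rm op}$ (both realize the essentially unique anti-involution on the $\infty$-category of small idempotent-complete stable $\infty$-categories, by Barwick--Schommer-Pries). Thus under the induced equivalence $\Pro(\Ind(\sC))^{\vee} \simeq \Pro(\Ind(\sC))^{\rm op}$, it suffices to show that passage to the opposite sends $\Tate(\sC)$ into $\Tate(\sC^{\vee})^{\rm op}$, i.e.\ that it interchanges the elementary Tate condition for $\sC$ with the one for $\sC^{\vee}$.

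Next, I would unfold the definition of an elementary Tate object. Any $X \in \Tate(\sC)$ sits in a fiber sequence
\[
P \longrightarrow X \longrightarrow I
\]
in $\Pro(\Ind(\sC))$ with $P \in \Pro(\sC)$ and $I \in \Ind(\sC)$. Since $\Pro(\Ind(\sC))$ is stable, passing to the opposite category sends this to a fiber sequence
\[
I^{\rm op} \longrightarrow X^{\rm op} \longrightarrow P^{\rm op}
\]
in $\Pro(\Ind(\sC))^{\rm op}$. The key step is then to identify
\[
\Pro(\sC)^{\rm op} \simeq \Ind(\sC^{\rm op}) \simeq \Ind(\sC^{\vee}), \qquad \Ind(\sC)^{\rm op} \simeq \Pro(\sC^{\rm op}) \simeq \Pro(\sC^{\vee}),
\]
where the first equivalence in each line is the classical interchange of $\Pro$ and $\Ind$ under opposites, and the second uses $\sC^{\vee} \simeq \sC^{\rm op}$. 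Under these identifications $P^{\rm op}$ becomes an object of $\Ind(\sC^{\vee})$ and $I^{\rm op}$ becomes an object of $\Pro(\sC^{\vee})$, so $X^{\rm op}$ is an extension of an Ind-object by a Pro-object in $\sC^{\vee}$, i.e.\ an elementary Tate object of $\sC^{\vee}$. This exhibits $X^{\vee}$ as an element of $\Tate(\sC^{\vee})^{\rm op} \subset \Pro(\Ind(\sC))^{\vee}$, and functoriality of all the constructions in $X$ upgrades this to the required statement about subcategories.

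The only potentially delicate point will be checking compatibility of the various equivalences: namely, that the identifications $\Pro(\sC)^{\vee} \simeq \Ind(\sC^{\vee})$ and $\Ind(\sC)^{\vee} \simeq \Pro(\sC^{\vee})$ supplied by Proposition \ref{prop:duality-swaps-Pro-and-Ind} fit into a commutative square with the inclusions into $\Pro(\Ind(\sC))$ and $\Pro(\Ind(\sC^{\vee}))$ respectively. This follows from naturality of the duality in $\DGcd$ applied to the inclusions $\sC \hookrightarrow \Ind(\sC)$ and $\sC \hookrightarrow \Pro(\sC)$, but writing out the coherence carefully will be the main bookkeeping task.
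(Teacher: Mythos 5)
Your proposal is correct and follows essentially the same route as the paper: identify the duality with passage to the opposite category via Proposition \ref{prop:duality-swaps-Pro-and-Ind}, observe that the opposite of $\Pro(\Ind(\sC))$ is $\Ind(\Pro(\sC^{\rm op}))$ and that the elementary Tate condition is preserved under swapping $\Pro$ and $\Ind$, then conclude using $\sC^{\vee}\simeq\sC^{\rm op}$. Your explicit fiber-sequence computation simply fills in the step the paper asserts without detail ("the restriction of the duality to $\Tate(\sC)$ factors through $\Tate(\sC^{\rm op})^{\rm op}$"), and the compatibility/bookkeeping issue you flag is exactly the point the paper also glosses over.
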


\begin{proof}
This is purely formal. We notice that
\[
\Pro(\Ind(\sC))^{\vee} \simeq \Ind(\Pro(\sC^{\rm op})),
\]
and that the restriction of the duality to $\Tate(\sC)$ factors through $\Tate(\sC^{\rm op})^{\rm op} \subset \Ind(\Pro(\sC^{\rm op}))$. Finally, we notice that
\[
\Tate(\sC^{\rm op})^{\rm op} \simeq \Tate(\sC^{\rm})^{\rm op}
\]
by the proof of Proposition \ref{prop:duality-swaps-Pro-and-Ind}.
\end{proof}

\subsection{t-structure results}

This section we collect general results about t-structures on Pro-objects and Tate-objects.

\subsubsection{Compatibility with cofiltered limits}

Let $\sC$ be a stable $\infty$-category with t-structure, we will say that the t-structure on $\sC$ is \emph{compatible with cofiltered limits} if the subcategory
\[
\sC^{\leq 0} \subset \sC
\]
is closed under cofiltered colimits.

The following is formally dual to \cite{GR-I}*{Chapter 4, Lemma 1.2.4}.

\begin{lem}
\label{lem:t-structure-on-Pro-objects}
Let $\sC$ be a stable $\infty$-category endowed with a t-structure. Then $\Pro(\sC)$ has a unique t-structure, which is compatible with cofiltered limits, and for which the inclusion $\sC \hra \Pro(\sC)$ is t-exact. Moreover:
\begin{enumerate}[(a)]
    \item The subcategory $\Pro(\sC)^{\leq 0}$ (resp.\ $\Pro(\sC)^{\geq 0}$) is cocompactly generated under cofiltered limits by $\sC^{\leq 0}$ (resp.\ $\sC^{\geq 0}$);
    \item For $\sD$ another stable $\infty$-category endowed with a t-structure compatible with cofiltered limits, let $F:\Pro(\sC) \ra \sD$ be a cocontinuous functor. Then $F$ is t-exact (resp.\ left t-exact, right t-exact) if and only if $\left.F\right|_{\sC}: \sC \ra \sD$ is.
\end{enumerate}
\end{lem}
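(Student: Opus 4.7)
The plan is to deduce the statement by duality from the analogous Ind-version, which is precisely \cite{GR-I}*{Chapter 4, Lemma 1.2.4}, together with the identification $\Pro(\sC) \simeq \Ind(\sC^{\rm op})^{\rm op}$.

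First, I would set up the duality carefully. Given a t-structure $(\sC^{\leq 0}, \sC^{\geq 0})$ on $\sC$, one has the opposite t-structure on $\sC^{\rm op}$ defined by $(\sC^{\rm op})^{\leq 0} := (\sC^{\geq 0})^{\rm op}$ and $(\sC^{\rm op})^{\geq 0} := (\sC^{\leq 0})^{\rm op}$. Applying the Ind case to $\sC^{\rm op}$ produces a t-structure on $\Ind(\sC^{\rm op})$ compatible with filtered colimits, whose connective part $\Ind(\sC^{\rm op})^{\leq 0}$ is compactly generated under filtered colimits by $(\sC^{\rm op})^{\leq 0} = (\sC^{\geq 0})^{\rm op}$, and for which the Yoneda embedding $\sC^{\rm op} \hra \Ind(\sC^{\rm op})$ is t-exact.

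Second, I would transport this t-structure across $\Pro(\sC) \simeq \Ind(\sC^{\rm op})^{\rm op}$ by passing to opposites, declaring $\Pro(\sC)^{\leq 0}$ to correspond to $\Ind(\sC^{\rm op})^{\geq 0}$ and $\Pro(\sC)^{\geq 0}$ to correspond to $\Ind(\sC^{\rm op})^{\leq 0}$. Under opposites, filtered colimits become cofiltered limits and compact objects become cocompact objects, so $\Pro(\sC)^{\leq 0}$ is cocompactly generated under cofiltered limits by $\sC^{\leq 0}$, and analogously for $\Pro(\sC)^{\geq 0}$ and $\sC^{\geq 0}$. The t-exactness of $\sC \hra \Pro(\sC)$ follows from the t-exactness of $\sC^{\rm op} \hra \Ind(\sC^{\rm op})$. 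Compatibility with cofiltered limits is automatic because $\Pro(\sC)^{\leq 0}$ is the opposite of $\Ind(\sC^{\rm op})^{\geq 0}$, which is closed under filtered colimits. For uniqueness, any other t-structure on $\Pro(\sC)$ satisfying both conditions must have its connective part containing $\sC^{\leq 0}$ and closed under cofiltered limits, hence containing the one just constructed; equality of the coconnective parts follows symmetrically (or by dualising the uniqueness statement in the Ind case). This yields the main statement and~(a).

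Third, for (b), the hypothesis that $\sD$ admits a t-structure compatible with cofiltered limits translates, via $\sD \rightsquigarrow \sD^{\rm op}$ with the opposite t-structure, to $\sD^{\rm op}$ having a t-structure compatible with filtered colimits. A functor $F : \Pro(\sC) \to \sD$ which commutes with cofiltered limits corresponds to a functor $F^{\rm op}: \Ind(\sC^{\rm op}) \to \sD^{\rm op}$ which commutes with filtered colimits, and the Ind version of (b) tells us that $F^{\rm op}$ is t-exact (resp.\ left/right t-exact) if and only if its restriction to $\sC^{\rm op}$ is. Passing back through the opposites, this is equivalent to the t-exactness (resp.\ right/left t-exactness) of $\left.F\right|_{\sC}$, swapping left and right as dictated by the opposite-t-structure convention, which gives exactly the three equivalent statements in~(b).

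The main obstacle is entirely bookkeeping rather than mathematical content: one must consistently track how opposites swap the labels $\leq 0 \leftrightarrow \geq 0$, left t-exact $\leftrightarrow$ right t-exact, compact $\leftrightarrow$ cocompact, and filtered colimit $\leftrightarrow$ cofiltered limit, so that the dualised statement matches the formulation in the lemma. Once this dictionary is set up, there is nothing further to prove beyond what is already contained in the Ind-version.
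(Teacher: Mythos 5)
Your proposal is correct and is exactly the paper's approach: the paper gives no written proof beyond the remark that the lemma is "formally dual to" \cite{GR-I}*{Chapter 4, Lemma 1.2.4}, and your argument simply carries out that dualization via $\Pro(\sC)\simeq \Ind(\sC^{\rm op})^{\rm op}$ with the correct bookkeeping of $\leq 0 \leftrightarrow \geq 0$, left/right t-exactness, and (co)filtered (co)limits.
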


\begin{cor}
\label{cor:t-structure-on-Tate-objects}
Given $\sC$ a stable $\infty$-category endowed with a t-structure, then $\Tate(\sC)$ acquires a t-structure, defined as
\[
\Tate(\sC)^{\leq 0} := \Tate(\sC) \cap \Pro(\Ind(\sC))^{\leq 0},
\]
where $\Pro(\Ind(\sC))$ has a t-structure by applying Lemma \ref{lem:t-structure-on-Pro-objects} and \cite{AGH}*{Proposition 2.13}. Moreover, if the t-structure on $\Pro(\Ind(\sC))$ is generated under cofiltered limits, so is the t-structure on $\Tate(\sC)$ and $\Tate(\sC)^{\leq n}$ (resp.\ $\Tate(\sC)^{\geq n}$) is generated under cofiltered limits by $\Ind(\sC)^{\leq n}$ (resp.\ $\Ind(\sC)^{\geq n}$).
\end{cor}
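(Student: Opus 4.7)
The plan is to verify the axioms of a t-structure on $\Tate(\sC)$ and the generation statement. Set $\Tate(\sC)^{\leq 0} := \Tate(\sC) \cap \Pro(\Ind(\sC))^{\leq 0}$ and $\Tate(\sC)^{\geq 0}$ analogously. The Hom-vanishing (orthogonality) is inherited for free from the ambient t-structure on $\Pro(\Ind(\sC))$. The remaining axiom---existence of truncation triangles \emph{inside} $\Tate(\sC)$---reduces to showing that for every $Y \in \Tate(\sC)$, both $\tau^{\leq 0}(Y)$ and $\tau^{\geq 1}(Y)$ (computed in $\Pro(\Ind(\sC))$) already lie in $\Tate(\sC)$.

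To establish this, I would introduce the auxiliary full subcategory
\[
\mathcal{T} := \left\{ Y \in \Pro(\Ind(\sC)) \;\middle|\; \tau^{\leq 0}(Y), \tau^{\geq 1}(Y) \in \Tate(\sC) \right\}
\]
and show $\Tate(\sC) \subset \mathcal{T}$. Recall that $\Tate(\sC)$ is the smallest stable subcategory of $\Pro(\Ind(\sC))$ containing $\Ind(\sC)$ and $\Pro(\sC)$, so it suffices to prove: (1) $\mathcal{T}$ is a stable subcategory, and (2) $\mathcal{T}$ contains both $\Ind(\sC)$ and $\Pro(\sC)$. For (1), the central observation is that $\tau^{\leq 0}$, being right adjoint to the inclusion $\Pro(\Ind(\sC))^{\leq 0} \hra \Pro(\Ind(\sC))$, preserves all limits, hence fiber sequences; dually $\tau^{\geq 1}$ is a left adjoint and preserves cofiber sequences. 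So for a fiber sequence $X \to Y \to Z$ with $X, Z \in \mathcal{T}$, the induced sequences $\tau^{\leq 0}(X) \to \tau^{\leq 0}(Y) \to \tau^{\leq 0}(Z)$ and $\tau^{\geq 1}(X) \to \tau^{\geq 1}(Y) \to \tau^{\geq 1}(Z)$ are again fiber sequences whose outer terms lie in $\Tate(\sC)$, and the middle terms lie there too because $\Tate(\sC)$ is closed under fibers/cofibers. Closure of $\mathcal{T}$ under shifts is obvious. For (2), both inclusions $\Ind(\sC) \hra \Pro(\Ind(\sC))$ and $\Pro(\sC) \hra \Pro(\Ind(\sC))$ are t-exact: the first is part of Lemma \ref{lem:t-structure-on-Pro-objects} (applied to the stable category $\Ind(\sC)$), and the second is obtained by applying Lemma \ref{lem:t-structure-on-Pro-objects} (b) to the Pro-extension of the t-exact inclusion $\sC \hra \Ind(\sC)$. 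T-exactness means truncations of objects in these subcategories remain in the subcategories and agree with the intrinsic truncations, so $\Ind(\sC), \Pro(\sC) \subset \mathcal{T}$.

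For the ``moreover'' part, assume the t-structure on $\Pro(\Ind(\sC))$ is compatible with cofiltered limits. Since $\Tate(\sC)^{\leq n} = \Tate(\sC) \cap \Pro(\Ind(\sC))^{\leq n}$ and the latter is closed under cofiltered limits, so is $\Tate(\sC)^{\leq n}$ (understood as closure under those cofiltered limits that happen to land in $\Tate(\sC)$). Lemma \ref{lem:t-structure-on-Pro-objects} (a) gives that every object of $\Pro(\Ind(\sC))^{\leq n}$ is a cofiltered limit of objects of $\Ind(\sC)^{\leq n}$; intersecting with $\Tate(\sC)$ yields the claimed presentation for $\Tate(\sC)^{\leq n}$, and the dual statement for $\Tate(\sC)^{\geq n}$ follows by the same argument applied to $\Ind(\sC)^{\geq n}$.

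The main potential obstacle is that one might worry about the interaction of truncation with the Tate condition, since naive cofiltered limits of Ind-objects need not be Tate. The key that sidesteps this is the observation that $\tau^{\leq 0}$ is simultaneously a right adjoint (hence preserves fiber sequences), which allows the entire argument to proceed via the stability of $\Tate(\sC)$ rather than via any explicit manipulation of Pro-Ind presentations or elementary Tate decompositions.
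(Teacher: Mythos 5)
Your reduction of the problem to showing that the truncation functors preserve $\Tate(\sC)$ is the right first step, but the engine of your argument --- that $\tau^{\leq 0}$ and $\tau^{\geq 1}$ preserve fiber sequences, so that the class $\mathcal{T}$ is a stable subcategory --- does not work. The functor $\tau^{\leq 0}$ is right adjoint to the inclusion $\Pro(\Ind(\sC))^{\leq 0} \hra \Pro(\Ind(\sC))$ and therefore preserves limits \emph{as a functor into} $\Pro(\Ind(\sC))^{\leq 0}$, where the fiber of a map is itself obtained by truncating the ambient fiber. As an endofunctor of $\Pro(\Ind(\sC))$ it is not exact: for $A$ in the heart, the fiber sequence $A[-1] \ra 0 \ra A$ is sent to $0 \ra 0 \ra A$, which is not a fiber sequence. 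So from a fiber sequence $X \ra Y \ra Z$ with $X,Z \in \mathcal{T}$ you cannot conclude that $\tau^{\leq 0}Y$ is the fiber (or cofiber) of a map between $\tau^{\leq 0}X$ and $\tau^{\leq 0}Z$, and the stability of $\mathcal{T}$ is not established. This is not a repairable technicality within your strategy: at the level of hearts, closure of $\mathcal{T}$ under cofibers amounts to closure of $\Tate(\sC)^{\heartsuit}$ under kernels and cokernels computed in $\Pro(\Ind(\sC))^{\heartsuit}$, which is exactly the classically delicate point (Tate objects form an exact, not an abelian, category). Separately, ``closure of $\mathcal{T}$ under shifts is obvious'' is not: $\tau^{\leq 0}(X[1]) \simeq (\tau^{\leq 1}X)[1]$, and knowing $\tau^{\leq 0}X,\ \tau^{\geq 1}X \in \Tate(\sC)$ does not visibly give $\tau^{\leq 1}X \in \Tate(\sC)$.

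The paper avoids all of this by never leaving the world of explicit presentations: given $Y \in \Tate(\sC)$ written as $Y \simeq \lim_I Y_i$ with $Y_i \in \Ind(\sC)$ and $\Fib(Y_i \ra Y_j) \in \sC$, it uses that $\Pro(\Ind(\sC))^{\leq 0}$ is closed under cofiltered limits (Lemma \ref{lem:t-structure-on-Pro-objects}) to identify $\tau^{\leq 0}Y \simeq \lim_I \tau^{\leq 0}Y_i$, and then checks the Tate condition directly on the fibers $\Fib(\tau^{\leq 0}Y_i \ra \tau^{\leq 0}Y_j)$ using the t-exactness of $\sC \hra \Ind(\sC)$. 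If you want a presentation-free argument, you would need at minimum to replace the fiber-sequence claim by the long exact sequence in cohomology together with a proof that $\Tate(\sC)^{\heartsuit}$ is closed under the relevant subobjects and quotients; that is additional real content, not a formal consequence of adjunction.
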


\begin{proof}
This follows from the fact that $\tau^{\leq 0}$ preserves limits and that the inclusion $\sC \subset \Ind(\sC)$ is t-exact, by definition of the t-structure on $\Ind(\sC)$.
\end{proof}

\begin{lem}
\label{lem:coconnective-Tate-objects-as-colimits}
Suppose that the t-structure on $\Tate(\sC)$ is defined as in Corollary \ref{cor:t-structure-on-Tate-objects}, then for $\sF$ an object of $\Tate(\sC)^{\geq 0}$, we can find a filtered diagram $\{\sF_i\}_{I}$ where each $\sF_i \in \Pro(\sC)^{\geq 0}$ such that
\[
\colim_I\sF_i \simeq \sF.
\]
\end{lem}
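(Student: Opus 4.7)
The plan is to take an Ind-Pro presentation of $\sF$ and replace each term by its connective truncation. Since $\sF$ is an elementary Tate object in $\sC$, realized inside $\Pro(\Ind(\sC))$, it can be written as a filtered colimit
\[
\sF \;\simeq\; \colim_I P_i
\]
with $P_i \in \Pro(\sC)$. The candidate diagram is $\sF_i := \tau^{\geq 0} P_i$, and the content of the lemma is that this colimit recovers $\sF$ and lands in $\Pro(\sC)^{\geq 0}$.

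First I would check that the truncation can equivalently be computed at the level of $\Pro(\sC)$ or of $\Pro(\Ind(\sC))$. This amounts to showing that the canonical inclusion $\Pro(\sC) \hookrightarrow \Pro(\Ind(\sC))$ is t-exact. By Lemma~\ref{lem:t-structure-on-Pro-objects}(b) applied to this cocontinuous functor, t-exactness reduces to that of the restriction $\sC \hookrightarrow \Pro(\Ind(\sC))$, which itself factors as the t-exact maps $\sC \hookrightarrow \Ind(\sC)$ (by definition of the t-structure on $\Ind(\sC)$) and $\Ind(\sC) \hookrightarrow \Pro(\Ind(\sC))$ (by Lemma~\ref{lem:t-structure-on-Pro-objects} again). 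Consequently each $\sF_i$ lies in $\Pro(\sC)^{\geq 0}$, and its image in $\Pro(\Ind(\sC))$ agrees with $\tau^{\geq 0}P_i$ computed there.

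Next I would use that $\tau^{\geq 0}: \Pro(\Ind(\sC)) \to \Pro(\Ind(\sC))^{\geq 0}$ is a \emph{left} adjoint to the inclusion of the coconnective part, and hence commutes with colimits. Combined with Corollary~\ref{cor:t-structure-on-Tate-objects}, which places $\sF \in \Tate(\sC)^{\geq 0}$ inside $\Pro(\Ind(\sC))^{\geq 0}$ so that $\tau^{\geq 0}(\sF) \simeq \sF$, this yields
\[
\colim_I \sF_i \;\simeq\; \colim_I \tau^{\geq 0}(P_i) \;\simeq\; \tau^{\geq 0}\bigl(\colim_I P_i\bigr) \;\simeq\; \tau^{\geq 0}(\sF) \;\simeq\; \sF,
\]
as required.

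The main potential obstacle would have been the failure of $\tau^{\geq 0}$ to commute with the filtered colimit defining $\sF$, but this is averted precisely because $\tau^{\geq 0}$ is a left (not right) adjoint to the inclusion of $\Pro(\Ind(\sC))^{\geq 0}$. The only other thing to verify is the existence of an Ind-Pro presentation $\sF \simeq \colim_I P_i$ of an elementary Tate object inside $\Pro(\Ind(\sC))$, which is part of the definition of $\Tate(\sC)$ used throughout the paper; no cardinality subtlety intervenes because everything happens inside the fixed ambient category $\Pro(\Ind(\sC))$.
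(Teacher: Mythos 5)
Your argument breaks at the step
\[
\colim_I \tau^{\geq 0}(P_i) \;\simeq\; \tau^{\geq 0}\bigl(\colim_I P_i\bigr),
\]
and the issue is exactly the one you dismiss in your last paragraph. It is true that $\tau^{\geq 0}$ is left adjoint to the inclusion $\Pro(\Ind(\sC))^{\geq 0} \hookrightarrow \Pro(\Ind(\sC))$ and therefore preserves colimits — but colimits \emph{in the target} $\Pro(\Ind(\sC))^{\geq 0}$ are computed by applying $\tau^{\geq 0}$ to the ambient colimit. So what the adjunction gives you is $\tau^{\geq 0}(\colim_I \tau^{\geq 0}P_i) \simeq \tau^{\geq 0}(\colim_I P_i) \simeq \sF$, and to strip the outer $\tau^{\geq 0}$ you need the ambient filtered colimit $\colim_I \tau^{\geq 0}P_i$ to already be coconnective, i.e.\ you need $\Pro(\Ind(\sC))^{\geq 0}$ to be closed under filtered colimits in $\Pro(\Ind(\sC))$. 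Lemma \ref{lem:t-structure-on-Pro-objects} only guarantees that the \emph{connective} part of a Pro-category is closed under \emph{cofiltered limits}; closure of the coconnective part under filtered colimits is the formal dual of asking that the connective part of an Ind-category be closed under cofiltered limits, which fails already for spectra. Note also that $\tau^{\geq 0}P_i$ is not a colattice of $\sF$ (its cofiber in $\sF$ is an extension of $\Cofib(P_i \to \sF) \in \Ind(\sC)$ by $\tau^{\leq -1}(P_i)[1] \in \Pro(\sC)$, hence only a Tate object), so you cannot appeal to colattice exhaustion to rescue the colimit identity either.

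The paper avoids this entirely by choosing the presentation from the other side: since $\sF \in \Tate(\sC)^{\geq 0}$, Corollary \ref{cor:t-structure-on-Tate-objects} lets one write $\sF \simeq \lim_J \sG_j$ with $\sG_j \in \Ind(\sC)^{\geq 0}$, and one takes $\sF_j := \Fib(\sF \ra \sG_j)$. These are colattices, hence lie in $\Pro(\sC)$, they are coconnective for free by the long exact sequence (no truncation is ever applied), and $\colim_J \sF_j \simeq \sF$ is the standard exhaustion of an elementary Tate object by its colattices. If you want to keep your strategy, you would have to prove that filtered colimits of coconnective objects in $\Pro(\Ind(\sC))$ remain coconnective, which is not available in this generality.
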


\begin{proof}
Let $\sF \simeq \lim_J\sG_j$ be a presentation of $\sF$, where $\sG_{j} \in \Ind(\sC_0)^{\geq 0}$, one easily checks that
\[
\sF_j := \Fib(\sF \ra \sG_j) \in \Pro(\sC_0)^{\geq 0}.
\]
Thus, one has
\[
\colim_J \sF_j \simeq \sF.
\]
\end{proof}

\subsubsection{Completion of t-structure}

Let $\sC$ be a stable $\infty$-category with a presentable and bounded t-structure. Our main example will be $\sC = \Coh(X)$ for a scheme of almost of finite type $X$.

By \cite{AGH}*{Proposition 2.13} the category $\Ind(\sC)$ inherits a t-structure defined where $\Ind(\sC)^{\leq 0}$ is defined as the subcategory formed under filtered colimits by the essential image of the functor
\[
\sC^{\leq 0} \hra \sC \hra \Ind(\sC).
\]

Moreover, the t-structure on $\Ind(\sC)$ is right complete. Notice however that it seldom is left complete, for instance the t-structure of $\IndCoh(X)$ is \emph{not} left complete (see \cite{GR-I}*{Chapter 4, \S 1.2.6}).

\paragraph{Completion and boundedness}

For a $\infty$-category with a t-structure $\sC$ let
\[
\sC^{\wedge} := \lim_{n \geq 0}\left(\cdots \sC^{\leq (n+1)} \ra \sC^{\leq n} \cdots\right)
\]
denote its right completion, we also denote by
\[
\sC^{-} := \bigcup_{n \geq 0} \sC^{\leq n}
\]
the bounded above part of $\sC$. Now we make the following observation (see \cite{HA}*{Remark 1.2.1.18}) the canonical map
\[
\sC^{\wedge,-} \ra \sC^{\wedge}
\]
is an equivalence.

In particular, one has
\[
\IndCoh(S)^{\wedge,-} \simeq \IndCoh(S)
\]
for any scheme almost of finite type.

\paragraph{Completion of Pro-objects}

Consider the natural inclusion
\begin{equation}
\label{eq:inclusion-Ind-bounded-above-to-Pro-Ind-bounded-above}
\Ind(\sC)^- \hra \Pro(\Ind(\sC))^-    
\end{equation}
where the t-structure on $\Pro(\Ind(\sC))$ is defined similarly as the t-structure in Ind-objects. The map (\ref{eq:inclusion-Ind-bounded-above-to-Pro-Ind-bounded-above}) gives the following functor by passing to right completions
\[
\Ind(\sC)^{\wedge,-} \hra \Pro(\Ind(\sC))^{\wedge,-}    
\]
which is still fully faithful. In the case where $\sC$ has a bounded t-structure, one obtains an inclusion
\begin{equation}
    \label{eq:inclusion-Ind-to-Pro-Ind-bounded-above}
    \Ind(\sC) \simeq \Ind(\sC)^{\wedge} \simeq \Ind(\sC)^{\wedge,-} \hra \Pro(\Ind(\sC))^{\wedge,-}.
\end{equation}

\paragraph{Completed Tate objects}

For our purposes we need to consider a subcategory of completed Tate objects. For $\sC$ an $\infty$-category with a t-structure, recall that we can define a t-structure on $\Tate(\sC)$ by
\[
\Tate(\sC)^{\leq 0} := \Tate(\sC) \bigcap \Pro(\Ind(\sC))^{\leq 0}.
\]

\begin{lem}
For $\sC$ a category with a t-structure, the following are equivalent
\begin{enumerate}[(a)]
    \item $V$ is an object of $\Tate(\sC)^{\wedge}$;
    \item $V \in \Pro(\Ind(\sC))^{\wedge}$ and for each $n \geq 0$ the truncation $\tau^{\leq n}(V) \in \Tate(\sC)^{\leq n}$.
\end{enumerate}
\end{lem}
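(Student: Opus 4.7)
The statement is essentially a definitional unpacking, so my plan is to spell out what both sides mean and verify that they describe the same compatible system of truncations. By the definition of right completion of a t-structure,
\[
\Tate(\sC)^{\wedge} = \lim_{n \geq 0}\Tate(\sC)^{\leq n}, \qquad \Pro(\Ind(\sC))^{\wedge} = \lim_{n \geq 0}\Pro(\Ind(\sC))^{\leq n},
\]
where the transition functors are the truncation functors. Concretely, an object in either limit is the data of a compatible tower $\{V_n\}_{n \geq 0}$ of objects in the respective $\leq n$ subcategory, equipped with isomorphisms $\tau^{\leq n}(V_{n+1}) \simeq V_n$ and higher coherence data.

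The key geometric input is that the t-structure on $\Tate(\sC)$ was \emph{defined} by
\[
\Tate(\sC)^{\leq n} := \Tate(\sC) \cap \Pro(\Ind(\sC))^{\leq n}.
\]
From this one sees immediately that the fully faithful inclusion $\Tate(\sC) \hookrightarrow \Pro(\Ind(\sC))$ is t-exact, and in particular commutes with the truncation functors $\tau^{\leq n}$. Thus the inclusions $\Tate(\sC)^{\leq n} \hookrightarrow \Pro(\Ind(\sC))^{\leq n}$ assemble into a map of towers, and passing to the limit yields a fully faithful functor $\Tate(\sC)^{\wedge} \hookrightarrow \Pro(\Ind(\sC))^{\wedge}$.

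For (a) $\Rightarrow$ (b): a compatible system $\{V_n\}$ with each $V_n \in \Tate(\sC)^{\leq n}$ is tautologically a compatible system with $V_n \in \Pro(\Ind(\sC))^{\leq n}$, and the truncation condition is built in since $\tau^{\leq n}(V) = V_n \in \Tate(\sC)^{\leq n}$. For (b) $\Rightarrow$ (a): a compatible system $\{V_n\}$ in $\Pro(\Ind(\sC))^{\leq n}$ with each $V_n \in \Tate(\sC)^{\leq n}$ automatically lifts to a compatible system in the subtowers $\Tate(\sC)^{\leq n}$, because the transition maps $\tau^{\leq n}(V_{n+1}) \simeq V_n$ live in $\Tate(\sC)^{\leq n}$ by hypothesis and agree with those of the ambient tower under the t-exact inclusion. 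This lifted system defines the required object of $\Tate(\sC)^{\wedge}$.

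There is no genuine obstacle here since the statement is a direct consequence of the t-exactness of $\Tate(\sC) \hookrightarrow \Pro(\Ind(\sC))$ combined with the description of right completions as limits of truncation towers; the only thing to verify carefully is that the coherence data for a tower in $\Pro(\Ind(\sC))^{\wedge}$ whose terms happen to lie in the Tate subcategories transports unambiguously to coherence data inside $\Tate(\sC)$, which again follows from full faithfulness of each inclusion $\Tate(\sC)^{\leq n} \hookrightarrow \Pro(\Ind(\sC))^{\leq n}$.
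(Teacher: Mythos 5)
The paper states this lemma without a proof, so there is no argument of its own to compare against; your proof is the natural one and it is correct. Both completions are by definition limits of truncation towers, the full subcategories $\Tate(\sC)^{\leq n}\subset \Pro(\Ind(\sC))^{\leq n}$ are stable under the transition functors $\tau^{\leq n}$, and therefore $\lim_{n}\Tate(\sC)^{\leq n}$ is identified with the full subcategory of $\lim_{n}\Pro(\Ind(\sC))^{\leq n}$ spanned by towers whose components lie in $\Tate(\sC)^{\leq n}$ --- which is exactly the equivalence of (a) and (b). The only step you pass over a bit quickly is the assertion that t-exactness of $\Tate(\sC)\hra \Pro(\Ind(\sC))$ is ``immediate'' from the definition $\Tate(\sC)^{\leq 0}=\Tate(\sC)\cap \Pro(\Ind(\sC))^{\leq 0}$: defining the connective part as an intersection does not by itself yield t-exactness; one also needs that the ambient truncation functors preserve the subcategory $\Tate(\sC)$, equivalently that $\Tate(\sC)^{\geq 0}=\Tate(\sC)\cap \Pro(\Ind(\sC))^{\geq 0}$. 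This is exactly the content of Corollary \ref{cor:t-structure-on-Tate-objects}, so the omission is expository rather than mathematical, but it is the one input worth citing explicitly, since without it the lifting of the tower in (b) to a tower in the subcategories would not be justified.
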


\paragraph{Right Kan extension from lattices}

We notice that the functor (\ref{eq:inclusion-Ind-to-Pro-Ind-bounded-above}) factors as follows
\begin{equation}
    \label{eq:inclusion-Ind-to-Tate-completed}
    \Ind(\sC) \hra \Tate(\sC)^{\wedge,-}.
\end{equation}

The following result is a direct consequence of Lemma \ref{lem:colattices-are-initial-in-Ind}.

\begin{cor}
\label{cor:completed-colattices-are-initial-in-Ind}
For any $X \in \Tate(\sC)^{\wedge,-}$ the canonical map
\[
F: \CoLatt^{\wedge}_{\sC}(X) \ra \Ind(\sC)_{X/-}
\]
is initial, here the category $\CoLatt^{\wedge}_{\sC}(X)$ is defined as in (\ref{eq:defn-Latt-of-sC}) and (\ref{eq:defn-collatice-of-X}) but considering the corresponding right completed categories.
\end{cor}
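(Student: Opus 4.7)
The plan is to mimic the proof of Lemma \ref{lem:colattices-are-initial-in-Ind} almost verbatim, reducing the completed statement to the (elementary) uncompleted one via bounded-above truncation. By (the dual of) \cite{HTT}*{Theorem 4.1.3.1}, the initiality of $F$ is equivalent to the weak contractibility of the comma category
\[
\mathcal{K}_Y := \CoLatt^{\wedge}_{\sC}(X) \underset{\Ind(\sC)_{X/-}}{\times} (\Ind(\sC)_{X/-})_{-/(X \to Y)}
\]
for every $(X \to Y) \in \Ind(\sC)_{X/-}$. I would then show $\mathcal{K}_Y$ is cofiltered and invoke the dual of \cite{HTT}*{Lemma 5.3.1.20}.

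Since $X \in \Tate(\sC)^{\wedge,-}$ is bounded above, there exists $n$ with $X \in \Pro(\Ind(\sC))^{\leq n}$, and the map $X \to Y$ necessarily factors through the truncation $\tau^{\leq n}(X) \in \Tate(\sC)^{\leq n}$, which is an elementary (non-completed) Tate object. Applying Lemma \ref{lem:colattices-are-initial-in-Ind} to $\tau^{\leq n}(X)$ produces a colattice $P' \to \tau^{\leq n}(X) \to I$ equipped with a map $I \to Y$. To promote this to an object of $\mathcal{K}_Y$, I would pull back along $X \to \tau^{\leq n}(X)$ to form $P := P' \times_{\tau^{\leq n}(X)} X$, yielding a lattice factorization $P \to X \to I$. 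The common-refinement step---given two such factorizations $(P_1 \to X \to I_1)$ and $(P_2 \to X \to I_2)$ over $Y$, form the map $X \to I_1 \times_Y I_2$ and take its fiber---is a direct transcription of the fiber-product argument from the proof of Lemma \ref{lem:colattices-are-initial-in-Ind}.

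The main obstacle will be verifying that the object $(P \to X \to I)$ so constructed genuinely lies in $\CoLatt^{\wedge}_{\sC}(X)$, i.e.\ that $P$ belongs to the completed Pro-category $\Pro(\sC)^{\wedge}$ rather than merely to $\Pro(\Ind(\sC))^{\wedge}$. This reduces to analyzing the fiber sequence
\[
\tau^{\geq n+1}(X) \to P \to P',
\]
in which $P'$ is Pro by construction and $\tau^{\geq n+1}(X)$ is a coconnective piece of a completed Tate object. That $\tau^{\geq n+1}(X)$ is a valid Pro-component in the completed lattice structure is essentially built into the definition of the right completion, combined with the compatibility of the t-structure on $\Pro(\sC)$ with cofiltered limits from Lemma \ref{lem:t-structure-on-Pro-objects}. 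Once this identification is justified, the weak contractibility of $\mathcal{K}_Y$ follows exactly as in the uncompleted case.
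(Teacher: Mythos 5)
Your opening reduction does not go through. An object of $\Tate(\sC)^{\wedge,-}$ is, by the paper's conventions, an object of the right completion $\lim_{n}\Tate(\sC)^{\leq n}$, i.e.\ a compatible system of truncations; it is \emph{not} in general bounded above, so there is no $n$ with $X \in \Pro(\Ind(\sC))^{\leq n}$. (If such an $n$ existed, then by the lemma immediately preceding the corollary $X \simeq \tau^{\leq n}(X)$ would already be an elementary Tate object and the corollary would be a verbatim restatement of Lemma \ref{lem:colattices-are-initial-in-Ind}; the entire content of the completed statement concerns objects that are genuinely unbounded above.) This also removes the justification for the claim that a map $X \ra Y$ factors through $\tau^{\leq n}(X)$.

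Second, even granting a reduction to a fixed truncation, the step you yourself flag as the main obstacle is where the argument actually breaks. In the fiber sequence $\tau^{\geq n+1}(X) \ra P \ra P'$ the left-hand term is a \emph{coconnective completed Tate object}, not a Pro-object: its Ind-direction survives coconnective truncation (think of a completed direct sum of shifted copies of $k((t))$). Hence $P := P'\times_{\tau^{\leq n}(X)}X$ has no reason to lie in the completed Pro-category, and $(P \ra X \ra I)$ need not define an object of $\CoLatt^{\wedge}_{\sC}(X)$; appealing to "the definition of the right completion" together with Lemma \ref{lem:t-structure-on-Pro-objects} does not supply the missing membership. The paper sidesteps both problems by arguing levelwise: it applies Lemma \ref{lem:colattices-are-initial-in-Ind} in each truncated category $\sC^{\leq n}$ and passes to the limit over $n$ defining the completion, using that the truncation functors $\tau^{\leq n}$ are right adjoints and therefore commute with the limits involved. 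To salvage your comma-category approach you would need to exhibit colattices of $X$ as compatible systems of colattices of the $\tau^{\leq n}(X)$, which is essentially the paper's limit argument in disguise.
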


\begin{proof}
We notice that the functors $\tau^{\leq n}: \sC^{\leq (n+1)} \ra \sC^{\leq n}$ are right adjoints, hence they commute with limits. In particular, the results follows from considering Lemma \ref{lem:colattices-are-initial-in-Ind} for the category $\sC^{\leq n}$ for some $n$ and passing to the limit defining the right completion of $\sC$.
\end{proof}

\section{Set-theoretic considerations}

In this section we discuss how to take care of possible set-theoretical problems in certain constructions that we perform.

\subsection{Presentable and accessible $\infty$-categories}

We fix $\lambda$ a regular strongly inaccessible cardinal and consider a Grothendieck universe $\sU(\lambda)$. Given any cardinal $\kappa < \lambda$ we say an $\infty$-category $\sC$ is \emph{essentially $\kappa$-small} if it is equivalent to any $\infty$-category $\sC_0$, such that the underlying simplicial set of $\sC_0$ is $\kappa$-small\footnote{Recall that a simplicial set $S$ is said to be \emph{$\kappa$-small} if $S$ has $<\kappa$ non-degenerate simplices.}.

We recall the following concept
\begin{defn}
\label{defn:accessible-category}
For $\kappa < \lambda$ a regular cardinal. An $\infty$-category $\sC$ is \emph{$\kappa$-accessible} if there exists a $\kappa$-small $\infty$-category $\sC^0$ and an equivalence
\[
\Ind_{\kappa}(\sC^0) \ra \sC.
\]
We will say that an $\infty$-category $\sC$ is \emph{accessible} if it is $\kappa$-accessible for some cardinal $\kappa < \lambda$.
\end{defn}

The following is \cite{HTT}*{Corollary 5.4.3.6}

\begin{lem}
\label{lem:k-accessible-is-k-small-and-idempotent-complete}
An essentially $\kappa$-small $\infty$-category $\sC$ is $\kappa$-accessible if and only if it is idempotent complete, i.e.\ every diagram $F: \mbox{Idem} \ra \sC$ admits a colimit\footnote{See Definition 4.4.5.2 in \cite{HTT}, in short $\mbox{Idem}$ is the classifying space of the two element monoid $\{\id,e\}$, where $e^2 = e$.}.
\end{lem}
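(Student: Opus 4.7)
My plan is to deduce this from \cite{HTT}*{Corollary 5.4.3.6}, which is how it is already cited. Since the statement is an iff, I would verify each direction separately, taking the defining equivalence $\sC \simeq \Ind_\kappa(\sC^0)$ for a $\kappa$-small $\sC^0$ as the working form of $\kappa$-accessibility.

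For the forward direction ($\Rightarrow$), suppose $\sC \simeq \Ind_\kappa(\sC^0)$. The key observation is that the indexing simplicial set $\mathrm{Idem}$ of \cite{HTT}*{Definition 4.4.5.2} is a filtered colimit of its $n$-skeleta, hence $\omega$-filtered, hence $\kappa$-filtered for any uncountable regular $\kappa$. Since $\Ind_\kappa(\sC^0)$ admits all $\kappa$-small filtered colimits, it in particular admits colimits of every $\mathrm{Idem}$-diagram, so $\sC$ is idempotent complete. Concretely, I would invoke \cite{HTT}*{Corollary 4.4.5.16} to identify idempotent completeness with the existence of such colimits and then use the fact that $\Ind_\kappa$-categories are closed under $\kappa$-filtered colimits.

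For the backward direction ($\Leftarrow$), suppose $\sC$ is essentially $\kappa$-small and idempotent complete. I would take $\sC^0 := \sC$ itself and consider the Yoneda embedding $y : \sC \hookrightarrow \Ind_\kappa(\sC)$. By \cite{HTT}*{Proposition 5.4.2.4}, the essential image of $y$ consists precisely of the retracts in $\Ind_\kappa(\sC)$ of representable objects; since $\sC$ is already idempotent complete, $y$ identifies $\sC$ with the full subcategory of $\kappa$-compact objects of $\Ind_\kappa(\sC)$. It remains to argue that every object of $\Ind_\kappa(\sC)$ is already $\kappa$-compact; this is where the essentially $\kappa$-small hypothesis is used. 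Any object $X \in \Ind_\kappa(\sC)$ is presented by a $\kappa$-filtered diagram $F : \sI \ra \sC$, and by a cofinality argument (replacing $\sI$ with an essentially $\kappa$-small cofinal subdiagram using the fact that the target $\sC$ itself has $<\kappa$ simplices), we can arrange for the colimit to be computed inside $\sC$, yielding an object of $\sC$ mapping isomorphically to $X$.

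The main obstacle is the cofinality argument in the backward direction: one needs to carefully invoke the Lurie machinery on presheaves and accessibility to reduce an arbitrary $\kappa$-filtered diagram into $\sC$ to one whose colimit factors through $y$. In practice the cleanest way is to quote \cite{HTT}*{Corollary 5.4.3.6} directly rather than reproduce Lurie's treatment, which is the approach taken in the excerpt.
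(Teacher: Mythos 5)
The paper offers no argument for this lemma beyond the citation of \cite{HTT}*{Corollary 5.4.3.6}, which is exactly where your proposal ends up, so at the level of what is actually written down you and the paper agree. Two steps in your sketch would nevertheless fail if you tried to expand them. First, the chain ``$\omega$-filtered, hence $\kappa$-filtered for any uncountable regular $\kappa$'' runs the implication backwards: $\kappa$-filtered implies $\omega$-filtered, not conversely ($\bN$ is filtered but not $\omega_1$-filtered). The fact you actually need is that $\mbox{Idem}$ is $\kappa$-filtered for \emph{every} regular cardinal --- roughly because the idempotent $e$ supplies a cocone on any subdiagram, irrespective of its size --- and this is precisely what the paper records and uses in the proof of Proposition \ref{prop:Pro-kappa-is-kappa-accessible}. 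Second, in the backward direction, a $\kappa$-filtered index category need not admit a $\kappa$-small cofinal subcategory, and $\kappa$-smallness of the target $\sC$ does not manufacture one; the mechanism in Lurie's proof is rather to re-index the ind-object by a $\kappa$-small $\kappa$-filtered $K$ and then apply $\kappa$-filteredness of $K$ to its identity diagram to extend it over $K^{\triangleright}$, which exhibits the colimit as a retract of an object of $\sC$ and hence, by idempotent completeness, as an object of $\sC$. Since you ultimately defer to the quoted corollary rather than to these sketches, the proposal is acceptable as it stands, but the two justifications above should not be internalized as written.
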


For our needs we will consider a variant of the notion of a presentable category notion from \cite{HTT}*{\S 5.5}, where we keep track of certain cardinalities.

\begin{defn}
\label{defn:l-presentable-category}
An $\infty$-category $\sC$ is said to be \emph{$\lambda$-presentable} if $\sC$ is $\kappa$-accessible for a regular cardinal $\kappa < \lambda$ and admits $\kappa$-small colimits. 
\end{defn}

The following is a variation of \cite{HTT}*{Proposition 5.4.2.9}.

\begin{prop}
Let $\sC$ be $\lambda$-presentable category, then $\sC$ is $\lambda'$-presentable for any $\lambda' >> \lambda$\footnote{We refer to \cite{HTT}*{Definition A.2.6.3} for the definition of the symbol $>>$ in this context.}
\end{prop}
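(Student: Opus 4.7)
The plan is to unwind the definition of $\lambda$-presentability and show that the same witness cardinal works for $\lambda'$, possibly after invoking HTT Proposition 5.4.2.9 to replace it by a larger one compatible with $\lambda'$. Concretely, by Definition \ref{defn:l-presentable-category}, the hypothesis that $\sC$ is $\lambda$-presentable gives a regular cardinal $\kappa < \lambda$ such that $\sC$ is $\kappa$-accessible and admits $\kappa$-small colimits. The goal is then to produce a regular cardinal $\kappa' < \lambda'$ with the analogous properties.

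First, I would observe that $\lambda' \gg \lambda$ forces $\lambda < \lambda'$, so the original witness $\kappa$ already satisfies $\kappa < \lambda'$. Setting $\kappa' := \kappa$, the accessibility of $\sC$ at $\kappa'$ is automatic, and the existence of $\kappa'$-small colimits is literally the hypothesis on $\kappa$-small colimits. This already establishes $\lambda'$-presentability from the bare definition.

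If a stronger conclusion is desired (for instance producing a $\kappa'$ that is genuinely comparable to $\lambda'$, which is what makes the parallel with HTT Proposition 5.4.2.9 meaningful), the second step would invoke HTT Proposition 5.4.2.9: any $\kappa$-accessible category is also $\kappa''$-accessible for every regular $\kappa'' \gg \kappa$. Choosing $\kappa''$ with $\kappa \ll \kappa'' < \lambda'$ (possible because $\lambda' \gg \lambda > \kappa$ and $\lambda'$ is a regular strongly inaccessible cardinal, hence closed under the relevant exponential operations), we obtain a larger witness cardinal for accessibility. The admissibility of $\kappa''$-small colimits then follows from the admissibility of $\kappa$-small colimits together with the fact that $\sC$ being $\kappa''$-accessible implies it is closed under $\kappa''$-filtered colimits of $\kappa$-compact objects, which together with $\kappa$-small colimits generates all $\kappa''$-small colimits (a standard cofinality argument).

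The only nontrivial step is the invocation of HTT 5.4.2.9, and this is entirely black-boxed. The rest is a cardinal-arithmetic bookkeeping exercise ensuring that $\kappa''$ can indeed be chosen strictly below $\lambda'$, which is where the hypothesis $\lambda' \gg \lambda$ (rather than merely $\lambda' > \lambda$) is used.
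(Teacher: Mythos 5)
Your first step — reusing the same witness cardinal $\kappa < \lambda < \lambda'$ and noting that both the accessibility and the $\kappa$-small-colimit conditions are unchanged — is exactly the paper's (one-line) proof. The second paragraph about upgrading to a larger witness via HTT 5.4.2.9 is not needed for the statement as given and the paper does not pursue it.
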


\begin{proof}
We need to check that $\sC$ is $\kappa$-accessible for some cardinal $\kappa < \lambda'$, indeed that is clearly the case. The condition of admiting $\kappa$-small colimits is automatic.
\end{proof}

\subsection{$\kappa$-Pro-objects}

\begin{defn}
\label{defn:k-pro-objects}
For $\kappa < \lambda$ a regular cardinal. The $\infty$-category $\Pro_{\kappa}(\sC)$ is the full subcategory of $\Fun(\sC,\Spc)^{\rm op}$ generated by the functors which preserve $\kappa$-small cofiltered limits.
\end{defn}

The following is \cite{thesis}*{Lemma 2.2.1}, but we give a proof for the convenience of the reader.

\begin{lem}
\label{lem:Pro-kappa-of-essentially-lambda-small-is-essentially-small}
For $\sC$ an essentially $\lambda$-small $\infty$-category, the category $\Pro_{\kappa}(\sC)$ is essentially $\lambda$-small.
\end{lem}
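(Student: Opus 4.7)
The plan is to bound the number of non-degenerate simplices in a suitable model of $\Pro_\kappa(\sC)$ using the strong inaccessibility of $\lambda$. The starting point is the description of Pro-objects by cofiltered diagrams: by (the dual of) \cite{HTT}*{Corollary 5.3.5.4}, every object of $\Pro_\kappa(\sC)$ is equivalent to one of the form $\lim_{I^{\rm op}} p(i)$ for some $\kappa$-small cofiltered $\infty$-category $I$ and some functor $p:I\to\sC$. So the first step is to replace $\sC$ by an equivalent model $\sC_0$ whose underlying simplicial set has $<\lambda$ non-degenerate simplices, which is possible by hypothesis.

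The second step is a cardinality count of objects. The collection of (isomorphism classes of) $\kappa$-small $\infty$-categories $I$ is bounded by the collection of $\kappa$-small simplicial sets, which has cardinality at most $2^\kappa < \lambda$ since $\lambda$ is strongly inaccessible and $\kappa<\lambda$. For each such $I$, the set of functors $I\to\sC_0$ is bounded degreewise by $|\sC_0|^{|I|}<\lambda^\kappa<\lambda$, again by strong inaccessibility. Summing, the class of objects of $\Pro_\kappa(\sC)$ up to equivalence has cardinality $<\lambda\cdot\lambda=\lambda$.

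The third step is the analogous count for mapping spaces. By definition of $\Pro_\kappa$, for $X=\lim_{I^{\rm op}}p(i)$ and $Y=\lim_{J^{\rm op}}q(j)$ one has
\[
\Hom_{\Pro_\kappa(\sC)}(X,Y)\;\simeq\;\lim_{J^{\rm op}}\,\colim_{I}\,\Hom_{\sC_0}(p(i),q(j)).
\]
Each $\Hom_{\sC_0}(p(i),q(j))$ is an essentially $\lambda$-small space (its homotopy groups are computed from a $\lambda$-small simplicial set), and the outer limit and inner colimit are $\kappa$-small. Strong inaccessibility then guarantees that the homotopy groups of the mapping space are of size $<\lambda$, so there is a model with $<\lambda$ non-degenerate simplices. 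Putting these together, one can extract a quasicategorical model of $\Pro_\kappa(\sC)$ whose underlying simplicial set has total size $<\lambda$.

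The main obstacle is bookkeeping: writing down a genuine simplicial-set model of $\Pro_\kappa(\sC)$ (rather than just counting equivalence classes of objects and mapping spaces separately) and verifying that the cardinality bounds pass through the construction. The cleanest way around this is to present $\Pro_\kappa(\sC)$ as a full subcategory of $\Fun(\sC_0^{\rm op},\Spc^{<\lambda})^{\rm op}$ where $\Spc^{<\lambda}$ is the (essentially $\lambda$-small) $\infty$-category of $\lambda$-small spaces, and to observe that a left-exactness condition cuts out a subcategory whose simplicial set of $n$-simplices is controlled by the bounds above. Everything else is a direct application of Definition~\ref{defn:k-pro-objects} together with the cardinal arithmetic of the strongly inaccessible cardinal $\lambda$.
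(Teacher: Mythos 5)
Your proposal is correct and is essentially the paper's argument: a cardinality count exploiting the strong inaccessibility of $\lambda$, with objects presented as $\kappa$-small cofiltered limits of objects of $\sC$. Two remarks. First, the "bookkeeping obstacle" you flag at the end — passing from separate bounds on objects-up-to-equivalence and on mapping spaces to an actual essentially $\lambda$-small model — is exactly what the paper dispatches by citation: $\Fun(\sC,\Spc)^{\rm op}$ is locally $\lambda$-small by \cite{HTT}*{Example 5.4.1.8}, and then \cite{HTT}*{Proposition 5.4.1.2 (1)} says that a locally $\lambda$-small $\infty$-category with $<\lambda$ equivalence classes of objects is essentially $\lambda$-small; so your hand-count of mapping spaces in the third step can be replaced by that citation, and only the object count is needed. (The paper organizes the object count slightly differently, first bounding the cocompact objects as retracts of $\kappa$-small limits via the dual of \cite{HTT}*{Proposition 5.3.4.17} and then writing everything as a $\kappa$-cofiltered limit of those, but your one-step count via the dual of \cite{HTT}*{Corollary 5.3.5.4} is equivalent.) Second, a literal error in your second step: $\lambda^{\kappa}<\lambda$ is false ($\lambda^{\kappa}\geq\lambda$ always). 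What you want is $|\sC_0|^{|I|}\leq\mu^{\kappa}<\lambda$ for $\mu:=|\sC_0|<\lambda$, which holds because $\mu^{\kappa}\leq 2^{\max(\mu,\kappa)}$ and $\lambda$ is a strong limit; the same correction applies wherever you invoke "$\lambda^{\kappa}<\lambda$". Also note that for Pro-objects the ambient presheaf category is $\Fun(\sC,\Spc)^{\rm op}$ (covariant functors, opposite category), not $\Fun(\sC^{\rm op},\Spc)^{\rm op}$ as in your closing paragraph.
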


\begin{proof}
By \cite{HTT}*{Example 5.4.1.8} the category of presheaves $\Fun(\sC,\Spc)^{\rm op}$ is locally $\lambda$-small, thus to check that $\Pro_{\kappa}\subset \Fun(\sC,\Spc)^{\rm op}$ is essentially $\lambda$-small, by \cite{HTT}*{Proposition 5.4.1.2 (1)} it is enough to prove that $\Pro_{\kappa}(\sC)$ has a $\lambda$-small set of equivalence classes of objects. Since the equivalence classes of objects are determined by the equivalence classes of objects in the homotopy category $\h\Pro_{\kappa}(\sC)$ it is enough to check that this class is $\lambda$-small.
Now consider $\sD \subset \h\Pro_{\kappa}(\sC)$ the subcategory of cocompact objects. By the dual of \cite{HTT}*{Proposition 5.3.4.17} an object $X \in \h\Pro_{\kappa}(\sC)$ is cocompact if and only if there exists a diagram
\[
p: K \ra \sC \ra \Fun(\sC,\Spc)^{\rm op}
\]
where $K$ is $\kappa$-small, and $X$ is a retract of $Y$ the limit of $K$ in $\Fun(\sC,\Spc)^{\rm op}$. Since this data is determined by $K$, $p$ and the idempotent map $Y \overset{e}{\ra} Y$ which are all bounded by $\kappa$, there are only $< \lambda$ eqiuvalence classes of objects in $\sD$. Finally, any object of $\h\Pro_{\kappa}(\sC)$ is a formal $\kappa$-cofiltered limit of objects in $\sD$ and those diagrams are also bounded by $\lambda$.
\end{proof}

In the case where the category we started with is idempotent complete the Pro-objects construction with the appropriate cardinals is also idempotent complete. The following is a consequence of the general behaviour of filtered colimits and small limits.

\begin{prop}
\label{prop:Pro-kappa-is-kappa-accessible}
Let $\sC$ be an idempotent complete essentially $\lambda$-small category, then the category $\Pro_{\kappa}(\sC)$ is idempotent complete. In particular, $\Pro_{\kappa}(\sC)$ is $\kappa$-accessible. 
\end{prop}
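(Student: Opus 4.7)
The plan is to situate both claims inside the fully faithful embedding
\[
\Pro_\kappa(\sC) \hookrightarrow \Fun(\sC,\Spc)^{\rm op},
\]
and exploit the fact that the ambient category admits all small limits, in particular splittings of idempotents. The whole argument is then a matter of checking that the splitting stays inside the Pro-subcategory.

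First I would prove idempotent completeness. Given an idempotent $e \colon X \to X$ in $\Pro_\kappa(\sC)$, I would split it inside $\Fun(\sC,\Spc)^{\rm op}$, producing a retract $X \to Y \to X$. The crucial verification is that $Y \colon \sC \to \Spc$ still preserves $\kappa$-small cofiltered limits of $\sC$, i.e.\ lies in $\Pro_\kappa(\sC)$. For this, let $\lim_I c_i$ be such a limit in $\sC$. Since $X$ is in $\Pro_\kappa(\sC)$, the canonical map $X(\lim_I c_i) \to \lim_I X(c_i)$ is an equivalence in $\Spc$. The split idempotent on $X$ restricts to a split idempotent $X(c_i) \to Y(c_i) \to X(c_i)$ for every $i$, compatibly in $i$; taking limits on both sides and using that retracts (being finite limits) commute with all limits in $\Spc$, one identifies the natural map $Y(\lim_I c_i) \to \lim_I Y(c_i)$ with the retract of the equivalence for $X$, hence it is itself an equivalence.

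For the ``in particular'' assertion I would combine three inputs: Lemma \ref{lem:Pro-kappa-of-essentially-lambda-small-is-essentially-small}, which provides essential smallness of $\Pro_\kappa(\sC)$; the idempotent completeness just proved; and the characterization of Lemma \ref{lem:k-accessible-is-k-small-and-idempotent-complete}, applied in the appropriate size regime (so that an essentially small, idempotent complete $\infty$-category is accessible in the desired sense, via the Ind-completion description $\Pro_\kappa(\sC) \simeq \Ind_\kappa(\sC^{\rm op})^{\rm op}$).

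The main technical point is the stability of $\Pro_\kappa$ under retracts in $\Fun(\sC,\Spc)^{\rm op}$, i.e.\ that the functor produced by splitting an idempotent still commutes with $\kappa$-small cofiltered limits; once this is checked, everything else is formal invocation of earlier results. The rest is purely bookkeeping about cardinals $\kappa < \lambda$ and their interaction with the definitions of ``essentially small'' and ``accessible.''
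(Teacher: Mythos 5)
Your proof takes a genuinely different route from the paper's. You split the idempotent \emph{externally}: the ambient category $\Fun(\sC,\Spc)^{\rm op}$ has all small limits, hence is idempotent complete, and you then check that the resulting retract $Y$ of $X$ still preserves $\kappa$-small cofiltered limits because its comparison map $Y(\lim_I c_i) \ra \lim_I Y(c_i)$ is a retract, in the arrow category, of the corresponding equivalence for $X$, and equivalences are closed under retracts. That step is sound, though your parenthetical justification is not: a retract diagram is not a finite limit ($\mbox{Idem}$ has infinitely many nondegenerate simplices); what you actually use is that a retraction diagram is an absolute limit, preserved by every functor, so that the retractions $X(c_i) \ra Y(c_i) \ra X(c_i)$ assemble compatibly and pass through $\lim_I$. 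The paper instead works \emph{internally}: it lifts the idempotent diagram in $\Pro_{\kappa}(\sC)$ to a $\kappa$-cofiltered family of diagrams valued in $\sC$ (via \cite{HTT}*{Proposition 5.3.5.15}), splits each of these using the idempotent completeness of $\sC$, and identifies the cofiltered limit of the levelwise splittings with the splitting of the pro-object. Your argument is shorter and avoids the delicate cofinality manipulations with $\mbox{Idem}$ and $\mbox{Idem}^+$; the paper's argument keeps everything inside the formal-limit description of pro-objects.

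The point you must address is that your proof never invokes the hypothesis that $\sC$ is idempotent complete, while the paper's uses it essentially. This is harmless if $\Pro_{\kappa}(\sC)$ is literally the full subcategory of $\Fun(\sC,\Spc)^{\rm op}$ spanned by \emph{all} functors preserving $\kappa$-small cofiltered limits, which is one reading of Definition \ref{defn:k-pro-objects}. But the surrounding text (in particular the proof of Lemma \ref{lem:Pro-kappa-of-essentially-lambda-small-is-essentially-small}, and your own appeal to $\Pro_{\kappa}(\sC) \simeq \Ind_{\kappa}(\sC^{\rm op})^{\rm op}$) treats $\Pro_{\kappa}(\sC)$ as the subcategory generated under formal $\kappa$-cofiltered limits by the corepresentables. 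Under that reading, showing that $Y$ preserves $\kappa$-small cofiltered limits does not by itself place $Y$ in $\Pro_{\kappa}(\sC)$: you would still need to exhibit $Y$ as a formal $\kappa$-cofiltered limit of corepresentables, and that identification is exactly where idempotent completeness of $\sC$ enters (it is what makes the cocompact objects corepresentable rather than merely retracts of corepresentables). So either state explicitly which definition you are working with and observe that the hypothesis on $\sC$ is then not needed, or supply the missing identification step.
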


\begin{proof}
We notice that for any infinite cardinal $\lambda$, $\mbox{Idem}$ is $\lambda$-filtered. By \cite{HTT}*{Proposition 5.3.1.18} there exists a filtered partially ordered set $A$ and a cofinal map
\[
\N(A) \ra \mbox{Idem}
\]
of $\infty$-categories. Moreover, we claim that we can pick $A$ to be finite, indeed by \cite{HTT}*{Lemma 4.4.5.11} the inclusion $\mbox{Idem} \subseteq \mbox{Idem}^+$ is cofinal, and $\mbox{Idem}^+ \simeq \N(A_0)$, where $A_0$ is the category described in Definition 4.4.5.2. Now consider a diagram
\[
F: \mbox{Idem}^+ \ra \Pro_{\kappa}(\sC)
\]
by applying \cite{HTT}*{Proposition 5.3.5.15} one can find a diagram
\[
\tilde{F}: J \ra \Fun(\mbox{Idem}^+,\sC),
\]
where $J$ is a $\kappa$-cofiltered diagram. Since $\sC$ is idempotent complete for each $j \in J$ the colimit $\colim_{\mbox{Idem}}F(j)$ exists. Moreover, since $\kappa$-filtered colimits commute with $\kappa$-small limits, the natural map
\[
\colim_{\mbox{Idem}^+}F \ra \lim_{J}\tilde{F}
\]
is an isomorphism, since $\mbox{Idem}^+$ is $\kappa$-filtered. This proves that $\Pro_{\kappa}(\sC)$ is idempotent complete. The last assertion is a consequence of Lemma \ref{lem:k-accessible-is-k-small-and-idempotent-complete}.
\end{proof}

\begin{cor}
\label{cor:Pro-k-sC-is-l-presentable}
For $\sC$ an idempotent complete stable essentially $\lambda$-small category, the category $\Pro_{\kappa}(\sC)$ is $\lambda$-presentable.
\end{cor}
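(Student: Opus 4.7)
The plan is to verify the two conditions from Definition \ref{defn:l-presentable-category} directly: that $\Pro_{\kappa}(\sC)$ is $\kappa$-accessible for some regular $\kappa < \lambda$, and that $\Pro_{\kappa}(\sC)$ admits $\kappa$-small colimits. The first condition is already provided by Proposition \ref{prop:Pro-kappa-is-kappa-accessible} once we observe that stability of $\sC$ forces idempotent completeness, so the hypothesis of that proposition is satisfied.

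For the second condition, I would use the defining embedding $\Pro_{\kappa}(\sC) \subset \Fun(\sC,\Spc)^{\rm op}$ as the full subcategory of functors $\sC \to \Spc$ preserving $\kappa$-small cofiltered limits. A $\kappa$-small colimit in $\Pro_{\kappa}(\sC)$ is, by construction, a $\kappa$-small limit in $\Fun(\sC,\Spc)$ computed pointwise in $\Spc$. So it suffices to show that this subcategory is closed under $\kappa$-small limits in $\Fun(\sC,\Spc)$. Given a $\kappa$-small diagram $i \mapsto F_i$ of such functors and a $\kappa$-small cofiltered diagram $j \mapsto X_j$ in $\sC$ with limit $X$, this reduces to the identity $(\lim_i F_i)(X) = \lim_i F_i(X) = \lim_i \lim_j F_i(X_j) = \lim_j \lim_i F_i(X_j) = \lim_j (\lim_i F_i)(X_j)$, which is a straightforward application of Fubini for limits in $\Spc$.

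Combining these two observations, $\Pro_{\kappa}(\sC)$ satisfies both conditions of Definition \ref{defn:l-presentable-category} with the same $\kappa$, hence is $\lambda$-presentable. There is no real obstacle here beyond keeping track of cardinality bookkeeping; the only subtlety is remembering that $\lambda$-presentability in our sense is a weaker notion than the classical one of \cite{HTT}*{\S 5.5}, so we do not need to check that $\Pro_{\kappa}(\sC)$ admits all small colimits, only $\kappa$-small ones.
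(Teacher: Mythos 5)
Your skeleton matches the paper's: both proofs invoke Proposition \ref{prop:Pro-kappa-is-kappa-accessible} for $\kappa$-accessibility and then verify the existence of $\kappa$-small colimits, which is all that Definition \ref{defn:l-presentable-category} asks for. One small correction before the substantive point: your aside that ``stability of $\sC$ forces idempotent completeness'' is false (a small stable $\infty$-category need not be idempotent complete), and it is also unnecessary, since idempotent completeness is already a hypothesis of the corollary and is simply handed to Proposition \ref{prop:Pro-kappa-is-kappa-accessible}.

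The colimit step is where you genuinely diverge. The paper argues that $\Pro_{\kappa}(\sC)$ is stable (citing \cite{thesis}*{Lemma 2.2.7}), uses this to trade $\kappa$-small colimits for $\kappa$-small limits, and obtains the latter from the dual of \cite{HTT}*{Proposition 5.3.5.14} together with the fact that the inclusion $\sC \ra \Pro_{\kappa}(\sC)$ preserves the relevant limits. You instead check directly that the full subcategory $\Pro_{\kappa}(\sC)^{\rm op}\subset \Fun(\sC,\Spc)$ is closed under pointwise $\kappa$-small limits, by commuting limits past limits in $\Spc$. This is clean and self-contained \emph{provided} membership in $\Pro_{\kappa}(\sC)$ really is the property of preserving $\kappa$-small cofiltered limits, which is what Definition \ref{defn:k-pro-objects} literally says. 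Be aware, though, that elsewhere the paper treats $\Pro_{\kappa}(\sC)$ as the bounded pro-category in the usual sense --- the proof of Proposition \ref{prop:Pro-kappa-is-kappa-accessible} presents its objects as formal $\kappa$-small cofiltered limits of objects of $\sC$ via \cite{HTT}*{Proposition 5.3.5.15}. For that model your Fubini computation only shows that a pointwise $\kappa$-small limit of such functors again preserves whatever $\kappa$-small cofiltered limits exist in $\sC$; it does not by itself show that this limit is again a $\kappa$-small cofiltered limit of corepresentables, which is the actual membership condition. Closing that gap (for instance by rewriting a $\kappa$-small limit as a $\kappa$-small cofiltered limit of finite limits and using that the stable category $\sC$ has finite limits) is precisely the work the paper outsources to stability of $\Pro_{\kappa}(\sC)$ and the HTT reference. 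So your argument is correct on the letter of the definition, but you should either commit explicitly to that reading or supply this extra step.
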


\begin{proof}
By Proposition \ref{prop:Pro-kappa-is-kappa-accessible} $\Pro_{\kappa}(\sC)$ is $\kappa$-accessible, we only need to check that $\Pro_{\kappa}(\sC)$ admits $\kappa$-small colimits. Since $\Pro_{\kappa}(\sC)$ is stable (see \cite{thesis}*{Lemma 2.2.7}) colimits and limits coincide, hence it is enough to check that $\Pro_{\kappa}(\sC)$ admits $\kappa$-small limits. Now this follows from the fact that $\sC$ admits all $\kappa$-small limits and the inclusion $\sC \ra \Pro_{\kappa}(\sC)$ preserves all $\kappa$-small limits by the dual of \cite{HTT}*{Proposition 5.3.5.14}.
\end{proof}

\section{Useful results from Category Theory}

\subsection{Limits of presentable $\infty$-categories}

Consider a functor
\[
\sC: I \ra \Catpr
\]
from a diagram ($\infty$-)category $I$ into the $\infty$-category of presentable $\infty$-categories with morphisms colimits preserving functors. Let 
\[
\sC^{\rm R}: I^{\rm op} \ra \Cat 
\]
denote the $I^{\rm op}$-indexed diagram of $\infty$-categories obtained by passing to right adjoints\footnote{Notice that though the categories are still presentable the right adjoint functors between them don't necessarily preserve colimits anymore.}. One has the following result that we use in numerous manipulations.

\begin{lem}[\cite{GR-I}*{Chapter 1, Proposition 2.5.7}]
\label{lem:limits-to-colimits-of-categories}
The canonical morphism
\[
\colim_I\sC_i \ra \lim_{I^{\rm op}}\sC^{\rm R}_{i}
\]
is an equivalence of $\infty$-categories.
\end{lem}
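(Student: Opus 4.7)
The plan is to reduce the statement to a standard duality theorem of Lurie between presentable $\infty$-categories with left-adjoint morphisms and with right-adjoint morphisms. Recall that $\Catpr$ is Lurie's $\Pr^L$, the $\infty$-category whose objects are presentable $\infty$-categories and whose morphisms are colimit-preserving functors. The key input is the anti-equivalence
\[
(-)^{\rm R}: \Catpr \overset{\simeq}{\ra} (\Pr^R)^{\rm op},
\]
(see \cite{HTT}*{Corollary 5.5.3.4}), where $\Pr^R$ has the same objects but morphisms are accessible limit-preserving functors, given by sending a left adjoint to its right adjoint. Under this anti-equivalence, the diagram $\sC:I \ra \Catpr$ transports to the diagram $\sC^{\rm R}: I^{\rm op} \ra \Pr^R$ from the statement.

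Next I would apply \cite{HTT}*{Theorem 5.5.3.18}, which says that the forgetful functor $\Pr^R \ra \widehat{\Cat}_\infty$ preserves small limits. Consequently, the limit $\lim_{I^{\rm op}} \sC^{\rm R}_i$ computed inside $\Pr^R$ agrees with the limit computed inside $\widehat{\Cat}_\infty$, which is the limit appearing on the right-hand side of the statement. Since an anti-equivalence of $\infty$-categories sends colimits to limits, the colimit $\colim_I \sC_i$ in $\Catpr$ is carried to $\lim_{I^{\rm op}} \sC^{\rm R}_i$ in $\Pr^R$. Chaining the two identifications produces the desired equivalence of $\infty$-categories.

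The final step is to identify the abstractly produced equivalence with the canonical map described in the statement. For each $i \in I$ one has the insertion functor $\iota_i: \sC_i \ra \colim_I \sC_i$ in $\Catpr$; its right adjoint $\iota_i^{\rm R}$ is the component of the universal cone exhibiting $\colim_I \sC_i$ as $\lim_{I^{\rm op}} \sC^{\rm R}_i$ in $\Pr^R$. The canonical map $\colim_I \sC_i \ra \lim_{I^{\rm op}} \sC^{\rm R}_i$ in $\widehat{\Cat}_\infty$ is precisely the one assembled from the family $\{\iota_i^{\rm R}\}$, so it coincides with the equivalence produced above. I expect the main (mild) obstacle to be this last identification of the comparison map, which amounts to unwinding the universal property of the colimit in $\Catpr$ against the construction of the limit in $\Pr^R$; no further calculation is required.
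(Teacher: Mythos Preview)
Your argument is correct and is the standard proof of this fact. Note, however, that the paper does not supply its own proof of this lemma: it simply records the statement and cites \cite{GR-I}*{Chapter 1, Proposition 2.5.7} for the proof, so there is nothing in the paper to compare your approach against beyond the reference.
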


\subsection{Passing to adjoints}

In this section we collect some facts about passing to adjoints, which will be useful in the next sections.

\subsubsection{Adjoints and limits of $\infty$-categories}

Suppose that $G: \sC \ra \sD$ is a functor between $\infty$-categories, and let $I$ be a family of indices and suppose we are given an $I$-family of commutative diagrams
\[
\begin{tikzcd}
\sC \ar[r,"G"] \ar[d,"\imath^{\sC}_i"] & \sD \ar[d,"\imath^{\sD}_i"] \\
\sC_{i} \ar[r,"G_i"] & \sD_i.
\end{tikzcd}
\]

Assume that for each $i \in I$, the functor $G_i$ admits a left (resp.\ right) adjoint $F_i$, and furthermore assume that for any map $i \ra j$ in $I$ the following diagram
\begin{equation}
\label{eq:commuting-diagram-between-i-and-j}
    \begin{tikzcd}
\sC_i \ar[d,"\imath^{\sC}_{i,j}"] & \sD_i \ar[d,"\imath^{\sD}_{i,j}"] \ar[l,"F_{i}"] \\
\sC_{j} & \sD_j \ar[l,"F_{j}"] .
\end{tikzcd}
\end{equation}
commutes\footnote{Notice that, a priori, the diagram (\ref{eq:commuting-diagram-between-i-and-j}) only commutes up to a natural transformation.}.

Finally, we assume that the functors
\[
\sC \ra \lim_{I}\sC_i \;\;\; \mbox{and}\;\;\; \sD \ra \lim_I\sD_i
\]
are equivalences.

\begin{lem}[\cite{GR-I}*{Chapter 1, Lemma 2.6.4}]
\label{lem:adjoints-of-limits-of-categories}
In the situation describe above, the functor $G$ admits a left (resp.\ right) adjoint, and for every $i \in I$ the diagram
\[
\begin{tikzcd}
\sC \ar[d,"\imath^{\sC}_i"] & \sD \ar[d,"\imath^{\sD}_i"] \ar[l,"F"] \\
\sC_{i} & \sD_i \ar[l,"F_i"].
\end{tikzcd}
\]
commutes\footnote{Again this diagram, a priori, only commutes up to a natural transformation.}.
\end{lem}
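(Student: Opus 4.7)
The plan is to construct the adjoint $F$ directly from the family $\{F_i\}$ by using functoriality of limits of $\infty$-categories, and then to verify the adjunction condition by reducing to the pointwise adjunctions.

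First, the compatibility diagram (\ref{eq:commuting-diagram-between-i-and-j}) expresses that for every morphism $i \ra j$ in $I$ we have a canonical isomorphism $\imath^{\sC}_{i,j} \circ F_{i} \simeq F_{j} \circ \imath^{\sD}_{i,j}$. Together with higher compatibility data (which is automatic once one writes everything as a diagram of $\infty$-categories), this says that the collection $\{F_{i}\}_{i \in I}$ defines a natural transformation between the two $I$-indexed diagrams $\sD_{\bullet}$ and $\sC_{\bullet}$, i.e.\ a morphism in $\Fun(I,\Cat)$. Passing to limits, we obtain an induced functor
\[
\widetilde{F}: \lim_{I}\sD_{i} \ra \lim_{I}\sC_{i},
\]
and after composing with the given equivalences $\sD \simeq \lim_{I}\sD_{i}$ and $\sC \simeq \lim_{I}\sC_{i}$ this defines the desired functor $F: \sD \ra \sC$. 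By construction of the limit, for each $i \in I$ the square
\[
\begin{tikzcd}
\sC \ar[d,"\imath^{\sC}_i"'] & \sD \ar[d,"\imath^{\sD}_i"] \ar[l,"F"'] \\
\sC_{i} & \sD_i \ar[l,"F_i"]
\end{tikzcd}
\]
commutes.

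The main step is then to verify that $F$ is left adjoint (resp.\ right adjoint) to $G$. For the left adjoint case, I would check this by exhibiting the natural equivalence of mapping spaces. Using the limit presentation of $\sC$, for any $c \in \sC$ and $d \in \sD$, the mapping space $\Maps_{\sC}(F(d),c)$ is computed as the limit over $I$ of $\Maps_{\sC_{i}}(F_{i}(\imath^{\sD}_{i}(d)), \imath^{\sC}_{i}(c))$, and similarly $\Maps_{\sD}(d,G(c))$ is the limit of $\Maps_{\sD_{i}}(\imath^{\sD}_{i}(d),G_{i}(\imath^{\sC}_{i}(c)))$, where we have used the commuting square for $G$ to rewrite $\imath^{\sD}_{i}(G(c)) \simeq G_{i}(\imath^{\sC}_{i}(c))$. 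The pointwise adjunction equivalences $(F_{i},G_{i})$ provide natural equivalences on each term, and their compatibility with the connecting functors (which follows from (\ref{eq:commuting-diagram-between-i-and-j}) and the commuting squares for $G$ and $G_{i}$) ensures that these assemble into a morphism in the limit diagram. Taking limits produces the desired equivalence $\Maps_{\sC}(F(d),c) \simeq \Maps_{\sD}(d,G(c))$, natural in both variables.

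The hard part of this argument, and the reason it is not entirely trivial, is the $\infty$-categorical coherence: naturality of the pointwise adjunctions must be promoted to a simplicially coherent system before one can safely pass to the limit. The cleanest way to handle this is to encode each adjunction $(F_{i},G_{i})$ as a Cartesian-and-coCartesian fibration $\sE_{i} \ra \Delta^{1}$ (so that the data of the adjunction is intrinsic to $\sE_{i}$ rather than to a choice of unit/counit) and to check that the limit $\sE := \lim_{I}\sE_{i}$ over $\Delta^{1}$ is again both Cartesian and coCartesian, which then produces an adjoint pair $(F,G)$ with the claimed compatibilities. For the right adjoint case the argument is strictly dual, reversing the roles of $F$ and $G$ and using the opposite natural transformation.
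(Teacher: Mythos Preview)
The paper does not actually supply a proof of this lemma: it is stated with a citation to \cite{GR-I}*{Chapter 1, Lemma 2.6.4} and used as a black box, so there is no argument in the paper to compare against. Your sketch is a reasonable reconstruction of the standard proof, and the key move you identify at the end---encoding each adjunction $(F_i,G_i)$ as a biCartesian fibration $\sE_i \ra \Delta^1$ and checking that $\lim_I \sE_i \ra \Delta^1$ remains biCartesian---is exactly the approach taken in the source you would be citing (and in the analogous \cite{HA}*{Proposition 4.7.4.19}). The earlier mapping-space computation is correct as heuristics but, as you note, does not by itself organize the coherences; the fibration formulation is what makes it go through cleanly. One small point: when you say the compatibility data in (\ref{eq:commuting-diagram-between-i-and-j}) promotes to a morphism in $\Fun(I,\Cat)$ ``automatically'', that is precisely the step the fibration language is designed to absorb, so it is better to frame the whole argument in those terms from the start rather than treat it as a closing remark.
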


\subsubsection{Adjoints and Kan extensions}

Let $F: \sC \ra \sD$ be a functor between $\infty$-categories and
\[
\Phi: \sC \ra \bS
\]
a functor into an $(\infty,2)$-category $\bS$. Let $\Psi := \LKE_{F}(\Phi)$.

Assume that for every arrow $X \ra Y$ in $\sC$ the $1$-morphism $\Phi(X) \ra \Phi(Y)$ admits a right adjoint. Let $\Phi^!: \sC^{\rm op} \ra \bS$ be the functor, obtained from $\Phi$ by passing to right adjoints.

\begin{lem}[\cite{GR-I}*{Chapter 8, Proposition 2.2.7}]
\label{lem:adjoints-and-Kan-extensions}
In the situation described above, the functor $\Psi^! := \RKE_{F^{\rm op}}(\Phi^!)$ is obtained from $\Psi$ by passing to right adjoints.
\end{lem}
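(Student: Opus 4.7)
The plan is to show that for each morphism $v:d_1 \to d_2$ in $\sD$, the $1$-morphism $\Psi(v):\Psi(d_1)\to\Psi(d_2)$ in $\bS$ admits a right adjoint, and that this right adjoint is canonically identified with $\Psi^!(v):\Psi^!(d_2)\to\Psi^!(d_1)$, coherently in $v$. Equivalently, one must produce the entire adjoint pair structure in the $(\infty,2)$-categorical sense, i.e.\ compatible unit and counit $2$-morphisms satisfying the triangle identities.

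First, I would expand both Kan extensions pointwise. For $d\in\sD$, one has $\Psi(d)\simeq\colim_{(X,\alpha)\in F_{/d}}\Phi(X)$, indexed by the comma category $F_{/d}$ of pairs $(X,\alpha\colon F(X)\to d)$, and dually $\Psi^!(d)\simeq\lim_{(X,\beta)\in(F_{d/})^{\rm op}}\Phi^!(X)$. The key observation is that the inclusion $F_{/d_1}\hookrightarrow F_{/d_2}$ obtained by post-composition with $v$ is the opposite of the corresponding functor on $F_{d_2/}\to F_{d_1/}$. Moreover, every transition morphism internal to these diagrams comes from a morphism in $\sC$, and hence, by the hypothesis on $\Phi$, lands in left adjoints with right adjoints given by $\Phi^!$.

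Second, I would invoke the general principle in $(\infty,2)$-category theory that a colimit of left adjoints is the left adjoint of the corresponding limit of right adjoints, when all the adjunction data fit together coherently. Applying this principle to our pointwise presentation converts the $1$-morphism $\Psi(v)$, presented as a colimit of $\Phi$-arrows (each of which is a left adjoint), into a $1$-morphism whose right adjoint is the limit of the corresponding $\Phi^!$-arrows, which is precisely the presentation of $\Psi^!(v)$. The naturality in $v$ is then automatic from the functoriality of the constructions. This strategy follows the pattern used throughout \cite{GR-I}*{Chapter 8} to convert left-adjointable data for left Kan extensions into right-adjointable data for right Kan extensions.

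The main obstacle will be packaging the coherences at the $(\infty,2)$-level rather than checking the statement pointwise. To produce genuine units and counits coherent in $v\in\sD$, one cannot simply take pointwise right adjoints; instead one needs to encode the hypothesis on $\Phi$ as a property of the $(\infty,2)$-categorical functor or its corresponding Cartesian/coCartesian fibration, as in \cite{GR-I}*{Chapter 10}, and then show that the Kan extension procedure transports this property. Equivalently, one can formulate the statement as a universal property characterization of the adjoint pair $(\Psi,\Psi^!)$ among pairs of functors with compatible adjunction data, and verify that the Kan extensions inherit this universal property from the hypothesis on $(\Phi,\Phi^!)$; this second approach avoids explicit manipulation of unit/counit data but requires carefully setting up the universal property in the $(\infty,2)$-categorical context.
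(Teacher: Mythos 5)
The paper does not actually prove this lemma: it is imported verbatim from \cite{GR-I}*{Chapter 8, Proposition 2.2.7}, so there is no internal argument to compare yours against. Judged on its own terms, your outline names the right ingredients — the pointwise formulas for the two Kan extensions over mutually opposite comma categories, the observation that every transition map in those diagrams is of the form $\Phi(f)$ and hence admits a right adjoint, and the principle that a colimit of left adjoints is left adjoint to the corresponding limit of right adjoints. One technical slip: the pointwise formula for $\Psi^!(d) = \RKE_{F^{\rm op}}(\Phi^!)(d)$ is a limit over $\left(F_{/d}\right)^{\rm op}$, the opposite of the over-comma category of pairs $(X, F(X)\to d)$, not over the opposite of the under-comma category $F_{d/}$ of pairs $(X, d \to F(X))$. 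With the under-comma category the indexing diagrams for $\Psi(d)$ and $\Psi^!(d)$ would not be opposite to one another and the duality your argument rests on would not be available.

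The more substantive issue is that the ``general principle'' invoked in your second step is not an off-the-shelf lemma at the generality you need; it is essentially the statement being proved. For a morphism $v: d_1 \to d_2$ in $\sD$, the $1$-morphism $\Psi(v)$ is not a colimit of left adjoints over a single index category: it is the comparison map between colimits over two \emph{different} index categories, induced by $v_*: F_{/d_1} \to F_{/d_2}$. Producing its right adjoint together with unit and counit $2$-morphisms satisfying the triangle identities, coherently and functorially in $v$, is exactly the content of the proposition, and this is what the (co)Cartesian-fibration encoding of functors into an $(\infty,2)$-category in \cite{GR-I} (Chapters 10--12) exists to supply. You correctly identify this as the main obstacle in your closing paragraph, but as written the proposal defers that step to the cited machinery rather than carrying it out, so it remains a plan for the proof rather than a proof.
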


\subsection{Beck--Chevalley condition}

The following piece of data is not given a name in \cite{GR-I}*{Chapter 7, \S 1.1.1} we will call it a \emph{pre-correspondence} $\infty$-category.

\begin{defn}
\label{defn:category-with-classes-of-morphisms}
Let $\sC$ be an $\infty$-category a \emph{pre-correspondence} on $\sC$ is the data of three classes of morphisms $horiz, vert$ and $adm \subset vert \cap horiz$ satisfying the following conditions:
\begin{enumerate}[(1)]
    \item all three classes contain the identity morphisms;
    \item the classes are stable under isomorphism;
    \item the classes are stable under composition;
    \item given a Cartesian square
    \[
    \begin{tikzcd}
    X' \ar[r,"f'"] \ar[d,"g'_Y"] & Y' \ar[d,"g_Y"] \\
    X \ar[r,"f"] & Y
    \end{tikzcd}
    \]
    if $f \in horiz$ and $g_Y \in vert$, then $f' \in horiz$ and $g'_Y \in vert$. Moreover, if $f \in adm$ (resp.\ $g_Y \in adm$) then $f' \in adm$ (resp.\ $g'_Y \in adm$);
    \item the class of morphisms $adm$ satisfy the 2 out of 3 property, i.e.\ given a composite $h = g\circ f$ if two among the three morphisms $f,g$ and $h$ belong to $adm$ then so does the last of them.
\end{enumerate}
\end{defn}

We will denote by $\sC_{vert} \subset \sC$ the full subcategory where morphisms are required to belong to $vert$, similarly one defines the subcategories $\sC_{horiz}$ and $\sC_{adm}$.

By an $(\infty,2)$-category we will mean a complete Segal object in the $\infty$-category of $(\infty,1)$-categories (see \cite{GR-I}*{Chapter 10, \S 2.1} for details).

We can now recall the meaning of left and right Beck--Chevalley conditions.

\begin{defn}
\label{defn:left-Beck-Chevalley}
Let $\sC$ be a pre-correspondence category, such that $horiz = adm$, and $\bS$ an $(\infty,2)$-category, for a functor
\[
\Phi: \sC_{vert} \ra \bS
\]
we will say that $\Phi$ satisfies the \emph{left Beck--Chevalley condition with respect to $horiz$}, if for every $1$-morphism $f:X \ra Y$ in $horiz$ the corresponding $1$-morphism
\[
\Phi(f): \Phi(X) \ra \Phi(Y)
\]
has a right adjoint, denoted by $\Phi(f)^!$, such that for every Cartesian diagram
\[
\begin{tikzcd}
X' \ar[r,"f'"] \ar[d,"g'_Y"] & Y' \ar[d,"g_Y"] \\
X \ar[r,"f"] & Y
\end{tikzcd}
\]
with $f,f' \in horiz$ and $g_Y,g_Y' \in vert$, the $2$-morphism
\[
\Phi(g'_Y) \circ \Phi^!(f') \ra \Phi^!(f)\circ\Phi(g_Y)
\]
arising by adjunction from the isomorphism
\[
\Phi(f) \circ \Phi(g'_Y) \simeq \Phi(g_Y)\circ \Phi(f')
\]
is an isomorphism.
\end{defn}

\begin{defn}
Let $\sC$ be a pre-correspondence category, such that $vert = adm$, and $\bS$ an $(\infty,2)$-category, for a functor
\[
\Phi^!: (\sC_{horiz})^{\rm op} \ra \bS
\]
we will say that $\Phi^!$ satisfies the \emph{right Beck--Chevalley condition with respect to $vert$}, if for every $1$-morphism $g:Y' \ra Y$ in $vert$ the corresponding $1$-morphism
\[
\Phi^!(g): \Phi^!(Y) \ra \Phi^!(Y')
\]
has a left adjoint, denoted by $\Phi(g)$, such that for every Cartesian diagram
\[
\begin{tikzcd}
X' \ar[r,"f'_X"] \ar[d,"g'"] & Y' \ar[d,"g"] \\
X \ar[r,"f_X"] & Y
\end{tikzcd}
\]
with $f_X,f'_X \in horiz$ and $g,g' \in vert$, the $2$-morphism
\[
\Phi(g') \circ \Phi^!(f'_X) \ra \Phi^!(f_X) \circ \Phi(g)
\]
arising by adjunction from the isomorphism
\[
\Phi^!(f'_X) \circ \Phi^!(g) \simeq \Phi^!(g') \circ \Phi^!(f_X)
\]
is an isomorphism.
\end{defn}

\paragraph{}

The following result is one of the main tools used by Gaitsgory--Rozenblyum in their constructions and it is equally essential for us. We formulate it here for the convenience of the reader.

\begin{thm}[\cite{GR-I}*{Chapter 7, Theorem 3.2.2}]
\label{thm:left-and-right-Beck-Chevalley-extension-result}
\begin{enumerate}[(a)]
    \item Let $\sC$ be a pre-correspondence category such that $horiz \subset vert$ and $adm = horiz$. Then restriction along
    \[
    \sC_{vert} \ra \Corr(\sC)^{horiz}_{vert;horiz}
    \]
    defines an isomorphism between the space of functors
    \[
    \Phi^{horiz}_{vert;horiz}: \Corr(\sC)^{horiz}_{vert;horiz} \ra \bS 
    \]
    and the subspace of functors
    \[
    \Phi: \sC_{vert} \ra \bS
    \]
    that satisfy the left Beck--Chevalley condition with respect to $horiz$.
    
    Moreover, for $\Phi^{horiz}_{vert;horiz}$ as above, the resulting functor $\Phi^! := \left.\Phi^{horiz}_{vert;horiz}\right|_{(\sC_{horiz})^{\rm op}}$ is obtained from $\left.\Phi\right|_{\sC_{horiz}}$ by passing to right adjoints.
    \item Let $\sC$ be a pre-correspondence category such that $vert \subset horiz$ and $adm = vert$. Then restriction along
    \[
    (\sC_{horiz})^{\rm op} \ra \Corr(\sC)^{vert}_{vert;horiz}
    \]
    defines an isomorphism between the space of functors
    \[
    \Phi^{vert}_{vert;horiz}: \Corr(\sC)^{vert}_{vert;horiz} \ra \bS 
    \]
    and the subspace of functors
    \[
    \Phi^!: (\sC_{horiz})^{\rm op} \ra \bS
    \]
    that satisfy the right Beck--Chevalley condition with respect to $vert$.
    
    Moreover, for $\Phi^{vert}_{vert;horiz}$ as above, the resulting functor $\Phi := \left.\Phi^{vert}_{vert;horiz}\right|_{\sC_{vert}}$ is obtained from $\left.\Phi^!\right|_{(\sC_{vert})^{\rm op}}$ by passing to left adjoints.
\end{enumerate}
\end{thm}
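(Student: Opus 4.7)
The statement is cited directly from \cite{GR-I}*{Chapter 7, Theorem 3.2.2}, so the task is really to reconstruct the strategy of \emph{loc.\ cit.} and explain what makes it work. First, the two halves (a) and (b) are formally dual: part (b) is obtained from part (a) by reversing the direction of $2$-morphisms in $\bS$ (i.e.\ replacing $\bS$ by $\bS^{2\text{-op}}$) and swapping the roles of $vert$ and $horiz$; concretely, $\Corr(\sC)^{vert}_{vert;horiz}$ is obtained from $\Corr(\sC)^{horiz}_{horiz;vert}$ by this flip, and the right Beck--Chevalley condition for $\Phi^!$ becomes the left Beck--Chevalley condition for its left adjoint. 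So I would concentrate on (a) and deduce (b) by this duality.

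For (a), the forward direction (restriction) is the easier one. One exhibits the canonical functor $\iota: \sC_{vert} \to \Corr(\sC)^{horiz}_{vert;horiz}$ which is the identity on objects and sends $g: X \to Y$ in $vert$ to the span $X \overset{g}{\la} X \overset{\id}{=} X$ viewed as a $1$-morphism $Y \to X$ (together with the dual embedding $(\sC_{horiz})^{\rm op} \to \Corr$ for $f \in horiz \subset vert$ sending $f: X \to Y$ to the span $Y \overset{f}{\la} X \overset{\id}{=} X$ viewed as $X \to Y$). I would verify that in $\Corr$ itself, each $1$-morphism coming from $f \in horiz$ has a right adjoint, namely the corresponding $1$-morphism coming from the same $f$ but in the other direction, with unit and counit encoded by the graph correspondence and the base change square of $f$ with itself (this is the content of \cite{GR-I}*{Chapter 7, \S 3.1}). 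Then Beck--Chevalley for $\Phi := \Phi^{horiz}_{vert;horiz}\circ \iota$ is automatic, because any functor of $(\infty,2)$-categories sends an adjunction to an adjunction and a Cartesian square in $\sC$ to a square that commutes in $\Corr$ up to isomorphism; the right adjoint $\Phi(f)^!$ is then the value of the extended functor on the opposite correspondence.

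The core of the theorem is the reverse direction: producing the extension. The construction proceeds in two stages. First, one constructs the functor out of the $(\infty, 1)$-truncation, i.e.\ out of the ordinary category of correspondences where $2$-morphisms are isomorphisms. On objects set $X \mapsto \Phi(X)$; on a $1$-morphism $X \overset{g}{\la} U \overset{f}{\to} Y$ with $g \in vert$, $f \in horiz$, set it to $\Phi(g) \circ \Phi(f)^!$, where $\Phi(f)^!$ is the right adjoint guaranteed by Beck--Chevalley. The compatibility with composition of correspondences (which involves taking a fibre product) is exactly the left Beck--Chevalley isomorphism, and the higher coherences of composition reduce to iterated base change diagrams. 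Second, one upgrades this to a functor of $(\infty, 2)$-categories by defining it on non-invertible $2$-morphisms, which in $\Corr(\sC)^{horiz}_{vert;horiz}$ come from $adm = horiz$-morphisms between the apices of spans; each such $2$-morphism produces a natural transformation via the unit/counit of the adjunction $(\Phi(f), \Phi(f)^!)$.

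The main technical obstacle, and the reason the GR proof is nontrivial, is organizing all the coherences in the $(\infty, 2)$-categorical setting: the pointwise description above must be promoted to a genuine functor out of a complete $2$-fold Segal space, and the higher simplices of that Segal space impose a tower of compatibilities among iterated base changes. The strategy in \cite{GR-I}*{Chapter 7, \S 3} is to exploit the universal property of $\Corr(\sC)^{horiz}_{vert;horiz}$: it is characterized as the initial $(\infty,2)$-category receiving a functor from $\sC_{vert}$ in which every $1$-morphism coming from $horiz$ acquires a right adjoint compatibly with base change. Once this universal property is in hand, the Beck--Chevalley hypothesis on $\Phi$ is precisely the data required to invoke it, and the extension $\Phi^{horiz}_{vert;horiz}$ together with the uniqueness clause and the ``moreover'' statement (that $\Phi^!$ is obtained from $\Phi|_{\sC_{horiz}}$ by passing to right adjoints) drop out formally.
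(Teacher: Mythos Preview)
The paper does not supply its own proof of this theorem: it is stated in the appendix with the attribution \cite{GR-I}*{Chapter 7, Theorem 3.2.2} and no proof environment follows. The result is used as a black box throughout (e.g.\ in the proof of Theorem \ref{thm:ProIndCoh-from-correspondences} and in Theorem \ref{thm:extension-of-correspondence-via-factorization}), so there is nothing in this paper to compare your argument against.

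That said, your sketch is a faithful outline of the Gaitsgory--Rozenblyum strategy. The duality reduction of (b) to (a), the observation that $horiz$-morphisms are adjunctable in $\Corr(\sC)^{horiz}_{vert;horiz}$ (so the restricted functor automatically satisfies Beck--Chevalley), and the pointwise description $\Phi(g)\circ\Phi(f)^!$ for the extension are all correct. One caution: the ``universal property'' you invoke at the end is essentially a restatement of the theorem itself, so appealing to it as though it were independently established is circular. In \cite{GR-I} the actual work is carried out in Chapter 7, \S\S 3--4 (and relies on the $(\infty,2)$-categorical machinery of Chapter 12), where the extension is built by an explicit construction on the double-categorical model of $\Corr$, verifying the Segal conditions by hand. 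Your paragraph identifying the coherence problem is accurate; just be aware that resolving it is the substance of the proof rather than a consequence of an abstract universal property.
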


\subsection{Functors constructed by factorization}

In this subsection we formulate a variant on \cite{GR-I}*{Chapter 7, \S 5} that we need to construct the formalism of Tate-coherent from the category of correspondences (see \S \ref{subsubsec:TateCoh-on-correspondences-of-schemes}).

\subsubsection{Set-up for the source}
\label{subsubsec:conditions-on-category}

First we set up the conditions on the source category. Let $\sC$ be a pre-correspondence category (see Definition \ref{defn:category-with-classes-of-morphisms}). Moreover, fix a class of morphisms $co-adm \subset vert$, such that $(co-adm, horiz, isom)$ satisfies the condition of Definition \ref{defn:category-with-classes-of-morphisms}.

Also assume that $co-adm$ and $vert$ satisfy the 2 out of 3 property.

\paragraph{}

One imposes a further condition on the pair of ($co-adm$, $adm$). We require that for every Cartesian diagram
\[
\begin{tikzcd}
X'\underset{X}{\times}X' \ar[r,"\delta_1"] \ar[d,"\delta_2"] & X' \ar[d,"\delta"] \\
X' \ar[r,"\delta"] & X
\end{tikzcd}
\]
where $\delta \in adm \cap co-adm$, the maps $\delta_1$ and $\delta_2$ both are isomorphisms, i.e.\ any morphism in $adm \cap co-adm$ is a monomorphism in the category $\sC$.

\paragraph{}

To formulate the last condition for every $f:X \ra Y$ in $vert$, we consider the category $\mbox{Fact}(f)$ whose objects are
\[
X \overset{g}{\ra} U \overset{h}{\ra} Y
\]
such that $g \in adm$ and $h \in co-adm$, and morphisms are pairs of factorizations with a map between their intermediate object.

We require that for any $f \in vert$ the category $\mbox{Fact}(f)$ is contractible.

\subsubsection{Set-up for the functor}
\label{subsubsec:conditions-on-functor}

Let $\bS$ be an $(\infty,2)$-category. We consider the functors
\[
\Phi^{adm}_{adm;horiz}: \Corr(\sC)^{adm}_{adm;horiz} \ra \bS,
\]
and
\[
\Phi^{isom}_{co-adm;horiz}: \Corr(\sC)^{isom}_{co-adm;horiz} \ra \bS
\]
together with an identification
\[
\left.\Phi^{adm}_{adm;horiz}\right|_{(\sC_{horiz})^{\rm op}} \simeq \left.\Phi^{isom}_{co-adm;horiz}\right|_{(\sC_{horiz})^{\rm op}}.
\]

Consider the functor
\[
\Phi^! := \left.\Phi^{adm}_{adm;horiz}\right|_{(\sC_{horiz})^{\rm op}}.
\]

By \cite{GR-I}*{Chapter 7, Theorem 3.2.2 (b)} the functor $\Phi^{adm}_{adm;horiz}$ is uniquely reconstructed from the data of $\Phi^!$, and it exists if and only if $\Phi^!$ satisfies the right Beck--Chevalley condition with respect to $adm \subset horiz$. Thus, the above data is uniquely recovered from that of $\Phi^{isom}_{co-adm;horiz}$.

Let
\[
\Phi_{\star, co-adm} := \left.\Phi^{isom}_{co-adm;vert}\right|_{\sC_{adm}}
\]
and
\[
\Phi_{adm} := \left.\Phi^{adm}_{adm;horiz}\right|_{\sC_{co-adm}}.
\]

\paragraph{}
\label{par:factorization-compatibility}

We now impose the following technical condition on the functor $\Phi^{isom}_{co-adm;vert}$.

Let
\[
\begin{tikzcd}
X' \ar[r,"f'_X"] \ar[d,"g'"'] & Y' \ar[d,"g"] \\
X \ar[r,"f_X"'] & Y
\end{tikzcd}
\]
be a Cartesian diagram, where $g,g' \in adm$ and $f_X,f'_X \in co-adm$ the isomorphism
\[
\Phi_{adm}(g') \circ \Phi^!(f'_X) \overset{\simeq}{\ra} \Phi^!(f_X)\circ \Phi_{adm}(g)
\]
by adjunction gives
\begin{equation}
    \label{eq:factorization-compatibility-2-morphism}
    \Phi_{\star, co-adm}(f_X)\circ \Phi_{adm}(g') \ra \Phi_{adm}(g) \circ \Phi_{\star, co-adm}(f'_X).
\end{equation}

We require that (\ref{eq:factorization-compatibility-2-morphism}) is an isomorphism for every Cartesian diagram as above.

\paragraph{}

\begin{thm}
\label{thm:extension-of-correspondence-via-factorization}
Restriction along
\[
\Corr(\sC)^{isom}_{co-adm;horiz} \ra \Corr(\sC)^{adm}_{vert;horiz}
\]
gives an isomorphism between the space of functors
\[
\Phi^{adm}_{vert;horiz}: \Corr(\sC)^{adm}_{vert;horiz} \ra \bS
\]
and that of functors
\[
\Phi^{isom}_{co-adm;horiz}: \Corr(\sC)^{isom}_{co-adm;horiz} \ra \bS
\]
for which
\[
\Phi^! := \left.\Phi^{isom}_{co-adm;horiz}\right|_{(\sC_{horiz})^{\rm op}}
\]
satisfies the right Beck--Chevalley condition with respect to $adm \subset horiz$, and such that the condition from paragraph \ref{par:factorization-compatibility} holds.
\end{thm}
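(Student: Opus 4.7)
The plan is to mimic the strategy of \cite{GR-I}*{Chapter 7, Theorem 5.2.4}, with the roles of horizontal/vertical exchanged: there one extends a $!$-pullback-type functor along proper maps sitting inside horizontal morphisms, while here we extend a $\star$-pushforward-type functor along $adm$-maps sitting inside vertical morphisms. The $2$-functor $\Phi^{adm}_{vert;horiz}$ will be reconstructed from $\Phi^{isom}_{co-adm;horiz}$ in three stages.

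First, from the hypothesis that $\Phi^!$ satisfies right Beck--Chevalley with respect to $adm \subset horiz$, Theorem \ref{thm:left-and-right-Beck-Chevalley-extension-result}(b) produces a unique extension to $\Phi^{adm}_{adm;horiz}:\Corr(\sC)^{adm}_{adm;horiz}\to\bS$ whose restriction to $\sC_{adm}$ is the functor $\Phi_{adm}$ obtained by passing to left adjoints of $\Phi^!$ along $adm$-morphisms. Together with $\Phi^{isom}_{co-adm;horiz}$, which encodes a pushforward $\Phi_{\star,co-adm}$ along $co-adm$-morphisms with base change against $horiz$, we then have the two pieces of data that should glue, via the factorization $vert = co-adm \circ adm$, to yield the pushforward along a general vertical morphism.

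Second, for $f\in vert$ one defines $\Phi_\star(f)$ by choosing a factorization $X\xrightarrow{g} U \xrightarrow{h} Y$ with $g\in adm$, $h\in co-adm$, and setting $\Phi_\star(f):=\Phi_{\star,co-adm}(h)\circ \Phi_{adm}(g)$. That this assignment is independent of the factorization and extends to a functor $\sC_{vert}\to\bS$ is an abstract consequence of the contractibility of $\mbox{Fact}(f)$ required in \S \ref{subsubsec:conditions-on-category}: given two factorizations, a map between them in $\mbox{Fact}(f)$ lies in $adm\cap co-adm$ and is therefore a monomorphism by the assumption on $adm\cap co-adm$, and one checks that the two constructions of $\Phi_\star(f)$ are canonically identified via the unit/counit of the resulting adjunction. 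Functoriality in composition of two vertical morphisms $X\xrightarrow{f_1}Y\xrightarrow{f_2}Z$ follows by choosing compatible factorizations and repeatedly invoking the compatibility isomorphism from \S \ref{par:factorization-compatibility} to commute the intermediate $\Phi_{\star,co-adm}$ past $\Phi_{adm}$; the contractibility of $\mbox{Fact}(f_2\circ f_1)$ guarantees that the resulting associator is the canonical one.

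Third, one has to assemble $\Phi_\star$ on $vert$ with $\Phi^!$ on $horiz$ into a $2$-functor out of $\Corr(\sC)^{adm}_{vert;horiz}$. By Theorem \ref{thm:left-and-right-Beck-Chevalley-extension-result}(b) applied again, this reduces to checking that $\Phi_\star$ on $\sC_{vert}$ satisfies left Beck--Chevalley against $horiz$, i.e.\ that for any Cartesian square whose vertical arrows are in $vert$ and whose horizontal arrows are in $horiz$, the natural $2$-morphism between $\Phi_\star$ and $\Phi^!$ is invertible. Writing the vertical arrows via their $adm$/$co-adm$ factorizations and forming the corresponding iterated pullback, the square decomposes into four Cartesian squares of the two elementary types: one of type ($adm$, $horiz$), which is handled by the right Beck--Chevalley hypothesis for $\Phi^!$; one of type ($co-adm$, $horiz$), which is built into the datum $\Phi^{isom}_{co-adm;horiz}$; and the mixed squares, for which the condition of \S \ref{par:factorization-compatibility} is precisely what guarantees that $\Phi_{\star,co-adm}$ commutes with $\Phi_{adm}$ around the square. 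The main obstacle in the argument is keeping track of coherence data in this last step: verifying that the $2$-morphisms produced by the three mechanisms assemble into a single invertible $2$-morphism is where one must really use the hypotheses of \S \ref{subsubsec:conditions-on-category} and \S \ref{par:factorization-compatibility} together, rather than just in isolation. Finally, uniqueness of the extension, as well as the fact that restriction along $\Corr(\sC)^{isom}_{co-adm;horiz}\hookrightarrow \Corr(\sC)^{adm}_{vert;horiz}$ is an equivalence onto the subspace cut out by the stated conditions, is a formal consequence of the universal property of the constructions in each of the three stages.
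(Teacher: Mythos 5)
Your proposal takes essentially the same route as the paper: the paper's proof consists entirely of the remark that the argument of \cite{GR-I}*{Chapter 7, \S\S 5.3--5.7} (the proof of Theorem 5.2.4 there) carries over after interchanging $adm$ with $co-adm$ and $horiz$ with $vert$ and replacing left by right Beck--Chevalley conditions, which is exactly the dualization you sketch in your three stages. The only imprecision worth flagging is in your third stage: assembling $\Phi_{\star}$ on $\sC_{vert}$ with $\Phi^!$ on $(\sC_{horiz})^{\rm op}$ into a functor out of $\Corr(\sC)^{adm}_{vert;horiz}$ is not a second application of Theorem \ref{thm:left-and-right-Beck-Chevalley-extension-result}(b) (which would require $adm = vert$), but rather of the intermediate extension results of \cite{GR-I}*{Chapter 7, \S 4}; this does not affect the substance of the argument.
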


\begin{proof}
We claim that all the steps taken in the proof in sections 5.3 to 5.7 from \cite{GR-I}*{Chapter 7} carry over when one interchanges $adm$ for $co-adm$ and $horiz$ for $vert$ and use the appropriate results for functors satisfying the right Beck--Chevalley condition instead of the left Beck--Chevalley condition as in \emph{loc.\ cit}.
\end{proof}

\subsection{Commuting left and right Kan extensions}

In this section we collect some results that allow us to address the following abstract situation. Consider
\begin{equation}
\label{eq:commutative-square-of-categories-set-up}
\begin{tikzcd}
\sC \ar[r,"F'"] \ar[d,"G"'] & \sC' \ar[d,"G'"] \\
\sD \ar[r,"F"'] & \sD'
\end{tikzcd}    
\end{equation}
a commutative diagram of $\infty$-categories. In this section we introduce the notation $F_!$ to denote the left Kan extension with respect to $F$, $F_*$ the right Kan extension with respect to $F$ and $F^*$ the restriction functor, and similarly for $G,F'$ and $G'$.

\paragraph{right Kan extension base change}

We notice that one has a natural transformation
\begin{equation}
    \label{eq:base-change-RKE}
    F^*\circ G'_* \ra G_* \circ (F')^*.
\end{equation}
obtained by the $(G^*,G_*)$-adjunction from
\[
G^*\circ F^* \circ G'_* \simeq (F')^*.
\]

The following follows from considering an explicitly computation of the right Kan extensions in (\ref{eq:base-change-RKE}).

\begin{lem}
\label{lem:initial-slice-categories-give-isomorphism-of-base-change}
If the square (\ref{eq:commutative-square-of-categories-set-up}) is Cartesian and for every $d \in D$ the morphism of slice categories
\[
G_{d/} \ra G'_{F(d)/}
\]
is initial, then (\ref{eq:base-change-RKE}) is an isomorphism.
\end{lem}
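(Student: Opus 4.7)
My plan is to verify the claim pointwise using the standard colimit/limit formula for Kan extensions. Fix an object $d \in \sD$ and let $\Phi \in \Fun(\sC',\sE)$, where $\sE$ is the target $\infty$-category (assumed to admit the relevant limits). By the pointwise formula for right Kan extensions, one has
\[
(G_{*}(F')^{*}\Phi)(d) \;\simeq\; \lim_{(c,\, d \to G(c))\in G_{d/}} \Phi(F'(c)),
\]
and
\[
(F^{*}G'_{*}\Phi)(d) \;=\; (G'_{*}\Phi)(F(d)) \;\simeq\; \lim_{(c',\, F(d) \to G'(c'))\in G'_{F(d)/}} \Phi(c').
\]

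The first step is to identify the natural transformation (\ref{eq:base-change-RKE}) evaluated at $d$ with the canonical restriction map between these two limits along the comparison functor
\[
\alpha_d : G_{d/} \longrightarrow G'_{F(d)/}, \qquad \bigl(c,\, d \xrightarrow{\eta} G(c)\bigr) \longmapsto \bigl(F'(c),\, F(d) \xrightarrow{F(\eta)} F(G(c)) = G'(F'(c))\bigr),
\]
where the last identification uses the commutativity of (\ref{eq:commutative-square-of-categories-set-up}). This is essentially the functoriality of the right Kan extension, together with a routine unwinding of how (\ref{eq:base-change-RKE}) is constructed by adjunction from $G^{*}F^{*}G'_{*} \simeq (F')^{*}G'^{*}G'_{*} \to (F')^{*}$; the check amounts to chasing the unit and counit through the definitions.

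Once the comparison map is identified with $\alpha_d^{*}$, the hypothesis that $\alpha_d = G_{d/} \to G'_{F(d)/}$ is initial implies, by the dual of the cofinality theorem (see \cite{HTT}*{Proposition 4.1.1.8}), that the induced restriction map on limits is an equivalence. Since $d$ was arbitrary, the natural transformation (\ref{eq:base-change-RKE}) is an isomorphism.

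The Cartesianness of the square enters precisely to make the identification of $\alpha_d$ transparent: when $\sC \simeq \sD \times_{\sD'} \sC'$, an object of $G'_{F(d)/}$ in the image of $\alpha_d$ is automatically a lift to $\sC$, and the comparison functor has the expected form. The only nontrivial step is therefore the identification of (\ref{eq:base-change-RKE}) with $\alpha_d^{*}$; this is bookkeeping of adjunctions and should not present a genuine obstacle, but it is the one place where care is required.
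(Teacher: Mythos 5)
Your argument is correct and is exactly the computation the paper has in mind: the paper offers no written proof beyond the remark that the lemma "follows from considering an explicit computation of the right Kan extensions," and your pointwise identification of the base-change map with restriction along $\alpha_d\colon G_{d/}\ra G'_{F(d)/}$, followed by the dual cofinality theorem, is that computation spelled out. (One small quibble: the comparison functor $\alpha_d$ already exists from mere commutativity of the square, so the Cartesian hypothesis is not really what makes the identification work — the initiality hypothesis carries the entire argument.)
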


\paragraph{Comparison map}

Suppose that (\ref{eq:base-change-RKE}) is an isomorphism, then one has a natural transformation
\begin{equation}
\label{eq:comparison-using-LKE-base-change}
    F_! \circ G_* \ra F_! \circ G_* \circ (F')^* \circ F'_! \overset{\simeq}{\la} F_! \circ F^* \circ G'_{*} \circ F'_! \ra G'_* \circ F'_!.
\end{equation}

\section{Proofs of \S \ref{subsec:Pro-ind-coherent-on-Tate-schemes}}

\subsection{Ind-proper base change for $\ProIndCoh_{\SchTatelaft}$}
\label{subsec:ind-proper-base-change-for-IndCoh}

In this section we prove that the functor
\[
\ProIndCoh_{\SchTatelaft}: \SchTatelaft \ra \DGct
\]
satisfies the left Beck--Chevalley condition with respect to ind-proper maps.

\begin{prop}
\label{prop:base-change-ind-proper-ProIndCoh}
Consider the pullback diagram
\[
\begin{tikzcd}
S' \ar[r,"g_S"] \ar[d,"f'"] & S \ar[d,"f"] \\
T' \ar[r,"g_T"] & T
\end{tikzcd}
\]
in the category of Tate schemes locally almost of finite type, where $g_T$ (and hence $g_S$) is ind-proper. The functors $(g_T)^{\rm ProInd}_*$ and $(g_S)^{\rm ProInd}_{*}$ admit right adjoints given by $g^!_Y$ and $g^!_T$. Moreover, the isomorphism
\[
(g_T)^{\rm ProInd}_* \circ (f')^{\rm ProInd}_* \overset{\simeq}{\ra} f^{\rm ProInd}_* \circ (g_S)^{\rm ProInd}_*
\]
by adjunction gives rise to a map
\begin{equation}
    \label{eq:base-change-ind-proper-ProIndCoh}
    (f')^{\rm ProInd}_* \circ g^!_S \ra g^!_T \circ f^{\rm ProInd}_*
\end{equation}
which is an isomorphism.
\end{prop}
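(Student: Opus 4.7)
The plan is to reduce to the base-change statement for proper morphisms between schemes almost of finite type (Corollary~\ref{cor:arbitrary-base-change-ProIndCoh}(a)) by choosing compatible presentations of the four Tate schemes involved. Using an argument analogous to the proof of Lemma~\ref{lem:Tate-affine-schemes-has-fiber-products} (via \cite{Hennion-Tate}*{Proposition 1.2}), I would first produce a common filtered diagram $I$ and presentations $T\simeq\colim_I T_i$, $S\simeq\colim_I S_i$, $T'\simeq\colim_I T'_i$, $S'\simeq\colim_I S'_i$ by schemes almost of finite type, such that for each $i$ the square
\[
\begin{tikzcd}
S'_i \ar[r,"g_{S,i}"] \ar[d,"f'_i"'] & S_i \ar[d,"f_i"] \\
T'_i \ar[r,"g_{T,i}"] & T_i
\end{tikzcd}
\]
is Cartesian and the maps $g_{T,i}, g_{S,i}$ are proper (this uses Remark~\ref{rem:ind-proper-is-proper-for-the-presentation}, since $g_T$ is ind-proper and the presentation can be refined so that every closed subscheme of $S$ mapping to $T$ factors through some $T_i$).

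Next, I would invoke the construction of the formalism on Tate schemes. Because $\ProIndCoh^!_{\SchTatelaft}$ is defined by right Kan extension from schemes almost of finite type, we have $g^!_T \simeq \lim_{I^{\rm op}} g^!_{T,i}$ in the sense that it is assembled from the level-wise $!$-pullbacks under the equivalence $\ProIndCoh(T) \simeq \lim_{I^{\rm op}}\ProIndCoh(T_i) \simeq \colim_I \ProIndCoh(T_i)$ of Proposition~\ref{prop:limit-and-colimit-presentation-of-ProIndCoh-on-Tate-schemes}. By Lemma~\ref{lem:adjoints-of-limits-of-categories}, the existence of the right adjoint $g^!_T$ to $(g_T)^{\rm ProInd}_*$ is obtained as the limit of the level-wise adjunctions $(g_{T,i})^{\rm ProInd}_* \dashv g^!_{T,i}$, which exist by the proper adjunction on schemes (Theorem~\ref{thm:ProIndCoh-from-correspondences}); the same applies to $g_S$. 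Likewise, by the (co)limit presentation, the pushforwards $f^{\rm ProInd}_*$ and $(f')^{\rm ProInd}_*$ are the (co)limits of $(f_i)^{\rm ProInd}_*$ and $(f'_i)^{\rm ProInd}_*$. For each $i$ the base-change 2-morphism
\[
(f'_i)^{\rm ProInd}_* \circ g^!_{S,i} \ra g^!_{T,i}\circ (f_i)^{\rm ProInd}_*
\]
is an isomorphism by Corollary~\ref{cor:arbitrary-base-change-ProIndCoh}(a).

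Finally, I would pass to the (co)limit over $I$ to obtain (\ref{eq:base-change-ind-proper-ProIndCoh}) and check that the level-wise isomorphisms assemble to the adjunction base-change map defined on the Tate scheme level. The main obstacle is exactly this last point: the bookkeeping required to see that the 2-morphism obtained by applying the $((g_T)^{\rm ProInd}_*, g^!_T)$-adjunction to the isomorphism $(g_T)^{\rm ProInd}_*\circ (f')^{\rm ProInd}_* \simeq f^{\rm ProInd}_*\circ (g_S)^{\rm ProInd}_*$ agrees, under the equivalences of categories above, with the colimit over $I$ of the analogous 2-morphisms at the scheme level. This compatibility is precisely the content of Lemma~\ref{lem:adjoints-and-Kan-extensions} applied to the right Kan extension that defines $\ProIndCoh^!_{\SchTatelaft}$ (together with the dual statement for the left Kan extension defining $\ProIndCoh_{\SchTatelaft}$); once this identification is established, the level-wise isomorphisms immediately give the result.
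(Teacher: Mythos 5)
Your overall strategy --- straighten the square into a level-wise Cartesian square of schemes almost of finite type with proper horizontal maps, apply Corollary \ref{cor:arbitrary-base-change-ProIndCoh}(a) level-wise, and pass to the (co)limit --- is in the same spirit as the paper's proof, and the preparatory steps (common presentations via \cite{Hennion-Tate}*{Proposition 1.2}, existence of the right adjoints via Lemma \ref{lem:adjoints-of-limits-of-categories} and Lemma \ref{lem:adjoints-and-Kan-extensions}) are fine. The gap is exactly at the point you flag as ``the main obstacle'', and Lemma \ref{lem:adjoints-and-Kan-extensions} does not close it: that lemma only identifies the right adjoint of a left Kan extended functor with the right Kan extension of the right adjoints; it says nothing about comparing a base-change $2$-morphism built from the global adjunctions with the family of level-wise $2$-morphisms. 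The concrete difficulty is that $g^!_S$ and $g^!_T$ are controlled by the \emph{limit} presentations $\ProIndCoh(S)\simeq\lim_{I^{\rm op}}\ProIndCoh(S_i)$, whereas the generators on which one wants to test the map, and the pushforwards $f^{\rm ProInd}_*$, $(f')^{\rm ProInd}_*$, live in the \emph{colimit} presentations; translating between the two requires commuting $g^!_S$ past $(\imath_i)^{\rm ProInd}_*$ for the closed embeddings $\imath_i\colon S_i\hra S$ (and likewise for $T$), which is itself an instance of ind-proper base change, now with the \emph{vertical} map ind-proper.

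This missing ingredient is precisely Proposition \ref{prop:base-change-ind-proper-ProIndCoh^!}, which the paper proves separately and uses as Step 0 of its argument: evaluating both sides on a generator $(\imath_i)^{\rm ProInd}_*\sF_i$, one needs $g^!_S\circ(\imath_i)^{\rm ProInd}_*\simeq(\imath'_i)^{\rm ProInd}_*\circ g^!_{S,i}$ and the analogous identity over $T$ before the level-wise isomorphism of Corollary \ref{cor:arbitrary-base-change-ProIndCoh}(a) can even be brought to bear. The paper's reduction pattern is also different from yours: rather than straightening all four corners at once, it first reduces $T'$ to a scheme (using that the functors $h^!$ for $h\colon T_0\ra T'$, $T_0\in\Schaft$, are jointly conservative), then reduces $S$ to a scheme by testing on the generators $(\imath_i)^{\rm ProInd}_*\sF_i$, and finally factors the resulting map $S_0\ra T$ through the presentation of $T$, invoking Proposition \ref{prop:base-change-proper} at the scheme level. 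Your argument can be repaired by inserting Proposition \ref{prop:base-change-ind-proper-ProIndCoh^!} at the two places indicated, at which point it becomes essentially the paper's computation; as written, the final ``pass to the colimit'' step is not justified.
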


\begin{proof}
This proof follows closely the strategy of \cite{GR-II}*{Chapter 3, Proposition 2.2.2}.

We notice that if $\widetilde{g^!_S}$ a right adjoint to $(g_S)^{\rm ProInd}_*$ exists, then by Lemma \ref{lem:adjoints-and-Kan-extensions} one has a natural equivalence
\[
\widetilde{g^!_S} \simeq g^!_S,
\]
and similarly for $(g_T)^{\rm ProInd}_{*}$. So we only need to check that $g^!_S$ and $g^!_T$ actually satisfy the base-change statement.

\textit{Step 0:}
In the case that $f$ is ind-proper the result follows from Proposition \ref{prop:base-change-ind-proper-ProIndCoh^!} below.

\textit{Step 1:}
To check that (\ref{eq:base-change-ind-proper-ProIndCoh}) is an isomorphism it is enough to check that for any $h: T_0 \ra T'$ with $T_0 \in \Schaft$ the map
\[
h^! \circ (f')^{\rm ProInd}_* \circ g^!_S \ra h^! \circ g^!_T \circ f^{\rm ProInd}_*
\]
is an isomorphism. Thus, we can suppose that $T' = T'_0 \in \Schaft$. 

\textit{Step 2:}
Let $S \simeq \colim_I S_i$ be a presentation of $S$, with $S_i \in \Schaft$, and let $\imath_i:S_i \hra S$ denote the canonical inclusions. Since by Proposition \ref{prop:limit-and-colimit-presentation-of-ProIndCoh-on-Tate-schemes}, we have an isomorphism
\[
\ProIndCoh(S) \simeq \colim_{I}\ProIndCoh(S_i).
\]
The category $\ProIndCoh(S)$ is generated by objects $(\imath_i)^{\rm ProInd}_*(\sF)$ for $\sF \in \ProIndCoh(S_i)$. Hence it is enough to check that (\ref{eq:base-change-ind-proper-ProIndCoh}) is an equivalence when it is precomposed by $(\imath_i)^{\rm ProInd}_*$. We are then reduced to the check the base change for the diagram
\[
\begin{tikzcd}
S'_i \ar[r,"g_{S,i}"] \ar[d] & S_i \ar[d,"\imath_i"] \\
S' \ar[r] \ar[d] & S \ar[d] \\
T'_0 \ar[r] & T
\end{tikzcd}
\]
where $S'_i = S_i\times_T T'_0$. Notice that since the map $\imath_i$ is ind-proper, since it is proper (see Remark \ref{rem:proper-is-ind-proper}). By Step 0 the top Cartesian diagram satisfies the base change isomorphism. Thus we are reduced to considering the case where $S = S_0 \in \Schaft$.

\textit{Step 3:}
Finally, in the case that $S_0$ is scheme almost of finite type, the map $f: S_0 \ra T$ factors as follows
\[
\begin{tikzcd}
S'_i \ar[r,"g_{S,i}"] \ar[d] & S_0 \ar[d] \\
T'_i \ar[r] \ar[d] & T_i \ar[d] \\
T'_0 \ar[r] & T
\end{tikzcd}
\]
for some $i \in I$ and $T \simeq \colim_I T_i$ a presentation of $T$, where all squares are Cartesian. We then notice that the lower square satisfies the base change isomorphism by Step 0 again, and by Proposition \ref{prop:base-change-proper} the upper square satisfies the base change isomorphism because $S_0 \ra T_i$ is proper.
\end{proof}

\paragraph{}

Let
\[
\TateCoh_{\SchTatelaft}: \SchTatelaft \ra \DGct
\]
be the functor from \S \ref{par:extension-of-TateCoh-functors}.

\begin{cor}
\label{cor:base-change-ind-proper-TateCoh}
For the same set up as Proposition \ref{prop:base-change-ind-proper-ProIndCoh} one has a canonical map
\begin{equation}
    \label{eq:base-change-ind-proper-TateCoh}
    (f')^{\rm Tate}_*\circ g^!_S \ra g^!_T \circ f^{\rm Tate}_*
\end{equation}
obtained from the $(g^{\rm Tate}_*,g^!)$-adjunction from
\[
(g_T)^{\rm Tate}_* \circ (f')^{\rm Tate}_* \simeq f^{\rm Tate}_* \circ (g_{S})^{\rm Tate}_{*}.
\]
\end{cor}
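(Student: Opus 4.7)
The plan is to adapt the proof of Proposition \ref{prop:base-change-ind-proper-ProIndCoh} from the Pro-Ind-coherent setting to Tate-coherent sheaves, using the fact that the relevant functors for Tate-coherent sheaves along ind-proper morphisms are simply the restrictions of the corresponding Pro-Ind-coherent functors. First, I would justify that the map (\ref{eq:base-change-ind-proper-TateCoh}) is well-defined: Proposition \ref{prop:ind-proper-preserve-TateCoh-on-Tate-schemes} ensures that for the ind-proper maps $g_T$ and $g_S$, the pushforwards $(g_T)^{\rm Tate}_*$ and $(g_S)^{\rm Tate}_*$ preserve Tate-coherent sheaves and the $!$-pullback functors $g^!_T, g^!_S$ are their right adjoints. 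Combined with the existence of $f^{\rm Tate}_*$ and $(f')^{\rm Tate}_*$ from Theorem \ref{thm:TateCoh-correspondence-functor-Sch-Tate}, the natural transformation (\ref{eq:base-change-ind-proper-TateCoh}) is obtained by adjunction from the canonical isomorphism provided by the correspondence functoriality.

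The proof then proceeds by the same reduction as in Proposition \ref{prop:base-change-ind-proper-ProIndCoh}. Step 0 would be the sub-case when $f$ itself is ind-proper: here both $f^{\rm Tate}_*$ and $(f')^{\rm Tate}_*$ have right adjoints and the statement follows from the right-sided base change Corollary \ref{cor:base-change-ind-proper-TateCoh^!} (to be established), exactly as in the Pro-Ind-coherent case. Step 1 reduces to the case $T' = T'_0 \in \Schaft$ by testing against $h^!$ for arbitrary $h:T_0 \ra T'$ with $T_0 \in \Schaft$, which is valid because $\TateCoh^!_{\SchTatelaft}$ is the right Kan extension from $\Schaft$ and the $!$-pullbacks along closed embeddings into $T'$ are jointly conservative. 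Step 2 uses Lemma \ref{lem:TateCoh-on-Tate-schemes-is-pushforward-from-closed-schemes} together with the colimit presentation $\TateCoh(S) \simeq \colim_I \TateCoh(S_i)$: since $\TateCoh(S)$ is generated by objects of the form $(\imath_i)^{\rm Tate}_*(\sF)$ with $\imath_i: S_i \hra S$ a closed embedding from a scheme almost of finite type, it suffices to prove base change after pre-composing with each such $(\imath_i)^{\rm Tate}_*$; a standard diagram chase then reduces to the case $S = S_0 \in \Schaft$ (using Step 0 for the closed embedding $\imath_i$, which is ind-proper as it is proper).

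In the final step, when both $S = S_0$ and $T' = T'_0$ are schemes almost of finite type, the morphism $f: S_0 \ra T$ factors through some $T_i$ in a presentation $T \simeq \colim_I T_i$, since $S_0$ is quasi-compact. Splitting the resulting Cartesian rectangle into two squares — an upper square over $T_i$, and a lower square expressing the factorization $T_i \hookrightarrow T$ — we handle the upper square using proper base change for Tate-coherent sheaves on schemes almost of finite type (Lemma \ref{lem:right-Beck-Chevalley-for-proper-morphisms-TateCoh-shriek}), and the lower square again falls under Step 0. The composition of these two isomorphisms yields the desired base change.

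The main obstacle I anticipate is verifying that the reduction in Step 2 really works in the Tate-coherent setting: one must check that the colimit presentation $\TateCoh(S) \simeq \colim_I \TateCoh(S_i)$ holds (analogous to Proposition \ref{prop:limit-and-colimit-presentation-of-ProIndCoh-on-Tate-schemes}) and that generation by the $(\imath_i)^{\rm Tate}_*$ commutes with applying $f^{\rm Tate}_*$ — this requires some care because $f^{\rm Tate}_*$ is not directly defined as a Pro-extension for arbitrary $f$, but rather via the correspondence machinery. The compatibility with $(\imath_i)^{\rm Tate}_*$ however is encoded in the functoriality of $\TateCoh_{\CorrTatelaftipaa}$, and this is precisely the content that makes the reduction legitimate.
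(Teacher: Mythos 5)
Your proposal is correct and follows essentially the same route as the paper: the paper's proof simply says to repeat the argument of Proposition \ref{prop:base-change-ind-proper-ProIndCoh} verbatim, substituting Lemma \ref{lem:right-Beck-Chevalley-for-proper-morphisms-TateCoh-shriek} for Proposition \ref{prop:base-change-proper}, which is exactly the substitution you make in your final step. The additional care you take in Step 2 about the colimit presentation of $\TateCoh$ on Tate schemes is already supplied by the corollary following Lemma \ref{lem:TateCoh-on-Tate-schemes-is-pushforward-from-closed-schemes}, so no new content is needed there.
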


\begin{proof}
We notice that the same proof as that of Proposition \ref{prop:base-change-ind-proper-ProIndCoh} works, except one needs Lemma \ref{lem:right-Beck-Chevalley-for-proper-morphisms-TateCoh-shriek} instead of Proposition \ref{prop:base-change-proper}.
\end{proof}

\subsection{Ind-proper base change for $\ProIndCoh^!_{\SchaffTateaft}$}
\label{subsec:ind-proper-base-change-for-ProIndCoh^!}

In this section we proved that the functor
\[
\ProIndCoh^!_{\SchTatelaft}: (\SchTatelaft)^{\rm op} \ra \DGct
\]
satisfies the right Beck--Chevalley condition with respect to ind-proper maps.

\begin{prop}
\label{prop:base-change-ind-proper-ProIndCoh^!}
Consider the pullback diagram
\[
\begin{tikzcd}
S' \ar[r,"g_S"] \ar[d,"f'"] & S \ar[d,"f"] \\
T' \ar[r,"g_T"] & T
\end{tikzcd}
\]
in the category of Tate schemes almost of finite type, where $f$ (and hence $f'$) is ind-proper. Then the functors $f^!$ and $(f')^!$ admit left adjoints given by $f^{\rm ProInd}_*$ and $(f')^{\rm ProInd}_*$. Moreover, the isomorphism
\[
g^!_S\circ f^! \overset{\simeq}{\ra} (f')^!\circ g^!_T
\]
by adjunction gives rise to a map
\begin{equation}
    \label{eq:base-change-ind-proper-ProIndCoh^!}
    (f')^{\rm ProInd}_{*} \circ g^!_S \ra g^!_T \circ f^{\rm ProInd}_*
\end{equation}
which is an isomorphism.
\end{prop}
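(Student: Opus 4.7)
The plan is to establish both assertions by reducing to proper base change for schemes almost of finite type, namely Corollary \ref{cor:arbitrary-base-change-ProIndCoh}(b). For the existence of left adjoints, I would exploit the fact that, since $f$ is ind-proper, one can choose compatible presentations $S \simeq \colim_I S_i$ and $T \simeq \colim_I T_i$ with each $f_i: S_i \to T_i$ a proper morphism of schemes almost of finite type (cf.\ Remark \ref{rem:ind-proper-is-proper-for-the-presentation}). Under the identification $\ProIndCoh(S) \simeq \lim_{I^{\rm op}} \ProIndCoh(S_i)$ from Proposition \ref{prop:limit-and-colimit-presentation-of-ProIndCoh-on-Tate-schemes}, the functor $f^!$ corresponds to the limit of the $(f_i)^!$. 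Each $(f_i)^!$ admits a left adjoint $(f_i)^{\rm ProInd}_*$ by \S \ref{par:!-pullback-for-proper}, and these left adjoints are compatible with the closed embeddings in the presentations by Corollary \ref{cor:arbitrary-base-change-ProIndCoh}(b). Lemma \ref{lem:adjoints-of-limits-of-categories} then produces a global left adjoint $f^{\rm ProInd}_*$; the same argument gives $(f')^{\rm ProInd}_*$.

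For the base change, I would first reduce to the case where $T' = T'_0 \in \Schaft$. By the right-Kan-extension description of $\ProIndCoh^!_{\SchTatelaft}$, the natural transformation (\ref{eq:base-change-ind-proper-ProIndCoh^!}) is an isomorphism if and only if it becomes one after pre-composition with $h^!$ for every $h: T_0 \to T'$ with $T_0 \in \Schaft$. After such pre-composition, the right-hand side simplifies to $(g_T \circ h)^! \circ f^{\rm ProInd}_*$ by composition of $!$-pullbacks, while the left-hand side is controlled by the analogous base change for the Cartesian square obtained by pulling back $f'$ along $h$, i.e., by the statement of the proposition with $T'$ replaced by $T_0 \in \Schaft$.

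Once we are in the situation where $T'_0$ is a scheme almost of finite type, I would use the quasi-compactness of $T'_0$ together with the ind-presentation of $T$ to factor $g_T: T'_0 \to T$ through some $T_j$ from the chosen presentation. Setting $S_j := S \times_T T_j$ (a scheme almost of finite type by ind-properness), the Cartesian rectangle decomposes as
\[
\begin{tikzcd}
S'_0 \ar[r] \ar[d, "f'"] & S_j \ar[r, "\imath_{S_j}"] \ar[d, "f_j"] & S \ar[d, "f"] \\
T'_0 \ar[r] & T_j \ar[r, "\imath_{T_j}"] & T
\end{tikzcd}
\]
with $f_j$ and $f'$ proper morphisms of schemes, and $\imath_{T_j}, \imath_{S_j}$ closed embeddings into the Tate schemes. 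The base change for the left square is Corollary \ref{cor:arbitrary-base-change-ProIndCoh}(b). For the right square, one must identify $\imath^!_{T_j} \circ f^{\rm ProInd}_* \simeq (f_j)^{\rm ProInd}_* \circ \imath^!_{S_j}$; this is built into the characterization of $f^{\rm ProInd}_*$ from the first paragraph, since by Proposition \ref{prop:limit-and-colimit-presentation-of-ProIndCoh-on-Tate-schemes} every object of $\ProIndCoh(S)$ can be written as $\imath^{\rm ProInd}_{S_k, *}(\sG_k)$ with $k \geq j$ cofinal, and on such objects both functors reduce to $(f_k)^{\rm ProInd}_*$ followed by base change morphisms internal to $\Schaft$.

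The main obstacle is the bootstrap character of the reduction in the second paragraph: the base change for the Cartesian square $(h, f')$ is itself an instance of the proposition being proved, so one must confirm that it truly suffices to handle $T' \in \Schaft$ first and then deduce the general case. Controlling the left adjoint $f^{\rm ProInd}_*$ --- which is defined only by adjunction rather than via an explicit formula --- through the compatible presentations afforded by ind-properness is the key technical input that breaks the potential circularity and makes the reduction effective.
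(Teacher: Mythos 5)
Your reduction to $T' = T'_0 \in \Schaft$ and the idea of exploiting presentations of the Tate schemes are in the right spirit, but the central step of your base-change argument fails: you claim that $S_j := S \times_T T_j$ and $S'_0 = S \times_T T'_0$ are schemes almost of finite type ``by ind-properness'', so that both squares of your rectangle become instances of proper base change for schemes (Corollary \ref{cor:arbitrary-base-change-ProIndCoh}(b)). This is false. Ind-properness of $f$ says nothing about the fibers of $f$ over schemes being schemes; it only says that the composite $S_i \to S \to T$ is proper for each term $S_i$ of a presentation of the \emph{source} (Remark \ref{rem:ind-proper-is-proper-for-the-presentation}). The map $\Spf(k[[t]]) \to \Spec(k[t])$ of Remark \ref{rem:proper-is-ind-proper} is ind-proper, yet its pullback along the identity of $\Spec(k[t])$ is $\Spf(k[[t]])$, which is not a scheme. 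So in your decomposition $f_j$ and $f'$ are not proper morphisms of schemes, and neither square reduces to the scheme-level statement.

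The paper's proof keeps $S$ (and hence $S'$) as Tate schemes throughout and instead (i) reduces $T$ itself to a scheme $T_0$ by writing $\ProIndCoh(T)$ as the limit over $(\Schaffaft)_{/T}$ and applying Lemma \ref{lem:adjoints-of-limits-of-categories}, and then (ii) uses that $\ProIndCoh(S)$ is generated under colimits by the essential images of $(\imath_i)^{\rm ProInd}_*$ for the closed embeddings $\imath_i: S_i \hookrightarrow S$ (Proposition \ref{prop:limit-and-colimit-presentation-of-ProIndCoh-on-Tate-schemes}), so the base-change map need only be checked on objects pushed forward from the $S_i$; there the relevant square is the \emph{outer} rectangle with vertical maps $S_i \to T_0$ and $S'_i = S_i \times_{T_0} T'_0 \to T'_0$, which is a Cartesian square of schemes with $S_i \to T_0$ proper precisely by ind-properness, and Proposition \ref{prop:base-change-proper} applies. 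A secondary issue: in your existence argument for $f^{\rm ProInd}_*$, the squares formed by $S_i \to S_j$ and $T_i \to T_j$ in a common presentation need not be Cartesian, so the compatibility of the $(f_i)^{\rm ProInd}_*$ with the transition functors $\imath^!_{i,j}$ is not directly an instance of Corollary \ref{cor:arbitrary-base-change-ProIndCoh}(b); the paper sidesteps this by indexing the limit by all of $(\Schaffaft)_{/T}$ rather than by a chosen presentation.
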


\begin{proof}
This proof follows closely the strategy of \cite{GR-II}*{Chapter 3, Proposition 2.1.2}.

As in the proof of Proposition \ref{prop:base-change-ind-proper-ProIndCoh} notice that if $\widetilde{f^{\rm ProInd}_*}$ a left adjoint to $f^!$ exists, then by Lemma \ref{lem:adjoints-and-Kan-extensions} one has a natural equivalence
\[
\widetilde{f^{\rm ProInd}_*} \simeq f^{\rm ProInd}_*,
\]
and similarly for $(f')^{!}$. So we only need to check that $f^{\rm ProInd}_*$ and $(f')^{\rm ProInd}_*$ actually satisfy the base-change statement.

\textit{Step 1:}
To check that (\ref{eq:base-change-ind-proper-ProIndCoh^!}) is an isomorphism it is enough to check that for any $h: T_0 \ra T'$ with $T_0 \in \Schaft$ the map
\[
h^!\circ(f')^{\rm ProInd}_{*} \circ g^!_S \ra h^!\circ g^!_T \circ f^{\rm ProInd}_*
\]
is an isomorphism. Thus, we can suppose that $T' = T'_0 \in \Schaft$. 

\textit{Step 2:}
We consider the functor $f^!: \ProIndCoh(T) \ra \ProIndCoh(S)$ and let $I = (\Schaffaft)_{/T}$ then for each $T_0 \in I$ we have
\[
f^!_i: \ProIndCoh(T_0) \ra \ProIndCoh(T_0\times_{T}S)
\]
and
\[
\ProIndCoh(T) \simeq \lim_{T_0 \in I}\ProIndCoh(T_0) \;\; \mbox{and} \;\; \lim_{T_0 \in I}\ProIndCoh(T_0\times_T S).
\]
Thus, by Lemma \ref{lem:adjoints-of-limits-of-categories}, it is enough to find left adjoints to $f^!_i$, and we can reduce the proof of the assertion to the case where $T = T_0 \in \Schaft$.

\textit{Step 3:}
Consider $S \simeq \colim_I S_i$ a presentation of $S$, where $S_i \in \Schaft$, and denote by
\[
\imath_i:S_i \hra S, \;\;\; \imath'_i: S_i\times_{T_0}T'_0 \hra S' \;\; \mbox{and} \;\; g_{S,i}:S'_i \ra S_i
\]
the canonical inclusions maps and the pullback of $g_S$. One obtains the diagram
\[
\begin{tikzcd}
S'_i \ar[r,"g_{S,i}"] \ar[d] & S_i \ar[d] \\
S' \ar[r] \ar[d] & S \ar[d] \\
T'_0 \ar[r] & T_0
\end{tikzcd}
\]
the composite of the right vertical arrows, by definition of ind-proper (see Remark \ref{rem:ind-proper-is-proper-for-the-presentation}), is proper. Thus the outer Cartesian square satisfy the base change isomorphism by Proposition \ref{prop:base-change-proper}.

\textit{Step 4:}
Now we notice that Proposition \ref{prop:limit-and-colimit-presentation-of-ProIndCoh-on-Tate-schemes} implies that the functors $(\imath^!_i,\imath^{\rm ProInd}_{i,*})$ form an adjoint pair and the canonical map
\[
\colim_I\imath^!_i \circ \imath^{\rm ProInd}_{i,*} \ra \mbox{Id}_{\ProIndCoh(S)}
\]
is an equivalence, and similarly for $\ProIndCoh(S')$. Thus the map (\ref{eq:base-change-ind-proper-ProIndCoh^!}) becomes
\[
(f')^{\rm ProInd}_{*} \circ \colim_I\imath'^!_i \circ \imath'^{\rm ProInd}_{i,*} \circ g^!_S \circ  \ra g^!_T \circ f^{\rm ProInd}_* \circ \colim_I\imath^!_i \circ \imath^{\rm ProInd}_{i,*}.
\]
So it is enough to check the isomorphism for a single $i$, but this claim follows from Step 3. And this finishes the proof.
\end{proof}

\paragraph{}

Let
\[
\TateCoh^!_{\SchTatelaft}: \SchTatelaft \ra \DGct
\]
be the functor from \S \ref{par:extension-of-TateCoh-functors}.

\begin{cor}
\label{cor:base-change-ind-proper-TateCoh^!}
For the same set up as Proposition \ref{prop:base-change-ind-proper-ProIndCoh^!} one has a canonical map
\begin{equation}
    \label{eq:base-change-ind-proper-TateCoh^!}
    (f')^{\rm Tate}_*\circ g^!_S \ra g^!_T \circ f^{\rm Tate}_*
\end{equation}
obtained from the $(f^{\rm Tate}_*,f^!)$-adjunction from
\[
g^!_S \circ f^! \simeq (f')^!\circ g^!_{T}
\]
\end{cor}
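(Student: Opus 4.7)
The plan is to adapt the proof of Proposition \ref{prop:base-change-ind-proper-ProIndCoh^!} to the Tate-coherent setting, in exactly the same way that the proof of Corollary \ref{cor:base-change-ind-proper-TateCoh} adapted the proof of Proposition \ref{prop:base-change-ind-proper-ProIndCoh}. First, the canonical map is well-defined because when $f$ (and hence $f'$) is ind-proper, Proposition \ref{prop:ind-proper-preserve-TateCoh-on-Tate-schemes} provides the adjunctions $(f^{\rm Tate}_*, f^!)$ and $((f')^{\rm Tate}_*, (f')^!)$ on the Tate-coherent categories. As in the pro-ind-coherent case, uniqueness of adjoints together with Lemma \ref{lem:adjoints-and-Kan-extensions} guarantees that these adjoints agree with the ones that would be produced abstractly by the Kan extension formalism, so we only need to verify that the base change $2$-morphism in (\ref{eq:base-change-ind-proper-TateCoh^!}) is an isomorphism.

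I would then follow the four-step reduction from the proof of Proposition \ref{prop:base-change-ind-proper-ProIndCoh^!}. In Step 1, to check that (\ref{eq:base-change-ind-proper-TateCoh^!}) is an isomorphism, it suffices to apply $h^!$ for every $h : T_0 \to T'$ with $T_0 \in \Schaft$, reducing to the case $T' = T'_0 \in \Schaft$. In Step 2, using the description
\[
\TateCoh^!_{\SchTatelaft}(T) \simeq \lim_{T_0 \in (\Schaft)_{/T}}\TateCoh(T_0)
\]
obtained from \S \ref{subsubsec:defn-and-ind-proper-maps} together with Lemma \ref{lem:adjoints-of-limits-of-categories}, we further reduce to $T = T_0 \in \Schaft$. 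In Step 3, for a presentation $S \simeq \colim_I S_i$ with closed embeddings $\imath_i : S_i \hookrightarrow S$, the outer Cartesian rectangle obtained by composing
\[
\begin{tikzcd}
S'_i \ar[r,"g_{S,i}"] \ar[d] & S_i \ar[d,"\imath_i"] \\
S' \ar[r,"g_S"] \ar[d,"f'"] & S \ar[d,"f"] \\
T'_0 \ar[r,"g_T"] & T_0
\end{tikzcd}
\]
has a proper vertical composite since $f$ is ind-proper, which reduces us to the scheme-theoretic case.

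In Step 4, one combines Lemma \ref{lem:TateCoh-on-Tate-schemes-is-pushforward-from-closed-schemes} (which shows that $\TateCoh(S)$ is generated by pushforwards $(\imath_i)^{\rm Tate}_*$) with the fact that the adjoint pair $((\imath_i)^{\rm Tate}_*, \imath^!_i)$ gives a colimit description analogous to Proposition \ref{prop:limit-and-colimit-presentation-of-ProIndCoh-on-Tate-schemes}, so that the canonical map
\[
\colim_I (\imath_i)^{\rm Tate}_* \circ \imath^!_i \to \mathrm{id}_{\TateCoh(S)}
\]
is an equivalence. This reduces the isomorphism claim to the base change along the outer rectangle for a single $i$, i.e.\ the scheme-theoretic statement of proper base change for Tate-coherent sheaves, which is exactly Lemma \ref{lem:right-Beck-Chevalley-for-proper-morphisms-TateCoh-shriek}.

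The main obstacle I expect is purely bookkeeping: verifying that the colimit-of-pushforwards description of $\TateCoh(S)$ with respect to a closed-embedding presentation $S \simeq \colim_I S_i$ is actually compatible with the $!$-pullback and the adjunctions in the sense required, so that the reduction in Step 4 is valid for $\TateCoh$. This is morally the Tate-coherent analogue of Proposition \ref{prop:limit-and-colimit-presentation-of-ProIndCoh-on-Tate-schemes}, but unlike in the pro-ind-coherent case the $\star$-pushforward only exists in the Tate setting for proper (or open) maps; however, since the $\imath_i$ are closed embeddings this is fine, and the generation statement is precisely the content of Lemma \ref{lem:TateCoh-on-Tate-schemes-is-pushforward-from-closed-schemes}. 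Once this compatibility is in place, the rest of the proof is formally identical to that of Proposition \ref{prop:base-change-ind-proper-ProIndCoh^!}.
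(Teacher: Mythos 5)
Your proposal is correct and follows essentially the same route as the paper: the paper's own proof is the one-line observation that the argument of Proposition \ref{prop:base-change-ind-proper-ProIndCoh^!} goes through verbatim once Proposition \ref{prop:base-change-proper} is replaced by Lemma \ref{lem:right-Beck-Chevalley-for-proper-morphisms-TateCoh-shriek}, which is exactly the substitution you make. Your elaboration of Step 4 (the colimit-of-pushforwards presentation of $\TateCoh(S)$ via Lemma \ref{lem:TateCoh-on-Tate-schemes-is-pushforward-from-closed-schemes} and the Tate analogue of Proposition \ref{prop:limit-and-colimit-presentation-of-IndCoh-on-Tate-schemes}) just makes explicit what the paper leaves implicit.
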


\begin{proof}
We notice that the same proof as that of Proposition \ref{prop:base-change-ind-proper-ProIndCoh^!} works, except one needs Lemma \ref{lem:right-Beck-Chevalley-for-proper-morphisms-TateCoh-shriek} instead of Proposition \ref{prop:base-change-proper}.
\end{proof}

\subsection{Pro-ind-coherent sheaves on ind-proper maps}
\label{subsec:ProIndCoh-on-ind-proper-maps}

In this section we check the two last conditions necessary to apply \cite{GR-I}*{Chapter 8, Theorem 1.1.9}, namely conditions (2) and (4) of \emph{loc.\ cit.\ }.

\begin{prop}
\label{prop:ProIndCoh^!-from-proper-agrees-with-restriction}
The canonical map
\begin{equation}
    \left.\ProIndCoh^!_{\SchTatelaft}\right|_{(\SchTatelaft)_{\rm ind-proper}} \ra \ProIndCoh^!_{(\SchTatelaft)_{\rm ind-proper}}
\end{equation}
is an equivalence.
\end{prop}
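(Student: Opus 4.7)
The plan is to unwind both sides of the canonical comparison by reducing to the explicit formulas for right Kan extension. For a Tate scheme $S$ locally almost of finite type, the left-hand side evaluates as
\[
\left.\ProIndCoh^!_{\SchTatelaft}\right|_{(\SchTatelaft)_{\rm ind-proper}^{\rm op}}(S) \simeq \lim_{(T \ra S) \in \left((\Schaft)_{/S}\right)^{\rm op}}\ProIndCoh^!(T),
\]
since restriction of a right Kan extension does not affect its values, while the right-hand side evaluates as
\[
\ProIndCoh^!_{(\SchTatelaft)_{\rm ind-proper}}(S) \simeq \lim_{(T \ra S) \in \left((\Schaft)_{\rm proper\mbox{-}in\mbox{-}S}\right)^{\rm op}}\ProIndCoh^!_{\rm proper}(T),
\]
where the second indexing category consists of \emph{proper} maps $T \ra S$ with $T \in \Schaft$. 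The canonical map between them is induced by the universal property of the right Kan extension on the RHS, and, evaluated at $S$, it corresponds to the restriction morphism between the two limits. Thus the claim reduces to showing that the inclusion $\iota: (\Schaft)_{\rm proper\mbox{-}in\mbox{-}S} \hra (\Schaft)_{/S}$ is initial.

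To establish initiality I would verify that for every object $(T_0 \ra S) \in (\Schaft)_{/S}$, the comma category $\iota_{(T_0 \ra S)/}$ is weakly contractible. For non-emptiness, I would use the presentation $S \simeq \colim_I S_i$ of $S$ as a Tate scheme; since $T_0$ is quasi-compact, the map $T_0 \ra S$ factors through some $S_i \hra S$, and because $S_i \hra S$ is a closed embedding it is in particular ind-proper, hence the object $(S_i \ra S)$ belongs to $(\Schaft)_{\rm proper\mbox{-}in\mbox{-}S}$ and receives a map from $(T_0 \ra S)$. For the weak contractibility of the slice, I would argue that the slice is in fact cofiltered: given two factorizations $T_0 \ra T_1 \ra S$ and $T_0 \ra T_1' \ra S$ with $T_1, T_1' \ra S$ proper, one can pick a single $S_j$ through which both proper maps factor (using filteredness of $I$); by the base change property of proper morphisms and the fact that $T_1 \to S$ factoring through $S_j$ gives $T_1 \simeq T_1 \times_S S_j$, both $T_1 \ra S_j$ and $T_1' \ra S_j$ are proper, hence their fiber product $T_1 \times_{S_j} T_1' \in \Schaft$ is proper over $S_j$ and hence over $S$, providing a common refinement. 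The analogous argument on higher cells yields weak contractibility.

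The main technical subtlety will be verifying that the restriction map between the two limits genuinely coincides with the canonical map obtained from the universal property of $\RKE$, so that proving an equivalence of the two limits actually proves the proposition. This amounts to a careful tracing of the Kan extension adjunctions and is essentially formal once the initiality statement above is in place. The whole argument mirrors the strategy of \cite{GR-II}*{Chapter 3, Corollary 1.3.5}, and the only place where the Tate structure enters is in ensuring that any $T_0 \ra S$ factors through a closed subscheme $S_i$ in the presentation of $S$; this is precisely the content of $S$ being an ind-scheme with closed-embedding transition maps in its Tate presentation.
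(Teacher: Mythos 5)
Your proof is correct and takes essentially the same route as the paper: both sides are unwound as limits over the respective indexing categories of the right Kan extensions, and the statement is reduced to a cofinality claim that ultimately rests on the fact that any map from a scheme almost of finite type into a Tate scheme factors through a closed subscheme occurring in its presentation. The only cosmetic difference is that the paper factors the cofinality through the intermediate category of closed embeddings (citing \cite{GR-II}*{Chapter 2, Corollary 1.7.5 (b)}), whereas you verify initiality of the proper slice inside all of $(\Schaft)_{/S}$ directly by exhibiting finite limits in the comma category.
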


\begin{proof}
Let $S \in \SchTatelaft$, by the definition of right Kan extension, we need to check that
\[
\ProIndCoh(S) \ra \lim_{T \in (\Schaft)^{\rm op}_{\rm ind-proper\;in\;S}}\ProIndCoh(T)
\]
is an equivalence. Since for $T \in \Schaft$ a map $f:T \ra S$ is ind-proper if and only if $f$ is proper one has
\[
\lim_{T \in (\Schaft)^{\rm op}_{\rm ind-proper\;in\;S}}\ProIndCoh(T) \simeq \lim_{T \in (\Schaft)^{\rm op}_{\rm proper\;in\;S}}\ProIndCoh(T).
\]
Now notice that the subcategory
\[
(\Schaft)_{\rm closed\;in\;S} \subset (\Schaft)_{\rm proper\;in\;S}
\]
is initial. Thus, we need to check that the map
\[
\ProIndCoh(S) \ra \lim_{T \in (\Schaft)^{\rm op}_{\rm closed\;in\;S}}\ProIndCoh(T)
\]
is an equivalence, which follows from the definition of $\ProIndCoh^!$ on $S$ and \cite{GR-II}*{Chapter 2, Corollary 1.7.5 (b)}.
\end{proof}

\begin{prop}
\label{prop:ProIndCoh-from-proper-agrees-with-restriction}
The canonical map
\[
\ProIndCoh_{(\SchTatelaft)_{\rm ind-proper}} \ra \left.\ProIndCoh_{\SchTatelaft}\right|_{(\SchTatelaft)_{\rm ind-proper}}
\]
is an equivalence.
\end{prop}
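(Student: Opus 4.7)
The plan is to check the natural transformation on objects $S \in \SchTatelaft$, since a morphism in $\DGct$ is an equivalence if and only if it is an equivalence on objects, and both sides are manifestly functorial in $S$ along ind-proper maps. At an object $S$, the left-hand side unwinds, by the definition of the left Kan extension, as
\[
\ProIndCoh_{(\SchTatelaft)_{\rm ind-proper}}(S) \simeq \colim_{(T,f) \in (\Schaft)_{\rm proper/S}} \ProIndCoh(T),
\]
with transition maps given by the proper pushforward functors $(f_{T,T'})^{\rm ProInd}_*$. The right-hand side equals $\ProIndCoh_{\SchTatelaft}(S)$, which by Proposition \ref{prop:limit-and-colimit-presentation-of-ProIndCoh-on-Tate-schemes} is canonically equivalent to $\colim_{I} \ProIndCoh(S_i)$ for any presentation $S \simeq \colim_I S_i$ with $S_i \in \Schaft$ and closed connecting maps. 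The comparison map is the obvious one induced by sending $(S_i \hra S) \in I$ to an object of $(\Schaft)_{\rm proper/S}$ (closed embeddings are in particular proper).

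Next I would factor the comparison through the intermediate colimit
\[
\colim_{(T,f) \in (\Schaft)_{\rm closed/S}} \ProIndCoh(T),
\]
and argue that both natural maps
\[
\colim_{(\Schaft)_{\rm closed/S}} \ProIndCoh(T) \;\longrightarrow\; \colim_{(\Schaft)_{\rm proper/S}} \ProIndCoh(T) \;\longrightarrow\; \ProIndCoh(S)
\]
are equivalences. The composite is an equivalence because $I$ is cofinal in $(\Schaft)_{\rm closed/S}$ (any closed affine subscheme of $S$ is dominated by some $S_i$, since each closed embedding $T \hra S$ from an affine scheme almost of finite type factors through a closed $S_i$ by the cofiltered presentation). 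Hence it suffices to show that the second map, i.e.\ $(\Schaft)_{\rm closed/S} \hra (\Schaft)_{\rm proper/S}$, induces an equivalence on the colimit of $\ProIndCoh(-)$ along proper pushforwards.

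For this last step I would invoke cofinality: given any proper map $f : T \ra S$ with $T \in \Schaft$, the slice category of factorizations $T \to S_i \hra S$ with $S_i \in (\Schaft)_{\rm closed/S}$ is filtered, and hence weakly contractible. Indeed, any such $f$ factors through some $S_i$ by the definition of a Tate scheme (any map from an affine scheme factors through a scheme in the presentation, and the resulting map $T \ra S_i$ is then automatically proper by the 2-out-of-3 property applied to the closed embedding $S_i \hra S$), and the system of such factorizations is closed under the filtered structure of $I$. By the dual of Theorem 4.1.3.1 in \cite{HTT} this gives cofinality of the inclusion $(\Schaft)_{\rm closed/S} \hra (\Schaft)_{\rm proper/S}$ of index diagrams, which propagates to an equivalence of colimits in $\DGct$.

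The main obstacle will be verifying the filteredness (hence weak contractibility) of the slice categories of factorizations in the cofinality step, which requires carefully using the compatibility between proper pushforward and the closed embeddings coming from the presentation, together with Lemma \ref{lem:technical-condition-extension-ProIndCoh-colimit-of-proper} (which provides the analogous commutation of colimits under fiber products with proper $T$'s over $S$). Once cofinality is secured, the remaining identifications are formal consequences of Proposition \ref{prop:limit-and-colimit-presentation-of-ProIndCoh-on-Tate-schemes} and the universal property of left Kan extensions.
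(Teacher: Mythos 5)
Your proof is correct, but it takes a genuinely more direct route than the paper's. You evaluate the left Kan extension at $S$ as a colimit over $(\Schaft)_{\rm proper\;in\;S}$ and prove directly, via a two-step cofinality argument (the presentation diagram $I$ is cofinal in closed embeddings, and closed embeddings are cofinal in proper maps over $S$), that this colimit computes $\ProIndCoh(S)$. The paper instead observes that the $*$-pushforward diagram is obtained from the $!$-pullback diagram by passing to adjoints, invokes Lemma \ref{lem:limits-to-colimits-of-categories} (via Proposition \ref{prop:limit-and-colimit-presentation-of-ProIndCoh-on-Tate-schemes}) to identify the colimit of the pushforward diagram with the limit of the right-adjoint diagram, and then simply quotes Proposition \ref{prop:ProIndCoh^!-from-proper-agrees-with-restriction}, where the initiality of $(\Schaft)_{\rm closed\;in\;S}$ inside $(\Schaft)_{\rm proper\;in\;S}$ has already been used. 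The cofinality content is therefore the same in both arguments (a functor is initial if and only if its opposite is final); your version has the merit of spelling out why the factorization categories are filtered --- using the cancellation property that $T \ra S_i$ is proper when $T \ra S$ is proper and $S_i \hra S$ is a closed (hence separated) embedding, together with $\Maps(T,S) \simeq \colim_I \Maps(T,S_i)$ for $T$ a quasi-compact scheme --- which the paper leaves implicit, at the cost of re-doing work already packaged in the $!$-version. One small remark: your appeal to Lemma \ref{lem:technical-condition-extension-ProIndCoh-colimit-of-proper} in the final paragraph is not actually needed for the cofinality step; that lemma addresses the separate condition $(*)$ of the extension theorem, and the two ingredients just named suffice.
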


\begin{proof}
Let $S \in \SchTateaft$, by the definition of left Kan extension, we need to check that
\[
\colim_{T \in (\Schaft)_{\rm ind-proper\;in\;S}}\ProIndCoh(T) \ra \ProIndCoh(S)    
\]
is an equivalence. First we notice that by Lemma \ref{lem:adjoints-and-Kan-extensions} the functor $\ProIndCoh^!_{(\SchTatelaft)_{\rm ind-proper}}$ is obtained from $\ProIndCoh_{(\SchTatelaft)_{\rm ind-proper}}$ by passing to right adjoints. Thus, by Proposition \ref{prop:limit-and-colimit-presentation-of-ProIndCoh-on-Tate-schemes} one has an equivalence
\[
\colim_{T \in (\Schaft)_{\rm ind-proper\;in\;S}}\ProIndCoh(T) \overset{\simeq}{\ra} \lim_{T \in (\Schaft)^{\rm op}_{\rm ind-proper\;in\;S}}\ProIndCoh(T).
\]
Thus, the equivalence follows from Proposition \ref{prop:ProIndCoh^!-from-proper-agrees-with-restriction}.
\end{proof}

\paragraph{}

Similar proofs as those of Proposition \ref{prop:ProIndCoh^!-from-proper-agrees-with-restriction} and Proposition \ref{prop:ProIndCoh-from-proper-agrees-with-restriction} give

\begin{cor}
\label{cor:TateCoh-from-proper-agrees-with-restriction}
The canonical maps
\[
\left.\TateCoh^!_{\SchTatelaft}\right|_{(\SchTatelaft)_{\rm ind-proper}} \ra \TateCoh^!_{(\SchTatelaft)_{\rm ind-proper}}
\]
and
\[
\TateCoh_{(\SchTatelaft)_{\rm ind-proper}} \ra \left.\TateCoh_{\SchTatelaft}\right|_{(\SchTatelaft)_{\rm ind-proper}}
\]
are equivalences.
\end{cor}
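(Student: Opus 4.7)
The plan is to imitate verbatim the two-step strategy used for Propositions \ref{prop:ProIndCoh^!-from-proper-agrees-with-restriction} and \ref{prop:ProIndCoh-from-proper-agrees-with-restriction}, substituting $\TateCoh$ for $\ProIndCoh$ throughout. The key point is that the subcategory $\TateCoh(S) \subset \ProIndCoh(S)$ is preserved by all the functorialities appearing in those arguments, so the formal shape of the proof transports directly.

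For the first equivalence, fix $S \in \SchTatelaft$. Unwinding the definition of $\TateCoh^!_{\SchTatelaft}$ as a right Kan extension, the claim reduces to showing that
\[
\TateCoh(S) \;\ra\; \lim_{T \in (\Schaft)^{\rm op}_{\rm ind\mbox{-}proper\;in\;S}}\TateCoh(T)
\]
is an equivalence. Since for $T \in \Schaft$ a map $T \ra S$ is ind-proper if and only if it is proper (cf.\ Remark \ref{rem:proper-is-ind-proper}), and since the subcategory of closed embeddings $T_0 \hra S$ with $T_0 \in \Schaft$ is initial inside $(\Schaft)_{\rm proper\;in\;S}$, this further reduces to
\[
\TateCoh(S) \;\overset{\simeq}{\ra}\; \lim_{T_0 \in (\Schaft)^{\rm op}_{\rm closed\;in\;S}}\TateCoh(T_0).
\]
This is precisely the defining presentation of $\TateCoh$ on a Tate scheme established in \S \ref{subsubsec:defn-and-ind-proper-maps} (cf.\ Lemma \ref{lem:TateCoh-on-Tate-schemes-is-pushforward-from-closed-schemes} and the subsequent corollary describing $\TateCoh^!_{\SchTatelaft}$ as the right Kan extension of $\TateCoh^!$ from $\Schaft$).

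For the second equivalence, the strategy is to reduce to the first via adjunction. By Proposition \ref{prop:ind-proper-preserve-TateCoh-on-Tate-schemes}, every ind-proper morphism $f$ in $\SchTatelaft$ gives rise to an adjoint pair $(f^{\rm Tate}_*, f^!)$ in $\TateCoh$, so Lemma \ref{lem:adjoints-and-Kan-extensions} implies that $\TateCoh^!_{(\SchTatelaft)_{\rm ind\mbox{-}proper}}$ is obtained from $\TateCoh_{(\SchTatelaft)_{\rm ind\mbox{-}proper}}$ by passing to right adjoints. Combining this with the analogue of Proposition \ref{prop:limit-and-colimit-presentation-of-ProIndCoh-on-Tate-schemes} for $\TateCoh$ (which converts the colimit presentation over proper maps into the corresponding limit via Lemma \ref{lem:limits-to-colimits-of-categories}) identifies
\[
\TateCoh_{(\SchTatelaft)_{\rm ind\mbox{-}proper}}(S) \;\simeq\; \colim_{T \in (\Schaft)_{\rm proper\;in\;S}}\TateCoh(T) \;\simeq\; \lim_{T \in (\Schaft)^{\rm op}_{\rm proper\;in\;S}}\TateCoh(T),
\]
which by the first equivalence is $\left.\TateCoh_{\SchTatelaft}\right|_{(\SchTatelaft)_{\rm ind\mbox{-}proper}}(S)$.

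The main conceptual obstacle would be verifying that the Tate condition is preserved under the various adjunctions and cofinality reductions invoked above; however, this is guaranteed by the careful construction of $\TateCoh$ in \S \ref{subsec:Tate-coherent-sheaves-on-Tate-schemes}, in particular by Lemma \ref{lem:TateCoh-on-Tate-schemes-is-pushforward-from-closed-schemes} and Corollary \ref{cor:pushforward-scheme-into-Tate-scheme-closed-is-t-exact}, which together ensure that $(\imath_i)^{\rm Tate}_*$ for closed embeddings $\imath_i:S_i \hra S$ gives the expected generation and adjunction properties. Hence no genuinely new technical input is required beyond tracing through the $\ProIndCoh$-arguments with $\TateCoh$-replacements; the remaining work is purely bookkeeping.
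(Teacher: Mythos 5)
Your proof is correct and follows exactly the route the paper intends: the paper itself gives no separate argument for this corollary, stating only that the proofs of Propositions \ref{prop:ProIndCoh^!-from-proper-agrees-with-restriction} and \ref{prop:ProIndCoh-from-proper-agrees-with-restriction} carry over with $\TateCoh$ in place of $\ProIndCoh$, which is precisely the substitution you perform, with the correct supporting references (Lemma \ref{lem:TateCoh-on-Tate-schemes-is-pushforward-from-closed-schemes}, Proposition \ref{prop:ind-proper-preserve-TateCoh-on-Tate-schemes}, Lemma \ref{lem:adjoints-and-Kan-extensions}).
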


\begin{bibdiv}
\begin{biblist}

\bib{AGH}{article}{
      author={Antieau, B.},
      author={Gepner, D.},
      author={Heller, J.},
       title={K-theoretic obstructions to bounded t-structures},
        date={2019},
        ISSN={0020-9910},
     journal={Invent. Math.},
      volume={216},
      number={1},
       pages={241\ndash 300},
         url={https://doi.org/10.1007/s00222-018-00847-0},
      review={\MR{3935042}},
}

\bib{Barwick-Schommer-Pries}{article}{
      author={Barwick, C.},
      author={Schommer-Pries, C.},
       title={On the {Unicity} of the {Homotopy} {Theory} of {Higher}
  {Categories}},
        date={2011},
     journal={arXiv:1112.0040 [math.AT]},
}

\bib{BBE}{article}{
      author={Beilinson, A.},
      author={Bloch, S.},
      author={Esnault, H.},
       title={\${\textbackslash}epsilon\$-factors for {Gauss}—{Manin}
  {Determinants}},
        date={2002},
        ISSN={16093321, 16094514},
     journal={Moscow Mathematical Journal},
      volume={2},
      number={3},
       pages={477\ndash 532},
  url={http://www.ams.org/distribution/mmj/vol2-3-2002/beilinson-etal.pdf},
}

\bib{BD}{book}{
      author={Beilinson, A.},
      author={Drinfeld, V.},
       title={Chiral algebras},
      series={American {Mathematical} {Society} {Colloquium} {Publications}},
   publisher={American Mathematical Society, Providence, RI},
        date={2004},
      volume={51},
        ISBN={978-0-8218-3528-9},
         url={https://mathscinet.ams.org/mathscinet-getitem?mr=2058353},
      review={\MR{2058353}},
}

\bib{BKY}{article}{
      author={Bouthier, A.},
      author={Kazhdan, D.},
      author={Varshavsky, Y.},
       title={Perverse sheaves on infinite-dimensional stacks, and affine
  {Springer} theory},
        date={2020},
     journal={arXiv:2003.01428 [math.AG]},
}

\bib{BGHW}{article}{
      author={Braunling, O.},
      author={Groechenig, M.},
      author={Heleodoro, A.},
      author={Wolfson, J.},
       title={On the normally ordered tensor product and duality for {Tate}
  objects},
        date={2018},
        ISSN={1201-561X},
     journal={Theory and Applications of Categories},
      volume={33},
       pages={Paper No. 13, 296\ndash 349},
         url={https://mathscinet.ams.org/mathscinet-getitem?mr=3795418},
      review={\MR{3795418}},
}

\bib{BGW-ideals}{article}{
      author={Braunling, O.},
      author={Groechenig, M.},
      author={Wolfson, J.},
       title={Operator ideals in {Tate} objects},
        date={2016},
        ISSN={1073-2780},
     journal={Mathematical Research Letters},
      volume={23},
      number={6},
       pages={1565\ndash 1631},
         url={https://doi.org/10.4310/MRL.2016.v23.n6.a2},
      review={\MR{3621099}},
}

\bib{BGW-Tate}{article}{
      author={Braunling, O.},
      author={Groechenig, M.},
      author={Wolfson, J.},
       title={Tate objects in exact categories},
        date={2016},
        ISSN={1609-3321},
     journal={Moscow Mathematical Journal},
      volume={16},
      number={3},
       pages={433\ndash 504},
         url={https://mathscinet.ams.org/mathscinet-getitem?mr=3510209},
      review={\MR{3510209}},
}

\bib{BGW-index}{article}{
      author={Braunling, O.},
      author={Groechenig, M.},
      author={Wolfson, J.},
       title={The index map in algebraic {$K$}-theory},
        date={2018},
        ISSN={1022-1824},
     journal={Selecta Math. (N.S.)},
      volume={24},
      number={2},
       pages={1039\ndash 1091},
         url={https://doi.org/10.1007/s00029-017-0364-0},
      review={\MR{3782417}},
}

\bib{Drinfeld}{incollection}{
      author={Drinfeld, V.},
       title={Infinite-dimensional vector bundles in algebraic geometry},
        date={2006},
   booktitle={The unity of mathematics},
   publisher={Springer},
       pages={263\ndash 304},
}

\bib{Elliott}{article}{
      author={Elliott, C.},
      author={Pestun, V.},
       title={Multiplicative {Hitchin} systems and supersymmetric gauge
  theory},
        date={2019},
        ISSN={1022-1824},
     journal={Selecta Mathematica. New Series},
      volume={25},
      number={4},
       pages={Paper No. 64, 82},
  url={https://doi-org.proxy2.library.illinois.edu/10.1007/s00029-019-0510-y},
      review={\MR{4021850}},
}

\bib{Emerton}{article}{
      author={Emerton, M.},
       title={Formal algebraic stacks},
        date={2019},
     journal={Preprint, available at
  \url{http://www.math.uchicago.edu/~emerton/pdffiles/formal-stacks.pdf}},
}

\bib{Fujiwara-Kato}{book}{
      author={Fujiwara, K.},
      author={Kato, F.},
       title={Foundations of rigid geometry. {I}},
      series={EMS Monographs in Mathematics},
   publisher={European Mathematical Society (EMS), Z\"{u}rich},
        date={2018},
        ISBN={978-3-03719-135-4},
      review={\MR{3752648}},
}

\bib{Gaitsgory-IndCoh}{article}{
      author={Gaitsgory, D.},
       title={ind-coherent sheaves},
        date={2013},
        ISSN={1609-3321},
     journal={Moscow Mathematical Journal},
      volume={13},
      number={3},
       pages={399\ndash 528, 553},
  url={https://doi-org.proxy2.library.illinois.edu/10.17323/1609-4514-2013-13-3-399-528},
      review={\MR{3136100}},
}

\bib{crystals}{article}{
      author={Gaitsgory, D.},
      author={Rozenblyum, N.},
       title={Crystals and {D}-modules},
        date={2014},
        ISSN={1558-8599},
     journal={Pure and Applied Mathematics Quarterly},
      volume={10},
      number={1},
       pages={57\ndash 154},
  url={https://doi-org.proxy2.library.illinois.edu/10.4310/PAMQ.2014.v10.n1.a2},
      review={\MR{3264953}},
}

\bib{DG-indschemes}{incollection}{
      author={Gaitsgory, D.},
      author={Rozenblyum, N.},
       title={{DG} indschemes},
        date={2014},
   booktitle={Perspectives in representation theory},
      series={Contemp. {Math}.},
      volume={610},
   publisher={Amer. Math. Soc., Providence, RI},
       pages={139\ndash 251},
         url={https://mathscinet.ams.org/mathscinet-getitem?mr=3220628},
      review={\MR{3220628}},
}

\bib{GR-I}{book}{
      author={Gaitsgory, D.},
      author={Rozenblyum, N.},
       title={A study in derived algebraic geometry. {Vol}. {I}.
  {Correspondences} and duality},
      series={Mathematical {Surveys} and {Monographs}},
   publisher={American Mathematical Society, Providence, RI},
        date={2017},
      volume={221},
        ISBN={978-1-4704-3569-1},
      review={\MR{3701352}},
}

\bib{GR-II}{book}{
      author={Gaitsgory, D.},
      author={Rozenblyum, N.},
       title={A study in derived algebraic geometry. {Vol}. {II}.
  {Deformations}, {Lie} theory and formal geometry},
      series={Mathematical {Surveys} and {Monographs}},
   publisher={American Mathematical Society, Providence, RI},
        date={2017},
      volume={221},
        ISBN={978-1-4704-3570-7},
      review={\MR{3701353}},
}

\bib{Ganter-Kapranov}{article}{
      author={Ganter, N.},
      author={Kapranov, M.},
       title={Symmetric and exterior powers of categories},
        date={2014},
        ISSN={1083-4362},
     journal={Transform. Groups},
      volume={19},
      number={1},
       pages={57\ndash 103},
  url={https://doi-org.proxy2.library.illinois.edu/10.1007/s00031-014-9255-z},
      review={\MR{3177367}},
}

\bib{EGAI}{article}{
      author={Grothendieck, A.},
       title={Éléments de géométrie algébrique : {I}. {Le} langage des
  schémas},
    language={en},
        date={1960},
     journal={Publications Mathématiques de l'IHÉS},
      volume={4},
       pages={5\ndash 228},
         url={http://www.numdam.org/item/PMIHES_1960__4__5_0/},
}

\bib{EGAIV}{article}{
      author={Grothendieck, A.},
       title={Éléments de géométrie algébrique : {IV}. Étude locale des
  schémas et des morphismes de schémas, {Troisième} partie},
    language={en},
        date={1966},
     journal={Publications Mathématiques de l'IHÉS},
      volume={28},
       pages={5\ndash 255},
         url={http://www.numdam.org/item/PMIHES_1966__28__5_0/},
}

\bib{thesis}{thesis}{
      author={Heleodoro, A.},
       title={On the geometry of higher tate spaces},
        type={Ph.D. Thesis},
        date={2018},
}

\bib{determinant-map}{article}{
      author={Heleodoro, A.},
       title={{Determinant map for the prestack of {Tate} objects}},
        date={2019},
     journal={arXiv:1907.00384 [math.AG]},
}

\bib{Hennion-Tate}{article}{
      author={Hennion, B.},
       title={Tate objects in stable $(\infty,1)$-categories},
        date={2017},
        ISSN={1532-0073},
     journal={Homology, Homotopy and Applications},
      volume={19},
      number={2},
       pages={373\ndash 395},
         url={https://doi.org/10.4310/HHA.2017.v19.n2.a18},
      review={\MR{3731483}},
}

\bib{Hilburn}{book}{
      author={Hilburn, J.},
      author={Kamnitzer, J.},
      author={Weekes, A.},
       title={{BFN} {Springer} {Theory}},
        date={2020},
        note={\_eprint: 2004.14998},
}

\bib{Kapranov-Vasserot}{article}{
      author={Kapranov, M.},
      author={Vasserot, E.},
       title={Vertex algebras and the formal loop space},
        date={2004},
        ISSN={0073-8301},
     journal={Publ. Math. Inst. Hautes \'{E}tudes Sci.},
      number={100},
       pages={209\ndash 269},
  url={https://doi-org.proxy2.library.illinois.edu/10.1007/s10240-004-0023-9},
      review={\MR{2102701}},
}

\bib{HTT}{book}{
      author={Lurie, J.},
       title={Higher topos theory},
      series={Annals of Mathematics Studies},
   publisher={Princeton University Press},
        date={2009},
      number={170},
}

\bib{HA}{article}{
      author={Lurie, J.},
       title={Higher algebra},
        date={2012},
     journal={Preprint, available at
  \url{https://www.math.ias.edu/~lurie/papers/HA.pdf}},
}

\bib{SAG}{article}{
      author={Lurie, J.},
       title={Spectral algebraic geometry},
        date={Feb. 2018},
     journal={Preprint, available at
  \url{https://www.math.ias.edu/~lurie/papers/SAG-rootfile.pdf}},
}

\bib{Previdi}{article}{
      author={Previdi, L.},
       title={Sato {Grassmannians} for generalized {Tate} spaces},
        date={2012},
        ISSN={0040-8735},
     journal={The Tohoku Mathematical Journal. Second Series},
      volume={64},
      number={4},
       pages={489\ndash 538},
  url={https://doi-org.proxy2.library.illinois.edu/10.2748/tmj/1356038976},
      review={\MR{3008236}},
}

\bib{Raskin-D-modules}{article}{
      author={Raskin, S.},
       title={D-modules on infinite dimensional varieties},
     journal={Preprint, available at
  \url{https://web.ma.utexas.edu/users/sraskin/dmod.pdf}},
}

\bib{Raskin-homological}{article}{
      author={Raskin, S.},
       title={Homological methods in semi-infinite contexts},
        date={2020},
     journal={arXiv:2002.01395 [math.RT]},
}

\bib{stacks-project}{misc}{
      author={{Stacks project authors}, The},
       title={The stacks project},
         how={\url{https://stacks.math.columbia.edu}},
        date={2020},
}

\bib{TT}{incollection}{
      author={Thomason, R.~W.},
      author={Trobaugh, T.},
       title={Higher {Algebraic} {K}-{Theory} of {Schemes} and of {Derived}
  {Categories}},
    language={en},
        date={1990},
   booktitle={The {Grothendieck} {Festschrift} {Volume} {III}},
      editor={Cartier, Pierre},
      editor={Katz, Nicholas~M.},
      editor={Manin, Yuri~I.},
      editor={Illusie, Luc},
      editor={Laumon, Gérard},
      editor={Ribet, Kenneth~A.},
      volume={88},
   publisher={Birkhäuser Boston},
     address={Boston, MA},
       pages={247\ndash 435},
         url={http://link.springer.com/10.1007/978-0-8176-4576-2_10},
}

\bib{Xinwen}{incollection}{
      author={Zhu, X.},
       title={An introduction to affine {G}rassmannians and the geometric
  {S}atake equivalence},
        date={2017},
   booktitle={Geometry of moduli spaces and representation theory},
      series={IAS/Park City Math. Ser.},
      volume={24},
   publisher={Amer. Math. Soc., Providence, RI},
       pages={59\ndash 154},
      review={\MR{3752460}},
}

\end{biblist}
\end{bibdiv}

\end{document}